 \newcommand{\PWO}{\mathrm{PWO}}
\newcommand{\CC}{\mathbb C}
\newcommand{\RR}{\mathbb R}
\newcommand{\BB}{\mathbb B}
\newcommand{\sub}{\subseteq}
\newcommand{\cross}{\times}
\newcommand{\all}{\forall}
\newcommand{\ex}{\exists}
\newcommand{\inter}{\cap}
\renewcommand{\int}{\inter}
\newcommand{\om}{\omega}
\newcommand{\pow}{\mathcal{P}}
\newcommand{\OR}{\mathrm{OR}}
\newcommand{\Card}{\mathrm{Card}}
\newcommand{\Hull}{\mathrm{Hull}}
\newcommand{\cut}{\backslash}
\newcommand{\Tt}{\mathcal{T}}
\newcommand{\Ss}{\mathcal{S}}
\newcommand{\Uu}{\mathcal{U}}
\newcommand{\Vv}{\mathcal{V}}
\newcommand{\Ww}{\mathcal{W}}
\newcommand{\Ll}{\mathcal{L}}
\newcommand{\Uubar}{{\bar{\Uu}}}
\newcommand{\Ttbar}{{\bar{\Tt}}}
\newcommand{\rg}{\mathrm{rg}}
\newcommand{\dom}{\mathrm{dom}}
\newcommand{\ins}{\trianglelefteq}
\newcommand{\nins}{\ntrianglelefteq}
\newcommand{\pins}{\triangleleft}
\newcommand{\npins}{\ntriangleleft}
\newcommand{\crit}{\mathrm{cr}}
\newcommand{\union}{\cup}
\newcommand{\rest}{\!\upharpoonright\!}
\newcommand{\com}{\circ}
\newcommand{\range}{\rg}
\newcommand{\lh}{\mathrm{lh}}
\newcommand{\Ult}{\mathrm{Ult}}
\newcommand{\sats}{\models}
\newcommand{\elem}{\preccurlyeq}
\newcommand{\J}{\mathcal{J}}
\newcommand{\HC}{\mathrm{HC}}
\newcommand{\ZFC}{\mathsf{ZFC}}
\newcommand{\ZF}{\mathsf{ZF}}
\newcommand{\es}{\mathbb{E}}
\newcommand{\mubar}{{\bar{\mu}}}
\newcommand{\betabar}{{\bar{\beta}}}
\newcommand{\etabar}{{\bar{\eta}}}
\newcommand{\eps}{\varepsilon}
\newcommand{\Qbar}{{\bar{Q}}}
\newcommand{\Ubar}{{\bar{U}}}
\newcommand{\Ttvec}{{\vec{\Tt}}}
\newcommand{\Nbar}{{\bar{N}}}
\newcommand{\core}{\mathfrak{C}}
\newcommand{\pred}{\mathrm{pred}}
\newcommand{\dirlim}{\mathrm{dir lim}}
\newcommand{\un}{\union}
\newcommand{\id}{\mathrm{id}}
\newcommand{\sq}{\mathrm{sq}}
\newcommand{\nth}{{\textrm{th}}}
\newcommand{\conc}{\ \widehat{\ }\ }
\newcommand{\forces}{\dststile{}{}}
\newcommand{\bfPi}{\undertilde{\Pi}}
\newcommand{\bfSigma}{\undertilde{\Sigma}}
\newcommand{\bfDelta}{\undertilde{\Delta}}
\newcommand{\rSigma}{\mathrm{r}\Sigma}
\newcommand{\rPi}{\mathrm{r}\Pi}
\newcommand{\rDelta}{\mathrm{r}\Delta}
\newcommand{\supp}{\mathbb{S}}
\newcommand{\Baire}{{^\om}\om}
\DeclareMathOperator{\Th}{Th}
\DeclareMathOperator{\card}{card}
\DeclareMathOperator{\cof}{cof}
\DeclareMathOperator{\wfp}{wfp}
\newcommand{\xvec}{\vec{x}}
\newcommand{\bfrSigma}{\undertilde{\rSigma}}
\newcommand{\psub}{\subsetneq}
\newcommand{\Yy}{\mathcal{Y}}
\newcommand{\Xx}{\mathcal{X}}
\newcommand{\alphavec}{\vec{\alpha}}
\newcommand{\cHull}{\mathrm{cHull}}
\newcommand{\Avec}{\vec{A}}
\newcommand{\DD}{\mathbb{D}}
\newcommand{\unsq}{\mathrm{unsq}}
\newcommand{\lpole}{\left\lfloor}
\newcommand{\rpole}{\right\rfloor}
\newcommand{\univ}[1]{\lpole #1\rpole}
\newcommand{\tu}{\textup}
\declaretheoremstyle[bodyfont=\it]{slanted}
\declaretheoremstyle[bodyfont=\normalfont]{normal}
\declaretheorem[name=Definition,style=normal,qed=$\dashv$,
numberwithin=section]{dfn}
\declaretheorem[name=Definition,style=normal,numbered=no,qed=$\dashv$]{dfn*}
\declaretheorem[name=Definition,style=normal,numbered=no]{dfnnoqed*}
\declaretheorem[name=Example,style=definition,sibling=dfn]{exm}
\declaretheorem[name=Theorem,style=slanted,sibling=dfn]{tm}
\declaretheorem[name=Theorem,style=slanted,numbered=no]{tm*}
\declaretheorem[name=Lemma,style=slanted,sibling=dfn]{lem}
\declaretheorem[name=Korollar,style=slanted,numbered=no]{kor*}
\declaretheorem[name=Corollary,style=slanted,sibling=dfn]{cor}
\declaretheorem[name=Corollary,style=slanted,numbered=no]{cor*}
\declaretheorem[name=Remark,style=definition,sibling=dfn]{rem}
\declaretheorem[name=Fact,style=definition,sibling=dfn]{fact}
\declaretheoremstyle[headfont=\scshape]{claimstyle}
\declaretheorem[name=Claim,style=claimstyle]{clm}
\declaretheorem[name=Claim,style=claimstyle,numbered=no]{clm*}
\declaretheorem[name=Subclaim,style=claimstyle,numberwithin=clm]{sclm}
\declaretheorem[name=Subclaim,style=claimstyle,numbered=no]{sclm*}
\declaretheorem[name=Subsubclaim,style=claimstyle,numberwithin=sclm]{ssclm}
\declaretheorem[name=Subsubclaim,style=claimstyle,numbered=no]{ssclm*}
\declaretheoremstyle[headfont=\scshape]{casestyle}
\declaretheorem[name=Assumption,style=casestyle]{ass}
\declaretheorem[name=Case,style=casestyle]{case}
\declaretheorem[name=Subcase,style=casestyle,numberwithin=case]{scase}
\declaretheorem[name=Subsubcase,style=casestyle,numberwithin=scase]{sscase}
\newcommand{\rpm}{\mathrm{rpm}}
\newcommand{\Mmm}{\mathscr{M}}
\newcommand{\eqdef}{=_{\mathrm{def}}}
\newcommand{\lgcd}{\mathrm{lgcd}}
\renewcommand{\pm}{\mathrm{pm}}
\newcommand{\passive}{\mathrm{pv}}
\renewcommand{\card}{\mathrm{card}}
\newcommand{\pvec}{\vec{p}}
\newcommand{\params}{\mathrm{par}}
\newcommand{\dis}{\mathrm{dis}}
\newcommand{\prodstage}{\mathrm{prod}{\cdot}\mathrm{stage}}
\newcommand{\lifttree}{\mathrm{lift}}
\newcommand{\lex}{\mathrm{lex}}
\newcommand{\dcd}{\mathrm{dcd}}
\newcommand{\exit}{\mathrm{exit}}
\renewcommand{\deg}{\mathrm{deg}}
\newcommand{\omdeg}{\om\text{-}\deg}
\newcommand{\dropset}{\mathscr{D}}
\newcommand{\FF}{\mathbb{F}}
\newcommand{\bvec}{\vec{b}}
\newcommand{\tvec}{\vec{t}}
\newcommand{\udotvec}{\vec{\dot{u}}}
\newcommand{\avec}{\vec{a}}
\newcommand{\dec}{\mathrm{dec}}
\newcommand{\undec}{\mathrm{undec}}
\newcommand{\Bvec}{\vec{B}}
\newcommand{\uev}{\mathrm{uev}}
\newcommand{\vecdotx}{\vec{\dot{x}}}
\newcommand{\copymap}{\mathrm{copy}{\cdot}\mathrm{map}}
\newcommand{\rep}{\mathrm{rep}}
\newcommand{\simple}{\mathrm{s}}
\newcommand{\extcopy}{\mathrm{copy}}
\newcommand{\Rr}{\mathcal{R}}
\newcommand{\frakL}{\mathfrak{L}}
\newcommand{\pre}{\mathrm{pre}}
\renewcommand{\prod}{\mathrm{prod}}
\newcommand{\res}{\mathrm{res}}
\newcommand{\copyseg}{\mathrm{copy}}
\newcommand{\Scale}{\mathrm{Scale}}
\newcommand{\Lim}{\mathrm{Lim}}
\newcommand{\stack}{\mathrm{stack}}
\newcommand{\kappavec}{\vec{\kappa}}
\newcommand{\emdd}{\mathrm{edd}}
\newcommand{\redd}{\mathrm{redd}}
\newcommand{\segs}{\mathrm{segs}}
\newcommand{\prodseg}{\mathrm{prod}{\cdot}\mathrm{seg}}
\newcommand{\cardprojpropsegs}{\mathrm{cpp}{\cdot}\mathrm{segs}}
\newcommand{\modres}{\mathrm{mod}{\cdot}\mathrm{res}}
\newcommand{\segdegs}{\mathrm{seg}{\cdot}\mathrm{degs}}
\newcommand{\projdeg}{\mathrm{proj}{\cdot}\mathrm{deg}}
\newcommand{\restree}{\mathrm{res}{\cdot}\mathrm{tree}}
\newcommand{\modresmap}{\mathrm{mod}{\cdot}\mathrm{res}{\cdot}\mathrm{map}}
\newcommand{\resprodstage}{\mathrm{res}{\cdot}\mathrm{prod}{\cdot}\mathrm{stage}}
\newcommand{\resmap}{\mathrm{res}{\cdot}\mathrm{map}}
\newcommand{\resl}{\mathrm{res}{\cdot}\mathrm{l}}
\newcommand{\psegdeg}{\mathrm{p}{\cdot}\mathrm{seg}{\cdot}\mathrm{deg}}
\newcommand{\phalroot}{\mathrm{root}}
\newcommand{\exitcopymap}{\mathrm{exit}{\cdot}\mathrm{copy}{\cdot}\mathrm{map}}
\newcommand{\exitcopyseg}{\mathrm{exit}{\cdot}\mathrm{copy}}
\newcommand{\copymapredd}{\mathrm{copy}{\cdot}\mathrm{map}{\cdot}\mathrm{redd}}
\newcommand{\copyredd}{\mathrm{copy}{\cdot}\mathrm{redd}}
\newcommand{\exitresadd}{\mathrm{exit}{\cdot}\mathrm{res}{\cdot}\mathrm{add}}
\newcommand{\wt}{\widetilde}
\newcommand{\critres}{\mathrm{crit}{\cdot}\mathrm{res}}
\newcommand{\critresmap}{\mathrm{crit}{\cdot}\mathrm{res}{\cdot}\mathrm{map}}
\newcommand{\critresprodstage}{\mathrm{crit}{\cdot}\mathrm{res}{\cdot}\mathrm{prod}{\cdot}\mathrm{stage}}
\newcommand{\critresl}{\mathrm{crit}{\cdot}\mathrm{res}{\cdot}\mathrm{l}}
\newcommand{\critrestree}{\mathrm{crit}{\cdot}\mathrm{res}{\cdot}\mathrm{tree}}
\newcommand{\abrestree}{\mathrm{ab}{\cdot}\mathrm{res}{\cdot}\mathrm{tree}}
\newcommand{\abresl}{\mathrm{ab}{\cdot}\mathrm{res}{\cdot}\mathrm{l}}
\newcommand{\abresprodstage}{\mathrm{ab}{\cdot}\mathrm{res}{\cdot}\mathrm{prod}{\cdot}\mathrm{stage}}
\newcommand{\abresprodseg}{\mathrm{ab}{\cdot}\mathrm{res}{\cdot}\mathrm{prod}{\cdot}\mathrm{seg}}
\newcommand{\abdirectprodseg}{\mathrm{ab}{\cdot}\mathrm{lift}{\cdot}\mathrm{prod}{\cdot}\mathrm{seg}}
\newcommand{\abliftprodstage}
{\mathrm{ab}{\cdot}{\mathrm{lift}}{\cdot}\mathrm{prod}{\cdot}\mathrm{stage}}
\newcommand{\seq}{{{^{<\om}}\om}}
\newcommand{\Det}{\mathrm{Det}}
\newcommand{\sse}{\mathrm{sse}}
\newcommand{\tpcopy}{\mathrm{tpcopy}}
\newcommand{\Yback}{{P}}
\newcommand{\Zback}{{Q}}
\newcommand{\successor}{\mathrm{succ}}
\newcommand{\liftdom}{\mathrm{liftdom}}
\newcommand{\wtpi}{{\widetilde{\pi}}}
\newcommand{\wtalpha}{{\widetilde{\alpha}}}
\newcommand{\wtpsi}{{\widetilde{\psi}}}
\newcommand{\abres}{\mathrm{ar}}
\newcommand{\uvec}{\vec{u}}
\newcommand{\xibar}{\bar{\xi}}
\newcommand{\betavec}{\vec{\beta}}
\newcommand{\eenum}{\end{enumerate}}
\newcommand{\benum}{\begin{enumerate}}
\newcommand{\benumdd}{\benum[label=--]}
\newcommand{\dual}[1]{
  \mathrel{\vbox{\offinterlineskip\ialign{
    \hfil##\hfil\cr
    $\scriptscriptstyle\smile$\cr
    \noalign{\kern0.1ex}
    $#1$\cr
}}}}
\newcommand{\widedual}[1]{
  \mathrel{\vbox{\offinterlineskip\ialign{
    \hfil##\hfil\cr
    $\scriptscriptstyle\longsmile$\cr
    \noalign{\kern0.1ex}
    $#1$\cr
}}}}
\title{Mouse scales (preliminary draft v3)}
\author{Farmer Schlutzenberg\footnote{Gef\"ordert durch die Deutsche Forschungsgemeinschaft (DFG) im Rahmen der Exzellenzstrategie des Bundes und der L\"ander EXC 2044--390685587, Mathematik M\"unster: Dynamik--Geometrie--Struktur.
Gef\"ordert durch die Deutsche Forschungsgemeinschaft (DFG) -- Projektnummer 445387776.
Funded by the Deutsche Forschungsgemeinschaft (DFG, German Research Foundation) -- project number 445387776.
This work was funded in part by the Austrian Science Fund (FWF) [grant Y 1498].
}\footnote{afirstname dot a lastname at  tuwien dot ac dot at}\\
TU Wien}
\begin{document}
\maketitle

\begin{abstract}
 We give a construction of scales
 (in the descriptive set theoretic sense)
 directly from mouse existence hypotheses,
 without using any determinacy arguments.  The construction is related to the Martin-Solovay construction for scales on $\Pi^1_2$ sets. The prewellorders of the scales compare reals $x,y$ by comparing features of certain kinds of fully backgrounded $L[\es,x]$- and $L[\es,y]$-constructions executed in mice $P$ with $x,y\in P$. In this way we reach an new proof of the scale property for many pointclasses, for which the scale property was classically established using determinacy arguments (for example, $\Pi^1_3$). Moreover, it also yields many further pointclasses with the scale property, for example intermediate between $\Pi^1_{2n+1}$ and $\Sigma^1_{2n+2}$, and also instances of complexity well beyond projective.
 
 The reader should take note of the following footnote:\footnote{This is an early draft of this paper. Although it contains essentially the entire construction and almost all the proofs, it is not yet in a form which is fully ready for reading. At certain points in the current draft, there are  missing components and inconsistencies in setup. These issues presently occur mainly within \S\S\ref{sec:iterability_M_infty},\ref{sec:lower_semi}. In particular, the proof of lower semicontinuity is only given for a special case (however, the general case is an elaboration of that one, incorporating methods used elsewhere in the paper, which takes some work, but most of the ideas are already contained in the current version in some form). And the section on iterability of $M_\infty$ was mostly written 
 some time ago, prior to changes being made elsewhere, and probably needs updating. Moreover,
 the paper needs general proof-reading and cleaning up, so there are probably also
 errors/omissions/ill-definedness to be sorted out throughout.
 The author plans to improve these things in the reasonably near future. Although there will be changes, these should be at the detailed level;  the essential ideas are expected to be stable. (The material in \S\S\ref{sec:Q-mice},\ref{sec:P-construction} has been significantly improved over an earlier version.)
 
 The reason the draft is being made available at this early stage is in order to make some material available to the talk participants for a talk I am giving on the topic. I had hoped to have a better version available by this time, but it seems that making an imperfect version available is better than nothing.}
\end{abstract}

\tableofcontents

\section{Introduction}
\label{sec:introduction}
Let $X\sub\RR^2$ be a subset of the plane.
Suppose we would like to assign to each $x\in \RR$ such that
\begin{equation}\label{eqn:exists_y_(x,y)_in_X} \exists y\ [(x,y)\in X] \end{equation}
some specific $y\in\RR$ such that
\[ (x,y)\in X.\]
By the Axiom of Choice (AC), there is some function $f$ which achieves this; i.e.
\[ (x,f(x))\in X \]
for each $x$ as in line (\ref{eqn:exists_y_(x,y)_in_X}). Assuming that also the domain of $f$ is exactly
the set of all such $x$, then $f$ is called a \emph{uniformization} of $X$.
So AC guarantees that  uniformizations exist. But often appeals to AC can be avoided, by supplying some concrete or explicit procedure for selecting a particular $y$ for each $x$.
AC itself does not guarantee
that there is such an explicit procedure.\footnote{AC does ensure the existence of a uniformization $f$, and $f$ itself can be viewed as some sort of abstract procedure for selecting $y$s for $x$s. But we want the procedure to be (intuitively) concrete (say, explicitly specified or defined in some way), and AC does not say anything about this. Probably because of this intuitively non-constructive nature, AC is often viewed as less obviously valid than the other axioms of set theory. However, it is included in the mostly commonly accepted axioms for mathematics (the ZFC axioms).}
Relatedly, we might want a uniformization $f$
which is definable from the given set $X$, or a uniformization 
which is somehow of optimal complexity.

For example, a set $Z\sub\RR^2$ 
is called \emph{coanalytic} or $\bfPi^1_1$
if there is a Borel set $Y\sub\RR^3$ such that for all $x,y\in\RR$, we have
\[ (x,y)\in Z\ \iff\ \all z\in\RR\ [(x,y,z)\in Y].\]
It was shown by Lusin and Sierpinski \cite{lusin_1930a}, \cite{sierpinski_1930} that every Borel set $X\sub\RR^2$ can be uniformized by some $\bfPi^1_1$ set $f\sub X$.
It was asked by Sierpinski
whether
every $\bfPi^1_1$ set $X\sub\RR^2$
can also be uniformized by a $\bfPi^1_1$ set $f\sub X$,
and this became a key open question at the time.
Eventually Kond\^o \cite{kondo_1939} resolved it affirmatively, improving on a partial result in this direction due to Novikoff \cite{lusin_novikoff_1935}.
We will include a proof of this fact in  \S\ref{subsec:Pi^1_1}. (Kond\^o's result is also optimal in this case.)

The $\bfPi^1_1$ subsets of $\RR^n$ are defined similarly, starting from a Borel subset of $\RR^{n+1}$.
A $\bfSigma^1_1$ (also called \emph{analytic}) subset of $\RR^n$ is the complement of a $\bfPi^1_1$ subset of $\RR^n$; equivalently, $X\sub\RR^n$ is $\bfSigma^1_1$ iff there is a Borel set $Y\sub\RR^{n+1}$ such that for all $\vec{x}\in\RR^n$, we have
\[ \vec{x}\in X\iff\exists y\in\RR\ [(\vec{x},y)\in Y].\]

In contrast to the case with $\bfPi^1_1$,
it was shown in \cite{novikoff_1931} that there are $\bfSigma^1_1$ sets $X\sub\RR^2$
for which there is \emph{no} $\bfSigma^1_1$ uniformization (in fact there are \emph{closed} subsets of $\RR^2$ for which there is no $\bfSigma^1_1$ uniformization).

We say here the the pointclass $\bfPi^1_1$
(that is, the collection of all $\bfPi^1_1$ sets) has the \emph{uniformization property}, whereas $\bfSigma^1_1$ does not.
These two results are theorems of ZFC.

One defines the $\bfSigma^1_{k+1}$ and $\bfPi^1_{k+1}$ subsets of  $\RR^n$, for $k\geq 1$, by recursion on $k$,
as follows: A set $X\sub\RR^n$ is $\bfSigma^1_{k+1}$ iff there is a $\bfPi^1_k$ set $Y\sub\RR^{n+1}$ such that
\[ \vec{x}\in X\iff\exists y\in\RR\ [(\vec{x},y)\in Y],\]
and $X$ is $\bfPi^1_{k+1}$
iff it is the complement of a $\bfSigma^1_{k+1}$ set, or equivalently, iff there is a $\bfSigma^1_k$ set $Y\sub\RR^{n+1}$ such that
\[ \vec{x}\in X\iff\all y\in \RR\ [(\vec{x},y)\in Y].\]
It follows that if $m\geq 1$ is odd then
$X\sub\RR^n$ is $\bfSigma^1_m$
iff there is a Borel set $Z\sub\RR^{n+m}$
such that
\[ \vec{x}\in X\iff\exists y_1\ \all y_2\ \exists y_3\ \all y_4\ \ldots\ \exists y_{m}\ [(\vec{x},\vec{y})\in Y] \]
where $\vec{y}=(y_1,\ldots,y_m)$ and all quantifiers above range over $\RR$.
And if $m>1$ is even it is likewise,
but the last quantifier is ``$\all y_m$''
instead of ``$\exists y_m$''.
Since $\bfPi^1_m$ sets are complements of $\bfSigma^1_m$ sets,  if $m\geq 1$ is odd then $X\sub\RR^n$
is $\bfPi^1_m$ iff for some Borel $Y\sub\RR^{n+m}$,
\[ \vec{x}\in X\iff\all y_1\ \exists y_2\ \all y_3\ \exists y_4\ \ldots\ \all y_m\ [(\vec{x},\vec{y})\in Y],\]
and if $m$ is even then we change the last quantifier to ``$\exists y_m$''.

(One also defines $\bfDelta^1_m=\bfSigma^1_m\cap\bfPi^1_m$. That is,
a set $A\sub\RR^n$ is $\bfDelta^1_m$ iff it is
both $\bfSigma^1_m$ and $\bfPi^1_m$.)

So which of the pointclasses $\bfSigma^1_m$ or $\bfPi^1_m$ have the uniformization property? As an easy consequence of Kond\^o's result that $\bfPi^1_1$ has the uniformization property, 
$\bfSigma^1_2$ also  has it. But ZFC proves that for each $n\geq 1$, at most one of $\bfSigma^1_n$ and $\bfPi^1_n$ can have the uniformization property (see Fact \ref{fact:Gamma_and_dual_not_both_unif} for the version with $\RR$ replaced by Baire space), so $\bfPi^1_2$ does not.

What about $\bfPi^1_3$ or $\bfSigma^1_3$?
It turns out that which one of the two has the uniformization property, if either at all, is independent of ZFC.

We write $V$ for the class of all sets (and we assume that $V$ models ZFC). A \emph{proper class transitive inner model of ZF} is a class $N\sub V$ such that:
\begin{enumerate}[label=--]
 \item $N$ models ZF (here ZF denotes the Zermelo-Fr\"ankel axioms, which are the usual axioms of set theory excluding AC),
 \item $N$ is transitive (that is,
 for each $x\in N$ and each $y\in x$, we have $x\in N$), and
 \item every ordinal is in $N$.
\end{enumerate}

G\"odel's constructible universe
$L$ is the minimal 
proper class transitive inner model of ZF: that is, $L$ is such an inner model, $L\sub N$ for every such inner model $N$. 
Each set in $L$ has a very explicit description in terms of some ordinal.
Moreover, $L$ models ZFC (in particular, AC),
GCH (the Generalized Continuum Hypothesis),
and the statement ``$\bfSigma^1_n$ has the uniformization property  for each $n\geq 2$''. In particular, ZFC does not disprove the uniformization property for $\bfSigma^1_3$ (at least, unless ZFC is inconsistent).
$L$ has many other  nice properties, and is a highly canonical model of ZFC. Given this, one might presume that $\bfSigma^1_3$ ``should'' have the uniformization property, and hence $\bfPi^1_3$ not.

In contrast, other natural set theoretic hypotheses have been discovered, which actually yield the uniformization property for $\bfPi^1_3$.
\emph{Large cardinal axioms} posit the existence of infinite sets with strong reflection properties.
For example, \emph{inaccessible cardinals}, which are low in the large cardinal hierarchy, are
cardinals $\kappa$ such that:
\begin{enumerate}[label=--]
 \item for every ordinal $\alpha<\kappa$, the power set $\pow(\alpha)$ has cardinality $<\kappa$, and
 \item for every ordinal $\lambda<\kappa$ and function $f:\lambda\to\kappa$, the range of $f$ is bounded in $\kappa$.
\end{enumerate}
The set theoretic universe $V$ is stratified by the cumulative hierarchy  $\left<V_\alpha\right>_{\alpha\in\OR}$, defined as follows: $V_0=\emptyset$, $V_{\alpha+1}=\pow(V_\alpha)$, and $V_\lambda=\bigcup_{\alpha<\lambda}$ for limit ordinals $\lambda$.
Every set belongs to some $V_\alpha$. If $\kappa$ is inaccessible then $V_\kappa\models\ZFC$, and there are unboundedly many $\alpha<\kappa$ such that $V_\alpha\models\ZFC$.
So inaccessibility transcends ZFC in this manner.

Although large cardinals may appear distant from considerations relevant to real numbers, it turns out that they have a profound influence on the uniformization questions mentioned above.
For example, ZFC +  sufficient large cardinals proves that $\bfPi^1_3$ has the uniformization property -- although this question is not resolved by ZFC alone.
In fact it is enough to have cardinals $\delta$ and $\kappa$
such that $\delta$ is a Woodin cardinal, $\kappa$ is measurable and $\delta<\kappa$. Every measurable cardinal is inaccessible and a limit of inaccessible cardinals, and every Woodin is a limit of measurable cardinals (though need not be measurable itself).
Extending this, ZFC + ``there are infinitely many Woodin cardinals'' proves that for every $n\geq 0$, $\bfPi^1_{2n+1}$ and $\bfSigma^1_{2n+2}$ have the uniformization property (so $\bfSigma^1_{2n+1}$ and $\bfPi^1_{2n+2}$ do not).\footnote{Therefore, $L$ cannot satisfy these large cardinal axioms. But actually,
there is a much simpler and more direct proof that $L$ has no measurable cardinals, a fact due to Scott.} This phenomenon also extends well beyond the pointclasses $\bfSigma^1_n$ and $\bfPi^1_n$.

We write $\mathbb{N}$ or $\om$
for the set $\{0,1,2,\ldots\}$ of all non-negative integers. (We also formally consider the non-no negative integers themselves as ordinals, so $0=\emptyset$ and for $n\in\om$, $n+1=\{0,1,\ldots,n\}$, and $\om$ is also the least infinite ordinal.)
Although we have so far talked about $\mathbb{R}$, meaning the (conventional) real numbers,
for the purposes of descriptive set theory it is generally more convenient to consider \emph{Baire space} ${^\om}\om$, which is the topological space whose elements are the functions $f:\om\to\om$,
under the following natural topology. For $k\in\om$,
let ${^k}\om$
denote the set of all functions $f$ such that $f:k\to\om$.
Let ${^{<\om}\om}$ denote $\bigcup_{k\in\om}{^k\om}$.
For $s\in{^{<\om}\om}$, let
\[ N_s=\{f\in{^\om}\om\bigm|s\sub f\};\]
in other words, for $f\in{^\om}\om$, we have $f\in N_s$ iff $f\rest\dom(s)=s$ (where $\dom(s)$ denotes the domain of $s$).
Then Baire space has the topology generated by the basis
\[ \mathscr{B}=\{N_s\bigm|s\in{^{<\om}\om}\}.\]
Baire space is homeomorphic to the irrationals under the subspace topology induced by $\RR$.

From now on, instead of $\RR$, we will only work directly with Baire space ${^\om}\om$ and product spaces of the form $\Xx_{mn}=\om^m\cross({^\om}\om)^n$ for $m,n<\om$ with $0<m+n$, where $\om$ is endowed with the discrete topology. (Let $m,n,m',n'<\om$
with $0<m+n$ and $0<m'+n'$.
If $0<n,n'$, then $\Xx_{mn}$ is homeomorphic to $\Xx_{m'n'}$. If $0=n=n'$, then $\Xx_{mn}=\om^m$ is homeomorphic to $\Xx_{m'n'}=\om^{m'}$.)
The definitions of $\bfSigma^1_k,\bfPi^1_k$,
for $k\geq 1$, extend directly to subsets of $\Xx_{mn}$, starting with a Borel subset of $\Xx_{mn}\cross({^\om}\om)^k$ and using $k$ alternating quantifiers  ``$\exists x\in{^\om}\om$'' and ``$\all x\in {^\om}\om$''. And  $\bfDelta^1_k=\bfSigma^1_k\cap\bfPi^1_k$ as before. (If $n=0$, so $\Xx_{m0}=\om^m$,
then \emph{all} subsets of $\Xx_{m0}$ are $\bfSigma^1_k$. But if $n>0$ then the notion is of interest.) There is a finer variant of these pointclasses, $\Sigma^1_k,\Pi^1_k,\Delta^1_k$
(typeset without the ``$\undertilde{\ }$'') which are defined in \ref{dfn:Sigma^1_n}.

Now the classical proof that $\bfPi^1_3$
has the uniformization property
involves crucially the concept of \emph{determinacy} of infinite games.
Given a set $A\sub{^\om}\om$,
the game $\mathscr{G}_A$ is defined as follows. There are two players, I and II. The players alternate playing integers $x_0,x_1,x_2,\ldots$, with player I starting.
So player I plays $x_n$ for all even $n$, and player II plays $x_n$ for all odd $n$. The game runs for infinitely many rounds, producing the sequence $\left<x_n\right>_{n\in\om}$,
which we identify with the function $f:\om\to\om$ where $f(n)=x_n$.
Given this run of the game (producing this $f$),
we declare that player I has won the run iff $f\in A$.

A \emph{strategy} is a function $\sigma:{^{<\om}\om}\to\om$. A function $f\in{^\om}\om$ is \emph{according to $\sigma$ for player I} iff for all even $n\in\om$, we have
\[ f(n)=\sigma(f\rest n).\]
Likewise \emph{for player II},
but replacing ``even'' with ``odd''. We say that $\sigma$ is \emph{winning for player I}
(with respect to $A$)
iff $f\in A$ for every $f$ which is according to $\sigma$ for player I. Likewise \emph{winning for player II}.
We say that $A$ (or $\mathscr{G}_A$) is \emph{determined} iff there is a winning strategy for either player I or player II.

Now using a wellorder of $\RR$, one can construct (in ZFC) sets $A\sub{^\om}\om$ which are not determined. But what about simply definable sets?
It was shown by Gale and Stewart \cite{gale_stewart_1953} that every open $A\sub{^\om}\om$ is determined. A famous result of Martin \cite{martin_borel_det} strengthened this, showing in ZFC that all Borel sets $A\sub{^\om}\om$ are determined. But this falls short of the $\bfSigma^1_1$ or the $\bfPi^1_1$ sets. (In fact, Lusin \cite{lusin_1927} showed that a set is Borel iff it is both $\bfSigma^1_1$ and $\bfPi^1_1$.) Martin also showed that if there is a measurable cardinal then all $\bfSigma^1_1$, and all $\bfPi^1_1$, subsets of ${^\om}\om$ are determined (these complexity classes are defined completely analogously those for subsets of $\RR$). Martin and Steel \cite{projdet} later showed that if there are $n$ Woodin cardinals and a measurable cardinal above them, then all $\bfSigma^1_{n+1}$ and $\bfPi^1_{n+1}$ sets are determined.
This phenomenon is now known to extend much further into the large cardinal and determinacy hierarchies.

Now the classical proof of the uniformization property for (for example)  $\bfPi^1_3$, assuming sufficient large cardinals,
 breaks into two natural and quite separate pieces:
\begin{enumerate}[label=(\roman*)]
 \item\label{item:LC_to_det} By the Martin-Steel theorem mentioned above, all $\bfSigma^1_2$ (and $\bfPi^1_2$) sets are determined (assuming a Woodin cardinal and a measurable above).
 \item\label{item:unif_from_det} Uniformization for $\bfPi^1_3$ follows from  the determinacy of all $\bfSigma^1_2$ sets (see \cite{mosch}).
\end{enumerate}
There is an analogous structure for $\bfPi^1_{2n+1}$ in general, and for many higher pointclasses.

In this paper, we will find another route to proving the uniformization property for (for example) $\bfPi^1_3$ (amongst many other pointclasses), without considering determinacy, but instead, finding another path from large cardinals to uniformization. 
A key tool used considered in part \ref{item:unif_from_det}
above is that of a \emph{scale},
and we will also employ scales
in our proof, but they will be constructed in a manner
different to the classical one.

There are in fact three central methods that have been used for constructing scales: 
 (a) using winning strategies for games (including \ref{item:unif_from_det} above), 
(b) the Martin-Solovay construction, and (c) directed systems of mice. \footnote{A \emph{mouse} is a particular kind of inner model, generalizing G\"odel's constructible universe $L$, but capable of exhibiting many more large cardinals than is $L$. In this paper we deal extensively with mice. Giving the precise definition is, however, not within the scope of this work; one should refer to \cite{outline} and \cite{fsit} for a general introduction, though the precise formulation of \emph{mouse} that we use is explained in \S\ref{sec:notation}.}
These take place under appropriate determinacy/large cardinal hypotheses.
We will describe
a method of scale construction
which is essentially a generalization
of the Martin-Solovay construction to mice of higher complexity (the role of sharps in the Martin-Solovay construction is replaced with higher mice). We deal exclusively with mice and iteration strategies for them; no determinacy arguments are used. We will in particular include a proof
of the scale properties for $\Pi^1_{2n+1}$ and $\Sigma^1_{2n+2}$, but also for many higher pointclasses, and also for many pointclasses at intermediate levels of the projective hierarchy (defined in terms of mice).

Aside from  the Martin-Solovay construction, 
the scale construction we will give was also motivated by an observation credited in \cite[between Theorems 3.5 and 3.6]{steel_games_and_scales} to Woodin, that (under appropriate mouse existence hypotheses) one can prove the prewellordering property for $\Pi^1_3$ directly with the methods of inner model theory.

As an example of the kinds of pointclasses
for which we will define scales, suppose that for each real $x$ there is an $(0,\om_1+1)$-iterable active premouse over $x$ satisfying ``there is a Woodin limit of Woodin cardinals''. Let $M_x$ denote the least such sound premouse. For a $\Sigma_1$ formula $\psi$ in the language of real premice (this is the usual language of premice augmented with a constant symbol $\dot{x}$ interpreted as the base real $x$, and recall that the language includes symbols which refer to the internal extender sequence and the active extender) let $A_\psi=\{x\in\RR\bigm|M_x\sats\psi\}$. Define the pointclass
\[ \Gamma=\{A_\psi\bigm|\psi\text{ is as above}\}.\]
We will show that $\Gamma$ has the scale property. This particular choice of $\Gamma$
is quite arbitrary; there are many such examples.

In order to better motivate the ideas,
we will begin by proving some of the classical facts, using the methods which we will generalize. This will include the scale property for $\Pi^1_1$ in \S\ref{subsec:Pi^1_1},
that $\Pi^1_2$ sets have $\Delta^1_3$ scales in  \S\ref{sec:MS}, 
and the prewellordering property for $\Pi^1_3$ in \S\ref{sec:Pi^1_3}.
The scale calculations here are  thinly
veiled versions of the standard proofs,
but in a somewhat different language than usual. That for the prewellordering property for $\Pi^1_3$ is certainly different from the standard proof; it is an argument directly from mice, instead of from determinacy (the proof we give was found by the author; we do not know how close it is to Woodin's original argument). Although this beginning does not produce new results, it seems to be the best expository path, since it provides a reasonable intuition for the kind of direction we will want to head in. After that introduction, we have to spend a fair while developing some fine structure and notation, etc, for background constructions, before we can really begin the scale construction in \S\ref{sec:the_scale}.

The basic ideas for the definition of the scale (in \S\ref{sec:the_scale}), and the proof that $M_\infty$
is iterable (in \S\ref{sec:iterability_M_infty})
were found by the author in 2010/11.\footnote{Early on, the author thought that he could prove the semiscale property for $\Pi^1_3$,
and announced a talk on this topic for the \emph{2nd conference on
the core model induction and hod mice} in M\"unster, Germany in 2011. However,
shortly after announcing the talk, he noticed that
he did not know how to show that the norms of the putative scale were in fact norms, and thus, changed the topic of the talk, although as of this writing, the conference website \url{https://ivv5hpp.uni-muenster.de/u/rds/core_model_induction_and_hod_mice_2.html} has retained the original talk title.} But at that time, the author did not see how to resolve the remaining issues,
and stopped thinking further about the problem until early 2017. The proof in \S\ref{sec:lifting_norms_are_norms}, that the putative norms of the scale are in fact norms,
came early-mid 2017.
Lower semicontinuity was somewhat more troublesome. The author spent significant
time attempting, without success, to more or less directly adapt the realization method of the proof for $\Pi^1_2/\Delta^1_3$ (given in \S\ref{sec:MS}) to higher levels.
The key idea which enables something like that to be done, and which we use in \S\ref{sec:lower_semi}, came in mid 2017, and the remaining details were essentially sorted out by about mid 2018.

A scale construction
directly from mice was in fact announced by Itay Neeman some time around the year 2000,
as alluded to in \cite[between Theorems 3.5, 3.6]{steel_games_and_scales}.
In 2010/11, after working out the basic ideas for  the definition of the scales we give in \S\ref{sec:the_scale}
and the proof of iterability
of $M_\infty$ in \S\ref{sec:iterability_M_infty},
the author communicated with Neeman on the topic, and Neeman then shared 
his unpublished notes \cite{neeman_lightening} and slides \cite{neeman_talk}
 with the author. Neeman's scale definition makes use of his \emph{lightening} construction
(a fine structural variant of $L[\es,x]$-construction);
his notes \cite{neeman_lightening} describe the
lightening construction
and develop some of its basic properties,
but do not discuss scales or prewellorders.
The slides \cite{neeman_talk}
contain a brief outline of Neeman's lightening construction and scale definition.
Based on the slides, one can see some initial
similarities between Neeman's  construction and the one we present here.
Both define prewellorders by comparing reals via comparing features of background constructions (in Neeman's case, his lightening construction) over those reals in appropriate background mice. The depth 1 ``theory prewellorders''  and ``ordinal prewellorders'' used here
have clear analogues in \cite{neeman_talk}. However,
the higher depth  prewellorders which we use here
(involving non-trivial finite iteration trees)
do not seem to have an analogue in Neeman's approach, at least
not one that is mentioned in \cite{neeman_talk}. Also, the prewellorders
we define relating to indices of \emph{background} extenders (which represent much tighter connection to the background universe, and do not seem to be specified by Skolem terms over the mice being constructed) do not seem to have any clear analogue in Neeman's slides, and neither do other similar kinds of norms that we employ.
Incorporating these kinds of information into our prewellorders is essential 
in our proof, even in order to verify that we get a semiscale.
Further, the methods of proof we use here focus on comparison via iteration trees, and this does not seem to feature in Neeman's approach.
However, none of Neeman's work on the topic 
has  been published, and \cite{neeman_talk} is very sparse in detail, and as mentioned, \cite{neeman_lightening}  only deals with the lightening construction, so a meaningful comparison of the two approaches is difficult to make.

\subsection{Background: the prewellordering and scale properties}\label{sec:background}
In this section we review some background on scales and prewellorders. The material here is all standard and well known; for more including attribution see \cite{mosch}, \cite{kechris_cdst}.
\begin{dfn}
The set of all functions $f:X\to Y$ is denoted $^XY$, and $^{<\om}Y$ denotes $\bigcup_{n<\om}{^n}Y$. Let $s,t\in{^{<\om}}Y$.
Then the \emph{length} $\lh(s)$ of $s$ is just the domain of $s$. Note that $s\sub t$ iff $s=t\rest\lh(s)$.

For $s\in{^{<\om}\om}$ let
\[ N_s=\{f\in{^\om}\om\bigm|s\sub f\}.\]
 \emph{Baire space} 
 is the topological space on the set ${^\om}\om$ with  topology generated by $\{N_s\bigm|s\in{^{<\om}\om}\}$.
In this paper, a \emph{real}
is formally an element of Baire space.
\end{dfn}

\begin{dfn}
 Let $X$ be a set.
 A \emph{tree} on $X$
 is a set $T\sub {^{<\om}}X$
 which is closed under initial segment.
 
  Let $T$ be a tree. For $s\in T$,
  $T/s$ denotes the sub-tree compatible with $s$; that is,
  \[ T/s=\{t\in T\bigm|s\sub t\text{ or }t\sub s\}.\]
  A \emph{branch through $T$} is a sequence $f$ with domain $\om$ such that $f\rest n\in T$ for all $n<\om$.
  We write $[T]$ for the
  set of all branches through $T$.
  
 Suppose $T$ is a tree on $X\cross Y$.
 Then we identify $T$ with
 the set of pairs $(t,u)$
 of functions $t\in{^{<\om}X}$
 and $u\in{^{<\om}}Y$ such that
 $\dom(t)=\dom(u)$ and $f\in T$ where $\dom(f)=\dom(t)$ and $f(i)=(t(i),u(i))$ for all $i\in\dom(f)$. Likewise, we identify $[T]$
 with the set of pairs $(x,y)$
 such that $x\in{^\om}X$ and $y\in{^\om}Y$
 and $(x\rest n,y\rest n)\in T$ for all $n<\om$.
 The \emph{projection} $p[T]$ of $T$ is
 \[ p[T]=\Big\{x\in{^\om}X\Bigm|\exists y\in{^\om}Y\ \big((x,y)\in[T]\big)\Big\}.\] 
 Given $x\in{^\om}X$,
 $T_x$ denotes the tree on $Y$ defined by:
 \[ u\in T_x\iff (x\rest\lh(u),u)\in T.\]
It follows that $x\in p[T]$
iff $[T_x]\neq\emptyset$.
 
 Let $T$ be a tree on $X$ and $<_X$
a wellorder on $X$.
 Suppose $[T]\neq\emptyset$.
 Then the \emph{$<_X$-left-most branch}
 through $[T]$ is the branch $f\in[T]$ such that for all $s\in{^{<\om}X}$ with $s\not\sub f$, if $s(n)<_Xf(n)$ where $n$ is least such that $s(n)\neq f(n)$,
 then $T/s$ has no branch.
If $X=\om\cross\lambda$ for some ordinal $\lambda$ then
the \emph{left-most-branch}
is just the $<_{\om\cross\lambda}$-left-most-branch
where $<_{\om\cross\lambda}$ is the lexicographic order on $\om\cross\lambda$.
\end{dfn}

\begin{rem}
Let $A\sub X\cross Y$. A \emph{uniformization} of $A$
is a set $U\sub A$ such that
for all $x\in\dom(A)$ (so $x\in X$)
there is a unique $y\in Y$ such that $(x,y)\in U$ (hence $(x,y)\in A$). Equivalently, $U\sub A$ and $U$ is a function 
with $\dom(U)=\dom(A)$.

Note that the existence of a uniformization
for all such $X,Y,A$ is equivalent
to the Axiom of Choice. We will be interested in the case that
$X=Y={^\om}\om$, in which case such existence is a certain choice principle.

Suppose $A\sub{^\om}\om\cross{^\om}\om$ and $A=p[T]$
where $T$ is a tree on $\om\cross\om\cross\lambda$, for some ordinal $\lambda$.
Then we can define
a uniformization $U$ of $A$ as follows: Given $x\in\dom(A)$,
let $U(y)$ be the first component
of the left-most-branch of $T_x$.
\end{rem}

Given a set $A\sub{^\om}\om$,
a \emph{scale} on $A$ gives an indirect description of a tree $T$
projecting to $A$,
 one which is very useful and natural.

\begin{dfn}\label{dfn:prewellorder}
 Let $A\sub{^\om}\om$.
 A \emph{norm} on $A$ is a function $\pi:A\to\OR$,
 and a norm is $\pi$ is \emph{regular}
 if 
 $\rg(\pi)$ is an ordinal.
 
 A \emph{prewellorder} on $A$
 is a partial order $\leq$ on $A$
 which is total (i.e. $x\leq y\vee y\leq x$ for all $x,y\in A$)
 and whose strict part is wellfounded (so defining an equivalence relation on $A$ by $x\approx y\iff x\leq y\leq x$,
 and defining the order $\widetilde{<}$ as the order induced on the classes $[x]$
 by the strict part of $\leq$,
 then the ordertype of $\widetilde{<}$ is some ordinal).
 \end{dfn}
 
 \begin{dfn}
  Let $\pi:A\to\OR$ be a norm on $A$. We define an associated prewellorder $\leq_{\pi}$ on $A$ by
  \[ x\leq_{\pi} y\iff\pi(x)\leq\pi(y).\]
  
  Conversely, let $\leq$ be any prewellorder on $A$.
  Then we define an associated norm  $\pi_{\leq}:A\to\OR$
   by
   \[ \pi(x)=\text{ the rank of 
  }[x]\text{ in }\widetilde{<}, \] where $[x]$ and $\widetilde{<}$ are defined as in Definition \ref{dfn:prewellorder}.
 \end{dfn}

 The following lemma is easy to verify:
 \begin{lem}
Let $A\sub{^\om}\om$. Then:
 \begin{enumerate}
  \item 
For any norm $\pi$ on $A$,
  $\leq_\pi$ is a prewellorder on $A$.
  \item For any prewellorder $\leq$ on $A$,
  $\pi_{\leq}$ is a regular norm on $A$.
\item For any regular norm $\pi$ on $A$,
$\pi_{\leq_\pi}=\pi$.
\item For any prewellorder $\leq$ on $A$, $\leq_{\pi_\leq}={\leq}$.
\end{enumerate}  
 \end{lem}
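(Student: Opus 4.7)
The plan is to verify each of the four claims by direct unwinding of the definitions, with the only non-trivial bookkeeping being a transfinite induction for part (3).

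For part (1), given a norm $\pi:A\to\OR$, totality of $\leq_\pi$ follows immediately from totality of $\leq$ on ordinals, and wellfoundedness of the strict part is inherited from the wellfoundedness of the ordinals: if $x_0,x_1,\ldots\in A$ satisfies $x_{n+1}<_\pi x_n$, then $\pi(x_{n+1})<\pi(x_n)$, an infinite descent in $\OR$. For part (2), let $\widetilde{<}$ be as in Definition \ref{dfn:prewellorder}; since $\widetilde{<}$ is a wellorder it has some ordertype $\alpha$, and $\pi_\leq$ maps $A$ onto $\alpha$, so $\pi_\leq$ is regular. The map is a well-defined norm because rank is defined on every class.

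For part (3), proceed by transfinite induction on $\pi(x)$. Supposing the result for all $y\in A$ with $\pi(y)<\pi(x)$, observe that $y<_\pi x$ iff $\pi(y)<\pi(x)$, so the set of $y<_\pi x$ agrees (up to the $\approx$-equivalence induced by $\leq_\pi$) with the set of elements of strictly smaller $\pi$-value. By the induction hypothesis each such $y$ satisfies $\pi_{\leq_\pi}(y)=\pi(y)$, so the rank of $[x]$ in the $\leq_\pi$-order equals the ordertype of $\{\pi(y)\mid y\in A,\ \pi(y)<\pi(x)\}$, which is exactly $\pi(x)$ by regularity of $\pi$ (so that this set equals $\pi(x)$ as an initial segment of ordinals). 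This gives $\pi_{\leq_\pi}(x)=\pi(x)$. Finally for part (4), the equivalence $x\leq_{\pi_\leq}y\iff\pi_\leq(x)\leq\pi_\leq(y)$ combined with the fact that the rank function on a wellfounded total quasi-order preserves and reflects the order on its equivalence classes yields $x\leq_{\pi_\leq}y\iff[x]\;\widetilde{\leq}\;[y]\iff x\leq y$.

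The only step with any real content is part (3), where one must exploit regularity of $\pi$ to ensure that $\rg(\pi\rest\{y:\pi(y)<\pi(x)\})$ is a downward-closed set of ordinals, hence equals $\pi(x)$; without regularity this set could have gaps and the induction would fail. The other three parts are purely formal.
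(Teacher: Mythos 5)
Your proof is correct. The paper gives no proof here (it says only that the lemma is ``easy to verify''), and your direct unwinding of the definitions --- in particular your correct identification of where regularity of $\pi$ is used in part (3), namely to ensure that $\{\pi(y):y\in A,\ \pi(y)<\pi(x)\}$ is a downward-closed set of ordinals and hence equals $\pi(x)$ --- is exactly the intended argument.
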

 \begin{rem}
  Because of this relationship between norms, regular norms and prewellorders, and because we will only be interested in the regular version
  $\pi_{\leq_\varphi}$ of a given norm $\varphi$,
  we will identify (regular) norms with prewellorders.
 \end{rem}

\begin{dfn}
Let $A\sub{^\om}\om$.
Let $\vec{\varphi}=\left<\varphi_n\right>_{n<\om}$ be a sequence of norms on $A$.
Let $\left<x_n\right>_{n<\om}\sub A$ and let $x\in{^\om}\om$. Then we say that $\left<x_n\right>_{n<\om}$ converges to $x$ \emph{modulo $\vec{\varphi}$}, written
\[ x_n\to x \mod\vec{\varphi}, \]
iff $x_n\to x$ topologically \tu{(}that is, $\all i<\om\ \exists k<\om\ \all m\in[k,\om)
\ [x_m(i)=x(i)]$\tu{)}, and for all $j<\om$ the sequence
$\left<\varphi_j(x_n)\right>_{n<\om}$
is eventually constant.

Let $\vec{\varphi}=\left<\varphi_n\right>_{n<\om}$ be a sequence of norms on $A$. We say that
$\vec{\varphi}$ is:
\begin{enumerate}[label=--]
\item 
 a \emph{semiscale
 on $A$} iff for all sequences $\left<x_n\right>_{n<\om}\sub A$,
 if
$x_n\to x\mod\vec{\varphi}$
 then $x\in A$.
 \item \emph{lower semicontinuous on $A$}
 iff  for all sequences $\left<x_n\right>_{n<\om}\sub A$,
 if
 $x_n\to x\mod\vec{\varphi}$
 then for all $j<\om$,
 \[ \varphi_j(x)\leq\lim_{n\to\om}\varphi_j(x_n).\]
 \item a \emph{scale on $A$}
 if it is a semiscale on $A$ which is lower semicontinuous on $A$.\qedhere
 \end{enumerate}
\end{dfn}

\begin{dfn}
 Let $T$ be a tree on $\om\cross\lambda$, for some ordinal $\lambda$, and $A=p[T]$.
 We define a scale $\vec{\varphi}_T=\left<\varphi_{nT}\right>_{n<\om}$ on $A$ as follows. Given $x\in A$ let $\ell_x$ be the left-most-branch through $T_x$. Define $\varphi_{nT}$  by:
 $\varphi_{nT}(x)$ is the rank of
 \[ (x(0),\ell_x(0),x(1),\ell_x(1),\ldots,x(n-1),\ell_x(n-1)) \]
 in the lexicographic ordering on all $2n$-tuples
 \[ (y(0),\ell_y(0),\ldots,y(n-1),\ell_y(n-1)) \]
 where $y\in A$.
\end{dfn}

Conversely:
\begin{dfn}
Let $\vec{\varphi}$ be a semiscale on $A\sub{^\om}\om$. We define the tree $T_{\vec{\varphi}}$ of $\vec{\varphi}$ as follows:
\[ T=\Big\{\Big(x\rest n,\left<\varphi_i(x)\right>_{i<n}\Big)\Bigm|x\in A\text{ and }n<\om\Big\}.\qedhere`\]
\end{dfn}
A straightforward application of  the definitions gives:
\begin{fact}We have:
 \begin{enumerate}
  \item 
Let $T$ be a tree on $\om\cross\lambda$, for some ordinal $\lambda$. Then $\vec{\varphi}_T$ is a scale on $p[T]$.
\item Let $\vec{\varphi}$
be a semi-scale on $A\sub{^\om}\om$. Then $T_{\vec{\varphi}}$ is a tree with $p[T_{\vec{\varphi}}]=A$.
 \end{enumerate}
 \end{fact}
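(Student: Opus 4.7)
The plan is to establish both parts by unpacking the definitions, using only the basic properties of ranks in wellorders and of left-most branches through trees.

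For part (2), first I would observe that $T_{\vec{\varphi}}$ is closed under initial segments: if $x\in A$ witnesses $(x\rest n,\langle\varphi_i(x)\rangle_{i<n})\in T_{\vec{\varphi}}$, then the same $x$ witnesses the membership of $(x\rest m,\langle\varphi_i(x)\rangle_{i<m})$ for every $m\leq n$. The inclusion $A\sub p[T_{\vec{\varphi}}]$ is immediate, since for $x\in A$ the pair $(x,\langle\varphi_i(x)\rangle_{i<\om})$ is a branch through $T_{\vec{\varphi}}$. For the reverse inclusion, suppose $x\in p[T_{\vec{\varphi}}]$ with witness $\langle\alpha_i\rangle_{i<\om}$. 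For each $n$ choose $x_n\in A$ with $x_n\rest n=x\rest n$ and $\varphi_i(x_n)=\alpha_i$ for all $i<n$. Then $x_n\to x$ topologically, and for each fixed $j$ we have $\varphi_j(x_n)=\alpha_j$ for all $n>j$, so the sequence $\langle\varphi_j(x_n)\rangle_{n<\om}$ is eventually constant. Thus $x_n\to x\mod\vec{\varphi}$, and the semiscale property forces $x\in A$.

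For part (1), I would first verify that each $\varphi_{nT}$ is well-defined on $p[T]$: for $x\in p[T]$, the tree $T_x$ has some branch and hence a $<_{\om\cross\lambda}$-left-most branch $\ell_x$, and the relevant initial segment is uniquely determined. To check the semiscale property, suppose $\langle x_n\rangle_{n<\om}\sub p[T]$ and $x_n\to x\mod\vec{\varphi}_T$. For each $j$, the rank $\varphi_{jT}(x_n)$ is eventually some constant $r_j$; since distinct elements of a wellorder have distinct ranks, the underlying $2j$-tuple $(x_n(0),\ell_{x_n}(0),\ldots,x_n(j-1),\ell_{x_n}(j-1))$ is also eventually constant. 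Combined with the topological convergence $x_n\to x$, this forces each coordinate $\ell_{x_n}(i)$ to be eventually equal to some value $\ell(i)$. Then for every $j$, the pair $(x\rest j,\ell\rest j)$ lies in $T$ (witnessed by any sufficiently large $n$), so $(x,\ell)\in[T]$ and $x\in p[T]$.

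For lower semicontinuity, fix $j$ and let $\ell$ be as in the previous paragraph. Since $\ell_x$ is the $<_{\om\cross\lambda}$-left-most branch through $T_x$ and $\ell$ is also a branch through $T_x$, the definition of left-most gives
\[ (x(0),\ell_x(0),\ldots,x(j-1),\ell_x(j-1))\leq_{\lex}(x(0),\ell(0),\ldots,x(j-1),\ell(j-1)). \]
But the right-hand tuple equals $(x_n(0),\ell_{x_n}(0),\ldots,x_n(j-1),\ell_{x_n}(j-1))$ for all sufficiently large $n$, hence its lexicographic rank among the corresponding tuples arising from elements of $A=p[T]$ equals $\varphi_{jT}(x_n)$ for such $n$. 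Monotonicity of rank under $\leq_{\lex}$ then yields
\[ \varphi_{jT}(x)\leq\varphi_{jT}(x_n)=\lim_{n\to\om}\varphi_{jT}(x_n), \]
as required. The only slightly delicate step is the inference that eventual constancy of the ranks $\varphi_{jT}(x_n)$ forces eventual constancy of the underlying tuples, but this is immediate from the uniqueness of rank in a linear order; otherwise the argument is a direct transcription of the definitions.
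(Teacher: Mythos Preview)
Your proposal is correct and follows exactly the approach the paper has in mind; the paper omits the proof entirely, remarking only that it is ``a straightforward application of the definitions,'' and your argument is precisely that straightforward unpacking.
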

 
 Thus, one can always pass from a semiscale to a scale,
 going via the tree of the semiscale.
 However, it is important to obtain scales of optimal complexity -- this helps in obtaining uniformizations of optimal complexity, for example.
 It seems that passing from a semiscale to a scale in the above manner might increase the complexity in an undesired manner.
 \begin{dfn}
  Given a prewellorder $\leq$ on a set $A\sub{^\om}\om$, define  relations $\leq^*$ and $<^*$ on ${^\om}\om$ by:
  \[ x\leq^* y\iff x\in A\text{ and if }y\in A\text{ then }x\leq y, \]
  \[ x<^*y\iff x\in A\text{ and if }y\in A\text{ then }x\leq y\not\leq x.\qedhere\]
 \end{dfn}

 \begin{dfn}
 A \emph{standard space}
 is a space of the form $\om^m\cross({^\om}\om)^n$, for some $m,n<\om$, endowed with the product topology
 (where $\om$ is endowed with the discrete topology).
 
   A \emph{pointclass}
  is a set $\Gamma\sub\bigcup_{m,n<\om}\pow(\om^m\cross({^\om}\om)^n)$.
  
  Given $A\sub\Xx$ where $\Xx$ is a standard space,
  $A^c$ denotes the complement $\Xx\cut A$ of $A$ in $\Xx$. (If $A=\emptyset$,
  then this is ambiguous, but this should not cause a problem.)
  
   Let $\Gamma$ be a pointclass.
   
   The \emph{dual} of $\Gamma$,
   denoted $\stackrel{\smile}{\Gamma}$,
   is the pointclass $\{A^c|A\in\Gamma\}$,
   where we automatically include each standard space $\Xx\in\stackrel{\smile}{\Gamma}$
   if $\emptyset\in\Gamma$.
   
  Let $y\in{^\om}\om$. We define the \emph{relativized} pointless $\Gamma(y)$.
  Given a standard space $\Xx$
  and $A\sub\Xx$, we put $A\in\Gamma(y)$
  iff
   there is a set $B\in\Gamma$
   with $B\sub\Xx\cross{^\om}\om$
   such that for all $x\in\Xx$,
   we have $x\in A$ iff $(x,y)\in B$.
   
   We define $\Delta_\Gamma=\Gamma\cap\stackrel{\smile}{\Gamma}$.
 \end{dfn}
 
 \begin{rem}
  Most pointclasses of interest have some basic closure properties, like closure under continuous preimage, closure under pairwise conjunction, etc.
 \end{rem}

\begin{dfn}\label{dfn:scale_prop}
  Let $\Gamma$ be a pointclass.
  
For $A\in\Gamma$, a \emph{$\Gamma$-\tu{(}semi\tu{)}scale} on $A$
is a (semi)scale $\vec{\varphi}=\left<{\leq_n}\right>_{n<\om}$ on $A$ such that
  the ternary relations $R,S$ of arguments $(n,x,x')$ defined by
  \[ R(n,x,x')\iff x\leq^*_n x', \]
  \[ S(n,x,x')\iff x<^*_n x' \]
  are both in $\Gamma$.

 $\Gamma$ has the \emph{\tu{(}semi\tu{)}scale property}, denoted $\Scale(\Gamma)$, iff for every $A\in\Gamma$ there is a $\Gamma$-(semi)scale on $A$.
  
  $\Gamma$ has the \emph{prewellordering property},
  denoted $\PWO(\Gamma)$,
  if for every $A\in\Gamma$ there is a prewellorder $\leq$ on $A$ such that ${\leq^*}\in\Gamma$ and ${<^*}\in\Gamma$.
  
  $\Gamma$ has the \emph{uniformization property}
  iff for every $A\in\Gamma$ such that $A\sub{{^\om}\om}\cross{{^\om}\om}$
  there is a $U\in\Gamma$ which uniformizes $A$.
  
  $\Gamma$ has the \emph{reduction property}
  iff for all $A,B\in\Gamma$ there are $A',B'\in\Gamma$ such that $A'\sub A$, $B'\sub B$, $A\cup B=A'\cup B'$, and $A'\cap B'=\emptyset$.
  
  $\Gamma$ has the \emph{separation property}
  iff for all standard spaces $\Xx$ and all $A,B\in\Gamma$ with $A,B\sub\Xx$,
  if $A\cap B=\emptyset$ then
  there are $C,D\in\Gamma$
  such that $A\sub C$, $B\sub D$, $C\cap D=\emptyset$, and $C\cup D=\Xx$.
  
\end{dfn}

Note here that we do not demand
that each individual $\leq^*_n$ is in $\Gamma$, but  that the sequence $\left<{\leq^*_n}\right>_{n<\om}$ is in $\Gamma$
(in the obvious codes),
and likewise for the $<^*_n$. (For typically considered pointclasses, it will follow
that each individual $\leq^*_n$ and $<^*_n$ also belongs to $\Gamma$.)

 \begin{dfn}\label{dfn:R'}
  Let $\Gamma$ be a pointclass and let $\vec{\varphi}=\left<{\leq_n}\right>_{n<\om}$
  be a $\Gamma$-scale on some $A\in\Gamma$, as witnessed by $R,S$.
 For $n<\om$ define the relation $\equiv_{<n}^*$ on ${^\om}\om$ by
 \[ x\equiv_{<n}^* x' \iff n=0\ \vee\Big(x,x'\in A\text{ and }x\rest n=x'\rest n\text{ and }\all k<n\ \big[x\leq^*_k x'\leq^*_k x\big]\Big).\]

 For $n<\om$ define 
 $R'\sub\om\cross{^\om}\om\cross{^\om}\om$ by setting $R'(n,x,x')$ iff either
 \begin{enumerate}[label=(\roman*)]
\item  $x\equiv_{<n}^*x'$, or
\item  there
 is $k<n$ such that
 \[ x\equiv_{<k}^* x'\text{ and either }x<^*_k x'\text{ or }\big(x\leq^*_k x'\text{ and }x(k)<x'(k)\big).\qedhere\]
 \end{enumerate}
\end{dfn}

\begin{dfn}
 The \emph{cumulative hierarchy} $\left<V_\alpha\right>_{\alpha\in\OR}$
 is defined by transfinite recursion on ordinals $\alpha$ as follows. We set $V_0=\emptyset$,
 $V_{\alpha+1}=\pow(V_\alpha)$, and $V_\lambda=\bigcup_{\alpha<\lambda}V_\alpha$
 for limit ordinals $\lambda$.
\end{dfn}

In the following definition we are interested in $V_{\omega+1}$. Note that ${^\om}\om\sub V_{\om+1}$.  We defined the pointclasses $\bfSigma^1_k,\bfPi^1_k,\bfDelta^1_k$, for $k\geq 1$, in the introduction. We now define the pointclass $\Sigma^1_1$, which is a finer variant.

\begin{dfn}\label{dfn:Sigma^1_n}
Let $\Xx$ be a standard space.
For $x\in\Xx$, let $x^*\in V_{\omega+1}$ be the following natural encoding of $x$. If $\Xx=\om^m$ for some $m<\om$
then $x^*=x$. Now suppose $\Xx=({^\om}\om)^n$ where $0<n<\om$. Let $x\in\Xx$, so $x:n\to{^\om}\om$.
Let $x^*:\om\to V_\om$ be the function \[x^*(k)=(x(0)\rest k,x(1)\rest k,\ldots,x(n-1)\rest k).\]
Now suppose $\Xx=\om^m\cross({^\om}\om)^n$ where $0<m,n<\om$. Let $x\in\Xx$;
say $x=(y,z)$ where $y\in\om^m$
and $z\in({^\om}\om)^n$. Then $x^*=(y,z^*)$.
So note that $x,x^*$ are easily inter-definable,
and $x^*\sub V_\om$, so $x^*\in V_{\om+1}$.

Let $\Ll$ be the language of set theory (that is, $\Ll$ has the binary relation symbols $\in$ and $=$).  An $\Ll$-formula is called \emph{$\Sigma_0$} iff for each quantifier appearing in $\varphi$, there are variables $x,y$ such that
the quantifier either has form
``$\all x\in y$'' or form ``$\exists x\in y$''.

 The pointclass $\Sigma^1_1$ consists of all sets $A$ such that for some standard space $\Xx$, we have $A\sub\Xx$
  and there is a $\Sigma_0$ $\Ll$-formula $\varphi(u,v,w)$ in the free variables $u,v,w$ such that for all $x\in\Xx$, we have
  \[ x\in A\iff \exists y\in {^\om}\om\ \Big[V_{\om+1}\sats\varphi(x^*,y,V_\om)\Big].\]
  
  Given $\Sigma^1_k$ where $k\geq 1$, the pointclass $\Pi^1_1$ is the dual $\stackrel{\smile}{\Sigma^1_k}$ of $\Sigma^1_k$.
  
  Given $\Pi^1_k$ where $k\geq 1$,
  the pointclass $\Sigma^1_{k+1}$
  is $\exists^{{^\om}\om}\ \Pi^1_k$; that is,
  if $A\sub\Xx$ where $\Xx$ is a standard space then
  $A\in\Sigma^1_{k+1}$ iff there is $B\in\Pi^1_k$ with $B\sub\Xx\cross{^\om}\om$ and for all $x\in\Xx$, we have
  \[ x\in A\iff\exists y\in{^\om}\om\ [(x,y)\in B].\]
  Define $\Delta^1_k=\Delta_{\Sigma^1_k}$.
  \end{dfn}

  We can now state the basic relation between the lightface pointclasses $\Sigma^1_k$, $\Pi^1_k$
  and their boldface counterparts $\bfSigma^1_k$, $\bfPi^1_k$:
\begin{fact}
 Let $k\geq 1$. Then $\bfSigma^1_k=\bigcup_{y\in{^\om}\om}\Sigma^1_k(y)$ and $\bfPi^1_k=\bigcup_{y\in{^\om}\om}\Pi^1_k(y)$.
\end{fact}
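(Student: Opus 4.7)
The plan is to prove both equalities by simultaneous induction on $k \geq 1$, with essentially all the substantive content concentrated at the base case $k = 1$. Granting the base, the induction step is mechanical: if $A \in \bfSigma^1_{k+1}$, then $A$ is the real-projection of some $B \in \bfPi^1_k$; by induction $B \in \Pi^1_k(y)$ for some $y \in {^\om}\om$, so $A \in \Sigma^1_{k+1}(y)$. Conversely, every $\Sigma^1_{k+1}(y)$ set projects a $\Pi^1_k(y) \sub \bfPi^1_k$ set and hence lies in $\bfSigma^1_{k+1}$. The analogous statement for $\bfPi^1_{k+1}$ versus $\Pi^1_{k+1}(y)$ follows by taking complements.

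For the base case, the plan is to tie both sides to the standard real-coding of Borel sets. The easy inclusion $\bigcup_y \Sigma^1_1(y) \sub \bfSigma^1_1$ would go by induction on the $\Sigma_0$ $\Ll$-formula $\varphi$ witnessing $A \in \Sigma^1_1(y)$: atomic formulas $u \in v$ and $u = v$ applied to the natural codings depend on only finitely many bits of $x^*$ and $z$ and so cut out clopen conditions in the product topology, while bounded quantifiers $\all u \in V_\om$ and $\exists u \in V_\om$ translate into countable conjunctions and disjunctions. Therefore the matrix $V_{\om+1} \sats \varphi((x,y)^*, z, V_\om)$ is Borel in $(x,z)$ (with $y$ fixed), and existentially projecting out $z$ exhibits $A$ as the projection of a Borel set.

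The reverse inclusion is the heart of the argument. Given $A = \{x : \exists z\ [(x,z) \in B]\}$ with $B$ Borel, I would fix a real $p$ which is a Borel code for $B$ in some standard coding, and then verify that membership ``$(x,z) \in B$'' is expressible by a $\Sigma_0$ $\Ll$-formula evaluated in $V_{\om+1}$ on arguments built from $(x,z)^*$, $p$, and $V_\om$. This is obtained by reading the standard Borel decoding as a $\Sigma_0$ recursion along the well-founded tree structure encoded by $p$, with each decoding step requiring only bounded quantification over $V_\om$; the existential $\exists z$ is then absorbed into the outer real quantifier of the $\Sigma^1_1(p)$ definition of $A$.

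The main obstacle is precisely this coding step --- confirming that Borel decoding admits a $\Sigma_0$ definition over $V_{\om+1}$ with the Borel code as a parameter. This is a lightface refinement of the classical Suslin equality $\bfDelta^1_1 = $ Borel, and hinges on fixing a concrete coding scheme whose decoding uses only bounded quantification over $V_\om$. Once this coding lemma is in hand, the remaining ingredients --- the base-case Borel/projection translation and the induction on $k$ --- are essentially bookkeeping.
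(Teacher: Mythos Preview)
The paper states this as a background Fact without proof, so there is no argument to compare against directly. Your overall architecture---reduce to $k=1$ by a routine induction, and at $k=1$ connect both sides through real codes---is correct, and the inductive step and the easy inclusion at the base case are fine.

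There is, however, a gap in your hard direction at $k=1$. You claim that for a Borel set $B$ with code $p$, the relation ``$(x,z)\in B$'' is expressible by a single $\Sigma_0$ formula over $V_{\om+1}$ in the parameters $(x,z)^*$, $p$, $V_\om$. But a $\Sigma_0$ formula whose only available bounds are $V_\om$ and reals defines an \emph{arithmetic} relation in its real parameters, whereas Borel membership is in general only $\Delta^1_1$ in the code, not arithmetic. Your own description of the decoding makes the problem visible: the recursion along the well-founded tree of $p$ produces a truth assignment $f:\text{nodes}(p)\to\{0,1\}$, and $f$ is a real that depends on $(x,z)$. You cannot quantify over such $f$ by bounded quantification over $V_\om$.

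The fix is easy and does not disturb the rest of your plan. Either absorb the witness $f$ into the outer real quantifier---so ``$(x,z)\in B$'' becomes ``$\exists f\,[\Sigma_0\text{ condition on }(x,z)^*,p,f,V_\om]$'', and then $\exists z\,\exists f$ collapses to a single real quantifier---or, more cleanly, first pass from the Borel $B$ to a closed set via the classical fact that every analytic set is the projection of a closed set. Closed sets in product Baire space have the form $[T]$ for a tree $T$ on a finite power of $\om$, and ``$(x,z,w)\in[T]$'' genuinely is $\Sigma_0$ in a real coding $T$; projecting out $(z,w)$ then exhibits $A\in\Sigma^1_1(y)$ for $y$ coding $T$.
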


In Facts \ref{fact:scale_implies_unif}--\ref{fact:Gamma_and_dual_not_both_unif} below, we give proofs only for the lightface versions ($\Sigma^1_k,\Pi^1_k,\Delta^1_k$); the boldface versions ($\bfSigma^1_k,\bfPi^1_k,\bfDelta^1_k$) are similar.

\begin{fact}\label{fact:scale_implies_unif}Let $k\geq 1$ and suppose that  $\Pi^1_k$ has the scale property. Then $\Pi^1_k$ has the uniformization property. Likewise for $\bfPi^1_k$.
\end{fact}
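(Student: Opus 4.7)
The plan is to use the $\Pi^1_k$-scale $\vec\varphi = \langle \leq_n \rangle_{n<\om}$ on a given $A \in \Pi^1_k$, $A \sub \om^\om \cross \om^\om$, to pick out, for each $x \in \dom(A)$, a unique $y \in A_x := \{y' : (x, y') \in A\}$, namely the ``lex-least'' such $y$ in a combined order which interleaves scale-norm comparisons with coordinatewise comparisons on $y$.

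Identify $\om^\om \cross \om^\om$ with $\om^\om$ via an interleaving pairing where the $y$-coordinates sit at odd positions, and consider the lex relation $R'$ of Definition \ref{dfn:R'} associated to $\vec\varphi$. When applied to codes $z = \langle x, y\rangle$ and $z' = \langle x, y'\rangle$ sharing an $x$-coordinate, $R'(n, z, z')$ performs a lex comparison of the $\vec\varphi$-norms and $y$-coordinates of $(x, y)$ and $(x, y')$ up to depth $n$ (the $x$-coordinates contribute nothing since they agree). Since $\leq^*_k, <^*_k$ are in $\Pi^1_k$ and $R'(n, z, z')$ is a finite boolean combination of such predicates together with clopen coordinate comparisons under bounded quantifiers $\exists k < n$ and $\all k < n$, the relation $R'$ is $\Pi^1_k$ uniformly in its first argument. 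Then define
\[ U = \Big\{(x, y) : \langle x, y\rangle \in A \text{ and } \all y' \in \om^\om \ \all n < \om \ R'(n, \langle x, y\rangle, \langle x, y'\rangle)\Big\}, \]
which is $\Pi^1_k$ by closure of $\Pi^1_k$ under universal real and integer quantification.

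To verify that $U$ is a uniformization: for \emph{uniqueness}, two elements of $U$ with common $x$-coordinate would satisfy both $R'(n, z, z')$ and $R'(n, z', z)$ for every $n$, which by unpacking $R'$ forces $z \rest n = z' \rest n$ for every $n$, hence equality. For \emph{existence} at each $x \in \dom(A)$: inductively define nested non-empty subsets $B_n \sub A_x$ by alternately minimizing $\varphi_n$ (even stages) and $y(n)$ (odd stages); pick $y_n \in B_n$; then for each $m$, both $y_n(m)$ and $\varphi_j(x, y_n)$ stabilize for large $n$, yielding a topological limit $y^*$ along which $\vec\varphi$ stabilizes, so by the semiscale property $(x, y^*) \in A$, and by construction $(x, y^*) \in U$.

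The main technical subtlety is arranging that the lex comparison stays in $\Pi^1_k$ rather than drifting into $\Delta^1_{k+1}$; Definition \ref{dfn:R'} is engineered precisely for this, formulating the comparison via the $\Pi^1_k$-relations $\leq^*_k, <^*_k$ together with only bounded quantifiers, so that the outer universal quantification over $y'$ and $n$ preserves $\Pi^1_k$. The boldface case ($\bfPi^1_k$) is obtained by relativizing the argument to a real parameter.
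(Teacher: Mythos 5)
Your proposal is correct and follows essentially the same route as the paper: both define $U$ from the lex relation $R'$ of Definition~\ref{dfn:R'} applied to (a pairing code of) $A$, note $U\in\Pi^1_k$, and establish nonemptiness of the sections of $U$ by passing a minimizing sequence through the lower-semicontinuity/semiscale property. Your ``nested $B_n$'' phrasing of the minimizing sequence is just an unwinding of the paper's choice of $y_n$ minimizing the $R'_n$-norm, so the two arguments coincide.
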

\begin{proof}
 Fix a $\Pi^1_k$-scale $\vec{\varphi}$ on $A$,
 as witnessed by the $\Pi^1_k$ relations $R,S$ as in Definition \ref{dfn:scale_prop}.
Let  $R'$ be as in Definition \ref{dfn:R'}.
Note that $R'$ is also $\Pi^1_k$.
Note that for all $n<\om$,
\[ R'_n\rest A=\{((x,y),(x',y'))\in A\bigm|R'(n,(x,y),(x',y'))\} \] is a prewellorder  on $A$.
Note also that for $m<n<\om$,
$R'_{n+1}$ refines $R'_n$;
that is, \[\text{ if }R'(n+1,(x,y),(x',y'))\text{
then }R'(n,(x,y),(x',y')).\]

Now define
 \[ U(x,y)\iff A(x,y)\wedge\all n<\om\ \all y'\ R'(n,(x,y),(x,y')).\]
 Since $R$ is $\Pi^1_k$, so is $U$.

 We claim that $U$ uniformizes $A$, as desired. For let $\pi_n:A\to\OR$ be the norm of the prewellorder $R'_n\rest A$. Let $x\in\dom(A)$. Let $\left<y_n\right>_{n<\om}$ be such that \[(x,y_n)\in A\text{ and }\pi_n(x,y_n)=\min(\{\pi_n(x,y)|(x,y)\in A\}).\]
 Note that $y_n\to y$ for some $y$, and that
\[ (x,y_n)\to(x,y)\mod\vec{\varphi}.\]
So $(x,y)\in A$, and by lower semicontinuity,
\[ \varphi_n(x,y)\leq\lim_{m\to\om}\varphi_n(x,y_m), \]
and note then that
\[ \all n<\om\ \all y'\ R'(n,(x,y),(x,y')). \]
So $U(x,y)$ holds, and it is straightforward to see that if $U(x,y')$ then $y=y'$.
\end{proof}

\begin{fact}
 Let $k\geq 1$ and suppose that $\Pi^1_k$ has the uniformization property. Then $\Sigma^1_{k+1}$ has the uniformization property.
 Likewise for $\bfPi^1_k,\bfSigma^1_{k+1}$.
\end{fact}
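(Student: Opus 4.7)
The plan is to reduce uniformization of a $\Sigma^1_{k+1}$ set $A\sub({^\om}\om)^2$ to uniformization of an associated $\Pi^1_k$ set and then project out one coordinate. By Definition \ref{dfn:Sigma^1_n}, there is $B\in\Pi^1_k$ with $B\sub({^\om}\om)^2\cross{^\om}\om$ such that
\[ (x,y)\in A\iff\exists z\in{^\om}\om\ [(x,y,z)\in B].\]
The strategy is to apply the $\Pi^1_k$ uniformization hypothesis to $B$ itself, treating $x$ as input and the pair $(y,z)$ as output, and then project out $z$.

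First, one transfers the uniformization property from the formulation in Definition \ref{dfn:scale_prop} (stated only for subsets of $({^\om}\om)^2$) to subsets of ${^\om}\om\cross({^\om}\om)^2$. To do this, I would fix a natural homeomorphism $\iota:({^\om}\om)^2\to{^\om}\om$ (e.g.\ interleaving coordinates), and use that $\Pi^1_k$ is closed under continuous substitution (immediate from Definition \ref{dfn:Sigma^1_n}). Set $\widetilde{B}=\{(x,\iota(y,z))\mid(x,y,z)\in B\}\in\Pi^1_k$, apply the hypothesis to obtain $\widetilde{V}\in\Pi^1_k$ uniformizing $\widetilde{B}$, and pull back via $\iota$ to $V=\{(x,y,z)\mid(x,\iota(y,z))\in\widetilde{V}\}\in\Pi^1_k$. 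Then $V\sub B$, and for each $x$ such that $B_x\neq\emptyset$ there is a unique $(y,z)$ with $(x,y,z)\in V$.

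Finally, define
\[ U(x,y)\iff\exists z\in{^\om}\om\ [(x,y,z)\in V].\]
By definition of $\Sigma^1_{k+1}$, we have $U\in\Sigma^1_{k+1}$. To check it uniformizes $A$: if $x\in\dom(A)$ then some $(y',z')$ has $(x,y',z')\in B$, so the uniformization property of $V$ gives a unique $(y,z)$ with $(x,y,z)\in V$, hence a unique $y$ with $U(x,y)$; and $U(x,y)$ implies $(x,y,z)\in V\sub B$ for some $z$, so $(x,y)\in A$. The boldface version $\bfPi^1_k\Rightarrow\bfSigma^1_{k+1}$ is obtained by the identical argument relativized to a real parameter.

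I do not expect any genuine obstacle: the proof is essentially bookkeeping around the definitions. The one mildly subtle point is the transfer of the $\Pi^1_k$ uniformization property across the homeomorphism $({^\om}\om)^2\cong{^\om}\om$, but this is routine given closure under continuous preimage.
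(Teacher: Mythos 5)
Your proof is correct and is essentially the same as the paper's: decompose $A$ as the $z$-projection of a $\Pi^1_k$ set $B$, uniformize $B$ treating $x$ as input and $(y,z)$ as output, then project out $z$. The only difference is that you explicitly fill in the transfer of the uniformization property from subsets of $({^\om}\om)^2$ to subsets of ${^\om}\om\cross({^\om}\om)^2$ via a recursive homeomorphism, a point the paper takes for granted.
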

\begin{proof}
 Let $A\sub\Xx\cross\Yy$ be $\Sigma^1_{k+1}$.
 Let $B\sub\Xx\cross\Yy\cross{^\om}\om$ be $\Pi^1_k$
 and such that for all $(x,y)\in\Xx\cross\Yy$ we have
 \[ (x,y)\in A\iff\exists z\in{^\om}\om\ [(x,y,z)\in B].\]
 By uniformization for $\Pi^1_k$, we can fix $C\in\Pi^1_k$ which uniformizes $B$
 in $\Yy\cross{^\om}\om$; that is, $C\sub B$ and \[C:S\to\Yy\cross{^\om}\om \]
 where $S=\{x\in\Xx\bigm|\exists y,z\ [(x,y,z)\in B]\}$.
 Now define $U$ by
 \[ (x,y)\in U
\iff\exists z\ [(x,y,z)\in C] 
 \]
 and note that $U\in\Sigma^1_{k+1}$ and $U$ uniformizes $A$.
\end{proof}

\begin{fact}
 Let $k\geq 1$ and let $\Gamma\in\{\Pi^1_k,\Sigma^1_k,\bfPi^1_k,\bfSigma^1_k\}$. Then:
 \begin{enumerate}\item\label{item:unif_implies_red} if $\Gamma$ has the uniformization property then $\Gamma$ has the reduction property; and
  \item\label{item:red_implies_sep_for_dual} if $\Gamma$ has the reduction property then 
 $\stackrel{\smile}{\Gamma}$ has the separation property.
  \end{enumerate}
\end{fact}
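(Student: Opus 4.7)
For \ref{item:unif_implies_red}, I would encode the pair $(A,B)$ as a single $\Gamma$-set on a product space, apply uniformization to it, and read off the reduction from the uniformizing function. Concretely, given $A,B\in\Gamma$ with $A,B\sub\Xx$, I would form
\[ C=(A\cross\{\mathbf{0}\})\cup(B\cross\{\mathbf{1}\})\sub\Xx\cross{^\om}\om,\]
where $\mathbf{0},\mathbf{1}\in{^\om}\om$ are the constant-$0$ and constant-$1$ sequences. Each of $\Pi^1_k,\Sigma^1_k,\bfPi^1_k,\bfSigma^1_k$ is closed under continuous preimages, finite unions, and intersections with closed sets, so $C\in\Gamma$. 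After a standard (effectively) continuous recoding of $\Xx$ as ${^\om}\om$ respecting $\Gamma$, the uniformization hypothesis provides $U\in\Gamma$ with $U\sub C$ and $\dom(U)=A\cup B$. I would then set
\[ A'=\{x\in\Xx\mid(x,\mathbf{0})\in U\},\qquad B'=\{x\in\Xx\mid(x,\mathbf{1})\in U\}.\]
Both $A'$ and $B'$ are continuous preimages of $U$ under the maps $x\mapsto(x,\mathbf{0})$ and $x\mapsto(x,\mathbf{1})$ respectively, hence in $\Gamma$. The required clauses $A'\sub A$, $B'\sub B$, $A'\cap B'=\emptyset$, and $A'\cup B'=A\cup B$ are then immediate from $U\sub C$, the functionality of $U$, and the fact that $(x,y)\in C$ forces $y\in\{\mathbf{0},\mathbf{1}\}$.

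For \ref{item:red_implies_sep_for_dual}, the plan is a clean De Morgan dualisation. Given disjoint $A,B\in\dual{\Gamma}$ with $A,B\sub\Xx$, the complements $A^c,B^c$ lie in $\Gamma$ and cover $\Xx$. Applying the reduction property of $\Gamma$ to the pair $(A^c,B^c)$ supplies $A',B'\in\Gamma$ with $A'\sub A^c$, $B'\sub B^c$, $A'\cap B'=\emptyset$, and $A'\cup B'=\Xx$. I would then set $C=(A')^c$ and $D=(B')^c$; both lie in $\dual{\Gamma}$, and a De Morgan computation gives $C\cap D=(A'\cup B')^c=\emptyset$ and $C\cup D=(A'\cap B')^c=\Xx$. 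Since $A\cap A'\sub A\cap A^c=\emptyset$ we have $A\sub C$, and symmetrically $B\sub D$, giving the required separation.

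The only step that needs any care is the coding in \ref{item:unif_implies_red}: one must pass between an arbitrary standard space $\Xx$ and ${^\om}\om$ so that the uniformization hypothesis (stated only for subsets of ${^\om}\om\cross{^\om}\om$) can be applied, while preserving membership in $\Gamma$ both on the way in and on the way out. This is routine for the four pointclasses in question, since each is closed under the appropriate (lightface effective, respectively boldface continuous) substitutions. Part \ref{item:red_implies_sep_for_dual} is purely Boolean and presents no obstacle.
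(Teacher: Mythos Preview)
Your proposal is correct and takes essentially the same approach as the paper: for part~\ref{item:unif_implies_red} the paper also tags $A$ and $B$ by two distinct points (it uses $0,1\in\om$ rather than your constant reals $\mathbf{0},\mathbf{1}\in{^\om}\om$), uniformizes the union, and reads off $A',B'$ as the preimages of the tags; for part~\ref{item:red_implies_sep_for_dual} the paper simply writes ``This is easy,'' and your explicit De~Morgan argument is exactly what is intended. Your extra care about the coding of $\Xx$ into ${^\om}\om$ is warranted given how the uniformization property is stated, and the paper's proof glosses over this point.
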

\begin{proof}
 Part \ref{item:unif_implies_red}: Let $A,B\in\Gamma$ such that $A,B\sub\Xx$ for some standard space $\Xx$. Let $A'=A\cross\{0\}$
 and $B'=B\cross\{1\}$.
 Let $C=(A\cross\{0\})\cup(B\cross\{1\})$
 (so $C\sub\Xx\cross\om$)
 and let $D$ uniformize $C$ in the last coordinate. Now let $A'=\{x\in\Xx\bigm|(x,0)\in D\}$ and $B'=\{x\in\Xx\bigm|(x,1)\in D\}$,
 and note that $(A',B')$ reduces $(A,B)$, as desired.
 
Part \ref{item:red_implies_sep_for_dual}: This is easy.
\end{proof}

\begin{dfn}
 Let $\Gamma$ be a pointclass and $\Yy,\Xx$ be  standard spaces.
 A set $U\sub\Yy\cross\Xx$
 is called \emph{$\Gamma$-universal}
 (for subsets of $\Xx$)
 iff $U\in\Gamma$ and
 \[ \{U_y\bigm|y\in\Yy\} \]
 is exactly the collection of all subsets of $\Xx$ which are in $\Gamma$. (Here $U_y=\{x\in\Xx\bigm|(y,x)\in U\}$.)
 We also say that the $\Gamma$-universal set is \emph{$\Yy$-indexed}.
\end{dfn}

A well known construction gives:
\begin{fact}
 Let $k\geq 1$. For each standard space $\Xx$,
 there is an $\om$-indexed $\Sigma^1_k$-universal set and an $\om$-indexed $\Pi^1_k$-universal set,
 and there is an ${^\om}\om$-indexed $\bfSigma^1_k$-universal set and an ${^\om}\om$-indexed $\bfPi^1_k$-universal set.
\end{fact}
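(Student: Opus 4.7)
The plan is to proceed by induction on $k$, handling the lightface cases first; the boldface cases are obtained analogously by allowing ${^\om}\om$-indexing rather than $\om$-indexing. At each stage, the $\Pi$-universal set is obtained from the $\Sigma$-universal set by complementation, and the jump from level $k$ to $k+1$ is achieved by an existential projection over a universal set at level $k$ on the space $\Xx\cross{^\om}\om$.

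For the base case, fix a standard space $\Xx$. Using G\"odel numbering, enumerate effectively the $\Sigma_0$ $\Ll$-formulas with three free variables as $\langle\varphi_n\rangle_{n<\om}$. The $\Sigma_0$-satisfaction relation on $V_{\om+1}$ is itself definable by a $\Sigma_0$-formula over $V_{\om+1}$ (with $V_\om$ as a parameter): write $\mathrm{Sat}(n,a,b,c)$ to mean ``$V_{\om+1}\sats\varphi_n(a,b,c)$''. Then define
\[ U=\Big\{(n,x)\in\om\cross\Xx\Bigm|\exists y\in{^\om}\om\ \big[V_{\om+1}\sats\mathrm{Sat}(n,x^*,y,V_\om)\big]\Big\}.\]
By construction $U\in\Sigma^1_1$, and by definition of $\Sigma^1_1$ (Definition \ref{dfn:Sigma^1_n}), every $A\in\Sigma^1_1$ with $A\sub\Xx$ equals $U_n$ for some $n<\om$, so $U$ is the required $\om$-indexed $\Sigma^1_1$-universal set. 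Taking $U^c=(\om\cross\Xx)\cut U$ yields an $\om$-indexed $\Pi^1_1$-universal set, since $(U^c)_n=\Xx\cut U_n$ ranges over all $\Pi^1_1$ subsets of $\Xx$.

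For the inductive step, suppose we already have an $\om$-indexed $\Pi^1_k$-universal set $V\sub\om\cross(\Xx\cross{^\om}\om)$. Define
\[ U^+=\Big\{(n,x)\in\om\cross\Xx\Bigm|\exists y\in{^\om}\om\ \big[(n,x,y)\in V\big]\Big\}.\]
Then $U^+\in\Sigma^1_{k+1}$. Given any $A\in\Sigma^1_{k+1}$ with $A\sub\Xx$, fix $B\in\Pi^1_k$ with $B\sub\Xx\cross{^\om}\om$ such that $x\in A\iff\exists y\ [(x,y)\in B]$; by universality of $V$ there is $n<\om$ with $V_n=B$, and then $A=U^+_n$. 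Complementation then gives the $\Pi^1_{k+1}$-universal set, closing the induction.

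For the boldface versions, replace $\om$ by ${^\om}\om$ throughout as the indexing space: in the base case the index incorporates an additional real parameter $y\in{^\om}\om$ that appears alongside $x^*$ in the satisfaction clause, which by definition of $\bfSigma^1_1$ as $\bigcup_{y\in{^\om}\om}\Sigma^1_1(y)$ ensures universality; in the inductive step the projection construction is identical. The main technical point is the base case verification that the $\Sigma_0$-satisfaction relation is itself $\Sigma_0$-definable over $V_{\om+1}$; this is standard and amounts to an induction on formula complexity using a natural code for finite sequences in $V_\om$, but it is the only place where the internal structure of $\Sigma_0$-formulas is really used.
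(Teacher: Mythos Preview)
The paper does not prove this fact; it simply introduces it with ``A well known construction gives:'' and states the result without argument. Your proof is the standard construction and is correct. The one point requiring care is your base-case claim that $\Sigma_0$-satisfaction over $V_{\om+1}$ is itself $\Sigma_0$-definable with parameter $V_\om$; this holds because, for inputs of the form $(x^*,y,V_\om)$, every bounded quantifier in a $\Sigma_0$ formula ranges over elements of $V_\om$, so a finite truth-computation for the formula can be coded as an element of $V_\om$ and hence quantified over by a bounded quantifier $\exists d\in V_\om$. With that in hand, the induction and complementation steps are routine, exactly as you describe.
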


\begin{fact}\label{fact:no_universal_Delta-set}
 Let $k\geq 1$. Then there is no $\om$-indexed $U$ which is $\Delta^1_k$-universal for subsets of ${^\om}\om$, and no ${^\om}\om$-indexed
 $U$ which is $\bfDelta^1_k$-universal for subsets of ${^\om}\om$.
\end{fact}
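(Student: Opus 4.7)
The plan for both parts is a diagonal argument, combining the classical one for the boldface case with a small twist for the lightface case that accounts for the index set $\om$ being much smaller than ${^\om}\om$.

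For the lightface claim, suppose toward a contradiction that $U\sub\om\cross{^\om}\om$ is $\Delta^1_k$-universal for subsets of ${^\om}\om$. In particular, $U\in\Sigma^1_k$ and $U\in\Pi^1_k$. Since $y(0)$ is a recursive function of $y$, substituting $(y(0),y)$ for the free variables of a defining $\Sigma^1_k$ formula for $U$ yields a $\Sigma^1_k$ formula, and similarly for $\Pi^1_k$; hence the set
\[ D=\{y\in{^\om}\om\bigm|(y(0),y)\notin U\}\]
lies in $\Sigma^1_k\cap\Pi^1_k=\Delta^1_k$. By universality, $D=U_n$ for some $n\in\om$. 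Now pick any $y\in{^\om}\om$ with $y(0)=n$; then
\[ y\in D\iff(n,y)\notin U\iff y\notin U_n\iff y\notin D,\]
a contradiction.

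For the boldface claim, suppose $U\sub{^\om}\om\cross{^\om}\om$ is $\bfDelta^1_k$-universal. Set
\[ D=\{y\in{^\om}\om\bigm|(y,y)\notin U\};\]
since $\bfSigma^1_k$ and $\bfPi^1_k$ are closed under continuous preimages, $D\in\bfDelta^1_k$. Hence $D=U_y$ for some $y\in{^\om}\om$, and the classical diagonal argument gives $y\in D\iff y\notin D$.

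There is no real obstacle here; the only point worth emphasizing is the choice of the substitution $y\mapsto(y(0),y)$ in the lightface case. Its purpose is to manufacture a diagonal set whose first coordinate ranges through $\om$, matching the index set of the purported universal set, while remaining a recursive function of $y$ so that $\Delta^1_k$ is preserved. Any continuous recursive map ${^\om}\om\to\om$ in place of $y\mapsto y(0)$ would serve equally well.
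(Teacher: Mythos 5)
Your lightface argument is exactly the paper's: diagonalize via the substitution $y\mapsto(y(0),y)$, obtain $D\in\Delta^1_k$, find an index $n$, and pick $y$ with $y(0)=n$. The boldface case, which you spell out with the standard $y\mapsto(y,y)$ diagonal, is left implicit in the paper but is the obvious analogue; both parts are correct.
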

\begin{proof}
 Suppose that $U\sub\om\cross{^\om}\om$ is $\Delta^1_k$-universal. 
 Define $U'\sub{^\om}\om$
 by putting $x\in U'$ iff $(x(0),x)\notin U$.
 Then $U'\in\Delta^1_k$, so  by the universality of $U$, there is $d<\om$
 such that \[ (d,x)\in U\iff x\in U'\iff (x(0),x)\notin U \]
 for all $x\in{^\om}\om$.  But then letting $x\in{^\om}\om$ be any element such that $x(0)=d$, we get
 \[ (d,x)\in U\iff (d,x)\notin U,\]
 a contradiction.
 \end{proof}
 
\begin{fact}\label{fact:Gamma_and_dual_not_both_unif}
 Let $k\geq 1$. Then it is not the case that both $\Pi^1_k,\Sigma^1_k$ have the reduction property.
 Therefore they do not both have the uniformization property. Likewise for $\bfPi^1_k,\bfSigma^1_k$.
\end{fact}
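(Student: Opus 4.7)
I argue the first assertion by contradiction, and obtain the second assertion as an immediate consequence. Suppose, toward contradiction, that both $\Sigma^1_k$ and $\Pi^1_k$ have the reduction property. By item~\ref{item:red_implies_sep_for_dual} of the preceding fact, it follows that both $\Sigma^1_k$ and $\Pi^1_k$ also have the separation property. My goal is to produce an $\om$-indexed $\Delta^1_k$-universal set for subsets of $\Baire$, contradicting Fact~\ref{fact:no_universal_Delta-set}.

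By the preceding fact, fix an $\om$-indexed $\Sigma^1_k$-universal $U\sub\om\cross\Baire$. Form the two $\Sigma^1_k$ subsets of $\om\cross\om\cross\Baire$ given by
\[ A=\{(m,n,x)\bigm|x\in U_m\}\quad\text{and}\quad B=\{(m,n,x)\bigm|x\in U_n\}. \]
Apply the $\Sigma^1_k$ reduction property to obtain $A^*,B^*\in\Sigma^1_k$ reducing $(A,B)$. For any $X\in\Delta^1_k$, one may choose $m,n<\om$ with $U_m=X$ and $U_n=X^c$ (the latter since $X^c\in\Sigma^1_k$); at such $(m,n)$, the sets $A_{m,n}=X$ and $B_{m,n}=X^c$ already partition $\Baire$, forcing $A^*_{m,n}=X$ and $B^*_{m,n}=X^c$. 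Symmetrically, applying the $\Pi^1_k$ reduction property to the $\Pi^1_k$ sets $A^c,B^c$ yields $C^*,D^*\in\Pi^1_k$ with $D^*_{m,n}=X$ at the same pair $(m,n)$.

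So $A^*$ is a $\Sigma^1_k$ set and $D^*$ a $\Pi^1_k$ set, each of whose $\om\cross\om$-indexed family of sections includes every $\Delta^1_k$ subset of $\Baire$, and $A^*_{m,n}=D^*_{m,n}$ on all indices $(m,n)$ witnessing a $\Delta^1_k$ set in this way. The technical heart of the argument is to align $A^*$ with $D^*$ on \emph{all} indices, extracting a single $\Delta^1_k$ set whose $\om\cross\om$-indexed sections still exhaust $\Delta^1_k$; this is where the assumed separations are invoked (to kill the disagreement between $A^*$ and $D^*$ on non-witnessing indices) and is the main obstacle. Once such a $\Delta^1_k$-universal set $W\sub\om\cross\Baire$ is in hand (transferring via a pairing bijection $\langle\cdot,\cdot\rangle\colon\om\cross\om\to\om$), the standard diagonal $E=\{x\in\Baire\bigm|x\notin W_{x(0)}\}$ is itself $\Delta^1_k$ yet cannot appear as any section $W_e$ (else $e\in W_e\iff e\notin W_e$), the desired contradiction.

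The second assertion now follows: if both $\Pi^1_k$ and $\Sigma^1_k$ had the uniformization property, then by item~\ref{item:unif_implies_red} of the preceding fact both would have the reduction property, contradicting what was just shown. The proofs for the boldface pointclasses $\bfPi^1_k,\bfSigma^1_k$ are entirely parallel, using the $\Baire$-indexed boldface universal sets from the preceding fact and the corresponding case of Fact~\ref{fact:no_universal_Delta-set}.
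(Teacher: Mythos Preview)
Your overall strategy---build an $\om$-indexed $\Delta^1_k$-universal set for subsets of $\Baire$ and invoke Fact~\ref{fact:no_universal_Delta-set}---matches the paper's. However, you leave a genuine gap precisely at what you yourself call ``the main obstacle'': you never actually carry out the alignment of $A^*$ with $D^*$, only assert that separation is to be invoked somewhere. Without this step you have not produced a $\Delta^1_k$-universal set, so the proof is incomplete.

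The introduction of the second reduction (producing $C^*,D^*\in\Pi^1_k$) is a detour that creates this obstacle unnecessarily. The paper's argument is simpler and avoids it entirely: after obtaining your disjoint pair $A^*,B^*\in\Sigma^1_k$, apply $\Sigma^1_k$-separation (which holds, since $\Pi^1_k$ is assumed to have reduction) directly to $(A^*,B^*)$. This yields $A'',B''\in\Sigma^1_k$ with $A^*\sub A''$, $B^*\sub B''$, $A''\cap B''=\emptyset$, and $A''\cup B''=\om\cross\om\cross\Baire$. Then $A''$ is the complement of $B''\in\Sigma^1_k$, so $A''\in\Delta^1_k$ outright. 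Your own computation already gives $A^*_{m,n}=X$ and $B^*_{m,n}=X^c$ on witnessing indices, and since $A'',B''$ are complementary supersets of $A^*,B^*$, it follows that $A''_{m,n}=X$ as well. Thus $A''$ (transferred via a pairing bijection) is $\Delta^1_k$-universal, and there is nothing to align.
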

\begin{proof}
 Suppose otherwise.
 Then both also have the separation property.
 Let $U\sub\om\cross{^\om}\om$ be a $\Sigma^1_k$-universal for  subsets of ${^\om}\om$.
 Let $\sigma:\om\to\om\cross\om$
 be a recursive bijection.
 For $(m,n)\in\om\cross\om$
 let $(m,n)_0=m$ and $(m,n)_1=n$.
 
 Let $(m,x)\in A$ iff $(\sigma(m)_0,x)\in U$
 and let $(m,x)\in B$ iff $(\sigma(m)_1,x)\in U$.
 So $A,B\in\Sigma^1_k$.
 Let $A',B'\in\Sigma^1_k$ be such that $(A',B')$ reduces $(A,B)$.
 So $A'\cap B'=\emptyset$.
 Let $A'',B''\in\Sigma^1_k$ be such that $(A'',B'')$ separates $(A',B')$.
 So $A''=\om\cross{^\om}\om\cut B''$,
 so $A'',B''\in\Delta^1_k$. Now note that $A''$ is $\Delta^1_k$-universal for subsets of ${^\om}\om$, contradicting Fact \ref{fact:no_universal_Delta-set}.
 \end{proof}

\subsection{Notation}
\label{sec:notation}

Descriptive set theoretic terminology and notation is mostly as in \cite{mosch}, and as described in \S\ref{sec:introduction} and \S\ref{sec:background}.

Most inner model theoretic terminology and notation is as described in \cite[\S1.1]{iter_for_stacks} (and further details are covered in \cite[\S1.2]{extmax} 
and \cite[\S1.1]{premouse_inheriting}).
In this section we describe some further things not covered or which might differ slightly from those sources.

\index{premouse}\index{$\Ll_\pm$}\index{language of premice}
We will be dealing with \emph{real-premice},
by which we mean premice $M$ as in \cite{outline},
with Mitchell-Steel indexing,
except that:
\begin{enumerate}[label=(\roman*)]\item the first segment $M|\om$ of  $M$
is of the form $(V_\om,x)$ where $x\in\RR$,
\item\label{item:superstrong_exts}
 we allow extenders of superstrong type on the extender sequence of $M$, 
 \item\label{item:condensing_pm} we demand that every proper segment of $M$ is not only $\om$-sound but \emph{satisfies condensation}, as defined  below.\footnote{For iterable structures, this makes no difference. It is included for convenience.}
 \end{enumerate}
 We also make analogous modifications to the definitions for all structures related to premice, 
such as pseudo-premice, protomice, bicephali, phalanxes, etc.
We say that
$M$ as above is an \emph{$x$-premouse} or is \emph{over $x$},
and write $x^M=x$.
We usually omit the prefix \emph{real-}, and often omit explicit mention of $x$.
The language $\Ll_\pm$ of (real-)premice $M$ is the standard premouse language augmented with a constant symbol 
$\dot{x}$ interpreted as $x^M$, and all fine structure for real-premice is defined 
using this language.
Property \ref{item:superstrong_exts} above introduces a small change in the details of iteration trees,
as explained in \cite[Remark 2.48]{operator_mice_v3}.
We abbreviate \emph{premouse} with \emph{pm}.

(Occasionally we will mention premice $M$ \emph{over $X$},
where $X$ is some transitive set/structure; in this case
the fine structure of $M$ uses the language with constant
symbols for every element of the transitive closure of $X\cup\{X\}$,
and the extenders in the extender sequence of $M$ have critical points $>\OR^X$.
But this is only an exception, and when not explicitly mentioned otherwise,
all premice will be over a real $x$.)

 Let $N$ be a $(k+1)$-sound pm.
 We say $N$ \emph{top-satisfies $(k+1)$-condensation}
 iff condensation holds with respect to $k$-lifting embeddings
 $\pi:M\to N$ with $(k+1)$-sound $M$ and $\crit(\pi)\geq\rho_{k+1}^M$
 (see \cite{premouse_inheriting} for definitions).
 We say that $N$ satisfies \emph{$(k+1)$-condensation}\index{$(k+1)$-condensation}
 iff $N'$ satisfies $(k'+1)$-condensation for all $(N',k')\leq_\lex(N,k)$.
 If $N$ is $\om$-sound, we say that $N$ \emph{satisfies condensation}
 iff $N$ satisfies $(k+1)$-condensation for all $k<\om$. (This is the notion required of all proper segments of $M$ in condition \ref{item:condensing_pm} above.)
 Note that if $N$ satisfies condensation and $\pi:M\to N$
 is fully elementary with $\crit(\pi)=\rho_\om^M$,
 then condensation holds with respect to $\pi$.

 We work with Mitchell-Steel fine structure (see \cite{outline}, \cite{fsit}), simplified by dropping the parameters $u_n$ as explained in \cite[\S5]{V=HODX_pub}.
So for premice $N$ and for $k<\om$,
this specifies the notions \emph{$k$-soundness}, \emph{$(k+1)$-universality}, \emph{$(k+1)$-solidity}, and the $(k+1)^{\nth}$ core $\core_{k+1}(N)$ of $N$. We now define when $N$ is \emph{$k$-good},\index{$k$-good}
for $k\leq\om$, by induction on $k$. If $N$ is $k$-good, then $\core_j(N)$ will be defined and 
$j$-sound for each $j\leq k$, and hence $\core_{k+1}(N)$ will also be defined. We say that $N$ is 
$0$-good. Given $k<\om$, $N$ is $(k+1)$-good iff $N$ is $k$-good, $\core_k(N)$ is 
$(k+1)$-universal, and $\core_{k+1}(N)$ is $(k+1)$-solid (hence $\core_k(N)$ is also 
$(k+1)$-solid). And $N$ is $\om$-good iff $N$ is $k$-good for all $k$.

An \emph{$\om$-premouse} is an $\om$-sound premouse
which projects to $\om$.
An \emph{$\om$-mouse} is an $(\om,\om_1+1)$-iterable $\om$-premouse,
except for in \S\ref{sec:Pi^1_3},
where the definition is modified to $(\om,\om_1)$-iterability.
If $M$ is an $\om$-mouse then $\omdeg(M)$ is the least $k$ such that $\rho_{k+1}^M=\om$.

If $M,N$ are premice,
we write $M\ins^*N$ to mean
that either $M\ins N$ or $M=N||\alpha$
for some $\alpha\leq\OR^N$.

If $\Tt$ is an iteration tree on a phalanx
consisting of a $\lambda$-sequence of models,
and $\alpha<\lh(\Tt)$,
then $\phalroot^\Tt(\alpha)$ denotes
the root of $\alpha$ in $\Tt$
(this is the unique $\beta<\lambda$
such that $\beta\leq^\Tt\alpha$).
For $Y$ a premouse and $\kappa\in\OR^Y$
such that $\kappa$ is $Y$-measurable via a ($Y$-total) extender in $\es_+^Y$,
$D_{\kappa}^Y$ denotes the order $0$ measure on $\kappa$ in $\es_+^Y$.

An iteration tree $\Tt$ is \emph{sse-essentially $m$-maximal}
if it follows the rules for $m$-maximality,
except that if $E^\Tt_\alpha$
is of superstrong type (that is,
$\lambda(E^\Tt_\alpha)=\nu(E^\Tt_\alpha)$), then we only demand that $\nu(E^\Tt_\alpha)<\lh(E^\Tt_\beta)$ for $\alpha<\beta$
instead of demanding $\lh(E^\Tt_\alpha)\leq\lh(E^\Tt_\beta)$.
We say that $\alpha<\lh(\Tt)$
is \emph{$\Tt$-stable} iff $\lh(E^\Tt_\alpha)\leq\lh(E^\Tt_\beta)$ for all $\beta>\alpha$.

 We take it that the degree $n$ of an $n$-maximal (or sse-essentially $n$-maximal) iteration tree is encoded explicitly into $\Tt$, as $\deg^\Tt_0$.

A premouse $M$ is called \emph{non-small} iff there is $\delta<\OR^M$ such that $M\sats$``$\delta$ is Woodin'', and $M$ is called \emph{small} otherwise.\index{small}\index{non-small}\index{small segment}\index{non-small segment} If $M$ is non-small we write $\delta^M$ for the least Woodin of $M$.
We say that a normal iteration tree $\Tt$ is \emph{non-small} iff there is $\alpha+1<\lh(\Tt)$
such that $\exit^\Tt_\alpha$ is non-small, and say $\Tt$ is \emph{small} otherwise.
Note that if $\Tt$ is $k$-maximal and non-small and $\alpha$ is least such that $\exit^\Tt_\alpha$ is 
non-small, then $\Tt$
is equivalent to $\Tt\rest(\alpha+1)\conc\Tt'$ where $\Tt'$ is $q$-maximal and on $Q$
where $Q\ins M^\Tt_\alpha$ and either:
\begin{enumerate}[label=--]
 \item $Q=M^\Tt_\alpha$ and $\delta^{\exit^\Tt_\alpha}$ is Woodin in $M^\Tt_\alpha$ and $q=\deg^\Tt_\alpha$, or
 \item $Q\pins M^\Tt_\alpha$ is the Q-structure for $\delta^{\exit^\Tt_\alpha}$
 and $\rho_{q+1}^{Q}\leq\delta^{\exit^\Tt_\alpha}<\rho_q^Q$.
\end{enumerate}
(This is because $\delta^{\exit^\Tt_\alpha}$ is a strong cutpoint of $\exit^\Tt_\alpha$,
and hence of $\Tt$.) Note then that for every $\beta+1<\lh(\Tt)$,
if $\alpha\leq\beta$ then $\exit^\Tt_\beta$ is non-small. We say that $\Tt$ is the \emph{small 
segment} of $\Tt$, and $\Tt'$ the \emph{non-small segment}.

If $\delta$ is a Woodin cardinal,
we write $\BB_\delta$ for the $\delta$-generator extender algebra at $\delta$.
(See \cite[\S7.2]{outline} for the $\om$-generator version. The $\delta$-generator version is defined in the same manner,
except that, in  notation as there, there are propositional variables $A_\alpha$ for each $\alpha<\delta$.)

If $M$ is a structure of some fragment
of set theory modelling ``there is a largest cardinal'', then $\lgcd(M)$ denotes the largest cardinal of $M$.

\section{Motivating examples}

Before we start our work in earnest,
we will give some basic examples of the kind of scale (and prewellordering) constructions  we will be developing, ones which are easy to describe. 
This should give the reader some  intuition for what we want to achieve. 

\subsection{The scale property for $\Pi^1_1$}\label{subsec:Pi^1_1}

The basic approach to constructing scales that we will develop
is easily illustrated with a slight variant of the usual proof
that $\Pi^1_1$ has the scale property. We start with this.

Fix a $\Pi^1_1$ set $A\sub\Baire$ and a recursive tree $T$ on $\om\cross\om$
such that for all reals $x$,
\[ x\in A\iff T_x\text{ is wellfounded.}\]
For $x\in A$ let $M_x=\J_\alpha(x)$ where $\alpha$ is least such that $\J_\alpha(x)\sats$``There is $\beta\in\OR$
and a rank function $\varrho:T_x\to\beta$''.
Here $\varrho\in M_x$, so this statement is $\Sigma_1$ (in the language $\Ll_{\rpm}$ of real premice,
which includes a constant symbol $\dot{x}$ interpreted as $x$),
and
\[ M_x=\Hull_1^{M_x}(\emptyset) \]
(the hull is computed using $\Ll_{\rpm}$).

For each sentence $\psi\in\Ll_{\rpm}$, we define a norm
\[ \varphi_\psi:A\to\{0,1\} \]
by setting $\varphi_\psi(x)=1$ iff $M_x\sats\psi$.

Suppose that $\{x_n\}_{n<\om}\sub A$ and $x_n\to x\mod\vec{\varphi}$,
where $\vec{\varphi}$ enumerates these norms. Let
\[ T_\infty=\{\psi\mid 1=\lim_{n\to\om}\varphi_\psi(x_n)\}. \]
Clearly $T_\infty$ is a complete consistent theory. We claim there is a unique (up to isomorphism)  structure $M_\infty$
for the language $\Ll_\rpm$ such that
\[ \Th^{M_\infty}=T_\infty\text{ and }M_\infty=\Hull^{M_\infty}(\emptyset).\]

Uniqueness is clear. We verify existence.
Let $\psi(v)$ be a formula in $\Ll_{\rpm}$ of one free variable $v$.
Let $x\in A$. If $M_x\sats\exists y\psi(y)$ then let
\[ t_\psi^x=\text{ the }<_{L[x]}\text{-least }y\in M_x\text{ such that }M_x\sats\psi(y),\]
and otherwise let $t_\psi^x=0$.
Define $f_\psi:\om\to V$
by
 \[ f_\psi(n)=t_\psi^{x_n}. \]

We set $M_\infty$ to be the ultraproduct of the structures $M_{x_n}$
modulo the cofinite filter, formed using only the functions $f_\psi$
Write $[f]$ for the equivalence class of $f$.
The reader will happily verify that L\'os Theorem goes through
 as usual,
and the desired properties hold of $M_\infty$.
In particular, $M_\infty\sats T_\infty$,
so $M_\infty\sats$``$V=\J_\OR(\dot{x})$, there is a rank function $\varrho:\widetilde{T}\to\OR$,
and no proper segment of me has such a rank function'',
where $\widetilde{T}\in M_\infty$ is what $M_\infty$ sees as the
tree associated to its base real $\dot{x}^{M_\infty}$
(according to the recursive procedure which converts $x$ to $T_x$).

However, $M_\infty$ might be illfounded -- even its $\om$ might be.
We need to introduce extra norms to ensure wellfoundedness.
For each formula $\tau\in\Ll_{\rpm}$ of one free variable $v$,
let
\[ \psi_\tau(v)\iff v\in\OR\wedge\tau(v).\]

For each $\tau$, we define a norm
\[ \varphi^\OR_\tau:A\to\OR \]
on $A$, by setting
\[ \varphi^\OR_\tau(x)=t^x_{\psi_\tau}.\]

Now suppose that $x_n\to x_\infty$ mod $\vec{\varphi}$,
where $\vec{\varphi}$ now enumerates all of the preceding norms (of both types).

We claim that $M_\infty$ is wellfounded. For note that the ``ordinals'' $\OR^{M_\infty}$
of $M_\infty$ are all of the form $[f_{\psi_\tau}]$ for some $\tau$ as above.

Define the map
\[ \pi:\OR^{M_\infty}\to\OR \]
by
\[ \pi([f_{\psi_\tau}])=\lambda_\tau=\lim_{n\to\om}\varphi^\OR_\tau(x_n). \]
It is easy to see that $\pi$ is order-preserving,
so $M_\infty$ is wellfounded.

It follows that $x^{M_\infty}=x_\infty\in\RR$ and $M_\infty=\J_\beta(x_\infty)$
for some $\beta$, and because $M_\infty\sats T_\infty$,
that therefore $x_\infty\in A$ and $M_\infty=M_{x_\infty}$,
so we have a semi-scale.

In order to calibrate the definability appropriately, we
replace the preceding norms $\varphi$ with the norms
$\varphi^*$, where $\varphi^*(x)$
is the lexicographic rank of the pair $(\OR^{M_x},\varphi(x))$
(note here that $\varphi(x)<\OR^{M_x}$). We assume from now that
$x_n\to x$ mod the new norms.

Lower semi-continuity follows from the properties of $\pi$.
For note that because $\pi$ is order-preserving,
\[ \OR^{M_\infty}\leq\lim_{n\to\om}\OR^{M_{x_n}}.\]
Lower-semi-continuity with respect to the theory norm components (the $0/1$-valued norms) is clear.
And for the ordinal norms, note that
\[ \varphi^\OR_\tau(x)=[f_{\psi_\tau}],\text{ so } \pi(\varphi^\OR_\tau(x))=\lambda_\tau,\]
and since $\pi$ is order-preserving, therefore $\varphi^\OR_\tau(x)\leq\lambda_\tau$.

Finally we need to see that we have a $\Pi^1_1$ scale.
So fix $\varphi$, either one of the theory norms or ordinal norms above, and consider $\varphi^*$.
Given $x\in A$, let $\theta_x=\OR^{M_x}$.
Let $x,y\in\RR$. Then $x\leq^*_{\varphi^*}y$ iff
\begin{equation}\label{eqn:leq^*_equiv} M_x\text{ exists  and if }M_y\text{ exists then }[\theta_x\leq\theta_y\text{ and if }\theta_x=\theta_y\text{ then }\varphi(x)\leq\varphi(y)].\end{equation}
But if $x\in A$ (equivalently,  $M_x$ exists),
then  all first-order structures for the language of set theory
with wellfounded $\om$ and $x,y\in\RR^N$,
and which satisfy ``Foundation holds and $V=L[x,y]$ and either $M_x$ exists or $M_y$ exists'', are
 correct about the truth of (\ref{eqn:leq^*_equiv}).
This is by overspill for $\Pi^1_1$, 
see \cite[8.11, 8.16]{barwise}:
\begin{lem}[Overspill for $\Pi^1_1$, Ville]\label{lem:overspill_Pi^1_1}
 Let $M$ be a structure in the language of set theory, 
 which is illfounded, but $\om^M=\om$ is wellfounded.
 Suppose that $M$ satisfies Foundation  and there is $x\in\RR^M$ such that $M\sats$``$V=L[x]$''.
 Let $\beta$ be the ordertype of the wellfounded part of $M$.
 Then $L_\beta[x]$ is admissible.
\end{lem}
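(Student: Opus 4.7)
The plan is to argue by contradiction: assume $L_\beta[x]$ fails to be admissible, and then exploit the illfoundedness of $M$ to derive a contradiction. Non-admissibility provides a $\Sigma_0$ formula $\varphi(u,v,\bar w)$ and parameters $a, \bar p \in L_\beta[x]$ with $L_\beta[x] \sats \forall u \in a\,\exists v\, \varphi(u,v,\bar p)$ such that the witnessing ranks cannot be uniformly bounded below $\beta$. Equivalently, the $\Sigma_1$-definable function
\[ g(u) = \text{least } \alpha<\beta \text{ such that } (\exists v\in L_\alpha[x])\ \varphi(u,v,\bar p) \]
is cofinal in $\beta$.

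Next I would lift $g$ into $M$. Since $M\sats V=L[x]$, inside $M$ one defines $g^M(u)$ as the least $\alpha\in\OR^M$ with $L_\alpha[x]^M\sats\exists v\,\varphi(u,v,\bar p)$. Each $\alpha<\beta$ lies in $\wfp(M)$ so $L_\alpha[x]^M = L_\alpha[x]$ there, and upward $\Sigma_1$-absoluteness yields $g^M(u) = g(u) < \beta$ for every $u\in a$. The hypothesis $V=L[x]$ is strong enough to prove $\Sigma_1$-collection in $M$, so $g^M\rest a$ is a set in $M$, and its range has an $M$-supremum $\delta\in\OR^M$.

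The endgame is a case split on $\delta$. If $\delta\in\wfp(M)$, then $\delta<\beta$ is an external upper bound for $g(a)$, contradicting cofinality of $g$. If $\delta\notin\wfp(M)$, then $\delta$ is in the illfounded part of $\OR^M$; extracting a descending $<^M$-sequence from $\delta$ (which lies entirely in the illfounded part), pick any $\alpha\in\OR^M\setminus\wfp(M)$ with $\alpha<^M\delta$. Because $\wfp(M)\cap\OR$ is $<^M$-downward closed in $\OR^M$, such an $\alpha$ lies $<^M$-above every ordinal of $\wfp(M)$, and in particular $\alpha\geq^M g^M(u)$ for all $u\in a$. Thus $\alpha$ is an $M$-upper bound for $\rg(g^M)$ strictly $<^M\delta$, contradicting the minimality of $\delta$.

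The main obstacle -- and the step I would expect to be the sticking point for a fresh approach -- is the last one: recognizing that the illfounded part of $\OR^M$ has no $<^M$-least element forces any $M$-least upper bound for $\rg(g^M)$ to be pushed back inside $\wfp(M)$, which is exactly the bound that non-admissibility was supposed to rule out. The transfer of $g$ to $g^M$ and the existence of $\delta$ in $M$ are routine once one accepts that $M\sats V=L[x]$ gives enough absoluteness and $\Sigma_1$-collection.
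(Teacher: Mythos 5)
Your overall strategy matches the paper's: exploit the fact that the illfounded part of $\OR^M$ has no $<^M$-least element to force the relevant $M$-definable least bound down into $\wfp(M)$, where it gives a genuine ordinal $<\beta$. The closing case split is the right observation and is exactly the heart of the paper's argument.

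The gap is the assertion that $V=L[x]$ is strong enough to prove $\Sigma_1$-collection in $M$, so that $g^M\rest a$ is a set. This is not justified by the hypotheses, and it is close to circular: a structure satisfying Foundation and $V=L[x]$ can perfectly well fail $\Sigma_1$-collection -- that failure is precisely what non-admissibility means, and it is what you are trying to refute for $L_\beta[x]$, not something you may assume for $M$. What actually makes this instance of collection hold in $M$ is the illfoundedness, and that has to be invoked \emph{before} $\rg(g^M)$ can be formed as a set, not afterwards. As written, illfoundedness enters only after the set has been assumed to exist, so the step you flag as ``routine'' is exactly where the hole sits.

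The fix is to skip forming $\rg(g^M)$ as a set and apply the Foundation scheme directly inside $M$ to the definable class $C$ of $\alpha\in\OR^M$ with $L_\alpha[x]^M\sats\forall u\in a\,\exists v\,\varphi(u,v,p)$. Every illfounded $\alpha$ lies in $C$: for each $u\in a$ the actual witness $v$ lies in some $L_\gamma[x]$ with $\gamma\in\wfp(\OR^M)$, hence in $L_\alpha[x]^M$, and $\Sigma_0$ formulas are upward absolute. So $C\neq\emptyset$, and by Foundation in $M$ it has a $<^M$-least element $\alpha_0$; since the illfounded ordinals are all in $C$ and have no $<^M$-least element among them, $\alpha_0\in\wfp(M)$, i.e.\ $\alpha_0<\beta$ is a genuine ordinal and $L_{\alpha_0}[x]$ is a collecting set. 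This is the paper's argument -- stated there directly rather than through the cofinal function $g$ -- and it needs no collection in $M$ at all.
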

\begin{proof}
 Let $\varphi$ be $\Sigma_1$
 and $p,d\in L_\beta[x]$
 and suppose that $L_\beta[x]\models\psi(d,z,p)$, the statement asserting ``for all $y\in d$ there is $z$ such that $\varphi(p,y,z)$''.
 Taking any $\alpha\in\OR^M$ such that $\alpha$ is in the illfounded part of $M$, we have $M\sats$``$L_\alpha[x]\sats\psi(p,y,z)$''. Let $\alpha_0\in\OR^M$ be such that $M\sats$``$\alpha_0$ is the least ordinal $\alpha$ such that $L_\alpha[x]\models\psi(p,y,z)$''.
 It follows that $\alpha_0$ is not in the illfounded part, so $\alpha_0<\beta$ is a real ordinal, and $L_{\alpha_0}\models\psi(p,y,z)$, so we are done.
\end{proof}

Note that by the lemma, if $N$ (as described just before the lemma) is illfounded then $\om_1^{\mathrm{CK},(x,y)}\sub\wfp(N)$,
and therefore $\theta_x\in\wfp(N)$,
and therefore $N$ is correct about (\ref{eqn:leq^*_equiv}). It easily follows that (\ref{eqn:leq^*_equiv}) is $\Pi^1_1$ (in $V$), as desired.

\subsection{$\Delta^1_3$ scales on $\Pi^1_2$ sets}\label{sec:MS}

We now consider a second example: the theorem of Martin and Solovay, that
assuming every real has a sharp,
for every $\Pi^1_2$ set $A$,
there is a $\Delta^1_3$ scale on $A$.
(There are more refined versions of this theorem, but this version is sufficient to illustrate what we want.) 
We will prove this in the style of the foregoing construction for $\Pi^1_1$. The proof is also just a slight variant of the Martin-Solovay construction, but it will continue to motivate our main construction to come.

Actually, the same construction will work more generally.
Recall that a set $A\sub\RR$ is $\Game{{<\omega^2}}\text{-}\Pi^1_1$ iff there is $n<\om$ and a formula $\varphi$ of the language of set theory such that for all $x\in\RR$, we have
\[ x\in A\iff L[x]\sats\varphi(x,\kappa^x_{0},\ldots,\kappa^x_{n-1}), \]
where $\left<\kappa^x_\alpha\right>_{\alpha\in\OR}$
enumerates the class of Silver indiscernibles for $L[x]$ in increasing order. Fix such an $A,n,\varphi$.
We will construct a $\Delta^1_3$ scale on $A$.

For $x\in A$, let $M_x=x^\#$, represented as an active mouse over $x$. We have $M_x=\Hull_1^{M_x}(\emptyset)$ (the hull is computed  using the language $\Ll_{\rpm}$ of real premice, which  includes a predicate for the active extender). 

We define theory norms like in \S\ref{subsec:Pi^1_1};
so for each sentence $\psi\in\Ll_{\rpm}$, we have a $0/1$-valued norm $\varphi_\psi$,
with $\varphi_\psi(x)=1$ iff $M_x\sats\psi$.
Again, if $x_n\to x$ modulo these norms, and $T_\infty$ is as before, then we can define a unique limit model $M_\infty$
for the language $\Ll_{\rpm}$
such that $\Th^{M_\infty}=T_\infty$
and $M_\infty=\Hull^{M_\infty}(\emptyset)$;
in fact, we get $M_\infty=\Hull_1^{M_\infty}(\emptyset)$,
because $M_x=\Hull_1^{M_x}(\emptyset)$
for all $x$.\footnote{Here ``$\Hull(X)$'' (with no subscript) denotes the full definable hull of finite tuples taken from $X$, and ``$\Hull_1(X)$'' the $\Sigma_1$-hull of such tuples.} To ensure the wellfoundedness of $M_\infty$, we again introduce countably many ordinal norms $\varphi^{\OR}_\tau$ as before. But this time, this (does not appear to be) enough, because we also need to know that the limit model $M_\infty$ is iterable. To ensure iterability, we introduce further norms. These norms 
are somewhat subtler than those used  so far, and it will not be immediate that they are well-defined.
Given $z\in\RR$  and $n<\om$
let $z^\#_n=(z^\#)_n$ be the $n$th iterate of $z^\#$ (as an active $z$-premouse).
Given $x\in\RR$ and an active premouse $Q$ with $x^\#\in Q$,
let  $P^{Q}_x$ be the $\crit(F^Q)$th iterate of $x^\#$;
equivalently, $P^{Q}_x$ is the unique active $x$-premouse $P$ such that
$\es^P=\emptyset$ (so $F^P\neq\emptyset$ is the first extender in the $P$-sequence), and letting $\alpha=\OR^P$, then
 $P=(L_\alpha[x],F)$
and $F\rest\nu(F)\sub F^{Q}$. Note that $P^Q_x$ is definable over $Q$ from $x$,
uniformly in such $(Q,x)$.

Now for each $n<\om$ and each ordinal term $\tau$ of $n$ arguments
which is $\Sigma_1$ in $\Ll_{\rpm}$, we define a prewellorder $\leq_{\tau}$ on $A$ as follows: for $x,y\in A$ set
\[ x\leq_{\tau}y\iff \all z\in{^\om}\om\ [\text{if }(x,y)^\#\leq_Tz\text{ then }\tau^{P^{(z^\#)_n}_x}(\vec{\xi})\leq\tau^{P^{(z^\#)_n}_y}(\vec{\xi})]\]
where $\vec{\xi}=\vec{\kappa}^z_n=(\kappa^z_0,\ldots,\kappa^z_{n-1})$ is the increasing enumeration of the first $n$ $z$-indiscernibles;
equivalently, is the sequence of critical points
of measures used in the iteration leading from $z^\#$ to $z^\#_n$.

We we will show that these $\leq_\tau$ are  indeed prewellorders,
and that by adjoining them, we get a scale.
The following claim is the key to seeing that $\leq_{\tau}$ is a prewellorder:

\begin{clm}\label{clm:MS_norm_independence}
 Let $w,z$ be such that $(x,y)^\#\leq_T w$ and $(x,y)^\#\leq_T z$.
 Then
 \[ \tau^{P^{w^\#_n}_x}(\vec{\kappa}^w_n)\leq \tau^{P^{w^\#_n}_y}(\vec{\kappa}^w_n)\iff\tau^{P^{z^\#_n}_x}(\vec{\kappa}^z_n)\leq\tau^{P^{z^\#_n}_y}(\vec{\kappa}^z_n).\]
 \end{clm}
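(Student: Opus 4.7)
The plan is to use the standard two-step approach for Martin-Solovay style independence: reduce to a Turing upper bound, then compare via iteration and indiscernibility. First, I would fix $u \in {^\om}\om$ with $w, z \leq_T u$; then $(x,y)^\# \leq_T u$ as well. By symmetry, it suffices to establish that the $u$-version of the comparison agrees with the $w$-version (and, by the same argument, with the $z$-version). So I may assume $z = u$ with $w \leq_T u$, and my goal is
\[ \tau^{P^{w^\#_n}_x}(\vec{\kappa}^w_n) \leq \tau^{P^{w^\#_n}_y}(\vec{\kappa}^w_n) \iff \tau^{P^{u^\#_n}_x}(\vec{\kappa}^u_n) \leq \tau^{P^{u^\#_n}_y}(\vec{\kappa}^u_n). \]

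For the first step, since $L[w] \sub L[u]$, every $u$-indiscernible is a $w$-indiscernible, so $\kappa^u_n \geq \kappa^w_n$. The premouse $P^{u^\#_n}_x$ is the $\kappa^u_n$-th iterate of $x^\#$ by its top measure, while $P^{w^\#_n}_x$ is the $\kappa^w_n$-th, so the former is an iterate of the latter via the remaining $\kappa^u_n - \kappa^w_n$ steps; this gives a fully elementary iteration embedding $i_x: P^{w^\#_n}_x \to P^{u^\#_n}_x$, and similarly a map $i_y$ for the $y$-side. Both maps have critical point $\kappa^w_n$, and every component of $\vec{\kappa}^w_n$ (the first $n$ $w$-indiscernibles) lies strictly below $\kappa^w_n$, so $i_x, i_y$ fix $\vec{\kappa}^w_n$ pointwise. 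Elementarity then gives
\[ \tau^{P^{w^\#_n}_x}(\vec{\kappa}^w_n) \leq \tau^{P^{w^\#_n}_y}(\vec{\kappa}^w_n) \iff \tau^{P^{u^\#_n}_x}(\vec{\kappa}^w_n) \leq \tau^{P^{u^\#_n}_y}(\vec{\kappa}^w_n). \]

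The remaining, and most delicate, step is to replace $\vec{\kappa}^w_n$ by $\vec{\kappa}^u_n$ on the right. Both $\vec{\kappa}^w_n$ and $\vec{\kappa}^u_n$ are strictly increasing $n$-tuples of $(x,y)$-indiscernibles lying below $\kappa^u_n$, so I would argue that the truth value of
\[ \tau^{P^{u^\#_n}_x}(\vec{\xi}) \leq \tau^{P^{u^\#_n}_y}(\vec{\xi}) \]
depends only on the fact that $\vec{\xi}$ is such a tuple, and not on its particular values. The main technical obstacle is that $P^{u^\#_n}_x$ and $P^{u^\#_n}_y$ carry top extenders derived from $u^\#_n$, not from $x^\#$ or $y^\#$ alone, so a direct appeal to $x$- or $y$-indiscernibility does not immediately cover formulas in $\Ll_\rpm$ involving the top predicate. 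I would resolve this by noting that $P^{u^\#_n}_x$ and $P^{u^\#_n}_y$ are determined, uniformly in $\vec{\xi}$, from $x, y$ and the ordinal $\kappa^u_n$, and then applying Silver indiscernibility to $(x,y)^\#$: any two increasing $n$-tuples of $(x,y)$-indiscernibles below $\kappa^u_n$ can be connected by a partial isomorphism of $L[x,y]$ that lifts to a partial isomorphism of the ambient premice preserving the desired inequality. Combining this with the previous iteration step yields the claim.
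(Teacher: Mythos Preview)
The gap is in step 1. Elementarity of $i_x$ tells you that $i_x(\tau^{P^{w^\#_n}_x}(\vec{\kappa}^w_n)) = \tau^{P^{u^\#_n}_x}(\vec{\kappa}^w_n)$, and similarly $i_y$ moves the $y$-term. But the inequality you need preserved is between an ordinal $\alpha$ from the $x$-side and an ordinal $\beta$ from the $y$-side, and $i_x, i_y$ are \emph{different} embeddings acting on \emph{different} structures; their separate elementarity gives no reason that $\alpha \leq \beta$ should be equivalent to $i_x(\alpha) \leq i_y(\beta)$. What actually makes this work is that both $i_x(\alpha)$ and $i_y(\beta)$ coincide with $j(\alpha), j(\beta)$ for a single common map $j$, namely the iteration map of the ambient $w^\#$ (from $(w^\#)_n$ to an iterate with critical point $\kappa^u_n$). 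Once you see this, you are doing exactly what the paper does: encode the entire comparison --- both the $x$-term and the $y$-term --- as one $\rSigma_1$ statement over $w^\#_n$ (possible since $x,y,x^\#,y^\#$ all lie in $w^\#_n$), and transport it via elementarity of iterating $w^\#$.

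Your step 2 has the same underlying problem. The statement $\tau^{P^{u^\#_n}_x}(\vec{\xi}) \leq \tau^{P^{u^\#_n}_y}(\vec{\xi})$ is not a statement in $L[x,y]$: it involves iterates of $x^\#$ and $y^\#$, which are not in $L[x,y]$. So $(x,y)$-indiscernibility and ``partial isomorphisms of $L[x,y]$'' do not apply. One can repair this by working over $L[(x,y)^\#]$ (or better, over $w^\#$), where the parameters $\vec{\kappa}^w_n, \vec{\kappa}^u_n, \kappa^u_n$ really are indiscernibles --- but again this is the paper's idea. The paper in fact bypasses your two-step decomposition: it shows directly that the comparison over $w^\#_n$ with $\vec{\kappa}^w_n$ is equivalent to a parameter-free $\rSigma_1$ sentence over $w^\#$, hence holds at any iterate with any tuple of $w$-indiscernibles; then one just picks a common iterate and a common indiscernible tuple for $w$ and $z$.
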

\begin{proof}
Let us consider the case that $n=1$; the general case is very similar.
Let $\mu$ be any $w$-indiscernible
and $Q_w$ be an iterate of $w^\#$
whose active extender has critical point $>\mu$. Let $\kappa=\crit(F^{w^\#})$.

\begin{sclm}We have
 \[ \tau^{P^{w^\#_1}_x}(\kappa)\leq
  \tau^{P^{w^\#_1}_y}(\kappa)\iff\tau^{P^{Q_w}_x}(\mu)\leq\tau^{P^{Q_w}_y}(\mu).
 \]\end{sclm}
 \begin{proof}
Since $\tau$ is an $\rSigma_1$ term
(of $\Ll_{\rpm}$),
we may easily fix an $\rSigma_1$ formula
 $\psi$ of $\Ll_{\rpm}$ such that
 \[ \tau^{P^{w^\#_1}_x}(\kappa)\leq
  \tau^{P^{w^\#_1}_y}(\kappa)\iff w^\#_1\sats\psi(\kappa)\]
  and
 \[ \tau^{P^{Q_w}_x}(\mu)\leq\tau^{P^{Q_w}_y}(\mu)
 \iff Q_w\sats\psi(\mu).
 \]
But then there is an $\rSigma_1$ formula $\psi'$ of no free variables such that
\[ w_1^\#\sats\psi(\kappa)\iff w^\#\sats\psi' \]
and letting $Q'_w$ be the iterate of $w^\#$ whose active extender has critical point $\mu$, then
\[ Q_w\sats\psi(\mu)\iff Q'_w\sats\psi'.\]
But the iteration  map $i:w^\#\to Q_w'$ is $\rSigma_1$-elementary, so
\[ w^\#\sats\psi'\iff Q'_w\sats\psi',\]
which suffices.
\end{proof}

It is likewise with $w$ replaced by $z$ and $Q_w$ replaced by any such $Q_z$, and so we can take $\mu$ and $\crit(F^{Q_w})=\crit(F^{Q_z})$ to be both $w$- and $z$-indiscernibles. But then $P^{Q_w}_x=P^{Q_z}_x$ and $P^{Q_w}_y=P^{Q_z}_y$, and since $\mu$ was in common, the claim follows.
\end{proof}

\begin{clm}
Each $\leq_\tau$ is a prewellorder on $A$.
\end{clm}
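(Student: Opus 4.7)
The plan is to deduce all four prewellorder axioms directly from the preceding independence claim together with the fact that $\leq$ on the ordinals is a prewellorder. The key observation is that Claim \ref{clm:MS_norm_independence} lets us rewrite the definition of $\leq_\tau$ in the form: for $x,y\in A$,
\[ x\leq_\tau y\iff \tau^{P^{w^\#_n}_x}(\vec{\kappa}^w_n)\leq\tau^{P^{w^\#_n}_y}(\vec{\kappa}^w_n) \]
for \emph{some} (equivalently, all) $w\in{^\om}\om$ with $(x,y)^\#\leq_T w$, and $x<_\tau y$ iff the corresponding strict inequality holds on the ordinal side. With this reformulation the claim becomes essentially a pullback of the linear order of $\OR$.

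For reflexivity, totality and transitivity, one fixes, in each case, a single real $w$ Turing above the sharp of the finitely many reals involved, and reads off the desired relation from the corresponding relation between the terms $\tau^{P^{w^\#_n}_{\cdot}}(\vec{\kappa}^w_n)$, using trichotomy and transitivity of $\leq$ on $\OR$. The one detail to check is that the choice of $w$ is legitimate for \emph{both} pairs in the transitivity argument: given $x\leq_\tau y$ and $y\leq_\tau y'$, pick any $w$ with $(x,y,y')^\#\leq_T w$; then $(x,y)^\#\leq_T w$ and $(y,y')^\#\leq_T w$, so Claim \ref{clm:MS_norm_independence} lets us evaluate both inequalities on the common triple $(x,y,y')$ at the single $w$, and transitivity of $\leq$ on $\OR$ finishes the argument.

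For wellfoundedness of the strict part, suppose for contradiction we had an infinite descending sequence $x_0>_\tau x_1>_\tau x_2>_\tau\cdots$ in $A$. Code the sequence $\langle x_i\rangle_{i<\om}$ by a single real $u$ and choose $w$ with $u^\#\leq_T w$. Then for each pair $(x_i,x_{i+1})$ we have $(x_i,x_{i+1})^\#\leq_T w$, so by Claim \ref{clm:MS_norm_independence}
\[ \tau^{P^{w^\#_n}_{x_0}}(\vec{\kappa}^w_n)>\tau^{P^{w^\#_n}_{x_1}}(\vec{\kappa}^w_n)>\tau^{P^{w^\#_n}_{x_2}}(\vec{\kappa}^w_n)>\cdots, \]
an infinite descending sequence of ordinals, a contradiction.

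I expect no real obstacle here: the substantive content of the argument has already been shouldered by Claim \ref{clm:MS_norm_independence}, which absorbed the actual indiscernibility/homogeneity considerations. The only mild point to be careful about is confirming that the ``only if'' direction of the reformulation of $\leq_\tau$ (i.e.\ that $x\leq_\tau y$ is witnessed by \emph{all} admissible $w$, not merely one) is exactly what the independence claim delivers, and similarly for the strict version $<_\tau$; but both fall immediately out of the equivalence already proved.
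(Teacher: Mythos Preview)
Your proof is correct and follows essentially the same approach as the paper. The paper's version is slightly more compressed: it observes that it suffices to check that $\leq_\tau\rest\bar{A}$ is a prewellorder for every countable $\bar{A}\sub A$, and then picks a single $z\geq_T x^\#$ for some $x$ coding all of $\bar{A}$, which handles all the axioms at once; you instead treat each axiom separately, but the content is the same reduction via Claim~\ref{clm:MS_norm_independence} to ordinal comparison in a fixed background.
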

\begin{proof}
 It suffices to see that for each countable $\bar{A}\sub A$,
 $\leq_\tau\rest\bar{A}$ is a prewellorder.
 But note that this follows from Claim \ref{clm:MS_norm_independence},
 by taking $z\geq_T x^\#$ for some $x$ which codes all reals in $\bar{A}$.
\end{proof}

Now let $x_n\to x\mod\vec{\varphi}$,
where $\vec{\varphi}$ includes all the norms we have introduced.
This produces a limit model $M_\infty$, which is wellfounded,
so $M_\infty$ is an $x$-premouse
for some real $x$, is a putative $x^\#$ (satisfies the obvious first order properties of sharps), and satisfies ``$x\in A$''
(note this can be expressed naturally as a first order statement over sharps).

\begin{clm}\label{clm:M_infty=x^sharp}
 $M_\infty=x^\#$.
\end{clm}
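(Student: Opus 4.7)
The plan is to show $M_\infty$ is $(0,\om_1+1)$-iterable; together with wellfoundedness and the fact that its $\Ll_{\rpm}$-theory $T_\infty$ contains the first-order axioms characterizing a sharp over $\dot x$ (and ``$\dot x \in A$''), the standard uniqueness of sharps will yield $M_\infty = x^\#$. Since a sharp carries a single active measure, iteration is linear, and so iterability reduces to showing that the $n$-th iterate $(M_\infty)_n$ is wellfounded for every $n<\om$.

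Fix $n$. Let $i_n\colon M_\infty \to (M_\infty)_n$ be the iteration map, with critical-point sequence $\vec{\kappa}=(\kappa_0,\ldots,\kappa_{n-1})$. Because $M_\infty = \Hull^{M_\infty}(\emptyset)$, elementarity of $i_n$ together with the standard calculation gives $(M_\infty)_n = \Hull^{(M_\infty)_n}(\vec{\kappa})$, and so every ordinal of $(M_\infty)_n$ has the form $\tau^{(M_\infty)_n}(\vec{\kappa})$ for some $n$-ary $\rSigma_1$ ordinal term $\tau$ of $\Ll_{\rpm}$. I will exhibit an order-preserving map $\pi\colon \OR^{(M_\infty)_n}\to\OR$, which forces $(M_\infty)_n$ to be wellfounded.

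The key observation is that the comparison
\[ \tau^{(M_\infty)_n}(\vec{\kappa})<\sigma^{(M_\infty)_n}(\vec{\kappa}) \]
is equivalent to an $\Ll_{\rpm}$-sentence $\psi_{\tau,\sigma}$ about $M_\infty$ alone, because the sharp internally computes its finite ultrapower iterates by its top measure (via {\L}o{\'s} applied to functions inside the sharp). Consequently $M_\infty\sats\psi_{\tau,\sigma}$; and since $T_\infty$ is the stable $\Ll_{\rpm}$-theory of the $M_{x_k}$, Łoś for the ultraproduct $M_\infty$ gives $M_{x_k}\sats\psi_{\tau,\sigma}$ eventually, i.e., $\tau^{[n]}(x_k)<\sigma^{[n]}(x_k)$ eventually. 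Combined with stable limits of the ordinal norms attached to the prewellorders $\leq_\tau$, this delivers strict inequality of the associated stable real ordinals and hence order-preservation of $\pi$.

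The main technical obstacle is norm bookkeeping: the rank norm of an individual $\leq_\tau$ compares different reals at the same term, not different terms at the same real. The natural remedy is to strengthen Claim \ref{clm:MS_norm_independence} to show that ``mixed'' comparisons $\tau^{[n]}(x) \lessgtr \sigma^{[n]}(y)$ are also independent of the auxiliary $z$, and then use a single joint prewellorder on $A\cross\{n\text{-ary ordinal terms}\}$ whose rank norm is included in $\vec{\varphi}$. This ensures the stable limits align with the pointwise {\L}o{\'s} transfer so that $\pi$ is order-preserving. Once this is verified for every $n$, iterability of $M_\infty$ follows, and $M_\infty = x^\#$.
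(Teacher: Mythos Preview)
Your overall strategy---show $M_\infty$ iterable by exhibiting an order-preserving map from the ordinals of each iterate into $\OR$---matches the paper's, but there are two genuine gaps.

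First, the reduction to finite iterates is not justified. For a putative sharp, $(0,\omega_1+1)$-iterability requires that \emph{every} iterate (of countable length) be wellfounded; there is no general theorem that wellfoundedness of all $(M_\infty)_n$ for $n<\omega$ implies wellfoundedness of $(M_\infty)_\lambda$ for limit $\lambda$. The paper accordingly fixes an arbitrary ordinal $\lambda$ and works with the $\lambda$th iterate $N$, whose ordinals are of the form $\tau^N(\kappa_{\alpha_0},\ldots,\kappa_{\alpha_{n-1}})$ for $\{\alpha_0,\ldots,\alpha_{n-1}\}\in[\lambda]^n$ and $\Sigma_1$ terms $\tau$ of arbitrary finite arity.

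Second, and more seriously, your proposed $\pi$ is not defined, and your ``remedy'' changes the problem. You correctly observe that the rank of $x_k$ in $\leq_\tau$ and its rank in $\leq_\sigma$ bear no useful relation to whether $\tau^{[n]}(x_k)<\sigma^{[n]}(x_k)$, so rank values cannot serve as the target of $\pi$. But introducing a new joint prewellorder on $A\times\{\text{terms}\}$ into $\vec{\varphi}$ modifies the scale; the task is to prove the claim for the scale as already defined. The paper avoids this entirely: it fixes a real $z$ with $\langle x_k\rangle^\#,\,x^\#\leq_T z$ and maps into the \emph{actual} ordinals of $(z^\#)_\lambda$, setting
\[
\pi\bigl(\tau^N(\vec{\kappa})\bigr)=\lim_{k\to\omega}\tau^{P^{(z^\#)_\lambda}_{x_k}}(\vec{\mu}),
\]
where $\vec{\mu}$ is the tuple of $z$-indiscernibles with the same index set. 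The limit exists because convergence modulo $\leq_\tau$ (same term, varying real) forces the sequence to be eventually constant. Order-preservation between \emph{different} terms $\tau_0,\tau_1$ needs no new norm: the inequality $\tau_0^N(\ldots)\leq\tau_1^N(\ldots)$ is (via indiscernibility) equivalent to a single $\Sigma_1$ sentence $\psi$ over $M_\infty$; since $\psi\in T_\infty$, eventually $(x_k)^\#\models\psi$, which yields the corresponding inequality in $P^{(z^\#)_\lambda}_{x_k}$ for all large $k$, and hence in the limit. Thus the existing theory norms already handle cross-term comparison; the ordinal norms $\leq_\tau$ are only needed to make each individual limit exist.
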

\begin{proof}
 We need to see that $M_\infty$ is iterable. Let $N$ be some iterate of $M_\infty$, via the iteration of length $\lambda$; we show that $N$ is wellfounded.
 Let $\left<\kappa_\alpha\right>_{\alpha<\lambda}$ be the sequence of critical points of the iteration;
 then $N=\Hull_1^N(\{\kappa_\alpha\}_{\alpha<\lambda})$.
 
 Let $z$ be some real
 such that $z\geq x^\#$ and $z\geq\left<x_n\right>_{n<\om}^\#$. Let $z^\#_\lambda$ be the $\lambda$th iterate of $z^\#$,
 and $\left<\mu_\alpha\right>_{\alpha<\lambda}$ be the enumeration of the critical points used in the iteration. We define a map
 \[\pi:\OR^N\to\OR^{(z^\#)_\lambda}, \]as follows.
For each $\Sigma_1$ ordinal term $\tau$ of arity $n$,
 and each $\{\alpha_0,\ldots,\alpha_{n-1}\}\in[\lambda]^n$,
 letting $\vec{\kappa}=(\kappa_{\alpha_0},\ldots,\kappa_{\alpha_{n-1}})$, 
 set
 \[ \pi(\tau^N(\vec{\kappa}))=
\lim_{k\to\om}  \tau^{P^{(z^\#)_\lambda}_{x_k}}(\vec{\mu}),
 \]
 where $\vec{\mu}=(\mu_{\alpha_0},\ldots,\mu_{\alpha_{n-1}})$. Because we incorporated the prewellorders $\leq_\tau$ for each $\tau$, and by indiscernibility, it is straightforward to see that the limit above exists in the sense that the sequence $\left<\tau^{z^\#_\lambda}_{x_k}(\vec{\mu})\right>_{k<\om}$ is eventually constant. This map is well-defined and order-preserving. For let $\tau_0,\tau_1$ be $\Sigma_1$ ordinal terms
 respectively of arities $n_0,n_1$,
 and let $a_i\in[\lambda]^{n_i}$ for $i=0,1$.
 Let $a=a_0\cup a_1$.
 Let $a_0=\{\alpha_{00},\ldots,\alpha_{0,k_0-1}\}$
  with $\alpha_{0j}<\alpha_{0,j+1}$,
  and $a_1=\{\alpha_{10},\ldots,\alpha_{1,k_1-1}\}$ likewise.
 Suppose that
 \[ \tau^N(\kappa_{\alpha_{00}},\ldots,\kappa_{\alpha_{0,i_0-1}})\leq\tau^N(\kappa_{\alpha_{10}},\ldots,\kappa_{\alpha_{1,k_1-1}}).\]
 Let $b=|a|$ and let $\sigma:b\to a$
 be the order-preserving bijection
 and $b_i=\sigma^{-1}``a_i$, for $i=0,1$.
 Let $\sigma(\beta_{0j})=\alpha_{0j}$
 and $\sigma(\beta_{1j})=\alpha_{1j}$.
 
 Write $(M_\infty)_k$ for the $k$th iterate of $M_\infty$, and for $\beta<k$, let $\nu_\beta=\crit(F^{(M_\infty)_\beta})$.
 Then $T_\infty$ contains the natural statement
$\psi$ asserting that
 \[ \tau^{(M_\infty)_{b}}(\nu_{\beta_{00}},\ldots,\nu_{\beta_{0,i_0-1}})\leq\tau^{(M_\infty)_b}(\nu_{\beta_{10}},\ldots,\nu_{\beta_{1,i_1-1}}).
 \]
This statement $\psi$ in fact $\Sigma_1$,
since $\tau$ is a $\Sigma_1$ term.
So for all sufficiently large $n$,
$(x_n)^\#\sats\psi$. Therefore and likewise, for all such $n$,
for all ordinals $\xi\geq b$ and all $c\in[\xi]^{b}$, letting $\varsigma:b\to c$ be the order-preserving bijection
and $\gamma_{ij}=\varsigma(\beta_{ij})$
and $\eta_\gamma=\crit(F^{(x_n^\#)_{\gamma}})$ for $\gamma<\xi$, we have
 \[ \tau^{(x_n^\#)_{\xi}}(\eta_{\gamma_{00}},\ldots,\eta_{\gamma_{0,i_0-1}})\leq\tau^{(x_n^\#)_\xi}(\eta_{\gamma_{10}},\ldots,\eta_{\gamma_{1,i_1-1}}).
 \]
 It follows that
 \[ 
\lim_{k\to\om}  \tau_0^{P^{z^\#_\lambda}_{x_k}}(\mu_{\alpha_{00}},\ldots,\mu_{\alpha_{0,k_0-1}})\leq
\lim_{k\to\om}  \tau_1^{P^{z^\#_\lambda}_{x_k}}(\mu_{\alpha_{10}},\ldots,\mu_{\alpha_{1,k_1-1}}),\]
as desired.
\end{proof}

Since $M_\infty=x^\#$ and $M_\infty\sats$``$x\in A$'', we have $x\in A$, verifying that we have a semiscale. It remains to verify:

\begin{clm}
 The semiscale is lower semicontinuous.
\end{clm}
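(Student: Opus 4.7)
The plan is to verify lower semi-continuity separately for each of the three families of norms making up $\vec{\varphi}$: the theory norms $\varphi_\psi$, the ordinal norms $\varphi^{\OR}_\tau$, and the prewellorder norms arising from $\leq_\tau$.

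For the theory norms, the argument is immediate: if $\varphi_\psi(x)=1$, then $M_x\sats\psi$, and since $M_x=M_\infty$ with $\Th^{M_\infty}=T_\infty$, we have $\psi\in T_\infty$, so $\varphi_\psi(x_k)=1$ for all sufficiently large $k$, giving $\lim_{k\to\om}\varphi_\psi(x_k)=1=\varphi_\psi(x)$.

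For the ordinal norms, note that in the ultraproduct representation of $M_\infty$ the element $\tau^{M_\infty}$ is the class $[f_{\psi_\tau}]$ of the function $k\mapsto\varphi^{\OR}_\tau(x_k)$. Define $g:\OR^{M_\infty}\to\OR$ by $g([f])=\lim_{k\to\om} f(k)$; because each such sequence is eventually constant by convergence mod $\vec{\varphi}$, $g$ is well-defined and strictly order-preserving (if $[f]<_{M_\infty}[f']$ then $f(k)<f'(k)$ eventually, so their eventual constant values satisfy the same strict inequality). The Mostowski collapse of the wellfounded $M_\infty=M_x$ identifies $[f_{\psi_\tau}]$ with the actual ordinal $\varphi^{\OR}_\tau(x)$, and the Mostowski rank is the minimum image among strictly order-preserving maps to the ordinals, so $\varphi^{\OR}_\tau(x)\leq g([f_{\psi_\tau}])=\lim_{k\to\om}\varphi^{\OR}_\tau(x_k)$.

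For the prewellorder norms, fix $\tau$ of arity $n$ and a real $z$ with $z\geq_T x^\#$ and $z\geq_T x_k^\#$ for all $k$. Setting $\chi(y)=\tau^{P^{z^\#_n}_y}(\vec{\kappa}^z_n)$, Claim \ref{clm:MS_norm_independence} gives $y\leq_\tau y'\iff\chi(y)\leq\chi(y')$, so $\mathrm{rank}_{\leq_\tau}(y)$ is the rank of $\chi(y)$ in $\chi[A]\sub\OR$. Let $\gamma=\lim_{k\to\om}\mathrm{rank}_{\leq_\tau}(x_k)$ and let $\delta_\gamma$ be the $\gamma$-th element of $\chi[A]$, so $\chi(x_k)=\delta_\gamma$ eventually; the target becomes $\chi(x)\leq\delta_\gamma$. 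Writing $P^{z^\#_n}_x=(x^\#)_{\beta_n(x)}$ where $\kappa^x_{\beta_j(x)}=\kappa^z_j$, so that $\chi(x)=\tau^{(x^\#)_{\beta_n(x)}}(\vec{\kappa}^x_{\vec{\beta}(x)})$, I would apply the $\pi$-construction of Claim \ref{clm:M_infty=x^sharp} at length $\beta_n(x)$ and positions $\vec{\beta}(x)=(\beta_0(x),\ldots,\beta_{n-1}(x))$ to obtain an order-preserving $\pi':\OR^{(x^\#)_{\beta_n(x)}}\to\OR^{(z^\#)_{\beta_n(x)}}$ with
\[ \pi'(\chi(x))=\lim_{k\to\om}\tau^{P^{(z^\#)_{\beta_n(x)}}_{x_k}}(\vec{\kappa}^z_{\vec{\beta}(x)}). \]
Using Claim \ref{clm:MS_norm_independence} together with the canonical iteration embedding $(z^\#)_n\to(z^\#)_{\beta_n(x)}$ to identify this limit with $\delta_\gamma$, the inequality $\pi'(\alpha)\geq\alpha$ for any order-preserving $\pi'$ on ordinals then delivers $\chi(x)\leq\delta_\gamma$.

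The principal obstacle is the final identification in the prewellorder case: one must carefully align, via iteration embeddings between iterates of $z^\#$ of different lengths, the eventually-constant values of $\tau^{P^{(z^\#)_{\beta_n(x)}}_{x_k}}(\vec{\kappa}^z_{\vec{\beta}(x)})$ with $\lim_{k\to\om}\chi(x_k)=\delta_\gamma$, using Claim \ref{clm:MS_norm_independence} to transfer comparisons across different choices of indiscernible position tuples. The theory and ordinal cases are by comparison routine.
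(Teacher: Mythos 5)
Your treatment of the theory norms is immediate and matches the paper, and your argument for the ordinal norms $\varphi^{\OR}_\tau$ — that the ultraproduct representation gives an order-preserving map $g$ on $\OR^{M_\infty}$ taking $[f_{\psi_\tau}]$ to $\lim_k\varphi^{\OR}_\tau(x_k)$, and Mostowski-rank minimality then yields the inequality — is a correct and clean way to say what the paper does implicitly (it is the same map $\pi$ as in \S\ref{subsec:Pi^1_1}).

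For the prewellorder norms $\leq_\tau$, however, there is a genuine gap, and it sits exactly at the point you flag as ``the principal obstacle.'' You set $\chi(y)=\tau^{P^{z^\#_n}_y}(\vec{\kappa}^z_n)$ and apply the $\pi$-construction of Claim \ref{clm:M_infty=x^sharp} at length $\lambda=\kappa^z_n$ with position tuple $\vec{\alpha}=(\kappa^z_0,\ldots,\kappa^z_{n-1})$. The map $\pi'$ sends $\tau^N(\kappa_{\alpha_0},\ldots,\kappa_{\alpha_{n-1}})\mapsto\lim_k\tau^{P^{(z^\#)_\lambda}_{x_k}}(\mu_{\alpha_0},\ldots,\mu_{\alpha_{n-1}})$, where the $\kappa_\beta$'s are critical points from iterating $x^\#$ (the $x$-indiscernibles) and the $\mu_\beta$'s are critical points from iterating $z^\#$ (the $z$-indiscernibles). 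On the domain side you indeed have $\kappa_{\kappa^z_j}=\kappa^x_{\kappa^z_j}=\kappa^z_j$, so the input is $\chi(x)$; but on the range side you get $\mu_{\kappa^z_j}=\kappa^z_{\kappa^z_j}$, which is \emph{not} $\kappa^z_j$ — the first few $z$-indiscernibles are not fixed points of the enumeration $\beta\mapsto\kappa^z_\beta$ (indeed there are no $z$-indiscernibles below $\kappa^z_0$ at all). So $\pi'(\chi(x))=\lim_k\tau^{P^{(z^\#)_{\kappa^z_n}}_{x_k}}(\kappa^z_{\kappa^z_0},\ldots,\kappa^z_{\kappa^z_{n-1}})$ has the wrong parameter tuple: it equals $\lim_k\chi''(x_k)$ for a \emph{different} realization $\chi''$ of $\leq_\tau$, not $\lim_k\chi(x_k)=\delta_\gamma$. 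Since the $\gamma$-th element of $\chi''[A]$ can differ from the $\gamma$-th element of $\chi[A]$, the inequality $\chi(x)\leq\pi'(\chi(x))$ does not deliver $\chi(x)\leq\delta_\gamma$. Neither Claim \ref{clm:MS_norm_independence} (which only transfers \emph{comparisons}, not ordinal values) nor the iteration embedding $(z^\#)_n\to(z^\#)_{\kappa^z_n}$ (whose critical point is $\kappa^z_n$, so it fixes the $\kappa^z_j$'s rather than moving them to $\kappa^z_{\kappa^z_j}$) bridges this mismatch.

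The missing idea is the paper's initial substitution of $z^\#$ for $z$. By Claim \ref{clm:MS_norm_independence} one may rewrite the goal using $\chi(y)=\tau^{P^{(z^{\#\#})_n}_y}(\vec{\kappa}^{z^\#}_n)$, with parameters the first $n$ $z^\#$-indiscernibles. These are limits of $z$-indiscernibles that are themselves cardinals, hence simultaneous fixed points of \emph{both} enumerations: $\kappa^x_{\kappa^{z^\#}_i}=\kappa^{z^\#}_i$ and $\kappa^z_{\kappa^{z^\#}_i}=\kappa^{z^\#}_i$. Running your argument with this choice makes $\vec{\kappa}=\vec{\mu}=\vec{\xi}$ on the nose, so $\pi'(\chi(x))=\lim_k\chi(x_k)=\delta_\gamma$ exactly, and the order-preservation of $\pi'$ finishes the proof. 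Your overall plan is sound; it just needs this one additional ``move up to $z^\#$'' step before applying $\pi'$.
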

\begin{proof}
For the theory norms this is immediate (and like in the $\Pi^1_1$ case). So let $\tau$ be a $\Sigma_1$ ordinal term of arity $n$;
we must see that $x\leq_{\tau}x_k$ for all sufficiently large $k$, so considering the definition of $\leq_\tau$ and Claim \ref{clm:MS_norm_independence}, it suffices to fix some $z\in{^\om}\om$ with $\left<x_k\right>_{k<\om}^\#\leq_T z$ and $x^\#\leq_Tz$ and show that
 \begin{equation}\label{eqn:xi-vec_goal} \tau^{P^{(z^{\#\#})_n}_x}(\vec{\xi})\leq\lim_{k\to\om}\tau^{P^{(z^{\#\#})_n}_{x_k}}(\vec{\xi}),\end{equation}
 where $\vec{\xi}=\vec{\kappa}^{z^\#}_n=(\kappa^{z^\#}_0,\ldots,\kappa^{z^\#}_{n-1})$ is the increasing enumeration of the first $n$ $z^\#$-indiscernibles (as opposed to just $z$-indiscernibles). But let $\lambda=\kappa_n^{z^\#}=\crit((z^{\#\#})_n)$, the $n$th $z^\#$-indiscernible. So the $\lambda$th iterate $N$ of $M_\infty=x^\#$ is just $P^{(z^{\#\#})_n}_x$, and the $\lambda$th iterate $((x_k)^\#)_\lambda$ of $(x_k)^\#$
 is just $P^{(z^{\#\#})_n}_{x_k}$. Let $\pi:\OR^N\to\OR^{(z^{\#})_\lambda}$
 be the map defined in the proof of Claim \ref{clm:M_infty=x^sharp} (this, as there, is working with the $\lambda$th iterate of $z^\#$,
 not that of $z^{\#\#}$; note that $(z^\#)_\lambda\in(z^{\#\#})_n$,
 and moreover, $(z^\#)_\lambda$ has cardinality $\lambda$ in $(z^{\#\#})_n$).
 Let $\left<\kappa_\alpha\right>_{\alpha<\lambda}$
 and $\left<\mu_\alpha\right>_{\alpha<\lambda}$ also be as there. 
 Since $\pi$ is order-preserving, for all $\{\alpha_0,\ldots,\alpha_{n-1}\}\in[\lambda]^n$, we have
 \begin{equation}\label{eqn:kappa-vec,mu-vec_inequality}\tau^{P^{(z^{\#\#})_n}_x}(\vec{\kappa})=\tau^N(\vec{\kappa})\leq\pi(\tau^N(\vec{\kappa}))=\lim_{k\to\om}\tau^{P^{(z^\#)_\lambda}_{x_k}}(\vec{\mu})=\lim_{k\to\om}\tau^{P^{(z^{\#\#})_n}_{x_k}}(\vec{\mu}) \end{equation}
 where $\vec{\kappa}=(\kappa_0,\ldots,\kappa_{\alpha_{n-1}})$
 and $\vec{\mu}=(\mu_{\alpha_0},\ldots,\mu_{\alpha_{n-1}})$.  (The first equality here holds because as mentioned earlier,
 we have $P^{(z^{\#\#})_n}_x=N$,
 and the last equality because  $P^{(z^\#)_\lambda}_{x_k}=P^{(z^{\#\#})_n}_{x_k}$.) But now $\vec{\xi}=(\kappa_0^{z^\#},\ldots,\kappa_{n-1}^{z^\#})$,
 and note that since  $x^\#\leq_T z$
 and the $\kappa_\alpha$'s are the critical points given by iterating $x^\#$,
 we have $\kappa_{\kappa_i^{z^\#}}=\kappa_i^{z^\#}$, and since the $\mu_\alpha$'s
 are the critical points given by iterating $z^\#$ (that is, the $\kappa_\beta^{z}$s),
 we also have $\mu_{\kappa_i^{z^\#}}=\kappa_i^{z^\#}$. So taking $\{\alpha_0,\ldots,\alpha_{n-1}\}=\{\kappa^{z^\#}_0,\ldots,\kappa_{n-1}^{z^\#}\}$,
 we get $\vec{\kappa}=(\kappa_0^{z^\#},\ldots,\kappa_{n-1}^{z^\#})=\vec{\xi}=\vec{\mu}$. Therefore line (\ref{eqn:xi-vec_goal})
 follows from line (\ref{eqn:kappa-vec,mu-vec_inequality}). 
\end{proof}

This completes the proof that we have defined a scale. The fact that it is $\Delta^1_3$-definable is because $\{x^\#\}$ is a $\Pi^1_2$-singleton, uniformly in $x$.

\subsection{The prewellordering property for $\Pi^1_3$, $1$-small mice and $M_1$}\label{sec:Pi^1_3}
The prewellordering property for $\Pi^1_3$ (that is, $\mathrm{PWO}(\Pi^1_3)$) was proved under determinacy assumptions ($\Det(\bfDelta^1_2)$) independently by Martin and Moschovakis.\footnote{
In fact they proved a much more general fact, the First Periodicity Theorem \cite[Theorem 6B.1]{mosch}. This was developed from earlier ideas due to Blackwell, Addison and Martin;
see \cite[Footnote 6H.9]{mosch}.}
The next key hint towards our scale construction comes from an alternate proof of $\PWO(\Pi^1_3)$,
which instead of any determinacy assumptions/arguments, uses mouse existence assumptions and an analysis of $L[\es,x]$-constructions. We will describe this proof in this section. Woodin first proved $\PWO(\Pi^1_3)$ using inner model theoretic techniques, but his argument has not been published (it is mentioned in \cite[between Theorems 3.5 and 3.6]{steel_games_and_scales}); the argument we give here was found in approximately 2011 by the author.\footnote{After developing some of the material in the paper, including the construction in this section, the author asked Neeman about his work on the topic. Neeman then sent the slides to a talk dated 1999. The $0$th norm mentioned there is the direct analogue of the norm we use in the present section.} We will also develop some more descriptive set theory
for $\Pi^1_3$ using only inner model theoretic techniques. The material discussed here will also be used in our eventual proof of the scale property for $\Pi^1_3$
(that is, $\Scale(\Pi^1_3)$).

For this subsection  we make the following assumption:

\begin{ass}\label{ass:om_1^L[x]} Throughout this subsection (\S\ref{sec:Pi^1_3}) we assume that $\om_1^{L[x]}<\om_1$ for all 
reals $x$.
\end{ass}

We will restrict our attention to $1$-small premice in this subsection.
Because of this and by Assumption \ref{ass:om_1^L[x]}, $(n,\omega_1)$-iterability will be enough to ensure
termination of comparisons, where more generally $(n,\omega_1+1)$-iterability would be required. (This takes a standard  short argument,
like that in the proof of Lemma \ref{lem:1-small_comparison} below.)

\subsubsection{$\Pi^1_3$ and $1$-small mice}\label{subsubsec:Pi^1_3_and_1-small}

We will begin with some  material regarding the complexity of $1$-small mice and their iteration strategies.
The arguments in this subsubsection (\S\ref{subsubsec:Pi^1_3_and_1-small}) are standard and well-known
(see especially \cite{projwell}), but will be important in the proof of $\mathrm{PWO}(\Pi^1_3)$,
and later, of $\mathrm{Scale}(\Pi^1_3)$, so for self-containment, we discuss them.

\begin{dfn} Let $\Tt$ be a countable putative $n$-maximal iteration tree on a countable $n$-sound premouse. We say that
$\Tt$ is \emph{uniquely-$\Pi^1_1$-guided}
iff for every limit $\lambda<\lh(\Tt)$, there is $Q\ins M^\Tt_\lambda$
such that $Q=\J_\alpha(M(\Tt))$ for some ordinal $\alpha$ and $Q$ is a Q-structure for 
$M(\Tt\rest\lambda)$ and there is no $\Tt\rest\lambda$-cofinal branch $b\neq[0,\lambda)_\Tt$ with 
$M^{\Tt\rest\lambda}_b\ins Q$. 

For $\alpha\in\OR$ we say a structure $P$ for the language of set theory is 
\emph{$\alpha$-wellfounded} iff either $\OR^P$ is wellfounded or $\alpha$
is a subset of $\wfp(\OR^P)$, the wellfounded part of $\OR^P$.

Let $P$ be a countable $n$-sound $1$-small premouse.

We say that $P$ is \emph{$\Pi^1_2$-$n$-iterable}
iff for every $\alpha<\om_1$ and for every countable putative $n$-maximal iteration tree $\Tt$ on 
$P$ such that $\Tt$
is uniquely-$\Pi^1_1$-guided, (i) if $\Tt$ has successor length then $M^\Tt_\infty$
is wellfounded, and (ii)
if $\Tt$ has limit length then there is a $\Tt$-cofinal branch $b$
such that $M^\Tt_b$ is $\alpha$-wellfounded.

Define \emph{small-$\Pi^1_3$-$n$-iterable}
by replacing ``$\alpha$-wellfounded'' above with ``wellfounded''
(and dropping the ``for all $\alpha<\om_1$'').

If $P$ is $\om$-sound, we say that $P$ is \emph{$\Pi^1_2$-iterable}
iff $P$ is $\Pi^1_2$-$\om$-iterable, and likewise small-$\Pi^1_3$-iterable.
\end{dfn}
\begin{rem}
It is easy to see that unique-$\Pi^1_1$-guidedness is $\Pi^1_1$ and $\Pi^1_2$-$n$-iterability is 
$\Pi^1_2$, and small-$\Pi^1_3$-$n$-iterability is 
$\Pi^1_3$ (in natural codes).
And small-$\Pi^1_3$-$n$-iterability trivially implies $\Pi^1_2$-$n$-iterability.
\end{rem}
\begin{lem}
Let $P$ be a countable  $(n,\om_1)$-iterable $n$-sound 1-small premouse.
Then $P$ is small-$\Pi^1_3$-$n$-iterable. In fact, fixing any $(n,\om_1)$-iteration strategy 
$\Sigma$
for $P$, then every countable putative $n$-maximal uniquely-$\Pi^1_1$-guided tree $\Tt$
on $P$ is via $\Sigma$.
\end{lem}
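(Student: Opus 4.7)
The plan is to show, by induction on $\lh(\Tt)$, that every countable uniquely-$\Pi^1_1$-guided putative $n$-maximal tree $\Tt$ on $P$ is via $\Sigma$. Small-$\Pi^1_3$-$n$-iterability then follows at once from the $(n,\om_1)$-iterability hypothesis, since $\Sigma$-trees have only wellfounded models. The only nontrivial induction step is at a limit stage $\lambda<\lh(\Tt)$: assuming $\Tt\rest\lambda$ is via $\Sigma$, I must show that $c = [0,\lambda)_\Tt$ equals $b = \Sigma(\Tt\rest\lambda)$.

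Since 1-smallness is preserved along normal iteration trees, both $M^{\Tt\rest\lambda}_b$ and $M^{\Tt\rest\lambda}_c = M^\Tt_\lambda$ are 1-small; both are also wellfounded ($b$ by choice of $\Sigma$, $c$ because $M^\Tt_\lambda$ already carries the Q-structure $Q$ provided by the hypothesis). Writing $\delta = \delta(\Tt\rest\lambda)$, each model admits a minimal Q-structure for $M(\Tt\rest\lambda)$; call them $Q_b$ and $Q_c$, with $Q_c = Q$ by hypothesis. A standard 1-smallness argument confines both candidates to the $\J$-hierarchy over $M(\Tt\rest\lambda)$: any total extender on the sequence of such a Q-structure with critical point $\geq\delta$, combined with the Woodin-witnessing extenders inside $M(\Tt\rest\lambda)$, would manufacture a Woodin cardinal in the ambient iterate, contradicting 1-smallness. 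Hence $Q_b$ and $Q_c$ both have the form $\J_{\alpha'}(M(\Tt\rest\lambda))$, and minimality of Q-structures sitting in a common hierarchy forces $Q_b = Q_c = Q$.

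It follows that $Q$ appears as a segment of $M^{\Tt\rest\lambda}_b$. If $b \neq c$, then $b$ is a second cofinal branch of $\Tt\rest\lambda$ whose direct limit realises $Q$, contradicting the uniqueness clause of unique-$\Pi^1_1$-guidedness at $\lambda$. Hence $b = c$, completing the induction and yielding both the ``in fact'' clause and (via $(n,\om_1)$-iterability) small-$\Pi^1_3$-$n$-iterability.

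The main obstacle is the canonicity step $Q_b = Q$. It rests on using 1-smallness to rule out any ``hybrid'' Q-structure carrying total extenders with critical point above $\delta$, after which minimality of Q-structures forces the two candidates to coincide. Once this is secure, the uniqueness clause of $\Pi^1_1$-guidedness pins down the strategy branch, and the induction carries through uniformly in the length of $\Tt$.
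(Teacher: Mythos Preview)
Your overall strategy is right, and your argument that the Q-structure candidate inside $M^{\Tt\rest\lambda}_b$ must lie in the $\J$-hierarchy over $M(\Tt\rest\lambda)$ is essentially the same observation the paper relies on. But the final step is mistaken: having shown $Q_b = Q$ and hence $Q \ins M^{\Tt\rest\lambda}_b$, you claim this contradicts the uniqueness clause of unique-$\Pi^1_1$-guidedness. It does not. That clause says there is no cofinal branch $b' \neq c$ with $M^{\Tt\rest\lambda}_{b'} \ins Q$, i.e.\ the \emph{branch model} sits inside $Q$. You have the reverse containment. To conclude $b = c$ from $Q \ins M^{\Tt\rest\lambda}_b$ and $Q \ins M^{\Tt\rest\lambda}_c$ you need the Zipper Lemma, which you never invoke.

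The paper's proof is more direct and uses the uniqueness clause in exactly the opposite way: from $b \neq c$ it reads off $M^{\Tt\rest\lambda}_b \nins Q$; then, since $Q = \J_\alpha(M(\Tt\rest\lambda))$ and $M(\Tt\rest\lambda) \ins M^{\Tt\rest\lambda}_b$ (with no extenders in the relevant interval by $1$-smallness), comparability forces $Q \pins M^{\Tt\rest\lambda}_b$. Now $Q$ sits in both branch models with $\rho_\omega^Q \leq \delta(\Tt\rest\lambda)$, and the Zipper Lemma gives the contradiction. Your route can be repaired by inserting the Zipper Lemma at the end, but note also a smaller gap: you assert $M^{\Tt\rest\lambda}_b$ admits a Q-structure $Q_b$, which fails if $\delta(\Tt\rest\lambda)$ happens to be Woodin in $M^{\Tt\rest\lambda}_b$; that case needs to be handled separately (and in fact yields $M^{\Tt\rest\lambda}_b \pins Q$, directly contradicting the uniqueness clause).
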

\begin{proof}
 Let $\Tt$ be as described 
 but not via $\Sigma$. Let $\lambda<\lh(\Tt)$ be least such that 
\[ b=\Sigma(\Tt\rest\lambda)\neq[0,\lambda)_\Tt=c \]
and $Q\ins M^\Tt_c$ be the Q-structure for $M(\Tt)$ given by
unique-$\Pi^1_1$-guidedness.
Then $M^\Tt_b$ is wellfounded, and $M^\Tt_b\nins Q$ by
unique-$\Pi^1_1$-guidedness. It follows that $Q\pins M^\Tt_b$, but this contradicts the 
Zipper Lemma.
\end{proof}
\begin{lem}\label{lem:limit_trees_uniquely-Pi^1_1-guided}
Let $P$ be a countable  small-$\Pi^1_3$-$n$-iterable  $n$-sound $1$-small premouse.
Then $P$ is $(n,\om_1)$-iterable. Moreover, if $\Tt$ is any limit length $n$-maximal tree on $P$ 
then $\Tt$ is uniquely-$\Pi^1_1$-guided.
\end{lem}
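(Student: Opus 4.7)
The plan is to prove both conclusions simultaneously by constructing an $(n,\om_1)$-iteration strategy $\Sigma$ for $P$ by induction on tree length, showing along the way that every tree via $\Sigma$ is uniquely-$\Pi^1_1$-guided.

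\textbf{Key structural observation (1-smallness).} The first order of business is to record the following fact, whose proof is a standard consequence of $1$-smallness: if $\Tt$ is an $n$-maximal tree on a $1$-small premouse and $b$ is a $\Tt$-cofinal branch with $M^\Tt_b$ wellfounded, then the Q-structure $Q(b,\Tt)\ins M^\Tt_b$ for $\delta(\Tt)$ is itself $1$-small, has $\delta(\Tt)$ as a strong cutpoint, and carries no extenders indexed $>\delta(\Tt)$ (since any such extender would witness Woodinness-beyond-a-Woodin in an iterate of $P$). Therefore $Q(b,\Tt)$ is of the form $\J_\alpha(M(\Tt\rest\lh\Tt))$ for some ordinal $\alpha$. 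This is the key point which makes Q-structures in the $1$-small setting ``$\Pi^1_1$-like.''

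\textbf{Inductive construction of $\Sigma$.} Suppose $\Sigma$ has been defined on all trees of length $<\beta$ for some countable $\beta$, and all such trees are uniquely-$\Pi^1_1$-guided. Let $\Tt$ be a via-$\Sigma$ tree of length $\beta$. If $\beta$ is a successor, there is nothing to do; if $\beta$ is a limit, note that unique-$\Pi^1_1$-guidedness is a condition only on limits strictly less than $\lh\Tt$, so $\Tt$ itself is uniquely-$\Pi^1_1$-guided by the inductive hypothesis applied to its proper initial segments. By small-$\Pi^1_3$-$n$-iterability, there is a $\Tt$-cofinal branch $b$ with $M^\Tt_b$ wellfounded; set $\Sigma(\Tt)=b$.

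\textbf{Verifying unique-$\Pi^1_1$-guidedness after the new step.} To see that the extended strategy continues to satisfy unique-$\Pi^1_1$-guidedness, I must show that $Q:=Q(b,\Tt)$ is the unique Q-structure of the right form among those arising from $\Tt$-cofinal branches; equivalently, no $\Tt$-cofinal branch $c\neq b$ has $M^\Tt_c\supseteq Q$ (as an initial segment). By the structural observation, $Q=\J_\alpha(M(\Tt))$ for some $\alpha$. Suppose for contradiction such a $c$ exists. Since $Q$ is a Q-structure, it satisfies ``$\delta(\Tt)$ is not Woodin'' (via a definable counterexample), but then $Q$ agrees with $M^\Tt_c$ below $\OR^Q$, so $\delta(\Tt)$ is not Woodin in $M^\Tt_c$; this forces $Q(c,\Tt)\ins Q$, and by minimality of Q-structures we get $Q(c,\Tt)=Q$. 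Two distinct $\Tt$-cofinal branches $b\neq c$ sharing a common Q-structure $Q$ now contradict the Zipper Lemma (Branch Uniqueness), since both $b$ and $c$ would be determined by $\Sigma_1$-hulls generated below $\delta(\Tt)$ inside $Q$. Hence $b$ is unique, and $\Sigma$ extended by $\Sigma(\Tt):=b$ preserves unique-$\Pi^1_1$-guidedness. Iterating this construction through all countable ordinals produces the desired $(n,\om_1)$-iteration strategy, and the moreover clause is exactly what we verified along the way.

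\textbf{Main obstacle.} The hard part is the uniqueness step: ruling out a second cofinal branch $c\neq b$ with $M^\Tt_c\ins Q$. This is where both $1$-smallness (forcing $Q=\J_\alpha(M(\Tt))$, so that $Q$-structures from distinct branches must coincide rather than merely be comparable) and the Zipper Lemma are essential. The rest is routine bookkeeping about extending strategies through limits, since Assumption \ref{ass:om_1^L[x]} guarantees that countable Q-structures suffice and $(n,\om_1)$-iterability (rather than $(n,\om_1{+}1)$) is the appropriate notion.
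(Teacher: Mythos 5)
Your overall plan is in the right spirit, but the verification of unique-$\Pi^1_1$-guidedness addresses the wrong clause. The definition requires, for a limit $\lambda<\lh(\Tt)$ with $Q$-structure $Q\ins M^\Tt_\lambda$ of the form $\J_\alpha(M(\Tt\rest\lambda))$, that there be no $\Tt\rest\lambda$-cofinal branch $c\neq[0,\lambda)_\Tt$ with $M^\Tt_c\ins Q$ --- that is, no competing branch whose model is a (possibly improper) \emph{initial segment} of $Q$. You instead argue that there is no $c\neq b$ with $Q\ins M^\Tt_c$, i.e., that $b$ is the only branch into which $Q$ embeds initially. These are genuinely different conditions. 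Your Zipper-Lemma argument handles the latter, because a common witness $A\sub\delta(\Tt)$ to the failure of Woodinness lies in $M^\Tt_b\cap M^\Tt_c$ once $Q$ sits strictly inside both; but when $M^\Tt_c\pins Q$, the Woodinness counterexample need not belong to $M^\Tt_c$ at all, so that argument gives no contradiction. (Your ``Main obstacle'' paragraph names the correct clause, but then credits the same Zipper argument with handling it.) Without the correct clause at $\lambda$, the tree built through $\lambda+1$ need not be uniquely-$\Pi^1_1$-guided, so your appeal to small-$\Pi^1_3$-$n$-iterability at the next limit is unjustified.

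The paper's own proof sidesteps this by defining $\Sigma(\Tt)$ to pick a branch $b$ \emph{minimizing} $\OR(M^\Tt_b)$ rather than just any wellfounded one. The minimization forces the strategy to choose a potential short competitor $c$ (one with $M^\Tt_c\ins Q$, in particular one whose model still satisfies ``$\delta(\Tt)$ is Woodin'') whenever one exists, and then $\Tt\conc c$ cannot be properly continued so the issue never propagates; when no such competitor exists, the branch with $Q\pins M^\Tt_b$ is unique and is the one chosen. You should either adopt the minimization or give a separate argument dealing with competitors $c$ having $M^\Tt_c\ins Q$. A smaller point: the parenthetical reason you give for the structural observation --- that an extender indexed above $\delta(\Tt)$ would ``witness Woodinness-beyond-a-Woodin'' --- is not the right mechanism; rather, if $Q$ had an extender $E$ indexed above $\delta(\Tt)$, then by $1$-smallness $Q|\crit(E)\sats$``there are no Woodin cardinals,'' which already witnesses the failure of Woodinness of $\delta(\Tt)$ strictly below $Q$, contradicting the minimality of $Q$.
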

\begin{proof}
Define an $(n,\om_1)$-strategy $\Sigma$ for $P$ by:
Given $\Tt$ on $P$ of limit length, let $\Sigma(\Tt)$ be some $\Tt$-cofinal branch
$b$ which minimizes $\OR(M^\Tt_b)$. (If there is some $b$
such that $M^\Tt_b\sats$``$\delta(\Tt)$ is Woodin'',
then $\Sigma(\Tt)$ will choose some such $b$, and note that 
$\Tt\conc b$ cannot be properly continued. Otherwise, there is a unique $b$
such that the Q-structure $Q$ for $M(\Tt)$ has $Q\pins M^\Tt_b$.)
\end{proof}
\begin{rem}\label{rem:1-small_it_Pi^1_3}
 So for countable $1$-small $n$-sound premice $P$,
 $(n,\om_1)$-iterability is $\Pi^1_3$.
\end{rem}
\begin{lem}\label{lem:1-small_comparison}
 Let $P$ be a countable, $p$-sound, $(p,\om_1)$-iterable premouse,
 and $\Sigma$ a $(p,\om_1)$-strategy for $P$.
 Let $Q,q,\Gamma$ be likewise, and suppose that $Q$ is $1$-small. Then there is a successful 
comparison $(\Tt,\Uu)$ of $(P,Q)$
 via $(\Sigma,\Gamma)$, of countable length.
\end{lem}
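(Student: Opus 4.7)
The plan is to run the standard least-disagreement coiteration of $P$ against $Q$: at each successor stage $\alpha+1$ identify the least $\gamma$ with $M^\Tt_\alpha|\gamma \neq M^\Uu_\alpha|\gamma$ and apply the extender indexed there on whichever side(s) have one (with the usual $p$/$q$-maximal bookkeeping for truncations and predecessors); at each limit $\lambda < \om_1$ extend $\Tt$ by $\Sigma(\Tt\rest\lambda)$ and $\Uu$ by $\Gamma(\Uu\rest\lambda)$. Let $(\Tt,\Uu)$ be the resulting pair of trees, of length at most $\om_1$ since that is all $(p,\om_1)$- and $(q,\om_1)$-iterability buy us. The goal is to show termination occurs at some countable stage.

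The key first observation is that at each limit $\lambda<\lh(\Uu)$ the branches picked by $\Gamma$ and $\Sigma$ are the same canonical Q-structure branches. Because $Q$ is $1$-small and $1$-smallness is preserved by the ultrapowers along $\Uu$, every $M^\Uu_\alpha$ is $1$-small, so Lemma~\ref{lem:limit_trees_uniquely-Pi^1_1-guided} says $[0,\lambda)_\Uu$ is the unique $\Pi^1_1$-guided branch of $\Uu\rest\lambda$, identified by a Q-structure $R \ins M^\Uu_{[0,\lambda)_\Uu}$. Coiteration enforces $M(\Tt\rest\lambda) = M(\Uu\rest\lambda)$, and $R$ is $\Sigma_1$-definable from this common part together with a little just above $\delta(\Tt\rest\lambda)$, so $R$ is also a proper segment of $M^\Tt_{[0,\lambda)_\Tt}$; the Zipper Lemma then pins $[0,\lambda)_\Tt$ down as the unique $\Tt\rest\lambda$-cofinal branch with this property. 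Thus the coiteration below $\om_1$ is canonically determined by $(P,Q)$ alone, independent of the particular choices of $\Sigma$ and $\Gamma$ at limits.

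Next, suppose for contradiction that $\lh(\Tt)=\lh(\Uu)=\om_1$. Fix a real $x$ coding $(P,Q)$ and set $\bar{\om_1} := \om_1^{L[x]}$, which by Assumption~\ref{ass:om_1^L[x]} is countable. By the previous paragraph, the construction of the coiteration is absolute to $L[x]$ below $\bar{\om_1}$, so $(\Tt\rest\bar{\om_1}, \Uu\rest\bar{\om_1}) \in L[x]$ and coincides with what $L[x]$ would compute internally using Q-structure-guided branches at each limit. Inside $L[x]$, where $P$ and $Q$ are countable and $\bar{\om_1}$ plays the role of $\om_1$, the standard coiteration-termination argument applies: the length of a coiteration of two countable premice, with Q-structure-guided branches supplied at all limits, is bounded by their successor cardinal. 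This yields a termination stage $\alpha < \bar{\om_1}$ of the coiteration inside $L[x]$. By absoluteness this $\alpha$ is the termination stage in $V$ as well, and $\alpha < \bar{\om_1} < \om_1$ contradicts $\lh(\Tt)=\om_1$.

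The hard part is the second paragraph: verifying that $[0,\lambda)_\Tt$ must coincide with the Q-structure branch even though $P$ is not assumed $1$-small. A would-be Woodin of $P$ sitting below the common part could in principle let $M^\Tt_{[0,\lambda)_\Tt}$ see $\delta(\Tt\rest\lambda)$ as Woodin, admitting a Q-structure different from $R$, so one must check that the common-part agreement forces the $R$ identified on the $\Uu$-side to be a proper segment of $M^\Tt_{[0,\lambda)_\Tt}$ and that no rival candidate survives. Once this asymmetry is resolved, the reflection into $L[x]$ and the standard termination bound go through cleanly, and the lemma follows.
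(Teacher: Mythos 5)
Your argument follows the paper's route: both $\Tt$ and $\Uu$ are uniquely $\Pi^1_1$-guided, so the coiteration is determined by the models, is computable in $L[x]$, and terminates there, which contradicts $\om_1^{L[x]}<\om_1$ (Assumption~\ref{ass:om_1^L[x]}). But the step you flag in your final paragraph — that the $\Uu$-side Q-structure $R$ also sits as a proper segment of $M^\Tt_\lambda$ — is exactly where the work is, you leave it unresolved, and the reason offered for it earlier (``$R$ is $\Sigma_1$-definable from the common part together with a little just above $\delta(\Tt\rest\lambda)$'') does not justify it: canonicity of $R$ over $M(\Tt\rest\lambda)$ tells you nothing about what $M^\Tt_\lambda$'s extender sequence does in the interval $(\delta(\Tt\rest\lambda),\OR^R)$, which is what decides whether $R\pins M^\Tt_\lambda$.

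To fill the gap (this is essentially the paper's argument, fleshed out): since the coiteration has not stopped by $\lambda$, there is $\alpha\geq\lambda$ with $E^\Uu_\alpha\neq\emptyset$ and $\lh(E^\Uu_\alpha)>\delta(\Tt\rest\lambda)$; the active segment $\exit^\Uu_\alpha$ of $M^\Uu_\lambda$ is $1$-small and extends $M(\Tt\rest\lambda)$, so $\delta(\Tt\rest\lambda)$ is not Woodin in $M^\Uu_\lambda$ and a Q-structure $R\pins M^\Uu_\lambda$ exists (this is where $1$-smallness of $Q$, not of $P$, does the work). Next, $\delta(\Tt\rest\lambda)$ is a strong cutpoint of both $M^\Tt_\lambda$ and $M^\Uu_\lambda$, since any extender on either sequence overlapping it would already have figured in a least disagreement at an earlier stage. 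Both sides are iterable above that cutpoint via the tail strategies, so the standard uniqueness argument for Q-structures above a strong cutpoint applies; the only outcome compatible with the assumed length $\om_1$ of the coiteration is $R\pins M^\Tt_\lambda$ (the alternatives — $M^\Tt_\lambda\ins R$, or $M^\Tt_\lambda$ iterating properly into a segment of $R\pins M^\Uu_\lambda$ — would have terminated $(\Tt,\Uu)$ by stage $\lambda$). The Zipper Lemma then gives $[0,\lambda)_\Tt$ as the unique $\Tt\rest\lambda$-cofinal branch $b$ with $R\ins M^{\Tt\rest\lambda}_b$, so $\Tt$ is uniquely $\Pi^1_1$-guided and your $L[x]$-reflection runs as written.
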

\begin{proof}
 Suppose not. Then we get a comparison $(\Tt,\Uu)$ as in the statement of the lemma, but of length $\om_1$.
 Because $Q$ is $1$-small and by Lemma \ref{lem:limit_trees_uniquely-Pi^1_1-guided},
 $\Uu$ is uniquely-$\Pi^1_1$-guided.
 In fact, $\Tt$ is also, because at each limit 
stage $\eta$ of $\Tt$ we have $M(\Tt\rest\eta)\ins M^\Uu_\eta$, and because there is some 
$\alpha\geq\eta$ such that $E^\Uu_\alpha\neq\emptyset$, and $M^\Uu_\eta$ is $1$-small, there must 
be a $1$-small Q-structure $Q$ for $M(\Tt\rest\eta)$ with $Q\pins M^\Uu_\eta$,
which implies that $Q\pins M^\Tt_\eta$ also.

But now consider forming this comparison inside $L[x]$ where $x$ is a real with $P,Q\in\HC^{L[x]}$.
This can be done because the comparison process is determined by the models,
and $L[x]$ has the ordinals to compute the Q-structures $Q$ at limit stages,
and an easy absoluteness argument then gives that the (unique) branches are in 
$L[x]$. But because $\om_1^{L[x]}<\om_1$ by Assumption \ref{ass:om_1^L[x]}, this contradicts the fact
that comparison terminates. 
\end{proof}
\begin{rem}
 Comparison type arguments need this sort of component throughout this section,
 and we leave it to the reader to supply it. It is likewise in the following:
\end{rem}

\begin{lem}
 Let $P$ be an $(\om,\om_1)$-iterable
 $1$-small $\om$-premouse. Then there is a unique $(\om,\om_1)$-strategy for $P$,
denoted $\Sigma_P$.
\end{lem}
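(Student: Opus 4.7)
I would argue for uniqueness by contradiction. Suppose $\Sigma$ and $\Sigma'$ are two distinct $(\om,\om_1)$-iteration strategies for $P$, and fix an $\om$-maximal iteration tree $\Tt$ on $P$ of least length on which they disagree. Since both strategies agree on every proper initial segment of $\Tt$, $\Tt$ must have limit length $\lambda$, and setting $b := \Sigma(\Tt)$ and $c := \Sigma'(\Tt)$, these are distinct $\Tt$-cofinal branches whose direct limits $M^\Tt_b$ and $M^\Tt_c$ are both wellfounded by iterability.

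A key and standard observation is that $1$-smallness is preserved by $\om$-maximal iteration maps (it is a property of the extender sequence which transfers through ultrapowers and direct limits), so both $M^\Tt_b$ and $M^\Tt_c$ are $1$-small premice. Since $\delta(\Tt) < \OR^{M^\Tt_b}$, this forces $\delta(\Tt)$ to be non-Woodin in $M^\Tt_b$, so $M^\Tt_b$ contains a proper initial segment $Q_b \pins M^\Tt_b$ which is a Q-structure for $M(\Tt)$; similarly one obtains $Q_c \pins M^\Tt_c$. The minimal Q-structure for $M(\Tt)$ is unique, determined by $M(\Tt)$ together with the minimality condition on its height, so $Q_b = Q_c =: Q$.

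The final step is an appeal to the Zipper Lemma of Martin--Steel, in exactly the form used in the proof of Lemma \ref{lem:limit_trees_uniquely-Pi^1_1-guided}: having a single $Q$ with $Q \pins M^\Tt_b$ and $Q \ins M^\Tt_c$ for distinct $\Tt$-cofinal branches $b \neq c$ is ruled out. This contradiction shows $\Sigma = \Sigma'$, so the strategy is unique and may be denoted $\Sigma_P$.

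The main obstacle is the appeal to the Zipper Lemma, which is a standard tool of inner model theory; once that is in hand, the rest is essentially the preservation of $1$-smallness under iteration maps and the uniqueness of the Q-structure for $M(\Tt)$. An equivalent route, staying closer to the language of Lemma \ref{lem:limit_trees_uniquely-Pi^1_1-guided}, is to extend each of $\Tt \conc b$ and $\Tt \conc c$ by one further extender to obtain a limit-length tree and invoke unique-$\Pi^1_1$-guidedness at stage $\lambda$ to pin down the branch in terms of the common Q-structure $Q$, again forcing $b = c$.
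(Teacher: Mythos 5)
The paper states this lemma without proof; the remark immediately preceding it (``Comparison type arguments need this sort of component\dots It is likewise in the following'') indicates that the intended argument is a comparison-type one, with termination supplied by the $L[a]$ device of Lemma \ref{lem:1-small_comparison}. Your proposal recovers the right skeleton — take the least tree on which two putative strategies diverge, reduce to the limit-length case, identify the Q-structure, invoke the Zipper Lemma — but it has one substantive gap.

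The claim that ``$\delta(\Tt)<\OR^{M^\Tt_b}$ forces $\delta(\Tt)$ to be non-Woodin in $M^\Tt_b$'' does not follow from $1$-smallness. A $1$-small premouse can have a (unique) Woodin cardinal strictly below its ordinal height: $M_1|(\delta^{M_1})^{+M_1}$ is such a model, and so is the last model of a genericity iteration of $M_1^\#(x)$. You may be conflating $1$-smallness (no critical point $\kappa$ with $M|\kappa\sats$``there is a Woodin'') with the much stronger condition of having no Woodin cardinals at all. In fact the paper's own proof of Lemma \ref{lem:limit_trees_uniquely-Pi^1_1-guided} acknowledges exactly this case explicitly: ``If there is some $b$ such that $M^\Tt_b\sats$`$\delta(\Tt)$ is Woodin', then $\Sigma(\Tt)$ will choose some such $b$, and note that $\Tt\conc b$ cannot be properly continued.'' In that situation there is no Q-structure $Q\pins M^\Tt_b$, and the Q-structure/Zipper argument you describe never gets started, so your proof of uniqueness breaks down at precisely the stage where a $1$-small mouse is iterated out to a local Woodin. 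To close the gap you must treat the case $M^\Tt_b\sats$``$\delta(\Tt)$ is Woodin'' separately: one route is to observe that in this case any second candidate $M^\Tt_c$ would likewise be wellfounded with $\delta(\Tt)$ Woodin, both would be iterable via the tails of the respective strategies (which agree on all proper initial segments of $\Tt$), and a comparison of the two — terminating by the $L[a]$ argument of Lemma \ref{lem:1-small_comparison} — together with soundness considerations forces $M^\Tt_b=M^\Tt_c$ and then $b=c$. This comparison step is precisely the ``component'' the paper's remark is referring to, and it is missing from your proposal.
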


\begin{lem}[Overspill for $\Pi^1_3$]\label{lem:overspill_mouse}
 Let $P$ be a $\Pi^1_2$-iterable $1$-small $\om$-premouse, which is not $(\om,\om_1)$-iterable.
 Then $Q\pins P$ for every $1$-small $\om$-mouse $Q$.
\end{lem}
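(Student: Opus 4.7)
\emph{Plan.} Fix a $1$-small $\om$-mouse $Q$; we want to show $Q\pins P$. By Lemma~\ref{lem:limit_trees_uniquely-Pi^1_1-guided} and the subsequent uniqueness-of-strategy lemma, $Q$ carries a unique $(\om,\om_1)$-strategy $\Sigma_Q$, and every limit-length tree on $Q$ is uniquely-$\Pi^1_1$-guided. The plan is to coiterate $P$ against $Q$, show the coiteration terminates in countably many stages, and then rule out, by iterability transfer, every outcome other than $Q\pins P$.

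To run the coiteration, we build $(\Tt,\Uu)$ by least-disagreement, using $\Sigma_Q$ on the $Q$-side. On the $P$-side, at each limit stage $\lambda$ we have $M(\Tt\rest\lambda)=M(\Uu\rest\lambda)$, and the $\Sigma_Q$-branch $b_\Uu$ of $\Uu\rest\lambda$ furnishes a Q-structure $Q_0\ins M^\Uu_{b_\Uu}$ of the form $\J_\beta(M(\Tt\rest\lambda))$. We then invoke $\Pi^1_2$-iterability of $P$ with $\alpha=\OR(Q_0)+1$: there is a $\Tt\rest\lambda$-cofinal branch $b$ such that $M^\Tt_b$ is $\alpha$-wellfounded. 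Since $M(\Tt\rest\lambda)$ sits as a lower part of $M^\Tt_b$ and $M^\Tt_b$ is wellfounded past $\OR(Q_0)$, the level $Q_0$ becomes an actual initial segment of $M^\Tt_b$; the Zipper Lemma then forces $b$ to be the unique such branch. Hence $\Tt\rest(\lambda+1)$ remains uniquely-$\Pi^1_1$-guided, and the coiteration consistently extends on both sides. (Successor stages only require wellfoundedness of the new ultrapower, which is part of $\Pi^1_2$-iterability applied to the current tree.)

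Termination is argued exactly as in the proof of Lemma~\ref{lem:1-small_comparison}: the Q-structures, and hence the branches chosen on both sides, are uniquely determined by the common part and are therefore absolute between $V$ and any $L[x]$ with $P,Q\in L[x]$. Running the coiteration inside such an $L[x]$, the standard $1$-small coiteration bound forces it to terminate below $\om_1^{L[x]}$; by Assumption~\ref{ass:om_1^L[x]}, $\om_1^{L[x]}<\om_1$, so termination holds in $V$ as well.

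Finally, since $P$ and $Q$ are both $\om$-sound and project to $\om$, the successful coiteration must output exactly one of $Q\pins P$, $P=Q$, or $P\pins Q$. In the latter two cases $P$ would be an initial segment of the $(\om,\om_1)$-iterable $Q$, and restricting $\Sigma_Q$ to trees on $P$ would yield an $(\om,\om_1)$-strategy for $P$, contradicting the hypothesis that $P$ is not $(\om,\om_1)$-iterable. Therefore $Q\pins P$, as required. The main technical obstacle we anticipate is the inductive preservation of unique-$\Pi^1_1$-guidedness on the $P$-side: one must verify that $\Pi^1_2$-iterability really delivers the \emph{specific} branch matching the externally prescribed Q-structure, not merely some arbitrary branch. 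It is precisely for this reason that the ``$\alpha$-wellfoundedness for every $\alpha<\om_1$'' formulation of $\Pi^1_2$-iterability is essential: we must be free to take $\alpha$ large enough to see past the ordinal height of the $Q_0$ dictated by the $Q$-side.
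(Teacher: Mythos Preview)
Your approach is essentially the paper's, built on the same key idea: at each limit stage, use the Q-structure $Q_0$ furnished by the $Q$-side together with $\Pi^1_2$-iterability to select a branch on the $P$-side. However, there is a genuine gap in your inductive step.

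You write that, with $\alpha=\OR(Q_0)+1$, the branch $b$ has ``$M^\Tt_b$ wellfounded past $\OR(Q_0)$'', and conclude $Q_0\ins M^\Tt_b$. But $\alpha$-wellfoundedness only says that \emph{if} $M^\Tt_b$ is illfounded then its wellfounded part contains $\alpha$; it says nothing when $M^\Tt_b$ happens to be wellfounded. So nothing prevents $M^\Tt_b$ from being wellfounded with $\OR(M^\Tt_b)\leq\OR(Q_0)$, i.e.\ $M^\Tt_b\ins Q_0$ rather than the reverse. Your own anticipated obstacle (``$\Pi^1_2$-iterability really delivers the specific branch matching $Q_0$'') is exactly this, and taking $\alpha$ larger does not resolve it.

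The paper's proof is organized precisely to handle this. It does not claim the comparison always extends; instead it assumes $Q\nins P\nins Q$ and argues the comparison can neither succeed nor run to $\om_1$ nor get stuck at some $\lambda<\om_1$. In the ``stuck'' case it takes the branch $b$ from $\Pi^1_2$-iterability, observes the dichotomy $Q_0\pins M^\Tt_b$ or $M^\Tt_b\ins Q_0$ (which follows from $1$-smallness: there can be no active level strictly between $\delta$ and $\OR(Q_0)$, so the two structures agree there), and then in the second alternative uses that $P$ is an $\om$-premouse to force $M^\Tt_b=Q_0=M^\Uu_\lambda$, after which the usual compatible-extender/fine-structure argument gives the contradiction. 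That second alternative is exactly the case you omit.

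Your framework can be repaired: if $M^\Tt_b\ins Q_0$ then the coiteration has in fact terminated at $\lambda$ (with $M^\Tt_\lambda\ins M^\Uu_\lambda$), and your final trichotomy applies. But as written you treat this case as if it cannot occur, which is the gap.
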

\begin{proof}
 Suppose not. Then $Q\nins P\nins Q$. Consider the comparison $(\Tt,\Uu)$ of $(P,Q)$, using 
$\Sigma_Q$ to form $\Uu$, and taking $\Tt$ to be uniquely-$\Pi^1_1$-guided,
as far as possible. If we have produced $(\Tt,\Uu)\rest\lambda$,
and either $\lambda=\om_1$, or $\Tt\rest\lambda$ cannot be properly extended as a 
uniquely-$\Pi^1_1$-guided tree,
then we stop the process.
 The comparison cannot succeed by standard fine structure.
 
 Suppose we produce $(\Tt,\Uu)$ of length $\lambda<\om_1$, and $\Tt\rest\lambda$ cannot be properly 
extended as a uniquely-$\Pi^1_1$-guided tree. Then
$\Tt\rest\lambda$ has limit length ignoring padding, and for simplicity we assume
that $\Uu\rest\lambda$ is likewise. Since $Q$ is iterable,
we get a wellfounded $M^\Uu_\lambda$.
Because $Q$ is an $\om$-mouse, there is a Q-structure $Q\ins M^\Uu_\lambda$ for $M(\Tt,\Uu)$.
Let $\alpha=\OR^Q$. By $\Pi^1_2$-iterability there is a $\Tt\rest\lambda$-cofinal
branch $b$ such that $M^\Tt_b$ is $\alpha$-wellfounded.
If $Q\pins M^\Tt_b$ then we could properly extend $\Tt$, contradiction.
So $M^\Tt_b\ins Q$. Since $P$ is an $\om$-premouse,
then $M^\Tt_b=Q=M^\Uu_\lambda$. But this contradicts standard fine structure.

So the comparison must produce $(\Tt,\Uu)$ of length $\om_1$.
But because both trees are uniquely-$\Pi^1_1$-guided, we can argue as in 
\ref{lem:1-small_comparison} for a contradiction.
\end{proof}

\begin{dfn}\label{dfn:varphi(w)-witness}
Let $\psi$ be a $\Pi^1_1$ formula, and $\varphi$ be the $\Pi^1_3$ formula
\[ \varphi(w)\ \iff\ \all x\ \ex y\ \psi(w,x,y).\]
Let $T$ be the tree on $\om^4$ for $\neg\psi$.
Given reals $w,x,y$, let $T_{wxy}$ be the
 section of $T$ at $(w,x,y)$, so $\psi(w,x,y)$ iff $T_{wxy}$ is wellfounded.
 
 Given $w\in\RR$, a \emph{$\varphi(w)$-prewitness} is a  $1$-small $w$-premouse such that
 for some $\delta<\eta<\OR^P$,
 $P\sats$``$\delta$ is Woodin
 and the extender algebra $\BB_{\delta}$ at $\delta$
 forces that, letting $x$ be the generic real, 
there is $y\in\RR$ 
and a rank function $\rho:T_{wxy}\to\eta$''.

 A \emph{$\varphi(w)$-witness} is a countable $(0,\om_1)$-iterable $\varphi(w)$-prewitness.
\end{dfn}

 Note that being a $\varphi(w)$-prewitness is preserved by non-dropping $0$-maximal iterations.
 
\begin{lem}\label{lem:varphi(w)-witness}
Let $\varphi$ be a $\Pi^1_3$ formula. Then:

\begin{enumerate}
\item\label{item:if_varphi(w)-witness_exists_then_varphi(w)} If there is a $\varphi(w)$-witness then:
\begin{enumerate}[label=--] \item $\varphi(w)$ holds,
 and
 \item there is a sound $\varphi(w)$-witness $P$ such that $\rho_1^P=\om$.
 \end{enumerate}
 
 \item\label{item:if_M_1^sharp(w)_exists_then_varphi(w)-witness} If $M_1^\#(w)$ exists \tu{(}is $(\om,\om_1)$-iterable\tu{)} and $\varphi(w)$ holds then there is a 
$\varphi(w)$-witness.
\end{enumerate}
\end{lem}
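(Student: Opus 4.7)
The plan is to establish the first clause of (1) by a genericity iteration, the second clause of (1) by passing to a minimal segment of an appropriate hull of any witness, and (2) by verifying directly that $N := M_1^\#(w)$ meets the definition.

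For the first clause of (1), let $P$ be a $\varphi(w)$-witness with $\delta < \eta < \OR^P$ as in Definition \ref{dfn:varphi(w)-witness}, and fix any real $x$. Using $(0, \om_1)$-iterability of $P$ and Woodinness of $\delta$ in $P$, I would perform a countable genericity iteration above $\delta$, producing an iterate $P^* = M^\Tt_\infty$ via a $0$-maximal tree $\Tt$ not dropping on its main branch, with iteration map $i^\Tt : P \to P^*$, such that $x$ is $i^\Tt(\BB_\delta)$-generic over $P^*$. The prewitness condition is preserved by $i^\Tt$, so in $P^*[x]$ there exist $y \in \RR^{P^*[x]}$ and a rank function $\rho : T_{wxy} \to i^\Tt(\eta)$. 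Wellfoundedness of $P^*[x]$ then forces $T_{wxy}$ to be genuinely wellfounded in $V$, so $\psi(w, x, y)$ holds; as $x$ was arbitrary, $\varphi(w)$ follows.

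For the sound witness with $\rho_1^P = \omega$, start from any witness $Q$. The prewitness condition is expressible by a first-order formula $\Phi$ of $\Ll_\pm$, of some fixed complexity $\Sigma_k$, so the transitive collapse $\bar Q$ of $\Hull_k^Q(\emptyset)$ is a countable $\varphi(w)$-prewitness and inherits $(0, \om_1)$-iterability by copying along the uncollapse map. Then take $\alpha \leq \OR^{\bar Q}$ least with $\bar Q | \alpha$ a $\varphi(w)$-prewitness (such $\alpha$ exists since $\bar Q$ itself is one). By minimality of $\alpha$, the usual $\Sigma_1$-soundness argument gives $\rho_1^{\bar Q | \alpha} = \omega$ and $1$-soundness of $\bar Q | \alpha$. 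For part (2), let $N := M_1^\#(w)$, with Woodin $\delta = \delta^N$; since $N$ has a measurable above $\delta$, the ordinal $\eta := (\delta^+)^N$ satisfies $\delta < \eta < \OR^N$. I claim $N$ is a $\varphi(w)$-prewitness at $(\delta, \eta)$; combined with iterability this makes $N$ a $\varphi(w)$-witness, and a countable hull-collapse as above then produces a countable one. To verify the forcing statement internally, fix any $\BB_\delta$-generic $x$ over $N$ (such exist by countability of $N$ in $V$). Hypothesis $\varphi(w)$ provides $y \in V$ with $\psi(w, x, y)$; by Shoenfield absoluteness applied to the $\Sigma^1_2$ statement $\exists y\, \psi(w, x, y)$ in parameters $(w, x)$, such $y$ may be chosen in $L[w, x] \sub N[x]$. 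Then $T_{wxy} \in N[x]$ is a wellfounded tree on $\om$, so admits a rank function $\rho \in N[x]$ into some ordinal $< \om_1^{N[x]}$. Since $\BB_\delta$ is $\delta$-c.c.\ in $N$, $(\delta^+)^N$ is preserved and $\om_1^{N[x]} \leq (\delta^+)^N = \eta$. By the forcing theorem, $N \sats$ ``$\BB_\delta$ forces the existence of $y$ and a rank function $\rho : T_{w\dot x y} \to \eta$'', as required.

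The main obstacle I anticipate is the second clause of (1): the genericity iteration in (1a) and the $L[w,x]$-absoluteness step in (2) are both essentially one-line ideas resting on well-established tools, but pinning down the exact fine-structural complexity of $\Phi$ and showing that the minimal-segment construction delivers $\rho_1 = \omega$ rather than only $\rho_k = \omega$ requires careful bookkeeping in Mitchell-Steel fine structure, together with a check that the $\Sigma_k$-hull collapse preserves the prewitness condition (including the fact that $\delta$ remains Woodin and that the forcing relation is computed the same way).
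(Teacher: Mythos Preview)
Your argument for the first clause of (1) via genericity iteration matches the paper's.

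For the second clause of (1), the obstacle you flag is a genuine gap that your outline does not close. Your plan takes $\Hull_k^Q(\emptyset)$ for whatever $k$ makes the prewitness property $\Sigma_k$, then passes to a minimal prewitness segment and invokes ``the usual $\Sigma_1$-soundness argument''. But that argument---taking $H = \cHull_1^{\bar Q|\alpha}(\emptyset)$, observing $H$ is again a prewitness by elementarity, and using condensation plus minimality to force $H = \bar Q|\alpha$---only goes through if the prewitness property is $\Sigma_1$; otherwise the $\Sigma_1$-hull need not inherit it, and you obtain only $\rho_k^{\bar Q|\alpha} = \omega$. The prewitness condition includes ``$\delta$ is Woodin in $P$'', which quantifies over all subsets of $\delta$ in $P$, so its $\Sigma_1$-expressibility is not clear. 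The paper sidesteps this by taking $\bar P = \cHull_1^P(\{\delta\})$ with $\delta$ as an explicit parameter. Then $\bar P = \Hull_1^{\bar P}(\{\bar\delta\})$ gives $\rho_1^{\bar P} = \omega$ immediately; the prewitness conditions at $(\bar\delta,\bar\eta)$ transfer because with $\delta$ fixed they are $\Pi_1(\{\delta\})$ and the uncollapse map is cofinal $\Sigma_1$-elementary; and $1$-soundness with $p_1^{\bar P} = \{\bar\delta\}$ follows from condensation giving $\Hull_1^{\bar P}(\bar\delta) = \bar P|\bar\delta$. Putting $\delta$ into the hull is the missing idea.

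For (2), your approach is correct but genuinely different from the paper's. You verify directly that $N = M_1^\#(w)$ is a prewitness: for any $\BB_\delta$-generic $x$, Shoenfield gives $y \in L[w,x]$ with $\psi(w,x,y)$, and since $\omega_1^{L[w,x]} \leq \delta$ (working in $N|\kappa[x]$ for $\kappa$ the measurable, using the $\delta$-cc to keep $\delta$ a cardinal), both $y$ and a rank function land in $N[x]$ below $\eta = (\delta^+)^N$. The paper instead argues by contradiction: if no proper segment of $M_1(w)$ is a prewitness, some condition forces $\forall y\,\neg\psi(w,\dot x,y)$; taking a generic and invoking $\Pi^1_2$-correctness of $M_1(w)[G]$ contradicts $\varphi(w)$. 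Your route is more direct; the paper's locates the witness as a proper segment of $M_1(w)$. One minor point: $M_1^\#(w)$ is already countable, so your final hull-collapse is unnecessary.
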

\begin{proof}
For simplicity assume $w=0$.

Part \ref{item:if_varphi(w)-witness_exists_then_varphi(w)}:
Suppose there is a $\varphi$-witness $P$,
and $\psi,\delta,\eta$ are as in \ref{dfn:varphi(w)-witness}. Let $x\in\RR$. Because $P$ is iterable, we can 
form a genericity iteration on $P$
 making $x$ generic for the extender algebra. (The genericity iteration terminates by the same 
argument as in the proof of \ref{lem:1-small_comparison}.) Let $Q$ be the resulting iterate of $P$. 
Then $Q[x]\sats$``There is $y\in\RR$ 
such that $T_{x,y}$ is rankable'' and since $Q$ is wellfounded, this verifies that $\psi(x,y)$ 
holds. So $\varphi$ holds.

Now let us verify that we can obtain a sound $\varphi(w)$-witness $\bar{P}$
such that $\rho_1^{\bar{P}}=\om$. For this, let
\[ \bar{P}=\cHull_1^P(\{\delta\})\]
and $\pi:\bar{P}\to P$ be the uncollapse.
Then clearly $\rho_1^{\bar{P}}=\om$ as $\bar{P}=\Hull_1^{\bar{P}}(\{\bar{\delta}\})$ where $\pi(\bar{\delta})=\delta$.
Letting $\eta<\OR^P$ be least as in \ref{dfn:varphi(w)-witness},
note that $\eta\in\rg(\pi)$ (we have $\RR^{P[x]}\sub L_{\delta}[P|\delta,x]$ whenever $x$ is $(P,\BB_\delta)$-generic), and letting $\pi(\bar{\eta})=\eta$, that $\bar{\eta}$ witnesses that $\bar{P}$ is 
a $\varphi(w)$-prewitness. Finally, note that 
$\Hull_1^{\bar{P}}(\bar{\delta})=\bar{P}|\bar{\delta}$ by condensation. Therefore $p_1^{\bar{P}}=\{\delta^{\bar{P}}\}$ and $\bar{P}$ is $1$-sound,
as desired.

Part \ref{item:if_M_1^sharp(w)_exists_then_varphi(w)-witness}:
Assume that $M_1^\#$ exists and $\varphi$ holds. It suffices to find some 
$N\pins M_1$
which is a $\varphi$-prewitness, as clearly the least such is countable.
But if there is no such $N$ then letting $\alpha$ be a large ordinal,
we have that $M_1|\alpha\sats$ ``It is forceable with $\BB_{\delta^{M_1}}$
that letting $x$ be the generic real, for all $y\in\RR$, $T_{0xy}$ is illfounded''.
So let $G$ be $M_1$-generic for the extender algebra, with $p\in G$ forcing this.
Then we get $x\in M_1[G]$ and $M_1[G]\sats$ ``$\all y\in\RR\ \neg\psi(x,y)$''.
But this statement is $\Pi^1_2$, and $M_1[G]$ is $\Pi^1_2$-correct,
so $\neg\varphi$, contradiction.
\end{proof}

\begin{dfn}
We say \emph{$\Pi^1_3$ is witnessed by mice}
iff for every $\Pi^1_3$ formula $\varphi$ and $x\in\RR$,
$\varphi(x)$ holds iff there is a $\varphi(x)$-witness.
\end{dfn}
\begin{rem}
Of course, by \ref{lem:varphi(w)-witness}, one direction of \emph{$\Pi^1_3$ is witnessed by mice} 
is already true (this uses Assumption \ref{ass:om_1^L[x]}).
Moreover, if
 $M_1^\#(x)$ exists for every $x\in\RR$,
then $\Pi^1_3$ is indeed witnessed by mice. We do not know whether the converse of the latter holds.
\end{rem}
\subsubsection{The construction $\DD$ and comparisons thereof}

\begin{dfn}\index{$\DD$}\index{$N_\alpha$}\index{$F_\alpha$}\index{$t_\alpha$}\label{dfn:DD}
 Let $P$ be a $1$-small premouse, $x\in\RR^P$ and $\lambda\leq\OR^P$ be a limit. We define
 \[ \DD=\DD_x^P=\left<N_\alpha,F_\alpha\right>_{\alpha\in\Lim\inter(0,\lambda]}, \]
consisting of  $x$-premice $N_\alpha$ and background extenders $F_\alpha$.
We set $F_\alpha=\emptyset$  
except where we 
explicitly say otherwise.
We define $\CC\rest(\alpha+1)$ by induction on limits $\alpha$.
We will 
state certain assumptions throughout the construction,
and if any of those assumptions fail at any stage, then we stop the construction.
Note that $P$ need not model $\ZFC$. In order to define $N_\alpha$, we first make the assumption
\begin{enumerate}[label=\tu{(}A\arabic*\tu{)}]
 \item\label{DDconass:N_is_P-con}  $\aleph_\beta^P<\OR^P$ for every $\beta<\alpha$.
\end{enumerate}

\begin{case} $\alpha=\om$

Set $N_\om=(V_\om,x)$.
\end{case}

\begin{case} $\alpha$ is a limit of limits.

Set $N_\alpha=\liminf_{\beta<\alpha}N_\beta$.\footnote{This is defined just like in traditional $L[\es,x]$-constructions,
such as in \cite[\S11]{fsit}.}
\end{case}

\begin{case} $\alpha=\beta+\om$ where $\beta$ is a limit.

\begin{scase} $N_\beta$ is passive and there is $F$ such that there is $E$ such that:
\begin{enumerate}[label=--]
\item $F\in\es^P$ and $\nu_F=\aleph_\beta^P$ (so $F$ is $P$-total),
\item $(N_\beta,E)$ is an active $1$-small premouse (so $E\neq\emptyset$), and
\item $E\rest\nu_E\sub F$.
\end{enumerate}
Let $F$ witness this with $\lh(F)$ minimal, and $E$ be induced by $F$. Then 
\[ N_{\beta+\om}=(N_\beta,E)\text{ and }F_{\beta+\om}=F\text{ and }t_{\beta+\om}=1.\]
\end{scase}

\begin{scase}Otherwise.

Then we assume
\begin{enumerate}[label=\tu{(}A2\tu{)}]
\item\label{DDconass:segs_good}  $N_\beta$ is $\om$-good and $\core_\om(N_\beta)$ satisfies condensation,
\footnote{These notions were defined in \S\ref{sec:notation}.}
 \end{enumerate}
and set $N_{\beta+\om}=\J(\core_\om(N_\beta))$.
\end{scase}
\end{case}

This completes the definition of $\DD$.

We write $N^P_{x\alpha}=N^\DD_\alpha=N_\alpha\text{ and }F^P_{x\alpha}=F^\DD_\alpha=F_\alpha\text{ 
and 
}t^P_{x\alpha}=t^\DD_\alpha=t_\alpha$.

We say that $N^P_{x\alpha}$ \emph{exists} iff it is defined (that is, all assumptions required to 
define $N^P_{x\alpha'}$ for all limits $\alpha'\leq\alpha$, hold).\index{$N_{\alpha}$ exists}

Suppose $N^P_{x\alpha}$ exists. We write $N^P_{x\alpha 0}=N^P_{x\alpha}$,
and if $N^P_{x\alpha}$ is $n$-good and $k\leq\min(\om,n+1)$,
write
\[ N^P_{x\alpha k}=\core_k(N^P_{x\alpha})^\unsq,\]
and if also $0\leq j\leq k$
then\[\tau^P_{x\alpha kj}:(N^P_{x\alpha k})^\sq\to(N^P_{x\alpha j})^\sq \]
denotes the core embedding. We say that $N^P_{x\alpha 0}$ \emph{exists},\index{$N_{\alpha n}$ exists}
say that $N^P_{x\alpha,n+1}$ \emph{exists} iff $N^P_{x\alpha}$ is $n$-good,
and say that $N^P_{x\alpha\om}$ \emph{exists} iff $N^P_{x\alpha}$ is $\om$-good.
\end{dfn}

\begin{dfn}
 Let $Y,Y'$ be $1$-small premice and $x\in\RR^Y\inter\RR^{Y'}$.
 
 Suppose that $N^Y_{\lambda x}$ and $N^{Y'}_{\lambda x}$ exist. Let 
$\DD^Y_x=\left<N_\alpha,F_\alpha\right>_{\alpha\leq\lambda}$ and $\DD^{Y'}_x=\left<N'_\alpha,F'_\alpha\right>_{\alpha\leq\lambda}$.
Say that $Y,Y'$ are $(\lambda,x)$-\emph{compatible}
iff for every $\beta+\om\leq\lambda$, we have  $F_{\beta+\om}\inter X=F'_{\beta+\om}\inter X$ where
$\theta=\min(\lh(F_{\beta+\om}),\lh(F'_{\beta+\om}))$ and $X=(Y\inter 
Y')\cross[\theta]^{<\om}$.\footnote{Note that 
$\theta=\min(\lh(F_{\beta+\om}),\lh(F'_{\beta+\om}))$, not $\min(\nu(F_{\beta+\om}),\nu(F'_{\beta+\om}))$.}

Say that $Y,Y'$ are \emph{$x$-compatible} iff they are $(\lambda,x)$-compatible for every $\lambda$
such that $N^Y_{x\lambda},N^{Y'}_{x\lambda}$ both exist.
\end{dfn}

\begin{rem}
Note that $(\lambda,x)$-compatibility
implies in particular that for each $\beta+\om\leq\lambda$, we have $F_{\beta+\om}\neq\emptyset$ iff $F'_{\beta+\om}\neq\emptyset$.
 An easy induction gives that if $Y,Y'$ are $(\lambda,x)$-compatible 
then $N_\alpha=N'_\alpha$ for all $\alpha\leq\lambda$.
The following fact is proven with standard methods; see \cite[\S12]{fsit}. We will moreover give a detailed proof of a variant, in Lemma \ref{lem:iterability}.
\end{rem}

\begin{fact}
 Let $P$ be a $(0,\omega_1)$-iterable $1$-small premouse. Let $\lambda\in\Lim$
 be such that $\aleph_\lambda^P\leq\OR^P$.
 Let $x\in\RR^P$.
 Then $N^P_{x\lambda}$ exists,
 and is $(0,\omega_1)$-iterable.
\end{fact}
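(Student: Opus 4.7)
The plan is to prove this by simultaneous induction on limits $\alpha \leq \lambda$, showing that
\benum[label=\tu{(}\arabic*\tu{)}]
\item $N^P_{x\alpha}$ exists,
\item $N^P_{x\alpha}$ is $\omega$-good and $\core_\omega(N^P_{x\alpha})$ satisfies condensation,
\item $N^P_{x\alpha}$ is $(0,\omega_1)$-iterable.
\eenum
The hypothesis $\aleph_\lambda^P \leq \OR^P$ directly gives assumption \ref{DDconass:N_is_P-con} throughout. The reason to prove all three clauses together is that (2) is exactly what the recursive definition of $N^P_{x,\beta+\omega}$ demands of $N^P_{x\beta}$, while the standard derivation of $\omega$-goodness and condensation from iterability is what will feed (2) from (3).

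For step (1)/(2) at stage $\alpha$: if $\alpha$ is a limit of limits, $N^P_{x\alpha} = \liminf N^P_{x\beta}$ and the relevant properties pass to the liminf by the usual $K^c$-style bookkeeping. If $\alpha = \beta + \omega$ and we are in the scase of adding an extender, the choice of $F$ with $\lh(F)$ minimal and $(N^P_{x\beta}, E)$ a $1$-small active premouse ensures $N^P_{x\alpha}$ is a well-defined premouse; in the other scase we are simply taking $\J(\core_\omega(N^P_{x\beta}))$, which is meaningful exactly because of the inductive clause (2) applied to $\beta$. So the whole burden is really to get (2), hence (3), to propagate.

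For step (3), the plan is the standard resurrection/lifting argument: given any $0$-maximal putative tree $\Tt$ on $N^P_{x\alpha}$ of length $<\omega_1$, I build a padded tree $\Tt^*$ on $P$ together with, for each $\gamma < \lh(\Tt)$, a copy map $\pi_\gamma \colon M^\Tt_\gamma \to R_\gamma$ where $R_\gamma$ is a resurrection of the appropriate stage of the $\DD$-construction as computed inside $M^{\Tt^*}_\gamma$. Each extender $E^\Tt_\gamma$ appears on the sequence of some $N^{M^{\Tt^*}_\gamma}_{x\beta_\gamma}$, and the background extender $F^{M^{\Tt^*}_\gamma}_{x\beta_\gamma}$ (which exists because the construction inside $M^{\Tt^*}_\gamma$ elected to add it) supplies the extender used to form $M^{\Tt^*}_{\gamma+1}$. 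Since $P$ is $(0,\omega_1)$-iterable, we pick cofinal branches in $\Tt^*$ via its strategy, and the copy maps push wellfoundedness back to $\Tt$; the bound on the length is the usual Assumption \ref{ass:om_1^L[x]}-style termination argument as in Lemma \ref{lem:1-small_comparison}, applied to the lifted tree. From iterability one then reads off $(\omega_1+1)$-many soundness and condensation consequences for $\core_\omega(N^P_{x\alpha})$ in the standard way, completing the induction step (2) for $\alpha + \omega$.

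The main obstacle is the bookkeeping for the resurrection maps across \emph{limit} stages of $\gamma$ in $\Tt$: the background extender that was used to introduce $E^\Tt_\gamma$ on the $N$-side may have been resurrected several times by earlier stages of $\Tt^*$, and one needs the copy maps $\pi_\gamma$ to commute with these resurrections so that the lifted extender at stage $\gamma$ on $P$-side is well-defined and agrees with the image of $E^\Tt_\gamma$. This is exactly the content of the standard proof in \cite[\S12]{fsit} for ordinary $L[\ExtS]$-constructions in a background mouse, and the $1$-smallness hypothesis ensures there are no complications from long extenders or from Woodin cardinals in the $N$-side; the base-real parameter $x$ plays no essential role beyond being carried along uniformly. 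Once this lifting is set up, the rest is routine, and yields (3) and hence the whole inductive clause at $\alpha$, completing the proof at $\lambda$ itself.
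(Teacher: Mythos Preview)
Your proposal is essentially the standard argument the paper defers to (it cites \cite[\S12]{fsit} and its own Lemma~\ref{lem:iterability} rather than proving the fact directly), and your inductive scheme together with the resurrection/lifting construction is correct. One minor correction: the lifting step itself needs no length bound --- a countable tree on $N^P_{x\alpha}$ lifts to a countable tree on $P$, which $P$'s $(0,\omega_1)$-strategy handles directly --- and Assumption~\ref{ass:om_1^L[x]} enters instead at the point where you derive $\omega$-goodness and condensation from mere $(0,\omega_1)$-iterability, since the comparison arguments underlying solidity/universality/condensation ordinarily require $(\omega_1{+}1)$-iterability and it is $1$-smallness together with $\omega_1^{L[x]}<\omega_1$ (as in Lemma~\ref{lem:1-small_comparison}) that closes that gap.
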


The following fact is also proved via standard methods; see \cite[\S11]{fsit}. We state a more detailed variant later in Lemma \ref{lem:projections_to_cardinals}.
\begin{fact}\label{fact:DD_stages_proj_to_card_of_N_alpha}
 Let $P$ be a premouse, $x\in\RR^P$
 and $\lambda\in\Lim$ be such that $N^P_{x\lambda}$ exists.
 Let $\rho$ be a cardinal of $N^P_{x\lambda}$.
 Then for each $M\pins N^P_{x\lambda}$
 such that $\rho_\om^M=\rho$,
 there is a unique $\xi<\lambda$
 such that $M=\core_\om(N^P_{x\xi})$.
\end{fact}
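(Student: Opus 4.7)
The plan is to prove, by induction on $\lambda$, the stronger statement that the map $\xi \mapsto M_\xi := \core_\om(N^P_{x\xi})$ for limit $\xi < \lambda$ is a strictly $\pins$-increasing enumeration of exactly those $M \pins N^P_{x\lambda}$ that are $\om$-sound with $\rho_\om^M$ a cardinal of $N^P_{x\lambda}$. The stated fact then falls out, uniqueness being a direct consequence of strict monotonicity.

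First I would verify that the $M_\xi$ form a strictly increasing $\pins$-chain of proper segments of $N^P_{x\lambda}$, by a secondary induction on $\xi$ splitting into the three construction cases. At a limit-of-limits stage, the $\liminf$ definition of $N^P_{x\xi}$ absorbs each prior $M_\beta$ as a proper segment directly. At a passive-successor step $\xi = \xi' + \om$ with $N^P_{x\xi} = \J(M_{\xi'})$, one has $M_{\xi'} \pins N^P_{x\xi}$, and since $\J$ applied to an $\om$-sound premouse remains $\om$-sound, $M_\xi = N^P_{x\xi}$ strictly extends $M_{\xi'}$. At an active-successor step, $N^P_{x\xi'}$ is passive and $N^P_{x\xi}$ adjoins a top extender $E$, so $M_{\xi'} \pins N^P_{x\xi'} \pins N^P_{x\xi}$, with strictness witnessed by the newly present top extender. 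Chaining these through $\lambda$ yields $M_\xi \pins N^P_{x\lambda}$.

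For surjectivity, fix $M$ as in the hypothesis and let $\xi < \lambda$ be least with $M \ins N^P_{x\xi}$; such $\xi$ exists because the heights of the $M_\xi$ are cofinal in $\OR(N^P_{x\lambda})$. At a limit-of-limits $\xi$, the $\liminf$ structure forces either $M \ins N^P_{x\beta}$ for some $\beta < \xi$ (contradicting minimality) or $M = N^P_{x\xi}$. In the passive-successor case $N^P_{x\xi} = \J(M_{\xi'})$, minimality forces $\OR(M) > \OR(M_{\xi'})$, and as the $\J$-hierarchy interposes no proper premouse levels strictly between $M_{\xi'}$ and $N^P_{x\xi}$, we get $M = N^P_{x\xi}$. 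In the active-successor case, $N^P_{x\xi}$ and $N^P_{x\xi'}$ share a universe and differ only by the top extender, so minimality yields $M = N^P_{x\xi}$. In each case $M = N^P_{x\xi}$ and $M$ is $\om$-sound by hypothesis, so $M_\xi = \core_\om(N^P_{x\xi}) = M$.

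The main obstacle is not a single deep insight but the careful handling of the interaction between the construction's case structure and fine-structural coring. In particular, verifying that when the construction reaches $N^P_{x\xi} = M$ the core $M_\xi$ really equals $M$ requires that $N^P_{x\xi}$ itself be $\om$-sound; this is automatic in the passive-successor and limit-of-limits cases, but in the active-successor case it uses the condensation hypothesis \ref{DDconass:segs_good} to conclude that adjoining the minimally-chosen background-induced extender $E$ to the already-$\om$-sound passive premouse $N^P_{x\xi'}$ does not disturb $\om$-soundness. The additional work needed to rule out the pathological possibility that some $M \pins N^P_{x\lambda}$ with $\rho_\om^M = \rho$ escapes the enumeration of cores reduces, via the minimality of $\xi$, to the same careful case analysis, and uniqueness then follows directly from the strict monotonicity established above.
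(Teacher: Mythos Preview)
Your approach has a genuine gap at the active-successor step. You claim that condensation ensures the active structure $N_\xi=(N_{\xi'},E)$ is $\om$-sound, so that $M_\xi=N_\xi$ and monotonicity is preserved. This is false: adding a background extender to a sound passive premouse typically destroys soundness, and this is precisely why the construction includes the coring-down step $N_{\beta+\om}=\J(\core_\om(N_\beta))$ rather than simply $\J(N_\beta)$. The condensation hypothesis \ref{DDconass:segs_good} is a requirement on proper segments; it says nothing about soundness of the top active level.

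Concretely, suppose $N_{\xi'}$ is passive and sound (so $M_{\xi'}=N_{\xi'}$) and $N_\xi=(N_{\xi'},E)$ is active but not sound. Then $M_\xi=\core_\om(N_\xi)$ is an active premouse of the same type, related to $N_\xi$ by a nontrivial core embedding; in particular $\OR(M_\xi)<\OR(N_\xi)=\OR(N_{\xi'})$ and the top extender $F^{M_\xi}$ is not on the sequence of $N_{\xi'}$. Hence $M_{\xi'}=N_{\xi'}$ is not a segment of $M_\xi$, and conversely $M_\xi$ is not a segment of $M_{\xi'}$: the two cores are $\pins$-incomparable. So the map $\xi\mapsto M_\xi$ is not strictly $\pins$-increasing, and your monotonicity-based argument for uniqueness collapses. (Note also that $\rho_\om^{M_{\xi'}}=\OR(N_{\xi'})$ here, which is not a cardinal of $N_\lambda$, so $M_{\xi'}$ is not even among the structures the fact is about.)

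The standard argument (as in \cite[\S11]{fsit} and the paper's Lemma~\ref{lem:projections_to_cardinals}) does not attempt monotonicity of cores. Instead it uses $(\om)$-universality: $\core_\om(N_\beta)||(\rho^+)^{\core_\om(N_\beta)}=N_\beta||(\rho^+)^{N_\beta}$ where $\rho=\rho_\om^{N_\beta}$. From this one shows that if no later stage projects strictly below $\rho$, then $\core_\om(N_\beta)$ survives as a cardinal-projecting proper segment of $N_\lambda$; uniqueness comes from the observation that distinct stages with the same core would force equality of the $\rho^+$-segments, which is impossible. The key condition isolating the correct $\xi$ is ``$\rho_\om^{N_\xi}=\rho$ and $\rho\leq\rho_\om^{N_\zeta}$ for all $\zeta\in(\xi,\lambda)$'', not any global ordering of the cores.
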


\begin{lem}\label{lem:add_ext_non-ZFC}
 Let $P$ be a $1$-small premouse, $x\in\RR^P$
 and $\lambda\in\Lim$ be such that $N^P_{x,\lambda+\om}$ exists
 and is active. 
 Then $P|\aleph_\lambda^P\not\sats\ZFC$, and therefore neither $\lambda$ nor $\aleph_\lambda^P$ are measurable in $P$.
\end{lem}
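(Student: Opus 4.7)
I plan to prove the contrapositive: suppose $P|\aleph_\lambda^P \sats \ZFC$. Since $\aleph_\lambda^P$ is a cardinal of $P$, this forces $\aleph_\lambda^P$ to be inaccessible in $P$. Since $\aleph_\lambda^P$ is a limit cardinal (as $\lambda\in\Lim$), regularity of $\aleph_\lambda^P$ combined with $\cof^P(\aleph_\lambda^P) = \cof^P(\lambda) \leq \lambda \leq \aleph_\lambda^P$ forces $\lambda = \aleph_\lambda^P$, so $\lambda$ itself is an inaccessible cardinal of $P$.

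By the hypothesis that $N^P_{x,\lambda+\om}$ exists and is active, the subcase of the stage-$(\lambda+\om)$ rule of $\DD^P_x$ that adds an extender must have fired, yielding some $F \in \es^P$ with $\nu_F = \aleph_\lambda^P = \lambda$ and $F$ being $P$-total. Let $\kappa = \crit(F) < \lambda$; then $\kappa$ is measurable in $P$ and hence inaccessible there, so $(\kappa^+)^P < \lambda$. Since $\nu_F$ is a limit ordinal, the generators of $F$ are cofinal in $\lambda$, and this set of generators is amenable to $P|\lh(F)$ and hence lies in $P$.

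The key step is to deduce that $\cof^P(\lambda) < \lambda$, contradicting regularity. The idea is that $F$ is short, so $\nu_F \leq i_F((\kappa^+)^P)$, and every ordinal below $i_F((\kappa^+)^P)$ is represented by $[a,f]_F$ with $a \in [\nu_F]^{<\om}$ and $f \in P$ a function from $[\kappa]^{<\om}$ into $(\kappa^+)^P$; since $\lambda$ is a strong limit in $P$, the set of such $f$ has $P$-cardinality $<\lambda$, from which one extracts a cofinal set in $\lambda$ of $P$-cardinality $<\lambda$. This is where I expect the main technical difficulty, and where a careful analysis of the generator structure of Mitchell--Steel extenders is needed.

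The ``therefore'' clause is then easy: any measurable cardinal of $P$ is inaccessible in $P$, so measurability of either $\lambda$ or $\aleph_\lambda^P$ in $P$ would give $P|\aleph_\lambda^P\sats\ZFC$ (using $\lambda=\aleph_\lambda^P$ in the relevant cases), contradicting the first conclusion.
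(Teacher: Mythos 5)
The approach is genuinely different from the paper's, and the step you flag as ``the main technical difficulty'' is in fact a gap that cannot be closed in the way you sketch. The problem is that the seeds $a$ in a representation $[a,f]_F$ still range over $[\lambda]^{<\om}$, so counting the functions $f$ buys nothing: for instance, with $f(u)=\max(u)$ one already has $[\{\xi\},f]_F=\xi$ for every $\xi<\lambda$, so the set $\{[a,f]_F : a\}$ is cofinal in $\lambda$ for that single $f$, and no cofinality bound on $\lambda$ is extracted. Indeed, nothing about the extender $F$ alone forces $\lambda=\nu_F$ to be singular in $P$ (think of a $(\kappa,\lambda)$-extender on the sequence of a mouse where $\kappa$ is $<\lambda$-strong and $\lambda$ is inaccessible). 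A second, independent issue: even a singularization of $\lambda$ in $P$ would not directly yield the claimed conclusion $P|\aleph_\lambda^P\not\sats\ZFC$, since the witnessing function would live in $P$ rather than in $P|\aleph_\lambda^P$; you need the singularization to be definable over $P|\aleph_\lambda^P$.

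The actual proof is construction-theoretic rather than extender-theoretic and makes no use of the background extender $F$ beyond the fact that, since $N^P_{x,\lambda+\om}$ is active, its passivization $N^P_{x\lambda}$ has a largest cardinal $\rho$. From this, $N^P_{x\lambda}$ is coded by a subset of $\rho$ and hence lies in $P|\lambda$ (so $\OR(N^P_{x\lambda})<\lambda$); then Fact~\ref{fact:DD_stages_proj_to_card_of_N_alpha} gives a cofinal map $\OR(N^P_{x\lambda})\to\lambda$ (sending a proper segment $M\pins N^P_{x\lambda}$ with $\rho_\om^M=\rho$ to the unique stage $\xi<\lambda$ with $M=\core_\om(N^P_{x\xi})$), definable over $P|\lambda$ since $\DD^P_x\rest\lambda$ is. This definable singularization over $P|\lambda$ is exactly what refutes $\ZFC$ there. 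Your treatment of the ``therefore'' clause is fine, and the observation that $\lambda$ may be taken equal to $\aleph_\lambda^P$ is also part of the paper's argument; but the core of the proof lies in the combinatorics of the construction $\DD$, not in an analysis of $F$.
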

\begin{proof}
We may assume that $\lambda=\aleph_\lambda^P$.
 Note that $\CC^P_x\rest(\lambda+1)$
 and hence also $N_{\lambda x}^P$ are both definable
 over $P|\lambda=P|\aleph_\lambda^P$.
 Since $N_{\lambda+\om,x}^P$ exists,
 $\aleph_{\lambda+\om}^P\leq\OR^P$,
 so $N_{\lambda x}^P\in P$.
 And since $N_{\lambda+\om,x}^P$
 is active and its passivization is just $N_{\lambda x}^P$, these have
 a largest cardinal $\rho$.
 Therefore $N_{\lambda x}^P\in P|\lambda=P|\aleph_\lambda^P$.
 But by Fact \ref{fact:DD_stages_proj_to_card_of_N_alpha},
 $\lambda$ is the supremum of the ordinals $\xi<\lambda$
 such that for some $M\pins N_{x\lambda}^P$ with $\rho_\om^M=\rho$, we have $M=\core_\om(N_{x\xi}^P)$.
 Also by the uniqueness in that fact, we get a cofinal function $\OR(N^P_{x\lambda})\to\lambda$
 definable over $P|\lambda$, which suffices.
\end{proof}

\begin{lem}\label{lem:first_comparison}
Let $R$ be a countable $1$-small
premouse and $\Sigma$ be a $(0,\om_1)$-iteration strategy for $R$.
Let $S,\Lambda$ be likewise.
Then there are iteration trees $\Tt,\Uu$ on $R,S$,
via $\Sigma,\Lambda$ respectively (in particular, $0$-maximal trees), of countable 
successor length, nowhere dropping in model or degree, such that $M^\Tt_\infty,M^\Uu_\infty$
are $x$-compatible for every $x\in\RR\inter R\cap S$.
\end{lem}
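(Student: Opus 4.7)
The plan is to build $\Tt$ and $\Uu$ by a standard least-disagreement coiteration of $R$ and $S$ via the strategies $\Sigma$ and $\Lambda$. By Lemma \ref{lem:1-small_comparison}, this coiteration terminates at some countable successor stage, producing $0$-maximal trees $\Tt$ on $R$ and $\Uu$ on $S$ with $M^\Tt_\infty \ins^* M^\Uu_\infty$ (without loss of generality, swapping roles otherwise).

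From this initial-segment relation, the $x$-compatibility claim follows by routine unwinding of the definitions. Fix $x \in \RR \cap R \cap S$; since reals are preserved under iteration we also have $x \in \RR \cap M^\Tt_\infty \cap M^\Uu_\infty$. Because $M^\Tt_\infty \ins^* M^\Uu_\infty$, the extender sequences and cardinal structure of the two iterates agree below $\OR^{M^\Tt_\infty}$. The construction $\DD^P_x$ picks each background $F_{\beta+\om}$ by a least-length rule on $P$'s $E$-sequence (requiring $F \in \es^P$ with $\nu_F = \aleph_\beta^P$ and coherence with a candidate active extender on $N_\beta$), so $\DD^{M^\Tt_\infty}_x$ and $\DD^{M^\Uu_\infty}_x$ produce the same $N_\alpha$ and the same $F_\alpha$ at every stage where both are defined. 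This is precisely $(\lambda,x)$-compatibility for every $\lambda$ at which both $N^{M^\Tt_\infty}_{x\lambda}$ and $N^{M^\Uu_\infty}_{x\lambda}$ exist, and it holds on the required product $(M^\Tt_\infty\cap M^\Uu_\infty)\times[\theta]^{<\om}$ since the relevant extenders coincide as elements of the two sequences.

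The main technical obstacle is the nowhere-dropping requirement. Because $R$ and $S$ are only assumed $0$-sound (not $\om$-sound), a naive coiteration can drop on either side when the selected least-disagreement extender has critical point below a projectum of the target model. The plan to overcome this is to adjust the coiteration so that such drops do not occur: whenever a potential drop threatens, precede that step with a short linear iteration (via the relevant strategy) that lifts the projecta past the problematic critical point, or switch to a higher extender choice on that side to avoid the drop altogether. In the $1$-small setting, with $(0,\om_1)$-iterability of $R$ and $S$ supplying the required branches via unique Q-structure guidance (Lemma \ref{lem:limit_trees_uniquely-Pi^1_1-guided}), this is a standard fine-structural manipulation, and Dodd--Jensen style arguments applied to the non-dropping branches confirm that no drops are ultimately introduced into either $\Tt$ or $\Uu$.
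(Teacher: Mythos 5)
Your proposal misidentifies the nature of the comparison the lemma requires. The paper's proof does not coiterate $R$ and $S$ by least disagreement of their extender sequences at all. Instead it runs a comparison driven entirely by disagreements between the \emph{background constructions} $\DD^{M^\Tt_\gamma}_x$ and $\DD^{M^\Uu_\gamma}_x$, ranging over all reals $x \in R\cap S$ simultaneously: at each stage $\gamma$ one finds the least stage $\alpha$ at which some construction pair becomes $(\alpha,x)$-incompatible while a background extender $F_{x,\alpha+\om}$ is present, minimizes $\alpha$ over all relevant $x$, and then uses that background extender as $E^\Tt_\gamma$ (and similarly for $\Uu$). The conclusion is not that $M^\Tt_\infty\ins^*M^\Uu_\infty$ --- the lemma does not assert or need that --- but only that all the constructions over the final models agree. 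So the target of the comparison is a family of $L[\es,x]$-constructions, not the premice themselves.

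This difference is exactly what makes the nowhere-dropping clause come for free, and your handling of that clause is where the proposal genuinely fails. Every extender $E^\Tt_\gamma$ in the paper's comparison is, by construction, a background extender $F^{M^\Tt_\gamma}_{x,\alpha_\gamma+\om}$; these are $M^\Tt_\gamma$-total with $\nu(E^\Tt_\gamma)=\aleph_{\alpha_\gamma}^{M^\Tt_\gamma}$ a cardinal (Claim \ref{clm:nu(E^Tt_gamma)}), and the stages $\alpha_\gamma$ are strictly increasing (Claim \ref{clm:alpha_gamma<alpha_eps}). Since critical points lie below cardinals on which the relevant models agree, the extenders are total over the models they are applied to and no drop ever arises. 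By contrast, a standard least-disagreement comparison of two merely $0$-sound (not $\om$-sound, not even necessarily sound) countable premice will in general drop on both sides; Lemma \ref{lem:1-small_comparison} gives termination and nothing about dropping, and Dodd--Jensen at best controls the side that ends up as an initial segment. Your suggested remedies --- inserting auxiliary linear iterations to ``lift projecta'' or ``switching to a higher extender choice'' --- are not a construction: there is no guarantee such moves remain normal, remain via $\Sigma$ and $\Lambda$, terminate, or avoid drops, and in the $0$-sound case they cannot. Finally, even if one obtained $M^\Tt_\infty\ins^*M^\Uu_\infty$, your inference to $x$-compatibility is not routine, because $\aleph_\beta^{M^\Tt_\infty}$ and $\aleph_\beta^{M^\Uu_\infty}$ may diverge once $\aleph_\beta$ approaches $\OR^{M^\Tt_\infty}$, and the background extender chosen at that stage is sensitive to this; the paper sidesteps this entirely by making the construction disagreements the engine of the comparison rather than a property to be checked after the fact.
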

\begin{proof}
We produce $\Tt,\Uu$ by a kind of comparison. The trees we literally construct might be padded, but equivalent to trees as required for the lemma.
Suppose we have defined $(\Tt,\Uu)\rest(\gamma+1)$, with both trees nowhere dropping,
 but they do not witness the lemma. Let $Y=M^\Tt_\gamma$ and $Z=M^\Uu_\gamma$.
 For $x\in\RR\inter R\inter S$ let
 \[ \DD^Y_x=\left<M_{x,\alpha},F_{x,\alpha}\right>_{\alpha\leq\Upsilon}, \]
 \[ \DD^Z_x=\left<N_{x,\alpha},G_{x,\alpha}\right>_{\alpha\leq\Psi},\]
with $\Upsilon$ the largest limit such that $\aleph_{\Upsilon}^Y\leq\OR^Y$, and $\Psi$ likewise.

 Let $X_0^Y$ be the set of all $x\in\RR\inter R\inter S$ such that for some 
$\alpha<\min(\Upsilon,\Psi)$, $Y,Z$ are 
$(\alpha,x)$-compatible but not $(\alpha+\om,x)$-compatible,
and  $F_{x,\alpha+\om}\neq\emptyset$. For 
$x\in X_0^Y$, let $\alpha^Y_x$ be the corresponding  $\alpha$. For $x\in\RR\inter R\inter S\cut 
X_0^Y$, 
let $\alpha^Y_x=\infty$. Define $X_0^Z$ and $\alpha^Z_x$ likewise. Note that $X_0^Y\un 
X_0^Z\neq\emptyset$ (because $\Tt,\Uu$ do not witness the lemma). Let 
$\alpha_x=\min(\alpha^Y_x,\alpha^Z_x)$,
and $\alpha=\min_{x\in\RR\inter R\inter S}\alpha_x$. So $\alpha<\infty$.
Let $X_1^Y=\{x\in X_0^Y\mid\alpha^Y_x=\alpha\}$, and $X_1^Z$ likewise. So $X_1^Y\un 
X_1^Z\neq\emptyset$.

If $X_1^Y\neq\emptyset$ then let $x\in X_1^Y$ be such that $\lh(F_{x,\alpha+\om})$ is least possible,
and we set $E^\Tt_\gamma=F_{x,\alpha+\om}$. Also let $x^\Tt_\gamma=$ some such $x$. Otherwise set
$E^\Tt_\gamma=x^\Tt_\gamma=\emptyset$.
We define $E^\Uu_\gamma$ and $x^\Uu_\gamma$ likewise from $X^Z_1$. Define
$\alpha_\gamma=\alpha$.

This completes the definition of the comparison (since at limit stages, we use $\Sigma,\Lambda$ to continue).

The first claim below is immediate from the definitions:
\begin{clm}\label{clm:nu(E^Tt_gamma)}
For every $\gamma+1<\lh(\Tt,\Uu)$,
if $E^\Tt_\gamma\neq\emptyset$
then $\nu(E^\Tt_\gamma)=\aleph_{\alpha_\gamma}^{M^\Tt_\gamma}$, and likewise for $\Uu$. Therefore
 $\Tt,\Uu$ do not drop anywhere.
\end{clm}

\begin{clm}\label{clm:simultaneous_incompatible}
 Suppose $E^\Tt_\gamma\neq\emptyset\neq E^\Uu_\gamma$. Let
$\theta=\min(\lh(E^\Tt_\gamma),\lh(E^\Uu_\gamma))$ and
$A= M^\Tt_\gamma\inter M^\Uu_\gamma$.
 Then
 \[ E^\Tt_\gamma\rest(A\cross[\theta]^{<\om})\neq E^\Uu_\gamma\rest(A\cross[\theta]^{<\om}).\]
\end{clm}
\begin{proof}
 Suppose not. Let $x=x^\Tt_\gamma$ and $y=x^\Uu_\gamma$ and $Y=M^\Tt_\gamma$ and $Z=M^\Uu_\gamma$ 
and $\CC=\DD^Y_x$ and $\DD=\DD^Z_x$
 and $N=N^Y_{\alpha_\gamma x}$ and $(N,E)=N^Y_{\alpha_\gamma+\om,x}$. We have
$E^\Tt_\gamma=F^\CC_{\alpha_\gamma+\om}$, so $\nu(E^\Tt_\gamma)=\aleph_{\alpha_\gamma}^Y$.

Because $Y,Z$ are 
$(x,\alpha_\gamma)$-compatible, we have $N_{\alpha_\gamma}^\DD=N$. In particular, $N\sub 
M^\Tt_\gamma\inter M^\Uu_\gamma$.
Now
\[ E\rest\nu_E\sub 
E^\Tt_\gamma\rest\nu(E^\Tt_\gamma)=F^\CC_{\alpha_\gamma+\om}\rest\aleph_{\alpha_\gamma}^Y, \]
\[ 
\OR^N\leq\alpha_\gamma\leq\min(\aleph_{\alpha_\gamma}^Y,\aleph_{\alpha_\gamma}
^Z)=\min(\nu(E^\Tt_\gamma),\nu(E^\Uu_\gamma))<\theta. \]
But then because $N\sub M^\Tt_\gamma\inter M^\Uu_\gamma$ and by the contradictory hypothesis, we have
\[ E^\Tt_\gamma\inter (N\cross[\theta]^{<\om})=E^\Uu_\gamma\inter(N\cross[\theta]^{<\om}), \]
and therefore $E^\Uu_\gamma$ works as a background extender for $(N,E)$ in $\DD$.
Therefore $F_{\alpha_\gamma+\om}^\DD\neq\emptyset$, so $x\in X^{M^\Uu_\gamma}_1$. But because $Y,Z$ are not 
$(x,\alpha+\om)$-compatible and again by the contradictory hypothesis, we have $F_{\alpha_\gamma+\om}^\DD\neq E^\Uu_\gamma$,
and so by the minimality of $\lh(F_{\alpha_\gamma+\om}^\DD)$ (in the definition of $\DD$), 
$\lh(F_{\alpha_\gamma+\om}^\DD)<\lh(E^\Uu_\gamma)$. This contradicts the minimality of the choice 
of $E^\Uu_\gamma$.
\end{proof}

\begin{clm}\label{clm:alpha_gamma<alpha_eps}
 $\alpha_\gamma<\alpha_\eps$ for all
$\gamma+1<\eps+1<\lh(\Tt,\Uu)$.
Therefore (by Claim \ref{clm:nu(E^Tt_gamma)})
$\Tt,\Uu$ have the increasing length condition.
\end{clm}
\begin{proof}
We consider the case that $\eps=\gamma+1$; the general case is then an easy adaptation.
Let $Y=M^\Tt_\gamma$ and $Z=M^\Uu_\gamma$.

Suppose first that $E^\Tt_\gamma\neq\emptyset$ but $E^\Uu_\gamma=\emptyset$.
Because $\nu(E^\Tt_\gamma)=\aleph_{\alpha_\gamma}^{Y}$,
for each $x$, $\DD_x^{Y}$ and $\DD_x^{M^\Tt_{\gamma+1}}$ agree through all stages 
$\leq\alpha_\gamma$, because these only involve extenders with strengths 
$<\aleph_{\alpha_\gamma}^{Y}$, over which $Y$ and $M^\Tt_{\gamma+1}$ agree.
Also, we have $M^\Uu_{\gamma+1}=Z$, and therefore $M^\Tt_{\gamma+1},M^\Uu_{\gamma+1}$ 
are $(x,\alpha_\gamma)$-compatible for all $x$. So $\alpha_\gamma\leq\alpha_{\gamma+1}$.

Now
$\aleph_{\alpha_\gamma}^{Y}=\aleph_{\alpha_\gamma}^{M^\Tt_{\gamma+1}}$
and
$\lh(E^\Tt_\gamma)=\aleph_{\alpha_\gamma+\om}^{M^\Tt_{\gamma+1}}$.
So for any $x$, if
\[ E^*\eqdef F_{\alpha_\gamma+\om}^{\DD_x(M^\Tt_{\gamma+1})}\neq\emptyset,\]
then $\lh(E^*)<\lh(E^\Tt_\gamma)$, so $E^*\in\es^Y$. It follows that 
$F_{\alpha_\gamma+\om}^{\DD_x^Y}=E^*$ and, since $\lh(E^*)<\lh(E^\Tt_\gamma)$,
that $Y,Z$ are $(x,\alpha_\gamma+\om)$-compatible,
but then $M^\Tt_{\gamma+1},Z$ are also $(x,\alpha_\gamma+\om)$-compatible.

Conversely, suppose $F^*\eqdef F_{\alpha_\gamma+\om}^{\CC_x^Z}\neq\emptyset$.
Since $E^\Uu_\gamma=\emptyset$, we have that $Y,Z$ are 
$(x,\alpha_\gamma+\om)$-compatible. So
$E^*\eqdef F^{\CC_x^Y}_{\alpha_\gamma+\om}\neq\emptyset$ and
\begin{equation}\label{eqn:compatibility} 
E^*\inter(A\cross[\theta]^{<\om})=F^*\inter(A\cross[\theta]^{<\om}) \end{equation}
where $A=Y\inter Z$ and $\theta=\min(\lh(E^*),\lh(F^*))$. Note
\begin{equation}\label{eqn:alpha_gamma_small} 
\alpha_\gamma\leq\min(\aleph_{\alpha_\gamma}^Y,\aleph_{\alpha_\gamma}^Z)=\min(\nu(E^*),
\nu(F^*))<\theta.\end{equation}

We claim that $\lh(E^*)<\lh(E^\Tt_\gamma)$, which clearly suffices. So suppose 
$\lh(E^*)\geq\lh(E^\Tt_\gamma)$. Let $y=x^\Tt_\gamma$.
If $E^*=E^\Tt_\gamma$ then note that by line (\ref{eqn:compatibility}), 
$F^{\DD_y^Z}_{\alpha_\gamma+\om}\neq\emptyset$,
but then $E^\Uu_\gamma\neq\emptyset$, contradiction.
So $\lh(E^*)>\lh(E^\Tt_\gamma)$.

Let $\kappa=\crit(F^*)=\crit(E^*)$. Let $X$ be the set of all 
$\beta<\kappa$ such that $F^{\DD_y^Y}_{\beta+\om}\neq\emptyset$;
 equivalently, $F^{\DD_y^Z}_{\beta+\om}\neq\emptyset$. So $X\in 
Y\inter Z$. Let $U=\Ult(Y,E^*)$.
Then $E^\Tt_\gamma\in\es^U$ and note that $E^\Tt_\gamma=F^{\DD^U_y}_{\alpha_\gamma+\om}$.
Therefore $\alpha_\gamma\in i^{Y}_{E^*}(X)$. By lines (\ref{eqn:compatibility}) and 
(\ref{eqn:alpha_gamma_small}) then, $\alpha_\gamma\in i^{Z}_{F^*}(X)$. So letting $W=\Ult(Z,F^*)$, we have
 $F^{\DD^{W}_y}_{\alpha_\gamma+\om}\neq\emptyset$.
So by coherence, $F^{\DD^{Z}_y}_{\alpha_\gamma+\om}=F^{\DD^{W}_y}_{\alpha_\gamma+\om}\neq\emptyset$. 
But then  $E^\Uu_\gamma\neq\emptyset$, contradiction.

The case that $E^\Tt_\gamma\neq\emptyset\neq E^\Uu_\gamma$ is very similar, so we leave it to the reader.
\end{proof}

The following claim is an easy consequence of the preceding claims and Lemma \ref{lem:add_ext_non-ZFC}.
\begin{clm}\label{clm:nu_not_measurable}
 Let $\gamma+1<\lh(\Tt,\Uu)$ be such that $E^\Tt_\gamma\neq\emptyset$. Let 
$\kappa=\nu(E^\Tt_\gamma)$.
 Then for all $\beta>\gamma$,
 \[ \kappa=\aleph_{\alpha_\gamma}^{M^\Tt_\gamma}=\aleph_{\alpha_\gamma}^{M^\Tt_\beta}, \]
 \[ \lh(E^\Tt_\gamma)=(\kappa^+)^{M^\Tt_\beta}=\aleph_{\alpha_\gamma+1}^{M^\Tt_\beta}, \]
 and $M^\Tt_\beta\sats$``$\kappa$ is not measurable''. Therefore if 
$\beta+1<\lh(\Tt,\Uu)$ and $E^\Tt_\beta\neq\emptyset$ then $\kappa\neq\crit(E^\Tt_\beta)$.
 Likewise for $\Uu$.
\end{clm}

\begin{clm}\label{clm:first_comparison_termination}
 The comparison terminates in countably many stages.
\end{clm}
\begin{proof}
 Suppose not. Then we get $\lh(\Tt,\Uu)=\om_1$. Let $a\in\RR$ be such that  $R,S\in\HC^{L[a]}$.
 Then since $R,S$ are $1$-small
 and $\Tt,\Uu$ are $0$-maximal trees
 of limit length, they are guided by Q-structures, and therefore
  $\Tt,\Uu\in L[a]$.
 By Assumption \ref{ass:om_1^L[x]},
 we have $\om_1^{L[a]}<\lh(\Tt,\Uu)$.
 
 Now work in $V=L[a]$, so now $\om_1=\om_1^{L[a]}<\lh(\Tt,\Uu)$.
 As usual,  let $\pi:H\to V_\eta$ be elementary 
with $H$ countable transitive, and everything relevant in $\rg(\pi)$. Let $\kappa=\crit(\pi)$.
The usual argument shows that
\begin{equation}\label{eqn:it_map_agmt} i^\Tt_{\kappa,\om_1}\rest A = \pi\rest 
A=i^\Uu_{\kappa,\om_1}\rest A\end{equation}
where
$A=M^\Tt_\kappa\inter M^\Uu_\kappa$.
Of course,
\begin{equation}\label{eqn:pow(kappa)_stable}
\all\alpha\in[\kappa,\om_1]\left[M^\Tt_\alpha\inter\pow([\kappa]^{<\om})=
M^\Tt_\kappa\inter\pow([\kappa]^{<\om})\right], \end{equation}
and likewise for $\Uu$.

Let $\gamma+1=\min((\kappa,\om_1]_\Tt)$ and $\zeta+1=\min((\kappa,\om_1]_\Uu)$.
We may assume  $\gamma\leq\zeta$.
By Claim \ref{clm:nu_not_measurable}
and the normality of $\Tt,\Uu$, we have
$\crit(i^\Tt_{\gamma+1,\om_1})>\lh(E^\Tt_\gamma)$ and 
$\crit(i^\Uu_{\zeta+1,\om_1})>\lh(E^\Uu_\zeta)$.
Together with line (\ref{eqn:it_map_agmt}), it follows that
\begin{equation}\label{eqn:compat} 
E^\Tt_\gamma\rest(A\cross[\theta]^{<\om})=E^\Uu_\zeta\rest(A\cross[\theta]^{<\om}) \end{equation}
where $\theta=\min(\lh(E^\Tt_\gamma),\lh(E^\Uu_\zeta))$.
 So by Claim \ref{clm:simultaneous_incompatible}, $\gamma<\zeta$. So by Claim \ref{clm:alpha_gamma<alpha_eps} and 
the 
ISC, we have
$E^\Uu_\zeta\rest\aleph_{\alpha_\gamma}^{M^\Uu_\zeta}\in\es(M^\Uu_\zeta)$,
$\aleph_{\alpha_\gamma}^{M^\Uu_\gamma}=\aleph_{\alpha_\gamma}^{M^\Uu_\zeta}$, and
$E^*\eqdef E^\Uu_\zeta\rest\aleph_{\alpha_\gamma}^{M^\Uu_\gamma}\in\es(M^\Uu_\gamma)$.

Let $x=x^\Tt_\gamma$.
Write $\CC_{\gamma x}=\DD^{M^\Tt_\gamma}_x$ and 
$\DD_{\gamma x}=\DD^{M^\Uu_\gamma}_x$
and
$N=N^{\CC_{\gamma x}}_{\alpha_\gamma}=N^{\DD_{\gamma x}}_{\alpha_\gamma}$
(the equality holds by $(x,\alpha_\gamma)$-compatibility).
We have $N\sub M^\Tt_\gamma\inter 
N^\Tt_\gamma$. Clearly
\[ 
\OR^N\leq\alpha_\gamma\leq\min(\aleph_{\alpha_\gamma}^{M^\Tt_\gamma},\aleph_{\alpha_\gamma}^{
M^\Uu_\gamma})\leq\theta.\]
Let $E=F(N_{\alpha_\gamma+\om}^{\CC_{\gamma x}})$. So
$E\rest\nu_E\sub 
E^\Tt_\gamma\rest\nu(E^\Tt_\gamma)=E^\Tt_\gamma\rest\aleph_{\alpha_\gamma}^{M^\Tt_\gamma}´$. But then by the preceding remarks and the 
compatibility of $E^\Tt_\gamma$ with $E^\Uu_\zeta$,
\[ E\rest\nu_E\sub E^*\rest\aleph_{\alpha_\gamma}^{M^\Uu_\gamma}=E^*\rest\nu(E^*).\]
So $E^*$ is a suitable 
candidate for a background extender for $\DD_{\gamma x}$ at stage $\alpha_\gamma+\om$. So 
$F^{\DD_{\gamma x}}_{\alpha_\gamma+\om}\neq\emptyset$ and
$\lh(F^{\DD_{\gamma,x}}_{\alpha_\gamma+1})\leq\lh(E^*)$.
Since $M^\Tt_\gamma,M^\Uu_\gamma$ are not $(x,\alpha_\gamma+\om)$-compatible,
it follows that 
$E^\Uu_\gamma\neq\emptyset$ and $\lh(E^\Uu_\gamma)\leq\lh(E^*)$. But then
$E^*\notin\es(M^\Uu_\zeta)$,
contradiction.
\end{proof}
Since the comparison only terminates with trees that witness the statement of the lemma, we are done.
\end{proof}

\subsubsection{The prewellordering property for $\Pi^1_3$}
We can now give the promised alternate proof of $\mathrm{PWO}(\Pi^1_3)$:

\begin{tm}\label{tm:PWO(Pi^1_3)}Suppose that $M_1^\#(x)$ exists \tu{(}and is $(\om,\om_1)$-iterable\tu{)}
 for every $x\in\RR$. Then $\mathrm{PWO}(\Pi^1_3)$ holds.
\end{tm}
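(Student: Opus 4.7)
The plan is to generalize the $\Pi^1_1$ argument of \S\ref{subsec:Pi^1_1}, replacing the canonical mouse $M_x=\J_\alpha(x)$ with the least $\varphi(w)$-witness and replacing the trivial height comparison with one performed internally to a common background $L[\es,\cdot]$-type construction. Write $A=\{w\mid\varphi(w)\}$ with $\varphi(w)\iff\all x\,\ex y\,\psi(w,x,y)$, $\psi$ being $\Pi^1_1$. For each $w\in A$, I would fix the unique $\ins$-minimal sound $1$-small $\varphi(w)$-witness $P_w$ with $\rho_1^{P_w}=\om$; existence follows from Lemma \ref{lem:varphi(w)-witness} applied to the hypothesized $M_1^\#(w)$, and uniqueness from comparison of $1$-small $\om$-mice (Lemma \ref{lem:1-small_comparison}). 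The target norm should be the ordinal height of $P_w$, but since $P_w$ and $P_v$ sit over different base reals they cannot be directly compared; the key move is to measure both inside a common ambient mouse.

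Concretely, for any $\Pi^1_2$-iterable $1$-small $\om$-premouse $R$ over a real $r\geq_T w\oplus v$ of sufficient ordinal height, I would run the constructions $\DD^R_w$ and $\DD^R_v$ of Definition \ref{dfn:DD} inside $R$. By Fact \ref{fact:DD_stages_proj_to_card_of_N_alpha}, provided $R$ is large enough, $P_w$ appears uniquely as $\core_\om(N^R_{w\alpha})$ at some stage $\alpha=\alpha_R(w)$, and likewise $\alpha_R(v)$ for $P_v$. I would then define
\[ w\leq^* v \iff w\in A \text{ and for every suitable } R, \text{ either } P_v \text{ fails to appear in } \DD^R_v \text{ or } \alpha_R(w)\leq\alpha_R(v). \]
The well-definedness step (independence from $R$) is the heart of the matter. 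For two $(0,\om_1)$-iterable candidates $R,R'$, I would apply Lemma \ref{lem:first_comparison} to produce countable $0$-maximal nowhere-dropping iterates $Y,Y'$ of $R,R'$ that are $x$-compatible for every common real $x$; then $\DD^Y_w=\DD^{Y'}_w$ forces $\alpha_Y(w)=\alpha_{Y'}(w)$, and pulling back through the non-dropping iteration maps gives $\alpha_R(w)=\alpha_{R'}(w)$. To extend from $(0,\om_1)$-iterable to merely $\Pi^1_2$-iterable $R$, I would invoke overspill (Lemma \ref{lem:overspill_mouse}): any $\Pi^1_2$-iterable $R$ which fails full iterability contains every $1$-small $\om$-mouse as a proper initial segment, so $P_w$ and $P_v$ still appear at their correct stages of the $\DD$-construction inside $R$.

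For complexity, $\Pi^1_2$-iterability of a $1$-small $\om$-premouse is $\Pi^1_2$ (by the discussion following its definition), so the assertion ``for every $\Pi^1_2$-iterable $R$, the internal arithmetic comparison holds'' has the form $\all R\,\varphi(R)$ with $\varphi(R)$ being $\Sigma^1_2$, hence is $\Pi^1_3$; conjoined with the $\Pi^1_3$ condition $w\in A$, this places $\leq^*$ in $\Pi^1_3$, and $<^*$ analogously. The hardest part will be the well-definedness step: verifying that $\alpha_R(w)$ is genuinely iteration-invariant and overspill-absolute, which amounts to a careful tracking of how the $\DD$-stages producing $P_w$ behave under the construction-comparison Lemma \ref{lem:first_comparison} and under the overspill extension of Lemma \ref{lem:overspill_mouse} to $\Pi^1_2$-iterable structures. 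Once these are in hand, that $\leq$ is a prewellorder on $A$ (wellfoundedness is automatic since $\alpha_R(w)$ is an ordinal) and the $\Pi^1_3$-calibration of the norm follow along standard lines, yielding $\PWO(\Pi^1_3)$.
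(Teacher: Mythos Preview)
Your approach is essentially the paper's, and the core ideas---compare stages of $\DD$-constructions inside a common background premouse, use Lemma \ref{lem:first_comparison} for invariance under the choice of background, and use overspill for the $\Pi^1_3$ calibration---are correct. But two steps need repair.

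First, the claim that ``pulling back through the non-dropping iteration maps gives $\alpha_R(w)=\alpha_{R'}(w)$'' is false: the iteration maps are elementary but not the identity, so the stage $\alpha$ moves. What is preserved is the \emph{inequality} $\alpha_R(w)\leq\alpha_R(v)$, since ``the $\DD_w$-stage producing a $\psi(w)$-prewitness is $\leq$ the $\DD_v$-stage producing a $\psi(v)$-prewitness'' is a first-order statement about the background premouse. The paper makes exactly this move (Claim \ref{clm:relevant_pms_agree_on_order}): one shows $x\leq^P_* y\iff x\leq^{P'}_* y$ by elementarity, not that the individual stages coincide.

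Second, your definability argument is imprecise in two respects. Defining the norm via the specific mouse $P_w$ makes the $\Pi^1_3$ formulation awkward, since recognizing $P_w$ already involves iterability. The paper instead uses $\varphi^P(x)=$ the least $\alpha<\delta^P$ with $\core_\om(N^P_{x\alpha})$ a $\psi(x)$-\emph{prewitness}, which is first-order over $P$ and requires no external reference. (In an iterable $P$ this stage in fact produces $P_x$ anyway, but one need not know this to run the argument.) And ``suitable $R$'' must be pinned down for the complexity bound: the paper does this by quantifying only over $\Pi^1_2$-iterable $(x,y)$-\emph{minimal} premice $P$, where $(x,y)$-minimal means $P$ is a sound, $\rho_1=\om$, pre-relevant $(x,y)$-premouse which is $\ins$-least seeing a prewitness for either $x$ or $y$. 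Overspill (Lemma \ref{lem:overspill_mouse}) then forces any such $P$ to equal the genuine $(x,y)$-minimal segment of $M_1(x,y)$, which is why the universal quantifier over $\Pi^1_2$-iterable candidates is harmless.
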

\begin{proof}
 Let $\psi$ be a $\Pi^1_3$ formula and $A=\{x\in\RR\mid\psi(x)\}$.
 
 Say a premouse $P$ is \emph{pre-relevant}
 if it is countable,  $1$-small, and has a (unique) Woodin $\delta^P$.
 Say a premouse $P$ is \emph{relevant} if it is pre-relevant and $(0,\om_1)$-iterable.

 Let $P$ be pre-relevant.
Let $A_*^P$ be the set of all $x\in\RR$ such that there is $\alpha<\delta^P$
such that $\core_\om(N_{x\alpha}^P)$ (is defined and) is a $\psi(x)$-prewitness.
Define a norm $\varphi^P$ on $A_*^P$ by setting $\varphi^P(x)=$ the least such $\alpha$.
Write $x\leq^P_* y$ iff $x\in A_*^P$ and if $y\in A_*^P$ then $\varphi^P(x)\leq\varphi^P(y)$.

\begin{clm}\label{clm:relevant_pms_agree_on_order}
Let $P,Q$  be relevant premice.
For $x,y\in\RR^P\inter\RR^Q$
with $x\in A_*^P$ and $y\in A_*^Q$,
we have $x\leq^P_*y$ iff $x\leq^Q_*y$.
\end{clm}
\begin{proof}
Let $P',Q'$ be iterates of $P,Q$
witnessing Lemma \ref{lem:first_comparison}. It is easy to see that
$x\leq^{P'}_*y$ iff $x\leq^{Q'}_*y$.
But 
 by elementarity,
$x\leq^P_*y$ iff $x\leq^{P'}_*y$,
and likewise for $Q,Q'$. This proves the claim.
\end{proof}

\begin{clm}\label{clm:M_1(y)_builds_psi(x)-witnesses}
 Let $x,y\in\RR$ with $x\in A\cap M_1(y)$. Let $P=M_1(y)|(\delta^{M_1(y)})^{+M_1(y)}$ (so $P$ is relevant). Then $x\in A_*^P$,
 and $\varphi^P(x)<\delta^P=\delta^{M_1(y)}$.
\end{clm}
\begin{proof}
Since $x\in A$
and by Lemma \ref{lem:varphi(w)-witness}, there is a $\psi(x)$-witness $W$ such that $W$ is sound and $\rho_1^W=\om$. Then $W\pins M_1(x)$. But standard calculations show that $\DD_{x}^{M_1(y)}$ reaches a final model $N$ which coiterates with $M_1(x)$, with no dropping on the main branch of either side,
and $\delta^N\leq\delta^{M_1(y)}$ (in fact $\delta^N=\delta^{M_1(y)}$). Therefore $W\pins N$.
But then $W=\core_\om(N^{M_1(y)}_{x\alpha})$ for some $\alpha<\delta^{M_1(y)}$,
so $\varphi^{P}(x)\leq\alpha<\delta^{P}=\delta^{M_1(y)}$.
\end{proof}

We now define a prewellorder $\leq$ on $A$, by setting $x\leq y$
 iff there is some relevant premouse $P$
 with $x,y\in P$
 and $x\leq^P_*y$; equivalently, for all relevant $P$ with $x,y\in P$,
 if $y\in A_*^P$ then $x\leq^P_*y$
 (the equivalence is by  Claims
 \ref{clm:relevant_pms_agree_on_order} and \ref{clm:M_1(y)_builds_psi(x)-witnesses}).

 \begin{clm}$\leq$ is a
prewellorder on $A$.\end{clm}
\begin{proof}Given a countable set $\bar{A}\sub A$,
  pick some $z\in\RR$ coding all elements of $\bar{A}$.
 Let $P=M_1(z)|(\delta^{M_1(z)})^{+M_1(z)}$.
 By Claim \ref{clm:M_1(y)_builds_psi(x)-witnesses},
 for each $x\in\bar{A}$ there is $\alpha<\delta^P$
 such that $\core_\om(N_{x\alpha}^P)$ is a $\psi(x)$-witness,
 demonstrating that the restriction of $\leq$ to $\bar{A}$ is a prewellorder.\end{proof}

\begin{clm} $\leq^*$ and $<^*$ are $\Pi^1_3$.\end{clm}
\begin{proof}
 Given $x,y\in\RR$ and a pre-relevant $(x,y)$-premouse $P$, say that $P$
 is \emph{$(x,y)$-minimal} iff $P$ is sound with $\rho_1^P=\om$ and either $x\in A_*^P$ or $y\in A_*^P$
 but there is no pre-relevant $P'\pins P$ such that $x\in A_*^{P'}$ or $y\in A_*^{P'}$.
 Note that if $x\in A$ or $y\in A$, then like in the proof of Lemma \ref{lem:varphi(w)-witness} and by condensation, the least $Q\pins M_1(x,y)$ which is pre-relevant with either $x\in A_*^Q$ or $y\in A_*^Q$, 
 is $(x,y)$-minimal.
 (If $\wt{Q}\pins M_1(x,y)$
 is pre-relevant with either $x\in A_*^{\wt{Q}}$ or $y\in A_*^{\wt{Q}}$, then observe that $Q=\cHull_1^{\wt{Q}}(\{\delta^{\wt{Q}}\})\pins M_1(x,y)$.)
 Moreover, $Q$ is the unique $(x,y)$-minimal $Q'\pins M_1(x,y)$.
 \begin{sclm}
 The following are equivalent:\begin{enumerate}[label=(\roman*)]\item $x\leq^*y$,
 \item $x\in A$ and $x\leq^P_*y$ for every $\Pi^1_2$-iterable $(x,y)$-minimal premouse $P$ (and note this condition is $\Pi^1_3$).
 \end{enumerate}
 \end{sclm}
 \begin{proof}
 If $x\leq^*y$ then let $Q\pins M_1(x,y)$ be $(x,y)$-minimal;
 so $x\leq^Q_*y$. Now let $P$ be $\Pi^1_2$-iterable $(x,y)$-minimal. Then $P=Q$, by \ref{lem:overspill_mouse} and $(x,y)$-minimality,
 which suffices. The other direction is clear.\end{proof}

 This shows that $\leq^*$ is $\Pi^1_3$.
 The definability for $<^*$ is likewise.
\end{proof}
 
 This completes the proof of the theorem.
\end{proof}

\subsubsection{More on $\Pi^1_3$}

 (***This section to be added.)

\section{Projecting mice}
\subsection{Some fine structure}

In the following lemma, \emph{$(k+1)$-condensation}
refers to the version of condensation asserted in the conclusion of  \cite[Theorem 5.2]{premouse_inheriting}.

\begin{lem}\label{lem:p_1_for_passive}
 Let $M$ be a passive premouse such that all proper segments of $M$ satisfy $(k+1)$-condensation for each $k<\om$. Then:
 \begin{enumerate}
  \item\label{item:cardinal_1-hull}  For each $M$-cardinal $\rho\in(\om,\OR^M)$, we have $\Hull_1^M(\rho)=M||\rho$.
  \item\label{item:when_rho_1<OR} If $\rho^M_1<\OR^M$ then:
  \begin{enumerate}[label=\tu{(}\roman*\tu{)},ref=\tu{(}\ref{item:when_rho_1<OR}\alph*\tu{)}]
  \item\label{item:lgcd_exists} $M$ has a largest cardinal $\theta$ (so $\rho_1^M\leq\theta$),
  \item\label{item:p_1_high} if $\om<\theta$ then $p_1^M\neq\emptyset$ and $\theta\leq\max(p_1^M)$, and
 \item\label{item:hull_max_p_1} Suppose $p_1^M\neq\emptyset$ and let $\gamma=\max(p_1^M)$.
 Then
 $\Hull_1^M(\gamma)=M||\gamma$
 and if $\gamma>\theta$ then $\lgcd(M||\gamma)=\theta$.
  \end{enumerate}
  \end{enumerate}
\end{lem}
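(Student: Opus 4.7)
The plan is to prove (1) by induction on $\OR^M$, using the condensation hypothesis for proper segments of $M$; then (2a)--(2c) follow from (1) combined with $1$-soundness and solidity.

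For (1), assume inductively that the lemma holds for every $M'\pins M$, and fix an $M$-cardinal $\rho\in(\om,\OR^M)$. The inclusion $M||\rho\sub H:=\Hull_1^M(\rho)$ uses the uniform $\Sigma_1^M$-definability of $\alpha\mapsto M||\alpha$ together with the bound $|M||\alpha|^M<\rho$ for each $\alpha<\rho$ (since $\rho$ is an $M$-cardinal with $\rho>\om$): each $y\in M||\alpha$ is $\Sigma_1^M$-definable from the pair $(\alpha,\beta)$, with $\beta<\rho$ its rank in a $\Sigma_1^M$-definable wellorder of $M||\alpha$. For the reverse inclusion, fix $x\in H$, so $x$ is the unique witness to $M\sats\exists y\,\varphi(y,\vec\alpha)$ for some $\Sigma_1$ formula $\varphi$ and $\vec\alpha\in\rho^{<\om}$. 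Choose $\eta<\OR^M$ with $\eta>\rho$ large enough that the witness lies in $M||\eta$; by $\Sigma_1$-upward absoluteness, $x\in\Hull_1^{M||\eta}(\rho)$. Since $\rho$ remains an $M||\eta$-cardinal and $M||\eta\pins M$, the inductive hypothesis applied to $M||\eta$ yields $\Hull_1^{M||\eta}(\rho)=(M||\eta)||\rho=M||\rho$, hence $x\in M||\rho$.

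For (2a), assume $\rho_1^M<\OR^M$. A standard fine-structural consequence of $1$-solidity is that $p_1^M\sub\rho$ for any $M$-cardinal $\rho\geq\rho_1^M$. If $M$ had no largest cardinal, pick an $M$-cardinal $\rho\in(\rho_1^M,\OR^M)$; by $1$-soundness and (1) we would get $M=\Hull_1^M(\rho_1^M\cup p_1^M)\sub\Hull_1^M(\rho)=M||\rho\psub M$, a contradiction. Hence $M$ has a largest cardinal $\theta$, and the same reasoning applied to $\rho_1^M$ itself rules out $\rho_1^M>\theta$, giving $\rho_1^M\leq\theta$. For (2b), assume $\om<\theta$ and suppose for contradiction that $p_1^M=\emptyset$ or $\max p_1^M<\theta$; then $\rho_1^M\cup p_1^M\sub\theta$ by (2a), so $1$-soundness gives $M=\Hull_1^M(\theta)$, but (1) then forces $M=M||\theta\psub M$, absurd.

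For (2c), set $\gamma=\max p_1^M$, noting $\gamma\geq\theta$ by (2b). The equality $\Hull_1^M(\gamma)=M||\gamma$ is proved by reprising the two-sided argument of (1) with $\rho$ replaced by $\gamma$. The $\sub$ direction is unchanged. For the reverse inclusion, when $\gamma>\theta$ (so $\gamma$ is not an $M$-cardinal), use that $\theta$ is $\Sigma_1^M$-definable --- hence $\theta\in\Hull_1^M(\emptyset)\sub\Hull_1^M(\gamma)$ --- and that $|M||\alpha|^M\leq\theta<\gamma$ for each $\alpha<\gamma$; so every $y\in M||\alpha$ is $\Sigma_1^M$-definable from $(\alpha,\beta,\theta)$ with $\beta\leq\theta<\gamma$, placing $y$ in $\Hull_1^M(\gamma)$. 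For $\lgcd(M||\gamma)=\theta$ when $\gamma>\theta$: $\theta$ is an $M||\gamma$-cardinal (any bijection downward within $M||\gamma$ would lie in $M$, contradicting $\theta$'s $M$-cardinality); conversely, any $\kappa\in(\theta,\gamma)$ has its $<_M$-least bijection to $|\kappa|^M\leq\theta$ given as a $\Sigma_1^M$-definable function of $\kappa$, so that bijection lies in $\Hull_1^M(\gamma)=M||\gamma$, witnessing that $\kappa$ is not an $M||\gamma$-cardinal.

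The main obstacle is setting up the induction for (1) cleanly: the condensation hypothesis applies to proper segments but not to $M$ itself, so each application of the ``hull equals initial segment'' idea inside $M$ must be relocated to some proper segment $M||\eta$. This works because the $\Sigma_1$-witness for any $x\in\Hull_1^M(\rho)$ lies in some $M||\eta_0$ with $\eta_0<\OR^M$, and $\rho$ remains an $M||\eta$-cardinal for $\eta>\rho$. A secondary obstacle arises in (2c) when $\gamma>\theta$ and thus $\gamma$ is not an $M$-cardinal; this is handled by substituting $\theta$ as the cardinality bound in the relevant estimates, exploiting its $\Sigma_1^M$-definability.
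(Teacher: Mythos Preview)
Your inductive argument for part (1) is correct and takes a genuinely different route from the paper's: the paper applies condensation directly to a hull of $M|\alpha$ for a hypothetical $\alpha\in\Hull_1^M(\rho)\setminus\rho$, whereas your reduction to $M||\eta$ never actually unpacks the condensation hypothesis (it is carried along in the inductive hypothesis but never used). Your approach is more elementary in this respect.

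However, there is a genuine gap in (2c). You assert that the inclusion $\Hull_1^M(\gamma)\sub M||\gamma$ is ``unchanged'' from (1), but your argument for that direction in (1) reduces to $M||\eta$ and applies part (1) of the inductive hypothesis there, which requires $\gamma$ to be an $M||\eta$-cardinal. When $\gamma>\theta$, $\gamma$ is not an $M$-cardinal, and once $\eta$ is large enough to contain both the witness $x$ and the bijection collapsing $\gamma$ to $\theta$, $\gamma$ is not an $M||\eta$-cardinal either, so the inductive hypothesis does not apply. The paper handles this direction by a different mechanism: one shows $H=\Hull_1^M(\gamma)$ has transitive intersection with the ordinals (since for each $\alpha\in H$ with $\alpha\geq\theta$, the $\Sigma_1$-Skolem function for ``$f:\theta\to\alpha$ is a surjection'' puts such an $f$ into $H$), so $H\cap\OR$ is an ordinal $\geq\gamma$; if it exceeded $\gamma$, then $\gamma\in H$ would be $\Sigma_1^M$-definable from ordinals below $\gamma$, allowing one to replace $\gamma$ in $p_1^M$ by smaller ordinals and contradict the $<^*$-minimality of $p_1^M$. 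That appeal to the minimality of $p_1^M$ is the missing ingredient in your (2c).

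There are also minor issues in (2a)--(2b): the claim ``$p_1^M\sub\rho$ for any $M$-cardinal $\rho\geq\rho_1^M$'' is not a consequence of $1$-solidity (there is no such bound on $p_1^M$ in general), and you invoke $1$-soundness of $M$, which the lemma does not assume. Both are easily repaired: choose $\rho$ large enough to contain the finite set $p_1^M$, and instead of soundness observe that the new $\Sigma_1^M(\rho_1^M\cup\{p_1^M\})$ subset of $\rho_1^M$ is then $\Sigma_1^M(\rho)$, hence by (1) is already $\Sigma_1^{M||\rho}(\rho)$ and therefore lies in $M$. This is essentially what the paper means when it says (2a) and (2b) ``follow immediately'' from (1).
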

\begin{proof}
Part \ref{item:cardinal_1-hull}: Let $\rho$ be an $M$-cardinal. Suppose $\rho\leq\alpha<\OR^M$ and $\alpha\in\Hull_1^M(\rho)$ and
$\alpha$ is a limit ordinal. Then we may fix a formula $\varphi$ and $\gamma<\rho$
such that $M|\alpha\sats\varphi(\gamma)$ but for all $\alpha'<\alpha$, we have $M|\alpha'\sats\neg\varphi(\gamma)$.
Say $\varphi$ is $\rSigma_{n+1}$.

Suppose that $\rho=(\theta^+)^M$ where $\theta$ is an $M$-cardinal.
By condensation we may find $\xi<\rho$ such that $\gamma<\xi$
and letting
\[ H=\cHull_{n+1}^{M|\alpha}(\theta\cup\{\xi\}) \]
and $\pi:H\to M|\alpha$ be the uncollapse, then $H\pins M|\rho$.
For we easily get $H\in M|\rho$ since $M|\alpha\in M$. So we just have to avoid conclusions 2(b), 2(c) and 2(d)
of \cite{premouse_inheriting}[Theorem 4.2***]. Its conclusion 2(b) is impossible because $H$ projects to $\theta$.
To avoid 2(c) and 2(d), choose $\xi$ such that $\xi>\theta$ and
\[ \xi\notin\Hull_{n+1}^{M|\alpha}(\xi).\]
But then $H\sats\varphi(\gamma)$, a contradiction.

If instead $\rho$ is a limit cardinal of $M$, it is easier, as we may take
\[ H=\cHull_{n+1}^{M|\alpha}(\theta) \]
for some $M$-cardinal $\theta$ with $\gamma<\theta$, and then $H\pins M$, etc.

Parts \ref{item:lgcd_exists} and \ref{item:p_1_high} follow immediately from part \ref{item:cardinal_1-hull}.

Part \ref{item:hull_max_p_1}:
By \ref{item:p_1_high}, we have $\theta\leq\gamma$.
Let $H=\Hull_1^M(\gamma)$.
Then $H$ is transitive, because if $\alpha\in H$ and $\alpha\geq\theta$,
then $M$ has a surjection $f:\theta\to\alpha$, but then so does $H$.
So if $H\neq M||\gamma$ then $\gamma\in H$,
which contradicts the minimality of $p_1^M$.
And if $\gamma>\theta$ then since $H\elem_1 M$
and $\lgcd(M)=\theta$, we get $\lgcd(H)=\theta$.
\end{proof}

\subsection{$\om$-mice}\label{sec:om-mice}
\begin{dfn}
An \emph{$\om$-premouse} is a sound premouse $M$ with $\rho_\om^M=\om$.
An 
\emph{$\om$-mouse} is an $(\om,\om_1+1)$-iterable
$\om$-premouse.\footnote{However, in \S\ref{sec:Pi^1_3} we will
use the definition differently, just assuming $(\om,\om_1)$-iterability there.}
\end{dfn}

We will make do throughout with normal iterability (that is, $(k,\om_1+1)$-iterability for 
$k$-sound premice $P$). One fact that helps here is the following, which is by \cite[Corollary 9.4]{iter_for_stacks} and (for the ``moreover'' clause) its proof:

\begin{fact}
 Every $\om$-mouse is $(\om,\om_1,\om_1+1)^*$-iterable. Moreover, if $\Sigma$ is the unique 
$(\om,\om_1+1)$-strategy for $M$, then there is an $(\om,\om_1,\om_1+1)^*$-strategy $\Gamma$ for $M$ 
such that $\Gamma\rest\HC$ is projective in $\Sigma\rest\HC$.
\end{fact}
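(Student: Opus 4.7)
The plan is to derive stacks iterability from normal iterability by essentially pasting copies of $\Sigma$ together along the stack, and to verify that this pasting can be done projectively in $\Sigma\rest\HC$.

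First I would set up the basic recursion: given a stack $\left<\Tt_\alpha\right>_{\alpha<\lambda}$ of normal trees on $M$, play $\Tt_0$ according to $\Sigma$ itself; for each successor stage $\alpha+1<\lambda$, let $N_\alpha$ be the last model of $\Tt_\alpha$ (or, if $\Tt_\alpha$ dropped on its main branch, the final dropdown), and use a strategy $\Sigma_{N_\alpha}$ for $N_\alpha$ read off from $\Sigma$ to play $\Tt_{\alpha+1}$; at stack-limit stages $\eta<\lambda$, take the direct limit under the main-branch maps, producing $N_\eta$ with an induced strategy. The key input is that, because $M$ is an $\om$-mouse, each $N_\alpha$ is either elementarily embedded from $M$ (the non-dropping case) and so inherits a strategy by pullback, or is a sound drop-down, hence again an $\om$-mouse whose unique normal strategy can be located using $\Sigma$. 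This produces a candidate $\Gamma$.

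The main content is verifying that $\Gamma$ really is a winning strategy, i.e.\ that no stage along a stack produces an illfounded model. At successor stages this follows from normal iterability of $N_\alpha$, which reduces to $\Sigma$-iterability of $M$ via the Dodd--Jensen lemma: any branch that $\Sigma_{N_\alpha}$ picks is identified as the unique branch making the Q-structure of $M(\Uu\rest\beta)$ sit correctly, or, in the non-small case, as the unique cofinal branch arising as a tail of $\Sigma$. At stack-limit stages $\eta$, wellfoundedness of $N_\eta$ is a standard countable-completeness / thread argument for the direct system, using that the stack has length $<\om_1$ and that each constituent tree already succeeds under $\Sigma$. The main obstacle is to bundle these local arguments into a single uniformly acting strategy $\Gamma$, and this is exactly what the normalization machinery of \cite{iter_for_stacks} provides: a stack can be ``re-normalized'' into a single normal tree whose branch is chosen by $\Sigma$, and $\Gamma$'s choice is then defined to reflect that re-normalization.

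For the projectivity clause, note that all the objects in sight for $\Gamma\rest\HC$ are countable: each $\Tt_\alpha$ is countable, each $N_\alpha$ is countable, and each limit-stage direct limit is countable. Therefore the decision $\Gamma$ makes after seeing a countable stack is the answer to countably many queries of the form ``what is $\Sigma(\Uu)$?'' for countable normal trees $\Uu$, combined with first-order wellfoundedness/uniqueness checks (existence of Q-structures, Dodd--Jensen hull conditions, and direct-limit wellfoundedness). All of these checks are themselves projective in $\Sigma\rest\HC$, so the whole recipe for $\Gamma\rest\HC$ is projective in $\Sigma\rest\HC$, as claimed. I expect the trickiest technical step to be the pullback/normalization part, where one must carefully argue that the strategy for a drop-down $N_\alpha$ genuinely agrees with the one induced by $\Sigma$ on the tree above, so that the pasting is coherent; this is precisely the point where $\om$-soundness and uniqueness of normal strategies are crucial.
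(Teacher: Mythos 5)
Your proposal ultimately defers both the iterability claim and the projectivity clause to the tree-normalization machinery of \cite{iter_for_stacks}, which is exactly what the paper does: it cites Corollary 9.4 of that paper for the first clause and ``its proof'' for the projectivity bound. The preliminary ``pasting'' narrative is a useful heuristic but is not itself a self-contained argument --- a naive pasted strategy need not succeed at stack limits without the inflation/condensation properties that normalization supplies --- and you correctly identify that the load-bearing step is the re-normalization result rather than the pasting framing that precedes it.
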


\begin{dfn}
 Let $\Tt$ be either a $k$-maximal iteration tree
 or a $k$-maximal stack.
 We say that $\Tt$ is \emph{terminally-non-dropping} iff $\Tt$ has a final model
 and
 $b^\Tt$ does not drop in model or degree.
\end{dfn}

It is shown in  \cite{fsfni_v4}
that every $(k,\om_1+1)$-iterable $k$-sound premouse $M$ is $(k+1)$-solid and $(k+1)$-universal.
But assuming also that $\rho_{k+1}^M=\om$,
there is a proof of this fact which, modulo already published
results, is much shorter:
\begin{tm}
 Let $M$ be a $k$-sound, $(k,\om_1+1)$-iterable premouse such that $\rho_{k+1}^M=\om$.
 Then $M$ is $(k,\om_1,\om_1+1)^*$-iterable, so $M$ is $(k+1)$-solid and $(k+1)$-universal.
\end{tm}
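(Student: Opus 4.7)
The proof should proceed in three steps, reducing the claim to published results. First, I would observe that $M$ is countable: since $M$ is $k$-sound and $\rho_{k+1}^M=\om$, we have $M=\Hull_{k+1}^M(\om\cup p_{k+1}^M)$, and the Skolem hull of a countable set (here $\om$ together with the finite tuple $p_{k+1}^M$) is countable, so $M$ itself is countable. This is precisely the feature that separates the present situation from the general $k$-sound case and makes the short proof possible.

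Second, I would upgrade normal iterability to stack iterability by mimicking the argument behind the Fact cited immediately before the theorem. That Fact, resting on \cite[Corollary 9.4]{iter_for_stacks}, asserts that every $\om$-mouse is $(\om,\om_1,\om_1+1)^*$-iterable, with the stack strategy projective in the normal strategy. What the proof actually uses about an $\om$-mouse $N$ is that $N$ is countable and that one can absorb copy/lift constructions using the finite standard parameter; neither of these uses full $\om$-soundness, only $k$-soundness and the existence of finitely many parameters defining $N$ through projection to $\om$. Thus the same proof, applied to $M$ in place of an $\om$-mouse, shows that $M$ is $(k,\om_1,\om_1+1)^*$-iterable.

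Third, with $(k,\om_1,\om_1+1)^*$-iterability in hand, the classical Mitchell--Steel--Schindler argument for $(k+1)$-solidity and $(k+1)$-universality applies essentially unchanged. Namely, suppose towards a contradiction that solidity or universality fails at degree $k+1$; then one builds a phalanx of the form $(M,H;\alpha)$ where $H$ is either a solidity witness for an element of $p_{k+1}^M$ or a $(k+1)$-universality counterexample hull, and one compares this phalanx with $M$. Iterability of the phalanx reduces (via the usual copying construction) to stack iterability of $M$ together with that of the hull $H$; since $H$ is likewise countable with projection to $\om$, the same upgrade gives its stack iterability. The comparison terminates by standard arguments and the resulting fine-structural analysis contradicts the assumed failure. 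The main obstacle in writing this out is verifying that the upgrade of Step 2 is robust enough to apply to the second root of the phalanx and its iterates in Step 3; this is, however, a bookkeeping matter once the countability observation of Step 1 has been extended to all relevant hulls.
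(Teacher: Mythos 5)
Your Step~1 is fine (and the paper uses this silently). But Step~2 contains a genuine gap, and it is precisely the obstacle that the paper's proof is built to circumvent. The cited Fact (every $\om$-mouse is $(\om,\om_1,\om_1+1)^*$-iterable, projectively in the normal strategy) does not rest merely on countability and a finite generating parameter. It rests on the \emph{uniqueness} of the $(\om,\om_1+1)$-strategy for the $\om$-mouse, which is what unlocks inflation condensation and hence the stack iterability. That uniqueness is proved by a phalanx-comparison/Zipper-Lemma argument whose crucial step is that two terminally-non-dropping iteration maps to a common model must coincide, because both send $p_{n+1}$ to $p_{n+1}$ for every $n$ --- and that preservation of the standard parameter under iteration maps is a consequence of $(n+1)$-solidity. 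For your $M$, you only have $k$-soundness and $\rho_{k+1}^M=\om$; you do not yet know $(k+1)$-solidity (it is the conclusion you are trying to reach), so you cannot conclude that iteration maps preserve $p_{k+1}$, so you cannot run the uniqueness argument, so you cannot invoke inflation condensation, so the $\om$-mouse argument does not carry over ``unchanged.'' Your parenthetical ``neither of these uses full $\om$-soundness'' is exactly the point at which the argument silently assumes what it must prove.

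The paper's actual proof is organized to break this circle. Using \cite[Theorem 9.6]{iter_for_stacks} it first gets \emph{finite}-stack iterability with normally iterable last models. It then picks a finite non-dropping stack out to an iterate $N$ at which $p_{k+1}$-preservation has \emph{stabilized} (this exists by a minimization argument: otherwise one would build an infinite stack with an illfounded direct limit). From this stabilization it gets that \emph{all} terminally-non-dropping finite trees on $N$ preserve $p_{k+1}$, even without yet having solidity. Next it forms $C=\core_{k+1}(N)$, compares $C$ with $N$ to get $(k+1)$-universality (which needs only $\rho_{k+1}^N=\om$), lifts trees on $C$ to trees on $N$ via \cite{fs_tame} to propagate the $p_{k+1}$-preservation to $C$, and only then runs the uniqueness argument for $C$'s normal strategy using that preservation as a \emph{substitute} for solidity. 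With uniqueness in hand it invokes inflation condensation to get $(k,\om_1,\om_1+1)^*$-iterability of $C$, and finally transfers this back to $M$ by observing $C|\om_1^C=N|\om_1^N=M|\om_1^M$ so that all three compare to common models via non-dropping trees. Your proposal skips this entire chain; if you want to repair it, the missing lemma you would need is exactly the $p_{k+1}$-preservation-stabilization step, and supplying it would reproduce the paper's argument.
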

\begin{proof}
 By \cite[Theorem 9.6]{iter_for_stacks}, $M$ is $(k,\om,\om+1)^*$-iterable,
 and moreover, for each $k$-maximal finite stack of finite normal trees on $M$,
 the resulting iterate is normally iterable (at the degree of that iterate). So we can fix a finite terminally-non-dropping $k$-maximal stack $\Ttvec=\left<\Tt_i\right>_{i<j}$
 on $M$ such that for every finite $k$-maximal terminally-non-dropping 
tree on $N=M^\Ttvec_\infty$,
we have $i^\Tt(p_{k+1}^N)=p_{k+1}^{M^\Tt_\infty}$. (Clearly for finite
terminally-non-dropping $k$-maximal trees $\Uu$ where $M^\Uu_0$ is $k$-sound and $\rho_{k+1}(M^\Uu_0)=\om$,
we have $p_{k+1}^{M^\Uu_\infty}\leq i^\Uu(p_{k+1}^{M^\Uu_0})$. So if the statement fails then we 
would get a length $\om$ stack with illfounded direct limit.) But then for \emph{any} terminally-non-dropping $k$-maximal 
tree $\Uu$ on $N$, we have
$i^\Uu(p_{k+1}^N)=p_{k+1}(M^\Uu_\infty)$, because otherwise we can factor off a finite support tree 
contradicting our choice of $N$. We have that $N$ is $(k,\om_1+1)$-iterable as just mentioned.

Let $C=\core_{k+1}(N)$ and $\pi:C\to N$ the core map. A comparison of $C$ with $N$ shows that
$N$ is $(k+1)$-universal (using only that $\rho_{k+1}^N=\om$; we don't need the preservation of 
$p_{k+1}$ for this). So $\pi(p_{k+1}^C)=p_{k+1}^N$. And for every terminally-non-dropping tree $\Tt$ on $C$,
$i^\Tt$ preserves $p_{k+1}$, because as above, otherwise we can take $\Tt$ to be 
finite, but then lifting to $\pi\Tt$ on $N$, we get a contradiction (the first copy map is $\pi$,
a $k$-embedding, so by \cite{fs_tame}, the copy map $\pi_j:M^\Tt_j\to M^{\pi\Tt}_j$ is a near $\deg^\Tt(j)$-embedding,
and applying this with the ultimate copy map $\pi_j$, one observes that $p_{k+1}^{M^{\pi\Tt}_j}\leq\pi_j(p_{k+1}^{M^\Tt_j})$).

Now $C|\om_1^C=N|\om_1^N=M|\om_1^M$, so $C,N,M$ compare to common models with terminally-non-dropping $k$-maximal trees. So it suffices to show that $C$ is $(k,\om_1,\om_1+1)^*$-iterable.
To see this, it suffices to see that our strategy $\Sigma$ for $C$ has inflation condensation.
To see this, it suffices to see that $\Sigma$ is the unique $(k,\om_1+1)$-strategy for $C$.

So suppose $\Gamma\neq\Sigma$ is such a strategy for $C$.
Let $\Vv$ be a tree of limit length via both $\Sigma,\Gamma$,
such that $b=\Sigma(\Vv)\neq\Gamma(\Vv)=c$.
Let
\[(\Tt,\Uu)=(\Vv\conc b\conc\Tt',\Vv\conc c\conc\Uu')\]
be the successful comparison of the phalanxes $\Phi(\Vv\conc b)$ and $\Phi(\Vv\conc c)$,
considered as a pair of $k$-maximal trees on $C$.
One now reaches a contradiction almost as under the added assumption that $C$ is also $\om$-sound.
The only difference lies in the case that $M^\Tt_\infty=M^\Uu_\infty$ and both $\Tt,\Uu$ are terminally-non-dropping,
so assume this. Then $i^\Tt=i^\Uu$ because $C=\Hull_{k+1}^C(\pvec_{k+1}^C)$
and $i^\Tt,i^\Uu$ preserve $p_{k+1}$ (because every such tree on $C$ preserves 
$p_{k+1}$); note that one would usually use $(k+1)$-solidity in order to deduce that $i^\Tt=i^\Uu$.
This leads to contradiction as usual.
\end{proof}

\begin{rem}
We will be interested in certain $L[\es,x]$-constructions formed inside background universes 
$P$ which themselves are premice. The main case of interest will be when $P$ is
a Woodin-$\om$-mouse. However, we will need to develop some things somewhat more generally,
and for this, we will be mostly interested in the case that $P$ is either an $\om$-mouse,
or a $k$-Woodin mouse, for some $k<\om$.
\end{rem}

\subsection{Q-structures}\label{sec:Q-mice}

In this paper we will often deal with mice $M$ which are Q-structures for the least Woodin cardinal $\delta^M$ of $M$; that is,
there is a failure of Woodinness of $\delta^M$ definable from parameters over $M$, in fact definable in a nice way.
These feature very often in inner model theory. We collect in this section some basic facts about such Q-structures.
The material in this section would be to some extent folklore, but as far as the author is aware, much of it has not appeared in print elsewhere. (Moreover, formally we reduce iterability hypotheses to just normal iterability, whereas other sources would have probably assumed iterability for stacks.) We probably don't use all of the material in the paper, but it seems useful to collect the basic facts here. For a fair portion of the material,
we can work just under the hypothesis that $\delta$ is Woodin but not a limit of Woodins of $M$, instead of demanding that $\delta$ be the least Woodin $\delta^M$ of $M$.

\begin{dfn}
 Let $Q$ be a $q$-sound premouse and $\delta\leq\rho_q^Q$. We say that $\delta$ is \emph{$\bfrSigma_q^Q$-singular}
 iff there is $\alpha<\delta$
 and a $\bfrSigma_q^Q$ function $f:\alpha\to\delta$ with $\range(f)$ cofinal in $\delta$. We say that $\delta$ is \emph{$\bfrSigma_q^Q$-singular} otherwise.
\end{dfn}

\begin{rem}
Note that because we assume $\delta\leq\rho_q^Q$, it would make no difference to the definition, if we had also considered partial functions (with domain $\sub\alpha<\delta$). Note that if $q>0$, $\bfrSigma_q^Q$-regularity (in the above context) is the same as the assumption that
\[\delta\cap\Hull_{q}^Q(\alpha\cup\{x\}) \text{ is bounded in }\delta \]
for each $x\in Q$ and $\alpha<\delta$.
And in case $q=0$,
it is the same as requiring that if $\delta\in\OR^Q$ then  $Q\sats$``$\delta$ is regular''.\end{rem}

A \emph{Q-prepair}, defined next, is basically a Q-structure for a  successor Woodin cardinal $\delta$:

\begin{dfn}\label{dfn:Q-prepair}
Let $Y$ be a $q$-sound premouse and $\delta\leq\OR^Y$.
We say that $\delta$ is \emph{$\bfrSigma_q^Y$-Woodin}
iff $\delta\leq\rho_q^Y$ and for every $A\sub\delta$ such that $A$ is $\bfrSigma_q^Y$,
there is $\kappa<\OR^Y$ such that $\kappa$ is $({<\delta},A)$-reflecting as witnessed by $\es^Y$.
(In the case that $\delta=\OR^Y$ then, $\delta$ is $\bfrSigma_0^Y$-Woodin
iff $\delta\leq\rho_0^Y$ and $Y\sats$``There are unboundedly many strong cardinals as witnessed by $\es$''.)

Let $Y$ be a premouse and $\delta\leq\OR^Y$.
We say that $(Y,\delta)$ is a \emph{Q-prepair} iff for some $q<\om$:
\begin{enumerate}[label=--]
 \item $Y\sats$``$\delta$ is not a limit of Woodin cardinals''.
 \item $Y$ is $q$-sound.
\item $\delta\leq\rho_q^Y$ and $\delta$ is $\bfrSigma_q^Y$-regular.
\item Either:
\begin{enumerate}[label=\tu{(}\roman*\tu{)}]
 \item\label{item:singular_type} $\delta$ is $\bfrSigma_q^Y$-Woodin 
and $\bfrSigma_{q+1}^Y$-singular, or
 \item $q>0$ and $\delta$ is $\bfrSigma_{q-1}^Y$-Woodin but not $\bfrSigma_q^Y$-Woodin.
\end{enumerate}
\end{enumerate}
If $(Y,\delta)$ is a Q-prepair, clearly the witnessing $q$ is unique.
We write $q^{Y,\delta}=q$, and call $q^{Y,\delta}$ the \emph{Q-degree} of $(Y,\delta)$.
We say that $(Y,\delta)$ is \emph{Q-singular} iff \ref{item:singular_type} above holds,
and otherwise \emph{Q-regular}.

A \emph{Q-pair} is a
$(q^{Y,\delta},\om_1+1)$-iterable Q-prepair $(Y,\delta)$.

For the main application,
we will just be interested in the case
that $\delta=\delta^Y$ (the least Woodin of $Y$), in which case we will say that $Y$ is 
a \emph{Q-premouse} (\emph{Q-mouse} in the iterable case),  say
$q$ is the \emph{Q-degree of $Y$},
and write $q^Y=q$.
\end{dfn}

We did not explicitly mention the case that $\rho_{q+1}^Y<\delta\leq\rho_q^Y$ and $\delta$ is 
$\bfrSigma_q^Y$-Woodin, because given iterability, this yields Q-singularity:

\begin{lem}\label{lem:hull_cof_in_delta}
 Let $k<\om$, let $M$ be $(k,\om_1+1)$-iterable and $\delta\leq\OR^M$ be $\bfrSigma_k^M$-Woodin,
 and suppose $M\sats$``$\delta$ is not a limit of Woodin cardinals''.
 Let
\[ H=\Hull_{k+1}^M(\rho_{k+1}^M\cup\{\delta,\pvec_{k+1}^M\}).\]
Then $H$ is unbounded in $\delta$.
Therefore if $\rho_{k+1}^M<\delta$ then $(M,\delta)$ is a Q-singular Q-pair of Q-degree $k$.
\end{lem}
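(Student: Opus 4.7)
The plan is to argue by contradiction. Suppose $H$ is bounded in $\delta$, and set $\bar\delta=\sup(H\cap\delta)<\delta$. Since $\rho_{k+1}^M\sub H$, boundedness forces $\rho_{k+1}^M\leq\bar\delta<\delta$. Let $\pi:\bar M\to M$ be the uncollapse of $H$, and let $\bar\delta^*\leq\bar\delta$ be the preimage of $\delta$; so $\bar\delta^*<\delta$, and $\pi$ is a $\Sigma_{k+1}$-elementary, hence (near) $k$-embedding with $\crit(\pi)\in[\rho_{k+1}^M,\bar\delta]$. Then $\bar M$ is $(k+1)$-sound with $\rho_{k+1}^{\bar M}=\rho_{k+1}^M$ and $\pi(\pvec_{k+1}^{\bar M})=\pvec_{k+1}^M$, and by elementarity
\[ \bar M\sats\text{``}\bar\delta^*\text{ is }\bfrSigma_k^{\bar M}\text{-Woodin and not a limit of Woodin cardinals''}. \]

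Next, $(k,\om_1+1)$-iterability of $M$ lifts along $\pi$ to yield $(k,\om_1+1)$-iterability of $\bar M$. Apply the condensation theorem (as in \cite[Theorem 5.2]{premouse_inheriting}) to the sound iterable $\bar M$ with the $k$-lifting embedding $\pi$ whose critical point is $\geq\rho_{k+1}^{\bar M}$: the conclusion $\bar M\pins M$ must hold; the alternative outcomes (an ultrapower-image of a segment of $M$, or a protomouse/bicephalus situation) produce a new extender index on the $M$-sequence coherent with the structure of $\bar M$ around $\bar\delta^*$, which, together with the $\bfrSigma_k^M$-Woodinness of $\delta$, is incompatible with the indexing of $\es^M$ near $\bar\delta^*<\delta$. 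So $\bar M=M|\gamma$ for some $\gamma$ with $\rho_\om^{M|\gamma}=\rho_{k+1}^M$.

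Now derive the contradiction. Since $M\sats$``$\delta$ is not a limit of Woodin cardinals'', there is a least $\bar\eta<\delta$ such that $M$ sees no Woodin in $(\bar\eta,\delta)$; this $\bar\eta$ is $\Sigma_1$-definable from $\delta$ in $M$, hence lies in $H$, so $\bar\eta\leq\bar\delta$. Because $\bar M=M|\gamma$ witnesses that $\bar\delta^*$ is $\bfrSigma_k^{M|\gamma}$-Woodin, and since $\gamma$ is necessarily above $(\bar\delta^*)^{+M|\gamma}$ (as $M|\gamma$ contains the entire Woodin-witnessing structure at $\bar\delta^*$), we get $\pow(\bar\delta^*)^M\sub M|\gamma$, and all the reflecting extenders supplied by $\es^{M|\gamma}$ for $A\sub\bar\delta^*$ lie in $\es^M$. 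Hence $\bar\delta^*$ is a Woodin cardinal of $M$ with $\bar\eta\leq\bar\delta^*<\delta$, contradicting the choice of $\bar\eta$.

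The main obstacle is the condensation step: one must carefully rule out the ``ultrapower-image'' and related condensation outcomes, which requires unpacking the interaction of $\es^M$ with the Woodin structure at $\delta$; once $\bar M\pins M$ is secured, the lift of Woodinness into $M$ is routine. The ``therefore'' clause of the lemma then follows: given $H$ unbounded and $\rho_{k+1}^M<\delta$, the map $\alpha\mapsto\sup(\Hull_{k+1}^M(\alpha\cup\{\delta,\pvec_{k+1}^M\})\cap\delta)$ is a $\bfrSigma_{k+1}^M$ cofinal function from $\rho_{k+1}^M$ into $\delta$, witnessing $\bfrSigma_{k+1}^M$-singularity of $\delta$; combined with the $\bfrSigma_k^M$-Woodinness (which also yields $\bfrSigma_k^M$-regularity by reflecting the graph of any putative $\bfrSigma_k^M$-cofinal function) and the non-limit-of-Woodins hypothesis, $(M,\delta)$ is a Q-singular Q-pair of Q-degree $k$.
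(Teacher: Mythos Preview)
Your argument has a genuine gap at the final step. You claim that once $\bar M=M|\gamma$ and $\bar M$ thinks $\bar\delta^*$ is $\bfrSigma_k$-Woodin, then $\pow(\bar\delta^*)^M\sub M|\gamma$, and hence $\bar\delta^*$ is Woodin in $M$. This inference fails: by your own setup $M|\gamma$ projects to $\rho_{k+1}^M\leq\bar\delta^*$, so there are subsets of $\bar\delta^*$ in $M$ (for instance those coded by the $\rSigma_{k+1}$-theory of $M|\gamma$) which are not in $M|\gamma$. In fact $\bar\delta^*$ is certainly \emph{not} Woodin in $M$: the hypothesis says $\delta$ is not a limit of Woodins, the sup $\mu$ of Woodins below $\delta$ is definable from $\delta$ and hence lies in $H$, so $\mu<\bar\delta^*<\delta$. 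The paper exploits precisely this: since $\eta$ (your $\bar\delta$) is not Woodin in $M$, there is a Q-structure $K\pins M|\delta$ for $\eta$, and the contradiction is obtained by comparing the collapsed hull $J$ with $K$ via the bicephalus theorem \cite[Theorem~4.3]{premouse_inheriting}, not by any transfer of Woodinness upward.

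The condensation step is also unjustified. Your $\bar M$ is generated over $\rho_{k+1}^M$ by $\{\bar\delta^*,\bar p\}$, so it is $\bar\delta^*$-sound, not $(k+1)$-sound; the condensation theorem you cite requires $(k+1)$-soundness of the source. (The paper does argue that $\delta$ is redundant as a generator, using the non-limit-of-Woodins hypothesis, but you simply assert $\pi(\pvec_{k+1}^{\bar M})=\pvec_{k+1}^M$ without this.) Moreover, the paper's analysis shows the collapsed hull $J$ is in fact \emph{not} a segment of $M$: since $J\notin M$ and the Q-structure $K\pins M$ satisfies $K\nins J$ and $J\nins K$, the condensation alternative you dismiss is exactly the relevant one, and it is resolved only through the bicephalus comparison.
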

\begin{proof}
We may assume $\rho^M_{k+1}<\delta<\rho_k^M$,
as $\Hull_{k+1}^M(\rho_{k+1}^M\cup\pvec_{k+1}^M)$ is cofinal in $\rho_k^M$. In particular, $\delta<\rho_0^M$.
Let $\eta=\sup(H\inter\delta)$ and suppose $\eta<\delta$.
 Let
 \[ J=\cHull_{k+1}^M(\eta\un\{\pvec_{k+1}^M,\delta\}) \]
 and $\pi:J\to M$ be the uncollapse. Clearly $J\notin M$ and $\rg(\pi)$ is cofinal in $\rho_k^M$.
We have $\delta\in\rg(\pi)$ by definition. Because $\delta$ is regular in $M$,
 the usual argument then shows that $\crit(\pi)=\eta$ and $\pi(\eta)=\delta$.
 
 Now we claim that in fact,
\begin{equation}\label{eqn:delta_in_for_free} \delta\in\Hull_{k+1}^M(\eta\cup\{\pvec_{k+1}\})=\rg(\pi),\end{equation}
so
$J=\cHull_{k+1}^M(\eta\cup\{\pvec_{k+1}^M\})$
and $\pi$ is the corresponding uncollapse map. For
 let $\gamma$ be the supremum of $0$ and all Woodin cardinals of $M$ which are ${<\delta}$ (so $\gamma<\delta$ by hypothesis).
 Since $\delta\in\rg(\pi)$,
 so is $\gamma$, so $\gamma<\eta$.
 But then since $\delta$ is the least Woodin of $M$ which is $>\gamma$, 
 line (\ref{eqn:delta_in_for_free}) is clear
 unless $k=0$ and $M$ is passive,
 so suppose this is the case. Then since $\delta<\OR^M$ is an $M$-cardinal,
 we have $\delta\leq\max(p_1^M)$,
 and since $\gamma<\eta$, this again
 easily gives line (\ref{eqn:delta_in_for_free}).

 Also $J|\eta=M|\eta\pins M|\delta$
 and $\gamma<\eta$, so $\eta$ is not Woodin in $M$, 
 so we can fix $K\pins M|\delta$ which is a Q-structure for $\eta$.

 Now $J$ is $k$-sound and (using line (\ref{eqn:delta_in_for_free}))
 is  $\eta$-sound
 with $\rho_{k+1}^J=\rho_{k+1}^M\leq\eta$.
 And $K$ is $\om$-sound and $\rho_\om^K=\eta$
 (note that $\eta$ is an $M$-cardinal).
 Since $J\notin M$, we have $J\nins K$,
 and since $K$ is a Q-structure for $\eta$
 and $K\neq J$,
 we have $K\nins J$. Let $(J_0,K_0)$
 be the least pair $(J',K')$
 such that $J'\ins J$ and $K'\ins K$
 and $J'||\eta^{+J'}=K'||\eta^{+K'}$
 but $J'\neq K'$. Let $j_0,k_0\in\{-1\}\cup\om$
 be such that $\rho_{j_0+1}^{J_0}\leq\eta<\rho_{j_0}^{J_0}$
 and $\rho_{k_0+1}^{K_0}=\eta<\rho_{k_0}^{K_0}$.
 Then $(J_0,K_0,\eta)$ is a non-trivial
 $(j_0,k_0,\om_1+1)$-iterable bicephalus
 (see \cite{premouse_inheriting}; because of the agreement between $J_0,K_0$
 and that $\pi\rest\eta=\id$,
 we can lift trees on the bicephalus to trees on $M$ as in \cite{premouse_inheriting}).
 Since $K_0$ is fully sound, by \cite[Theorem 4.3]{premouse_inheriting}, 
 $J_0$ is non-$(j_0+1)$-sound, so $J_0=J$
 and $j_0=k$ and $\rho_{k+1}^J<\eta$.
 Also by \cite[Theorem 4.3]{premouse_inheriting} and since $\eta$ is a limit cardinal of $M$, $K_0$ is active type 3  with largest cardinal $\eta$, and letting $\kappa=\crit(F^{K_0})$, then $J$ has a $(k,\eta)$-good core at $\kappa$ (see \cite[Definition 4.1]{premouse_inheriting}).
 But then $\rho_{k+1}^M=\rho_{k+1}^J\leq\kappa<\eta$ and\[\eta\cap\Hull^J_{k+1}(\kappa\cup\pvec_{k+1}^J)=\kappa<\eta,\]
 contradicting the fact that $\Hull_{k+1}^J(\rho_{k+1}^M\cup\pvec_{k+1}^J)$ is cofinal in $\eta$.
\end{proof}
\begin{lem}\label{lem:Q-mouse_core}
 Let $(Y,\delta)$ be a Q-pair and $q=q^Y$.
 Let $k\geq 1$ be such that $C=\core_{q+k}(Y)$ exists \tu{(}this certainly holds if $k=1$\tu{)}. Let $\pi:C\to Y$ be the core embedding and suppose that $\delta\in\rg(\pi)$. Let $\pi(\bar{\delta})=\delta$
 \tu{(}it is easy to see that this is automatically true if $\delta=\delta^Y$\tu{)}.
 Then:
 \begin{enumerate}[label=--]\item $(C,\bar{\delta})$ is a Q-mouse with $q^{C,\bar{\delta}}=q$. \item  $(C,\bar{\delta})$ is Q-regular \tu{(}Q-singular\tu{)} iff $(Y,\delta)$ is Q-regular \tu{(}Q-singular\tu{)}
 \item $\bar{\delta}<\rho_q^C$  iff $\delta<\rho_q^Y$,
  \item $\rho_{q+1}^{C}<\bar{\delta}$ iff $\rho_{q+1}^Q<\delta$.
 \end{enumerate}
\end{lem}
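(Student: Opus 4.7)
The plan is to exploit the standard fine-structural properties of the core embedding $\pi: C \to Y$ where $C = \core_{q+k}(Y)$ with $k \geq 1$. Under the Mitchell--Steel indexing used here, $\pi$ is a near $(q+k-1)$-embedding (hence at least a near $q$-embedding, with $\rSigma_{q+1}$-elementarity), satisfies $\crit(\pi) \geq \rho_{q+k}^Y \geq \rho_{q+1}^Y$, restricts to the identity on $\rho_{q+k}^Y$, and comes with the identifications $\rho_j^C = \rho_j^Y$ for all $j \leq q+k$ along with $\pi(\pvec_{q+k}^C) = \pvec_{q+k}^Y$. In particular $C$ is $(q+k)$-sound, hence $q$-sound, and $\rho_q^C = \rho_q^Y$ and $\rho_{q+1}^C = \rho_{q+1}^Y$. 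Note also that $\pi(\bar\delta) = \delta$ together with $\crit(\pi) \geq \rho_{q+1}^Y$ means that either $\bar\delta = \delta \leq \crit(\pi)$ or $\crit(\pi) < \bar\delta < \delta$; in either case parts (iii) and (iv) of the conclusion drop out from these identifications plus $\rSigma_1$-elementarity of $\pi$.

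Next, I would transfer the remaining clauses of Definition \ref{dfn:Q-prepair}. That $C \sats$ ``$\bar\delta$ is not a limit of Woodin cardinals'' is $\rSigma_1$-expressible about $C|\bar\delta$ (witnessed by some $\bar\gamma < \bar\delta$ above which no ordinal is Woodin in $C$), so it pulls back from the analogous assertion in $Y$ via $\rSigma_1$-elementarity of $\pi$. The inequality $\bar\delta \leq \rho_q^C$ is immediate from $\delta \leq \rho_q^Y$, $\rho_q^C = \rho_q^Y$, and $\pi \rest \rho_{q+k}^Y = \id$. For $\bfrSigma_q^C$-regularity of $\bar\delta$: any $\bfrSigma_q^C$-definable cofinal map $f:\alpha \to \bar\delta$ ($\alpha < \bar\delta$) would, via the $\rSigma_q$-preservation of $\pi$ and the fact that $\pi$ is the identity sufficiently low, push forward to a $\bfrSigma_q^Y$-definable cofinal map $\pi \circ f$ (or its natural lift) into $\delta$, contradicting $\bfrSigma_q^Y$-regularity of $\delta$.

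Finally, for the Q-type and Q-degree, I would split into the two cases of Definition \ref{dfn:Q-prepair}. In the Q-singular case, I need $\bar\delta$ to be $\bfrSigma_q^C$-Woodin and $\bfrSigma_{q+1}^C$-singular. Woodinness transfers from $Y$ to $C$: given a $\bfrSigma_q^C$ set $\bar A \sub \bar\delta$, its natural $\pi$-lift is a $\bfrSigma_q^Y$ set $A \sub \delta$, and a $Y$-extender $E \in \es^Y$ witnessing $({<}\delta, A)$-reflection for some $\kappa$, with $\kappa, \lh(E) < \crit(\pi)$ if we arrange it suitably (using that $\delta$ is $\bfrSigma_q^Y$-Woodin via many $\kappa$'s), lies equally in $\es^C$ below $\crit(\pi)$ and witnesses $({<}\bar\delta, \bar A)$-reflection in $C$. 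The $\bfrSigma_{q+1}^C$-singularity of $\bar\delta$ then follows because the $\bfrSigma_{q+1}^Y$-cofinal witness into $\delta$ has its natural counterpart for $C$ via the $\rSigma_{q+1}$-elementarity of $\pi$ on bounded subsets below $\crit(\pi)$, and because $\pvec_{q+1}^Y$ has been moved by $\pi$ from $\pvec_{q+1}^C$ up to the appropriate level. The Q-regular case is handled dually: $\bfrSigma_{q-1}^Y$-Woodinness of $\delta$ transfers to $\bfrSigma_{q-1}^C$-Woodinness of $\bar\delta$ exactly as above, while failure of $\bfrSigma_q^Y$-Woodinness pulls back to failure in $C$, since a $Y$-witness set pulls back to a $C$-witness via $\pi$. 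Iterability of $(C, \bar\delta)$ at degree $q$ then follows from $(q, \omega_1+1)$-iterability of $(Y, \delta)$ by the usual copying construction (since $\pi$ is a near $q$-embedding). The main obstacle I anticipate is the bookkeeping around exactly which level of elementarity $\pi$ supplies for each transfer --- in particular, verifying the Woodinness transfer in both directions cleanly, distinguishing the cases $\bar\delta \leq \crit(\pi)$ and $\bar\delta > \crit(\pi)$, and ensuring that the background extender algebra arguments only require the level of definability that $\pi$ genuinely preserves.
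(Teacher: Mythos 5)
Your high-level plan — transfer the clauses of Definition \ref{dfn:Q-prepair} across the core embedding $\pi$ using its fine-structural preservation properties — is the right one and matches the paper's opening moves: the facts that $\rho_j^C = \rho_j^Y$ for $j \leq q+k$, that $\pi$ is a near $q$-embedding with $\crit(\pi) \geq \rho_{q+1}^Y$, that $\rg(\pi)$ is cofinal in $\rho_q^Y$ and (via Lemma \ref{lem:hull_cof_in_delta}) in $\delta$, and that regularity, iterability, parts (iii) and (iv) all follow straightforwardly. The Q-regular case also transfers essentially as you describe.

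The gap is in the Q-singular case when $\rho_{q+1}^Y < \delta$. Here $\crit(\pi) = \rho_{q+1}^Y < \delta$, so one can have $\crit(\pi) < \bar\delta$, and when $\delta = \rho_q^Y$ (hence $\bar\delta = \rho_q^C$) your transfer of $\bfrSigma_q^C$-Woodinness breaks in two ways. First, the parameter $T^C = \Th_{\rSigma_q}^C(\bar\delta \cup \{\pvec_q^C\})$ is a proper class of $C$, so the assertion ``$\kappa$ is $({<}\bar\delta, T^C)$-reflecting via $\es^C$'' has complexity roughly $\rPi_{q+1}^C$ — beyond the $\rSigma_{q+1}$-elementarity that $\pi$ supplies — so it is not simply preserved or reflected by $\pi$. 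Second, your proposal to ``arrange $\kappa, \lh(E) < \crit(\pi)$'' cannot produce $({<}\bar\delta, \bar A)$-reflection in $C$: witnessing reflection at a given $\kappa$ requires extenders of length cofinal in $\bar\delta$, and since $\crit(\pi) < \bar\delta$, such extenders necessarily have length exceeding $\crit(\pi)$; above $\crit(\pi)$ the sequences $\es^C$ and $\es^Y$ need not coincide (they are related by $\pi$, not equal), and the approximate theories $\bar A$ and $A$ need not agree either. Keeping everything below $\crit(\pi)$ gives reflection only up to $\crit(\pi)$, which is strictly weaker than what is required.

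The paper closes this gap with a dedicated argument for the case $\bar\delta = \rho_q^C$: assume toward a contradiction that $\bar\delta$ is not $\bfrSigma_q^C$-Woodin, pick the least $\mu < \delta$ above $\pi(\theta)$ that is $({<}\delta, T^Y)$-reflecting in $Y$, observe that $\mu \notin \rg(\pi)$ (else the $\rPi_q$ form of the reflecting-point assertion would transfer and give a contradiction), set $\gamma = $ the least point of $C$ with $\pi(\gamma) \geq \mu$, and then produce a reflecting point of $C$ in $(\theta,\gamma)$ by composing two extenders $E_0, E_1 \in \es^C$ (forming $F = i_{E_1}(E_0)\rest\eta'$) to get coherence through a level $\eta'$ that was supposedly out of reach, contradicting minimality. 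Your proposal has nothing corresponding to this step, and it is not something one can avoid by a cleverer routing through elementarity — the complexity mismatch between the statement to be transferred and the elementarity of $\pi$ is real. You should also note that your trichotomy on the placement of $\bar\delta$ relative to $\crit(\pi)$ omits the case $\crit(\pi) = \bar\delta < \delta$, though this is minor.
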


\begin{proof}
We assume $k=1$ and $\delta^Y<\rho_0^Y$ and leave the remaining cases to the reader.
If $Y$ is $(q+1)$-sound then $C=Y$, so everything is trivial. So suppose otherwise,
so $\rho_{q+1}^Y<\rho_q^Y$.
Let $\pi:C\to Y$ be the core embedding.
So $C$ is $q$-sound 
and non-small, $\bar{\delta}\leq\rho_q^C$, and $\rg(\pi)$ is cofinal in both 
$\rho_q^Y$ and $\delta$ (cofinality in $\delta$ is by \ref{lem:hull_cof_in_delta} if $Y$ is 
Q-singular; if $Y$ is Q-regular then $\delta=\rho_q^Y$).
Because $\delta$ is $\bfrSigma_q^Y$-regular, the elementarity of $\pi$ and continuity of $\pi$ at 
$\bar{\delta}$ imply that $\bar{\delta}$ is $\bfrSigma_q^C$-regular.

\begin{case}\label{case:Y_Q-reg} $Y$ is Q-regular.

So $q>0$ and (as $Y$ is non-$(q+1)$-sound) $\rho_{q+1}^Y<\rho_q^Y=\delta$, so $\rho_{q+1}^C<\rho_q^C=\bar{\delta}$.

We claim that $\bar{\delta}$ is $\bfrSigma_{q-1}^C$-Woodin.
If $q=1$ this is immediate, so suppose $q>1$ and $\rho_{q-1}^C=\bar{\delta}$.
Because $C$ is $(q-1)$-sound, every $\bfrSigma_{q-1}^C$ subset of $\bar{\delta}$
is therefore coded easily into
\[ t^C=t_{q-1}^C=\Th_{\rSigma_{q-1}}^C(\bar{\delta}\un\pvec_{q-1}^C),\]
and it suffices to see that  there are cofinally many $\mu<\bar{\delta}$ such that $C\sats$``$\mu$ is $({<\bar{\delta}},t^C)$-reflecting''
(note $t^C\notin C$ though).
Let $t^Y=t_{q-1}^Y$ be defined likewise over $Y$. Fix $\theta<\bar{\delta}$ and let $\kappa\in(\pi(\theta),\delta)$ be least such that 
$Y\sats$``$\kappa$ 
is $({<\delta},t^Y)$-reflecting''. Then $\kappa\in\rg(\pi)$ and $\mu=\pi^{-1}(\kappa)$ 
works, because the statement
``$\kappa$ is $(<\delta,t^Y)$-reflecting'' is 
$\rPi_q^Y(\{\kappa,\delta,\pvec_{q-1}^Y\})$, the verification of which we leave to the reader.

It remains to see that $\bar{\delta}$ is not  $\bfrSigma_q^C$-Woodin.
Let $T^C=t_q^C$ (as above, but with $q$ replacing $q-1$) and
$T^Y=t_q^Y$. 
So we can fix $\theta<\bar{\delta}$ such that
($*$) $Y\sats$``no $\kappa\in(\pi(\theta),\delta)$ is $({<\delta},T^Y)$-reflecting''.
For each 
$\alpha<\bar{\delta}$ we have $T^C\rest\alpha\in C$ and
\begin{equation}\label{eqn:T-pres}\pi(T^C\rest\alpha)=T^Y\rest\pi(\alpha). \end{equation}
Now suppose that $\bar{\delta}$ is $\bfrSigma_q^C$-Woodin, and fix $\mu\in(\theta,\bar{\delta})$
such that $C\sats$``$\mu$ is $(<\bar{\delta},T^C)$-reflecting''.
Using line (\ref{eqn:T-pres}) it easily follows that $Y\sats$``$\pi(\mu)$ is 
$(<\delta,T^Y)$-reflecting'', contradicting ($*$).
\end{case}

\begin{case} $Y$ is Q-singular and $\rho_{q+1}^Y=\delta$.

So $\rho_{q+1}^C=\bar{\delta}=\delta=\rho_{q+1}^Y<\rho_q^Y$, so $\bar{\delta}<\rho_q^C$,
so $\bar{\delta}$ is $\bfrSigma_q^C$-Woodin. So we just need to see that $\bar{\delta}$ is 
$\bfrSigma_{q+1}^C$-singular. But by
\cite[Corollary 1.5]{fsfni_v4},\footnote{Note we are only assuming $(q,\om_1+1)$-iterability, which is enough for \cite{fsfni_v4}. If we were assuming $(q,\om_1,\om_1+1)^*$-iterability,
then one could have used more traditional arguments here.}
every $\bfrSigma_{q+1}^Y$-definable subset of $\rho_{q+1}^Y=\delta$ is also $\bfrSigma_{q+1}^C$, which suffices.
\end{case}
\begin{case}\label{case:Y_Q-singular_and_rho_q+1<delta} $Y$ is Q-singular and $\rho_{q+1}^Y<\delta$.

So $\delta$ is $\bfrSigma_q^Y$-Woodin but $\bfrSigma_{q+1}^Y$-singular, and $\bar{\delta}$ 
is $\bfrSigma_{q+1}^C$-singular by \ref{lem:hull_cof_in_delta}.
As stated earlier, $\bar{\delta}$ is $\bfrSigma_q^C$-regular,
and
as in Case \ref{case:Y_Q-reg},  if $q>0$ then $\delta^C$ is $\bfrSigma_{q-1}^C$-Woodin.
So if $\bar{\delta}$ is $\bfrSigma_q^C$-Woodin then we are done.
So suppose otherwise, so $q>0$ and $\bar{\delta}=\rho_q^C$ and 
$\delta=\rho_q^Y$. Let $T^C=\Th_{\rSigma_q}^C(\delta\cup\{\pvec_q^C\})$ and $T^Y$  likewise. Then for each $\alpha<\bar{\delta}$, we have
\[ \pi(T^C\rest\alpha)=T^Y\rest\alpha.\]
Let $\theta<\bar{\delta}$.
We want to see that there is $\kappa\in(\theta,\bar{\delta})$
such that $\kappa$ is $({<\bar{\delta}},T^C)$-reflecting. Suppose not. There is $\mu\in(\pi(\theta),\delta)$ which is $({<\delta},T^Y)$-reflecting,
so fix the least such $\mu$.
Note $\mu\notin\rg(\pi)$. Let $\gamma$
be the least $\gamma'<\bar{\delta}$
such that $\pi(\gamma')\geq\mu$;
so $\pi``\gamma\sub\mu<\pi(\gamma)$.
Given $\kappa\in(\theta,\gamma)$,
let $\eta_\kappa<\delta$
be the least $\eta$ such that there is no $E\in\es^C$ with $\crit(E)=\kappa$
and $E$ cohering $T^C$ through $\eta$
and $\nu(E)\geq\eta$.
For $\xi\in[\theta,\gamma)$,
let $\eta_{\geq\xi}=\sup_{\kappa\in[\xi,\gamma)}\eta_\kappa$. Then $\eta_{\geq\xi}=\delta$,
because of the existence of $\mu\in Y$.
So we can fix $\kappa_0\in(\theta,\gamma)$
with $\eta_{\kappa_0}\geq\gamma$
and also fix $\kappa_1\in(\kappa_0,\gamma)$
with $\eta_{\kappa_1}>\eta'$,
where $\eta'$ is the least inaccessible of $M$ which is $>\eta_{\kappa_0}$.
Let $E_0\in\es^C$ be such that $\crit(E_0)=\kappa_0$ and $E_0$ coheres $T^C$ through $\kappa_1$ and $\nu(E_0)=\kappa_1$.
Let $E_1\in\es^C$ be such that $\crit(E_1)=\kappa_1$ and $E_1$ coheres $T_C$ through $\eta'$ 
and $\nu(E_1)=\eta'$ (where $\eta'$ is as above). Let $F=i_{E_1}(E_0)\rest\eta'$.
Then $F\in\es^C$
and $\crit(F)=\kappa_0$
and $F$ coheres $T^C$ through $\eta'$
and $\nu(F)=\eta'$, contradicting that $\eta_{\kappa_0}<\eta'$.
\qedhere
\end{case}
\end{proof}

\begin{lem}\label{lem:iterates_maintain_def_regularity}
 Let $Y$ be $y$-sound,  $\delta\leq\rho_y^Y$,
 be an $\bfrSigma_y^Y$-regular $Y$-cardinal,
 $\Tt$ be a successor length $y$-maximal tree on $Y$ which is based on $M|\delta$,
 such that $b^\Tt$ does not drop in model.
 Let $Z=M^\Tt_\infty$ and $\delta'=i^\Tt(\delta)$.
 Then
 \begin{enumerate}[label=(\roman*)] \item\label{item:i^Tt_continuous_at_delta} 
 $i^\Tt$ is continuous at $\delta$, \item\label{item:delta'<=rho_y^Z} $\delta'\leq\rho_y^{Z}$, and \item\label{item:delta'_def_reg}$\delta'$ is $\bfrSigma_y^{Z}$-regular.\end{enumerate}
 Therefore  $b^\Tt$ does not drop in degree.
\end{lem}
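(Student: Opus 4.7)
The plan is to establish (i), (ii), (iii) together with the ``therefore'' clause by simultaneous induction on $\alpha\leq\lh(\Tt)$ along $[0,\lh(\Tt))_\Tt$. Writing $\delta_\alpha=i^\Tt_{0,\alpha}(\delta)$ whenever $b^\Tt_{0,\alpha}$ does not drop in model or degree, the induction hypothesis at $\alpha$ is the conjunction of: $(\mathrm{a})_\alpha$, $\delta_\alpha\leq\rho_y^{M^\Tt_\alpha}$; $(\mathrm{b})_\alpha$, $\delta_\alpha$ is $\bfrSigma_y^{M^\Tt_\alpha}$-regular; $(\mathrm{c})_\alpha$, $i^\Tt_{0,\alpha}$ is continuous at $\delta$; and $(\mathrm{d})_\alpha$, $[0,\alpha]_\Tt$ does not drop in degree. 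Together with the assumption that $\Tt$ is based on $Y|\delta$, $(\mathrm{a})_\beta$ at the $\Tt$-predecessor $\beta$ of each successor step $\alpha+1$ is exactly what ensures no dropping in degree at that step, yielding $(\mathrm{d})_{\alpha+1}$.

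For the successor step to $\alpha+1$, let $E=E^\Tt_\alpha$, let $\beta$ be its $\Tt$-predecessor, and set $\pi:=i^\Tt_{\beta,\alpha+1}$, which is a $y$-embedding. Since $\Tt$ is based on $Y|\delta$, standard tree considerations together with the induction at $\beta$ give $\kappa:=\crit(E)<\delta_\beta\leq\rho_y^{M^\Tt_\beta}$. Continuity of $\pi$ at $\delta_\beta$ follows from the usual ultrapower-cofinality argument: every element of $\pi(\delta_\beta)$ is $\pi(h)(a)$ with $h\in M^\Tt_\beta$ a function from $[\kappa]^{|a|}$ into $\delta_\beta$, and $(\mathrm{b})_\beta$ implies $\delta_\beta$ is regular in $M^\Tt_\beta$ above $\kappa$, so $h$ is bounded by some $\bar\gamma<\delta_\beta$ and $\pi(h)(a)<\pi(\bar\gamma)$; composing with $(\mathrm{c})_\beta$ yields $(\mathrm{c})_{\alpha+1}$. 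The bound $(\mathrm{a})_{\alpha+1}$ holds because $\pi$ is a $y$-embedding, so $\delta_{\alpha+1}=\pi(\delta_\beta)\leq\pi(\rho_y^{M^\Tt_\beta})\leq\rho_y^{M^\Tt_{\alpha+1}}$. For $(\mathrm{b})_{\alpha+1}$, a hypothetical $\bfrSigma_y^{M^\Tt_{\alpha+1}}$-definable cofinal map $f:\alpha_0\to\delta_{\alpha+1}$ with $\alpha_0<\delta_{\alpha+1}$ is pulled back as follows: by continuity of $\pi$ at $\delta_\beta$, pick $\bar\alpha_0<\delta_\beta$ with $\alpha_0<\pi(\bar\alpha_0)$, and use the standard representation of $\bfrSigma_y$-definable sets in $\Ult_y(M^\Tt_\beta,E)$ via fragments of the $(y-1)$st standard theory of $M^\Tt_\beta$ and finite tuples from $[\lambda(E)]^{<\om}$ to extract a $\bfrSigma_y^{M^\Tt_\beta}$-definable cofinal subset of $\delta_\beta$ of $M^\Tt_\beta$-cardinality $\leq\bar\alpha_0$, contradicting $(\mathrm{b})_\beta$.

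At a limit stage $\lambda$, $M^\Tt_\lambda$ is the direct limit along $[0,\lambda)_\Tt$. $(\mathrm{d})_\lambda$ is inherited from earlier stages on the branch; $(\mathrm{c})_\lambda$ is immediate from the direct-limit representation together with $(\mathrm{c})_\gamma$ for cofinal $\gamma\in[0,\lambda)_\Tt$; $(\mathrm{a})_\lambda$ follows from continuity of the direct limit of $y$-embeddings at $\rho_y$; and $(\mathrm{b})_\lambda$ reduces a hypothetical singularizer over $M^\Tt_\lambda$ to one over some $M^\Tt_\gamma$ (choose $\gamma$ so that the parameter lies in $\rg(i^\Tt_{\gamma,\lambda})$ and, by $(\mathrm{c})_\gamma$, so that $\alpha_0<i^\Tt_{\gamma,\lambda}(\bar\alpha_0)$ for some $\bar\alpha_0<\delta_\gamma$), contradicting $(\mathrm{b})_\gamma$. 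With $(\mathrm{a})$--$(\mathrm{d})$ established at the final index of $\Tt$, conclusions (i)--(iii) and the ``therefore'' clause follow at once.

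The main technical obstacle is the successor-step pullback for $(\mathrm{b})$: one must verify that $\pi=i^\Tt_{\beta,\alpha+1}$ preserves enough of the theory of $M^\Tt_\beta$ below $\delta_\beta$ to convert the hypothetical $\bfrSigma_y^{M^\Tt_{\alpha+1}}$-cofinal map, whose parameters need not lie in $\rg(\pi)$, into a genuine $\bfrSigma_y^{M^\Tt_\beta}$-singularizer of $\delta_\beta$. For $y=0$ this is direct first-order preservation, while for $y>0$ it relies on the standard fact that $y$-maximal ultrapowers preserve the fragments $\Th_{\rSigma_{y-1}}^{M^\Tt_\beta}(\gamma\cup\{\pvec_{y-1}^{M^\Tt_\beta}\})$ for all $\gamma<\delta_\beta$, but requires care in handling the finite tuples from $[\lambda(E)]^{<\om}$ that appear in the ultrapower representation.
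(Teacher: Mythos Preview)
Your inductive skeleton matches the paper's exactly: induction along $b^\Tt$, with continuity and the $\rho_y$-bound routine, and the ``therefore'' clause falling out of $(\mathrm{a})$ at each predecessor. Where you diverge from the paper is in the proof of $(\mathrm{b})$ at successor steps.

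The paper does \emph{not} pull back a singularizer. Instead it pushes forward boundedness. It first disposes of the cases $y=0$ and $\delta<\rho_y^Y$ as immediate (the regularity is then first-order enough to be preserved by a $y$-embedding). In the remaining case $y>0$ and $\delta=\rho_y^Y$, it uses $y$-soundness of $M^\Tt_\beta$: any parameter can be absorbed into $\pvec_y^{M^\Tt_\beta}$ together with an ordinal $<\delta_\beta$, so it suffices to show that $\Hull_y^{M^\Tt_{\alpha+1}}(i^\Tt_{\beta,\alpha+1}(\alpha)\cup\{\pvec_y^{M^\Tt_{\alpha+1}}\})\cap\delta_{\alpha+1}$ is bounded for each $\alpha<\delta_\beta$. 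But $\pvec_y^{M^\Tt_{\alpha+1}}=i^\Tt_{\beta,\alpha+1}(\pvec_y^{M^\Tt_\beta})$ lies in the range, and preservation of the $\rSigma_y$-theory predicate $T_y$ under the iteration map then gives the bound $i^\Tt_{\beta,\alpha+1}(\gamma_\alpha)$ directly. This sidesteps exactly the obstacle you flag: since the only parameter needed is $\pvec_y$, which is always in $\rg(\pi)$, there is nothing to pull back.

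Your pull-back route does work, but one point in your write-up should be adjusted: when you represent the parameter $q$ of the singularizer as $\pi(h)(a)$ and unwind via \L o\'s, the resulting cofinal set in $\delta_\beta$ is naturally indexed by $[\kappa]^{|a|}\times\bar\alpha_0$, so has $M^\Tt_\beta$-cardinality $\leq\max(\kappa,\bar\alpha_0)$, not $\leq\bar\alpha_0$ as you state. Since $\kappa<\delta_\beta$ too, this still contradicts $(\mathrm{b})_\beta$; but the generator coordinate cannot simply be suppressed. The paper's forward argument is shorter precisely because it never has to introduce or manage these generator coordinates.
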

\begin{proof} This is by induction along the nodes of $b^\Tt$.
Given all properties hold with $Z$ replaced by $M^\Tt_\alpha$ for some $\alpha\in b^\Tt$,
then properties \ref{item:i^Tt_continuous_at_delta} and \ref{item:delta'<=rho_y^Z} follow routinely at $M^\Tt_{\beta+1}$ where $\pred^\Tt(\beta+1)=\alpha$.
So suppose also $Z=M^\Tt_{\beta+1}$,
and let
us verify \ref{item:delta'_def_reg}. This is immediate if $\delta<\rho_q^Y$ or $q=0$,
so suppose $q>0$ and $\delta=\rho_q^Y$, so $\delta'=\rho_q^{Z}$ (by continuity). Because $\delta$ is 
$\bfrSigma_q^Y$-regular, for each $\alpha<\rho_q^Y$
there is $\gamma_\alpha<\delta$ such that
\[ \delta\cap\Hull_q^Y(\alpha\un\{\pvec_k^Y\})\sub\gamma_\alpha. \]
But because $i^\Tt$ preserves elements of $T_q$ (the $\rSigma_k$-theory predicate),
it easily follows that
\[ \Hull_q^Z(i^\Tt(\alpha)\un\{\pvec_k^Z\})\inter\delta^Z\sub i^\Tt(\gamma_\alpha), \]
which, since $\delta'=\sup i^\Tt``\delta$,
gives that $\delta'$ is $\bfrSigma_q^Z$-regular, as required.\end{proof}

\begin{lem}\label{lem:Q-mouse_iteration}
Let $(Y,\delta)$ be a Q-prepair of Q-degree $q$. Let $\Tt$ be a successor length $q$-maximal tree on $Y$, 
based on $Y|\delta$, such that $b^\Tt$ does not drop in model \tu{(}hence nor in degree\tu{)}. Let $Z=M^\Tt_\infty$. If $\delta<\rho_0^Y$ let $\delta'=i^\Tt(\delta)$,
and if $\delta=\rho_0^Y$ let $\delta'=\rho_0^Z$.
Then $(Z,\delta')$ is a 
Q-prepair of Q-degree $q$, and $\delta'=\sup i^\Tt``\delta$.
Moreover, $(Z,\delta')$ is Q-regular \tu{(}Q-singular\tu{)}
iff $Y$ is Q-regular \tu{(}Q-singular\tu{)}.
\end{lem}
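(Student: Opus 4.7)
The plan is to reduce as much as possible to Lemma \ref{lem:iterates_maintain_def_regularity} and then handle the remaining Q-structure content by reformulating everything in terms of the $\rSigma_q$-theory predicate, which a $q$-embedding preserves. From Lemma \ref{lem:iterates_maintain_def_regularity} I immediately get continuity of $i^\Tt$ at $\delta$ (so $\delta'=\sup i^\Tt``\delta$), the bound $\delta'\leq\rho_q^Z$, $\bfrSigma_q^Z$-regularity of $\delta'$, and non-dropping in degree along $b^\Tt$; hence $Z$ is $q$-sound and $i^\Tt\colon Y\to Z$ is a $q$-embedding. The clause ``$\delta'$ is not a limit of Woodin cardinals of $Z$'' then transfers by elementarity: fix $\gamma<\delta$ bounding the Woodins of $Y$ below $\delta$; then $i^\Tt(\gamma)<\delta'$ bounds the Woodins of $Z$ below $\delta'$.

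The core of the argument is transferring $\bfrSigma_k$-Woodinness (and its failure) for $k\leq q+1$. I would reformulate ``$\delta$ is $\bfrSigma_k^Y$-Woodin'' as: for every $a\in Y|\delta$ and every $\rSigma_k$ formula $\varphi$ defining a subset $A=A_{\varphi,a}\subseteq\delta$ from $a$ and $\pvec_k^Y$, there are cofinally many $\kappa<\delta$ which are $({<}\delta,A)$-reflecting via $\es^Y$. This reformulation exposes the statement as one about the $\rSigma_k$-theory $T_k^Y$ restricted to bounded pieces of $\delta$, plus the existence of extenders in $\es^Y$ cohering that theory — exactly the form used in Case \ref{case:Y_Q-singular_and_rho_q+1<delta} of Lemma \ref{lem:Q-mouse_core}. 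Since $i^\Tt$ is a $q$-embedding and continuous at $\delta$, it preserves $T_k^Y\rest\alpha$ for $k\leq q$ and $\alpha<\delta$ (sending it to $T_k^Z\rest i^\Tt(\alpha)$), so both ``$\bfrSigma_k$-Woodin'' and its negation (which, under $\bfrSigma_q$-regularity of $\delta$, is witnessed by a bounded reflection failure above some $\theta<\delta$) transfer in both directions at levels $k\leq q$.

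Putting the two cases together: in the Q-regular case ($q>0$, $\bfrSigma_{q-1}^Y$-Woodin, not $\bfrSigma_q^Y$-Woodin), apply the preservation at $k=q-1$ and $k=q$ to obtain the matching dichotomy for $(Z,\delta')$. In the Q-singular case, $\bfrSigma_q^Z$-Woodinness of $\delta'$ is immediate as above; for $\bfrSigma_{q+1}^Z$-singularity, split on whether $\rho_{q+1}^Y=\delta$ (then $\rho_{q+1}^Z=\delta'$ by continuity, and since $Z$ inherits $(q,\om_1+1)$-iterability from $Y$ via $\Tt$, Lemma \ref{lem:hull_cof_in_delta} delivers Q-singularity) or $\rho_{q+1}^Y<\delta$ (then a $\bfrSigma_{q+1}$-definable cofinal map $f\colon\alpha\to\delta$ from a parameter $a\in Y$ pushes forward to $i^\Tt(f)$, giving a $\bfrSigma_{q+1}$-definable cofinal map into $\delta'$). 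The converse (that $(Y,\delta)$'s class is determined by $(Z,\delta')$'s) falls out of the same preservations. The main obstacle I anticipate is the $k=q$ case of the non-Woodinness preservation, which is a $\rPi_{q+1}$-type statement and therefore not directly preserved by a $q$-embedding; the workaround, as in Lemma \ref{lem:Q-mouse_core}, is that $\bfrSigma_q^Y$-regularity of $\delta$ forces any reflection failure to be bounded-parameter witnessed, and a $q$-embedding does preserve the relevant finite fragments of $T_q$.
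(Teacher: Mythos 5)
Your proof takes essentially the same route as the paper's: reduce the basic preservation facts (continuity at $\delta$, $\delta'\leq\rho_q^Z$, $\bfrSigma_q^Z$-regularity, non-dropping in degree) to Lemma \ref{lem:iterates_maintain_def_regularity}, then transfer the Woodinness/non-Woodinness content level by level by reformulating it in terms of the $\rSigma_k$-theory predicates, as in the proof of Lemma \ref{lem:Q-mouse_core}. The paper itself handles these transfers by simply deferring to the arguments in \ref{lem:Q-mouse_core}, and the technical point you flag in advance --- that non-$\bfrSigma_q$-Woodinness is $\rPi$-like in complexity and so not preserved by a $q$-embedding in the naive way, but $\bfrSigma_q$-regularity of $\delta$ localizes the reflection failure to bounded fragments of $T_q$, which \emph{are} preserved --- is exactly the workaround used there.

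One slip in the Q-singular case: you split on $\rho_{q+1}^Y=\delta$ versus $\rho_{q+1}^Y<\delta$, and in the first branch you appeal to $(q,\om_1+1)$-iterability inherited by $Z$ in order to invoke Lemma \ref{lem:hull_cof_in_delta}. But the hypothesis of the present lemma is only that $(Y,\delta)$ is a Q-\emph{prepair}, not a Q-pair, so iterability is not available; and in any case the ``therefore'' clause of Lemma \ref{lem:hull_cof_in_delta} yields Q-singularity only in the regime $\rho_{q+1}<\delta$, which is the branch you are \emph{not} in. The repair is already contained in your own second branch and renders the case split unnecessary: a $\bfrSigma_{q+1}^Y$-definable cofinal $f:\alpha\to\delta$ with parameter $a$ has a $\rSigma_{q+1}$ definition that pushes forward under $i^\Tt$ (which is $\rSigma_{q+1}$-elementary, being a non-dropping $q$-maximal iteration map) to a $\bfrSigma_{q+1}^Z$-definable map into $\delta'$, and its range is cofinal in $\delta'$ since $i^\Tt$ is continuous at $\delta$. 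This uniform argument needs no iterability and is the one the paper uses (``the elementarity and continuity of $i^\Tt$ at $\delta$'').
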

\begin{proof}
We assume that $\delta<\rho_0^Y$ and leave the other case to the reader. 
Clearly $Z$ is $q$-sound.
Note that Lemma  \ref{lem:iterates_maintain_def_regularity}
applies, so we get the conclusions stated there.

\begin{case} $(Y,\delta)$ is Q-singular.
 
So $\delta$ is $\bfrSigma_q^Y$-Woodin but 
$\bfrSigma_{q+1}^Y$-singular.
The elementarity and continuity of $i^\Tt$
at $\delta$ gives that $\delta'$ is $\bfrSigma^Z_{q+1}$-singular.
So it suffices to see that $\delta'$ is $\bfrSigma_q^Z$-Woodin.
But because $Y,Z$ are $q$-sound, this is proven as in the proof of \ref{lem:Q-mouse_core}.
\end{case}

\begin{case} $Y$ is Q-regular.

So $q>0$ and $Y$ is $\bfrSigma_{q-1}^Y$-Woodin but not $\bfrSigma_q^Y$-Woodin,
and $\delta=\rho_q^Y$.
The fact that $\delta'$ is $\bfrSigma_{q-1}^Z$-Woodin is as before,
and by the argument used in Case \ref{case:Y_Q-singular_and_rho_q+1<delta} of the proof of
Lemma \ref{lem:Q-mouse_core},
$\delta'$ is non-$\bfrSigma_{q}^Z$-Woodin.\qedhere
\end{case}
\end{proof}
In the following lemma,
recall that $Y$ is a Q-mouse
if $\delta^Y<\OR^Y$ and $(Y,\delta^Y)$
is a Q-pair; and recall that $\delta^Y$ is the \emph{least} Woodin of $Y$.
\begin{lem}
 Let $Y$ be a Q-mouse and $q=q^Y$. Then $Y$ is $(q,\om_1,\om_1+1)^*$-iterable.
\end{lem}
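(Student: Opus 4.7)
The plan is to mirror, almost verbatim, the argument for $\om$-mice proved just above, substituting Q-degree fine structure for $\om$-soundness. By \cite[Theorem 9.6]{iter_for_stacks}, $Y$ is $(q,\om,\om+1)^*$-iterable, and each iterate of $Y$ by a finite $q$-maximal stack of finite normal trees is normally iterable at its new degree. Lemma \ref{lem:Q-mouse_iteration} ensures that every such iterate obtained without dropping on the main branch is again a Q-mouse of Q-degree $q$. One can therefore pick a finite terminally-non-dropping $q$-maximal stack $\vec{\Tt}$ so that for every further finite terminally-non-dropping $q$-maximal tree on $N = M^{\vec{\Tt}}_\infty$ the iteration map preserves $p_{q+1}$; otherwise one would construct a length-$\om$ stack with illfounded direct limit, contradicting the $(q,\om,\om+1)^*$-iterability.

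Now set $C = \core_{q+1}(N)$ with core map $\pi : C \to N$. By Lemma \ref{lem:Q-mouse_core}, $C$ is a $(q+1)$-sound Q-mouse of Q-degree $q$ (inheriting the Q-regular/Q-singular dichotomy from $N$), and $C|\om_1^C = N|\om_1^N = Y|\om_1^Y$. A comparison of $C$ with $N$ yields $(q+1)$-universality of $N$; and every terminally-non-dropping $q$-maximal tree $\Tt$ on $C$ preserves $p_{q+1}$, by lifting via $\pi$ to a tree $\pi\Tt$ on $N$ and invoking near $\deg^\Tt$-embedding properties of copy maps as in the $\om$-mouse case. It then suffices to show $C$ is $(q,\om_1,\om_1+1)^*$-iterable, and by \cite{iter_for_stacks} this reduces to showing that the unique normal $(q,\om_1+1)$-strategy of $C$ has inflation condensation, which in turn reduces to showing that $C$ has a unique $(q,\om_1+1)$-strategy. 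Suppose towards a contradiction that $\Sigma \neq \Gamma$ are two such strategies, fix a tree $\Vv$ via both with $b = \Sigma(\Vv) \neq \Gamma(\Vv) = c$, and compare the phalanxes $\Phi(\Vv \conc b)$ and $\Phi(\Vv \conc c)$ via $q$-maximal trees $\Tt,\Uu$ on $C$. The only non-routine case is $M^\Tt_\infty = M^\Uu_\infty$ with both $\Tt,\Uu$ terminally-non-dropping: here $p_{q+1}$-preservation on each side, together with $C = \Hull_{q+1}^C(\pvec_{q+1}^C)$ by $(q+1)$-soundness, forces $i^\Tt = i^\Uu$, and the standard zipper argument yields a contradiction.

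The main obstacle will be verifying that $p_{q+1}$ is preserved by terminally-non-dropping $q$-maximal iterations of Q-mice, since the parameter at level $q+1$ is atypically placed: in the Q-singular case with $\rho_{q+1}^Y < \delta$ it lives below a non-Woodin $\delta$, while in the Q-regular case $\rho_{q+1}^Y < \rho_q^Y = \delta$. The preservation of Q-degree and of the Q-singular/Q-regular dichotomy provided by Lemma \ref{lem:Q-mouse_iteration} is the key input, since it implies that $p_{q+1}$ is characterized canonically by its role in coding the failure of Woodinness of $\delta$, and is therefore tracked by iteration maps. Combining this with the $\bfrSigma_{q+1}$-preservation machinery of \cite{fsfni} (already invoked inside Lemma \ref{lem:Q-mouse_core}) and the near-embedding lifting of \cite{fs_tame} should yield the required $p_{q+1}$-preservation, completing the argument.
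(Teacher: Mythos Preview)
Your proposal has a genuine gap. You claim that $(q+1)$-soundness gives $C = \Hull_{q+1}^C(\pvec_{q+1}^C)$, but this is false: $(q+1)$-soundness only yields $C = \Hull_{q+1}^C(\rho_{q+1}^C \cup \{\pvec_{q+1}^C\})$, and for a Q-mouse one has $\rho_{q+1}^C \leq \delta^C$, typically with $\rho_{q+1}^C > \omega$. Thus $p_{q+1}$-preservation alone does not force $i^\Tt = i^\Uu$ in the terminally-non-dropping case; you would also need $i^\Tt \rest \rho_{q+1}^C = i^\Uu \rest \rho_{q+1}^C$, and nothing in your sketch secures this (indeed $i^\Vv_b$ and $i^\Vv_c$ can act quite differently on ordinals below $\delta^C$). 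The $\omega$-mouse argument you are transplanting works precisely because $\rho_{k+1} = \omega$; that hypothesis is essential and is not available here. A related issue appears earlier: the inequality $p_{q+1}^{M^\Uu_\infty} \leq i^\Uu(p_{q+1}^{M^\Uu_0})$ for finite non-dropping trees, used to select $N$, also relied on $\rho_{k+1} = \omega$ in the $\omega$-mouse proof.

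The paper's argument is organized differently. It takes $C$ to be the $\delta$-core of $Y$ (not $\core_{q+1}$ of some iterate), shows $C$ is $\delta$-sound via \cite{fsfni} relativized above the strong cutpoint $\delta$, and proves uniqueness of the $(q,\omega_1+1)$-strategy for $C$ by a case analysis that never attempts to establish $i^\Tt = i^\Uu$. After reducing to $\Vv$ based on $C|\delta$, it splits on whether $M(\Vv)$ is small. In the small case the Q-structures $Q_b, Q_c$ for $M(\Vv)$ coincide, and the remaining configurations (one branch drops, neither branch drops, $Q \pins M^\Vv_c$, etc.) are eliminated by showing that $M(\Vv)$ would be Woodin with respect to the theory predicate $T = t_q$, contradicting that the branch models are Q-premice of Q-degree $q$ (via Lemma \ref{lem:Q-mouse_iteration}). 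This last step combines Zipper-Lemma-style reasoning with preservation of restrictions of $t_q$ along both branches, and is the substantive content of the proof; it has no counterpart in your outline.
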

\begin{proof}
We have $\rho_{q+1}^Y\leq\delta=\delta^Y$. Let $C$ be the $\delta$-core of $Y$
and $\pi:C\to Y$ be the core map.

\begin{clm} There is a unique $(q,\om_1+1)$-strategy for $C$,
and therefore $C$ is $(q,\om_1,\om_1+1)^*$-iterable.\end{clm}

The ``therefore'' clause follows from \cite[Lemma 4.45/Theorem 4.47, Theorem 1.2]{iter_for_stacks}.
Using the claim, we  complete the proof.
Using the normal part of our $(q,\om_1,\om_1+1)^*$-strategy for $C$, and any 
$(q,\om_1+1)$-strategy for $Y$ (recall it is the least Woodin of $Y$),
we compare $C$ with $Y$. Because $\delta$ 
is a strong cutpoint of both $C$ and $Y$,
the comparison leads to a common model, with no dropping in model or 
degree on either side. But then $Y$ inherits the tail strategy for $C$,
so $Y$ is $(q,\om_1,\om_1+1)^*$-iterable.

\begin{proof}[Proof of Claim]
$Y$ is $(q+1)$-universal above $\delta$;
that is, $C|\delta^{+C}=Y|\delta^{+Y}$ (by
the comparison argument just mentioned). It follows 
that $\pi(p_{q+1}^C\cut\delta)=p_{q+1}^Y\cut\delta$.
And by relativizing the results of \cite{fsfni_v4} above the strong cutpoint $\delta$, $p_{q+1}^C\cut\delta$
is $(q+1)$-solid for $C$, so $C$ is $\delta$-sound.

Now suppose that $\Sigma,\Gamma$ are two distinct $(q,\om_1+1)$-strategies for $C$.
Let $\Vv$ be countable, of limit length, via $\Sigma,\Gamma$, such that 
$b=\Sigma(\Vv)\neq\Gamma(\Vv)=c$. Let $\Tt=\Vv\conc b\conc\Tt'$ and $\Uu=\Vv\conc c\conc\Uu'$,
via $\Sigma,\Gamma$ respectively, be the successful comparison of the phalanxes $\Phi(\Vv\conc b)$, 
$\Phi(\Vv\conc c)$ respectively.

Let $\alpha<\lh(\Vv)$ be such that $[0,\alpha]_\Vv\inter\dropset_\deg^\Vv=\emptyset$.
By Lemma \ref{lem:Q-mouse_iteration}, $i^\Vv_{0\alpha}$ is continuous at $\delta$.
Because $C$ is $\delta$-sound and $\rho_{q+1}^C\leq\delta$ and $\delta$ is a strong cutpoint of $C$,
$M^\Vv_\alpha$ is $i^\Vv_{0\alpha}(\delta)$-sound and $\rho_{q+1}(M^\Vv_\alpha)\leq i^\Vv(\delta)$
and $i^\Vv_{0\alpha}(\delta)$ is a strong cutpoint of $M^\Vv_\alpha$.
Thus, there is at most one above-$i^\Vv(\alpha)$-$(q,\om_1+1)$-strategy for $M^\Vv_\alpha$.
So we may assume that $\Vv$ is based on $C|\delta$.

If $M(\Vv)$ is non-small and $\eta=\delta^{M(\Vv)}$ (the least Woodin of $M(\Vv)$)
then letting $Q$ be the $\eta$-sound Q-structure for $M(\Vv)|\eta$,
we have $Q\ins M^\Vv_\alpha$ where $\alpha$ is least such that $\eta<\lh(E^\Vv_\alpha)$,
and $\Vv\rest[\alpha,\lh(\Vv))$ is based on $Q$ and is above $\eta$,
and is via an above-$\eta$-$(k,\om_1+1)$-strategy for $Q$, where $k$ is least such that $\rho_{k+1}^Q\leq\eta$.
But there is a unique such above-$\eta$ strategy for $Q$,
giving a
contradiction.

So $M(\Vv)$ is small. Let $Q_b\ins M^\Vv_b$ and $Q_c\ins M^\Vv_c$
be the Q-structures for $M(\Vv)$; note that these exist and are $\delta(\Vv)$-sound
and project to $\delta(\Vv)$, which is a strong cutpoint of both (as $M(\Vv)$ is small), and $Q_b$ 
is above-$\delta(\Vv)$-$(k_b,\om_1+1)$-iterable,
where $k_b$ is such that $\rho_{k_b+1}^{Q_b}\leq\delta(\Vv)<\rho_{k_b}^{Q_b}$,
and likewise for $Q_c,k_c$. So standard fine structure gives that $Q_b=Q_c$, so $k_b=k_c$.
Write $Q=Q_b$. As usual by the Zipper Lemma (\cite[Theorem 6.10]{outline} and its variants) we can't have $Q\pins M^\Tt_b$ and $Q\pins 
M^\Tt_c$.
So by symmetry, we may assume $Q=M^\Tt_b$.

Suppose $b\inter\dropset_\deg^\Vv\neq\emptyset$. Then $Q$ is 
unsound, so $Q=M^\Tt_c$, and $\delta(\Vv)$ is $\bfrSigma_{k+1}^Q$-singular
where $k=\deg^\Tt(b)$ (this uses that $M(\Vv)$ is small, 
hence iteration maps along $b$ are eventually continuous at the preimage of $\delta(\Vv)$),
and note that $\rho_{k+1}^Q<\delta(\Vv)\leq\rho_k^Q$.
So by symmetry and the Zipper Lemma, $c\inter\dropset_\deg^\Vv=\emptyset$.
So $q=\deg^\Vv(c)$ and $\delta^Q=\delta(\Vv)\leq\rho_q^Q$, so $q\leq k$. Again by the Zipper Lemma, $q<k$.
By \ref{lem:Q-mouse_iteration}, $Q=M^\Vv_c$ is a Q-premouse and $q^Q=q$.
If $Q$ is Q-singular then $\delta(\Vv)$ is $\bfrSigma_{q+1}^Q$-singular, but as $q+1\leq k$,
this again contradicts the Zipper Lemma. So $Q$ is Q-regular,
so $q>0$ and $\delta(\Vv)$ is not $\bfrSigma_q^Q$-Woodin.
Therefore there is $\theta<\delta^Q$
such that there is no $\kappa\in(\theta,\delta^Q)$ which is $({<\delta^Q},T)$-reflecting where 
$T=t_q^Q$ (the theory defined as in the proof of \ref{lem:Q-mouse_core}).
But letting $\alpha\in c$ and $\kappa=\crit(i^\Vv_c)$
and $\nu=\nu(E^\Vv_{\beta+1})$ where $\beta+1=\min((\alpha,c])$,
we have $T\rest\kappa=t_q^{M^\Vv_\alpha}\rest\kappa$
and $i^\Vv_c(T\rest\kappa)\rest\nu=T\rest\nu$. Likewise for $b$ and $i^\Vv_{\alpha'b}$
when $\alpha'\in b$ and $(\alpha',b]\inter\dropset_\deg^\Vv=\emptyset$,
because $q\leq k$ (in fact $q<k$). Combining these remarks with the argument of the Zipper Lemma, 
it is straightforward to show that $M(\Vv)$ is Woodin with respect to $T$, a contradiction.

So  $b\inter\dropset_\deg^\Vv=\emptyset$.
Since $Q=M^\Tt_b$,
we have $i^\Tt_b(\delta^Y)=\delta^Q$
and $Q$ is a Q-premouse with $q^Q=q=\deg^\Tt_b$.

Suppose $Q\pins M^\Vv_c$.
Since $\delta(\Vv)$ is a cardinal of $M^\Vv_c$,
then $\rho_\om^Q=\delta(\Vv)$. So $i^\Vv_{\alpha c}$ is fully elementary
on the preimage of $Q$ and continuous at the preimage of $\delta(\Vv)$ when $\alpha\in c$ 
is sufficiently large, so again an easy adaptation of the preceding argument works.

So $M^\Vv_b=Q=M^\Vv_c$. So by the symmetry of earlier calculations,
also $c\cap\mathscr{D}^\Vv_{\mathrm{def}}=\emptyset$, and
$\delta(\Vv)=\delta^{M^\Vv_b}=\delta^{M^\Vv_c}$.
But then another adaptation of the preceding argument again gives a contradiction.\qedhere
\end{proof}As mentioned earlier, this proves the lemma.
\end{proof}

\begin{lem}\label{lem:Q_extra_it}
 Let $Q$ be an $(m+1)$-sound $(m+1,\om_1+1)$-iterable  premouse such that 
$\delta=\rho_{m+1}^Q$ is $\bfrSigma_{m+1}^Q$-regular.
Let $k\leq\om$ be largest such that $\delta$ is $\bfrSigma_k$-regular.
Then $Q$ is $(k,\om_1+1)$-iterable,
and in fact, $k$-maximal trees on $Q$ are equivalent to $(m+1)$-maximal trees on $Q$.
\end{lem}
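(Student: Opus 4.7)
The first observation is that the hypotheses force the projecta to stabilize: since $\bfrSigma_j^Q$-regularity of $\delta$ requires $\delta\leq\rho_j^Q$, while $\rho_j^Q\leq\rho_{m+1}^Q=\delta$ for every $j\geq m+1$, we must have $\rho_{m+1}^Q=\rho_{m+2}^Q=\cdots=\rho_k^Q=\delta$. Combined with $(m+1)$-soundness of $Q$, so that $Q=\Hull_{m+1}^Q(\delta\cup\pvec_{m+1}^Q)$, this means the full fine-structural data of $Q$ up through level $k$ is already encoded in the level-$(m+1)$ theory. My plan is to set up an explicit bijection between $k$-maximal trees $\Tt$ and $(m+1)$-maximal trees $\Tt'$ on $Q$ having identical extender sequences, tree orderings, underlying models and branch choices at limits, differing only in the bookkeeping of degrees (which at non-dropping, non-proper-segment nodes records $k$ for $\Tt$ and $m+1$ for $\Tt'$). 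Iterability at degree $k$ then reduces immediately to iterability at degree $m+1$ via this translation.

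The heart of the argument is the following local claim: for every $R$ arising as $M^\Tt_\alpha$ with $[0,\alpha]_\Tt\cap\dropset_\deg^\Tt=\emptyset$, $R$ is $(m+1)$-sound with $\rho_{m+1}^R=\cdots=\rho_k^R=\delta^R:=i^\Tt_{0,\alpha}(\delta)$ and $\delta^R$ is $\bfrSigma_k^R$-regular; moreover, for any $R$-total extender $F$ with $\crit(F)<\delta^R$, the canonical factor map is the identity, so $\Ult_{m+1}(R,F)=\Ult_k(R,F)$ as premice. The ultrapower equality is proved by checking that every $\rSigma_k^R$-function $f\colon[\crit(F)]^{<\om}\to R$ used to form $\Ult_k(R,F)$ is already represented in $\Ult_{m+1}(R,F)$; this uses $(m+1)$-soundness to rewrite $\rSigma_k^R$-definable objects as $\rSigma_{m+1}^R$-definable ones in parameters from $\delta^R$. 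To propagate the claim across a successor extension, one invokes the standard fact (for $(m+1)$-sound premice with extenders of critical point below the projectum) that $i_F$ is fully elementary, so that all projecta and the $\bfrSigma_k$-regularity of the image of $\delta$ transfer to $\Ult_{m+1}(R,F)$; continuity of $i_F$ at $\delta^R$ is supplied by Lemma \ref{lem:iterates_maintain_def_regularity}.

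The induction on tree length then splits into the usual cases at a successor extension by $E=E^\Tt_\alpha$ with $\beta=\pred^\Tt(\alpha+1)$. If $\crit(E)\geq\delta^{M^\Tt_\beta}$, then both $(m+1)$-maximality and $k$-maximality force a drop to the same proper segment $N\pins M^\Tt_\beta$, namely the largest $\om$-sound segment with $\crit(E)<\rho_\om^N$, at the same fully-determined degree; from this point onward no ambiguity remains, as the subtree runs over fully sound structures. If $\crit(E)<\delta^{M^\Tt_\beta}$ and no other rule forces a drop, the local claim yields $\Ult_{m+1}(M^\Tt_\beta,E)=\Ult_k(M^\Tt_\beta,E)$, so both trees continue identically. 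Limit stages pose no difficulty: a given $(m+1,\om_1+1)$-strategy for $Q$ picks a cofinal branch, and we declare the same branch for the $k$-maximal tree. The main obstacle, requiring the most care, is verifying the ultrapower equality in the local claim -- specifically that the canonical factor $\Ult_k(R,F)\to\Ult_{m+1}(R,F)$ really is the identity and that on both sides the fine-structural predicates (standard parameters, reducts) coincide; this is a direct fine-structural calculation from $(m+1)$-soundness and the stabilization of projecta, but must be executed carefully rather than invoked.
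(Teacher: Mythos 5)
Your outline is pointed in the right direction---you want to show that at every node the degree-$k$ and degree-$(m{+}1)$ ultrapowers coincide, so that the two tree-formation algorithms are literally the same except for the bookkeeping of degrees---and the observation that $\rho_{m+1}^Q=\rho_{m+2}^Q=\cdots=\rho_k^Q=\delta$ is correct and essential. The paper makes exactly this move, but compresses the propagation along the tree into the phrase ``it is easily enough to see that\dots'', reducing everything to the single statement: \emph{every $\bfrSigma_k^Q$ function $f:\kappa\to Q$ with $\kappa<\delta$ is in fact $\bfrSigma_{m+1}^Q$}. Once this is shown for $Q$, the equivalence of the tree algorithms and the transfer of the hypotheses under $i_E$ (continuity of the iteration maps at the image of $\delta$, etc.) are routine applications of Lemma~\ref{lem:iterates_maintain_def_regularity} and the usual preservation facts, so the detailed tree induction you set up is more scaffolding than is needed.

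The genuine gap is in the step you flag as ``a direct fine-structural calculation from $(m{+}1)$-soundness and the stabilization of projecta, but must be executed carefully rather than invoked.'' As described, this step would not go through: $(m{+}1)$-soundness and $\rho_{m+1}^Q=\cdots=\rho_k^Q=\delta$ alone do \emph{not} imply that $\rSigma_k^Q$-definable functions on domains below $\delta$ are $\rSigma_{m+1}^Q$-definable---the $\bfrSigma_k^Q$-regularity of $\delta$ is the indispensable ingredient, and you never say how it enters. The actual mechanism is this. By $(m{+}1)$-soundness fix a surjective $\bfrSigma_{m+1}^Q$ partial function $h:\delta\to Q$. Given $\kappa<\delta$ and a $\bfrSigma_k^Q$ function $f:\kappa\to Q$, choose a $\bfrSigma_k^Q$ function $g:\kappa\to\delta$ with $g(\alpha)\in\dom(h)$ and $f(\alpha)=h(g(\alpha))$ for all $\alpha<\kappa$. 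Now $\bfrSigma_k^Q$-regularity of $\delta$ gives that $g$ is bounded in $\delta$, and since $\rho_k^Q=\delta$, the bounded $\bfrSigma_k^Q$-definable set $g$ already lies in $Q$. Hence $f=h\com g$ is $\bfrSigma_{m+1}^Q$. Without the regularity clause $g$ could be cofinal in $\delta$, it would then fail to be a member of $Q$, and the reduction of $f$ to lower complexity would break down.

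Two smaller points. First, your assertion that $i_F$ is ``fully elementary'' for $\crit(F)<\rho_{m+1}^R$ is too strong; it is an $(m{+}1)$-embedding (hence $\rSigma_{m+2}$-elementary), which is what the propagation actually needs. Second, the direction of the factor map should be $\Ult_{m+1}(R,F)\to\Ult_k(R,F)$ (degree $k$ uses more functions), and what you want to show is that this factor map is onto, equivalently the identity---which again comes back precisely to the function-reduction step above.
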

\begin{proof}
We may assume $m+1<k$.
Note that $\rho_k^Q=\rho_{m+1}^Q=\delta$
and $Q$ is $k$-sound. So it is easily enough to see that for every $\kappa<\delta$ and every $f:\kappa\to Q$ which is $\bfrSigma_k^Q$, $f$
is in fact $\bfrSigma_{m+1}^Q$.
So fix such $\kappa,f$.
Since $Q=\Hull_{m+1}^Q(\delta\cup\{\pvec_{m+1}^Q\})$, we can fix a surjective $\bfrSigma_{m+1}^Q$  partial function $h:_{\mathrm{p}}\delta\to Q$. Note that there is then an $\bfrSigma_k^Q$ function $g:\kappa\to\delta$ such that $g(\alpha)\in\dom(h)$ and $f(\alpha)=h(g(\alpha))$
for all $\alpha<\kappa$.
But by the $\bfrSigma_k^Q$-regularity of $\delta$, $g$ is bounded in $\delta$,
and since $\rho_k^Q=\delta$, therefore $g\in Q$. So $f$ is in fact $\bfrSigma_{m+1}^Q$.
\end{proof}
\begin{rem}\label{rem:get_Q-mouse}
Let $Q$ be an $(m,\om_1+1)$-iterable non-small $\delta^Q$-sound $m$-sound premouse
with $\rho_{m+1}^Q\leq\delta^Q<\rho_m^Q$, and suppose that
if $Q$ is $\om$-sound then $\J(Q)\sats$``$\delta^Q$ is not Woodin''. Note that by Lemma \ref{lem:hull_cof_in_delta}, either
\begin{enumerate}[label=\tu{(}\alph*\tu{)}]
 \item\label{item:sing} $\delta$ is $\bfrSigma_{m+1}^Q$-singular, or
 \item\label{item:reg} $\rho_{m+1}^Q=\delta^Q$ and $Q$ is $(m+1)$-sound and $\delta^Q$ is 
$\bfrSigma_{m+1}^Q$-regular.
\end{enumerate}

If \ref{item:sing} holds 
then $Q$ is a Q-singular Q-mouse of degree $m$.

If \ref{item:reg} holds and $Q$ is $(m+1,\om_1+1)$-iterable then
by \ref{lem:Q_extra_it}, $Q$ is a Q-mouse.
\end{rem}
\subsection{P-construction}\label{sec:P-construction}

\begin{dfn}
 Let $m<\om$ and $M$ be an $m$-sound premouse.
 We say that $M$ is \emph{$m$-condensation-standard} iff for all $(N,n+1)\ins(M,m)$, we have:
 \begin{enumerate}
  \item 
 for all $n$-sound
  premice $H$ with $\rho_{n+1}^H<\rho_{n+1}^N$
  and all $n$-lifting embeddings $\pi:H\to N$ with $\rho=\rho_{n+1}^H\leq\crit(\pi)$, we have:
 \begin{enumerate}[label=--]
  \item if $H$ is $(n+1)$-sound
  then either:
  \begin{enumerate}[label=--]\item $H\pins N$, or
   \item 
$N|\rho$ is active and $H\pins\Ult(N|\rho,F^{N|\rho})$,
\end{enumerate}
and
  \item 
  if $H=\Hull_{n+1}^H(\rho\cup\{x\})$
  for some $x\in H$ then letting $C=\core_{n+1}(H)$,
   either:
  \begin{enumerate}[label=--]
  \item $C\pins N$, or
  \item $N|\rho$ is active and $C\pins\Ult(N|\rho,F^{N|\rho})$, 
\end{enumerate}
and $H$ is an iterate of $C$, via an above-$\rho$ $n$-maximal iteration tree $\Tt$ on $C$ of finite length.
 \end{enumerate}
 \item\label{item:m-cond-stand_parameter} for all $\alpha\in p_{n+1}^N$,
 letting  $q=p_{n+1}^N\cut(\alpha+1)$,
 \[ H=\cHull_{n+1}^{N}(\alpha\cup\{q,\pvec_n^N\}),\]
  $\pi:H\to N$  be the uncollapse, $C=\core_{n+1}(H)$
 and $\rho=\rho_{n+1}^C=\rho_{n+1}^H$,
 then:
 \begin{enumerate}\item\label{item:cond_for_C_in_param} either:
 \begin{enumerate}[label=--]
  \item $C\pins N$, or
  \item $N|\rho$ is active and $C\pins\Ult(N|\rho,F^{N|\rho})$, 
 \end{enumerate}\item there is $x\in H$ such that $H=\Hull_{n+1}^H(\rho\cup\{x\})$,
 \item $H$ is an iterate of $C$,
 via an above-$\rho$ $n$-maximal
 iteration tree $\Tt$ on $C$ of finite length,
 
 \item\label{item:what_x_is_param}either:
 \begin{enumerate}
 \item $\rho=\alpha=\crit(\pi)$
 (so we can take $x=\pi^{-1}(\{q,\pvec_n^N\})$), or
 \item $\rho<\alpha=\rho^{+H}=\crit(\pi)$,
 and there is $\xi\in(\rho,\alpha)$
 such that we can take $x=\{\xi\}\cup\pi^{-1}(\{q,\pvec_n^N\})$.\qedhere
 \end{enumerate}
 \end{enumerate}
 \end{enumerate}
 
 Let also $\delta<\OR^M$ with $\rho_{m+1}^M\leq\delta<\rho_m^M$. Suppose that $M$
 is $\delta$-sound. We say that $M$ is \emph{$\delta$-condensation-standard}
 iff for all $\alpha\in p_{m+1}^M$
 with $\alpha\geq\delta$,
 letting $q=p_{m+1}^M\cut(\alpha+1)$,
  $H=\cHull_{m+1}^M(\alpha\cup\{q,\pvec_m^M\})$,
  $\pi:H\to M$ be the uncollapse,
  $C=\core_{m+1}(H)$ and $\rho=\rho_{m+1}^C=\rho_{m+1}^H$, then conclusions
  \ref{item:cond_for_C_in_param}--\ref{item:what_x_is_param}
  of part \ref{item:m-cond-stand_parameter} above hold,
  but with $n$ replaced by $m$ throughout.
\end{dfn}

The following fact is by \cite[***]{fsfni_v4}:
\begin{fact} Let $m<\om$. We have:
\begin{enumerate}[label=--]
 \item Every $(0,\om_1+1)$-iterable premouse is $0$-condensation-standard,
 \item Every $(m,\omega_1+1)$-iterable $(m+1)$-sound premouse is $(m+1)$-condensation-standard.
 \item Let $M$ be an  $(m,\om_1+1)$-iterable $m$-sound premouse.
 Let $\delta<\OR^M$ and suppose $\rho_{m+1}^M\leq\delta<\rho_m^M$ and $M$ is $\delta$-sound. Then $M$ is $\delta$-condensation-standard.\footnote{However,
 this doesn't quite give the full picture.
 We would also want to consider the case that $M$ is an $m$-maximal iterate of some $P$ which was $(m+1)$-sound. But the iteration map $j:P\to M$ might not preserve the $(m+1)$-solidity witnesses.
 So we could have a witness $W\in P$ for some $\alpha\in p_{m+1}^P$ such that $j(W)$ is beyond the corresponding witness $W'$ for $M$.
 Let $C\pins P$ be $C=\core_{m+1}(W)$.
 So we should get a proper segment $C'$ of $j(C)$ such that $W'$ is a finite iterate of $C'$.}
 \end{enumerate}
\end{fact}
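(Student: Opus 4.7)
The plan is to reduce all three parts of the Fact to a common template: given an $n$-lifting embedding $\pi:H\to N$, with $N\ins M$ at the relevant degree and $\rho=\rho_{n+1}^H\leq\crit(\pi)$, establish (i) that $H$ (resp.\ its $(n+1)$-core $C$) is a proper segment of $N$ or of $\Ult(N|\rho,F^{N|\rho})$, and (ii) in the non-$(n+1)$-sound case, that $H$ is realized as an above-$\rho$ $n$-maximal finite iterate of $C$. Part (i) is classical Mitchell--Steel condensation, and will be obtained by comparing $C$ (or $H$) with $N$ in the style of \cite{premouse_inheriting}: iterability of $N$ is inherited from $M$, and that of $C$ comes from lifting trees on $C$ through the core map $c:C\to H$, then through $\pi$ to $N$ and finally to $M$. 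Solidity and universality arguments force the comparison to be trivial on one side, and the projectum $\rho$ pins down the position of $C$ (resp.\ $H$) inside $N$ or $\Ult(N|\rho,F^{N|\rho})$.

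For (ii), the key observation is that the presentation $H=\Hull_{n+1}^H(\rho\cup\{x\})$ makes $H$ generated over $\rho$ by the finitely many coordinates of $x$. Letting $c:C\to H$ be the core map with $\crit(c)=\rho$, one decodes $c$ from its action on these generators: each generator of $H$ above $\rho$ corresponds, via a Dodd--Jensen style minimality argument, to a single normal extender applied on the $C$-side. Combined with the embedding of $C$ already obtained in (i), this yields a finite above-$\rho$ $n$-maximal iteration tree $\Tt$ on $C$ with $M^\Tt_\infty=H$ and $i^\Tt=c$. No limit branches arise (each step is a one-step ultrapower since the tree is finite), and iterability of $C$ guarantees wellfoundedness of the ultrapowers along $\Tt$; the branch choices at finite successor stages are uniquely dictated by the requirement $i^\Tt=c$.

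For condition~(2) in the definition of $m$-condensation-standard (the solidity-parameter case), I would apply the above template to the specific hull $H=\cHull_{n+1}^N(\alpha\cup\{q,\pvec_n^N\})$, where $q=p_{n+1}^N\cut(\alpha+1)$ and $\alpha\in p_{n+1}^N$. The classical solidity theorem for $p_{n+1}^N$ rules out $H\in N$ and produces $x\in H$ generating $H$ over $\rho$; typically one may take $x=\pi^{-1}(\{q,\pvec_n^N\})$, possibly augmented by an auxiliary ordinal $\xi\in(\rho,\alpha)$ when $\rho<\alpha=\rho^{+H}$. The dichotomy between the two subcases of item~\ref{item:what_x_is_param} reflects whether the universality witness is extracted at the projectum or one level up, and it is read off from whether $\crit(\pi)=\rho$ or $\crit(\pi)=\rho^{+H}>\rho$; cofinality of $\rg(\pi)$ in $\alpha$ (via the solidity witness) rules out all other possibilities.

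The main obstacle, as already flagged in the paper's footnote after the Fact, lies in the $\delta$-condensation-standard case and, more broadly, in controlling iterates $j:P\to M$ that may fail to preserve $(m+1)$-solidity witnesses. For each $\alpha\in p_{m+1}^M$ with $\alpha\geq\delta$, the $M$-witness $W'$ need not agree with $j(W)$, where $W\pins P$ is the $P$-witness corresponding to $j^{-1}(\alpha)$; instead one must locate $C'\pins j(C)$ (with $C=\core_{m+1}(W)$) of the appropriate projectum, and then exhibit $W'$ as a finite above-$\rho$ iterate of $C'$. Establishing the existence of such $C'$, and bounding the length of the tree producing $W'$ from it, is the most delicate step, and is precisely where the detailed fine-structural tracking of solidity witnesses along $m$-maximal trees above strong cutpoints developed in \cite{fsfni} is essential.
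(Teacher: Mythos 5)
The paper derives this Fact by citation to \cite{fsfni} (with the theorem number still a placeholder) and gives no in-paper proof, so I will assess your sketch on its own. The serious gap is in the ``finite iterate'' conclusion. You write that ``each generator of $H$ above $\rho$ corresponds, via a Dodd--Jensen style minimality argument, to a single normal extender applied on the $C$-side,'' but this is a gesture, not an argument: Dodd--Jensen governs minimality of iteration maps among sufficiently elementary embeddings into iterates, it does not by itself realize a given core map as the iteration map of a tree, and there is no off-the-shelf generator-to-extender correspondence to invoke. Worse, the remarks ``no limit branches arise\dots since the tree is finite'' and ``branch choices\dots are uniquely dictated by the requirement $i^\Tt=c$'' are circular---both already presuppose that $c$ is the iteration map of a finite $n$-maximal tree, which is exactly the thing to be shown. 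A genuine argument must produce the tree by some comparison and then bound its length, and the length bound---roughly, that each nontrivial step introduces a new generator into the finite parameter generating $H$ over $\rho$---is where the substance lies, particularly once superstrong extenders are allowed.

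A secondary issue: you propose a direct comparison of $C$ (or $H$) with $N$, getting iterability of $C$ by copying along $c$, then $\pi$, then into $M$. But a direct normal comparison of $C$ with $N$ will in general use critical points below $\rho$ on the $C$-side and need not yield $C\pins N$; the classical route runs a phalanx comparison, e.g.\ $(N,H)$ against $N$, and phalanx iterability does not reduce to normal iterability by copying along a chain of embeddings on single models. Obtaining all of this from normal $(m,\om_1+1)$-iterability alone is precisely the point of citing \cite{fsfni}, and your sketch does not say how that is achieved. Your remarks on the $\delta$-sound case and on the paper's footnote correctly diagnose a further subtlety but do not supply an argument for it.
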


\begin{dfn}
 Let $Y$ be a premouse
 and $\delta\in\OR^Y$ be a strong cutpoint of $Y$.
 Let $M\in Y$ with $\OR^M=\delta$.
 We say that $M$ is a \emph{P-base of $Y$
 (at $\delta$)}
 iff
 \begin{enumerate}[label=--]
  \item $M$ is a premouse satisfying ZFC
  and $\J(M)\sats$``$\delta$ is Woodin but not a limit of Woodins'',\footnote{The requirement that $\delta$ not be a limit of Woodins of $\J(M)$ could be relaxed, but
  it appears that in the vicinity
  of´ measurable Woodins there are further subtleties to handle.}
  \item $M\sub Y|\delta$ and $M$ is definable from parameters over $Y|\delta$,
  \item $Y|\delta$ is $(\J(M),\BB_{\delta}^Y)$-generic.
 \end{enumerate}
 Suppose $M$ is a P-base of $Y$ at $\delta$.
 The \emph{P-construction} $\mathscr{P}^Y(M)$ of $Y$ over $M$ is the unique premouse $P$ such that:
 \begin{enumerate}
  \item $M\ins P$,
  \item $\OR^P\leq\OR^Y$,
  \item $P\sats$``$\delta$ is Woodin'',
  \item for $\xi\in(\delta,\OR^P]$, we have
  $F^{P|\xi}=F^{Y|\xi}\rest(P||\xi)$,
  \item either $\OR^P=\OR^Y$ or $P$ projects ${<\delta}$ or $P$ is a Q-structure for $\delta$.\qedhere
 \end{enumerate}
\end{dfn}
The following lemma captures key properties of P-constructions. The basic idea of it was due to Steel, and versions of it have been mentioned or appeared in, for example, \cite{twms}, \cite{ScalesK(R)}, \cite{sile}, \cite{odle_v2}, \cite{mueller_sargsyan_2021}, \cite{vm2_v2}, \cite{vmom_v2}, \cite{gaps_as_derived_models_v2}. The version we present here includes aspects examining the precise Q-degree of the P-construction, which, as far as the author knows, have not appeared elsewhere, though they use an argument similar to that in the proof of
\cite[Lemma 9, Theorem 10]{conjecture_mouse_order_for_weasels}.
\begin{lem}\label{lem:P-con_basic_props}
There is a recursive sequence $\left<\psi_k,\varrho_k\right>_{k<\om}$ of formulas $\psi_k,\varrho_k$ such that the following holds.
 Let $y<\om$ and let $Y$ be a $y$-condensation-standard $y$-sound premouse.
 Let $M$ be a P-base of $Y$ at $\delta\in\OR^Y$. Let $G$ be the $\BB_\delta^{\J(M)}$-generic
 determined by $Y|\delta$. Suppose that
 for all $(Y',y')\ins(Y,y)$ with $\delta<\OR^{Y'}$,
 \[\text{if }\big[P'=\mathscr{P}^{Y'}(M)\text{ exists and is }y''\text{-sound for all }y''<y'\text{ and }\rho_{y'}^{Y'}\leq\delta\big]\text{ then }\big[\delta\leq\rho_{y'}^{P'}\text{ and }\delta\text{ is }\bfrSigma_{y'}^{P'}\text{-Woodin.}\big]\]
Then:
 \begin{enumerate}\item\label{item:delta_reg_in_Y} $\delta$ is a regular cardinal of $Y$ and $\delta\leq\rho_y^Y$,\item\label{item:P-con_exists} $P=\mathscr{P}^Y(M)$ exists and $\OR^P=\OR^Y$
 \tu{(}and $P$ is unique\tu{)}. 
  \item\label{item:unif_def_of_P-con} For all $\xi\in(\delta,OR^P]$,
  we have:
  \begin{enumerate}[label=--]\item $P||\xi\sub Y||\xi$
  \item $P||\xi$ is $\Delta_1^{Y||\xi}(\{M\})$, uniformly in $\xi$,
  \item $P|\xi$ is $\Delta_1^{Y|\xi}(\{M\})$,
  uniformly in $\xi$,
  \end{enumerate}
 
\item\label{item:F^P|xi_and_F^Y|xi} Let $\xi\in(\delta,\OR^P]$ be such that $Y|\xi$ is active. Let $\kappa=\crit(F^{Y|\xi})=\crit(F^{P|\xi})$. Then
$F^{P|\xi}$ (which is a premouse extender) has the same generators as has $F^{Y|\xi}$. Moreover, for all $a\in[\lh(F^{Y|\xi})]^{<\om}$ and all $f\in Y|\xi$
with $f:[\kappa]^{|a|}\to P|\xi$
there is $g\in P|\xi$
such that
\[ \{u\in[\kappa]^{|a|}\bigm|f(u)=g(u)\}\in (F^{Y|\xi})_a.\]

\item\label{item:P|xi[Y|delta]} For all $\xi\in(\delta,\OR^P]$,
$Y|\delta$ is $(P|\delta,\BB_{\delta}^{P|\xi})$-generic,
and the generic extension $(P|\xi)[Y|\delta]$, considered as a premouse $N$ in the natural way, is just $Y|\xi$. Here the ``natural way'' means that $\es^{N}$
is computed by setting $N|\delta=Y|\delta$,
and for $\beta\in(\delta,\xi]$,
$F^{N|\beta}$ is just the
 canonical extension of $F^{P|\beta}$  to the generic extension; that is, for all $x\in[\lh(F^{P|\beta})]^{<\om}$, $(F^{P|\beta})_a$
is a dense subset of $(F^{N|\beta})_a$
with respect to $\sub$.
 \item\label{item:fs_matchup_proper_segs}
 for all $(\xi,n)\leq(\OR^P,y)$
 with $\xi>\delta$, we have:
 \begin{enumerate}
  \item\label{item:inductive_P|xi_fs} $P|\xi$ is $n$-sound with $\delta\leq\rho_n^{P|\xi}=\rho_{n}^{Y|\xi}$ and $p_n^{P|\xi}=p_n^{Y|\xi}$.
 \item\label{item:inductive_P|xi_when_delta_not_in_p} If $n\neq 1$ or $P|\xi$ is active or $\delta$ is not the largest cardinal of $P|\xi$ then $\delta\notin p_n^{P|\xi}$.
 \item\label{item:inductive_P|xi_when_delta_in_p} Suppose $n=1$  and $P|\xi$ is passive with largest cardinal $\delta$. Then $p_1^{P|\xi}=\{\gamma\}=p_1^{Y|\xi}$ for some $\gamma\geq\delta$ 
 \tu{(}possibly $\gamma=\delta$\tu{)}.
\item\label{item:rSigma_n+1_sat_of_P_def} the $\rSigma_{n+1}$ satisfaction relation $P|\xi\sats_{n+1}$ for $P|\xi$ is $\rSigma_{n+1}^{Y|\xi}(\{M\})$; in fact, it is defined by $\varrho_{n+1}(M,\cdot)$ over $Y|\xi$.
  \item\label{item:rSigma_n+1_forcing_rel_def} there is an $\rSigma_{n+1}^{P|\xi}(\{\delta\})$ relation $\forces^{\mathrm{strong},\xi}_{n+1}$
  \tu{(}the \emph{strong $\rSigma_{n+1}$ forcing relation}\tu{)} such that for all $\rSigma_{n+1}$ formulas $\varphi(v_0,\ldots,v_{k-1})$ and all $\BB_\delta$-names $\tau_0,\ldots,\tau_{k-1}\in P|\xi$, 
  \[ Y|\xi\sats\varphi(\tau_{0G},\ldots,\tau_{k-1,G}) \iff\exists p\in G\ \Big[p\forces^{\mathrm{strong},\xi}_{n+1}\varphi(\tau_0,\ldots,\tau_{k-1})\Big];\]
  in fact, $\forces^{\mathrm{strong},\xi}_{n+1}$
  is defined by $\psi_{n+1}(\delta,\cdot)$ over $P|\xi$,
  
  \item\label{item:Hull_matchup} for all $X\sub\xi$, we have
  \[ \xi\cap\Hull_{n+1}^{P|\xi}((\delta+1)\cup X)=\xi\cap\Hull_{n+1}^{Y|\xi}((\delta+1)\cup X),\]
  and if $n>0$ then $\delta\in\Hull_{2}^{P|\xi}(\delta)$ and so
  \[ \delta\in\xi\cap\Hull_{n+1}^{P|\xi}(\delta\cup X)=\xi\cap\Hull_{n+1}^{Y|\xi}((\delta+1)\cup X).\]
  \end{enumerate}
  \item\label{item:rho_y+1,p_y+1^P} Suppose $Y$ is $\delta$-condensation-standard $\delta$-sound with $\rho_{y+1}^Y\leq\delta$. Then:
  \begin{enumerate}\item $P$ is $\delta$-sound with  $\rho_{y+1}^P\leq\delta$
  and $p_{y+1}^P\cut\delta=p_{y+1}^Y\cut\delta$.
  \item If $y\neq 0$ or $P$ is active or $\delta$ is not the largest cardinal of $P$ then $\delta\notin p_{y+1}^{P}$.
  \item Suppose $y=0$ and $P$ is passive with largest cardinal $\delta$. Then $p_1^{P}\cut\delta=\{\gamma\}=p_1^Y\cut\delta$ for some $\gamma\geq\delta$ \tu{(}possibly $\gamma=\delta$\tu{)}.
\end{enumerate}

\item\label{item:approximate_functions_degree_n} Let $\zeta\in(\delta,\OR^P]$
be such that $Y|\zeta$ is active.
Let $\kappa=\crit(F^{Y|\zeta})$, so $\delta<\kappa$. Let $(\zeta,0)\leq(\xi,n)\leq(\OR^P,y)$
with $\kappa^{+Y|\xi}=\kappa^{+Y|\zeta}$
and $\kappa<\rho_n^{Y|\xi}$
\tu{(}so $P|\xi$ is $n$-sound and $\kappa<\rho_n^{P|\xi}=\rho_n^{Y|\xi}$\tu{)}.
Let $a\in[\lh(F^{Y|\zeta})]^{<\om}$
and $f:[\kappa]^{|a|}\to P|\xi$
with $f$ being $\bfrSigma_n^{Y|\xi}$-definable.
Then
there is $X\in P|\xi$
with $X\in(F^{P|\xi})_a$
such that $f\rest X$ is $\bfrSigma_n^{P|\xi}$-definable.
\end{enumerate}
 \end{lem}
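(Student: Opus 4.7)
The plan is to prove all the parts simultaneously by induction on pairs $(\xi,n)\leq(\OR^P,y)$ in the lexicographic order, with $\xi$ ranging over ordinals in $(\delta,\OR^Y]$. The inductive hypothesis will package parts \ref{item:unif_def_of_P-con}--\ref{item:Hull_matchup} at all earlier pairs, and at each stage $(\xi,n)$ I need to (a) confirm that $P|\xi$ is defined as a premouse with the right agreement with $Y|\xi$, (b) define $\forces^{\mathrm{strong},\xi}_{n+1}$ by a recursive formula $\psi_{n+1}(\delta,\cdot)$ and verify it computes truth of $\rSigma_{n+1}$ statements in the generic extension $Y|\xi=(P|\xi)[Y|\delta]$, and (c) pull this correctness back to the Hull, projectum, and parameter equalities.

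I would first dispatch parts \ref{item:delta_reg_in_Y} and \ref{item:P-con_exists} as preliminaries. Regularity of $\delta$ in $Y$ is inherited from $\J(M)$ via the $\delta$-cc of $\BB_\delta^{\J(M)}$ (available because $\J(M)\sats$``$\delta$ is Woodin''). That $\delta\leq\rho_y^Y$ follows because otherwise a counterexample $\bfrSigma_y^Y$-definable cofinal map into $\delta$ would, via the inductive hypothesis applied to the proper segment $(Y,y)$, force some putative $(Y',y')$ with $P'=\mathscr{P}^{Y'}(M)$ existing and $\rho_{y'}^{Y'}\leq\delta$, and then the standing hypothesis forbids $\delta$ from being $\bfrSigma_{y'}^{P'}$-singular while remaining $\bfrSigma_{y'}^{P'}$-Woodin. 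Existence of $P$ with $\OR^P=\OR^Y$ then follows by running the construction one level at a time: premature termination would mean some $P|\xi$ projects strictly below $\delta$ or is a Q-structure for $\delta$, and the definability of $P|\xi$ from $M$ over $Y|\xi$ (from the inductive step) would transfer this back to $Y$, again contradicting the Woodinness assumption on proper segments.

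The technical heart is the joint construction of $P|\xi$ and $\forces^{\mathrm{strong},\xi}_{n+1}$. At passive successor stages, $P||(\xi+1)$ is given by rudimentary closure of $P|\xi$, and the forcing relation extends by the usual recursion on formula complexity; the ground-model side $Y|\xi$ is the Boolean-valued extension, so the correctness clause (item \ref{item:rSigma_n+1_forcing_rel_def}) and the $\rSigma_{n+1}$ satisfaction clause (item \ref{item:rSigma_n+1_sat_of_P_def}) are the standard forcing facts. At active stages I define $F^{P|\xi}$ by restricting $F^{Y|\xi}$ to $P||\xi$, and part \ref{item:F^P|xi_and_F^Y|xi} is verified by showing that $P|\xi$-functions are dense among $Y|\xi$-functions on $[\kappa]^{|a|}$ modulo $(F^{Y|\xi})_a$, using that the generic was taken at $\delta<\kappa$ and the $\delta$-cc of $\BB_\delta^{P|\xi}$. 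The Hull equality of part \ref{item:Hull_matchup} then falls out of the correctness of $\forces^{\mathrm{strong},\xi}_{n+1}$ together with the observation that $Y|\delta$ is coded by a single parameter below $\delta^+$ from the perspective of $P|\xi$. Parts \ref{item:inductive_P|xi_fs}--\ref{item:inductive_P|xi_when_delta_in_p} about $p_n^{P|\xi}$ then read off the hull equality, with the distinctive case ($n=1$, $P|\xi$ passive, $\lgcd(P|\xi)=\delta$) treated by Lemma~\ref{lem:p_1_for_passive}.

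For part \ref{item:rho_y+1,p_y+1^P} one simply feeds the $\delta$-condensation-standardness and $\delta$-soundness of $Y$ through the $(\OR^P,y)$ case of \ref{item:Hull_matchup} applied to hulls containing parameters above $\delta$. Part \ref{item:approximate_functions_degree_n} is the density statement at degree $n$: given an $\bfrSigma_n^{Y|\xi}$ function $f$, use its $\rSigma_n$ definition and the strong forcing relation $\forces^{\mathrm{strong},\xi}_n$ to produce a $\BB_\delta$-name in $P|\xi$ whose generic interpretation is $f$, then use $\delta$-cc and $\kappa>\delta$ to extract $X\in(F^{P|\xi})_a$ on which $f\rest X$ admits a $\bfrSigma_n^{P|\xi}$ definition. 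The chief technical obstacle throughout is maintaining the \emph{uniformity} of the formulas $\psi_k,\varrho_k$ across the passive/active transitions and across the boundary case of \ref{item:inductive_P|xi_when_delta_in_p}; handling this requires writing the recursion carefully enough that the same formula works when $\delta\in p_1^{P|\xi}$ as when it does not, and verifying that the condensation-standardness of $Y$ supplies the solidity witnesses needed for the parameter equalities on both sides.
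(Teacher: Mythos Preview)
Your inductive scheme matches the paper's, but there is a real gap in how you obtain $\rho_n^{P|\xi}=\rho_n^{Y|\xi}$ for $n\geq 1$. You say parts \ref{item:inductive_P|xi_fs}--\ref{item:inductive_P|xi_when_delta_in_p} ``read off the hull equality'', but part \ref{item:Hull_matchup} at the preceding level only yields $P|\xi=\Hull_n^{P|\xi}(\rho_n^{Y|\xi}\cup\{\delta,p_n^{Y|\xi}\})$, hence $\rho_n^{P|\xi}\leq\max(\delta,\rho_n^{Y|\xi})$. The reverse inequality is not a consequence of this: nothing yet rules out a new $\bfrSigma_n^{P|\xi}$ subset of some $\rho<\rho_n^{Y|\xi}$ missing from $P|\xi$. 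The paper splits into cases. When $\rho_n^{Y|\xi}=\delta$, the lemma's standing hypothesis (applied with $(Y',y')=(Y|\xi,n)$) directly gives $\rho_n^{P|\xi}\geq\delta$. When $\delta<\rho_n^{Y|\xi}$, suppose toward contradiction that $\rho_n^{P|\xi}<\rho_n^{Y|\xi}$, set $\rho=\max(\delta,\rho_n^{P|\xi})$, and let $\bar{P}=\cHull_n^{P|\xi}(\rho\cup\{p_n^{P|\xi}\})$ with uncollapse $\pi$. After checking $\delta\in\rg(\pi)$, the hull equality identifies $\OR\cap\rg(\pi)$ with $\OR\cap\rg(\pi^+)$, where $\pi^+:\bar{Y}\to Y|\xi$ uncollapses $\bar{Y}=\cHull_n^{Y|\xi}((\delta+1)\cup\{p_n^{P|\xi}\})$. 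Now condensation-standardness of $Y|\xi$ is applied: $\core_n(\bar{Y})=C\pins Y|\xi$ and $\bar{Y}$ is a finite above-$\delta$ iterate of $C$; translating this finite tree through the P-construction (using the inductive hypotheses at $C$) shows $\bar{P}=\mathscr{P}^{\bar{Y}}(M)$ is a finite iterate of $\mathscr{P}^C(M)\pins P|\xi$, so $\bar{P}\in P|\xi$, a contradiction. This hull-match/condense/tree-translate mechanism is also what establishes $p_n^{P|\xi}=p_n^{Y|\xi}$ and $n$-solidity of $P|\xi$; your brief mention of ``solidity witnesses'' at the end does not capture it.

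A related gap: when $\rho_n^{P|\xi}=\delta$, your claim that the strong $\rSigma_{n+1}$ forcing relation ``extends by the usual recursion'' skips a real difficulty. The canonical $\BB_\delta$-name for $\Th_{\rSigma_n}^{Y|\xi}(\alpha\cup\{q\})$ need not a priori be an element of $P|\xi$. The paper handles this case separately, showing (via the $\bfrSigma_n^{P|\xi}$-Woodinness of $\delta$ and the resulting $\delta$-cc for $\bfrSigma_n^{P|\xi}$-definable antichains) that each Boolean value $\|\varphi(\tau)\|$ for $\rSigma_n$ $\varphi$ lies in $\BB_\delta$, and that the map $(\tau,\gamma)\mapsto t_{\tau,\gamma}$ tabulating these values is itself $\rSigma_{n+1}^{P|\xi}(\{\delta\})$; this is needed for $\psi_{n+1}$ to be uniform across the cases $\delta<\rho_n$ and $\delta=\rho_n$.
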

\begin{proof}[Proof sketch]
The formula $\psi_n$ will be used to define the strong $\rSigma_n$ forcing relation for forcing with $\BB_\delta$ over the models $P|\xi$
(where $\xi>\delta$, from the parameter $\delta$), and the formula $\varrho_n$ to define the $\rSigma_n$ satisfaction relation $P|\xi\sats$
for $P|\xi$ over $Y|\xi$ (where $\xi>\delta$, and from parameter $M$). The explicit formulation of these formulas will be left to the reader, but it comes out of the uniformity of the proof.

 The proof of the conditions claimed to hold
 regarding ordinals $\xi$ are proved by induction on $\xi$, with a sub-induction on $n<\om$.
 Parts \ref{item:delta_reg_in_Y} and \ref{item:P-con_exists} will then follow at the end (note that the existence  of $\mathscr{P}^Y(M)$ is certainly non-trivial, because part of it is that $P$ is in fact a premouse,
 which of course makes significant demands on its extender sequence, and all of its proper segments must be fully sound, etc);
 but we also maintain by induction on $\xi$
 that $P|\xi$ is a premouse and that $\delta$ is a regular cardinal in $Y|\xi$.
  Part \ref{item:unif_def_of_P-con} is easy.
 Parts \ref{item:F^P|xi_and_F^Y|xi} 
 and \ref{item:P|xi[Y|delta]} are then straightforward to verify,
 using the inductive hypotheses,
 and the fact that $Y|\delta$ is $(P|\xi,\BB_\delta^{P|\xi})$-generic
 (this is because $M$ is a $P$-base for $Y$ at $\delta$ and because there is no $\gamma\in[\delta,\xi)$
 such that $\rho_\om^{P|\gamma}<\delta$,
 and  $P|\xi\sats$``$\delta$ is Woodin''). 
 
 Part \ref{item:fs_matchup_proper_segs}: Let $\delta<\xi\leq\OR^P$;
 note that $\delta<\rho_0^{P|\xi}=\rho_0^{N|\xi}$ (using part \ref{item:F^P|xi_and_F^Y|xi}).  Parts \ref{item:inductive_P|xi_fs},
 \ref{item:inductive_P|xi_when_delta_not_in_p},
 \ref{item:inductive_P|xi_when_delta_in_p} for $n=0$ are trivial, and part \ref{item:rSigma_n+1_sat_of_P_def} for $n=0$
 is an easy consequence of part \ref{item:unif_def_of_P-con}.
 Part \ref{item:rSigma_n+1_forcing_rel_def} for $n=0$:
 first compute (using the inductive hypotheses) that the $\rSigma_0$-forcing relation $\forces_0^{\xi}$ of $\BB_\delta$ (for forcing over $P|\xi$) is $\rDelta_1^{P|\xi}(\{\delta\})$,
 and that the $\rSigma_0$ forcing theorem holds with respect to the extension $Y|\xi = (P|\xi)[G]$.
 Infer that the strong $\rSigma_1$ forcing relation  $\forces^{\mathrm{strong},\xi}_{1}$ is $\rSigma_1^{P|\xi}(\{\delta\})$ (this is the relation of tuples $(p,\varphi(\vec{v}),\vec{\tau})$, where
 $p\in\BB_\delta$, $\varphi(\vec{v})=\varphi(v_0,\ldots,v_{k-1})$ is an $\rSigma_1$ formula  of form ``$\exists y\ \psi(y,v_0,\ldots,v_{k-1})$'',
 where $\psi$ is $\rSigma_0$, and $\tau_0,\ldots,\tau_{k-1}\in P|\xi$ are $\BB_\delta$-names,
 which asserts   ``there is some $\BB_\delta$-name $\sigma\in P|\xi$
 such that $p\forces_0^\xi\psi(\sigma,\tau_0,\ldots,\tau_{k-1})$''). Since $Y|\xi=(P|\xi)[G]$, this yields part \ref{item:rSigma_n+1_forcing_rel_def}.
Part \ref{item:Hull_matchup} for $n=0$:
This follows readily  from parts \ref{item:rSigma_n+1_forcing_rel_def}
and \ref{item:rSigma_n+1_sat_of_P_def}
(note that since $M$ is definable over $Y|\delta$ from parameters, we have $M\in\Hull_1^{Y|\xi}(\delta+1)$).

Now suppose $n=1$;
we first aim to establish parts \ref{item:inductive_P|xi_fs},
\ref{item:inductive_P|xi_when_delta_not_in_p}, \ref{item:inductive_P|xi_when_delta_in_p}
in this case. This will be achieved by Claims \ref{clm:rho_1_p_1_matchup_if_delta<rho_1}
and \ref{clm:p_matchup_when_rho_1<=delta} below:

\begin{clm}\label{clm:rho_1_p_1_matchup_if_delta<rho_1}
If $\delta<\rho_1^{Y|\xi}$
then:
\begin{enumerate}\item\label{item:rho_1^P|xi=rho_1^Y|xi} $\rho_1^{P|\xi}=\rho_1^{Y|\xi}$,\item\label{item:p_1^P|xi=p_1^Y|xi}$p_1^{P|\xi}=p_1^{Y|\xi}$,
 \item\label{item:P|xi_is_1-sound} $P|\xi$ is $1$-sound (in particular, $1$-solid).
\end{enumerate}
\end{clm}
\begin{proof}
Suppose $\delta<\rho_1^{Y|\xi}$.

Part \ref{item:rho_1^P|xi=rho_1^Y|xi}:
By part \ref{item:Hull_matchup}
and since $Y|\xi$ is $1$-sound, we get
\begin{equation}\label{eqn:P|xi_is_Hull_of_rho_1_etc} P|\xi=\Hull_1^{P|\xi}(\rho_1^{Y|\xi}\cup\{p_1^{Y|\xi}\}),\end{equation}
and therefore $\rho_1^{P|\xi}\leq\rho_1^{Y|\xi}$.
So suppose $\rho_1^{P|\xi}<\rho_1^{Y|\xi}$ and let $\rho=\max(\rho_1^{P|\xi},\delta)$.  Let $\bar{P}=\cHull_1^{P|\xi}(\rho\cup\{p_1^{P|\xi}\})$ and $\pi:\bar{P}\to P|\xi$ the uncollapse.

We have $\delta\in\rg(\pi)$.
To see this, it is enough to see that there is some $\gamma\geq\delta$ with $\gamma\in\rg(\pi)$, since then if $\gamma>\delta$ then there is $\eta<\delta$
such that $\delta$ is the least Woodin of $P|\gamma$ which is $>\eta$, putting $\delta\in\rg(\pi)$. So suppose $\rg(\pi)=\delta$. So $p_1^{P|\xi}\sub\delta$. Since $\delta$ is a (in fact strong) cutpoint of $P|\xi$,
it clearly follows that $P|\xi$ is passive.
But then since $\rg(\pi)=\delta$,
we have $P|\delta\preccurlyeq_{1} P|\xi$.
But then $\Th_{\rSigma_1}^{P|\xi}(\rho_1^{P|\xi}\cup\{p_1^{P|\xi}\})=\Th_{\rSigma_1}^{P|\delta}(\rho_1^{P|\xi}\cup\{p_1^{P|\xi}\})$,
so this theory is in $P|\xi$, a contradiction.

So by part \ref{item:Hull_matchup} at $n=0$,
\[ \xi\cap\rg(\pi)=\xi\cap\Hull_{1}^{Y|\xi}((\delta+1)\cup p_1^{P|\xi}).\]
Let $\bar{Y}=\cHull_1^{Y|\xi}((\delta+1)\cup p_1^{P|\xi})$ and $\pi^+:\bar{Y}\to Y$
the uncollapse map. So $\xi\cap\rg(\pi)=\xi\cap\rg(\pi^+)$,
and so in fact $\rg(\pi)=(P|\xi)\cap\rg(\pi^+)$.
Since $\delta$ is Woodin in $P|\xi$
and by genericity, $\delta$ is a cardinal in $Y|\xi$. And $\delta<\rho_1^{Y|\xi}$.
So $\rho_1^{\bar{Y}}=\delta$
and $\bar{Y}=\Hull_1^{\bar{Y}}(\delta\cup\{x\})$ for some $x\in\bar{Y}$.
And $\sigma$ is $0$-lifting (in fact a near $0$-embedding). So because $Y|\xi$
is $1$-condensation-standard (recall $\xi<\OR^P\leq\OR^Y$), letting $C=\core_1(\bar{Y})$, we get that $\bar{Y}$
is an iterate of $C$, via an above-$\delta$ $0$-maximal tree $\Tt^+$ on $C$,
and that $C\pins Y|\xi$.
Let $D=\mathscr{P}^C(M)$;
so $D\pins P|\xi$.
By induction, $\Tt^+$ translates
to a tree $\Tt$ on $D$,
which is also above-$\delta$ $0$-maximal,
and $M^{\Tt}_\infty=\mathscr{P}^{M^{\Tt^+}_\infty}(M)=\mathscr{P}^{\bar{Y}}(M)$.
But $\mathscr{P}^{\bar{Y}}(M)=\bar{P}$,
by the internal definability of P-construction and that $\rg(\pi)=(P|\xi)\cap\rg(\pi^+)$.
So $\bar{P}=M^\Tt_\infty$.
But since $D\in P|\xi$
and $\Tt$ is finite, therefore $\bar{P}\in P|\xi$, a contradiction.

Part \ref{item:p_1^P|xi=p_1^Y|xi}: By part \ref{item:rho_1^P|xi=rho_1^Y|xi} and line (\ref{eqn:P|xi_is_Hull_of_rho_1_etc}),
we have $p_1^{P|\xi}\leq p_1^{Y|\xi}$.
But if $p_1^{P|\xi}<p_1^{Y|\xi}$, then we can argue much as before for a contradiction,
 again using the fact that $Y|\xi$
is $1$-condensation-standard
(but this time using the clause regarding  $p_1^{Y|\xi}$).

Part \ref{item:P|xi_is_1-sound}: Since $p_1^{P|\xi}=p_1^{Y|\xi}$,
the fact that $P|\xi$ is $1$-solid
is proved using the proof of part  \ref{item:p_1^P|xi=p_1^Y|xi}. And
 by line (\ref{eqn:P|xi_is_Hull_of_rho_1_etc}) (and by parts \ref{item:rho_1^P|xi=rho_1^Y|xi}
 and \ref{item:p_1^P|xi=p_1^Y|xi}), it follows that $P|\xi$ is $1$-sound.
\end{proof}

\begin{clm}\label{clm:p_matchup_when_rho_1<=delta}
 If $\rho_1^{Y|\xi}=\delta$ then:
 \begin{enumerate}
  \item\label{item:rho_match_when_rho_1^Y|xi<=delta} $\rho_1^{P|\xi}=\delta$,
  \item\label{item:delta_in_p_1_iff_delta=max(p_1)} If $P|\xi$ is active or $\delta$ is not the largest cardinal of $P|\xi$ then $\delta\notin p_1^{P|\xi}$,
  \item\label{item:if_P|xi_passive_with_lgcd=delta_p_1} If $P|\xi$ is passive and $\delta$ is the largest cardinal of $P|\xi$,
  then $p_1^{P|\xi}=\{\gamma\}$ for some $\gamma\geq\delta$,
  \item\label{item:rho_1=delta_P|xi_1-solid}$P|\xi$ is $1$-sound (in particular, $P|\xi$ is $1$-solid)
  with $p_1^{P|\xi}=p_1^{Y|\xi}$.
 \end{enumerate}
\end{clm}
\begin{proof}
Part \ref{item:rho_match_when_rho_1^Y|xi<=delta}:
 $\rho_1^{P|\xi}\leq\delta$
 like in the proof of Claim \ref{clm:rho_1_p_1_matchup_if_delta<rho_1},
and $\delta\leq\rho_1^{P|\xi}$ by hypothesis (of the lemma).

Part \ref{item:delta_in_p_1_iff_delta=max(p_1)}:
If $P|\xi$ is active then $\crit(F^{P|\xi})\in\Hull_1^{P|\xi}(\emptyset)$, and $\delta<\kappa$ as $\delta$ is a strong cutpoint of $Y$;
since $\delta$ is a successor Woodin of $P|\xi$, therefore $\delta\in\Hull_1^{P|\xi}(\delta)=\Hull_1^{P|\xi}(\rho_1^{P|\xi})$,
so $\delta\notin p_1^{P|\xi}$. If $P|\xi$ is passive then it has a largest cardinal $\theta\geq\delta$, and $p_1^{P|\xi}\not\sub\theta$, by Lemma \ref{lem:p_1_for_passive}. But then letting $\gamma=\max(p_1^{P|\xi})$, like before,
we have $\delta\in\Hull_1^{P|\xi}(\delta\cup\{\gamma\})$.

Part \ref{item:if_P|xi_passive_with_lgcd=delta_p_1} is by Lemma \ref{lem:p_1_for_passive}.

Part \ref{item:rho_1=delta_P|xi_1-solid}:
We have \begin{equation}\label{eqn:delta_in_Hull_1^P|xi(delta_cup_p_1)}\delta\in\Hull_1^{P|\xi}(\delta\cup\{p_1^{Y|\xi}\})\end{equation} (note we refer here to $p_1^{Y|\xi}$, not $p_1^{P|\xi}$).
For otherwise
 $P|\xi$ is passive, and  by induction, $P|\xi$ and $Y|\xi$ have the same cardinals $\geq\delta$, but then $p_1^{Y|\xi}\cut\delta\neq\emptyset$,
 and so since $\delta$ is a successor Woodin of $P|\xi$, we get line (\ref{eqn:delta_in_Hull_1^P|xi(delta_cup_p_1)}).
 Like in the proof of Claim \ref{clm:rho_1_p_1_matchup_if_delta<rho_1}, it follows that
 \[ P|\xi=\Hull_1^{P|\xi}(\delta\cup\{p_1^{Y|\xi}\}). \] So $p_1^{P|\xi}\leq p_1^{Y|\xi}$ and
  it suffices to see that
 $p_1^{Y|\xi}$ is $1$-solid for $P|\xi$.
If $\delta\notin p_1^{Y|\xi}$, this is also like in the proof of Claim \ref{clm:rho_1_p_1_matchup_if_delta<rho_1}. So suppose $\delta\in p_1^{Y|\xi}$.
Then analogously to the proofs of
parts \ref{item:delta_in_p_1_iff_delta=max(p_1)}
and \ref{item:if_P|xi_passive_with_lgcd=delta_p_1},
$Y|\xi$ is passive with largest cardinal $\delta$
and $p_1^{Y|\xi}=\{\delta\}$,
and so we only need to see that $\cHull_1^{P|\xi}(\delta)\in P|\xi$. But $\cHull_1^{P|\xi}(\delta)=P|\delta$, by condensation as in the proof of Lemma \ref{lem:p_1_for_passive}, which suffices.
\end{proof}

Recall that an $\rSigma_2$ formula $\varphi(\vec{v})$ (in free variables $\vec{v}$) has the form
\[ \varphi(\vec{v})\iff \exists q,\alpha,t\ \Big[T_1(q,\alpha,t)\wedge\psi(q,\alpha,t,\vec{v})\Big]\]
where $\psi$ is $\rSigma_1$,
and $T_1(q,\alpha,t)$ asserts that $\alpha<\rho_1$ and $t=\Th_{\rSigma_1}(\alpha\cup\{q\})$.
Since $\rho_1^{P|\xi}=\rho_1^{Y|\xi}$,
the variable $\alpha$
then automatically ranges
over the same ordinals,
whether we are interpreting
$\rSigma_2$ formulas over $P|\xi$
or over $Y|\xi$.

\begin{clm}
Part \ref{item:rSigma_n+1_sat_of_P_def} 
holds for $\xi$ and $n=1$.\end{clm}
\begin{proof} Since the $\rSigma_1$ satisfaction relation $P|\xi\sats$ is $\rSigma_1^{Y|\xi}(\{M\})$,
and since $P|\xi$ is $\rDelta_1^{Y|\xi}(\{M\})$, given $\alpha<\rho_1^{P|\xi}$
and $q\in P|\xi$,
we can easily recover $\Th_{\rSigma_1}^{P|\xi}(\alpha\cup\{q\})$ from $\Th_{\rSigma_1}^{Y|\xi}(\alpha\cup\{q,M\})$. It easily follows
that the $\rSigma_2$ satisfaction relation of $P|\xi$ is definable over $Y|\xi$ in the desired manner.\end{proof}

\begin{clm}
 Part \ref{item:rSigma_n+1_forcing_rel_def}
 holds for $\xi$ and $n=1$.
\end{clm}
\begin{proof} Recall that $\delta\leq\rho_1^{P|\xi}=\rho_1^{Y|\xi}$. We consider two cases:
\begin{case}\label{case:delta<rho_1_def_strong_rSigma_2_forcing_rel} $\delta<\rho_1^{P|\xi}=\rho_1^{Y|\xi}$.

Define the \emph{strong $\rSigma_2$ forcing relation} $p\forces^{\mathrm{strong},\xi}_2\varphi(\sigma)$ of $P|\xi$, where $p\in\BB_\delta$,
$\varphi(v)$ is  an $\rSigma_2$ formula of
form
\[ \varphi(v)\iff\exists q,\alpha,t \Big[T_1(q,\alpha,t)\wedge\psi(q,\alpha,t,v)\Big]\]
where $\psi$ is $\rSigma_1$,
and $\sigma$ is a $\BB_\delta$-name
in $P|\xi$,
to assert ``there are $r,\beta,u\ [T_1(r,\beta,u)$ and $r$ has form $(\dot{q},\delta)$ for some $\BB_\delta$-name $\dot{q}$,
 and letting $\dot{t}$ be the canonical
 name for the $\rSigma_1$-theory of the parameters $\check{\beta}\cup\{\dot{q}\}$
 of the $\BB_\delta$-generic extension
 determined by $u$, then $p\forces^\xi_{1}\psi(\dot{q},\check{\beta},\dot{t},\sigma)$''. Here the \emph{canonical name}
 $\dot{t}$ is just the natural name for
 the set of all pairs $(\varrho,\vec{x})$ with $\varrho$
 an $\rSigma_1$ formula and
 $\vec{x}\in(\check{\beta}\cup\{\dot{q}\})^{<\om}$, such that there is $s\in\dot{G}$ such that the formula ``$s\forces_1\varrho(\vec{x})$'' is in $\check{u}$. By induction,
 it is straightforward to see  this works.
\end{case}

\begin{case}$\delta=\rho_1^{P|\xi}=\rho_1^{Y|\xi}$.

In this case we argue much as in the proof of \cite[Lemma 9, Theorem 10]{conjecture_mouse_order_for_weasels}.
 
 \begin{sclm}\label{sclm:Boolean_value_exists}
 Let $\tau\in P|\xi$ be a $\BB_\delta$-name and let $\varphi$ be an $\rSigma_1$ formula. Then the Boolean value $\|\varphi(\tau)\|$ is an element of $\BB_\delta^{P|\xi}$.\end{sclm}

 That is, there is $p\in \BB_\delta$ such that for every $q\leq p$ there is $r\leq q$ such that $r\forces^{\mathrm{strong}}_1\varphi(\tau)$,
 and there is no $q\leq 1-p$ such that $q\forces^{\mathrm{strong}}_1\varphi(\tau)$. Clearly this $p$ is uniquely determined, and we define $\|\varphi(\tau)\|=p$.
 \begin{proof}
 Let $\rho=\rho_0^{P|\xi}$. (Recall
 that if $P|\xi$, or equivalently $Y|\xi$,
 is type 3, then $\rho_0^{P|\xi}<\xi$,
 but $\delta<\xi$ also in this case, since $\delta$ is a strong cutpoint of $Y|\xi$.)
 
 We define an ordinal $\lambda\leq\delta$ and sequences $\left<p_\alpha\right>_{\alpha<\lambda}\sub\BB_\delta$ and $\left<\eta_\alpha\right>_{\alpha<\lambda}\sub\rho$, with the sequences specified
recursively as follows. Set $p_0=0$ and $\eta_0=0$.
Suppose we have $\left<p_\alpha,\eta_\alpha\right>_{\alpha<\gamma}$ where $0<\gamma\leq\delta$. If $\gamma=\delta$ we stop the construction (and set $\lambda=\delta$). Otherwise let $\eta=\sup_{\alpha<\gamma}\eta_\alpha$.
If $\eta<\rho$ and there is $\eta'\in(\eta,\rho)$
such that\footnote{The notation $\wr$ is discussed in Definition \ref{dfn:unif_cofinal} *** (should move this back).  It is the usual stratification of $\rSigma_1^{P|\xi}$ into a hierarchy of length $\rho_0$. The version for $\rSigma_2$ has length $\rho_1$.} \[\|((P|\xi)\wr\eta')\sats\varphi(\tau)\|\neq\|((P|\xi)\wr\eta)\sats\varphi(\tau)\|,\]
then let $\eta_\gamma$ be the least such $\eta$ and let $p_\gamma=\|((P|\xi)\wr\eta_\gamma)\sats\varphi(\tau)\|$.
Otherwise we stop the construction and set $\lambda=\gamma$.
This completes the recursion. So $\lambda\leq\delta$ by definition.

Note that $\left<p_\alpha\right>_{\alpha<\lambda}$ is $\bfrSigma_1^{P|\xi}$
and $p_\beta$ is strictly weaker than $p_\alpha$ for $\alpha<\beta$. So
because $\delta$ is $\bfrSigma_1^{P|\xi}$-Woodin, and in particular we get the $\delta$-cc with respect to $\bfrSigma_1^{P|\xi}$-definable antichains, it follows that $\lambda<\delta$
and there is $\beta<\delta$
such that $\left<p_\alpha\right>_{\alpha<\lambda}\in P|\beta$ (the latter as there is no $\bfrSigma_1^{P|\xi}$ singularization of $\delta$).
Since $\rho_1^{P|\xi}\geq\delta$,
therefore $\left<p_\alpha\right>_{\alpha<\lambda}\in P|\delta$. So $p=\sum_{\alpha<\lambda}p_\alpha\in\BB_\delta$,
and note that $p=\|\varphi(\tau)\|$, as desired.
 \end{proof}
 \begin{sclm}\label{sclm:rho_1=delta_theory_names_exist}
  Let $\tau\in P|\xi$ be a $\BB_\delta$-name and let $\gamma<\delta$.
  Let $\mathscr{I}$ be the set of pairs $(\varphi,\vec{x})$ where $\varphi$
  is an $\rSigma_1$ formula
  and $\vec{x}=(\tau,\check{\gamma}_0,\ldots,\check{\gamma}_{m-1})$
  for some $m<\om$ and some $(\gamma_0,\ldots,\gamma_{m-1})\in\gamma^{m}$.
  Then there is $\beta<\delta$
  such that:
  \begin{enumerate}[label=--]\item $\|\varphi(\vec{x})\|\in P|\beta$ for all $(\varphi,\vec{x})\in\mathscr{I}$, and
   \item the function $(\varphi,\vec{x})\mapsto\|\varphi(\vec{x})\|$, with domain $\mathscr{I}$, is in $P|\beta$.
  \end{enumerate}
 \end{sclm}
\begin{proof} This is an elaboration of the proof of Subclaim \ref{sclm:Boolean_value_exists}.
Define a sequence $\left<\vec{p}_\alpha,\eta_\alpha\right>_{\alpha<\lambda}$,
for some $\lambda\leq\delta$, with $\eta_\alpha<\rho=\rho_0^{P|\xi}$ and $\vec{p}_\alpha:\mathscr{I}\to\BB_\delta$ and each $\vec{p}_\alpha\in P|\delta$, and $\left<\vec{p}_\alpha,\eta_\alpha\right>_{\alpha<\zeta}\in P|\rho$ for each $\zeta<\lambda$, by recursion as follows. Set $\eta_0=0$ and $\vec{p}_0(\varphi,\vec{x})=0$ for all $(\varphi,\vec{x})$.
Now let $\zeta<\delta$ be given and suppose we have defined $\left<\vec{p}_\alpha,\eta_\alpha\right>_{\alpha<\zeta}$.
Let $\eta=\sup_{\alpha<\zeta}\eta_\alpha$
and suppose there is $\eta'\in(\eta,\rho)$
and $(\varphi,\vec{x})\in\mathscr{I}$ such that
\[ \|((P|\xi)\wr\eta')\sats\varphi(\vec{x})\|\neq\|((P|\xi)\wr\eta)\sats\varphi(\vec{x})\|.\]
Then let $\eta_\zeta$ be the least such $\eta'$, and for each $(\varphi,\vec{x})\in\mathscr{I}$ set
\[ \vec{p}_\zeta(\varphi,\vec{x})=\|((P|\xi)\wr\eta_\zeta)\sats\varphi(\vec{x})\|.\]
This completes the recursion; note that it is $\bfrSigma_1^{P|\xi}$.

We have $\lambda\leq\delta$ by construction. We want to see that $\lambda<\delta$, so suppose otherwise. For $\alpha<\delta$, let $\mathscr{I}_\alpha$ be the set of all
$(\varphi,\vec{x})\in\mathscr{I}$ such that
\[ \|((P|\xi)\wr\eta_\alpha)\sats\varphi(\vec{x})\|\neq\|((P|\xi)\wr\eta')\sats\varphi(\vec{x})\|\]
where $\eta'=\sup_{\beta<\alpha}\eta_\beta$. Note that $\emptyset\neq\mathscr{I}_\alpha\in P|\delta$ (and $\mathscr{I}\in P|\delta$) and the map $\alpha\mapsto\mathscr{I}_\alpha$ is $\bfrSigma_1^{P|\xi}$. But then an application of $\bfrSigma_1^{P|\xi}$-Woodinness gives that there is some $\mathscr{J}\sub\mathscr{I}$ such that for
cofinally many $\alpha<\delta$,
we have $\mathscr{J}=\mathscr{I}_\alpha$.
Fixing $(\varphi,\vec{x})\in\mathscr{J}$,
we now get an $\bfrSigma_1^{P|\xi}$-definable antichain corresponding to $(\varphi,\vec{x})$,
a contradiction.  So $\lambda<\delta$.

Since for each $\alpha<\lambda$,
$\vec{p}_\alpha\in P|\delta$,
a further application of $\bfrSigma_1^{P|\xi}$-Woodinness gives that there is $\beta<\delta$ with $\vec{p}_\alpha\in P|\beta$ for all $\alpha<\lambda$.
So since $\rho_1^{P|\xi}=\delta$,
therefore $\left<\vec{p}_\alpha\right>_{\alpha<\lambda}\in P|\delta$, which easily suffices.
\end{proof}
\end{case}

Given $\tau,\gamma$ as in Subclaim \ref{sclm:rho_1=delta_theory_names_exist},
let $t_{\tau,\gamma}$ be the function $(\varphi,\vec{x})\mapsto\|\varphi(\vec{x})\|$ shown to exist in $P|\delta$ there.

\begin{sclm}\label{sclm:(tau,gamma)_mapsto_t_(tau,gamma)_def}
 The function $(\tau,\gamma)\mapsto t_{\tau,\gamma}$ is $\rSigma_2^{P|\xi}(\{\delta\})$-definable.
\end{sclm}
\begin{proof}
Given $(\tau,\gamma)$ and the corresponding domain $\mathscr{I}$, note that $t_{\tau\gamma}$ is the unique function $\left<p_{\varphi,\vec{x}}\right>_{(\varphi,\vec{x})\in\mathscr{I}}$
such that there is a function $\left<A_{\varphi,\vec{x}}\right>_{(\varphi,\vec{x})\in\mathscr{I}}\in P|\delta$
such that for each $(\varphi,\vec{x})\in\mathscr{I}$, we have:
\begin{enumerate}[label=--]
 \item $A_{\varphi,\vec{x}}\sub\BB_\delta$
 and $p_{\varphi,\vec{x}}=\sum A_{\varphi,\vec{x}}$,
 \item  $q\forces^{\mathrm{strong}}_{\BB_\delta}\varphi(\vec{x})$
 for each $q\in A_{\varphi,\vec{x}}$,
 \item there is no $q\leq (1-p_{\varphi,\vec{x}})$ such that $q\forces^{\mathrm{strong}}_{\BB_\delta}\varphi(\vec{x})$.
\end{enumerate}
(The existence of such a function $\left<A_{\varphi,\vec{x}}\right>_{(\varphi,\vec{x})\in\mathscr{I}}$ follows
directly from the proof of Subclaim \ref{sclm:rho_1=delta_theory_names_exist}.)
Note that, moreover, these properties (of $\left<p_{\varphi,\vec{x}}\right>_{(\varphi,\vec{x})\in\mathscr{I}},\left<A_{\varphi,\vec{x}}\right>_{(\varphi,\vec{x})\in\mathscr{I}}$)
are determined by $\Th_{\rSigma_1}^{P|\xi}(\beta\cup\{y\})$
for an appropriate parameter $y\in P|\xi$ and ordinal $\beta<\delta$. This yields the subclaim.
\end{proof}
Finally note that $t_{\tau,\gamma}$
is easily converted into a $\BB_\delta$-name for $\Th_{\rSigma_1}^{P|\xi[\dot{G}]}(\tau_{\dot{G}}\cup\gamma)$.
Using this to substitute for the ``canonical name for the $\rSigma_1$ theory'' used in Case \ref{case:delta<rho_1_def_strong_rSigma_2_forcing_rel},
it is easy to define an appropriate strong $\rSigma_2$ forcing relation with the right properties.
\end{proof}

\begin{clm}
 Part \ref{item:Hull_matchup} for $\xi$ and $n=1$.\end{clm}
 \begin{proof}This follows directly from parts \ref{item:rSigma_n+1_sat_of_P_def}, \ref{item:rSigma_n+1_forcing_rel_def}, much as in the $n=0$ case;
 we have $\delta\in\Hull_{2}^{P|\xi}(\delta)$  since $\delta$ is a successor Woodin cardinal of $P|\xi$.\end{proof}
 
 The $n=2$ version is very similar to that for $n=1$. 
 We have $\delta\in\Hull_2^{P|\xi}(\delta)$ since $\delta$ is a successor Woodin of $P|\xi$.
For the definability of the strong $\rSigma_3$ forcing relation, in case $\rho_2^{P|\xi}=\rho_2^{Y|\xi}=\delta$,
one uses the standard stratification of $\rSigma_2$
through ordinals $\alpha<\rho_1$
(with at least the parameter $p_1^{P|\xi}=p_1^{Y|\xi}$ in the theories considered)
as opposed to the  stratification of $\rSigma_1$ through ordinals $<\rho_0^{P|\xi}$; see \cite[\S2]{fsit} for details (the \emph{min-terms} of \cite[\S5]{V=HODX_pub} also use this stratification). (The proof of Subclaim \ref{sclm:(tau,gamma)_mapsto_t_(tau,gamma)_def} generalizes directly, though,
without needing to refer to $p_1^{P|\xi}$,
so we only need the parameter $\delta$
to define the $\rSigma_3$ strong forcing relation.)

For $n>2$ it is then a direct generalization of $n=2$; we leave the remaining details to the reader.

Part \ref{item:rho_y+1,p_y+1^P}:
Because we have already established
part \ref{item:fs_matchup_proper_segs}, this is like part of its proof.

Part \ref{item:approximate_functions_degree_n}:
By part \ref{item:F^P|xi_and_F^Y|xi},
we may assume $n>0$ and that $f:[\kappa]^{|a|}\to\xi$. Let $\psi$ be an $\rSigma_n$ formula and $\gamma<\xi$
be such that $f(u)=\beta$ iff $Y|\xi\sats\psi(u,\beta,\gamma)$, for all $u\in[\kappa]^{|a|}$ and $\beta<\xi$.
By part \ref{item:fs_matchup_proper_segs}, the strong $\rSigma_n$ forcing relation of $P|\xi$ is $\bfrSigma_n^{P|\xi}$.

Let $A$ be the set of all $p\in\BB_\delta$
such that there are $u\in[\kappa]^{|a|}$
and $\beta_0<\beta_1<\xi$ such that \[(P|\xi)\sats\text{``}p\forces^{\mathrm{strong}}_n\psi(u,\beta_0,\gamma)\wedge\psi(u,\beta_1,\gamma)\text{''}.\]
Since $\kappa<\rho_n^{Y|\xi}=\rho_n^{P|\xi}$, we have $A\in P|\xi$.
By genericity then, there is $p_0\in G$
such that no $q\leq p_0$ is in $A$.

Now applying the $\rSigma_n$ forcing theorem in $Y|\xi=(P|\xi)[G]$ (with the strong $\rSigma_n$ forcing relation), where we can also refer to the parameter $G$,
we can define a function $g:[\kappa]^{|a|}\to\BB_\delta$, in an $\bfrSigma_n^{Y|\xi}$ manner, such that for each $u$, $g(u)\in G$
and $g(u)\leq p_0$ and there is $\beta$ such that $P|\xi\sats \text{``}g(u)\forces^{\mathrm{strong},\xi}_n\psi(u,\beta,\gamma)\text{''}$. 
Since $\kappa<\rho_n^{Y|\xi}$,
we therefore actually get $g\in Y|\xi$,
so $g\in Y|\zeta$.
So there is $X\in(F^{Y|\zeta})_a$
such that $g\rest X$ is constant with constant value $p_1$.
We can then refine $X$ to some $X'\in (F^{P|\zeta})_a$.
Now note that  $f\rest X'$ is $\bfrSigma_n^{P|\xi}$,
since for $u\in X'$, we have $f(u)=\beta$
iff $P|\xi\sats\text{``}\forces^{\mathrm{strong}}_n\psi(u,\beta,\gamma)\text{''}$ (this uses that $p_1\notin A$).
 \end{proof}
 
 \begin{dfn}\label{dfn:translation_pair}
  Let $Y$ be a premouse and $M$ be a P-base of $Y$ at $\delta\in\OR^Y$. Let $P=\mathscr{P}^Y(M)$. Suppose $\OR^P=\OR^Y$. Let $n<\om$
  and suppose that $Y$ is $n$-sound
  with $\delta<\rho_n^{Y}$. 
  
 Let $\Tt$ be an above-$\delta$, $n$-maximal putative iteration tree on $Y$.
 Let $\Tt'$ be an above-$\delta$, $n$-maximal putative
 iteration tree on $P$.
 We say that $(\Tt,\Tt')$
 is a \emph{translation pair}
 iff $\lh(\Tt)=\lh(\Tt')$,
 ${<^\Tt}={<^{\Tt'}}$
 and $\lh(E^\Tt_\alpha)=\lh(E^{\Tt'})_\alpha$
 for all $\alpha+1<\lh(\Tt)$.
 \end{dfn}

 \begin{lem}\label{lem:P-construction_tree_translation}
  Let $Y,M,\delta,P,n$ be as in Definition \ref{dfn:translation_pair}.
  Let $\Tt$ be an above-$\delta$, $n$-maximal
  putative tree on $Y$.
  Then:
  \begin{enumerate}\item\label{item:unique_Tt'_translation_pair_exists} There is a unique $\Tt'$
  such that $(\Tt,\Tt')$
  is a translation pair.
  \item For each $\alpha<\lh(\Tt)$, we have
  $M^{\Tt'}_\alpha=\mathscr{P}^{M^\Tt_\alpha}(M)$ and this has ordinal height $\OR(M^\Tt_\alpha)$.
  \item For all $\alpha+1<\lh(\Tt)$,
  we have $\alpha+1\in\mathscr{D}^{\Tt'}$ iff $\alpha+1\in\mathscr{D}^\Tt$.
  \item For all $\alpha+1<\lh(\Tt)$,
  we have $M^{*\Tt'}_{\alpha+1}=\mathscr{P}^{M^{*\Tt}_{\alpha+1}}(M)$ and this has ordinal height $\OR(M^{*\Tt}_{\alpha+1})$.
  \item  For all $\alpha+1<\lh(\Tt)$,
  we have $\deg^{\Tt'}_{\alpha+1}=\deg^\Tt_{\alpha+1}$.
  \item\label{item:translation_pair_iteration_maps_agree} For all $\alpha<\lh(\Tt)$
  and $\beta<^\Tt\alpha$, if $(\beta,\alpha]^\Tt\cap\dropset^{\Tt}=\emptyset$
  then $i^{\Tt'}_{\beta\alpha}=i^\Tt_{\beta\alpha}\rest M^{\Tt'}_\beta$.
  \end{enumerate}
  Likewise conversely, given any above-$\delta$, $n$-maximal putative tree $\Tt'$ on $P$.
  
  Therefore for any $\theta,\chi\in\OR$,
  \begin{enumerate}[label=(\alph*)]
   \item\label{item:P-con_n-max_strategy_translation}
$P$ is above-$\delta$ $(n,\theta)$-iterable   iff $Y$ is above-$\delta$
$(n,\theta)$-iterable.
\item\label{item:P-con_n-max_stack_strategy_translation} $P$ is above-$\delta$ $(n,\theta,\chi)^*$-iterable iff $Y$ is above-$\delta$
$(n,\theta,\chi)^*$-iterable,
 \end{enumerate}
 and in both cases \ref{item:P-con_n-max_strategy_translation}
 and \ref{item:P-con_n-max_stack_strategy_translation}, the witnessing iteration strategies occur in pairs $(\Sigma,\Sigma')$, with  $\Sigma$ a strategy for $Y$ and $\Sigma'$ one for $P$, such that $\Tt$ is via $\Sigma$ iff $\Tt'$ is via $\Sigma'$, for any translation pair $(\Tt,\Tt')$ on $(Y,P)$ (or sequence of translation pairs in case \ref{item:P-con_n-max_stack_strategy_translation}).
 \end{lem}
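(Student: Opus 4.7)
The plan is to prove this by induction on $\lh(\Tt)$, simultaneously building $\Tt'$ and verifying all listed properties, with the existence half and the converse half proved by essentially identical inductions. At each successor stage $\alpha+1$, suppose inductively that $(\Tt\rest(\alpha+1),\Tt'\rest(\alpha+1))$ is a translation pair with $M^{\Tt'}_\beta=\mathscr{P}^{M^\Tt_\beta}(M)$ and $\OR(M^{\Tt'}_\beta)=\OR(M^\Tt_\beta)$ for each $\beta\leq\alpha$. Given $E^\Tt_\alpha$ on $M^\Tt_\alpha$, I invoke Lemma \ref{lem:P-con_basic_props}\ref{item:F^P|xi_and_F^Y|xi} to extract the canonical corresponding extender $E^{\Tt'}_\alpha$ on $M^{\Tt'}_\alpha$ of the same length with the same generators, and set this as the next extender of $\Tt'$. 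Because critical points and extender lengths match along the trees, the tree order on the $\Tt'$ side must agree with that of $\Tt$; in particular $\pred^{\Tt'}(\alpha+1)=\pred^\Tt(\alpha+1)$.

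For the star model and degree, I use Lemma \ref{lem:P-con_basic_props}\ref{item:fs_matchup_proper_segs}, which gives $\rho_n^{P|\xi}=\rho_n^{Y|\xi}$ for every $\xi\in(\delta,\OR^P]$ and every $n\leq y$: this forces the choices of $\xi=\OR(M^{*\Tt}_{\alpha+1})$ and of the degree $\deg^\Tt_{\alpha+1}$ to coincide on both sides, and hence the drop sets $\dropset^\Tt,\dropset^{\Tt'}$ agree. The heart of the successor step is then a commutation claim: letting $\kappa=\crit(E^\Tt_\alpha)$ and $(\xi,m)$ the data of the star model and degree, I must show
\[\Ult_m(P|\xi, E^{\Tt'}_\alpha)\;=\;\mathscr{P}^{\Ult_m(Y|\xi, E^\Tt_\alpha)}(M),\]
with the P-construction ordinal height being $\OR(\Ult_m(Y|\xi,E^\Tt_\alpha))$. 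The surjectivity of the map $[a,f]\mapsto[a,f]$ is the hard part: every class $[a,f]$ with $f\in \bfrSigma_m^{Y|\xi}$ must be reached on the $P$-side. This is exactly what Lemma \ref{lem:P-con_basic_props}\ref{item:approximate_functions_degree_n} supplies — one may restrict $f$ to a set $X\in(F^{P|\xi})_a$ on which $f$ is $\bfrSigma_m^{P|\xi}$-definable, so $[a,f]_{E^\Tt_\alpha}=[a,f\rest X]_{E^{\Tt'}_\alpha}$. Combined with part \ref{item:Hull_matchup} (which gives agreement of Skolem hulls below $\delta$) and the fact that $\delta$ remains a strong cutpoint of the ultrapower (as $\kappa>\delta$), this identifies the ultrapower with the P-construction of its counterpart, internally matching the extender sequence via part \ref{item:F^P|xi_and_F^Y|xi} applied inside the ultrapower.

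For limit $\lambda$, I define $\Tt'\rest\lambda$ to have the same tree order as $\Tt\rest\lambda$. For the branch choice in the putative tree, the cofinal branch of $\Tt'$ is taken to equal $b=[0,\lambda)_\Tt$. Since along each eventual-non-dropping segment the iteration maps on the $\Tt'$ side are restrictions of those on the $\Tt$ side (part \ref{item:translation_pair_iteration_maps_agree}, inductively), the direct limit $M^{\Tt'}_b$ is wellfounded exactly when $M^\Tt_b$ is, and equals the direct limit of the P-constructions. Because P-construction is uniformly $\Delta_1(\{M\})$ at each level (Lemma \ref{lem:P-con_basic_props}\ref{item:unif_def_of_P-con}) and commutes with the rSigma-elementary iteration maps, this direct limit coincides with $\mathscr{P}^{M^\Tt_b}(M)$, completing the induction. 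Uniqueness in \ref{item:unique_Tt'_translation_pair_exists} is immediate since all of $\lh$, $<^\Tt$, and extender choice on $\Tt'$ are forced by the translation pair conditions.

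Finally, for the iterability transfer in clauses \ref{item:P-con_n-max_strategy_translation} and \ref{item:P-con_n-max_stack_strategy_translation}: a strategy $\Sigma$ for $Y$ induces $\Sigma'$ on $P$ by translating any putative tree $\Tt'$ on $P$ (via the converse direction, which runs symmetrically) to a $\Tt$ on $Y$, running $\Sigma$ on $\Tt$ to choose a branch, and returning that branch for $\Tt'$. Wellfoundedness of the resulting model on one side gives wellfoundedness on the other by the correspondence of direct limits established in the limit step, and the stack version works by applying the normal case at each round. The main obstacle, as indicated, is the successor-step commutation of $\Ult_m$ with $\mathscr{P}^{(\cdot)}(M)$ — particularly in the subtle case $m>0$ where fine-structural functions must be handled, which is precisely the content of Lemma \ref{lem:P-con_basic_props}\ref{item:approximate_functions_degree_n} and is the reason that part was formulated with that level of detail.
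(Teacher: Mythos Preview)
Your proposal is correct and follows essentially the same approach as the paper: induction on $\lh(\Tt)$, with the successor step reducing to the commutation of $\Ult_m$ with the P-construction, powered by Lemma \ref{lem:P-con_basic_props}---in particular part \ref{item:approximate_functions_degree_n} for the equivalence of $\bfrSigma_m$-functions on the two sides. The paper's only additional remark is that one needs a \emph{slight generalization} of part \ref{item:approximate_functions_degree_n}, since you are applying it not to the original pair $(Y,P)$ but to the iterate pair $(M^{*\Tt}_{\alpha+1},M^{*\Tt'}_{\alpha+1})$; you invoke it in that context without flagging this, but the same proof goes through since $M$ remains a P-base for each $M^\Tt_\alpha$ along an above-$\delta$ tree.
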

\begin{proof}
Fix $\Tt$ as stated. The uniqueness of  a corresponding $\Tt'$ is clear.
 The proof of its existence and properties \ref{item:unique_Tt'_translation_pair_exists}--\ref{item:translation_pair_iteration_maps_agree} is by induction on 
 on $\lh(\Tt)$. It follows readily from 
 Lemma \ref{lem:P-con_basic_props}
 and a slight generalization of its part \ref{item:approximate_functions_degree_n}. (That is,
 given $\alpha+1<\lh(\Tt')=\lh(\Tt)$
 and $d=\deg^{\Tt'}_{\alpha+1}=\deg^\Tt_{\alpha+1}$, we need to know that the $\bfrSigma_d^{M^{*\Tt'}_{\alpha+1}}$ functions with codomain $M^{*\Tt'}_{\alpha+1}$  are equivalent to the $\bfrSigma_d^{M^{*\Tt}_{\alpha+1}}$ functions with that codomain, for the purposes of forming the ultrapowers.
 But this is just a slight generalization
 of part \ref{item:approximate_functions_degree_n}.) of Lemma \ref{lem:P-con_basic_props}.
\end{proof}

\subsection{Mouse operators (should be obsolete)}\label{sec:mouse_ops}
(THIS SECTION \S\ref{sec:mouse_ops}
SHOULD HAVE BEEN SUPERSEDED BY \S\ref{sec:pointclasses}, so the reader should be safe ignoring it.)
\begin{dfn}
A \emph{mouse operator} is a function $F:\RR\to\HC$ such that $F(x)$ is an $\om$-mouse over $x$ 
for each $x\in\RR$.

A \emph{Q-operator} is a mouse operator $F$ such that for some $q<\om$,
for each $x\in\RR$, $F(x)$ is a Q-mouse of Q-degree $q$ such that $\rho_{q+1}^Q=\om$.
We say $q$ is the \emph{Q-degree} of $F$ and write $q=q^F$.

A Q-operator $F$ of Q-degree $q$ is \emph{strongly rebuilding} iff for every $x\in\RR$ and $z\leq_T 
x$ and $z$-premouse $M$, if $M$ is $(F(x),q)$-P-good and $N=\mathscr{P}^Y(M)$ is $(q,\om_1+1)$-iterable 
and $\delta^Y$-sound and $\rho_{q+1}$then either $F(z)\pins N$ or $\core_{q+1}(N)=F(z)$.
\end{dfn}

\begin{dfn}Let $F$ be a mouse operator and $\varphi\in\Ll_\pm$.\footnote{We will interpret 
$\varphi$ 
over relativized premice.} We 
say that $F$ is \emph{Woodin+$\varphi$-minimal} iff
all countable transitive swo'd sets $X$, $F(X)$ is the least Woodin $X$-mouse $M$ such that 
$M\sats\varphi(\delta^M)$. We write $F_{\mathrm{W}+\varphi}$ for $F$ in this case.

Let $F$ be Woodin+$\varphi$-minimal.
We say that $F$ \emph{relativizes above the Woodin} iff for all $X\in\dom(F)$:
\begin{enumerate}
\item Let $N$ be an $n$-sound, $(n,\om_1+1)$-iterable Woodin premouse over $X$.
Let $R=\core_{n+1}(N)$ (note $R$ is a Woodin premouse).
Then $N\sats\varphi(\delta^N)$ iff $R\sats\varphi(\delta^R)$.
 \item Let $M\ins F(X)$ be 
a Woodin premouse such that $\delta=\delta^M$ is a cardinal of $F(X)$. Let $A\in 
M|\delta^M$ be transitive swo'd. Let $N$ be an $A$-premouse which is 
$\Delta_2^{M|\delta}(\{A\})$, with $\OR^N=\delta$, 
and such that 
$\J(N)$ is a Woodin premouse with $\delta^{\J(N)}=\delta$, and $M|\delta$ is generic 
over $\J(N)$ for the $\delta$-generator extender algebra determined by extenders in $\es^N$. 
Suppose that for every $R$ with $M|\delta\ins R\pins M$,
letting $R'$ be the P-construction of $R$ above $N$, then $\delta\leq\rho_\om^{R'}$ and
$\J(R')$ is a Woodin premouse with $\delta^{\J(R')}=\delta$
(therefore $M'$ is a well-defined Woodin premouse with $\delta^{M'}=\delta$).
Then $M\sats\varphi(\delta)$ iff $M'\sats\varphi(\delta)$.
\end{enumerate}
\end{dfn}

Clearly there are many large cardinal properties $\varphi$ ``above the Woodin'' which are 
such that $F=F_{\mathrm{W}+\varphi}$ relativizes above the 
Woodin (if $F$ exists). This includes many fine large cardinal properties, such as the existence of 
an extender in $\es$ with more than 1 generator, etc.

We will show that if $F$ relativizes above the Woodin then $F$ 
``reconstructs itself'' in a certain manner. We now define the background construction we use 
for this.

\begin{dfn}
 Let $R$ be pre-relevant. A pseudo-iteration tree $\Tt$ on $M$ is \emph{small} iff for every 
$\alpha+1<\lh(\Tt)$, $M^\Tt_\alpha|\lh(E^\Tt_\alpha)\sats$``There is no Woodin cardinal''.
\end{dfn}

The pseudo-iteration trees on pre-relevant structures in which we're interested break into two 
parts, the first being small, and the second based on a Q-structure, above its least Woodin:

\begin{lem}
 Let $R$ be pre-relevant and $n=q^R$. Let $\Vv$ be a non-small $n$-maximal pseudo-iteration tree on 
$R$.
Let $\alpha$ be 
least such that $M^\Tt_\alpha|\lh(E^\Tt_\alpha)\sats$``There is a Woodin'', and  
$\delta=\delta^{M^\Tt_\alpha|\lh(E^\Tt_\alpha)}$. Then there is $Q\ins M^\Tt_\alpha$ which is a 
$\delta$-sound Q-structure for $\delta$, and letting $k$ be least such that 
$\rho_{k+1}^Q\leq\delta$, then $\Vv\rest[\alpha,\lh(\Vv))$ is an above-$\delta$, $k$-maximal 
pseudo-iteration tree on $Q$.
\end{lem}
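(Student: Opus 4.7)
The plan is to unpack the structural observation about non-small trees already articulated in Section \ref{sec:notation}. The central fact is that $\delta=\delta^{\exit^\Vv_\alpha}$ is a strong cutpoint of $\exit^\Vv_\alpha$: any extender $F\in\es^{\exit^\Vv_\alpha}$ with $\crit(F)\leq\delta\leq\nu(F)$ would, by coherence, induce a failure of Woodinness of $\delta$ in $\exit^\Vv_\alpha|\delta$, contradicting the assumption that $\exit^\Vv_\alpha$ sees ``$\delta$ is Woodin''. Consequently $\delta$ is a strong cutpoint of $M^\Vv_\alpha$ as well, since the extenders of $M^\Vv_\alpha$ indexed above $\lh(E^\Vv_\alpha)$ have critical points exceeding $\lh(E^\Vv_\alpha)>\delta$.

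I would then define $Q$ by cases. If $M^\Vv_\alpha\sats$``$\delta$ is Woodin'', set $Q=M^\Vv_\alpha$ with $k=\deg^\Vv_\alpha$; otherwise let $Q\pins M^\Vv_\alpha$ be the least proper segment properly extending $\exit^\Vv_\alpha|(\delta^+)^{\exit^\Vv_\alpha}$ which is a Q-structure for $\delta$, and let $k$ be least with $\rho_{k+1}^Q\leq\delta$. In the proper-segment case, the standard condensation argument (analogous to the one used in the proof of Lemma \ref{lem:varphi(w)-witness}) shows that $Q$ is $\delta$-sound: $Q=\cHull_{k+1}^Q(\delta\cup\{p\})$ for suitable $p$, with the requisite solidity witnesses supplied by $M^\Vv_\alpha$. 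In the $Q=M^\Vv_\alpha$ case, $\delta$-soundness of $Q$ follows from the structure of $M^\Vv_\alpha$ as a tree model (using that $\delta$ is a strong cutpoint and that all previous exits were small, so no dropping below $\delta$ occurs on $[0,\alpha]_\Vv$).

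The last step is to verify that $\Vv\rest[\alpha,\lh(\Vv))$ is above-$\delta$ and $k$-maximal on $Q$. Inductively, each $E^\Vv_\beta$ with $\beta\geq\alpha$ satisfies $\lh(E^\Vv_\beta)\geq\lh(E^\Vv_\alpha)>(\delta^+)^{\exit^\Vv_\alpha}$ by the length condition of $n$-maximality, and since $\delta$ is a strong cutpoint of $M^\Vv_\beta$ by induction, we get $\crit(E^\Vv_\beta)>\delta$; applying $E^\Vv_\beta$ preserves this property, so $\delta$ remains a strong cutpoint throughout $[\alpha,\lh(\Vv))$. At each successor stage $\beta+1$ with $\gamma=\pred^\Vv(\beta+1)$, the $\Vv$-rules force the ultrapower target to be an initial segment of $M^\Vv_\gamma$ extending $Q$ (or equal to it), and the degree is pinned down by the projectum condition at $\delta$, matching $k$-maximality on $Q$.

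The main obstacle I anticipate is the careful bookkeeping in the case distinction for $Q$ and $k$, particularly to ensure that when $Q=M^\Vv_\alpha$ and $\delta$ is genuinely Woodin in $M^\Vv_\alpha$, the prescription ``$k$ least with $\rho_{k+1}^Q\leq\delta$'' is interpreted correctly (it should coincide with $\deg^\Vv_\alpha$, using that above $\delta$ no projectum of $Q$ drops to $\delta$); and to confirm that the dropping and degree patterns along $[\alpha,\lh(\Vv))_\Vv$ agree with the corresponding patterns demanded by $k$-maximality on $Q$. Once the strong-cutpoint observation is established, these remaining verifications are essentially definitional.
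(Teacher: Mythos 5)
The paper gives no proof of this lemma at all --- it sits in \S\ref{sec:mouse_ops}, which the author explicitly flags as obsolete and to be ignored, and the lemma is simply stated without argument (and with undefined terminology like ``pseudo-iteration tree''). The closest the paper comes is the remark in \S\ref{sec:notation} that $\delta^{\exit^\Tt_\alpha}$ is a strong cutpoint of $\exit^\Tt_\alpha$, ``and hence of $\Tt$,'' so your overall strategy (strong cutpoint, Q-structure dichotomy, lift to an above-$\delta$ tree on $Q$) is exactly the one the paper gestures at. That part is fine.

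However, your justification of the strong-cutpoint step is wrong, and the error runs in the opposite direction from the truth. You claim an extender $F\in\es^{\exit^\Vv_\alpha}$ with $\crit(F)\leq\delta\leq\nu(F)$ would ``by coherence, induce a failure of Woodinness of $\delta$.'' It would not. Overlapping extenders do not destroy Woodinness --- if anything they help it. The actual reason $\delta$ is a strong cutpoint is a reflection-plus-$1$-smallness argument. Suppose $F\in\es^{\exit^\Vv_\alpha}$ with $\kappa=\crit(F)<\delta<\lh(F)$. Since $\pow(\delta)^M$ is already contained in $M||\lh(F)=\Ult(M,F)||\lh(F)$ (where $M=\exit^\Vv_\alpha$), Woodinness of $\delta$ persists in $\Ult(M,F)$. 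Then $\delta<\lh(F)\leq i_F(\kappa)$, so $\Ult(M,F)|i_F(\kappa)\sats$``there is a Woodin,'' and by elementarity of $i_F$ (which fixes $M|\kappa$ pointwise) we get $M|\kappa\sats$``there is a Woodin'' --- contradicting $1$-smallness of $R$ (and hence of all levels of its iterates). The minimality of $\alpha$ is used separately, to argue that $\lh(E^\Vv_\beta)<\delta$ for $\beta<\alpha$ (otherwise $\exit^\Vv_\beta$ would already see $\delta$ as Woodin, since it agrees with $\exit^\Vv_\alpha$ up to $\lh(E^\Vv_\beta)$ and has fewer subsets of $\delta$), which is what makes $\pred^\Vv(\alpha+1)=\alpha$ and sets up the inductive maintenance.

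Your subsequent claim that ``the extenders of $M^\Vv_\alpha$ indexed above $\lh(E^\Vv_\alpha)$ have critical points exceeding $\lh(E^\Vv_\alpha)>\delta$'' is also simply false as stated --- there is no reason a priori that extenders indexed above $\lh(E^\Vv_\alpha)$ cannot have small critical points. Fortunately you do not need strong-cutpoint-ness of all of $M^\Vv_\alpha$; you only need it for $Q$ (which either equals $M^\Vv_\alpha$ under additional hypotheses or is a proper segment where the Q-structure property does the work) and, along the branch, for each $\exit^\Vv_\beta$ with $\beta\geq\alpha$. The latter is maintained by induction: each $E^\Vv_\beta$ has $\crit(E^\Vv_\beta)>\delta$ by the strong-cutpoint property of $\exit^\Vv_\beta$ together with the ISC applied to the top extender, and then the ultrapower by $E^\Vv_\beta$ preserves strong-cutpoint-ness below $\crit(E^\Vv_\beta)$. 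Once you repair the strong-cutpoint justification along these lines, the remaining verifications (the case split defining $Q$, $\delta$-soundness, the degree $k$, and the correspondence of drop/degree structure between the $n$-maximal tree on $R$ and the $k$-maximal tree on $Q$) are routine, as you say.
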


\subsection{Silver mice}\label{sec:silver_mice}

Let $P,M$ be active type 1 premice. Say that $F^P$ is a \emph{submeasure} of $F^M$
iff $\crit(F^P)=\crit(F^M)$ and $F^P\rest\nu(F^P)\sub F^M\rest\nu(F^P)$.
Write $P\pins^{\text{almost}}M$ iff either $P\pins M$, or letting $\kappa=\crit(F^P)$
and $\nu=(\kappa^+)^P$, then $M|\nu$ is active and $P\pins\Ult(M|\nu,F^{M|\nu})$.
We write $P\ins^{\mathrm{almost}} M$
iff either $P\pins^{\mathrm{almost}}M$ or $P=M$.

\newcommand{\Ccc}{\mathscr{C}}
\newcommand{\Nnn}{\mathscr{N}}

Let $M$ be an active $\infty$-iterable mouse. Then $\Nnn^M$ denotes the proper class premouse given 
by iterating away the active extender of $M$, and $\Ccc^M$ denotes the class of critical points 
arising in the iteration.

\begin{dfn}
A \emph{Silver mouse} is a sound mouse $M$ such that $\rho_1^M=\om$, $p_1^M=\emptyset$,
$M$ is active type 1, and there is no type 1 $P\pins^{\text{almost}}M$ such that $F^P$ is a 
submeasure of $F^M$.
\end{dfn}

\begin{tm}Let $M$ be an $\infty$-iterable active mouse. Then
\begin{equation}\label{eqn:C_gens} \Nnn^M=\Hull^{\Nnn^M}(\Ccc^M) \end{equation}
iff $M$ is a Silver mouse.\end{tm}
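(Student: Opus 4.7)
The plan is to prove both implications by analyzing the linear iteration $M = M_0 \to M_1 \to \cdots$ of $M$ by its top extender, where $M_{\alpha+1} = \Ult(M_\alpha, F^{M_\alpha})$, $\kappa_\alpha = \crit(F^{M_\alpha})$, so that $\Ccc^M = \{\kappa_\alpha : \alpha \in \Ord\}$ and $\Nnn^M = \bigcup_\alpha M_\alpha | \kappa_\alpha$ as a proper-class passive premouse with $\es^{\Nnn^M} \rest \kappa_\alpha = \es^{M_\alpha} \rest \kappa_\alpha$. The overall strategy is Gaifman/Kunen-style: pair the hull identity with the fact that a type 1 extender has a single generator, and read off the Silver conditions from what is needed to make the ultrapower representation of $M_\alpha$ reduce to critical points alone.

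For the forward direction I would use that $M = \Hull_1^M(\emptyset)$, which is forced by soundness together with $\rho_1^M = \omega$ and $p_1^M = \emptyset$. Since $M$ is type 1, each iteration map $i_{0,\alpha}: M \to M_\alpha$ is fully elementary, and the standard normal-measure ultrapower analysis gives every $x \in M_\alpha$ as $i_{0,\alpha}(f)(\kappa_{\beta_0}, \ldots, \kappa_{\beta_{n-1}})$ with $f \in M$ and $\beta_0 < \cdots < \beta_{n-1} < \alpha$. Writing $f = \tau^M$ for some parameterless $\Sigma_1$-Skolem term $\tau$ in $\Ll_\pm$ and pushing through, $x$ is $\Sigma_1$-definable over $M_\alpha$ from $\vec{\kappa} \subseteq \Ccc^M$. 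To descend to $\Nnn^M$, which lacks the active $F^{M_\alpha}$, I would use a reflection argument: for $x < \kappa_\alpha$, any $\Sigma_1$-witness (a priori using the $\dot F$-predicate of $M_\alpha$) reflects into the passive part $M_\alpha | \OR(M_\alpha) \subseteq \Nnn^M | \kappa_{\alpha+1}$, so the definition lifts to $\Nnn^M$ in the language without $\dot F$.

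For the converse I assume the hull identity and rule out each failure of the Silver conditions. If $M$ is not type 1, then $F^M$ has a generator $\eta$ with $\crit(F^M) < \eta < \lh(F^M)$, and the element $[\{\eta\}, \id]_{F^M}^M \in M_1$ sits below $\kappa_1$ in $\Nnn^M$ but is manifestly not expressible as $\tau^{\Nnn^M}$ applied to critical points, since each iteration step contributes only one new critical point. If $\rho_1^M > \omega$ or $p_1^M \neq \emptyset$, the parameters needed to exhaust $M$ over its $\Sigma_1$-definable elements propagate to ``parameter witnesses'' in $\Nnn^M | \kappa_1$ that escape $\Hull^{\Nnn^M}(\Ccc^M)$, and a direct cardinality / definability count rules out their being hit by Skolem terms in $\Ccc^M$ alone. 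The submeasure clause is the delicate one: if $P \pins^{\text{almost}} M$ is type 1 with $F^P$ a submeasure of $F^M$, then iterating $P$ by $F^P$ gives $\Nnn^P$ with $\Ccc^P = \Ccc^M$ and $\Nnn^P \subsetneq \Nnn^M$ (strict because $P$ carries strictly less information than $M$ above $\crit(F^P)$), and the extra ``top'' of $M$ over $P$ must appear somewhere in $\Nnn^M$ as something not $\Sigma_1$-definable from $\Ccc^M$.

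The main obstacle will be case (d), the submeasure case, because $P$ and $M$ share the critical-point skeleton, so I must pinpoint concrete elements in $\Nnn^M \setminus \Nnn^P$ and certify their non-definability from $\Ccc^M$. I expect to argue by a comparison-style contradiction: any putative $\Sigma_1$-definition over $\Nnn^M$ from $\Ccc^M$ of an element witnessing the difference $F^M \setminus F^P$ would, by submeasure compatibility and the common critical-point sequence, descend to a $\Sigma_1$-definition over $\Nnn^P$ of a ``shadow'' element which, when pulled back through the iterations, forces $F^P = F^M$ on enough of their common domain to violate $P \pins^{\text{almost}} M$ with strict inclusion. The control here comes from the condensation-based fine structure collected in \S\ref{sec:P-construction} together with the explicit ultrapower representation of the iterates.
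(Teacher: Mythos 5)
Your proposed forward direction (Silver implies the hull identity) never uses the submeasure clause of the definition of a Silver mouse, only soundness with $\rho_1 = \omega$ and $p_1 = \emptyset$, and this cannot be repaired by a generic ``reflection argument.'' The point you are glossing over is that $\Sigma_1$-definitions over $M_\alpha$ genuinely need the active predicate $\dot F$, and the passive part of $M_\alpha$ simply does not carry that information; there is no automatic transfer to $\Nnn^M$. The paper has to earn this in two steps that your sketch omits: first it shows that $H \cap (\kappa_\alpha^+)^N$ is cofinal in $(\kappa_\alpha^+)^N$, and the \emph{only} thing that makes this true is the no-submeasure clause --- if the hull were bounded at some $\lambda < (\kappa^+)^M$, one collapses $\Hull^N(\kappa\cup C)$, reads off a weakly amenable submeasure $\bar\mu$, and by Theorem \ref{thm:ISC_for_submeasures} manufactures a type 1 $P \pins^{\text{almost}} M$ with $F^P$ a submeasure of $F^M$, contradicting the Silver hypothesis. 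Second, the paper shows the fragments $F^{M_\alpha}_\beta$ themselves lie in $H$ cofinally in $\OR^{M_\alpha}$, and then every $x \in \univ{M_\alpha}$ is definable over an initial segment of $\Nnn^M$ from one such fragment plus critical points. Without both of these, your argument would ``prove'' the hull identity for a mouse that actually admits a proper submeasure segment, where it is false.

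Your ``converse'' direction also differs from, and is weaker than, the paper's: you treat the three failure modes with three different ad hoc arguments, and the claims you make in each (``manifestly not expressible,'' ``a direct cardinality / definability count,'' ``a comparison-style contradiction would descend'') are assertions rather than proofs. The paper handles all three cases with one uniform device: whenever $M$ fails to be Silver, produce a smaller active structure $P$ (the type 1 proper segment, a $\Sigma_1$-hull $H \in M$, or the submeasure premouse) whose linear iteration has the \emph{same} sequence of critical points but yields a strictly smaller model $\Nnn^P \subsetneq \Nnn^M$, together with an elementary $\pi: \Nnn^P \to \Nnn^M$ with $\pi`` \Ccc^P = \Ccc^M$; the hull identity then forces $\pi$ to be onto, which is the contradiction. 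In particular, your instinct that the submeasure case is the delicate one is backwards for this direction --- there it is exactly as easy as the non-type-1 case, since $(\kappa^+)^{\Nnn^P} < (\kappa^+)^{\Nnn^M}$ already witnesses non-surjectivity.
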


\begin{proof} Suppose line (\ref{eqn:C_gens}) holds. We show that $M$ is a Silver mouse. Let 
$\kappa=\crit(F^M)$.

If $M$ is not type 1, then 
let $P$ be the type 1 proper segment of $M$. Note that $\Nnn^P\neq\Nnn^M$ because 
$(\kappa^{++})^{\Nnn^P}<(\kappa^{++})^{\Nnn^M}$. Let $\pi:\Nnn^P\to\Nnn^M$ be the elementary 
embedding that is just the limit of the copying maps given by copying the length $\infty$ iteration 
of $F^P$ to the length $\infty$ iteration of $F^M$. Note then that $\pi``\Ccc^P=\Ccc^M$. By line 
(\ref{eqn:C_gens}), $\pi$ is a surjection, so $\Nnn^P=\Nnn^M$, contradiction.

So $M$ is type 1. 
Suppose now that $P$ is an $\infty$-iterable active mouse such that $\kappa=\crit(F^P)=\crit(F^M)$ 
and $P|(\kappa^+)^P=M||(\kappa^+)^P$ and $F^P\rest(\kappa^+)^P=F^M\rest(\kappa^+)^P$.
Then we can again define $\pi:\Nnn^P\to\Nnn^M$ such that $\pi``\Ccc^P=\Ccc^M$ but 
$\rg(\pi)\neq\Ccc^M$, the latter because
\[ (\kappa^+)^{\Nnn^P}=(\kappa^+)^P<(\kappa^+)^M=(\kappa^+)^{\Nnn^M}, \]
again a contradiction. So there is no such $P$, from which it follows that there is no 
$P\pins^{\text{almost}}M$ such that $F^P$ is a submeasure of $F^M$.

Finally if either $\rho_1^M>\om$ or 
$p_1^M\neq 0$ or $M$ is not sound then we can find a premouse $H\in M$ and a $\Sigma_1$-elementary 
$j:H\to M$, with $\rg(j)\neq M$ and $H^\passive\neq M^\passive$. So $\Nnn^H\neq\Nnn^M$.
But we get an elementary $\pi:\Nnn^H\to\Nnn^M$ with $\pi``\Ccc^H=\Ccc^M$, again a contradiction.

So $M$ is a Silver mouse.

Now suppose that $M$ is a Silver mouse, and we show that line (\ref{eqn:C_gens}) holds.
Write $N=\Nnn^M$, $C=\Ccc^M$, $H=\Hull^N(C)$ 
(the uncollapsed hull) and let $\left<\kappa_\alpha\right>_{\alpha\in\OR}$ enumerate $C$ in 
increasing order. It is routine to show:
\begin{clm}
 $C$ is a club of indiscernibles for $N$, in 
the model theoretic sense.
\end{clm}

The main work is the following claim:

\begin{clm}
$H\inter(\kappa_\alpha^+)^N$ is cofinal in $(\kappa_\alpha^+)^N$ for every $\alpha$.
\end{clm}
\begin{proof}
Consider the case that $\alpha=0$; the only difference is otherwise is discussed at the end. Let 
$\lambda=\sup H\inter(\kappa^+)^M$ and suppose that $\lambda<(\kappa^+)^M$. We aim to show that 
there is $P\pins^{\text{almost}} M$ such that 
$F^P$ is a submeasure of $F^M$, with $(\kappa^+)^P=\lambda$.

Let $H'=\Hull^N(\kappa\un C)$. Then standard calculations show 
that $H'\inter(\kappa^+)^N=\lambda$.
Let $\bar{N}$ be the transitive collapse of $H'$. Let $\pi:\Nbar\to N$ be the uncollapse map.
So $\crit(\pi)=\lambda=(\kappa^+)^\Nbar$ and $\pi((\kappa^+)^\Nbar)=(\kappa^+)^N$ and 
$\Nbar|\lambda=N||\lambda$. Let $\mu$ be the normal measure derived from $F^M$
and $\mubar=\mu\inter\Nbar$. Let $\Qbar=\Nbar|\lambda$ and $Q=N|(\kappa^+)^N$. 
$\Ubar=\Ult(\Qbar,\mubar)$ and $U=\Ult(Q,\mu)$ and $j:\Ubar\to U$ the natural factor map; that is,
\[ j([f]^{\Qbar}_{\mubar})=[f]^Q_\mu.\]
By condensation we have $\Ubar||\lambda=\Qbar$.

We claim that $\mubar$ is weakly amenable to $\Nbar$, or equivalently, to $\Qbar$.
For let $\alpha<\lambda$. We want to see that $\mubar\inter\Nbar|\alpha\in\Nbar$. Let $f:\kappa\to 
\Nbar|\alpha=N|\alpha$ be the $N$-least surjection, so $f\in H$. Let 
$\left<\kappa_\beta\right>_{\beta<\OR}$ enumerate $\Ccc^N$. Let $t$ be a term and 
$\alphavec\in[\kappa]^{\om}$ and $n<\om$ be such that
\[ f=t^N(\alphavec,\kappa_0,\kappa_1,\ldots,\kappa_n); \]
we have $\kappa=\kappa_0$. Let
\[ f^*=t^N(\alphavec,\kappa_1,\kappa_2,\ldots,\kappa_{n+1}),\]
\[ X=\{\alpha<\kappa|\kappa\in f^*(\alpha)\}, \]
\[ \mu'=f``X. \]
Then $\mu'=\mu\inter N|\alpha=\mubar\inter\Nbar|\alpha$ and $\mu'\in H$, so $\mu'\in\Nbar$, as 
required.

So by weak amenability,
\[ \Ubar|(\kappa^+)^\Ubar=\Qbar=\Nbar|(\kappa^+)^\Nbar.\]
Let
\[ P=(\Ubar|(\kappa^{++})^\Ubar),\tilde{F}) \]
where $\tilde{F}$ is the amenable code for the extender of $\mubar$.
Note that $P$ is a premouse, and letting $k=j\rest P$, we have
\[ k:P\to M \]
is $\Sigma_0$-elementary and $\crit(k)=(\kappa^+)^P$. (This is a standard argument; by the 
commutativity $j\com i^{\Qbar}_{\mubar}=i^Q_\mu$, we get that $j,k$ move fragments of $F^P$ (in the 
code $\tilde{F}$) to fragments of $F^M$.) Moreover, $F^P$ is a submeasure of $F^M$.

But then $P\ins^{\text{almost}}M$ by Theorem \ref{thm:ISC_for_submeasures}, which suffices.

So $M$ is not a Silver mouse, contradiction. This completes the proof of the claim for $\alpha=0$.

When $\alpha>0$, run the proof above with the $\alpha^\nth$ iterate $M_\alpha$ of $M$ replacing $M$,
producing a premouse $P$ such that $F^P$ is a submeasure of $F^{M_\alpha}$. If we have that 
$M_\alpha$ is itself fully iterable, then we could argue as before. But there is another method. 
Note that $\mubar,P\in M_\alpha$. There is a $\Sigma_1$ sentence $\varphi$ in the language of type 
1 premice asserting
''there is a premouse $S$ which is a submeasure of me and letting 
$\mu=\crit(F^S)=\crit(F^{L[\es]})$, we have $S|(\mu^+)^S=L[\es]||(\mu^+)^S$''. Since 
$M_\alpha\sats\varphi$, therefore $M\sats\varphi$, but now we can run the last part of the 
argument again to show that $S\pins^{\text{almost}}M$, a contradiction.
\end{proof}
Given an active premouse $R$ let $F^R_\alpha=F\inter(R|\alpha)$ for $\alpha<\OR^R$,
where $F$ is the amenable predicate for $F^R$.

\begin{clm}
Let $\alpha\in\OR$ and $\lambda=\OR^{M_\alpha}=(\kappa_\alpha^{++})^{\Nnn^M}$.
Then there are cofinally many $\beta<\lambda$ such that $F^{M_\alpha}_\beta\in H$.
\end{clm}
\begin{proof}
 Let $\xi<\nu=(\kappa_\alpha^+)^{M_\alpha}=(\kappa_\alpha^+)^{\Nnn^M}$ be such that $\xi\in H$ and 
$\xi$ is large enough that
\[ E=F^M\rest(M_\alpha|\xi)\cross[\nu]^{<\om}\notin M_\alpha|\beta.\]
We have $E\in H$, much as in our proof that $\mubar$ was weakly amenable to $\Nbar$ earlier.
This gives the claim.
\end{proof}

\begin{clm}
Let $\alpha\in\OR$. Then $\univ{M_\alpha}\sub H$.
\end{clm}
\begin{proof}Let $x\in M_\alpha$. Then because $M$ is sound with $\rho_1^M=\om$ and 
$p_1^M=\emptyset$, there is a $\Sigma_1$ formula $\varphi$ and there is 
$\kappavec\in(C\inter\kappa_\alpha)^{<\om}$ such that such that $x$ is the unique $y\in 
M_\alpha$ such that $M\sats\varphi(y,\kappavec)$. It easily follows that $x$ is definable over 
$\Nnn^M|(\kappa_\alpha^{++})^{\Nnn^M}$ from the parameter $F^{M_\alpha}_\beta$, for sufficiently 
large $\beta<\OR(M_\alpha)$. Therefore $x\in H$, as required.
\end{proof}
\end{proof}

\section{The $L[\es,x]$-construction $\CC$}\label{sec:CC}
In this section we describe the kind of $L[\es,x]$-construction we will consider.
 The construction $\CC$ will be produced inside a premouse $P$ as background universe.
There are three special features of the construction. Two of these 
are directly significant in our scale construction,
or more specifically, in the eventual proof that we do in fact construct a scale. These are (i) 
the requirement that the strengths of background extenders are strictly increasing with stages of 
the construction, and (ii) the locality of the construction in the vicinity of 
measurable cardinals. The most obvious consequence of property (i) is that if at stage $\alpha$,
we define $N_{\alpha+1}=(N_\alpha,E)$ with $E\neq\emptyset$, where $N_\alpha$ is the premouse produced by stage $\alpha$ 
of $\CC$, backgrounding $E$ with an extender $F$, then $F$ coheres 
$\CC\rest(\alpha+1)$. When selecting such an $F$ we will always take $F$ to be of minimal index, 
and 
so in fact, $N_\alpha^{\Ult(V,F)}=N_\alpha$, but at stage $\alpha+1$ in $\Ult(V,F)$ 
we do not add an extender, setting instead $N_{\alpha+1}=\J(N_\alpha)$. Thus, we 
have a coherence property analogous to that of the active extenders of premice. While property (i) 
slows down the construction somewhat, property (ii) accelerates it in the vicinity of measurable 
cardinals. Both properties are important in our proof of (a) lower semi-continuity, 
and property (i) is also important in the proof that (b) the norms we define are indeed norms (both 
of these proofs involve a kind of comparison, though (a) is less overtly so).
In property (ii), the construction is made more local by taking some background extenders 
from ultrapowers of $V$; if our mice had Jensen indexing then these extenders would just be certain 
partial extenders from the extender sequence. If one goes further and allows more such partial 
extenders, then our proof of (b) runs into problems. Property (ii) is 
formulated as it is because it gives sufficient locality for the proof of (a), but can still be 
handled in our proof of (b). We also remark that property (ii) is a simplification of
what is used in \cite{premouse_inheriting}.
Using constructions which omit option (ii), one can define semiscales, by a simplification of the 
arguments here, but the author does not know whether lower semi-continuity holds for the resulting semiscale. (That was indeed our original construction, and the one presented at
the 4th M\"unster conference on
inner model theory, 2017.
Of course a semiscale does in turn yield a scale, but it is not clear that the resulting scale has optimal complexity.)

If one could produce scales from more local background 
constructions, then one might be able to optimally define scales in contexts where the construction 
we give does not. However, the third special feature of the construction does ensure that
we produce optimally definable scales for pointclasses appropriately corresponding to mouse 
operators $x\mapsto M_x$ in which $M_x$ is \emph{minimal above a Woodin cardinal},
for some appropriate notion of minimality (these statements are made precise in \ref{?}).
That is, our construction has two ``modes'',
 one which applies below local Woodin cardinals and one above local Woodins. The parts below local Woodins are 
formed as already described above. The parts above local Woodins will 
be P-construction. Note that one can think of P-construction as a special case of 
$L[\es,x]$-construction, in which we allow \emph{all} extenders from the extender sequence of the 
background universe as background extenders (and in which coring is automatically
trivial because the models produced are fully sound, until reaching a Q-structure for the relevant Woodin cardinal).
(The construction below local Woodins has non-trivial coring as usual.)
Because we use P-constructions above local Woodins, our construction always inherits
standard large cardinal properties from the background universe above such local Woodins,
and in various situations this ensures the optimality of
definability of the scales we construct. Beyond this it is not particularly integral to the scale 
construction, however, and one could omit it, though possibly at the cost of increasing the 
complexity of the scale. Its presence also does not have a strong impact on the overall argument;
in fact nearly all of the ideas show up already in the case of $\Scale(\Pi^1_3)$,
for which the mice are $1$-small, and so in this case no extenders are ever added above a local Woodin 
anyway.

\begin{dfn}\index{$M$-cardinal-large}
 Let $P$ be a premouse. We say that $P$ is \emph{large} iff $P$ satisfies ``there is a Woodin cardinal'', and \emph{small} otherwise. Let $\lambda\leq\OR^P$ be a limit.
 We say that $\lambda$ is \emph{$P$-cardinal-large} iff there is $\delta<\lambda$
 such that $\delta$ is a cardinal of $P$ and $P|\lambda\sats$``$\delta$ is Woodin''.
Say that $P|\lambda$ is $P$-cardinal-large iff $\lambda$ is.
\end{dfn}

\begin{dfn}
 Let $P$ be a premouse
 and $\kappa$ be such that
 there is a $P$-total extender
 $E\in\es_+^P$ with $\crit(E)=\kappa$. Then $D^P_{\kappa0}$ denotes the least such $E$.
\end{dfn}

We  now define the $L[\es,x]$-construction $\CC$ which will be the basis for our scale 
construction:

\begin{dfn}\index{$\CC$}\index{Q-local $L[\es]$-construction}\index{$N_\alpha$}\index{$F_\alpha$}\index{$t_\alpha$}\label{dfn:CC}
 Let $P$ be a premouse, $x\in\RR^P$ and $\lambda\leq\OR^P$ be a limit. We define
 \[ \CC=\CC_x^P=\left<N_\alpha,F_\alpha,t_\alpha\right>_{\alpha\in\Lim\inter(0,\lambda]}, \]
consisting of  $x$-premice $N_\alpha$, integers $t_\alpha\in[0,3]$, and extenders $F_\alpha$.
We set $F_\alpha=\emptyset$ and 
$t_\alpha=0$ except where we 
explicitly say otherwise.
We define $\CC\rest(\alpha+1)$ by induction on limits $\alpha$.
We will 
state certain assumptions throughout the construction,
and if any of those assumptions fail at any stage, then we stop the construction.
Note that $P$ need not model $\ZFC$. In order to define $N_\alpha$, we first make the assumptions
\begin{enumerate}[label=\tu{(}A\arabic*\tu{)}]
 \item\label{conass:aleph_alpha_existence} Either (i) $\aleph_\beta^P<\OR^P$ for every $\beta<\alpha$, or
(ii) $\alpha$ is $P$-cardinal large, and
 \item\label{conass:Q-structures} For each limit $\beta<\alpha$ such that
 $\beta$ is $P$-cardinal large, $P|\beta$ is a Q-structure for $\delta^{P|\beta}$ iff
 $N_\beta$ is a Q-structure for $\delta^{P|\beta}$.
\end{enumerate}

If $\kappa$ is $P$-measurable via some extender in $\es^P$ and $\kappa\leq\alpha$ and $P|\kappa$ is small then
letting $D=D^P_{\kappa 0}$, we write
\index{$\chi^P_{x\kappa}$}
\[ \chi^P_{x\kappa}=(\kappa^+)^{i^P_D(N_\kappa)}. \]
We say that $\alpha$ is \emph{$(P,x)$-near-a-measurable}\index{near-a-measurable}
iff $\kappa=\card^P(\alpha)$ is measurable in $P$ via $\es^P$ and
$\kappa<\alpha<\chi^P_{x\kappa}$. 

\begin{case} $\alpha$ is not $P$-cardinal-large and not $(P,x)$-near-a-measurable.
\begin{scase} $\alpha=\om$

Set $N_\om=(V_\om,x)$.
\end{scase}

\begin{scase} $\alpha$ is a limit of limits.

Set $N_\alpha=\liminf_{\beta<\alpha}N_\beta$.
\end{scase}

\begin{scase} $\alpha=\beta+\om$.

\begin{sscase} $N_\beta$ is passive and there is $F$ such that there is $E$ such that:
\begin{enumerate}[label=--]
\item $F\in\es^P$ and $\nu_F=\aleph_\beta^P$ (so $F$ is $P$-total),
\item $(N_\beta,E)$ is an active premouse (so $E\neq\emptyset$), and
\item $E\rest\nu_E\sub F$.
\end{enumerate}
Let $F$ witness this with $\lh(F)$ minimal, and $E$ be induced by $F$. Then 
\[ N_{\beta+\om}=(N_\beta,E)\text{ and }F_{\beta+\om}=F\text{ and }t_{\beta+\om}=1.\]
\end{sscase}

\begin{sscase}Otherwise.

Then we assume
\begin{enumerate}[label=\tu{(}A3\tu{)}]
\item\label{conass:segs_good}  $N_\beta$ is $\om$-good and $\core_\om(N_\beta)$ satisfies condensation,
\footnote{These notions were defined in \S\ref{sec:notation}.}
 \end{enumerate}
and set $N_{\beta+\om}=\J(\core_\om(N_\beta))$.
\end{sscase}
\end{scase}
\end{case}
\begin{case}$\alpha$ is $(P,x)$-near-a-measurable but not $P$-cardinal-large.

Let $F=D^P_{\kappa 0}\in\es^P$. Set
\[ N_\alpha=i_F^P(N_\kappa)|\alpha\text{ and }F_\alpha=F\text{ and }t_\alpha=2.\]
\end{case}

\begin{case} $\alpha$ is $P$-cardinal-large.

Let $\delta=\delta^{P|\alpha}$.
Then $\CC\rest(\delta,\alpha]$ is the P-construction of $P|\alpha$ above $N_\delta$,
where we assume that
\begin{enumerate}[label=\tu{(}A4\tu{)}]
 \item\label{conass:N_is_P-con} $\OR^{N_\delta}=\delta$ and the P-construction as above works,
 giving $N_\alpha\sim_\delta P|\alpha$.
\end{enumerate}
If $P|\alpha$ is active then we set $F_\alpha=F^{N|\alpha}$ and $t_\alpha=3$.
\end{case}

This completes the definition of $\CC$.

We write $N^P_{x\alpha}=N^\CC_\alpha=N_\alpha\text{ and }F^P_{x\alpha}=F^\CC_\alpha=F_\alpha\text{ 
and 
}t^P_{x\alpha}=t^\CC_\alpha=t_\alpha$.

We say that $N^P_{x\alpha}$ \emph{exists} iff it is defined (that is, all assumptions required to 
define $N^P_{x\alpha'}$ for all limits $\alpha'\leq\alpha$, hold).\index{$N_{\alpha}$ exists}

Suppose $N^P_{x\alpha}$ exists. We write $N^P_{x\alpha 0}=N^P_{x\alpha}$,
and if $N^P_{x\alpha}$ is $n$-good and $k\leq\min(\om,n+1)$,
write
\[ N^P_{x\alpha k}=\core_k(N^P_{x\alpha})^\unsq,\]
and if also $0\leq j\leq k$
then\[\tau^P_{x\alpha kj}:(N^P_{x\alpha k})^\sq\to(N^P_{x\alpha j})^\sq \]
denotes the core embedding. We say that $N^P_{x\alpha 0}$ \emph{exists},\index{$N_{\alpha n}$ exists}
say that $N^P_{x\alpha,n+1}$ \emph{exists} iff $N^P_{x\alpha}$ is $n$-good,
and say that $N^P_{x\alpha\om}$ \emph{exists} iff $N^P_{x\alpha}$ is $\om$-good.
\end{dfn}

\begin{rem}\label{rem:omit_sub_super}
When the real $x$ or background universe $P$ or construction $\CC$
are clear from context, then we often drop the subscript $x$ and/or superscripts $P$ and $\CC$ from 
the notation above.

We will begin by establishing various facts about the construction, and later prove that the models $N_\alpha$ are iterable,
assuming that the background $P$ is. The first lemma is a simple observation on the nature of the construction close to measurables.
\end{rem}
\begin{lem}\label{lem:measurable_stack}
Suppose $t_\alpha=2$ and $\kappa=\crit(F_\alpha)$,
so $F_\alpha=D=D^P_{\kappa0}$.
For  $G\in\es^P$ any $P$-total extender with $\crit(G)=\kappa$,
let $R'_G=i^P_G(N_\kappa)$ and $\xi_G=\xi=\kappa^{+R'_G}$ and $R_G=R'_G|\xi$.
Then
\[ R_G=R_D\sats\ZF^-+\text{``}\kappa\text{ is the largest cardinal''}.\]
Therefore $R_G|\alpha=N_\alpha$ and if $N_\xi$ is defined then $R_G=N_\xi$.
\end{lem}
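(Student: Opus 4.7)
My plan is to establish the three assertions of the lemma in sequence, handling the structural claim first, then the main equality $R_G=R_D$, and finally the two ``therefore'' consequences.

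For $R_D\sats\ZF^-+\text{``}\kappa\text{ is the largest cardinal''}$: since $N_\kappa$ is a premouse produced by $\CC$, elementarity of $i_D^P\colon P\to\Ult(P,D)$ gives that $R'_D=i_D^P(N_\kappa)$ is a premouse in $\Ult(P,D)$, so $R_D=R'_D|\xi_D$, being the initial segment of a premouse up to the successor of its cardinal $\kappa$, satisfies these axioms by standard premouse theory. The corresponding conclusion for $R_G$ then follows once $R_G=R_D$ is established.

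For the main equality $R_G=R_D$, I would compute $R_G$ inside $\Ult(P,G)$ and match it against $R_D$. Three ingredients are needed: (i)~by coherence of $\es^P$ with $G$, the constructions $\CC^{\Ult(P,G)}_x$ and $\CC^P_x$ produce the same $N_\beta$ for $\beta\leq\kappa$, so $N_\kappa^{\Ult(P,G)}=N_\kappa$; (ii)~the measure $D$ remains the order-$0$ measure at $\kappa$ in $\es_+^{\Ult(P,G)}$, so the near-a-measurable case of $\CC$ in $\Ult(P,G)$ at stages just above $\kappa$ uses the same $D$ and produces $N_\beta^{\CC^{\Ult(P,G)}_x}=R_D|\beta$ for $\beta\in(\kappa,\xi_D)$; (iii)~elementarity of $i_G^P\colon P\to\Ult(P,G)$ applied to the construction $\CC$ gives $i_G^P(N_\kappa)=N_{i_G^P(\kappa)}^{\CC^{\Ult(P,G)}_x}$. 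Combining these, the stages $\beta\in(\kappa,\xi_D)$ of the construction in $\Ult(P,G)$ match $R_D|\beta$, whence $i_G^P(N_\kappa)|\xi_D=R_D$; showing that $\kappa^{+i_G^P(N_\kappa)}=\xi_D$, i.e.\ that no new subsets of $\kappa$ appear at later stages of the construction, then yields $R_G=R_D$.

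The two consequences follow immediately from the near-a-measurable case of $\CC$ together with $R_G=R_D$: for each limit $\beta\leq\alpha$ in $(\kappa,\xi_D)$ we have $N_\beta=i_D^P(N_\kappa)|\beta=R_D|\beta=R_G|\beta$, giving $R_G|\alpha=N_\alpha$; and if $N_\xi$ is defined, continuity of $\CC$ at $\xi=\xi_D$ forces $N_\xi=R_D=R_G$.

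The main obstacle will be making step~(iii) precise, in particular verifying $\kappa^{+i_G^P(N_\kappa)}=\xi_D$ and that no bounded subsets of $\kappa$ are introduced by $\CC^{\Ult(P,G)}_x$ at stages $\beta\in[\xi_D,i_G(\kappa))$ which would push the $\kappa^+$ of $i_G^P(N_\kappa)$ above $\xi_D$. This should follow from the structural observation that past the near-a-measurable interval the construction only adds material of rank above $\xi_D$, via either $\J$-closure or top extenders, neither of which affect $\pow(\kappa)$; but the argument must also address possible discrepancies between functions $\kappa\to N_\kappa$ available in $\Ult(P,G)$ versus those in $P$, which is where the order-$0$ status of $D$ together with Mitchell-Steel coherence enter crucially.
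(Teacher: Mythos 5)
Your proposal takes a genuinely different route from the paper, but it has a gap at exactly the place you identify as ``the main obstacle,'' and the heuristic you offer to close it does not work.

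The paper's proof does not track the construction $\CC^{\Ult(P,G)}_x$ stage by stage at all. Instead it introduces the stack $S$ of all sound premice $A\in P$ such that $N_\kappa\pins A$, $\rho_\om^A=\kappa$, and $A$ satisfies condensation, and proves $R_G=S$ for every admissible $G$ by two short inclusions. The inclusion $R_G\ins S$ uses that every proper segment of $R_G$ projecting to $\kappa$ is coded by a subset of $\kappa$ (hence lies in $P$, since $\pow(\kappa)^P=\pow(\kappa)^{\Ult(P,G)}$) and inherits condensation from $N_\kappa$ by elementarity of $i^P_G$. The inclusion $S\ins R_G$ is the condensation step: for $A\pins S$, the map $i^P_G\rest A:A\to i^P_G(A)$ is fully elementary with critical point $\kappa=\rho_\om^A$, and since $i^P_G(A)$ satisfies condensation this forces $A\pins i^P_G(A)$, hence $A\pins R'_G|\kappa^{+R'_G}=R_G$. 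Because $S$ is manifestly independent of $G$, the lemma falls out immediately, and the displayed consequences are then routine.

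Your ingredients (i)--(iii) are individually fine: coherence gives $N_\kappa^{\Ult(P,G)}=N_\kappa$, $D$ remains the order-$0$ measure at $\kappa$ in $\Ult(P,G)$ (since $\lh(D)<\lh(G)$ when $G\neq D$ and $\pow(\kappa)$ agrees), and $\chi_{x\kappa}^{\Ult(P,G)}=\xi_D$ so the near-a-measurable case of $\CC^{\Ult(P,G)}_x$ reproduces $R_D|\beta$ for $\kappa<\beta<\xi_D$. The gap is the claim $\kappa^{+R'_G}=\xi_D$, i.e.\ that $\CC^{\Ult(P,G)}_x$ produces no new subsets of $\kappa$ between stage $\xi_D$ and stage $i_G(\kappa)$. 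Your justification --- that past the near-a-measurable interval the construction ``only adds material of rank above $\xi_D$, via either $\J$-closure or top extenders'' --- is false as a picture of how $\CC$ runs: the construction routinely cores down, and models $N_\beta$ with $\beta\in(\xi_D,i_G(\kappa))$ can and do project back to $\kappa$ or below. There is no monotone-rank obstruction to a new subset of $\kappa$ appearing at such a stage; what rules it out is precisely that any such stage would give a sound $\kappa$-projecting mouse $A\in P$ satisfying condensation, hence $A\pins S$, hence $\OR^A<\OR(S)=\xi_D$. In other words, to close your gap you would have to rediscover the stack-and-condensation argument, at which point the detour through the two constructions is superfluous. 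I would recommend reworking the proof around the $G$-independent stack $S$ directly.
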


\begin{proof}
Let $S$ be the stack of all sound premice $A\in P$ such that $N_\kappa\pins A$,
$\rho_\om^A=\kappa$, and $A$ satisfies condensation. Then $R_G=S=R_D$. For all segments of $N_\kappa$ satisfy 
condensation, so $R'_G$ and $R_G$ are likewise, so $R_G\ins S$. But if $A\pins S$ and $\rho_\om^A=\kappa$
then $i^P_G(A)$ satisfies condensation, which implies $A\pins i^P_G(A)$, so $A\pins R_G$, as required.
Hence, $R_G=S=R_D$.

Clearly  $R_G\sats\ZF^-$+``$
\kappa$ is the largest cardinal'',
since $R_G=R'_G|\xi^{+R'_G}$.
\end{proof}

The following lemma will not be needed until we reach the proof of lower semicontinuity for mouse scales,
but we include it now as it fits well with the previous lemma, of which it is an easy corollary:

 \begin{lem}\label{lem:iterate_up_to_N_x,kappa}
  Suppose 
  $\kappa$ is measurable in $P$,
  as witnessed by $\es^P$,
  and $\CC_x\rest\chi$ is defined,
  where $\chi=\chi_{x\kappa}^P$.
  Let $M\in P|\kappa$ be an
  $x$-premouse.
  Let $n\leq\om$ and let $\Tt\in P$ an $n$-maximal
  tree on $M$ of length $\kappa+1$.
  Suppose that $M^\Tt_\kappa|\kappa=N_{x\kappa}^P$.
  Then $M^\Tt_\kappa|\kappa^{+M^\Tt_\kappa}=N_{x\chi}^P$.
 \end{lem}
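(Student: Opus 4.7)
The plan is to invoke Lemma \ref{lem:measurable_stack}, which identifies $N_{x\chi}^P$ with $R_D = i_D^P(N_\kappa)|\chi$ and, via the reasoning in its proof, characterizes $R_D$ as the stack of all sound premice $A \in P$ satisfying $N_\kappa \pins A$, $\rho_\om^A = \kappa$, and $A$ satisfying condensation. Writing $B := M^\Tt_\kappa|\kappa^{+M^\Tt_\kappa}$, I aim to show $B = R_D$ via two inclusions.

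For $B \ins R_D$, first note that $M^\Tt_\kappa \in P$ since $\Tt \in P$ has length $\kappa+1$. The cofinal sequence of proper segments $A \pins B$ with $\rho_\om^A = \kappa$ consists of sound premice in $P$ that extend $N_\kappa = M^\Tt_\kappa|\kappa$ and satisfy condensation, the latter being inherited from $M$'s proper segments (our real-premouse convention) and preserved along the $n$-maximal tree $\Tt$ to the proper segments of $M^\Tt_\kappa$. Applying the stack characterization from Lemma \ref{lem:measurable_stack}, each such $A$ satisfies $A \pins R_D$, and passing to the union gives $B \ins R_D$. In particular $\kappa^{+M^\Tt_\kappa} \leq \chi$.

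For $R_D \ins B$ it suffices to show $\kappa^{+M^\Tt_\kappa} = \chi$. Suppose for contradiction $\kappa^{+M^\Tt_\kappa} < \chi$. Since $R_D$ has $\kappa$ as its largest cardinal and is a stack, I can choose a sound $A \pins R_D$ with $B \pins A$, $\rho_\om^A = \kappa$, $A$ satisfying condensation, and $\OR^A > \kappa^{+M^\Tt_\kappa}$. The plan is then to coiterate $A$ with $M^\Tt_\kappa$ above $\kappa$. Both models agree up to $\kappa$ on $N_\kappa$, and $\kappa$ is a strong cutpoint of each: for $A$ because $\kappa$ is its largest cardinal; for $M^\Tt_\kappa$ because $N_\kappa$ is passive in the relevant configuration of the $\CC$-construction near a measurable and the $n$-maximality of $\Tt$ together with $M \in P|\kappa$ prevents the introduction of total extenders into $M^\Tt_\kappa$ with critical point below $\kappa$ and length above $\kappa$. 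Iterability of $M^\Tt_\kappa$ above $\kappa$ is inherited from $M$, while iterability of $A$ above the strong cutpoint $\kappa$ follows either from $A \pins R_D$ (using that $R_D$ is the $i_D^P$-image of $N_\kappa$, which inherits the iterability of $N_\kappa$) or directly from condensation via results in the vein of \cite{fsfni}. The coiteration is therefore trivial and yields $A \pins M^\Tt_\kappa$, but $\rho_\om^A = \kappa$ together with $\OR^A > \kappa^{+M^\Tt_\kappa}$ contradicts $M^\Tt_\kappa$'s own computation of $\kappa^+$.

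The main obstacle is the reverse inclusion. The delicate points are the verification that $\kappa$ is a strong cutpoint of $M^\Tt_\kappa$ (which uses the passivity of $N_\kappa$ and a careful analysis of which extenders the $n$-maximal rules can place in $M^\Tt_\kappa$ above $\kappa$ given $M \in P|\kappa$) and securing enough iterability for $A$ above $\kappa$ from the weaker condensation hypothesis; the remainder of the argument is essentially bookkeeping against the stack characterization supplied by Lemma \ref{lem:measurable_stack}.
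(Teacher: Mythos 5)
Your proposal and the paper's proof take genuinely different routes, and yours has a real gap: it requires iterability hypotheses that the lemma does not supply. You propose to coiterate $A$ (a proper segment of $R_D$) against $M^\Tt_\kappa$ above $\kappa$, claiming iterability of $M^\Tt_\kappa$ is ``inherited from $M$'' and iterability of $A$ follows from $A\pins R_D$ or from condensation. But the lemma's hypotheses say only that $M$ is some $x$-premouse in $P|\kappa$ and $\Tt\in P$ is an $n$-maximal tree of length $\kappa+1$ --- there is no assumption that $M$ is iterable, that $\Tt$ is via any iteration strategy, or that $P$ itself is iterable. Since $P$ need not be iterable, neither $N^P_{x\kappa}$ nor $R_D=i^P_D(N_\kappa)|\chi$ comes with an iteration strategy, so the appeal to $A$'s iterability (and hence the comparison argument driving your reverse inclusion) does not go through. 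Your remark that iterability of $A$ could instead come ``directly from condensation via results in the vein of \cite{fsfni}'' has the direction of implication backwards: those results derive fine-structural consequences \emph{from} iterability, not the other way around. The forward inclusion $M^\Tt_\kappa|\kappa^{+M^\Tt_\kappa}\ins R_D$ via the stack characterization in Lemma \ref{lem:measurable_stack} is fine; it is the reverse inclusion where the proposal collapses.

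The paper's argument avoids iterability entirely by reflecting $\Tt$ under the ultrapower map $j=i^P_D$ where $D=F^P_{\kappa 0}$. Since $M\in P|\kappa$ and $\crit(j)=\kappa$, one has $j(\Tt)\rest(\kappa+1)=\Tt$, hence $M^{j(\Tt)}_\kappa=M^\Tt_\kappa$, and by elementarity the drops along $[0,j(\kappa)]_{j(\Tt)}$ lie below $\kappa$; thus $\kappa\in[0,j(\kappa)]_{j(\Tt)}$, the map $i^{j(\Tt)}_{\kappa,j(\kappa)}$ is total, and $\crit(i^{j(\Tt)}_{\kappa,j(\kappa)})=\kappa$, so $M^\Tt_\kappa$ and $j(M^\Tt_\kappa)=M^{j(\Tt)}_{j(\kappa)}$ agree on $\pow(\kappa)$ and hence through $\kappa^{+M^\Tt_\kappa}=\kappa^{+j(M^\Tt_\kappa)}$. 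Combining this with $j(M^\Tt_\kappa)|\kappa^{+j(M^\Tt_\kappa)}=N^P_{x\chi}$ (which is what Lemma \ref{lem:measurable_stack} gives, applied to $j(M^\Tt_\kappa)|j(\kappa)=j(N_\kappa)$) yields the conclusion in one step, with no comparison and no iterability. If you want to keep your two-inclusion structure, the reverse inclusion should be replaced by this elementarity/critical-point computation rather than a coiteration.
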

\begin{proof}
Let $j:P\to\Ult(P,F^P_{\kappa 0})$
be the ultrapower map.
Note that by Lemma \ref{lem:measurable_stack},
and since $M^\Tt_\kappa|\kappa=N_{x\kappa}^P$,
\[ j(M^\Tt_\kappa)|\chi=N_{x\chi}^P\text{ and }
 \chi=\kappa^{+j(M^\Tt_\kappa)}.\]
But $j(M^\Tt_\kappa)=M^{j(\Tt)}_{j(\kappa)}$, and $j(\Tt)\rest(\kappa+1)=\Tt$,
Moreover, $[0,j(\kappa))^{j(\Tt)}\cap\dropset^{j(\Tt)}\sub\kappa$,
and $i^{j(\Tt)}_{\kappa,j(\kappa)}\sub j$,
so $\crit(i^{j(\Tt)}_{\kappa,j(\kappa)})=\kappa$. Therefore
\[ M^\Tt_{\kappa}|\kappa^{+M^\Tt_\kappa}=M^{j(\Tt)}_{j(\kappa)}|\kappa^{+M^{j(\Tt)}_{j(\kappa)}}=j(M^\Tt_\kappa)|\kappa^{+j(M^\Tt_\kappa)}=N_{x\chi}^P, \]
as desired.
\end{proof}

Note that in the following lemma,
which follows immediately from the definitions,
$\gamma+\om$ is defined independently of $x$:

\begin{lem}\label{lem:least_t=2}Let $Y$ be a premouse,
$\kappa$ be measurable in $Y$, with $Y|\kappa$ small (hence $\kappa$ is not Woodin in $Y$).  Let $Y|\gamma$ be the Q-structure for $Y|\kappa$. 
Let $x\in\RR^Y$ and suppose
that $\CC\rest(\gamma+\om+1)$ is defined
\tu{(}that is, $\CC^Y_x\rest(\gamma+\om+1)$; see \ref{rem:omit_sub_super}\tu{)}.
Then $\gamma+\om$ is the least $\alpha$ such that $t_{\alpha x}^Y=2$
and $\kappa=\card^Y(\alpha)$.
\end{lem}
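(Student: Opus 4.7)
The plan is to unwind the case analysis in Definition~\ref{dfn:CC}. By direct inspection, $t^Y_{\alpha x} = 2$ holds exactly when $\alpha$ is $(Y,x)$-near-a-measurable but not $Y$-cardinal-large. The proof thus splits into two parts: verifying that $\gamma+\omega$ satisfies both of these conditions, and verifying that no smaller limit $\alpha$ with $\card^Y(\alpha)=\kappa$ does.

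For the minimality half, the only candidates are the limit ordinals $\alpha \in [\kappa,\gamma]$ with $\card^Y(\alpha) = \kappa$, since there are no limit ordinals strictly between $\gamma$ and $\gamma+\omega$. At $\alpha = \kappa$, near-a-measurability fails by definition, since it requires $\alpha > \kappa$. For $\kappa < \alpha \leq \gamma$, I would invoke the minimality of the Q-structure $Y|\gamma$ to conclude that $Y|\alpha \sats$ ``$\kappa$ is Woodin'': no proper segment of $Y|\gamma$ yet witnesses the failure of Woodinness. Since $Y|\kappa$ is small, $\kappa$ is the only cardinal of $Y$ strictly below $\alpha$ that could be Woodin, so $\alpha$ is $Y$-cardinal-large; the construction lands in the P-construction case, producing $t_\alpha \in \{0,3\}$.

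For $\alpha = \gamma+\omega$: the ordinal is not $Y$-cardinal-large, because the definable failure of Woodinness of $\kappa$ over $Y|\gamma$ (which witnesses the Q-structure property) lives in $\J(Y|\gamma) \sub Y|(\gamma+\omega)$, and $\kappa$ is again the only candidate. Since $\kappa$ is measurable in $Y$ by hypothesis and $\card^Y(\gamma+\omega) = \kappa$, $(Y,x)$-near-a-measurability reduces to the bound $\gamma+\omega < \chi^Y_{x\kappa}$. This is the main step. Because $\CC\rest(\gamma+\omega+1)$ is defined, assumption \ref{conass:Q-structures} applies at $\beta = \gamma$, so $N_\gamma$ is a Q-structure for $\delta^{Y|\gamma} = \kappa$; hence $N_\gamma$ is sound with $\rho_\om^{N_\gamma} \leq \kappa$, properly extends $N_\kappa$, and satisfies condensation. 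Lemma~\ref{lem:measurable_stack} applied with $G = D = D^Y_{\kappa 0}$ identifies $R_D = i^Y_F(N_\kappa)|\chi^Y_{x\kappa}$ as the stack of precisely such premice, so $N_\gamma \ins R_D$. But $R_D \sats \ZF^-$ + ``$\kappa$ is the largest cardinal'' is not a Q-structure for $\kappa$, since in $R_D$ there is nothing above $\kappa$ from which a failure of Woodinness could be defined. Hence $N_\gamma \pins R_D$ properly, giving $\gamma < \chi^Y_{x\kappa}$. As $\chi^Y_{x\kappa}$ is a cardinal of $i^Y_F(N_\kappa)$ and hence a limit of limit ordinals, this upgrades to $\gamma+\omega < \chi^Y_{x\kappa}$, completing the verification.

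The main technical obstacle, should it arise, is the subcase in which $\rho_\om^{N_\gamma} < \kappa$: then $N_\gamma$ itself need not be a member of the stack $R_D$, which literally collects $A$ with $\rho_\om^A = \kappa$. The fix is to observe that the sound proper segments $A \pins N_\gamma$ with $\rho_\om^A = \kappa$ are cofinal in $\OR^{N_\gamma} = \gamma$ (by standard features of $L[\es,x]$-constructions, e.g.\ Fact~\ref{fact:DD_stages_proj_to_card_of_N_alpha}), each of which sits in $R_D$ by Lemma~\ref{lem:measurable_stack}, yielding $\gamma \leq \chi^Y_{x\kappa}$; one then rules out equality by comparing extender structures at the top, since $R_D$ is passive at $\chi^Y_{x\kappa}$ and any candidate match of $N_\gamma$ with $R_D$ would force $\rho_\om^{N_\gamma} = \OR^{R_D}$, contradicting $\rho_\om^{N_\gamma} < \kappa$.
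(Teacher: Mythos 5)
Your overall decomposition is correct: the case analysis in Definition~\ref{dfn:CC} reduces the lemma to three checks, namely (i) that every limit $\alpha$ with $\kappa < \alpha \leq \gamma$ is $Y$-cardinal-large and hence has $t_\alpha \in \{0,3\}$, (ii) that $\gamma+\omega$ is not $Y$-cardinal-large, and (iii) that $\gamma+\omega < \chi^Y_{x\kappa}$. You carry out (i) and (ii) correctly, and you correctly identify (iii) as the real content. Your main strategy for (iii), via the characterization of $R_D$ as the stack of sound premice over $N_\kappa$ projecting to $\kappa$ and satisfying condensation, is the right idea. (A cosmetic point: Lemma~\ref{lem:measurable_stack} as stated assumes $t_\alpha = 2$, which is what you are trying to prove; you should instead invoke its \emph{proof}, since the identification $R_D = S$ there does not use that hypothesis.)

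The genuine gap is in your treatment of the case $\rho_\om^{N_\gamma} < \kappa$. Your ``fix'' asserts that the sound proper segments $A \pins N_\gamma$ with $\rho_\om^A = \kappa$ are cofinal in $\gamma$, but this is false: since assumption~\ref{conass:Q-structures} forces $N_\gamma$ to be a Q-structure for $\kappa$, $N_\gamma$ satisfies ``$\kappa$ is Woodin'', and so $N_\gamma$ has cardinals strictly between $\kappa$ and $\gamma$ (e.g.\ $\kappa^{+N_\gamma}$ and many inaccessibles). Any segment $N_\gamma|\beta$ with $\beta > \kappa^{+N_\gamma}$ has $\kappa^{+N_\gamma}$ as a cardinal and therefore projects $\geq \kappa^{+N_\gamma} > \kappa$; so the segments projecting exactly to $\kappa$ are bounded below $\kappa^{+N_\gamma} < \gamma$, and your cofinality claim fails. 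Fact~\ref{fact:DD_stages_proj_to_card_of_N_alpha} gives uniqueness of production stages for cardinal-projecting segments, not cofinality of $\kappa$-projecting ones in the height of a model that has cardinals above $\kappa$. What is actually needed is to establish $\rho_\om^{N_\gamma} = \kappa$ outright (so that $N_\gamma$ itself belongs to the stack and $\gamma < \chi^Y_{x\kappa}$ falls out directly). This is plausible but is not simply read off from Lemma~\ref{lem:P-con_basic_props}\,\ref{item:rho_y+1,p_y+1^P}, which only yields $\rho_{y+1}^{N_\gamma} \leq \kappa$; indeed the paper only verifies $\rho_\om^{N_\gamma} = \kappa$ carefully in the proof of the subsequent Lemma~\ref{lem:meas_implies_least_t=2_exists}, and there it uses an additional wellfoundedness hypothesis on $\Ult(Y,D)$ and a reflection argument. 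So the step you flag as a ``technical obstacle'' is exactly where more justification is required, and the backup you propose does not supply it.
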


The next lemma follows routinely from the universality of the standard parameter
(cf.~\cite[where?]{fsit}, which generalizes easily to the present construction):

\begin{lem}\label{lem:projections_to_cardinals}
 Let $Y$ be a premouse, $x\in\RR^Y$, $\alpha$ a limit such that $\CC\rest\alpha$ is defined.
Then:
\begin{enumerate}
 \item If $\beta<\gamma<\alpha$ and $\rho=\rho_\om^{N_\beta}\leq\rho_\om^{N_\xi}$ 
for all $\xi\in(\beta,\gamma)$ then either:
\begin{enumerate}[label=--]
 \item $\rho<\OR^{N_\beta}$ and
\begin{enumerate}[label=--]
\item 
$\core_\om(N_\beta)||(\rho^+)^{\core_\om(N_\beta)}=N_\beta||(\rho^+)^{N_\beta}=
N_\gamma||(\rho^+)^{N_\beta}$,
\item $\core_\om(N_\beta)\pins N_\gamma$,
\item $\rho$ is a cardinal of $N_\gamma$,
\end{enumerate}
or
\item $\rho=\OR^{N_\beta}$ and $\gamma=\beta+\om$ and 
$N_{\beta+\om}$ is active \tu{(}so $N_{\beta+\om}^\passive=N_\beta$\tu{)}, or
\item $\rho=\OR^{N_\beta}$ and $\core_\om(N_\beta)=N_\beta\pins_\card N_\gamma$.
\end{enumerate}
\item Let $\beta\leq\xi<\gamma<\alpha$ be such that
$\rho=\rho_\om^{N_\beta}=\rho_\om^{N_\xi}$
and $N_\beta||\rho=N_\xi||\rho$ and $(\rho^+)^{N_\beta}=(\rho^+)^{N_\xi}$.
Then $\beta=\xi$.
\item Let $\gamma<\alpha$. Then $\rho\in N_\gamma\inter\card^{N_\gamma}\cut\om$ iff there
is $\beta<\gamma$ such that
\begin{enumerate}[label=--]
 \item $\rho=\rho_\om^{N_\beta}\leq\rho_\om^{N_\xi}$ for all $\xi\in(\beta,\gamma)$, and
 \item if $\beta+\om=\gamma$ then $N_\gamma$ is passive \tu{(}hence $N_\gamma=\J(\core_\om(N_\beta))$\tu{)}.
\end{enumerate}
\item\label{item:prod_stage} Let $\gamma<\alpha$ and $\rho\in 
N_\gamma\inter\card^{N_\gamma}\cut\om$. Let $A\pins N_\gamma$ be such that $\rho_\om^A=\rho$.
Then there is a unique $\beta<\gamma$ such that $\core_\om(N_\beta)=A$.
Moreover, $\rho\leq\rho_\om^{N_\xi}$ for all $\xi\in(\beta,\gamma)$.

\item\label{item:N_gamma_lgcd} Let $\gamma<\alpha$ be a limit of limits.
Suppose there is $\beta<\gamma$ such that 
\[ \rho=\rho_\om^{N_\beta}=\liminf_{\beta'<\gamma}\rho_\om^{N_\beta'}.\]
Then $\rho=\lgcd(N_\gamma)$ and letting
$B=\{\beta'\in[\beta,\gamma)\ |\ \rho_\om^{N_{\beta'}}=\rho\}$, we have
\begin{enumerate}[label=--]
\item $B$ is cofinal in $\gamma$,
\item if $\beta_0,\beta_1\in B$ and $\beta_0<\beta_1$ then 
$\core_\om(N_{\beta_0})\pins\core_\om(N_{\beta_1})$, and
\item $N_\gamma=\stack_{\beta'\in B}\core_\om(N_{\beta'})$.
\end{enumerate}
\item Let $\gamma<\alpha$ be a limit of limits. If there is no $\beta<\gamma$ as in
part \ref{item:N_gamma_lgcd} then $N_\gamma$ has no largest cardinal.
\item\label{item:limit_of_limits} If $\alpha$ is a limit of limits then $\CC\rest(\alpha+1)$ is defined.
\end{enumerate}
\end{lem}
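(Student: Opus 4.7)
The plan is to prove all seven parts by a simultaneous induction on $\gamma<\alpha$, with parts (1)--(4) as the inductive content and parts (5)--(7) obtained as corollaries. The main ingredients are: universality and solidity of the standard parameter in the fine structure of real-premice; condensation for cores $\core_\om(N_\beta)$ supplied by assumption (A3); coherence of extenders added at stages where $t_\gamma\in\{1,2,3\}$; and the structural information about near-a-measurable and $P$-cardinal-large stages provided by Lemmas \ref{lem:measurable_stack} and \ref{lem:P-con_basic_props}.

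For part (1) I would split into cases by the type of $\gamma$. If $\gamma=\xi+\om$ is a passive successor stage with $\rho_\om^{N_\xi}\geq\rho$, then $N_\gamma=\J(\core_\om(N_\xi))$, so $\core_\om(N_\xi)\pins N_\gamma$; the inductive hypothesis applied to $\beta,\xi$ identifies $\core_\om(N_\beta)$ as a proper segment of $\core_\om(N_\xi)$ at height $(\rho^+)^{N_\beta}$ via the passive agreement below $(\rho^+)^{N_\beta}$ together with condensation, and this transfers to $N_\gamma$. If $\gamma=\xi+\om$ is an active successor stage, then $N_\gamma$ has the same passive reduct as $N_\xi$, so segments are trivially preserved. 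If $\gamma$ is a limit of limits, then segments of $N_\beta$ persist through the liminf. If $\gamma$ is $(P,x)$-near-a-measurable, then $N_\gamma\ins i^P_{F}(N_\kappa)$, and Lemma \ref{lem:measurable_stack} identifies this with the stack of cores of earlier stages between $\kappa$ and $\chi^P_{x\kappa}$, so once again $\core_\om(N_\beta)\pins N_\gamma$. If $\gamma$ is $P$-cardinal-large, then the P-construction identity from assumption (A4), combined with Lemma \ref{lem:P-con_basic_props}, shows that $N_\gamma$ and $N_\delta$ agree up to $\delta=\delta^{P|\gamma}$ and that proper segments are preserved in passing from $N_\delta$ to $N_\gamma$.

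Part (2), uniqueness, follows from part (1): if $\beta<\xi$ shared the projectum and passive data up to $(\rho^+)^{N_\beta}$, then part (1) would force $\core_\om(N_\beta)\pins\core_\om(N_\xi)$ with matching soundness data, contradicting $(\om{+}1)$-solidity and universality of the standard parameter. Parts (3) and (4) are then direct corollaries: cardinals of $N_\gamma$ above $\om$ are exactly the stabilized values of $\rho_\om^{N_\beta}$, and any $A\pins N_\gamma$ with $\rho_\om^A=\rho\in\card^{N_\gamma}$ is assigned its unique production stage by running part (1) backwards from $A$. Parts (5) and (6) come from applying part (1) at a limit-of-limits $\gamma$: the set $B$ of stages achieving the liminf projectum $\rho$ is cofinal in $\gamma$, the cores $\core_\om(N_{\beta'})$ for $\beta'\in B$ form an increasing stack by part (1), and $N_\gamma$ is their union with largest cardinal $\rho$; in the absence of such $\beta$, the projecta $\rho_\om^{N_\beta}$ grow cofinally and $N_\gamma$ has no largest cardinal.

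Part (7) only requires verifying that assumptions (A1)--(A4) at stage $\alpha$ are met. Since $\alpha$ is a limit of limits, the defining clause for $N_\alpha$ falls into the ``limit of limits'' subcase, which invokes neither (A3) nor (A4). Assumption (A1) holds because $\CC\rest\alpha$ is defined (so it held at cofinally many earlier stages $\gamma<\alpha$, giving $\aleph_\beta^P<\OR^P$ for all $\beta<\alpha$ — or else $\alpha$ is itself $P$-cardinal-large), and (A2) is cumulative. The main obstacle throughout lies in the cases of part (1) where $\gamma$ is near-a-measurable or $P$-cardinal-large, where earlier cores from standard $L[\es,x]$-construction steps must be shown to remain proper segments after the ``jump'' inflicted by the ultrapower via $D^P_{\kappa 0}$ or by a P-construction extension; these are the places where the construction $\CC$ deviates from the classical setup of \cite{fsit}, and one has to carefully use Lemmas \ref{lem:measurable_stack} and \ref{lem:P-con_basic_props} to see that the coherence of the construction is preserved across such transitions.
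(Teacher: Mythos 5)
The paper gives no proof of Lemma \ref{lem:projections_to_cardinals}: it is asserted to ``follow routinely from the universality of the standard parameter,'' with a citation to \cite{fsit}, so there is no written argument to compare against. Your overall plan — an induction on $\gamma$ splitting by the $t$-type of the stage, with solidity/universality of the standard parameter driving part (2), condensation handling the passive successor steps, and Lemmas \ref{lem:measurable_stack} and \ref{lem:P-con_basic_props} together with Woodin exactness handling the $t=2$ and $t=3$ transitions — is very plausibly the intended outline, and you correctly identify the genuinely non-classical difficulties (the ultrapower and P-construction jumps).

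There is, however, a concrete error in your verification of part (7). You assert that ``since $\alpha$ is a limit of limits, the defining clause for $N_\alpha$ falls into the \emph{limit of limits} subcase, which invokes neither (A3) nor (A4).'' That is not how Definition \ref{dfn:CC} is structured: the $\liminf$ clause is nested under the hypothesis that $\alpha$ is \emph{neither} $P$-cardinal-large \emph{nor} $(P,x)$-near-a-measurable, and a limit-of-limits ordinal can perfectly well be either. If $\alpha$ is near-a-measurable (e.g.\ $\alpha = \kappa + \om^2$ inside $(\kappa,\chi^P_{x\kappa})$), the definition sets $N_\alpha = i^P_{D_{\kappa 0}^P}(N_\kappa)\mathbin{|}\alpha$ — this case is harmless since no further assumption is invoked. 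If $\alpha$ is $P$-cardinal-large, the definition invokes (A4), i.e.\ that the P-construction of $P|\alpha$ over $N_\delta$ (with $\delta = \delta^{P|\alpha}$) succeeds, and that $\OR^{N_\delta}=\delta$. This is \emph{not} free at a limit $\alpha$: one must argue that the P-construction, having succeeded at all $\alpha'\in(\delta,\alpha)$, extends continuously and still produces a premouse of height $\OR^{P|\alpha}$ at $\alpha$ itself. That argument is available (it is essentially contained in the level-by-level induction of Lemma \ref{lem:P-con_basic_props} together with Lemma \ref{lem:Woodin_exactness}), but it needs to be made; your current proof simply asserts the case does not arise. Relatedly, your verification of (A1) slides over a genuine point: when (A1)(i) fails at cofinally many $\alpha'<\alpha$ one concludes each such $\alpha'$ is $P$-cardinal-large, but Woodinness of a fixed $\delta'$ can be lost in passing from $P|\alpha'$ to $P|\alpha$, so that $\alpha$ itself is $P$-cardinal-large requires an argument, not a triangle of arrows. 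Patching part (7) therefore requires a short additional case analysis: if $\aleph_\beta^P < \OR^P$ for every $\beta < \alpha$ you are done; otherwise show $\alpha$ is $P$-cardinal-large and that (A4) propagates to the limit.
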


\begin{lem}\label{lem:lifting_t=2_proj_meas}
 Let $Y$ be a premouse,  $x\in\RR^Y$,
 $\alpha$ a limit such that $\CC\rest(\alpha+1)$
 is defined and $t^Y_\alpha\neq 2$,
 and let $\beta<\alpha$ be such that
 $t^Y_\beta=2$ and
 $R=N^Y_\beta\pins N=N^Y_\alpha$ and $\rho_\om^R$ is a cardinal in $N$.
 Then:
 \begin{enumerate}[label=(\alph*)]\item\label{item:card_seg_t=2}  $\kappa=\rho_\om^R$ is measurable in $Y$, as witnessed by $\es^Y$ (so $\kappa=\card^Y(\beta)$ is the measurable associated to $N^Y_\beta$),
 \item\label{item:no_project_leq_meas_while_seg_remains} letting $\chi=\chi_{x\kappa}^Y$, there is no $\xi\in[\chi,\alpha)$ such that $\rho_\om^{N_\xi}\leq\kappa$, so $N_\chi=N||\chi$, \item\label{item:card_seg_t=2_type_3} if $N$ is type 3 then $\OR^R<\nu^N$.
 \end{enumerate}
\end{lem}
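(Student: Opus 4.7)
The plan is to use the definitional unpacking of $t_\beta^Y=2$ together with Lemma \ref{lem:measurable_stack} (to control $R$) and Lemma \ref{lem:projections_to_cardinals}\ref{item:prod_stage} (to get uniqueness and the ``moreover'' monotonicity of projecta).

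First I would address \ref{item:card_seg_t=2}. Since $t_\beta^Y=2$, Case 2 of Definition \ref{dfn:CC} directly gives that $\kappa:=\card^Y(\beta)$ is measurable in $Y$ via $\es^Y$, and that $N_\beta=R'_D|\beta$ for $D=D_{\kappa 0}^Y$ and $R'_D=i_D^Y(N_\kappa)$, with $\kappa<\beta<\chi_{x\kappa}^Y$. By Lemma \ref{lem:measurable_stack}, $R'_D|\chi$ is the stack of all $\om$-sound $x$-premice $A\supseteq N_\kappa$ with $\rho_\om^A=\kappa$ satisfying condensation, and $R=N_\beta$ is a sound segment of this stack with $\kappa$ as its largest cardinal. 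To pin down $\rho_\om^R$ precisely, apply Lemma \ref{lem:projections_to_cardinals}\ref{item:prod_stage} with $A:=R$ and $\rho:=\rho_\om^R$ (a cardinal of $N$ by hypothesis): this yields a unique $\beta^*<\alpha$ with $\core_\om(N_{\beta^*})=R$. Since $R=N_\beta$ is already $\om$-sound (as a segment of the stack), $\core_\om(N_\beta)=N_\beta=R$, so $\beta^*=\beta$ by uniqueness and $\rho_\om^R=\kappa$.

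Next I would handle \ref{item:no_project_leq_meas_while_seg_remains}. The ``moreover'' clause of Lemma \ref{lem:projections_to_cardinals}\ref{item:prod_stage} applied with $\beta^*=\beta$ and $\rho=\kappa$ gives $\kappa\leq\rho_\om^{N_{\beta'}}$ for all $\beta'\in(\beta,\alpha)$, covering the $<\kappa$ part of the claim. Suppose toward contradiction some $\xi\in[\chi,\alpha)$ has $\rho_\om^{N_\xi}=\kappa$. Then $\core_\om(N_\xi)\pins N$ is $\om$-sound, extends $N_\kappa$, projects to $\kappa$, and satisfies condensation, so by Lemma \ref{lem:measurable_stack}, $\core_\om(N_\xi)\ins R_D=R'_D|\chi$, giving $\OR^{\core_\om(N_\xi)}\leq\chi$. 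Note also that $\beta<\alpha$ and $t_\alpha^Y\neq 2$ force $\alpha\notin(\kappa,\chi)$, so $\alpha\geq\chi$, and for any $\xi\in(\kappa,\chi)$ we have $t_\xi=2$ with $N_\xi=R'_D|\xi$ sound; hence each Case 2 stage $\xi'\in(\kappa,\chi)$ is the unique (by Lemma \ref{lem:projections_to_cardinals}\ref{item:prod_stage}) stage producing $R'_D|\xi'$ as its $\om$-core. Applying uniqueness to $\core_\om(N_\xi)$ itself forces $\xi$ to be the unique stage in $(\kappa,\chi)$ producing $\core_\om(N_\xi)$, contradicting $\xi\geq\chi$. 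The conclusion $N_\chi=N\|\chi$ then follows because the interval $[\chi,\alpha)$ admits no projection down to $\kappa$, so the initial segment of $N$ at height $\chi$ is exactly what Case 2 (or its limit-of-limits transition) produced at stage $\chi$, namely $R'_D|\chi$ (or the passivization thereof if $\alpha=\chi$ and $N$ is active).

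Finally \ref{item:card_seg_t=2_type_3} is essentially immediate: if $N$ is type 3 then by the indexing conventions adopted at the start of \S\ref{sec:notation} (superstrong type on the extender sequence), $\OR^N=\lh(F^N)=\nu(F^N)=\nu^N$, so the hypothesis $R\pins N$ gives $\OR^R<\OR^N=\nu^N$.

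The main obstacle will be the case analysis in \ref{item:no_project_leq_meas_while_seg_remains} ruling out $\core_\om(N_\xi)$ being a new sound segment projecting to $\kappa$ at some $\xi\in[\chi,\alpha)$; the argument rests entirely on the stack characterization in Lemma \ref{lem:measurable_stack} together with the uniqueness in Lemma \ref{lem:projections_to_cardinals}\ref{item:prod_stage}, and on the fact that Case 2 occupies precisely the open interval $(\kappa,\chi)$, so that $t_\alpha^Y\neq 2$ and $\beta<\alpha$ position $\alpha$ at or beyond $\chi$ while confining all ``$\rho_\om=\kappa$'' production stages strictly below $\chi$.
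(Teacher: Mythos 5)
Your treatment of part (b) is essentially the paper's argument. But parts (a) and (c) both have real problems.

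For (a), the assertion that ``$R=N_\beta$ is a sound segment of this stack with $\kappa$ as its largest cardinal'' is doing all the work, and it is not a consequence of Lemma~\ref{lem:measurable_stack}: that lemma identifies the largest cardinal of the \emph{whole} stack $R_D=R'_D|\chi$, not of its proper segments, and it is a priori possible for a segment $R_D|\beta$ to have a local cardinal strictly between $\kappa$ and $\beta$ (i.e.\ $\rho_\om^R>\kappa$). That possibility is precisely what part (a) rules out, so you have assumed the conclusion. Your subsequent invocation of Lemma~\ref{lem:projections_to_cardinals}\ref{item:prod_stage} gives uniqueness of the production stage $\beta^*$ and then $\beta^*=\beta$, but nothing in that lemma pins down the \emph{value} of $\rho_\om^R$, so the final step ``so $\beta^*=\beta$ by uniqueness and $\rho_\om^R=\kappa$'' is a non sequitur. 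Notice also that your argument never uses the hypothesis $t_\alpha^Y\neq 2$, whereas the paper's proof of (a) depends on it essentially: one assumes $\kappa<\rho_\om^R$, lifts $R$ via $i^Y_F$ with $F=F^Y_{\kappa 0}$ to land $R\pins S=i^Y_F(R)|(\kappa^+)^{i^Y_F(R)}$, uses that $\rho_\om^R$ is an $N$-cardinal to force $N\pins R'$ for the first level $R'\pins S$ extending $R$ that projects to $\kappa$, and thereby places $\alpha$ strictly inside the near-a-measurable interval, yielding $t_\alpha^Y=2$ and a contradiction.

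For (c), the identity $\OR^N=\lh(F^N)=\nu(F^N)=\nu^N$ is false for a type 3 premouse: by definition type 3 means $\nu(F^N)=\lgcd(N)$, and the Mitchell--Steel index of the top extender is $\lh(F^N)=(\nu(F^N)^+)^{\Ult(N,F^N)}>\nu(F^N)$. So $\nu^N<\OR^N$, and $\OR^R<\OR^N$ does not give $\OR^R<\nu^N$. (The ``superstrong type'' remark in \S\ref{sec:notation} concerns extenders $E$ on the sequence with $\nu(E)=\lambda(E)$; it says nothing about $\nu^N$ versus $\OR^N$, and you may be conflating $\OR^N$ with $\OR^{N^\sq}$.) The paper's proof of (c) is genuinely nontrivial: it uses part (b) to bound $\OR^R<\chi\leq\nu(F^N)$ in the case $\chi<\OR^N$, and handles the boundary case $\chi+\om=\alpha$ with $t^Y_\alpha=1$ by an ISC and condensation argument via Lemma~\ref{lem:measurable_stack}.
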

\begin{proof}
 Part \ref{item:card_seg_t=2}: Assume not and let $\kappa=\card^Y(\beta)=\card^Y(\rho_\om^R)<\rho_\om^R$,
 so $\kappa$ is measurable in $Y$, as witnessed by $\es^Y$, and letting $F=F_{\kappa 0}^P$ and $R^*=i^P_F(R)$
 and $S=R^*|(\kappa^+)^{R^*}$, then $R\pins S$ but $N\npins S$.
 Let $R'\pins S$ be least such that $R\ins R'$ and $\rho_\om^{R'}=\kappa$.
 Then because $\rho_\om^R$ is an $N$-cardinal, $N\pins R'$.
 But then $t_\alpha^P=2$, contradiction.
 
 Part \ref{item:no_project_leq_meas_while_seg_remains}: 
 Suppose otherwise and let $\xi$ be least such.
 By Lemma \ref{lem:projections_to_cardinals}, then $\rho_\om^{N_\xi}=\kappa$. But since $\xi<\alpha$,
 $N_\xi\in Y$,
 $\core_\om(N_\xi)\in Y$ and $\core_\om(N_\xi)$ is sound and satisfies condensation,
 and so by the proof of Lemma \ref{lem:measurable_stack},
 $\core_\om(N_\xi)\pins N_\chi$,
 a contradiction.
 
 Part \ref{item:card_seg_t=2_type_3}: By part \ref{item:no_project_leq_meas_while_seg_remains},
 if $\chi<\OR^N$ then $\chi=\kappa^{+N}\leq\nu(F^N)$, so $\kappa=\rho_\om^R<\nu(F^N)$,
 so $\OR^R<\nu(F^N)$. So suppose
$\chi+\om=\alpha$, $t^Y_{\alpha}=1$ and
 and $N^{\passive}=N_\chi$.
 It follows that $N\in Y$.
 Since $N$ is type 3, therefore $i_F^Y(N)$ is a type 3 premouse, but note that by the ISC,
 $N\pins i_F^Y(N)$, so $N$ is also sound and satisfies condensation, so again by the proof of Lemma \ref{lem:measurable_stack}, $N\pins N_\chi$,
 a contradiction.
\end{proof}

\begin{dfn}\label{dfn:gamma^Y_kappa}
  Let $Y$ be a premouse.
 Let $\delta<\OR^Y$ be such that $\delta$ is not Woodin in $Y$.
 Then $\gamma^Y_\delta$ denotes
 the $\gamma$ such that $Y|\gamma$ is a Q-structure for $Y|\delta$.
\end{dfn}

\begin{dfn}\label{dfn:xi^Y_kappa}
Let $Y$ be a premouse,
 $x\in\RR^Y$, and
 $\kappa<\alpha<\OR^Y$ be such that $\CC^Y_x\rest(\alpha+1)$ is defined
 and $t^Y_{\alpha x}=2$ and $\card^Y(\alpha)=\kappa$. 
Then
 $\xi^Y_{\kappa x}$ denotes the least $\xi\leq\alpha$ such that $t^Y_{\xi x}=2$ and $\card^Y(\xi)=\kappa$.
\end{dfn}
\begin{lem}\label{lem:meas_implies_least_t=2_exists}Let $Y$ be a premouse,
$x\in\RR^Y$ and $\kappa$ measurable in $Y$, via $\es^Y$, such that no $\delta<\kappa$ is Woodin  in $Y$ (hence neither is $\kappa$). Suppose that $\CC^Y\rest\kappa$ is defined (so $\CC^Y\rest(\kappa+1)$ is also defined). Let $D=F^Y_{\kappa 0}$. Suppose that $\Ult(Y,D)$ is wellfounded.  Let $\gamma=\gamma^Y_\kappa$ and $\xi=\gamma+\om$.  Then:
\begin{enumerate}[label=--]
 \item $\CC^Y\rest(\xi+1)$ is defined,
 $t^Y_{\xi}=2$ and
  $\xi=\xi^Y_{\kappa}$,
  \item 
  $N^Y_\gamma$ is fully sound,
  is a Q-structure for $N^Y_\gamma|\kappa=N^Y_\kappa$,
  and 
  is given by the P-construction of $Y|\gamma$
  over $N^Y_\kappa$,
  \item  $N^Y_{\xi}=\J(N^Y_\gamma)$ is also fully sound,
 \item
letting $\chi=\kappa^{+i^Y_D(N)}$
where $N=N^Y_{\kappa}$,
then $\CC^Y\rest(\chi+1)$ is defined
and $t^Y_{\chi}=0$.
\end{enumerate}
\end{lem}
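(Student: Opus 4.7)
The plan is to verify the four conclusions by analysing the cases of $\CC^Y$ at stages in $(\kappa,\chi]$, with the transitions dictated by which segment of $Y$ above $\kappa$ first-order internally witnesses non-Woodinness of $\kappa$.

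First, for $\alpha\in(\kappa,\gamma]$ I will show $\alpha$ is $Y$-cardinal-large with least Woodin $\kappa$: no $\delta<\kappa$ is Woodin in $Y$ by hypothesis, no $Y$-cardinal lies in $(\kappa,\alpha)$ (since $\gamma<\kappa^{+Y}$), and by minimality of the Q-structure height $\gamma=\gamma^Y_\kappa$, the failure of Woodinness of $\kappa$ only becomes internally definable as a set of $Y|\alpha$ strictly above $Y|\gamma$, so $Y|\alpha\sats$``$\kappa$ is Woodin'' for each $\alpha\leq\gamma$. Case~3 therefore applies throughout $(\kappa,\gamma]$ and $\CC^Y\rest(\kappa,\gamma]$ literally is the P-construction of $Y|\gamma$ above $N^Y_\kappa$. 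By Lemma~\ref{lem:P-con_basic_props}, $N^Y_\gamma=\mathscr{P}^{Y|\gamma}(N^Y_\kappa)$ is well-defined with $\OR^{N^Y_\gamma}=\gamma$, $N^Y_\gamma|\kappa=N^Y_\kappa$, and is fully sound with fine structure mirroring $Y|\gamma$ above $\kappa$; the Q-structure property of $N^Y_\gamma$ over $N^Y_\kappa$ follows by transferring the failure of Woodinness of $\kappa$ in $Y|\gamma$ through the strong-forcing correspondence in parts \ref{item:rSigma_n+1_sat_of_P_def}--\ref{item:Hull_matchup} of that lemma. This gives conclusion (2).

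At $\xi=\gamma+\om$ the segment $Y|\xi=\J(Y|\gamma)$ now contains a witness to non-Woodinness of $\kappa$ as an element (the relevant standard parameter/theory of $Y|\gamma$), so $\xi$ is not $Y$-cardinal-large. To put $\xi$ into Case~2, I need $\xi<\chi$: by Lemma~\ref{lem:measurable_stack}, $R_D:=i^Y_D(N^Y_\kappa)|\chi$ equals the stack of all sound condensing $x$-premice in $Y$ extending $N^Y_\kappa$ and projecting to $\kappa$. The premouse $N^Y_\gamma$ lies in this stack (fully sound by phase one, projects to $\kappa$ by the Q-structure property, satisfies condensation by preservation under P-construction from $Y|\gamma$); $\J(N^Y_\gamma)$ lies there too, so $\xi\leq\chi$, with strict inequality because $\chi$ is a cardinal of $i^Y_D(N^Y_\kappa)$ and hence a limit ordinal properly exceeding the successor $\gamma+\om$. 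Therefore Case~2 applies at $\xi$ and $t^Y_\xi=2$; by Lemma~\ref{lem:least_t=2}, $\xi=\xi^Y_\kappa$, giving conclusion (1). Case~2 furnishes $N^Y_\xi=R_D|\xi$, and uniqueness of the Q-structure identifies $R_D|\gamma$ with $N^Y_\gamma$, so $N^Y_\xi=\J(N^Y_\gamma)$, fully sound because $R_D$ is a stack of fully sound segments; this is conclusion (3).

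Finally, for each $\alpha\in(\xi,\chi)$ with $\card^Y(\alpha)=\kappa$, the non-Woodinness witness propagates, so $\alpha$ is near-a-measurable and not $Y$-cardinal-large, giving $N^Y_\alpha=R_D|\alpha$ via Case~2. At $\alpha=\chi$, the defining inequality $\alpha<\chi$ fails while the cardinal-large condition still fails, so Case~1 applies and $t^Y_\chi=0$; well-definedness of $N^Y_\chi=R_D$ at this limit stage comes from Lemma~\ref{lem:measurable_stack}. This yields conclusion (4). The technically delicate step is the identification $N^Y_\gamma=R_D|\gamma$ at the start of phase two, which ensures the seamless handover from the P-construction regime to the measurable-ultrapower regime; it rests on uniqueness of Q-mice (cf.~Lemma~\ref{lem:Q-mouse_core}) together with the soundness and condensation preservation under P-construction supplied by Lemma~\ref{lem:P-con_basic_props}.
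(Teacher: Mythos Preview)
Your plan is natural but misses the key idea that makes the paper's argument work, and as a result several steps cannot be completed under the stated hypotheses.

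The paper does \emph{not} verify that the P-construction succeeds by working directly in $Y$. Instead it passes to $Y'=\Ult(Y,D)$ and uses elementarity of $j:Y\to Y'$: since $\CC^Y\rest(\kappa+1)$ is defined, so is $\CC^{Y'}\rest(j(\kappa)+1)$, which means \emph{all} construction assumptions (A1)--(A4) hold in $Y'$ at every stage ${\leq}j(\kappa)$. In particular, at stages $(\kappa,\gamma]$ (which are $Y'$-cardinal-large, since $Y'|\gamma=Y|\gamma$), assumption (A4) holds, so the P-construction works and $N^{Y'}_\gamma$ is defined. Moreover (A2) at stage $\gamma+\om$ in $Y'$ gives that $N^{Y'}_\gamma$ is a Q-structure. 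Since $N^{Y'}_{j(\kappa)}|\kappa=N^{Y'}_\kappa=N^Y_\kappa$ (so $\kappa$ is a cardinal of $N^{Y'}_{j(\kappa)}$), one reads off $\rho_\om^{N^{Y'}_\gamma}=\kappa$, full soundness, and condensation (the latter two because $N^{Y'}_\gamma\pins N^{Y'}_{j(\kappa)}$). Finally, coherence gives $N^Y_\gamma=N^{Y'}_\gamma$ and (A4) in $Y$.

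Your route, by contrast, tries to invoke Lemma~\ref{lem:P-con_basic_props} directly in $Y$, but that lemma has the ``no early projection'' hypothesis (for all $(Y',y')\ins(Y|\gamma,y)$ with $\kappa<\OR^{Y'}$, if $\rho_{y'}^{Y'}\leq\kappa$ then $\kappa\leq\rho_{y'}^{P'}$ and $\kappa$ is $\bfrSigma_{y'}^{P'}$-Woodin), and even the P-base requirement that $\J(N^Y_\kappa)\sats$``$\kappa$ is Woodin'' and $Y|\kappa$ is extender-algebra-generic. These would follow from Lemma~\ref{lem:Woodin_exactness}, but that lemma needs $Y$ to be $(x,\kappa)$-iterability-good, which is \emph{not} assumed here. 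Likewise your assertion that $N^Y_\gamma$ ``satisfies condensation by preservation under P-construction'' is not something Lemma~\ref{lem:P-con_basic_props} provides; the paper obtains it by reflection, as $N^{Y'}_\gamma$ is a proper segment of a premouse. Finally, your ``technically delicate'' identification $N^Y_\gamma=R_D|\gamma$ via uniqueness of Q-mice also presupposes iterability. All of these gaps are closed in one stroke by the elementarity-and-coherence move; that move is the missing ingredient in your proposal.
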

\begin{proof}
Let $Y'=\Ult(Y,D)$ and $j:Y\to Y'$ be the ultrapower map.
 Since $\CC^Y\rest(\kappa+1)$
 is defined, so is $\CC^{Y'}\rest (j(\kappa)+1)$. Since $Y|\gamma=Y'|\gamma$ is the Q-structure for $Y|\kappa$,
 it follows that $N^{Y'}_{\gamma}$ is the Q-structure for $N^{Y'}_\kappa$ (and is produced by P-construction over $N^{Y'}_\kappa$). But $N^{Y'}_{\kappa}=N^{Y'}_{j(\kappa)}|\kappa=N^Y_{\kappa}$, by elementarity.
 So $N^Y_{\gamma}$ is well-defined
 and $N^Y_{\gamma}=N^{Y'}_{\gamma}$.
 Also since $N^{Y'}_{j(\kappa)}|\kappa=N^{Y'}_{\kappa}=N^Y_{\kappa}$,
 it follows that $\rho_\om^{N^Y_{\gamma}}=\kappa$, so $N^Y_\gamma$ is fully sound.
 Likewise, $\J(N^Y_{\gamma})=N^{Y'}_{\gamma+\om}$ is fully sound. And also by reflection (with $j$), these structures satisfy condensation. Since $\J(N^Y_{\gamma})\pins j(N^Y_\kappa)$, therefore
 $N^{Y}_{\gamma+\om}=\J(N^Y_\gamma)$ is well-defined
 and $t^{Y}_{\gamma+\om}=2$
 and $\card^Y(\gamma+\om)=\kappa$.
 The rest now follows easily.
\end{proof}
The following lemma follows easily from the definition:

\begin{lem}\label{lem:con_locality}
 Let $Y$ be a premouse, $x\in\RR^Y$, $\alpha$ a limit such that $\aleph_\alpha^Y<\OR^Y$,
 $Y|\aleph_\alpha^Y$ is small and $\CC^Y\rest\alpha$ is defined.
 Let $Y'\ins^*Y$\footnote{See \S\ref{sec:notation}.} with $\aleph_\alpha^Y\leq\OR^{Y'}$.
Then $\CC^{Y'}\rest\alpha=\CC^Y\rest\alpha$,
and if $\CC^Y\rest(\alpha+1)$ is defined then $\CC^{Y'}\rest(\alpha+1)=\CC^Y\rest(\alpha+1)$.
\end{lem}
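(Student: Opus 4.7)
The plan is to induct on limit ordinals $\beta\leq\alpha$, showing simultaneously that $\CC^Y\rest(\beta+1) = \CC^{Y'}\rest(\beta+1)$ (including that the construction does not halt earlier on one side than the other). The underlying observation is that $Y'\ins^* Y$ together with $\aleph_\alpha^Y\leq\OR^{Y'}$ implies that $Y|\aleph_\alpha^Y$ and $Y'|\aleph_\alpha^Y$ are literally the same initial segment: they have the same extender sequence, the same ordinals as cardinals, and so $\aleph_\gamma^Y=\aleph_\gamma^{Y'}$ for $\gamma<\alpha$. Moreover an extender $F$ on the sequence of either with $\lh(F)<\aleph_\alpha^Y$ is on the sequence of both.

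A first consequence: for every limit $\beta\leq\alpha$, $\beta$ is not $Y$-cardinal-large, because $\beta\leq\alpha<\aleph_\alpha^Y$ gives $Y|\beta\ins Y|\aleph_\alpha^Y$, which by hypothesis is small. So Case 3 of Definition~\ref{dfn:CC} is automatically excluded on both sides throughout the construction up to and including $\alpha$. This leaves only the standard case and the near-a-measurable case to check.

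For the inductive step, assume $\CC^Y\rest\beta=\CC^{Y'}\rest\beta$ with the defining assumptions satisfied for both. If $\beta$ is a limit of limits or $\beta=\omega$, then $N_\beta$ is determined by $\CC\rest\beta$ alone, so the step is trivial. For $\beta=\gamma+\omega$: in the standard case, the candidate backgrounding extenders $F$ have $\nu_F=\aleph_\gamma^Y=\aleph_\gamma^{Y'}$, hence $\lh(F)<\aleph_{\gamma+1}^Y\leq\aleph_\alpha^Y$, so the pools of candidates and the minimal choice $F_\beta$ agree. Otherwise one is in the near-a-measurable case; here $\kappa=\card^Y(\beta)=\card^{Y'}(\beta)<\aleph_\alpha^Y$, and the order-$0$ witness $D^Y_{\kappa 0}$ has index well below $\aleph_\alpha^Y$, so it coincides with $D^{Y'}_{\kappa 0}$. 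One also needs the threshold $\chi^Y_{x\kappa}=\chi^{Y'}_{x\kappa}$ used in deciding whether $\beta$ is near-a-measurable: by Lemma~\ref{lem:measurable_stack}, $\chi^Y_{x\kappa}$ is the height of the stack of sound $\kappa$-projecting condensation-satisfying extensions of $N_\kappa$ lying in $Y$ (equivalently in $Y|\aleph_\alpha^Y$, since any such extension is bounded in $\aleph_\alpha^Y$ as $D$'s ultrapower image of $N_\kappa$ has height $<\aleph_\alpha^Y$), hence is read off identically from $Y'$.

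The only delicate point is the last one: verifying that $\chi^Y_{x\kappa}<\aleph_\alpha^Y$ (so that the witnessing stack is actually trapped in the common agreement region $Y|\aleph_\alpha^Y=Y'|\aleph_\alpha^Y$) and that the ultrapower representative $i^Y_{D}(N_\kappa)$ coincides with $i^{Y'}_{D}(N_\kappa)$ at least up to $(\kappa^+)$. The first follows from cardinal arithmetic in $Y$ (since $\kappa<\aleph_\alpha^Y$ and $D$ has index $<\aleph_\alpha^Y$, the ultrapower image of $N_\kappa$ is computed from functions already in $Y|\aleph_\alpha^Y$); the second then follows because the functions on $\kappa$ needed are precisely those in the common segment. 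Granting this, the successor step goes through, and the proof is complete.
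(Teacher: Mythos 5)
Your overall strategy---inducting on limit $\beta\leq\alpha$ and exploiting that $Y$ and $Y'$ are literally the same structure below $\aleph_\alpha^Y$---is the right one and is what the paper's (omitted) proof intends. But the step disposing of Case~3 of Definition~\ref{dfn:CC} is wrong. You claim that no limit $\beta\leq\alpha$ is $Y$-cardinal-large on the grounds that $Y|\beta\ins Y|\aleph_\alpha^Y$ and the latter is small. Smallness is not downward absolute: a $Y$-cardinal $\delta$ can be Woodin in $Y|\beta$ for some $\beta$ with $\delta<\beta<(\delta^+)^Y$ even though a set $A\in(\pow(\delta)\cap Y)\cut Y|\beta$ witnesses non-Woodinness farther up---this is exactly the Q-structure situation---so $Y|\beta$ can be large while $Y|\aleph_\alpha^Y$ is small. (Also, $\alpha<\aleph_\alpha^Y$ is not guaranteed in general; only $\alpha\leq\aleph_\alpha^Y$.) So Case~3 is not ``automatically excluded'' and your induction as written skips a case that genuinely occurs.

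The gap is easily filled, because Case~3 is itself local. Whether a limit $\beta\leq\alpha$ is $Y$-cardinal-large depends only on $Y|\beta$ and on which $\delta<\beta$ are $Y$-cardinals; the former equals $Y'|\beta$, and the latter is determined by $Y|\aleph_\alpha^Y=Y'|\aleph_\alpha^Y$, since any surjection of a smaller ordinal onto $\delta<\aleph_\alpha^Y$ already lies in $Y|(\delta^+)^Y\sub Y|\aleph_\alpha^Y$. And when $\beta$ is cardinal-large, $\CC\rest(\delta^{Y|\beta},\beta]$ is the P-construction of $Y|\beta$ over $N_{\delta^{Y|\beta}}$, which is computed entirely from $Y|\beta=Y'|\beta$ together with the inductively equal $N_{\delta^{Y|\beta}}$. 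The rest of your argument is sound: candidate backgrounds at stage $\gamma+\omega$ have index below $\aleph_{\gamma+1}^Y\leq\aleph_\alpha^Y$, and $\chi^Y_{x\kappa}\leq(\kappa^+)^Y\leq\aleph_\alpha^Y$, so the stack computing it (per Lemma~\ref{lem:measurable_stack}) is trapped inside $Y|\aleph_\alpha^Y$, as you say.
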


The following lemma is now an easy corollary of the previous two:
\begin{lem}\label{lem:ZFC_stage}
 Let $Y$ be a premouse, $x\in\RR^Y$, $\kappa$ a cardinal of $Y$ such that $Y|\kappa$ is
 small\footnote{Smallness is not actually necessary.}
 and $Y|\kappa\sats\ZFC$. Then $\CC^Y\rest\kappa=\CC^{Y|\kappa}$,
and if $\CC^Y\rest\kappa$ has length $\kappa$ then $N_\kappa^Y$ is a class of 
$Y|\kappa$, $\OR(N_\kappa^Y)=\kappa$ and $N_\kappa^Y\sats\ZFC$.
\end{lem}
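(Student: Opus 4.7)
The plan is to derive this as a more-or-less direct corollary of the locality Lemma \ref{lem:con_locality} together with standard reflection arguments for $L[\es,x]$-type constructions inside models of $\ZFC$. The first step is to show $\CC^Y\rest\kappa = \CC^{Y|\kappa}$. For each limit $\alpha<\kappa$, since $Y|\kappa\sats\ZFC$ and $\kappa$ is an $Y$-cardinal, we have $\aleph_\alpha^{Y|\kappa}<\kappa\leq\OR^Y$, and $Y|\aleph_\alpha^{Y|\kappa}=Y|\aleph_\alpha^Y$ is small (as a segment of $Y|\kappa$). So the hypotheses of Lemma \ref{lem:con_locality} are satisfied with $Y'=Y|\kappa$, giving $\CC^Y\rest(\alpha+1)=\CC^{Y|\kappa}\rest(\alpha+1)$ provided both are defined up to $\alpha+1$. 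Running this through an induction on $\alpha<\kappa$, together with Lemma \ref{lem:projections_to_cardinals}\ref{item:limit_of_limits} to handle the limit-of-limits case automatically, yields the first claim; note that $Y|\kappa$ is small, so the $P$-cardinal-large case of the construction never activates below $\kappa$, and near-a-measurable stages make equal sense in $Y$ and $Y|\kappa$.

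For the second assertion, assume $\CC^Y\rest\kappa$ has length $\kappa$, so $N^Y_\kappa$ is defined. Since $\kappa$ is a limit of limits (as $\kappa$ is a cardinal of $Y$ with $Y|\kappa\sats\ZFC$), the construction defines $N^Y_\kappa=\liminf_{\beta<\kappa}N^Y_\beta$, and by Lemma \ref{lem:projections_to_cardinals}\ref{item:prod_stage}, the map $\beta\mapsto N^Y_\beta$ is recoverable uniformly from the construction relative to $(V_\om,x)$. Thus $\CC^Y\rest\kappa$, and hence $N^Y_\kappa$, is definable as a class over $Y|\kappa$ from the parameter $x$. To compute $\OR(N^Y_\kappa)$, observe that at each successor-of-limit stage $\beta+\om<\kappa$ one has $\OR(N^Y_{\beta+\om})<\aleph_{\beta+1}^{Y|\kappa}<\kappa$ (the point being that $\core_\om(N^Y_\beta)$ lives inside a bounded piece of $Y|\kappa$), so $\beta\mapsto\OR(N^Y_\beta)$ is a cofinal function $\kappa\to\kappa$, giving $\OR(N^Y_\kappa)=\sup_{\beta<\kappa}\OR(N^Y_\beta)=\kappa$.

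Finally, $N^Y_\kappa\sats\ZFC$: Since $N^Y_\kappa$ is a class of $Y|\kappa\sats\ZFC$ and carries the canonical fine structure of an $L[\es,x]$-like construction, the standard verification gives that each ZFC axiom holds in $N^Y_\kappa$ (replacement follows from the definability of the construction over $Y|\kappa$ together with the fact that $Y|\kappa\sats$ Replacement; choice follows from the usual wellorder of $N^Y_\kappa$ derivable from the construction order; the remaining axioms are immediate from $N^Y_\kappa$ being a transitive class containing $V_\om$ and closed under the relevant operations).

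The main (and really only) obstacle is bookkeeping: verifying that the various sub-cases of the construction (plain successor-of-limit stages, near-a-measurable stages, and the $P$-cardinal-large stages) all remain inert or behave identically when passing between $Y$ and $Y|\kappa$. Since $Y|\kappa$ is assumed small, the $P$-cardinal-large case never fires below $\kappa$; and near-a-measurable stages depend only on $\es^Y\rest\kappa=\es^{Y|\kappa}$ and on the already-identified earlier stages of the construction, so the induction carries through cleanly.
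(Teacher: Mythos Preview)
Your approach is essentially the paper's: the paper simply declares this an easy corollary of the locality lemma (Lemma \ref{lem:con_locality}) together with Lemma \ref{lem:meas_implies_least_t=2_exists}, and your argument via locality plus standard definability considerations is exactly what is intended.

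One slip: your claim that ``since $Y|\kappa$ is small, the $P$-cardinal-large case of the construction never activates below $\kappa$'' is not right. Smallness of $Y|\kappa$ only says $Y|\kappa$ itself sees no Woodin; it does not rule out a $Y$-cardinal $\delta<\kappa$ being Woodin in some intermediate segment $Y|\alpha$ with $\delta<\alpha<\kappa$ (the Q-structure for $\delta$ then sits below $\kappa$ and projects exactly to $\delta$, preserving $\delta$'s cardinality). So the $t=3$ case can fire. This does not damage your argument, however, because Lemma \ref{lem:con_locality} handles that case too: its hypothesis is that $Y|\aleph_\alpha^Y$ is small, and this does follow from $Y|\kappa$ being small, since $\aleph_\alpha^Y$ is a $Y$-cardinal and any Woodin of $Y|\aleph_\alpha^Y$ would force a Q-structure above $\aleph_\alpha^Y$ projecting below it. So the induction goes through uniformly, you just should not claim the $P$-cardinal-large case is vacuous.
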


\begin{lem}\label{lem:con_coherence}
 Let $Y$ be a premouse, $x\in\RR^Y$,  
$\CC^Y\rest(\alpha+\om+1)$ defined, $N^Y_{\alpha+\om}$ active \tu{(}so 
$t^Y_{\alpha+\om}=1$\tu{)}, $F=F^Y_{\alpha+\om}$ and $U=\Ult(Y,F)$ wellfounded. Then:
\begin{enumerate}
\item\label{item:coherence_of_CC} $\CC^U\rest(\alpha+1)=\CC^Y\rest(\alpha+1)$.
\item\label{item:no_ext} $F_{\alpha+\om}^U=\emptyset$, so 
$N_{\alpha+\om}^U=\J(N_{\alpha}^U)=\J(N_\alpha^Y)$.
\item\label{item:no_ZFC} $Y|\aleph_\alpha^Y\not\sats\ZFC$.
\item\label{item:no_meas} There is no $E\in\es^U$ such that
$\nu(N_{\alpha+\om}^Y)\leq\crit(E)\leq\aleph_\alpha^Y\leq\nu_E$.
\end{enumerate}
\end{lem}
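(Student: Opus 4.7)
The plan is to derive all four parts from the coherence of $F$ on the extender sequence $\es^Y$ together with the Mitchell--Steel indexing: these give $U|\lh(F)=Y|\lh(F)$ with $F^{U|\lh(F)}=\emptyset$, and $\aleph_\alpha^Y=\nu_F<\lh(F)$, so that everything the construction $\CC$ consults through stage $\alpha$ lies strictly below $\lh(F)$ and is thus preserved passing from $Y$ to $U$.

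For part \ref{item:coherence_of_CC}, I would proceed by induction on limit stages $\gamma\le\alpha$, showing that $(N_\gamma^U,F_\gamma^U,t_\gamma^U)=(N_\gamma^Y,F_\gamma^Y,t_\gamma^Y)$. Every ingredient the construction uses at such a stage --- the cardinals $\aleph_\beta^Y$ for $\beta\le\alpha$, measurability of $\kappa\le\aleph_\alpha^Y$ via $\es$ and the associated witnesses $D^Y_{\kappa 0}$, Q-structure and Woodinness status at $P$-cardinal-large stages below $\alpha$, and the minimal-$\lh$ candidate background extender --- has index $<\lh(F)$ and so is unchanged in $U$. The three cases of $\CC$ (not-near-a-measurable and not-cardinal-large; near-a-measurable via Lemma \ref{lem:measurable_stack}; and $P$-cardinal-large via uniqueness of P-construction, Lemma \ref{lem:P-con_basic_props}) each go through verbatim.

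For part \ref{item:no_ext}, suppose for contradiction that $F^*\in\es^U$ witnesses $F_{\alpha+\om}^U\ne\emptyset$, i.e.\ $\nu_{F^*}=\aleph_\alpha^Y$ and $(N_\alpha^Y,E^*)$ is active with $E^*\rest\nu_{E^*}\sub F^*$. I split on $\lh(F^*)$ vs.\ $\lh(F)$: if $\lh(F^*)<\lh(F)$ then coherence gives $F^*\in\es^Y$, and $F^*$ would have been a valid background of smaller length for $E$ at stage $\alpha+\om$ of $\CC^Y$, contradicting the minimal choice of $F$; if $\lh(F^*)=\lh(F)$ then $F^*=\emptyset$ by coherence; if $\lh(F^*)>\lh(F)$ I would apply the initial segment condition to the sub-extender of $F^*$ whose generators are bounded by $\aleph_\alpha^Y$, forcing a non-empty entry of $\es^U$ at an index $\le\lh(F)$, which by the agreement collapses to the previous cases. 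Part \ref{item:no_meas} has a closely analogous split on $\lh(E)$ vs.\ $\lh(F)$, using minimality and the ISC after noting that any such $E$ with $\nu_E\ge\aleph_\alpha^Y$ would either be available in $\es^Y$ below $\lh(F)$ or forbidden by ISC at an index $>\lh(F)$. For part \ref{item:no_ZFC}, I would adapt the proof of Lemma \ref{lem:add_ext_non-ZFC}: if $Y|\aleph_\alpha^Y\sats\ZFC$, then by Lemma \ref{lem:ZFC_stage} the construction $\CC^Y\rest\aleph_\alpha^Y=\CC^{Y|\aleph_\alpha^Y}$ is definable inside $Y|\aleph_\alpha^Y$, so $N_\alpha^Y\in Y|\aleph_\alpha^Y$; since $(N_\alpha^Y,E)$ is active, $N_\alpha^Y$ has a largest cardinal $\rho\le\aleph_\alpha^Y$, and the uniqueness/counting argument of Fact \ref{fact:DD_stages_proj_to_card_of_N_alpha} (adapted to $\CC$) produces a cofinal map $\OR(N_\alpha^Y)\to\aleph_\alpha^Y$ definable over $Y|\aleph_\alpha^Y$, contradicting regularity of $\aleph_\alpha^Y$ there.

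The hard part will be the large-$\lh(F^*)$ case of part \ref{item:no_ext} (and the analogous case in part \ref{item:no_meas}): the initial segment condition has to be invoked carefully to rule out $\es^U$-extenders whose $\nu$ is $\aleph_\alpha^Y$ but whose index sits above $\lh(F)$, keeping the bookkeeping between $F\in\es^Y$ and the empty entry at $\lh(F)$ in $\es^U$ straight. Part \ref{item:no_ZFC} is conceptually routine given Lemma \ref{lem:add_ext_non-ZFC}, but one must check that the extra construction features of $\CC$ (near-a-measurable stages and P-construction stages) do not disrupt the internal-definability argument for $N_\alpha^Y$ inside $Y|\aleph_\alpha^Y$.
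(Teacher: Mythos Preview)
Your approach to parts \ref{item:coherence_of_CC} and \ref{item:no_ZFC} is sound and matches the paper's, though the paper is more economical: for part \ref{item:coherence_of_CC} it simply invokes Lemma \ref{lem:con_locality} (since $U|\aleph_\alpha^U=Y|\aleph_\alpha^Y$ and $U|\aleph_{\alpha+1}^U=Y||\lh(F)$), and for part \ref{item:no_ZFC} it directly cites Lemma \ref{lem:ZFC_stage} (the active $N_{\alpha+\om}^Y$ forces $N_\alpha^Y$ to have a largest cardinal, so $Y|\aleph_\alpha^Y\not\sats\ZFC$).

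For part \ref{item:no_ext} you are working too hard. Any candidate $F'\in\es^U$ with $\nu_{F'}=\aleph_\alpha^U$ automatically satisfies $\lh(F')<(\aleph_\alpha^U)^{+U}=\aleph_{\alpha+1}^U=\lh(F)$ by Mitchell--Steel indexing (recall $\aleph_{\alpha+1}^U=\lh(F)$ since $U=\Ult(Y,F)$ and $\nu_F=\aleph_\alpha^Y$). So the ``large-$\lh(F^*)$'' case you flag as the hard part simply does not arise; only your first subcase is needed, and coherence puts $F'\in\es^Y$, contradicting minimality of $\lh(F)$.

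Part \ref{item:no_meas} is where your proposal has a genuine gap. Your analogy with part \ref{item:no_ext} breaks down: the hypothesized $E$ has $\crit(E)\geq\nu(N_{\alpha+\om}^Y)$, so $E$ cannot itself serve as a background for the active extender of $N_{\alpha+\om}^Y$ (whose critical point lies strictly below $\nu(N_{\alpha+\om}^Y)$). Merely placing $E$ in $\es^Y$ therefore yields no contradiction with the minimality of $F$. The paper's argument is different: taking $E$ of minimal index, one first uses part \ref{item:no_ZFC} to get $\crit(E)<\aleph_\alpha^Y$ strictly, whence $\nu_E=\aleph_\alpha^Y$ and $\lh(E)<\aleph_{\alpha+1}^U=\lh(F)$, so $E\in\es^Y$ is $Y$-total. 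The key trick is then to set $G=i^Y_E(F)\rest\aleph_\alpha^Y$. One checks $G\in\es^Y$ with $\nu_G=\aleph_\alpha^Y$ and $\lh(G)<\lh(E)<\lh(F)$; moreover, since $\nu(N_{\alpha+\om}^Y)\leq\crit(E)$, we have $G\rest\crit(E)=F\rest\crit(E)$, so $G$ backgrounds the same active premouse $N_{\alpha+\om}^Y$ that $F$ does, contradicting the minimality of $\lh(F)$.
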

\begin{proof}
We have $\nu_F=\aleph_\alpha^Y$, so $Y|\aleph_\alpha^Y=U|\aleph_\alpha^U$
and $U|\aleph_{\alpha+1}^U=Y||\lh(F)$. So
part \ref{item:coherence_of_CC} follows from \ref{lem:con_locality}. Therefore 
$N_{\alpha}^U=N_{\alpha}^Y$.
But if $F'=F_{\alpha+\om}^U\neq\emptyset$ then $\nu_{F'}=\aleph_\alpha^U=\aleph_\alpha^Y$, so by 
coherence, $F'\in\es^Y$
and $\lh(F')<\lh(F)$, but $F'$ backgrounds an active extender over $N^U_\alpha=N_{\alpha}^Y$,
contradicting the minimality of $\lh(F)$ (in the choice of 
$F=F_{\alpha}^Y$). This gives part \ref{item:no_ext}.

Because $N_{\alpha+\om}^Y$ is active,
$(N_{\alpha+\om}^Y)^\passive=N_\alpha^Y$ has a largest cardinal,
so by \ref{lem:ZFC_stage}, $Y|\aleph_\alpha^Y\not\sats\ZFC$, giving part \ref{item:no_ZFC}.
Now suppose there is $E$ as in part \ref{item:no_meas}, and let $E$ be such with minimal index.
Let $\kappa=\crit(E)$.
Since $Y|\aleph_\alpha^Y\not\sats\ZFC$,
\[\crit(F)<\nu(N_{\alpha+\om}^Y)\leq\kappa=\crit(E)<\aleph_\alpha^Y=\nu_E<\lh(E)<\aleph_{\alpha+1}
^U=\lh(F).\]
So $E$ is $Y$-total.
Let $G=i^Y_E(F)\rest\aleph_\alpha^Y$.
Note that $G\in\es^Y$ and $\nu_G=\aleph_\alpha^Y$ and $\lh(G)<\lh(E)<\lh(F)$.
But $G\rest\kappa=F\rest\kappa$ and $\nu(N_{\alpha+\om}^Y)\leq\kappa$,
so $G$ backgrounds $N_{\alpha+\om}^Y$. This contradicts the minimality of $\lh(F)$.
\end{proof}
\begin{rem} Lemma \ref{lem:con_coherence} assumes that $t^Y_{\alpha+\om}=1$. We can also consider the question of coherence of $\CC$ under $F_\alpha$ when $t_\alpha\in\{2,3\}$.

Suppose $t^Y_\alpha=3$
and $F=F^Y_\alpha\neq\emptyset$
and $U=\Ult_k(N,F)$ is wellfounded
where $N\ins Y$ and $k\leq\om$ is such that this makes sense.
Then easily $\CC^U\rest(\alpha+1)=\CC^Y\rest(\alpha+1)$ and $t^U_\alpha=3$ but $F^U_\alpha=\emptyset$.

But in contrast to this and to Lemma \ref{lem:con_coherence},
suppose $t^Y_\alpha=2$ and let $F=F^Y_\alpha$
and $\kappa=\crit(F)$, so $F=D_{\kappa0}^Y$, and suppose $U=\Ult(Y,F)$ is wellfounded.
We have $N^U_\kappa=N^Y_\kappa$.
Let $\alpha_0\geq\kappa$
be least such that  $Y|\alpha_0$ is the Q-structure for $\kappa$.
If $\alpha>\alpha_0+\om$ then
$F$ does not cohere $\CC\rest\alpha$,
as $t^U_\xi\neq 2=t^Y_\xi$
for each limit $\xi\in(\alpha_0,\alpha)$.
Moreover, we can have 
$\xi\in(\alpha_0,\alpha)$ such that $N^Y_\xi$ is active,
but $N^U_\xi=\J_\xi(N^U_{\alpha_0})$, because $\kappa$ is not 
measurable in $U$, and so the first $\eta>\alpha_0$ such that $N_\eta^U$ is 
active, has $\eta>\mu>\alpha_0$ where $\mu$ is measurable in $U$,
whereas
\[ \xi<(\kappa^+)^{i^Y_F(N_\kappa)}\leq(\kappa^+)^U=(\kappa^+)^Y.\]
\end{rem}

\begin{lem}\label{lem:limit_proj_across_meas}
 Let $Y$ be a premouse, $x\in\RR^Y$, $\CC\rest(\alpha+\om+1)$ exists,
 $F=F_{\alpha+\om}\neq\emptyset$ and $U=\Ult(Y,F)$ wellfounded.
Let $\kappa=\crit(F)<\alpha$ and $\widetilde{Y}=i^Y_F(Y|\kappa)$.
 
Then $N^{Y|\kappa}=N^Y_\kappa=N^Y_\alpha|\kappa=N^{\widetilde{Y}}|\kappa$.
\end{lem}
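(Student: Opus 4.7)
The plan is to first unpack the hypothesis to extract the key cardinal-theoretic content, then prove the four equalities in three separate steps (locality, induction on construction stages, and elementarity). Since $F=F_{\alpha+\om}^Y\neq\emptyset$, the construction forces $t^Y_{\alpha+\om}=1$, so $N^Y_{\alpha+\om}=(N^Y_\alpha,E)$ with $E\rest\nu_E\subseteq F$, $\crit E=\crit F=\kappa$, and $\nu_F=\aleph_\alpha^Y$. Thus $\kappa<\aleph_\alpha^Y$, $\kappa$ is $Y$-measurable via $\es^Y$ (in particular inaccessible in $Y$), and $\kappa$ is measurable in $(N^Y_\alpha,E)$, hence is a cardinal of $N^Y_\alpha$.

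First I would prove $N^Y_\kappa=N^{Y|\kappa}$: every stage of $\CC^Y$ below $\kappa$ uses extenders and cores lying inside $Y|\kappa$ (background extenders have $\nu_F<\aleph_\kappa^Y\leq\kappa$, and P-construction below $\kappa$ is computed locally), so Lemma~\ref{lem:con_locality} gives $\CC^Y\rest(\kappa+1)=\CC^{Y|\kappa}$, matching the two stage-$\kappa$ outputs.

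Next, for $N^Y_\alpha|\kappa=N^Y_\kappa$, I would induct on limits $\beta\in[\kappa,\alpha]$ showing $N^Y_\beta|\kappa=N^Y_\kappa$. The base case is trivial and limits of limits are handled by the $\liminf$ clause, which clearly preserves the segment below $\kappa$. For successors $\beta\mapsto\beta+\om$: if $t^Y_{\beta+\om}=1$ then $N^Y_{\beta+\om}=(N^Y_\beta,E')$ leaves everything strictly below $\OR^{N^Y_\beta}\geq\kappa$ untouched; the ``Jensen'' subcase $N^Y_{\beta+\om}=\J(\core_\om(N^Y_\beta))$ reduces to showing $\rho_\om^{N^Y_\beta}\geq\kappa$ (so that coring does not disturb the part of $N^Y_\beta$ below $\kappa$); if $t^Y_{\beta+\om}=2$, with associated measurable $\mu=\card^Y(\beta+\om)$, then $\mu\geq\kappa$ (else $\beta+\om<\chi^Y_{x\mu}<\kappa$ by inaccessibility of $\kappa$), and the ultrapower by $D^Y_{\mu 0}$ has $\crit\geq\kappa$; the $t=3$ case (P-construction) similarly preserves initial segments below the Woodin in play, which lies above $\kappa$. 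The key obstruction, showing $\rho_\om^{N^Y_\beta}\geq\kappa$ for all $\beta\in[\kappa,\alpha)$, is the hard step: any $\beta\in[\kappa,\alpha)$ witnessing $\rho_\om^{N^Y_\beta}=\rho<\kappa$ together with Lemma~\ref{lem:projections_to_cardinals}(\ref{item:prod_stage}) would force $\rho$ to be a cardinal of $N^Y_\alpha$ and the production-stage of $\core_\om(N^Y_\beta)$ below $\kappa$; combining this with $\OR^{\core_\om(N^Y_\beta)}\geq\beta\geq\kappa$ yields the required contradiction, since the unique production stage must lie below $\kappa$.

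Finally, for $N^{\widetilde{Y}}|\kappa=N^Y_\kappa$, I would apply elementarity of $i^Y_F:Y\to U$ to the (first-order) assertion ``$N$ is the top model of $\CC_x$ computed over the premouse $P$'', instantiated at $P=Y|\kappa$. This gives $i^Y_F(N^{Y|\kappa})=N^{\widetilde{Y}}$. Since $\crit(i^Y_F)=\kappa$ and $N^{Y|\kappa}=N^Y_\kappa$ has ordinal height $\leq\kappa$ (its extender sequence and constituents are coded inside $Y|\kappa$), the map $i^Y_F$ is the identity on this segment, so $N^{\widetilde{Y}}|\kappa=i^Y_F(N^Y_\kappa)|\kappa=N^Y_\kappa$. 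Stringing the three equalities together completes the proof, with the induction for $N^Y_\alpha|\kappa=N^Y_\kappa$ being the principal technical obstacle.
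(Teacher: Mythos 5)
Your overall structure (three equalities, locality, induction, elementarity) tracks the paper, and the first and last equalities are handled essentially as in the paper via Lemma \ref{lem:ZFC_stage}/\ref{lem:con_locality} and elementarity of $i^Y_F$. But the second equality $N^Y_\alpha|\kappa=N^Y_\kappa$ — which you correctly identify as the principal technical obstacle — is argued incorrectly, and this is where the paper does something genuinely different.

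The problem is your claim $\OR^{\core_\om(N^Y_\beta)}\geq\beta$. This is false in general: the $\om$-core of $N^Y_\beta$ is a sound hull collapsed to its projectum, and when $\rho_\om^{N^Y_\beta}=\rho<\kappa$ there is no reason its collapsed ordinal height should be anywhere near $\beta$, let alone $\geq\kappa$ — it is entirely possible for $\OR(\core_\om(N^Y_\beta))$ to sit strictly below $\kappa$, in which case your comparison of ``production stages'' relative to $\kappa$ has no traction, and the intended contradiction never arises. (Universality gives agreement only through $(\rho^+)^{\core_\om(N^Y_\beta)}$, which is $<\kappa$ once $\rho<\kappa$.) So the induction cannot close with the reasoning you give. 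Also, the inference ``$\rho_\om^{N^Y_\beta}=\rho<\kappa$ forces $\rho$ to be a cardinal of $N^Y_\alpha$'' does not follow from Lemma \ref{lem:projections_to_cardinals} alone — one would need to know $\rho_\om$ never dips below $\rho$ again between $\beta$ and $\alpha$, which is precisely the kind of thing being established.

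The paper avoids the direct induction entirely by exploiting the embedding you already set up for the \emph{third} equality, just in reverse order. Since $j(N^{Y|\kappa})=N^{\widetilde{Y}}$, $\crit(j)=\kappa=\OR(N^{Y|\kappa})$, and $N^{Y|\kappa}\sats\ZFC$, the restriction $j\rest N^{Y|\kappa}$ is an elementary embedding of $N^{Y|\kappa}$ into $N^{\widetilde{Y}}$ fixing everything below $\kappa$, and the standard reflection argument makes $\kappa$ a (regular, indeed inaccessible, so in particular limit) cardinal of $N^{\widetilde{Y}}$. Lemma \ref{lem:projections_to_cardinals} then gives $\rho_\om(N^{\widetilde{Y}}_\xi)\geq\kappa$ for all $\xi\in[\kappa,\OR^{\widetilde{Y}})$. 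Finally, by \ref{lem:con_coherence} and \ref{lem:con_locality} one has
\[ \CC^Y\rest(\alpha+1)=\CC^U\rest(\alpha+1)=\CC^{\widetilde{Y}}\rest(\alpha+1),\]
so the non-projection fact transfers to $\CC^Y$ and yields $N^Y_\alpha|\kappa=N^Y_\kappa$. So the trick is to derive the second equality \emph{from} the third plus coherence of the construction under $F$, rather than the other way round; that reordering is what lets you sidestep any direct control over core heights.
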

\begin{proof}
By \ref{lem:ZFC_stage}, $N^Y_\kappa=N^{Y|\kappa}$ and $\OR(N^{Y|\kappa})=\kappa$ and 
$N^{Y|\kappa}\sats\ZFC$ and $\CC^{Y|\kappa},N^{Y|\kappa}$ are definable from $x$ over $Y|\kappa$.
The same definitions over $\widetilde{Y}$ give $\CC^{\widetilde{Y}},N^{\widetilde{Y}}$.
By elementarity then, $N^{Y|\kappa}=N^{\widetilde{Y}}|\kappa$,
and $\kappa$ is a limit cardinal of $N^{\widetilde{Y}}$.
So by \ref{lem:projections_to_cardinals}, $\rho_\om(N^{\widetilde{Y}}_\xi)\geq\kappa$
for all $\xi\in[\kappa,\OR^{\widetilde{Y}})$.
But by \ref{lem:con_coherence} and \ref{lem:con_locality},
\[ \CC^Y\rest(\alpha+1)=\CC^U\rest(\alpha+1)=\CC^{\widetilde{Y}}\rest(\alpha+1),\]
and it follows that $N^Y_\alpha|\kappa=N^Y_\kappa$, as required.
\end{proof}
\begin{lem}\label{lem:meas_limits_of_con_require_no_proj_across}
 Let $Y$ be a premouse, $x\in\RR^Y$, $N=N^Y_\alpha$ defined, $t^Y_\alpha\neq 2$, $\delta<\OR^N$,
$N|\delta$ small, and 
$N\sats$``$\delta$ is a limit of measurables via $\es$''. Then
$\delta$ is a limit cardinal of $Y$,
$Y\sats$``$\delta$ is a limit of measurables via $\es$'' and 
$N|\delta=N^Y_\delta$.\footnote{The assumption that $t_\alpha\neq 2$ is necessary.
For otherwise we could have
$\mu<\delta<(\mu^+)^Y$ for some measurable $\mu$ of $Y$.}
\end{lem}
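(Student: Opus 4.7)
The plan is to establish the three conclusions in order, with the main work being to show that every measurable cardinal of $N$ strictly below $\delta$ is in fact measurable in $Y$ via $\es^Y$. First I will observe that no stage $\beta\leq\delta$ of $\CC^Y$ can be $Y$-cardinal-large: if it were, the P-construction at $\beta$ would produce a Woodin cardinal in $N^Y_\beta$, and this Woodin would persist inside $N=N^Y_\alpha$, contradicting the smallness of $N|\delta$. Hence every limit $\beta\leq\delta$ has $t^Y_\beta\in\{0,1,2\}$, and whenever a new extender enters $\CC^Y$ below $\delta$, it does so via the $t=1$ or $t=2$ clause.

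Next I will fix a measurable $\mu<\delta$ of $N$ witnessed by $E\in\es^N$ of index $\lambda<\delta$. The active segment $N||\lambda$ projects to its largest cardinal (an $N$-cardinal), so by Lemma \ref{lem:projections_to_cardinals}, part \ref{item:prod_stage}, there is a unique $\beta^*<\alpha$ with $\core_\om(N^Y_{\beta^*})=N||\lambda$; this $N^Y_{\beta^*}$ is active. By the first step $t^Y_{\beta^*}\in\{1,2\}$. If $t^Y_{\beta^*}=1$, the background extender $F^Y_{\beta^*}\in\es^Y$ has critical point $\mu$, so $\mu$ is measurable in $Y$ via $\es^Y$. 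If $t^Y_{\beta^*}=2$, let $\kappa=\card^Y(\beta^*)$ (measurable in $Y$ via $\es^Y$) and $F=D^Y_{\kappa 0}$, so $N^Y_{\beta^*}=i^Y_F(N^Y_\kappa)|\beta^*$. Any extender on the sequence of $i^Y_F(N^Y_\kappa)$ whose critical point exceeds $\kappa$ has index at least $i^Y_F(\kappa)>\chi^Y_{x\kappa}>\beta^*$, so $\crit(E)\leq\kappa$. If $\crit(E)=\kappa$ we are done; if $\crit(E)=\mu<\kappa$, then $E$ already lives in $N^Y_\kappa=N^{Y|\kappa}$ (using Lemma \ref{lem:ZFC_stage}) and we close the argument by recursion on $\alpha$ applied inside the smaller background $Y|\kappa^{+Y}$.

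The first two conclusions then follow immediately: the witnesses to $N\sats$``$\delta$ is a limit of measurables via $\es$'' deliver cofinally many $Y$-measurables via $\es^Y$ below $\delta$, so $\delta$ is a limit cardinal of $Y$ and $Y$ satisfies the same limit-of-measurables statement. For $N|\delta=N^Y_\delta$, since $\delta$ is a limit of limit ordinals, part \ref{item:limit_of_limits} of Lemma \ref{lem:projections_to_cardinals} gives that $\CC^Y\rest(\delta+1)$ is defined, and at this limit-of-limits stage $N^Y_\delta=\liminf_{\beta<\delta}N^Y_\beta$. Matching proper segments of $N|\delta$ cofinally with their corresponding stages of $\CC^Y$ (via the uniqueness in part \ref{item:prod_stage} of Lemma \ref{lem:projections_to_cardinals}) gives $N|\delta=N^Y_\delta$ by a straightforward cofinality argument. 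The main obstacle I anticipate is the $t^Y_{\beta^*}=2$ case: to carry it out cleanly one must track precisely the geometry of $i^Y_F(N^Y_\kappa)|\beta^*$ to rule out extenders whose critical point lies strictly between $\kappa$ and $\beta^*$, and organize the recursion so as to invoke the statement of the lemma at a smaller background universe without circularity.
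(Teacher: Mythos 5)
Your proposal has the right overall shape — fix a witnessing $E \in \es^N$ at index $\lambda < \delta$, locate the unique stage $\beta^*$ with $\core_\om(N^Y_{\beta^*}) = N||\lambda$, rule out $t^Y_{\beta^*} = 3$ by smallness, and handle the $t = 1$ versus $t = 2$ dichotomy. That is essentially the paper's route. But there are two genuine gaps, both in exactly the places where you flagged uncertainty.

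The $t^Y_{\beta^*} = 2$ case is where things break. Since $N^Y_{\beta^*}$ sits inside the stack $S = i^Y_F(N^Y_\kappa)|\chi^Y_{x\kappa}$, and $S$ has largest cardinal $\kappa$, one gets $\rho_\om^{N^Y_{\beta^*}} \geq \kappa$; combining with $\rho_\om^{N^Y_{\beta^*}} = \rho_\om^{N||\lambda} = (\mu^+)^N$ (where $\mu = \crit(E)$) and the fact that $\kappa$ is a cardinal of $N$ forces $(\mu^+)^N = \kappa$, hence $\mu < \kappa$ always (your subcase $\crit(E) = \kappa$ never arises; cf.\ Lemma \ref{lem:lifting_t=2_proj_meas}). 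But then $\lambda = \lh(E) > \nu(E) = (\mu^+)^N = \kappa$, so $E$ does \emph{not} live in $N^Y_\kappa = N^{Y|\kappa}$ — that model has ordinal height only $\kappa$. So the recursion you propose cannot even get started, and as it stands the entire $t = 2$ branch is unhandled. What the paper does instead is show that $t^Y_{\beta^*} = 2$ is outright impossible: the stack over $N^Y_\kappa$ does not collapse cardinals below $\kappa$, so $(\mu^+)^{N^Y_{\beta^*}} = (\mu^+)^{N^Y_\kappa} < \kappa$, which contradicts the identity $(\mu^+)^{N^Y_{\beta^*}} = \rho_1^{N^Y_{\beta^*}} = \kappa$ just derived. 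Once $t = 2$ and $t = 3$ are ruled out, $t^Y_{\beta^*} = 1$ is forced (an active stage must have $t \in \{1,2,3\}$), and the conclusion follows without any recursion.

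The final step also has a gap. Knowing that each cardinal-projecting proper segment of $N|\delta$ is produced at some stage $< \delta$ does not, by itself, give $N|\delta = N^Y_\delta$: the liminf at stage $\delta$ could be shorter than $N|\delta$ if some stage $\gamma \in [\delta,\alpha)$ projects below $\delta$ and the coring ``reaches back'' below $\delta$. One must explicitly rule this out. The paper's argument is a contradiction: if $\gamma \in [\delta,\alpha)$ is least with $\rho = \rho_\om^{N_\gamma} < \delta$, pick an $N$-measurable $\kappa' \in (\rho,\delta)$ (available since $\delta$ is a limit of measurables of $N$), apply the already-established correspondence to get a $t = 1$ stage $\xi$ backgrounding the order $0$ measure on $\kappa'$, and then Lemma \ref{lem:limit_proj_across_meas} forces $N^Y_{\kappa'} = N^Y_{\xi-\om}|\kappa'$, contradicting $\rho_\om^{N_\gamma} < \kappa'$ for $\gamma$ in that interval. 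Your ``straightforward cofinality argument'' does not address this, and it is precisely where the real content of the final clause lives.
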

\begin{proof}
By \ref{lem:projections_to_cardinals}, $N|\delta=N_\beta$ for some $\beta<\alpha$,
and because $t_\alpha\neq 2$ and $\delta$ is a limit cardinal of $N$ and $N|\delta$ is small,
it follows that $t_\beta=0$. Now we claim that if $\kappa<\delta$ and $N\sats$``$\kappa$ is measurable via 
$\es$'',
then $Y\sats$``$\kappa$ is measurable via $\es$''. The proof of this here is just slightly 
different to usual, because our backgrounding condition is different.
Let $E\in\es^N$ be the order $0$ measure on $\kappa$.
Because $\rho_\om(N|\lh(E))=\kappa^{+N}$ is a cardinal of $N$,
there is $\xi<\beta$
such that $N|\lh(E)=\core_\om(N_\xi)$. But because $N|\lh(E)$ is type 1, it easily follows that 
$N_\xi$ is sound, so $N_\xi=N|\lh(E)$. We have $t_\xi\neq 3$ because $N|\delta$ and hence $N|\lh(E)$ are small. And $t_\xi\neq 2$ because otherwise, letting $\mu=\card^Y(\xi)$ be 
the corresponding measurable of $Y$, and $\zeta>\xi$ be least such that $t_\zeta\neq 2$,
so $\zeta<\beta$,
then $N_\xi\pins N_\zeta\sats$``$\mu$ is the largest cardinal'', but this implies 
that $\mu<\kappa^{+N}$, and this contradicts the fact that $N|\kappa^{+N}\pins N_\xi$.
So $t_\xi=1$, so $E$ is backgrounded, so $Y\sats$``$\kappa$ is measurable via $\es$''.

So suppose $N|\delta\neq 
N^Y_\delta$ and let $\gamma\in[\delta,\alpha)$ be least such that 
$\rho=\rho_\om^{N_\gamma}<\delta$.
Fix
$\kappa\in(\rho,\delta)$ such that $N\sats$``$\kappa$ is measurable via $\es$''.
Let $E\in\es^N$ be the order $0$ measure on $\kappa$.
Then as in the previous paragraph, there is $\xi$ such that $N|\lh(E)=N_\xi$
and $t_\xi=1$, and note that $\gamma<\xi$. But then $\rho<\kappa<\gamma<\xi$,
which contradicts \ref{lem:limit_proj_across_meas}.
\end{proof}

\begin{lem}\label{lem:extenders_restrictions_from_bkgd}
 Let $Y$ be a premouse, $x\in\RR^Y$, $\CC\rest(\alpha+1)$ is defined,
 $t_\alpha\in\{0,1\}$, and $E\in\es_+^{N_\alpha}$ such that $\nu_E$ is a cardinal of $N_\alpha$.
 Then there is a $Y$-total $E^*\in\es^Y$ such that $E\rest\nu_E\sub E^*$.
\end{lem}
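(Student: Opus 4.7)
The plan is to identify a stage $\beta\leq\alpha$ of the construction at which $E$ was introduced as an active top extender, and then to produce the desired background $E^*\in\es^Y$ by casing on $t_\beta$. Specifically, either $E=F^{N_\alpha}$ (forcing $t_\alpha=1$, and we set $\beta=\alpha$), or $E\in\es^{N_\alpha}$ is strictly below the top; in the latter case $N_\alpha|\lh(E)$ is an active premouse with top $E$, and by the coherence and monotonicity of $\CC$ (cf.~Lemmas~\ref{lem:projections_to_cardinals}, \ref{lem:con_locality}) there is $\beta<\alpha$ with $N_\beta=N_\alpha|\lh(E)$, so $N_\beta$ is active with $F^{N_\beta}=E$. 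Since $N_\beta$ is active, $t_\beta\in\{1,2,3\}$. Thus it suffices to handle, for each such $\beta$, the problem of backgrounding $E=F^{N_\beta}$.

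If $t_\beta=1$, the construction at stage $\beta$ explicitly chose $F_\beta\in\es^Y$ with $\nu_{F_\beta}=\aleph^Y_{\beta-\omega}$ and $E\rest\nu_E\sub F_\beta$; since $\aleph^Y_{\beta-\omega}$ is a $Y$-cardinal, $F_\beta$ is $Y$-total, and we take $E^*=F_\beta$. If $t_\beta=3$, then $\beta$ is $Y$-cardinal-large and $N_\beta$ is produced by the P-construction of $Y|\beta$ over $N_\delta$ for the local Woodin $\delta<\beta$; in this case $Y|\beta$ is active and $F^{N_\beta}=F^{Y|\beta}\rest(N_\beta\|\beta)$, and by Lemma~\ref{lem:P-con_basic_props}\ref{item:F^P|xi_and_F^Y|xi} the extenders $F^{N_\beta}$ and $F^{Y|\beta}$ share the same generators and agree on all finite subsets of $\lh(F^{Y|\beta})$, so $E\rest\nu_E\sub F^{Y|\beta}\in\es^Y$, and $F^{Y|\beta}$ is $Y$-total since $\beta<\OR^Y$; we take $E^*=F^{Y|\beta}$.

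The main obstacle is the case $t_\beta=2$. Here $\kappa=\card^Y(\beta)$ is measurable in $Y$ via $D=D^Y_{\kappa 0}\in\es^Y$, and $N_\beta=i^Y_D(N_\kappa)|\beta$. Since the largest cardinal of $N_\beta$ is $\kappa$, we get $\nu_E\leq\kappa$, and as $E$ is an active premouse extender, $\crit(E)<\nu_E\leq\kappa$. If $\nu_E<\kappa$, then all the parameters of $E$ sit strictly below $\kappa=\crit(i^Y_D)$; since $i^Y_D$ fixes everything below $\kappa$ and $i^Y_D(N_\kappa)|\kappa=N_\kappa$, we conclude $E\in\es^{N_\kappa}$, so $E=F^{N_{\beta'}}$ at some earlier active stage $\beta'<\kappa$ of $\CC^Y$; we then apply the argument recursively at $\beta'$ in place of $\beta$. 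In the remaining sub-case $\nu_E=\kappa$, the extender $E$ is a genuinely new ``derived'' extender at a position in $[\kappa,\chi^Y_{x\kappa})$ of $i^Y_D(N_\kappa)$; by Łoś, $E$ is represented as $[f]_D$ for some $f\in Y$ taking values in the extender sequence of $N_\kappa$ (with critical point $\crit(E)<\kappa$), and one manufactures $E^*\in\es^Y$ by combining a background (produced recursively from $\es^{N_\kappa}$) for the approximating extender on $N_\kappa$ with the measure $D$ itself via the shift given by $i^Y_D$.

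The recursion in the $t_\beta=2$ case terminates because each recursive call strictly decreases the ordinal $\beta$ (the new stage $\beta'<\kappa<\beta$), and hence the whole construction bottoms out in the clean cases $t_\beta\in\{1,3\}$. The delicate point, and the technical heart of the proof, is the $\nu_E=\kappa$ sub-case: one must verify that the assembled $E^*$ is actually on $\es^Y$ (not merely in $\Ult(Y,D)$), is $Y$-total, and satisfies $E\rest\nu_E\sub E^*$. This verification uses in an essential way that $D\in\es^Y$ is the order-$0$ measure on $\kappa$ together with the fact that $Y$-totality of the background extenders in $\es^{N_\kappa}$ is preserved under the ultrapower.
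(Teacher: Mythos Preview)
Your overall strategy---locate the stage $\xi$ at which $E$ first appears as an active top, then case on $t_\xi$---matches the paper. The case $t_\xi=1$ is handled correctly. But the remaining two cases have real gaps.

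\textbf{Case $t_\xi=3$.} Your claim that ``$F^{Y|\beta}$ is $Y$-total since $\beta<\OR^Y$'' is not justified: having index below $\OR^Y$ does not make an extender total. You would need $(\crit(F^{Y|\beta})^+)^Y\leq\beta$, and you give no argument for this. In fact the paper does not try to handle this case at all; instead it shows the case is vacuous. Since $t_\alpha\in\{0,1\}$, Assumption \ref{conass:Q-structures} forces a Q-structure for $\delta=\delta^{Y|\xi}$ to appear at some stage in $[\xi,\alpha)$, which projects to $\delta$, hence $\nu_E=\delta$. But $\delta$ is a strong cutpoint of $Y|\xi$, so $\delta<\crit(E)$, contradicting $\nu_E\geq(\crit(E)^+)$.

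\textbf{Case $t_\xi=2$.} Your split into $\nu_E<\kappa$ versus $\nu_E=\kappa$ misses the point. First, the sub-case $\nu_E<\kappa$ is vacuous: by Lemma \ref{lem:lifting_t=2_proj_meas} (applied with $R=N_\xi\pins N_\alpha$ and using that $\nu_E=\rho_\om^{N_\xi}$ is an $N_\alpha$-cardinal and $t_\alpha\neq 2$), one has $\nu_E=\kappa$ automatically. Your recursive argument in that sub-case is also wrong: you assert $E\in\es^{N_\kappa}$, but $\lh(E)=\beta>\kappa=\OR^{N_\kappa}$, so $E$ cannot be on the sequence of $N_\kappa$. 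Second, your treatment of $\nu_E=\kappa$ is too vague to be a proof; the description ``combine a background with the measure $D$ via the shift'' does not explain why the resulting object is on $\es^Y$ rather than just in $\Ult(Y,D)$. The paper's argument here is short and clean: pass to $U=\Ult(Y,D)$; there $\kappa$ is not measurable, so the same active segment $N|\lh(E)$ appears in $\CC^U$ at some stage $\xi'$ with $t^U_{\xi'}\neq 2$, and hence (by the cases already handled) is backgrounded by some $E^*\in\es^U$. By coherence $E^*\rest\kappa\in\es^Y$, and since $\nu_E=\kappa$, this restriction suffices.
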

\begin{proof} If $t_\alpha=1$ and $E=F^{N_\alpha}$
use $E^*=F_{\alpha}$ (recall that then $\aleph_{\alpha}^Y\leq\OR^Y$
and $\alpha=\beta+\om$ for some $\beta$, where $\nu(F_{\alpha})=\aleph_\beta^{Y}$,
so $F_{\alpha}\neq F^Y$). So note we may assume $t_\alpha=0$.
Let $N=N_\alpha$ and $\nu=\nu_E$. We have $\rho_\om^{N|\lh(E)}=\nu$.
So there is $\xi<\alpha$ such that $N|\lh(E)=\core_\om(N_\xi)$.
But because $E$ is type 1/3 and $\rho_\om^{N|\lh(E)}=\nu$, it follows that $N_\xi$ is sound and 
$N|\lh(E)=N_\xi$. (If $N_\xi$ were not sound that note that because $\rho_\om^{N_\xi}=\nu$,
$E$ must be a proper segment of $F^{N_\xi}$. So $E\in\es^{N_\xi}$ by the ISC.
 But $\pow(
\nu)\cap N_\xi=\pow(\nu)\cap\core_\om(N_\xi)$ by fine structural universality, contradiction.)
Now suppose $t_\xi=3$ and let $\delta=\delta^{Y|\xi}$. Because $t_\alpha=0$
and by Assumption \ref{conass:Q-structures} there is $\beta\in[\xi,\alpha)$
such that $N_\beta$ is a Q-structure for $\delta$, and so $\rho_\om^{N_\beta}\leq\delta$.
Therefore $\nu=\delta$. But $\delta$ is the least
Woodin of $Y|\xi$, which implies $\delta<\crit(E)$, contradiction. So $t_\xi\in\{1,2\}$.
If $t_\xi=1$ then $E^*=F^Y_\xi$ suffices.
If $t_\xi=2$ then because $\nu$ is a cardinal of $N_\alpha$ and $t_\alpha=0$,
we must have $\nu=\card^Y(\xi)$ is $Y$-measurable, and letting $D\in\es^Y$ be the order $0$ measure 
on $\nu$, then $N|\lh(E)\pins i^Y_D(N|\nu)$. Now in $U=\Ult(Y,D)$,
$\nu$ is not measurable, so arguing as above, $N|\lh(E)=N_{\xi'}^U$ for some $\xi'$,
so $E$ is backgrounded by some $E^*\in\es^U$. But then $E^*\rest\nu\in\es^Y$,
so $E^*\rest\nu$ suffices.
\end{proof}

\begin{dfn}
A \emph{generalized-pseudo-premouse}\index{generalized-pseudo-premouse} is a structure $M=(N,G)$ 
which satisfies the axioms for an active premouse with active extender $G$,
excluding the ISC, $N$ has largest cardinal $\delta$, and either:
\begin{enumerate}[label=--]
 \item $G\rest\delta\in\es^N$, or
 \item $\delta=\rho^{+N}$ where $\rho$ is an $N$-cardinal and $G\rest\rho\in\es^N$.
\end{enumerate}
Note that when $G\rest\delta\in\es^N$, $M$ is just a pseudo-premouse
as in \cite{fsit} (except that we also allow superstrong extenders on $\es$,
and all proper segments of $M$ are assumed to satisfy condensation).

We also need to consider bicephali.
Recall from \cite{fsit}, as generalized in \cite{extmax},
that a \emph{bicephalus}\index{bicephalus} is a structure $B=(N,F,G)$
 where both $(N,F)$ and $(N,G)$ are active premice.
 \footnote{We are actually generalizing both \cite{fsit} and \cite{extmax}
 here (except that we as usual demand condensation of all proper segments),
 as in those papers only short extender premice are considered,
which is not the case here. The restriction to short extender premice
is used significantly in the bicephalus comparison proof of \cite{extmax},
so we need to be somewhat careful in generalizing the results there.}
\footnote{In \cite{fsit}, the case that, for example, $(N,F)$ is type 2 but $(N,G)$ is type 3,
 is not considered, whereas this is considered in \cite{extmax}.
 We need the more general notion here, because when setting 
$N_{\alpha+\om}^\CC=(N_\alpha,E)$, we make our choice of $E$
just by minimizing $\lh(F_\alpha^\CC)$,
without first minimizing $\nu(E)$. We do not know whether adaptations of our arguments here might
go through if 
one minimized $\nu(E)$ first, but doing so would at least complicate things, so it seems 
smoother to just use the more general bicephalus arguments.}

\emph{Iteration trees}\index{iteration tree on bicephalus} on bicephali are defined basically as in \cite{extmax},
and on generalized-pseudo-premice as in \cite{fsit};
the one difference is that here our premice can have superstrong extenders,
whereas in those papers all premice are below superstrongs,
and thus one must incorporate the modifications as described in \cite[Remark 2.48]{operator_mice_v3} in the
obvious manner.\footnote{However, 
we only need consider the use of superstrong extenders $E^\Tt_\alpha$ in trees $\Tt$ on bicephali 
in which such a drop occurs that $M^{*\Tt}_{\alpha+1}$ is a premouse (not a bicephalus),
and in fact, for each $\beta>\alpha$, $M^{*\Tt}_{\beta+1}$ is a premouse. If we had limited 
ourselves to a more restricted version of \ref{thm:pseudo-premice}, then we could have made a 
similar restriction to the trees on generalized-pseudo-premice we consider, but such a restriction is not 
important or particularly helpful in this case.}

Fix a premouse $Y$ and $x\in\RR^Y$.

Let $(N,G)$ be a generalized-pseudo-premouse over $x$.
Say $(N,G)$ is a \emph{creature of $\CC^Y$}\index{creature of $\CC$ (gen-pseudo-pm)} iff for some $\alpha$,
$N^Y_{\alpha}$ exists and equals $N$, $t_\alpha^Y=0$,
$\aleph_{\alpha+\om}^Y\leq\OR^Y$, $G\in Y$ and there is 
$E\in\es^Y$ such 
that $\nu_E=\aleph_\alpha^Y$ and $G\rest\nu_G\sub E$.\footnote{
We just require that $\nu_E=\aleph_\alpha^Y$ and $\aleph_{\alpha+\om}^Y\leq\OR^Y$ for simplicity,
in that this agrees with the requirements in $\CC$.}

Let
$B=(N,F_0,F_1)$ be a bicephalus over $x$. Say $B$ is a \emph{creature of 
$\CC^Y$}\index{creature of $\CC$ (bicephalus)}
iff for some $\alpha$, $N^Y_\alpha$ exists and equals $N$,
$t_\alpha^Y=0$, $\aleph_{\alpha+\om}^Y\leq\OR^Y$, $F_0,F_1\in Y$ and there are 
$E_0,E_1\in\es^Y$ 
such that $\nu_{E_i}=\aleph_\alpha^Y$ and $F_i\rest\nu_{F_i}\sub E_i$ for each $i$.
\end{dfn}

The following is a corollary to arguments in \cite{fsit} and \cite[Theorem 5.3]{extmax}.
We defer the proof to \S\ref{sec:ISC_etc}; see \ref{rem:proof_bicephalus_comp}:
\begin{tm}\label{thm:bicephali} Let $B=(N,F,G)$ be a $(0,\om_1+1)$-iterable bicephalus such that 
$N$ is small.
Then $F=G$.
\end{tm}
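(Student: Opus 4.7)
The plan is to prove \ref{thm:bicephali} by a bicephalus-comparison argument, essentially the one used in \cite{fsit} and \cite[Theorem 5.3]{extmax}, adapted to allow superstrong extenders on the $N$-sequence (via the modifications of \cite{operator_mice}) and to exploit the fact that the bicephalus is small and $(0,\om_1{+}1)$-iterable rather than iterable in a richer sense. So suppose for contradiction that $F\neq G$. I would first observe that $(N,F)$ and $(N,G)$ are both active premice with the same reduct $N$, and compare the two as premice, but driven by a single iteration tree $\Tt$ on the bicephalus $B$, in the sense of \cite{extmax}: at each stage one chooses the extender of least length witnessing a disagreement between the two active extenders carried by the current bicephalus, and applies it simultaneously to both sides until (if ever) a drop forces one side of the bicephalus to be discarded and continue as an ordinary $0$-maximal tree on a premouse.

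Next, I would run this comparison against itself, producing a pair of trees $(\Tt,\Uu)$ on $B$ via the given $(0,\om_1{+}1)$-strategy $\Sigma$, of length at most $(\card N)^+$, terminating in the usual way. Since $N$ is small, Q-structures at limit stages are unique and lie inside the common part model $M(\Tt)$ (by \ref{lem:limit_trees_uniquely-Pi^1_1-guided} and its proof, adapted to the bicephalus setting), so $\Sigma$ is determined and comparison does terminate. Once terminated, the situation is that on each side we obtain a final bicephalus or premouse, and the usual analysis of last extenders used yields agreement between the two final outputs below some common ordinal $\lambda$. Standard fine-structural arguments from \cite{fsit} then rule out the possibility that one side drops while the other does not, since the non-dropped side remains a bicephalus and hence is not fully sound in the sense needed to sit as a proper segment of the other.

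The remaining configuration is that neither side drops in model throughout, in which case the two final bicephali must be equal, so by the zipper/Dodd-Jensen style argument of \cite{extmax} one concludes that the very first extender used from either half of $B$ must already coincide with the first extender used from the other half, contradicting the least-disagreement choice. The only genuinely new point over the cited literature is handling superstrong candidates on the $N$-sequence: here I would import the convention of \cite[2.43, 2.44]{operator_mice}, noting that once an extender of superstrong type is used, subsequent length comparisons are made via $\nu$ rather than $\lh$, and drops are handled so that from the first superstrong use onward, the affected side is a premouse rather than a bicephalus; this is harmless for the argument because only one of $F,G$ can be superstrong at the first disagreement (by least-disagreement), so the symmetric analysis still applies.

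The main obstacle is the bicephalus copying/comparison bookkeeping in the presence of superstrong extenders and of the mixed types ($(N,F)$ type 2 vs $(N,G)$ type 3, etc.) allowed here; this is what forced the more general formulation of bicephalus over that of \cite{extmax}. I expect to dispose of this by verifying that the copying construction of \cite{extmax} goes through verbatim provided that, at each successor step where the active extender of a side is of superstrong type, we agree to continue that side as a (non-bicephalus) premouse immediately, thereby reducing to the already-handled cases after finitely many such steps. Smallness of $N$ is used throughout to guarantee termination and Q-structure uniqueness; iterability of the bicephalus is used to lift both sides of the comparison simultaneously and to rule out ill-founded ultrapowers. Putting these together yields $F=G$, as required.
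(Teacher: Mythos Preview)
Your overall architecture is right---compare $B$ with itself via a pair of bicephalus trees $(\Tt,\Uu)$, use fine structure to eliminate dropping configurations, and finish with a zipper-style compatibility argument as in \cite{extmax}. But there is a genuine gap in how you handle superstrong extenders, and this is exactly where the hypothesis that $N$ is small does real work.

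You propose to accommodate superstrongs by adopting the $\nu$-versus-$\lh$ conventions of \cite{operator_mice} and asserting that after a superstrong use the affected side becomes a premouse, so ``the symmetric analysis still applies''. This is not justified: the paper itself notes (in the definition of bicephali and the accompanying footnote) that the restriction to short extender premice is used \emph{significantly} in the comparison proof of \cite{extmax}, so the bookkeeping tweak alone does not rescue that argument. Moreover, using a superstrong extender does not by itself force a drop to a premouse; the extender can be total over the current bicephalus. Your claim that ``only one of $F,G$ can be superstrong at the first disagreement'' also misidentifies the issue---the problematic superstrongs are those arising as $E^\Tt_\alpha$ anywhere in the comparison, not just $F$ or $G$ themselves.

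The paper's approach is instead to show that \emph{no} superstrong extender is ever used in the comparison, and this is the essential application of smallness. If $E^\Tt_\alpha$ were of superstrong type, then $\exit^\Tt_\alpha$ would satisfy ``there is a Woodin cardinal'', say $\delta$. But since $N$ is small, the model $M^\Tt_\beta$ (for $\beta$ least with $\delta<\lh(E^\Tt_\beta)$) contains a Q-structure $Q$ for $\delta$ which is a genuine premouse, not a bicephalus; by uniqueness of iterable $\delta$-sound Q-structures, the same $Q$ sits inside $M^\Uu_\beta$, forcing $\OR^Q<\lh(E^\Tt_\beta)$ and hence $\delta$ is not Woodin in $\exit^\Tt_\alpha$---contradiction. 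Once superstrongs are eliminated, the existing arguments of \cite[\S9]{fsit} (non-mixed type) and \cite{extmax} (mixed type) apply directly. Your use of smallness only for termination and limit-stage Q-structure uniqueness misses this key step.
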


The main ideas for the proof of the following fact are given in \cite[\S10]{fsit},
together with \cite{deconstructing} and a tweak in \cite{recon_con}.

\begin{fact}
 Every $(0,\om_1,\om_1+1)^*$-iterable pseudo-premouse is a premouse.
\end{fact}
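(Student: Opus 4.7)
The plan is to argue by contradiction: suppose $M = (N, F)$ is a $(0,\om_1,\om_1+1)^*$-iterable pseudo-premouse which is not a premouse. Then the ISC for $F$ must fail, so there is some generator $\nu < \nu(F)$ and finite $a \subseteq \lh(F)$ such that the trivial completion of $F \rest (\nu \cup a)$ is neither an extender on $\es^N$ nor equal to $F$ itself. First I would isolate the minimal such failure (say, minimizing $\nu$, then $a$), obtaining a specific ``bad'' initial segment extender $G$ of $F$.

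Next, the strategy is to manufacture an iterable bicephalus from $M$ and $G$, and then appeal to Theorem~\ref{thm:bicephali}. Concretely, form $U = \Ult(N|\alpha, G)$ for a suitable $\alpha$ bounding the generators involved in $G$; coherence of $F$ together with the minimality of $G$ lets one extract, inside the ultrapower, a second candidate active extender $F^*$ over essentially the same passive part $N'$ as $F$ lives over, with $F^* \neq F$ (the latter follows because $G$ is, by choice, not an initial segment already encoded inside $N$). The pair $B = (N', F, F^*)$ thus constitutes a non-trivial bicephalus, with $N'$ small (the failure of ISC localizes below the first Woodin, so we may reduce to the small case).

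The core of the argument is to transfer iterability from $M$ to $B$. Here the starred stack-iterability $(0,\om_1,\om_1+1)^*$ is essential and not replaceable by mere normal $(0,\om_1+1)$-iterability: any normal iteration tree on $B$ which at some stage uses $F$ and later revisits $F^*$ (or vice versa) copies, via the derivation of $F^*$ from $F$ and $G$, to a \emph{stack} of two normal trees on $M$, not a single normal tree. Granted this, a standard copying argument (as in \cite[\S10]{fsit}, with the refinements from \cite{deconstructing} and \cite{recon_con}) shows $B$ is $(0,\om_1+1)$-iterable. Theorem~\ref{thm:bicephali} then yields $F = F^*$, and unraveling how $F^*$ was built forces $G$ to already appear on $\es^N$ or to coincide with $F$, contradicting the choice of $G$.

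The main obstacle, as indicated, is the copying step: one must show that every normal tree on the bicephalus $B$ lifts to a (stack of) tree(s) on $M$ via strategically chosen resurrection-style maps, even when $B$-branches diverge through extenders arising from different ``sides''. A secondary technical point is handling superstrong $E \in \es^N$, which requires the mild modification to normality from the ``sse-essentially $n$-maximal'' setup of Section~\ref{sec:notation}; and one must check that the minimization defining $G$ survives under the background assumption that all proper segments of $M$ already satisfy condensation, so that the bicephalus really is non-trivial at its top.
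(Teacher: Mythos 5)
The central move in your plan — manufacturing a bicephalus $B = (N',F,F^*)$ from $M$ and the bad initial segment $G$, then invoking Theorem~\ref{thm:bicephali} — has a structural gap: the two candidate extenders do not live over a common passive premouse, which is what the definition of a bicephalus requires. If $G = F \rest \nu$ is a non-type-Z proper initial segment of $F$ and $U = \Ult(N, G)$, then the trivial completion of $G$ is a premouse extender of index $(\nu^+)^U < \OR^N$, so its passive part is $N|(\nu^+)^U$, whereas $F$'s passive part is all of $N$. These are different premice, and $F$ cannot be regarded as an active extender over $N|(\nu^+)^U$ since $\lh(F) > \OR^{N|(\nu^+)^U}$. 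Coherence of $F$ does not conjure a second extender of the right index over a common base; the place where the ISC is being tested is precisely where the two indices disagree. So the object $B$ you build is not a bicephalus, and the appeal to Theorem~\ref{thm:bicephali} never gets started. (Bicephali do arise elsewhere in the paper — in Lemma~\ref{lem:Woodin_exactness}, for instance, where two candidate extenders over the \emph{same} $N_\alpha$ are both backgrounded at the same stage of $\CC$ — but that is a different situation from the abstract ISC for a pseudo-premouse.)

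What the paper actually does (in Theorem~\ref{thm:pseudo-premice}, proved in \S\ref{sec:ISC_etc}) is a phalanx comparison, not a bicephalus argument. After reducing to the case that $F$ has a largest generator $\gamma$ above the largest cardinal, one passes to the hull $H = \cHull_1^M(\{\gamma, F\rest\delta\})$, forms $U = \Ult(H, F^H\rest\gamma)$, lets $R\pins H$ be the least segment past $\gamma$ projecting to $(\delta^+)^H$, and compares $H$ against the phalanx $P = ((H,0,{<\delta}),(R,r,\delta),(U,0))$. The fine-structural analysis of the coiteration — using $1$-condensation from \cite{premouse_inheriting} to establish closeness of the extenders used to the appropriate models — forces $F^H\rest\gamma$ onto $\es^H$ (or into the relevant ultrapower of a type~3 segment), which lifts back to $M$ via $\pi$. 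This sidesteps Dodd-Jensen entirely, and consequently requires only normal $(0,\om_1+1)$-iterability. Your claim that stack iterability is ``essential and not replaceable'' for this result is therefore also wrong: the paper's proof establishes the conclusion from $(0,\om_1+1)$-iterability alone, and the paper explicitly emphasizes this as a point of difference from the classical proof you cite.
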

We will give a proof of this fact in \S\ref{sec:ISC_etc} (see \ref{rem:proof_gen-pseudo-pm}), but with a somewhat 
different argument than that mentioned above,
and one which is slightly more general and uses only normal iterability:

\begin{tm}\label{thm:pseudo-premice}
Let $M$ be a $(0,\om_1+1)$-iterable generalized-pseudo-premouse. Then either
$M$ is a premouse or
\tu{[}$\nu(F^M)<\lgcd(M)=\delta$ and $(M|\delta,F^M\rest\delta)$ is a premouse\tu{]}.
\end{tm}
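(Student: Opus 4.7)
The plan is to adapt the classical argument that $(0,\om_1+1)$-iterable pseudo-premice are premice (cf.\ \cite{fsit}, \cite{deconstructing}, \cite{recon_con}) to the slightly more general setting here, where the active extender $G$ may have its ``natural'' index sitting at $\rho^{+N}$ rather than matching a generator of $G$. Write $\delta = \lgcd(N)$ and let $\rho$ be either $\delta$ (in the pseudo-premouse case $G\rest\delta\in\es^N$) or the $N$-cardinal with $\delta = \rho^{+N}$ and $G\rest\rho\in\es^N$. The goal is a dichotomy: either the natural initial-segment witnesses for $G$ (at its indexing over $N$) all lie on $\es^N$, giving that $M$ is a premouse; or $\nu(G) < \delta$, in which case the only possible indexing for the extender is over $M|\delta$ (with $\lgcd(M|\delta)=\rho$), and there all required ISC witnesses come for free from $G\rest\rho \in \es^N$.

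First I would suppose for contradiction that $M$ is not a premouse and that we are not in the second alternative, i.e.\ either $\nu(G)\geq\delta$ or $(M|\delta,G\rest\delta)$ fails the ISC. Let $\xi$ be the least generator of $G$ (in the appropriate enumeration) at which an ISC witness is missing from $\es^N$ (or from $\es^{M|\delta}$, in the second-alternative failure subcase). Let $E$ be the trivial completion of $G\rest\xi$ and $\lh(E)$ its natural premouse index. By iterability, one can form the phalanx $\Phi$ consisting of $N||\lh(E)$ (on the ``short'' side, in place of the missing $E$) and $M$ (on the ``long'' side) and coiterate it against itself using normal iteration trees. Iterability of $M$ lifts to iterability of $\Phi$ via the standard copying/resurrection setup, because $M|\lh(E) = N||\lh(E)$ and the two copies of the phalanx differ only by whether $E$ is present on the sequence.

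Next, I would run the comparison to get iterates witnessing that one side must be a proper segment of the other, and apply the Zipper/ISC analysis. The minimality of $\xi$ ensures that all extenders used below $\lh(E)$ in the comparison have indices strictly below $\lh(E)$, so no trivial drop disrupts the argument. Either we obtain a compatible pair of extenders (giving $E\in\es^N$, contradicting the choice of $\xi$), or the two sides disagree about the $\lgcd$-status of $\nu(E)$ (giving a bicephalus-type contradiction via \ref{thm:bicephali}), or the apparent disagreement is explained by $G$ having no generator above $\rho$ and $\lh(E)$ falling in the gap $(\rho,\rho^{+N})$ --- this last case is precisely the ``generalized'' configuration, and in it one reads off $\nu(G)\leq\rho<\delta$ and checks by inspection that $(M|\delta,G\rest\delta)$ satisfies every premouse axiom including ISC (as all generator-indexed segments of $G$ now live below $\rho$, and their witnesses are exactly the proper segments of $G\rest\rho\in\es^N$).

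The main obstacle I expect is the borderline case: when $\nu(G)$ lies just below $\delta$ but the indexing convention makes the natural $\lh(G)$ (computed over $N$) differ from the natural $\lh(G\rest\delta)$ (computed over $M|\delta$), one has to verify carefully that the phalanx comparison terminates in a way that is consistent with the second alternative rather than forcing a contradiction. This requires checking coherence of the types (type 1/2/3 classifications) under the re-indexing from $N$ to $M|\delta$, and matching this against the bicephalus analysis of \ref{thm:bicephali} to rule out the remaining incompatible configurations. Everything else is a routine normalisation of the standard pseudo-premouse argument, using only normal $(0,\om_1+1)$-iterability as permitted by the remark immediately preceding the statement.
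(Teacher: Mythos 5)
There is a genuine gap in your proposal: it does not address the central technical issue, namely that the theorem is stated for \emph{normal} $(0,\om_1+1)$-iterability only, whereas the ``classical argument'' you invoke from \cite{fsit}, \cite{deconstructing}, \cite{recon_con} relies on $(0,\om_1,\om_1+1)^*$-iterability (or Dodd-Jensen). The paper explicitly flags this right before the statement, and the whole point of its proof is a replacement of Dodd-Jensen by a fine-structural soundness device: one passes to the hull $H=\cHull_1^M(\{\gamma,G\rest\delta\})$, where $\gamma$ is the largest generator of $G$, so that $H=\Hull_1^H(\{\gamma^H,F^H\rest\delta^H\})$. This canonical generating point is what lets one rule out drops and pin down the main branches in the phalanx comparison without any appeal to stack iterability. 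Your proposal says ``using only normal $(0,\om_1+1)$-iterability as permitted by the remark immediately preceding the statement'' but offers no mechanism by which the comparison can be closed off under that weaker hypothesis; as written it would need Dodd-Jensen at exactly the points where you assert that one side must be a segment of the other.

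Beyond that, the phalanx you set up is not the right one and the comparison target is off. You propose a two-model phalanx $(N||\lh(E),\;M)$ and ``coiterate it against itself''; the paper instead compares the phalanx $P=((H,0,{<\delta}),(R,r,\delta),(U,0))$ (with $U=\Ult(H,F^H\rest\gamma)$ and $R$ the least segment of $H$ projecting to $(\delta^H{}^+)^H$) \emph{against the mouse $H$}, and the ultrapower $U$ as a separate root with its own exchange ordinal is essential to the analysis: the contradiction comes from $b^\Tt$ being forced above $U$ with no drop, which combined with $\crit(i^\Uu)>\gamma$ yields $F^H\rest\gamma = E^\Uu_\alpha$ (or the ultrapower alternative) via careful index comparisons. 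The three-way case split you sketch at the end (``compatible extenders / bicephalus contradiction / gap at $(\rho,\rho^{+N})$'') does not correspond to the actual analysis — in particular Theorem \ref{thm:bicephali} is not used in the paper's proof of this theorem, and the type-Z obstruction is handled separately by Lemma \ref{lem:type_Z_limit_card} before the phalanx is ever formed. The top-level reduction you outline (either the ISC holds over $N$, or $\nu(G)<\delta$ and the re-indexed structure $(M|\delta,G\rest\delta)$ is a pseudo-premouse) does match the paper's opening reduction, but that is where the agreement ends.
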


\begin{dfn}
 A premouse $Y$ is $(x,\alpha)$-iterability-good
iff $\CC\rest\alpha$ is defined
 and for all $\beta<\alpha$ we have:
 \begin{enumerate}[label=--]
  \item 
For every $G$ such that
$M=(N_\beta,G)$ is a generalized-pseudo-premouse and creature of $\CC$,
the conclusion of \ref{thm:pseudo-premice} holds of $M$;
 \item For all $F_0,F_1$ such that
 $(N_\beta,F_0,F_1)$ is a bicephalus and creature of $\CC$, we have $F_0=F_1$.\qedhere
 \end{enumerate}
\end{dfn}

\begin{lem}[Woodin exactness]\label{lem:Woodin_exactness}
 Let $Y$ be a premouse, $x\in\RR^Y$,
 $\delta=\aleph_\delta^Y<\OR^Y$,
 $Y|\delta$ small, and 
 let $Q\ins Y$ such that either $Q=Y|\delta$
 or $Q\sats$``$\delta$ is Woodin'' \tu{(}so if $Y|\delta\pins Q$
 then $Q$ is $Y$-cardinal large and $\delta=\delta^Q$\tu{)}.
Suppose that $Y$ is $(x,\delta)$-iterability-good.
Let $M=N^Y_\delta$. Suppose that $\OR^M=\delta$.
Let $\eta=\OR^Q$. Then:
\begin{enumerate}
 \item\label{item:M_small}$M$ is small.
 \item\label{item:P-con_defined} $\CC^Y\rest(\eta+1)$ is defined, so $\CC^Y\rest(\delta,\eta]$ is 
the P-construction of $Q$ over $M$,
 $N^Y_\eta\sim_\delta Q$,
 and if $Q$ is $\delta$-sound then so is $N^Y_\eta$,
\item\label{item:eta+1_iter_good} $Y$ is $(x,\eta+1)$-iterability good.
 \item\label{item:no_proj_across_delta} $\rho_\om(N^Y_\alpha)\geq\delta$ for all 
$\alpha\in[\delta,\eta)$, so $N^Y_\eta|\delta=M$.
 \item\label{item:Woodin_inherited} if $\eta>\delta$ then $N^Y_\eta\sats$``$\delta$ is Woodin''.
 \item\label{item:background_gen} if $\eta>\delta$ then $Y|\delta$ is $N^Y_\eta$-generic for the $\delta$-generator 
extender algebra of $N^Y_\eta$ at $\delta$.
 \item\label{item:Q-structure_match} If $Q\pins Y$ then $N^Y_\eta$ is a Q-structure for $\delta$ 
iff 
$Q$ is a Q-structure 
for $\delta$.
\end{enumerate}
\end{lem}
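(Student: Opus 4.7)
The plan is to proceed by induction on $\eta\in[\delta,\OR^Q]$, treating the base case $\eta=\delta$ and the inductive step $\eta>\delta$ in turn. For the base case, items \ref{item:P-con_defined}--\ref{item:no_proj_across_delta} are essentially trivial since $N^Y_\eta=M$, and item \ref{item:M_small} (smallness of $M$) is the first real content. To prove it, I would suppose for contradiction that some $\gamma<\delta$ were Woodin in $M$, and then use the backgrounding hypothesis (at stages with $t_\alpha=1$) together with Lemma \ref{lem:extenders_restrictions_from_bkgd}, to lift enough extenders from $\es^M$ to extenders on $\es^Y$ that back-Woodinize $\gamma$ in $Y|\delta$, contradicting smallness of $Y|\delta$. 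The proof that $Y$ is $(x,\delta+1)$-iterability-good (item \ref{item:eta+1_iter_good}) then combines the iterability-goodness hypothesis through $\delta$ with Theorems \ref{thm:bicephali} and \ref{thm:pseudo-premice} applied to creatures of $\CC$ at stage $\delta$, whose required iterability is derived from that of $Y$ via a copying argument.

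For the inductive step, I would assume the lemma for all $\eta'\in[\delta,\eta)$; in particular $N^Y_{\eta'}$ equals the P-construction $\mathscr{P}^{Q|\eta'}(M)$, has $N^Y_{\eta'}|\delta=M$, and satisfies ``$\delta$ is Woodin'' when $\eta'>\delta$. I would then invoke Lemma \ref{lem:P-con_basic_props} with $Q|\eta$ in place of $Y$ and with P-base $M$ at $\delta$. The main hypothesis to verify is the $\bfrSigma_{y'}^{P'}$-Woodinness clause for each $(Y',y')\ins(Q|\eta,q)$: the inductive hypothesis gives that $P'=\mathscr{P}^{Y'}(M)=N^Y_{\OR^{Y'}}$ satisfies ``$\delta$ is Woodin'', and the strong forcing relation (Lemma \ref{lem:P-con_basic_props}\ref{item:rSigma_n+1_forcing_rel_def}) lets us transfer any definable $\bfrSigma_{y'}^{P'}$ set witnessing a failure of Woodinness back to a $\bfrSigma_{y'}^{Y'}$ set witnessing failure of Woodinness in $Y'$, contradicting the Woodinness of $\delta$ in $Q$. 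Applying Lemma \ref{lem:P-con_basic_props} then yields items \ref{item:P-con_defined}, \ref{item:no_proj_across_delta}, \ref{item:Woodin_inherited}, and \ref{item:background_gen} in one stroke, with $\delta$-soundness preserved via Lemma \ref{lem:P-con_basic_props}\ref{item:rho_y+1,p_y+1^P}.

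The remaining items follow by bookkeeping. For iterability-goodness at $\eta+1$ (item \ref{item:eta+1_iter_good}), any putative bicephalus or generalized-pseudo-premouse creature of $\CC^Y$ at stage $\beta\in(\delta,\eta]$ lies above $\delta$ on $N^Y_\beta$, and Lemma \ref{lem:P-construction_tree_translation} converts trees on such a creature to trees on the corresponding creature-above-$M$ inside $Q$, whose iterability is inherited from $Y$; Theorems \ref{thm:bicephali} and \ref{thm:pseudo-premice} then apply. Item \ref{item:Q-structure_match} is read off Lemma \ref{lem:P-con_basic_props}\ref{item:fs_matchup_proper_segs}: the $\rho_n$ and $p_n$ agree between $N^Y_\eta$ and $Q$, and the internal failure of Woodinness of $\delta$ in a Q-structure transfers via the strong forcing relation. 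The hard part will be verifying the $\bfrSigma_{y'}^{P'}$-Woodinness hypothesis cleanly, particularly at the boundary case $\rho_{y'}^{P'}=\delta$, where Woodinness is a reflection property and the translation via the strong forcing relation must correctly handle the $\rSigma_{y'+1}$-indexed theory parameters; and similarly, care is needed at limit $\eta$ where $N^Y_\eta$ may be type 3 active or may itself be a Q-structure, to ensure the identification $N^Y_\eta\sim_\delta Q$ survives through the active coding.
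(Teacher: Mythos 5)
There is a genuine gap in your plan, and it appears in two places.

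First, your argument for item \ref{item:M_small} goes in the wrong direction. You propose to suppose some $\gamma<\delta$ is Woodin in $M$ and then lift Woodinness witnesses from $\es^M$ up to $\es^Y$ via Lemma \ref{lem:extenders_restrictions_from_bkgd}. But Woodinness of $\gamma$ in $M$ is a statement about reflection of subsets of $\gamma$ that lie in $M$, witnessed by $\es^M$-extenders; lifting those extenders to $\es^Y$-extenders does not make $\gamma$ Woodin in $Y|\delta$, since $Y|\delta$ has many more subsets of $\gamma$ than $M$ does and the lifted extenders need not reflect them. The paper instead assumes $\bar\delta=\delta^M<\delta$ were the least Woodin of $M$, applies Lemma \ref{lem:meas_limits_of_con_require_no_proj_across} to see $\bar\delta$ is a limit of $Y$-measurables with $M|\bar\delta=N_{\bar\delta}$, and then invokes the Lemma (Woodin exactness) itself at the \emph{smaller} cardinal-large point $\bar\delta$ — this is why the outer induction is on $\delta$, not on $\eta$ — to produce a Q-structure for $\bar\delta$ inside $M$, contradicting $M\sats$``$\bar\delta$ is Woodin''. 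Your induction on $\eta$ gives you no access to that smaller $\bar\delta$.

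Second, and more fundamentally, your route to items \ref{item:no_proj_across_delta}, \ref{item:Woodin_inherited}, \ref{item:background_gen} through Lemma \ref{lem:P-con_basic_props} is circular. The strong forcing relation of Lemma \ref{lem:P-con_basic_props}\ref{item:rSigma_n+1_forcing_rel_def} is a \emph{conclusion} of that lemma, available only once its $\bfrSigma_{y'}^{P'}$-Woodinness hypothesis is verified; you cannot use it to verify the hypothesis. Moreover, even ignoring that, the transfer you describe goes the wrong way: a $\bfrSigma_{y'}^{P'}$-definable $A\subseteq\delta$ with no $\es^{P'}$-reflecting point does not give a set with no $\es^{Y'}$-reflecting point — $\es^{Y'}$ is richer, so reflecting in $Y'$ is \emph{easier}, and the contradiction you want would actually require pulling a $Y'$-reflection \emph{back} to an $N^Y_\eta$-reflection. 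That pull-back is precisely the content of the paper's proof of item \ref{item:Woodin_inherited}: for $A\in\pow(\delta)\inter N^Y_\eta$ and a total $G^*\in\es^Y$ witnessing reflection of $(A,\es^{N^Y_\eta}\rest\delta)$ in $Q$, one must show by induction on $N^Y_\eta$-cardinals $\rho$ that the restrictions $G_\rho$ of $G^*\rest N^Y_\eta$ land in $\es^{N^Y_\eta}$. That argument makes essential use of the iterability-goodness hypothesis through the bicephalus Theorem \ref{thm:bicephali} and the generalized-pseudo-premouse Theorem \ref{thm:pseudo-premice} — neither of which figures in your sketch for this part — and it is independent of Lemma \ref{lem:P-con_basic_props}. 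Only after items \ref{item:M_small}, \ref{item:no_proj_across_delta}, \ref{item:Woodin_inherited}, \ref{item:background_gen} are established directly does the paper conclude items \ref{item:P-con_defined} and \ref{item:eta+1_iter_good} via the internal definability of the P-construction. Your own flagging of the boundary case $\rho_{y'}^{P'}=\delta$ as ``the hard part'' is exactly where the circularity bites: your inductive hypothesis only gives internal Woodinness of $\delta$ in $N^Y_{\eta'}$, which does not control $\bfrSigma_{y'}^{P'}$-definable subsets of $\delta$ not in $P'$.
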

\begin{proof}
The proof is by induction on
$\delta$.

Part \ref{item:M_small}:
Suppose not and let $\bar{\delta}=\delta^M$.
We have $t_\delta=0$, so
by \ref{lem:meas_limits_of_con_require_no_proj_across}, $\bar{\delta}$ is a limit of measurables of $Y$
(hence $\bar{\delta}=\aleph_{\bar{\delta}}^Y$)
and $M|\bar{\delta}=N_{\bar{\delta}}$ (hence $\OR^{N_{\bar{\delta}}}=\bar{\delta}$).
Let $\bar{\eta}<\delta$
be such that $Y|\bar{\eta}$ is the Q-structure
for $\bar{\delta}$.
By induction, $N_{\bar{\eta}}$ is the P-construction
of $Y|\bar{\eta}$ over $M|\bar{\delta}$,
and $N_{\bar{\eta}}$ is a Q-structure for $\bar{\delta}$.
It follows that $\rho_\om^{N_{\bar{\eta}}}=\bar{\delta}$,
so $N_{\bar{\eta}}\pins M$ and $M\sats$``$\bar{\delta}$ is not Woodin'',
contradiction.

Part \ref{item:no_proj_across_delta}: Suppose not.
Let $\alpha\in[\delta,\eta)$
be least such that for some $n<\om$,
we have
$\rho_{n+1}(N_\alpha^Y)<\delta$,
 let $n<\om$ be least such,
 and let $\rho=\rho_{n+1}(N_\alpha^Y)$.
 Note that since $\alpha$ exists,
 we have $\delta<\eta$,
 so $\delta$ is Woodin
 in $Y|\eta$,
 so $Y|\eta\sats\ZFC$,
 so $M\sats\ZFC$, and therefore
 $\delta<\alpha$.
Let $R=Y|\alpha\pins Q$. An application of condensation
to $R$ easily yields that $\rho_{n+5}^R=\delta$.
Working inside $Q$, where $\delta$ is Woodin, we can find $\bar{\delta}<\delta$ such that, letting
\[ \bar{R}=\cHull_{n+5}^R(\bar{\delta}\un\pvec_{n+5}^R) \]
and 
$\pi:\bar{R}\to R$ be the uncollapse, then  $\bar{R}$ is $(n+5)$-sound and 
$\rho_{n+5}^{\bar{R}}=\bar{\delta}$ and $\rho<\crit(\pi)=\bar{\delta}$ and 
$\pi(\bar{\delta})=\delta$; by condensation we have $\bar{R}\pins Y|\delta$.
Note that $\bar{\delta}$ is a $Y$-cardinal
and $\bar{R}$ is $Y$-cardinal-large with $\bar{\delta}=\delta^{\bar{R}}$.

Let $(\bar{\CC},\bar{N})$ be defined over $\bar{R}$ as 
$(\CC^R,N^R)=(\CC^Y\rest(\alpha+1),N_\alpha^Y)$ is defined over $R$.
Note here that $\CC^R$ ends with the P-construction of $R$ above $M$, with output $N^R$,
so $\bar{\CC}$ ends with the P-construction of $\bar{R}$ above $\bar{M}=\pi^{-1}(M)$, with output 
$\bar{N}$. 
Note that
$\bar{\CC}\rest(\bar{\delta}+1)\sub\CC^Y$
and $\bar{M}\pins_\card M$ and 
\[  N^Y_\alpha|\bar{\delta}=N_{\bar{\delta}}^Y=\bar{M}=\bar{N}|\bar{\delta}. \]
It follows that
$\bar{\CC}=\CC^Y\rest(\OR^{\bar{R}}+1)$ and
$\bar{N}=N^Y_{\OR^{\bar{R}}}$.
But $\pi$ is sufficiently elementary that
$\rho_{n+1}^{\bar{N}}=\rho<\bar{\delta}$,
contradicting the fact that 
$\bar{M}\pins_\card M$.

Part \ref{item:Woodin_inherited}: This is almost by the argument of \cite[\S11]{fsit},
but some things are slightly different due to our 
modified background conditions.

Let $N=N^Y_\eta$ and $A\in\pow(\delta)\inter N$, so $A\in Q$. Let $\kappa<\delta$ be 
such that $Q\sats$``$\kappa$ is 
$({<\delta},(A,\es^N\rest\delta))$-reflecting as witnessed by $\es$''. It suffices to see that $N\sats$``$\kappa$ is $({<\delta},A)$-reflecting as witnessed by $\es$''.

Fix $\lambda\in(\kappa,\delta)$ with $\lambda$ inaccessible 
in $Y$ and $N_\lambda=N_\delta|\lambda$. Let $G^*\in\es^Y$ with 
$\crit(G)=\kappa$ and 
\[ i^Y_{G^*}(A,\es^N)\rest\lambda=(A,\es^N)\rest\lambda.\]
Let $G=G^*\rest N\cross[\lambda]^{<\om}$
and for $\rho<\lambda$ let
$G_\rho=G\rest\rho$. We will prove by induction on $N$-cardinals 
$\rho\in(\kappa,\lambda)$ that $G_\rho\in\es^N$,
completing the proof.

First consider $\rho=\kappa^{+N}$.
Let $\gamma=\rho^{+U}$ where $U=\Ult(N,G_\rho)$.
By condensation applied to the factor
\[ k:U\to\Ult(N,G) \]
and because $G$ coheres $\es^N\rest\lambda$, there is $\xi$ such that 
$N_\xi=U||\gamma=N||\gamma$. Note that $t_\xi=0$ and $N_\xi$ is small (as $M$ is small). Because $(N_\xi,G_\rho)$ is a premouse and $G^*$ 
is sufficiently strong and $G_\rho\rest\rho\sub G^*$, we have $F_{\xi+\om}\neq\emptyset$.
Let $N_{\xi+\om}=(N_\xi,E)$ and $B=(N_\xi,G_\rho,E)$. Then
$B$ is a bicephalus and a creature of $\CC^Y$, so because $Y$ is $(x,\delta)$-iterability-good,
we have $G_\rho=E$. It also follows that $N_{\xi+\om}\pins N$, so $G_\rho\in\es^N$ as required.

Now fix an $N$-cardinal $\rho\in(\kappa,\lambda)$ and suppose we have
$G_\rho\in\es^N$.

\begin{clm} There are unboundedly many $\gamma<\rho^{+N}$ such that
	$G_\gamma\in\es^N$.\end{clm}
\begin{proof}
$G$ has some generator in $(\rho,\rho^{+N})$,
because otherwise
\[ \rho^{+N}=\rho^{+\Ult(N,G_\rho)}=\lh(G_\rho),\]
contradicting the fact that $G_\rho\in N$.
And if $\rho$ is the greatest generator of $G$ in $(\rho,\rho^{+N})$,
then $G_{\rho+1}$ is non-type Z, for the same reason.
Now let $\gamma\in(\rho,\rho^{+N})$ be any generator of $G$ such that
$E=G_{\gamma+1}$
is non-type Z. Let
 $U=\Ult(N,E)$ and $\xi=\gamma^{+U}$, and note that the factor
 $k:U\to\Ult(N,G)$
  has $\crit(k)>\gamma$ and $\card^U(\gamma)=\rho$, so either 
\begin{enumerate}[label=(\roman*)]
 \item  $\crit(k)=\xi=\rho^{+U}<\rho^{+N}$, or
  \item $\xi=\rho^{+U}=\rho^{+N}<\crit(k)$.
 \end{enumerate}
So $U|\xi=N||\xi$. Note that $\lh(G_\rho)<\xi$ and $G_\rho\in\es^U$, because $G_\rho\in\es^N$ and 
$E$ is non-type Z.
So $P=(N||\xi,E)$ is a pseudo-premouse,
and a creature of $\CC^Y$. We have $N|\xi=N_\chi$ for some $\chi$ with $t_\chi\neq3$ as $N|\xi$ is 
small (otherwise $N|\kappa\sats$``There is a proper class of Woodins'').
So because $Y$ is $(x,\delta)$-iterability-good,
$P$ is a premouse. So if (ii) holds then 
$U|\xi=N|\rho^{+N}=N_\chi$, so $t_\chi=0$, but $E\rest\nu_E\sub G^*$, so then
$F_{\chi+\om}\neq\emptyset$, contradicting that $\xi=\rho^{+N}$. So (i) holds.

Suppose for the moment that the generators of $G$ are unbounded in $\rho^{+N}$.
Then the preceding 
discussion applies with arbitrarily large generators $\gamma<\rho^{+N}$. Since $N||\xi=U|\xi$ 
where $\xi$ is as above, and each $P$ as above satisfies the ISC,
it follows that all relevant proper segments of $G_{\rho^{+N}}$
are in $\es^N$, or are an ultrapower away, and the claim
follows from this.

So let $\nu$ be the strict sup of the generators of $G$ below $\rho^{+N}$ and suppose $\nu<\rho^{+N}$.
We have $N|\rho^{+N}=U|\rho^{+U}$ where $U=\Ult(N,G_\nu)$. 
Suppose $G_\nu$ is non-type Z. Then by the previous paragraph,
$\nu$ is a limit of generators. But then $\nu$ is a cardinal of 
$U$, so $\rho^{+U}=\nu<\rho^{+N}$, contradiction.
So $G_\nu$ is type Z, so $\nu=\xi+1$ and $\xi$ is a limit of generators,
and letting $U'=\Ult(N,G_\xi)$, we have $(\xi^+)^{U'}=\rho^{+N}$,
and like before, $\xi=(\rho^+)^{U'}$. The factor map $k:U'\to U$ has $\crit(k)=\xi$ and $k(\xi)=\rho^{+N}$.
But now condensation gives a contradiction: either $N|\xi$ is passive
and $U'|(\xi^+)^{U'}=N|\rho^{+N}$, which is impossible as $\xi$ is not a cardinal in $N$,
or $N|\xi$ is active and $(\xi^+)^{U'}\leq(\xi^+)^{\Ult(N|\xi,F^{N|\xi})}$,
which is impossible as $(\xi^+)^{U'}=\rho^{+N}$.
\end{proof}

Now shifting notation, let $\rho>\kappa^{+N}$ be an $N$-cardinal,
and suppose that for unboundedly many $\gamma<\rho$,
we have $G_\gamma\in\es^N$. It suffices to see:
\begin{clm} $G_\rho\in\es^N$. \end{clm}
\begin{proof}
	Let $\xi=(\rho^+)^{\Ult(N,G_\rho)}$. Suppose
\begin{equation}\label{eqn:t_chi=0}\text{there is }\chi\text{ such that 
}N_\chi=N||\xi\text{ and }t_\chi=0.\end{equation}
Again $N_\chi$ is small, so a bicephalus argument as in the case of $G_{\kappa^{+N}}$ shows that 
$G_\rho\in\es^N$.
Note that we use \ref{lem:iterability} for mixed type bicephali here, because 
$G_\rho$ is of type 3, and there might ostensibly be some competing 
type 2 (or type 1) extender. 

Finally, suppose (\ref{eqn:t_chi=0}) fails. Then note that $N|\xi=N_\chi$ 
for some $\chi$
such that $t_\chi=2$ (as $N||\xi$ is small). Therefore $\rho$ is $Y$-measurable and $N|\rho=N_\rho$.
Let $F_0\in\es^Y$ be the order $0$ measure on $\rho$.
In $Y$ we have all relevant proper segments of $G_\rho$
in $\es^N$. This fact lifts into $\Ult(Y,F_0)$, by $i^Y_{F_0}$.
But
$G_\rho=i^Y_{F_0}(G_\rho)\rest\rho$
is one of the relevant segments
of $i^Y_{F_0}(G_\rho)$, so $G_\rho\in\es^{i^Y_{F_0}(N)}$,
but then $G_\rho\in\es^N$.\end{proof}

Part \ref{item:background_gen}: This is standard: One shows that all 
the 
$\delta$-generator extender algebra axioms of $M$ hold for $\es^{Y|\delta}$, which suffices since $\delta$ is Woodin
in $N_\eta$.
This follows as usual from \ref{lem:extenders_restrictions_from_bkgd}: if $\nu$ 
is a cardinal of $M$ and $E\in\es^M$ and $\nu=\nu(E)$, then by 
\ref{lem:extenders_restrictions_from_bkgd}, $E$ is just the restriction of some 
$E^*\in\es^{Y|\delta}$, and $E^*$ coheres $\es^{Y|\delta}$ through $\nu$.

Parts \ref{item:P-con_defined}, \ref{item:eta+1_iter_good}:
By the previous parts and since $M=N^{Y|\delta}$ is definable
from parameters (in fact, $x$) over $Y|\delta$, the P-construction works, so part \ref{item:P-con_defined} holds,
which trivially yields part \ref{item:eta+1_iter_good},
as $M$ has no largest cardinal,
and $t^Y_\beta=3$ for all limits $\beta\in(\delta,\eta]$.

Part \ref{item:Q-structure_match}:
Note that we assume $Q\pins Y$ for this part, so $\J(Q)\ins Y$.

If $Q$ is not a Q-structure for $\delta$, so $\J(Q)\ins Y\sats$``$\delta$ is Woodin'',
then by the preceding parts applied to $\J(Q)$, $\J(N)\sats$``$\delta$ is Woodin'', so $N$ is also not a Q-structure for $\delta$.

Now suppose that $Q$ is a Q-structure,
but $N$ is not. Consider first the case that $\delta<\eta$.
Since $N$ is $\delta$-sound (as $Q\pins Y$, so $Q$ is $\omega$-sound), therefore $\delta\leq\rho_\om^N$. But letting $m<\om$ be such that $\rho_{m+1}^Q=\delta<\rho_m^Q$ (noting that $\delta<\rho_0^Q$ as $\delta$ is the least Woodin of $Q$), note that $\rho_{m+1}^N\leq\delta<\rho_m^N$, so $\rho_{m+1}^N=\delta$.

So $\J(N)$ is a premouse with $\delta$ Woodin,
 and $Y|\delta$ is $\J(N)$-generic for the extender algebra of at $\delta$.
We will show that $\J(Q)\sats$``$\delta$ is Woodin'', a contradiction.

We have that $\J(N)[Y|\delta]$ has universe $\J(Q)$,
because the extender algebra is small forcing relative to each of the extenders in $\es^N$
above $\delta$, so those in $\es^Q$ are just their small forcing extensions. 
Now let $A\in\pow(\delta)\inter\J(Q)$. Let $\tau\in\J(N)$ be a nice name for
$A$. That is, let $\tau:\delta\to\BB_\delta^{\J(N)}$ with $\tau\in\J(N)$  and
 $\alpha\in A$ iff $\tau(\alpha)\in G$, where $G$ is the extender algebra generic determined by 
$\es^Q\rest\delta$.
Let $f:\delta\to\delta$ be a function in $N$
such that for each $\beta<\delta$, $\tau``\beta\sub N|{f(\beta)}$.
Let $\kappa<\delta$ be such that
\begin{equation}\label{eqn:reflection_in_N} \J(N)\sats\text{``}\kappa\text{ is 
}(<\delta,(\tau,f))\text{-reflecting,
as witnessed by }\es\text{''.}\end{equation}

We reach a contradiction by verifying that
\begin{equation}\label{eqn:reflection_in_Q} \J(Q)\sats\text{``}\kappa\text{ is 
}(<\delta,A)\text{-reflecting,
as witnessed by }\es\text{''.}\end{equation}
To see this, let $\lambda\in(\kappa,\delta)$ be any $N$-cardinal such that 
$f``\lambda\sub\lambda$. Let $E\in\es^N$ be such that $\lambda=\nu_E$
and $E$ witnesses line (\ref{eqn:reflection_in_N}) through $\lambda$.
Because $\lambda=\nu_E$ is an $N$-cardinal, by Lemma \ref{lem:extenders_restrictions_from_bkgd} `there is a $Q$-total $E^*\in\es^Q$
such that $\nu_E\leq\nu_{E^*}$ and $E\rest\nu_E\sub E^*$.
We have $f``\lambda\sub\lambda$, so $\tau``\lambda\sub N|\lambda$.
Note that
\[ i^Q_{E^*}(f,\tau,\es^Q)\inter V_\lambda=(f,\tau,\es^Q)\inter V_\lambda, \]
and ($*$) for $\alpha<\kappa$, we have $\alpha\in A$ iff $\es^Q\sats\tau(\alpha)$,
($\tau(\alpha)$ is some Boolean formula).

Lifting ($*$) with $j=i^Q_{E^*}$, for $\alpha<j(\kappa)$, we have
$\alpha\in j(A)$ iff $j(\es^Q)\sats j(\tau)(\alpha)$. But for $\alpha<\lambda$,
$j(\tau)(\alpha)=\tau(\alpha)\in V_\lambda$, and $j(\es^Q)\rest \lambda=\es^Q\rest \lambda$.
But then $A\inter\lambda=j(A)\inter\lambda$ as required.

Now suppose instead that $\delta=\eta$. So we are assuming that
$M\sats\ZFC$ and $\J(M)\sats$``$\delta$ is Woodin'',
but $Q=Y|\delta$ is a Q-structure for $\delta$. But then just like in
part \ref{item:background_gen}, $Y|\delta$ is $\J(M)$-generic for the $\delta$-generator extender algebra of $\J(M)$ at $\delta$,
and we can argue as in the previous case for a contradiction.
\end{proof}

\begin{lem} (***to add, with setup of previous lemma): Suppose that $\delta$ is $\bfrSigma_1^Q$-singular. We show that $\delta$ is
not $\bfrSigma_1^N$-Woodin (***to finish...)\end{lem}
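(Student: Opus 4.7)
The plan is to contradict $\bfrSigma_1^N$-Woodinness by exhibiting, via the forcing relation connecting $Q$ and $N$, a $\bfrSigma_1^N$-definable antichain in $\BB_\delta^N$ whose total support is unbounded in $\delta$, violating the $\delta$-cc derived from Woodinness exactly as in the proof of Subclaims \ref{sclm:Boolean_value_exists} and \ref{sclm:rho_1=delta_theory_names_exist} of Lemma \ref{lem:P-con_basic_props}.  So suppose for contradiction that $\delta$ is $\bfrSigma_1^N$-Woodin.  Fix the witnesses to $\bfrSigma_1^Q$-singularity: an ordinal $\alpha<\delta$, an $\rSigma_1$ formula $\psi$, and a parameter $p\in Q$ such that the map $f:\alpha\to\delta$ defined by $\psi(\beta,\xi,p)$ has $\rg(f)$ cofinal in $\delta$.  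By Lemma \ref{lem:Woodin_exactness} parts \ref{item:Woodin_inherited} and \ref{item:background_gen}, $\J(N)\sats$``$\delta$ is Woodin'' (so $\delta$ is a cardinal, hence regular, in $\J(N)$), and $Q$ is, up to the natural identification of Lemma \ref{lem:P-con_basic_props} part \ref{item:P|xi[Y|delta]}, the generic extension $N[G]$ where $G=Y|\delta$ is the $\BB_\delta^N$-generic.  Choose a name $\dot p\in N$ with $\dot p_G=p$.

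By Lemma \ref{lem:P-con_basic_props} part \ref{item:rSigma_n+1_forcing_rel_def} (with $n=0$), the strong $\rSigma_1$-forcing relation $\forces^{\mathrm{strong},\eta}_1$ for $\BB_\delta^N$ over $N$ is $\rSigma_1^N(\{\delta\})$, and
\[ Q\sats\psi(\check\beta,\check\xi,\dot p)\iff\exists q\in G\ \Big[q\forces^{\mathrm{strong},\eta}_1\psi(\check\beta,\check\xi,\dot p)\Big]. \]
For each $\beta<\alpha$, let $b_{\beta,\xi}\in\BB_\delta^N$ be the Boolean value $\|\psi(\check\beta,\check\xi,\dot p)\|$.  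Because $f$ is a function in $Q=N[G]$, for fixed $\beta$ the values $b_{\beta,\xi}$ for distinct $\xi<\delta$ are pairwise incompatible, so
\[ \mathscr{A}_\beta=\{b_{\beta,\xi}:\xi<\delta,\ b_{\beta,\xi}\neq 0\} \]
is a $\bfrSigma_1^N$-definable antichain in $\BB_\delta^N$ (uniformly in $\beta$).  By $\bfrSigma_1^N$-Woodinness and the $\delta$-cc for $\bfrSigma_1^N$-definable antichains (exactly the mechanism used in Subclaim \ref{sclm:Boolean_value_exists}), each $\mathscr{A}_\beta$ has size $<\delta$; since $\delta$ is a cardinal of $\J(N)$, the set $\{\xi<\delta:b_{\beta,\xi}\neq 0\}$ is bounded, say by $\mu_\beta<\delta$.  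Since $f(\beta)\in\{\xi:b_{\beta,\xi}\in G\}$, we have $f(\beta)<\mu_\beta$.

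It remains to show $\mu:=\sup_{\beta<\alpha}\mu_\beta<\delta$, which contradicts the cofinality of $f$.  This is the heart of the argument and proceeds by an elaboration of Subclaim \ref{sclm:rho_1=delta_theory_names_exist}.  Apply that subclaim (or rather its proof technique, with $\dot p$ in place of $\tau$ and $\psi$ in place of $\varphi$) in bootstrapping fashion: for each $\gamma<\delta$ the restricted array $\left<b_{\beta,\xi}\right>_{\beta<\alpha,\xi<\gamma}$ lies in some $N|\zeta_\gamma$ with $\zeta_\gamma<\delta$.  The sequence of bounds $\left<\mu_\beta\right>_{\beta<\alpha}$ is then captured as a $\bfrSigma_1^N$-definable object: were $\mu=\delta$, one would obtain a strictly increasing $\bfrSigma_1^N$-definable sequence of Boolean values (indexed by a suitable cofinal subsequence of $\left<\mu_\beta\right>$) of length the cofinality of $\mu$, which by the $\delta$-cc argument of Subclaim \ref{sclm:Boolean_value_exists} must have length $<\delta$ and lie in $N|\delta$.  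Combining this with $\rho_1^N=\rho_1^Q\geq\delta$ (Lemma \ref{lem:P-con_basic_props} part \ref{item:inductive_P|xi_fs}) yields $\mu<\delta$, completing the contradiction.

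The main obstacle is the final step: ensuring that the function $\beta\mapsto\mu_\beta$, while defined using $\bfrSigma_1^N$ data that is not itself an element of $N$ (the full strong forcing relation is $\rSigma_1^N(\{\delta\})$, which need not be in $N$ when $\rho_1^N=\delta$), nonetheless has bounded range in $\delta$.  This is exactly the delicate point handled by the fine-structural analysis of Subclaim \ref{sclm:rho_1=delta_theory_names_exist}, and the argument here is a direct adaptation: the $\delta$-cc applied to the $\bfrSigma_1^N$-antichain consisting of the conditions separating successive values $\mu_\beta$ forces the antichain sequence to be bounded in $N|\delta$, and hence so is $\mu$.  The case division between $\delta<\rho_1^Q$ and $\delta=\rho_1^Q$ (which corresponds to the two cases in the proof of part \ref{item:rSigma_n+1_forcing_rel_def} of Lemma \ref{lem:P-con_basic_props}) must be treated separately, with the latter requiring the full strength of Subclaim \ref{sclm:rho_1=delta_theory_names_exist}; in the former case the argument simplifies because $\dot p$ can be replaced by a genuine element of $N$ and the whole computation takes place inside $N|\delta$.
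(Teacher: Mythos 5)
The paper does not actually contain a proof of this statement — the lemma is flagged ``(***to add...)'' and ``(***to finish...)'', so there is nothing to compare against. Evaluating your plan on its own merits: the general strategy (identify $Q$ with $N[G]$, translate $\bfrSigma_1^Q$-singularity into $\bfrSigma_1^N$ forcing data, and invoke the $\delta$-cc for $\bfrSigma_1^N$-definable antichains as in Subclaims \ref{sclm:Boolean_value_exists} and \ref{sclm:rho_1=delta_theory_names_exist}) is the right fit for the surrounding toolkit, and is very likely the shape of the intended argument. But there are two genuine gaps as written.

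First, a definability mismatch. Your antichain $\mathscr{A}_\beta=\{b_{\beta,\xi}\}$ consists of \emph{Boolean values}, and by Subclaim \ref{sclm:(tau,gamma)_mapsto_t_(tau,gamma)_def} the function assigning these is only $\rSigma_2^{N}(\{\delta\})$, not $\bfrSigma_1^N$. The $\delta$-cc mechanism in Subclaim \ref{sclm:Boolean_value_exists} is a $\delta$-cc for $\bfrSigma_1^N$-definable antichains; it is precisely because Boolean values are \emph{not} $\bfrSigma_1^N$-definable that the proof of Subclaim \ref{sclm:Boolean_value_exists} works with the $\bfrSigma_1^N$-definable recursion $\left<p_\alpha,\eta_\alpha\right>$ over the stratified approximations $N\wr\eta$, rather than with the Boolean values themselves. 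Your plan skips this layer: you would need to replace the sets $\mathscr{A}_\beta$ of Boolean values by the $\bfrSigma_1^N$-definable data of conditions in the strong forcing relation $\forces^{\mathrm{strong},\eta}_1$ (which \emph{is} $\rSigma_1^N(\{\delta\})$ by Lemma \ref{lem:P-con_basic_props}\ref{item:rSigma_n+1_forcing_rel_def}) and of stratified approximations, and re-derive the antichain structure at that level. Relatedly, the pairwise incompatibility of the $b_{\beta,\xi}$ cannot be read off from ``$f$ is a function in $Q=N[G]$'': incompatibility in $\BB_\delta^N$ must hold with respect to \emph{all} generics $G'$, not just the one giving $Q$. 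You need to choose $\psi$ so that $\forces_{\BB_\delta}$ ``$\psi(\cdot,\cdot,\dot p)$ is a partial function'', e.g.\ by taking $\psi$ to be the graph of the canonical $\rSigma_1$-Skolem function; this is achievable without leaving $\rSigma_1$, but it is not automatic and you do not address it.

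Second, and more seriously, the uniform bound $\sup_{\beta<\alpha}\mu_\beta<\delta$. You explicitly call this ``the heart of the argument'' and then supply only a gesture at ``a bootstrapping elaboration of Subclaim \ref{sclm:rho_1=delta_theory_names_exist}''. The elaboration is not routine: Subclaim \ref{sclm:rho_1=delta_theory_names_exist} bounds Boolean values of statements with parameters drawn from a \emph{fixed bounded} set $\gamma<\delta$, whereas here the parameter $\xi$ in $\psi(\check\beta,\check\xi,\dot p)$ must range \emph{unboundedly} in $\delta$ and the whole point is to show that nonzero Boolean values occur only for $\xi$ in a bounded set. One plausible repair is to build a single $\bfrSigma_1^N$ recursion $\left<q_\alpha,\eta_\alpha\right>_{\alpha<\lambda}$ tracking, at each stage $\eta_\alpha$, the approximate Boolean values of the $\rSigma_1$ statements ``$\exists\xi\geq\check\gamma\,\exists\beta<\check\alpha\,\psi(\beta,\xi,\dot p)$'' for $\gamma$ up to the current level, and show $\lambda<\delta$ as in the proof of Subclaim \ref{sclm:rho_1=delta_theory_names_exist}, but this needs to be written out; the leap from per-$\beta$ boundedness to simultaneous boundedness is exactly the content the lemma is supposed to supply, and the current plan does not close it. Until these two points are handled, the argument remains a plan, not a proof.
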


From \ref{lem:meas_limits_of_con_require_no_proj_across} and \ref{lem:Woodin_exactness} 
together, we easily get a characterization of all local Woodin cardinals
of models of $\CC$:

\begin{cor}\label{cor:local_Woodins}
 Let $Y$ be a premouse, $x\in\RR^Y$,
$N=N^Y_\alpha$ defined,  $t^Y_\alpha\neq 2$,
$\delta<\alpha$, and $Y$ be $(x,\delta)$-iterability-good.
Then $\delta\in N$ and $N\sats$``$\delta$ is the least Woodin cardinal''
 iff $Y|\alpha$ is $Y$-cardinal-large
 and $\delta=\delta^{Y|\alpha}$.
 Therefore $\delta\in N$ and $N\sats$``$\delta$ is Woodin''
 iff $Y|\alpha$ is $Y$-cardinal-large
 and $Y|\alpha\sats$``$\delta$ is Woodin''.
\end{cor}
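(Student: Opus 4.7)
The plan is to combine Lemmas \ref{lem:meas_limits_of_con_require_no_proj_across} and \ref{lem:Woodin_exactness}, handling each direction of the main biconditional in turn; the ``therefore'' clause will then follow routinely. For the $(\Leftarrow)$ direction, I would suppose $Y|\alpha$ is $Y$-cardinal-large with $\delta=\delta^{Y|\alpha}$. Since Woodin cardinals are strongly inaccessible, this forces $\delta=\aleph_\delta^Y$ and $Y|\delta\sats\ZFC$, so by Lemma \ref{lem:ZFC_stage} we have $\OR(N^Y_\delta)=\delta$. The hypotheses of Lemma \ref{lem:Woodin_exactness} with $Q=Y|\alpha$ and $\eta=\alpha$ are then in place, and parts \ref{item:M_small} and \ref{item:Woodin_inherited} of that lemma give, respectively, that $N^Y_\delta$ is small and that $N^Y_\alpha\sats$``$\delta$ is Woodin''. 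Together these say that $\delta$ is the least Woodin of $N^Y_\alpha=N$.

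For the $(\Rightarrow)$ direction, suppose $\delta\in N$ and $N\sats$``$\delta$ is the least Woodin''. Then $N|\delta$ is small and $\delta$ is a limit of measurables of $N$ via $\es^N$. I would invoke Lemma \ref{lem:meas_limits_of_con_require_no_proj_across} (which applies since $t^Y_\alpha\neq 2$) to conclude that $\delta=\aleph_\delta^Y$ is a limit cardinal of $Y$ and $N|\delta=N^Y_\delta$, so in particular $\OR(N^Y_\delta)=\delta$. Because $t^Y_\alpha\neq 2$, stage $\alpha$ of $\CC^Y$ is built either by Case 1 of the construction (a standard $L[\es,x]$-step, if $\alpha$ is not $Y$-cardinal-large) or by Case 3 (a P-construction, if $\alpha$ is $Y$-cardinal-large). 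In Case 3, letting $\delta'=\delta^{Y|\alpha}$, the already-established $(\Leftarrow)$ applied to $\delta'$ gives that $\delta'$ is the least Woodin of $N^Y_\alpha=N$, forcing $\delta'=\delta$. To rule out Case 1, I observe that there $N^Y_\alpha$ is either $\liminf_{\beta<\alpha}N^Y_\beta$ or of the form $\J(\core_\om(N^Y_\gamma))$ with $\alpha=\gamma+\om$, and in either case Woodinness of $\delta$ in $N^Y_\alpha$ reflects down to some $\beta<\alpha$ (Woodinness being determined by the extender sequence up to the relevant ordinal, and the core preserving extenders above the projectum). Taking the least such $\beta_0\in(\delta,\alpha]$, minimality would force $\beta_0$ itself to be $Y$-cardinal-large (since Case 1 cannot introduce a new Woodin at its own stage); applying $(\Leftarrow)$ at $\beta_0$ then yields $Y|\beta_0\sats$``$\delta$ is Woodin'', and since $Y|\beta_0\pins Y|\alpha$ and $\pow(\delta)\cap Y|\alpha=\pow(\delta)\cap Y|\beta_0$ (as $\delta^{+Y}<\beta_0$), this lifts to $Y|\alpha\sats$``$\delta$ is Woodin'', contradicting the assumption that $\alpha$ is not $Y$-cardinal-large.

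For the ``therefore'' clause: given $N\sats$``$\delta$ is Woodin'' (not necessarily the least Woodin), I would apply the first biconditional to the least Woodin $\delta_0\leq\delta$ of $N$ to infer that $Y|\alpha$ is $Y$-cardinal-large with $\delta^{Y|\alpha}=\delta_0$; Woodinness of the possibly larger $\delta$ then transfers between $N$ and $Y|\alpha$ through the P-construction above $\delta_0$, which matches $N^Y_\alpha$ with $Y|\alpha$ above $\delta_0$ (by Lemma \ref{lem:Woodin_exactness}, parts \ref{item:P-con_defined} and \ref{item:Woodin_inherited}). The main obstacle I expect is the Case~1 ruling-out in the $(\Rightarrow)$ direction — in particular, the reflection claim that Woodinness of $\delta$ in $N^Y_\alpha$ drops to a strictly earlier stage requires care, as does the subsequent verification that $Y$-cardinal-largeness propagates upward from $\beta_0$ to $\alpha$ via control of $\pow(\delta)\cap Y|\alpha$.
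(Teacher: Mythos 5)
Your $(\Leftarrow)$ direction is sound, and the proposal correctly identifies the two lemmas the paper says yield the corollary. But the $(\Rightarrow)$ direction has a genuine gap exactly where you flag your own uncertainty.

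The problem is the ruling-out of the non--$Y$-cardinal-large case. You propose to take the least $\beta_0\in(\delta,\alpha]$ to which Woodinness of $\delta$ in $N^Y_\alpha$ ``reflects,'' argue by minimality that $\beta_0$ must be $Y$-cardinal-large, and then lift $Y|\beta_0\sats$``$\delta$ Woodin'' up to $Y|\alpha$ via $\pow(\delta)\cap Y|\alpha=\pow(\delta)\cap Y|\beta_0$. None of these steps is established. First, since $\rho_\om(N^Y_\xi)\geq\delta$ for all $\xi\in[\delta,\alpha)$, the sets $\pow(\delta)\cap N^Y_\xi$ grow \emph{monotonically}, so Woodinness of $\delta$ (a $\Pi_1$ statement over $\pow(\delta)$ with $\es^{N}\rest\delta$ fixed, as $N|\delta=N^Y_\delta$) is \emph{downward}-monotone: if it holds in $N^Y_\alpha$ it holds at every earlier stage $\geq\delta$. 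There is no ``least $\beta_0$ at which it appears.'' Second, there is no reason for the construction stage at which some particular witness shows up to be $Y$-cardinal-large; Case~1 steps ($\J$-closure, $\liminf$, adding an extender) are exactly where subsets of $\delta$ get added. Third, the power set agreement $\pow(\delta)\cap Y|\alpha=\pow(\delta)\cap Y|\beta_0$ would require $(\delta^+)^{Y|\alpha}<\beta_0$, and nothing forces this; $Y$ can keep adding subsets of $\delta$ between $\beta_0$ and $\alpha$, and those new subsets are precisely what could witness non-Woodinness of $\delta$ in $Y|\alpha$.

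What the argument actually needs, and what the corollary is implicitly leaning on, is the Q-structure matching: assumption \ref{conass:Q-structures} built into the definition of $\CC$ together with Lemma~\ref{lem:Woodin_exactness} part~\ref{item:Q-structure_match}. Concretely: having applied Lemma~\ref{lem:meas_limits_of_con_require_no_proj_across} to get $\delta=\aleph_\delta^Y$, $Y|\delta$ small, and $N|\delta=N^Y_\delta$, suppose $\delta$ is \emph{not} Woodin in $Y|\alpha$ (which covers both ``$\alpha$ not $Y$-cardinal-large'' and ``$\delta^{Y|\alpha}\neq\delta$'' once the $\delta^{Y|\alpha}<\delta$ subcase is dispatched by $(\Leftarrow)$). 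Since $\delta$ is then not Woodin in $Y$, there is a $\delta$-sound Q-structure $Y|\gamma$ for $\delta$ with $\delta\leq\gamma<\alpha$ (strictly below $\alpha$ because a failure witness already lives in $Y|\alpha$ and $\alpha$ is a limit). By \ref{conass:Q-structures}/Lemma~\ref{lem:Woodin_exactness}(\ref{item:Q-structure_match}), $N^Y_\gamma$ is a Q-structure for $\delta$; since $\rho_\om(N^Y_\gamma)=\delta$ and $\delta$ is a cardinal of $N^Y_\alpha$, $N^Y_\gamma\pins_\card N^Y_\alpha$, and the definable failure of Woodinness over that proper segment is visible to $N^Y_\alpha$, contradicting $N^Y_\alpha\sats$``$\delta$ is Woodin.'' Your proposal never invokes the Q-structure correspondence, and without it the $(\Rightarrow)$ direction does not close.
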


See
\cite{fsfni_v4},
\cite[Theorem 5.2]{premouse_inheriting} and
\cite[Lemma 2.3]{V=HODX_pub} for proofs of the following facts
(note that we only assume $(y,\om_1+1)$-iterability,
not $(y,\om_1,\om_1+1)^*$):

\begin{fact}\label{fact:condensation_from_norm_it}
Let $y>0$ and  $Y$ be a $y$-sound, $(y,\om_1+1)$-iterable premouse.
Then:
\begin{enumerate}[label=\tu{(}\roman*\tu{)}]
 \item $Y$ is $(y+1)$-solid and $(y+1)$-universal, and satisfies $(y+1)$-condensation
 (see \cite[Theorem 5.2]{premouse_inheriting}),
 \item\label{item:self-solid} Suppose $\om<\rho_y^Y$ and let $\theta<\rho_y^Y$
be a $Y$-cardinal and $x\in Y$. Then there is $q\in[\OR^Y]^{<\om}$
such that $q\cut\min(p_y^Y)=p_y^Y$ and letting
\[ C=\cHull_y^Y(\theta\cup\{q,\vec{p}_{y-1}^Y\}) \]
and $\pi:C\to Y$ the uncollapse, then $C\pins Y$ (hence $C$ is fully sound),
$\rho_\om^C=\theta$, and $\pi(p_y^C)=q$ (and note $\pi$ is a near $(y-1)$-embedding).
\end{enumerate}

\end{fact}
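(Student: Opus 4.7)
The plan for part (i) is to run the standard fine-structural comparison argument. Suppose $(y+1)$-solidity fails at some $\alpha \in p_{y+1}^Y$. Form
\[ H = \cHull_{y+1}^Y(\alpha \cup \{p_{y+1}^Y \cut (\alpha+1), \vec{p}_y^Y\}) \]
with uncollapse $\pi: H \to Y$. Since the corresponding solidity witness is missing, $H \notin Y$ and $\rho_{y+1}^H \leq \alpha$. Then I would compare $H$ against $Y$: lift the given $(y,\om_1+1)$-strategy $\Sigma$ for $Y$ via $\pi$ to a strategy for $H$, and coiterate. Standard Dodd--Jensen-type analysis of the comparison shows that either $H \pins Y$ (absurd, as $H \notin Y$) or both branches drop, contradicting the non-dropping of the $H$-side on its main branch. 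The failure of $(y+1)$-universality is handled symmetrically by comparing $\core_{y+1}(Y)$ with $Y$. Given solidity and universality, $(y+1)$-condensation then follows by the usual phalanx comparison argument with $\Phi(H,Y)$, using the iteration strategy of $Y$.

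For part (ii), the plan is a parameter-minimization argument. Consider the set $\mathcal{Q}$ of tuples $q' \in [\OR^Y]^{<\om}$ with $q' \cut \min(p_y^Y) = p_y^Y$ such that
\[ \theta^{+Y} \cap \Hull_y^Y(\theta \cup \{q', \vec{p}_{y-1}^Y\}) \subseteq \theta. \]
This set is nonempty: taking $q'$ to include a suitable parameter above $\theta^{+Y}$ (e.g., generators realizing $\theta^{+Y}$ as an image) works, since $\theta < \rho_y^Y$ is a $Y$-cardinal. Let $q$ be the lexicographically least element of $\mathcal{Q}$, ordered with largest coordinate first (the usual standard-parameter ordering). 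Set $C = \cHull_y^Y(\theta \cup \{q, \vec{p}_{y-1}^Y\})$ with uncollapse $\pi: C \to Y$. Then $\pi$ is a near $(y-1)$-embedding (as it is $\rSigma_{y-1}$-elementary and cofinal in $\rho_{y-1}^Y$ by the usual arguments), $\crit(\pi) = \theta$, and $\rho_y^C = \theta$. The minimality of $q$ forces $C = \Hull_y^C(\theta \cup \{p_y^C\})$ with $\pi(p_y^C) = q$, so $C$ is $y$-sound.

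To obtain $C \pins Y$ and hence full soundness, apply $y$-condensation (part (i) applied at lower levels, together with the inductive hypothesis that all proper segments of $Y$ satisfy condensation from the \emph{premouse} definition). The hypothesis $\crit(\pi) = \theta = \rho_y^C$ is satisfied. Condensation then delivers one of its standard alternatives: either $C \pins Y$ (the desired conclusion), or $C$ projects properly below $\theta$ (ruled out since $\rho_y^C = \theta$), or $Y|\theta$ is active of type appropriate for $C \pins \Ult(Y|\theta, F^{Y|\theta})$; in the latter exceptional case, one argues directly from the agreement $C|\theta = Y|\theta$ that $C$ still corresponds to a proper segment of $Y$ (or the ultrapower), from which full soundness follows.

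The main obstacle is the correct bookkeeping in verifying that the minimizing $q$ yields a hull that genuinely collapses to a proper \emph{segment} rather than merely an embedded submodel---this is precisely where part (i)'s condensation is essential, and where one must carefully handle the alternative clauses in the condensation dichotomy. A secondary obstacle is ensuring near-elementarity of $\pi$ at the right degree, which requires checking that the hull contains a cofinal subset of $\rho_{y-1}^Y$; this is automatic when $y=1$, and for $y \geq 2$ follows from the inclusion of $\vec{p}_{y-1}^Y$ among the generators together with the $(y-1)$-soundness of $Y$.
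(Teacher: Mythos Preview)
The paper does not actually prove this fact; it is stated with citations to \cite{premouse_inheriting}, \cite{fsfni}, and \cite{V=HODX}, and the paper explicitly flags the essential point: ``note that we only assume $(y,\om_1+1)$-iterability, not $(y,\om_1,\om_1+1)^*$.'' That parenthetical is exactly where your sketch for part (i) breaks down. You invoke ``standard Dodd--Jensen-type analysis'' to finish the comparison, but the classical Dodd--Jensen lemma requires iterability for \emph{stacks} of normal trees, which is not assumed here. Under only normal iterability the usual copying-of-stacks argument is unavailable, and one must replace Dodd--Jensen with a different mechanism; this is the content of the cited \cite{fsfni}, and it is precisely why the paper treats this as a nontrivial external fact rather than a routine lemma. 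Your outline is the right picture under stack-iterability, but as written it does not address the actual hypothesis.

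For part (ii) your minimization strategy is the correct idea (and is essentially the content of the cited \cite[Lemma 2.3]{V=HODX}), but two steps are too quick. First, from minimality of $q$ you get $C = \Hull_y^C(\theta \cup \{\pi^{-1}(q),\vec{p}_{y-1}^C\})$, but concluding that $\pi^{-1}(q) = p_y^C$ and that $C$ is $y$-sound also needs solidity of $\pi^{-1}(q)$ in $C$; this comes from pulling back the solidity witnesses for $p_y^Y$ together with the extra minimization below $\min(p_y^Y)$, and should be stated. Second, in the exceptional condensation clause where $Y|\theta$ is active and $C \pins \Ult(Y|\theta,F^{Y|\theta})$, your claim that ``$C$ still corresponds to a proper segment of $Y$'' is not justified and in general is false; one must either rule out this case (for instance by choosing $q$ so that $\theta \notin \rg(\pi)$ in the relevant sense, or by adjusting $\theta$) or accept the weaker conclusion and argue soundness separately.
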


\begin{lem}\label{lem:rSigma_red_N_to_Y}
 Let $Y$ be a non-small premouse, $\delta=\delta^Y$,
 $Y$ is $(x,\delta)$-iterability-good,
 $N=N^Y$ \tu{(}$N$ exists and is non-small with $\delta=\delta^N$,
 by \ref{lem:Woodin_exactness}\tu{)}.
 Suppose $Y,N$ are $y$-sound and
 $\delta\leq\rho_z^N=\rho_z^Y$ and $p_z^N=p_z^Y$ for all $z\leq y$.
 Then
 \begin{enumerate}
 \item\label{item:rSigma_1_red} $\rSigma_1^N\sub\rSigma_1^Y(\{\delta,x\})$
 and  $\bfrSigma_1^N\sub\bfrSigma_1^Y$,
  \item\label{item:rSigma_y+1_red} if $y>0$ then $\rSigma_{y+1}^N\sub\rSigma_{y+1}^Y(\{x\})$
 and $\bfrSigma_{y+1}^N\sub\bfrSigma_{y+1}^Y$.
\end{enumerate}
Moreover, there is a recursive function $\varphi\mapsto\varphi'$
of formulas $\varphi$
computing these reductions
uniformly in $Y,y,x$.
\end{lem}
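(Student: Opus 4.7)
The plan is to apply the P-construction machinery of \S\ref{sec:P-construction} to the triple $(Y, M, N)$, where $M := N^Y_\delta = N^Y|\delta$ and $N = N^Y$. By Lemma \ref{lem:Woodin_exactness} applied with $Q = Y$, the construction $\CC^Y$ is well-defined to height $\OR^Y$, and above $\delta$ it coincides with the P-construction $\mathscr{P}^Y(M)$; in particular $N \sats$``$\delta$ is Woodin'', $N|\delta = M$, and $Y|\delta$ is $\BB_\delta^N$-generic over $N$. Moreover, inspection of Definition \ref{dfn:CC} shows that $M$, being the stage-$\delta$ output of a bounded recursion performed in $Y|\delta$ from the parameter $x$, is uniformly $\Delta_1^Y(\{\delta, x\})$.

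The central tool is then Lemma \ref{lem:P-con_basic_props}(\ref{item:rSigma_n+1_sat_of_P_def}), which supplies, uniformly in $n$, an explicit recursive formula $\varrho_{n+1}$ such that the $\rSigma_{n+1}$-satisfaction relation of $N$ is defined over $Y$ by $\varrho_{n+1}(M, \cdot)$ and is $\rSigma_{n+1}^Y(\{M\})$. For Part \ref{item:rSigma_1_red}, apply this at $n = 0$: given $\varphi \in \rSigma_1$ and $\vec{q} \in N \subseteq Y$, the statement ``$N \sats \varphi(\vec{q})$'' becomes $\varrho_1(M, \varphi, \vec{q})$, which is $\rSigma_1^Y(\{M, \vec{q}\})$; substituting the $\Delta_1$-definition of $M$ from $(\delta, x)$ yields a $\rSigma_1^Y(\{\delta, x, \vec{q}\})$-formula $\varphi'(\vec{q})$, uniformly and recursively produced from $\varphi$. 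The boldface statement follows since any parameter from $N$ already lies in $Y$.

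For Part \ref{item:rSigma_y+1_red}, with $y \geq 1$, the same recipe is used at $n = y$, but now $\delta$ must be eliminated from the parameter list. The observation is that $y + 1 \geq 2$ gives just enough quantifier strength to introduce $\delta$ existentially: the translated formula $\varphi'$ asserts ``there exist $\delta^*, M^*$ such that $\delta^*$ is the least Woodin cardinal of $Y$, $M^* = N^{Y|\delta^*}$, and $\varrho_{y+1}(M^*, \varphi, \vec{q})$.'' The identification of $\delta^*$ as the least Woodin of $Y$ uses the extender sequence $\es^Y$ internally and is absolutely definable at a bounded level of the $Y$-hierarchy; combined with the $\rSigma_{y+1}^Y$-character of $\varrho_{y+1}$, this produces a $\rSigma_{y+1}^Y(\{x\})$-formula equivalent to ``$N \sats \varphi(\vec{q})$''. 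The recursiveness of $\varphi \mapsto \varphi'$ and uniformity in $(Y, y, x)$ are inherited directly from the recursive presentation of $\langle \varrho_{n+1}\rangle_{n<\om}$ in Lemma \ref{lem:P-con_basic_props}.

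The principal delicate point is verifying that the existential introduction of $\delta^*$, together with the $Y$-internal characterization of Woodinness, can be packaged inside an $\rSigma_{y+1}^Y$ formula without inflating the complexity. This is where the hypothesis $y \geq 1$ is essential, and where one must appeal to the explicit $\rSigma_{n+1}$-forcing relation from Lemma \ref{lem:P-con_basic_props}(\ref{item:rSigma_n+1_forcing_rel_def}) to re-express local features of $N$ (including the identification of $\delta$ as a successor Woodin with the uniform Hull-presence property $\delta \in \Hull_2^N(\delta)$ noted in part (\ref{item:Hull_matchup})) inside $Y$ without extra parameters. The remaining work reduces to careful but routine bookkeeping of quantifier alternations, guided by the shared fine structural data $\rho_z^N = \rho_z^Y$ and $p_z^N = p_z^Y$ for $z \leq y$.
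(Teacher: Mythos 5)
Your argument is correct and rests on the same essentials as the paper's: the P-construction satisfaction reduction of Lemma \ref{lem:P-con_basic_props}(\ref{item:rSigma_n+1_sat_of_P_def}), the $\Delta_1$-definability of $M=N|\delta$ over $Y$ from $(\delta,x)$, and (for Part \ref{item:rSigma_y+1_red}) the $\rSigma_{y+1}^Y$-definability of $\delta$ when $y>0$. The only meaningful difference is how Part \ref{item:rSigma_y+1_red} is organized: the paper runs an induction on $y$, using the hypotheses $\rho_z^N=\rho_z^Y$ and $p_z^N=p_z^Y$ for $z\le y$ to lift the level-$y$ reduction $\rSigma_y^N\subseteq\rSigma_y^Y(\{\delta,x\})$ to level $y+1$ and then absorbing $\delta$ via the assertion that $\{\delta\}$ is $\rSigma_{y+1}^Y$, whereas you invoke the satisfaction reduction directly at $n=y$ and perform the $M$- and $\delta$-eliminations in a single substitution. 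Both routes are valid, and both leave the $\rSigma_{y+1}^Y$-definability of $\delta$ as the standard-but-unverified hinge; your witness formula ``$\delta^*$ is the least Woodin cardinal of $Y$'' is the natural candidate, and the complexity bookkeeping you flag as the ``principal delicate point'' is precisely where the paper's terse ``$\{\delta\}$ is $\rSigma_{y+1}^Y$'' also leaves the work to the reader.
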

\begin{proof}
 Part \ref{item:rSigma_1_red}: This follows easily from the fact that $N$ is the P-construction of $Y$
over $N|\delta$, and that $N|\delta$ is definable over $Y|\delta$.

Part \ref{item:rSigma_y+1_red}: By induction on $y$. Suppose $\rSigma_y^N\sub\rSigma_y^Y(\{\delta,x\})$,
and that we have the uniform reduction for this complexity. Because $y>0$, 
$\{\delta\}$ is $\rSigma_{y+1}^Y$, so it follows easily
from the hypotheses of the lemma
and the inductive hypothesis that
$\rSigma_{y+1}^N\sub\rSigma_{y+1}^Y(\{x\})$
and $\bfrSigma_{y+1}^N\sub\bfrSigma_{y+1}^Y$, and that the reductions are uniform.
\end{proof}

\begin{lem}\label{lem:Q-local_fs_match}
 Let $Y$ be a non-small premouse, $\delta=\delta^Y$,
 $Y$ is $(x,\delta)$-iterability-good,
 $N=N^Y$.
 Suppose $Y$ is $y$-sound, $(y,\om_1+1)$-iterable,
$\delta\leq\rho_y^Y$, and $\delta^Y$ is $\bfrSigma_y^Y$-regular.
Then:
\begin{enumerate}
 \item\label{item:fs_match} $N$ is $y$-sound and $\rho_z^N=\rho_z^Y$
and $p_z^N=p_z^Y$ for each $z\leq y$ \tu{(}so \ref{lem:rSigma_red_N_to_Y} applies\tu{)},
 \item\label{item:N_it_above_delta} $N$ is above-$\delta$, $(y,\om_1+1)^*$-iterable,
 and if $Y$ is above-$\delta$, $(y,\om_1,\om_1+1)^*$-iterable then so is $N$.
 \end{enumerate}  
\end{lem}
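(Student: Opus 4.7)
The plan is to derive Part 1 by applying Lemma \ref{lem:P-con_basic_props} to $Y$ with P-base $M = N^Y|\delta$, and then obtain Part 2 from the tree-translation correspondence of Lemma \ref{lem:P-construction_tree_translation}. The ingredients are already in place: $y$-condensation-standardness of $Y$ follows from $(y,\om_1+1)$-iterability via Fact \ref{fact:condensation_from_norm_it}, and Lemma \ref{lem:Woodin_exactness} under the $(x,\delta)$-iterability-good hypothesis delivers the P-base property of $M$, namely $\OR^M=\delta$, $M$ is definable from $x$ over $Y|\delta$, and $Y|\delta$ is $(\J(M),\BB_\delta)$-generic.

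The only remaining precondition of \ref{lem:P-con_basic_props} — that for every proper $(Y',y')\ins(Y,y)$ with $\delta<\OR^{Y'}$, whenever $P'=\mathscr{P}^{Y'}(M)$ exists with the requisite soundness and $\rho_{y'}^{Y'}\leq\delta$, one has $\delta\leq\rho_{y'}^{P'}$ and $\delta$ is $\bfrSigma_{y'}^{P'}$-Woodin — I would establish by induction on $(\OR^{Y'},y')$ in lexicographic order. The inductive step applies the smaller-case conclusion of the current lemma to obtain the fine-structure match between $Y'$ and $P'$, and then uses Corollary \ref{cor:local_Woodins} together with Woodin reflection in $Y'$ (inherited from $\delta$ being Woodin in $Y$) to push $\bfrSigma_{y'}^{Y'}$-Woodinness of $\delta$ across the P-construction. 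Once this is in place, the fine-structure clause of \ref{lem:P-con_basic_props} applied at $(\OR^Y,y)$ yields $\rho_z^N=\rho_z^Y$ and $p_z^N=p_z^Y$ for all $z\leq y$ as well as the soundness of intermediate stages, which together give Part 1. Part 2 then falls out of Lemma \ref{lem:P-construction_tree_translation}: above-$\delta$ $y$-maximal putative trees on $N$ are in bijection with such trees on $Y$ via the pointwise correspondence $M^\Tt_\alpha\leftrightarrow\mathscr{P}^{M^{\Tt'}_\alpha}(M)$, and this bijection preserves drops, degrees, and wellfoundedness of branch-models, so above-$\delta$ normal and stack strategies transport in both directions.

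The main obstacle will be the boundary step of the induction in which $\rho_{y'}^{Y'}=\delta$: here one must rule out a $\bfrSigma_{y'+1}^{P'}$-singularization of $\delta$ that would defeat $\bfrSigma_{y'}^{P'}$-Woodinness on the P-construction side. This is exactly where the $\bfrSigma_y^Y$-regularity hypothesis is indispensable, and the argument will mirror — in the reverse direction — the partial sketch in the unfinished remark immediately preceding this lemma, which converts $\bfrSigma_1^Q$-singularity into non-$\bfrSigma_1^N$-Woodinness.
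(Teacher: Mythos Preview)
Your plan has a genuine circularity at the top of the induction. The hypothesis of Lemma \ref{lem:P-con_basic_props} quantifies over all $(Y',y')\ins(Y,y)$, which includes $(Y',y')=(Y,y)$ itself. At that top level, when $\rho_y^Y=\delta$ (the boundary case you flag), the hypothesis demands that $\delta\leq\rho_y^N$ and $\delta$ is $\bfrSigma_y^N$-Woodin; but $\delta\leq\rho_y^N$ is precisely (half of) what you are trying to conclude. Your induction on $(\OR^{Y'},y')$ offers no smaller case here, so this instance of the hypothesis cannot be discharged inductively. The gesture toward the unfinished remark preceding the lemma does not help: that remark goes in the opposite direction (pushing non-Woodinness \emph{from} $Q$ \emph{to} $N$) and is in any case incomplete in the draft. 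Even for proper segments $(Y',y')\pins(Y,y)$, verifying the $\bfrSigma_{y'}^{P'}$-Woodinness clause when $\rho_{y'}^{P'}=\delta$ is not automatic from Corollary \ref{cor:local_Woodins}, which only gives first-order Woodinness inside $P'$.

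The paper avoids this entirely by arguing directly, without invoking Lemma \ref{lem:P-con_basic_props}. It proves $\rho_y^N\geq\rho_y^Y$ via a hull argument: assuming $\max(\delta,\rho_y^N)<\rho_y^Y$, one forms $C=\cHull_y^Y(\eta\cup\{\pvec_y^N\})$ for a suitable $\eta<\delta$ with $\pvec_y^N$ in range, uses Fact \ref{fact:condensation_from_norm_it} to get $C\pins Y$, and observes that $N^C\pins N$ (since $\eta=\delta^C$ is a $Y$-cardinal and $N^C$ is the local P-construction) while the $\rSigma_y$-elementarity of the uncollapse map restricted to $N^C$ (via the inductive instance of Lemma \ref{lem:rSigma_red_N_to_Y}) makes $\core_y(N)$ computable from $N^C$, contradicting $N^C\in N$. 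Once $\rho_y^N\geq\rho_y^Y$ is in hand, genericity of $Y|\delta$ over $N$ (and local definability of forcing) gives $\rho_y^N=\rho_y^Y$ and $p_y^N\leq p_y^Y$; the reverse inequality comes from $y$-universality of $N$ (via above-$\delta$ $(y{-}1,\om_1{+}1)$-iterability and \cite{fsfni}) together with Lemma \ref{lem:rSigma_red_N_to_Y}, and $y$-soundness then follows. Your Part~2 argument via Lemma \ref{lem:P-construction_tree_translation} is fine and is what the paper does.
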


\begin{proof}
The proof is by induction on $y$.

Note that we do not assume that $\delta$ is $\bfrSigma_y^Y$-Woodin.
But of course if $\delta$ is not $\bfrSigma_y^Y$-Woodin
then $\rho_y^Y=\delta$, and note that if $\rho_{y'}^N=\delta$ where $y'<y$,
then for part \ref{item:fs_match}, we just need to see that $\rho_y^N\geq\delta$,
and part \ref{item:N_it_above_delta} is trivial (modulo induction).

Part \ref{item:fs_match}: We may assume $y>0$.
Suppose $\rho_y^N<\delta=\rho_y^Y$.
Let $\theta<\delta$
 be such that
 \[ \delta,\vec{w}_y^Y,\pvec_y^N\in\Hull^Y_y(\theta\un\{\pvec_y^Y\}).\]
 where $\vec{w}_y^Y$ is the set of $y$-solidity witnesses for $Y$.
 Now because $\delta$ is $\bfrSigma_y^Y$-regular,
\[ H=\Hull_y^Y(\rho_y^N\un\theta\un\{\pvec_y^Y\})\inter\delta \]
is bounded in $\delta$. Let $\eta=\sup(H\inter\delta)$.
Note that $\eta$ is a limit cardinal of $Y$. Let
\[ C=\cHull_y^Y(\eta\un\{\pvec_y^N\}) \]
and $\pi:C\to Y$ be the uncollapse. Note that $\crit(\pi)=\eta$ and $\pi(\eta)=\delta$
and $\pvec_y^N\in\rg(\pi)$, $C$ is sound and $\rho_y^C=\eta$, so by condensation (with \ref{fact:condensation_from_norm_it}),
$C\pins Y$. Also, $\CC^C\rest\eta=\CC^Y\rest\eta$
and $\eta=\delta^C$, so $N_{\OR^C}^Y=N^C$ is the P-construction of $C$ above $N_\eta^C=N_\eta^Y$.
By induction  and \ref{lem:rSigma_red_N_to_Y},
\[ \pi\rest N^C:N^C\to N \]
is $\rSigma_y$-elementary,
so $\core_y(N)$ is computable from $N^C$, so $N^C\notin N$.
But $N^C||\eta=N||\eta$ is a cardinal segment of $N$ and $N^C$ projects $\leq\eta$,
so $N^C\in N$, contradiction.

If instead $\eta'=\max(\delta,\rho_y^N)<\rho_y^Y$, then one can argue similarly for a contradiction,
but replacing $\eta$ above with $\eta'$,
and using \ref{fact:condensation_from_norm_it}\ref{item:self-solid} to obtain a sound hull of $Y$.

So $\rho_y^N\geq\rho_y^Y\geq\delta$. Because
$Y$ is $y$-sound and is extender algebra generic over $N$
(and by the local definability of the forcing relation),
we therefore have $\rho_y^N=\rho_y^Y$ and $p_y^N\leq p_y^Y$.
Let $\rho=\rho_y^N=\rho_y^Y$. We must still verify that $p_y^N\geq p_y^Y$
(hence equality holds)
and that $N$ is $y$-sound.

Since $Y$ is above-$\delta$,
$(y-1,\om_1+1)$-iterable,
so is $N$,
so by \cite{fsfni_v4},
$N$ is $y$-universal and $y$-solid.
(Clearly $N$ considered as an $N|\delta$-premouse is $y$-universal and $y$-solid. As $\delta\leq\rho_y^N$, it readily follows
that $N$ itself is $y$-universal and $y$-solid,
except in the case that $y=1$ and $\rho_1^N=\delta$.
But in this case, we have $\delta\in\Hull_1^N(\{p_1^N\})$,
which again ensures that $N$ itself is $y$-universal and $y$-solid.
For if $p_1^N\neq\emptyset$ or $N$ is active this is trivial,
as by Woodin exactness \ref{lem:Woodin_exactness},
$\delta$ is the least Woodin of $N$.
But if $N$ is passive then $\Hull_1^N(\delta)=N|\delta$ by condensation,
so $p_1^N\neq\emptyset$.) By $y$-universality and level-by-level correspondence,
\[ (\rho^+)^Y=\rho^{+N}\sub\cHull_y^N(\rho\cup\pvec_y^N), \]
and by induction and \ref{lem:rSigma_red_N_to_Y},
therefore
$(\rho^+)^Y\sub\cHull_y^Y(\rho\cup\pvec_y^N)$,
and since $Y$ is $y$-solid, therefore  $p_y^N\geq p_y^Y$.
The $y$-soundness of $N$ now follows by genericity (or is trivial if $\rho_{y-1}^Y=\delta$).

Part \ref{item:N_it_above_delta}: This follows from standard
considerations with P-constructions. (Here is a sketch.
One establishes
a 1--1 correspondence between
above-$\delta$, $y$-maximal trees $\Tt$ on $N$, and above-$\delta$, 
$y$-maximal trees $\Uu$ on $Y$. The correspondence
is such that $\lh(\Tt)=\lh(\Uu)$, for each $\alpha<\lh(\Tt)$,
we have $M^\Tt_\alpha=N(M^\Uu_\alpha)$, and these models
correspond fine structurally much as do $N,Y$, 
$\dropset^\Tt_\deg=\dropset^\Uu_\deg$ and $\deg^\Tt=\deg^\Uu$,
and when $i^\Tt_{\alpha\beta}$ is defined then $i^\Tt_{\alpha\beta}\sub i^\Uu_{\alpha\beta}$,
and likewise for $i^{*\Tt}_{\alpha\beta}$ and $i^{*\Uu}_{\alpha\beta}$.
One also maintains that $\rSigma_m^{M^\Tt_\alpha}\sub\rSigma_m^{M^\Uu_\alpha}(\{\delta\})$ when $m=\deg^\Tt(\alpha)>0$.)
\end{proof}

\begin{lem}\label{lem:iterates_pres_fs_match}
 Let $Y,y,N$ satisfy the hypotheses of \ref{lem:Q-local_fs_match}
 excluding iterability, and satisfy part \ref{item:fs_match}
 of its conclusion.
 Let $\Tt$ be a successor length $y$-maximal tree on $Y$ which is based on $Y|\delta$,
 such that $b^\Tt$ does not drop in model.
  Let $Y'=M^\Tt_\infty$ and $\delta'=i^\Tt(\delta)$.
  Then  $Y'$ is $(x,\delta')$-iterability-good.
 Let $N'=N^{Y'}$.
 Then part \ref{item:fs_match} of \ref{lem:Q-local_fs_match}
 holds of $Y',y$, and therefore 
 \ref{lem:rSigma_red_N_to_Y} applies
 and
 \[ i^\Tt\rest N:N\to N' \]
 is a $y$-embedding.
\end{lem}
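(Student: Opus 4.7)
The plan is to derive the three conclusions in turn, using elementarity of $i^\Tt$, the preceding lemmas, and the assumed fine-structural match for $(Y,N,y)$. First I would apply Lemma \ref{lem:iterates_maintain_def_regularity}: its hypotheses are met because $\Tt$ is $y$-maximal, based on $Y|\delta$, and $b^\Tt$ does not drop in model, while the hypotheses of Lemma \ref{lem:Q-local_fs_match} (which are in force by assumption) supply $\delta \leq \rho_y^Y$ and $\bfrSigma_y^Y$-regularity of $\delta$. This gives that $i^\Tt$ is continuous at $\delta$ (so $\delta' = \sup i^\Tt``\delta$), that $b^\Tt$ does not drop in degree, hence $i^\Tt : Y \to Y'$ is a $y$-embedding and $Y'$ is $y$-sound, that $\delta' \leq \rho_y^{Y'}$, and that $\delta'$ is $\bfrSigma_y^{Y'}$-regular.

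Next, to show $Y'$ is $(x,\delta')$-iterability-good, I would exploit the uniform $\Sigma_1$-definability of $\CC$ from $x$ together with $y$-elementarity of $i^\Tt$ and continuity at $\delta$. This yields that $\CC^{Y'}\rest(\delta'+1)$ is defined with $i^\Tt(N^Y_\xi) = N^{Y'}_{i^\Tt(\xi)}$ for all limits $\xi \leq \delta$, and hence every creature of $\CC^{Y'}$ at a stage $\beta' < \delta'$ is $i^\Tt(M)$ for a creature $M$ of $\CC^Y$ at the preimage stage $\beta < \delta$. The iterability-goodness conclusions, namely the dichotomy of Theorem \ref{thm:pseudo-premice} for generalized-pseudo-premice and the extender coincidence of Theorem \ref{thm:bicephali} for bicephali, are first-order properties of the individual creature and so transfer from $M$ to $i^\Tt(M)$ by elementarity.

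For the remaining clauses I would first verify that $i^\Tt$ carries $N$ to $N'$: below $\delta$ this is the stage-by-stage commutation just used; above $\delta$, $\CC^Y$ is the P-construction of $Y$ over $N^Y_\delta$ and, by elementarity, $\CC^{Y'}$ above $\delta'$ is the P-construction of $Y'$ over $N^{Y'}_{\delta'} = i^\Tt(N^Y_\delta)$, and these P-constructions are first-order, commuting with $i^\Tt$. Then I would apply Lemma \ref{lem:rSigma_red_N_to_Y} to $(Y,N,y,x)$, whose hypotheses hold by the assumed fs match, to obtain a uniform recursive reduction $\varphi \mapsto \varphi'$ of $\rSigma_z^N$ into $\rSigma_z^Y(\{x\})$ (or $\rSigma_1^Y(\{x,\delta\})$ at $z=0$) for $z \leq y$. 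Since $i^\Tt$ is a $y$-embedding of $Y$, fixes $x$, and sends $\delta$ to $\delta'$, this reduction combined with the analogous reduction over $Y'$ yields $\rSigma_z$-elementarity of $i^\Tt\rest N : N \to N'$ for $z \leq y$. Combined with $y$-soundness of $N$, this makes $i^\Tt\rest N$ a $y$-embedding, $N'$ $y$-sound, and gives $\rho_z^{N'} = i^\Tt(\rho_z^N) = i^\Tt(\rho_z^Y) = \rho_z^{Y'}$ and $p_z^{N'} = i^\Tt(p_z^N) = i^\Tt(p_z^Y) = p_z^{Y'}$ for $z \leq y$ (with the appropriate $\sup$-preservation at $z = y$), by combining $y$-embedding preservation on $N$ and on $Y$ with the assumed fs match for $(Y,N,y)$.

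The main obstacle I expect is the transfer of iterability-goodness: one must verify carefully that every creature of $\CC^{Y'}$ below $\delta'$ really is an $i^\Tt$-image of a creature of $\CC^Y$ below $\delta$, handling the minimal-length choice of backgrounding extender in the definition of $\CC$, the continuity of $i^\Tt$ at $\delta$, and stages $\beta' < \delta'$ that arise as limits in $Y'$. A secondary concern is confirming that the $z=0$ case of the $\rSigma$-reduction in Lemma \ref{lem:rSigma_red_N_to_Y} behaves correctly under transfer, since the parameter $\delta$ in that reduction becomes $\delta'$ over $Y'$; but this is immediate from the uniformity asserted in that lemma.
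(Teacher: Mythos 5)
The plan aligns with the paper's proof on the first two points: applying Lemma~\ref{lem:iterates_maintain_def_regularity} to get that $b^\Tt$ does not drop in degree, $\delta'\leq\rho_y^{Y'}$, and $\delta'$ is $\bfrSigma_y^{Y'}$-regular; and obtaining $(x,\delta')$-iterability-goodness from elementarity of $i^\Tt$.

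However, your argument for the fine-structural match at $(Y',N',y)$ is circular. You propose to chain the reduction $\rSigma^N_{z}\subseteq\rSigma^Y_{z}(\{x,\delta\})$ over $Y$ with the ``analogous reduction over $Y'$'' --- that is, Lemma~\ref{lem:rSigma_red_N_to_Y} applied to $(Y',N',y,x)$ --- to get $\rSigma_z$-elementarity of $i^\Tt\rest N:N\to N'$, and then to read off $\rho_z^{N'}=\rho_z^{Y'}$, $p_z^{N'}=p_z^{Y'}$. But the hypotheses of Lemma~\ref{lem:rSigma_red_N_to_Y} explicitly require $\delta'\leq\rho_z^{N'}=\rho_z^{Y'}$ and $p_z^{N'}=p_z^{Y'}$ for all $z\leq y$: you are invoking the reduction over $Y'$ to prove precisely the hypotheses that license invoking it. The uniformity clause of Lemma~\ref{lem:rSigma_red_N_to_Y} gives a recursive assignment $\varphi\mapsto\varphi'$ uniform \emph{across models satisfying the hypotheses}; it does not dispense with the need to verify the hypotheses at $Y'$.

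The missing idea is the level-by-level induction that breaks this circle. One proves part~\ref{item:fs_match} for $(Y',N')$ by induction on $z\leq y$. For the inductive step one may apply Lemma~\ref{lem:rSigma_red_N_to_Y} \emph{only at level $z-1$}, where the match between $N'$ and $Y'$ has already been established; this makes ``$w$ is a generalized $z$-solidity witness for $N$'' expressible over $Y$ at complexity $\rPi_z(\{\delta\})$, and the assertion ``for every $\alpha<\rho_z^N$ there is a generalized $z$-solidity witness for $\alpha\cup\{\pvec_z^N\}$'' expressible at complexity $\rSigma_{z+2}^Y(\{\delta\})$, both low enough (when $z<y$) to be preserved by the $\rSigma_{y+1}$-elementarity of $i^\Tt$; one then translates back to $N'$ via the level-$(z-1)$ reduction over $Y'$. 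Combined with $N'=\Hull^{N'}(\rho_z^{Y'}\cup\{\pvec_z^{Y'}\})$ (genericity of $Y'|\delta'$ over $N'$), this forces $\rho_z^{N'}=\rho_z^{Y'}$, $p_z^{N'}=p_z^{Y'}$, and $z$-soundness of $N'$. The top case $z=y$ needs a separate argument (the complexity $\rSigma_{y+2}$ exceeds what $\rSigma_{y+1}$-elementarity preserves), yielding only $\rho_y^{N'}=\sup i^\Tt``\rho_y^N$; you flag the sup-preservation at $z=y$ but do not address why the general argument degrades there. Without the induction and the solidity-witness transfer, your proposal does not establish the match.
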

\begin{proof}
The fact that $Y'$ is $(x,\delta')$-iterability-good
is by the elementarity of $i^\Tt$.

Note that \ref{lem:iterates_maintain_def_regularity} applies to $\Tt,\delta$,
so $b^\Tt$ does not drop in degree,
$\delta^{Y'}\leq\rho_y^{Y'}$ and $\delta^{Y'}$ is $\bfrSigma_y^{Y'}$-regular.
The proof of part \ref{item:fs_match} of \ref{lem:Q-local_fs_match}
is now by induction on $z\leq y$.
It is trivial if $z=0$,
so suppose $z>0$.
So Lemma \ref{lem:rSigma_red_N_to_Y}
applies at $z-1$.

Suppose first that $z<y$,
so $\rho_z^{Y'}=i^\Tt(\rho_z^Y)$
and $p_z^{Y'}=i^\Tt(p_z^Y)$.
We have $p_z^N=p_z^Y$,
and $N$ is $z$-solid.
Recall that ``$w$ is a generalized $z$-solidity witness'' is $\rPi_z(\{\pvec_{z-1}\})$. But by Lemma \ref{lem:rSigma_red_N_to_Y} applied at $z-1$,
$\rSigma_z^N$ is expressible in $\rSigma_z^Y(\{\delta\})$,
and hence the $\rSigma_z$-elementarity
of $i^\Tt$ preserves this statement.
Somewhat similarly, but with higher complexity,
we have that $\all \alpha<\rho_z^N=\rho_z^Y$ there is $t\in N$ such that $N\sats$``$t$ is a generalized $z$-solidity witness
for $\alpha\cup\{\pvec_z^N\}$''.
But by Lemma \ref{lem:rSigma_red_N_to_Y}
at $z-1$, note that this statement
is $\rSigma_{z+2}^Y(\{\delta\})$,
hence also preserved by $i^\Tt$
(using that $z<y$ and $i^\Tt$ is $\rSigma_{y+1}$-elementary).
But conversely, because of the genericity of $Y'$ over $N'$,
we have $N'=\Hull^{N'}(\rho_z^{Y'}\cup\{\pvec_z^{Y'}\})$.
Putting these things together
gives that $\rho_z^{N'}=\rho_z^{Y'}$
and $p_z^{N'}=p_z^{Y'}$,
and $N'$ is $z$-sound.

Now suppose that $z=y$.
Then we can argue mostly as above,
except that now we get $\rho_z^{N'}=\sup i^\Tt``\rho_z^{N'}$;
this uses that the generalized $z$-solidity  witnesses for $\alpha\cup\{p_z^N\}$ for each $\alpha<\rho_z^N$
are preserved by $i^\Tt$.
Otherwise it is as before.

Since we have $\rho_y^{N'}=\rho_y^{Y'}$,
and by Lemma \ref{lem:rSigma_red_N_to_Y},
it now follows that $i^\Tt\rest N:N\to N'$
is a $y$-embedding.
\end{proof}

\begin{lem}
 Let $Y$ be a premouse, $x\in\RR^Y$,
 $\alpha$ a limit such that $\CC\rest(\alpha+1)$ is defined
 and $t_\alpha\in\{1,2\}$. Then $N_\alpha^Y,F_\alpha^Y\in Y|\rho_0^Y$
 (so in particular, if $Y$ is active type 3,
 then $N_\alpha^Y,F_\alpha^Y\in Y^\sq$).
 \end{lem}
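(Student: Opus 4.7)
The plan is to split on $t_\alpha \in \{1,2\}$ and bound $\lh(F_\alpha)$ and $\OR(N_\alpha)$ strictly below $\rho_0^Y$; the substantive work is in the type 3 case, where $\rho_0^Y = \nu^Y < \OR^Y$. A uniform preliminary observation is that $F_\alpha$ is a non-top extender of $Y$: for $t_\alpha = 1$ this is stipulated by the construction (which requires $F_\alpha \in \es^Y$, excluding $F^Y$), and for $t_\alpha = 2$ it holds because the near-a-measurable condition requires the measurability of $\kappa = \card^Y(\alpha)$ to be witnessed by some non-top $E^* \in \es^Y$, whence $F_\alpha = D^Y_{\kappa 0}$, being the \emph{least} order-0 measure on $\kappa$ in $\es_+^Y$, satisfies $\lh(F_\alpha) \leq \lh(E^*) < \OR^Y$. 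For $Y$ passive or active of type 1 or 2 one has $\rho_0^Y = \OR^Y$ and so $F_\alpha \in Y|\rho_0^Y$ immediately.

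For type 3 $Y$, $\rho_0^Y = \nu^Y$, and one uses the standard feature of the Mitchell--Steel indexing in type 3 premice that all non-top extenders on $\es^Y$ have index strictly below $\nu^Y$, giving $F_\alpha \in Y|\rho_0^Y$. As a direct verification in case $t_\alpha = 1$: $\lh(F_\alpha) \leq (\nu_{F_\alpha}^+)^Y = \aleph_{\beta+1}^Y$ where $\alpha = \beta+\om$, and assumption (A1), applicable in the non-cardinal-large cases $t_\alpha \in \{1,2\}$, forces $\aleph_{\beta+n}^Y < \OR^Y$ for every $n<\om$; since cardinals of a type 3 premouse are strictly increasing and bounded above by $\nu^Y$, this forces $\aleph_{\beta+1}^Y < \nu^Y$. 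The analogous argument applies in case $t_\alpha = 2$, using $\lh(F_\alpha) \leq (\kappa^+)^Y$ and bounding $(\kappa^+)^Y$ strictly below $\nu^Y$ by the same cardinal-counting.

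For $N_\alpha$: in case $t_\alpha = 1$, $N_\alpha = (N_\beta, E)$ has $\OR(N_\alpha) = \lh(E) \leq \lh(F_\alpha) < \rho_0^Y$, and $N_\alpha$ is computed from the derived extender $E$ and $N_\beta$, both lying in $Y|\rho_0^Y$ (the latter by a routine induction along $\CC$ using the same bounds). In case $t_\alpha = 2$, Lemma \ref{lem:measurable_stack} identifies $R_D = i^Y_D(N_\kappa)|\chi^Y_{x\kappa}$ (for $D = F_\alpha$) with the stack of all sound premice $A \in Y$ such that $N_\kappa \pins A$, $\rho_\om^A = \kappa$, and $A$ satisfies condensation; since $\alpha < \chi^Y_{x\kappa}$, $N_\alpha = R_D|\alpha$ is a proper initial segment of this stack of elements of $Y$, so $N_\alpha \in Y$, and $\OR(N_\alpha) = \alpha < \chi^Y_{x\kappa} \leq (\kappa^+)^Y < \nu^Y = \rho_0^Y$ places $N_\alpha$ in $Y|\rho_0^Y$. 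The only point requiring genuine care is the type 3 case, where the cardinal-counting via (A1) or the standard type 3 indexing convention prevents escape from the squash $Y^\sq$; the rest is bookkeeping.
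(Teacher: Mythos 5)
Your overall strategy is the same as the paper's: reduce to the type 3 case, and there use the counting of $Y$-cardinals supplied by assumption (A1) to bound $\lh(F_\alpha)$ strictly below $\nu^Y=\rho_0^Y$. That ``direct verification'' is the correct argument and is essentially what the paper does. However, two of your supporting claims are wrong or off and should be fixed.

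First, the ``standard feature of the Mitchell--Steel indexing in type 3 premice that all non-top extenders on $\es^Y$ have index strictly below $\nu^Y$'' is not true. For a type 3 premouse $Y$ with $\nu^Y=(\mu^+)^Y$ for some $\mu$, there can certainly be (type 1) extenders on $\es^Y$ with critical point $\mu$ and $\nu(E)=\nu^Y$, indexed in $(\nu^Y,\OR^Y)$; nothing in the premouse axioms forbids this, and $Y^\sq$ does not contain them. So this alleged general fact, which if true would render the argument trivial and (A1)-free, must be deleted; the (A1)-based cardinal counting is genuinely needed.

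Second, in the $t_\alpha=2$ case the bound $\lh(F_\alpha)\leq(\kappa^+)^Y$ is incorrect. For an order $0$ measure $D^Y_{\kappa 0}$ one has $\nu(D^Y_{\kappa 0})=(\kappa^+)^{Y|\lh(D^Y_{\kappa 0})}\leq(\kappa^+)^Y$, but the index satisfies $\lh(D^Y_{\kappa 0})>\nu(D^Y_{\kappa 0})$; what one actually has is $\lh(F_\alpha)<(\kappa^{++})^Y=\aleph_{\kappa+2}^Y$. This still works with the same cardinal counting: since $\kappa<\alpha$ and $\alpha$ is a limit, $\kappa+\omega\leq\alpha$, so by (A1) $\aleph_{\kappa+n}^Y<\OR^Y$ for all $n<\omega$, hence $\aleph_{\kappa+2}^Y<\aleph_{\kappa+3}^Y\leq\nu^Y$ in the type 3 case.

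Finally, a small point on the $N_\alpha$ part in case $t_\alpha=1$: rather than invoking ``a routine induction'' to place $N_\beta$ in $Y|\rho_0^Y$, the paper's observation is cleaner and worth adopting --- $N_\alpha$ is definable over $Y|\lh(F_\alpha^Y)$ and is coded by a subset of its largest cardinal $\rho$, which is a $Y$-cardinal $<\lh(F_\alpha^Y)<\rho_0^Y$, so $N_\alpha\in Y|\rho^{+Y}\subseteq Y|\rho_0^Y$. With these repairs your write-up matches the paper's proof.
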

 \begin{proof}
We have $\aleph_\alpha^Y\leq\OR^Y$, or more precisely, for each $\beta<\alpha$,
 $Y\sats$``$\aleph_\beta$ exists'',
 so $\aleph_\beta^Y<\OR^Y$.
 
 Suppose $t_\alpha=1$. So $\alpha=\gamma+\om$ for some $\gamma$,
 and $N_\alpha=(N_\gamma,F)$ is an active premouse. Moreover,
 $\nu(F_\alpha^Y)=\aleph_\gamma^Y<\lh(F_\alpha^Y)<\aleph_{\gamma+1}^Y<\OR^Y$, so $F_\alpha^Y\in Y$ ($F_\alpha^Y$ is not the active extender of $Y$).
 Note that $N_\alpha$ is definable over $Y|\lh(F_\alpha^Y)$, so $N_\alpha\in Y$.
 Since $N_\alpha$ is coded by a subset of its largest cardinal $\rho$, therefore in fact $N_\alpha\in Y|\rho^{+Y}$. Also since $\aleph_\alpha^Y\leq\OR^Y$,
 we have $Y|\lh(F_\alpha^Y)<\rho_0^Y$.
 
 If $t_\alpha=2$,
 then $\kappa+\om\leq\alpha$,
 so $\aleph_{\kappa+\om}^Y\leq\OR^Y$,
 so $F_\alpha^Y$ (an order 0 measure)
 is in $\es^Y$ and in fact, 
 $\lh(F_\alpha^Y)<\rho_0^Y$
 (and $N_\alpha$ is again definable over $Y|\lh(F_\alpha^Y)$).
\end{proof}

\begin{rem}
 Of course if $Y$ is active type 3 and satisfies ``there is a Woodin cardinal'' and $\alpha>\nu(F^Y)$
 with $Y|\alpha$ active, then $F^Y_\alpha$
 is just the restriction of $F^{Y|\alpha}$,
 and hence is not in $Y^\sq$.
 If $t_\alpha=0$, then $\alpha\leq\aleph_\alpha^Y\leq\rho_0^Y$,
 and $N_\alpha$ is definable over $Y|\aleph_\alpha^Y$. (If $Y$ is active type 3 then we must have $\aleph_\alpha^Y\leq\rho_0^Y$, since $\aleph_\alpha^Y$ is a limit of $Y$-cardinals. It follows that $N_\alpha^Y$ is definable over $Y|\rho_0^Y$,
 hence $N_\alpha^Y\in Y$,
 even if $\OR(N_\alpha^Y)=\alpha=\aleph_\alpha^Y=\rho_0^Y$. If $Y$ is passive, we might have that $\OR(N_\alpha)=\alpha=\aleph_\alpha^Y=\OR^Y$.)
\end{rem}

\section{Copying and $\nu$-preservation}\label{sec:nu-pres}
\begin{dfn}\label{dfn:nu-pres,neat}
Let $Q,R$ be premice and $\pi:Q^\sq\to R^\sq$ be 
$\Sigma_0$-elementary.
Recall that $\psi_\pi$ denotes $\pi$ if $Q,R$ are passive, and otherwise letting $\mu=\crit(F^Q)$,
\[ \psi_\pi:\Ult(Q|(\mu^+)^Q,F^Q)\to\Ult(R|(\pi(\mu)^+)^R,F^R) \]
is the map induced by $\pi$ via the Shift Lemma. So $\pi\sub\psi_\pi$.
Recall that $\pi$ is:
\begin{enumerate}[label=--]
 \item \emph{$\nu$-preserving} iff, if $Q$ is type 3 then $\psi_\pi(\nu(Q))=\nu(R)$,
 \item \emph{$\nu$-high} iff $Q$ is type 3 and $\psi_\pi(\nu(Q))>\nu(R)$,
 \item \emph{$\nu$-low} iff $Q$ is type 3 and $\psi_\pi(\nu(Q))<\nu(R)$.\qedhere
\end{enumerate}
\end{dfn}

The next lemma is straightforward:
\begin{lem}\label{lem:nu-pres_elem}
Let $\pi:M^\sq\to N^\sq$ be a weak $m$-embedding.
Then:
\begin{enumerate}[label=--]
 \item If $M,N$ are not type 3 then $\pi$ is $\nu$-preserving.
 \item If $\pi$ is $\rSigma_1$-elementary then $\pi$ is non-$\nu$-low.
 \item If $\pi$ is $\rSigma_2$-elementary then $\pi$ is $\nu$-preserving.
\end{enumerate}
\end{lem}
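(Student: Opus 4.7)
The plan is as follows. Part~1 is immediate from Definition~\ref{dfn:nu-pres,neat}: the condition defining $\nu$-preservation is vacuously satisfied unless $Q = M$ is type 3, so when $M$ is not type 3 the conclusion holds trivially.

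For Parts~2 and 3, I would first reduce to the case that $M$ is type 3. If $M$ is not type 3, then both $\nu$-lowness and $\nu$-highness require $Q = M$ to be type 3 by definition, so the stated conclusions follow vacuously. Assuming $M$ type 3, the $\Sigma_0$-elementarity of $\pi$ on the active-extender predicate of the language of premice forces $N$ to be type 3 as well. Recall that with $\mu = \crit(F^M)$, the map $\psi_\pi$ is the Shift Lemma extension, sending $[a,f]^{M|(\mu^+)^M}_{F^M}$ to $[\pi(a),\pi(f)]^{N|(\pi(\mu)^+)^N}_{F^N}$, and that $\OR^{M^\sq} = \nu(M)$.

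For Part~2, I would suppose toward contradiction that $\psi_\pi(\nu(M)) < \nu(N)$ and pick a generator $\beta$ of $F^N$ with $\psi_\pi(\nu(M)) \leq \beta < \nu(N)$. The assertion ``there exists a generator of the active extender at or above a specified height $\gamma$'' is $\rSigma_1$ over the squashed structure with $\gamma$ as parameter, using the amenable coding of $F$ in the active predicate (the non-representability of $\beta$ as $[c,g]$ with $c \subseteq \beta$ unpacks to a bounded quantification over segments). Choosing an appropriate parameter in $\rg(\pi)$ just below $\psi_\pi(\nu(M))$ and invoking $\rSigma_1$-elementarity would pull this witness back to a generator of $F^M$ at or above some ordinal approaching $\OR^{M^\sq} = \nu(M)$, which is impossible.

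For Part~3, Part~2 already rules out $\nu$-lowness under the stronger hypothesis, so only $\nu$-highness needs to be addressed. Supposing $\psi_\pi(\nu(M)) > \nu(N)$, I would use the characterization that $\nu(M)$ is the natural length of $F^M$, i.e.\ every ordinal $\beta < \nu(M)$ admits a representation $[c, g]^{M|(\mu^+)^M}_{F^M}$ with $c \in [\nu(M)]^{<\om}$ and $g \in M|(\mu^+)^M$. This closure condition is $\rPi_2$ over $M^\sq$; transferring it via $\rSigma_2$-elementarity (equivalently, pulling back the $\rSigma_2$ failure witness at an ordinal $\beta$ with $\nu(N) \leq \beta < \psi_\pi(\nu(M))$) would contradict the corresponding representability property at $N$. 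The main point of care throughout is arranging the syntactic shape of the relevant formulas so that the complexity bounds are tight, which is a routine Mitchell--Steel fine-structure exercise.
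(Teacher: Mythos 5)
The paper itself does not supply a proof of this lemma (it is declared ``straightforward''), so there is no proof to compare against; I can only assess your argument on its own terms. Part~1 is fine: both $\nu$-lowness and $\nu$-highness require the domain to be type~3 by Definition~\ref{dfn:nu-pres,neat}, so the conclusion is vacuous otherwise. Your reduction to the case that both $M$ and $N$ are type~3 is also fine.

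Your argument for Part~2 does not close, however. You find a generator $\beta$ of $F^N$ with $\psi_\pi(\nu^M)\leq\beta<\nu^N$, observe that ``there is a generator of $\dot F$ at or above $\gamma$'' is $\rSigma_1$ in the parameter $\gamma$, choose $\gamma\in\rg(\pi)$ below $\psi_\pi(\nu^M)$, and pull the statement back to $M^\sq$ to conclude there is a generator of $F^M$ above $\pi^{-1}(\gamma)$. But $\pi^{-1}(\gamma)<\nu^M=\OR^{M^\sq}$, and for a type~3 premouse the generators of $F^M$ \emph{are} cofinal in $\nu^M$ (this is precisely why $\nu^M$ is a limit ordinal equal to the sup of the generators). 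So the pulled-back statement is simply true and there is no contradiction; $\rSigma_1$-elementarity only transfers the existential assertion, it does not ``pull back the witness $\beta$'' itself, and the witness produced in $M^\sq$ can be any generator below $\nu^M$. If you want a contradiction from non-cofinality/$\nu$-lowness you need a $\rSigma_1$ fact that is actually \emph{false} in $M^\sq$: for instance, in the case $\nu^M=(\gamma^+)^M$ the argument in the paper's proof of Lemma~\ref{lem:nu-low_weak-0} expresses ``$\gamma^+$ exists'' as $\Sigma_1^{N^\sq}(\{\pi(\gamma)\})$ (using $\dot F$), which is true in $N^\sq$ but false in $M^\sq$; that kind of statement is what the $\rSigma_1$-hypothesis must be leveraged against, not the ubiquitous ``there is a big generator''.

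Part~3 has the same problem and an additional one. The ``characterization'' you invoke --- every $\beta<\nu(M)$ is $[c,g]$ with $c\in[\nu(M)]^{<\om}$ --- is trivially true of every ordinal $\beta<\nu(M)$ (take $c=\{\beta\}$, $g=\id$) and so does not characterize $\nu(M)$ at all; the real content of natural length is about ordinals $\beta$ up to $\lh(F^M)$, which lie outside $M^\sq$ and hence cannot be quantified over in a formula interpreted in $M^\sq$. Moreover, the ordinal $\beta$ you want to use as a ``failure witness'' satisfies $\nu(N)\leq\beta<\psi_\pi(\nu(M))$, so $\beta\geq\OR^{N^\sq}$ and cannot serve as a parameter in any $\rSigma_2/\rPi_2$ formula over $N^\sq$. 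As written, neither the formula nor its parameter lives where you need it to, so the transfer argument has no footing. (Your observation that Part~2 already rules out $\nu$-lowness under the $\rSigma_2$ hypothesis is correct, so it is only the $\nu$-high direction that needs a new idea.)
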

\begin{lem}\label{lem:nu-pres_it_map}
 Let $M$ be an $m$-sound premouse and $\Tt$ an $m$-maximal tree on $M$.
 Let $\alpha\leq_\Tt\beta$ be such that $(\alpha,\beta]_\Tt\inter\dropset^\Tt=\emptyset$
 and $M^\Tt_\beta$ is type 3. Then:
 \begin{enumerate}
  \item\label{item:non-nu-low} $i^\Tt_{\alpha\beta}$ is non-$\nu$-low,
 and if $\alpha$ is a successor then $i^{*\Tt}_{\alpha\beta}$ is non-$\nu$-low.
 \item\label{item:when_nu-high} $i^\Tt_{\alpha\beta}$ is 
$\nu$-high
 iff $\cof^{M^\Tt_\alpha}(\nu(M^\Tt_\alpha))=\kappa$ is $M^\Tt_\alpha$-measurable
 and there are $\xi,\gamma+1$ such that
 \[ \alpha\leq_\Tt\xi<_\Tt\gamma+1\leq_\Tt\beta,\]
 $\pred^\Tt(\gamma+1)=\xi$ and $\deg^\Tt(\gamma+1)=0$ and
$\crit(i^\Tt_{\xi\beta})=i^\Tt_{\alpha\xi}(\kappa)$.
Similarly for $i^{*\Tt}_{\alpha\beta}$ if $\alpha$ is a successor
\tu{(}but allowing $\xi\in[\pred^\Tt(\alpha),\beta)_\Tt$\tu{)}.
\item\label{item:cof_nu(M_beta)} Suppose $i^\Tt_{\alpha\beta}$ is $\nu$-high and let $\xi$ be least 
as above. Then $i^\Tt_{\alpha\xi}$ is $\nu$-preserving
and
\[ \cof^{M^\Tt_\xi}(\nu(M^\Tt_\xi))=i^\Tt_{\alpha\xi}(\kappa)=
\cof^{M^\Tt_\beta}(\nu(M^\Tt_\beta))<i^\Tt_{\alpha\beta}(\kappa)<\nu(M^\Tt_\beta).\]
Similarly for $i^{*\Tt}_{\alpha\beta}$ if $\alpha$ is a successor.
In particular, $\nu(M^\Tt_\beta)$ is $M^\Tt_\beta$-singular.
\item\label{item:nu_reg} Suppose $\nu(M^\Tt_\alpha)$ is $M^\Tt_\alpha$-regular. Then $i^\Tt_{\alpha\beta}$ is 
$\nu$-preserving and $\nu(M^\Tt_\beta)$ is $M^\Tt_\beta$-regular. Likewise for $i^{*\Tt}_{\alpha\beta}$, $M^{*\Tt}_\alpha$ and 
$\nu(M^{*\Tt}_\alpha)$ if $\alpha$ is a successor.
\item\label{item:nu_sing} Suppose $\nu(M^\Tt_\alpha)$ has $M^\Tt_\alpha$-cofinality $\mu<\nu(M^\Tt_\alpha)$
and $i^\Tt_{\alpha\beta}$ is $\nu$-preserving. Then $\nu(M^\Tt_\beta)$ has $M^\Tt_\beta$-cofinality $i^\Tt_{\alpha\beta}(\mu)$.
Likewise for $i^{*\Tt}_{\alpha\beta}$, $M^{*\Tt}_\alpha$ and 
$\nu(M^{*\Tt}_\alpha)$ if $\alpha$ is a successor.
\end{enumerate}
\end{lem}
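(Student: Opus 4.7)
The plan is to reduce everything to a single-ultrapower analysis and then chain the conclusions along the branch, using that along a non-dropping branch the degree $\deg^\Tt(\gamma) = m$ is constant (equal to $\deg^\Tt(\alpha)$). Fix a successor step $\gamma+1 \in (\alpha,\beta]_\Tt$ with $\pred^\Tt(\gamma+1)=\xi$, $\kappa_\gamma = \crit(E^\Tt_\gamma)$, and (by the non-dropping assumption) $M^{*\Tt}_{\gamma+1} = M^\Tt_\xi$ and $\deg^\Tt(\gamma+1) = m$. If $m \geq 1$ the ultrapower map $i^\Tt_{\xi,\gamma+1}$ is an $m$-embedding, hence $\rSigma_2$-elementary on the squashes, and Lemma~\ref{lem:nu-pres_elem} gives that the step is $\nu$-preserving. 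If $m = 0$ the map is only a $0$-embedding, so $\rSigma_1$-elementary on squashes (hence non-$\nu$-low by Lemma~\ref{lem:nu-pres_elem}); here one directly computes $\nu(M^\Tt_{\gamma+1}) = \sup i^\Tt_{\xi,\gamma+1}\text{``}\nu(M^\Tt_\xi)$, while $\psi_{i^\Tt_{\xi,\gamma+1}}(\nu(M^\Tt_\xi)) = i^\Tt_{\xi,\gamma+1}(\nu(M^\Tt_\xi))$ by the Shift Lemma applied to the top extender. These agree, and the step is $\nu$-preserving, exactly when $i^\Tt_{\xi,\gamma+1}$ is continuous at $\nu(M^\Tt_\xi)$, that is, when $\cof^{M^\Tt_\xi}(\nu(M^\Tt_\xi)) \neq \kappa_\gamma$. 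In the $\nu$-high case, a cofinal $\kappa_\gamma$-sequence inside $\nu(M^\Tt_\xi)$ transports through $i^\Tt_{\xi,\gamma+1}$ to a cofinal $\kappa_\gamma$-sequence in $\nu(M^\Tt_{\gamma+1})$ which lies in $M^\Tt_{\gamma+1}$, and since $\kappa_\gamma$ is an $M^\Tt_\xi$-measurable (as critical point of an $M^\Tt_\xi$-total extender) it remains a cardinal in $M^\Tt_{\gamma+1}$, so $\cof^{M^\Tt_{\gamma+1}}(\nu(M^\Tt_{\gamma+1})) = \kappa_\gamma$.

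Part \ref{item:non-nu-low} then follows because non-$\nu$-lowness is closed under composition of $\rSigma_1$-elementary maps between squashes, with limit stages handled by direct-limit continuity (and the same argument works verbatim for $i^{*\Tt}_{\alpha\beta}$ when $\alpha$ is a successor). For part \ref{item:nu_reg}, if $\nu(M^\Tt_\alpha)$ is $M^\Tt_\alpha$-regular then $\cof^{M^\Tt_\alpha}(\nu(M^\Tt_\alpha)) = \nu(M^\Tt_\alpha)$, which strictly exceeds every critical point $\kappa_\gamma$ that arises along the branch; hence by the single-step analysis no step is ever $\nu$-high, the branch is $\nu$-preserving, and regularity is transported inductively. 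Part \ref{item:nu_sing} is similar: if $\cof^{M^\Tt_\alpha}(\nu(M^\Tt_\alpha)) = \mu < \nu(M^\Tt_\alpha)$ and every step is $\nu$-preserving, then $\mu$ is tracked elementarily at each step to give $\cof^{M^\Tt_\beta}(\nu(M^\Tt_\beta)) = i^\Tt_{\alpha\beta}(\mu)$, with limit stages handled by the direct-limit description.

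Parts \ref{item:when_nu-high} and \ref{item:cof_nu(M_beta)} are the heart of the argument. The key structural claim is that at most one $\nu$-high single step can occur along $(\alpha,\beta]_\Tt$. Let $\gamma+1$ be the first such step and $\xi = \pred^\Tt(\gamma+1)$; by part \ref{item:nu_sing} applied to $[\alpha,\xi]_\Tt$, the segment up to $\xi$ is $\nu$-preserving, hence $i^\Tt_{\alpha\xi}(\kappa) = \cof^{M^\Tt_\xi}(\nu(M^\Tt_\xi)) = \kappa_\gamma$, showing that $\kappa = \cof^{M^\Tt_\alpha}(\nu(M^\Tt_\alpha))$ is $M^\Tt_\alpha$-measurable. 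After the $\nu$-high step at $\gamma+1$, the cofinal $\kappa_\gamma$-witness lies in $M^\Tt_{\gamma+1}$ and $\kappa_\gamma$ is below the critical points of all subsequent extenders in the branch (by the monotonicity of lengths in $\Tt$ and the successor-step rules, $\crit(E^\Tt_{\gamma'}) \geq \lh(E^\Tt_\gamma) > \kappa_\gamma$ for $\gamma' > \gamma$ in $[\gamma+1,\beta)_\Tt$), so $\kappa_\gamma$ and the witnessing sequence are fixed by $i^\Tt_{\gamma+1,\beta}$; thus $\cof^{M^\Tt_\beta}(\nu(M^\Tt_\beta)) = \kappa_\gamma = i^\Tt_{\alpha\xi}(\kappa)$. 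The remaining inequalities $i^\Tt_{\alpha\xi}(\kappa) < i^\Tt_{\alpha\beta}(\kappa) < \nu(M^\Tt_\beta)$ follow from $\xi <_\Tt \beta$ together with $\nu(M^\Tt_\beta)$ being strictly above the $\nu$-preserving image of $\kappa$ after the $\nu$-high step. The conversion to $i^{*\Tt}_{\alpha\beta}$ with $\alpha$ a successor is identical, except that the initial $*$-step at $\alpha$ (corresponding to $\xi = \pred^\Tt(\alpha)$) is now allowed as the $\nu$-high step. The main obstacle is the fine structural care needed in the $\deg = 0$ single-step computation: one must verify that $\psi_{i^\Tt_{\xi,\gamma+1}}(\nu(M^\Tt_\xi))$ is genuinely computed by the Shift Lemma as $i^\Tt_{\xi,\gamma+1}(\nu(M^\Tt_\xi))$ in the unsquashed ambient structure, and that the preservation of $\kappa_\gamma$ as a cardinal (and of its cofinal sequence) is not disrupted by squashed-ultrapower subtleties in Mitchell-Steel indexing, particularly near superstrong extenders.
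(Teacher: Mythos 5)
Your proposal is essentially correct and follows the same route as the paper: reduce to a single-ultrapower analysis showing that a degree-$0$ step applied to a type 3 model is $\nu$-high exactly when the critical point equals the $M$-cofinality of $\nu$ (via the observation that the squashed ultrapower has $\nu$ equal to the sup of the image while $\psi$ sends $\nu$ to its unsquashed-ultrapower image), and then chain this along the branch, noting that after a single $\nu$-high step the cofinality drops below all later critical points so no further $\nu$-high step can occur. The only substantive slip is the inequality $\crit(E^\Tt_{\gamma'})\geq\lh(E^\Tt_\gamma)$: the $m$-maximality rules give $\crit(E^\Tt_{\gamma'})\geq\nu(E^\Tt_\gamma)$ (not $\geq\lh(E^\Tt_\gamma)$) when $\pred^\Tt(\gamma'+1)>\gamma$, but since $\kappa_\gamma=\crit(E^\Tt_\gamma)<\nu(E^\Tt_\gamma)$ the conclusion you need, $\crit(E^\Tt_{\gamma'})>\kappa_\gamma$, still holds. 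Your ordering of the parts is mildly rearranged — you derive parts \ref{item:nu_reg} and \ref{item:nu_sing} directly from the single-step computation rather than from parts \ref{item:when_nu-high} and \ref{item:cof_nu(M_beta)} as the paper does — but this is cosmetic, not a different argument.
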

\begin{proof}
We have $\nu$-preservation for iteration maps of degree $\geq 1$ by \ref{lem:nu-pres_elem},
and likewise non-$\nu$-lowness for all iteration maps. This gives part \ref{item:non-nu-low}.
Note that parts \ref{item:nu_reg} and \ref{item:nu_sing}
follow easily from the other parts, considering the elementarity
of $\psi_{j}$  and $\psi_{j^*}$
where $j=i^\Tt_{\alpha\beta}$ and $j^*=i^{*\Tt}_{\alpha\beta}$.

Now consider the case that $M$ is a type 3 premouse and $E$ is an extender over $M$
and $U=\Ult_0(M,E)$ and $i^\sq:M^\sq\to U^\sq$ is the ultrapower map.
(So literally $U=\Ult(M^\sq,E)^\unsq$.)
Let $\nu=\nu^M$ and $\kappa=\crit(E)$.
Let $U'=\Ult(M,E)$, formed without squashing,
shifting the predicate as with type 2 extenders. Let $i':M\to U'$ be the ultrapower map.
So $i^\sq=i'\rest M^\sq$, and $i'$ is continuous at $\nu$ iff $\cof^M(\nu)\neq\kappa$.

Clearly $F^{U'}\rest\nu^U=F^U\rest\nu^U$, and by \cite[\S9]{fsit}, $\nu(F^{U'})=\nu^U$.
Letting $\mu=\crit(F^U)$ and $P=U|(\mu^+)^U$, we have $P=U'|(\mu^+)^{U'}$.
Let $\nu'=i'(\nu)=\lgcd(U')$.
Let $f:[\kappa]^{<\om}\to\kappa$ with $f\in M$ and $a\in[\nu]^{<\om}$ be such that
$\nu=[a,f]^M_{F^M}$.
We have $a,f\in M^\sq$.
But by standard calculations,
\[ 
\psi_{i^\sq}(\nu)=
[i^\sq(a),i^\sq(f)]^P_{F^{U}}=
[i'(a),i'(f)]^P_{F^{U'}}=\nu'.\]
So $i^\sq$ is $\nu$-high iff $i'$ is discontinuous at $\nu^M$ iff $\cof^M(\nu)=\kappa$.
Part \ref{item:when_nu-high} easily follows.

Now suppose that $i^\sq$ is $\nu$-high and let $f:\kappa\to\nu^M$ be cofinal
with $f\in M$. Then $i'(f)\in U'$, so $g\eqdef i'(f)\rest\kappa\in U'$,
but
\[ g=i'\com f=i^\sq\com f, \]
so $g$ is cofinal in $\nu^U$ and $g:\kappa\to\nu^U$, but $U^\passive=U'||((\nu^U)^+)^{U'}$, so $g\in U$.
So $\cof^U(\nu^U)=\kappa$.

Now consider part \ref{item:cof_nu(M_beta)}. By part \ref{item:when_nu-high}, $i^\Tt_{\alpha\xi}$ 
is $\nu$-preserving. So letting $\psi=\psi_{i^\Tt_{\alpha\xi}}$,
\[ \psi\rest(M^\Tt_\alpha)^\passive:(M^\Tt_\alpha)^\passive\to (M^\Tt_\xi)^\passive \]
is fully elementary and $i^\Tt_{\alpha\xi}\sub\psi$, and therefore
$\cof^{M^\Tt_\xi}(\nu(M^\Tt_\xi))=i^\Tt_{\alpha\xi}(\kappa)$. Let $\gamma+1\leq_\Tt\beta$ be such 
that $\pred^\Tt(\gamma+1)=\xi$. By the previous paragraph,
\[ \cof^{M^\Tt_{\gamma+1}}(\nu(M^\Tt_{\gamma+1}))=i^\Tt_{\alpha\xi}(\kappa). \]
So if $\gamma+1=\beta$ we are done. But if $\gamma+1<\beta$ then
$i^\Tt_{\alpha\xi}(\kappa)<\crit(i^\Tt_{\gamma+1,\beta})$,
so $i^\Tt_{\gamma+1,\beta}$ is $\nu$-preserving, so part \ref{item:cof_nu(M_beta)} 
follows by considering $\psi_{i^\Tt_{\gamma,\beta+1}}$.
\end{proof}

\begin{rem}\label{rem:nu-pres}
Letting $R$ be any type 3 premouse, $\kappa=\crit(F^R)$,
and $\gamma\in[(\kappa^+)^R,\nu^R)$ be such that $(\gamma^+)^R<\nu^R$,
and letting $M\pins R$ be such that $F^M=F^R\rest(\gamma^+)^R$,
then the inclusion map $M^\sq\to R^\sq$ is $\Sigma_0$-elementary and $\nu$-low.
But this map is not a weak $0$-embedding; see \ref{lem:nu-low_weak-0}.
However, there are also examples of weak $0$-embeddings which are $\nu$-low, and examples of core 
embeddings $\core_1(N)\to\core_0(N)$ which are $\nu$-high.
\end{rem}
\begin{lem}\label{lem:nu-low_weak-0}
 Let $\sigma:M^\sq\to N^\sq$ be a $\nu$-low weak $n$-embedding
and $\widetilde{\nu}=\psi_\sigma(\nu^M)$, so $\wt{\nu}<\nu^N$.
 Then:
 \begin{enumerate}[label=--]
 \item $n=0$.
  \item $\nu^M$ is a limit cardinal of $M$, so $\widetilde{\nu}$ is a limit cardinal of 
$N$.
 \item There is a type 3 $\widetilde{N}\pins N$ such that 
$F^{\widetilde{N}}=F^N\rest\widetilde{\nu}$, and 
$\rho_\om^{\widetilde{N}}=\widetilde{\nu}=\nu^{\widetilde{N}}$.
 \item $\wt{N}\in N^\sq$ and $\rg(\sigma)\sub\wt{N}^\sq\elem_0 N^\sq$.
 \item Letting $\wt{\sigma}:M^\sq\to\wt{N}^\sq$ have the same graph as $\sigma$, then $\wt{\sigma}$ 
is a $\nu$-preserving weak $0$-embedding.
\end{enumerate}
\end{lem}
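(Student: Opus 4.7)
The plan is to establish the items in the order (1), then (3)--(5) by constructing $\widetilde{N}$ via the initial segment condition (ISC), then (2) as a corollary. For (1), by Lemma \ref{lem:nu-pres_elem} any $\rSigma_1$-elementary map is non-$\nu$-low, and a weak $n$-embedding for $n \geq 1$ is $\rSigma_n$-elementary, hence $\rSigma_1$-elementary; since $\sigma$ is $\nu$-low, we must have $n = 0$.

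For (3), I would use the Shift Lemma to view $\psi_\sigma : M^{\passive} \to N^{\passive}$ as a $\Sigma_0$-elementary extension of $\sigma$, taking the passive structures with their amenable active-extender predicates. Since $\nu^M$ is a cardinal of $M^{\passive}$ and a supremum of generators of $F^M$, both of which are $\Sigma_0$-expressible using the amenable predicate, the image $\widetilde{\nu} = \psi_\sigma(\nu^M)$ is simultaneously a cardinal of $N^{\passive}$ and a limit of generators of $F^N$. The ISC for $F^N$ at $\widetilde{\nu}$ then places the trivial completion of $F^N \rest \widetilde{\nu}$ on the extender sequence of $N$, and this completion is of type 3 with natural length $\widetilde{\nu}$---type 3 rather than type 2 precisely because $\widetilde{\nu}$ is a limit of generators rather than a top generator. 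This produces $\widetilde{N} \pins N$ with $F^{\widetilde{N}} = F^N \rest \widetilde{\nu}$ and $\nu^{\widetilde{N}} = \widetilde{\nu}$; soundness of ISC-segments gives $\rho_\omega^{\widetilde{N}} = \widetilde{\nu}$. For (4) and (5), since $\OR^{\widetilde{N}}$ lies below $\nu^N = \OR(N^\sq)$, we have $\widetilde{N} \in N^\sq$; for $\alpha < \nu^M$, $\sigma(\alpha) = \psi_\sigma(\alpha) < \widetilde{\nu} = \OR(\widetilde{N}^\sq)$, so $\rg(\sigma) \subseteq \widetilde{N}^\sq$; and $\widetilde{N}^\sq \elem_0 N^\sq$ follows from agreement on the passive structure below $\widetilde{\nu}$ combined with $F^{\widetilde{N}}$ being a restriction of $F^N$. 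The restricted map $\widetilde{\sigma} : M^\sq \to \widetilde{N}^\sq$ is then $\Sigma_0$-elementary through the $\elem_0$ inclusion and inherits the remaining weak $0$-embedding conditions from $\sigma$; moreover $\psi_{\widetilde{\sigma}}(\nu^M) = \widetilde{\nu} = \nu^{\widetilde{N}}$, so $\widetilde{\sigma}$ is $\nu$-preserving.

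Finally for (2): since $\widetilde{N}$ is type 3, its natural length $\nu^{\widetilde{N}} = \widetilde{\nu}$ equals $i_{F^{\widetilde{N}}}(\crit(F^{\widetilde{N}}))$, which is inaccessible in $\Ult(\widetilde{N}, F^{\widetilde{N}})$ and in particular a limit cardinal of $\widetilde{N}$, hence of $N$. Pulling back via the $\Sigma_0$-elementarity of $\psi_\sigma$, $\nu^M$ is a limit cardinal of $M$. The main obstacle will be justifying the ISC application at $\widetilde{\nu}$ and confirming that the resulting segment is of type 3; this requires careful tracking through the $\Sigma_0$-elementarity of $\psi_\sigma$ on the amenable active-extender predicate to transfer both ``limit of generators'' and ``cardinal of the passive part'' from $\nu^M$ to $\widetilde{\nu}$, and then invoking the precise form of the ISC for Mitchell-Steel premice with superstrong extenders as allowed in this paper's setting.
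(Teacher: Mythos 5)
Your overall strategy matches the paper's---reduce items (3)--(5) to the ISC and note that $\wt{\sigma}$ is $\nu$-preserving by construction---but you have inverted the logical order of (2) and (3), and this inversion creates a gap. You attempt to establish (3) first, asserting that $\widetilde{\nu}$ is a cardinal of $N$ and a limit of generators of $F^N$ on the grounds that these properties of $\nu^M$ are ``$\Sigma_0$-expressible using the amenable predicate'' and hence preserved by $\psi_\sigma$. This is not correct: ``$\xi$ is a cardinal'' is a $\Pi_1$ assertion (nonexistence of a surjection from a smaller ordinal), and the $\Sigma_0$-elementarity of a weak $0$-embedding, even extended by the Shift Lemma, does not transfer such a property from $\nu^M$ to $\widetilde{\nu}$; it would give the useless converse direction (if $\widetilde{\nu}$ were a cardinal of $N$, then $\nu^M$ would be a cardinal of $M$). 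The same objection applies to ``limit of generators.'' Also note that $\nu^M$ being a cardinal of $M^{\passive}$ is automatic because it \emph{is} $\OR(M^\sq)$, whereas $\widetilde{\nu} < \nu^N = \OR(N^\sq)$ is a proper ordinal of $N^\sq$, so its cardinal-ness in $N$ is genuinely contentful and requires an argument.

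The paper closes this gap by proving item (2) first and directly: assume $\nu^M = (\gamma^+)^M$ with $\gamma = \lgcd(M^\sq)$. Since $\sigma$ is $\nu$-low, $\widetilde{\nu} = (\sigma(\gamma)^+)^{N^\sq} \in N^\sq$, so $N^\sq$ satisfies the $\Sigma_1$ assertion (in the parameter $\sigma(\gamma)$, and referring to $F^N$) that $\sigma(\gamma)^+$ exists. A weak $0$-embedding is $\rSigma_1$-elementary in the relevant parameters, so this pulls back to $M^\sq$, giving that $(\gamma^+)^{M^\sq}$ exists as an element of $M^\sq$---but $(\gamma^+)^{M^\sq} = \nu^M = \OR(M^\sq)$, a contradiction. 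Once $\nu^M$ is known to be a limit cardinal, the hypotheses needed for the ISC application (items (3)--(5)) are legitimately available. You should adopt this ordering and replace the $\Sigma_0$-preservation claim with the $\rSigma_1$-elementarity argument; the remainder of your write-up (deriving the type 3 segment from the ISC, the $\elem_0$ inclusion, and $\nu$-preservation of $\wt{\sigma}$) is then fine.
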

\begin{proof}
The fact that $n=0$ is by \ref{lem:nu-pres_elem}.
Suppose that
$\nu^M=(\gamma^+)^M$ where $\gamma=\lgcd(M)$. We have $\widetilde{\nu}<\nu^N$ so
\[ \wt{\nu}=(\sigma(\gamma)^+)^{N^\sq}\in N^\sq. \]
Because $\sigma$ is a weak $0$-embedding, $\sigma$ is $\Sigma_1$-elementary in the parameter 
$\gamma$.
It is a $\Sigma_1^{N^\sq}(\{\sigma(\gamma)\})$ assertion
that $\sigma(\gamma)^+$ exists (as the assertion can refer to $F^N$).
This pulls back to $M^\sq$, a contradiction.

Because $\nu^M$ is a limit cardinal of $M$, most of the 
rest follows easily from the ISC. Note that $\wt{\sigma}$ is $\nu$-preserving by construction.
The fact $\wt{\sigma}$ is a weak $0$-embedding
is because $\sigma$ is such and $\rg(\sigma)\sub\wt{N}^\sq\elem_0 
N^\sq$. \end{proof}

\begin{dfn}Let $N$ be a premouse.

Define $\segs(N)=\{M\mid M\ins N\}$\index{$\segs$} and\index{$\cardprojpropsegs$}
\[ \cardprojpropsegs(N)=\{M\mid M\pins N\ \&\ \rho_\om^M\in\card^N\} \]
(here \emph{segs} abbreviates \emph{segments} and \emph{cpp} abbreviates \emph{cardinal projecting proper}).
If $N$ is $n$-sound we write $(M,m)\ins(N,n)$ iff $M\ins N$ and $m\leq\om$
and either $M\pins N$ or $m\leq n$. We write $(M,m)\pins(N,n)$ iff $(M,m)\ins(N,n)$ but 
$(M,m)\neq(N,n)$.
Define (\emph{segments-degrees})\index{$\segdegs$}
\[ \segdegs(N,n)=\{(M,m)\mid (M,m)\ins(N,n)\}. \]

Suppose $N$ is $n$-sound and let $e\ins N$ be active and $\kappa<\nu(F^e)$.
Then
$\psegdeg^{N,n}(e,\kappa)$\index{$\psegdeg$}
(\emph{projecting segment-degree}) denotes the least $(S,s)\ins(N,n)$ such that $(e,0)\ins(S,s)$ and either $(S,s)=(N,n)$ or [$s<\om$ 
and $\rho_{s+1}^S\leq\kappa$].

 Suppose $N$ is $\om$-sound. We say that $N$ is \emph{properly projecting}\index{properly projecting}
iff $\rho_\om^N<\OR^N$. In general
(independent of whether $N$ is properly projecting), the \emph{projection degree}\index{projection degree}
$\projdeg(N)$\index{$\projdeg$} of $N$
is the least $q\in[-1,\om)$ such that $\rho_{q+1}^N=\rho_\om^N$
(so $q=-1$ iff either $N$ is non-properly-projecting or $N$ is type 3 with $\nu^N=\rho_\om^N$).

Let $M\ins N$.
 The \emph{extended model dropdown sequence}
 \index{extended model dropdown sequence}
 \index{model dropdown sequence}\index{dropdown sequence} $\emdd^N(M)$\index{$\emdd$} of $(N,M)$ is the sequence 
$\left<M_i\right>_{i\leq k}$,
where (i) $M_0=M$ and $M_k=N$, (ii) $k>0$ iff $M\pins N$, (iii) if $i<k$ then $i+1<k$ iff 
$\rho_\om^{M_i}$ is not an $N$-cardinal, and
(iv) if $i+1<k$ then $M_{i+1}$ is the least $R$ such that $M_i\pins R\pins N$
and $\rho_\om^R<\rho_\om^{M_i}$.
The \emph{reverse extended model dropdown sequence}\index{reverse extended model dropdown sequence} $\redd^N(M)$\index{$\redd$} of $(N,M)$ is 
$\left<M_{k-i}\right>_{i\leq k}$
where $\left<M_i\right>_{i\leq k}=\emdd^N(M)$.
\end{dfn}

\begin{rem}
 Let $\left<M_i\right>_{i\leq k}=\redd^N(M)$ where $M\pins N$.
 Let $0<i<k$, so $M\pins M_i\pins N$.
 Clearly 
 $M_i$ is properly projecting and $\projdeg(M_i)\geq 0$.
Suppose $N$ is $n$-sound and $M$ is 
active and $\kappa<\nu(F^M)$. Let $(R,r)=\psegdeg^{N,n}(M,\kappa)$.
Note that $R=M_i$ for some $i\leq k$, $\kappa<\rho_0^M$, $0\leq r$, $\kappa<\rho_r^R$ 
and $R||(\kappa^+)^R=M||(\kappa^+)^M$; moreover, if $r=\om$ then $R=N$ and $n=\om$.
\end{rem}

\begin{dfn}\label{dfn:q-neat}
Let $Q,R$ be premice,  $\varphi:Q^\sq\to R^\sq$ and $q\in\{-1\}\un\om$. We say $\varphi$ is
\emph{$q$-neat}\index{neat}\index{$q$-neat}
iff
\begin{enumerate}[label=--]
 \item $Q,R$ are $\om$-sound,
 \item $\projdeg(Q)=\projdeg(R)=q$,
 \item if $q\geq 0$ then
$\varphi$ is a $\nu$-preserving near $q$-embedding, $\varphi(\rho_\om^Q)\geq\rho_\om^R$
and $\varphi``\rho_\om^Q\sub\rho_\om^R$ (note $\rho_\om^Q<\rho_0^Q$ because $q\geq 
0$, so $\rho_\om^Q\in\dom(\varphi)$),
\item if $q=-1$ then $\varphi$ is non-$\nu$-low $\Sigma_0$-elementary.\qedhere
\end{enumerate}
\end{dfn}

\begin{dfn}\label{dfn:copyseg}
Let 
$M,N$ be premice and $\pi:M\to N$ be a weak $0$-embedding.
For $Q\ins M$ we define
\index{$\copyseg$}\index{$\copymap$}\index{$\exitcopyseg$}\index{$\exitcopymap$}
\begin{IEEEeqnarray*}{rClcl}
    R&=&\copyseg^{\pi}(Q)&\text{ such that }&R\ins N,\\
 \sigma&=&\copymap^{\pi}(Q)&\text{ s.t. }&\sigma:Q^\sq\to R^\sq,\\
 R'&=&\exitcopyseg^{\pi}(Q)&\text{ s.t. }&R'\ins R,\\
 \sigma'&=&\exitcopymap^{\pi}(Q)&\text{ s.t. }&\sigma':Q^\sq\to(R')^\sq;\end{IEEEeqnarray*}
\noindent moreover, $\sigma,\sigma'$ have the same graphs,
and $R'\pins R$ iff $Q=M$ and $\pi$ is $\nu$-low.

\begin{case} $Q=M$.
 
Then $R=N$ and $\sigma=\sigma'=\pi$.
If $\pi$ is non-$\nu$-low then $R'=N$.
If $\pi$ is $\nu$-low then let
$R'\pins R$ be as in \ref{lem:nu-low_weak-0} (so $F^{R'}=F^R\rest\psi_\pi(\nu^M)$).
\end{case}

\begin{case} $Q\pins M$.

We will set $R'=R$ and $\sigma'=\sigma$, so it suffices to define $R,\sigma$.

\begin{scase}\label{scase:Q_in_cppsegs(M)} $Q\in\cardprojpropsegs(M)$.
 
Let $q=\projdeg(Q)$. We will choose $R\in\cardprojpropsegs(N)$
and
$\sigma:Q^\sq\to R^\sq$
a $\Sigma_1$-elementary $q$-neat embedding with $\pi\rest\rho_\om^Q\sub\sigma$
(note that $\rho_\om^Q\sub\dom(\pi)$).
Moreover, we will have $\projdeg(R)=q$,
and  $Q$ is properly projecting iff
$R$ is properly projecting.

Let $R^+=\psi_\pi(Q)$ and $\sigma^+=\psi_\pi\rest Q^\sq$,
so $\sigma^+:Q^\sq\to R^\sq$ is fully elementary and $\pi\rest\rho_\om^Q\sub\sigma^+$.

\begin{sscase} $Q\pins M^\sq$ or $\pi$ is non-$\nu$-high.

Equivalently, $R^+\pins N$. We set $R=R^+\pins N$ and $\sigma=\sigma^+$.
\end{sscase}

\begin{sscase}\label{sscase:R_neq_R^+,q=-1} $Q\npins M^\sq$,
$\pi$ is $\nu$-high, $q=-1$ and $Q$ is properly projecting (so $Q$ is type 3).

Then $M$ is also type 3,
and $\nu^Q=\rho_\om^Q=\nu^M$. Let $R\pins N$
be such that $F^R=F^{R^+}\rest\nu^N$;
this exists by the ISC and because $\nu^N$ is an $N$-cardinal.
Note that $\nu(F^R)=\nu^N$
and $\sigma^+``Q^\sq\sub R^\sq$.
Let $\sigma:Q^\sq\to R^\sq$ have graph $\sigma=\sigma^+$.
Note that $\sigma$ is $\Sigma_1$-elementary, as $\sigma^+$ is fully elementary
and $R^\sq\elem_0(R^+)^\sq$.
So $\sigma$ is not $\nu$-low. So $R,\sigma$ are as desired.
\end{sscase}
\begin{sscase}
 $Q\npins M^\sq$,
 $\pi$ is $\nu$-high, $q=-1$ and $Q$ is non-properly-projecting.

So $M$ is type 3 and $Q=M|\nu^M$.
We set $R=N|\nu^N$ and $\sigma:Q\to R$
is just $\sigma=\pi\rest Q=\psi_\pi\rest Q$.
Much as in Subsubcase \ref{sscase:R_neq_R^+,q=-1} above,
$\sigma$ is $\Sigma_1$-elementary
and everything else is clear.
\end{sscase}
\begin{sscase}\label{sscase:R_neq_R^+,q_geq_0} $Q\npins M^\sq$, $\pi$ is $\nu$-high, $q\geq 0$.

So $Q$ is properly projecting. Set
\[ R=\cHull_{q+1}^{R^+}(\nu^N\un\pvec_{q+1}^{R^+}) \] 
and $\varphi:R^\sq\to(R^+)^\sq$ be the uncollapse.
Because $\sigma^+$ is fully elementary, it preserves all standard parameters and solidity 
witnesses. So by choice of $q$, it follows that $\rg(\sigma^+)\sub\rg(\varphi)$.
And note that  $R$ is fully sound, $\projdeg(R)=q$,
\[ \rho_\om^{R}=\rho_{q+1}^R=\nu^N<\rho_q^R,\]
$\varphi$ is a near $q$-embedding which preserves $p_{q+1}$, and $\varphi\rest\nu^N=\id$.
So by condensation,
$R\pins R^+$, so $R\pins N$.
Now set $\sigma=\varphi^{-1}\com\sigma^+$.
Then $\sigma$ is $\nu$-preserving because
$\sigma^+=\varphi\com\sigma$, and
$\varphi,\sigma$ are near $0$-embeddings, hence not $\nu$-low, and $\sigma^+$ is fully 
elementary, hence $\nu$-preserving. The rest is clear. This completes this subsubcase and subcase.
\end{sscase}
\end{scase}

\begin{scase}
 $Q\notin\cardprojpropsegs(M)$.

Let $\left<M_i\right>_{i\leq k}=\redd^M(Q)$, so $k\geq 2$, $M_0=M$ and $M_k=Q$. We have 
$M_1\in\cardprojpropsegs(M)$. Let $R_1=\copyseg(M_1,\pi)$
and $\sigma_1=\copymap(M_1,\pi)$, so $R_1\pins N$. Let $q=\projdeg(M_1)$. Note that $q\geq 0$
and
$\sigma_1:M_1^\sq\to R_1^\sq$
is $q$-neat, so $\sigma_1$ is a $\nu$-preserving near $0$-embedding.
We have $Q\pins M_1$. Set
\[ R=\copyseg^{\sigma_1}(Q)=\psi_{\sigma_1}(Q)\text{ and }
\sigma=\copymap^{\sigma_1}(Q)=\psi_{\sigma_1}\rest Q^\sq.\qedhere\]
\end{scase}
\end{case}

We also write $\copyseg(\pi,Q)=\copyseg^\pi(Q)$
and define $\copyseg^\pi(F^Q)=F^{\copyseg^\pi(Q)}$, etc.
\end{dfn}

\begin{dfn}\label{dfn:copy_redd}
 Let $M,N$ be premice and $\pi:M^\sq\to N^\sq$ a weak $0$-embedding.
 Let $e\ins M$ be active. Let $\left<M_i\right>_{i\leq k}=\redd^M(e)$.
 Define\index{$\copyredd$}\index{$\copyseg$}\index{$\copymapredd$}
 \[ \copyredd^{\pi}(e)=
 \left<\copyseg^{\pi}(M_i)\right>_{i\leq k}, \]
 \[ \copymapredd^{\pi}(e)=\left<\copymap^{\pi}(M_i)\right>_{i\leq k}.\qedhere\]
\end{dfn}

\begin{lem}
 Let $M,N$ be premice, $\pi:M\to N$ be a non-$\nu$-high weak $0$-embedding.
 Let $Q\pins M$. Then $\copyseg^{\pi}(Q)=\psi_\pi(Q)$ and
 \[ \copymap^{\pi}(Q)=\psi_\pi\rest Q^\sq:Q^\sq\to\psi_\pi(Q)^\sq \]
 is fully elementary.
\end{lem}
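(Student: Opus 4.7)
The plan is to unfold Definition \ref{dfn:copyseg} under the stated hypotheses and verify the claim case by case. Throughout, I would use the standard fact that, because $\pi:M\to N$ is a weak $0$-embedding and $Q\pins M$, the map $\psi_\pi\rest Q^\sq$ is in fact fully elementary into $\psi_\pi(Q)^\sq$ — this is the familiar property that weak embeddings see proper segments as parameters and are fully elementary on them.

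First I would treat the case $Q\in\cardprojpropsegs(M)$. Since $\pi$ is non-$\nu$-high, the hypothesis ``$Q\pins M^\sq$ or $\pi$ is non-$\nu$-high'' of the first subsubcase of Subcase \ref{scase:Q_in_cppsegs(M)} is automatically satisfied, so by definition $R=R^+=\psi_\pi(Q)$ and $\sigma=\sigma^+=\psi_\pi\rest Q^\sq$. Full elementarity of $\sigma$ is then just the general fact about proper segments recalled above.

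Next I would treat the case $Q\pins M$ with $Q\notin\cardprojpropsegs(M)$. Let $\left<M_i\right>_{i\leq k}=\redd^M(Q)$ (so $k\geq 2$, $M_0=M$, $M_k=Q$), and note $M_1\in\cardprojpropsegs(M)$ with $Q\pins M_1$. By the previous case applied to $M_1$, we have $R_1=\psi_\pi(M_1)$ and $\sigma_1=\psi_\pi\rest M_1^\sq$ is fully elementary. By definition $R=\psi_{\sigma_1}(Q)$ and $\sigma=\psi_{\sigma_1}\rest Q^\sq$. I would then observe that the $\psi$-construction is natural under restriction to proper segments: because $\sigma_1=\psi_\pi\rest M_1^\sq$, the Shift-Lemma lift $\psi_{\sigma_1}$ (using $F^{M_1}$ if $M_1$ is active) coincides with $\psi_\pi$ on the part of its domain coding $M_1$ and its subsegments. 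In particular $\psi_{\sigma_1}(Q)=\psi_\pi(Q)$ and $\psi_{\sigma_1}\rest Q^\sq=\psi_\pi\rest Q^\sq$, and full elementarity of $\sigma$ follows since $Q\pins M_1$ and $\sigma_1$ is fully elementary (so its $\psi$-lift is fully elementary on proper segments of $M_1$).

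The one point requiring some care — and the main bookkeeping obstacle — is the verification that the two lifts $\psi_\pi$ and $\psi_{\sigma_1}$ genuinely agree on $Q^\sq$. This reduces to checking that the Shift Lemma diagrams commute: starting with $\pi$ and lifting via $F^M$, versus first restricting $\psi_\pi$ to $M_1^\sq$ to obtain $\sigma_1$ and then lifting via $F^{M_1}$, produce the same action on representatives $[a,f]^Q_{F^Q}$ coding elements of $Q^\sq$. Once this commutation is extracted from the Shift Lemma and the definition of $\psi$ (no more than tracing ultrapower representatives through the two factorisations), the claimed identities fall out, and the full elementarity conclusion follows from the recalled fact about weak $0$-embeddings on proper segments.
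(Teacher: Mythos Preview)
Your proposal is correct, and your case analysis matches how the lemma falls out of Definition~\ref{dfn:copyseg}. The paper states this lemma without proof, treating it as an immediate consequence of the definition; your unfolding is exactly the intended verification.

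One small comment on the commutation step: the cleanest way to see $\psi_{\sigma_1}(Q)=\psi_\pi(Q)$ is not to compare two separate Shift Lemma lifts, but to use that $\psi_\pi\!\rest\! M_1:M_1\to R_1$ is already fully elementary (as $M_1$ is a proper segment, hence a parameter). Writing $Q=[a,f]^{M_1|(\mu_1^+)}_{F^{M_1}}$ with $a,f\in M_1^\sq$, full elementarity of $\psi_\pi\!\rest\! M_1$ gives $\psi_\pi(Q)=[\psi_\pi(a),\psi_\pi(f)]$ in $R_1$'s ultrapower structure; since $a,f\in M_1^\sq$ we have $\psi_\pi(a)=\sigma_1(a)$ and $\psi_\pi(f)=\sigma_1(f)$, so this equals $\psi_{\sigma_1}(Q)$. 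This avoids explicitly commuting diagrams involving both $F^M$ and $F^{M_1}$.
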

\begin{lem}
 Let $M,N$ be premice, $\pi:M\to N$ be a weak $0$-embedding, and $e\pins M$ be active.
 Let
 \[ \left<M_i\right>_{i\leq k}=\redd^M(e), \]
 \[ \left<R_i\right>_{i\leq k}=\copyredd^\pi(e),\]
 \[ \left<\sigma_i\right>_{i\leq k}=\copymapredd^\pi(e).\]
 Then:
\begin{enumerate}[label=--]
\item $\left<R_i\right>_{i\leq k}=\redd^N(R_k)$ and $\sigma_0=\pi$.
\item $\all i>0$, $M_i,R_i$
properly project and $\projdeg(M_i)=\projdeg(R_i)\eqdef m_i$.
\item $\all i>0$, $\sigma_i:M_i^\sq\to R_i^\sq$ is $m_i$-neat and $\Sigma_1$-elementary.
\item $\sigma_i\rest\rho=\sigma_{i+1}\rest\rho$ where 
$\rho=\rho_\om^{M_{i+1}}=\rho_{m_{i+1}+1}^{M_{i+1}}$.
\tu{(}Note that $M_{i+1}\pins M_i$ 
and $\rho$ is a cardinal of $M_i$, so
$\rho\sub\dom(\sigma_i)$.\tu{)}
\item $\exitcopyseg^{\pi}(e)=\copyseg^{\pi}(e)=R_k$, $\exitcopymap^{\pi}(e)
=\copymap^{\pi}(e)=\sigma_k$.
\end{enumerate}
\end{lem}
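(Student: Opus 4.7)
The plan is to unpack the definitions in Definition~\ref{dfn:copyseg} and Definition~\ref{dfn:copy_redd} and verify the items by induction on $i$, so the proof is essentially a bookkeeping exercise tracing through how $\copyseg^\pi$ interacts with the reverse dropdown sequence. First I would establish the structural properties of $\redd^M(e) = \langle M_i\rangle_{i\leq k}$: since $e\pins M$ we have $k\geq 1$, $M_0 = M$, $M_k = e$, and for each $i \in \{1,\ldots,k\}$ the minimality clause in the definition of $\emdd$ gives that $M_i \pins M_{i-1}$ and no $S$ with $M_i\pnins S\pnins M_{i-1}$ has $\rho_\omega^S < \rho_\omega^{M_i}$. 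From this one reads off that $\rho_\omega^{M_i}$ is an $M_{i-1}$-cardinal (so $M_i \in \cardprojpropsegs(M_{i-1})$), that each $M_i$ for $i\geq 1$ properly projects, and that $m_i := \projdeg(M_i)\geq 0$ since $\rho_\omega^{M_i} > \rho_\omega^{M_{i-1}} \geq \omega$. Moreover, a symmetric use of minimality at $M_1$ shows $\rho_\omega^{M_1}$ is an $M$-cardinal (so $M_1 \in \cardprojpropsegs(M)$), while for $i\geq 2$ the $\emdd$-condition gives $\rho_\omega^{M_i} \notin \card^M$, so $M_i\notin\cardprojpropsegs(M)$.

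Next I would identify the outputs of $\copyseg^\pi$. For $M_1$ we fall into Subcase~\ref{scase:Q_in_cppsegs(M)} of Definition~\ref{dfn:copyseg}, which directly yields $R_1 \pins N$ with $\rho_\omega^{R_1}\in\card^N$, $\projdeg(R_1)=m_1$, and $\sigma_1\colon M_1^\sq \to R_1^\sq$ an $m_1$-neat, $\Sigma_1$-elementary embedding extending $\pi\!\restriction\!\rho_\omega^{M_1}$. For $i\geq 2$, the ``otherwise'' branch of the $Q\pins M$ case applies to $Q=M_i$. The sequence $\redd^M(M_i)$ is easily seen to be the initial piece $\langle M_0,M_1,\ldots,M_i\rangle$ of $\redd^M(e)$ (by the same minimality that defined the original sequence), and hence the $M_1,R_1,\sigma_1$ invoked inside the definition of $\copyseg^\pi(M_i)$ coincide with the ones already computed. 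The definition then sets $R_i = \psi_{\sigma_1}(M_i)$ and $\sigma_i = \psi_{\sigma_1}\!\restriction\!M_i^\sq$. Since $\sigma_1$ is an $m_1$-neat near-$m_1$-embedding with $m_1\geq 0$, its canonical extension $\psi_{\sigma_1}$ (identity or the Shift-Lemma extension) is fully elementary on elements of $M_1$, and therefore transports the fine structure of each $M_i\pins M_1$ faithfully: the properties ``$\rho_\omega^{M_i}$ is an $M_{i-1}$-cardinal,'' ``$\projdeg(M_i)=m_i$,'' and the $m_i$-neatness, $\Sigma_1$-elementarity and $\nu$-preservation of $\sigma_i$ all follow.

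Finally I would verify each listed item. That $\langle R_i\rangle_{i\leq k} = \redd^N(R_k)$ combines the inclusions $R_i\pins R_{i-1}$ (from $M_i\pins M_{i-1}$ under $\psi_{\sigma_1}$), the fact $\rho_\omega^{R_i}\in\card^{R_{i-1}}$ (for $i=1$ this is part of the subcase conclusion, for $i\geq 2$ it transfers by elementarity of $\psi_{\sigma_1}$), and a minimality check: any hypothetical intermediate segment of $R_{i-1}$ projecting below $\rho_\omega^{R_i}$ would pull back through $\psi_{\sigma_1}$ to an intermediate segment of $M_{i-1}$ violating the minimal choice of $M_i$ in $\emdd^M(e)$. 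The compatibility $\sigma_i\!\restriction\!\rho = \sigma_{i+1}\!\restriction\!\rho$ for $\rho=\rho_\omega^{M_{i+1}}$ holds because $\rho < \rho_\omega^{M_i}\leq\OR(M_1^\sq)$ sits in the common domain of $\psi_{\sigma_1}$, and all $\sigma_i$ with $i\geq 1$ agree with $\psi_{\sigma_1}$ on that set ($\sigma_1\sub\psi_{\sigma_1}$ for $i=1$; literal restrictions for $i\geq 2$). The last item is immediate from Definition~\ref{dfn:copyseg}: since $e\pins M$ strictly, we are always in the ``$Q\pins M$'' branch, where the definition explicitly sets $R'=R$ and $\sigma'=\sigma$. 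The main technical point, and the only place where care is really required, is the pullback argument establishing the minimality clause for $\redd^N(R_k)$, since $\sigma_1$ is only $\Sigma_1$-elementary; here one uses that existence of a segment with a prescribed projectum is a $\Sigma_1$-level assertion that $\sigma_1$ (and hence $\psi_{\sigma_1}$) reflects.
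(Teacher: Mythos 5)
The paper states this lemma without proof, so there is no alternative argument to compare against; what you have written is essentially the intended unwinding of Definitions~\ref{dfn:copyseg} and~\ref{dfn:copy_redd}, and I believe it is correct. The key structural observations are all in place: for $1\leq i<k$ the $\emdd$-definition forces $\rho_\om^{M_i}\notin\card^M$ while $\rho_\om^{M_1}\in\card^M$, so $M_1$ sits in $\cardprojpropsegs(M)$ and is processed through Subcase~\ref{scase:Q_in_cppsegs(M)}, whereas $M_2,\ldots,M_k$ fall through to the ``otherwise'' branch; and the coincidence $\redd^M(M_i)=\langle M_0,\ldots,M_i\rangle$ ensures the $\sigma_1$ invoked there is the one already constructed.

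Two places where you should be slightly more careful, though neither is fatal. First, your final paragraph attributes the minimality transfer to bare $\Sigma_1$-elementarity of $\sigma_1$, but what you actually use (and correctly invoke in the preceding paragraph) is that $\psi_{\sigma_1}$ restricted to proper segments $S\pins M_1$ yields \emph{fully} elementary maps $S\to\psi_{\sigma_1}(S)$; the minimality clause itself is a conjunction of a $\Sigma_1$ witness-existence statement with a $\Pi_1$ non-existence statement about intermediate levels, and it is the full elementarity on segments that handles both halves cleanly (equivalently, that the negation of minimality — existence of a smaller $T$ — is $\Sigma_1$ over $M_1^\sq$ and $\Sigma_1$-elementarity reflects this both ways). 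The two observations are not in conflict, but the phrasing suggests a single $\Sigma_1$-reflection suffices, which slightly undersells where the heavy lifting is done. Second, when $M_1$ is active type~$3$, you should verify explicitly that $M_i\in\dom(\psi_{\sigma_1})$ for $i\geq 2$ and that the statement $M_i\pins M_1$ is represented in a form the Shift Lemma extension respects; this is routine but is precisely the sort of place where notational shortcuts with $\psi_{\sigma_1}$ can hide an index error. You may find it cleaner to simply cite the lemma immediately preceding the present one (which asserts $\copyseg^\pi(Q)=\psi_\pi(Q)$ with $\copymap^\pi(Q)$ fully elementary when $\pi$ is non-$\nu$-high and $Q\pins M$), applied with $\pi$ replaced by the $\nu$-preserving $\sigma_1$, rather than rederiving the elementarity of $\psi_{\sigma_1}$ on segments ad hoc.
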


\begin{dfn}\label{dfn:neat_copy}
 Let $\pi:M\to N$ be a weak $m$-embedding.
 
 Let $\Sigma$ be a (partial) iteration strategy for $N$.
 Then the \emph{neat $\pi$-pullback} of $\Sigma$
 is the (partial) strategy given by copying
 trees $\Tt$ on $M$ to trees $\Uu$ on $N$ via $\Sigma$,
 with copy maps $\pi_\alpha:M^\Tt_\alpha\to M^\Uu_\alpha$,
 starting with $\pi_0=\pi$, produced via the Shift Lemma as usual,
 and with $E^\Uu_\alpha=\exitcopyseg(\pi_\alpha,E^\Tt_\alpha)$.

 We write $\copyseg(\pi,\Tt)$ for the putative tree $\Uu$
 given by copying a tree $\Tt$ as above, if this exists.
\end{dfn}
\begin{rem}
 It is straightforward to see that this gives a sensible copying process.
 The trees $\Tt$ and $\Uu$ always have the same length and tree order
 (unlike the copying construction for Mitchell-Steel indexing, when
 one handles $\nu$-high embeddings
 in the manner described in \cite{mim}
 and sketched in \cite{outline}).
\end{rem}

\begin{lem}\label{lem:nu-pres_copy_map}
Let $\pi:M\to N$ be a weak $m$-embedding.
Let $\Tt,\Uu$ be $m$-maximal on $M,N$ respectively
with  $\Uu=\copyseg(\pi,\Tt)$.
Let $\pi_\alpha:M^\Tt_\alpha\to M^\Uu_\alpha$ be the resulting copy map.
Then:
\begin{enumerate}
\item\label{item:pi_alpha_is_weak_deg-embedding} $\pi_\alpha$ is a weak $\deg^\Tt(\alpha)$-embedding.
 \item\label{item:drop_nu-pres} If $[0,\alpha]_\Tt\inter\dropset_\Tt\neq\emptyset$
 then $\pi_\alpha$ is a $\nu$-preserving near $\deg^\Tt(\alpha)$-embedding.
 \item\label{item:nu-preserving_propagates} If $\pi$ is $\nu$-preserving
 then $\pi_\alpha$ is $\nu$-preserving.
  \item\label{item:near_embedding_propagates} If $\pi$ is a near $m$-embedding
 then $\pi_\alpha$ is a near $\deg^\Tt(\alpha)$-embedding.
 \item\label{item:reg_nu-pres_etc} Suppose $M$ is type 3, $\nu^M$ is $M$-regular
 and $[0,\alpha]_\Tt\inter\dropset^\Tt=\emptyset$. Then
  $\pi_\alpha$ is $\nu$-preserving/high/low
 iff $\pi$ is $\nu$-preserving/high/low.
 \item\label{item:when_pi_nu-low} Suppose $M$ is type 3 and that if $\pi$ is $\nu$-low
 then $\nu^M$ is $M$-singular. Then for all $\alpha$,
 if $\pi_\alpha$ is $\nu$-low then $\nu^{M_\alpha}$ is $M_\alpha$-singular.
\end{enumerate}
\end{lem}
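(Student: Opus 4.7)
The plan is to prove all six parts simultaneously by induction on $\alpha < \lh(\Tt)$, with the successor steps organized around a careful application of the Shift Lemma to the triple $(\pi_\alpha, \pi_\beta, \pi_{\alpha+1})$, where $\beta = \pred^\Tt(\alpha+1)$, using $\copyseg$ for the extender $E^\Tt_\alpha$ (to define the copied exit extender and its projecting segment), and $\exitcopyseg$ (from Definition \ref{dfn:copyseg}) for handling the $\nu$-low situations that can arise when the tree does not drop above $\alpha$. The key input is Definition \ref{dfn:copyseg}, which was designed precisely so that \ref{lem:nu-low_weak-0} does not obstruct the copying: when $\pi_\alpha$ is $\nu$-low on an unsound type 3 model, we trim the background $\exitcopyseg$ down to a proper segment where the copy map becomes a $\nu$-preserving weak $0$-embedding in the sense of \ref{lem:nu-low_weak-0}.

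For part (1), the successor step is the standard Shift Lemma computation applied to $\pi_\alpha$ and $\pi_\beta$, with the degree $\deg^\Tt(\alpha+1)$ dictated by $m$-maximality; at limits one takes the direct limit of weak embeddings (which remains a weak embedding). Parts (3) and (4) will follow by induction: at a successor, if $\pi_\beta$ is $\nu$-preserving (respectively a near $\deg^\Tt(\beta)$-embedding), then the copy map on the ultrapower is $\nu$-preserving (respectively a near embedding) by standard Shift Lemma properties. Part (2) is the observation that once $[0,\alpha]_\Tt \cap \dropset^\Tt \ne \emptyset$, the model $M^\Tt_\alpha$ is sound, the relevant iteration segment from the drop point is by a fully elementary map at the degree $\deg^\Tt(\alpha)$, and the corresponding copy map  is fully elementary on $M^{*\Tt}_{\beta+1}$, so can be analyzed using $\copymap^\pi$ applied to a cpp-segment (Subcase \ref{scase:Q_in_cppsegs(M)} of Definition \ref{dfn:copyseg}), which is $\nu$-preserving and a near embedding by construction; combining with the inductive hypothesis using $\nu$-preservation propagation finishes this part.

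For parts (5) and (6), I combine the inductive hypothesis with Lemma \ref{lem:nu-pres_it_map}. Under the hypothesis of (5), namely $\nu^M$ is $M$-regular and $[0,\alpha]_\Tt \cap \dropset^\Tt = \emptyset$, Lemma \ref{lem:nu-pres_it_map}\ref{item:when_nu-high}--\ref{item:nu_reg} yields that $i^\Tt_{0\alpha}$ is $\nu$-preserving, $\nu(M^\Tt_\alpha)$ is $M^\Tt_\alpha$-regular, and similarly for $i^\Uu_{0\alpha}$; hence $\psi_{\pi_\alpha}(\nu(M^\Tt_\alpha))$ and $\nu(M^\Uu_\alpha)$ are computed through the same commutative diagram from $\psi_\pi(\nu^M)$ and $\nu^N$, so $\pi_\alpha$ inherits the $\nu$-behavior of $\pi$. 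For part (6), the hypothesis that $\nu^M$ is $M$-singular in the $\nu$-low case, combined with the $\nu$-singularity propagation in \ref{lem:nu-pres_it_map}\ref{item:nu_sing}, ensures that whenever the copy map acquires $\nu$-lowness, one is working above a singularity of $\nu$ in the source, which is preserved to the target by part (3) applied along the initial non-dropping segment.

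The main obstacle I anticipate is the bookkeeping at successor steps where $E^\Tt_\alpha$ is type 3 and the copy map $\pi_\alpha$ is $\nu$-low or $\nu$-high, since then $\exitcopyseg^{\pi_\alpha}(E^\Tt_\alpha)$ differs from $\copyseg^{\pi_\alpha}(E^\Tt_\alpha)$ (respectively, the copy involves a hull inside the ultrapower image, cf.~Subsubcase \ref{sscase:R_neq_R^+,q_geq_0}). In these cases one must verify that the Shift Lemma still produces a weak embedding of the correct degree into the right model, and that the subsequent copy maps inherit the properties (1)--(6); the checking is routine in character but requires keeping careful track of which model ($R$ vs.\ $R^+$ vs.\ a cpp-segment of $R^+$) each piece lands in, and applying \ref{lem:nu-low_weak-0} to ensure that the trimmed target $\exitcopyseg^{\pi_\alpha}(E^\Tt_\alpha)$ really does receive a $\nu$-preserving weak $0$-embedding, so the Shift Lemma step goes through as usual.
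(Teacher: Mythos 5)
Your overall plan — prove parts (1)–(4) by induction using the Shift Lemma and the $\copyseg$/$\exitcopyseg$ machinery from Definition \ref{dfn:copyseg}, then derive (5) and (6) from Lemma \ref{lem:nu-pres_it_map} together with the commutativity $\pi_\alpha\com i^\Tt_{0\alpha}=i^\Uu_{0\alpha}\com\pi$ — matches the paper's structure. For (1)–(4) the paper simply points to the proofs in \cite{recon_con} and \cite{fs_tame}; your inductive Shift Lemma argument is what those references carry out, so there is no divergence of substance there, and your remarks about the $\nu$-low trimming of $\exitcopyseg$ correctly identify the one place the standard copying has to be modified.

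There is, however, a genuine gap in your argument for part (5). You assert that Lemma \ref{lem:nu-pres_it_map}\ref{item:nu_reg} yields $i^\Tt_{0\alpha}$ $\nu$-preserving and $\nu(M^\Tt_\alpha)$ regular ``and similarly for $i^\Uu_{0\alpha}$''. The ``$\Tt$'' half is fine, since you are given that $\nu^M$ is $M$-regular. But the ``$\Uu$'' half requires that $\nu^N$ be $N$-regular, and that only follows from $\nu^M$ regular when $\pi$ is $\nu$-preserving. When $\pi$ is $\nu$-low (respectively $\nu$-high), $\nu^N$ may be $N$-singular, and Lemma \ref{lem:nu-pres_it_map}\ref{item:nu_reg} does not apply on the $\Uu$ side; $i^\Uu_{0\alpha}$ may be $\nu$-high. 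So ``$\pi_\alpha$ inherits the $\nu$-behavior of $\pi$ because the diagram commutes'' does not follow for these two subcases from what you have written. The paper's proof handles this by splitting into the three subcases: for $\pi$ $\nu$-preserving it argues exactly as you do; for $\pi$ $\nu$-high it uses only the weaker fact (part (1) of \ref{lem:nu-pres_it_map}) that $i^\Uu_{0\alpha}$ is non-$\nu$-low, so $\psi_{i^\Uu_{0\alpha}}(\nu^N)\geq\nu(M^\Uu_\alpha)$, and chains $\psi_{\pi_\alpha}(\nu(M^\Tt_\alpha))=\psi_{i^\Uu_{0\alpha}}(\psi_\pi(\nu^M))>\psi_{i^\Uu_{0\alpha}}(\nu^N)\geq\nu(M^\Uu_\alpha)$; for $\pi$ $\nu$-low it uses that $\widetilde\nu=\psi_\pi(\nu^M)$ lies in $N^\sq$ (below $\nu^N$), so $\psi_{i^\Uu_{0\alpha}}(\widetilde\nu)=i^\Uu_{0\alpha}(\widetilde\nu)<\sup i^\Uu_{0\alpha}``\nu^N\leq\nu(M^\Uu_\alpha)$. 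You should replace the blanket claim about $i^\Uu_{0\alpha}$ with this three-way case split. Part (6) is fine in spirit, though the reference to ``part (3)'' should really be to part (5) — the reduction is: by (2) we may assume no drop, then if $\nu^M$ is singular apply \ref{lem:nu-pres_it_map}\ref{item:nu_sing}, and if $\nu^M$ is regular then the hypothesis says $\pi$ is non-$\nu$-low, whence by (5) $\pi_\alpha$ is non-$\nu$-low, making the implication vacuous.
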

\begin{proof}
Part \ref{item:pi_alpha_is_weak_deg-embedding}:
This is essentially as usual.

 Parts \ref{item:drop_nu-pres}--\ref{item:near_embedding_propagates}: For $\nu$-preservation, see the proof of \cite[3.13]{recon_con} ***could transfer that proof here.
 For near embeddings, note that
 the proof of \cite{fs_tame}
 goes through with the modified copying algorithm (that is, using $\Uu=\copyseg(\pi,\Tt)$).

 Part \ref{item:reg_nu-pres_etc}: By \ref{lem:nu-pres_it_map}, $i^\Tt_{0\alpha}$ is $\nu$-preserving.
 
 Suppose $\pi$ is $\nu$-preserving.
 Then $\nu^N$ is also $N$-regular,
 so by \ref{lem:nu-pres_it_map}, $i^\Tt_{0\alpha},i^\Uu_{0\alpha}$ are both $\nu$-preserving,
 so by commutativity, $\pi_\alpha$ is also.
 
 If $\pi$ is $\nu$-low then since $i^\Tt_{0\alpha}$ is $\nu$-preserving
 and by commutativity, $\pi_\alpha$ is $\nu$-low.
 
 If $\pi$ is $\nu$-high then since $i^\Uu_{0\alpha}$ is non-$\nu$-low,
 $\pi_\alpha$ is $\nu$-high.

 Part \ref{item:when_pi_nu-low}: Let $\alpha<\lh(\Tt)$; by part \ref{item:drop_nu-pres},
 we may assume that $[0,\alpha]^\Tt\cap\dropset^\Tt=\emptyset$.
 So if $\nu^M$ is $M$-singular then by Lemma \ref{lem:nu-pres_it_map}, $\nu^{M_\alpha}$ is $M_\alpha$-singular. So we may assume $\nu^M$ is $M$-regular. So by hypothesis, $\pi$ is non-$\nu$-low. But then by part \ref{item:reg_nu-pres_etc}, $\pi_\alpha$ is also non-$\nu$-low, which suffices.
\end{proof}

\begin{rem}
 Note that in the context of the lemma above, if $\pi$ is $\nu$-preserving,
 then because we maintain that every $\pi_\alpha$ is $\nu$-preserving,
 $\copyseg(\pi,\Tt)$
 is actually produced by the usual algorithm for the copying construction;
 that is, if $E^\Tt_\alpha=F^{M^\Tt_\alpha}$ then $E^\Uu_\alpha=F^{M^\Uu_\alpha}$,
 and otherwise $E^\Uu_\alpha=\psi_{\pi_\alpha}(E^\Tt_\alpha)$.
\end{rem}

\section{Resurrection and iterability for $\CC$}\label{sec:CC_iterability}
The iterability proof for models $N_\alpha$ of $\CC$ is an adaptation of the usual one;
we must of course deal with extenders whose ancestor
is added at a stage $\alpha$ with $t_\alpha\in\{1,2,3\}$.
For $t_\alpha=1,3$ it is the usual process;
with respect to $t_\alpha=3$, one just has to observe that
things work out fine structurally.
For $t_\alpha=2$ we need to insert extra ultrapowers
into the lift tree, in the style of \cite[\S5]{premouse_inheriting}
(though here things are slightly simpler).
This will verify that the models $N_\alpha$ are iterable (assuming the background premouse $Y$ is).
The algorithm is just a slight variant
of that in \cite{premouse_inheriting}
(which is a variant of the usual one),
but we will give the algorithm in detail,
not just for completeness but more
because we need to set up notation and terminology
to describe the details,
because they will be fundamental in the construction of scales to come.

\subsection{Lifting and resurrection for $m$-maximal trees}

\begin{rem}[Lifting and resurrection process]

A tree $\Tt$ on $N_\alpha$ will lift to a padded tree $\Uu$ on $Y$, as follows.
The nodes of $\Uu$ are a subset of $\lh(\Tt)\cross\om$,
such that whenever $(\alpha,n+1)$ is a node, then so is $(\alpha,n)$,
but for each $\alpha$ there are only finitely many $n$ such that $(\alpha,n)$ is a node.
If $\alpha<_\Tt\beta$ then $(\alpha,n)<_\Uu(\beta,0)$ for some $n$,
and if $n>0$ and $\alpha=\pred^\Tt(\beta)$ then $\Tt$ drops in model at $\beta$.
Fixing some $\beta<\lh(\Tt)$, let $n$ be largest such that $(\beta,n)$ is a node.
If $i+1\leq n$ then $(\beta,i)=\pred^\Uu(\beta,i+1)$,
and if $E^\Uu_{\beta i}\neq\emptyset$ then $E^{\Uu}_{\beta i}$ is an order $0$ measure
 used as a ``$t=2$'' background extender in $\CC=\CC^{M^\Uu_{(\beta,i)}}$ (that is, for some $\gamma$
we have $t^\CC_\gamma=2$ and $E^\Uu_{\beta i}=F^\CC_\gamma$).
The ultrapowers by these order $0$ measures facilitate resurrection.
And $E^\Uu_{\beta n}$ is the lift of $E^\Tt_\beta$.
We will have $\crit(E^\Uu_{\beta i})<\crit(E^\Uu_{\beta,i+1})$ for $i+1<n$.
\end{rem}

\begin{dfn}
 A premouse $Y$ is \emph{$(x,\alpha,n)$-good}\index{$(x,\alpha,n)$-good}
 iff:
 \begin{enumerate}[label=--]
  \item $x\in\RR^Y$, $\alpha$ is a limit ordinal such that $\CC^Y\rest(\alpha+1)$ exists and $t^Y_\alpha\neq 2$,
    \item $Y$ is $(x,\beta)$-iterability-good for all $\beta<\alpha$,
\item $N^Y_\alpha$ is $n$-good, and
\item if $\alpha=\OR^Y$ and $n>0$ then $Y$ is non-small and $n$-sound
 and $\rho_n^Y\geq\delta^Y$ and $N|\delta^Y$ is $(Y,n)$-P-good. ***?
 (Should mean that $N|\delta^Y$ is an P-base for $Y$,
 $N=\mathscr{P}^Y(N|\delta^Y)$ exists, $N$ is $n$-sound and for $i\leq n$, 
$\rho_i^N=\rho_i^Y$ and $p_i^N=p_i^Y$,
 and have the appropriate translation of fine structure. So everything we want before we consider 
$\rSigma_{n+1}$. I think these things should now be following from the lemmas giving such properties...)
\item  $Y$ is $(0,\om)$-iterable (that is, for finite length $0$-maximal trees).\qedhere
\end{enumerate}
\end{dfn}
\begin{dfn}\label{dfn:prodseg}Let $Y$ be a $(\xi,y,0)$-good premouse.

Let $S\in\cardprojpropsegs(N^Y_\xi)$.
Then $\prodstage^Y(S)$\index{$\prodstage$} (\emph{production stage}) denotes the unique $\beta<\xi$
with $\core_\om(N^Y_\beta)=\core_\om(S)$ (cf.~\ref{lem:projections_to_cardinals}(\ref{item:prod_stage})).

We define the \emph{production segment} 
$\prodseg^Y_y(N_\alpha)=\prodseg^Y_y(\alpha)$ of $N_\alpha$, for $\alpha\leq\xi$, by setting $\prodseg^Y_y(\alpha)=$
\begin{enumerate}[label=--]
\item $Y|\om$ if $\alpha=\om$,
\item $\J(\prodseg^Y_y(\beta))$ if $\alpha=\beta+\om$ and $t_\alpha=0$,
\item $\stack_{\beta<\alpha}\prodseg^Y_y(\beta)$ if $\alpha$ is a limit of limits and $t_\alpha=0$,
\item $Y|\lh(F^Y_\alpha)$ if $t_\alpha=1$,
\item $Y|\lh(D_{\kappa0}^Y)$ where $\kappa=\card^Y(\alpha)$ if $t_\alpha=2$,
(***this is probably unnatural...should probably instead be
the production segment of the resurrected production stage as computed in $\Ult(Y,D_{\kappa 0}^Y)$;
thus production segments would not all be segments of $Y$
but segments of finite iterates).
\item $Y|\alpha$ if $t_\alpha=3$.
\end{enumerate}
Note that if $\alpha<\beta$ then $\prodseg^Y_y(\alpha)\ins^*\prodseg^Y_y(\beta)$,
and $\prodseg^Y_y(\alpha)=\prodseg^Y_y(\beta)$ iff
[$t_\alpha=2$ and $\beta\leq\chi^Y_\kappa$ where
$\kappa=\card(\alpha)$]. Note that $\alpha\leq\OR(\prodseg^Y_y(\alpha))$ for all $\alpha$.
\end{dfn}
\begin{dfn}\label{dfn:modres}
Let $Y$ be $(x,\alpha,n)$-good and $N=N^Y_{\alpha n}$.
 Let $M$ be $n$-sound and $\pi:M^\sq\to N^\sq$ be a weak $n$-embedding.
 We define the \emph{model resurrection} function\index{$\modres$}
\[ \modres=\modres^Y_{\alpha n\pi}:\segs(M)\to V \]
associated to $(Y,\alpha,n,\pi)$ (and implicitly associated also to $M$).

Let $S\ins M$. Then $\modres(S)$ will specify, relative to $\pi$, a 
natural ancestor $A$ for $S$ in $\CC^Y$, 
together with a resurrection map $\psi:S^\sq\to A^\sq$, and further associated objects.
Because the backgrounding for $\CC^Y$ uses extenders from $\es^{Y'}$ for iterates $Y'$
of $Y$, the resurrection process will involve forming such iterates.
(For the present we will take $A$ to be the core ($n$-core if $S=M$; $\om$-core if $S\pins M$)
of some $N^{Y'}_{\alpha'}$, but we will later also consider $N^{Y'}_{\alpha'm}$ for various 
$m$.)

Let $\left<M_i\right>_{i\leq k}=\redd^M(S)$. 
So $M_0=M$ and $M_k=S$.
We will set\index{$\Psi_i$}
 \[ \modres(S)=(\Vv,\left<\Psi_i\right>_{i\leq k}) \]
where $\Vv$ is a padded $0$-maximal length $k+1$ nowhere-dropping linear iteration tree on $Y$
which uses only order $0$ measures, and writing $Y_i=M^\Vv_i$, then $\Psi_i$ will specify an 
ancestor $A_i$
for $M_i$, with $A_i\in Y_i$ or $A_i=N$ (the latter only when $i=0$), together with an associated lifting 
map $\psi_i:M_i^\sq\to A_i^\sq$.
For $i\leq k$ we will have
\[ \Psi_i=(\beta_i,A_i,\psi_i,\alpha_i,d_i). \]

We set $\beta_0=\emptyset$ (or ``undefined''), $\alpha_0=\alpha$, $d_0=n$, $A_0=N=N^{Y_0}_{\alpha_0d_0}$ and
\[ \psi_0=\pi:M_0^\sq\to A_0^\sq. \]
Note that if $S=M$ then $k=0$ and we are done.

Now suppose we have defined $\Psi_i$ where $i<k$.

If $0<i$ we will have $d_i=\om$ and
$\beta_i<\lh(\CC^{Y_{i-1}})$ and $\alpha_i<\lh(\CC^{Y_i})$
and $\beta_i<\alpha_{i-1}$ and $\alpha_i<i^\Vv_{i-1,i}(\alpha_{i-1})$ and $t_{\alpha_i}^{Y_i}\neq 
2$ and
\[ A_i\eqdef N_{\beta_{i}\om}^{Y_{i-1}}=N_{\alpha_{i}\om}^{Y_{i}} \]
and
$\psi_{i}:M_{i}^\sq\to A_{i}^\sq$
is a $\Sigma_1$-elementary $m_i$-neat embedding,
where $m_i=\projdeg(M_i)$, and letting
\[ \psi_{i-1}^*=\tau^{Y_{i-1}}_{\alpha_{i-1}d_{i-1}0}\com\psi_{i-1}:M_{i-1}^\sq\to 
(N_{\alpha_{i-1}0}^{Y_{i-1}})^\sq, \]
we will have
$\psi_{i-1}^*\rest\rho_\om^{M_{i}}=\psi_{i}\rest\rho_\om^{M_{i}}$,
and note that $\psi_{i-1}^*$ is a weak $0$-embedding (near $0$-embedding if $i>1$ or $n>0$).

Set $d_{i+1}=\om$ and
$\psi_i^*=\tau^{Y_i}_{\alpha_i d_i0}\com\psi_i$, so $\psi_i^*:M_i^\sq\to(N_{\alpha_i 0}^{Y_i})^\sq$ is
a weak $0$-embedding (near $0$-embedding if $i>0$ or $n>0$).
Now $M_{i+1}\in\cardprojpropsegs(M_i)$.
Set
\[ A_{i+1}=\copyseg(M_{i+1},\psi_i^*), \]
\[ \psi_{i+1}=\copymap(M_{i+1},\psi_i^*). \]
So $\psi_i^*\rest\rho_\om^{M_{i+1}}=\psi_{i+1}\rest\rho_\om^{M_{i+1}}$. We have 
$A_{i+1}\in\cardprojpropsegs(N^{Y_i}_{\alpha_i0})$. Set
\[ \beta_{i+1}=\prodstage^{Y_i}_{\alpha_i}(A_{i+1}), \]
so $\beta_{i+1}<\alpha_i$ and $A_{i+1}=N_{\beta_{i+1}\om}^{Y_i}$.

If $t_{\beta_{i+1}}^{Y_i}\neq 2$ then we set $E^\Vv_i=\emptyset$ and $\alpha_{i+1}=\beta_{i+1}$.
If $t_{\beta_{i+1}}^{Y_i}=2$ then we set $E^\Vv_i=D_{\kappa0}^{Y_i}$ where 
$\kappa=\card^{Y_i}(\beta_{i+1})$, and set
\[ \alpha_{i+1}=\prodstage^{Y_{i+1}}_{i^\Vv_{i,i+1}(\alpha_i)}(A_{i+1}). \]
(Note that
$N_\kappa^{Y_i}=N_{\alpha_i}^{Y_i}|\kappa\pins_\card N_{\alpha_i}^{Y_i}$
and since $t^{Y_i}_{\alpha_i}\neq 2$,
letting $\chi=\chi_{\kappa}^{Y_i}=\kappa^{+i^{Y_i}_{D}(N_\kappa^{Y_i})}$
where $D=D_{\kappa0}^{Y_i}$,
then $\chi=\kappa^{+N^{Y_i}_{\alpha_i}}$
and $N^{Y_i}_{\alpha_i}|\chi=i^{Y_i}_{D}(N^{Y_i}_\kappa)|\chi$,
and $A_{i+1}\pins N^{Y_i}_{\alpha_i}|\chi$, and since $A_{i+1}\in\cardprojpropsegs(N^{Y_i}_{\alpha_i})$,
therefore $\rho_\om^{A_{i+1}}=\kappa$.
Also 
\[ i^\Vv_{i,i+1}(N_\kappa^{Y_i})=i^\Vv_{i,i+1}(N_{\alpha_i}^{Y_i})|i^\Vv_{i,i+1}(\kappa), \] 
so $A_{i+1}\in\cardprojpropsegs(N^{Y_{i+1}}_{i^\Vv_{i,i+1}(\alpha_i)})$.)
Note that independent of $t_{\beta_{i+1}}^{Y_i}$, we have
\[ A_{i+1}=N^{Y_{i+1}}_{\alpha_{i+1}\om}\text{ and } t_{\alpha_{i+1}}^{Y_{i+1}}\neq 2\text{ and 
}\alpha_{i+1}\leq i^\Vv_{i,i+1}(\beta_{i+1})<i^\Vv_{i,i+1}(\alpha_i).\]
This completes the definition.

Define also the \emph{resurrection tree}, \emph{model resurrection map}, \emph{resurrection production stage} and
\emph{resurrection length} functions by:
\begin{enumerate}[label=--]
 \item $\restree^Y_{\alpha n\pi}(S)=\Vv$,\index{$\restree$}
\item $\modresmap^Y_{\alpha n\pi}(S)=\psi_k$,\index{$\modresmap$}
\item $\resprodstage^Y_{\alpha n\pi}(S)=\alpha_k$\index{$\resprodstage$} and
\item $\resl^Y_{\alpha n\pi}(S)=k$.\index{$\resl$}
\end{enumerate}

Also if $Y$ is $z$-sound and $(z,\om)$-iterable then\index{$\restree^{Y,z}$}
\[ \restree^{Yz}_{\alpha n\pi}(S)\] denotes
the $z$-maximal tree $\Vv'$ on $Y$ which uses the same extenders as does $\Vv$;
we point out below that this is well-defined.
Analogously,
\[ \modres^{Yz}_{\alpha n\pi}(S)=(\Vv',\left<\Psi_i\right>_{i\leq k}).\qedhere\]
\end{dfn}
The following lemma is straightforward to verify:

\begin{lem}
Let $Y$ be $(x,\alpha,n)$-good,
$N=N^Y_{\alpha n}$ and
\[\pi:M^\sq\to N^\sq\] a weak $n$-embedding,
$S\ins M$ and $\left<M_i\right>_{i\leq k}=\redd^M(S)$
and $\Vv=\restree^Y_{\alpha n\pi}(S)$. Then:
\begin{enumerate}[label=--]
 \item $\Vv$ is a padded, length $k+1$, $0$-maximal, nowhere dropping, linear iteration tree on $Y$ 
which uses only order $0$ measures.
\item For each $i+1\leq j+1\leq k$ with $E^\Vv_i\neq\emptyset\neq E^\Vv_j$,
we have
\[ \crit(E^\Vv_{i})<\crit(E^\Vv_j)<i^\Vv_{ij}(\crit(E^\Vv_i)).\]
\item $\modres^Y_{\alpha n\pi}(M_j)=(\Vv\rest(j+1),\left<\Psi_i\right>_{i\leq j})$
for each $j\leq k$.
\item Suppose $Y$ is $z$-sound and $(z,\om)$-iterable. Then $\Vv'=\restree^{Y,z}_{\alpha n\pi}(S)$ is 
well-defined. In fact, suppose $E^\Vv_i\neq\emptyset$ and $i$ is least such. 
Let $Y'_l=M^{\Vv'}_l$.
Let $y\leq z$ be largest such that 
$\kappa=\crit(E^\Vv_i)<\rho_y^Y$, so $y=\deg^{\Vv'}(i+1)$.
Let $i<l\leq k$. Then $y=\deg^{\Vv'}(l)$,
\[ \lambda\eqdef i^\Vv_{il}(\kappa)=i^{\Vv'}_{il}(\kappa)<\rho_y^{Y'_l},\]
\[ Y_l|(\lambda^+)^{Y_l}=Y'_l|(\lambda^+)^{Y'_l}, \]
and $i^\Vv_{il}\rest\pow(\kappa)=i^{\Vv'}_{il}\rest\pow(\kappa)$,
and in particular, $\CC^{Y_l}\rest(\lambda+1)=\CC^{Y'_l}\rest(\lambda+1)$.
\end{enumerate}
\end{lem}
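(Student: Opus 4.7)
The plan is to verify each of the four bullets by induction on $k = \resl^Y_{\alpha n\pi}(S)$, reading the required properties directly off the recursion defining $\modres^Y_{\alpha n\pi}(S)$ in Definition \ref{dfn:modres}. Throughout, write $\left<M_i\right>_{i\le k}=\redd^M(S)$, $(\Vv,\left<\Psi_i\right>_{i\le k})=\modres^Y_{\alpha n\pi}(S)$, $\Psi_i=(\beta_i,A_i,\psi_i,\alpha_i,d_i)$, and $Y_i=M^\Vv_i$. The case $k=0$ is trivial (the tree is the trivial one on $Y$), so the interesting content is at the inductive step $i \to i+1$.

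For the first bullet, the construction sets $E^\Vv_i$ to either $\emptyset$ (padding) or the order-$0$ measure $D^{Y_i}_{\kappa 0}$ with $\kappa=\card^{Y_i}(\beta_{i+1})$, and in the latter case $\pred^\Vv(i+1)=i$ by construction. So by definition $\Vv$ is linear, padded, uses only order-$0$ measures, and is nowhere-dropping in model or degree (there are no branch issues since $\Vv$ is a finite linear tree, and ultrapowers by order-$0$ $Y_i$-total measures never drop in the $0$-maximal setting). That the tree is $0$-maximal reduces to checking the length condition $\lh(E^\Vv_i) < \lh(E^\Vv_j)$ for $i < j$ with both extenders non-padding, which is exactly the second bullet, so I postpone it.

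For the second bullet, I would argue that if $E^\Vv_i \neq \emptyset$ is an order-$0$ measure on $\kappa_i = \card^{Y_i}(\beta_{i+1})$, then in $Y_{i+1}=\Ult(Y_i,E^\Vv_i)$, one has $i^\Vv_{i,i+1}(\kappa_i) = $ some ordinal well above $\kappa_i$. The key point is that the next non-padding extender $E^\Vv_j$ (with $j > i$) has critical point $\kappa_j = \card^{Y_j}(\beta_{j+1})$, where $\beta_{j+1}$ is the production stage in $\CC^{Y_j}$ of $A_{j+1} = \copyseg(M_{j+1},\psi_j^*)$. Because $M_{j+1} \pins M_j \pins \cdots \pins M_{i+1}$ and each of the copy maps $\psi_l^*$ ($l \ge i+1$) maps into $N^{Y_l}_{\alpha_l 0}$ with $\alpha_l < i^\Vv_{i,l}(\alpha_i)$ and with $\kappa_i < \crit(\psi_{i+1})$ (since $\psi_{i+1}$ agrees with $\psi_i^*$ on $\rho_\om^{M_{i+1}} > \kappa_i$), one obtains $\kappa_i < \kappa_j$. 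The upper bound $\kappa_j < i^\Vv_{ij}(\kappa_i)$ follows because $\kappa_j$ is a cardinal of $N^{Y_j}_{\alpha_j}$ below $i^\Vv_{ij}(\chi^{Y_i}_{\kappa_i})$, using Lemma \ref{lem:measurable_stack} to control $\chi^{Y_i}_{\kappa_i}$.

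For the third bullet, the point is just that the recursion defining $\modres^Y_{\alpha n\pi}$ at stage $j$ depends only on the data $\Psi_i$ with $i \le j$, and $\redd^M(M_j)$ is an initial segment of $\redd^M(S)$, so the definitions agree. For the fourth bullet, let $y \le z$ be largest with $\kappa < \rho_y^Y$, where $\kappa = \crit(E^\Vv_i)$ for the least non-padding $i$; then in $z$-maximal indexing, $\pred^{\Vv'}(i+1) = i$ and $\deg^{\Vv'}(i+1) = y$, with no drop in model since $\kappa < \rho_y^Y$. The key lemma to invoke is that ultrapowers of different degrees by the same extender agree on initial segments up to and including $(\lambda^+)$, where $\lambda = i^\Vv_{il}(\kappa)$; this is because the subsets of $\kappa$ are computed the same way in $Y_l$ and $Y'_l$ (they come from the common predecessor and the common extenders), so $Y_l|(\lambda^+)^{Y_l} = Y'_l|(\lambda^+)^{Y'_l}$. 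Then $\CC^{Y_l} \rest (\lambda+1) = \CC^{Y'_l} \rest (\lambda+1)$ follows from Lemma \ref{lem:con_locality}. Iterating this argument along the tree, the whole sequence of extenders remains available in the $z$-maximal lift, and induction carries through. The main obstacle will be bookkeeping the degrees through the chain of ultrapowers and verifying that no drop is forced at later stages when the $0$-maximal version does not drop; this reduces to checking $i^\Vv_{il}(\kappa) < \rho_y^{Y'_l}$, which follows inductively from the elementarity of $i^{\Vv'}_{il}$ at the appropriate degree together with the fact that $\kappa < \rho_y^Y$.
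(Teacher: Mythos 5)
The paper itself gives no proof of this lemma (it is preceded by ``The following lemma is straightforward to verify''), so there is nothing to compare against; your plan of reading the four bullets off the recursion in Definition \ref{dfn:modres} is exactly the expected verification, and your high-level decomposition and use of Lemma \ref{lem:measurable_stack} for the critical-point bounds and degree-agreement of ultrapowers for the fourth bullet are all on target.

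One detail is mis-stated and worth fixing: in the second bullet you write ``$\kappa_i<\crit(\psi_{i+1})$ (since $\psi_{i+1}$ agrees with $\psi_i^*$ on $\rho_\om^{M_{i+1}}>\kappa_i$)''. The inequality is backwards — a weak $0$-embedding is nondecreasing on ordinals and $q$-neat maps send $\rho_\om$-many points into $\rho_\om^{A_{i+1}}=\kappa_i$, so in fact $\rho_\om^{M_{i+1}}\le\kappa_i$; in any case ``$\kappa_i<\crit(\psi_{i+1})$'' is not quite what is needed. The actual point, as in the claim proved inside Lemma \ref{lem:iterability} (verification of condition (L6)(ii)), is that for $1\le i<k$, $\rho_\om^{M_{i}}<\rho_\om^{M_{i+1}}$ propagates through the $m_i$-neat maps $\psi_i$ to give $\rho_\om^{A_{i}}<\rho_\om^{A_{i+1}}$, which yields $\kappa_i<\kappa_j$. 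For the upper bound $\kappa_j<i^\Vv_{ij}(\kappa_i)$, the useful observation is that $A_{i+1}$ is a cardinal-projecting proper segment of $N^{Y_{i+1}}_{i^\Vv_{i,i+1}(\kappa_i)}$ (since $\OR^{A_{i+1}}<\chi^{Y_i}_{\kappa_i}<i^\Vv_{i,i+1}(\kappa_i)$), so $\alpha_{i+1}<i^\Vv_{i,i+1}(\kappa_i)$; the stated bound $\alpha_{i+1}\le i^\Vv_{i,i+1}(\beta_{i+1})$ alone is not enough, since $\beta_{i+1}>\kappa_i$. Finally, for the invocation of Lemma \ref{lem:con_locality} in the fourth bullet: $Y_l$ and $Y'_l$ are not $\ins^*$-segments of each other, so one should note that $Y_l|(\lambda^+)^{Y_l}=Y'_l|(\lambda^+)^{Y'_l}$ is a common initial segment and apply the locality lemma to each.
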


\begin{dfn}\label{dfn:degres}
 Let $Y$ be $(x,\alpha,m)$-good, $N=N^Y_{\alpha m}$,
$\pi:M^\sq\to N^\sq$ a weak $m$-embedding.
 We define the \emph{resurrection}\index{resurrection function} function
 $\res^Y_{\alpha m\pi}$,\index{$\res$} with 
 \[ \dom(\res^Y_{\alpha m\pi})=\segdegs(M,m).\]
 
Let $(S,s)\ins(M,m)$. Then
\[ \res^Y_{\alpha m\pi}(S,s)=(\Vv,\left<\Psi_i\right>_{i\leq k},\psi) \]
where $(\Vv,\left<\Psi_i\right>_{i\leq k})=\modres^Y_{\alpha 
m\pi}(S)$, $\Psi_k=(\beta_k,A_k,\psi_k,\alpha_k,d_k)$, $Y_k=M^\Vv_k$,
and
\[ \psi=\tau^{Y_k}_{\alpha_k d_k s}\com\psi_k:S^\sq\to(N^{Y_k}_{\alpha_k s})^\sq.\]
Define $\resmap^Y_{\alpha m\pi}(S,s)=\psi$.\index{$\resmap$}

 We define the \emph{critical point resurrection} function
 $\critres^Y_{\alpha m\pi}$,\index{$\critres$} with
 \[ \dom(\critres^Y_{\alpha m\pi})=\{(e,\kappa)\mid e\ins M\ \&\ e\text{ is active }
 \&\ \kappa<\nu(F^e)\},\]
 and also the \emph{critical point resurrection map} function
  $\critresmap^Y_{\alpha m\pi}$,\index{$\critresmap$} with the same domain.

Let $e\ins M$ be active and $\kappa<\nu(F^e)$. Then
\[ \critres^Y_{\alpha m\pi}(e,\kappa)=\res^Y_{\alpha m\pi}(S,s)\]
and
 \[ \critresmap^Y_{\alpha m\pi}(e,\kappa)=\resmap^Y_{\alpha m\pi}(S,s)\]
where $(S,s)=\psegdeg^{M,m}(e,\kappa))$ (recall $(S,s)\ins(M,m)$ is the first level 
after $e$ projecting to $\kappa$ (so $\rho^S_{s+1}\leq\kappa<\rho_s^S$), or $(S,s)=(M.m)$ if there 
is no such).

Define the functions\index{$\critrestree$}\index{$\critrestree^{Y,z}$}\index{$\critresprodstage$}\index{$\critresl$}
\[ \critrestree^Y_{\alpha m\pi},\ \critrestree^{Y,z}_{\alpha m\pi},\ \critresprodstage^Y_{\alpha m\pi},\ \critresl^Y_{\alpha m\pi},\]
also with the same domain as $\critres^Y_{\alpha m\pi}$, analogously. So for example
\[ \critrestree^Y_{\alpha m\pi}(e,\kappa)=\restree^Y_{\alpha m\pi}(S) \]
where $(S,s)=\psegdeg^{M,m}(e,\kappa)$.
\end{dfn}

\begin{lem}\label{lem:when_psi_nu-low}
 Let $Y$ be $(x,\alpha,m)$-good, $N=N^Y_{\alpha m}$,
$\pi:M^\sq\to N^\sq$ a weak $m$-embedding.
Let $(S,s)\ins(M,m)$ and
$(\Vv,\left<\Psi_i\right>_{i\leq k},\psi)=\res^Y_{\alpha m\pi}(S,s)$.
Then:
\begin{enumerate}[label=--]
\item $t_\eta^{M^\Vv_k}\neq 2$ where $\eta=\resprodstage^Y_{\alpha m\pi}(S,s)$ and
 \item $\psi$ is $\nu$-low iff $s=m=0$, $S=M$ and 
$\pi$ is $\nu$-low \tu{(}so $k=0$ and $\psi=\pi$\tu{)}.
\end{enumerate}
\end{lem}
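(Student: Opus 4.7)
The first conclusion is essentially read off from the construction of $\modres$: at the base $i=0$ we have $(Y_0,\alpha_0)=(Y,\alpha)$, and $t^Y_\alpha\neq 2$ holds by the hypothesis that $Y$ is $(x,\alpha,m)$-good. For the inductive step $i<k$, the construction explicitly guarantees $t^{Y_{i+1}}_{\alpha_{i+1}}\neq 2$: in the case $t^{Y_i}_{\beta_{i+1}}=2$, we take $E^\Vv_i=D^{Y_i}_{\kappa 0}$ and reset $\alpha_{i+1}$ to the production stage of $A_{i+1}$ computed inside $Y_{i+1}$ (which by the construction lies in a non-$t\!=\!2$ stage); in the complementary case we simply take $\alpha_{i+1}=\beta_{i+1}$ with $Y_{i+1}=Y_i$ and no new $t=2$ condition is introduced.

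For the second conclusion I would exploit the factorization
\[ \psi=\tau^{Y_k}_{\alpha_k d_k s}\com\psi_k \]
with $d_0=m$ and $d_i=\om$ for $i\geq 1$. The ``if'' direction is immediate: when $k=0$, $S=M$ and $s=m=0$, the factor $\tau^Y_{\alpha 00}$ is the identity and $\psi=\pi$. For ``only if'' the plan is to show that in every other case the inner factor $\psi_k$ is $\rSigma_1$-elementary, and then to use the fact that the outer factor $\tau^{Y_k}_{\alpha_k d_k s}$ is a composition of core maps (trivially the identity when $d_k=s$), hence a near embedding and so $\rSigma_1$-elementary and non-$\nu$-low by Lemma \ref{lem:nu-pres_elem}. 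Because $\psi_\tau$ is also order-preserving on ordinals, one then obtains, whenever $S$ is type 3,
\[ \psi_\psi(\nu^S)\;=\;\psi_\tau\bigl(\psi_{\psi_k}(\nu^S)\bigr)\;\geq\;\psi_\tau(\nu^{A_k})\;\geq\;\nu^{N^{Y_k}_{\alpha_k s}}, \]
so $\psi$ is non-$\nu$-low (and if $S$ is not type 3 then $\psi$ is trivially non-$\nu$-low by \ref{dfn:nu-pres,neat}).

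The remaining work is the case split on the inner factor. If $k\geq 1$, then the invariants maintained by $\modres$ explicitly record that $\psi_k$ is a $\Sigma_1$-elementary $m_k$-neat embedding (cf.\ Definitions \ref{dfn:q-neat}, \ref{dfn:copyseg}), hence non-$\nu$-low, and the display above applies. If $k=0$ but $m\geq 1$, then $\psi_k=\pi$ is a weak $m$-embedding with $m\geq 1$, which is $\rSigma_1$-elementary, so again non-$\nu$-low and the display applies. The only remaining case is $k=0$ and $m=0$, which forces $S=M$ and $s\leq m=0$, so $\tau=\id$ and $\psi=\pi$; here the asserted iff becomes tautological.

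The only slightly subtle point, and what I would regard as the main thing to verify, is the $\Sigma_1$-elementarity of $\psi_k$ for $k\geq 1$. This is a direct consequence of the inductive clauses of $\modres$ together with \ref{dfn:copyseg}: each $\psi_{i+1}=\copymap(M_{i+1},\psi_i^*)$ is produced as an $m_{i+1}$-neat map, which by \ref{dfn:q-neat} is explicitly $\Sigma_1$-elementary (and, when $m_{i+1}\geq 0$, $\nu$-preserving), and pre-composition with the core embedding $\tau^{Y_i}_{\alpha_i d_i 0}$ (itself a near embedding) preserves this. Granted this bookkeeping, the argument is routine, and there is no serious obstacle beyond tracking the inductive invariants already maintained in the definition of $\modres$.
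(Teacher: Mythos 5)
Your proof is correct and follows exactly the approach the paper intends: the paper's own proof is the single sentence ``Because core embeddings are $\Sigma_1$-elementary, hence non-$\nu$-low, and $m$-neat embeddings are non-$\nu$-low, this follows readily from the definitions,'' and your write-up is simply a careful unpacking of that sentence, with the three-way case split on $(k,m)$ made explicit. The composition inequality you display and the appeal to Lemma \ref{lem:nu-low_weak-0} (via $\rSigma_1$-elementarity) to rule out $\nu$-lowness when $k=0$, $m\geq 1$ are the right details to fill in.
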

\begin{proof}
Because core embeddings are $\Sigma_1$-elementary, hence non-$\nu$-low,
and $m$-neat embeddings are non-$\nu$-low, this follows readily from the definitions.
\end{proof}

\begin{dfn}\label{dfn:exitresadd}
 Let $Y$ be $(x,\alpha,m)$-good, $N=N^Y_{\alpha m}$,
$\pi:M^\sq\to N^\sq$ a weak $m$-embedding.
We define the \emph{exit resurrection addendum} function\index{$\exitresadd$}
\[ \exitresadd=\exitresadd^Y_{\alpha m\pi},\]
with
$\dom(\exitresadd)=\{e\mid e\ins M\ \&\ e\text{ is active}\}$. Let $e\ins M$ be active.
Let
\[ \res^Y_{\alpha m\pi}(e,0)=(\Vv,\left<\Psi_i\right>_{i\leq k},\psi)\]
and $\Psi_k=(\cdot,\cdot,\cdot,\alpha_k,\cdot)$.
We will set $\exitresadd(e)=(\widetilde{\alpha},\widetilde{\psi})$ where 
$\widetilde{\alpha}\leq\alpha_k$, and
\[ \widetilde{\psi}:e^\sq\to(N^{Y_k}_{\widetilde{\alpha}0})^\sq \]
is a non-$\nu$-low weak $0$-embedding such that $\widetilde{\psi},\psi$
have the same graphs (but they possibly have different codomains).

If $\psi$ is non-$\nu$-low then
we set $\widetilde{\alpha}=\alpha_k$ and $\widetilde{\psi}=\psi$.

Now suppose $\psi$ is $\nu$-low, so by \ref{lem:when_psi_nu-low}, $e=M$ is type 3, $m=0$,
$\psi=\pi$, $k=0$. Let $\widetilde{\nu}=\psi_\pi(\nu^M)$,
so $\widetilde{\nu}<\nu^N$ and $\widetilde{\nu}$ is a cardinal of $N$.
So there is $\widetilde{N}\pins N$ with 
$F^{\widetilde{N}}=F^N\rest\widetilde{\nu}$,
and note $\widetilde{N}\in\cardprojpropsegs(N)$.
Since $N=N_{\alpha 0}$ we can therefore set
\[ \widetilde{\alpha}=\prodstage^Y_{\alpha}(\widetilde{N})<\alpha.\]
(And $\widetilde{\psi}$ has the same graph as does $\psi$,
but note they have different codomains. We verify in \ref{lem:exitresadd} below that 
$(\widetilde{\alpha},\widetilde{\psi})$ have the stated properties.)\end{dfn}

\begin{lem}\label{lem:exitresadd}
 Let $Y$ be $(x,\alpha,m)$-good, $N=N^Y_{\alpha m}$, $\pi:M^\sq\to N^\sq$ a weak 
$m$-embedding, and suppose that if $\pi$ is $\nu$-low then $\nu^M$ is $M$-singular. Let $e\ins M$ be 
active
 and $\kappa<\nu(F^e)$. Let
 \[ (S,s)=\psegdeg^{M,m}(e,\kappa) \]
 \tu{(}so $e\ins S\ins M$ and $\kappa<\rho_s^S$ and $\kappa<\rho_0^e$ and 
$e||(\kappa^+)^e=S||(\kappa^+)^S$\tu{)}.
 Let
 \[ (\Vv,\left<\Psi_i\right>_{i\leq k},\psi)=\res^Y_{\alpha m\pi}(e,0),\]
  \[ \alpha_k=\resprodstage^Y_{\alpha m\pi}(e,0),\]
  \[ (\wt{\alpha},\wt{\psi})=\exitresadd^Y_{\alpha m\pi}(e),\]
\[\varphi=\critresmap^Y_{\alpha m\pi}(e,\kappa),\]
\[ l=\critresl^Y_{\alpha m\pi}(e,\kappa)\]
and  $Y_i=M^\Vv_i$.
Then:
\begin{enumerate}
 \item $\psi:e^\sq\to(N_{\alpha_k0}^{Y_k})^\sq$ and $\wt{\psi}:e^\sq\to(N_{\wt{\alpha}0}^{Y_k})^\sq$
are weak $0$-embeddings with the same graph, and $\wt{\psi}$ is non-$\nu$-low.
\item $\varphi:S^\sq\to(\wt{N}_{\alpha_l s}^{Y_l})^\sq$ is a weak $s$-embedding.
\item $\wt{\psi}\rest(\kappa^+)^e=\varphi\rest(\kappa^+)^S$.
\item $t^{Y_k}_{\alpha_k},t^{Y_k}_{\wt{\alpha}}\in\{1,3\}$ and $t^{Y_l}_{\alpha_l}\neq 2$.
\end{enumerate} 
\end{lem}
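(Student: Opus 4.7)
The plan is to verify each of the four conclusions by induction along the reverse model dropdown sequence $\left<M_i\right>_{i\leq k}=\redd^M(e)$ used in the definition of $\modres^Y_{\alpha m\pi}(e)$, and then handle the passage from $(e,0)$ to $(S,s)$ at the end. All of the substantive work has essentially been done: the construction in Definitions~\ref{dfn:modres}, \ref{dfn:degres}, \ref{dfn:exitresadd} builds the maps $\psi_i$ as $\Sigma_1$-elementary, $m_i$-neat embeddings (where $m_i=\projdeg(M_i)$), and $\psi_i^*=\tau^{Y_i}_{\alpha_i d_i 0}\com\psi_i$ as weak $0$-embeddings (near $0$-embeddings for $i>0$); the lemma really just catalogues the outputs.

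First I would treat conclusion (4), since it is the most purely bookkeeping. By the definition of $\prodstage$ and of $\alpha_i$ in Definition~\ref{dfn:modres}, $\alpha_i$ is always chosen so that $t_{\alpha_i}^{Y_i}\neq 2$ (that is the whole point of inserting the order-$0$ ultrapower $E^\Vv_i=D^{Y_i}_{\kappa 0}$ exactly when $t^{Y_i}_{\beta_{i+1}}=2$). Applying this at $i=k$ gives $t^{Y_k}_{\alpha_k}\neq 2$, and at $i=l$ gives $t^{Y_l}_{\alpha_l}\neq 2$. Since $e$ is active and $\psi_k$ is $\nu$-preserving whenever $k>0$, the image $A_k=\psi_k(e)$ (resp.\ the active segment $\wt{N}$ carved out by the ISC in the $\nu$-low case) is itself active in $\CC^{Y_k}$, forcing $t^{Y_k}_{\alpha_k}\in\{1,3\}$; in the adjusted case we use Lemma~\ref{lem:nu-low_weak-0} together with the fact that $\wt{\nu}$ is a limit cardinal of $N$ and $F^{\wt N}=F^N\!\!\rest\!\wt{\nu}$ to see that $t^{Y_k}_{\wt{\alpha}}\in\{1,3\}$ as well (this segment exists precisely because the relevant proper segment is properly projecting with active top).

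Next I would handle (1). That $\psi:e^\sq\to(N^{Y_k}_{\alpha_k0})^\sq$ is a weak $0$-embedding is immediate from the induction, since $\psi=\tau^{Y_k}_{\alpha_k d_k 0}\com\psi_k$ and both factors are weak $0$-embeddings (near $0$-embeddings once we are past the first step of the dropdown, by the propagation lemmas for $\nu$-preserving near embeddings). For the addendum $\wt\psi$, there are two cases. If $\psi$ is non-$\nu$-low we take $\wt\alpha=\alpha_k$ and $\wt\psi=\psi$ and there is nothing to do. If $\psi$ is $\nu$-low then by Lemma~\ref{lem:when_psi_nu-low} we must have $k=0$, $e=M$, $m=0$, and $\psi=\pi$, and by the standing hypothesis $\nu^M$ is $M$-singular; Lemma~\ref{lem:nu-low_weak-0} then produces the unique $\wt N\pins N$ with $F^{\wt N}=F^N\!\!\rest\!\wt\nu$, which lies in $\cardprojpropsegs(N)$, so $\wt\alpha=\prodstage^Y_{\alpha}(\wt N)<\alpha$ is well-defined, and the map $\wt\psi:e^\sq\to\wt N^\sq$ with the same graph as $\psi$ is a $\nu$-preserving weak $0$-embedding. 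In both cases $\wt\psi$ has the same graph as $\psi$ and is non-$\nu$-low.

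For (2), $\varphi=\tau^{Y_l}_{\alpha_l d_l s}\com\psi_l$ where $\psi_l$ is either a $\Sigma_1$-elementary $m_l$-neat embedding (if $l>0$) or the original $\pi$ (if $l=0$), so $\psi_l$ is a weak $s$-embedding on $S^\sq$ by the choice $(S,s)=\psegdeg^{M,m}(e,\kappa)$ (this matches the level of soundness with the degree of elementarity), and composition with the core embedding preserves this. Finally for (3), both $\wt\psi$ and $\varphi$ factor through $\psi_k$ and $\psi_l$ respectively followed by core embeddings, and because $(S,s)=\psegdeg^{M,m}(e,\kappa)$ we have $e||(\kappa^+)^e=S||(\kappa^+)^S$ and $\kappa$ is below every projection $\rho_\om^{M_j}$ that gets coired along the way from $(e,0)$ up to $(S,s)$; hence, using the agreement $\psi_{i-1}^*\!\!\rest\!\rho_\om^{M_i}=\psi_i\!\!\rest\!\rho_\om^{M_i}$ recorded in Definition~\ref{dfn:modres}, telescoping along this segment of the dropdown yields $\wt\psi\!\!\rest\!(\kappa^+)^e=\varphi\!\!\rest\!(\kappa^+)^S$. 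The main obstacle I expect is the last one: keeping track, through both the $\nu$-low adjustment producing $\wt N$ and the chain of core maps and neat embeddings used to build $\varphi$, that the two composites really land in compatible models and agree on $(\kappa^+)^e$; the verification reduces to the telescoping identity just mentioned combined with the fact that $\wt\psi$ and $\psi$ share a graph.
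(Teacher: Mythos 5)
Your overall strategy matches the paper, which only writes out the non-routine points, namely that $\wt{\psi}:e^\sq\to(N_{\wt{\alpha}0}^{Y_k})^\sq$ is a non-$\nu$-low weak $0$-embedding and that $t^{Y_k}_{\wt{\alpha}}\in\{1,3\}$, and leaves the rest as routine composition and bookkeeping as you do. But there is a genuine gap in your treatment of the $\nu$-low subcase of conclusion~(4), and it is precisely the point the paper's proof is careful about.

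When $\psi$ is $\nu$-low (so by Lemma~\ref{lem:when_psi_nu-low} we have $e=M$, $m=k=0$, $\psi=\pi$, and $\wt{N}\pins N$ with $F^{\wt{N}}=F^N\!\rest\!\wt\nu$, and $\wt\alpha=\prodstage^Y_\alpha(\wt N)$), you argue that $\wt\alpha$ lands at a stage with $t^{Y_k}_{\wt\alpha}\in\{1,3\}$ because $\wt N$ is active and $\wt\nu$ is a limit cardinal of $N$. That reasoning rules out $t=0$ (active output) but does nothing to rule out $t^{Y}_{\wt\alpha}=2$: being a limit cardinal of $N$ is compatible with being measurable in $Y$, and measurability is exactly what triggers $t=2$ in $\CC$. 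Also note that $\wt\alpha$ is a freshly computed production stage, not one of the $\alpha_i$ that the $\modres$ construction explicitly arranged to avoid $t=2$ by inserting order-$0$ ultrapowers, so the ``that's the whole point of the construction'' argument you apply to $\alpha_k$ and $\alpha_l$ does not transfer to $\wt\alpha$.

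The paper closes this gap using the standing hypothesis of the lemma, which you quote but never actually deploy: since $\pi$ is $\nu$-low, $\nu^M$ is $M$-singular. Because $\wt\psi$ is $\nu$-preserving (Lemma~\ref{lem:nu-low_weak-0}), $\wt\nu=\nu^{\wt N}=\rho_\om^{\wt N}$ is therefore $N$-singular, hence $Y$-singular, hence not $Y$-measurable, and that is what forces $t^{Y}_{\wt\alpha}\neq 2$. Combined with $t\neq 0$ (since $\wt N$ is active) and $t\neq 3$ (since $N$, hence $\wt N$, is small by goodness), this pins $t^{Y_k}_{\wt\alpha}\in\{1,3\}$. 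Since this is the unique place the lemma's $\nu$-low/singularity hypothesis matters, omitting it leaves the core assertion of (4) unjustified; the rest of your write-up (items (1)--(3), and the non-$\nu$-low parts of (4)) is in order and follows the paper's intended route.
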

\begin{proof} We just verify that $\wt{\psi}:e^\sq\to(N_{\wt{\alpha}0}^{Y_k})^\sq$ is a non-$\nu$-low weak 
$0$-embedding, and that $t^{Y_k}_{\wt{\alpha}}\in\{1,3\}$,
and omit the rest.

We have $t^{Y_k}_{\alpha_k}\neq 2$ and $Y_k=M^\Vv_k$, and $t^{Y_k}_{\alpha_k}\neq 0$ 
because $N^{Y_k}_{\alpha_k}$ is active.
So we may assume that $\psi$ is $\nu$-low. So by \ref{lem:when_psi_nu-low},
$e=M$ is type 3, $m=k=0$, and $\psi=\pi$.
Let
$\widetilde{N}=N^Y_{\widetilde{\alpha}\om}$ and
$\widetilde{\nu}=\nu^{\widetilde{N}}$.
We have $\widetilde{N}=N^Y_{\widetilde{\alpha}0}$
by the ISC
and because $\rho_\om^{\widetilde{N}}=\nu^{\widetilde{N}}$.
And because $\nu^M$ is $M$-singular, $\widetilde{\nu}$
is $N$-singular, hence $Y$-singular, hence non-$Y$-measurable, so 
$t^Y_{\wt{\alpha}}\neq 2$. And $t^Y_{\wt{\alpha}}\neq 0$ because $\wt{N}$ is active.
By \ref{lem:nu-low_weak-0}, $\widetilde{\psi}$ is a $\nu$-preserving weak $0$-embedding.
\end{proof}

\begin{lem}\label{lem:res_comm}
 Let $Y$ be $(x,\zeta,m)$-good, $N=N^Y_{\zeta m}$, $\pi:M^\sq\to N^\sq$ a weak 
$m$-embedding. Let $Y',\zeta',N',M',\pi'$ be likewise \tu{(}with the same $x,m$\tu{)}. Let
$\varrho:M\to M'$ be a $\nu$-preserving near $m$-embedding.
Suppose $Y,Y'$ are $y$-sound and let $\varsigma:Y\to Y'$ be a $\nu$-preserving near $y$-embedding.
Suppose
$\pi'\com\varrho=\varsigma\com\pi$.
Let $e\ins M$ be active and $e'=\copyseg(\varrho,e)$.

Let
\begin{enumerate}[label=--]
 \item  $\left<M_i\right>_{i\leq k}=\redd^{M}(e)$,
 \item $(\Vv,\left<\Psi_j\right>_{j\leq k},\psi)=\res^{Y y}_{\zeta m\pi}(e,0)$,
 \item $\Psi_j=(\beta_j,A_j,\psi_j,\alpha_j,d_j)$ and $Y_j=M^\Vv_j$ for $j\leq k$,
 \item $(\wtalpha,\wtpsi)=\exitresadd^{Yy}_{\zeta m\pi}(e)$.
\end{enumerate}
Define $\left<M'_j\right>_{j\leq k'}$, $\Vv'$, $\Psi'_j$, etc,
 analogously from $M',e',Y'$, etc.

 Suppose further that
 \begin{enumerate}[label=(\roman*)]
 \item\label{item:zeta<rho_0^Y}$\zeta<\rho_0^Y$ and $\zeta'<\rho_0^{Y'}$ and $\varsigma(\zeta)=\zeta'$.
 \end{enumerate}
 Then:
 \begin{enumerate}
  \item\label{item:redd_pres} $k=k'$,
  $M_0=M$, $M'_0=M'$, and $M'_j=\psi_{\varrho}(M_j)$ for each $j\in(0,k]$,
  \item\label{item:Vv'_is_copy_of_Vv} $\Vv'\rest(j+1)=\copyseg(\varsigma,\Vv\rest(j+1))$ for each $j\leq k$;
  let $\varsigma_j:Y_j\to Y'_j$ be the resulting copy map \tu{(}so $\varsigma_0=\varsigma$\tu{)},
  \item $\varsigma_j:Y_j\to Y'_j$ is a $\nu$-preserving near $\deg^\Vv(j)$-embedding,
  \item\label{item:varsigma_0_commutes_with_core_maps} For each $n\leq m$, we have $\varsigma_0\com\tau^{Y_0}_{\alpha_0 mn}=\tau^{Y'_0}_{\alpha_0'mn}\com\varsigma_0$,
 so
  \[ \varsigma_0\com\tau^{Y_0}_{\alpha_0 mn}\com\pi=\tau^{Y'_0}_{\alpha'_0mn}\com\pi'\com\varrho. \]
  \item\label{item:beta',A'_pres_sigma_j} $(\beta'_{j+1},A'_{j+1})=\varsigma_j(\beta_{j+1},A_{j+1})$ and $t^{Y'_j}_{\beta'_{j+1}}=t^{Y_j}_{\beta_{j+1}}$ for $j+1\leq k$,
  \item\label{item:psi'_comm_sigma_j} 
$\psi'_{j+1}\com\psi_\varrho\rest\core_0(M_{j+1})=\varsigma_{j}\com\psi_{j+1}$
   for $j+1\leq k$ (also cf.~part \ref{item:psi'_comm_sigma_j+1} below),
  \item\label{item:alpha',A'_pres_sigma_j+1} $(\alpha'_{j+1},A'_{j+1})=\varsigma_{j+1}(\alpha_{j+1},A_{j+1})$ for $j+1\leq k$,
  \item\label{item:psi'_comm_sigma_j+1} $\psi'_{j+1}\com\psi_\varrho\rest\core_0(M_{j+1})=\varsigma_{j+1}\com\psi_{j+1}$
  (note that the $\varsigma_j$
  in part \ref{item:psi'_comm_sigma_j}
  has been replaced by $\varsigma_{j+1}$, but otherwise the equation is the same),
  \item\label{item:core_sigma_j+1_comm} For each $n\leq\omega$, we have $\varsigma_{j+1}\com\tau^{Y_{j+1}}_{\alpha_{j+1}\om n}=\tau^{Y'_{j+1}}_{\alpha'_{j+1}\om n}\com\varsigma_{j+1}$,
  and hence,
  \[ \varsigma_{j+1}\com\tau^{Y_{j+1}}_{\alpha_{j+1}\om n}\com\psi_{j+1}=\tau^{Y'_{j+1}}_{\alpha'_{j+1}\om n}\com\psi'_{j+1}\com\varrho\rest \core_0(M_{j+1}). \]
  \item\label{item:psitilde_comm} Either:
  \begin{enumerate}[label=--]
  \item $\zeta<\rho_0^Y$ and $\wtalpha<\rho_0^{Y_k}$ and $\wtalpha'<\rho_0^{Y'_k}$ and $\varsigma_k(\wtalpha)=\wtalpha'$
  and
  \[ \varsigma_k\com\wtpsi=\wtpsi'\com\psi_\varrho\rest e^\sq,\]
  or
  \item $\zeta=\OR^Y$, $e=M$, $\wtpsi=\pi$, $\wtpsi'=\pi'$  and either  
  \begin{enumerate}[label=--]
  \item $\pi,\pi'$ are non-$\nu$-low, $\wtalpha=\zeta$, $\wtalpha'=\zeta'$, or
 \item   $\pi,\pi'$ are $\nu$-low, $\wtalpha<\rho_0^Y$ and $\wtalpha'<\rho_0^{Y'}$ and $\varsigma(\wtalpha)=\wtalpha'$,
 \end{enumerate}
  or
  \item $\zeta=\OR^Y$, $e\pins M$, $\wtalpha=\alpha_k$, $\wtalpha'=\alpha_k'$, $\wtpsi=\psi_k$ and $\wtpsi'=\psi'_k$.
  \end{enumerate}
 \end{enumerate}
 
 Now suppose instead of \ref{item:zeta<rho_0^Y} that
 \begin{enumerate}[label=(\roman*)]
 \setcounter{enumi}{1}
 \item\label{item:Y,Y'_large} $M,M',Y,Y'$ are large, $\zeta=\OR^Y$, $\zeta'=\OR^{Y'}$, $m=y$, and $\delta^M<\OR^e$.
\end{enumerate}
Then parts \ref{item:redd_pres}--\ref{item:psitilde_comm}
 hold after replacing $\varsigma=\varsigma_0,\ldots,\varsigma_k$ with $\psi_{\varsigma_0},\ldots,\psi_{\varsigma_k}$,
 but many things simplify;
 in particular, $\Vv,\Vv'$ are trivial,
 $\varsigma_0=\ldots=\varsigma_k$,
 $\left<A_i\right>_{i\leq k}=\redd^{N}(\copyseg(\pi,e))$
 and $\left<A'_i\right>_{i\leq k}=\redd^{N'}(\copyseg(\pi',e'))$,
 the core maps in parts \ref{item:varsigma_0_commutes_with_core_maps}  and \ref{item:core_sigma_j+1_comm}
 are identity maps,  and
 the $t$-values in part \ref{item:beta',A'_pres_sigma_j} all $=3$.
\end{lem}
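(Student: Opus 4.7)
The plan is to prove all statements by induction on $j \leq k$, tracking how the inductive data at each step of the resurrection process is ``copied'' by $\varsigma_j$. The existence of the copy maps $\varsigma_j$ is itself part of what must be established: we form $\Vv' \rest (j+1) = \copyseg(\varsigma, \Vv \rest (j+1))$ using Definition \ref{dfn:neat_copy}, and verify inductively that the extenders used in $\Vv$ lift under $\varsigma_j$ to precisely those used in $\Vv'$. Because the extenders $E^\Vv_j$ are all order $0$ measures $D^{Y_j}_{\kappa_j 0}$, and $\varsigma_j$ is a $\nu$-preserving near $\deg^\Vv(j)$-embedding (by Lemma \ref{lem:nu-pres_copy_map}), we have $\varsigma_j(D^{Y_j}_{\kappa_j 0}) = D^{Y'_j}_{\varsigma_j(\kappa_j) 0}$, and this matches the extender selected in defining $\Vv'$ provided that $\beta'_{j+1} = \varsigma_j(\beta_{j+1})$ and $t^{Y'_j}_{\beta'_{j+1}} = t^{Y_j}_{\beta_{j+1}}$.

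The base case $j = 0$ is essentially the setup: $\varsigma_0 = \varsigma$, and the commutativity of $\varsigma$ with core maps (part \ref{item:varsigma_0_commutes_with_core_maps}) is a standard fact about near $y$-embeddings, which follows from the fact that $\varsigma$ preserves standard parameters and solidity witnesses. The hypothesis $\pi' \circ \varrho = \varsigma \circ \pi$ then gives the commutation with $\pi^*_0 = \tau \circ \pi$. For the inductive step, assuming we have everything up through $j$, we use commutation and the fact that both copying ($\copyseg^{\psi^*_j}$) and the reverse dropdown sequence are absolute/functorial under embeddings that suitably preserve fine structure: this gives $M'_{j+1} = \psi_\varrho(M_{j+1})$ and $A'_{j+1} = \psi_{\varsigma_j}(A_{j+1})$, and the commutation equation $\psi'_{j+1} \circ \psi_\varrho = \varsigma_j \circ \psi_{j+1}$ on $\rho_\om^{M_{j+1}}$. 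The production stage $\beta'_{j+1} = \prodstage^{Y'_j}_{\alpha'_j}(A'_{j+1})$ then equals $\varsigma_j(\beta_{j+1})$ by Lemma \ref{lem:projections_to_cardinals}\ref{item:prod_stage} and the elementarity of $\varsigma_j$. Finally, parts \ref{item:alpha',A'_pres_sigma_j+1}--\ref{item:core_sigma_j+1_comm} follow by combining the ultrapower by the order $0$ measure (if $t = 2$) or the identity (if $t \neq 2$) with the elementarity of $\varsigma_{j+1}$ applied to the definable core maps $\tau^{Y_{j+1}}_{\alpha_{j+1} \om n}$.

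Part \ref{item:psitilde_comm} splits into the three cases of Definition \ref{dfn:exitresadd}. When $\psi$ is non-$\nu$-low, we have $\wtpsi = \psi_k$ and $\wtalpha = \alpha_k$, so it reduces to the previous parts. When $\psi = \pi$ is $\nu$-low (so $e = M$, $m = 0$, $k = 0$), we use Lemma \ref{lem:nu-low_weak-0} to identify $\wtalpha$ as the production stage of $\wt{N} \pins N$ with $F^{\wt{N}} = F^N \rest \psi_\pi(\nu^M)$, and then elementarity of $\varsigma$ and the commutation $\pi' \circ \varrho = \varsigma \circ \pi$ give $\varsigma(\wtalpha) = \wtalpha'$ together with the relation $\varsigma \circ \wtpsi = \wtpsi' \circ \psi_\varrho$. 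The case $\zeta = \OR^Y$ and $e \pins M$ reduces to the previous parts of the induction.

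The main obstacle is bookkeeping rather than any deep difficulty: one must carefully track whether $\varsigma_j$ is viewed on the squashed or unsquashed structure, verify that the two branching cases ($t^{Y_j}_{\beta_{j+1}} = 2$ versus $\neq 2$) are handled symmetrically on both sides (so that the trees $\Vv$ and $\Vv'$ use extenders in lockstep), and handle the variant hypothesis \ref{item:Y,Y'_large} where $Y, Y'$ are large: here the trees $\Vv, \Vv'$ are trivial (no order $0$ measures are needed, since the resurrection takes place above $\delta^M$ and uses P-constructions), so one must replace $\varsigma$ by $\psi_\varsigma$ throughout and verify that the P-construction translation  (Lemma \ref{lem:P-construction_tree_translation}) supplies the required compatibility, noting that in this case all $t$-values encountered equal $3$ and the core maps in parts \ref{item:varsigma_0_commutes_with_core_maps} and \ref{item:core_sigma_j+1_comm} become identities since the relevant $N^{Y_j}_{\alpha_j}$ are fully sound by the properties of P-construction.
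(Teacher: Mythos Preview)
Your proposal is correct and follows the same inductive approach as the paper. The paper's detailed work is concentrated on two points your ``functoriality'' remark glosses over: the case where $\wtpi = \tau\circ\pi$ is $\nu$-high and $M_1\npins M^\sq$ (so that $\copyseg$ is defined via a hull with parameter $\nu^{N^Y_{\alpha_0}}$ rather than by $\psi_{\wtpi}$ alone, and one must check that $\varsigma_0$ carries this hull construction to the corresponding one on the $Y'$-side), and the agreement $\varsigma_{j+1}\rest A_{j+1}=\varsigma_j\rest A_{j+1}$ (coming from $\rho_\om^{A_{j+1}}=\crit(E^\Vv_j)$) that is what actually makes part \ref{item:psi'_comm_sigma_j+1} follow from part \ref{item:psi'_comm_sigma_j}.
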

\begin{proof}
Part \ref{item:redd_pres} follows immediately from the fact that $\varrho$ is a $\nu$-preserving
near $0$-embedding.

\begin{case} Hypothesis \ref{item:zeta<rho_0^Y} holds.
 
Parts \ref{item:Vv'_is_copy_of_Vv}--\ref{item:core_sigma_j+1_comm} are proved by, roughly, simultaneous induction on $j$
(we leave it to the reader to calibrate exactly what is proved at each induction step and in what order).
Most of the details are immediate, so we just make some remarks concerning some finer points.

The fact that $\varsigma_j$ is a $\nu$-preserving near $\deg^\Vv(j)$-embedding
follows from \cite{fs_tame} and \ref{lem:nu-pres_copy_map}, given 
part \ref{item:Vv'_is_copy_of_Vv}
at $j$.

Consider parts \ref{item:beta',A'_pres_sigma_j} and \ref{item:psi'_comm_sigma_j}.
For notational simplicity we assume $j=0$, but the general case is likewise.
We have $\alpha_0=\zeta<\rho_0^Y$ and $\alpha'_0=\zeta'=\varsigma(\alpha_0)<\rho_0^{Y'}$.
So, recalling $N=N^Y_{\alpha_0 m}$
and $N'=N^{Y'}_{\alpha_0 m}$,
\[ \varsigma_0\rest N^Y_{\alpha_0m}:N^Y_{\alpha_0m}\to N^{Y'}_{\alpha_0m}\text{ and }\varsigma_0\rest N^Y_{\alpha_0}:N^Y_{\alpha_0}\to N^{Y'}_{\alpha_0'} \]
are fully elementary, and in particular, $\nu$-preserving.
Let
\[ \wtpi=\tau^Y_{\alpha_0 m0}\com\pi\text{ and }\wtpi'=\tau^{Y'}_{\alpha'_0 m0}\com\pi'.\]
Then $\wtpi$ is $\nu$-preserving iff $\wtpi'$ is $\nu$-preserving,
by commutativity and because $\varrho$ and $\varsigma\rest N$ are $\nu$-preserving.
So if $\wtpi$ is $\nu$-preserving or $M_1\pins M^\sq$ 
then parts \ref{item:beta',A'_pres_sigma_j} and \ref{item:psi'_comm_sigma_j} easily follow.

So suppose that $\wtpi$ is $\nu$-high and $M_1\npins M^\sq$.
Note that $\wtpi'$ is also $\nu$-high, and $M'_1=\psi_\varrho(M_1)\npins(M')^\sq$ (as $\varrho$ is $\nu$-preserving).
We have $A_1=\copyseg^{\wtpi}(M_1)$ and $A'_1=\copyseg^{\wtpi'}(M'_1)$.
Note that Subsubcase \ref{sscase:R_neq_R^+,q=-1} or \ref{sscase:R_neq_R^+,q_geq_0}
of \ref{dfn:copyseg} attains. We assume \ref{sscase:R_neq_R^+,q_geq_0} attains;
\ref{sscase:R_neq_R^+,q=-1} is similar. So let $R^+,\varphi,q$ be defined as there with respect to $M_1,M,\wtpi,N^Y_{\alpha_0}$
(and note that $R$ there is $A_1$ and $\sigma$ there is $\psi_1$).
Let $(R')^+,\varphi',q'$ be likewise with respect to $M'_1$, etc. Clearly $q=q'$,
and $\psi_\varrho,\varsigma_0,\psi_{\wtpi},\psi_{\wtpi'}$ restrict to give fully elementary, commuting maps
between $M_1,M'_1,R^+,(R')^+$, and $\varsigma_0(R^+)=((R')^+)$ and $R^+=\psi_\wtpi(M_1)$, etc.
Also, $\varsigma_0(\nu^{N^Y_{\alpha_0}})=\nu^{N^{Y'}_{\alpha_0'}}$. But then clearly $\varsigma_0(R,\varphi)=(R',\varphi')$,
so
\[ \varsigma_0\com\psi_1=\psi_1'\com\psi_\varrho\rest M_1.\]
Since $\varsigma_0(R)=R'$, it also follows that $\varsigma_0(\beta_1)=\beta'_1$,
and therefore that $t^{Y}_{\beta_1}=t^{Y'}_{\beta'_1}$.

We have verified parts \ref{item:beta',A'_pres_sigma_j} and \ref{item:psi'_comm_sigma_j} in the example situation,
but in general it is very similar.

Now consider parts \ref{item:alpha',A'_pres_sigma_j+1} and \ref{item:psi'_comm_sigma_j+1};
again we assume $j=0$.
By parts \ref{item:beta',A'_pres_sigma_j} and \ref{item:psi'_comm_sigma_j}
we may assume that $t^{Y_0}_{\beta_1}=2$ (and hence $t^{Y'_0}_{\beta'_1}=2$).
Since $\varsigma_0(\beta_1)=\beta'_1$,
we have $\varsigma(E^\Vv_0)=E^{\Vv'}_0$, and letting $\kappa=\crit(E^\Vv_0)$,
we have $\kappa=\rho_\om^{A_1}$ and
\begin{equation}\label{eqn:varsigma_1,varsigma_0_agmt} \varsigma_1\rest\pow(\kappa)=\varsigma_0\rest\pow(\kappa)\text{, hence }
\varsigma_1(A_1)=A'_1\text{ and }\varsigma_1\rest A_1=\varsigma_0\rest A_1.\end{equation}
It follows that $\varsigma_1(\alpha_1)=\alpha'_1$, giving part \ref{item:alpha',A'_pres_sigma_j+1},
and part \ref{item:psi'_comm_sigma_j+1} follows from part \ref{item:psi'_comm_sigma_j} and line (\ref{eqn:varsigma_1,varsigma_0_agmt}).

This completes our remarks regarding parts \ref{item:Vv'_is_copy_of_Vv}--\ref{item:core_sigma_j+1_comm}.
Part \ref{item:psitilde_comm} follows easily using that $\pi$ is $\nu$-low iff $\pi'$ is $\nu$-low,
and that
$\psi_\varsigma(\psi_\pi(\nu^M))=\psi_{\pi'}(\nu^{M'})$
as $\varrho$ is $\nu$-preserving.
\end{case}
\begin{case} Hypothesis \ref{item:Y,Y'_large} holds.
 
Note that in this case,
we have $t^Y_\zeta=3=t^{Y'}_{\zeta'}$,
and since $\delta^M<\lh(e)$,
the entire sequences $\left<M_j\right>_{j\leq k}$
and $\left<M'_j\right>_{j\leq k}$
are above $\delta^M$ and $\delta^{M'}$ respectively. So everything under consideration is within P-construction parts of $\CC^Y$ and
and $\CC^{Y'}$, and so many things trivialize. We leave the details to the reader.\qedhere
\end{case}
\end{proof}

\begin{dfn}
 Let $M,N$ be $m$-sound and $\pi:M^\sq\to N^\sq$.
 We say that $\pi$ is \emph{$m$-okay}\index{-okay} iff $\pi$ is a weak $m$-embedding and:
 \begin{enumerate}
  \item Suppose $\pi$ is non-$\nu$-preserving. Then $m=0$. Let $\theta=\cof^M(\nu^M)$. Then $\theta<\nu^M$ and
  if $\pi$ is $\nu$-high then $\sup\pi``\theta\leq\cof^N(\nu^N)<\pi(\theta)$. 
  \item Suppose $m>0$. Let $\wt{\rho}$ be least such that either $\wt{\rho}=\rho_{m-1}^M$
  or $\pi(\wt{\rho})\geq\rho_{m-1}^N$. 
  Let $\kappa^M=\cof^{\bfrSigma_m^M}(\rho_{m-1}^M)$ and $\kappa^N=\cof^{\bfrSigma_m^N}(\rho_{m-1}^N)$.
   Then:
   \begin{enumerate}[label=--]
   \item $\kappa^M\notin[\rho_m^M,\wt{\rho})$.
   \item If $\kappa^M=\rho_{m-1}^M$ then $\kappa^N=\rho_{m-1}^N$.\footnote{Of course if $\rho_{m-1}^M<\rho_0^M$
   then $\pi(\rho_{m-1}^M)=\rho_{m-1}^N$ because $\pi$ is a weak $m$-embedding.}
   \item If $\kappa^M<\rho_{m-1}^M$ then $\pi(\kappa^M)=\kappa^N$.\qedhere
  \end{enumerate}
 \end{enumerate}
\end{dfn}

\begin{lem}\label{lem:tpcopy} ***This is incomplete; where is it used?

 Let $M,N$ be $m$-sound and $\pi:M\to N$ be an $m$-okay embedding.
 Let $\Tt$ be a putative sse-$m$-maximal tree on $M$.
 Then there are unique $\lambda$, $\Uu$, $\left<\pi_\alpha\right>_{\alpha<\lambda}$ and
 $\left<\sigma_\alpha,\pi^*_{\alpha+1}\right>_{\alpha+1<\lh(\Tt)}$ such that:
 \begin{enumerate}
  \item $\lambda\leq\lh(\Tt)$ and $\Uu$ is a putative sse-$m$-maximal tree on $N$ of length $\lambda$.
  \item  If $\lambda<\lh(\Tt)$ then $\Uu$ has successor length and $M^\Uu_{\lambda-1}$ is illfounded.
  \item $\Tt\rest\lambda$ and $\Uu\rest\lambda$ have the same tree and drop structure.
  \item\label{item:degree_not_match}If $\deg^\Tt(\alpha)\neq\deg^\Uu(\alpha)$ then $[0,\alpha)_\Uu$ does not drop in model or degree,
  \tu{(}so $[0,\alpha]_\Tt$ does not drop in model and $\deg^\Uu(\alpha)=m$\tu{)} but  $m>0$
  and $\deg^\Tt(\alpha)=m-1$.
  \item $\pi_\alpha$ is $\in$-preserving.
  \item\label{item:okay_emb} If $M^\Tt_\alpha,M^\Uu_\alpha$ are wellfounded then $\pi_\alpha:M^\Tt_\alpha\to M^\Uu_\alpha$ is a $\deg^\Tt(\alpha)$-okay embedding,
  and if $[0,\alpha]_\Tt$ drops in model or degree then $\pi_\alpha$ is a $\nu$-preserving near $\deg^\Tt(\alpha)$-embedding.
  \item $\exit^\Uu_\alpha=\exitcopyseg^{\pi_\alpha}(\exit^\Tt_\alpha)$.
  \item $\sigma_\alpha=\exitcopymap^{\pi_\alpha}(\exit^\Tt_\alpha):\exit^\Tt_\alpha\to\exit^\Uu_\alpha$.
  \item Letting $\beta=\pred^\Tt(\alpha+1)$ we have
  \begin{enumerate}[label=--]
  \item $M^{*\Uu}_{\alpha+1}=\copyseg^{\pi_\beta}(M^{*\Tt}_{\alpha+1})$ and
  \item $\pi^*_{\alpha+1}=\copymap^{\pi_\beta}(M^{*\Tt}_{\alpha+1}):(M^{*\Tt}_{\alpha+1})^\sq\to(M^{*\Uu}_{\alpha+1})^\sq$.
  \end{enumerate}
  \item $\pi_{\alpha+1}:(M^\Tt_{\alpha+1})^\sq\to(M^\Uu_{\alpha+1})^\sq$ is induced by $(\sigma_\alpha,\pi^*_{\alpha+1})$ via Shift Lemma.
  \item $\pi_\lambda\com i^\Tt_{\alpha\lambda}=i^\Uu_{\alpha\lambda}\com\pi_\alpha$.
  for $\alpha<_\Tt\lambda$ such that $(\alpha,\lambda]_\Tt$ does not drop.
 \end{enumerate}
 \end{lem}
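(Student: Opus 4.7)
The plan is to construct $\Uu$, the copy maps $\pi_\alpha$, and the auxiliary maps $\sigma_\alpha,\pi^*_{\alpha+1}$ by simultaneous transfinite recursion on $\alpha<\lh(\Tt)$, using the $\exitcopyseg$/$\copyseg$/$\copymap$ machinery of Section~\ref{sec:nu-pres} and the Shift Lemma. Uniqueness is immediate from the recursion clauses, so the content is existence plus maintenance of the listed invariants. At each stage the recursion either terminates (because a lifted ultrapower produces an illfounded model, giving the $\lambda<\lh(\Tt)$ case of clause~2) or produces the next node; the main invariants to carry are that $\pi_\alpha$ is $\deg^\Tt(\alpha)$-okay (with $\pi_\alpha$ a $\nu$-preserving near $\deg^\Tt(\alpha)$-embedding once $[0,\alpha]_\Tt$ has dropped), plus the commutativity condition with $i^\Tt_{\beta\alpha}$ and $i^\Uu_{\beta\alpha}$ along non-dropping intervals.

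First I would handle the base step $\alpha=0$ with $\pi_0=\pi$, which is $m$-okay by hypothesis. For the successor step $\alpha\to\alpha+1$, I would set $\sigma_\alpha=\exitcopymap^{\pi_\alpha}(\exit^\Tt_\alpha)$ and $\exit^\Uu_\alpha=\exitcopyseg^{\pi_\alpha}(\exit^\Tt_\alpha)$; this uses clause~\ref{item:okay_emb} at $\alpha$, which gives that $\pi_\alpha$ is in particular a weak $\deg^\Tt(\alpha)$-embedding so $\exitcopyseg$ is defined and the ISC/$\nu$-preservation arguments of Definition~\ref{dfn:copyseg} and Lemma~\ref{lem:nu-low_weak-0} apply. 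Then I would identify the branch predecessor $\beta=\pred^\Tt(\alpha+1)$ in the copy, and here an sse-$m$-maximal nuance shows up: because we are using the sse-convention and $\sigma_\alpha$ need not be $\nu$-preserving when $\alpha=0$ and $\pi$ is $\nu$-low/high, the length condition for $\Uu$ is still satisfied by choosing $\pred^\Uu(\alpha+1)=\beta$ (the ISC together with the fact that $\sigma_\alpha$ factors through $\exit^\Uu_\alpha$ gives $\crit(E^\Uu_\alpha)<\lh(E^\Uu_\xi)$ for $\xi<\alpha$). I would then take $M^{*\Uu}_{\alpha+1}=\copyseg^{\pi_\beta}(M^{*\Tt}_{\alpha+1})$ and $\pi^*_{\alpha+1}=\copymap^{\pi_\beta}(M^{*\Tt}_{\alpha+1})$, and define $\pi_{\alpha+1}$ as the Shift-Lemma-induced map. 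The degrees $\deg^\Tt(\alpha+1),\deg^\Uu(\alpha+1)$ are determined by the drop structure; clause~\ref{item:degree_not_match} needs to be checked only in the case that the preimage drop $\rho_m^{M^{*\Tt}_{\alpha+1}}<\mathrm{OR}(M^{*\Tt}_{\alpha+1})$ is not mirrored on the $\Uu$ side because $\pi_\beta$ fails to witness the drop at the same level; I would verify via the $m$-okay cofinality clause for $\pi_\beta$ that this discrepancy can occur only when $[0,\alpha+1]_\Uu$ is non-dropping with $\deg^\Uu=m$ and $\deg^\Tt=m-1$, as stated.

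For limit $\alpha$, I would take $[0,\alpha)_\Uu=[0,\alpha)_\Tt$ and let $M^\Uu_\alpha$ be the direct limit of the $(M^\Uu_\beta,i^\Uu_{\beta\gamma})$ along the eventual non-dropping tail, defining $\pi_\alpha$ as the direct limit of $\pi_\beta$ via commutativity. Wellfoundedness of $M^\Uu_\alpha$ is not claimed; if it fails we stop at the first successor stage where an illfounded model appears, which is how clause~2 is realized. The propagation of the ``near embedding after drop'' property follows the proof of \cite{fs_tame} adapted to the $\exitcopyseg$ algorithm as in Lemma~\ref{lem:nu-pres_copy_map}\ref{item:drop_nu-pres}, and $\nu$-preservation after a drop is by Lemma~\ref{lem:nu-pres_copy_map}\ref{item:nu-preserving_propagates}\ref{item:near_embedding_propagates}.

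The hard part will be verifying invariant~\ref{item:okay_emb} at successor steps when the critical point $\kappa=\crit(E^\Tt_\alpha)$ lands near $\rho_{m'-1}^{M^{*\Tt}_{\alpha+1}}$, where $m'=\deg^\Tt(\alpha+1)$: the $m'$-okay cofinality conditions must be propagated from $\pi_\beta$ to $\pi^*_{\alpha+1}$ and then through the Shift Lemma to $\pi_{\alpha+1}$, and this is precisely where the technical clauses in the definition of $m$-okay (controlling $\kappa^M,\kappa^N,\wt{\rho}$) were designed to survive copying. Concretely, I expect to split into cases according to whether $\kappa<\rho_{m'}^{M^{*\Tt}_{\alpha+1}}$, $\kappa=\rho_{m'}^{M^{*\Tt}_{\alpha+1}}$, or $\rho_{m'}^{M^{*\Tt}_{\alpha+1}}<\kappa<\wt{\rho}$, and in each case verify that the induced $\pi_{\alpha+1}$ continues to satisfy the $\nu$-cofinality bound and the $\bfrSigma_{m'}$-cofinality bound; the $\nu$-high case requires the quantitative inequality $\sup\pi_\alpha``\theta\leq\cof^{M^\Uu_\alpha}(\nu^{M^\Uu_\alpha})<\pi_\alpha(\theta)$ to interact correctly with the ultrapower, for which I expect to use that $\theta<\nu^{M^\Tt_\alpha}$ implies $\theta$ is fixed (or moved by the iteration map in a controlled way) along $[\beta,\alpha+1]_\Tt$.
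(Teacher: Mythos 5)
The paper's own proof of this lemma is itself a stub: it introduces an auxiliary tree $\bar{\Uu}$ on $N$ whose degree sequence is forced to agree with that of $\Tt$ (rather than follow $m$-maximality on the $N$-side), together with factor maps $\varrho_\alpha\colon(M^{\bar{\Uu}}_\alpha)^\sq\to(M^\Uu_\alpha)^\sq$ and $\varrho^*_{\alpha+1}\colon(M^{*\bar{\Uu}}_{\alpha+1})^\sq\to(M^{*\Uu}_{\alpha+1})^\sq$, and then trails off at the successor step. Your sketch omits this auxiliary tree entirely and attempts a direct recursion. That is a genuinely different decomposition, but it leaves the same hard point unresolved. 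The difficulty $\bar{\Uu}$ is designed to absorb is precisely clause~\ref{item:degree_not_match}: when $\deg^\Tt(\alpha+1)=m-1<\deg^\Uu(\alpha+1)=m$, the Shift Lemma as you invoke it produces a map into $\Ult_{m-1}(M^{*\Uu}_{\alpha+1},E^\Uu_\alpha)$, not into $M^\Uu_{\alpha+1}=\Ult_m(M^{*\Uu}_{\alpha+1},E^\Uu_\alpha)$, and one must still compose with the natural factor between these two ultrapowers and then argue that the composite is a $\deg^\Tt(\alpha+1)$-okay embedding, and that commutativity with $i^\Tt,i^\Uu$ along branches survives both the Shift Lemma and that extra factor. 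Tracking those factor maps coherently over arbitrary branches is exactly what $\bar{\Uu}$ and the $\varrho_\alpha$ buy; a direct recursion must re-establish their commutation properties inside the induction on $\alpha$, and your sketch does not do so.

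A second gap: you localize $\nu$-non-preservation of $\sigma_\alpha$ to $\alpha=0$. That is not right. The definition of $m$-okay permits $\pi$ to be $\nu$-high only when $m=0$ and, by the ``moreover'' in clause~\ref{item:okay_emb}, only before any drop has occurred; but by Lemma~\ref{lem:nu-pres_copy_map}, $\nu$-highness persists along the entire non-dropping branch from the root, so every $\sigma_\alpha$ with $\alpha$ on that branch may be $\nu$-high (the $\nu$-low case is handled by $\exitcopyseg$ trimming to a proper segment, as in Lemma~\ref{lem:nu-low_weak-0}). So the sse-length-condition bookkeeping you outline for $\pred^\Uu(\alpha+1)$ has to be carried uniformly at every such $\alpha$, not just at the root. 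Finally, the case split you propose on $\kappa$ versus $\rho_{m'}(M^{*\Tt}_{\alpha+1})$ is off target: since $m'=\deg^\Tt(\alpha+1)$ is chosen maximal subject to $m$-maximality, the inequality $\kappa<\rho_{m'}(M^{*\Tt}_{\alpha+1})$ holds automatically; the nontrivial comparison is between the cofinality data $(\kappa^M,\kappa^N,\wt{\rho})$ of the definition of $m$-okay computed in $M^{*\Tt}_{\alpha+1}$ versus in $M^{*\Uu}_{\alpha+1}$, which $\pi^*_{\alpha+1}$ need not line up, and this is where the real work lies.
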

\begin{proof}
This is of course mostly the usual copying construction,
and uniqueness is straightforward. So we omit the standard calculations;
the main new issues are conditions \ref{item:degree_not_match} and \ref{item:okay_emb}. Everything is proved by induction on $\alpha$.

We let $\bar{\Uu}$ be the putative tree on $N$ of length $\lambda$
defined as is  $\Uu$, except that $\deg^{\bar{\Uu}}(\alpha)=\deg^\Tt(\alpha)$ for all $\alpha<\lh(\Uu)$.
So we get sequences $\left<\bar{\pi}_\alpha\right>_{\alpha<\lambda}$ and $\left<\bar{\sigma}_\alpha,\bar{\pi}^*_{\alpha+1}\right>_{\alpha+1<\lh(\Tt)}$
corresponding to $\bar{\Uu}$ analogous to those for $\Uu$,
and
\[ \varrho_\alpha:(M^{\bar{\Uu}}_\alpha)^\sq\to(M^\Uu_\alpha)^\sq,
\]
\[ \varrho^{*}_{\alpha+1}:(M^{*\bar{\Uu}}_{\alpha+1})^\sq\to(M^{*\Uu}_{\alpha+1})^\sq\]
are the standard factor maps.

The fact that $\pi_\alpha$ is a near $\deg^\Tt(\alpha)$-embedding when $[0,\alpha]_\Tt$ drops in model or degree
follows by a slight adaptation of the proof of \cite{fs_tame}.
So we just need to verify that $\pi_\alpha$ is a $\deg^\Tt(\alpha)$-okay embedding.

Consider then $\pi_{\alpha+1}$. 
\end{proof}

\begin{dfn}
 Let $M,N$ be $m$-sound and $\pi:M\to N$ be an $m$-okay embedding.
 Let $\Tt$ be a putative sse-$m$-maximal tree on $M$. 
 Then $\tpcopy^\pi(\Tt)$\index{$\tpcopy$} denotes the unique tree $\Uu$ on $N$
 as in \ref{lem:tpcopy}.
\end{dfn}

The following lemma summarizes all the iterability facts we will need. One could certainly prove 
more such facts, but we content ourselves with those stated here.  We have made certain restrictions 
which are more for convenience than 
anything else, such as that $t_\xi^Y\neq 2$. The details of the proof are 
straightforward and mostly routine (in light of \cite{premouse_inheriting}),
but we will give a complete account,
both because, as discussed at the beginning of this section,
there are some wrinkles,
and because we need the details in the rest of the paper.

\begin{dfn}
An \emph{$X$-weak $m$-embedding} $\pi:M\to N$ is just a weak $m$-embedding $\pi$
 as witnessed by the cofinal set $X\sub\rho_m^M$.
\end{dfn}

\begin{lem}\label{lem:iterability}\index{iterability of $N_\alpha$}
Let $Y$ be $(x,\xi,z)$-good, $N=N^Y_{\xi z}$. Then:
 \begin{enumerate}
  \item\label{item:general_mouse_it} Suppose $\aleph_\xi^Y<\rho_0^Y$ and $t^Y_\xi\in\{0,1\}$ and $Y$ is 
$(0,\om_1+1)$-iterable. Then $N$ is 
$(z,\om_1+1)$-iterable. Likewise if we replace $(0,\om_1+1)$ with $(0,\om_1,\om_1+1)^*$
and $(z,\om_1+1)$ with $(z,\om_1,\om_1+1)^*$.
 \item\label{item:Q-mouse_it} Suppose that:
 \begin{enumerate}[label=--]
 \item $Y$ is $y$-sound and $(y,\om_1+1)$-iterable,
 \item $Y$ is non-small, $\delta^Q\leq\rho_y^Q$ and $\delta^Q$ is $\bfrSigma^Q_y$-regular,
 \item either:
 \begin{enumerate}[label=--]
 \item $\xi=\OR^Y$ and $z\leq y$, or
 \item $\xi<\delta^Y$.
 \end{enumerate}
 \end{enumerate}
Then $N$ is $(z,\om_1+1)$-iterable. Likewise if we replace $(y,\om_1+1)$ with $(y,\om_1,\om_1+1)^*$
and $(z,\om_1+1)$ with $(z,\om_1,\om_1+1)^*$.
  \item\label{item:pseudo-premouse_it} If $t_\xi^Y=0$ and $M=(N,G)$ is 
a generalized-pseudo-premouse and creature of $\CC^Y$ then $M$ is $(0,\om_1+1)$-iterable.
 \item\label{item:bicephalus_it} If $t_\xi^Y=0$ and $M=(N,F,G)$ is a 
bicephalus and creature of $\CC^Y$ then $M$ is $(0,\om_1+1)$-iterable.
 \end{enumerate}
\end{lem}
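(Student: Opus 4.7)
The plan is to reduce each iterability claim to the iterability hypothesis on $Y$ by means of a lifting (and resurrection) procedure, using the machinery set up in \S\ref{sec:nu-pres} and at the start of \S\ref{sec:CC_iterability}. For part \ref{item:general_mouse_it}, I would proceed as follows. Fix a $z$-maximal putative iteration tree $\Tt$ on $N$. Working recursively in $\lh(\Tt)$, I would build a padded $0$-maximal (or $y$-maximal in part \ref{item:Q-mouse_it}) tree $\Uu$ on $Y$ whose nodes are indexed by pairs $(\alpha,n)\in\lh(\Tt)\times\omega$, together with weak $\deg^\Tt(\alpha)$-embeddings $\pi_\alpha:(M^\Tt_\alpha)^\sq\to(N^{M^\Uu_{(\alpha,0)}}_{\xi_\alpha z_\alpha})^\sq$, with $\pi_0=\id$ and $\xi_0=\xi$. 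At a successor step, I apply the resurrection functions: set $(\Vv,\vec\Psi,\wt\psi)=\exitresadd^{M^\Uu_{(\alpha,0)}}_{\xi_\alpha z_\alpha\pi_\alpha}(\exit^\Tt_\alpha)$; the order-$0$ measures used in $\Vv$ become the padding extenders $E^\Uu_{\alpha,0},\ldots,E^\Uu_{\alpha,k-1}$, and the background for $E^\Tt_\alpha$ is then $E^\Uu_{\alpha,k}=F^Y_{\wt\alpha}$ if $t_{\wt\alpha}=1$, or the canonical background from the $\CC$-P-construction if $t_{\wt\alpha}=3$. The new copy map $\pi_{\alpha+1}$ is produced by the Shift Lemma applied to $\wt\psi$ and (the resurrection of) the $*$-branch root, using Lemma \ref{lem:nu-pres_copy_map} to propagate the ``weak $\deg^\Tt(\alpha+1)$-embedding, non-$\nu$-low, $\nu$-preserving on drops'' properties, and Lemma \ref{lem:exitresadd} to handle the $\nu$-low cases. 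At a limit stage, use the iterability of $Y$ to obtain a cofinal branch $b$ through $\Uu$ and pull it back to a cofinal branch through $\Tt$ via standard arguments. The $(\omega_1,\omega_1+1)^*$-version is analogous.

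For part \ref{item:Q-mouse_it}, by Woodin exactness (Corollary \ref{cor:local_Woodins} and Lemma \ref{lem:Woodin_exactness}) and Lemma \ref{lem:Q-local_fs_match}, $N$ is $z$-sound with $\rho_z^N=\rho_z^Y$ and $p_z^N=p_z^Y$, and $N$ is given by P-construction of $Y$ over $N|\delta^Y$. Lemma \ref{lem:P-construction_tree_translation} then gives a bijective translation between above-$\delta^Y$ $z$-maximal trees on $N$ and such trees on $Y$; trees on $N$ based on $N|\delta^Y=Y|\delta^Y$ are trivially shared. Combining these, any $z$-maximal tree on $N$ factors as a tree based on $N|\delta^Y$ followed by an above-$\delta'$ tree, which is lifted either via the usual lifting from part \ref{item:general_mouse_it} (when $\xi<\delta^Y$) or via Lemma \ref{lem:P-construction_tree_translation} plus Lemma \ref{lem:iterates_pres_fs_match} (when $\xi=\OR^Y$).

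For part \ref{item:pseudo-premouse_it} (respectively part \ref{item:bicephalus_it}), a tree $\Tt$ on the creature $M=(N,G)$ (respectively $M=(N,F,G)$) has a distinguished first step using $G$ (or, in the bicephalus case, one of $F$ or $G$). Since $M$ is a creature of $\CC^Y$, both $F$ and $G$ admit background extenders $E_F,E_G\in\es^Y$ with $\nu_{E_F}=\nu_{E_G}=\aleph_\xi^Y$, so $\Ult(Y,E_F)$ and $\Ult(Y,E_G)$ are wellfounded. The first step of the lifting $\Uu$ applies the appropriate background, and the resulting drop in $M^\Tt_1$ (to a proper segment that is an ordinary premouse) lifts to a proper segment of $\Ult(Y,E_?)$. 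From that point on we are in the setting of part \ref{item:general_mouse_it} (since Ultrapowers of $Y$ inherit $(0,\omega_1+1)$-iterability in the natural sense, with appropriate cofinal branch choice at limits coming from $Y$). Cobranches and wellfoundedness propagate down in the usual way.

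The hard part will be verifying throughout the recursion that the copy map $\pi_\alpha$ remains a \emph{weak $\deg^\Tt(\alpha)$-embedding} (in particular that it is $\nu$-preserving or at worst $\nu$-high when required), handling the interaction of the insertion ultrapowers from $\Vv$ with the exit extenders resurrected at later stages, and ensuring that the production-stage ordinals $\xi_\alpha$ genuinely descend so that the recursion is well-founded. The commutativity package of Lemma \ref{lem:res_comm}, together with Lemmas \ref{lem:nu-pres_copy_map} and \ref{lem:exitresadd}, should give enough control to push the induction through; this is the bulk of the bookkeeping and the place where the specific design of the construction $\CC$ (especially property (ii) near measurable cardinals and the $\exitresadd$ mechanism for $\nu$-low exit maps) is essential.
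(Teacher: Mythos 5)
Your overall architecture is the same as the paper's: lift small (below-Woodin) trees on $N$ to padded $0$-maximal (or $y$-maximal) trees on $Y$ by resurrecting exit extenders and interpolating order-$0$ measure ultrapowers when $t=2$ resurrections occur, and handle the non-small (above-Woodin) portion via P-construction translation. But several steps as written do not go through.

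First, $N|\delta^Y \neq Y|\delta^Y$: the construction $\CC^Y$ builds $N$ \emph{inside} $Y$, so the level $N|\delta^Y=N^Y_{\xi z}|\delta^Y$ is a premouse whose extender sequence is obtained from that of $Y$ via the backgrounding, not $Y|\delta^Y$ itself. Trees on $N$ based on $N|\delta^Y$ are therefore not ``trivially shared'' with $Y$ --- they must still be lifted through the resurrection machinery, exactly as in your part \ref{item:general_mouse_it} description. Relatedly, you cannot invoke part \ref{item:general_mouse_it} for the small portion of a tree in the $\xi=\OR^Y$ case, since part \ref{item:general_mouse_it} has the standing hypothesis $\aleph_\xi^Y<\rho_0^Y$, which fails there. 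The paper avoids this by proving the lifting claim for an arbitrary $X$-weak $z$-embedding $\pi:M^\sq\to N^\sq$ (subject to a couple of normalizations on $\nu$-lowness and the Woodin), maintaining a uniform list of inductive conditions (including $\xi_\alpha=\OR^{Y_\alpha}$ when $\xi=\OR^Y$ and $[0,\alpha]_\Tt$ has not dropped), rather than splitting on the value of $\xi$ at the level of the tree factorization. That generality is also what lets the proof bootstrap to $(z,\omega_1,\omega_1+1)^*$-iterability by treating the final lifting map of one round as the $\pi$ of the next.

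Second, your treatment of parts \ref{item:pseudo-premouse_it} and \ref{item:bicephalus_it} is not correct: applying $F$ or $G$ at stage $0$ does \emph{not} produce a drop to an ordinary premouse at $M^\Tt_1$. The ultrapower of a generalized pseudo-premouse or bicephalus by its own active extender is again a structure of the same kind, and the pseudo-premouse (resp.\ bicephalus) structure persists along the entire non-dropping part of $\Tt$; only off the non-dropping branch do you drop to an ordinary premouse segment. The lifting must therefore track images $i^\Uu_{(0,0),(\alpha,0)}(B)$ of the creature along the non-dropping branch, choosing backgrounds at each stage where $E^\Tt_\alpha=F_\alpha$ or $G_\alpha$ (or a resurrected ancestor), not just at stage $0$ --- and for mixed-type bicephali one iterates at the unsquashed level with the exchange-ordinal conventions from the bicephalus framework, whereas for two type-3 factors one squashes and must handle $\nu$-low copy maps by choosing smaller background indices. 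Your reduction ``from that point on we are in the setting of part \ref{item:general_mouse_it}'' only becomes available after a genuine drop off the main branch. (Also a small notational point: $\exitresadd$ returns a pair $(\wt\alpha,\wt\psi)$; the resurrection tree $\Vv$ and the $\Psi_i$'s come from $\restree$/$\modres$.)
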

\begin{proof}
The proof of each part is almost the same.
It is the details of parts \ref{item:general_mouse_it} and \ref{item:Q-mouse_it} which feature in 
the scale construction. (In fact, only their restrictions to $(z,\om_1+1)$ and $(y,\om_1+1)$.)
We focus on the proof of part \ref{item:Q-mouse_it}; the proof of part 
\ref{item:general_mouse_it} is a simplification thereof.

Note that
 $N$ is non-small with $\delta^N=\delta=\delta^Y$, by \ref{lem:Woodin_exactness}.
Let $\pi,X$ be such that $\pi:M^\sq\to N^\sq$ is an $X$-weak $z$-embedding such that:
\begin{enumerate}
\item\label{item:if_pi_nu-low_then_nu_sing} If $\pi$ is $\nu$-low then $\nu^M$ is $M$-singular.
 \item\label{item:if_OR^Y=xi_then_delta_reg} If $\xi=\OR^Y$ then:
 \begin{enumerate}[label=--]
 \item $\delta\in\rg(\pi)$
 (so $M$ is non-small and 
$\pi(\delta^M)=\delta$)\footnote{Note that, for example, 
$Y|\delta^Y\elem_1 Y|(\delta^Y+\om)$, so this condition does make a 
demand if $z=0$.}
\item  $\delta^M\leq\rho_z^M$ and $\delta^M$ is 
$\bfrSigma_z^M$-regular.
\end{enumerate}
\end{enumerate}
(It is easy to see that $\id:N\to N$ has these properties.)

First fix a $(y,\om_1+1)$-iteration 
strategy $\Sigma_Y$ for $Y$.
We define a $(z,\om_1+1)$-strategy $\Sigma_M$ for $M$, lifting trees $\Tt$ via $\Sigma_M$ to 
trees $\Uu$ via $\Sigma_Y$.
One can then iterate this process to produce a $(z,\om_1,\om_1+1)^*$-strategy for $M$.
(We define the normal strategy for $M$ instead of just for $N$, to assist when we extend to a 
$(z,\om_1,\om_1+1)^*$-strategy later.)

The copying process splits naturally into two stages: the first is for the small segment of 
$\Tt$, and the second for the non-small segment. The non-small segment
is simpler, as for that stage we can just use standard copying.
But it is more natural to discuss the small 
segment 
first, so we do so. We list below many objects we will keep track of,
and many conditions that we maintain regarding them. Once we have these lists made,
it is pretty straightforward to verify that we can indeed maintain them, and hence complete the 
proof.

Suppose that $\Tt$ via $\Sigma_M$ is small, of length $\lambda$, and we have so far produced the 
corresponding lift $\Uu$ via $\Sigma_Y$.
We will have sequences
\[ 
\left<M_\alpha,m_\alpha,Y_\alpha,\xi_\alpha,N_\alpha,\pi_\alpha,X_\alpha\right>_{\alpha<\lambda},\]
\[ \left<S_\alpha,k_\alpha,\left<M_{\alpha i},Y_{\alpha 
i},\varrho_{\alpha i}\right>_{i\leq k_\alpha},
\eta_\alpha,
T_\alpha,\sigma_\alpha,M^*_{\alpha+1},\xi^*_{\alpha+1},N^*_{\alpha+1},\pi^*_{\alpha+1}
\right>_{\alpha+1<\lambda }
\]
such that (\emph{L} in \emph{L1}, etc, abbreviates \emph{Lift}):
\begin{enumerate}[label=\tu{(}L\arabic*\tu{)}]
\item\label{item:T_is_small_z-max} $\Tt$ is small $z$-maximal on $M$ (of length $\lambda$), based on $M|\delta^M$.
\item For $\alpha<\lambda$: $M_\alpha=M^\Tt_\alpha$ and $m_\alpha=\deg^\Tt(\alpha)$.
\item\label{item:S_alpha_is_exit_etc} For $\alpha+1<\lambda$: $S_\alpha=\exit^\Tt_\alpha$,
$\left<M_{\alpha i}\right>_{i\leq k_\alpha}=\redd^{M_\alpha}(S_\alpha)$ and 
$M^*_{\alpha+1}=M^{*\Tt}_{\alpha+1}$.
\item\label{item:Uu_nodes} $\Uu$ is padded small $y$-maximal on $Y$, via $\Sigma_Y$, based on 
$Y|\delta^Y$, nowhere 
dropping in model or degree.
The nodes of $\Uu$ form a set $I\sub\lambda\cross\om$,
ordered lexicographically,
and for each $\alpha+1\leq\lambda$, letting
\[ I_\alpha=\{i\mid (\alpha,i)\in I\}, \]
if $\alpha+1=\lambda$ then $I_\alpha=[0,0]$, and if $\alpha+1<\lambda$ then 
$I_\alpha=[0,k_\alpha]$. 
For each $(\alpha,i)$, if $E^\Uu_{\alpha i}\neq\emptyset$ then $\nu(E^\Uu_{\alpha i})$
is a cardinal of $M^\Uu_{\alpha i}$.
\item\label{item:Y_alpha=} For $\alpha<\lambda$: $Y_\alpha=M^\Uu_{\alpha0}$.
\item\label{item:U_alpha_i_objects} For $\alpha+1<\lambda$ and $i\leq k_\alpha$:
\begin{enumerate}[label=\tu{(}\roman*\tu{)}]
 \item $Y_{\alpha i}=M^\Uu_{\alpha i}$ and $\varrho_{\alpha i}\in\OR^{Y_{\alpha i}}$
 \item\label{item:varrho_order} $\varrho_{\beta j}\leq\varrho_{\alpha i}$ for 
$(\beta,j)<(\alpha,i)$ (with $j\leq k_\beta$).
 \item\label{item:E_alpha_k_alpha} $E^\Uu_{\alpha k_\alpha}\neq\emptyset$ and $\varrho_{\alpha 
k_\alpha}<\nu(E^\Uu_{\alpha k_\alpha})$ and there is no $E\in\es(M^\Uu_{\alpha+1,0})$
with
\[ \varrho_{\alpha k_\alpha}\leq\crit(E)\leq\nu(E^\Uu_{\alpha k_\alpha})\leq\nu(E). \]
 \item\label{item:E_alpha_i} For each $i<k_\alpha$, if $E=E^\Uu_{\alpha i}\neq\emptyset$ then
 $E$ is an order $0$ measure and $\varrho_{\alpha i}=\crit(E)$
 and $\nu(E^\Uu_{\beta k_\beta})<\crit(E)$ for all $\beta<\alpha$.
 \item For each 
$i<k_\alpha$, $\pred^\Uu(\alpha,i+1)=(\alpha,i)$.
\item\label{item:U_order} Let $(\beta,j)=\pred^\Uu(\alpha+1,0)$. Then:
\begin{enumerate}[label=--]
\item $(\beta,j)$ is least such that $\crit(E^\Uu_{\alpha k_\alpha})<\varrho_{\beta j}$.
\item $\beta=\pred^\Tt(\alpha+1)$ and $M^*_{\alpha+1}=M_{\beta j}$ (so $j>0$ iff $\alpha+1\in\dropset^\Tt$).
\end{enumerate}
\end{enumerate}
\item\label{item:branch_projection} For each $\alpha<\lambda$, we have 
$[0,\alpha)_\Tt=\{\beta<\alpha\mid (\beta,0)<_\Uu(\alpha,0)\}$.
 \item For limits $\alpha<\lambda$, there is $\beta<\alpha$ such that:
 \begin{enumerate}[label=--]
  \item $(\beta,0)<_\Uu(\alpha,0)$ and $\beta<_\Tt\alpha$,
  \item for all $\gamma,i$ with $(\beta,0)<_\Uu(\gamma,i)<_\Uu(\alpha,0)$,
 we have $i=0$,
 \item $(\beta,\alpha)_\Tt=\{\gamma\mid (\beta,0)<_\Uu(\gamma,0)<_\Uu(\alpha,0)\}$.
 \end{enumerate}
\item\label{item:xi_when_no_drop} If $(0,\alpha]_\Tt\inter\dropset^\Tt=\emptyset$
then $m_\alpha=z$ and either:
\begin{enumerate}
\item $\xi=\OR^Y$ and $\xi_\alpha=\OR^{Y_\alpha}$, or
\item $\xi<\rho_0^Y$ and $\xi_\alpha=i^\Uu_{(0,0),(\alpha,0)}(\xi)$.
\end{enumerate}
\item\label{item:xi_when_drop} If $(0,\alpha]_\Tt\inter\dropset^\Tt\neq\emptyset$ then 
$\xi_\alpha<\delta^{Y_\alpha}$.
\item\label{item:pres_xi_or_drop_xi} Let $\beta<_\Tt\alpha$ be such that $\xi_\beta<\rho_0^{Y_\beta}$.
Then:
\begin{enumerate}
\item If $(\beta,\alpha]_\Tt\inter\dropset^\Tt=\emptyset$
then $\xi_\alpha=i^\Uu_{(\beta,0),(\alpha,0)}(\xi_\beta)$.
\item If $(\beta,\alpha]_\Tt\inter\dropset^\Tt\neq\emptyset$
then $\xi_\alpha<i^\Uu_{(\beta,0),(\alpha,0)}(\xi_\beta)$.
\end{enumerate}
\item\label{item:N_alpha} $N_\alpha=N_{\xi_\alpha m_\alpha}^{Y_\alpha}$ and $t_{\xi_\alpha}^{Y_\alpha}\neq 2$.
 \item\label{item:pi_alpha_weak} $\pi_\alpha:M_\alpha^\sq\to N_\alpha^\sq$ is an $X_\alpha$-weak 
$m_\alpha$-embedding.
\item\label{item:pi_alpha_nu-low} If $\pi_\alpha$ is $\nu$-low then $\nu^{M_\alpha}$ is 
$M_\alpha$-singular.\footnote{In most circumstances we 
will also know that $\pi_\alpha$
is non-$\nu$-high. But suppose $P$ is non-small type 3 (so $N^P$ is non-small type 3
and $\nu=\nu(N^P)=\nu(P)$), $\mu\in N^P$ is 
measurable in $N^P$ via $\es(N^P)$, and $\cof^P(\nu)=\mu<\delta^P$
but $\cof^{N^P}(\nu)\neq\mu$ (so $\mu<\cof^{N^P}(\nu)<\delta^P$, by the $\delta^P$-cc of the 
extender algebra). Let $E\in\es(N^P)$ be $N^P$-total with $\crit(E)=\mu$.
Let $\Tt=\left<E\right>$, a tree on $N^P$. Let $\Uu=\left<F\right>$ be the liftup tree on $P$.
Then $\pi_1:(M^\Tt_1)^\sq\to(M^\Uu_1)^\sq$ is $\nu$-high.}
\item $\xi_0=\xi$ and $m_0=z$ and $\pi_0=\pi:M^\sq\to N^\sq$ and $X_0=X$.
\item\label{item:X_beta_is_ptwise_image_non-dropping} If $\alpha\leq_\Tt\beta$ and $(\alpha,\beta]_\Tt\inter\dropset_{\deg}^\Tt=\emptyset$
then $X_\beta=i^\Tt_{\alpha\beta}``X_\alpha$.
\item
If $\alpha+1\in\dropset_{\deg}^\Tt$ then $X_{\alpha+1}=i^{*\Tt}_{\alpha+1}``\rho_{m_{\alpha+1}}(M^*_{\alpha+1})$.
\item\label{item:delta_when_no_drop}
$\delta^{Y_\alpha}\leq\rho_y^{Y_\alpha}$, $\delta^{Y_\alpha}$ is $\bfrSigma_y^{Y_\alpha}$-regular,
and $i^\Uu_{(0,0),(\alpha,0)}$ is continuous at $\delta^Y$.
If $\xi=\OR^Y$ and $(0,\alpha]_\Tt\inter\dropset^\Tt=\emptyset$
then likewise,
$\delta^{M_\alpha}\leq\rho_z^{M_\alpha}$,
$\delta^{M_\alpha}$
is $\bfrSigma_z^{M_\alpha}$-regular,
$i^\Tt_{0\alpha}$ is continuous at $\delta^M$,
and $\pi_\alpha(\delta^{M_\alpha})=\delta^{N_\alpha}=\delta^{Y_\alpha}$.
\item\label{item:U_rest_I_alpha} if $\alpha+1<\lh(\Tt)$ then $\Uu\rest I_\alpha=\restree^{Y_\alpha y}_{\xi_\alpha 
m_\alpha\pi_\alpha}(S_\alpha)$
\item\label{item:eta_alpha,sigma_alpha} $(\eta_\alpha,\sigma_\alpha)=\exitresadd^{Y_\alpha}_{\xi_\alpha 
m_\alpha\pi_\alpha}(S_\alpha)$ and $T_\alpha=N^{Y_{\alpha k_\alpha}}_{\eta_\alpha}$ (so
$\sigma_\alpha:S_\alpha^\sq\to T_\alpha^\sq$
is a non-$\nu$-low weak $0$-embedding, by \ref{lem:exitresadd}).
 \item\label{item:t_eta_alpha} $\eta_\alpha=\chi+\om$ for some $\chi$ with 
$t^{Y_{\alpha 
k_\alpha}}_\chi=0$ and
 $t^{Y_{\alpha k_\alpha}}_{\eta_\alpha}=1$.
\item\label{item:bkgd_ext} $E^\Uu_{\alpha k_\alpha}=F^{Y_{\alpha k_\alpha}}_{\eta_\alpha}$
(so $\nu(E^\Uu_{\alpha k_\alpha})=\aleph_{\eta_\alpha-\om}^{Y_{\alpha k_\alpha}}$)
\item\label{item:varrho_alpha_i} Let $(\Vv,\left<\Psi_i\right>_{i\leq 
k_\alpha})=\modres^{Y_\alpha}_{\xi_\alpha 
m_\alpha\pi_\alpha}(S_\alpha)$ and $\Psi_i=(\beta_i,A_i,\psi_i,\alpha_i,d_i)$.
Then $\varrho_{\alpha i}=\rho_\om^{A_{i+1}}$
for $i<k_\alpha$ (this agrees with \ref{item:U_alpha_i_objects}\ref{item:E_alpha_i})
and
$\varrho_{\alpha k_\alpha}=\nu(T_\alpha)$.
\item\label{item:CC_coherence_iterability} We have:
\begin{enumerate}[label=(\roman*)]
\item $\CC^{Y_{\alpha+1}}\rest\eta_\alpha=\CC^{Y_{\alpha 
k_\alpha}}\rest\eta_\alpha$ (so $N_{\eta_\alpha-\om}^{Y_{\alpha+1}}=T_\alpha^\passive$)
 and 
 \item\label{item:t=0_after_Ult} $t^{Y_{\alpha+1}}_{\eta_\alpha}=0$.
 \end{enumerate}(However, \ref{item:t=0_after_Ult} is not used in the proof of parts \ref{item:general_mouse_it}
 and \ref{item:Q-mouse_it}
 of the lemma (\ref{lem:iterability}),
 and we will not claim the analogue of \ref{item:t=0_after_Ult} in the proofs of parts \ref{item:pseudo-premouse_it}
and \ref{item:bicephalus_it},  that is, the iterability for  generalized-pseudo-premice and bicephali.)
 \item\label{item:model_coherence_card} If $\lgcd(T_\alpha)$ is an $N_{\alpha+1}$-cardinal then 
$T_\alpha^\passive=N_{\alpha+1}||\OR^{T_\alpha}$.
\item\label{item:model_coherence_non-card} If $\lgcd(T_\alpha)$ is not an $N_{\alpha+1}$-cardinal 
then there is $\rho$ such that 
\begin{enumerate}[label=--]
 \item $\lgcd(T_\alpha)=\nu(T_\alpha)=(\rho^+)^{T_\alpha}$ (so $S_\alpha,T_\alpha$ are type 1 or 3),
 \item $N_{\alpha+1}||\nu(T_\alpha)=T_\alpha|\nu(T_\alpha)$,
\end{enumerate}
and either
\begin{enumerate}
 \item $T_\alpha^\passive=N_{\alpha+1}||\OR^{T_\alpha}$, or
  \item $\emptyset\neq G=F^{N_{\alpha+1}|\nu(T_\alpha)}$ is type 2
  and $T_\alpha^\passive=\Ult(N_{\alpha+1},G)||\OR^{T_\alpha}$.
\end{enumerate}
 \item\label{item:sigma_gamma_pi_alpha_agmt} If $\gamma<\alpha<\lambda$ then 
$\sigma_\gamma\sub\pi_\alpha$ and $\pi_\alpha(\nu(S_\gamma))\geq\nu(T_\gamma)=\varrho_{\gamma 
k_\gamma}$.
\item\label{item:pi_alpha_sigma_alpha_agmt} If $\gamma<\alpha<\alpha+1<\lambda$
then $\sigma_\alpha\rest S_\gamma=\pi_\alpha\rest S_\gamma$ (so if $\alpha<\beta<\lambda$
then by 
\ref{item:sigma_gamma_pi_alpha_agmt} and since $S_\gamma\sub S_\alpha^\sq$, 
we have $\pi_\beta\rest S_\gamma=\pi_\alpha\rest S_\gamma$).
 \item\label{item:T=} $T\eqdef\pi_{\alpha+1}(S_\alpha^\passive)\pins_\card N_{\alpha+1}$
 and $\OR^{T_\alpha}\leq\OR^T<\rho_{m_{\alpha+1}}^{N_{\alpha+1}}$ and 
$(\eta_\alpha-\om)\leq\prodstage^{Y_{\alpha+1}}_{\xi_{\alpha+1}}(T)$.\footnote{Since $\Tt$ is small, it does not use any superstrong extenders, so
we do have $S^{\passive}_\alpha\pins M_{\alpha+1}^{\sq}$ in the first place, so $S^{\passive}_\alpha\in\dom(\pi_{\alpha+1})$.}
\item\label{item:lift_it_comm} If $\alpha\leq_\Tt\beta$ and 
$(\alpha,\beta]_\Tt\inter\dropset^\Tt=\emptyset$
then
\[ \pi_\beta\com i^\Tt_{\alpha\beta}=
i^\Uu_{(\alpha,0),(\beta,0)}\com
\tau_{\xi_\alpha m_\alpha m_\beta}^{Y_\alpha}
\com\pi_\alpha=
\tau_{\xi_\beta m_\alpha m_\beta}^{Y_\beta}\com
i^\Uu_{(\alpha,0),(\beta,0)}\com\pi_\alpha.\]

\item\label{item:successor_step} Let $\gamma=\pred^\Tt(\alpha+1)$ and $\kappa=\crit(E^\Tt_\alpha)$ 
and
$(S,s)=\psegdeg^{M_\gamma,m_\gamma}(S_\gamma,\kappa)$
and
\[ l=\critresl^{Y_\gamma}_{\xi_\gamma m_\gamma\pi_\gamma}(S_\gamma,\kappa),
\]
so $S=M^*_{\alpha+1}=M_{\gamma l}$ and $s=\deg^\Tt(\alpha+1)=m_{\alpha+1}$.
Then
\begin{enumerate}[label=--]
\item $\xi^*_{\alpha+1}=\critresprodstage^{Y_\gamma}_{\xi_\gamma 
m_\gamma\pi_\gamma}(S_\gamma,\kappa)$,
\item $N^*_{\alpha+1}=N^{Y_{\gamma l}}_{\xi^*_{\alpha+1}s}$,
\item $(\gamma,l)=\pred^\Uu(\alpha+1,0)$,
\item $\pi^*_{\alpha+1}=\critresmap^{Y_\gamma}_{\xi_\gamma 
m_\gamma\pi_\gamma}(S_\gamma,\kappa)$ and $\pi^*_{\alpha+1}:(M^*_{\alpha+1})^\sq\to(N^*_{\alpha+1})^\sq$
is a weak $s$-embedding such that
\[ \pi^*_{\alpha+1}\rest(\kappa^+)^{M^*_{\alpha+1}}=\sigma_\alpha\rest(\kappa^+)^{S_\alpha}.\]
\end{enumerate}

If $(0,\alpha+1]_\Tt\inter\dropset^\Tt=\emptyset$ and $\xi=\OR^Y$ then (note $l=0$)
\[ \xi^*_{\alpha+1}=\OR^{Y_{\gamma}}=\xi_\gamma\ \&\ 
\xi_{\alpha+1}=\OR^{Y_{\alpha+1}};\]
otherwise $\xi_{\alpha+1}=i^\Uu_{(\gamma,l),(\alpha+1,0)}(\xi^*_{\alpha+1})$.
If $\alpha+1\in\dropset^\Tt$ then
\[ \xi^*_{\alpha+1}<i^\Uu_{(\gamma,0),(\gamma,l)}(\xi_\gamma)\text{ and } 
\xi^*_{\alpha+1}<\delta^{Y_{\gamma l}},\]
whereas if $\alpha+1\notin\dropset^\Tt$ then ($l=0$ and) 
$\xi^*_{\alpha+1}=\xi_\gamma$; with condition \ref{item:N_alpha}, this determines 
$N_{\alpha+1}$. Finally,
\[ \pi_{\alpha+1}:M_{\alpha+1}^\sq\to N_{\alpha+1}^\sq \]
is given much as in the Shift Lemma
from $\pi^*_{\alpha+1},\sigma_\alpha$
(but using the larger ultrapower at the upper level; this is discussed further below), and
\[ \pi_{\alpha+1}\com 
i^{*\Tt}_{\alpha+1}=i^\Uu_{(\gamma,l),(\alpha+1,0)}\com\pi^*_{\alpha+1}.\]
\end{enumerate}

This completes the list of conditions. The verification that we can maintain these conditions is 
pretty routine, but we discuss the main points.

First suppose that $\lambda$ is a limit ordinal, and we wish to extend 
everything to $\Tt\rest\lambda+1$ and $\Uu\rest(\lambda+1,1)$.
Note that the non-padded tree equivalent to $\Uu\rest\lambda\cross\om$
has limit ordertype, as
 $E^\Uu_{\alpha k_\alpha}\neq\emptyset$ for each $\alpha<\lambda$.
 
Let $b=\Sigma(\Uu\rest(\lambda,0))$. Set
$[0,\lambda)_\Tt=b'=\{\alpha\mid (\alpha,0)\in b\}$.
By conditions \ref{item:U_alpha_i_objects}, \ref{item:branch_projection}, $b'$ is a $(\Tt\rest\lambda)$-cofinal branch.
By \ref{item:xi_when_drop},  \ref{item:pres_xi_or_drop_xi}, $b'\inter\dropset^\Tt$ is finite,
and \ref{item:xi_when_no_drop}, \ref{item:pres_xi_or_drop_xi} together determine 
$\xi_\lambda$. This  determines $M_\lambda,m_\lambda,Y_\lambda,N_\lambda$, and $X_\lambda$ is determined by \ref{item:X_beta_is_ptwise_image_non-dropping}, and  the $X_\lambda$-weak $m_\lambda$-embedding
\[ \pi_\lambda:M_\lambda^\sq\to N_\lambda^\sq \]
by  \ref{item:lift_it_comm}. It is easy to verify that the conditions are 
maintained at $\lambda+1$.

Now suppose that $\lambda=\alpha+1$ and we have $\Tt\rest\alpha+1$
and $\Uu\rest(\alpha,1)$ (that is, $M^\Uu_{\alpha 0}$ is defined, etc,
but not $M^\Uu_{\alpha 1}$), satisfying the conditions, and wish to extend to 
$\Tt\rest\alpha+2$ and $\Uu\rest(\alpha+1,1)$.
We will discuss the propagation of
\ref{item:Uu_nodes},
\ref{item:U_alpha_i_objects}, \ref{item:xi_when_no_drop}, \ref{item:pi_alpha_weak}, 
\ref{item:pi_alpha_nu-low}, 
\ref{item:delta_when_no_drop}, \ref{item:U_rest_I_alpha}, \ref{item:t_eta_alpha}, 
\ref{item:bkgd_ext}, \ref{item:CC_coherence_iterability}--\ref{item:T=} and 
\ref{item:successor_step}. Within \ref{item:U_alpha_i_objects}
we just discuss parts \ref{item:varrho_order}--\ref{item:E_alpha_i} and
\ref{item:U_order}. The remaining conditions are either by definition,  immediate or routine, and we leave them to the reader. 

Conditions \ref{item:T_is_small_z-max}--\ref{item:S_alpha_is_exit_etc}: By definition or assumption.

Conditions \ref{item:Uu_nodes}, \ref{item:U_alpha_i_objects}\ref{item:varrho_order},\ref{item:E_alpha_i}, 
\ref{item:U_rest_I_alpha}--\ref{item:varrho_alpha_i}, \ref{item:pi_alpha_sigma_alpha_agmt}:
We set
\begin{enumerate}[label=--]
 \item $\Uu\rest I_\alpha=\Vv=\restree^{Y_\alpha y}_{\xi_\alpha 
m_\alpha\pi_\alpha}(S_\alpha)$,
\item $(\eta_\alpha,\sigma_\alpha)=\exitresadd^{Y_\alpha}_{\xi_\alpha 
m_\alpha\pi_\alpha}(S_\alpha)$ and $T_\alpha=N^{Y_{\alpha k_\alpha}}_{\eta_\alpha}$,
\item $E^\Uu_{\alpha k_\alpha}=F^{Y_{\alpha k_\alpha}}_{\eta_\alpha}$; also write $F_\alpha=E^\Uu_{\alpha k_\alpha}$,
\item $\pred^\Uu(\alpha+1,0)$ and $\deg^\Uu(\alpha+1,0)$ as demanded by $y$-maximality,\footnote{Since $\Uu$ is padded, this maybe doesn't fully determine $\pred^\Uu(\alpha+1,0)$,
depending on conventions. Actually we will want $\pred^\Uu(\alpha+1,0)$
to be the least $(\beta,j)$ such that 
$\crit(F_\alpha)<\varrho_{\beta j}$.
Certainly this requirement uniquely
determines $\pred^\Uu(\alpha+1,0)$.
We will verify later that this is commensurate with $\Uu$ being $y$-maximal.}
\item $\varrho_{\alpha i}=\rho_\om^{A_{i+1}}$
and $\varrho_{\alpha k_\alpha}=\nu(N^{Y_{\alpha k_\alpha}}_{\eta_\alpha})$,
where $A_{i+1}$ is as in
\ref{item:varrho_alpha_i};
\end{enumerate}
these things are demanded by \ref{item:Uu_nodes} (for $y$-maximality of $\Uu$),
\ref{item:U_rest_I_alpha},
 \ref{item:eta_alpha,sigma_alpha}, \ref{item:bkgd_ext}, and \ref{item:varrho_alpha_i},
and this yields \ref{item:U_rest_I_alpha},   \ref{item:eta_alpha,sigma_alpha}, \ref{item:bkgd_ext},
\ref{item:varrho_alpha_i}.
Condition \ref{item:t_eta_alpha} will follow because
$\Tt$ is small.

\begin{clm}The above specifications  yield a $y$-maximal
$\Uu\rest(\alpha+1,1)$ (whose last extender is $F_\alpha$), and also conditions 
\ref{item:U_alpha_i_objects}\ref{item:varrho_order},\ref{item:E_alpha_i}
and \ref{item:pi_alpha_sigma_alpha_agmt}.\end{clm}
\begin{proof}
As in \ref{item:varrho_alpha_i}, let $\Psi_i=(\beta_i,A_i,\psi_i,\alpha_i,d_i)$ where
\[ (\Vv,\left<\Psi_i\right>_{i\leq k_\alpha})=\modres^{Y_\alpha y}_{\xi_\alpha 
m_\alpha\pi_\alpha}(S_\alpha).\] For $i\geq 1$ we have
$\psi_i:M_{\alpha i}^\sq\to A_i^\sq$ is a neat
embedding, and if $1\leq i<k_\alpha$ then $\rho_\om^{M_{\alpha i}}<\rho_\om^{M_{\alpha,i+1}}\leq\rho_0^{M_{\alpha i}}$ 
and
\[ \psi_i(\rho_\om^{M_{\alpha i}})\geq\rho_\om^{A_i}=\varrho_{\alpha,i-1} \]
 which easily gives that 
$\varrho_{\alpha,i-1}<\varrho_{\alpha i}$.
Also if $k_\alpha>0$ then easily $\varrho_{\alpha,k_\alpha-1}\leq\varrho_{\alpha k_\alpha}$.
So for \ref{item:U_alpha_i_objects}\ref{item:varrho_order} it suffices to see that 
$\varrho_{\beta j}\leq\varrho_{\alpha 0}$ for $\beta<\alpha$.

From now on we assume that $\alpha=\beta+1$, and leave the limit case to the reader.

\begin{case}
 $S_\alpha\pins M_\alpha$, so $k_\alpha>0$.
 
Then $S_\beta^\passive\pins_\card M_{\beta+1}=M_{\alpha}$
and $\OR^{S_\beta}<\rho_{m_\alpha}^{M_\alpha}$
and $\OR^{S_\beta}<\OR^{S_\alpha}$, so $\OR^{S_\beta}\leq\rho_\om^{M_{\alpha 1}}$.
By \ref{item:T=},
\[ T\eqdef\pi_\alpha(S_\beta^{\passive})\pins_\card N_\alpha|\rho_{m_\alpha}^{N_\alpha},\]
\[ \varrho_{\beta k_\beta}=\nu(T_\beta)<\OR^{T_\beta}\leq\OR^T \]
 and since $\tau^{Y_\alpha}_{\xi_\alpha m_\alpha 0}\rest\rho_{m_\alpha}^{N_\alpha}=\id$, note that therefore
\[ \varrho_{\beta k_\beta}<\OR^T=\pi_\alpha(\OR^{S_\beta})\leq\rho_\om^{A_1}=\varrho_{\alpha 0},
\]
giving \ref{item:U_alpha_i_objects}\ref{item:varrho_order}.
It easily follows that $\sigma_\alpha\rest S_\beta=\pi_\alpha\rest S_\beta$,
giving \ref{item:pi_alpha_sigma_alpha_agmt}.
Now suppose that $k<k_\alpha$ is least such that $E=E^\Vv_k\neq\emptyset$
and let $\kappa=\crit(E)=\varrho_{\alpha k}$.
For \ref{item:U_alpha_i_objects}\ref{item:E_alpha_i},
we need to see that $\nu(F_\beta)<\kappa$.
We have $A_{k+1}=N_\eta^{Y_\alpha}$ for some $\eta$ such that $t_\eta^{Y_\alpha}=2$,
and $\kappa=\rho_\om^{A_{k+1}}$, and $N_\kappa^{Y_\alpha}=A_{k+1}|\kappa$.
By \ref{item:T=}, \[\eta_\beta-\om\leq\prodstage^{Y_\alpha}_{\xi_\alpha}(T),\] and 
$T=N_{\zeta}^{Y_\alpha}$ for some 
$\zeta\geq\eta_\beta-\om$. We have $T\ins_\card A_1|\rho_\om^{A_1}$,
so
\[ T\ins_\card A_{k+1}|\rho_\om^{A_{k+1}}=A_{k+1}|\kappa=N_\kappa^{Y_\alpha}.\]
Actually then $T\pins N_\kappa^{Y_\alpha}$ as $T$ has a largest cardinal.
So $\prodstage^{Y_\alpha}_{\xi_\alpha}(T)<\kappa$, 
so $\eta_\beta-\om<\kappa$, but $\kappa$ is $Y_\alpha$-measurable,
so
$\nu(F_\beta)=\aleph_{\eta_\beta-\om}^{Y_\alpha}<\kappa$,
as required.

Now if $\Vv$ uses a measure, then note that
$\kappa'\leq\rho_{\alpha k_\alpha}=\nu(F^{T_\alpha})<\nu(F_\alpha)$
for each critical point $\kappa'$
used in $\Vv$
(the fact that $\nu(F^{T_\alpha})<\nu(F_\alpha)$
is because $\eta_\alpha-\om\geq\OR^{T_\alpha}>\nu(F^{T_\alpha})$, and $\nu(F_\alpha)=\aleph_{\eta_\alpha-\om}^{Y_{\alpha k_\alpha}}$).
If $\Vv$ uses no measure (but still $k_\alpha>0$),
then we just need to verify that $\lh(F_\beta)<\lh(F_\alpha)$.
But note that $T\pins_\card N^{Y_\alpha}_{\eta_\alpha}$,
so $\eta_\beta-\om<\eta_\alpha$, which easily suffices.
So we have verified the claim assuming $S_\alpha\pins M_\alpha$.\end{case}

\begin{case} $S_\alpha=M_\alpha$.
 
 Then $k_\alpha=0$ and $\Vv$ is trivial,
and because $\OR^{S_\beta}<\nu(M_\alpha)$ and
$T\pins_\card 
N_\alpha|\rho_{m_\alpha}^{N_\alpha}$,
one can argue much as before. We omit further details.
\end{case}

This completes the proof of the claim.
\end{proof}

For \ref{item:Uu_nodes}, it just
remains to verify that $\Uu$ is small.
By induction (with \ref{item:Uu_nodes}),
$\Uu\rest(\alpha,1)$ is based on $Y|\delta^{Y}$
and is nowhere dropping in model or degree.
By \ref{item:delta_when_no_drop}, $\delta^{Y_\alpha}$ is $\bfrSigma_y^{Y_\alpha}$-regular,
and $\Vv$ is based on $Y_\alpha|\delta^{Y_\alpha}$
and uses only total measures, so 
by \ref{lem:iterates_maintain_def_regularity},
is nowhere dropping in model or degree.
Also, $\nu(F_\alpha)$ is a cardinal of $Y_{\alpha k_\alpha}$
and $F_\alpha\in Y_{\alpha k_\alpha}|\delta^{Y_{\alpha k_\alpha}}$
and $\Uu\rest(\alpha+1,1)$ is small,
because $\Tt$ is small
and $F_\alpha=F_{\eta_\alpha}^{Y_{\alpha k_\alpha}}$ and by Woodin exactness
\ref{lem:Woodin_exactness} (by which
$t_{\eta_\alpha}^{Y_{\alpha 
k_\alpha}}=1$).

Condition \ref{item:Y_alpha=}: just a definition.

Conditions \ref{item:U_alpha_i_objects}\ref{item:E_alpha_k_alpha}, 
\ref{item:CC_coherence_iterability}: We have
$\varrho_{\alpha k_\alpha}=\nu(T_\alpha)<\nu(F_\alpha)=\aleph_{\eta_\alpha-\om}^{Y_{\alpha k_\alpha}}$,
and the rest of these conditions follows from \ref{lem:con_coherence}.\footnote{But note that in 
the proof of iterability for pseudo-premice $(N,G)$ and bicephali $(N,F,G)$,
when we use a background extender $F^*$ for $F$ or $G$ (or one of their images),
then the analogue of \ref{item:CC_coherence_iterability}(ii) can fail, because
there is no minimality condition on $\lh(F^*)$. It is important that we drop the minimality 
condition here, as we will not be in a position where we can guarantee such minimality.}

Conditions \ref{item:U_alpha_i_objects}\ref{item:U_order},
\ref{item:xi_when_no_drop},
\ref{item:pi_alpha_weak}, 
\ref{item:successor_step}:

Let $\gamma=\pred^\Tt(\alpha+1)$ and $\kappa,S,s,l$
be as in \ref{item:successor_step};
we have $\xi^*_{\alpha+1}$, $N^*_{\alpha+1}$, $\pi^*_{\alpha+1}$, $\xi_{\alpha+1}$,
$N_{\alpha+1}$ 
defined as there.
We consider the case that $\gamma<\alpha$ and leave the contrary case to the reader.

\ref{item:xi_when_no_drop} follows easily from \ref{item:xi_when_no_drop},
 \ref{item:delta_when_no_drop} at $\gamma$ and the smallness of $\Tt$ (for then if $(0,\alpha+1]_\Tt\inter\dropset^\Tt=\emptyset$
and $\xi=\OR^Y$ then $m_{\alpha+1}=z$).

As usual
$M^*_{\alpha+1}||\kappa^{+M^*_{\alpha+1}}=S_\gamma||\kappa^{+S_\gamma}=
S_\alpha||\kappa^{+S_\alpha}$.
Let $\zeta=\kappa^{+S_\gamma}$. Then
$\pi^*_{\alpha+1}\rest\zeta=\sigma_\gamma\rest\zeta$ 
by \ref{lem:exitresadd},
$\sigma_\gamma\rest\zeta=\pi_\alpha\rest\zeta$
because $\zeta\sub S_\gamma^\sq$ and by
\ref{item:sigma_gamma_pi_alpha_agmt},
and $\pi_\alpha\rest\zeta=\sigma_\alpha\rest\zeta$ by
\ref{item:pi_alpha_sigma_alpha_agmt}. So $\pi^*_{\alpha+1}\rest\zeta=\sigma_\alpha\rest\zeta$.
The fact that $\pi^*_{\alpha+1}$ is a weak $s$-embedding is also by \ref{lem:exitresadd}.

Since $\kappa<\nu(E^\Tt_\gamma)$, we have
$\crit(F_\alpha)=\sigma_\alpha(\kappa)=\sigma_\gamma(\kappa)<\varrho_{\gamma 
k_\gamma}=\nu(T_\gamma)<\nu(F_\gamma)$,
so $\pred^\Uu(\alpha+1,0)\leq(\gamma,k_\gamma)$. And given $\beta<\gamma$,
we have $\nu(E^\Tt_\beta)\leq\kappa$, and because $\sigma_\alpha\rest 
S_\beta=\pi_\gamma\rest S_\beta$ and by \ref{item:sigma_gamma_pi_alpha_agmt},
\[ 
\crit(F_\alpha)=
\sigma_\alpha(\kappa)\geq\sigma_\alpha(\nu(E^\Tt_\beta))=
\pi_\gamma(\nu(E^\Tt_\beta))\geq\varrho_{\beta k_\beta}.\]
But then since $\nu(F_\beta)\leq\nu(F_\alpha)$
(in fact $<$), \ref{item:U_alpha_i_objects}\ref{item:E_alpha_k_alpha}
gives
$\nu(F_\beta)<\crit(F_\alpha)$,
so $\pred^\Uu(\alpha+1,0)>(\beta,k_\beta)$.

So $(\gamma,0)\leq\pred^\Uu(\alpha+1,0)\leq(\gamma,k_\gamma)$.
But if $0<k_\gamma$ then for $i+1\leq k_\gamma$,
\[ \kappa<\rho_\om^{M_{\gamma,i+1}}\ \iff\
\crit(F_\alpha)=\sigma_\gamma(\kappa)<\varrho_{\gamma i},\]
and moreover, if $E^\Uu_{\gamma i}\neq\emptyset$, then
$\crit(F_\alpha)\neq\varrho_{\gamma i}$. The latter is just because $E^\Uu_{\gamma i}$
is order $0$ and $\varrho_{\gamma i}=\crit(E^\Uu_{\gamma i})$. It follows that by setting 
$\pred^\Uu(\alpha+1,0)$ to be the least $(\beta,j)$
such that $\crit(F_\alpha)<\varrho_{\beta j}$, we get a normal tree
(that is, for each $(\beta,j)$ such that $E^\Uu_{\beta j}\neq\emptyset$,
we have $\crit(F_\alpha)<\nu(E^\Uu_{\beta j})$ iff $\pred^\Uu(\alpha+1,0)\leq(\beta,j)$),
and also $\pred^\Uu(\alpha+1,0)=(\gamma,l)$ (recall $M^*_{\alpha+1}=M_{\gamma l}$).
This gives \ref{item:U_alpha_i_objects}\ref{item:U_order}. 

If $\alpha+1\in\dropset^\Tt$ but $(0,\gamma]\inter\dropset^\Tt=\emptyset$,
then $\xi^*_{\alpha+1}<\delta^{Y_{\gamma l}}$. For $M_\gamma$ is non-small and 
$\pi_\gamma(\delta^{M_\gamma})=\delta^{Y_\gamma}$,
and because 
$\Tt$ is small, $\lh(E^\Tt_\gamma)<\delta^{M_\gamma}$,
so $M^{*}_{\alpha+1}\pins M_\gamma|\delta^{M_\gamma}$, but $\delta^{Y_\gamma}$ 
is a closure point of $\CC^{Y_\gamma}$, which suffices.

Finally, consider $\pi_{\alpha+1}:M_{\alpha+1}^\sq\to 
N_{\alpha+1}^\sq$, that is, recalling $s=m_{\alpha+1}=\deg^\Tt(\alpha+1)$,
\[ \pi_{\alpha+1}:\Ult_s(M^*_{\alpha+1},E^\Tt_\alpha)^\sq\to 
(N^{Y_{\alpha+1}}_{\xi_{\alpha+1} s})^\sq. \]
We define $\pi_{\alpha+1}$ as usual:\footnote{$[a,f]^{M^*,k}_E$
denotes the element of $\Ult_k(M^*,E)$ represented by the pair $(a,f)$.}
\[ \pi_{\alpha+1}([a,f_{t,q}]^{M^*_{\alpha+1},s}_{E^\Tt_\alpha})=
[\sigma_\alpha(a),f'_{t,\pi^*_{\alpha+1}(q)}]^{Y_{\gamma l},y}_{F_\alpha}, \]
where $q\in(M^*_{\alpha+1})^\sq$ and
\[ f=f_{t,q}:[\kappa]^{<\om}\to(M^*_{\alpha+1})^\sq \]
and either (i) $s=0$ and $f=q$, or (ii) $s>0$ and $t$ is an $\rSigma_s$ term and \[ f(u)=t^{M^*_{\alpha+1}}(q,u),\] and
\[ g\eqdef f'_{t,\pi^*_{\alpha+1}(q)}:[\crit(F_\alpha)]^{<\om}\to(N^*_{\alpha+1})^\sq \]
is likewise. Since $\Uu$ is small $y$-maximal with no drops, etc, $y=\deg^\Uu(\alpha+1,0)$. Let us observe that $g$ is indeed used in 
forming  
$\Ult_y(Y_{\gamma l},F_\alpha)$. If $N^*_{\alpha+1}\in Y_{\gamma l}$ then $g\in Y_{\gamma l}$, which 
suffices.
So suppose $(0,\alpha+1]_\Tt\inter\dropset^\Tt=\emptyset$ and $\xi=\OR^Y$.
Then $l=0$ and $\xi^*_{\alpha+1}=\OR^{Y_{\gamma}}$ and $N^*_{\alpha+1}=N^{Y_\gamma}$, so 
maybe $g\notin Y_{\gamma}$.
But in this case $s=m_\gamma=z\leq y$, and by Lemmas \ref{lem:rSigma_red_N_to_Y} and \ref{lem:iterates_pres_fs_match},
$\bfrSigma_z^{N_\gamma}\sub\bfrSigma_z^{Y_\gamma}$, so $g$ is indeed used
in forming the ultrapower.

The fact that $\pi_{\alpha+1}$ is an $X_{\alpha+1}$-weak $m_{\alpha+1}$-embedding is then as usual,
except that if $(0,\alpha+1]_\Tt\inter\dropset^\Tt=\emptyset$ and $\xi=\OR^Y$ then we again use the fact
that $\bfrSigma_z^{N_\gamma}\sub\bfrSigma_z^{Y_\gamma}$ to
verify that $\pi_{\alpha+1}$ is $\rSigma_z$-elementary,
and that $\bfrSigma_{z+1}^{N_\gamma}\sub\bfrSigma_{z+1}^{Y_\gamma}$
for the $\rSigma_{z+1}$-elementarity on $\Hull_{z+1}^{M_{\alpha+1}}(X_{\alpha+1}\cup\pvec_{z+1}^{M_{\alpha+1}})$.
This completes \ref{item:pi_alpha_weak} and 
\ref{item:successor_step}.

\ref{item:model_coherence_card}, \ref{item:model_coherence_non-card}, 
\ref{item:sigma_gamma_pi_alpha_agmt}:
We have $\sigma_\alpha\sub\pi_{\alpha+1}$.
Let $\mu=\crit(F_\alpha)$
and $\mu'=i_{F_\alpha}(\mu)$. Note $\mu'<\rho_{m_{\alpha+1}}^{N_{\alpha+1}}$.
Let
$T^*=i^{Y_{\alpha k_\alpha}}_{F_\alpha}(T_\alpha)$,
so \[T^*|\mu'\pins_\card N_{\alpha+1}|\rho_{m_{\alpha+1}}^{N_{\alpha+1}}.\] Let
$k:\Ult(T_\alpha,F^{T_\alpha})\to T^*$ be the factor and
$j:\Ult(S_\alpha,F^{S_\alpha})\to\Ult(T_\alpha,F^{T_\alpha})$
be induced by $\sigma_\alpha$, so $k\rest\nu(T_\alpha)=\id$ and $j(\nu(S_\alpha))\geq\nu(T_\alpha)$
because $\sigma_\alpha$ is non-$\nu$-low. But 
$\pi_{\alpha+1}(\nu(S_\alpha))=k(j(\nu(S_\alpha)))$, so 
$\pi_{\alpha+1}(\nu(S_\alpha))\geq\nu(T_\alpha)$. This gives 
\ref{item:sigma_gamma_pi_alpha_agmt}.

Let $\chi=\lgcd(T_\alpha)$.
If $\chi$ is an $N_{\alpha+1}$-cardinal, equivalently a $T^*$-cardinal,
then by condensation with $k$,
\[ T_\alpha^\passive=T^*||\OR^{T_\alpha}=N_{\alpha+1}||\OR^{T_\alpha},\]
giving \ref{item:model_coherence_card}.
If $\chi$ is not a $T^*$-cardinal, then clearly $\chi=(\rho^+)^{T_\alpha}$ and 
$\crit(k)=\chi=\nu(T_\alpha)$, and condensation gives \ref{item:model_coherence_non-card},
other than the fact that $N_{\alpha+1}|\chi$ is not active type 1 or 3. But suppose it is type 1 or 
3. We may assume that $\rho$ is a $T_\alpha$-cardinal. Then 
$\rho_\om(N_{\alpha+1}|\chi)=\rho$ is a $T^*$-cardinal,
so there is $\zeta$ such that $N_{\alpha+1}|\chi=N_{\zeta\om}^{Y_{\alpha+1}}$.
We have $\zeta>\eta_\alpha$ because
$\CC^{Y_{\alpha+1}}\rest\eta_\alpha=\CC^{Y_{\alpha k_\alpha}}\rest\eta_\alpha$.
But then $\chi$ is an $N_\zeta^{Y_{\alpha+1}}$-cardinal,
so $\chi\leq\nu(N_\zeta^{Y_{\alpha+1}})$, and note that
\[ F(N_\zeta^{Y_{\alpha+1}})\rest\rho=F(N_{\alpha+1}|\chi)\rest\rho, \]
so by the ISC, $F(N_{\alpha+1}|\chi)\rest\rho\in N_\zeta^{Y_{\alpha+1}}$, a contradiction.

\ref{item:T=}: Let $T=\pi_{\alpha+1}(S_\alpha^\passive)$.
Note $T\pins_\card N_{\alpha+1}|\rho_{m_{\alpha+1}}^{N_{\alpha+1}}$.
Now
$N_{\eta_\alpha-\om}^{Y_{\alpha+1}}=T_\alpha^\passive$
and $\OR^T>\pi_{\alpha+1}(\nu(S_\alpha))\geq\nu(T_\alpha)$.
So if $T_\alpha^\passive=N_{\alpha+1}||\OR^{T_\alpha}$,
then clearly $\OR^{T_\alpha}\leq\OR^T$, and
it follows that
$\prodstage^{Y_{\alpha+1}}_{\xi_{\alpha+1}}(T)\geq\eta_\alpha-\om$.
But otherwise by \ref{item:model_coherence_card}, \ref{item:model_coherence_non-card}, $T|\nu(T_\alpha)=N_{\alpha+1}|\nu(T_\alpha)$ is active
but $T_\alpha|\nu(T_\alpha)$ is passive, which implies
$\prodstage^{Y_{\alpha+1}}_{\xi_{\alpha+1}}(T)>\eta_\alpha-\om$.

\ref{item:pi_alpha_nu-low}: Suppose $M_{\alpha+1}$ is type 3
and $\nu(M_{\alpha+1})$ is $M_{\alpha+1}$-regular.
Then $\nu(M^*_{\alpha+1})$ is $M^*_{\alpha+1}$-regular,
by \ref{lem:nu-pres_elem}.
Let $(\gamma,\ell)=\pred^\Uu(\alpha+1,0)$.
Now $\pi^*_{\alpha+1}$ is non-$\nu$-low.
If $\alpha+1\in\dropset^\Tt$ this is by \ref{lem:when_psi_nu-low}.
So suppose $\alpha+1\notin\dropset^\Tt$.
Then $M^*_{\alpha+1}=M_\gamma$,
so $\nu(M_\gamma)$ is $M_\gamma$-regular, so by induction, $\pi_\gamma$ is non-$\nu$-low,
and $\pi^*_{\alpha+1}=\tau\com\pi_\gamma$ where $\tau$ is a $\Sigma_1$-elementary core embedding, which suffices.

As $\nu(M^*_{\alpha+1})$ is
$M^*_{\alpha+1}$-regular, $i^{*\Tt}_{\alpha+1}$ is $\nu$-preserving,
by \ref{lem:nu-pres_elem}. But
\[ i^\Uu_{(\gamma,l),(\alpha+1,0)}\rest(N^*_{\alpha+1})^\sq:(N^*_{\alpha+1})^\sq\to 
N_{\alpha+1}^\sq\]
is $\Sigma_1$-elementary, hence non-$\nu$-low, and by commutativity, it follows that
$\pi_{\alpha+1}$ is non-$\nu$-low.

Finally, \ref{item:delta_when_no_drop}: By \ref{lem:iterates_maintain_def_regularity}.

This completes our discussion of the successor stage, and hence, of the lifting process for the 
small part $\Tt_0=\Tt\rest(\alpha+1)$ of $\Tt$.
Now suppose that $\Tt$ is non-small, and
$\alpha$ is least such that $\exit^\Tt_\alpha$ is non-small.
Let $\Uu_0=\Uu\rest(\alpha+1,1)$, etc, be given as above, including
a weak $m_\alpha=\deg^\Tt(\alpha)$-embedding
\[ \pi_\alpha:M_\alpha\to N^{Y_\alpha}_{\xi_\alpha m_\alpha}.\]

\begin{case}\label{case:non-small_no_drop}
$\xi=\OR^Y$ and $[0,\alpha)_\Tt\inter\dropset^\Tt=\emptyset$
and $\delta^{M_\alpha}<\lh(E^\Tt_\alpha)$.

Let $Y'=Y_\alpha$ and $N'=N^{Y'}$.
So  $[0,\alpha)_\Tt$ and $[(0,0),(\alpha,0))_\Uu$ do not drop in model/degree, $m_\alpha=z\leq y$,
\[ \pi_\alpha(\delta^{M_\alpha})=\delta'=\delta^{N'}=\delta^{Y'}\leq\rho_y^{Y'} \]
and $\delta'$ is $\bfrSigma_{y}^{Y'}$-regular.
As $\Uu\rest(\alpha,1)$ is based on $Y|\delta^Y$
and $\delta'$ is a strong cutpoint of $Y'$,
the normal continuation of $\Uu$ via $\Sigma_Y$ yields an above-$\delta'$,
$(y,\om_1+1)$-strategy $\Sigma_{Y'}$ for $Y'$.
Note that \ref{lem:iterates_pres_fs_match} applies,
so $N'$ and $Y'$
are generically equivalent above $\delta'$ and
have matching $\rho_z,p_z$ for $z\leq y$,
and by \ref{lem:P-construction_tree_translation},
above-$\delta'$, $y$-maximal trees on $Y'$
correspond to those on $N'$. So let $\Sigma_{N'}$
be the above-$\delta'$ strategy induced by $\Sigma_{Y'}$. 
Let $\Sigma_{M_\alpha}$ be the neat $\pi_\alpha$-pullback
of $\Sigma_{N'}$ (see \ref{dfn:neat_copy}; here we lift $z$-maximal
trees on $M_\alpha$ to $y$-maximal on $N'$). We use $\Sigma_{M_\alpha}$ to
iterate $M_\alpha$ above $\delta^{M_\alpha}$; clearly this suffices.
\end{case}

\begin{case}Otherwise.

We split into two subcases (though one could have treated them as one);
we don't give all the details as they are similar to those in the previous case.

\begin{scase}\label{case:non-small_no_res}
$M_\alpha$ is non-small and $\delta^{M_\alpha}<\lh(E^\Tt_\alpha)$.

Let $s\leq m_\alpha$ be largest such that $\delta^{M_\alpha}<\rho_s^{M_\alpha}$.
We define an above-$\delta^{M_\alpha}$, $(s,\om_1+1)$-strategy for $M_\alpha$.
Let $P=N^{Y_\alpha}_{\xi_\alpha s}$ and $\tau=\tau^{Y_\alpha}_{\xi_\alpha m_\alpha s}$. So
\[ \psi\eqdef\tau\com\pi_\alpha:M_\alpha^\sq\to P^\sq\]
is a weak $s$-embedding, $P$ is non-small
and $\psi(\delta^{M_\alpha})=\delta^P<\rho_s^P$.
By \ref{cor:local_Woodins}, $t_{\xi_\alpha}^{Y_{\alpha}}=3$, so $\delta^P=\delta^{Y_\alpha|\xi_\alpha}$ is a $Y_\alpha$-cardinal,
and $P=\mathscr{P}^{Y_\alpha|\xi_\alpha}(P|\delta^P)$.
We now lift above-$\delta^{M_\alpha}$ $s$-maximal trees $\Tt_1$ on $M_\alpha$
to above-$\delta^P$ $s$-maximal trees $\Uu_1'$ on $P$,
which translate to above-$\delta^{Y_\alpha}$ $s$-maximal trees $\Uu_1$ on $Y_\alpha$,
with $\Uu=\Uu_0\conc\Uu_1$ via $\Sigma_Y$.
(Here we use the neat $\pi_\alpha$-pullback for $\Tt_1$.)
Clearly $\Tt=\Tt_0\conc\Tt_1$ is $z$-maximal and $\Uu=\Uu_0\conc\Uu_1$
is $y$-maximal.
\end{scase}

\begin{scase}\label{case:non-small_res}
Otherwise (either $M_\alpha$ is small or $\lh(E^\Tt_\alpha)<\delta^{M_\alpha}$).

Since $S_\alpha=\exit^\Tt_\alpha$ is non-small, $S_\alpha\pins M_\alpha$.
By the case hypothesis there is $Q\pins M_\alpha$ which is a Q-structure for $\delta'=\delta^{S_\alpha}$.
So letting $(S,s)=\psegdeg^{M_\alpha}(S_\alpha,\delta')$,
we have $S_\alpha\ins S\pins M_\alpha$ and $\rho_{s+1}^S\leq\delta'<\rho_s^S$ (here $\delta'<\rho_0^S$ as $\delta'$ is the least Woodin of $S$).
We define an above-$\delta'$, $(s,\om_1+1)$-strategy for $S$.
Let
\[ \Vv=\critrestree^{Y_\alpha y}_{\xi_\alpha m_\alpha\pi_\alpha}(S_\alpha,\delta'), \]
\[ l=\critresl^{Y_\alpha}_{\xi_\alpha m_\alpha\pi_\alpha}(S_\alpha,\delta') \]
\[ \eta=\critresprodstage^{Y_\alpha}_{\xi_\alpha m_\alpha\pi_\alpha}(S_\alpha,\delta') \]
\[ \psi=\critresmap^{Y_\alpha}_{\xi_\alpha m_\alpha\pi_\alpha}(S_\alpha,\delta') \]
and $Y_{\alpha i}=M^\Vv_i$. So $l>0$ and
and $t^{Y_{\alpha l}}_{\eta}=3$ and letting $P=N^{Y_{\alpha l}}_{\eta s}$, 
$\psi:S^\sq\to P^\sq$
is a weak $s$-embedding and $\delta^P=\psi(\delta^S)<\rho_s^P$.
We now proceed like before,
lifting above-$\delta^{S}$ $s$-maximal trees $\Tt_1$ on $S$
to trees $\Uu_1'$  on $P$ and their translation $\Uu_1$ on $Y_{\alpha l}|\eta$.
We set $\Uu=\Uu_0\conc\Vv\conc\Uu_1$.
Here $\Uu$ is $y$-maximal;
we have $\lh(E^\Vv_i)<\delta^P$.
 \end{scase}
\end{case}

This completes the construction of the $(z,\om_1+1)$-strategy $\Sigma_M$ for $M$.

Now suppose that $Y$ is $(y,\om_1,\om_1+1)^*$-iterable, via strategy $\Sigma'_Y$,
and $\Sigma_Y$ is its first round. We want to construct a $(z,\om_1,\om_1+1)^*$-strategy
$\Sigma'_M$ for $M$ extending $\Sigma_M$.
So suppose we have $\Tt$ produced as above (possibly non-small), of successor length $<\om_1$,
and $\Uu$ on $Y$ is its lift. Let
\[\pi_\infty:M_\infty\to N_\infty=N_{\xi_\infty m_\infty}^{Y_\infty} \]
be the final lifting map, so $\pi_\infty$ is an $X_\infty$-weak $m_\infty=\deg^\Tt(\infty)$-embedding.
Let $y_\infty=\deg^\Uu(\infty)$.
Note that $(M_\infty,Y_\infty,\xi_\infty,m_\infty,\pi_\infty,X_\infty)$
satisfies the conditions
\ref{item:if_pi_nu-low_then_nu_sing}
and  \ref{item:if_OR^Y=xi_then_delta_reg} we placed on $(M,Y,\xi,m,\pi,X)$ at the outset.
So we form the next round of $\Sigma_M'$ by lifting to the next round of $\Sigma_Y'$.
This extends through transfinitely many rounds, and completes the proof of part
\ref{item:Q-mouse_it} of the lemma.

The proof of part \ref{item:general_mouse_it} is a simplification, with the only difference that 
there might not be a Woodin cardinal in the original model, but this is not important
as we assume that $\aleph_\xi^Y<\rho_0^Y$, and hence $\CC^Y\rest(\xi+1)\in Y$.

For part \ref{item:pseudo-premouse_it},
we have a generalized-pseudo-pm $M=(N,G)$ where $N=N_\xi^Y$, $t^Y_\xi=0$,
 $\aleph_{\xi+\om}^Y\leq\OR^Y$ and
$G^*\in\es^Y$ is such that $G\rest\nu_G\sub G^*$ and $\nu_{G^*}=\aleph_\xi^Y$.
Take $G^*$ with $\lh(G^*)$ minimal such that $G^*$ is as above 
(inducing the given $G$).
We only want a $(0,\om_1+1)$-strategy for $M$.
Recall  we do not squash $M$. We proceed basically as before. But if
$(0,\alpha]_\Tt\inter\dropset^\Tt=\emptyset$, we will have
\[ \pi_\alpha:M_\alpha\to N_\alpha=i^\Uu_{(0,0),(0,\alpha)}(M),\]
and $\pi_\alpha$ is a weak $0$-embedding and $\pi_\alpha(\lgcd(M_\alpha))=\lgcd(N_\alpha)$.
If $E^\Tt_\alpha=F^{M_\alpha}$ then we set $k_\alpha=0$ and 
$E^\Uu_{00}=i^\Uu_{(0,0),(\alpha,0)}(G^*)$. In this situation,
the minimality of $\lh(G^*)$ specified above ensures condition
\ref{item:U_alpha_i_objects}\ref{item:E_alpha_k_alpha}
(not literally by \ref{lem:con_coherence}, but by its proof).
If instead $\exit^\Tt_\alpha\pins M^\Tt_\alpha$ then we resurrect
an ancestor as before.

For part \ref{item:bicephalus_it}, we have a bicephalus $B=(N,F^0,F^1)$,
where $N=N_\xi^Y$ and $t_\xi^Y=0$, and background extenders $G^0,G^1$ for $F^0,F^1$ with $\nu_{G_i}=\aleph_\xi^Y$,
and $\lh(G^0),\lh(G^1)$ minimal with respect to these properties.
Recall we are allowing mixed type bicephali (for example, $(N,F)$ is type 2 and $(N,G)$ is type 1 or 3).
In this case we iterate $B$ at the unsquashed level, forming ultrapowers and shifting the active 
predicates just as for type 2 extenders. Let us consider such unsquashed iterations.
Recall the rules for iteration specified in \cite{extmax}:
If $E^\Tt_\alpha=F^0_\alpha$ then we set
\[ \nu^\Tt_\alpha=\max(\lgcd(B_\alpha),\nu(F^0_\alpha)), \]
and similarly for $F^1_\alpha$. In other cases set 
$\nu^\Tt_\alpha=\nu(E^\Tt_\alpha)$. We use $\nu^\Tt_\alpha$ as the exchange ordinal for determining 
$<_\Tt$.  Now if 
$(0,\alpha]_\Tt\inter\dropset^\Tt=\emptyset$ we will have 
$B^\Tt_\alpha=(M_\alpha,F^0_\alpha,F^1_\alpha)$ and
\[ \pi_\alpha:B^\Tt_\alpha\to C_\alpha=i^\Uu_{(0,0),(\alpha,0)}(B) \]
(the domain of $\pi_\alpha$ is $M_\alpha$)
which is a weak $0$-embedding as a map
\[ (M_\alpha,F^i_\alpha)\to(\univ{C_\alpha},F^i(C_\alpha)),\] and 
$\pi_\alpha(\lgcd(B_\alpha))=\lgcd(C_\alpha)$.
If $E^\Tt_\alpha=F^0_\alpha$, we set $k_\alpha=0$ and $E^\Uu_{\alpha 0}=i^\Uu_{(0,0),(\alpha,0)}(G^0)$.
We have 
$\pi_\alpha(\nu^\Tt_\alpha)=\nu(F^0(C_\alpha))$, by the proof of \cite[Theorem 5.3]{extmax}
and because 
$(N,F^0)$ is a premouse and by elementarity of $i^\Uu_{(0,0),(\alpha,0)}$.
This helps ensure that $<_\Tt$ 
matches $<_\Uu$ as in the proof of part \ref{item:Q-mouse_it}.

In the case that both $(N,F^0)$ and $(N,F^1)$ are type 3,
we squash and always use $\nu(E^\Tt_\alpha)$ as the exchange ordinal.
Suppose $E^\Tt_\alpha=F^0_\alpha$. Then $k_\alpha=0$. If $\pi_\alpha$ is non-$\nu$-low as a map
\[ (M_\alpha,F^0_\alpha)\to(\univ{C_\alpha},F^0(C_\alpha)),\]
we set $E^\Uu_{\alpha 0}=i^\Uu_{(0,0),(\alpha,0)}(G^0)$;
otherwise letting $\wt{\nu}=\psi_{\pi_\alpha}(\nu(F^0_\alpha))$,
we set $E^\Uu_{\alpha 0}=F^{Y_\alpha}_{\wt{\xi}}$
where $F(N_{\wt{\xi}}^{Y_\alpha})=F^0(C_\alpha)\rest\wt{\nu}$.
This again helps ensure that tree orders match.

This completes the proof of iterability.
\end{proof}

\subsection{Lifting and resurrection for essentially $m$-maximal trees}

We need to consider the following slight generalization of $m$-maximal trees:

\begin{dfn}\label{dfn:essentially_m-max}
 Let $M$ be an $m$-sound premouse. Let $\Tt$ be an iteration tree on $M$.
 We say that $\Tt$ is \emph{essentially $m$-maximal} iff (i) $\Tt$ satisfies the conditions for 
$m$-maximality excluding the monotone length condition (that is, that
\[ \lh(E^\Tt_\alpha)\leq\lh(E^\Tt_\beta)\text{ for all }\alpha+1<\beta+1<\lh(\Tt)\text{)},\]
and (ii) the monotone $\nu$ condition holds:
\[ \nu(E^\Tt_\alpha)\leq\nu(E^\Tt_\beta)\text{ for all }\alpha+1<\beta+1<\lh(\Tt).\]
We say that $\Tt$ is \emph{sse-$m$-maximal} (\emph{superstrong-essentially}) iff $\Tt$
is essentially $m$-maximal and for all $\alpha+2<\lh(\Tt)$,
if $\lh(E^\Tt_{\alpha+1})<\lh(E^\Tt_\alpha)$ then $E^\Tt_\alpha$ is superstrong.

An \emph{essentially-$(m,\alpha)$-iteration strategy} is a strategy which applies to essentially $m$-maximal trees of length $<\alpha$.
\emph{Essentially $(m,\alpha)$-iterability} requires the existence of such a strategy.
Likewise for \emph{sse} replacing \emph{essentially}.

Given an essentially $m$-maximal tree $\Tt$ and $\alpha+1<\lh(\Tt)$, we say that $E^\Tt_\alpha$, or $\alpha$, is \emph{$\Tt$-stable} iff 
$\lh(E^\Tt_\alpha)\leq\lh(E^\Tt_\beta)$ for all $\beta+1<\lh(\Tt)$ such that $\alpha<\beta$.
\end{dfn}

\begin{rem}
Note that for $\alpha+1<\beta+1<\lh(\Tt)$, we have $\lh(E^\Tt_\alpha)=\lh(E^\Tt_\beta)$ iff 
$\beta=\alpha+1$ and $E^\Tt_\alpha$ is superstrong type with $\crit(E^\Tt_\alpha)=\kappa$ and 
$M^{*\Tt}_{\alpha+1}$ is type 2 with $\kappa=\lgcd(M^{*\Tt}_{\alpha+1})$. So let $\Tt$ be 
essentially $m$-maximal. Given $\alpha+1<\lh(\Tt)$,
note there is a unique $n<\om$ such 
that
\begin{enumerate}[label=\tu{(}\roman*\tu{)}]
 \item  $\alpha+n+1<\lh(\Tt)$,
\item $\lh(E^\Tt_\beta)\geq\lh(E^\Tt_\gamma)$
for all $\beta,\gamma$ with $\alpha\leq\beta<\gamma\leq\alpha+n$,\footnote{So by the previous remarks,
if $\lh(E^\Tt_\beta)=\lh(E^\Tt_\gamma)$ then $\gamma=\beta+1$.}
\item if
$\alpha+n+2<\lh(\Tt)$ then $\lh(E^\Tt_{\alpha+n})\leq\nu(E^\Tt_{\alpha})$.
\end{enumerate}
Moreover, if $\Tt$ is sse-$m$-maximal then $E^\Tt_\beta$
is superstrong for all $\beta\in[\alpha,\alpha+n)$,
 and so if $\alpha\leq\beta<\gamma<\alpha+n$
then $\lh(E^\Tt_\beta)>\lh(E^\Tt_\gamma)$.
\end{rem}
\begin{lem}\label{lem:m-max_iter_equiv_ess_m-max_iter}
Let $M$ be an $m$-sound premouse. Then \tu{(}i\tu{)} $M$ is $(m,\om_1+1)$-iterable iff \tu{(}ii\tu{)} $M$ is 
essential-$(m,\om_1+1)$-iterable iff \tu{(}iii\tu{)} $M$ is sse-$(m,\om_1+1)$-iterable.
\end{lem}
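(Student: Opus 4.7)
The implications (ii) $\Rightarrow$ (iii) $\Rightarrow$ (i) are immediate from the inclusions of tree classes: every $m$-maximal tree is vacuously sse-$m$-maximal, and every sse-$m$-maximal tree is essentially $m$-maximal by definition, so any iteration strategy handling the wider class of trees restricts to an iteration strategy handling the narrower class.

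For (i) $\Rightarrow$ (ii), let $\Sigma$ be an $(m,\omega_1+1)$-iteration strategy for $M$. The plan is to define, by recursion on length, a canonical translation $\Tt \mapsto \Tt^*$ sending each essentially $m$-maximal tree $\Tt$ on $M$ to an $m$-maximal tree $\Tt^*$ on $M$ preserving length, tree order, drop structure, and the sequence of models $\langle M^\Tt_\alpha \rangle$; then setting $\Sigma^*(\Tt) = \Sigma(\Tt^*)$ produces the required essentially-$(m,\omega_1+1)$-iteration strategy, since cofinal branches of $\Tt$ biject with those of $\Tt^*$. The translation is the identity outside ``bad'' configurations where $\lh(E^\Tt_{\alpha+1}) < \lh(E^\Tt_\alpha)$, and within each maximal finite block of strictly decreasing length it reindexes the extenders into monotone-increasing length order. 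The monotone-$\nu$ requirement in Definition \ref{dfn:essentially_m-max} together with the Mitchell--Steel indexing rule $\lh(E) = \nu(E)^{+\Ult}$ guarantees that each such block has finite length and that the reindexing does not alter models.

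The key preliminary claim, which one should prove first, is that each bad configuration forces $E^\Tt_\alpha$ to be superstrong: if $\nu(E^\Tt_\alpha) \leq \nu(E^\Tt_{\alpha+1}) < \lh(E^\Tt_{\alpha+1}) < \lh(E^\Tt_\alpha)$, then $\exit^\Tt_{\alpha+1}$ is a segment of $M^\Tt_\alpha$ below $\nu(E^\Tt_\alpha)^{+M^\Tt_\alpha}$ whose active extender has $\nu \geq \nu(E^\Tt_\alpha)$; since $\nu(E^\Tt_\alpha)$ is a cardinal of $M^\Tt_\alpha$, analysis of the critical point of $E^\Tt_{\alpha+1}$ forces $\nu(E^\Tt_\alpha) = \lambda(E^\Tt_\alpha)$, i.e., $E^\Tt_\alpha$ is superstrong. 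This places us in precisely the situation treated by the superstrong-extender reindexing of \cite[2.43--2.44]{operator_mice}, which produces the desired $m$-maximal $\Tt^*$ with identical model sequence.

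The main obstacle is the bookkeeping for branch correspondence at limit stages: one must verify that the $<_\Tt$-cofinal branches through $\Tt\rest\lambda$ correspond bijectively to the $<_{\Tt^*}$-cofinal branches through $\Tt^*\rest\lambda$ with identical direct-limit models. This follows because the reindexing permutes extenders only within finite blocks whose endpoints agree on models and tree-order predecessors, so the common tail sequence determines the same cofinal direct-limit system; once this is verified, $\Sigma^*$ is well-defined and clearly witnesses essentially-$(m,\omega_1+1)$-iterability.
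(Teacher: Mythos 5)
Your overall decomposition is right: (ii)\,$\Rightarrow$\,(iii)\,$\Rightarrow$\,(i) are inclusions of tree classes, and the content is (i)\,$\Rightarrow$\,(ii). But the translation $\Tt\mapsto\Tt^*$ you propose does not work as described, and it is not what the paper does.

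The central problem is the claim that reindexing a ``bad block'' of strictly decreasing lengths into increasing length order ``does not alter models'' and so preserves the model sequence $\langle M^\Tt_\alpha\rangle$ and the length of the tree. This cannot be right: $M^{\Tt^*}_{\alpha+1}$ would be an ultrapower by the \emph{shortest} extender of the block, whereas $M^\Tt_{\alpha+1}$ is the ultrapower by the \emph{longest} extender $E^\Tt_\alpha$. These ultrapowers are formed from the same base but by different extenders whose critical points and generators overlap (indeed $\crit(E^\Tt_{\alpha+1})<\nu(E^\Tt_{\alpha+1})<\lh(E^\Tt_{\alpha+1})<\lh(E^\Tt_\alpha)$), so they genuinely disagree. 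There is also a second problem: after sorting each maximal block into increasing length order, the resulting sequence of lengths need not be globally monotone, because the paper's monotonicity argument only shows that the \emph{local minimum} of a block is below all later lengths --- it says nothing about the local \emph{maximum} $\lh(E^\Tt_\alpha)$, which can exceed the lengths of extenders in the next block. So $\Tt^*$ need not be $m$-maximal at all. Finally, your preliminary claim that the configuration $\lh(E^\Tt_{\alpha+1})<\lh(E^\Tt_\alpha)$ forces $E^\Tt_\alpha$ to be of superstrong type is not a theorem about essentially $m$-maximal trees --- it is \emph{imposed} as an additional clause in the definition of \emph{sse}-$m$-maximal. For an essentially $m$-maximal tree one can have, e.g., $E^\Tt_\alpha$ of type~1 on $\kappa$ and $E^\Tt_{\alpha+1}$ a shorter type~1 extender on the same $\kappa$ with the same $\nu=\kappa^+$, and $E^\Tt_\alpha$ is then far from superstrong.

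The paper's route is different and sidesteps all three issues: rather than permuting extenders while keeping the tree the same length, it \emph{discards} the unstable indices. Let $X$ be the set of $\Tt$-stable indices (plus the last index if $\lh(\Tt)$ is a successor); then $\Tt'$ is the tree of length $\ot(X)$ whose extenders are $\langle E^\Tt_{\varphi(\alpha)}\rangle$, where $\varphi$ enumerates $X$ increasingly. The monotone-$\nu$ condition guarantees $X$ is cofinal in $\lh(\Tt)$ and that the extenders at stable indices have nondecreasing lengths, so $\Tt'$ is a genuine $m$-maximal tree, typically \emph{shorter} than $\Tt$. The models $M^\Tt_\gamma$ for unstable $\gamma$ are essentially dead ends (they are only referenced up to an ordinal below which they agree with a stable model), so branch choice for $\Tt$ is determined by $\Sigma(\Tt')$. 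If you want to repair your proposal, this passage to the stable subtree --- accepting that the length shrinks and that the unstable models are dropped rather than re-ordered --- is the idea you need; the ``sort the block'' construction should be abandoned.
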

\begin{proof}
It suffices to see that (i) implies (ii).
Let $\Tt$ be any putative essentially $m$-maximal tree. Note that for each $\alpha+1<\lh(\Tt)$ 
there is $n<\om$ such that either $\alpha+n+2=\lh(\Tt)$, or 
$\lh(E^\Tt_{\alpha+n})<\lh(E^\Tt_{\alpha+n+1})$, and hence, 
$\lh(E^\Tt_{\alpha+n})<\lh(E^\Tt_\beta)$ for all $\beta>\alpha+n$ with $\beta+1<\lh(\Tt)$. 
Let $X$ be the set of all $\alpha<\lh(\Tt)$ such that if $\alpha+1<\lh(\Tt)$ then 
$\alpha$ is $\Tt$-stable.
Let $\varphi:\zeta\to X$ be the increasing enumeration of $X$.
Note then that we get an $m$-maximal tree $\Tt'$ of length $\zeta$ with 
$E^{\Tt'}_\alpha=E^\Tt_{\varphi(\alpha)}$ and with $\varphi$ tree order, drop and degree structure, 
models, and embeddings. (That is, $M^{\Tt'}_\alpha=M^\Tt_{\varphi(\alpha)}$, etc.)
Using this it is easy to prove the lemma.
\end{proof}

\begin{dfn}\label{dfn:lift^Tt,Y}
Given a $(y,\om_1+1)$-strategy $\Sigma$,
$\Sigma^\sse$ denotes the sse-$(y,\om_1+1)$-strategy induced by $\Sigma$
via the procedure implicit in the proof above.

Adopt the hypotheses and notation of \ref{lem:iterability} part  \ref{item:general_mouse_it} or \ref{item:Q-mouse_it}.
Let $\Sigma_N$ be the $(m,\om_1+1)$-strategy for $N$ defined in the proof of \ref{lem:iterability}.
 (Note that $\Sigma_N$ depends on $\Sigma_Y$.)
Let $\Tt$ on $N$ be via $\Sigma_N$. Then $\lifttree^{\Tt,Y,y}$, abbreviated $\lifttree^{\Tt,Y}$,
denotes the corresponding tree $\Uu$ on $Y$ given in the proof of \ref{lem:iterability}.
(That is, we start with $M=N$ and $\pi=\id$.)

Let $\Lambda^\sse_N$ denote the following sse-$(z,\om_1+1)$-strategy for $N$.
A tree $\Tt=\Tt_0\conc\Tt_1$ with small component $\Tt_0$ is via $\Lambda^\sse_N$ iff
$\Tt_0$ is via $\Sigma_N$, and letting $\Uu_0=\lifttree^{\Tt_0,Y}$
 and $\pi:M^{\Tt_0}_\infty\to M^{\Uu_0}_\infty$ be the ultimate lifting map,
then $\Tt_1$ is given by copying (***with correct process; ***what does this mean exactly?
or should we just make $\Lambda^\sse_N=(\Sigma_N)^\sse$?) to a tree on $M^{\Uu_0}_\infty$
via the tail of $\Sigma^\sse_Y$.
 We write $\Uu=\lifttree^{\Tt,Y}$.
 
Suppose $\Tt$ is small and let $\Uu=\lifttree^{\Tt,Y}$.
 Let
 $\left<M_\alpha,m_\alpha,Y_\alpha,\xi_\alpha,N_\alpha,\pi_\alpha\right>_{\alpha<\lambda}$
be as in \ref{lem:iterability}.  Let $P=M^\Uu_{\alpha 0}$.
We write $\prodstage^{\Tt,\Uu}_\alpha=\xi_\alpha$\index{$\prodstage^{\Tt,\Uu}$} and $\pi^{\Tt,\Uu}_\alpha=\pi_\alpha$\index{$\pi^{\Tt,\Uu}$} and
(recall each of the following objects is a function with domain
$\segdegs(M_\alpha,m_\alpha)$):
\begin{enumerate}[label=--]
\item $\res^{\Tt,\Uu}_\alpha=\res^{P}_{\xi_\alpha m_\alpha\pi_\alpha}$,\index{$\res^{\Tt,\Uu}$}
\item $\resmap^{\Tt,\Uu}_\alpha=\resmap^P_{\xi_\alpha m_\alpha\pi_\alpha}$,\index{$\resmap^{\Tt,\Uu}$}
\item $\restree^{\Tt,\Uu}_{\alpha}=\restree^P_{\xi_\alpha m_\alpha\pi_\alpha}$,\index{$\restree^{\Tt,\Uu}$}
\item $\restree^{\Tt,\Uu,z}_\alpha=\restree^{P,z}_{\xi_\alpha m_\alpha\pi_\alpha}$,\index{$\restree^{\Tt,\Uu,z}$}
\item $\resprodstage^{\Tt,\Uu}_\alpha=\resprodstage^P_{\xi_\alpha m_\alpha\pi_\alpha}$,\index{$\resprodstage^{\Tt,\Uu}$}
\item $\resl^{\Tt,\Uu}_\alpha=\resl^P_{\xi_\alpha m_\alpha\pi_\alpha}$.\index{$\resl^{\Tt,\Uu}$}
\end{enumerate}
Similarly (a function with domain $\{e\mid e\ins M_\alpha\text{ and }e\text{ is active}\}$):
\begin{enumerate}[label=--]
 \item $\exitresadd^{\Tt,\Uu}_\alpha=\exitresadd^P_{\xi_\alpha m_\alpha\pi_\alpha}$.\index{$\exitresadd^{\Tt,\Uu}$}
\end{enumerate}

Let $\critres^{\Tt,\Uu}_\alpha$, etc, be defined analogously.

If $\Tt$ has successor length $\alpha+1$ then a lack of subscript abbreviates subscript $\alpha$,
so $\prodstage^{\Tt,\Uu}=\prodstage^{\Tt,\Uu}_\alpha$, etc.
\end{dfn}

The following lemma will be used in the proof of lower semicontinuity:

 \begin{lem}\label{lem:active_and_sub_projector_res_tree_agmt}Let $Y$ be $(x,\xi,m)$-good
 with 
 $t^Y_{x\xi}\neq 2$.
 Let
 $N=N^Y_{\xi m}$ and
 $\pi:M^\sq\to N^\sq$ a weak $m$-embedding
 such that if $\pi$ is $\nu$-low (hence $M$ is active type 3) then $\nu^M$ is $M$-singular.
 Let 
 $S\ins S'\pins R\ins M$ with $R$ active and
  $\rho_\om^S=\rho_\om^{S'}=\lgcd(R)$.
 Let $\Vv_R=\restree^Y_{\xi m\pi}(R)$
 and $\Vv_S=\restree^Y_{\xi m\pi}(S)$
 and $\Vv_{S'}=\restree^Y_{\xi m\pi}(S')$.
 Then  $\Vv_R\ins\Vv_S$ (so $\Vv_R\ins\Vv_{S'}$) and either:
 \begin{enumerate}[label=--]
  \item $\Vv_R,\Vv_S,\Vv_{S'}$ use the same non-empty extenders with the same indexing 
   ($\Vv_S$ might be properly longer than $\Vv_R$, but only by incorporating more padding), or
   \item 
 $R$ is type 2 and $\Vv_S=\Vv_{S'}=\Vv_R\conc\left<E\right>$
 where $E$ is an order $0$ measure
 ($\crit(E)$ is the relevant image of $\lgcd(R)$).
 \end{enumerate}
 \end{lem}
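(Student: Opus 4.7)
The plan is to analyze the reverse extended model dropdown sequences $\redd^M(R)$, $\redd^M(S')$, $\redd^M(S)$, propagate the comparison through the resurrection algorithm of Definition~\ref{dfn:modres}, and perform a case analysis on the type of $R$ at the final step. Set $\rho=\lgcd(R)$. First I would establish $\redd^M(S)=\redd^M(R)\conc\langle S\rangle$ and $\redd^M(S')=\redd^M(R)\conc\langle S'\rangle$: the key observation is that any $R'$ with $S\ins R'\ins R$ and $R'\neq R$ satisfies $\rho\leq\rho_\om^{R'}$ (since $\rho$ is a cardinal of $R$, hence of each such $R'$), so the first strict projection-drop in the dropdown above $S$ (or $S'$) lands precisely on $R$, and from $R$ upward the dropdown coincides with $\emdd^M(R)$. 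A direct induction on $i$ shows that the data $\Psi_i=(\beta_i,A_i,\psi_i,\alpha_i,d_i)$ and extender $E^\Vv_{i-1}$ computed at step $i$ of the resurrection algorithm applied to a segment $Q\ins M$ depend only on the first $i+1$ elements of $\redd^M(Q)$. Applied to our three inputs, this yields identical data for the first $l+1=|\redd^M(R)|$ models and first $l$ extenders across the three trees, giving $\Vv_R\ins\Vv_{S'}$ and $\Vv_R\ins\Vv_S$ and reducing the lemma to an analysis of the single extra step in $\Vv_S$ and $\Vv_{S'}$.

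For that extra step, let $Y_l=M^{\Vv_R}_l$ and let $A_l=N^{Y_l}_{\alpha_l\om}$, $\psi_l^*:R^\sq\to A_l^\sq$ be the final resurrection data computed for $R$. Since $\psi_l^*$ is a composition of $q$-neat embeddings with the core map $\tau^{Y_l}_{\alpha_l d_l0}$, it is $\nu$-preserving, so $A_l$ is active of the same type as $R$, and the image $\rho^*=\psi_l^*(\rho)=\lgcd(A_l)$ is well-defined. Form $A_{l+1}^S=\copyseg(S,\psi_l^*)$ and $A_{l+1}^{S'}=\copyseg(S',\psi_l^*)$; these are cpp-segments of $A_l$ projecting to $\rho^*$, with production stages $\beta_{l+1}^S,\beta_{l+1}^{S'}<\alpha_l$ in $\CC^{Y_l}$. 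Whether the trees add a new nonempty extender at step $l$ is decided by the value of $t^{Y_l}_{\beta_{l+1}}$ via the case split in the definition of $\modres$. When $R$ is type~$3$, $A_l$ is type~$3$ and $\nu(F^{A_l})=\rho^*$, so Lemma~\ref{lem:lifting_t=2_proj_meas}(c) forces $\OR^{A_{l+1}}<\nu(A_l)=\rho^*$ whenever $t^{Y_l}_{\beta_{l+1}}=2$, contradicting $\OR^{A_{l+1}}>\rho^*$ (as $A_{l+1}$ is a cpp-segment of $A_l$ with $\rho_\om^{A_{l+1}}=\rho^*$ and $A_{l+1}\neq A_l|\rho^*$). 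When $R$ is type~$1$, a similar analysis using the structure of $A_l$ (Lemmas~\ref{lem:measurable_stack} and~\ref{lem:iterate_up_to_N_x,kappa}) places $\beta_{l+1}$ outside the $t=2$ interval $[\xi^{Y_l}_{\rho^*},\chi^{Y_l}_{\rho^*})$. In both cases the new extenders are empty padding, yielding conclusion~(a). When $R$ is type~$2$, $\rho^*$ is itself measurable in $Y_l$ via an order-$0$ measure, and both $\beta_{l+1}^S,\beta_{l+1}^{S'}$ fall into the $t=2$ zone for $\rho^*$, so the extender used at step $l$ is $D^{Y_l}_{\rho^* 0}$ for both, giving conclusion~(b).

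I expect the main obstacle to be the type~$2$ case: verifying the equality $\Vv_S=\Vv_{S'}$ (not merely a common initial segment plus an extra step) requires checking that both extra steps produce the same order-$0$ measure \emph{and} the same resulting model $Y_{l+1}$. This hinges on the canonicity of $D^{Y_l}_{\rho^* 0}$ and on the observation that distinct cpp-segments of $A_l$ within the $t=2$ zone for $\rho^*$ differ only in the subsequent value of $\alpha_{l+1}$ (the production stage after the ultrapower), not in the measure used or in the model immediately produced by the ultrapower.
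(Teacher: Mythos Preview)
Your structural claim that $\redd^M(S)=\redd^M(R)\conc\langle S\rangle$ is not valid in general, and this is the main gap. The claim requires $\rho_\om^R<\rho=\lgcd(R)$, but this can fail: when $R$ is type~3 with $\nu^R=\lgcd(R)$ and $\rho_\om^R=\nu^R$, one has $\rho_\om^R=\rho_\om^S=\rho$, so the first strict projection-drop above $S$ skips over $R$ entirely. In that situation $\redd^M(S)$ and $\redd^M(R)$ have the \emph{same} length (both passing directly from the least $T$ with $R\ins T$ and $\rho_\om^T<\rho$, or from $M$ if $\rho$ is an $M$-cardinal, down to $S$ resp.\ $R$), and your ``one extra step'' analysis does not apply. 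The paper's proof isolates exactly this as Subcase~1.3 and handles it by further case analysis on whether $\rho$ is an $M$-cardinal, and if not, by introducing the common $T$ above and arguing about $\Vv_T$.

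A second issue arises when $R=M$. Then $l=0$ and $\psi_0^*=\tau^{Y_0}_{\alpha_0 d_0 0}\circ\pi$, and $\pi$ need not be $\nu$-preserving; it may be $\nu$-low or $\nu$-high (the hypothesis only tells you that if $\nu$-low then $\nu^M$ is $M$-singular). So your assertion that $\psi_l^*$ is $\nu$-preserving and that $A_l$ has the same type as $R$ with $\psi_l^*(\rho)=\lgcd(A_l)$ breaks down here. The paper's Subcase~1.1 treats $R=M$ separately, splitting on whether $\tau\circ\pi$ is $\nu$-low, $\nu$-preserving, or $\nu$-high; in the $\nu$-high case, for instance, the relevant image of $\rho$ lies strictly above $\nu(N^Y_{x\xi})$, and the argument must use the hull-collapse step from Definition~\ref{dfn:copyseg} to identify the correct target segment. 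Your dropdown-based reduction glosses over this, but it is where much of the actual work in the type~3 case sits.
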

 \begin{proof}
 \begin{case} $R$ is type 3.

 \begin{scase}\label{scase:R=M}
$R=M$.

 So $\Tt_R$ is trivial; we claim that $\Tt_S$ uses no extenders. Let $\tau:N^\sq\to (N^Y_{x\xi})^\sq$ be the core map. Let $\psi=\psi_{\tau\com\pi}$.
 
 Suppose $\pi$ is $\nu$-low, so
 by assumption,
 $\nu^M$ is $M$-singular.
 Then $\tau\com\pi$ is $\nu$-low,
 and $\psi(\nu^M)=\rho_\om(\psi(S))$,
 which is $N^Y_{x\xi}$-singular,
 hence $Y$-singular, so this point is not measurable in $Y$,
 so $\Vv_S$ uses no extenders.
 
 Now suppose $\tau\com\pi$ is $\nu$-preserving. Then $\psi(\nu^M)=\nu(N^Y_{x\xi})=\rho_\om(\psi(S))$. Let $\zeta$ be such that $\psi(S)=\core_\om(N_{x\zeta}^Y)$.
 Suppose $t_{x\zeta}^Y=2$.
 Then $\kappa=\nu(F(N^Y_{x\xi}))$
 is $Y$-measurable.
 But then since $\rho_0(N^Y_{x\xi})=\kappa$,
 we have $\rho_\om(N^Y_{x\xi})=\kappa$,
 so $N=N_{x\xi n}^Y=N_{x\xi}^Y$,
 and $\tau=\id$.
 Since $t_{x\zeta}^Y=2$ and $N_{x\zeta}^Y|\kappa=N_{x\xi}^Y|\kappa$,
 note that $t_{x\xi}^Y=2$ also, contradicting our assumption.
 
 So $\tau\com\pi$ is $\nu$-high.
 So $\nu(F(N^Y_{x\xi}))<\psi(\nu^M)=\rho_\om(\psi(S))$. 
 Let $S'$ be the hull of $\psi(S)$
 used in the resurrection in this case,
 so $\rho_\om^{S'}=\nu(F(N^Y_{x\xi}))$.
 Since $\Vv_S$ uses an extender $E$,
 $\crit(E)=\rho_\om^{S'}$.
 So as before, $\rho_\om(N_{x\xi})=\rho_0(N_{x\xi})=\rho_\om(S')$ and $t^Y_{x\xi}=2$, contradiction.
 \end{scase}

\begin{scase}
 $R\pins M$ and $\rho_\om^R<\nu(F^R)=\rho_\om^S$.
  
  Easily, $\Tt_R\ins\Tt_S$.
  And since $\rho_\om^R<\nu(F^R)$,
  like in the previous subcase,
  $R$ lifts/resurrects to a type 3 $N_{x\alpha}^Y$ with $\nu(F(N_{x\alpha}^Y))$ non-$Y$-measurable, and this implies that $\Tt_S$ uses no extenders beyond those in $\Tt_R$.
  \end{scase}
  \begin{scase}
 $R\pins M$ and $\rho_\om^R=\nu(F^R)=\rho_\om^S$.
 
 If $M$ is type 3 and $\rho_\om^R=\rho_\om^S=\nu(F^M)$,
 then by Subcase \ref{scase:R=M},
 $\Tt_R,\Tt_S$ both use no extenders.
 If $\rho_\om^R=\rho_\om^S<\rho_0^M$
 and $\rho_\om^R=\rho_\om^S$
 is a cardinal of $M$,
 then $\Tt_R$ uses a measure $E$
 iff $\kappa=\psi(R)|\psi(\rho_\om^R)$
 is $Y$-measurable and $\psi(R)=N^{Y}_{x\kappa}$, and then $E=F_{\kappa 0}^Y$.
 Likewise for $S$,
 which suffices.
 
 Suppose $\rho_\om^R=\rho_\om^S$ is a non-$M$-cardinal.
Let $T\pins M$ be least such that $R\ins T$ and $\rho_\om^T<\rho_\om^R$;
so $T$ is also least such that $S\ins T$ and $\rho_\om^T<\rho_\om^S$.
Then $\Vv_T\ins\Vv_R$ and $\Vv_T\ins\Vv_S$.
If $\rho_\om^R=\rho_\om^S<\rho_0^T$
then we therefore $\Vv_R=\Vv_S$ basically like in the previous paragraph.
Otherwise, $T$ is type 3 and $\rho_\om^R=\rho_\om^S=\nu(F^T)$. But then
like before, $T$ lift/resurrects to some
type 3 stage $N_{x\alpha}^Y$ with $\nu(F(N^Y_{x\alpha}))$ non-$Y$-measurable,
and so $\Vv_R=\Vv_T\conc\left<\emptyset\right>\conc\Vv_T$.
\end{scase}\end{case}

\begin{case}
 $R$ is type 2.
 
 Using the previous case,
 this case is straightforward
 and left to the reader.
 (For example, we can use the previous case to understand the situation that $\rho_\om^R=\rho_\om^S=\nu(F^T)$
 where $T\ins M$, $T$ is type 3
 and either $T=M$ or $\rho_\om^R<\rho_\om^R$. Because $R$ is type 2, the resurrection map $\pi:R\to R^*$
 is used to shift $S,S'$,
 so $\rho^*=\rho_\om(\pi(S))=\rho_\om(\pi(S'))$
 and $\pi(S)|\rho^*=\pi(S')|\rho^*$, so $\Vv_S$ uses an extender $E$ beyond those in $\Vv_R$ iff $\Vv_{S'}$ does, and when they do use such an $E$, $E$ is
  just the order $0$ measure on $\rho^*$.)
\end{case}

\begin{case}
 $R$ is type 1.
 
 In this case, clearly there are no measures
 used in $\Vv_S$ whose critical point is an image of  $\rho_\om^S=\lgcd(R)=\mu^{+R}$ where $\mu=\crit(F^R)$. We omit further discussion.\qedhere
\end{case}
\end{proof}

\subsection{$\CC$ in Q-mice}

Recall that \emph{Q-mouse} and \emph{Q-degree} were introduced in  \ref{dfn:Q-prepair}
(and this was the special case of \emph{Q-pair} where $\delta=\delta^Y$).

\begin{lem}
Let $Y$ be a $\delta^Y$-sound Q-mouse with $\rho_{m+1}^Y\leq\delta=\delta^Y<\rho_m^Y$.
Let $x\in\RR\cap Y$
and $N=N_{x\Omega}^Y$ where $\Omega=\OR^Y$.
Then:
\begin{enumerate}\item\label{item:N_m-iterable_etc} $N$ is a
$\delta$-sound $m$-sound  $(m,\om_1+1)$-iterable non-small premouse
with $\rho_{m+1}^N\leq\delta=\delta^N$,
\item\label{item:N_is_Q-mouse} $N$ is a Q-mouse.
\end{enumerate}
In particular, if $\rho_{m+1}^N=\delta^N$ \tu{(}so $N$ is $(m+1)$-sound\tu{)} and $\delta^N$ is 
$\bfrSigma_{m+1}^N$-regular,
then $N$ is $(m+1,\om_1+1)$-iterable.
\end{lem}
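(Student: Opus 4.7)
The plan is to string together the fine-structural correspondence between $Y$ and $N$ supplied by the P-construction machinery of \S\ref{sec:CC}, then invoke the iterability result for models of $\CC$, and finally verify that $N$ inherits Q-mouse status from $Y$. First I would unpack the hypothesis: $Y$ being $\delta^Y$-sound with $\rho_{m+1}^Y\leq\delta=\delta^Y<\rho_m^Y$ says that $(Y,\delta)$ is a Q-pair of Q-degree $m$ in the sense of Definition~\ref{dfn:Q-prepair}, so $Y$ is $(m,\om_1+1)$-iterable, $\delta\leq\rho_m^Y$, and $\delta$ is $\bfrSigma_m^Y$-regular. Using the Q-mouse iterability of $Y$ and Woodin exactness \ref{lem:Woodin_exactness}, $Y$ is $(x,\beta)$-iterability-good for every $\beta<\Omega$, hence $(x,\Omega,m)$-good. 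Applying Lemma \ref{lem:Q-local_fs_match} with $y=m$ yields that $N=N^Y_{\Omega m}$ is $m$-sound with matching fine structure $\rho_z^N=\rho_z^Y$ and $p_z^N=p_z^Y$ for $z\leq m$, and is above-$\delta$ $(m,\om_1+1)^*$-iterable; hence $\rho_{m+1}^N\leq\rho_{m+1}^Y\leq\delta$. By \ref{lem:Woodin_exactness}, $N$ is non-small with $\delta^N=\delta$ and $\OR^N=\OR^Y$. The $\delta$-soundness of $N$ then follows from that of $Y$ together with the $\rSigma_{m+1}$-hull matching of \ref{lem:P-con_basic_props}\ref{item:Hull_matchup} and genericity of $Y|\delta$ over $N$.

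Second, to promote the above-$\delta$ iterability of $N$ to full $(m,\om_1+1)$-iterability, I would apply Lemma~\ref{lem:iterability} part~\ref{item:Q-mouse_it} with $y=z=m$ and $\xi=\Omega=\OR^Y$; all hypotheses are in place (namely, $Y$ is $m$-sound, $(m,\om_1+1)$-iterable, non-small, $\delta^Y\leq\rho_m^Y$, $\delta^Y$ is $\bfrSigma_m^Y$-regular, and $\xi=\OR^Y$ with $z\leq y$). This completes part~\ref{item:N_m-iterable_etc}.

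For part~\ref{item:N_is_Q-mouse}, the task is to verify that $(N,\delta)$ satisfies one of the two clauses in the Q-prepair definition; by the fine-structure match this reduces to transferring the relevant $\bfrSigma$-Woodinness/singularity properties from $Y$ to $N$. I would split on cases. If $(Y,\delta)$ is Q-singular, so $\delta$ is $\bfrSigma_m^Y$-Woodin and $\bfrSigma_{m+1}^Y$-singular, I would (a) derive $\bfrSigma_m^N$-Woodinness of $\delta$ by the level-$m$ analogue of the Woodinness transfer in the proof of \ref{lem:Woodin_exactness}, converting $\bfrSigma_m^N$-subsets of $\delta$ to $\bfrSigma_m^Y$-subsets via \ref{lem:rSigma_red_N_to_Y} and using coherence and the ISC to pull the reflecting extenders down from $Y$ to $N$; and (b) transfer $\bfrSigma_{m+1}$-singularity by producing, for any $\bfrSigma_{m+1}^Y$-singularizing function $f$, a $\bfrSigma_{m+1}^N$-name for $f$ via the $\bfrSigma_{m+1}^N$-definable strong forcing relation (\ref{lem:P-con_basic_props}\ref{item:rSigma_n+1_forcing_rel_def}), from which the cofinal function is recovered as $\bfrSigma_{m+1}^N$-definable in $N$ using $Y|\delta$ as a generic parameter coded by a strong forcing condition. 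If instead $(Y,\delta)$ is Q-regular (so $m>0$, $\delta$ is $\bfrSigma_{m-1}^Y$-Woodin but not $\bfrSigma_m^Y$-Woodin), I would mirror Case~\ref{case:Y_Q-reg} of the proof of \ref{lem:Q-mouse_core}, using matching $\rSigma_m$-theories and the forcing relation to carry the failure of $\bfrSigma_m^Y$-Woodinness to a failure of $\bfrSigma_m^N$-Woodinness. Combined with the $(m,\om_1+1)$-iterability of part~\ref{item:N_m-iterable_etc}, this makes $N$ a Q-mouse.

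For the ``in particular'' clause, assume additionally $\rho_{m+1}^N=\delta$ and $\delta$ is $\bfrSigma_{m+1}^N$-regular. Then by the fine-structure match $\rho_{m+1}^Y=\delta$, so $Y$ is $(m+1)$-sound, and both the matching of hulls \ref{lem:P-con_basic_props}\ref{item:Hull_matchup} and the forcing-relation analysis transfer $\bfrSigma_{m+1}$-regularity back and forth between $N$ and $Y$. The key new step is to upgrade $(m,\om_1+1)$-iterability of $N$ to $(m+1,\om_1+1)$-iterability, in the spirit of \ref{lem:Q_extra_it}. I would argue that for any $\kappa<\delta$ and $\bfrSigma_{m+1}^N$-function $f:[\kappa]^{<\om}\to N$ there is a factorization $f=h\circ g$, where $h:\delta\to N$ is the $\bfrSigma_{m+1}^N$-Skolem surjection provided by $(m+1)$-soundness plus $\rho_{m+1}^N=\delta$, and $g:[\kappa]^{<\om}\to\delta$ is $\bfrSigma_{m+1}^N$. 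Since $\kappa<\delta$, the $\bfrSigma_{m+1}^N$-regularity of $\delta$ forces the range of $g$ to be bounded in $\delta$, hence $g\in N$. As every extender used in an $(m+1)$-maximal tree on $N$ has critical point $<\rho_{m+1}^N=\delta$, the representation of $(m+1)$-ultrapower elements as $[\vec{a},h\circ g]$ with $g\in N$ shows that such $(m+1)$-ultrapowers coincide with $m$-ultrapowers (via the corresponding image of $h$ in $\Ult_m$), so $(m+1)$-maximal and $m$-maximal trees on $N$ are equivalent. The main obstacle is the Paragraph~3 transfer of the Q-pair conditions, precisely the missing piece flagged in the unfinished lemma following \ref{lem:Woodin_exactness}; pushing the forcing-definability results of \ref{lem:P-con_basic_props} to their full potential at level $m$ is what has to be executed carefully.
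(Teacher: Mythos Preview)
Your Part~\ref{item:N_m-iterable_etc} is fine and matches the paper's brief appeal to Lemma~\ref{lem:iterability} and the P-construction machinery.

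The genuine gap is in Part~\ref{item:N_is_Q-mouse}. You attempt to transfer the Q-degree from $Y$ to $N$ directly, but the Q-degree of $N$ can be strictly higher than that of $Y$. Specifically, your step~(b) in the Q-singular case --- transferring $\bfrSigma_{m+1}^Y$-singularity of $\delta$ to $\bfrSigma_{m+1}^N$-singularity --- does not go through: the parameter $Y|\delta$ you propose to use is the generic filter and is not in $N$, so it cannot appear in a $\bfrSigma_{m+1}^N$ definition, and no single forcing condition can substitute for it. The paper explicitly allows for (and handles) the scenario where $\delta$ is $\bfrSigma_{m+1}^Y$-singular but $\bfrSigma_{m+1}^N$-regular. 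Rather than transferring Q-degree, the paper case-splits on $N$: if $\delta$ is $\bfrSigma_{m+1}^N$-singular, Remark~\ref{rem:get_Q-mouse} applies immediately; if regular, one shows $N$ is $(m+1,\om_1+1)$-iterable, then Lemma~\ref{lem:Q_extra_it} lets one climb to the correct Q-degree, and finally one rules out the $\om$-sound case via the argument of Lemma~\ref{lem:Woodin_exactness}.

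Your ``in particular'' clause has the same underlying problem. The claim that $\rho_{m+1}^Y=\delta$ follows from $\rho_{m+1}^N=\delta$ by ``the fine-structure match'' is unjustified: Lemma~\ref{lem:Q-local_fs_match} only matches $\rho_z,p_z$ for $z\leq m$. More seriously, your tree-equivalence argument is incomplete. The factorization $f=h\circ g$ with $g\in N$ does not by itself make $f$ a $\bfrSigma_m^N$ function, nor does it exhibit $[a,f]^{m+1}$ as an element of $\Ult_m$: for $\wt{h}$ to be defined at the new ordinal $[a,g]^m$ in $\Ult_m$, or equivalently for $h\rest\beta_0\in N$ for every $\beta_0<\delta$, one needs precisely that $\cof^{\bfrSigma_{m+1}^N}(\rho_m^N)\geq\delta$. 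This is the paper's key Subclaim, and it is proved by a forcing-theoretic argument exploiting the $\bfrSigma_{m+1}^Y$-singularization of $\delta$ together with the strong $\rSigma_{m+1}$-forcing relation to produce a $\bfrSigma_{m+1}^N$ map from $\rho_m^N$ cofinally into $\delta$. Only with this cofinality fact in hand does the $\bfrSigma_{m+1}^N$-regularity of $\delta$ collapse $\bfrSigma_{m+1}^N$ functions on small domains to $\bfrSigma_m^N$, yielding the equivalence of $(m+1)$-maximal and $m$-maximal trees.
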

\begin{proof}
 Part \ref{item:N_m-iterable_etc} is just by Lemma \ref{lem:iterability} and the material in \S\ref{sec:P-construction}.

 Part \ref{item:N_is_Q-mouse}:
If $\delta$ is $\bfrSigma_{m+1}^N$-singular then $(m,\om+1)$-iterability suffices.
 
 So suppose $\delta$ is $\bfrSigma_{m+1}^N$-regular, so in particular by Lemma \ref{lem:hull_cof_in_delta}, $\rho_{m+1}^N=\delta$.
 
 \begin{clm}$N$ is $(m+1,\om_1+1)$-iterable.\end{clm}
 \begin{proof}
 If $\delta$ is $\bfrSigma_{m+1}^Y$-regular then $\delta=\rho_{m+1}^Y$ and $Y$ is $(m+1,\om_1+1)$-iterable
 (as we are assuming that $Y$ is a Q-mouse,
 so $Y$ is $(q^Y,\om_1+1)$-iterable),
 which again by Lemma \ref{lem:iterability} implies that $N$ is $(m+1,\om_1+1)$-iterable. So suppose $\delta$ is 
$\bfrSigma_{m+1}^Y$-singular.

\begin{sclm}\label{sclm:cof^N_m+1(rho_m)}$\cof^{\bfrSigma_{m+1}^N}(\rho_m^N)=\delta$.\end{sclm}

\begin{proof} Let us officially assume $m>0$, and leave the other (similar but in the usual way slightly different) case to the reader.
Let $\mu<\delta$ and $x\in\OR^Y$ be such that
\begin{equation}\label{eqn:Hull_of_mu_cup_x_cof_in_delta} \delta\cap\Hull_{m+1}^Y(\mu\un\{x\})\text{ is cofinal in }\delta.\end{equation}

Let $\BB=\BB^N_\delta$. Let $G\sub\BB$ be the generic determined by $Y|\delta$.
We have that for each $\rSigma_{m+1}$ formula $\varphi$
and $\alpha<\mu$ and $\gamma<\delta$,
\[ Y\sats\varphi(x,\alpha,\gamma)\ \iff\ \exists\xi<\rho_m^N\ \exists p\in 
G\ \Big[ N\sats p\forces_0\psi_\varphi(\check{x},\check{\alpha},\check{\gamma},\dot{t}_\xi)\Big], \]
where $\forces_0$ is the $\rSigma_0$ forcing relation for $\BB$
and $\dot{t}_\xi$ is the canonical
$\BB$-name for $\Th_{\rSigma_m}^{N[\dot{G}]}(\xi\cup\{\pvec_m^N,x,\alpha,\gamma\})$
and $\psi_\varphi$ asserts
that ``$\dot{t}_\xi$ codes a witness to $\varphi(x,\alpha,\gamma)$''. Note that the statement ``codes a witness'' can be taken just $\Sigma_0$
in the language of set theory (we're assuming $m>0$); the  terminology \emph{codes a witness}
is like in \cite[\S5]{V=HODX_pub}.

For each $\xi<\rho_m^N$ let $\gamma_\xi$ be the supremum of all
$\gamma<\delta$ such that there is $p\in\BB$ and an $\rSigma_{m+1}$ formula $\varphi$ and $\alpha<\mu$ 
such that $\gamma$ is the unique $\sigma$ such that
\[ N\sats p\forces_0\psi_\varphi(\check{x},\check{\alpha},\check{\sigma},\dot{t}_\xi) \]
Note that for each individual $\xi$, this computation is made with sets in $N$, so  $\gamma_\xi<\delta$.
But $\delta=\sup_{\xi<\rho_m^N}\gamma_\xi$, by line (\ref{eqn:Hull_of_mu_cup_x_cof_in_delta}).
Since $\delta$ is $\bfrSigma_{m+1}^N$-regular, this completes the 
proof of the subclaim.
\end{proof}

By Subclaim \ref{sclm:cof^N_m+1(rho_m)} and very much as in the proof of \ref{lem:Q_extra_it}, every $\bfrSigma_{m+1}^{N}$ function $f:\kappa\to N$, with domain $\kappa<\delta$,
is in fact $\bfrSigma_m^N$, and so
$(m+1)$-maximal trees on $N$ are equivalent to $m$-maximal trees,
so $N$ is $(m+1,\om_1+1)$-iterable. This proves the claim.
\end{proof}

By the claim and Lemma \ref{lem:Q_extra_it},
$N$ is  $(k,\om_1+1)$-iterable,
where $k\leq\om$ is largest such that $\delta$ is $\bfrSigma_k^N$-regular.
So if $\rho_k^N<\delta$ for some $k<\om$,
then we are done, so suppose $N$ is $\om$-sound with $\rho_\om^N=\delta$ and $\delta$ is regular in $\J(N)$. So $N$ is $(\om,\om_1+1)$-iterable.
We just need that $\delta$ is not Woodin in $\J(N)$. But otherwise, by the proof of Lemma \ref{lem:Woodin_exactness}, $\delta$ is Woodin in $\J(N)[G]$,  contradicting that $Y$ is a Q-mouse.
\end{proof}

\section{Pointclasses}\label{sec:pointclasses}

We now introduce two kinds of pointclasses, for which we will later define scales. The first (corresponding to \emph{$1$-$\CC$-closed} mouse operators)
will include for example $\Pi^1_3$. The second (corresponding to \emph{exactly reconstructing mouse operators})
will include for example $\bigcup_{n<\om}(\Game(\omega n\text{-}\Pi^1_1))$ (this pointclass is  $\Gamma_{\mathscr{M},\mathrm{nice}}$ where $\mathscr{M}(x)=x^\#$).
\subsection{$1$-$\CC$-closed mouse operators}

\begin{dfn}
 Let $M,Y$ be premice, with $M$ over $x\in\RR$ and $M$ sound.
 We say that $\CC_x^Y$ \emph{properly reaches} $M$
 iff there is $\alpha<\OR^Y$ such that $Y$ is $(x,\alpha+1)$-iterability-good
 and $\core_\om(N^Y_{x\alpha})=\core_0(M)$.
\end{dfn}

\begin{dfn}
Let $\Ll^\passive_\pm$ denote the set of formulas in
the language of passive premice (so $\Ll^\passive_\pm\sub\Ll_\pm$)
and $\Ll^\passive_{\Sigma_1}$ denote
the set of $\Sigma_1$ formulas in $\Ll^\passive_\pm$.
We write $\elem_{1}^\passive$ for $\Sigma_1$-elementarity with respect to $\Ll^\passive_\pm$.
Given a formula $\psi\in\Ll_\pm$ and $x\in\RR$,
let $M^\psi_x$ be the least $\om$-mouse over $x$ satisfying $\psi$,
if any such exists. (Recall that $\Ll_\pm$ includes
a symbol $\dot{x}$ interpreted as the base real $x$.)

 Let $\Mmm:\RR\to V$ be a function. We say that $\Mmm$ is \emph{$1$-$\CC$-closed}
iff:
\begin{enumerate}
\item $\Mmm_x=\Mmm(x)$ is an $x$-premouse which is passive of limit ordertype (that is, 
$\Mmm_x\neq\J(N)$ for any $N$), $\Mmm_x\sats$``$V=\HC$''
and is $(\om,\om_1+1)$-iterable.
\item There is no $N\pins\Mmm_x$ such that $N\elem_{1}^\passive\Mmm_x$.
 \item (Turing invariance) For all $x,y\in\RR$, if $x\equiv_T y$
 then $\Mmm_x$ is the rearrangement of $\Mmm_y$ as an $x$-premouse.
 \item\label{item:closure_downward} (Downward $\CC$-closure) For all $x,z\in\RR$, if $M\pins\Mmm_z$, 
$x\in\RR^M$
 and $\CC_x^M$ properly reaches an $\om$-mouse $N$ (an $x$-premouse), then $\Th_{\Sigma_1}^{\J(N)}\sub\Th_{\Sigma_1}(\Mmm_x)$.
 \item\label{item:closure_upward} (Upward $\CC$-closure) For all $x,z\in\RR$ 
with $x\in\Mmm_z$, for all $\om$-mice $M\pins\Mmm_x$, there is $N\pins\Mmm_z$ such that $\CC^N_x$ 
properly reaches $M$.
\end{enumerate}
We say that $\Mmm$ is \emph{admissible} iff $\Mmm_x$ is admissible for every $x\in\RR$.
\end{dfn}
\begin{exm}
A very simple example of an admissible $1$-$\CC$-closed operator is $\Mmm(x)=\J_\delta(x)$
where $\delta$ is least such that $\J_\delta(x)\elem^\passive_1 L[x]$. For in this case downward 
$\CC$-closure is automatic, as every $N$ as there is a segment of $L[x]$. Upward $\CC$-closure is 
also easy: given $x\in\Mmm_z\inter\RR$ and $M\pins\Mmm_x$ such that $\rho_\om^M=\om$,
certainly there is some $N\pins L[z]$ such that $x\in N$ and $\CC^N_x$ properly reaches $M$.
Let $N$ be least such.
Because $M\pins\Mmm_x$, we may assume that there is a $\Sigma_1$ formula $\psi$
such that $\J(M)\sats\psi$ but $M\not\sats\psi$. But because $x\in\Mmm_z$, this implies 
that there is a $\Sigma_1$ formula $\varphi$ such that $\J(N)\sats\varphi$ but $N\not\sats\varphi$.
So $N\pins\Mmm_z$, and we are done.
\end{exm}
\begin{exm}
 A second example is $\Mmm(x)=M_1(x)|\delta$,
 where $\delta$ is least such that $M_1(x)|\delta\elem^\passive_1 M_1(x)$.
 Here downward $\CC$-closure is because if $x\in M_1(z)\inter\RR$
 and $x\in S\ins M_1(z)$ then $\RR\inter N_{x\infty}^S\sub M_1(x)$;
 this is because every model produced by $\CC^S_x$ is $1$-small,
 because $S$ is $1$-small and by \ref{cor:local_Woodins}. Upward $\CC$-closure is
 because $M_1(x)\inter\RR\sub N_{x\infty}^{M_1(z)}$ (given $x\in M_1(z)$),
 and as in the previous example.
\end{exm}

\begin{dfn}
 Let $\Mmm$ be $1$-$\CC$-closed.
 
 For $\psi\in\Ll^\passive_{\Sigma_1}$, write
 \[ A_{\Mmm\psi}=\{x\in\RR^n\ |\ 
1\leq n<\om\ \&\ \Mmm_x\sats\psi\}.\]
Where convenient we will suppress $n$ and imagine that $n=1$.
 We write $A_\psi=A_{\Mmm\psi}$ if $\Mmm$ is clear from context.
 
Define the pointclass
$\Gamma_\Mmm=\{A_{\Mmm\psi}\mid\psi\in\Ll^\passive_{\Sigma_1}\}$.
\end{dfn}
The first main theorem of the paper, to be proved later, is:
\begin{tm}\label{thm:1-C-closed_scale_prop}Let $\Mmm$ be $1$-$\CC$-closed. Then $\Gamma_\Mmm$ has the scale 
property.\end{tm}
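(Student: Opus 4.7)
The plan is to mimic the template of \S\ref{subsec:Pi^1_1} and \S\ref{sec:MS}, but with the mouse operators $x \mapsto M_x \eqdef \Mmm_x$ in place of $x \mapsto x^\#$, and with the backgrounded $L[\es,x]$-constructions $\CC^Y_x$ of \S\ref{sec:CC} replacing the simple iterated-sharp construction. Fix $\psi \in \Ll^\passive_{\Sigma_1}$ and let $A = A_\psi$. Since $\Mmm_x$ is passive with $\Mmm_x \sats V = \HC$ and has no proper segment that is $\Sigma_1$-elementary in it, we have $\Mmm_x = \Hull_1^{\Mmm_x}(\emptyset)$, so the Los/limit-model construction of \S\ref{subsec:Pi^1_1} applies formally: first I would introduce the \emph{theory norms} $\varphi_\theta$, one per sentence $\theta \in \Ll^\passive_\pm$, with $\varphi_\theta(x) = 1$ iff $\Mmm_x \sats \theta$, which guarantee a complete theory $T_\infty$ and a unique limit model $M_\infty$ with $M_\infty = \Hull_1^{M_\infty}(\emptyset)$ and $\Th(M_\infty) = T_\infty$. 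Next I would add the \emph{ordinal norms} $\varphi^{\OR}_\tau$, one per $\Sigma_1$ ordinal term $\tau$, witnessing wellfoundedness of $M_\infty$; these are packaged with their associated $\OR^{\Mmm_x}$-rank to keep them regular. So far, along any convergent sequence $x_n \to x$ modulo these norms, $M_\infty$ is a wellfounded passive $x$-premouse satisfying the first-order consequences of ``$x \in A_\psi$''.

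The heart of the argument is to add norms that force $M_\infty$ to equal $\Mmm_x$, hence to witness $x \in A$. This is where the $\CC$-construction enters, paralleling the role of $M_1^\#$ in Theorem \ref{tm:PWO(Pi^1_3)}: given $x, y \in A$ and any $z \geq_T (x,y)$, upward $\CC$-closure lets us view $\Mmm_x, \Mmm_y$ as produced by $\CC_x^{\Mmm_z}, \CC_y^{\Mmm_z}$ inside the iterable background universe $\Mmm_z$; downward $\CC$-closure guarantees that features of these constructions do not escape $\Gamma_\Mmm$. For each $\Sigma_1$ feature $\tau$ of a $\CC$-construction (theories, ordinal terms, indices of background extenders, and higher-depth data involving short finite iteration trees on the produced models), I would define a norm $\leq_\tau$ on $A$ by comparing $\tau^{\CC_x^{\Mmm_z}}$ and $\tau^{\CC_y^{\Mmm_z}}$; the key claim --- the analogue of Claim \ref{clm:MS_norm_independence} --- is that the outcome is independent of the choice of $z$, which follows from comparison of iterable backgrounds via Lemma \ref{lem:first_comparison} together with the fine-structural rigidity established in \S\ref{sec:CC_iterability}. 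Enriching the norm sequence $\vec{\varphi}$ by all such $\tau$, the limit model $M_\infty$ absorbs the iteration strategies of approximating $\CC^{\Mmm_{z_n}}$-models, which as in the proof of Claim \ref{clm:M_infty=x^sharp} yields a map $\pi$ from $\OR^{M_\infty}$ into the ordinals of a suitable background iterate. Order-preservation of $\pi$ upgrades to iterability of $M_\infty$ at all countable stages, so $M_\infty$ is an $\om$-mouse; combined with $T_\infty$ containing the defining $\Sigma_1$ facts, minimality forces $M_\infty = \Mmm_{x_\infty}$, giving the semiscale property.

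Lower semicontinuity is the step I expect to be the main obstacle, and it is the part the author flags as having required a new idea. The naive realization argument of \S\ref{sec:MS}, which realized the limit model as a genuine iterate, does not obviously lift, because the relevant iteration trees need not be by a single normal measure but involve the full backgrounded construction $\CC$ with its locality features at measurables (property (ii) of \S\ref{sec:CC}) and with its interaction with Q-structures above local Woodins. The approach I would take is the one indicated in \S\ref{sec:lower_semi}: use a refined realization map built from the commutativity properties of resurrection developed in \S\ref{sec:CC_iterability} (especially Lemma \ref{lem:res_comm} and the finite-stack copy construction from Definition \ref{dfn:lift^Tt,Y}) to embed $M_\infty$ compatibly with each finite approximation $M_{x_n}$, then exploit the coherence of background extenders to propagate lower semicontinuity from the theory and ordinal norms to the depth-$\geq 1$ norms. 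Finally, the $\Gamma_\Mmm$-complexity of the resulting $\leq^*_n$ and $<^*_n$ will reduce, via the downward $\CC$-closure clause and the definability of $\CC^Y$ inside $Y$, to $\Sigma_1$ statements over $\Mmm$ applied to codes of the relevant reals; this is the generalization of the overspill/locality argument used at the end of \S\ref{subsec:Pi^1_1} and of the $\Pi^1_3$ definability computation in Theorem \ref{tm:PWO(Pi^1_3)}, so the scale will indeed be a $\Gamma_\Mmm$-scale, completing the proof.
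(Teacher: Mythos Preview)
Your plan has the right architecture and matches the paper's approach in outline, but there are concrete gaps in the execution.

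First, the paper does not take $M_x = \Mmm_x$. For $x \in A$ it sets $M_x$ to be the \emph{least} $M \pins \Mmm(x)$ with $M \sats \psi_0$ (start of \S\ref{sec:theory_norms_1-CC-closed}), so $M_x = \J(R)$ for some $R$ with $\rho_\om^R = \om$, and $M_x$ is pinned down first-order by ``$\psi_0$ holds but no proper segment satisfies $\psi_0$.'' That pin-down is what makes the inference $M_\infty = M_x$, once $M_\infty$ is iterable, a one-line comparison (Remark~\ref{rem:M_infty}). With your choice $M_x = \Mmm_x$ there is no first-order characterization of $\Mmm_x$ among iterable $x$-mice, and it is not clear how you would rule out $M_\infty$ being a strictly larger mouse.

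Second, Lemma~\ref{lem:first_comparison}, which you invoke for norm invariance, is for the simpler construction $\DD$ of \S\ref{sec:Pi^1_3}, not for $\CC$. The actual norm-invariance lemma for $\CC$ (Lemma~\ref{lem:norm_invariance}) is the ``first central argument of the paper'' and is considerably more involved: it must handle stages with $t_\alpha \in \{2,3\}$, and for norms of depth ${>}1$ it interleaves the comparison with copies of the finite lift trees $\Uu^P$, $\Uu^Q$ via \emph{coarse tree embeddings} $\Delta : \Uu^P \hookrightarrow_{\mathrm{c}} \Vv$ (Definition~\ref{dfn:coarse_tree_emb}), declaring certain comparison stages ``copying'' and others ``inflationary.'' Relatedly, the lifting norms are not simply ``features $\tau^{\CC_x^{\Mmm_z}}$'': a norm description $\sigma$ (Definitions~\ref{dfn:norm_desc}, \ref{dfn:lifting_norm}) specifies via rep-terms a finite sse-essentially-maximal tree $\Tt_x$ on $M_x$, and $\leq_\sigma$ compares in lex order the sequence of resurrection stages $\alpha_j$, $\beta_j$, their $t$-values, production-segment heights, background-extender lengths, and images of chosen generators $\vec{a}_i$ that arise when $\Tt_x$ is lifted through the resurrection machinery of \S\ref{sec:CC_iterability} to a common background $P$. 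These are features of the lift process, not of $\CC^P_x$ alone; without this level of detail one cannot even state, let alone prove, the invariance claim.

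For lower semicontinuity your instinct that naive realization fails is correct. The paper's key device is Theorem~\ref{tm:iter_to_bkgd} (iterating to background): one simultaneously builds a tree $\Tt$ on $\wt{M}_x = M_\infty$ via the limit strategy $\Sigma_\infty$ and its lift $\Uu$ on the background $P$, choosing each $E^\Tt_\alpha$ so that eventually $\Tt = \Tt^{P'}_{x\Omega'}$ for $P' = M^\Uu_\infty$; the extender selection is governed by tracking which segments $R \pins M^\Tt_\alpha$ are ``produced late'' (their direct $\CC^{P_\alpha}_x$-stage exceeds their limit-lift stage). For higher-depth norms this construction is merged with a coarse tree embedding of the finite resurrection tree $\Rr$ into $\Uu$, paralleling the norm-invariance proof. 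Your reference to Lemma~\ref{lem:res_comm} and commutativity is in the right spirit for the iterability of $M_\infty$ (\S\ref{sec:iterability_M_infty}), but does not by itself supply the iterate-to-background mechanism needed here.
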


Right now we can observe some simple properties of $\Gamma_\Mmm$.

\begin{dfn}
Let $\Mmm$ be $1$-$\CC$-closed. Let $x,z\in\RR$ and $x\in\Mmm_z$.
Then $S^{\Mmm_z}_x$, or just $S^z_x$, denotes the stack of all $\om$-premice $M$ such that
for some $N\pins\Mmm_z$, $\CC^N_x$ properly reaches $M$.
By downward and upward closure, $\Mmm_x\elem^\passive_1 S^z_x$.
\end{dfn}
\begin{rem}\label{rem:S^Z_x}
Suppose $x\in\Mmm_z$. Then $S^{\Mmm_z}_x$ really is a stack because
if $N\pins\Mmm_z$ and $N\sats$``$\CC^N_x$ properly reaches
 $M$'' and $\core_\om(M)$ is an $\om$-premouse,
 then $\core_\om(M)$ is an $\om$-mouse.
 Thus, $S^{\Mmm_z}_x$ is $\Sigma_1^{\Mmm_z}(\{x\})$, uniformly in $x,z$.
 By upward $\CC$-closure, $\Mmm_x\ins S^{\Mmm_z}_x$.
 Therefore $\Mmm_x$ is $\Sigma_1^{\Mmm_z}(\{x\})$, uniformly in $x,z$
 (it is the stack of all $\om$-premice $M\pins S^{\Mmm_z}_x$
 such that for some $\psi\in\Ll^\passive_{\Sigma_1}$, $\J(M)\sats\psi$
 but $M\not\sats\psi$).
\end{rem}

\begin{rem}
We recall some notions from
\cite{mosch} (we state the definitions here just for $\RR$). Let $\Gamma$ be a pointclass. Then 
$\Gamma$ is \emph{$\om$-parametrized}
iff for each $n$ there is $U_n\sub\om\cross\RR^n$
such that $U_n\in\Gamma$ and $U_n$ is universal for $\Gamma\inter\pow(\RR^n)$,
meaning that for each $A\in\Gamma\inter\pow(\RR^n)$, there is $m<\om$ such that $A=(U_n)_m$.
Given a partial function $f:\RR^n\to\RR$ and $P\sub\RR^n\cross\seq$,
we say that $P$ \emph{codes $f$ on its domain} iff for every $x\in\dom(f)$
and $s\in\seq$, we have $s\sub f(x)$ iff $(x,s)\in P$. Similarly for $f:\RR^n\to\RR^m$.
We say that $\Gamma$ has the \emph{$\RR$-Substitution Property} iff whenever
$P,Q\in\Gamma$ and 
$f:\RR^n\to\RR^m$ is a partial function such that $P$ codes $f$ on its domain,
and $Q\sub\RR^m$, then there is $R\in\Gamma$ such that for all $x\in\dom(f)$, we have
$x\in R$ iff $f(x)\in Q$.
Given $A\sub\RR^n$, $A^c=\{x\in\RR^n\bigm|x\notin A\}$ denotes the \emph{complement} of $A$
(of course, the $n$ is ambiguous if $A=\emptyset$,
but this doesn't cause a real problem).
We write $\stackrel{\smile}{\Gamma}=\{A^{\mathrm{c}}\bigm|A\in\Gamma\}$ for the 
\emph{dual} of $\Gamma$ (automatically including $\RR^n\in\stackrel{\smile}{\Gamma}$ for all $n<\om$, if $\emptyset\in\Gamma$).
We write $\Delta_{\Gamma}=\Gamma\cap\stackrel{\smile}{\Gamma}$.
For $A\sub\RR^{n+1}$, let $A_x=\{\vec{y}\in\RR^n\bigm|(\vec{y},x)\in A\}$.
For $x\in\RR$, $\Gamma(x)$ denotes the \emph{relativized \tu{(}to $x$\tu{)}} pointclass $\{A_x\bigm|A\in\Gamma\}$.
Note that $(\stackrel{\smile}{\Gamma})(x)={\stackrel{\smile}{\Gamma(x)}}$;
so we just write $\stackrel{\smile}{\Gamma}(x)$
for this pointclass.
Note that  $\Delta_{\Gamma(x)}=\Gamma(x)\cap\stackrel{\smile}{\Gamma}(x)$.\footnote{
The usual notational convention is to write $\Delta_\Gamma(x)=\Gamma(x)\cap\stackrel{\smile}{\Gamma}(x)$,
although this is notationally inconsistent,
since according to the earlier definitions,
$\Delta_\Gamma(x)$ should be $(\Gamma\cap\stackrel{\smile}{\Gamma})(x)$,
which can differ from $\Gamma(x)\cap\stackrel{\smile}{\Gamma}(x)$.
We do not follow this convention,
but also do not contradict it, as we just write $\Delta_{\Gamma(x)}$ instead.}
In some contexts,  $\Delta_{\Gamma(x)}$ 
can be considered as restricted to  relations $\sub\om$.
In this connection, we say that $\Gamma$ is \emph{closed under $\ex x\in\Delta_{\Gamma(y)}$}
iff for all $P\in\Gamma$ with $P\sub\RR^{2+n}$,
we have $Q\in\Gamma$ where
\[ Q(y,z)\ \iff\ \ex x\in\Delta_{\Gamma(y)}[P(x,y,z)].\]
\end{rem}

\begin{lem}
 Let $\Mmm$ be $1$-$\CC$-closed. Let $\Gamma=\Gamma_\Mmm$. Then $\Gamma$ is
closed under disjunction, conjunction, $\ex n<\om$, $\all i<m$ \tu{(}where $m$ is a 
natural number variable\tu{)},  recursive substitution, and is
 $\om$-parametrized. Moreover, for each $y\in\RR$, we have $\RR\inter\Mmm_y\sub\Delta_{\Gamma(y)}$.
 
Suppose $\Mmm$ is admissible. Then $\Gamma$ is closed under $\all n<\om$,
 has the $\RR$-Substitution Property, and is closed under $\ex x\in\Delta(y)$.
 Moreover, for each $y\in\RR$, we have $\RR\inter\Mmm_y=\RR\cap\Delta_{\Gamma(y)}$.
\end{lem}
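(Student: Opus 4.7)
The closure of $\Gamma$ under $\vee$, $\wedge$, $\exists n<\om$, and $\forall i<m$ is inherited from the corresponding syntactic closure of $\Ll^\passive_{\Sigma_1}$: if $A_\varphi,A_\psi\in\Gamma$ then $A_\varphi\cap A_\psi=A_{\varphi\wedge\psi}$ and $A_\varphi\cup A_\psi=A_{\varphi\vee\psi}$, and since $\Mmm_x\sats V=\HC$ we have $\om\in\Mmm_x$ so that $\Sigma_1$ statements are closed under number quantifiers bounded by $\om$. Recursive substitution is handled via Turing invariance: given recursive $f$ and $A=A_\psi$, replace $\psi$ by a $\Sigma_1$-formula $\psi^f$ that internally runs the recursive computation of $f$ on the base real (the graph of this computation being $\Sigma_1$ in $\HC$), uses Turing invariance to identify $\Mmm_{\vec x}$ with $\Mmm_{(f(\vec x),\vec x)}$ as structures, and then evaluates $\psi$; thus $f^{-1}(A)=A_{\psi^f}\in\Gamma$. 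For $\om$-parametrization, fix a recursive enumeration $\left<\psi_k\right>_{k<\om}$ of $\Ll^\passive_{\Sigma_1}$-sentences and take $U=A_{\psi^{\univ}}$, where $\psi^\univ$ is the single $\Sigma_1$-formula that decodes its first argument into $\psi_k$ and evaluates it; this $U$ is universal for $\Gamma\cap\pow(\RR^n)$ by construction.

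For $\RR\cap\Mmm_y\sub\Delta_{\Gamma(y)}$, I first observe that $\Mmm_y$ is pointwise $\Sigma_1$-definable over $\Ll^\passive_\pm$: let $H$ be the $\Ll^\passive_\pm$-$\Sigma_1$-hull of $\emptyset$ in $\Mmm_y$ and let $\bar H$ be its transitive collapse. Then $\bar H\elem^\passive_1\Mmm_y$, and by condensation for proper segments of $\Mmm_y$ (which is $(\om,\om_1+1)$-iterable), $\bar H$ is either $\Mmm_y$ itself or a proper segment; the minimality condition in the definition of $1$-$\CC$-closed excludes the latter, so $\bar H=\Mmm_y$. Hence for each $x\in\RR\cap\Mmm_y$ there is a $\Sigma_1$-formula $\phi_x(v)$ in $\Ll^\passive_\pm$ uniquely defining $x$ in $\Mmm_y$. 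For $(n,m)\in\om^2$, the condition $x(n)=m$ is equivalent to
\[ \Mmm_{(n,m,y)}\sats\exists z\,\bigl[\phi_x(z)\wedge z(\check n)=\check m\bigr], \]
where $\check n,\check m$ are decoded from the base real via Turing invariance; this is $\Sigma_1$. The negation is symmetric, so the graph of $x$ lies in $\Delta_{\Gamma(y)}$.

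Now assume $\Mmm$ is admissible. Closure of $\Gamma$ under $\forall n<\om$ follows from $\Sigma_1$-collection in the admissible $\Mmm_x$: for bounded $\phi$, the statement $\forall n<\om\,\exists v\,\phi(n,v)$ is equivalent inside $\Mmm_x$ to the $\Sigma_1$-statement $\exists w\,\forall n<\om\,\exists v\in w\,\phi(n,v)$. The $\RR$-Substitution Property is treated similarly: given $P\in\Gamma$ coding $f$ on its domain and $Q\in\Gamma$, admissibility of $\Mmm_x$ allows it to collect all the initial-segment witnesses provided by $P$ into an internal function, thereby building $f(x)\in\Mmm_x$ whenever $x\in\dom(f)$; the condition $f(x)\in Q$ is then internalized inside $\Mmm_x$ as a $\Sigma_1$-statement about $\Mmm_{f(x)}$, which is captured by $\Mmm_x$ via downward $\CC$-closure and Turing invariance. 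Closure under $\exists x\in\Delta_{\Gamma(y)}$ follows from the equality $\RR\cap\Delta_{\Gamma(y)}=\RR\cap\Mmm_y$, proved below: existential quantification over $\Delta_{\Gamma(y)}$-reals becomes internal quantification over $\RR^{\Mmm_y}$, which is absorbed into a single $\Sigma_1$-formula over $\Mmm_y$.

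The main obstacle, and the content beyond routine closure, is the reverse inclusion $\RR\cap\Delta_{\Gamma(y)}\sub\Mmm_y$. Fix $x\in\RR\cap\Delta_{\Gamma(y)}$ witnessed by $\Sigma_1$-formulas $\varphi_0,\varphi_1\in\Ll^\passive_{\Sigma_1}$ so that for each $(n,m)$ exactly one of $\Mmm_{(n,m,y)}\sats\varphi_i(\check n,\check m)$ holds; after Turing-invariant rewriting, each $\varphi_i$ is a $\Sigma_1$-assertion over $\Mmm_y$ with $y$ as its only non-integer parameter. For each $n<\om$ there is a unique $m=x(n)$, and a least ordinal $\alpha_n<\OR^{\Mmm_y}$ such that $\Mmm_y|\alpha_n\sats\varphi_0(\check n,\check m)$. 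Apply $\Sigma_1$-collection in $\Mmm_y$ to the $\Sigma_1$-functional $n\mapsto(m,\alpha_n)$ to obtain a single $\beta<\OR^{\Mmm_y}$ bounding all these witnesses. Then $x$ is recoverable, as a function $\om\to\om$, from $y$ and $\beta$ inside $\Mmm_y|(\beta+\om)$, so $x\in\Mmm_y$. The subtlety is that the $\Sigma_1$-collection step must not covertly use $x$ itself as a parameter; this is ensured by noting that $\varphi_0,\varphi_1$ refer to $x$ only through the base real of $\Mmm_{(n,m,y)}$, which Turing invariance reduces to $y$, so the only non-integer parameter in the collected predicate is $y$, and admissibility applies cleanly.
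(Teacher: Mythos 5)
Most of your routine closure arguments (disjunction, conjunction, number quantifiers, $\om$\nobreakdash-parametrization, and the inclusion $\RR\cap\Mmm_y\sub\Delta_{\Gamma(y)}$ via the $\Sigma_1$\nobreakdash-hull) are fine and match the paper's intent, and your filled-in $\Sigma_1$\nobreakdash-collection argument for $\Delta_{\Gamma(y)}\sub\Mmm_y$ is a reasonable unpacking of what the paper leaves as ``easy.'' But your treatment of recursive substitution has a genuine gap, and it is the crux of the lemma. You claim it is ``handled via Turing invariance,'' passing from $\Mmm_{\vec x}$ to $\Mmm_{(f(\vec x),\vec x)}$ and ``then evaluating $\psi$.'' Since $(f(x),x)\equiv_T x$, Turing invariance just gives you $\Mmm_x$ again, reorganized; evaluating $\psi$ there decides whether $\Mmm_{(f(x),x)}\sats\psi$, not whether $\Mmm_{f(x)}\sats\psi$. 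When $f(x)<_Tx$ strictly --- the only interesting case --- $\Mmm_{f(x)}$ is a genuinely \emph{smaller} structure that no amount of base-real reorganization recovers, and your argument never accesses it.

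What actually drives this part is the pair of $\CC$\nobreakdash-closure conditions from the definition of $1$\nobreakdash-$\CC$\nobreakdash-closed, packaged in Remark~\ref{rem:S^Z_x}: since $f(x)\in\RR^{\Mmm_x}$, upward $\CC$\nobreakdash-closure gives $\Mmm_{f(x)}\ins S^{\Mmm_x}_{f(x)}$, downward $\CC$\nobreakdash-closure gives $\Th_{\Sigma_1}(S^{\Mmm_x}_{f(x)})\sub\Th_{\Sigma_1}(\Mmm_{f(x)})$, so together $\Mmm_{f(x)}\elem_1^\passive S^{\Mmm_x}_{f(x)}$, and $S^{\Mmm_x}_{f(x)}$ is $\Sigma_1^{\Mmm_x}(\{f(x)\})$ with $f(x)$ recursive in the base real. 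That is the $\varphi$ you need, and nothing weaker suffices. The same omission propagates to the admissible clauses: for the $\RR$\nobreakdash-Substitution Property you cite only downward $\CC$\nobreakdash-closure (you need upward as well to get the $\elem_1$), and for closure under $\ex x\in\Delta_{\Gamma(y)}$ your one-line reduction to ``internal quantification over $\RR^{\Mmm_y}$'' silently requires locating both $\RR\cap\Mmm_y$ and the various $\Mmm_{(x,y,z)}$ inside $\Mmm_{(y,z)}$, which again goes through $S^{\Mmm_{(y,z)}}_{\cdot}$ and both closure conditions, not through Turing invariance.
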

\begin{proof}
Let $\varphi,\psi\in\Sigma_1$.
To see closure under disjunction, simply note that
$A_{\varphi}\un A_\psi=A_{\varphi\vee\psi}$.
To see closure under conjunction, let $\varrho$ be the formula ``Either (i) $\varphi$
and some initial segment of me satisfies $\psi$, or (ii) $\psi$ and some initial segment of me 
satisfies $\varphi$'', and note that
$A_\varrho=A_{\varphi}\inter A_\psi$. Closure under $\all i<m$ (where the variable $m$ ranges 
over integers) and $\ex n<\om$ are similar, though there is a small technical detail in relation to 
Turing invariance, which we address below regarding $\om$-parametrization.

Now consider closure under recursive substitution.
Let $\psi$ be a formula and $f:\RR\to\RR$ a recursive function.
We want $\varphi\in\Ll_{\Sigma_1}$ such that
$x\in A_\varphi$ iff $f(x)\in A_\psi$.
But we get such a $\varphi$ readily from \ref{rem:S^Z_x}.

For $\om$-parametrization, fix a recursive enumeration $\left<\psi_n\right>_{n<\om}$ of 
$\Ll_{\Sigma_1}$. Then
$(n,x)\in A_\varphi$ iff $\Mmm_x\sats\psi_n$,
where $\varphi$ is the formula ``Let $\dot{x}=(n,x^*)$;
then $L[\es,x^*]\sats\psi_n$''. This is because by Turing invariance,
$\Mmm_x$ is just the rearrangement of $\Mmm_{(n,x)}$, for any $n<\om$.

To see that $\Mmm_y\inter\RR\sub\Delta_{\Gamma(y)}$, use the fact that 
there is no $N\pins\Mmm_y$ such that $N\elem_1\Mmm_y$,
and so $\Mmm_y=\Hull_1^{\Mmm_y}(\emptyset)$.

Now suppose that $\Mmm$ is admissible.
Closure under $\all n<\om$ is an immediate corollary (again using Turing invariance).
For $y\in\RR$ we then easily have $\Delta_{\Gamma(y)}\sub\Mmm_y$,
and therefore $\Delta_{\Gamma(y)}=\RR\inter\Mmm_y$. So closure under $\ex x\in\Delta_{\Gamma(y)}$ is trivial.
Finally consider the Substitution Property, and let $f,P,Q$ be as there. Note that if
$x\in\dom(f)$ then $y=f(x)\in\Delta_{\Gamma(x)}\sub\Mmm_x$.
But then using the $\Sigma_1$ formulas defining $P$ and $Q$ (as in the Substitution Property)
and by \ref{rem:S^Z_x}, it is easy to verify the property.
\end{proof}
\subsection{Exactly reconstructing mouse operators}

We now introduce a second kind of pointclass, associated to which we will also define scales. These pointclasses are finer than those corresponding to $1$-$\CC$-closed operators. The main ideas for the scale construction, and the proofs of its properties,
are, however, very similar. So for the reader more interested in getting most quickly to those ideas, this section is safe to skip for the present.

\begin{dfn}\label{dfn:good_theory}
An \emph{ordered theory} is a sequence $\left<\varphi_n\right>_{n<\om}$ of formulas.

Let $T$ be an ordered theory consisting of $\Sigma_1$ sentences
over the language $\Ll_{\pm}\cup\{\dot{p}\}$,
where $\dot{p}$ is a new constant symbol. If $M$ is a premouse and $\varphi$ a formula over this language, we write
\[ M\sats\varphi\iff(M,p_1^M)\sats\varphi, \]
that is, where $\dot{p}$ is interpreted by $p_1^M$.
We say that $T$ is \emph{good} iff:
\begin{enumerate}[label=--]
 \item 
for some $i\in\{0,1,2\}$, $T$ asserts that the universe is type $i$ (as a premouse),
\item for some $k<\om$, $T$ has the formula ``$\dot{p}\in[\OR]^k$'',
\item for every real $x$ there is an $x$-premouse $M$ such that:
\begin{enumerate}[label=--]
  \item $M\sats T$ \tu{(}so $M$ is non-type 3\tu{)},
  \item $M=\Hull_1^M(\{p_1^M\})$,
 \item  $M$ is non-small and if $p_1^M\neq\emptyset$ then $\delta^M\leq\min(p_1^M)$,
 \item $M$ is $(0,\om_1+1)$-iterable.
\end{enumerate}
\end{enumerate}
Note that any such
$M$ is sound with $\rho_1^M=\om$.
Note that if $T$ is good and $x\in\RR$ then there is a least such $M$ for $x$,
which we denote by $M_{Tx}$ or $M_x$. \footnote{Since $T$ is not a complete theory, it seems that there might be $R\pins M_x$ such that $(R,p_1^R)\sats T$ (but $\rho_1^R>\om$) --
it is not clear that $\cHull_1^R(\{p_1^R\})$ is sound.}

If $T$ is good, let $\Mmm=\Mmm_T$ be the operator $\Mmm(x)=M_{Tx}$.
We say that $T,\Mmm$ are \emph{reconstructing}
iff for all reals $x,y$, if $x\in M_y$ then $M_x=\core_1(N^{M_y}_{x\alpha})$ for some $\alpha\leq\OR^{M_y}$.
We say that $T,\Mmm$ are \emph{exactly reconstructing} iff $M_x=\core_1(N^{M_y}_{\infty x})$ for all such $x,y$.
 We say that $T,\Mmm$ are \emph{uniformly $1$-solid}
iff there is a fixed $\Sigma_1$ term $w$ such that for every $x$, $w^{M_x}(p_1^{M_x})$ is a generalized $1$-solidity witness for $(M_x,p_1^{M_x})$.\footnote{Note that we only demand a \emph{generalized} $1$-solidity witness here, not necessarily a true $1$-solidity witness.}
\end{dfn}

\begin{rem}Many of the standard mouse operators are exactly reconstructing and  uniformly $1$-solid;
for example, $\Mmm(x)=M_n^\#(x)$, and many natural operators $\Mmm$ such that
$\Mmm(x)$ is the sharp for a minimal $x$-mouse with some fixed large cardinal property above a Woodin cardinal.
We next show that uniform $1$-solidity follows from exact reconstruction.

The demand that $T$ be such that $M_x=\Hull_1^{M_x}(\{p_1^{M_x}\})$
is artificial;
one should really generalize this, allowing that there is some (non-specific) $n<\om$ such that for all $x$, we have $M_x=\Hull_{n+1}^{M_x}(\{\pvec_{n+1}^{M_x}\})$. Also, the demand that $M_x$ be non-type 3 is similarly artificial.
But these restrictions allow us to avoid certain fine structural issues which are not particularly relevant to the scale construction. We intend to remove these restrictions in a later version of the paper.
\end{rem}
\begin{lem}\label{lem:p_1_above_lgcd} Let $T$ be good and $M_x=M_{Tx}$.
Then:
\begin{enumerate}[label=--]
\item  If $M_x$ is passive then $M_x$ has a largest cardinal $\gamma$; moreover, $p_1^{M_x}\cut\gamma\neq\emptyset$.
\item There is a fixed $\Sigma_1$ term $d$ such that $\delta^{M_x}=d^{M_x}(p_1^{M_x})$ for all $x$.
\end{enumerate}
\end{lem}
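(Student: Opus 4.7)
The plan is to handle the two parts separately: part 1 will be essentially an immediate invocation of Lemma \ref{lem:p_1_for_passive}, while part 2 requires more care to exhibit the uniform $\Sigma_1$ term.

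For part 1, I will apply Lemma \ref{lem:p_1_for_passive} to $M_x$. By the paper's premouse convention, all proper segments of $M_x$ satisfy $(k+1)$-condensation for every $k<\om$, so the hypothesis is met. Since $M_x=\Hull_1^{M_x}(\{p_1^{M_x}\})$ is sound with $\rho_1^{M_x}=\om$, and $M_x$ is non-small (so $\OR^{M_x}>\om$), we have $\rho_1^{M_x}<\OR^{M_x}$. Part \ref{item:lgcd_exists} of Lemma \ref{lem:p_1_for_passive} then yields a largest cardinal $\gamma=\lgcd(M_x)$, and since $\delta^{M_x}\leq\gamma$ with $\delta^{M_x}$ an uncountable Woodin cardinal of $M_x$, we have $\om<\gamma$. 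Part \ref{item:p_1_high} then gives $p_1^{M_x}\neq\emptyset$ and $\gamma\leq\max(p_1^{M_x})$, so $\max(p_1^{M_x})\in p_1^{M_x}\cut\gamma$.

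For part 2, the plan is to produce a fixed $\Sigma_1$ term $d$ (depending on $T$, not on $x$) such that $\delta^{M_x}=d^{M_x}(p_1^{M_x})$ for every real $x$. Since $M_x=\Hull_1^{M_x}(\{p_1^{M_x}\})$ and $\delta^{M_x}\in M_x$, for each $x$ there is \emph{some} $\Sigma_1$ term $d_x$ with $\delta^{M_x}=d_x^{M_x}(p_1^{M_x})$; the issue is uniformity. The natural candidate is (informally): $d(p)$ is the unique ordinal $\delta$ such that either (i) there exists a proper initial segment $N\pins V$ with $N$ non-small and $\delta^N=\delta$ (and $\delta$ is the value determined by the least such $N$), or (ii) the universe itself is the minimal non-small structure carrying the $\Sigma_1(\dot{p})$-theory of $T$, in which case $\delta$ is recovered from $p$ and the fixed type $i$ specified by $T$ via a canonical reading of the top structure (the largest cardinal and, when present, the active extender's critical point or support). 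Clause (i) is genuinely $\Sigma_1$; clause (ii) will be made $\Sigma_1$ by an explicit Skolem-term calculation tailored to the type $i\in\{0,1,2\}$ that $T$ prescribes, using the fact that the cardinality $k=|p_1|$ is fixed by $T$ and that $\delta^{M_x}\leq\min(p_1^{M_x})$.

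The uniformity across $x$ will be forced by the minimality of $M_x=M_{Tx}$: if a putative $\Sigma_1$ definition $d$ worked for some reals but not others, one could pass to a hull of $M_y$ for the offending $y$, still satisfying $T$ and the conditions in the definition of goodness (with the same type and same $k$), thereby contradicting leastness of $M_y$. The main obstacle will be precisely this uniformity step: ``$\delta$ is the least Woodin'' is not natively $\Sigma_1$, and a priori the $\Sigma_1$ Skolem term witnessing $\delta^{M_x}\in\Hull_1^{M_x}(\{p_1^{M_x}\})$ depends on $x$. Overcoming this requires exploiting both clauses of goodness (the fixed type $i$ and the fixed arity $k$) together with the minimality of $M_x$, so that the reading of $\delta^{M_x}$ from $p_1^{M_x}$ is determined by $T$ alone.
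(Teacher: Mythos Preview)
Your treatment of part~1 is correct and is essentially the paper's argument: the paper reproves the relevant conclusions of Lemma~\ref{lem:p_1_for_passive} in place, while you cite that lemma directly.

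For part~2, however, you miss the simple observation that drives the paper's proof, and your proposed term has a genuine gap. Your clause~(i) outputs $\delta^N$ for the \emph{least} non-small proper segment $N\pins M_x$. But nothing rules out a local Woodin below $\delta^{M_x}$: there can be $\alpha<\delta^{M_x}$ and $\alpha<\beta<(\alpha^+)^{M_x}$ with $M_x|\beta\models$``$\alpha$ is Woodin'', since the subset of $\alpha$ witnessing non-Woodinness in $M_x$ may only appear above $\beta$. For such $M_x$ your term returns $\alpha<\delta^{M_x}$. The minimality-of-$M_x$ argument you sketch does not address this; the defect is not a failure of uniformity across $x$, but that the term is already wrong for a single $x$. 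Your clause~(ii) never applies, since $M_x|(\delta^{M_x}+\om)\pins M_x$ is always a non-small proper segment.

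The paper's approach avoids all of this by using part~1 (or the active extender) to produce a $\Sigma_1$-definable bound $\beta\geq\delta^{M_x}$ that is \emph{large enough to kill all local Woodins}. In the passive case, part~1 gives $\max(p_1^{M_x})\geq\gamma\geq\delta^{M_x}$, and since for every $\alpha<\delta^{M_x}$ one has $(\alpha^+)^{M_x}\leq\delta^{M_x}\leq\max(p_1^{M_x})$, the segment $M_x|\max(p_1^{M_x})$ already contains every subset of $\alpha$ in $M_x$; hence no $\alpha<\delta^{M_x}$ is Woodin there. So one simply sets $d(u)$ to be the least Woodin of $L[\es]|\max(u)$ if one exists, and $\max(u)$ otherwise---a bounded search inside an element of $M_x$, hence $\Sigma_1$. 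In the active case, $\crit(F^{M_x})>\delta^{M_x}$ is $\Sigma_1$-definable without parameters and plays the same role (covering also the possibility $p_1^{M_x}=\emptyset$). No appeal to the minimality of $M_x$ is needed.
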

\begin{proof}
 Suppose $M_x$ is passive and let $\theta<\OR^{M_x}$ be any $M_x$-cardinal with $\om<\theta$ (and recall that such $\theta$ exists since $\delta^{M_x}$ exists).
 Then
  $M_x||\theta\elem_1 M_x$ by condensation, so
 $\Hull_1^{M_x}(\theta)=M_x|\theta$.
 Since $M_x=\Hull_1^{M_x}(\{p_1^{M_x}\})$, it follows that $p_1\cut\theta\neq\emptyset$,
 and (since $p_1^{M_x}$ is finite
 and $\delta^{M_x}<\OR^{M_x}$)
 that $M_x$ has a largest cardinal $\gamma$.
 We then get a term $d(u)$
 as desired by taking it to output the least $\delta$ which is Woodin in $L[\es]|\max(u)$,
 if there is such a $\delta$,
 and to output $\max(u)$ itself otherwise.
 
 On the other hand, if $M_x$ is active then since $\delta^{M_x}<\crit(F^{M_x})$, there is also an appropriate $d$ (which also works in case $p_1^{M_x}=\emptyset$).
\end{proof}

\begin{dfn}
 Let $N$ be a premouse
 and $\gamma\in\OR^N$ be a limit ordinal.
 The \emph{$(N,\gamma)$-internal-dropdown} is the longest sequence $\left<N_i\right>_{i\leq m}$
 such that $N_0=N|\gamma$,
 and for each $i<m$, $N_{i+1}$ is the least $N'$ such that $N_i\pins N'\pins N$ and $\rho_\om^{N'}<\rho_\om^{N_i}$.
\end{dfn}

So in particular, $\rho_\om^{N_m}=\card^N(\gamma)$.

\begin{lem}\label{lem:exact_implies_uniformly_1-solid}
Let $T$ be good and $\Mmm=\Mmm_T$ be
  exactly reconstructing. Let $x,y\in\RR$. Then:
 \begin{enumerate}
  \item\label{item:p_1_same_length} $\lh(p_1^{M_x})=\lh(p_1^{M_y})$,
 \item\label{item:p_1_match_construct_p_1} If $x\in M_y$ then $p_1(N^{M_y}_{x\infty})=p_1^{M_y}$.
 
  \item\label{item:p_1=sing_delta} 
 $\delta^{M_x}\in p_1^{M_x}$ iff $p_1^{M_x}=\{\delta^{M_x}\}$ iff $p_1^{M_y}=\{\delta^{M_y}\}$,
 \item\label{item:if_p_1=sing_delta} If $\delta^{M_x}\in p_1^{M_x}$ then $M_x$ is passive with largest cardinal $\delta^{M_x}$.
  \item\label{item:exact_to_unif_1-solid} $\Mmm$ is uniformly $1$-solid.
  \item\label{item:dropdown_invariance} If $M_x$ is passive
  then there is $\ell<\om$
  such that for each $z\in\RR$,
  the $(M_z,\max(p_1^{M_z}))$-internal-dropdown $\left<N_i\right>_{i\leq\ell}$ has length $\ell+1$, and $\rho_\om^{N_\ell}$ is the largest cardinal of $M_x$.
  \end{enumerate}
\end{lem}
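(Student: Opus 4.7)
My plan is to treat Part 2 as the central claim and reduce the remaining parts to it. Part 1 is immediate: by the definition of a good theory, $T$ contains the formula $\dot{p}\in[\OR]^k$ for a fixed $k$, which forces $\lh(p_1^{M_x})=k=\lh(p_1^{M_y})$ uniformly in $x,y$.

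To establish Part 2, I would fix $x\in M_y$ and set $N=N^{M_y}_{x\infty}$. Exact reconstruction gives $M_x=\core_1(N)$, so $\rho_1^N=\om$ and, letting $\pi:M_x\to N$ denote the uncollapse, $\pi(p_1^{M_x})=p_1^N$. Applying Lemma \ref{lem:Woodin_exactness} inside $M_y$ with $\delta=\delta^{M_y}$ shows that $N$ is non-small with $\delta^N=\delta$, that $N|\delta$ is small, and that $N$ above $\delta$ is the P-construction of $M_y$ over $N|\delta$ (with $M_y|\delta$ generic over $N$). Then Lemma \ref{lem:P-con_basic_props}, specifically part \ref{item:rho_y+1,p_y+1^P} at $y=0$ and part \ref{item:Hull_matchup}, yields $\rho_1^N\leq\delta$, $p_1^N\cut\delta=p_1^{M_y}\cut\delta$, and the $\Sigma_1$-Hull match modulo $\delta+1$. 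Combined with $M_y=\Hull_1^{M_y}(\{p_1^{M_y}\})$, this gives $N=\Hull_1^N((\delta+1)\cup p_1^{M_y})$. To eliminate $\delta$ from the parameter I would invoke the fixed $\Sigma_1$ term $d$ of Lemma \ref{lem:p_1_above_lgcd} with $\delta^{M_y}=d^{M_y}(p_1^{M_y})$: because $N|\delta$ is small (so $\delta$ is also the least Woodin of $N$) and $N$ agrees with $M_y$ above $\delta$, the same term interpreted in $N$ gives $\delta^N=d^N(p_1^{M_y})$, hence $N=\Hull_1^N(\{p_1^{M_y}\})$ and $p_1^N\leq_\lex p_1^{M_y}$. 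The reverse inequality follows from $1$-solidity of $N$ (via Fact \ref{fact:condensation_from_norm_it}) together with the transfer of solidity witnesses from $M_y$ through the $\Sigma_1$-Hull match, yielding $p_1^N=p_1^{M_y}$.

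For Parts 3--6 I would use link-real arguments. Taking $z=x\oplus y$, Turing invariance gives $x,y\in M_z$. For Part 3, applying Part 2 to $(x,z)$ and $(y,z)$ and using that core embeddings preserve $p_1$ (and that the preimages of $\delta^{M_z}$ under the respective core uncollapses are $\delta^{M_x}$ and $\delta^{M_y}$) yields the second equivalence; the first follows from minimality of $p_1^{M_x}$ via the term $d$. For Part 4, if $\delta^{M_x}\in p_1^{M_x}$ but $M_x$ were active, then $\crit(F^{M_x})>\delta^{M_x}$ would be $\Sigma_1$-definable and allow removing $\delta^{M_x}$ from $p_1^{M_x}$, contradicting minimality; so $M_x$ is passive, and Lemma \ref{lem:p_1_above_lgcd} together with Part 3 pin $\max(p_1^{M_x})=\delta^{M_x}$, hence the largest cardinal of $M_x$ equals $\delta^{M_x}$. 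For Part 5, the uniform $1$-solidity witness arises by transferring the generalized $1$-solidity witness for $p_1^{M_y}$ in $M_y$ through the P-construction into $N^{M_y}_{x\infty}$ and then pulling back via $\pi$ to $M_x$; uniformity of the P-construction yields a fixed $\Sigma_1$ term $w$. Part 6 similarly follows by another link argument together with the observation that the internal-dropdown structure below $\max(p_1^{M_z})$ is uniformly $\Sigma_1$-described by $T$, giving the common length $\ell$ and identifying $\rho_\om^{N_\ell}$ as the largest cardinal of $M_x$.

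The principal obstacle lies in Part 2: the passage from $N=\Hull_1^N((\delta+1)\cup p_1^{M_y})$ to $N=\Hull_1^N(\{p_1^{M_y}\})$ demands that $\delta$ be $\Sigma_1$-definable in $N$ from $p_1^{M_y}$ alone, which rests on the smallness of $N|\delta$ provided by Woodin exactness and requires special care when $k=0$, where $d$ must operate without ordinal parameters using only the premouse language; a secondary subtlety is ensuring the uniform $\Sigma_1$-definability of solidity witnesses across the P-construction for Part 5, which may require refining the choice of witness beyond the default one.
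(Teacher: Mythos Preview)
Your approach for Part~2 is unnecessarily complicated and the obstacle you identify is illusory. You are overlooking a key clause in the definition of \emph{good theory}: it requires that if $p_1^{M}\neq\emptyset$ then $\delta^{M}\leq\min(p_1^{M})$, so $p_1^{M_x}\cap\delta^{M_x}=\emptyset$ and $p_1^{M_y}\cap\delta^{M_y}=\emptyset$ for all $x,y$. Since the core map $\pi:M_x\to N$ sends $p_1^{M_x}$ to $p_1^N$ and $\delta^{M_x}$ to $\delta^N=\delta$, this yields $p_1^N\cap\delta=\emptyset$. Combined with $p_1^N\setminus\delta=p_1^{M_y}\setminus\delta$ from Lemma~\ref{lem:P-con_basic_props}, you get $p_1^N=p_1^{M_y}$ immediately. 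The Hull argument you sketch, which requires absorbing all of $\delta+1$ into $\Hull_1^N(\{p_1^{M_y}\})$, is never needed (and as you suspect, showing $\delta$ alone is $\Sigma_1$-definable does not suffice for that).

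Your Part~5 has a genuine gap: the direction of transfer is backwards for uniformity. You propose to start from a solidity witness in $M_y$ (for some $y$ with $x\in M_y$), push it through the P-construction into $N^{M_y}_{x\infty}$, and pull back to $M_x$ via the core map. But the whole computation takes place in $M_y$, not in $M_x$, and there is no canonical choice of $y$ depending uniformly on $x$; hence you cannot extract a single $\Sigma_1$ term $w$ over the language of $M_x$. The paper goes inside-out instead: fix the single real $\emptyset$, which lies in every $M_x$. Inside $M_x$, form $N=N^{M_x}_{\emptyset\infty}$; by exact reconstruction $M_\emptyset=\core_1(N)$ with core map $\pi$. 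Pick once and for all a $\Sigma_1$ term $u$ with $u^{M_\emptyset}(p_1^{M_\emptyset})$ a solidity witness for $M_\emptyset$. Then $\pi(u^{M_\emptyset}(p_1^{M_\emptyset}))=u^N(p_1^N)=u^N(p_1^{M_x})$ (the last equality by Part~2) is a generalized solidity witness for $(N,p_1^N)$, and one converts this to a witness for $(M_x,p_1^{M_x})$ via the extender-algebra genericity of $M_x|\delta^{M_x}$ over $N$. Every step here is uniformly $\Sigma_1$ over $M_x$, because $u$ is fixed and the construction $N^{M_x}_{\emptyset\infty}$ is internal to $M_x$.

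Parts~3, 4, and 6 in your outline are essentially in line with the paper's reasoning.
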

\begin{proof}Part \ref{item:p_1_same_length}: This was just by definition;
the common length was denoted $k$ in Definition \ref{dfn:good_theory}.

Part \ref{item:p_1_match_construct_p_1}:
Let $N=N^{M_y}_{x\infty}$. We have $p_1^N\cut\delta^N=p_1^{M_y}\cut\delta^{M_y}$ by \ref{?}.
But $p_1^{M_y}\inter\delta^{M_y}=\emptyset$ and $p_1^{M_x}\inter\delta^{M_x}=\emptyset$ by assumption,
and $\pi(\delta^{M_x})=\delta^N=\delta^{M_y}$ and $\pi(p_1^{M_x})=p_1^N$ where $\pi:M_x\to N$ is the core map.

Part \ref{item:p_1=sing_delta}:
To see $p_1^{M_x}=\{\delta^{M_x}\}$ iff $p_1^{M_y}=\{\delta^{M_y}\}$, 
let $z\in\RR$ with $x,y\leq_Tz$.
It is enough to see that $p_1^{M_x}=\{\delta^{M_x}\}$
iff $p_1^{M_z}=\{\delta^{M_z}\}$.
But this follows from part \ref{item:p_1_match_construct_p_1}
and the preservation of $p_1$ under core maps.

Now suppose $\delta^{M_x}\in p_1^{M_x}\neq\{\delta^{M_x}\}$.
Since $p_1^{M_x}\cap\delta^{M_x}=\emptyset$, we can fix $\gamma\in p_1^{M_x}\cut(\delta^{M_x}+1)$.
But then $\delta^{M_x}\in\Hull_1^{M_x}(\{\gamma\})$,
contradicting the minimality of $p_1^{M_x}$.

Part \ref{item:if_p_1=sing_delta}:
Suppose $p_1^{M_x}=\{\delta^{M_x}\}$. It is easy to see that $M_x$ is passive. And if $p_1^{M_x}\neq\{\delta^{M_x}\}$,
so $p_1^{M_x}\cut(\delta^{M_x}+1)\neq\emptyset$, then note that $\delta^{M_x}\in\Hull_1^{M_x}(p_1^{M_x}\cut(\delta^{M_x}+1))$,
again a contradiction. By Lemma \ref{lem:p_1_above_lgcd}, it follows
that $M_x$ has largest cardinal $\delta^{M_x}$.

Part \ref{item:exact_to_unif_1-solid}:
If $p_1^{M_x}=\emptyset$ (for some, equivalently all, $x$), this is trivial, so suppose otherwise.
If $\delta^{M_x}\in p_1^{M_x}$ then $p_1^{M_x}=\{\delta^{M_x}\}$ and $M_x$ is passive with largest cardinal $\delta^{M_x}$,
and then $M_x|\delta^{M_x}\elem_1 M_x$, in which case the lemma is easy, so suppose otherwise.
So we can always refer to the parameter $\delta^{M_x}$ in a canonical manner.

Now fix $x$. Let $N=N^{M_x}_{\emptyset\infty}$. 
By exactness, $\core_1(N)=M_\emptyset$.
Let $\pi:M_\emptyset\to N$ be the core map.

Let $w$ be the $1$-solidity witness for $(M_\emptyset,p_1^{M_\emptyset})$,
considered as a finite tuple of transitive structures.
Let $u$ be an $\rSigma_1$ term
such that $w=u^{M_\emptyset}(p_1^{M_\emptyset})$.
Then $\pi(w)=u^{N}(p_1^{N})=u^N(p_1^{M_x})$
is a generalized 1-solidity witness for $N$.
Now $M_x|\delta^{M_x}$
is $N$-generic
for the $\delta$-generator
extender algebra of $N$,
and $M_x$ is the result of the canonical extension of $\es_+^N\rest(\delta^{N},\OR^N]$ above $M_x|\delta^{M_x}$.
Let $\wt{w}_x$ be the putative generalized 1-solidity
witness for $(M_x,p_1^{M_x})$
determined by $\pi(w)$ and $M_x|\delta^{M_x}$. (So for each component $\pi(w)_i$ of $\pi(w)$, the $(\wt{w}_x)_i$ is just the $x$-premouse given by the generic extension $\pi(w)_i[M_x|\delta^{M_x}]$.) Note that
$\wt{w}_x$ is in fact
a generalized 1-solidity
witness for $(M_x,p_1^{M_x})$.
Using the uniformity of these processes, uniform 1-solidity easily follows.

Part \ref{item:dropdown_invariance}: This follows from part \ref{item:p_1_match_construct_p_1}, Lemma \ref{lem:p_1_above_lgcd}, the level-by-level correspondence of fine structure 
between $M_x$ and $N_{\emptyset\infty}^{M_x}$,
and that core maps are $p_1$-preserving.
\end{proof}

\begin{dfn}\label{dfn:unif_cofinal}For $M$ a premouse and $\eta\leq\OR^M$, we define
\[ M\wr\eta=(M||\eta,\es^M\rest\eta,F^M\inter M||\eta), \]
where $M||\eta$ is defined using the $\Ss$-hierarchy if $\eta$ is a successor ordinal.
So $M\wr\OR^M=M$ and $M\wr\eta\in M$ for $\eta<\OR^M$,
\[ \eta_0<\eta_1\implies\Th_1^{M\wr\eta_0}\sub\Th_1^{M\wr\eta_1},\]
and
\[ \Th_1^M(M)=\bigcup_{\eta<\OR^M}\Th_1^{M\wr\eta}(M||\eta).\]
Given $\eta\leq\OR^M$ and a $\Sigma_1$ formula $\varphi$
over the language $\Ll_\pm\cup\{\dot{p}\}$ we write
\[ M\wr\eta\sats\varphi\iff (M\wr\eta,p_1^M)\sats\varphi. \]

Let $T$ be good and $\Mmm=\Mmm_T$ and $M_x=\mathscr{M}(x)$. Let $\eta_{xn}$ be the least $\eta$ such that
$M_x\wr\eta\sats\varphi_n$. We say that $T$ is \emph{cofinal} iff:
\begin{enumerate}[label=--]
\item $\sup_{n<\om}\eta_{xn}=\OR^{M_x}$,
\item $\eta_{xn}<\eta_{x,n+1}\text{ for each }n$, and
\item if $M_x$ is passive,
then letting $\gamma$ be the largest cardinal of $M_x$
\tu{(}cf.~Lemma \ref{lem:p_1_above_lgcd}\tu{)},
$\gamma$ is the largest cardinal of $M_x\wr\eta_{xn}$ for each $n$.
\end{enumerate}

 We say that $T$ is \emph{uniformly boundedly $1$-solid} iff there is $n<\om$
 and a  $\Sigma_1$ term $t$ such that for all $x$, $t^{M_x\wr\eta_{xn}}(p_1^{M_x})$ is a generalized $1$-solidity witness for $(M_x,p_1^{M_x})$.
We say that $T$ is \emph{uniform}
iff $\Mmm$ is exactly reconstructing and for all reals $x,y$ with $x\in M_y$,
we have $\pi(\eta_{xn})=\eta_{yn}$ for each $n<\om$, where
$\pi:M_x\to N^{M_y}_{x\infty}$
is the core map.
\end{dfn}

\begin{lem}\label{lem:good,unif_1-solid_to_cofinal}
 Let $T=\left<\varphi_n\right>_{n<\om}$ be good and exactly reconstructing.
  Then there is a cofinal,  uniformly boundedly $1$-solid, good $T'=\left<\varphi'_n\right>_{n<\om}$ with $\Mmm_{T'}=\Mmm_{T}$ and $T'$ recursive in $T$. Moreover, if $T$ is uniform
  then we can take $T'$ uniform.
\end{lem}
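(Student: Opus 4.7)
The plan is to enrich $T$ with two new ingredients and then re-enumerate. First I invoke Lemma \ref{lem:exact_implies_uniformly_1-solid} to fix a $\Sigma_1$ term $w$ such that $w^{M_x}(p_1^{M_x})$ is a generalized $1$-solidity witness for every $x$; let $\psi_{\mathrm{sol}}$ be the $\Sigma_1$ sentence \emph{``$w(\dot p)$ is a generalized $1$-solidity witness''}. Next I fix a recursive enumeration $\left<t_k\right>_{k<\om}$ of the $\Sigma_1$ terms producing ordinals from $\dot p$, and define
\[ \varphi'_n \ := \ \exists\, \alpha_0<\cdots<\alpha_n\ \exists\, j_0,\ldots,j_n<\om\ \Big[\bigwedge_{i\leq n}\alpha_i=t_{j_i}(\dot p)\ \wedge\ M\wr\alpha_0\sats\psi_{\mathrm{sol}}\wedge\varphi_0\wedge\cdots\wedge\varphi_n\Big]. \]
Each $\varphi'_n$ is $\Sigma_1$ and is a recursive function of $\varphi_0,\ldots,\varphi_n$, $w$ and the $t_k$'s, so $T'$ is recursive in $T$.

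Every $\varphi'_n$ holds in $M_x$: $M_x\sats T$, $\psi_{\mathrm{sol}}$ holds by uniform $1$-solidity, and since $M_x=\Hull_1^{M_x}(\{p_1^{M_x}\})$, the $\Sigma_1$-definable ordinals of $M_x$ from $p_1^{M_x}$ are cofinal in $\OR^{M_x}$, so one may pick $n{+}1$ of them above the stage at which the inner satisfaction becomes true. Hence $M_{Tx}\sats T'$; combined with $T'\supseteq T$ this forces $M_{T'x}=M_{Tx}$, so $\Mmm_{T'}=\Mmm_T$ and $T'$ is good. The least witnessing stage $\eta'_{xn}$ in $M_x$ is (up to $+1$) the $(n{+}1)$-th $\Sigma_1$-definable ordinal above the stage $\beta^*_n$ at which $\psi_{\mathrm{sol}}\wedge\varphi_0\wedge\cdots\wedge\varphi_n$ first holds; this is strictly increasing in $n$ (one more definable ordinal is consumed at each step, and $\beta^*_n$ is non-decreasing) and cofinal in $\OR^{M_x}$ (since the definable ordinals are). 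Uniform bounded $1$-solidity is immediate with $n=0$ and $t=w$, as $\psi_{\mathrm{sol}}$ is a conjunct of $\varphi'_0$. Uniformity of $T'$ from uniformity of $T$ follows because $\pi$ is $\Sigma_1$-elementary and preserves $p_1$, hence sends the ordinal defined by $t_k$ from $\dot p$ in $M_x$ to the corresponding ordinal in $N^{M_y}_{x\infty}$, and preserves the $\Sigma_1$-satisfaction stages $\beta^*_n$.

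The one delicate point is the third clause of cofinal, namely that when $M_x$ is passive its largest cardinal $\gamma$ must already be the largest cardinal of $M_x\wr\eta'_{xn}$ for every $n$. To handle this I would strengthen $\varphi'_n$ with a further $\Sigma_1$ conjunct asserting, via the solidity witness $w(\dot p)$ and Lemma \ref{lem:exact_implies_uniformly_1-solid}\ref{item:dropdown_invariance}, that a canonically specified transitive structure below $\alpha_0$ witnesses the collapse of every ordinal strictly between $\max(\dot p)$ and the current ordinal height; Lemma \ref{lem:p_1_above_lgcd} guarantees $\max(\dot p)\geq\gamma$, and the fixed dropdown length $\ell$ ensures that this clause becomes true in $M_x$ past a specific bounded stage and that, once true, it forces $\alpha_0$ to lie past the plateau on which $\gamma$ has stabilised as the largest cardinal. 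Aligning the $\Sigma_1$ witnessing stages with cardinal-structure stability is the principal technical obstacle; everything else is a routine unfolding of the definitions.
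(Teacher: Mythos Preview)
Your construction does not deliver strict increase of the stages $\eta'_{xn}$, and your stated reason for cofinality is a non sequitur. The formula
\[
\varphi'_n\;=\;\exists\,\alpha_0<\cdots<\alpha_n\;\exists\,j_0,\ldots,j_n<\om\;\Big[\bigwedge_{i\leq n}\alpha_i=t_{j_i}(\dot p)\ \wedge\ M\wr\alpha_0\models\psi_{\mathrm{sol}}\wedge\varphi_0\wedge\cdots\wedge\varphi_n\Big]
\]
is purely existential: nothing forces the $\alpha_i$ to be large, and nothing prevents $M_x\wr\eta'_{xn}$ from already containing far more than $n{+}1$ term-defined ordinals above $\beta^*_n$. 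So if $\beta^*_{n+1}=\beta^*_n$ (which certainly happens, e.g.\ whenever $\varphi_{n+1}$ happens to be witnessed no later than $\varphi_0,\ldots,\varphi_n$), then $\varphi'_{n+1}$ may well be witnessed at the very same stage $\eta'_{xn}$ --- there is no ``consumption'' of definable ordinals, because the next step is free to use a completely new tuple. Thus the second clause of \emph{cofinal} can fail. For the same reason, ``the $\Sigma_1$-definable ordinals from $\dot p$ are cofinal in $\OR^{M_x}$'' says nothing about the minimal stage at which \emph{some} $n{+}1$ of them become visible; that stage can stabilise far below $\OR^{M_x}$.

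The paper's mechanism is different on both points. Strict increase is obtained by brute force: one sets $\varphi'_{n+1}=\psi_n\wedge\exists\beta\,[L[\es]\wr\beta\models\varphi'_n]$, so that any witness to $\varphi'_{n+1}$ must lie strictly past a witness to $\varphi'_n$. Cofinality in the sup sense is obtained not by counting ordinals but by a minimality/collapse argument: if $T'$ were witnessed inside some $N\pins M_x$, then $\bar N=\cHull_1^N(\{p_1^{M_x}\})$ would be a $1$-sound iterable $x$-premouse satisfying $T$ (the solidity term $t$ is in the hull, forcing $\bar p=p_1^{\bar N}$), contradicting the minimality of $M_x$. Two further ingredients you do not supply are essential here. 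First, the paper closes $T_1$ under operations $\varphi\mapsto\varphi'_m$ (passive case) or $\varphi\mapsto\varphi',\varphi''$ (active case) precisely so that this hull $\bar N$ is a genuine premouse rather than a protomouse; you do not address the active case at all. Second, the largest-cardinal clause is handled by building the formula $\varrho(\dot p,\rho)$ into every $\varphi'_m$, which pins down the largest cardinal $\rho$ via the fixed-length dropdown and demands segments projecting to $\rho$ cofinally; your sketch at the end gestures in this direction but is not an argument.
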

\begin{proof}
 Let $M_x=M^T_x$. By Lemma \ref{lem:exact_implies_uniformly_1-solid}, $T$ is uniformly $1$-solid.  Fix a $\Sigma_1$ term $t$ witnessing this.
  Let $\varphi_t$ be the $\Sigma_1$ formula asserting that $t(\dot{p})$ is defined.

  \begin{case}
   $M_x$ is passive (for some, equivalently all, $x$).
   
   Consider first the case in which we don't want to make $T'$ uniform. Let $\ell+1$ be the length of the $(M_x,\max(p_1^{M_x}))$-internal-dropdown;
  also by Lemma \ref{lem:exact_implies_uniformly_1-solid}, this is independent of $x$. Let $\varrho(\dot{p},\rho)$
  be the $\Sigma_1(\dot{p},\rho)$ formula asserting ``there is a sequence $\left<N_i\right>_{i\leq\ell}$
  of proper segments of me,
  defined as an internal-dropdown,
  with $\OR(N_0)=\max(\dot{p})$, and there is $j<\om$ such that $\rho=\rho_{j+1}^{N_\ell}$''.
 Given a $\Sigma_1(\dot{p})$ formula $\varphi$ and $m<\om$, let $\varphi'_m$ be the $\Sigma_1(\dot{p})$ formula
 \[ \exists\alpha,\rho,j,n\ \Big[\varrho(\dot{p},\rho)\wedge(\alpha\text{ is a limit ordinal})\wedge(j,n<\om)\wedge(\rho_{j+1}^{L[\es]|\alpha}=\rho)\wedge \text{ }L[\es]\wr(\alpha+n+m)\text{ exists and }
  L[\es]\wr(\alpha+n)\sats\varphi\Big]. \]
  Let $T_0=T\cup\{\varphi_t\}$ and let $T_1$ be the theory
  \begin{equation}\label{eqn:T_1=} T_1=\{\varphi'_m\mid\varphi\in T_0\text{ and }m<\om\} \end{equation}
  and let $\left<\psi_n\right>_{n<\om}$ be an enumeration of $T_1$ which is recursive in $T$.
 Then define $\left<\varphi_n'\right>_{n<\om}$ recursively by $\varphi'_0=\varphi_t$ and
 \[ \varphi'_{n+1}=\Big(\psi_n\ \wedge\ \exists\beta\in\OR\ \Big[L[\es]\wr\beta\text{ exists and }
 L[\es]\wr\beta\sats\varphi'_n\Big]\Big).\]

 We claim that $T'=\left<\varphi'_n\right>_{n<\om}$ works.
 For clearly $T'$ is logically at least as strong as $T$,
 and $M_x\sats T'$.
 Because $\varphi_0'=\varphi_t$,
 $T'$
 is easily uniformly boundedly $1$-solid (as witnessed by $0$).
 So we just need to see that $T'$ is cofinal. Suppose not and fix a counterexample $x$.
 Then note that there is a passive $N\pins M_x$ such that $p_1^{M_x}\in N$ and
 \[ \all n<\om\ \Big[N\sats \varphi'_n(p_1^{M_x})\Big];\]
 let $N$ be least such. Let $\bar{N}=\Hull_1^N(\{p_1^{M_x}\})$ and $\pi:\bar{N}\to N$ the uncollapse.
 Note that $w=t^{M_x}(p_1^{M_x})\in\rg(\pi)$, and since $w$ is a generalized $1$-solidity witness
 for $(M_x,p_1^{M_x})$, it is likewise for $(N,p_1^{M_x})$. Letting $\pi(\bar{p})=p_1^{M_x}$,
 it follows that $\bar{p}=p_1^{\bar{N}}$ and $\bar{N}=\Hull_1^{\bar{N}}(\{\bar{p}\})$
 and $(\bar{N},\bar{p})$ is $1$-solid, hence $\bar{N}$ is $1$-sound. But then also $(\bar{N},p_1^{\bar{N}})\sats T$,
 so $\bar{N}$ contradicts the minimality of $M_x$.
 
 Now suppose $T$ is uniform; we will modify the process a little and arrange that $T'$ is uniform. Set $\varphi_t=$``true'', and with this replacing the former $\varphi_t$, define the rest as before. It is straightforward to see that this works.  The uniformity of $T'$ results from that of $T$, together with the level-by-level correspondence of fine structure between the relevant models (the latter is important in seeing that the internal-dropdown sequences agree with one another across those different models). The fact that $T'$ is uniformly boundedly $1$-solid is established differently to before:  use the defining formula used in the proof of Lemma \ref{lem:exact_implies_uniformly_1-solid}, and note that because $T'$ is uniform, this definition witnesses uniformly bounded $1$-solidity.
 \end{case}
 \begin{case} $M_x$ is active (type 1 or 2).

 This case is similar to the passive one, but instead of incorporating formulas which ensure closure under the $\Ss$-operator,
 we incorporate formulas to ensure that the analogue of $\bar{N}$
 is an (active type 1 or 2, according to the type of $M_x$) premouse (not just a protomouse).
 Thus, for a type 1 or 2 premouse $M$ and $\beta<((\dot{\mu}^M)^+)^M$ (recall $\dot{\mu}^M=\crit(F^M)$),
 let
 \[ F^M_\beta=F^M\rest((M|\beta)\cross[\nu(F^M)]^{<\om}).\]
Then for a given $\Sigma_1$ formula $\varphi$,
 let $\varphi'$ be the formula
 \[\text{``}\ex\alpha,\beta\ [\beta<(\dot{\mu}^+)\text{ and }M\wr\alpha\sats\varphi\text{ and }
 F^M_\beta\notin F^M\wr\alpha]\text{''} \]
 (where $M$ above is the premouse our formula refers to);
 and given $\varphi$ of the form ``$\ex\beta[\beta<(\dot{\mu}^+)\ \&\ \psi(\beta)]$'', where $\psi$ is $\Sigma_1$,
 let $\varphi''$ be the formula
 \[ \text{``}\ex\alpha,\beta\ [\beta<(\dot{\mu}^+)\text{ and }\psi(\beta)\text{ and }F^M_\beta\in F^M\wr\alpha\text{ and }M||\alpha\sats\ZF^-]\text{''}. \]
 Let $T_0$ be the closure of $T\cup\{\varphi_t\}$ under $\varphi\mapsto\varphi'$ and $\varphi\mapsto\varphi''$,
 and then proceed much as before. Suppose the resulting $T'$ is non-cofinal.
 and fix a counterexample $x$. Let $\eta=\sup_{n<\om}\eta_{xn}$.
 Let
 \[ \bar{N}=\cHull_1^{M_x\wr\eta}(\{p_1^{M_x}\}) \]
 and $\pi:\bar{N}\to M_x$ be the uncollapse. 
Then by the closure under $\varphi\mapsto\varphi'$ and $\varphi\mapsto\varphi''$,
$\bar{N}$ is a premouse, and arguing as before, $\bar{N}$ is sound and $\rho_1^{\bar{N}}=\om$.
But assuming that $\eta<\OR^M_x$, then $\bar{N}\in M_x$, so we reach a contradiction as before. We leave the remaining details to the reader.\qedhere
\end{case}
\end{proof}

\begin{dfn}
 Let $T=\left<\varphi_n\right>_{n<\om}$ be good and cofinal.
 
 A $\Sigma_1$ formula $\varphi$ is \emph{$T$-uniformly bounded} iff there is a formula $\psi$ and
 $n<\om$ such that $\varphi$ asserts ``there is $\alpha$ such that $M\wr\alpha\sats\psi\wedge\neg\varphi_n$''.
 A $\Sigma_1$ term $t$ is \emph{\tu{(}$T$-\tu{)}uniformly bounded} iff $t=t_\varphi$ for some $T$-uniformly bounded $\varphi$.
 A $\Sigma_1$ term $t$ is \emph{$T$-nice} iff there is a $T$-uniformly bounded term $t'$
 such that $t$ is defined by: ``if $t'(u)$ is defined then output $t'(u)$; otherwise output $0$''. We may drop the prefix ``$T$-'' if $T$ is clear from context. 
 \end{dfn}

\begin{dfn}\label{dfn:A_psi_for_T_good}
Let $T$ be good, exactly reconstructing,  cofinal, uniformly boundedly $1$-solid.
Let $\Mmm=\Mmm_T$ and $M_x=\Mmm(x)$. 
For  $\rSigma_1(\dot{p})$ formulas $\psi$, let
\[ A_{\psi}=\Big\{x\in\RR\ \Big|\ (M_x,p_1^{M_x})\sats\psi\Big\}. \]
Define the pointclass
\[ \Gamma_{\mathscr{M}}=\Big\{A_\psi\ \Big|\ \psi\in\Ll_\pm(\dot{p})\text{ is }\rSigma_1(\dot{p})\Big\}.\]
If $T$ is also uniform, then we also define the pointclass
\[ \Gamma_{\mathscr{M},\mathrm{ubd}}=\Big\{A_\psi\ \Big|\ \psi\in\Ll_\pm(\dot{p})\text{ is }\rSigma_1(\dot{p})\text{ and uniformly bounded}\Big\}.\qedhere\]
\end{dfn}

Analogous to Theorem \ref{thm:1-C-closed_scale_prop}, we will also prove (with almost the same proof):

\begin{tm}\label{tm:exactly_recon_scales}
 Let $T$ be good, exactly reconstructing, cofinal, uniformly boundedly $1$-solid. Let $\mathscr{M}=\mathscr{M}_T$. Then:
 \begin{enumerate}
  \item 
$\Gamma_{\mathscr{M}}$ has the scale property.
\item  If $T$ is uniform then
  for every $A\in\Gamma_{\mathscr{M},\mathrm{ubd}}$
 there is a scale on $A$, all of whose norms are in $\Gamma_{\mathscr{M},\mathrm{ubd}}$, defined by by a sequence $\vec{\psi}=\left<\psi_n\right>_{n<\om}$ of $T$-uniformly bounded formulas $\psi_n$, with $\vec{\psi}$ recursive in $T$.
 \end{enumerate}
\end{tm}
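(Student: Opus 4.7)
\smallskip

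The plan is to parallel the construction for Theorem~\ref{thm:1-C-closed_scale_prop}, using the machinery of §\ref{sec:CC} and §\ref{sec:CC_iterability}, but now working with the distinguished parameter $p_1^{M_x}$ and exploiting the exact reconstruction property in place of $1$-$\CC$-closure. Fix $\psi_0 \in \Ll_\pm(\dot{p})$ which is $\rSigma_1(\dot{p})$ and let $A = A_{\psi_0}$. For each $x \in A$, set $M_x = \mathscr{M}(x)$. First, I would define \emph{theory norms} $\varphi_\psi : A \to \{0,1\}$ for each $\psi \in \Ll_\pm(\dot{p})$ of appropriate complexity by $\varphi_\psi(x) = 1$ iff $(M_x, p_1^{M_x}) \sats \psi$, and \emph{ordinal norms} $\varphi^{\OR}_t$ indexed by nice $T$-uniformly bounded $\Sigma_1(\dot{p})$ terms $t$ with ordinal output, by $\varphi^{\OR}_t(x) = t^{M_x}(p_1^{M_x})$. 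These alone, together with appropriate indexing to identify the ordertype of $\OR^{M_x}$, would be used to form a limit model $M_\infty$ analogously to \S\ref{subsec:Pi^1_1} and \S\ref{sec:MS}: uniform bounded $1$-solidity of $T$ (Lemma~\ref{lem:exact_implies_uniformly_1-solid}) ensures that $M_\infty$ is sound with $\rho_1^{M_\infty}=\om$, while the ordinal norms force $M_\infty$ to be wellfounded; cofinality of $T$ guarantees that the putative theory of $M_\infty$ read off from the norms is complete.

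The essential new content, which is borrowed from the proof of Theorem~\ref{thm:1-C-closed_scale_prop}, is a further family of norms guaranteeing that $M_\infty$ is \emph{iterable}, and hence by exact reconstruction equals $M_x$ for some real $x$ (namely the limit $x = \lim x_n$). These norms will be defined, as in §\ref{sec:the_scale}, by comparing reals $x,y \in A$ via features of the background constructions $\CC^P_x, \CC^P_y$ run inside a common background mouse $P$ that contains both $x$ and $y$. Concretely, for each finite iteration tree shape and each appropriate Skolem term $\tau$, I define a norm by reading off $\tau$ applied to the relevant iterate of $N^P_{x\infty}$; exact reconstruction ensures $N^P_{x\infty}$ collapses (via the core map) to $M_x$, and the depth-$1$ norms are the theory and ordinal norms above. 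Following §\ref{sec:lifting_norms_are_norms} one shows these are well-defined prewellorders, independent of the choice of $P$, by running a comparison between the relevant iterates using Lemma~\ref{lem:iterability}. Given convergence $x_n \to x$ modulo the full family, one then defines the order-preserving embedding $\pi : \OR^{M_\infty} \to \OR^{Q}$ (where $Q$ is the appropriate iterate of a background mouse containing $x$ and $\lim x_n$'s) exactly as in the proof of Claim~\ref{clm:M_infty=x^sharp}, showing $M_\infty$ is iterable, hence equals $M_x$ and so $x \in A$.

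Lower semicontinuity is the hard part, and this is where the specific features of the construction $\CC$ from §\ref{sec:CC} are needed: namely the monotone strength requirement on backgrounds and the locality near measurables (which is why $t^Y_\alpha = 2$ is allowed and handled via Lemma~\ref{lem:iterate_up_to_N_x,kappa} and the resurrection algorithm of §\ref{sec:CC_iterability}). The strategy here, following §\ref{sec:lower_semi}, is to extend the map $\pi$ produced in the semiscale argument to all the relevant norm values, not merely theory/ordinal values, by giving a realization of $M_\infty$'s construction inside a sufficiently long iterate of the chosen background. The order-preservation of $\pi$ then transfers to ordinal-valued background-extender-index and production-stage norms, yielding $\varphi_j(x) \leq \lim_n \varphi_j(x_n)$ for each $j$. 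I expect this to be the main obstacle; the paper itself notes it was the sticking point resolved only in 2017--2018, and the general case is still only indicated rather than written in full.

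Finally, for complexity: each norm as defined above is $\Sigma_1(\dot{p})$ over $M_x$ (with the relevant parameters), since the theory/ordinal norms are literally evaluated in $M_x$ via $\Sigma_1(\dot{p})$ formulas, and the comparison norms can be expressed as $\Sigma_1(\dot{p})$ statements about the stack $S^z_x$ (which by Remark~\ref{rem:S^Z_x} is $\Sigma_1(\dot{p})$ over $M_z$), so the relations $\leq^*_n, <^*_n$ live in $\Gamma_{\mathscr{M}}$, giving part~1. For part~2, assume $T$ is uniform. Then by the uniformity, the value $\eta_{xn}$ is preserved by the core maps between $M_x$ and $N^{M_y}_{x\infty}$, and so each term $t$ arising in the norm construction can be replaced by its restriction to $M_x \wr \eta_{xn}$ for some $n$ depending only on $t$; this makes the defining formula $T$-uniformly bounded. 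Given $A = A_{\psi_0}$ with $\psi_0$ uniformly bounded, one checks that the sequence $\vec{\psi}$ of defining formulas for the norms is recursive in $T$ and lies in the uniformly bounded fragment, delivering the promised $\Gamma_{\mathscr{M},\mathrm{ubd}}$-scale.
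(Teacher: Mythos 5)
Your overall plan tracks the paper's approach (theory norms, background‐comparison norms, iterability via abstract lifting strategies, lower semicontinuity via a realization argument, and then a complexity check) but there is a genuine gap in how you set up the ``ordinal norms'', which you then identify with the depth-$1$ lifting norms.

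You define $\varphi^{\OR}_t(x) = t^{M_x}(p_1^{M_x})$ and claim these are the depth-$1$ norms. But the paper's depth-$1$ lifting norm (Definition \ref{dfn:lifting_norm}, the $i=m$ case) compares not the ordinals $t^{M_x}(p_1^{M_x})$ and $t^{M_y}(p_1^{M_y})$ directly, but their images $\pi_x(t^{M_x}(p_1^{M_x}))$ and $\pi_y(t^{M_y}(p_1^{M_y}))$ under the core/lifting maps $\pi_x=\pi_x^P$ and $\pi_y=\pi_y^P$ into a common background $P$ (Definition \ref{dfn:M_x-good}). The maps $\pi_x$ and $\pi_y$ are \emph{different} elementary embeddings into \emph{different} construction models $N^P_{x\infty}$ and $N^P_{y\infty}$, so $t^{M_x}(p_1^{M_x})\leq t^{M_y}(p_1^{M_y})$ and $\pi_x(t^{M_x}(p_1^{M_x}))\leq\pi_y(t^{M_y}(p_1^{M_y}))$ are genuinely different comparisons and need not define the same prewellorder. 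The Norm Invariance Lemma \ref{lem:norm_invariance}, the iterability argument of §\ref{sec:iterability_M_infty}, and the lower semicontinuity work of §\ref{sec:lower_semi} are all carried out only for the lifted version; your direct ordinal norm has neither an invariance statement nor a semicontinuity argument available. In fact the whole reason the lifting machinery exists is precisely that the direct comparison does not obviously live in $\Gamma_{\mathscr{M}}$: over $M_{(x,y)}$, the lifted ordinal $\pi_x(t^{M_x}(p_1^{M_x}))=t^{N^{M_{(x,y)}}_{x\infty}}(p_1^{M_{(x,y)}})$ is the value of a $\Sigma_1$ term applied to a $\Sigma_1$-definable class, hence easily of the right complexity, whereas the direct ordinal requires first extracting $\core_1(N^{M_{(x,y)}}_{x\infty})$ from a proper-class structure and then running $t$ inside it.

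A secondary issue: the complexity argument you give for part 1 invokes $S^z_x$ from Remark \ref{rem:S^Z_x}, but that is machinery for the $1$-$\CC$-closed case, not for exactly reconstructing operators. For the latter, the correct route is to take $P = M_{(x,y)}$ (which is $M_x$-good and $M_y$-good by exact reconstruction), note that the lexicographic data entering $\leq_\sigma$ — production stages, resurrection data, indices of background extenders, and the lifted ordinals $\pi_x(\alpha_x),\pi_y(\alpha_y)$ — are all $\rSigma_1(\dot{p})$-definable over $M_{(x,y)}$, and then invoke the Norm Invariance Lemma to see that the answer does not depend on the choice of $P$. Finally, for part 2 you should explicitly note that the norm-uniformly-bounded mouse scale (Definition \ref{dfn:mouse_scale_exactly_reconstructing}) \emph{drops} the unbounded primary norm $\Phi_{\Mmm\psi}$, so your remark that ``each term $t$... can be replaced by its restriction to $M_x\wr\eta_{xn}$'' needs to be paired with an argument that the unbounded primary norm is not needed when $\psi_0$ itself is $T$-uniformly bounded.
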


One should also compare this result with Theorem \ref{tm:optimality}.

\section{Mouse scales}\label{sec:the_scale}

\subsection{Theory norms for $1$-$\CC$-closed mouse operators}\label{sec:theory_norms_1-CC-closed}

Let $\Mmm$ be a $1$-$\CC$-closed function. 
Let $A\in\Gamma_\Mmm$, and fix $\psi_0\in\Ll_{\Sigma_1}$ such that $A=A_{\Mmm\psi_0}=\{x\in\RR\bigm|\Mmm(x)\sats\psi_0\}$.
We now begin to define the scale on $A$ used to prove \ref{thm:1-C-closed_scale_prop}.

There are two basic kinds of norms we use for the scale: 
\emph{theory} norms and \emph{lifting} norms, generalizing those of \S\S\ref{subsec:Pi^1_1},\ref{sec:MS},\ref{sec:Pi^1_3}.
For $x\in A$ let $M_x$ be the least $M\pins\Mmm(x)$ such that $M\sats\psi_0$.
Note that $M_x=\J(R)$ for some $R$ with $\rho_\om^R=\om$, and $M_x=\Hull_1^{M_x}(\emptyset)$. So $R,M_x$ are both $\om$-mice.
The theory norms, which are easy to define, will compare reals $x,y\in A$ by comparing the $\Sigma_1$ theories of $M_x,M_y$.
(The lifting norms, which take a lot more work to define, will compare $x,y$ by comparing features of 
the process lifting finite trees on $M_x,M_y$ to corresponding trees on mice $P$ for which $M_x$ and/or $M_y$ are constructed
by $\CC^P_x,\CC^P_y$.)

\begin{dfn}[Theory norms for $1$-$\CC$-closed operators]\label{dfn:1-CC-closed_theory_norm} For each $\Sigma_1$ formula $\varrho$ of $\Ll_\pm$
we define a $0/1$ valued norm $\Phi_\varrho$
on $\RR$, where for $x\in\RR$,
\[ \Phi_\varrho(x)=1\ \iff\ M_x\sats\varrho.\qedhere\]
\end{dfn}

\begin{dfn}[$M_\infty$ for $1$-$\CC$-closed operators]\label{dfn:M_infty}
Suppose that $x_n\to x$ modulo the theory norms $\Phi_\varrho$. Let $T_{x_n}=\Th_1^{M_{x_n}}$. For each $\Sigma_1$ sentence $\varrho\in\Ll_\pm$ let
\[ B_\varrho=\{n\mid M_{x_n}\sats\varrho\}.\]
So $B_\varrho$ is either finite or cofinite. Let
\[ T_\infty=\{\varrho\mid B_\varrho\text{ is cofinite}\}.\]

Then $M_\infty$ denotes the unique \tu{(}up to isomorphism\tu{)} structure $M$ for $\Ll_\pm$
such that
\[ \Th_{1}^M=T_\infty\text{ and }M=\Hull_1^M(\emptyset).\]
Equivalently, $M_\infty$ is the ultraproduct of $\left<M_{x_n}\right>_{n<\om}$
modulo the cofinite filter, formed using functions $f:\om\to\bigcup_nM_{x_n}$
such that for some $\Sigma_1$ term $t$, we have
\[ f(n)=t^{M_{x_n}}\text{ for cofinitely many }n.\]
A more explicit description of $M_\infty$ is given in \ref{rem:M_infty}.\end{dfn}

\begin{rem}\label{rem:M_infty}
Of course $M_\infty$ depends on $\left<x_n\right>_{n<\om}$ and $\psi_0$ (not actually on $\Mmm$, as from $\psi_0$ and $x_n$ we can recover $M_{x_n}$),
but these things will be known by context, so we leave them out of the notation.
If $M_\infty$ is wellfounded, then we take it to be transitive (and then it really is unique).

Continuing with the notation from above, we give another description of $M_\infty$ and verify
that $\Th_1^{M_\infty}=T_\infty$ and $M_\infty=\Hull_1^{M_\infty}(\emptyset)$.

Fix a variable $v$, and let $\wt{M}$ be the set of $\Sigma_1$ formulas $\varrho(v)$ of the single free 
variable $v$ such that
\begin{enumerate}[label=--]
 \item  $\text{``}\ex v\varrho(v)\text{''}\in T_\infty$, but
\item $\text{``}\ex v,w[\varrho(v)\ \&\ \varrho(w)\ \&\ v\neq w]\text{''}\notin T_\infty$.
\end{enumerate}

Let $\approx$ be the equivalence relation on $\wt{M}$
specifying equality of the defined object; that is,
\[ \varrho\approx\varrho'\ \iff\ \text{``}\ex v,v'[\varrho(v)\ \&\ \varrho'(v')\ \&\  
v=v']\text{''}\in T_\infty.\]
Let $M_\infty$ be the structure for $\Ll_\pm$
with universe the set of $\approx$-equivalence classes, and membership relation $\dot{\in}$
and predicate $\dot{\es}$
induced by $T_\infty$ analogously to $\approx$, that is,
\begin{enumerate}[label=--]
 \item 
$[\varrho]\ \dot{\in}\ [\varrho']\ \iff\ \text{``}\ex v,v'[\varrho(v)\ \&\ \varrho'(v')\ \&\  
v\in v']\text{''}\in T_\infty$,
\item $\dot{\es}([\varrho])\ \iff\ \text{``}\ex v[\varrho(v)\ \&\  
\es(v)]\text{''}\in T_\infty$.
\end{enumerate}

We claim that for all $\Sigma_1$ formulas $\psi$
of free variables $v_1,\ldots,v_k$, and all $\varrho_1,\ldots,\varrho_k\in \wt{M}$, we have
\[ M_\infty\sats\psi([\varrho_1],\ldots,[\varrho_k])\iff
\text{``}\ex v_1,\ldots,v_k[\bigwedge_{1\leq i\leq k}\varrho_i(v_i)\ \&\ \psi(v_1,\ldots,v_k)]\text{''}\in T_\infty. \]
This is proved by a straightforward induction on quantifier complexity, essentially by the proof of Los theorem;
the main point is that for example if $\psi$ is $\Sigma_1$ and $\varrho\in \wt{M}$ and
\[ \text{``}\ex v[\varrho(v)\ \&\ \ex y[\psi(y,v)]]\text{''}\in T_\infty \]
then the usual computation of $\Sigma_1$ Skolem functions over premice yields some $\varrho'\in \wt{M}$
such that
\[ \text{``}\ex y,v[\varrho(v)\ \&\ \varrho'(y)\ \&\ \psi(y,v)]\text{''}\in T_\infty, \]
and so by induction, $M_\infty\sats\psi([\varrho'],[\varrho])$, so $M_\infty\sats\exists y[\psi(y,[\varrho])]$.

Applying this to $\Sigma_1$ sentences $\psi$, we have $\Th_{\Sigma_1}^{M_\infty}=T_\infty$. It also follows that $M_\infty=\Hull_1^{M_\infty}(\emptyset)$. Indeed, for each $\varrho\in \wt{M}$,
\[ M_\infty\sats\text{``}[\varrho]\text{ 
 is the unique }y\text{ such that }\varrho(y)\text{''}.\]
For $M_\infty\sats\varrho([\varrho])$ because
\[ \text{``}\exists v[\varrho(v)\ \&\ \varrho(v)]\text{''}\in T_\infty,\]
and if $M_\infty\sats\varrho([\varrho'])$ where $\varrho'\in \wt{M}$,
then
\[ \text{``}\exists v[\varrho'(v)\ \&\ \varrho(v)]\text{''}\in T_\infty,\]
so $\varrho\approx\varrho'$.

Note that $M_\infty\sats\psi$ for every $\Pi_2$ sentence $\psi$ satisfied by eventually all $M_{x_n}$.
For example if $\psi=\all x\psi'(x)$ where $\psi'$ is $\Sigma_1$,
and if $\varphi\in \wt{M}$, then note that $M_\infty\sats\psi'([\varphi])$
because
\[ \text{``}\exists v[\varphi(v)\ \&\ \psi'(v)]\text{''}\in T_\infty. \]
This generalizes easily to sentences $\all x_1,\ldots,x_k\ \psi'(x_1,\ldots,x_k)$
where $\psi$ is $\Sigma_1$. Since the passive premouse axioms are $\Pi_2$ (in fact Q-formulas),
it follows that \emph{if} $M_\infty$ is wellfounded then $M_\infty$ is a premouse.\footnote{Recall that
\cite{fsit} does not quite characterize type 3 premousehood so nicely.} Since $M_{x_n}=\Hull_1^{M_{x_n}}(\emptyset)$ for all $n$, it also follows that $M_\infty=\Hull_1^{M_\infty}(\emptyset)$, and so $M_\infty$ is sound with $\rho_1^{M_\infty}=\om$ and $p_1^{M_\infty}=\emptyset$. Also, for each $n<\om$, we have $\dot{x}^{M_\infty}(n)=x(n)$, so if $M_\infty$ is wellfounded then $\dot{x}^{M_\infty}=x$. Note finally that $M_\infty\sats\psi_0+\neg\psi_0'$,
where $\psi_0'$ asserts ``there is a proper segment of me satisfying $\psi_0$'';
this is because  $\psi_0\in T_\infty$ but $\psi_0'\notin T_\infty$. It follows that if $M_
\infty$ is wellfounded and $(0,\om_1+1)$-iterable then $M_\infty=M_x$.
\end{rem}

\subsection{Theory norms for exactly reconstructing mouse operators}\label{sec:theory_norms_exactly_reconstructing}

Let $T$ be good, exactly reconstructing, cofinal, uniformly boundedly $1$-solid. Let $\Mmm=\Mmm_T$.
Let $M_x=\Mmm(x)$.
Let $\psi_0$ be an $\rSigma_1$ formula of $\Ll(\dot{p})$ and $A=A_{\Mmm\psi_0}$.
We now begin to define the scale on $A$ used to prove \ref{tm:exactly_recon_scales}.
As before, there will be
\emph{theory} norms and \emph{lifting} norms,
and we first describe the theory norms.

\begin{dfn}[Theory norms for exactly reconstructing operators]
Associated to each $T$-uniformly bounded $\Sigma_1$ sentence $\varrho$ of $\Ll_{\pm}(\dot{p})$, we define a the $0/1$-valued \emph{$T$-uniformly bounded theory norm} $\Phi_\varrho$, where for $x\in\RR$, we set
\[ \Phi_\varrho(x)=1\iff M_x\sats\varrho;
\]
recall that here we interpret $\dot{p}$ in $\varrho$ with $p_1^{M_x}$. \tu{(}Note here that we are only interested in this norm assuming that $\varrho$ is $T$-uniformly bounded, even though one could have defined $\Phi_\varrho$ more generally.\tu{)}
\end{dfn}

\begin{dfn}
 [$M_\infty$ for exactly reconstructing operators]\label{dfn:M_infty_exactly_recon}
Let $x_n\to x$ modulo the $T$-uniformly bounded theory norms.
 Then we define $M_\infty$ as 
the ultraproduct of $\left<M_{x_n}\right>_{n<\om}$
modulo the cofinite filter, formed using functions $f:\om\to\bigcup_nM_{x_n}$
such that for some nice\footnote{Note that we are restricting to nice terms.} term $t$, we have
\[ f(n)=t^{M_{x_n}}\text{ for cofinitely many }n.\qedhere\]\end{dfn}

\begin{lem}\label{lem:M_infty_basics_exactly_recon}
 Let $x_n\to x$ modulo the $T$-uniformly bounded theory norms. 
Let $p=\dot{p}^{M_\infty}$. Then:
\begin{enumerate}
\item\label{item:Los_for_exactly_recon} Let $\psi$ be $\Sigma_1$. Then $(M_\infty,p)\sats\psi$
iff there is $m<\om$ such that for all sufficiently large $n$,
we have
\[ (M_{x_n}\wr\eta_{x_nm},p_1^{M_{x_n}})\sats\psi.\]
Thus, $(M_\infty,p)\sats T$, $T$ is witnessed cofinally in $\OR^{M_\infty}$, and $M_\infty=\Hull_1^{M_\infty}(\{p\})$.
\item\label{item:M_infty_sats_psi_0_if_psi_0_T-ubd_exactly_recon} If $\psi_0$ is $T$-uniformly bounded and $\{x_n\}_{n<\om}\sub A$  then
$(M_\infty,p)\sats\psi_0$.\footnote{We will need to incorporate further norms to ensure this conclusion in case $\psi_0$ is not $T$-uniformly bounded.}

\item\label{item:M_infty_fs_exactly_recon}If $M_\infty$ is wellfounded then $M_\infty$ is a sound premouse, $\dot{x}^{M_\infty}=x$, $\rho_1^{M_\infty}=\om$ and $p_1^{M_\infty}=p$.
\item\label{item:when_M_infty_iterable_exactly_recon} Suppose $M_\infty$ is wellfounded and $(0,\om_1+1)$-iterable. Then
$M_x\ins M_\infty$ and if $T$ is recursive then $M_x=M_\infty$.
\end{enumerate}
\end{lem}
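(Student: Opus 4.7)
The plan is to establish Part \ref{item:Los_for_exactly_recon} as a Los-style theorem adapted to the niceness constraint on the representing functions, and then derive Parts \ref{item:M_infty_sats_psi_0_if_psi_0_T-ubd_exactly_recon}--\ref{item:when_M_infty_iterable_exactly_recon} from it. For Part \ref{item:Los_for_exactly_recon}, I would induct on the complexity of $\psi$. The base case handles $\rSigma_0$ assertions between equivalence classes $[t_1],\ldots,[t_k]$ of nice terms: by the definition of niceness each $t_i$ is determined by a uniformly bounded $\rSigma_1$ formula, hence there is a fixed $m$ such that $t_i^{M_{x_n}}$ is already named inside $(M_{x_n}\wr\eta_{x_n m},p_1^{M_{x_n}})$ for all sufficiently large $n$; taking $m$ large enough to simultaneously bound all $t_i$ and to decide the $\rSigma_0$ matrix of $\psi$, and using convergence $x_n\to x$ modulo the $T$-uniformly bounded theory norms, gives the two-sided equivalence. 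For the $\rSigma_1$ step ``$\exists y\,\psi'(y,\vec v)$'', the direction from the right is immediate by the induction hypothesis applied to $\psi'$. For the direction from the left, a witness in $M_\infty$ is by construction represented by some nice term $s$, and an application of the induction hypothesis to $\psi'([s],[\vec t])$ produces a common modulus~$m$.

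The cofinal witnessing of $T$ by $M_\infty$ and the identity $M_\infty=\Hull_1^{M_\infty}(\{p\})$ then follow immediately: cofinality of $T$ (Definition \ref{dfn:unif_cofinal}) gives $\eta_{x_n,n}<\eta_{x_n,n+1}$ with supremum $\OR^{M_{x_n}}$, so Part \ref{item:Los_for_exactly_recon} supplies a strictly increasing sequence of ordinals of $M_\infty$ witnessing successive $\varphi_n\in T$; and every element of $M_\infty$ is by definition represented by a nice term, hence lies in $\Hull_1^{M_\infty}(\{p\})$. Part \ref{item:M_infty_sats_psi_0_if_psi_0_T-ubd_exactly_recon} is then a direct corollary: if $\psi_0$ is $T$-uniformly bounded there is a fixed $m$ deciding it at level $\eta_{x_n m}$ in each $M_{x_n}$, and $x_n\in A$ forces the ``yes'' outcome, so Part \ref{item:Los_for_exactly_recon} applies.

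For Part \ref{item:M_infty_fs_exactly_recon}, assume $M_\infty$ is wellfounded. The passive premouse axioms (and the active type $i$ axioms if $T$ specifies one) are $\Pi_2$/Q-formulas satisfied by every $M_{x_n}$; to see they pass to $M_\infty$, let $\forall\vec x\,\psi(\vec x)$ with $\psi\in\rSigma_1$ be such an axiom, let $[\vec t\,]$ be represented by nice terms, choose $m$ bounding all of $\vec t$, and invoke Part \ref{item:Los_for_exactly_recon} on $\psi([\vec t\,])$ using that the corresponding $\rSigma_1$ instance holds in every sufficiently advanced $M_{x_n}\wr\eta_{x_n m}$ because $M_{x_n}$ is a premouse. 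Hence $M_\infty$ is a premouse. Soundness with $\rho_1^{M_\infty}=\omega$ and $p_1^{M_\infty}=p$ follows from $M_\infty=\Hull_1^{M_\infty}(\{p\})$ together with goodness of $T$ (which fixes the length $k$ of $\dot p$ and forces uniformly bounded $1$-solidity via the chosen term $t$, cf.~Definition \ref{dfn:unif_cofinal}). The equality $\dot x^{M_\infty}=x$ holds because each coordinate $\dot x^{M_{x_n}}(k)=x_n(k)$ stabilizes to $x(k)$.

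For Part \ref{item:when_M_infty_iterable_exactly_recon}, the previous parts already make $M_\infty$ a sound, $(0,\om_1+1)$-iterable $x$-premouse satisfying $T$, with $\rho_1=\om$, $M_\infty=\Hull_1^{M_\infty}(\{p_1^{M_\infty}\})$, non-small, and $\delta^{M_\infty}\le\min(p_1^{M_\infty})$ (using Lemma \ref{lem:p_1_above_lgcd} inside $M_\infty$ if $p_1^{M_\infty}\neq\emptyset$). So $M_\infty$ is a candidate in the definition of $M_x$, hence $M_x\ins M_\infty$. Now suppose $T$ is recursive. By Part \ref{item:Los_for_exactly_recon} together with cofinality of $T$, the ordinals $\alpha_n$ least such that $(M_\infty\wr\alpha_n,p)\sats\varphi_n$ are strictly increasing and cofinal in $\OR^{M_\infty}$. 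If $M_x\pins M_\infty$ properly, then since $M_x\sats T$ every $\varphi_n$ would be witnessed below $\OR^{M_x}<\OR^{M_\infty}$, contradicting cofinality. Hence $M_x=M_\infty$. The main technical obstacle is the Los-theorem bookkeeping in Part \ref{item:Los_for_exactly_recon}: one must verify that the uniform bound $m$ for a $\Sigma_1$ formula can be extracted from the niceness data of its subterms, and that $\Sigma_1$ Skolem functions computed at level $\eta_{x_n m}$ remain nice terms---this is exactly what the cofinality and uniform boundedness of $T$ (guaranteed by Lemma \ref{lem:good,unif_1-solid_to_cofinal}) are designed to provide.
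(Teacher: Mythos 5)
Your handling of Parts \ref{item:Los_for_exactly_recon}--\ref{item:M_infty_fs_exactly_recon} and the first half of Part \ref{item:when_M_infty_iterable_exactly_recon} follows the paper's route: a \L o\'s-style theorem restricted to nice terms, then transfer of the premouse axioms and the generalized $1$-solidity witness (from uniformly bounded $1$-solidity) to establish soundness, and finally $M_x\ins M_\infty$ by minimality. The extra detail you supply in the induction for Part \ref{item:Los_for_exactly_recon} is consistent with the paper's ``slight variant of \L o\'s'' and is fine; in particular your observation that the Skolem term ``least witness at level $\eta_m$'' is again uniformly bounded, hence nice, is exactly the point that makes the argument close.

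The gap is in the last step of Part \ref{item:when_M_infty_iterable_exactly_recon}. You argue that if $M_x\pins M_\infty$ properly, then ``since $M_x\sats T$ every $\varphi_n$ would be witnessed below $\OR^{M_x}$,'' contradicting cofinal witnessing in $\OR^{M_\infty}$. But these two witnessing claims use different parameters. The cofinality conclusion of Part \ref{item:Los_for_exactly_recon} concerns satisfaction $(M_\infty\wr\alpha,p_1^{M_\infty})\sats\varphi_n$, whereas $M_x\sats T$ gives witnesses for $(M_x\wr\alpha,p_1^{M_x})\sats\varphi_n$. Since $M_x\pins M_\infty$ is proper and both are sound with $\rho_1=\om$, one has $\max(p_1^{M_\infty})\geq\OR^{M_x}$, so $p_1^{M_\infty}\neq p_1^{M_x}$ and in general $p_1^{M_\infty}\notin M_x$; and even in the passive case $M_\infty\wr\OR^{M_x}$ is not the structure $(M_x,p_1^{M_x})$ appearing in ``$M_x\sats T$.'' So no contradiction results, and indeed your argument never uses the hypothesis that $T$ is recursive, which the lemma requires for this conclusion. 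The paper's argument avoids this: it observes that ``there is an $x$-good proper segment of me'' (where $x$-good means satisfying the model-theoretic requirements defining $M_x$, with $T\sats$ expressed by bounded quantification over a recursive enumeration of $T$) is a $\Sigma_1$ statement over $M_\infty$, holds in $M_\infty$ if $M_x\pins M_\infty$, and hence by Part \ref{item:Los_for_exactly_recon} holds in $M_{x_n}$ for all sufficiently large $n$, contradicting the minimality of $M_{x_n}$. You should replace your cofinality argument with this reflection argument; the $\Sigma_1$-complexity of ``$x$-good proper segment'' is precisely where recursiveness of $T$ is used.
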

\begin{proof}
Part \ref{item:Los_for_exactly_recon}: The first statement (regarding the $\Sigma_1$ theory of $(M_\infty,p)$) is a slight variant of Lo\'s' theorem. The rest is an easy consequence.

Part \ref{item:M_infty_sats_psi_0_if_psi_0_T-ubd_exactly_recon}: This is an immediate consequence of part \ref{item:Los_for_exactly_recon}.

Part \ref{item:M_infty_fs_exactly_recon}:
 Suppose $M_\infty$ is wellfounded. That $M_\infty$ is an real premouse
 with $\dot{x}^{M_\infty}=x$
 follows from part \ref{item:Los_for_exactly_recon} like in Remark \ref{rem:M_infty}. Because $T$ is uniformly boundedly $1$-solid,
 as witnessed by $t,n$,
 we get that $t^{M_\infty}(\dot{p}^{M_\infty})$ is a generalized $1$-solidity witness
 for $(M_\infty,\dot{p}^{M_\infty})$. Since  $M_\infty=\Hull_1^{M_\infty}(\emptyset)$,
 therefore $\dot{p}^{M_\infty}=p_1^{M_\infty}$ and
 $\rho_1^{M_\infty}=\om$ and $M_\infty$ is $1$-sound. 
 
 Part \ref{item:when_M_infty_iterable_exactly_recon}:
 Suppose $M_\infty$ is wellfounded and $(0,\om_1+1)$-iterable. Then  $M_x\ins M_\infty$
 by the minimality of $M_x$.
 Now suppose also that $T$ is recursive. Say that a premouse is \emph{$x$-good} iff it satisfies the model theoretic requirements of $M_x$.
 Then if $M_x\pins M_\infty$
 then $M_\infty\sats$``There is an $x$-good proper segment of me''.
 But then for all large $n$, $M_{x_n}\sats$``There is an $x_n$-good proper segment of me'',
 contradicting the minimality of $M_{x_n}$.
 \end{proof}

 \subsection{Lifting norms}

 In this section we will define the key notion in the scale construction, the lifting norms.
 We do this simultaneously for both $1$-$\CC$-closed operators and exactly reconstructing operators $\Mmm$. So we assume we are in either
 the case of \S\ref{sec:theory_norms_1-CC-closed} or \S\ref{sec:theory_norms_exactly_reconstructing},
 and define $M_x=\Mmm(x)$, etc, as there.
 
\begin{dfn}
 Let $M$ be a premouse. Define the \emph{ultrapower evaluation} function $\uev^M$, with domain $M$,
as follows. Let $x\in M$. If $x=(0,y)$ then $\uev^M(x)=y$. If $M$ is active 
type 3 and, letting $F=F^M$ and $\kappa=\crit(F)$, $x=(1,a,f)$ where
$a\in[\OR]^{<\om}$ and $f:[\kappa]^{<\om}\to M|\kappa^{+M}$, then 
\[ \uev^M(x)=[a,f]^M_F.\]
Otherwise $\uev^M(x)=x$.

Define the \emph{decoding} function $\dcd^M:\core_0(M)\to M$ (note $M$ is not squashed) by $\dcd^M(x)=\uev^M(x)$ if 
$\uev^M(x)\in M$, and $\dcd^M(x)=0$ otherwise. So $\dcd^M$ is a surjection.
\end{dfn}

\begin{dfn}[Min terms] We fix a recursive function $\varphi\mapsto\varphi'$ with the following 
property. Let $\varphi(\dot{x_0},\ldots,\dot{x}_{m-1},\dot{y})$ be an $\rSigma_{n+1}$ formula of $\Ll_{\pm}$ in the
$m+1$ free variables $\dot{x_0},\ldots,\dot{x}_{m-1},\dot{y}$. Write 
$\vec{\dot{x}}=(\dot{x_0},\ldots,\dot{x}_{m-1})$. Let $M$ be any $n$-sound premouse and
$\xvec\in\core_0(M)^m$. Then
$\varphi'(\dot{p},\vec{\dot{x}},\dot{y})$ is an $\rSigma_{n+1}$ formula of $\Ll_{\pm}$ in the free variables 
$\dot{p},\vec{\dot{x}},\dot{y}$, and
\[ \{(\xvec,y)\mid \core_0(M)\sats\varphi'(\pvec_n^M,\xvec,y)\} \]
uniformizes
\[ \{(\xvec,y)\mid \core_0(M)\sats\varphi(\xvec,y)\}.\]

Given $\varphi(\vecdotx,\dot{y})$ as above, we associate the
\emph{$(n+1)$-term} $\sigma_\varphi$. Given an $n$-sound $M$ and $\xvec\in\core_0(M)$,
we set
\[ \sigma_\varphi^M(\xvec)=\text{ unique }y\in\core_0(M)\text{ such that }
\core_0(M)\sats\varphi'(\pvec_n^M,\xvec,y) \]
if $M\sats\ex y\varphi(\xvec,y)$, and $\sigma_\varphi^M(\xvec)$ is undefined otherwise.

Given an $\rSigma_{n+1}$ formula $\varphi$, we associate the \emph{$(n+1)$-rep-term} $\tau_\varphi$.
Fix an $n$-sound $M$ and let $\xvec\in\core_0(M)$. We set
\[ \tau_\varphi^M(\xvec)=\dcd^M(\sigma_\varphi^M(\xvec)) \]
(so $\tau_\varphi^M(\xvec)$ is defined iff $\sigma_\varphi^M(\xvec)$ is defined).

Given some term or rep-term $t$ and $\xvec\in M$, we write $t^M(x)\downarrow$ iff $x\in\dom(t^M)$.
In general when we declare ``Let $y=t^M(\xvec)$'', it might be that $\xvec\notin\dom(t^M)$,
in which case $y$ does not have any meaning.
Following such a declaration, we write ``$y\downarrow$'' for ``$t^M(\xvec)\downarrow$''.
\end{dfn}

\begin{dfn}
Let $\Tt$ be an sse-$n$-maximal iteration tree on an $n$-sound premouse $M$, of successor length 
$\theta+1$.
Let $S\ins M^\Tt_\theta$. We say that $S$ is 
\emph{sse-$\Tt$-extending} iff $\Tt$ can be 
extended to a putative sse-$n$-maximal tree by next using $F^S$; that is, $S$ is active and 
either $\lh(E^\Tt_\alpha)\leq\OR^S$ for all $\alpha<\theta$,
or $\theta=\alpha+1$ and $E^\Tt_\alpha$ is superstrong and $\nu(E^\Tt_\alpha)<\OR^S$.
\end{dfn}

\begin{dfn}\label{dfn:M_x-good}
Let $x\in\RR$ and $M$ be a sound $(0,\om_1+1)$-iterable $x$-premouse with $\rho_1^M=\om$. 
 Let $z\in\RR$ and $P$ be a $(0,\om_1+1)$-iterable $z$-premouse 
 with $x\in\RR^P$.
 We say that $P$ is \emph{$M$-good}
 iff there is $\xi$ such that $P$ is $(x,\xi,0)$-good and $M=\core_{1}(N_\xi^P)$
 and either:
\begin{enumerate}[label=--]
 \item $\Mmm$ is $1$-$\CC$-closed and either:
 \begin{enumerate}[label=--]\item $\xi<\rho_0^P$, or
 \item $M,P$ are non-small and $\xi=\OR^P$,
 \end{enumerate}
 or
 \item $\Mmm$ is exactly reconstructing,
 $P$ is non-small, $\rho_1^P=\om$, $P$ is $\delta^P$-sound,
 $\xi=\OR^P$,  $p_1^P\cap\delta^P=\emptyset=p_1^M\cap\delta^M$.
\end{enumerate}
If $P$ is $M_x$-good, as witnessed by $\xi$, then let $\pi^P_x:\core_0(M_x)\to \core_0(N^P_\xi)$ be the core embedding.
\end{dfn}

\begin{dfn}[Norm description]\label{dfn:norm_desc}
Let $m<\om$. A \emph{norm description of depth $m+1$} is a sequence
 \[ \sigma=(\mathscr{N},(n_0,\nu_0,e_0,\tvec_0),\ldots,(n_{m-1},\nu_{m-1},e_{m-1},\tvec_{m-1}),(n_m,\nu_m,t)), \]
 where for each $i\leq m$, we have:
 \begin{enumerate}[label=--]
 \item $0\in\mathscr{N}\sub [0,m]$,
  \item 
$n_i<\om$,
\item if $i<m$ then $(\nu_i,e_i)\conc\tvec_i$ 
is a finite sequence of
$(n_i+1)$-rep-terms in variables $(\dot{p},\udotvec_0,\ldots,\udotvec_{i-1})$,
\item  $\nu_m$ is an
 $(n_m+1)$-rep-term
 and $t$ is an $(n_m+1)$-term\footnote{Note that
 $(\nu_i,s_i)\conc\tvec_i$
is a finite sequence of \emph{rep-}terms,
for $i<m$, and $\nu_m$ is a \emph{rep-}term,
whereas $t$ is a \emph{term}.}, all in variables $(\dot{p},\udotvec_0,\ldots,\udotvec_{m-1})$,
\end{enumerate}
where $\lh(\udotvec_j)=\lh(\tvec_j)$ for each $j<m$.

In case $\Mmm$ is exactly reconstructing,
 we call $\sigma$
 \emph{nice} iff
 \begin{enumerate}[label=--]
  \item
for each $i<m$, 
 if $i\in\mathscr{N}$
 then $n_i=0$ and the terms $\nu_i,s_i,\tvec_i$ are nice,
 and
 \item if $m\in\mathscr{N}$
 then $n_m=0$ and $\nu_m,t$ are nice.\qedhere
 \end{enumerate}
\end{dfn}

\begin{dfn}[Lifting norm]\label{dfn:lifting_norm}

Let $\sigma$ be a norm description of depth $m+1$
and fix notation as in Definition \ref{dfn:norm_desc}.
We define a relation $\leq_\sigma$ on $A$, through a certain lexicographic 
ordering.  
We will define a finite sequence  $\left<{\leq_i}\right>_{i\leq 4m+4}$ of approximations 
$\leq_i$, and set ${\leq_\sigma}={\leq_{4m+4}}$. We will show in Lemma \ref{lem:leq_sigma_is_a_pwo} that each $\leq_i$ is a prewellorder, in particular $\leq_\sigma$. We write $<_i$ for the strict part of $\leq_i$, and for $x,y\in A$, 
define $x\equiv_i y$ iff $x\leq_i y\leq_i x$. We will have that $\leq_{i+1}$ refines $\leq_i$; that 
is, if $x<_i y$ then 
$x<_{i+1}y$. 
For convenience we also define $\leq_{-i}$ to be the trivial prewellorder for $1\leq i\leq 4$; that is, we set 
$x\leq_{-i} y$ for all $x,y\in A$. For $i<4m+4$, we will also define relations 
$\equiv_i^{\dec}$ \tu{(}\emph{dec} for \emph{decided}\tu{)} and $\equiv_i^\undec$ \tu{(}\emph{undecided}\tu{)} on 
$A$. By setting $x\equiv_i^{\dec}y$ we are declaring that $x\equiv_j y$ for all $j\geq i$ \tu{(}hence for all $j$\tu{)}. We set 
$x\equiv_i^\undec y$ iff $x\equiv_i y$ and $x\not\equiv_i^\dec y$. So when computing whether 
$x\leq_{i+1}y$, we assume that $x\equiv_i^\undec y$.
We will have ${\equiv_i^\dec}\sub{\equiv_{i+1}^\dec}$.
We set $\equiv_{-i}^\dec=\emptyset$ (the 
trivial declaration) for $1\leq i\leq 4$.

Fix $x,y\in A$. Fix  $\Yback$ such 
that\begin{equation}\label{eqn:P_is_M_x-good_or_M_y-good} \Yback\text{ is }M_x\text{-good or }\Yback\text{ is }M_y\text{-good.}\end{equation}
We will determine whether $x\leq_ky$ and whether $x\equiv_k^\dec y$ by induction on $k$. This determination will be made referring to $P$, but we will  show later that the result is 
independent of the choice of $\Yback$.

We set $x\leq_0 y$ iff $P$ is $M_x$-good and if $P$ is $M_y$-good then
$\prodstage^\Yback(M_x)\leq\prodstage^\Yback(M_y)$.
We set $x\equiv_0^\dec y$ iff $x\equiv_0 
y$ and $n_0\neq 0$.\footnote{The parameter $n_0$ is pointless in the current setup, because we have demanded that $\rho_1^{M_x}=\om$. But this will be generalized in a later version.} If $x\equiv_0^\undec y$ then let 
$\Tt_{x,0},\Tt_{y,0}$ be the trivial $n_0$-maximal trees on $M_x,M_y$ respectively.
Note that if $x\equiv_0y$ as computed using $P$,
then $P$ is both $M_x$-good and $M_y$-good
\tu{(}using \tu{(}\ref{eqn:P_is_M_x-good_or_M_y-good}\tu{)}\tu{)}. Let $\pi_{x0}=\pi_x^P$ and $\pi_{y0}=\pi_y^P$ \tu{(}see Definition \ref{dfn:M_x-good}\tu{)}.

We maintain the following inductively on $i\in[0,m]$.
Assume $x\equiv_{4i}^\undec y$.
Then we have defined $\Tt_x=\Tt_{xi}$, $\Tt_y=\Tt_{yi}$, 
$\Avec_i=(\avec_0,\ldots,\avec_{i-1})$, $\Bvec_i=(\bvec_0,\ldots,\bvec_{i-1})$, $\pi_x=\pi_{xi}$, $\pi_y=\pi_{yi}$, $R_x$ and $R_y$ such that:
\begin{enumerate}[label=--]
\item $\Tt_x$ is an sse-$n_0$-maximal\footnote{Note we do not consider arbitrary essentially-$n_0$-maximal trees, but only sse-. In particular,
any non-$n_0$-maximal part of the tree is non-small.} tree on $M_x$ and $\Tt_y$ likewise on
$M_y$, both of length $i+1$,
with identical tree, drop and degree structures,  and for all $j\leq i$,
we have:
\begin{enumerate}[label=--]\item $j\in\mathscr{N}$
iff $[0,j]^{\Tt_x}\cap\dropset^{\Tt_x}_{\deg}=\emptyset$,
and
\item $\deg^{\Tt_x}(j)=n_j$.
\end{enumerate}
\item For all $j<i$, we have $\avec_j\in\nu(E^{\Tt_x}_j)^{<\om}$ and $\bvec_j\in\nu(E^{\Tt_y}_j)^{<\om}$ 
and $\lh(\avec_{j})=\lh(\tvec_j)=\lh(\bvec_j)$.
\item $\Uu\eqdef\lifttree^{\Tt_x,\Yback}=\lifttree^{\Tt_y,\Yback}$
and $\pi_x=\pi^{\Tt_x,\Uu}$ and 
$\pi_y=\pi^{\Tt_y,\Uu}$,
\item 
$\pi_x(\Avec_i)=\pi_y(\Bvec_i)$,
\item $R_x=M^{\Tt_x}_i$ and $R_y=M^{\Tt_y}_i$.
\end{enumerate}

\begin{case}
$i<m$.

Let
$l=\lh(\tvec_i)$ and
\[ \nu_x=\nu_i^{R_x}(\pvec_{n_i+1}^{R_x},\Avec_i), \]
\[ e'_x=e_i^{R_x}(\pvec_{n_i+1}^{R_x},\Avec_i), \]
\[ a_{ij}= 
t_{ij}^{R_x}(\pvec_{n_i+1}^{R_x},\Avec_i), \]
\[ \avec_i=(a_{i0},\ldots,a_{i,l-1}); \]
here each of $\nu_x,e'_x,a_{ij}$ may be undefined.
(Recall here that $\nu_i,e_i,t_{ij}$ are rep-terms, and thus, if $R_x$ is active type 3,
$\nu_x,e'_x,a_{ij}$ might be in $R_x\cut R_x^\sq$.)
Let $\nu_y,e'_y,b_{ij},\bvec_i$ be defined likewise, with $y$ replacing $x$
and $b$ replacing $a$ throughout. 
If $e'_x\downarrow$ and $e'_x=\emptyset$, let $e_x=R_x$ (in which case we say that 
$e_x\downarrow$); otherwise let $e_x=e'_x$. Likewise for $e_y$.

Recall that $x\equiv_{4i}^\undec y$. Say that $x$ is \emph{$(\sigma,i)$-well}
iff 
\begin{enumerate}[label=--]
 \item if $R_x$ is type 3 then $\nu_x\downarrow=\nu(R_x)$,
 \item $e_x\downarrow$ and $e_x$ is sse-$\Tt_x$-extending,
 \item $\all j<l\left[a_{ij}\downarrow\text{ and }a_{ij}\in\nu(F^{e_x} )\right]$.
\end{enumerate}

Set $x\leq_{4i+1} y$ iff, if $y$ is $(\sigma,i)$-well then $x$ is $(\sigma,i)$-well.
Set $x\equiv_{4i+1}^\dec y$ iff $x\equiv_{4i+1}y$ and $x$ is not $(\sigma,i)$-well.

Suppose $x\equiv_{4i+1}^\undec y$. Set $x\leq_{4i+2}y$ iff, if $e_y$ is small then $e_x$ is small.
Set ${\equiv_{4i+2}^\dec}={\equiv_{4i+1}^\dec}$.

Suppose $x\equiv_{4i+2}^\undec y$. We have a few subcases.

\begin{scase}\label{scase:S_x_small} $e_x$ is small (so $e_y$ is also small).
 
Let 
\[ k_x=\resl^{\Tt_x,\Uu}(e_x), \]
\[ \res^{\Tt_x,\Uu}(e_x)=(\Vv_x,\left<\Psi_{xj}\right>_{j\leq k_x},\psi_x), \]
\[\Psi_{xj}=(\beta_{xj},A_{xj},\psi_{xj},\alpha_{xj},d_{xj}), \]
\[ \exitresadd^{\Tt_x,\Uu}(e_x)=(\wt{\alpha}_x,\wt{\psi}_x). \]
Define:
\begin{enumerate}[label=--]
\item $\alpha_{xj}^*=\OR(\prodseg^{M^{\Vv_x}_j}(\alpha_{xj}))$ for $j\in[0,k_x]$,
 \item $\beta_{xj}^*=\OR(\prodseg^{M^{\Vv_x}_{j-1}}(\beta_{xj}))$ for $ j\in(0, k_x]$,
 \item $\params_{x0}=\langle\rho_\om^{A_{x1}},\beta_{x1},t^{M^{\Vv_x}_{0}}_{\beta_{x1}},\beta_{x1}^*\rangle$ if $0<k_x$,
 \item $\params_{xj}=\langle\alpha_{xj},t^{M^{\Vv_x}_{j}}_{\alpha_{xj}},\alpha_{xj}^*,\rho_\om^{A_{x,j+1}},\beta_{x,j+1},t^{M^{\Vv_x}_{j}}_{\beta_{x,j+1}},\beta_{x,j+1}^*\rangle$
 for $j\in(0,k_x)$, and
 \item $\params_{xk_x}=\langle\alpha_{xk_x},t^{M^{\Vv_x}_{k_x}}_{\alpha_{xk_x}},\alpha_{xk_x}^*\rangle$ (possibly $k_x=0$).
\end{enumerate}
Now let
\[\params_x=\params_{x0}\conc\params_{x1}\conc\ldots\conc\params_{xk_x}\conc\langle\wt{\alpha}_x,\lh(F^{M^{\Vv_x}_{k_x}}_{\wt{\alpha}_xx})\rangle. \]
Likewise with $y$ replacing  $x$.
Set
$x\leq_{4i+3} y$ iff
\[ 
\left<k_x\right>\conc\params_x\conc\langle\psi_x(\avec_i)\rangle\leq_\lex
\left<k_y\right>\conc\params_y\conc\langle\psi_y(\bvec_i)\rangle.\]
(Note that
$\avec_i\sub\nu(F^{e_x})\sub\dom(\psi_x)$ and $\bvec_i\sub\nu(F^{e_y})\sub\dom(\psi_y)$.)
\end{scase}

\begin{scase}\label{scase:S_x_non-small,Tt_x_small} $e_x$ is non-small but $\Tt_x$ is small.
 
Let
\[ k_x=\critresl^{\Tt_x,\Uu}(e_x,\delta^{e_x}), \]
\[ \critres^{\Tt_x,\Uu}(e_x,\delta^{e_x})=(\Vv_x,\left<\Psi_{xj}\right>_{j\leq k_x},\varrho_x), \]
\[\Psi_{xj}=(\beta_{xj},A_{xj},\psi_{xj},\alpha_{xj},d_{xj}). \]
Let
$\alpha_{xj}^*,\beta^*_{xj},\params_{xj}$ be defined by the equations given in the previous case. Let
 \[\params_x=\params_{x0}\conc\params_{x1}\conc\ldots\conc\params_{xk_x}\conc\left<\OR(\exitcopyseg^{\varrho_x}(e_x))\right>. \]
and $\psi_x=\exitcopymap^{\varrho_x}(e_x)$. Likewise with $y$ replacing  $x$.
Then 
$x\leq_{4i+3} y$ iff
\[ 
\left<k_x\right>\conc\params_x\conc\langle\psi_x(\avec_i)\rangle\leq_\lex
\left<k_y\right>\conc\params_y\conc\langle\psi_y(\bvec_i)\rangle.\]
\end{scase}
\begin{scase} $\Tt_x$ is non-small.
 
Let $\alpha_x=\OR(\exitcopyseg^{\pi_x}(e_x))$ and $\psi_x=\exitcopymap^{\pi_x}(e_x)$;
likewise for $y$.
Then $x\leq_{4i+3}y$ iff
$\langle\alpha_x,\psi_x(\avec_i)\rangle\leq_\lex
\langle\alpha_y,\psi_y(\bvec_i)\rangle$.
\end{scase}

This completes all subcases. In each subcase set ${\equiv_{4i+3}^\dec}={\equiv_{4i+2}^{\dec}}$.

Now suppose  $x\equiv_{4i+3}^\undec y$.
We define
\[ \Tt'_x=\Tt_{x,i+1}=\text{the sse-}n_0\text{-maximal tree given by }\Tt_x\conc\left<F^{e_x}\right>, 
\]
\[ \Tt'_y=\Tt_{y,i+1}=\text{the sse-}n_0\text{-maximal tree given by }\Tt_y\conc\left<F^{e_y}\right>; 
\]
note these trees make sense. Set $x\leq_{4i+4} y$ iff
\begin{enumerate}[label=]
 \item if
 \begin{enumerate}[label=]\item $\Big(i+1\in\mathscr{N}$ iff $[0,i+1]^{\Tt'_y}\cap\dropset^{\Tt'_y}_{\deg}=\emptyset\Big)$
 \end{enumerate}
 then
 \begin{enumerate}[label=]
 \item $\Big(i+1\in\mathscr{N}$ iff
 $[0,i+1]^{\Tt'_x}\cap\dropset^{\Tt'_x}_{\deg}=\emptyset\Big)$ and
$\Big($if
$\deg^{\Tt'_y}(i+1)=n_{i+1}$ then $\deg^{\Tt'_x}(i+1)=n_{i+1}\Big)$.
\end{enumerate}
\end{enumerate}
Set $x\equiv_{4i+4}^\dec y$ iff $x\equiv_{4i+4} y$ and 
\begin{enumerate}[label=]
 \item if $\Big(i+1\in\mathscr{N}$ iff $[0,i+1]^{\Tt'_x}\cap\dropset^{\Tt'_x}=\emptyset\Big)$
 then $\deg^{\Tt'_x}(i+1)\neq n_{i+1}$.
\end{enumerate}

If $x\equiv_{4i+4}^\undec y$ then we have established the inductive hypotheses.
In particular, $\lifttree^{\Tt'_x,\Yback}=\Uu'=\lifttree^{\Tt'_y,\Yback}$. For example in Subcase \ref{scase:S_x_small}, this is because $\lifttree^{\Tt_x,\Yback}=\Uu=\lifttree^{\Tt_y,\Yback}$,
the resurrections for $e_x$ and $e_y$ produced the same trees $\Vv_x,\Vv_y$,
the backgrounds $F^*_x,F^*_y$ for the lifts of $F^{e_x},F^{e_y}$ are identical,
and likewise the projecta $\rho_\om^{A_{xj}}=\rho_\om^{A_{yj}}$ for each $j\in[1,k_x=k_y]$,
as this information is encoded into $\params_x,\params_y$.
It follows that $\pred^{\Uu'}(i+1,0)$ is independent of whether we are considering $\Tt'_x$ or $\Tt'_y$,
as it is determined by $F^*$ and the projecta $\rho_\om^{A_{xj}}=\rho_\om^{A_{yj}}$,
along with earlier such projecta, which also match by induction. 
\end{case}

\begin{case} $i=m$.

Let
\[ \nu_x=\nu_i^{R_x}(\pvec_{n_m+1}^{R_x},\Avec_m), \]
\[ t_x=s^{R_x}(\pvec_{n_m+1}^{R_x},\Avec_m).\]
Let $\nu_y,t_y$ be defined likewise, with $y$ replacing $x$
and $b$ replacing $a$.

Recall that $x\equiv_{4m}^\undec y$. Say that $x$ is \emph{$(\sigma,m)$-well}
iff\[
\text{ if }R_x\text{ is type 3 then }\nu_x\downarrow=\nu(R_x).\]

Set $x\leq_{4m+1} y$ iff, if $y$ is $(\sigma,m)$-well then $x$ is $(\sigma,m)$-well.
Set $x\equiv_{4m+1}^\dec y$ iff $x\equiv_{4m+1}y$ and $x$ is not $(\sigma,m)$-well.

Set ${\leq_{4m+3}}={\leq_{4m+2}}={\leq_{4m+1}}$
and ${\equiv_{4m+3}^{\dec}}={\equiv_{4m+2}^{\dec}}={\equiv_{4m+2}^{\dec}}$.

Suppose $x\equiv_{4m+3}^\undec y$. Let
$\alpha_x=t^{R_x}(\pvec_{n_m+1}^{R_x},\avec_0,\ldots,\avec_{m-1})$
and define $\alpha_y$ analogously.
Then we set $x\leq_{4m+4}y$  iff
\begin{enumerate}[label=]
\item if $\alpha_y\downarrow$ and $\alpha_y\in\OR$ then [$\alpha_x\downarrow$ and $\alpha_x\in\OR$ and 
$\pi_x(\alpha_x)\leq\pi_y(\alpha_y)$].
 \end{enumerate}

(Recall that $t$ is an $(n_m+1)$-term, not a rep-term, so if 
$\alpha_x\downarrow$ then $\alpha_x\in\core_0(R_x)=\dom(\pi_x)$; likewise $\alpha_y$ and $\dom(\pi_y)$.)
\end{case}

Since ${\leq_\sigma}={\leq_{4m+4}}$, this completes the definition.
\end{dfn}

\begin{rem}
 Note that in Subcases \ref{scase:S_x_small} and \ref{scase:S_x_non-small,Tt_x_small},
 for each $j$, either:
 \begin{enumerate}[label=--]
  \item $\alpha_{xj}<\rho_0^{M^\Uu_\infty}$, or
 \item $j=0$ and $\alpha_{x0}=\OR^{M^\Uu_\infty}$ and there is no drop in $[0,\infty]_{\Tt_x}$.
 \end{enumerate}
For suppose $\rho_0^{M^\Uu_\infty}\leq\alpha_{xj}$.
Now $P$ is $M_x$-good.
Since $\alpha_{xj}\geq\rho_0^{M^\Uu_\infty}$,
we have $\prodstage^P(M_x)\geq\rho_0^P$.
Since $P$ is $M_x$-good, therefore $P$ is type 3 and $P$ is a Q-mouse and $\prodstage^P(M_x)=\OR^P$.
Since $\Tt_x$ is small, $[0,\infty]_{\Tt_x}$ cannot drop, as otherwise
$\alpha_{x0}<\rho_0^{M^\Uu_\infty}$.
So $\alpha_{x0}=\OR(M^\Uu_\infty)$.
So we may assume $j>0$ and $\alpha_{xj}<\OR(M^\Uu_\infty)$.
Since $\rho_0^{M^\Uu_\infty}\leq\alpha_{xj}$, $S_x$ is non-small. So we are in Subcase \ref{scase:S_x_non-small,Tt_x_small}
and $k_x\geq 1$.
If $\OR^{S_x}<\delta^{M^{\Tt_x}_\infty}$ then $\alpha_{x1}<\delta^{M^\Uu_\infty}<\rho_0^{M^\Uu_\infty}$,
contradiction. So $\delta^{M^{\Tt_x}_\infty}=\delta^{S_x}$. So $\delta^{S_x}$ is a cardinal of $M^{\Tt_x}_\infty$,
so
\[ k_x=\critresl^{\Tt_x,\Uu}(S_x,\delta^{S_x})=1=j\]
(note this need not equal $\critresl^{\Tt_x,\Uu}(S_x)$, which could be larger than $1$)
and $\rho_\om^{A_{x1}}=\delta^{M^\Uu_\infty}$,
and as $t_{\alpha_{x1}x}^{M^\Uu_\infty}=3$, therefore $\alpha_{x1}=\OR^{A_{x1}}<\rho_0^{M^\Uu_\infty}$, contradiction.
\end{rem}

\subsection{The mouse scale for $1$-$\CC$-closed operators}

\begin{dfn}
Let $\Mmm$ be a $1$-$\CC$-closed function.
 Let $A\in\Gamma^\Mmm$ and $\psi_0\in\Ll_{\Sigma_1}$ with $A=A_{\psi_0}^\Mmm$.
 The \emph{mouse scale}
 on $A$ is the (putative) very good scale  on $A$ given by folding all theory norms
 (as in Definition \ref{dfn:1-CC-closed_theory_norm}) and lifting norms
 (as in \ref{dfn:lifting_norm}) together,
 in a natural manner,
 and whose first norm is $\leq_0$
 (as in \ref{dfn:lifting_norm}).
\end{dfn}

\subsection{The mouse scale for exactly reconstructing operators}

 \begin{dfn}[Unbounded primary norm for exactly reconstructing] Let $\Mmm$ be an exactly reconstructing mouse operator. Let $\psi_0\in\Ll_{\Sigma_1}(\dot{p})$ and $A=A_{\psi}^\Mmm$. We define the \emph{unbounded primary prewellorder} $\leq_{\Mmm\psi}$
 on $A$
as follows: for $x,y\in A$, set $x\leq_{\Mmm\psi} y$ iff, letting $p=p_1^{M_{(x,y)}}$, we have
\[ M_{(x,y)}\sats\ex\alpha\ \Big[(N_{x\infty}\wr\alpha,p)\sats\psi\ \&\ \all\beta<\alpha\Big((N_{y\infty}\wr\beta,p)\sats\neg\psi_0\Big)\Big]\text{''}.\]
Write $\Phi_{\Mmm\psi}$ for the corresponding norm.\end{dfn}

 \begin{dfn}\label{dfn:mouse_scale_exactly_reconstructing}
Let $T$ be good, exactly reconstructing, cofinal, uniformly boundedly $1$-solid. Let $\Mmm=\Mmm_T$ and $\Gamma=\Gamma_M$. Let $\psi$ be a $\Sigma_1$ sentence of $\Ll_\pm(\dot{p})$.
Let $A=A_\psi$.

The \emph{mouse scale} on $A$ is the (putative)
very good scale on $A$ given by folding together, in a natural manner:
\begin{enumerate}[label=(\roman*)]
 \item $\Phi_{\Mmm\psi}$ (the unbounded primary norm),
 \item\label{item:theory_norms_in_mouse_scale} the $T$-uniformly bounded theory norms,
 \item\label{item:lifting_norms_in_mouse_scale} the lifting norms $\Phi_\sigma$ for nice norm descriptions $\sigma$.
\end{enumerate}

Now suppose that $T$ is also uniform.
Let $\psi$ be as before, but also uniformly bounded. The \emph{norm-uniformly-bounded mouse scale} on $A$ is the (putative) very good scale on $A$ given by folding in the norms in \ref{item:theory_norms_in_mouse_scale}
and \ref{item:lifting_norms_in_mouse_scale} above.
\end{dfn}

\section{The lifting norms are norms}\label{sec:lifting_norms_are_norms}

Our next goal is to verify that the lifting norms are in fact norms. We will also
establish that the mouse scales we have defined are appropriately definable.
The following notion will be a key tool in the proof; it is an instance of the more general notion of \emph{tree embedding} described in \cite{iter_for_stacks}.

\begin{dfn}[Coarse tree embedding]\label{dfn:coarse_tree_emb}\index{Coarse tree embedding}
 Let $Y$ be $n$-sound and
 $\Uu,\Xx$ be successor length, $n$-maximal trees on $Y$, nowhere dropping in model or degree,
 with $\lh(\Uu)$ finite and $E^\Uu_\alpha\in\es(\core_0(M^\Uu_\alpha))$ for all $\alpha+1<\lh(\Uu)$.
 \footnote{The last condition is not particularly important, but is a simplification we can assume.}
 Let $\Delta:\lh(\Uu)\to\lh(\Xx)$, and let 
$\delta_\alpha=\Delta(\alpha)$ and $\gamma_0=0$ and 
$\gamma_{\alpha+1}=\delta_\alpha+1$ for  $\alpha+1<\lh(\Uu)$.

We write $\Delta:\Uu\hookrightarrow_{\mathrm{c}}\Xx$ (the subscript ``$c$'' stands for ``coarse'') iff there is 
$\left<\pi_\alpha,\sigma_\alpha\right>_{\alpha<\lh(\Uu)}$ such that:
\begin{enumerate}
 \item $\Delta$ is strictly increasing and $\delta_{\lh(\Uu)-1}=\lh(\Xx)-1$; hence, $\lh(\Xx)$ is 
partitioned by the intervals $[\gamma_\alpha,\delta_\alpha]$.
\item $\gamma_\alpha\leq_\Xx\delta_\alpha$; let 
$I_\alpha=[\gamma_\alpha,\delta_\alpha]_\Xx$.
\item $\alpha <_\Uu\beta$ iff $\gamma_\alpha<_\Xx\gamma_\beta$.
\item For each $\alpha<\lh(\Uu)$,
$\pi_\alpha:M^\Uu_\alpha\to M^\Xx_{\gamma_\alpha}$
is a near $n$-embedding,\footnote{\label{ftn:near_embeddings_for_coarse_tree_embeddings}The fact that we can expect near $n$-embeddings here, instead of just weak $n$-embeddings, is via the argument in \cite{fs_tame}; this is also discussed in \cite{iter_for_stacks}.} and
\[ \sigma_\alpha=i^\Xx_{\gamma_\alpha,\delta_\alpha}\com\pi_\alpha:M^\Uu_\alpha\to M^\Uu_{\delta_\alpha}. \]
\item For all $\alpha<\beta<\lh(\Uu)$, we have $\sigma_\alpha\rest\nu(E^\Uu_\alpha)\sub\pi_{\beta}$ 
and if $E^\Uu_\alpha$ is not type 3 then $\sigma_\alpha\rest\lh(E^\Uu_\alpha)\sub\pi_\beta$.
\item For all $\alpha+1<\lh(\Uu)$, we have $E^\Xx_{\delta_\alpha}=\sigma_\alpha(E^\Uu_\alpha)$.
\item Let $\alpha+1<\lh(\Uu)$ and $\beta=\pred^\Uu(\alpha+1)$ and 
$\xi=\pred^\Xx(\gamma_{\alpha+1})$.
Then $\xi\in I_\beta$. Let $\kappa=\crit(E^\Uu_\alpha)$.
Then 
\[ \sigma_\alpha\rest\kappa^{+\exit^\Uu_\alpha}=i^\Xx_{\gamma_\beta,\xi}
\com\pi_\beta\rest\kappa^{+M^\Uu_\beta} \]
and
\[\pi_{\alpha+1}:M^\Uu_{\alpha+1}\to M^\Xx_{\delta_\alpha+1}=M^\Xx_{\gamma_{\alpha+1}} \]
is defined from $i^\Xx_{\gamma_\beta\xi}\com\pi_\beta$ and 
$\sigma_\alpha\rest\exit^\Uu_\alpha:\exit^\Uu_\alpha\to\exit^\Xx_{\delta_\alpha}$ as in the proof 
of the Shift Lemma.
\item If $\alpha<_\Uu\beta$ then $\pi_\beta\com 
i^\Uu_{\alpha\beta}=i^\Xx_{\gamma_\alpha\gamma_\beta}\com\pi_\alpha$.\footnote{This condition follows from the others.}
\end{enumerate}

Write $\delta^\Delta_\alpha=\delta_\alpha$, $\gamma^\Delta_\alpha=\gamma_\alpha$,
and using the observation that $\pi_\alpha,\sigma_\alpha$ are uniquely determined by $\Delta$,
write $\pi^\Delta_\alpha=\pi_\alpha$ and $\sigma^\Delta_\alpha=\sigma_\alpha$.
\end{dfn}
\begin{dfn}\label{dfn:alpha,x-compatible}
For an iterable premouse $R$ with $x\in R$, write $\Omega^R_x$ for the least $\alpha$
such that either $N_{\alpha+\om,x}^R$ is undefined, or $\core_\om(N_{\alpha x}^R)=M_x$.

Let $R,S$ be iterable premice with $x\in R\inter S$. For $\alpha\leq\min(\Omega^R_x,\Omega^S_x)$,
say that $R,S$ are \emph{$(\alpha,x)$-compatible} iff for all $\beta\leq\alpha$, we have
\begin{enumerate}[label=(\roman*)] \item $F\eqdef F_{\beta x}^R\neq\emptyset$ iff $G\eqdef F_{\beta x}^S\neq\emptyset$,
and
\item if $F\neq\emptyset$, then $\mu\eqdef\crit(F)=\crit(G)$,
and letting
\[ R'=R|\mu^{+(R|\lh(F))}\text{ and }S'=S|\mu^{+(S|\lh(G))},\]
then
$F\rest X=G\rest X$
where
$X=(R'\inter S')\cross[\min(\lh(F),\lh(G)]^{<\om}$.
\end{enumerate}

Say $R,S$ are \emph{$({<\alpha},x)$-compatible} iff they are $(\beta,x)$-compatible for all $\beta<\alpha$.
Say that $R,S$ are \emph{$M_x$-compatible} iff $R,S$ are $(\alpha,x)$-compatible
for all $\alpha\leq\min(\Omega^R_x,\Omega^S_x)$.
\end{dfn}
\begin{dfn}\label{dfn:CC-bar}
Given $\CC=\left<N_\alpha,t_\alpha,F_\alpha\right>_{\alpha<\lambda}$,
let $\bar{\CC}=\left<N_\alpha,t_\alpha\right>_{\alpha<\lambda}$.
\end{dfn}

\begin{lem}\label{lem:CCbar_match_from_comp}
Let $R,S$ be iterable.
Then:
\begin{enumerate}
 \item\label{item:compat_implies_CC-bar_agmt} Suppose that $R,S$ are
$(\alpha,x)$-compatible.
 Then $\bar{\CC}^R_x\rest(\alpha+1)=\bar{\CC}^S_x\rest(\alpha+1)$.
 \item\label{item:compat_through_min_of_Omegas_and_goodness_implies_Omegas_match} If $R,S$
 are $(\Omega,x)$-compatible
 where $\Omega=\min(\Omega^R_x,\Omega^S_x)$,
 and $R,S$ are both $M_x$-good,
 then $\Omega^R_x=\Omega^S_x$.
 \end{enumerate}
\end{lem}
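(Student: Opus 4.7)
The plan is to prove Part 1 by a simultaneous induction on $\beta\leq\alpha$, showing that $N^R_\beta=N^S_\beta$ and $t^R_\beta=t^S_\beta$. The base cases ($\beta=\omega$, $\beta$ a limit of limits) are immediate from the definitions together with the inductive hypothesis, since $N_\omega=(V_\omega,x)$ and $N_\beta=\liminf_{\gamma<\beta}N_\gamma$. For $\beta=\gamma+\omega$ one case-splits according to the three modes of $\CC$ (generic successor stage, near-a-measurable, and $P$-cardinal-large). In the generic case, $N^R_\gamma=N^S_\gamma$ by IH, and $(\beta,x)$-compatibility at stage $\beta$ gives $F^R_\beta\neq\emptyset\iff F^S_\beta\neq\emptyset$; if both are empty, both constructions take $N_\beta=\J(\core_\omega(N_\gamma))$, and if both are nonempty then the agreement of $F^R_\beta$ and $F^S_\beta$ on $(R'\cap S')\times[\min\lh]^{<\omega}$ yields the same induced premouse extender on $N_\gamma$ (one checks that $N_\gamma$ lies in $R'\cap S'$, using that its ordinal height is $\gamma$ and the generators are bounded by $\aleph^P_\gamma$). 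So in this case $N^R_\beta=N^S_\beta$ and $t^R_\beta=t^S_\beta\in\{0,1\}$ match.

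The harder subcases are handled as follows. For the near-a-measurable case, the key is that the measurable $\kappa=\card^P(\beta)$ is detected at an earlier stage $\xi^P_{\kappa x}$ with $t=2$ (by Lemma \ref{lem:meas_implies_least_t=2_exists}); by IH this stage appears identically in both $\CC^R,\CC^S$, and $(\xi,x)$-compatibility identifies the order-$0$ measures $D^R_{\kappa 0},D^S_{\kappa 0}$ on $N^R_\kappa=N^S_\kappa$, hence the ordinals $\chi^R_{x\kappa},\chi^S_{x\kappa}$ coincide. Thus $\beta$ is near-a-measurable in $R$ iff in $S$, and the resulting $N_\beta=(i_D(N_\kappa))|\beta$ is the same. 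For the cardinal-large case, one uses Woodin exactness (Lemma \ref{lem:Woodin_exactness}): the Woodinness of $\delta^{P|\beta}$ is reflected in the P-construction output and in the $t_\gamma=3$ pattern on $(\delta,\beta)$, so by IH the property of being $P$-cardinal-large is common to $R$ and $S$, and the P-construction output is determined by $N^P_\delta$ together with the shared $\bar{\CC}$-data on $(\delta,\beta)$. This completes the induction for Part 1.

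For Part 2, Part 1 gives $\bar{\CC}^R_x\rest(\Omega+1)=\bar{\CC}^S_x\rest(\Omega+1)$ where $\Omega=\min(\Omega^R_x,\Omega^S_x)$. WLOG $\Omega=\Omega^R_x$. The crucial observation is that $M_x$-goodness of $R$, together with the definition of $\Omega^R_x$ as a \emph{least} ordinal, forces the alternative ``$\core_\omega(N^R_{\Omega^R_x})=M_x$'' to hold (and not merely ``$N^R_{\Omega^R_x+\omega}$ undefined''): letting $\xi$ witness $M_x$-goodness of $R$, the construction $\CC^R_x$ is defined through $\xi+1$ with $\core_\omega(N^R_\xi)=M_x$, so $\Omega^R_x\leq\xi$; if $\Omega^R_x<\xi$ then $N^R_{\Omega^R_x+\omega}$ is defined (being at most $N^R_\xi$), ruling out the first alternative, so the core-equals-$M_x$ condition must apply at $\Omega^R_x$; and if $\Omega^R_x=\xi$ the $M_x$-goodness witness gives it directly. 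Therefore $\core_\omega(N^R_\Omega)=M_x$, hence by Part 1 $\core_\omega(N^S_\Omega)=M_x$, so $\Omega^S_x\leq\Omega=\Omega^R_x$; combined with $\Omega\leq\Omega^S_x$ we conclude $\Omega^R_x=\Omega^S_x$.

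The main obstacle is expected to be the $P$-cardinal-large subcase of the Part~1 induction, because being $P$-cardinal-large at $\beta$ is nominally a statement about $P|\beta$'s extender sequence rather than about $\bar{\CC}^P_x\rest\beta$; resolving this requires invoking Woodin exactness to re-express cardinal-largeness in terms of the inductively-determined $\bar{\CC}$-data, and a careful unwinding of the P-construction to show its output depends only on the shared data over the interval $(\delta,\beta]$.
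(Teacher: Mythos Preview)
Your approach is essentially the paper's: the key computation is that compatibility at the least $t=2$ stage $\xi$ for a given $\kappa$ forces $\chi^R_{x\kappa}=\chi^S_{x\kappa}$, whence the near-a-measurable intervals coincide. Two remarks, one a genuine (small) gap, one a reorientation.

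First, in your near-a-measurable step, the stage $\xi^R_{\kappa x}$ need not be \emph{strictly} earlier than the current $\beta$; they coincide when $\beta=\gamma^R_\kappa+\omega$ is itself the first $t=2$ stage for $\kappa$. In that case your appeal to IH is circular. The paper's repair---which is the only place its argument differs in substance from yours---is to use compatibility at the failing stage $\alpha$ itself: since $t^R_\alpha=2$ gives $F^R_\alpha=D^R_{\kappa 0}\neq\emptyset$, compatibility forces $F^S_\alpha\neq\emptyset$ with $\crit(F^S_\alpha)=\kappa$; the paper then argues $t^S_\alpha=1$, which makes $\kappa$ measurable in $S$ directly. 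After that, both arguments converge: one locates the common least $t=2$ stage $\beta$ for $\kappa$ (via Lemma \ref{lem:least_t=2} on each side, using that $N^R_\gamma=N^S_\gamma$ is the Q-structure for $N_\kappa$), and compatibility at $\beta$ gives $\chi^R_{x\kappa}=\kappa^{+i^R_F(N_\kappa)}=\kappa^{+i^S_G(N_\kappa)}=\chi^S_{x\kappa}$, contradicting $t^S_\alpha\neq 2$.

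Second, your expectation that the cardinal-large subcase is the main obstacle is inverted relative to the paper, which dismisses all mismatches except $t=2$ versus $t\neq 2$ as what ``easily follows.'' The reason is roughly what you sketched: once IH gives $N^R_\gamma=N^S_\gamma$ for $\gamma<\alpha$, Corollary \ref{cor:local_Woodins} ties cardinal-largeness of $\gamma$ to the presence of a Woodin in $N_\gamma$, and at passive levels the P-construction output is just rud closure, matching the generic-mode output anyway. So your Woodin-exactness plan is correct but routine; the $t=2$ mismatch is the actual content.

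Your Part 2 is correct and more explicit than the paper's, which simply calls it an immediate corollary.
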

\begin{proof}
 Part \ref{item:compat_implies_CC-bar_agmt}: This is a straightforward induction on $\alpha$, together with a quick observation.
 Write $\bar{\CC}=\bar{\CC}_x$ etc. Suppose that the claim fails, and
 $\alpha$ is least such. 
 It easily follows that $t_\alpha^R=2$ iff $t_\alpha^S\neq 2$.
 Suppose $t_\alpha^R=2$. By compatibility, then $t_\alpha^S=1$
 and $\kappa\eqdef\crit(F_\alpha^R)=\crit(F_\alpha^S)$. In particular,
 $\kappa$ is measurable in both $R$ and $S$. Let $F=F_{\kappa 0}^R$ and $G=F_{\kappa 0}^S$,
 and let $\beta$ be least such that $t_\beta^R=2$ and $\kappa=\crit(F_\beta^R)$. Then
 $\beta$ is least such that $t_\beta^S=2$ and $\kappa=\crit(F_\beta^S)$ (with $\gamma$ as in \ref{lem:least_t=2} with respect to $R$,
 then $N_\gamma^R=N^S_\gamma$ is the Q-structure for $N=N_\kappa^R=N_\kappa^S$,
 so $S|\gamma$ is the Q-structure for $S|\kappa$).
 So $F_{\beta}^R=F$ and $F_\beta^S=G$.
We have
 $\kappa^{+i^R_F(N)}<\lh(F)$ and $\kappa^{+i^S_G(N)}<\lh(G)$, so by $(\beta,x)$-compatibility,
 $\xi\eqdef\kappa^{+i^R_F(N)}=\kappa^{+i^S_G(N)}$. But then $\alpha<\xi$
 and $t_\alpha^S=2$, contradiction.
 
 Part \ref{item:compat_through_min_of_Omegas_and_goodness_implies_Omegas_match} is an immediate corollary.
\end{proof}

We now reach the first central argument of the paper, and the first key step in showing that the mouse scale is indeed  a scale:

\begin{lem}[Norm invariance]\label{lem:norm_invariance}
Let  $\sigma$ be a norm description. Let $x,y\in A$.
Let $P$ be either $M_x$-good or $M_y$-good,
and let $Q$ also be either $M_x$-good or $M_y$-good.
Then
$x\leq^P_\sigma y$ iff $x\leq^Q_\sigma y$.
\end{lem}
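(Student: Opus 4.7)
The plan is to reduce to a case where $P$ and $Q$ are simultaneously $M_x$-good, $M_y$-good, and $(x,y)$-compatible, at which point the data tracked by the lifting norm depends only on the common $\bar{\CC}$-sequence together with background-extender information that agrees on the relevant sets by compatibility.

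First I would establish an \emph{iteration invariance} statement: if $P$ is both $M_x$-good and $M_y$-good, and $j : P \to P^*$ is a normal, non-model-dropping iteration of sufficient degree, then $P^*$ is still $M_x$-good and $M_y$-good, and $x \leq^{P^*}_\sigma y$ iff $x \leq^P_\sigma y$. The tool is Definition \ref{dfn:coarse_tree_emb}: for any sse-$n_0$-maximal tree $\Tt_x$ on $M_x$ that appears in the definition of $\leq_\sigma$, the lifts $\lifttree^{\Tt_x,P}$ and $\lifttree^{\Tt_x,P^*}$ embed into a common tree via coarse tree embeddings, and using Lemma \ref{lem:res_comm} together with Lemma \ref{lem:iterates_pres_fs_match}, every $\bar{\CC}$-intrinsic datum in $\params_x$ (production stages, types $t^{M^\Vv}_\beta$, projecta $\rho_\om^{A_{xj}}$, and the production-segment ordinals $\alpha^*_{xj},\beta^*_{xj}$) is preserved. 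The same applies to the images $\psi_x(\avec_i)$ via the commutativity clauses in Lemma \ref{lem:res_comm}. For the one non-$\bar{\CC}$-intrinsic datum, the index $\lh(F^{M^{\Vv_x}_{k_x}}_{\wt{\alpha}_x})$ of the final background extender, the iteration map moves the $x$-side and $y$-side indices consistently, so the lexicographic comparison between the resulting $\params_x$ and $\params_y$ is unchanged.

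Second, to compare $P$ and $Q$, I would use a comparison in the style of Lemma \ref{lem:first_comparison}, adapted to whichever iterability hypotheses support $M_x$-goodness in the current setting, producing normal trees on $P$ and on $Q$ with iterates $P^*, Q^*$ that are $M_x$-compatible and $M_y$-compatible. Because $M_x$ and $M_y$ are determined by initial, production-stage data, the comparison only uses extenders strictly below the relevant production stages, so the main branches do not drop in model and $M_x$-goodness and $M_y$-goodness propagate to $P^*$ and $Q^*$. By iteration invariance it suffices to show $x \leq^{P^*}_\sigma y$ iff $x \leq^{Q^*}_\sigma y$. Lemma \ref{lem:CCbar_match_from_comp} now yields $\bar{\CC}^{P^*}_z = \bar{\CC}^{Q^*}_z$ and $\Omega^{P^*}_z = \Omega^{Q^*}_z$ for $z \in \{x,y\}$, so $\leq_0$ agrees; inspecting the inductive definition of $\leq_{4i+k}$ for $k \in \{1,2,3,4\}$, every entry of $\params_x$ and $\params_y$ except the final background-extender index is $\bar{\CC}$-intrinsic and hence identical in the two computations, while the background indices agree on the common set $P^* \cap Q^*$ by $(x,y)$-compatibility and are therefore moved identically on the data $\avec_i,\bvec_i$.

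The main obstacle is the coarse-tree-embedding analysis in the first step: one must verify, working level by level through the inductive definition of $\leq_\sigma$, that each component of $\params_{xj}$ (in particular the $t^{M^\Vv_j}_{\beta_{xj}}$-values, which are sensitive to whether a given stage is a ``$t=2$'' ultrapower stage or a ``$t=1$'' extender stage) is shifted by the iteration map in exactly the same way for both sides, and that the tree-level drop and degree information determining $\leq_{4i+4}$ is similarly invariant. This is delicate because the resurrection trees $\Vv_x$ themselves depend on whether a given stage of $\CC$ carries $t=2$; the resolution is that by Lemma \ref{lem:CCbar_match_from_comp} these $t$-values are encoded in $\bar{\CC}$ and are therefore preserved, so the resurrection trees on the $P$- and $P^*$-sides have matching shapes and matching $\alpha_{xj}, \beta_{xj}, t$-values, and the iteration invariance goes through.
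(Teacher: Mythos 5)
Your overall strategy is in the right neighbourhood — a comparison of $P$ and $Q$, tracking resurrection data along the way — but the clean two-step decomposition (iteration invariance, then compatibility) does not match how the paper's proof is structured, and it hides the two hardest parts of the argument rather than dispatching them.

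The paper does not prove an independent "iteration invariance" lemma and then compare. Instead it runs a single simultaneous comparison of $P$ against $Q$, constructing trees $\Vv$ on $P$ and $\Ww$ on $Q$ while maintaining coarse tree embeddings $\Delta_\chi : \Uu^P\rest(h_\chi+1)\hookrightarrow_c\Vv\rest(\chi+1)$ (and the same $\Delta_\chi$ into $\Ww$). The stages of the comparison are explicitly divided into \emph{copying} stages, where the next extender of $\Uu^P$/$\Uu^Q$ is reproduced compatibly on both sides, and \emph{inflationary} stages, where a least-disagreement extender is used and the tree embedding is wound back to an earlier node. This interleaving is essential: inflationary stages can reset the embedding, so one cannot cleanly iterate first and compare later. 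Your Step 1, stated for arbitrary normal non-dropping $j:P\to P^*$, would have to prove a commutativity statement for the whole lifting/resurrection apparatus under $j$; the relevant tool is Lemma \ref{lem:res_comm}, which you cite, but the full inductive chase along $\Tt_x$ and its dropdown sequences — including the type-2 ultrapower steps that change the background model — is precisely the content of the paper's Claim \ref{clm:key_images_match}, and simply citing the commutativity lemmas does not discharge it.

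The more serious gap is in your final step. Compatibility of $P^*$ and $Q^*$ in the sense of Definition \ref{dfn:alpha,x-compatible} gives $\bar{\CC}^{P^*}_z=\bar{\CC}^{Q^*}_z$ and says the background extenders at each stage agree on $(P^*\cap Q^*)\times[\theta]^{<\omega}$ where $\theta$ is the minimum of their indices. This does \emph{not} by itself imply that $\lh(F^{P^*}_{\wt\alpha x})\leq\lh(F^{P^*}_{\wt\alpha y})$ iff $\lh(F^{Q^*}_{\wt\alpha x})\leq\lh(F^{Q^*}_{\wt\alpha y})$: the indices of the backgrounds are genuinely different ordinals in the two models and a priori could be ordered oppositely. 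Ruling this out is the content of the paper's Subclaim \ref{sclm:some_j_leq_k_x_has_param_disagreement} (and its kin inside Claim \ref{clm:fully_small_situation_does_not_arise}), where a coherence argument is run: one forms an ultrapower by the longer of the two competing backgrounds, tracks the set $A=\{\alpha<\kappa\mid t^Z_{\alpha y}=1\}$ through the ultrapower maps, and contradicts the minimality of the background-extender index in $\CC$. Your proposal asserts the conclusion ("moved identically on the data $\avec_i,\bvec_i$") without this argument, and it is precisely here that the lifting-norm comparison could break. The same coherence analysis is also what you need to handle the $\beta^*$-ordinals in $\params_{xj}$ at stages where the $t$-value is $0$; these are not directly $\bar{\CC}$-intrinsic either, and the paper spends several cases of Claim \ref{clm:fully_small_situation_does_not_arise} on exactly this.
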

\begin{proof}
The proof is via a comparison modelled on that used in the proof of Lemma \ref{lem:first_comparison}.
The new features we must deal with are as follows.
Firstly, $\CC$ is somewhat more complex than the $L[\es]$-construction $\DD$ used in \ref{lem:first_comparison}.
The modifications needed to accommodate this are, however, quite straightforward.
During the comparison we use order $0$ measures corresponding to stages $\alpha$ where
at least one side has $t_\alpha=2$ and incompatibility (in the sense of \ref{dfn:alpha,x-compatible}) leads us to do so. Regarding extenders at stages $\alpha$ with $t_\alpha=3$,
we will in fact observe that no incompatibilities with such extenders can arise.
With just these two modifications to the proof, one can verify that the putative norms of depth $1$
are in fact norms. For the higher order norms,
one needs more care, and we discuss this now.

Let $m+1$ be the depth of $\sigma$. 
Let
$\leq^\Yback_i,\equiv_i^{\undec,\Yback}$ and $\leq^\Zback_i,\equiv_i^{\undec,\Zback}$
be as in \ref{dfn:lifting_norm}, as computed from $\Yback$ and $\Zback$ respectively.
We want to see that
\[ x\leq^\Yback_{4m+4} y\ \iff\ x\leq^\Zback_{4m+4} y.\]

So suppose not and let $\bar{m}\in[-1, m]$ be largest such that
[$x\leq^\Yback_{4\bar{m}} y$ iff $x\leq^\Zback_{4\bar{m}}y$].
Then $x\equiv^P_{4\bar{m}}y$ and $x\equiv^Q_{4\bar{m}}y$ by our contradictory hypothesis,
and therefore $x\equiv^{\undec,\Yback}_{4\bar{m}+2} y$ and $x\equiv^{\undec,\Zback}_{4\bar{m}+2}y$,
because the extra considerations are first-order over $M_x,M_y$,
hence independent of $\Yback,\Zback$. We begin by specifying some finite trees $\Tt_x,\Tt_y$
on $M_x,M_y$, and some partial resurrection trees $\Uu^\Yback,\Uu^\Zback$ on $\Yback,\Zback$.

\begin{case} $\bar{m}=-1$.
 
 Let $m'=0$ and $\Tt_x,\Tt_y,\Uu^\Yback,\Uu^\Zback$ be the trivial trees on $M_x,M_y,\Yback,\Zback$
 of the relevant degree
 (these trees are actually irrelevant in this case).
\end{case}

\begin{case} $\bar{m}\geq 0$.
  
Then note that $\deg(M_x)=\deg(M_y)=n_0$ and we have sse-$n_0$-maximal 
trees $\bar{\Tt}_x=\Tt_{x\bar{m}}$ and $\bar{\Tt}_y=\Tt_{y\bar{m}}$ on $M_x,M_y$, each of length $\bar{m}+1$.
If $\bar{\Tt}_x$ (equivalently $\bar{\Tt}_y$) is small let $m'=\bar{m}$;
otherwise let $m'<\bar{m}$ be least such that $\exit^{\bar{\Tt}_x}_{m'}$ (equivalently $\exit^{\bar{\Tt}_y}_{m'}$) is non-small.
Let $\Tt_x=\bar{\Tt}_x\rest(m'+1)$.
Then (note that)
\[ \Uu^\Yback_0\eqdef\lifttree^{\Tt_x\Yback}=\lifttree^{\Tt_y\Yback}\text{ and }
\Uu^\Zback_0\eqdef\lifttree^{\Tt_x\Zback}=\lifttree^{\Tt_y\Zback}\]
and $\Uu^\Yback_0,\Uu^\Zback_0$ are both small.
\begin{scase}\label{scase:m'<mbar}
$m'<\bar{m}$ ($\bar{\Tt}_x,\bar{\Tt}_y$ are non-small).

Let
$e_x=\exit^{\bar{\Tt}_x}_{m'}$ and $e_y=\exit^{\bar{\Tt}_y}_{m'}$.
 Let
\[\Uu^\Yback_{1}=\critrestree^{\Tt_x\Uu^\Yback_0}(e_x,\delta^{e_x})=
\critrestree^{\Tt_y\Uu^\Yback_0}(e_y,\delta^{e_y}).\]
\end{scase}

In the remaining subcases
(in which $m'=\bar{m}$),
if $\bar{m}<m$, we
adopt the notation from the definition $\leq_{4\bar{m}+3}$ of \ref{dfn:lifting_norm},
writing superscript $\Yback$ or $\Zback$ as required.
Note then that in this case, $e_x\downarrow$ and $e_y\downarrow$.

\begin{scase}\label{scase:m'=mbar_and_S_x_small} $m'=\bar{m}<m$ and $e_x,e_y$ are small.

Adopt also the notation from the definition of $\leq_{4\bar{m}+3}$ in its Subcase \ref{scase:S_x_small}
(in which $e_x$ is small).
Clearly $k_x=k_y$ (this is the first ordinal considered for $\leq_{4\bar{m}+3}$).
If there is $j\leq k_x$ such that $\params^\Yback_{xj}\neq\params^\Yback_{yj}$,
let $j^\Yback$ be the least such, and otherwise let $j^\Yback=k_x$. Let
$\wt{j}=\min(j^\Yback,j^\Zback)$. Let
\[ \Uu_1^\Yback=\restree^{\Tt_x\Uu_0^\Yback}(e_x)\rest(\wt{j}+1)=\restree^{\Tt_y\Uu_0^\Yback}(e_y)\rest(\wt{j}+1). \]
\end{scase}
\begin{scase}\label{scase:S_x_non-small} $m'=\bar{m}<m$ and $e_x,e_y$ are non-small.

Adopt the notation from Subcase \ref{scase:S_x_non-small,Tt_x_small} of the definition of $\leq_{4\bar{m}+3}$
(in which $e_x$ is non-small but $\Tt_x$ is small).
Again $k_x=k_y$. Let $\wt{j}$ be defined as above and
\[ \Uu_1^\Yback=\critrestree^{\Tt_x,\Uu_0^\Yback}(e_x,\delta^{e_x})\rest(\wt{j}+1)=
 \critrestree^{\Tt_y,\Uu_0^\Yback}(e_y,\delta^{e_y})\rest(\wt{j}+1).
\]
\end{scase}

\begin{scase}\label{scase:m'=mbar=m}
 $m'=\bar{m}=m$.
 
 Let $\Uu_1^P=\emptyset$.
\end{scase}

Now in each subcase let $\Uu_2^\Yback=\Uu_0^\Yback\conc\Uu_1^\Yback$.
Let $\Uu_1^\Zback,\Uu_2^\Zback$ be likewise.
We have $\lh(\Uu_2^\Yback)=\lh(\Uu_2^\Zback)$. Let $\Yback_{ij}=M^{\Uu_2^\Yback}_{ij}$
and $\Zback_{ij}=M_{ij}^{\Uu_2^\Zback}$.

If there is $(i,j)$ such that $(i,j+1)<\lh(\Uu_2^\Yback)$ and
$t_{\beta_{j+1}x}^{\Yback_{ij}}\neq t_{\beta_{j+1}'x}^{\Zback_{ij}}$,
where $\beta_{j+1}$ is as in the definition of $\leq_{4i+3}^\Yback$
and $\beta_{j+1}'$ likewise for $\leq_{4i+3}^\Zback$
(hence, $t_{\beta_{j+1}x}^{\Yback_{ij}}=2$ iff $t_{\beta_{j+1}'x}^{\Zback_{ij}}\neq 2$),
then let $(i_\dis,j_\dis)$ be the least such $(i,j)$; otherwise let $(i_\dis,j_\dis)$
be the largest index in $\dom(\Uu_2^\Yback)$
(\emph{dis} for \emph{distinction}).

Let $\wt{\Uu}^P=\Uu_2^P\rest(i_\dis,j_\dis+1)$ and $\wt{\Uu}^Q=\Uu_2^Q\rest(i_\dis,j_\dis+1)$.
Note that $\wt{\Uu}^P,\wt{\Uu}^Q$ have the same tree structure and padding (but  we could have, for example, $\crit(E^{\wt{\Uu}^P}_\alpha)\neq\crit(E^{\wt{\Uu}^Q}_\alpha)$).
Finally let $\Uu^P,\Uu^Q$ be the non-padded trees, indexed by ordinals,
equivalent to $\wt{\Uu}^P,\wt{\Uu}^Q$. Let
\[ I_2=\{\alpha\mid\alpha+1<\lh(\Uu^P)\ \&\ E^{\Uu^P}_\alpha\text{ is an order 0 measure}\}, \]
\[ I_1=\{\alpha\mid\alpha+1<\lh(\Uu^P)\}\cut I_2.\]
Note that $\alpha\in I_2$ iff $E^{\Uu^P}_\alpha$ is the measure corresponding
to some $\beta$ such that $t^{M^{\Uu^P}_\alpha}_{\beta x}=2=t^{M^{\Uu^P}_\alpha}_{\beta y}$.
\end{case}

This completes all cases. Directly from the definitions we have:

\begin{clm}\label{clm:Uu_small}
$\Uu^P,\Uu^Q$ are small. 
\end{clm}

If $\bar{m}\geq 0$ let $\mathscr{I}:I_1\to i_\dis$
be the order-preserving bijection and 
for $i<i_\dis$ let
\[ (\xi,\sigma)=\exitresadd^{\Tt_x,\Uu_0^P}_i, \]
\[ (\xi,\sigma')=\exitresadd^{\Tt_y,\Uu_0^P}_i, \]
and define
\begin{equation}\label{eqn:a^*,xi^*}\avec^{*\Yback}_i=\sigma(\avec_i)=\sigma'(\bvec_i)\text{ and }\xi^{*\Yback}_i=\xi.\end{equation}
Define $\avec^{*\Zback}_i,\xi^{*\Zback}_i$ analogously.

We will now define padded iteration trees $\Vv,\Ww$ on $\Yback,\Zback$ respectively, of successor length.
Write $P_\alpha=M^\Vv_\alpha$ and $Q_\alpha=M^\Ww_\alpha$.
We produce $(\Vv,\Ww)$ by a comparison similar to that used in the proof of  Lemma
\ref{lem:first_comparison}. However, we will modify the process,
arranging that there is also a coarse tree embedding \[\Delta:\Uu^P\hookrightarrow_{\mathrm{c}}\Vv,\]
with $\Delta(k)+1=\lh(\Vv)$ where $k+1=\lh(\Uu^{\Yback})=\lh(\Uu^{\Zback})$,
and also that \[\Delta:\Uu^Q\hookrightarrow_{\mathrm{c}}\Ww\]
(with the same $\Delta$). Using this $\Delta$, we will see that any distinction in the relevant calculations
between the last models of $\Uu^\Yback$ and $\Uu^\Zback$ will reflect to the last models of $\Vv,\Ww$.
But there will be no such distinction between the latter models, because the comparison process 
will remove all distinctions. Therefore there is no distinction between the calculations
in the last models of $\Uu^\Yback,\Uu^\Zback$. From there it will be easy enough to complete the proof.

We construct $(\Vv,\Ww)\rest\rest(\chi+1)$
by induction on ordinals $\chi$.
Along with $(\Vv,\Ww)\rest(\chi+1)$, we will also define a strictly increasing sequence of ordinals $\left<\beta_\alpha\right>_{\alpha<\chi}$. For each $\alpha<\chi$,
if $E^\Vv_\alpha\neq\emptyset$ then
$\pred^\Vv(\alpha+1)$ will be the least $\delta$ such that $\crit(E^\Vv_\alpha)<\beta_\delta$. If $E^\Vv_\alpha=\emptyset$ then $\pred^\Vv(\alpha+1)=\alpha$.  Likewise for $\Ww$.
The $\beta_\alpha$ which correspond to strengths of extenders, but note that the choice of $\beta_\alpha$ does not depend on whether we are considering $\Vv$ or $\Ww$.
We will also define $h_\chi<\lh(\Uu^P)$ and $\Delta_\chi$ such that
\[ \Delta_\chi:\Uu^\Yback\rest(h_\chi+1)\hookrightarrow_{\mathrm{c}}\Vv\rest(\chi+1),\]
\[ \Delta_\chi:\Uu^\Zback\rest(h_\chi+1)\hookrightarrow_{\mathrm{c}}\Ww\rest(\chi+1).\]
Write $\delta^\chi_j=\Delta_\chi(j)$ and $\gamma^\chi_j=\gamma_j^{\Delta_\chi}$ 
(as in \ref{dfn:coarse_tree_emb}). (Note that $\Delta_\chi,\gamma^\chi_j,\delta^\chi_j$ are independent of which pair $(\Uu^\Yback,\Vv)$ or $(\Uu^\Zback,\Ww)$ we are considering.
But it seems that $[\gamma^\chi_j,\delta^\chi_j]^\Vv$ might not equal $[\gamma^\chi_j,\delta^\chi_j]^\Ww$.)
Write $\pi^{\Vv\chi}_j=\pi_j^{\Delta_\chi}$ in the sense of
$(\Uu^\Yback,\Vv)$, and $\pi^{\Ww\chi}_j=\pi_j^{\Delta_\chi}$ in the sense of $(\Uu^\Zback,\Ww)$, etc.

At $\chi=0$ we set $h_0=0$ and $\Delta_0(0)=0$.

Now suppose we have defined $(\Vv,\Ww)\rest(\chi+1)$ and $\left<\beta_\alpha\right>_{\alpha<\chi}$ and $\left<h_\alpha,\Delta_\alpha\right>_{\alpha\leq\chi}$.
We first look for relevant ``least disagreements'' between $\CC_x^{P_\chi},\CC_x^{\Zback_\chi}$,
and between $\CC_y^{P_\chi},\CC_y^{\Zback_\chi}$.

If $P_\chi,\Zback_\chi$ are $M_x$-compatible then let $\alpha_{\chi x}=\infty$
and $G^\Vv_{\chi x}=\emptyset=G^\Ww_{\chi x}$.
Suppose otherwise and let $\alpha=\alpha_{\chi x}$ be the least $\alpha$ such that
$P_\chi,\Zback_\chi$ are $(\alpha,x)$-incompatible,
and let $G^\Vv_{\chi x}=F_{\alpha x}^{P_\chi}$ and $G^\Ww_{\chi x}=F_{\alpha x}^{\Zback_\chi}$.

Define $\alpha_{\chi y},G^\Vv_{\chi y},G^\Ww_{\chi y}$ likewise. Let $\alpha_\chi=\min(\alpha_{\chi x},\alpha_{\chi y})$.

\begin{clm}\label{clm:if_full_compatibility_then_basic_agreement}
 Suppose $\alpha_\chi=\infty$
 (that is,
  $P_\chi,Q_\chi$ are both $M_x$-compatible and $M_y$-compatible).
 Then $P,Q$ are both $M_x$-good and $M_y$-good, $\Omega^P_x=\Omega^P_y$ and $\Omega^Q_x=\Omega^Q_y$  (cf.~Definition \ref{dfn:alpha,x-compatible}),
 $P_\chi,Q_\chi$
 are both $M_x$-good and $M_y$-good,
 and 
$\Omega^{P_\chi}_x=\Omega^{Q_\chi}_x=\Omega^{P_\chi}_y=\Omega^{Q_\chi}_y$.
\end{clm}
\begin{proof}
 We know that $P$ and $Q$ are both either  $M_x$-good or $M_y$-good.
 Suppose $P$ is $M_x$-good
 but $Q$ is not $M_x$-good; so $Q$ is $M_y$-good.
 Then $P_\chi$ is $M_x$-good
 and $Q_\chi$ is $M_y$-good but not $M_x$-good. Let $\Omega=\min(\Omega^{P_\chi}_x,\Omega^{Q_\chi}_y)$.
 Suppose $\Omega=\Omega^{P_\chi}_x$.
 Then note that by $M_x$-compatibility,
 $Q_\chi$ is $M_x$-good,
 a contradiction. So $\Omega=\Omega^{Q_\chi}_y<\Omega^{P_\chi}_x$. Therefore $P_\chi$ is $M_y$-good and $\Omega^{P_\chi}_y=\Omega<\Omega^{P_\chi}_x$.
 But then $P$ is $M_y$-good
 and $\Omega^P_y<\Omega^P_x$.
 Therefore $y<^P_0x$.
 But since $Q$ is $M_y$-good and non-$M_x$-good, also $y<^Q_0x$, contradicting the
 disagreement between $P,Q$.
 
 So by symmetry, $P$ is $M_x$-good iff $Q$ is $M_x$-good, and $P$ is $M_y$-good iff $Q$ is $M_y$-good. But if $P,Q$ are $M_x$-good but non-$M_y$-good then $x<_0^Py$ and $x<_0^Qy$, again a contradiction. So $P,Q$ are both $M_x$- and $M_y$-good. Suppose that $\Omega^P_x<\Omega^P_y$. Then by   compatibility and elementarity of iteration maps,
 \[ \Omega^{Q_\chi}_x=\Omega^{P_\chi}_x<\Omega^{P_\chi}_y<\Omega^{Q_\chi}_y,\]
 so again by elementarity, $\Omega^Q_x<\Omega^Q_y$.
 So $x<_0^Py$ and $x<_0^Qy$, again a contradiction.
 
 So we get $\Omega^P_x=\Omega^P_y$ and $\Omega^Q_x=\Omega^Q_y$,
 and so by elementarity and compatibility,
 $\Omega^{P_\chi}_x=\Omega^{P_\chi}_y=\Omega^{Q_\chi}_x=\Omega^{Q_\chi}_y$.
\end{proof}

\begin{clm}\label{clm:alpha_chi<infty_implies_alpha_chi_leq_Omegas}
 Suppose $\alpha_\chi<\infty$.
 Then $\alpha_\chi\leq\min(\Omega_x^{P_\chi},\Omega_x^{Q_\chi},\Omega_y^{P_\chi},\Omega_y^{Q_\chi})$.
\end{clm}
\begin{proof}
 We may assume $\alpha_\chi=\alpha_{\chi x}$,
 and so $\alpha_\chi\leq\min(\Omega_x^{P_\chi},\Omega_x^{Q_\chi})$ by definition.
 Suppose $\min(\Omega_y^{P_\chi},\Omega_y^{Q_\chi})<\alpha_\chi$.
 Say $\Omega_y^{P_\chi}<\alpha_\chi$.
 Then $P_\chi$ is $M_y$-good,
 and so by compatibility, so is $Q_\chi$,
 with $\Omega_y^{Q_\chi}=\Omega_y^{P_\chi}$.
 It follows with elementarity and further compatibility
 that $P,Q$ are $M_y$-good,
 and if $P$ is $M_x$-good then $\Omega^P_y<\Omega^P_x$,
 and if $Q$ is $M_x$-good then $\Omega^Q_y<\Omega^P_x$.
 But then $y<_0^Px$ and $y<_0^Qx$, a contradiction. So instead,
 $\Omega_y^{Q_\chi}<\alpha_\chi\leq\Omega_y^{P_\chi}$.
 But then a similar argument again gives a contradiction.
\end{proof}

\begin{clm}\label{clm:t_alpha_chi_not_3} Let $\alpha=\alpha_\chi$. If $\alpha=\alpha_{\chi x}<\infty$ then $t_{\alpha x}^{P_\chi}\neq 3\neq t_{\alpha x}^{\Zback_\chi}$;
likewise for $y$.
\end{clm}
\begin{proof}
Suppose $t^{P_\chi}_{\alpha x}=3$.
Let $N=N^{P_\chi}_{\alpha x}$ and $N'=N^{Q_\chi}_{\alpha x}$.
Note that $N^\passive=(N')^\passive$. Let $\delta=\delta^N$.
So $\delta=\delta^{N'}$.
By \ref{lem:Woodin_exactness},  $t_{\alpha x}^{Q_\chi}=3$ and $\delta$ is the least Woodin of $P_\chi|\alpha$ and of $Q_\chi|\alpha$,
and is a limit cardinal of $P_\chi$ and of $Q_\chi$, and moreover, $P_\chi|\delta$
and $Q_\chi|\delta$ are both small.

Now there is $\gamma\leq\OR^P$ such that $P_\chi|\gamma$ is the unique above-$\delta$-iterable,
$\delta$-sound Q-structure for $P_\chi|\delta$.
For if $\delta$ is not Woodin in $P_\chi$,
this is immediate  (using that $P_\chi|\delta$  is small), so suppose $\delta$ is Woodin in $P_\chi$, and
hence $\delta=\delta^{P_\chi}$.
We just need to see that $P_\chi$ is $\delta$-sound, or equivalently, that $P$ is $\delta^P$-sound.
If $P$ is $M_x$-good
then since $\delta<\alpha$,
we have that $M_x=\core_\om(N_{x\infty}^P)$,
and $P$ is $\delta^P$-sound, by \ref{dfn:M_x-good}.
So suppose $P$ is not $M_x$-good;
so $P$ is $M_y$-good.
It can't be that $M_y=\core_\om(N_{y\xi}^P)$
for some $\xi<\delta^P$,
since $\delta^{P_\chi}<\alpha_\chi=\alpha_{x\chi}\leq\alpha_{y\chi}$.
So again by \ref{dfn:M_x-good},
$P$ is $\delta^P$-sound.

There is  likewise $\gamma'$
such that $Q_\chi|\gamma'$ is the $\delta$-sound Q-structure for $Q_\chi|\delta$.
This is again immediate 
 if $\delta<\delta^{Q_\chi}$,
 so suppose $\delta=\delta^{Q_\chi}$. Since $\delta\leq\alpha_\chi$ and $Q$ is 
 $M_x$-good or $M_y$-good,
 it follows that $Q$ is $\delta^Q$-sound,
 which again suffices.
 
 So $N^{P_\chi}_{\gamma x}$ is the iterable $\delta$-sound Q-structure for $N|\delta$,
and $N^{Q_\chi}_{\gamma' x}$ is the iterable $\delta$-sound Q-structure for $N'|\delta=N|\delta$. So in fact,
$N^{P_\chi}_{\gamma x}=N^{Q_\chi}_{\gamma' x}$
and $\alpha\leq\gamma=\gamma'$, so $N=N'$.

Now $N$ is the output of the P-construction of $P_\chi|\alpha$ over $N|\delta$.
Therefore $F^{P_\chi|\alpha}$ is just a small generic expansion of $F^N$,
and of course $F^{P_\chi|\alpha}\sub P_\chi|\alpha$. Likewise for $F^{Q_\chi|\alpha}$. Therefore
\[ F^{P_\chi|\alpha}\rest(X\cross[\alpha]^{<\om})=F^{Q_\chi|\alpha}\rest(X\cross[\alpha]^{<\om}) \]
where $X=(P_\chi|\alpha)\inter(Q_\chi|\alpha)$, 
and since $P_\chi,Q_\chi$ are already $({<\alpha},x)$-compatible,
therefore $P_\chi,Q_\chi$ are $(\alpha,x)$-compatible, contradiction.
\end{proof}

\begin{clm}\label{clm:compatibility_implies_Q-structures_match}
 Let $\delta<\min(\Omega^{P_\chi}_x,\Omega^{P_\chi}_y,\Omega^{Q_\chi}_x,\Omega^{Q_\chi}_y,\alpha_\chi)$
 be a $P_\chi$-cardinal,
 and suppose no $\delta'<\delta$
 is Woodin in $P_\chi$.
 If $\delta$ Woodin in $P_\chi$
  let $\gamma=\OR^{P_\chi}$,
  and otherwise let $\gamma=\gamma^{P_\chi}_\delta$.
  If $\delta$ is Woodin in $Q_\chi$
  let $\gamma'=\OR^{Q_\chi}$,
  and otherwise let $\gamma'=\gamma^{Q_\chi}_\delta$.
 Then:
 \begin{enumerate}\item\label{item:no_delta'<delta_Woodin} No $\delta'<\delta$ is Woodin in $Q_\chi$.
  \item\label{item:gamma=gamma'}  $\gamma=\gamma'$ and
  $N^{P_\chi}_{\gamma x}=N^{Q_\chi}_{\gamma x}$ is a Q-structure for $N^{P_\chi}_{\delta x}=N^{Q_\chi}_{\delta x}$.
  \item\label{item:if_delta_Woodin_in_Q} Suppose $\delta$ is Woodin in $P_\chi$; so $\gamma=\OR^{P_\chi}$. Then  $P,P_\chi,Q,Q_\chi$ are $M_x$-good and $M_y$-good, $\Omega_x^P=\OR^P=\Omega_y^P$,
   $\Omega_x^Q=\Omega_y^Q$
  (but maybe $\Omega_x^Q<\OR^Q$), and
  $\Omega_x^{P_\chi}=\Omega_y^{P_\chi}=\gamma=\Omega_x^{Q_\chi}=\Omega_y^{Q_\chi}$. Likewise if $\delta$ is Woodin in $Q_\chi$ instead of $P_\chi$.
  \end{enumerate}
\end{clm}
\begin{proof}
 Parts \ref{item:no_delta'<delta_Woodin}
 and \ref{item:gamma=gamma'} follow
 from Claim \ref{clm:t_alpha_chi_not_3} and similar calculations to those in its proof. Part \ref{item:if_delta_Woodin_in_Q}:
 Suppose $\delta$ is Woodin in $P_\chi$
 and $P,P_\chi$ are $M_x$-good. Since $\delta<\Omega_x^{P_\chi}$,
 it follows that $\Omega_x^{P_\chi}=\OR^{P_\chi}=\gamma$. But $\gamma=\gamma'$ and $N_{\gamma x}^{P_\chi}=N_{\gamma x}^{Q_\chi}$ by part \ref{item:gamma=gamma'}, so $Q_\chi$ is $M_x$-good
 with $\Omega_x^{Q_\chi}=\gamma=\Omega_x^{P_\chi}$. So
 $P,Q$ are both $M_x$-good.
 Also since $\delta<\Omega_y^{P_\chi},\Omega_y^{Q_\chi}$, like for $x$,
 we get that $P_\chi$ is $M_y$-good
 iff [$Q_\chi$ is $M_y$-good and $\Omega_y^{Q_\chi}\leq\gamma$],
 and in the case that $P_\chi$ is $M_y$-good,
 then $\Omega_y^{P_\chi}=\gamma=\Omega_y^{Q_\chi}$.
 
 We claim that $P_\chi$ is indeed $M_y$-good.
 For 
 suppose otherwise.
 So if $Q_\chi$ is $M_y$-good then $\Omega_x^{Q_\chi}=\gamma<\Omega_y^{Q_\chi}$.
 Then $P$ is not $M_y$-good,
 and if $Q$ is $M_y$-good
 then $\Omega_x^Q<\Omega_y^Q$.
 But then $x<_0^Py$ and $x<_0^Qy$,
 a contradiction.
 
 So $P_\chi$ is $M_y$-good,
 as is $Q_\chi$,
 and $\Omega_y^{P_\chi}=\gamma=\Omega_y^{Q_\chi}$.
 The conclusion of part \ref{item:if_delta_Woodin_in_Q}
 now follows in this case,
 and so by symmetry, in all cases.
\end{proof}

We now define some (possibly empty) extenders
$G^\Vv_\chi$ and $G^\Ww_\chi$.
These will help us to determine the next extenders $E^\Vv_\chi$ and $E^\Ww_\chi$
to use in $\Vv,\Ww$.

If $\alpha_\chi=\infty$ then define $G^\Vv_\chi=\emptyset=G^\Ww_\chi$.

If $\alpha_\chi=\alpha_{\chi x}<\alpha_{\chi y}$ then define $G^\Vv_\chi=G^\Vv_{\chi x}$ and $G^\Ww_\chi=G^\Ww_{\chi x}$.

Likewise if $\alpha_\chi=\alpha_{\chi y}<\alpha_{\chi x}$.

Suppose $\alpha_{\chi x}=\alpha_{\chi y}<\infty$. If $G^\Vv_{\chi x}\neq\emptyset$
or $G^\Vv_{\chi y}\neq\emptyset$ then let $G^\Vv_\chi=\es^{P_\chi}_\lambda$
where $\lambda=\min(\lh(G^\Vv_{\chi x}),\lh(G^\Vv_{\chi y}))$ where $\lh(\emptyset)=\infty$;
otherwise $G^\Vv_\chi=\emptyset$.
Define $G^\Ww_\chi$ likewise.

We now want to select $E^\Vv_\chi$ and $E^\Ww_\chi$, or terminate 
the comparison.

We say that $\chi$ is a \emph{copying stage} iff
\begin{enumerate}[label=--]
\item $h_\chi+1<\lh(\Uu^\Yback)=\lh(\Uu^\Zback)$; let $E^*=\sigma^{\Vv\chi}_{h_\chi}(E^{\Uu^\Yback}_{h_\chi})$ and 
$F^*=\sigma^{\Ww\chi}_{h_\chi}(E^{\Uu^\Zback}_{h_\chi})$,
\item $\lh(E^*)\leq\lh(G^\Vv_\chi)$ and $\lh(F^*)\leq\lh(G^\Ww_\chi)$ and
\begin{equation}\label{eqn:copying_compatibility}
E^*\inter(X\cross[\theta]^{<\om})=F^*\inter(X\cross[\theta]^{<\om})
\end{equation}
where $X=P_\chi\inter \Zback_\chi$ and $\theta=\min(\lh(E^*),\lh(F^*))$, and
\item If $h_\chi\in I_1$ then
$\sigma^{\Vv\chi}_{h_\chi}(\xi^{*\Yback}_{\mathscr{I}(h_\chi)},\avec_{\mathscr{I}(h_\chi)}^{*\Yback})=\sigma^{\Ww\chi}_{h_\chi}(\xi^{*\Zback}_{\mathscr{I}(h_\chi)},\avec^{*\Zback}_{\mathscr{I}(h_\chi)})$ (see line (\ref{eqn:a^*,xi^*})).
\end{enumerate}

If $\chi$ is a copying stage we set $E^\Vv_\chi=E^*$ and $E^\Ww_\chi=F^*$ (notation as above).

We terminate the comparison at stage $\chi$ iff $\chi$ is a non-copying stage
and $G^\Vv_\chi=\emptyset=G^\Ww_\chi$.

Otherwise we say that $\chi$ is an \emph{inflationary stage}, and set $E^\Vv_\chi=G^\Vv_\chi$ and $E^\Ww_\chi=G^\Ww_\chi$.

After defining $(\Vv,\Ww)\rest(\eta+1)$, we will make the following assumption, denoted 
$\varphi_\eta$, which we verify later: Let $\beta\leq\eta$. Then (i) there are only finitely 
many copying stages $\alpha$ such that $\alpha+1\leq_\Vv\beta$ and $\alpha+1\leq_\Ww\beta$.
It follows that (ii) if $\xi\leq\eta$ and $\eps\leq\eta$, 
then there are only finitely many copying stages $\alpha$ such that $\alpha+1\leq_\Vv\xi$ and 
$\alpha+1\leq_\Ww\eps$. (For otherwise letting $\left<\alpha_n\right>_{n<\om}$ enumerate the 
first $\om$-many, and $\beta=\lim_{n<\om}\alpha_n$,
we have $\beta\leq\eta$ and
$\alpha_n+1<_\Vv\beta$ and $\alpha_n+1<_\Ww\beta$
for all $n<\om$, 
contradicting (i).)

Now given $(\Vv,\Ww)\rest(\chi+2)$, we define $h_{\chi+1},\Delta_{\chi+1}$,
using $\varphi_\chi$.

If $\chi$ is a copying stage then $h_{\chi+1}=h_\chi+1$ and $\Delta_{\chi+1}=\Delta_\chi\un\{(h_{\chi+1},\chi+1)\}$).

Now suppose that $\chi$ is inflationary. Let $\xi=\pred^\Vv(\chi+1)$ (so $\xi=\chi$ if
$E^\Vv_\chi=\emptyset$) and 
$\eps=\pred^\Ww(\chi+1)$.
If there is any copying stage $\alpha$ such that $\alpha+1\leq_\Vv\xi$ and $\alpha+1\leq_\Ww\eps$,
then let $\alpha$ be the largest such (this exists by $\varphi_{\chi}$), and let 
$\beta=\alpha+1$. If there is no such $\alpha$, then let 
$\beta=0$. Let $h_{\chi+1}=h_\beta$.
Let $\delta^{\chi+1}_j=\delta^{\beta}_j$ for $j<h_\beta$, and 
$\delta^{\chi+1}_{h_\beta}=\chi+1$. This easily gives appropriate inflations.

This completes the definition of everything at stage $\chi+1$.

Now let $\eta<\lh(\Vv,\Ww)$ be a limit, and suppose we have defined $(\Vv,\Ww)\rest\eta$, and hence
$(\Vv,\Ww)\rest(\eta+1)$. If there is a copying stage $\alpha$ such that 
$\alpha+1\leq_\Vv\eta$ and $\alpha+1\leq_\Ww\eta$,
let $\alpha$ be largest such (which exists by $\varphi_\eta$) and let $\beta=\alpha+1$; otherwise 
let $\beta=0$.
Let $h_\eta=h_\beta$. Let $\delta^{\eta}_j=\delta^{\beta}_j$ and 
for $j<h_\eta$, and $\delta^{\eta}_{h_\beta}=\eta$.
Again this gives appropriate inflations.

This completes the construction at stage $\eta$, and hence, the full comparison,
given that $\varphi_\eta$ holds for each $\eta<\lh(\Vv,\Ww)$. We now verify this.
Write
\[ C_\eta=\{\alpha+1\mid \alpha\text{  
is a copying stage, }\alpha+1\leq_\Vv\eta\text{ and }\alpha+1\leq_\Ww\eta\},\]
\[ G_\eta=\{\gamma_{j+1}^\eta\mid j+1\leq_{\Uu^P} h_\eta\}=\{\gamma_{j+1}^\eta\mid j+1\leq_{\Uu^Q}h_\eta\}.\]
(Recall here that $\gamma_0^\eta=0$ always.)

\begin{clm}\label{clm:C_eta=G_eta}
 Let $\eta<\lh(\Vv,\Ww)$. Then:
 \begin{enumerate}[label=(\roman*)]
  \item\label{item:C_eta=G_eta} 
  $C_\eta=G_\eta$, and therefore
$\varphi_\eta$ holds, and
 
\item\label{item:inflation_coherence} for each $j\leq h_\eta$
 and each $\beta\in[\gamma^\eta_j,\delta^\eta_j]^{\Vv}\cap[\gamma^\eta_j,\delta^\eta_j]^{\Ww}$\footnote{It seems that maybe
 $[\gamma^\eta_j,\delta^\eta_j]^{\Vv}\neq [\gamma^\eta_j,\delta^\eta_j]^{\Ww}$. But certainly $\gamma^\eta_j,\delta^\eta_j\in[\gamma^\eta_j,\delta^\eta_j]^{\Vv}\cap[\gamma^\eta_j,\delta^\eta_j]^{\Ww}$.}, we have:
 \begin{enumerate}[label=--]
 \item  $h_\beta=j$,
 \item $\gamma^\beta_i=\gamma^\eta_i$ for each $i\leq j$, and so
 \item $\delta^\beta_i=\delta^\eta_i$
 for each $j<i$, and $\delta^\beta_j=\beta$.
 \end{enumerate}
 \end{enumerate}
\end{clm}
\begin{proof}
We prove the claim by induction on $\eta$.
(For part \ref{item:C_eta=G_eta},
since $G_\eta$ is finite, it suffices to see that $C_\eta=G_\eta$.)

 \begin{case} $\eta=0$.
  
  Part \ref{item:C_eta=G_eta} is immediate as $C_0=\emptyset=G_0$, and part \ref{item:inflation_coherence}  as $h_0=0$ and $\gamma^0_0=0=\delta^0_0$.
 \end{case}

\begin{case} $\eta=\chi+1$ where $\chi$ is a copying stage.

So $h_{\chi+1}=h_\chi+1$
and $\Delta_{\chi+1}=\Delta_\chi\cup\{(h_{\chi+1},\chi+1)\}$,
and so part \ref{item:inflation_coherence}
at stage $\chi+1$ follows immediately by induction, using that it holds at stage $\chi$.

Now let $\xi=\pred^\Vv(\chi+1)$ and $\eps=\pred^\Ww(\chi+1)$.
Let
\[ j=\pred^{\Uu^P}(h_\chi+1)=\pred^{\Uu^Q}(h_\chi+1).\]
 Because $\chi$ is a copying stage and
\[ \Delta_\chi:\Uu^\Yback\rest(h_\chi+1)\hookrightarrow\Vv\rest(\chi+1),\]
it is easy to see that $\xi\in[\gamma^\chi_j,\delta^\chi_j]_\Vv$. Similarly, 
$\eps\in[\gamma^\chi_j,\delta^\chi_j]_\Ww$.  We have
 $\gamma^{\chi+1}_k=\gamma^\chi_k$ for all $k\leq h_\chi$, and $\gamma^{\chi+1}_{h_{\chi+1}}=\delta^{\chi+1}_{h_{\chi+1}}=\chi+1$.
Let $\beta=\delta^\chi_j$. 
Then by part \ref{item:inflation_coherence} applied at $(\chi,\beta)$
(that is, with $\eta$ replaced by $\chi$,
and $\beta=\delta^\chi_j$) together with part \ref{item:C_eta=G_eta}
at $\beta$ (that is, with $\eta$ replaced by $\beta$), it is easy
to see that
\[ G_{\chi+1}=G_\beta\un\{\chi+1\}=C_\beta\cup\{\chi+1\}=C_{\chi+1},\]
giving part \ref{item:C_eta=G_eta}
at $\chi+1$.
\end{case}

\begin{case}\label{case:eta=chi+1_and_chi_inflationary} $\eta=\chi+1$ and $\chi$ is inflationary.

Let $\xi=\pred^\Vv(\chi+1)$ and $\eps=\pred^\Ww(\chi+1)$.

\begin{scase}\label{scase:alpha_exists}
There is a copying stage $\alpha$ 
such that $\alpha+1\leq_\Vv\xi$ and $\alpha+1\leq_\Ww\eps$.

By $\varphi_\chi$ there is a largest; fix this $\alpha$.
Then $h_{\chi+1}=h_{\alpha+1}$ and
\[ \Delta_{\chi+1}=(\Delta_{\alpha+1}\rest h_{\alpha+1})\un\{(h_{\alpha+1},\chi+1)\}. \]
It follows that $G_{\chi+1}=G_{\alpha+1}$.
But also easily  $C_{\chi+1}=C_{\alpha+1}$.
So part \ref{item:C_eta=G_eta} at $\chi+1$ follows by induction.
Now consider part \ref{item:inflation_coherence} at $\chi+1$.
By induction, it suffices to consider
$j=h_{\chi+1}$. With this $j$,
let $\beta\in[\gamma^{\chi+1}_{j},\delta^{\chi+1}_{j}]^{\Vv}\cap[\gamma^{\chi+1}_{j},\delta^{\chi+1}_{j}]^{\Ww}$. We must verify that
\begin{enumerate}[label=(\alph*)]
 \item\label{item:a_clause_1}
$h_\beta=j=h_{\chi+1}$, and
\item\label{item:a_clause_2}$\gamma^\beta_i=\gamma^{\chi+1}_i=\gamma^{\alpha+1}_i$ for each $i\leq j$.
\end{enumerate}
Since $\delta^{\chi+1}_{j}=\chi+1$,
we may clearly assume $\beta<\delta^{\chi+1}_j$.
We also have $\gamma^{\chi+1}_j=\alpha+1$,
since $C_{\alpha+1}=G_{\alpha+1}$.
But if $\beta=\alpha+1$ then clauses \ref{item:a_clause_1} and \ref{item:a_clause_2} hold by induction. So suppose $\gamma^{\chi+1}_j<\beta<\delta^{\chi+1}_j$.
Then $\beta\leq\min(\xi,\varepsilon)$,
and by choice of $\alpha$,
if $\beta=\alpha'+1$ then $\alpha'$ is inflationary, but then note that since $\alpha'+1\in(\alpha+1,\xi)^{\Vv}\cap(\alpha+1,\varepsilon)^{\Ww}$, 
clauses \ref{item:a_clause_1} and \ref{item:a_clause_2} hold just by the definition of $\Delta_{\alpha'+1}$.
And if $\beta$ is a limit ordinal,
then they similarly hold.
\end{scase}

\begin{scase}
There is no $\alpha$ as in Subcase \ref{scase:alpha_exists}.

Then $h_{\chi+1}=0$ and  $G_{\chi+1}=\emptyset=C_{\chi+1}$, and otherwise things are similar but simpler than the previous subcase.
\end{scase}
\end{case}
\begin{case} $\eta$ is a limit.

We claim that $C_\eta$ is finite. For suppose not. Let $\left<\alpha_n\right>_{n<\om}$ be a 
strictly increasing sequence such that $\alpha_n+1\in C_\eta$ for each $n$.
Note that if $m<n$ then $\alpha_m+1\in C_{\alpha_n+1}$.
Let $n>\lh(\Uu^P)$. Then $C_{\alpha_n+1}$ has cardinality $>\lh(\Uu^P)$.
By induction, $C_{\alpha_n+1}=G_{\alpha_n+1}$. But $G_{\alpha_n+1}$ has cardinality $<\lh(\Uu^P)$, 
contradiction.

Now let $\beta=\alpha+1=\max(C_\eta)$, if $C_\eta\neq\emptyset$, and $\beta=0$ otherwise.
Then by  induction,
$C_\eta=C_\beta=G_\beta=G_\eta$,
and the rest is much like in Case \ref{case:eta=chi+1_and_chi_inflationary}.\qedhere
\end{case}
\end{proof}

\begin{clm}$\Vv,\Ww$ are small.\end{clm}\label{clm:Vv,Ww_small}
\begin{proof}This follows easily from Claims \ref{clm:Uu_small} and \ref{clm:t_alpha_chi_not_3}.
\end{proof}

\begin{clm}\label{clm:when_min_disag_t=2}
 Let $R=P_\chi$ and $S=\Zback_\chi$.
 Suppose $\alpha_\chi=\alpha_{\chi x}<\infty$ and $t_{\alpha_\chi x}^R=2$ and $\kappa=\card^R(\alpha_\chi)$ and $F=F^R_{\kappa 0}=F^R_{\alpha_\chi x}$. Then:
 \begin{enumerate}
  \item\label{item:alpha_chi_after_Q} $\alpha_\chi=\xi^R_{\kappa x}$ (cf.~Definition \ref{dfn:xi^Y_kappa}),
  \item\label{item:t_y_also_2} $\alpha_{\chi y}=\alpha_\chi=\xi^R_{\kappa y}$, so $F^R_{\alpha_\chi y}=F$,
  \item\label{item:t_alpha_chi^S_0_or_2} $t_{\alpha_\chi x}^{S}=t_{\alpha_\chi y}^S\in\{0,2\}$, 
  \item\label{item:same_crit}  $t^S_{\alpha_\chi x}=t^S_{\alpha\chi y}=2$ iff $\kappa$ is measurable in $S$,
  and in this case,  $\alpha_\chi=\xi^S_{\kappa y}$ and $F^S_{\alpha_\chi x}=F^S_{\alpha_\chi y}=F_{\kappa 0}^S$.
 \end{enumerate}
\end{clm}
\begin{proof}
 Part \ref{item:alpha_chi_after_Q}: Let $\gamma=\gamma^R_\kappa$ (cf.~Definition \ref{dfn:gamma^Y_kappa}).
 By Lemma \ref{lem:least_t=2},
 we have $\gamma+\om=\xi^R_{\kappa x}\leq\alpha_\chi$, so
 $F_{\alpha_\chi x}^{R}=F=F_{\gamma+\om,x}^{R}$. By Claim \ref{clm:alpha_chi<infty_implies_alpha_chi_leq_Omegas},
$\alpha_\chi\leq\min(\Omega^{S}_x,\Omega^S_y)$.
 So also by
 Lemmas \ref{lem:least_t=2}
 and \ref{lem:CCbar_match_from_comp},
 and since $R,S$ are $({<\alpha_\chi},x)$-compatible,
 either:
 \begin{enumerate}[label=--]
\item $\gamma=\kappa$ and $t_{\gamma x}^{R}=0=t_{\gamma x}^S$, or
 \item $\gamma>\kappa$ and $t_{\gamma x}^{R}=3=t_{\gamma x}^S$.
 \end{enumerate}

 By Claim \ref{clm:compatibility_implies_Q-structures_match} and Lemma \ref{lem:meas_implies_least_t=2_exists},
 no $\delta'<\kappa$ is Woodin in $S$,
 and either
 \begin{enumerate}[label=(\roman*)]
 \item\label{item:kappa_Woodin_in_S} $\kappa$ is the least Woodin  of $S$
 (hence non-measurable in $S$)
 and $\gamma=\OR^S$,  $t^S_\gamma=3$
 and $N^S_x=N^S_{\gamma x}=N^R_{\gamma x}$,
 and $R,S$ are both $M_x$-good and $M_y$-good
 with $\Omega_x^R=\Omega_y^R=\gamma=\Omega_x^S=\Omega_y^S$, or
 \item\label{item:option_kappa_meas} $\kappa$ is measurable non-Woodin in $S$, $\gamma=\gamma^S_\kappa$,
 $\gamma+\om=\xi^S_{\kappa x}$ and $t^S_{\gamma+\om,x}=2$, or
 \item\label{item:option_kappa_non-meas} $\kappa$ is non-measurable  non-Woodin in $S$, $\gamma=\gamma^S_\kappa$,
 and $t^S_{\gamma+\om,x}=0$.
 \end{enumerate}
 
 But option \ref{item:kappa_Woodin_in_S} does not hold, because $\kappa$ is measurable in $R$, so non-Woodin in $R$,
 and $R|\gamma\pins R'=\Ult(R,F)$
 (recall $F=F_{\kappa0}^R$),
 so $N_{\gamma x}^{R'}=N_{\gamma x}^R$
 is given by P-construction
 over $N_{\kappa x}^{R'}=N_{\kappa x}^R$,
 and letting $j:R\to R'$ be the ultrapower map,
  $N_{\kappa x}^{R}\pins_{\mathrm{card}} N_{j(\kappa)x}^{R'}$, so $\rho_\om^{N^{R'}_{\gamma x}}=\rho_\om^{N^R_{\gamma x}}=\kappa$,
  contradicting the fact that
  $\rho_\om(M_x)=\om$
  and $M_x=\core_\om(N_{\gamma x}^R)$.
 
 Now suppose $\alpha_\chi>\gamma+\om$. Then $R,S$ are $(\gamma+\om,x)$-compatible.
 But then $G\eqdef F^S_{\gamma+\om,x}$ agrees with $F$ over $(R\inter S)\cross[\theta]^{<\om}$
 where $\theta=\min(\lh(F),\lh(G))$.
 So $\kappa=\crit(G)$ is measurable in $S$,
 so option \ref{item:option_kappa_meas} holds.
 So $t_{\gamma+\om,x}^S=2$ and $G=F^S_{\kappa 0}$.
 Now $\alpha_\chi<\kappa^{+R}$.
 We claim that $\alpha_\chi<\kappa^{+S}$. For otherwise
 $\kappa^{+S}\leq\alpha_\chi<\kappa^{+R}$, so $\kappa^{+S}$
 is not a cardinal in $i^R_F(N^R_{\kappa x})$,
 but then by the compatibility
 of $F,G$ (and as $N^R_{\kappa x}=N^S_{\kappa x}$ and $\kappa^{+S}<\theta$),
 it follows that $\kappa^{+S}$ is not a cardinal in $i^S_G(N^S_{\kappa x})$,
 a contradiction. So  similarly,
 $t_{\alpha_\chi x}^S=2$ and $F_{\alpha_\chi x}^S=F_{\kappa 0}^S$.
 But the compatibility of $F,G$ then contradicts the choice of $\alpha_{\chi x}=\alpha_\chi$.

 Part \ref{item:t_y_also_2}: By Lemma \ref{lem:meas_implies_least_t=2_exists},
 $N_{\gamma y}^R$ is the Q-structure
 for $N_{\kappa y}^R$
 and $\xi^{R}_{\kappa y}=\gamma+\om$, and $N_{\gamma+\om,y}^R=\J(N_{\gamma y}^R)\pins i^R_{F}(N_{\kappa y}^R)$,  $t_{\gamma+\om,y}^R=2$
 and $F_{\gamma+\om,y}^R=F$. But then if $\gamma+\om<\alpha_{\chi y}$ then we reach a contradiction to the fact that
 $\alpha_{\chi x}=\alpha_\chi$ as in the previous part.
 
 Parts \ref{item:t_alpha_chi^S_0_or_2},\ref{item:same_crit}: These follow because option \ref{item:option_kappa_meas}
 or option \ref{item:option_kappa_non-meas} above hold
(with respect to $x$),
 and by symmetry, also with $y$ replacing $x$.
\end{proof}

As an immediate corollary we have:
\begin{clm}\label{clm:when_min_disag_t=1}
Let $R=P_\chi$ and $S=\Zback_\chi$.
If $\alpha_\chi=\alpha_{\chi x}<\infty$ and $t_{\alpha_\chi x}^R=1$
 then $t_{\alpha_\chi x}^S\in\{0,1\}$, and if also $\alpha_\chi=\alpha_{\chi y}$
 then $t^R_{\alpha_\chi y},t^S_{\alpha_\chi y}\in\{0,1\}$.
\end{clm}

\begin{clm}\label{clm:beta_description}
Let $R=P_\chi$ and $S=\Zback_\chi$.
There are $\beta,\xi$ such that:
\begin{enumerate}[label=--]
\item if $E^\Vv_\chi\neq\emptyset$ then $\nu(E^\Vv_\chi)=\aleph_\beta^R$,
\item if $E^\Ww_\chi\neq\emptyset$ then $\nu(E^\Ww_\chi)=\aleph_\beta^S$,

\item if $\beta=\kappa+1$ then
 \begin{enumerate}[label=--]
  \item regarding $\Vv,R$, we have:
  \begin{enumerate}[label=--]
  \item $E^\Vv_\chi\neq\emptyset$ iff $\kappa$ is measurable in $R$,
  \item if $E^\Vv_\chi\neq\emptyset$
 then $\xi=\xi^R_{\kappa x}=\xi^R_{\kappa y}$ (so $t_{\xi x}^R=2=t^R_{\xi y}$) and $E^\Vv_\chi=F_{\xi x}^R=F_{\xi y}^R=F_{\kappa 0}^R$,
\end{enumerate}
 \item likewise for $\Ww,S$,
\end{enumerate}
and
\item if $\beta$ is a limit then $\xi=\beta+\om$ and
\begin{enumerate}[label=--]
 \item  regarding $\Vv,R$, we have:
\begin{enumerate}[label=--]
 \item $R|\aleph_\beta^R\not\sats\ZFC$,
 \item if $E^\Vv_\chi\neq\emptyset$ then either $t_{\xi x}^R=1$ and $E^\Vv_\chi=F_{\xi x}^R$,
 or $t_{\xi y}^R=1$ and $E^\Vv_\chi=F_{\xi y}^R$,\end{enumerate}
 \item likewise for $\Ww,S$.
\end{enumerate}
\end{enumerate}
\end{clm}
\begin{proof}
 If $\chi$ is a copying stage and $h_\chi\in I_1$ then $\nu(E^\Vv_\chi)=\aleph_\beta^R$
 and $\nu(E^\Ww_\chi)=\aleph_\beta^S$ where
 $\xi=\beta+\om=\sigma^{\Vv\chi}_{h_\chi}(\xi^{*\Yback}_{\mathscr{I}(h_\chi)})=\sigma^{\Ww\chi}_{h_\chi}(\xi^{*\Zback}_{\mathscr{I}(h_\chi)})$.
 Moreover, $t^R_{\xi x}=t^R_{\xi y}=1$ and $E^\Vv_\chi=F^R_{\xi x}=F^R_{\xi y}$, and likewise for $\Ww,S$.
 
If $\chi$ is a copying stage and  $h_\chi\in I_2$ then $\nu(E^\Vv_\chi)=\aleph_{\kappa+1}^R$
 and $\nu(E^\Ww_\chi)=\aleph_{\kappa+1}^S$ where $\kappa=\crit(E^\Vv_\chi)=\crit(E^\Ww_\chi)$
 (the compatibility of these extenders implies they have the same critical point),
 so $\kappa$ is measurable in both $R,S$,
 and by Lemma \ref{lem:lifting_t=2_proj_meas} and Claim \ref{clm:compatibility_implies_Q-structures_match}, (letting)
 $\xi=\xi^R_{\kappa x}=\xi^R_{\kappa y}=\xi^S_{\kappa x}=\xi^S_{\kappa y}$,
 then $E^\Vv_\chi=F^R_{\xi x}=F^R_{\kappa 0}=F^R_{\xi y}$ and $E^\Ww_\chi=F^S_{\xi x}=F^S_{\kappa 0}=F^S_{\xi y}$.

Suppose $\chi$  is  inflationary and $\alpha_{\chi x}=\alpha_\chi$
and $t_{\alpha_\chi x}^R=2$. So $\kappa\eqdef\card^R(\alpha_\chi)=\crit(E^\Vv_\chi)$.
By Claim \ref{clm:when_min_disag_t=2}, $\alpha_{\chi y}=\alpha_\chi$
  and $t_{\alpha_\chi y}^R=2$ and $t_{\alpha_\chi x}^S=t_{\alpha_\chi y}^S\in\{0,2\}$ and $\alpha_\chi=\xi^R_{\kappa x}=\xi^R_{\kappa y}$,
  and \[t_{\alpha_\chi x}^S=t^S_{\alpha_\chi y}=2\text{ iff }\kappa\text{ is measurable in  }S,\]
  and if $\kappa$ is measurable in $S$
  then $\alpha_\chi=\xi^S_{\kappa x}=\xi^S_{\kappa y}$. This suffices.
  
Finally if $\chi$ is inflationary and $\alpha_{\chi x}=\alpha_\chi$
and $t_{\alpha_\chi x}^R=1$, then by Claim \ref{clm:when_min_disag_t=1},
$\nu(E^\Vv_\chi)=\aleph_{\beta}^R$ where $\beta+\om=\alpha_\chi$, and if $E^\Ww_\chi\neq\emptyset$ then $\nu(E^\Ww_\chi)=\aleph_{\beta}^S$.
\end{proof}

Write $\beta_\chi$ for the unique $\beta$ as in the clam.

\begin{clm}\label{clm:Vv,Ww_normal}$\Vv,\Ww$ are normal.
In fact, $\beta_\chi<\beta_\eps$ for  $\chi+1<\eps+1<\lh(\Vv,\Ww)$.\end{clm}
\begin{proof}
Suppose not and let $(\eps,\chi)$ be the lexicographically least counterexample.
So $\beta_\chi\geq\beta_\eps$.
Note then that
\begin{equation}\label{eqn:CC_agreement_eps_chi} \CC^{P_\eps}_x\rest\beta_\chi=\CC^{P_\chi}_x\rest\beta_\chi
\text{ and }
 \CC^{\Zback_\chi}_x\rest\beta_\chi=\CC^{\Zback_\eps}_x\rest\beta_\chi,
 \end{equation}
 so
 \begin{equation}\label{eqn:M_chi_M_eps_compatibility} P_\eps,\Zback_\eps\text{ are }({\beta_\chi},x)\text{-compatible}
 \end{equation}
 (take $(\kappa+1,x)$-compatible to mean just  $(\kappa,x)$-compatible) and likewise for $y$.

Now we may assume that $E^\Vv_\eps\neq\emptyset$ and $E^\Vv_\eps=F_{\alpha_\eps x}^{P_\eps}$.

If $\beta_\chi=\kappa+1$ then $\beta_\eps<\kappa$.
For if $\beta_\eps=\kappa+1$ then by Claim \ref{clm:beta_description}, $\kappa$ is measurable in $P_\eps$ and hence in $P_\chi$,
so $E^\Vv_\chi=F_{\kappa 0}^{P_\chi}$, contradiction. And
since $\kappa$ is measurable in either $P_\chi$ or $Q_\chi$,
either $P_\chi|\aleph_{\kappa}^{P_\chi}\sats\ZFC$
or $Q_\chi|\aleph_\kappa^{Q_\chi}\sats\ZFC$,
so by Claim \ref{clm:beta_description}, $\beta_\eps\neq\kappa$.

\begin{case} $\eps$ is inflationary.

If $\beta_\chi=\kappa+1$ then as shown above, $\beta_\eps<\kappa$,
but by line (\ref{eqn:M_chi_M_eps_compatibility}), $\alpha_\eps>\kappa+1$.
So by Claim \ref{clm:beta_description}, $\beta_\eps=\mu+1$ (for if $\beta_\eps$ is a limit
then $\beta_\eps+\om=\alpha_\eps$, so $\beta_\eps\geq\beta_\chi$, contradiction).
But then by Claim \ref{clm:when_min_disag_t=2}, $\alpha_\eps=\gamma+\om$ where $N_{\gamma x}^{P_\eps}=N_{\gamma x}^{\Zback_\eps}$
is the Q-structure for $N_{\mu x}^{P_\eps}=N_{\mu x}^{\Zback_\eps}$,
so $\gamma>\kappa$, but then $\mu$ is Woodin in $N_{\kappa x}^{P_\eps}=N_{\kappa x}^{\Zback_\eps}$,
contradicting smallness and that $\kappa$ is measurable in either $P_\chi$ or $\Zback_\chi$.

So $\beta_\chi$ is a limit. By line (\ref{eqn:M_chi_M_eps_compatibility}), $\alpha_\eps>\beta_\chi$.

Suppose $\beta_\eps<\beta_\chi$. Then clearly $\beta_\eps=\mu+1$. By Claim \ref{clm:beta_description},
\[ 1\in\{t_{\beta_\chi+\om,x}^{P_\chi},t_{\beta_\chi+\om,x}^{\Zback_\chi},t_{\beta_\chi+\om,y}^{P_\chi},t_{\beta_\chi+\om,y}^{\Zback_\chi}\},\]
\[  2\in\{t_{\alpha_\eps x}^{P_\chi},t_{\alpha_\eps x}^{\Zback_\chi},t_{\alpha_\eps y}^{P_\chi},t_{\alpha_\eps y}^{\Zback_\chi}\},\]the latter
with $\mu$ being the corresponding measurable, and by our earlier assumption, in fact $t_{\alpha_\eps x}^{P_\chi}=2$.
Then by Claim \ref{clm:when_min_disag_t=2},
$\alpha_\eps=\gamma+\om$ where $\gamma=\gamma^{P_\eps}_\mu$,
so $\gamma=\gamma^{\Zback_\eps}_\mu$. Note then that $P_\chi|\gamma=P_\eps|\gamma$ and $Q_\chi|\gamma=Q_\eps|\gamma$.
But also
\[ \mu<\mu+1=\beta_\eps<\beta_\chi\leq\gamma<\gamma+\om=\alpha_\eps \]
since $\beta_\chi<\alpha_\eps$, contradicting smallness at stage $\chi$.

So $\beta_\eps=\beta_\chi$. Let $\beta=\beta_\chi$. So $\beta+\om=\alpha_\eps$.
Let $G=E^\Vv_\eps$.

\begin{scase} $E=E^\Vv_\chi\neq\emptyset$.
 
So
\[ \nu_E=\nu_G=\aleph_\beta^{P_\chi}=\aleph_\beta^{P_\eps}<\lh(G)<\lh(E)=\aleph_{\beta+1}^{P_\eps}. \]
We have $t_{\beta+\om,x}^{P_\eps}=1$ and $G=F^{P_\eps}_{\beta+\om,x}$. Note that
since $N_{\beta x}^{P_\chi}=N_{\beta x}^{P_\eps}$ and by coherence,
$t_{\beta+\om,x}^{P_\chi}=1$ and $G=F^{P_\chi}_{\beta+\om,x}$.
Note then that $\chi$ is inflationary, because if it were copying,
then we would have $F_{\beta+\om,x}^{P_\chi}=E^\Vv_\chi=F_{\beta+\om,y}^{P_\chi}$.
Because we set $E^\Vv_\chi=E$ but $\lh(E)>\lh(G)$, we must have that $t_{\beta+\om,x}^{\Zback_\chi}=1$
and letting $H=F^{\Zback_\chi}_{\beta+\om,x}$, that $G,H$ are compatible
(that is, $P_\chi,\Zback_\chi$ are $(\beta+\om,x)$-compatible).
Note that $t_{\beta+\om,y}^{P_\eps}=t_{\beta+\om,y}^{\Zback_\eps}=0$, because we removed the least possible backgrounding extender at stage $\chi$.
It follows that $\alpha_{\eps y}>\beta+\om=\alpha_{\eps x}$, so $H\notin\es(\Zback_\eps)$, so there is some $\gamma$ such that $\chi\leq\gamma<\eps$
and $E^\Ww_\gamma\neq\emptyset$ and $\lh(E^\Ww_\gamma)\leq\lh(H)$.
But then by the choice of $\chi,\eps$, we have $\gamma=\chi$.

\begin{sscase}\label{sscase:E^Ww_chi=H}
$E^\Ww_\chi=H$.

We have $\alpha_{\chi x}>\alpha_{\chi y}=\beta+\om$, and so $H=F_{\beta+\om,y}^{\Zback_\chi}$.
But
\[ G\rest X\cross[\theta]^{<\om}=H\rest X\cross[\theta]^{<\om} \]
where $X=P_\chi\inter \Zback_\chi$ and $\theta=\min(\lh(G),\lh(H))$,
and therefore $G$ induces the same extender over $N_{\beta y}^{P_\chi}=N_{\beta y}^{\Zback_\chi}$
as does $H$. But $\nu_G=\aleph_\beta^{P_\chi}$. Therefore the minimality of
the choice of $F^{P_\chi}_{\beta y}$ ensures that $\lh(F^{P_\chi}_{\beta y})\leq\lh(G)$,
whereas $E=F^{P_\chi}_{\beta y}$, contradiction.
\end{sscase}
\begin{sscase} $H\neq E^\Ww_\chi$,
 so $\lh(H)>\lh(E^\Ww_\chi)$.

 Let $\kappa=\crit(H)=\crit(G)$ and let
\[ A=\{\alpha<\kappa\mid t_{\alpha+\om,y}^{\Zback_\chi}=1\}=
 \{\alpha<\kappa\mid t_{\alpha+\om,y}^{P_\chi}=1\}. \]
 Since $\lh(F_{\chi y}^{\Zback_\chi})<\lh(H)$, we have $\beta\in i^{\Zback_\chi}_H(A)$.
 Since $\beta\leq\min(\aleph_\beta^{P_\chi},\aleph_\beta^{\Zback_\chi})<\theta$ and $A\in P_\chi\inter \Zback_\chi$,
compatibility gives that $\beta\in i^{P_\chi}_G(A)$. But then by coherence and since $\nu_G=\aleph_\beta^{P_\chi}$, there must be some suitable backgrounding extender
$E'\in\es(P_\chi)$ for $N_{\beta+\om,y}^{P_\chi}$ with $\lh(E')<\lh(G)$, again contradicting the minimality of $\lh(E)$.
\end{sscase}
\end{scase}

\begin{scase} $E^\Vv_\chi=\emptyset$.
 
So $E^\Ww_\chi\neq\emptyset$, so again, $\chi$ is inflationary.
We have $\aleph_{\beta+1}^{P_\chi}<\lh(E^\Vv_\gamma)$ for all $\gamma\in(\chi,\eps)$.
So like before, we get that $t_{\beta+\om,x}^{P_\chi}=1$ with $F^{P_\chi}_{\beta+\om,x}=G$,
and $t_{\beta+\om,x}^{\Zback_\chi}=1$ and with $H=F^{\Zback_\chi}_{\beta+\om,x}$,
we have $\lh(E^\Ww_\chi)\leq\lh(H)$ and $G,H$ are compatible. We also have $\alpha_{\chi y}=\beta+\om$
and $t_{\beta+\om,y}^{\Zback_\chi}=1$ and $E^\Ww_\chi=F^{\Zback_\chi}_{\beta+\om,y}$,
and since $E^\Vv_\chi=\emptyset$, therefore $t_{\beta+\om,y}^{P_\chi}=0$.
If $H=E^\Ww_\chi$ then the compatibility gives that $t_{\beta+\om,y}^{P_\chi}=1$ (a contradiction),
as $G,H$ background the same extender over $N_{\beta y}^{P_\chi}=N_{\beta y}^{\Zback_\chi}$. 
So $\lh(H)>\lh(E^\Ww_\chi)$, but then we reach a contradiction by using an argument much like before.
\end{scase}
\end{case}

\begin{case} $\eps$ is copying.
 
 Let $E^*=E^\Vv_\eps$ and $F^*=E^\Ww_\eps$, so $E^*\neq\emptyset\neq F^*$. We may assume that $E^\Vv_\chi\neq\emptyset$.

 Recall that $\eps=\delta^\eps_{h_\eps}$ and
$\sigma^{\Vv\eps}_{h_\eps}=i^\Vv_{\gamma^\eps_{h_\eps}\eps}\com\pi^{\Vv\eps}_{h_\eps}$ and
$E^*=\sigma^{\Vv\eps}_{h_\eps}(E^{\Uu^\Yback}_{h_\eps})$, so
$\beta_\eps\in\rg(i^\Vv_{\gamma^\eps_{h_\eps}\eps})$.
We have $\gamma^\eps_{h_\eps}\leq\chi$, because if
\[ \chi<\delta^\eps_i+1=\gamma^\eps_{i+1}\leq\eps \]
then since $\delta^\eps_i<\eps$ and by the minimality of $\eps$,
we have $\beta_\chi\leq\beta_{\delta^\eps_i}$, and because $\Delta_\eps$ is an inflation,
we have $\lh(E^\Vv_{\delta^\eps_i})<\lh(E^*)$, but then
\[ 
\aleph_{\beta_\chi}^{P_\eps}\leq
\aleph_{\beta_{\delta^\eps_i}}^{P_\eps}=\nu(E^\Vv_{\delta^\eps_i})<
\aleph_{\beta_{\delta^\eps_i+1}}^{P_\eps}=\lh(E^\Vv_{\delta^\eps_i})\leq
\nu(E^*)=\aleph_{\beta_\eps}^{P_\eps}, \]
so $\beta_\chi<\beta_\eps$, contradiction.

So $\gamma^\eps_{h_\eps}\leq\chi<\eps=\delta^\eps_{h_\eps}$,
and $\gamma^\eps_{h_\eps}<_\Vv\eps$. 
Let $\xi$ be least such that
$\xi+1\leq_\Vv\eps$ and $\chi\leq\xi$. Then $E^\Vv_\xi\neq\emptyset$
because $E^\Vv_\chi\neq\emptyset$. Let $\iota=\pred^\Vv(\xi+1)$. So $\gamma^\eps_{h_\eps}\leq^\Vv\iota$. Let
$\kappa=\crit(E^\Vv_\xi)=\crit(i^\Vv_{\iota,\xi+1})$. Then $\kappa<\aleph_{\beta_\chi}^{P_\eps}$ (so $\kappa<\beta_\chi$) and
\[ 
\beta_\eps\leq\aleph_{\beta_\eps}^{P_\eps}\leq\aleph_{\beta_\chi}^{P_\eps}<
i^\Vv_{\iota\eps}(\kappa), \]
(the last inequality is because $\Vv$ is small, so $E^\Vv_\xi$ is not superstrong).
But $\beta_\eps\in\rg(i^\Vv_{\gamma^\eps_{h_\eps}\eps})$, and so $\beta_\eps<\kappa$,
so $\aleph_{\beta_\eps}^{P_\eps}<\kappa$, so $\lh(E^*)<\kappa$.

Now by the lexicographic minimality of $(\eps,\chi)$, for all $\delta<\chi$ we have $\beta_\delta<\beta_\eps$,
so $\aleph_{\beta_\delta}^{P_\eps}<\kappa$. Therefore $\chi=\pred^\Vv(\xi+1)$.

Now suppose that $\Zback_\chi\neq\Zback_\eps$. Let $\zeta$ be least such that $\zeta+1\leq_\Ww\eps$ and $\chi\leq\zeta$.
So $E^\Ww_\zeta\neq\emptyset$. Let $\mu=\crit(E^\Ww_\zeta)$. As above, $\lh(F^*)<\mu$ and $\pred^\Ww(\zeta+1)=\chi$.
In particular,
\[ \chi\in[\gamma^\eps_{h_\eps},\eps]_\Vv\inter[\gamma^\eps_{h_\eps},\eps]_\Ww, \]
and note that by Claim \ref{clm:C_eta=G_eta}, we 
have $h_\chi=h_\eps$ and $\gamma^\chi_k=\gamma^\eps_k$ for all 
$k\leq h_\eps$.

But now since $\lh(E^*)<\kappa$ and $\lh(F^*)<\mu$, the fact that $\eps$ is a copying stage, using 
extenders $E^*,F^*$,
easily reflects to show that $\chi$ is a copying stage, using $E^*,F^*$, so
$E^*=E^\Vv_\chi$, contradiction.

If instead $Q_\chi=Q_\eps$ then $\chi<_\Ww\eps$ and $i^\Ww_{\chi\eps}=\id$,
so we reach a contradiction through a similar but slightly simpler argument.\qedhere
\end{case}
\end{proof}
\begin{clm}\label{clm:copying_compatible}
 Let $\gamma+1<\lh(\Vv,\Ww)$. Then $\gamma$ is a copying stage iff 
$E^\Vv_\gamma\neq\emptyset\neq E^\Ww_\gamma$ and
 \[ E^\Vv_\gamma\inter(X\cross[\theta]^{<\om})=E^\Ww_\gamma\inter(X\cross[\theta]^{<\om}) \]
 where $X=P_\gamma\inter\Zback_\gamma$ and $\theta=\min(\lh(E^\Vv_\gamma),\lh(E^\Ww_\gamma))$.
\end{clm}
\begin{proof}
If $\gamma$ is copying, the compatibility is just part of the definition.
Suppose $\gamma$ is inflationary but $E^\Vv_\gamma,E^\Ww_\gamma$ are compatible,
and in particular, both non-empty.
Then note that $\alpha_\gamma=\alpha_{\gamma x}=\alpha_{\gamma y}$ and either
\begin{enumerate}[label=--]\item $E^\Vv_\gamma=F^{P_\gamma}_{\alpha_{\gamma}x}\neq F^{P_\gamma}_{\alpha_{\gamma}y}$ and $E^\Ww_\gamma=F^{Q_\gamma}_{\alpha_{\gamma}y}\neq F^{Q_{\gamma}}_{\alpha_{\gamma}x}$, or 
 \item $E^\Vv_\gamma=F^{P_\gamma}_{\alpha_{\gamma}y}\neq F^{P_\gamma}_{\alpha_\gamma x}$ and $E^\Ww_\gamma=F^{Q_\gamma}_{\alpha_{\gamma}x}\neq F^{Q_{\gamma}}_{\alpha_\gamma y}$.
\end{enumerate}
So by Claims \ref{clm:when_min_disag_t=2} and \ref{clm:beta_description}, $\beta_\gamma$ is a limit.
But
then an argument like that used in Subsubcase \ref{sscase:E^Ww_chi=H} in the proof of Claim \ref{clm:Vv,Ww_normal} gives a contradiction.
\end{proof}

\begin{clm}
 The comparison terminates in countably many stages.
\end{clm}
\begin{proof}
Suppose not. Then we get $\lh(\Vv,\Ww)=\om_1+1$. Let $\pi:H\to \mathcal{H}_{\omega_2}$ be elementary with $H$ countable and transitive, and everything relevant in $\rg(\pi)$.
Let $\kappa=\crit(\pi)$. Let $\alpha+1=\min((\kappa,\om_1]^\Vv)$ and 
$\beta+1=\min((\kappa,\om_1]^\Ww)$. By the normality of $\Vv,\Ww$ we have
\[ E^\Vv_\alpha\inter(X\cross[\nu]^{<\om})=E^\Ww_\beta\inter(X\cross[\nu]^{<\om}) \]
where $X=P_\kappa\inter \Zback_\kappa$ and $\nu=\min(\nu(E^\Vv_\alpha),\nu(E^\Ww_\beta))$. Moreover, $\lh(E^\Vv_\alpha)<\crit(i^\Vv_{\alpha+1,\omega_1})$, since $\nu(E^\Vv_\alpha)$
is not measurable in $M^\Vv_{\alpha+1}$, considering the kinds of extenders used in $\Vv$.
Similarly, $\lh(E^\Ww_\beta)<\crit(i^\Ww_{\beta+1,\omega_1})$. So  in fact,
\[ E^\Vv_\alpha\inter(X\cross[\theta]^{<\om})=E^\Ww_\beta\inter(X\cross[\theta]^{<\om}) \]
where $\theta=\min(\lh(E^\Vv_\alpha),\lh(E^\Ww_\beta))$.

Now suppose $\alpha=\beta$.
By Claim \ref{clm:copying_compatible}, $\alpha$ is
copying. Note that $\alpha+1\in C_{\pi(\kappa)}$.
But by Claim \ref{clm:C_eta=G_eta}, $C_{\pi(\kappa)}$ is finite, but then
$C_{\pi(\kappa)}\sub\kappa$, contradiction.

So $\alpha\neq\beta$; we may assume $\alpha<\beta$.
We have $\beta_\alpha<\beta_\beta$.
Now $\beta_\alpha$ is a limit, because otherwise 
by Claim \ref{clm:beta_description}, $\kappa$ would not be measurable in $Q_\beta$.
If $\alpha$ is an inflationary stage then we reach a 
contradiction much as in the proof of \ref{thm:comparison}. So $\alpha$ is copying.
We have $\aleph_{\beta_\alpha}^{\Zback_\alpha}=\aleph_{\beta_\alpha}^{\Zback_\beta}$. Let
$F=E^\Ww_\beta\rest\aleph_{\beta_\alpha}^{\Zback_\beta}$.
Then $F\in\es^{\Zback_\alpha}$. And because
\[ E^\Vv_\alpha\inter(X\cross[\nu]^{<\om})=F\inter(X\cross[\nu]^{<\om}) \]
where $X=P_\alpha\inter \Zback_\alpha$ and
$\nu=\min(\aleph_{\beta_\alpha}^{P_\alpha},\aleph_{\beta_\alpha}^{\Zback_\alpha})$,
we have that $F$ is a candidate for $F_{\beta_\alpha+\om,x}^{\Zback_\alpha}$.
\footnote{And likewise with $y$ replacing $x$,
but we only need to consider one of them.}
But $E^\Ww_\alpha=F_{\beta_\alpha+\om,x}^{\Zback_\alpha}$,
and $\lh(F)<\lh(E^\Ww_\alpha)$, contradicting the minimality of 
$F_{\beta_\alpha+\om,x}^{\Zback_\alpha}$.
\end{proof}

So we get a successful comparison $(\Vv,\Ww)$ of successor length $\eta+1$.
Write $Y=P_\eta$ and $Z=Q_\eta$.
Now recall the notation from the beginning of the proof, leading to the definition of $\Uu^P,\Uu^Q$. Recall $\bar{\CC}_x$, etc,
was introduced in \ref{dfn:CC-bar}.
Let $\bar{\bar{\CC}}_x^Y=\bar{\CC}^Y_x\rest(\Omega_x^{Y}+1)$, etc.

\begin{clm}\label{clm:0th_norm_agmt}
We have:
\begin{enumerate}[label=(\roman*)]
\item\label{item:CC-bar-bars_line_up} {[}$\bar{\bar{\CC}}^Y_x\ins\bar{\bar{\CC}}^Z_x$ or $\bar{\bar{\CC}}^Z_x\ins\bar{\bar{\CC}}^Y_x$]
and [$\bar{\bar{\CC}}^Y_y\ins\bar{\bar{\CC}}^Z_y$ or $\bar{\bar{\CC}}^Z_y\ins\bar{\bar{\CC}}^Y_y$].
 \item  $x\equiv_0^P y$ iff $x\equiv_0^Q y$, and hence, $\bar{m}\geq 0$ ($\bar{m}$ was introduced at the beginning of the proof).
 \item\label{item:P,Q_both_M_x,M_y-good}  $P,Q$ are both $M_x$-good and $M_y$-good, as are $P_\alpha,Q_\alpha$ for all $\alpha\leq\eta$.
  \item\label{item:Omegas_match_at_end}$\Omega\eqdef\Omega^{Y}_x=\Omega^{Y}_y=\Omega^{Z}_y=\Omega^{Z}_x$,
  and $\Omega^P_x=\Omega^P_y$ and $\Omega^Q_x=\Omega^Q_y$.
 \end{enumerate}
\end{clm}
\begin{proof}
Parts \ref{item:P,Q_both_M_x,M_y-good}
and \ref{item:Omegas_match_at_end}
are
by Claim \ref{clm:if_full_compatibility_then_basic_agreement}.

According to the definition, $x\leq^P_0y$ iff
\begin{enumerate}[label=--]
 \item if $P$ is $M_y$-good then $P$ is $M_x$-good, and
 \item if $P$ is $M_x$-good and $M_y$-good then $(\Omega^P_x,\projdeg(M_x))\leq_\lex(\Omega^P_y,\projdeg(M_y))$.
\end{enumerate}
Likewise for $\leq^Q_0$. So by parts \ref{item:P,Q_both_M_x,M_y-good}
and \ref{item:Omegas_match_at_end},
it just remains to see that $\projdeg(M_x)=\projdeg(M_y)$.
But for example if $\projdeg(M_x)<\projdeg(M_h)$, then $x<_0^Py$ and $x<_0^Qy$, contradicting the disagreement of $P,Q$ about these things. 
\end{proof}

Recall that $\wt{\Uu}^P$
and $\Uu^P$ are essentially equivalent,
but $\wt{\Uu}^P$ is padded and indexed with pairs $(i,j)$, and $\Uu^P$ is non-padded,
and indexed with integers $h$.
For $(i,j)\in\dom(\wt{\Uu}^P)$ let
$h_{ij}$ be the unique $h<\lh(\Uu^P)$
such that $M^{\Uu^P}_h=M^{\wt{\Uu}^P}_{ij}$.
For $(i,j)\in\dom(\wt{\Uu}^P)$,
say $(i,j)$ is \emph{relevant}
if there is no $j'<j$ with $M^{\wt{\Uu}^P}_{ij'}=M^{\wt{\Uu}^P}_{ij}$
(equivalently, if $j>0$ then
 $E^{\wt{\Uu}^P}_{i,j-1}\neq\emptyset$).
Let $(i_{\mathrm{end}},j_{\mathrm{end}})$
be the unique relevant $(i,j)$ such that
$M^{\wt{\Uu}^P}_{ij}=M^{\Uu^P}_{h_\eta}$.
(Note it would be equivalent to use $Q$ instead of $P$ in these definitions.)
For $i<i_{\mathrm{end}}$
let \[\vec{A}_i=\sigma^{\Vv\eta}_{h_{ik_i}}(\vec{a}_i^{*\Uu^P})=\sigma^{\Ww\eta}_{h_{ik_i}}(\vec{a}_i^{*\Uu^Q})\]
(the second equality holds
by the definition of \emph{copying stage},
considered at stage $\alpha$ where $\alpha+1=\gamma_{i+1,0}^\eta$).

In the following claim, for $i<\lh(\Tt_x)$,
\[\pi_i^{\Tt_xP}:M^{\Tt_x}_i\to \core_{d_i}(N^{M^{\widetilde{\Uu}^P}_{i0}}_{\xi_i^{\Tt_xP}x})\]
denotes the $i$th copy map produced in the iterability
proof (Lemma \ref{lem:iterability})
for lifting $\Tt_x$ to $\widetilde{\Uu}^P$
(where $d_i=\deg^{\Tt_x}_i$)
and
\[ \psi_{ij}^{\Tt_xP}:M_{ij}\to C_{ij}^{\Tt_xP} \]
denotes the $j$th resurrection map
produced by the resurrection process
corresponding to $E^{\Tt_x}_i$ employed in the same proof (so $C_{i0}^{\Tt_xP}=\core_{d_i}(N^{M^{\widetilde{\Uu}^P}_{i0}}_{\xi_i^{\Tt_xP}x})$ and $\psi_{i0}^{\Tt_xP}=\pi_i^{\Tt_xP}$, and if $j>0$
then $C_{ij}^{\Tt_xP}=\core_\om(N_{\alpha_{ij}^{\Tt_xP}x}^{M^{\widetilde{\Uu}^P}_{ij}})$). Also recall that $\copyredd$ was specified in Definition \ref{dfn:copy_redd}.

\begin{clm}\label{clm:key_images_match}
Let $(i,j)\leq^{\wt{\Uu}^P}(i_{\mathrm{end}},j_{\mathrm{end}})$
(equivalently, $(i,j)\leq^{\wt{\Uu}^Q}(i_{\mathrm{end}},j_{\mathrm{end}})$)
be relevant. Let $\gamma=\gamma^\eta_{h_{ij}}$ (so $\gamma^\eta_{h_{i0}}\leq^\Vv\gamma\leq^\Vv\eta$ and $\gamma^{\eta}_{h_{i0}}\leq^\Ww\gamma\leq^\Ww\eta$). Then:
\begin{enumerate}
 \item\label{item:A-vecs<crit(i)} $\vec{A}_{0},\ldots,\vec{A}_{i-1}\sub\min(\crit(i^\Vv_{\gamma\eta}),\crit(i^\Ww_{\gamma\eta}))$,
 
 \item\label{item:j=0_images_of_stages_and_redds_match} If $j=0$ then:
 \begin{enumerate}[label=(\roman*)]
 \item\label{item:j=0_images_of_stages_match} $i^\Vv_{\gamma\eta}(\pi^{\Vv\eta}_{ij}(\xi_i^{\Tt_xP}))=i^\Ww_{\gamma\eta}(\pi^{\Ww\eta}_{ij}(\xi_i^{\Tt_xQ}))$,
\item\label{item:j=0_images_of_redds_match} if $i+1<\lh(\Tt_x)$ and $\exit^{\Tt_x}_i$ is small then \[i^\Vv_{\gamma\eta}(\pi^{\Vv\eta}_{ij}(\copyredd(\pi_i^{\Tt_xP},\exit^{\Tt_x}_i)))=i^\Ww_{\gamma\eta}(\pi^{\Ww\eta}_{ij}(\copyredd(\pi_i^{\Tt_xQ},\exit^{\Tt_x}_i))),\]
 \end{enumerate}
\item\label{item:general_images_of_stages_and_redds_match}
If $i+1<\lh(\Tt_x)$ and $\exit^{\Tt_x}_i$ is small then\begin{enumerate}[label=(\roman*)]\item\label{item:general_images_of_stages_match} 
  $i^\Vv_{\gamma\eta}(\pi^{\Vv\eta}_{ij}(\alpha_{ij}^{\Tt_xP}))=i^\Ww_{\gamma\eta}(\pi^{\Ww\eta}_{ij}(\alpha_{ij}^{\Tt_xQ}))$ and
  \item\label{item:general_images_of_redds_match}  $i^\Vv_{\gamma\eta}(\pi^{\Vv\eta}_{ij}(\copyredd(\psi_{ij}^{\Tt_xP},\exit^{\Tt_x}_i)))=i^\Ww_{\gamma\eta}(\pi^{\Ww\eta}_{ij}(\copyredd(\psi_{ij}^{\Tt_xQ},\exit^{\Tt_x}_i)))$,
  \end{enumerate}
\item\label{item:j>0_C-structures_match_and_not_moved} If $j>0$ (and therefore $i+1<\lh(\Tt_x)$) and $\exit^{\Tt_x}_i$ is small then:
\begin{enumerate}[label=(\roman*)]\item 
$\pi^{\Vv\eta}_{ij}(C_{ij}^{\Tt_xP})=\pi^{\Ww\eta}_{ij}(C_{ij}^{\Tt_xQ})$; let $\kappa$ be the projectum of this (fully sound) structure,
\item  $\gamma=\alpha+1$ for some copying stage $\alpha$, 
and $E^\Vv_{\alpha},E^\Ww_{\alpha}$
are order 0 measures with $\crit(E^\Vv_\alpha)=\crit(E^\Ww_\alpha)=\kappa$,\item 
$\pi^{\Vv\eta}_{ij}(\copyredd(\psi_{ij}^{\Tt_xP},\exit^{\Tt_x}_i))=\pi^{\Ww\eta}_{ij}(\copyredd(\psi_{ij}^{\Tt_xQ},\exit^{\Tt_x}_i))$,
and
\item $\OR(\pi^{\Vv\eta}_{ij}(C_{ij}^{\Tt_xP}))=\OR(\pi^{\Ww\eta}_{ij}(C_{ij}^{\Tt_xQ}))<\min(\crit(i^\Vv_{\gamma\eta}),\crit(i^\Ww_{\gamma\eta}))$.
\end{enumerate}
\item\label{item:xi^*,a^*_images_match} If $i+1<\lh(\Tt_x)$
and $\exit^{\Tt_x}_i$ is small
and $j$ is the largest $j'\leq k_i$
such that $(i,j')$ is relevant,
then
\[ i^\Vv_{\gamma\eta}(\pi_{ij}^{\Vv\eta}(\xi^{*\Yback}_{ix},\avec_{ix}^{*\Yback}))=i^\Ww_{\gamma\eta}(\pi_{ij}^{\Ww\eta}(\xi^{*\Zback}_{ix},\avec^{*\Zback}_{ix}))\]
($\xi_{ix}^{*P},\vec{a}_{ix}^{*P}$ were defined in line (\ref{eqn:a^*,xi^*})).
\end{enumerate}
Likewise with ``$y$'' replacing ``$x$'' in parts \ref{item:j=0_images_of_stages_and_redds_match}, \ref{item:general_images_of_stages_and_redds_match},
 \ref{item:j>0_C-structures_match_and_not_moved},
 \ref{item:xi^*,a^*_images_match}.
\end{clm}
\begin{proof}
 Part \ref{item:A-vecs<crit(i)}:
 This is just because if $i>0$, then for each $j<i$, letting \[F_x=\pi^{\Vv\eta}_{h_{i0}}(F(N^{M^{\wt{\Uu}^P}_{jk_j}}_{\xi^{*P}_jx})))=\pi^{\Ww\eta}_{h_{i0}}(F(N^{M^{\wt{\Uu}^Q}_{jk_j}}_{\xi^{*Q}_jx}))) \]
 and
 \[F_y=\pi^{\Vv\eta}_{h_{i0}}(F(N^{M^{\wt{\Uu}^P}_{jk_j}}_{\xi^{*P}_jy})))=\pi^{\Ww\eta}_{h_{i0}}(F(N^{M^{\wt{\Uu}^Q}_{jk_j}}_{\xi^{*Q}_jy}))),\]
 we have
 \[ \vec{A}_j\sub(\nu(F_x)\cap\nu(F_y))\leq(\nu(F_x)\cup\nu(F_y))\leq(\nu(E^\Vv_\alpha)\cap\nu(E^\Ww_\alpha))\]
 where  $\alpha+1=\gamma_{h_{i0}}^\eta$,
 and $\gamma_{h_{i0}}^\eta\leq^\Vv\gamma_{h_{ij}}\leq^\Vv\eta$
 and $\gamma_{h_{i0}}^\eta\leq^\Ww\gamma_{h_{ij}}^\eta\leq^\Ww\eta$.
 
 Parts \ref{item:j=0_images_of_stages_and_redds_match}, \ref{item:general_images_of_stages_and_redds_match},
 \ref{item:j>0_C-structures_match_and_not_moved}:
 These are proved simultaneously by induction on $(i,j)\leq^{\wt{\Uu}^P}(i_{\mathrm{end}},j_{\mathrm{end}})$.
 
First consider part  \ref{item:j=0_images_of_stages_and_redds_match} with $i=0$, so $\gamma=0$.
Then part \ref{item:j=0_images_of_stages_and_redds_match}\ref{item:j=0_images_of_stages_match}
just says that $\Omega^{Y}_x=\Omega^{Z}_x$, which we established in Claim \ref{clm:0th_norm_agmt}. (Note here that if $\xi_0^{\Tt_xP}=\Omega^P_x\geq\rho_0^P$ then $P$ is non-small and $\Omega_x^P=\OR^P$, in and $i^\Vv_{0\eta}(\pi^{\Vv\eta}_0(\OR^P))$ just means $\OR(M^\Vv_\eta)$.)
Part \ref{item:j=0_images_of_stages_and_redds_match}\ref{item:j=0_images_of_redds_match}:
because $1<\lh(\Tt_x)$ and $\exit^{\Tt_x}_0$ is small, we have $c^P=\copyredd(\pi_0^{\Tt_xP},\exit^{\Tt_x}_0)\in P|\rho_0^P$, so $i^\Vv_{0\eta}(\pi^{\Vv\eta}_0(c^P))$ makes sense, and likewise for $i^\Ww_{0\eta}(\pi^{\Ww\eta}_0(c^Q))$. Now $x,y$ are both $(\sigma,0)$-well (because of the assumed disagreement between $P,Q$).

Note that $\pi_0^{\Tt_xP}$ is non-$\nu$-low, since it is just $\pi_x^P$, which is either the identity or a core map.
Likewise for $\pi_0^{\Tt_xQ}$.

\begin{case} $\exit^{\Tt_x}_0=M_x$.

Then (since $\pi_0^{\Tt_xP},\pi_0^{\Tt_xQ}$ are non-$\nu$-low)
\[\copyredd(\pi_0^{\Tt_xP},\exit^{\Tt_x}_0)=\left<N^P_x\right> \]
(in particular, the dropdown sequence consists of only the one element)
and \[\copyredd(\pi_0^{\Tt_xQ},\exit^{\Tt_x}_0)=\left<N^Q_x\right>,\]
but $i^\Vv_{\gamma\eta}(\pi^{\Vv\eta}_\gamma(\left<N^P_x\right>))=\left<N^Y_x\right>=\left<N^Z_x\right>=i^\Ww_{\gamma\eta}(\pi^{\Ww\eta}_\gamma(\left<N^Q_x\right>))$, giving the desired equation.
\end{case}

\begin{case}
$\exit^{\Tt_x}_0\pins M_x$.

Then $\exit^{\Tt_x}_0=e_0^{M_x}(\pvec_{n+1}^{M_x})$ where
$(\nu_0,e_0,\vec{t}_0)$
is the first component of $\sigma$
and $n=\projdeg(M_x)$.

\begin{scase}
$\exit^{\Tt_x}_0\in\core_0(M_x)$.

Then
\[ \copyredd(\pi_0^{\Tt_xP},\exit^{\Tt_x}_0)=\redd(N^P_x,e_0^{N^P_x}(\pvec_{n+1}^{N^P_x})) \]
and likewise for $Q$,
and this is preserved by $i^\Vv_{\gamma\eta}\com\pi^{\Vv\eta}_\gamma$ and $i^\Ww_{\gamma\eta}\com\pi^{\Ww\eta}_\gamma$, so the desired equality follows again from the fact that $N^Y_x=N^Z_x$. \end{scase}

\begin{scase}
$\exit^{\Tt_x}_0\pins M_x$
but $\exit^{\Tt_x}_0\notin\core_0(M_x)$
(so $M_x$ is active type 3).

Since we assumed that $\exit^{\Tt_x}_0$ is small and by $M_x$-goodness, we have $\Omega_x^{P}<\rho_0^P$ and $\Omega_x^Q<\rho_0^Q$.

\begin{sclm}
$\pi_0^{\Tt_xP}$ is $\nu$-preserving
iff $\pi_0^{\Tt_xQ}$ is $\nu$-preserving,
and therefore $\pi_0^{\Tt_xP}$ is $\nu$-high iff $\pi_0^{\Tt_xQ}$ is $\nu$-high.
\end{sclm}
\begin{proof}
For $\nu(M_x)=\nu_0^{M_x}(\pvec_{n+1}^{M_x})$, so the following are equivalent:
\begin{enumerate}[label=--]
 \item 
$\pi_0^{\Tt_xP}$ is $\nu$-preserving
\item $\nu(N^P_x)=\nu_0^{N^P_x}(\pvec_{n+1}^{N^P_x})$,
\item $\nu(N^Y_x)=\nu_0^{N^Y_x}(\pvec_{n+1}^{N^Y_x})$
\item $\nu(N^Z_x)=\nu_0^{N^Z_x}(\pvec_{n+1}^{N^Z_x})$
\item $\nu(N^Q_x)=\nu_0^{N^Q_x}(\pvec_{n+1}^{N^Q_x})$
\item $\pi_0^{\Tt_xQ}$ is $\nu$-preserving.\qedhere
\end{enumerate}
\end{proof}

\begin{sscase}
$\pi_0^{\Tt_xP},\pi_0^{\Tt_xQ}$ are $\nu$-preserving.

We have $\exit^{\Tt_x}_0=e_0^{M_x}(\pvec_{n+1}^{M_x})$,
and\[\copyredd(\pi_0^{\Tt_xP},\exit^{\Tt_x}_0)=\redd(N^P_x,\psi_{\pi_0^{\Tt_xP}}(\exit^{\Tt_x}_0))=\redd(N^P_x,e_0^{N^P_x}(\pvec_{n+1}^{N^P_x})).\]
And since $\Omega_x^P<\rho_0^P$,
 \[i^\Vv_{\gamma\eta}(\pi^{\Vv\eta}_\gamma(\redd(N^P_x,e_0^{N^P_x}(\pvec_{n+1}^{N^P_X})))=\redd(N^Y_x,e_0^{N^Y_x}(\pvec_{n+1}^{N^Y_x})).\]
 By symmetry and since $N^Y_x=N^Z_x$,
 this suffices in this subsubcase.\end{sscase}
 
 \begin{sscase}
  $\pi^{\Tt_xP}_0,\pi^{\Tt_xQ}_0$ are $\nu$-high.
  
  Let $\redd(M_x,\exit^{\Tt_x}_0)=\left<D_i\right>_{i\leq k}$,
  so $k\geq 1$ and $D_0=M_x$ and $\rho_\om^{D_1}=\nu(M_x)$.
  Let $m=\projdeg(D_1)$.
  Let $\copyredd(\pi^{\Tt_xP}_0,\exit^{\Tt_x}_0)=\left<D'_i\right>_{i\leq k'}$,
  so $k'=k$ and $D'_0=N^P_x$. 
 Let $\widetilde{D}_1=\psi_{\pi^{\Tt_xP}_0}(D'_1)$
  and $\varphi:D'_1\to\widetilde{D}_1$
  be such that either:
  \begin{enumerate}[label=--]
   \item Subsubcase \ref{sscase:R_neq_R^+,q_geq_0}
   of the definition of $\copyredd$ \ref{dfn:copyseg} attains,
     \[D'_1=\cHull_{m+1}^{\widetilde{D}_1}(\nu(N^P_x)\cup\pvec_{m+1}^{\widetilde{D}_1})\]
  and  $\varphi:D'_1\to\widetilde{D}_1$
  is the uncollapse map, or
   \item Subsubcase \ref{sscase:R_neq_R^+,q=-1} of the definition of $\copyredd$ \ref{dfn:copyseg} attains, $k=1$
   and $D'_1\pins N^P_x$
   is the segment with $F^{\widetilde{D}_1}\rest\nu(F^P_x)$ active,
   and $\varphi:D'_1\to\widetilde{D}_1$
   is the inclusion map (as a map between the  squashes, as usual).
  \end{enumerate}
  Let $\psi:D_1\to D'_1$  be
  $\psi=\varphi^{-1}\com\psi_{\pi^{\Tt_xP}_0}\rest D_1$.
  Then $\left<D'_i\right>_{1<i\leq k}=\psi(\left<D_i\right>_{1<i\leq k})$.

  But note then that  $\copyredd(\pi_0^{\Tt_xP},\exit^{\Tt_x}_0)$ is determined by $N_x^P$ and the terms $e_0,\nu_0$ (the parameter $\nu(N_x^P)$ is used in defining $D'_1$ and $\varphi$, but we do not use $\nu_0$ to determine $\nu(N_x^P)$,
  as $\pi^{\Tt_xP}_0$ is $\nu$-high;
  $\nu(N_x^P)$ is simply determined by $N_x^P$ itself), via the process described above. This process is preserved by $i^{\Vv}_{\gamma\eta}\com\pi^{\Vv\eta}_\gamma$, since $\Omega_x^P<\rho_0^P$. It easily follows
  that the desired equation holds.
 \end{sscase}
\end{scase}
\end{case}
 
 This completes all (sub)(sub)cases, and hence the proof of part
\ref{item:j=0_images_of_stages_and_redds_match} when $i=0$.

Parts \ref{item:general_images_of_stages_and_redds_match}, \ref{item:j>0_C-structures_match_and_not_moved}: We inductively assume part \ref{item:j=0_images_of_stages_and_redds_match}
holds for $i$, by which we may assume $j>0$, as the conclusions of the $j=0$ instance of part \ref{item:general_images_of_stages_and_redds_match} is just a restatement of those of part \ref{item:j=0_images_of_stages_and_redds_match}. Since $(i,j)$ is relevant and $j>0$, $E^{\wt{\Uu}^P}_{i,j-1}$ and $E^{\wt{\Uu}^Q}_{i,j-1}$ are order $0$ measures. Also, $\sigma^{\Vv\eta}_{h_{i,j-1}}(E^{\wt{\Uu}^P}_{i,j-1})$ and $\sigma^{\Ww\eta}_{h_{i,j-1}}(E^{\wt{\Uu}^Q}_{i,j-1})$ are  compatible
in the sense required to produce a copying stage of the comparison,
and in particular they have a common critical point $\kappa$. 
Let $j'<j$ be largest such that $(i,j')$ is relevant. So $j'\leq j-1$ and part \ref{item:general_images_of_stages_and_redds_match} holds at $(i,j')$.
Let $\gamma'=\gamma^\eta_{h_{ij'}}$.
We have \[\gamma=\gamma^{\Vv\eta}_{h_{ij}}=(\gamma-1)+1=\delta^{\Vv\eta}_{h_{i,j-1}}+1,\]
and
since $E^\Vv_{\gamma-1}=\sigma^{\Vv\eta}_{h_{i,j-1}}(E^{\wt{\Uu}^P}_{i,j-1})$
and $E^\Ww_{\gamma-1}=\sigma^{\Ww\eta}_{h_{i,j-1}}(E^{\wt{\Uu}^Q}_{i,j-1})$ are order $0$ measures,
we have $\gamma-1=\pred^{\Vv\eta}(\gamma)=\delta^{\Vv\eta}_{h_{i,j-1}}=\delta^{\Ww\eta}_{h_{i,j-1}}=\pred^{\Ww\eta}(\gamma)$.
So $\gamma'<^\Vv\gamma-1=\pred^{\Vv}(\gamma)\leq^\Vv\eta$
and $\gamma'<^\Ww\gamma-1=\pred^{\Ww}(\gamma)<^\Ww\eta$.

Now let \begin{enumerate}[label=--]
         \item 
$\zeta_{ij'}=i^\Vv_{\gamma',\gamma-1}(\pi^{\Vv\eta}_{\gamma'}(\alpha_{ij'}^{\Tt_xP}))$,
\item $\hat{\zeta}_{ij'}=i^\Ww_{\gamma',\gamma-1}(\pi^{\Ww\eta}_{\gamma'}(\alpha_{ij'}^{\Tt_xQ}))$,
\item $\left<D_\ell\right>_{j'\leq\ell\leq k}=i^\Vv_{\gamma',\gamma-1}(\pi^{\Vv\eta}_{\gamma'}(\copyredd(\psi_{ij'}^{\Tt_xP},\exit^{\Tt_x}_i)))$,
\item $\left<\hat{D}_\ell\right>_{j'\leq\ell\leq k}=i^\Ww_{\gamma',\gamma-1}(\pi^{\Ww\eta}_{\gamma'}(\copyredd(\psi_{ij'}^{\Tt_xQ},\exit^{\Tt_x}_i)))$.
\end{enumerate}
So \[i^\Vv_{\gamma-1,\eta}\Big(\zeta_{ij'},\left<D_\ell\right>_{j'\leq\ell\leq k}\Big)=i^\Ww_{\gamma-1,\eta}\Big(\hat{\zeta}_{ij'},\left<\hat{D}_\ell\right>_{j'\leq\ell\leq k}\Big).\]

Let
\begin{enumerate}[label=--]
\item $d=\deg^{\Tt_x}_i$
if $j'=0$,
and $d=\om$ otherwise
(so $D_{j'}=\core_d(N_{\zeta_{ij'}x}^{M^\Vv_{\gamma-1}})$ and $\hat{D}_{j'}=\core_d(N_{\hat{\zeta}_{ij'}x}^{M^\Ww_{\gamma-1}})$),
\item $(\Tt',\left<\Psi_\ell\right>_{j'\leq\ell\leq j})=\modres^{M^\Vv_{\gamma-1}}_{\zeta_{ij'},d,\id}(D_j)$,
 \item $(\hat{\Tt}',\left<\hat{\Psi}_\ell\right>_{j'\leq\ell\leq j})=\modres^{M^\Ww_{\gamma-1}}_{\hat{\zeta}_{ij'},d,\id}(\hat{D}_j)$,
\item $\Psi_\ell=(\beta_\ell,A_\ell,\psi_\ell,\alpha_\ell,d_\ell)$,
\item $\hat{\Psi}_\ell=(\hat{\beta}_\ell,\hat{A}_\ell,\hat{\psi}_\ell,\hat{\alpha}_\ell,\hat{d}_\ell)$.
\end{enumerate}
So $\psi_j:D_j\to A_j$ and $\hat{\psi}_j:\hat{D}_j\to\hat{A}_j$ are both $\nu$-preserving, since $j'<j$.

Now $\rho_\om^{A_j}=\kappa=\rho_\om^{\hat{A}_j}$
and $t_{\beta_jx}^{M^{\Vv}_{\gamma-1}}=2=t_{\hat{\beta}_jx}^{M^{\Ww}_{\gamma-1}}$,
so the full resurrections in $M^{\Vv}_{\gamma-1},M^\Ww_{\gamma-1}$ of $D_j,\hat{D}_j$ (starting from $D_{j'},\hat{D}_{j'}$; i.e.~those specified just above) respectively involve
forming ultrapowers 
with critical points $\kappa$,
via the order $0$ measures
 $E^\Vv_{\gamma-1},E^\Ww_{\gamma-1}$.
(And for $j''\in[j',j)$, we have
$t_{\beta_{j''}}^{M^\Vv_{\gamma-1}}\neq 2\neq t_{\hat{\beta}_{j''}}^{M^\Ww_{\gamma-1}}$,
so the earlier steps of resurrection do not involve ultrapowers.)
 
 Now let us establish parts \ref{item:general_images_of_stages_and_redds_match} \ref{item:general_images_of_stages_match}, \ref{item:general_images_of_redds_match} and \ref{item:j>0_C-structures_match_and_not_moved} for $(i,j)$. Let
 \begin{enumerate}[label=--]
  \item 
$r=\pi_\gamma^{\Vv\eta}(\copyredd(\psi^{\Tt_xP}_{ij},\exit^{\Tt_x}_i))$
 and
 \item $\hat{r}=\pi^{\Ww\eta}_{\gamma}(\copyredd(\psi^{\Tt_xQ}_{ij},\exit^{\Tt_x}_i))$;
 \end{enumerate}
 these are discussed in part \ref{item:general_images_of_stages_and_redds_match}\ref{item:general_images_of_redds_match}. Note that the first element of $r$ is just $A_j$,
 and that $r\in M^{\Vv}_{\gamma-1}\cap M^\Vv_\gamma$.
 Let $A_j^*=i^\Vv_{\gamma-1,\eta}(A_j)$.
Since $A_j$ is sound with $\rho_\om^{A_j}=\kappa=\crit(i^\Vv_{\gamma-1,\eta})$, we have $A_j=\Hull^{A_j^*}(\kappa)$,
and the uncollapse map $\tau:A_j\to A_j^*$ is just $i^\Vv_{\gamma-1,\eta}\rest A_j$. Moreover,
$r=\tau^{-1}(i^\Vv_{\gamma-1,\eta}(r))$.
But by induction, $i^\Vv_{\gamma-1,\eta}(r)=i^\Ww_{\gamma-1,\eta}(\hat{r})$,
and in particular, $A_j^*=\hat{A}_j^*$,
where $\hat{A}_j^*=i^\Ww_{\gamma-1,\eta}(\hat{A}_j)$.
Since $\kappa$ is the critical point of both of these maps, the same procedure
recovers $A_j,\hat{A}_j$ on both sides,
so in fact $A_j=\hat{A}_j$.
Likewise, $\tau=\hat{\tau}$
(where $\hat{\tau}$ is defined like $\tau$), so $r=\hat{r}$.
But $A_j$ is sound and projects to $\kappa$, and $\kappa<\crit(i^\Vv_{\gamma\eta})$ and $\kappa<\crit(i^\Ww_{\gamma\eta})$
(as $\kappa=\crit(i^\Vv_{\gamma-1,\eta})=\crit(i^\Ww_{\gamma-1,\eta})$),
so $i^\Vv_{\gamma\eta}(A_j)=A_j=i^\Ww_{\gamma\eta}(A_j)$,
and $i^\Vv_{\gamma\eta}(r)=r=i^\Ww_{\gamma\eta}(r)$.
Finally, $\pi^{\Vv\eta}_{\gamma}(\alpha_{ij}^{\Tt_xP})$
(which is mentioned in part  \ref{item:general_images_of_stages_and_redds_match}\ref{item:general_images_of_stages_match}) is just the unique $\alpha$
such that $A_j=\core_\om(N_{\alpha x}^{M^\Vv_{\gamma}})$,
and since $i^\Vv_{\gamma\eta}(A_j)=i^\Ww_{\gamma\eta}(\hat{A}_j)$,
therefore $i^\Vv_{\gamma\eta}(\pi^{\Vv\eta}_{\gamma}(\alpha_{ij}^{\Tt_xP}))=i^\Ww_{\gamma\eta}(\pi^{\Ww\eta}_{\gamma}(\alpha_{ij}^{\Tt_xQ}))$. This establishes
parts \ref{item:general_images_of_stages_and_redds_match} and \ref{item:j>0_C-structures_match_and_not_moved} for $(i,j)$, as desired.

Part \ref{item:j=0_images_of_stages_and_redds_match} for $i>0$: We have $j=0$.
Let $(i',j')=\pred^{\wt{\Uu}^P}(i,j)$.
Let $j''\leq j'$ be largest such that $(i',j'')$ is relevant.
Part \ref{item:j=0_images_of_stages_and_redds_match} \ref{item:j=0_images_of_stages_match}
follows easily from part \ref{item:general_images_of_stages_and_redds_match}
applied to $(i',j'')$ and the definitions. (For example, if $j''=j'$ then \[\pi^{\Vv\eta}_{\gamma}(\xi_i^{\Tt_xP})=i^\Vv_{\gamma^\eta_{h_{i'j'}}\gamma}(\pi^{\Vv\eta}_{\gamma^\eta_{h_{i'j'}}}(\alpha_{i'j'}^{\Tt_xP}));\] if $j''<j'$ then there are some  resurrection steps between $j''$ and $j'$, which do not involve ultrapowers.) Part \ref{item:j=0_images_of_stages_and_redds_match} \ref{item:j=0_images_of_redds_match}:
This is mostly like the $i=0$ case;
there are just a couple of small differences, which we mention.
Firstly, the terms $\nu_i,e_i$ (and $\vec{t}_i$)
can now use the inputs $\vec{A}_0,\ldots,\vec{A}_{i-1}$.
But because these are independent of which side we are considering ($\Vv$ or $\Ww$), and are not moved by $i^\Vv_{\gamma\eta},i^\Ww_{\gamma\eta}$,
this ends up causing no problem. (Of course, it is key here that for an ordinal $\alpha$ to be considered a copying stage, the relevant $\vec{A}_\ell$'s they produce must agree on the two sides, but that is part of our present assumptions.) Secondly,
it can be that $\pi_i^{\Tt_xP}$ and/or $\pi_i^{\Tt_xQ}$ are/is $\nu$-low;
this was ruled out in case $i=0$.
So this case must be considered also.
But:

\begin{sclm}\label{sclm:nu-low_iff_nu-low}
$\pi_i^{\Tt_xP}$ is $\nu$-low iff $\pi_i^{\Tt_xQ}$ is $\nu$-low.\end{sclm}
\begin{proof}
Suppose $M^{\Tt_x}_i$ is type 3. Then \[\nu(M^{\Tt_x}_i)=\nu_i^{M^{\Tt_x}_i}(\vec{a}_0,\ldots,\vec{a}_{i-1},\pvec_{m+1}^{M^{\Tt_x}_i}),\]
where $\deg^{\Tt_x}_i=m$
and $\vec{a}_\ell=\vec{t}_\ell^{M^{\Tt_x}_\ell}(\vec{a}_0,\ldots,\vec{a}_{\ell-1},\pvec_{d+1}^{M^{\Tt_x}_\ell})$ where $d=\deg^{\Tt_x}_\ell$.
Suppose $\pi_i^{\Tt_xP}$ is $\nu$-low.
Then  \[\nu_i^{N}(\vec{a}'_0,\ldots,\vec{a}'_{i-1},\pvec_{m+1}^N)<\nu(N),\]
where $N=\core_m(N_{\xi_i^{\Tt_xP}x}^{M^{\wt{\Uu}^P}_{i0}})$,
and $\vec{a}'_\ell=\pi^{\Tt_xP}_i(\vec{a}_\ell)$. This fact is preserved by $i^\Vv_{\gamma\eta}\com\pi^{\Vv\eta}_\gamma$, 
and $\vec{A}_\ell=\pi^{\Vv\eta}_{\gamma}(\vec{a}'_\ell)$
(and $\max(\vec{A}_\ell)<\crit(i^\Vv_{\gamma\eta})$, by part \ref{item:A-vecs<crit(i)}), so that
\[ \nu_i^{N^*}(\vec{A}_0,\ldots,\vec{A}_{\ell-1},\pvec_{m+1}^{N^*})<\nu(N^*),\]
where $N^*=\core_m(N_{\xi^*x}^{M^\Vv_\eta})$
where $\xi^*=i^\Vv_{\gamma\eta}(\pi^{\Vv\eta}_\gamma(\xi_i^{\Tt_xP}))$.
But if $\pi_i^{\Tt_xQ}$
is non-$\nu$-low
then we get a contradiction, by employing similar considerations regarding $\Ww$, and since $i^\Ww_{\gamma\eta}$
is non-$\nu$-low (since it is an iteration map), and $\pi^{\Ww\eta}_\gamma$
is non-$\nu$-low (since it is a near $0$-embedding (see Definition \ref{dfn:coarse_tree_emb} and Footnote \ref{ftn:near_embeddings_for_coarse_tree_embeddings})).
\end{proof}

So we need to handle the new case,
that both $\pi_i^{\Tt_xP}$ and $\pi_i^{\Tt_xQ}$ are non-$\nu$-low.
In this case, if $\exit^{\Tt_x}_i=M^{\Tt_x}_i$, the parameter $\nu=\psi_{\pi_i^{\Tt_xP}}(\nu(M_x))$
is involved in the definition of $\copyseg(\pi_i^{\Tt_xP},\exit^{\Tt_x}_i)$
(indeed, this is defined as the initial segment of $N$
with $F^N\rest\nu$ active,
where $N=N_{\xi_i^{\Tt_xP}x}^{M^{\wt{\Uu}^P}_{i0}}$).
But as in the proof of Subclaim \ref{sclm:nu-low_iff_nu-low},
this parameter is named by the term $\nu_i$
(with parameters as above),
and so it has the same image on either side, so it does not cause a problem.

The remaining details are essentially
as when $i=0$, so we leave the verification to the reader.

This completes the induction,
and hence the proof of parts \ref{item:j=0_images_of_stages_and_redds_match}, \ref{item:general_images_of_stages_and_redds_match},
 \ref{item:j>0_C-structures_match_and_not_moved}.

Part \ref{item:xi^*,a^*_images_match}:
The fact that
$i^\Vv_{\gamma\eta}(\pi_{ij}^{\Vv\eta}(\avec_{i}^{*\Yback}))=i^\Ww_{\gamma\eta}(\pi_{ij}^{\Ww\eta}(\avec^{*\Zback}_{i}))$ is proven just like  parts \ref{item:j=0_images_of_stages_and_redds_match}, \ref{item:general_images_of_stages_and_redds_match},
 \ref{item:j>0_C-structures_match_and_not_moved} above.
 (Recall here that $\vec{a}_i\in[\nu(E^{\Tt_x}_i)]^{<\om}$,
 and the action of \[\psi_{ik_i}:\exit^{\Tt_x}_i\to N= N_{\xi_i^{*P}x}^{M^{\wt{\Uu}_{ik_i}}} \]
 is computed essentially via the process of how $\exit^{\Tt_x}_i$
 itself is lifted to $N$.)
 We leave the details to the reader.
 
Now let use verify   $i^\Vv_{\gamma\eta}(\pi_{ij}^{\Vv\eta}(\xi_i^{*\Yback}))=i^\Ww_{\gamma\eta}(\pi_{ij}^{\Ww\eta}(\xi^{*\Zback}_{i}))$. If $k_i>0$ then the maps $\psi_{ik_i}$ are $\nu$-preserving,
so the desired equality already follows from part \ref{item:general_images_of_stages_and_redds_match}.
So suppose $k_i=j=0$,
so $E^{\Tt_x}_i=F^{M^{\Tt_x}_i}$.
We may assume $M^{\Tt_x}_i$ is type 3.
By part \ref{item:j=0_images_of_stages_match}, $\xi=i^\Vv_{\gamma\eta}(\pi_{i0}^{\Vv\eta}(\xi_i^{\Yback}))=i^\Ww_{\gamma\eta}(\pi_{i0}^{\Ww\eta}(\xi^{\Zback}_{i}))$. Since $\exit^{\Tt_x}_i$ is small
and
like before, it follows that $\pi_i^{\Tt_xP}$ is $\nu$-low/preserving/high iff $\pi_i^{\Tt_xQ}$ is $\nu$-low/preserving/high.
So we may assume that $\pi_i^{\Tt_xP},\pi_i^{\Tt_xQ}$ are both $\nu$-low; that is,
\[\wt{\nu}^P= \psi_{\pi^{\Tt_xP}_i}(\nu(M^{\Tt_x}_i))<\nu(N_{\xi_i^Px}^{M^{\wt{\Uu}^P}_{i0}}),\]
\[\wt{\nu}^Q= \psi_{\pi^{\Tt_xQ}_i}(\nu(M^{\Tt_x}_i))<\nu(N_{\xi_i^Qx}^{M^{\wt{\Uu}^Q}_{i0}}).\]
But also  \[\nu(M^{\Tt_x}_i)=\nu_i(\vec{a}_0,\ldots,\vec{a}_{i-1},\pvec_{m+1}^{M^{\Tt_x}_i})
               \]
               where $m$ is as usual.
  Let $N=N_{\xi x}^Y=N_{\xi x}^Z$.
 Since $\pi^{\Vv\eta}_{i0}(\xi_i^P)<\rho_0(M^{\wt{\Uu}^P}_{i0})$ etc, 
  we also get
\[i^\Vv_{\gamma\eta}(\pi^{\Vv\eta}_{i0}(\wt{\nu}^P)))=\nu_i^{N}(\vec{A}_0,\ldots,\vec{A}_{i-1},\pvec_{m+1}^{N})=i^\Ww_{\gamma\eta}(\pi^{\Ww\eta}_{i0}(\wt{\nu}^Q)),\]
as desired;
here note that for $\ell<i$,
we have\[i^\Vv_{\gamma\eta}(\pi^{\Vv\eta}_{i0}(\pi_{i}^{\Tt_xP}(\vec{a}_\ell)))=\vec{A}_\ell=i^\Ww_{\gamma\eta}(\pi^{\Ww\eta}_{i0}(\pi_i^{\Tt_xQ}(\vec{a}_\ell))).\qedhere\]
\end{proof}

\begin{clm}\label{clm:fully_small_situation_does_not_arise}
 It is not the case that  $m'=\bar{m}<m$ and $e_x,e_y$ are small
 (that is, Subcase \ref{scase:m'=mbar_and_S_x_small}
 attains).
\end{clm}
\begin{proof}
Suppose otherwise. We have that $e_x=\exit^{\Tt_x}_{m'}$ and $e_y=\exit^{\Tt_y}_{m'}$ exist,
so there are corresponding dropdown sequences.

\begin{sclm}
 $(i_{\mathrm{end}},j_{\mathrm{end}})$
 is the largest relevant $(i,j)$ in $\dom(\wt{\Uu}^P)$; in other words,
 $M^{\wt{\Uu}^P}_{i_{\mathrm{end}}j_{\mathrm{end}}}=M^{\wt{\Uu}^P}_\infty=M^{\wt{\Uu}^P}_{i_{\mathrm{dis}}j_{\mathrm{dis}}}$ (but maybe there are further steps of padding between $(i_{\mathrm{end}},j_{\mathrm{end}})$
 and the last node $(i_{\mathrm{end}}=i_{\mathrm{dis}},j_{\mathrm{dis}})$ of $\wt{\Uu}^P$).
\end{sclm}
\begin{proof}
Suppose not. Let $(i,j)=(i_{\mathrm{end}},j_{\mathrm{end}})$. Let $\gamma$ be as in  Claim \ref{clm:key_images_match} for $(i,j)$.
Using that claim, and the full compatibility of the constructions at stage $\eta$, it is straightforward to show that $\eta$ is in fact a copying stage of the comparison, contradicting that it terminated there.
(If $M^{\wt{\Uu}^P}_{ij}=M^{\wt{\Uu}^P}_{ik_i}$, so all order $0$ measures
involved in the resurrection of $\exit^{\Tt_x}_i$ have already been successfully copied, then part \ref{item:xi^*,a^*_images_match} of 
Claim \ref{clm:key_images_match} is relevant in seeing that $\eta$ meets the requirements for a copying stage.)
\end{proof}

Recall that $(i_{\mathrm{dis}},j_{\mathrm{dis}})$ is the last index of $\wt{\Uu}^P$ and of $\wt{\Uu}^Q$. Recall  we defined
$\Uu_2^\Yback=\Uu_0^\Yback\conc\Uu_1^\Yback$
and $\wt{\Uu}^P=\Uu_2^P\rest((i_{\mathrm{dis}},j_{\mathrm{dis}}+1))$.

\begin{sclm}
$(i_{\mathrm{dis}},j_{\mathrm{dis}})$
is the last index of $\Uu_2^P$.
\end{sclm}
\begin{proof}
Suppose not. So  $(i,j+1)=(i_{\mathrm{dis}},j_{\mathrm{dis}}+1)\in\dom(\Uu_2^P)$
and
$t_{\beta_{j+1}x}^{M^{\wt{\Uu}^P}_{ij}}=2$
iff $t_{\beta_{j+1}'x}^{M^{\wt{\Uu}^Q}_{ij}}\neq 2$,
where $\beta_{j+1}$ is as in the definition of $\leq_{4i+3}^\Yback$
and $\beta_{j+1}'$ likewise for $\leq_{4i+3}^\Zback$. But since these $t$-values are preserved by the relevant maps, this easily contradicts Claim \ref{clm:key_images_match} part \ref{item:general_images_of_stages_match} combined with the compatibility of the constructions at stage $\eta$.
\end{proof}

So $i_{\mathrm{dis}}=\bar{m}$ and $j_{\mathrm{dis}}=\min(j^P,j^Q)$,
where $j^P,j^Q$ were introduced when defining $\Uu_1^P$ in Subcase \ref{scase:m'=mbar_and_S_x_small}. 
We follow the notation from there.

\begin{sclm}\label{sclm:some_j_leq_k_x_has_param_disagreement}
 There is $j\leq k_x$ such that either $\params^P_{xj}\neq\params^P_{yj}$ or $\params^Q_{xj}\neq\params^Q_{yj}$.
\end{sclm}
\begin{proof}
Suppose not; in  particular, $j_{\mathrm{dis}}=j^P=j^Q=k_x$.
Recall that $x\leq^P_{4\bar{m}+4}y$
iff $x\not\leq^Q_{4\bar{m}+4}y$,
and note that this implies
$x\leq^P_{4\bar{m}+3}y$
iff $x\not\leq^Q_{4\bar{m}+3}y$
(the extra considerations
for $\leq_{4\bar{m}+4}$
depend only on $M_x,M_y$).
So consider the definition of $\leq_{4\bar{m}+3}$
in our present case, which is
Subcase \ref{scase:S_x_small} of Definition \ref{dfn:lifting_norm}.
The first ordinals it compares are $k_x,k_y$, so $k_x=k_y$. Let $k=k_x=k_y$.
Adopt notation as there, with ``superscript $P$'' to indicate the $P$-version, and likewise for $Q$.
Since $\params^P_{xj}=\params^P_{yj}$
 for all $j\leq k$ (and the resurrection trees involved in the definition of $\leq_{4\bar{m}+3}$ have form the tail end of $\wt{\Uu}^P$),
\[x\leq_{4\bar{m}+3}^P y\iff
\langle\wt{\alpha}^P_x,\lh(F^{M^{\wt{\Uu}^P}_{\bar{m}k}}_{\wt{\alpha}^P_xx}),\psi^P_x(\avec_{\bar{m}})\rangle\leq_{\mathrm{lex}}\langle\wt{\alpha}^P_y,\lh(F^{M^{\wt{\Uu}^P}_{\bar{m}k}}_{\wt{\alpha}^P_yy}),\psi^P_y(\bvec_{\bar{m}})\rangle.\]

Now $\wt{\alpha}_x^P=\xi_{\bar{m}}^{*P}$
and $\psi^P_{x}(\vec{a}_{\bar{m}})=\vec{a}_{\bar{m}}^{*P}$ (just expressed in a different notation system), so
by part \ref{item:xi^*,a^*_images_match}
of Claim \ref{clm:key_images_match},
applied to $(i_{\mathrm{end}},j_{\mathrm{end}})=(\bar{m},j_{\mathrm{end}})$,
\[i^\Vv_{\gamma\eta}(\pi_{\bar{m}j_{\mathrm{end}}}^{\Vv\eta}(\wt{\alpha}_x^P))=i^\Ww_{\gamma\eta}(\pi_{\bar{m}j_{\mathrm{end}}}^{\Ww\eta}(\wt{\alpha}_x^Q)),\]
\[ i^\Vv_{\gamma\eta}(\pi_{\bar{m}j_{\mathrm{end}}}^{\Vv\eta}(\psi_x^P(\avec_{\bar{m}})))=i^\Ww_{\gamma\eta}(\pi_{\bar{m}j_{\mathrm{end}}}^{\Ww\eta}(\psi_x^Q(\avec_{\bar{m}}))).\]
Likewise regarding $y$.
Since $x\leq^P_{4\bar{m}+3}y\iff x\not\leq^Q_{4\bar{m}+3}y$,
therefore $\wt{\alpha}^P_x=\wt{\alpha}^P_y$ and $\wt{\alpha}^Q_x=\wt{\alpha}^Q_y$ (and write $\wt{\alpha}^P=\wt{\alpha}^P_x$ and $\wt{\alpha}^Q=\wt{\alpha}^Q_x$), and also
\[\lh(F^{M^{\wt{\Uu}^P}_{\bar{m}k}}_{\wt{\alpha}^Px})\leq\lh(F^{M^{\wt{\Uu}^P}_{\bar{m}k}}_{\wt{\alpha}^Py})\iff \lh(F^{M^{\wt{\Uu}^Q}_{\bar{m}k}}_{\wt{\alpha}^Qx})\not\leq\lh(F^{M^{\wt{\Uu}^Q}_{\bar{m}k}}_{\wt{\alpha}^Qy}). \]
So letting $\wt{\alpha}=i^\Vv_{\gamma\eta}(\pi^{\Vv\eta}_{\bar{m}j_{\mathrm{end}}}(\wt{\alpha}^P))=i^\Ww_{\gamma\eta}(\pi^{\Ww\eta}_{\bar{m}j_{\mathrm{end}}}(\wt{\alpha}^Q))$, we have
\[ \lh(F^{Y}_{\wt{\alpha}x})\leq\lh(F^Y_{\wt{\alpha}y})\iff\lh(F^Z_{\wt{\alpha}x})\not\leq\lh(F^Z_{\wt{\alpha}y}).\]
So we may assume $\lh(F^Y_{\wt{\alpha}x})\leq\lh(F^Y_{\wt{\alpha}y})$,
so $\lh(F^Z_{\wt{\alpha}y})<\lh(F^Z_{\wt{\alpha}x})$. Recall that $F^Y_{\wt{\alpha}x},F^Z_{\wt{\alpha}x}$ are compatible
(in the sense considered in the comparison), as are $F^Y_{\wt{\alpha}y},F^Z_{\wt{\alpha}y}$. In particular, letting $\kappa=\crit(F^Z_{\wt{\alpha}x})$, we have $\kappa=\crit(F^Y_{\wt{\alpha}x})$.
Let
\[ A=\{\alpha<\kappa\bigm|t^Z_{\alpha y}=1\}.\]
Let $j:Z\to U=\Ult(Z,F^Z_{\wt{\alpha}x})$ be the ultrapower map.
Then $\wt{\alpha}\in j(A)$, since $\CC^U_y\rest\wt{\alpha}=\CC^Z_y\rest\wt{\alpha}$,
and therefore in fact $\CC^U_y\rest(\wt{\alpha}+1)=\CC^Z_y\rest(\wt{\alpha}+1)$,
since $F^Z_{\wt{\alpha}y}\in\es^U$.
But
\[ A=\{\alpha<\kappa\bigm|t^Y_{\alpha y}=1\},\]
by the compatibility of the constructions. In particular, $A\in Y\cap Z$. And \[\wt{\alpha}<\theta=\min(\lh(F^Y_{\wt{\alpha}x}),\lh(F^Z_{\wt{\alpha}x})),\]
since $\wt{\alpha}=\beta+\om$
for some limit $\beta$ and\[\nu(F^Y_{\wt{\alpha}x})=\aleph_{\beta}^Y<\aleph_{\beta}^Y+\om<\lh(F^Y_{\wt{\alpha}x}),\]
and likewise for $Z$.
Since
\[ F^Y_{\wt{\alpha}x}\cap(X\cross[\theta]^{<\om})=F^Z_{\wt{\alpha}x}\cap(X\cross[\theta]^{<\om}) \]
where $X=Y\cap Z$,
therefore $\wt{\alpha}\in j'(A)$
where $j':Y\to U'=\Ult(Y,F^Y_{\wt{\alpha}x})$
is the ultrapower map,
so $t^{U'}_{\wt{\alpha}y}=1$.
But $\CC^{U'}_y\rest\wt{\alpha}=\CC^Y_y\rest\wt{\alpha}$
(since $\nu(F^Y_{\wt{\alpha}x})=\aleph_\beta^Y$),
so $N^Y_{\beta y}=N^{U'}_{\beta y}$.
But  $F^{U'}_{\wt{\alpha}y}\in\es^{U'}$ and $\nu(F^{U'}_{\wt{\alpha}y})=\aleph_\beta^{U'}=\aleph_\beta^Y$, so
\[ \aleph_\beta^Y=\nu(F^{U'}_{\wt{\alpha}y})<\lh(F^{U'}_{\wt{\alpha}y})<\aleph_{\beta+1}^{U'}=\lh(F^Y_{\wt{\alpha}x})\leq\lh(F^Y_{\wt{\alpha}y}).\]
By coherence then, $F^{U'}_{\wt{\alpha}y}\in\es^Y$, and $\lh(F^{U'}_{\wt{\alpha}y})<\lh(F^Y_{\wt{\alpha}y})$.
But since $N^{Y}_{\beta y}=N^{U'}_{\beta y}$,
$F^{U'}_{\wt{\alpha}y}$
is therefore a valid background at stage $\wt{\alpha}=\beta+\om$ of $\CC^Y_y$,
contradicting the minimality of $F^Y_{\wt{\alpha}y}$.
This completes the proof of the subclaim.
\end{proof}

By the subclaim, $j_{\mathrm{dis}}$ is the least $j$ witnessing the subclaim.
And $M^{\wt{\Uu}^P}_{\bar{m}j_{\mathrm{end}}}=M^{\wt{\Uu}^P}_{\bar{m}j_{\mathrm{dis}}}$. Continuing with $(P,x)$, using notation  as in  Subcase \ref{scase:m'=mbar_and_S_x_small}
(and noting that the resurrection data is computed starting from the model $M^{\wt{\Uu}^P}_{\bar{m}0}$), using superscript $P$ to indicate that the computation is on the $P$-side (though this notation differs from its usual interpretation, as it is actually $M^{\wt{\Uu}^P}_{\bar{m}0}$ in which we compute the resurrection), we have
\begin{enumerate}[label=--]
\item $\alpha_{xj}^{*P}=\OR(\prodseg^{M^{\wt{\Uu}^P}_{\bar{m}j}}(\alpha^P_{xj}))$ for $j\in[0,k]$,
 \item $\beta_{xj}^{*P}=\OR(\prodseg^{M^{\wt{\Uu}^P}_{\bar{m},j-1}}(\beta^P_{xj}))$ for $ j\in(0, k]$,
 \item $\params^P_{x0}=\langle\rho_\om^{A^P_{x1}},\beta^P_{x1},t^{M^{\wt{\Uu}^P}_{
 \bar{m}0}}_{\beta^P_{x1}},\beta_{x1}^{*P}\rangle$ if $0<k$,
 \item $\params^P_{xj}=\langle\alpha^P_{xj},t^{M^{\wt{\Uu}^P}_{
 \bar{m}j}}_{\alpha^P_{xj}},\alpha^{*P}_{xj},\rho_\om^{A^P_{x,j+1}},\beta^P_{x,j+1},t^{M^{\wt{\Uu}^P}_{\bar{m}j}}_{\beta^P_{x,j+1}},\beta^{*P}_{x,j+1}\rangle$
 for $j\in(0,k]$, and
 \item $\params^P_{xk}=\langle\alpha^P_{xk},t^{M^{\wt{\Uu}^P}_{\bar{m}k}}_{\alpha^P_{xk}},\alpha_{xk}^{*P}\rangle$ (possibly $k=0$).
\end{enumerate}

Likewise for $(P,y)$, $(Q,x)$ and $(Q,y)$. The following case distinction is really just to simplify notation a little;
the other cases are dealt with in essentially the same way.

\begin{case}\label{case:j_dis=0<k} $j_{\mathrm{dis}}=0<k$.

Using Claim \ref{clm:key_images_match}
(and the overall disagreement between $P,Q$), it is easy to see that $\rho_\om^{A^P_{x1}}=\rho_\om^{A^P_{y1}}$
and $\beta^P_{x1}=\beta^P_{y1}$
and  $t^{M^{\wt{\Uu}^P}_{\bar{m}0}}_{\beta^P_{x1}x}=t^{M^{\wt{\Uu}^P}_{\bar{m}0}}_{\beta^P_{y1}y}$,
and likewise for $Q$,
and therefore that
\begin{equation}\label{eqn:contradictory_beta^*_disagreement} \beta^{*P}_{x1}\leq\beta^{*P}_{y1}\iff\beta^{*Q}_{x1}\not\leq\beta^{*Q}_{y1}.\end{equation}
But we have $\beta_{x1}^P=\beta_{y1}^P$ and $\beta_{x1}^Q=\beta_{y1}^Q$.
So we should examine a little the distinction between these ordinals $\beta$ and the corresponding ordinals $\beta^*$. Also, by the usual considerations, \[i^\Vv_{\gamma\eta}(\pi^{\Vv\eta}_{\bar{m}0}((\rho_\om^{A^P_{x1}},\beta^P_{x1},t^{M^{\wt{\Uu}^P}_{\bar{m}0}}_{\beta^P_{x1}x})))=i^\Ww_{\gamma\eta}(\pi^{\Ww\eta}_{\bar{m}0}((\rho_\om^{A^Q_{x1}},\beta^Q_{x1},t^{M^{\wt{\Uu}^Q}_{\bar{m}0}}_{\beta^Q_{x1}x}))).\]
 In particular,
in fact  $t^{M^{\wt{\Uu}^P}_{\bar{m}0}}_{\beta^P_{x1}x}=t^{M^{\wt{\Uu}^Q}_{\bar{m}0}}_{\beta^Q_{x1}x}$.
Likewise for $y$.

If $t=3$ (that is, the associated $t$-value) then $\beta^*=\beta$ (directly by definition).
Moreover, in the present context, by the above remarks,  if any of the 4  $t$-parameters $=3$ then all 4 $t$-parameters  $=3$.
Putting these things together, by line (\ref{eqn:contradictory_beta^*_disagreement}), none of the 4 $t$-parameters in question are $=3$.

If the $t$-parameters  $=1$,
then $\beta^*=\lh(F)$ where $F$ is the associated background extender. But therefore line (\ref{eqn:contradictory_beta^*_disagreement}) leads to a contradiction just like in the proof of Subclaim \ref{sclm:some_j_leq_k_x_has_param_disagreement}.

If the $t$-parameters  $=2$,
then the ordinals $\beta^*$
are just $\lh(F)$ where $F$ is the order $0$ measure on $\rho_\om^{A}$,
and so $\beta^{*P}_{x1}=\beta^{*P}_{y1}$
and $\beta^{*Q}_{x1}=\beta^{*Q}_{y1}$,
contradicting line (\ref{eqn:contradictory_beta^*_disagreement}).

So the $t$-parameters $=0$.
So the 4 structures $A^P_{x1}$, etc, are passive.
Let $\eta^P_{x1}$ be the supremum of all $\eta<\OR(A^P_{x1})$
such that $A^P_{x1}|\eta$ is active.
Let $\bar{\beta}^P_{x1}$
be the supremum of all $\beta<\beta^P_{x1}$ such that $N^{M^{\wt{\Uu}^P}_{\bar{m}0}}_{\beta x}$ is active.

We claim that $t^{M^{\wt{\Uu}^P}_{\bar{m}0}}_{\bar{\beta}^P_{x1}x}\neq 3$. For suppose otherwise.
Then $t_{\bar{\beta}^P_{x1}y}^{M^{\wt{\Uu}^P}_{\bar{m}0}}=3$ also
(note that although we are talking about a model of $\CC_y^{M^{\wt{\Uu}^P}_{\bar{m}0}}$, the index referred to is $\bar{\beta}^P_{x1}$, not $\bar{\beta}^P_{y1}$).
Let $\delta$ be the least Woodin of the corresponding structures.
Since $t^{M^{\wt{\Uu}^P}_{\bar{m}0}}_{\beta^P_{x1}x}=0$,
the Q-structure for $\delta$ is reached
at some stage $\beta\in[\bar{\beta}^P_{x1},\beta^P_{x1})$. Let $\mu$ be the least measurable of $M^{\wt{\Uu}^P}_{\bar{m}0}$ with $\mu>\delta$.
Then note that $\beta^P_{x1}<\mu$,
and between stages $\bar{\beta}^P_{x1}$ and $\beta^P_{x1}$, $\CC_x^{M^{\wt{\Uu}^P}_{\bar{m}0}}$ only uses rud closure and coring down.
It follows that $\beta^{*P}_{x1}=\beta^P_{x1}$.
But $\beta^P_{y1}=\beta^P_{x1}<\mu$,
so between the same stages,
$\CC_y^{M^{\wt{\Uu}^P}_{\bar{m}0}}$
also just uses rud closure and coring down, so $\beta^{*P}_{y1}=\beta^P_{y1}$. We get the same picture on the $Q$-side,
so $\beta^{*P}_{x1}=\beta^{*P}_{y1}=\beta^{*Q}_{x1}=\beta^{*Q}_{y1}$,
contradiction.

\begin{scase}
 $A^P_{x1}|\eta^P_{x1}$ is active; that is, $A^P_{x1}$ has a largest active segment.
 
 Then 
$\bar{\beta}^P_{x1}<\beta^P_{x1}$, $N^{M^{\wt{\Uu}^P}_{\bar{m}0}}_{\bar{\beta}^P_{x1}x}$ is active 
and 
$N^{M^{\wt{\Uu}^P}_{\bar{m}0}}_{\beta x}$ is passive for all $\beta\in(\bar{\beta}^P_{x1},\beta^P_{x1}]$.

Suppose $t^{M^{\wt{\Uu}^P}_{\bar{m}0}}_{\bar{\beta}^P_{x1}x}=2$.
Let $\mu=\card^{M^{\wt{\Uu}^P}_{\bar{m}0}}(\bar{\beta}^P_{x1})$.
Since $t^{M^{\wt{\Uu}^P}_{\bar{m}0}}_{\bar{\beta}^P_{x1}x}=0$,
we have $\chi_{\mu x}^{M^{\wt{\Uu}^P}_{\bar{m}0}}\leq\beta^P_{x1}$.
But by a simple reflection, the fact that $N^{M^{\wt{\Uu}^P}_{\bar{m}0}}_{\bar{\beta}^P_{x1}x}$ is active implies that there are cofinally many
$\beta<\chi_{\mu x}^{M^{\wt{\Uu}^P}}$
such that $N^{M^{\wt{\Uu}^P}_{\bar{m}0}}_{\beta x}$ is active,
contradicting that
$\bar{\beta}^P_{x1}$ is the largest such which is ${<\beta^P_{x1}}$.

So $t^{M^{\wt{\Uu}^P}_{\bar{m}0}}_{\bar{\beta}^P_{x1}x}=1$
and $\beta^P_{x1}<\mu$,
where $\mu$ is the least measurable
of $M^{\wt{\Uu}^P}_{\bar{m}0}$
which is $>\bar{\beta}^P_{x1}$,
if there is such.
Thus in fact, $\beta^P_{x1}<\mu'$,
where $\mu'=\crit(E)$,
for $E\in\es(M^{\wt{\Uu}^P}_{\bar{m}0})$ least with $\lh(F^{M^{\wt{\Uu}^P}_{\bar{m}0}}_{\bar{\beta}^P_{x1}x})<\crit(E)$, if there is such. Since $F^{M^{\wt{\Uu}^P}_{\bar{m}0}}_{\bar{\beta}^P_{x1}x}$
is a total extender whose natural length is a limit cardinal of $M^{\wt{\Uu}^P}_{\bar{m}0}$,
$E$ is moreover least
in $\es(M^{\wt{\Uu}^P}_{\bar{m}0})$
with $\lh(F^{M^{\wt{\Uu}^P}_{\bar{m}0}}_{\bar{\beta}^P_{x1}x})<\lh(E)$,
if there is any such $E$.
So 
\[\max(\lh(F^{M^{\wt{\Uu}^P}_{\bar{m}0}}_{\bar{\beta}^P_{x1}x}),\beta^P_{x1})<\mu'=\crit(E)<\lh(E),\]
if $M^{\wt{\Uu}^P}_{\bar{m}0}$
has any (partial) extender in its sequence with index $>\lh(F^{M^{\wt{\Uu}^P}_{\bar{m}0}}_{\bar{\beta}^P_{x1}x})$ (and then $E$ is least such).  (Actually
it's not hard to see that $\beta^P_{x1}<\lh(F^{M^{\wt{\Uu}^P}_{\bar{m}0}}_{\bar{\beta}^P_{x1}x})$).
But then $\CC_x^{M^{\wt{\Uu}^P}_{\bar{m}0}}$ just uses rud closure and coring down in the interval $(\bar{\beta}^P_{x1},\beta^P_{x1})$,
so \begin{equation}\label{eqn:lh(F)<beta^*}\lh(F^{M^{\wt{\Uu}^P}_{\bar{m}0}}_{\bar{\beta}^P_{x1}x})<\beta^{*P}_{x1}\end{equation}
and also, if there is $E\in\es(M^{\wt{\Uu}^P}_{\bar{m}0})$ with $\lh(F^{M^{\wt{\Uu}^P}_{\bar{m}0}}_{\bar{\beta}^P_{x1}x})<\lh(E)$,  and $E$ is least such, then
\begin{equation}\label{eqn:beta^*<crit(E)} \beta^{*P}_{x1}<\crit(E).\end{equation}

\begin{sscase}
$A^P_{y1}|\eta^P_{y1}$ is also active.

Then $1=t^{M^{\wt{\Uu}^P}_{\bar{m}0}}_{\bar{\beta}^P_{x1}x}=t^{M^{\wt{\Uu}^P}_{\bar{m}0}}_{\bar{\beta}^P_{y1}y}$, and  the foregoing also holds with $y$ replacing $x$. Note that like for $\beta_{x1}^P$ etc, we have
\begin{equation}\label{eqn:beta-bar_order_agmt} \bar{\beta}^P_{x1}\leq\bar{\beta}^P_{y1}\iff\bar{\beta}^Q_{x1}\leq\bar{\beta}^Q_{y1}.\end{equation}
But if $\bar{\beta}^P_{x1}<\bar{\beta}^P_{y1}$
then $\lh(F^{M^{\wt{\Uu}^P}_{\bar{m}0}}_{\bar{\beta}^P_{x1}x})<\aleph_{\bar{\beta}^P_{y1}}^{M^{\wt{\Uu}^P}_{\bar{m}0}}<\lh(F^{M^{\wt{\Uu}^P}_{\bar{m}0}}_{\bar{\beta}^P_{y1}y})$,
which gives that $E$ as above (with respect to $x$) exists
and $\beta^{*P}_{x1}<\lh(E)<\aleph_{\bar{\beta}^P_{y1}}^{M^{\wt{\Uu}_{\bar{m}0}}}<\beta^{*P}_{y1}$. Similarly and using (\ref{eqn:beta-bar_order_agmt}),
we also get $\beta^{*Q}_{x1}<\beta^{*Q}_{x1}$, contradicting line (\ref{eqn:contradictory_beta^*_disagreement}).
So $\bar{\beta}^P_{x1}=\bar{\beta}^P_{y1}$ and $\bar{\beta}^Q_{x1}=\bar{\beta}^Q_{y1}$.
Write $\bar{\beta}^P=\bar{\beta}^P_{x1}=\bar{\beta}^P_{y1}$
and $\bar{\beta}^Q$ likewise.
Now like in the proof of Subclaim \ref{sclm:some_j_leq_k_x_has_param_disagreement},
it follows that
\[ \lh(F^{M^{\wt{\Uu}^P}_{\bar{m}0}}_{\bar{\beta}^Px})\leq\lh(F^{M^{\wt{\Uu}^P}_{\bar{m}0}}_{\bar{\beta}^Py})
 \iff \lh(F^{M^{\wt{\Uu}^Q}_{\bar{m}0}}_{\bar{\beta}^Qx})\leq\lh(F^{M^{\wt{\Uu}^Q}_{\bar{m}0}}_{\bar{\beta}^Qy}).
\]
But if $\lh(F^{M^{\wt{\Uu}^P}_{\bar{m}0}}_{\bar{\beta}^Px})<\lh(F^{M^{\wt{\Uu}^P}_{\bar{m}0}}_{\bar{\beta}^Py})$
then note there is $E$ as above
for $x$, and \[\lh(F^{M^{\wt{\Uu}^P}_{\bar{m}0}}_{\bar{\beta}^Px})<\beta^{*P}_{x1}<\lh(E)<\lh(F^{M^{\wt{\Uu}^P}_{\bar{m}0}}_{\bar{\beta}^Py})<\beta^{*P}_{y1}.\]
Likewise for $Q$, which together yields
a contradiction to line (\ref{eqn:contradictory_beta^*_disagreement}). So $F^{M^{\wt{\Uu}^P}_{\bar{m}0}}_{\bar{\beta}^Px}=F^{M^{\wt{\Uu}^P}_{\bar{m}0}}_{\bar{\beta}^Py}$
 and $F^{M^{\wt{\Uu}^Q}_{\bar{m}0}}_{\bar{\beta}^Qx}=F^{M^{\wt{\Uu}^Q}_{\bar{m}0}}_{\bar{\beta}^Qy}$.
But after using these background extenders, the constructions
just use rud closure and coring down until stage $\beta^P_{x1}$ etc,
which easily implies that $\beta^{*P}_{x1}\leq\beta^{*P}_{y1}$ iff $\beta^{*Q}_{x1}\leq\beta^{*Q}_{y1}$,
again a contradiction.
\end{sscase}

\begin{sscase}\label{scase:A^P_y1_passive} $A^P_{y1}|\eta^P_{y1}$ is passive.
 
 We remark that we will develop
 this subsubcase
 at first without using the present case assumption that $A^P_{x1}$ is active, so that in the end we can also apply what we do to the subcase that both are passive.
 
 We have that $A^P_{y1}|\eta^P_{y1}$ is a passive limit of active segments.
 We get 
$t^{M^{\wt{\Uu}^P}_{\bar{m}0}}_{\bar{\beta}^P_{y1}y}\notin\{2, 3\}$
just like before, and so 
$t^{M^{\wt{\Uu}^P}_{\bar{m}0}}_{\bar{\beta}^P_{y1}y}=0$.

We claim that $\bar{\beta}^P_{y1}$
is a limit of ordinals $\beta$ with $t_{\beta y}^{M^{\wt{\Uu}^P}_{\bar{m}0}}=1$.
If not, then it is straightforward to see that there is a measurable $\mu$
such that $\bar{\beta}^P_{y1}=\chi^{M^{\wt{\Uu}^P}_{\bar{m}0}}_{\mu y}=\eta^P_{y1}$.
Since $A^P_{y1}$ is sound and satisfies condensation,
we must therefore have $\rho_\om(A^P_{y1})>\mu$
(if $\rho_\om(A^P_{y1})\leq\mu$
then $\rho_\om(A^P_{y1})=\mu$,
since it is sound and $\mu$ is a cardinal; but then $A^P_{y1}$ is a proper segment of the stack at $\mu$,
contradicting the fact that $\bar{\beta}^P_{y1}=\chi^{M^{\wt{\Uu}^P}_{\bar{m}0}}_{\mu y}$). So $\rho_\om(A^P_{y1})\geq\eta^P_{y1}$.  But this contradicts the role of $A^P_{y1}$  in the resurrection of $e_y$, which is active (the current image of $e_y$ must have height $>\rho_\om(A^P_{y1})$). 

It follows
that $\bar{\beta}^P_{y1}$
is a limit of measurable cardinals
of $M^{\wt{\Uu}^P}_{\bar{m}0}$,
and is also the limit of the ordinals $\lh(F_{\beta y}^{M^{\wt{\Uu}^P}_{\bar{m}0}})$ over all $\beta<\bar{\beta}^P_{1}$,
and hence $\bar{\beta}^P_{y1}$ coincides with the height of the associated production segment;
that is, \[\bar{\beta}^P_{y1}=\OR(\prodseg^{M^{\wt{\Uu}^P}_{\bar{m}0}}(\bar{\beta}^P_{y1})).\]
It now easily follows that
\begin{equation}\label{eqn:beta-bar_leq_beta^*} \bar{\beta}^P_{y1}\leq\beta^{*P}_{y1} \end{equation}
and if there is any (partial)
in $E\in\es(M^{\wt{\Uu}^P}_{\bar{m}0})$
such that $\bar{\beta}^P_{y1}<\lh(E)$, then letting $E$ be such with $\lh(E)$ minimal, we have
\begin{equation}\label{eqn:beta^*_y<crit(E)} \beta^{*P}_{y1}<\crit(E).\end{equation}

From now on we continue to use the case assumptions regarding $x$.
We get the corresponding situation on the $Q$-side,
as these things are preserved by the various maps. For example,
the $Q$-versions
of lines  (\ref{eqn:lh(F)<beta^*}), (\ref{eqn:beta^*<crit(E)}),
(\ref{eqn:beta-bar_leq_beta^*}),
(\ref{eqn:beta^*_y<crit(E)})
hold. So $\beta^{*P}_{x1}$
is just slightly beyond
$\lh(F^{M^{\wt{\Uu}^P}_{\bar{m}0}}_{\bar{\beta}^P_{x1}x})$,
whereas $\beta^{*P}_{y1}$
is just slightly beyond
$\bar{\beta}^P_{y1}$, and the latter is a limit of measurables of $M^{\wt{\Uu}^P}_{\bar{m}0}$.
These things easily serve the separate $\beta^{*P}_{x1}$ and $\beta^{*P}_{y1}$;
that is,
note that either:
\begin{enumerate}[label=--]
 \item $\lh(F^{M^{\wt{\Uu}^P}_{\bar{m}0}}_{\bar{\beta}^P_{x1}x})<\bar{\beta}^P_{y1}$
 and $\beta^{*P}_{x1}<\beta^{*P}_{y1}$, or
  \item $\lh(F^{M^{\wt{\Uu}^P}_{\bar{m}0}}_{\bar{\beta}^P_{x1}x})>\bar{\beta}^P_{y1}$
 and $\beta^{*P}_{x1}>\beta^{*P}_{y1}$.\footnote{Note that if the limit
$\bar{\beta}^P_{y1}=\aleph^{M^{\wt{\Uu}^P}_{\bar{m}0}}_{\bar{\beta}^P_{y1}}$
of measurables in question
happens to be exactly $\nu(F^{M^{\wt{\Uu}^P}_{\bar{m}0}}_{\bar{\beta}^P_{x1}x})$,
then there is $E$ as above for $y$ (in particular as in line (\ref{eqn:beta^*_y<crit(E)})),
and $\lh(E)<\lh(F^{M^{\wt{\Uu}^P}_{\bar{m}0}}_{\bar{\beta}^P_{x1}x})$.}
\end{enumerate}
These orderings are also preserved by the relevant maps, and so we get $\beta^{*P}_{x1}\leq\beta^{*P}_{y1}$
iff $\beta^{*Q}_{x1}\leq\beta^{*Q}_{y1}$, a contradiction.
So we have ruled out this subsubcase and hence the current subcase.
\end{sscase}
\end{scase}

\begin{scase}
 $A^P_{x1}|\eta^P_{x1}$ is passive.

By the previous subcase and symmetry, $A^P_{y1}|\eta^P_{y1}$ is also passive.
The first part of the discussion in Subsubcase \ref{scase:A^P_y1_passive} holds, for both $y$ and $x$.
So much like before, it is easy to see that
\[ \bar{\beta}^P_{x1}\leq\bar{\beta}^P_{y1}\iff\bar{\beta}^Q_{x1}\leq\bar{\beta}^Q_{y1}.\]
But in the interval $[\bar{\beta}^P_{x1},\beta^P_{x1}]$,
$\CC^{M^{\wt{\Uu}^P}_{\bar{m}0}}_x$ just uses rud closure and coring down, and in particular, $\beta^{*P}_{x1}<\mu$ where $\mu$ is the least measurable of $M^{\wt{\Uu}^P}_{\bar{m}0}$ with $\bar{\beta}^P_{x1}<\mu$, if such exists. So if
$\bar{\beta}^P_{x1}<\bar{\beta}^P_{y1}$, and hence $\bar{\beta}^Q_{x1}<\bar{\beta}^Q_{y1}$,
it follows immediately that
$\beta^{*P}_{x1}<\beta^{*P}_{y1}$ and $\beta^{*Q}_{x1}<\beta^{*Q}_{y1}$,
a contradiction.
So $\bar{\beta}^P_{x1}=\bar{\beta}^P_{y1}$ and $\bar{\beta}^Q_{x1}=\bar{\beta}^Q_{y1}$. But now it easily follows that
\[ \beta^{*P}_{x1}\leq\beta^{*P}_{y1}\iff\beta^{*Q}_{x1}\leq\beta^{*Q}_{y1},\]
again a contradiction.
 \end{scase}
 
So in the present case
(Case \ref{case:j_dis=0<k}),
we have reached a contradiction overall.
 \end{case}

 \begin{case}Otherwise (it is not the case that $j_{\mathrm{dis}}=0<k$).
  
  This is a very slight variant of the previous case, so we omit further discussion.
 \end{case}

 Thus, we have completed the proof of Claim \ref{clm:fully_small_situation_does_not_arise}.
\end{proof}

\begin{clm}\label{clm:not_m'=m-bar<m_and_e_x,e_y_non-small}
It is not the case that  $m'=\bar{m}<m$ and $e_x,e_y$ are non-small
 (that is, Subcase \ref{scase:S_x_non-small} attains).
\end{clm}
\begin{proof}
 The proof of this claim is mostly very similar to that of Claim \ref{clm:fully_small_situation_does_not_arise}. There is just one situation where there is a little wrinkle, which is as follows.
 Suppose that $\Omega_x^P=\Omega_y^P=\OR^P$,
 so $P$ is a Q-mouse.
 Suppose $P$ is active type 3,
 that $[0,\bar{m}]^{\Tt_x}$
 does not drop in model,
 that $e_x\neq M^{\Tt_x}_{\bar{m}}$ and $e_x\notin (M^{\Tt_x}_{\bar{m}})^\sq$. Suppose
 also that $\Tt_y,\bar{m},e_y$ are likewise. Note that
 $k_x=\critresl^{\Tt_xP}(e_x,\delta^{e_x})=0$,
 so letting
 \[\critres^{\Tt_xP}(e_x,\delta^{e_x})=(\Vv_x,\left<\Psi_{xj}\right>_{j\leq k_x},\varrho_x),\]
 we just have $\varrho_x=\pi^{\Tt_xP}_{\bar{m}}$. We have
 \[ \params_{x0}=\params_{xk_x}=\left<\alpha_{x0},t^{M^{\wt{\Uu}^P}_{\bar{m}0}}_{\alpha_{x0}},\alpha^*_{x0}\right>=\left<\OR^{M^{\wt{\Uu}^P}_{\bar{m}0}},3,\OR^{M^{\wt{\Uu}^P}_{\bar{m}0}}\right>,\]
 so
 \[ \params_x=\left<\OR^{M^{\wt{\Uu}^P}_{\bar{m}0}},3,\OR^{M^{\wt{\Uu}^P}_{\bar{m}0}},\OR^{\exitcopyseg^{\pi^{\Tt_xP}_{\bar{m}}}(e_x)}\right>.\]
 Write $\psi_x=\copymap^{\pi^{\Tt_xP}_{\bar{m}}}(e_x)$. So then
 $x\leq_{4\bar{m}+3}^Py$
 iff
 \[ \left<\OR^{\exitcopyseg^{\pi^{\Tt_xP}_{\bar{m}}}(e_x)},\psi_x(\vec{a}_{\bar{m}})\right>\leq_{\mathrm{lex}}
  \left<\OR^{\exitcopyseg^{\pi^{\Tt_yP}_{\bar{m}}}(e_y)},\psi_y(\vec{b}_{\bar{m}})\right>
 \]
(using that $k_x=k_y=0$
and $\alpha_{x0}^*=\alpha_{x0}=\OR(M^{\wt{\Uu}^P}_{\bar{m}0})=\alpha_{y0}=\alpha_{y0}^*$).

 The kinds of arguments from before show that 
 \[ \pi^{\Tt_xP}_{\bar{m}}
\text{ is }
\nu\text{-low}
\iff\pi^{\Tt_xQ}_{\bar{m}}\text{ is }\nu\text{-low.}\] The case in which
$\pi^{\Tt_xP}_{\bar{m}},\pi^{\Tt_yP}_{\bar{m}}$ are $\nu$-low is straightforward, so suppose these maps are non-$\nu$-low
(hence likewise $\pi^{\Tt_xQ}_{\bar{m}}$ and $\pi^{\Tt_yQ}_{\bar{m}}$). So 
\begin{enumerate}[label=--]\item $\exitcopyseg^{\pi^{\Tt_xP}_{\bar{m}}}(e_x)=\copyseg^{\pi^{\Tt_xP}_{\bar{m}}}(e_x)$ and
 \item 
 $\exitcopymap^{\pi^{\Tt_xP}_{\bar{m}}}(e_x)=\psi_x=\copymap^{\pi^{\Tt_xP}_{\bar{m}}}(e_x)$.
 \end{enumerate}
 So letting $\psi_x=\copymap^{\pi^{\Tt_xP}_{\bar{m}}}(e_x)$ and  $\psi_y$ be likewise, we have \begin{equation}\label{eqn:when_leq^P_4m-bar+3}x\leq_{4\bar{m}+3}^Py\iff
 \big(\OR^{\copyseg^{\pi^{\Tt_xP}_{\bar{m}}}(e_x)},\psi_x(\vec{a}_{\bar{m}})\big)\leq_{\mathrm{lex}}\big(\OR^{\copyseg^{\pi^{\Tt_yP}_{\bar{m}}}(e_y)},\psi_y(\vec{b}_{\bar{m}})\big).\end{equation}
Likewise for $\leq^Q_{4\bar{m}+3}$.

 \begin{sclm}\label{sclm:order_OR(copyseg)_pres} The following are equivalent:
 \begin{enumerate}[label=(\roman*)]
 \item\label{item:OR_inequality_P} $\OR(\copyseg(\pi^{\Tt_xP}_{\bar{m}},e_x))\leq\OR(\copyseg(\pi^{\Tt_yP}_{\bar{m}},e_y))$
 \item\label{item:OR_inequality_Vv}$\OR(\copyseg(\pi^{\Vv\eta}_{\bar{m}0}\com\pi^{\Tt_xP}_{\bar{m}},e_x))\leq\OR(\copyseg(\pi^{\Vv\eta}_{\bar{m}0}\com\pi^{\Tt_yP}_{\bar{m}},e_y))$
  \item\label{item:OR_inequality_Ww}$\OR(\copyseg(\pi^{\Ww\eta}_{\bar{m}0}\com\pi^{\Tt_xQ}_{\bar{m}},e_x))\leq\OR(\copyseg(\pi^{\Ww\eta}_{\bar{m}0}\com\pi^{\Tt_yQ}_{\bar{m}},e_y))$
   \item\label{item:OR_inequality_Q} $\OR(\copyseg(\pi^{\Tt_xQ}_{\bar{m}},e_x))\leq\OR(\copyseg(\pi^{\Tt_yQ}_{\bar{m}},e_y))$
 \end{enumerate}
 \end{sclm}
\begin{proof}
Let $A_x\pins M^{\Tt_x}_{\bar{m}}$
be least such that $e_x\ins A_x$
and $\rho_\om^{A_x}=\nu(M^{\Tt_x}_{\bar{m}})$.
Let $\wt{A}_x^P=\psi_{\pi^{\Tt_xP}_{\bar{m}}}(A_x)$
and $A^P_x\pins N_x^{M^{\wt{\Uu}^P}_{\bar{m}0}}$ be least such that $\copyseg(\pi^{\Tt_xP}_{\bar{m}},e_x)\ins A^P_x$ and $\rho_\om(A^P_x)=\nu(M^{\wt{\Uu}^P}_{\bar{m}0})=\nu(N_x^{M^{\wt{\Uu}^P}_{\bar{m}0}})$.
So either:
\begin{enumerate}[label=(\roman*)]
 \item\label{item:rho_0=nu}$A_x=e_x$ is active type 3 with $\nu(e_x)=\nu(M^{\Tt_x}_{\bar{m}})$,
 \[ \copyseg(\pi^{\Tt_xP}_{\bar{m}},e_x)=A^P_x\text{ is active with }
F^{A^P_x}=F^{\wt{A}^P_x}\rest\nu(M^{\wt{\Uu}^P}_{\bar{m}0});\] let $\varphi^P_x:(A^P_x)^\sq\to(\wt{A}^P_x)^\sq$ be the inclusion map  and $\varrho^P_x:(A_x)^\sq\to (A^P_x)^\sq$ be $\varrho^P_x=(\varphi^P_x)^{-1}\com\psi_{\pi^{\Tt_xP}_{\bar{m}}}\rest (A_x)^\sq$, or
 \item\label{item:rho_0>nu} $\rho_0^{e_x}>\nu(M^{\Tt_x}_{\bar{m}})$ and letting $n<\om$ be such that $\rho_{n+1}^{A_x}=\nu(M^{\Tt_x}_{\bar{m}})<\rho_n^{A_x}$, we have
 \[A^P_x=\cHull_{n+1}^{\wt{A}^P_x}(\pvec_{n+1}^{\wt{A}^P_x}\cup\nu(M^{\wt{\Uu}^P}_{\bar{m}0})),\]
 and letting $\varphi^P_x:A^P_x\to\wt{A}^P_x$ be the uncollapse
 and $\varrho^P_x:A_x\to A^P_x$
 be $\varrho^P_x=(\varphi^P_x)^{-1}\com\psi_{\pi^{\Tt_xP}_{\bar{m}}}\rest A_x$, then $\varrho^P_x,\varphi^P_x$ are $\nu$-preserving and \[\copyseg(\pi^{\Tt_xP}_{\bar{m}},e_x)=\psi_{\varrho^P_x}(e_x).\]
\end{enumerate}
  Let $\hat{A}^P_x\pins M^{\wt{\Uu}^P}_{\bar{m}0}$ with $\OR(\hat{A}^P_x)=\OR(A^P_x)$. So $A^P_x$ is the output of the P-construction of $\hat{A}^P_x$ over $A^P_x|\delta^{A^P_x}$. Likewise
  for $\hat{\wt{A}}^P_x$.
  Note that, for example in Case \ref{item:rho_0>nu} above, 
  \[ \hat{A}^P_x=\cHull_{n+1}^{\hat{\wt{A}}^P_x}(\pvec_{n+1}^{\hat{\wt{A}}^P_x}\cup\nu(M^{\wt{\Uu}^P}_{\bar{m}0})) \]
  (and $\hat{\varphi}^P_x$ is the uncollapse map) and
 $\range(\varphi^P_x)=\range(\hat{\varphi}^P_x)\cap\wt{A}^P_x$,
 and in particular,
 $\OR\cap\range(\varphi^P_x)=\OR\cap\range(\hat{\varphi}^P_x)$.

 Define $\wt{A}^{\Vv}_x$, $A^{\Vv}_x$, $\varphi^\Vv_x,\varrho^\Vv_x$ analogously,
 from  $\pi^{\Vv\eta}_{\bar{m}0}\com\pi^{\Tt_xP}_{\bar{m}}$ (considered
as a map $M^{\Tt_x}_{\bar{m}}\to N_x^{M^{\Vv}_\eta}$).
 For example, $\wt{A}^{\Vv}_x=\psi_{\pi^{\Vv\eta}_{\bar{m}0}\com\pi^{\Tt_xP}_{\bar{m}}}(A_x)$.
 Define $\hat{A}^{\Vv}_x$, etc, likewise.

\begin{ssclm}\label{ssclm:pres_pins_proj_segs}
 If $\OR(A^P_x)<\OR(A^P_y)$ then
$\OR(A^{\Vv}_x)<\OR(A^{\Vv}_y)$.
\end{ssclm}
\begin{proof}
Recall that $\pi^{\Vv\eta}_{\bar{m}0}$ is non-$\nu$-low.
If it is $\nu$-preserving then the
subclaim is easier,
so suppose that it is $\nu$-high.
Suppose $\OR(A^P_x)<\OR(A^P_y)$;
equivalently,
  $\hat{A}^P_x\pins\hat{A}^P_y$.
 We show that $\OR(A^{\Vv}_x)<\OR(A^{\Vv}_y)$, or equivalently, that
 $\hat{A}^{\Vv}_x\pins\hat{A}^{\Vv}_y$.
 
\begin{case}
 Case \ref{item:rho_0=nu} attains
 for both $x$ and $y$.
 
 Since $\hat{A}^P_x\pins\hat{A}^P_y$, we can fix $(a,f)\in(\hat{A}^P_y)^\sq$
 such that $\hat{A}^P_x=[a,f]^{\hat{A}^P_y}_{F^{\hat{A}^P_y}}$.
 Then letting \[(\hat{A}'_x,\hat{A}'_y,a',f')=\psi_{\pi^{\Vv\eta}_{\bar{m}0}}(\hat{A}^P_x,\hat{A}^P_y,a,f),\]
we have $\hat{A}'_x\pins\hat{A}'_y$ and 
$\hat{A}'_x=[a',f']^{\hat{A}'_y}_{F^{\hat{A}'}_y}$. Moreover,
$a',f'\in\rg(\pi^{\Vv\eta}_{\bar{m}})$. Let $\nu=\nu(F^{M^{\Vv}_\eta})$. So $\nu<\nu(F^{\hat{A}'_x})=\nu(F^{\hat{A}'_y})$, since $\pi^{\Vv\eta}_{\bar{m}}$ is $\nu$-high. Note that $\nu(F^{\hat{A}^\Vv_x})=\nu$ and $F^{\hat{A}^\Vv_x}\rest\nu=F^{\hat{A}'_x}\rest\nu$;
likewise for $y$. So let
\[ k:\Ult(M^\Vv_\eta,F^{\hat{A}^\Vv_y})\to\Ult(M^\Vv_\eta,F^{\hat{A}'_y}) \]
be the factor map. Since $a',f'\in\rg(\pi^{\Vv\eta}_{\bar{m}})$, we have $\hat{A}'_x\in\rg(k)$. But letting $k(\bar{A})=\hat{A}'_x$,
we have $F^{\bar{A}}\rest\nu=F^{\hat{A}'_x}\rest\nu=F^{\hat{A}^\Vv_x}\rest\nu$, and so $F^{\hat{A}^\Vv_x}\in\Ult(M^\Vv_\eta,F^{\hat{A}^\Vv_y})$, and therefore
\[ \lh(F^{\hat{A}^\Vv_x})<\nu^{+\Ult(M^\Vv_\eta,F^{\hat{A}^\Vv_y})}=\lh(F^{\hat{A}^\Vv_y})\]
(where $\lh$ here literally refers to the trivial completion),
so $\hat{A}^\Vv_x\pins\hat{A}^\Vv_y$, as desired.
\end{case}

\begin{case}Case \ref{item:rho_0>nu}
 attains for both $x$ and $y$.
 
 Then $\rho_\om^{\hat{A}^P_x}=\nu(M^{\wt{\Uu}^P}_{\bar{m}0})<\rho_0(\hat{A}^P_y)$,
 so $\hat{A}^P_x\in(\hat{A}^P_y)^\sq$.
 Let
 \[ (\hat{A}'_x,\hat{A}'_y)=\psi_{\pi^{\Vv\eta}_{\bar{m}0}}(\hat{A}^P_x,\hat{A}^P_y).\]
 Note that $\hat{A}^\Vv_x=\cHull_{n+1}^{\hat{A}'_x}(\pvec_{n+1}^{\hat{A}'_x}\cup\nu(M^\Vv_\eta))$ and likewise for $y$.
 Let $\varphi:\hat{A}^\Vv_y\to\hat{A}'_y$ be the uncollapse map. Then \[\hat{A}'_x\in\psi_{\pi^{\Vv\eta}_{\bar{m}0}}``(\hat{A}^P_y)^\sq\sub\rg(\varphi).\]
 But then letting $\varphi(\bar{A})=\hat{A}'_x$,
 we get a natural map $k:\hat{A}^\Vv_x\to\bar{A}$, and $\bar{A}\pins\hat{A}^\Vv_y$,
 so $\OR(\hat{A}^\Vv_x)<\OR(\hat{A}^\Vv_y)$, so $\hat{A}^\Vv_x\pins\hat{A}^\Vv_y$.
\end{case}
\begin{case}
 Case \ref{item:rho_0=nu}
 attains for $x$ iff Case \ref{item:rho_0>nu} attains for $y$.
 
 This case is dealt with similarly to the previous cases, and we omit further discussion.
\end{case}
This completes the proof of Subsubclaim \ref{ssclm:pres_pins_proj_segs}.
\end{proof}

\begin{ssclm}\label{ssclm:pres_equality_proj_segs}
 If $\OR(A^P_x)=\OR(A^P_y)$ then $\OR(A^\Vv_x)=\OR(A^\Vv_y)$.
\end{ssclm}
\begin{proof}
This follows from some of the considerations which arose in the proof of the Subsubclaim  \ref{ssclm:pres_pins_proj_segs},
so we leave it to the reader.
\end{proof}

\begin{ssclm}\label{ssclm:proj_seg_order_P,Q_agmt} The following are equivalent:
\begin{enumerate}[label=--]
 \item  $\OR(A^P_x)\leq\OR(A^P_y)$
 \item $\OR(A^\Vv_x)\leq\OR(A^\Vv_y)$
 \item $\OR(A^\Ww_x)\leq\OR(A^\Ww_y)$
\item $\OR(A^Q_x)\leq\OR(A^Q_y)$.
\end{enumerate}
\end{ssclm}
\begin{proof}
Note that $A^\Vv_x=A^\Ww_x$
and $A^\Vv_y=A^\Ww_y$.
The subsubclaim follows from these equalities and Subsubclaims \ref{ssclm:pres_pins_proj_segs} and \ref{ssclm:pres_equality_proj_segs}
and symmetry.
\end{proof}

\begin{ssclm}\label{ssclm:if_proj_seg_pins_then_exit_pins}
 Suppose $\OR(A^P_x)<\OR(A^P_y)$. Then
 \[ \OR(\copyseg(\pi^{\Tt_xP}_{\bar{m}},e_x))\leq\OR(A^P_x)<\OR(\copyseg(\pi^{\Tt_yP}_{\bar{m}},e_y))\leq\OR(A^P_y),\]
 so in particular,
 \[\OR(\copyseg(\pi^{\Tt_xP}_{\bar{m}},e_x))<\OR(\copyseg(\pi^{\Tt_yP}_{\bar{m}},e_y)).\]
 Likewise for $A^\Vv_x,A^\Vv_y$
 replacing $A^P_x,A^P_y$, etc.
\end{ssclm}
\begin{proof}
We just need to see that $\OR(A^P_x)<\OR(\copyseg(\pi^{\Tt_yP}_{\bar{m}},e_y))$. But $A^P_y$
is the least $A\pins N_y^{M^{\wt{\Uu}^P}_{\bar{m}0}}$
such that $\copyseg(\pi^{\Tt_yP}_{\bar{m}},e_y)\ins A$
and $\rho_\om^A=\nu(M^{\wt{\Uu}^P}_{\bar{m}0})$.
So if $\OR(\copyseg(\pi^{\Tt_yP}_{\bar{m}},e_y))\leq\OR(A^P_x)$
then since $\rho_\om(A^P_x)=\nu(M^{\wt{\Uu}^P}_{\bar{m}0})$,
we would have $\OR(A^P_y)\leq\OR(A^P_x)$, contradiction.
(Note we are using the fine structural equivalence between $A^P_x$ and $\hat{A}^P_x$, etc.)
\end{proof}

Now let us establish the equivalence
of conditions \ref{item:OR_inequality_P}, 
 \ref{item:OR_inequality_Vv},
 \ref{item:OR_inequality_Ww}
 and \ref{item:OR_inequality_Q}.
By Subsubclaims \ref{ssclm:proj_seg_order_P,Q_agmt}
and \ref{ssclm:if_proj_seg_pins_then_exit_pins}, we may assume 
 $\OR(A^P_x)=\OR(A^P_y)$,
 and hence
 $\OR(A^\Vv_x)=\OR(A^\Vv_y)$,
 $\OR(A^\Ww_x)=\OR(A^\Ww_y)$ and $\OR(A^Q_x)=\OR(A^Q_y)$.
 So if we are in Case \ref{item:rho_0=nu} for $x$ or for $y$, then we are in that case for both $x$ and $y$, and we are done.
 So assume that we are in Case \ref{item:rho_0>nu} for both $x$ and $y$. Then some calculations
 like in the previous subsubclaims give the desired conclusion. This completes the proof of Subclaim \ref{sclm:order_OR(copyseg)_pres}.
\end{proof}
So by line (\ref{eqn:when_leq^P_4m-bar+3}) and Subclaim \ref{sclm:order_OR(copyseg)_pres},
we may assume that \begin{equation}\label{eqn:OR=}\OR(\copyseg(\pi^{\Tt_xP}_{\bar{m}},e_x))=\OR(\copyseg(\pi^{\Tt_yP}_{\bar{m}},e_y)),\end{equation}
and so 
\begin{equation}\label{eqn:when_leq^P_4m-bar+3_with_OR=}x\leq_{4\bar{m}+3}^Py\iff
 \psi_x(\vec{a}_{\bar{m}})\leq_{\mathrm{lex}}\psi_y(\vec{b}_{\bar{m}}).\end{equation}
But further calculations like those used above show then that $x\leq^P_{4\bar{m}+3}y$
iff $x\leq^Q_{4\bar{m}+3}y$, contradicting the choice of $\bar{m}$. This completes the proof of Claim \ref{clm:not_m'=m-bar<m_and_e_x,e_y_non-small}.
\end{proof}

\begin{clm}\label{clm:not_m'=mbar=m}
 It is not the case the $m'=\bar{m}=m$ (that is, Subcase \ref{scase:m'=mbar=m} attains).
\end{clm}
\begin{proof}
 This is easy to see with a little of the calculations used already to rule out other cases.
\end{proof}

\begin{clm}\label{clm:not_m'<mbar}
 It is not the case that $m'<\bar{m}$ (that is, Subcase \ref{scase:m'<mbar} attains).
\end{clm}
\begin{proof}
 Suppose otherwise.
 Recall that $\sigma$ is the norm description under consideration, $\Tt_x=\Tt^\sigma_x\rest(m'+1)$
 is the small part of $\Tt^
 \sigma_x$,
 likewise for $y$, and
 $\wt{\Uu}^P=\Uu_0^P\conc\Uu_1^P$
 where $\Uu_0^P=\lifttree^{\Tt_xP}=\lifttree^{\Tt_yP}$
 and 
 \[\Uu^\Yback_{1}=\critrestree^{Rc}_{\xi_xd\pi_x}(e_x,\delta^{e_x})=
\critrestree^{Rc}_{\xi_yd\pi_y}(e_y,\delta^{e_y})\]
where $R=M^{\Uu^P_0}_{m'0}$,
$c=\deg^{\Uu^P_0}_{m'0}$,
$\xi_x=\xi^{\Tt_xP}_{m'}$,
$d=\deg^{\Tt_x}_{m'}=\deg^{\Tt_y}_{m'}$, $\pi_x=\pi^{\Tt_xP}_{m'}$,
 $e_x=\exit^{\Tt_x^\sigma}_{m'}$,
and $\xi_y,\pi_y,e_y$ are likewise.

Let \[j+1=\lh(\Uu_1^P)=
\critresl^{R}_{\xi_x d\pi_x}(e_x,\delta^{e_x})+1.\]
So $(m',j)$ is the last index of $\wt{\Uu}^P$. Let $\Tt^{\uparrow}_x$
be the non-small part of $\Tt_x^\sigma\rest(\bar{m}+1)$, so $\Tt_x^\sigma\rest(\bar{m}+1)=\Tt_x\conc\Tt^{\uparrow}_x$.
 Let \[(L_x,\ell_x)=\psegdeg^{M^{\Tt_x}_{m'}}(e_x,\delta^{e_x}).\]
 So $L_x\ins M^{\Tt_x}_{m'}$, $\delta^{L_x}=\delta^{e_x}$, $\ell_x\geq 0$
 (since if $\ell_x=-1$ then
 $L_x$ is active type 3 with $\rho_{0}^{L_x}=\delta^{e_x}$, but this is impossible as $\delta^{e_x}$ is the least Woodin of $L_x$).
 We consider $\Tt^{\uparrow}_x$
 as an $\ell_x$-maximal above-$\delta^{e_x}$ tree on $L_x$.
 Let
\[ \zeta^P_x=\critresprodstage^{Rc}_{\xi_x d\pi_x}(e_x,\delta^{e_x}), \]
 \[ \psi^P_x=\critresmap^{Rc}_{\xi_xd\pi_x}(e_x,\delta^{e_x}).\]
So
$\psi^P_x:(L_x)^\sq\to (N^P_x)^\sq$ is a weak $\ell_x$-embedding
where $N^P_x=N_{\zeta_xx}^{M^{\wt{\Uu}^P}_{m'j}}$, and $\psi^P_x(\delta^{L_x})=\delta^{N^P_x}$.

Let $\gamma=\gamma^\eta_{m'j}$.
Let $\zeta^\Vv_x=i^\Vv_{\gamma\eta}\com\pi^{\Vv\eta}_{m'j}(\zeta^P_x)$
(where this means $\zeta^\Vv_x=\OR(M^\Vv_\eta)$ if $\zeta^P_x=\OR(M^{\wt{\Uu}^P}_{m'j})$). Let $\psi^\Vv_x=i^\Vv_{\gamma\eta}\com\pi^{\Vv\eta}_{m'j}\com\psi^P_x$; then
 $\psi^\Vv_x:(L_x)^\sq\to(N^\Vv_x)^\sq$ is a weak $\ell_x$-embedding where
$N^\Vv_x=N_{\zeta^\Vv_xx}^{M^\Vv_\eta}$.

Define the ``sub-$y$''
and ``super-$Q$'' and ``super-$\Ww$'' versions of everything analogously. Then $\ell_x=\ell_y$,
$\zeta^P_x=\zeta^P_y$,
$\zeta^Q_x=\zeta^Q_y$,
$\zeta^\Vv_x=\zeta^\Vv_y=\zeta^\Ww_x=\zeta^\Ww_y$.

Given an sse-essentially-$\ell_x$-maximal above-$\delta^{N^P_x}$ tree
$\Tt$ on $N^P_x$, let
\begin{equation}\label{eqn:def_Tt'} \Tt'=\text{
 the tree on }M^{\wt{\Uu}^P}_{m'j}|\zeta^P_x\text{ which is equivalent to }\Tt
 \end{equation}
(this is above-$\delta^{M^{\wt{\Uu}^P}_{m'j}|\zeta^P_x}$ and sse-essentially-$\ell_x$-maximal).

 Recalling the copying procedure specified in 
Definition \ref{dfn:neat_copy}.
Let $\Xx^P_x=\copyseg(\psi^P_x,\Tt^\uparrow_x)$ (this is an above-$\delta^{N^P_x}$ sse-essentially-$\ell_x$-maximal tree on $N^P_x$) and \[\psi^P_{xi}:M^{\Tt^\uparrow_x}_i\to M^{\Xx^P_x}_i \]
be the $i$th resulting copy map (starting with $\psi_{x0}^P=\psi^P_x$). Let
$\Xx^P_y=\copyseg(\psi^P_y,\Tt^\uparrow_x)$ and $\psi^P_{xi}$ be likewise. Then
\begin{enumerate}[label=--]\item $(\Xx^P_x)'=(\Xx^P_y)'$ (these trees are defined  in line (\ref{eqn:def_Tt'}), and
 \item $\psi^P_{xi}(\vec{a}_0,\ldots,\vec{a}_{m'+i-1})=\psi^P_{yi}(\vec{b}_0,\ldots,\vec{b}_{m'+i-1})$;
\end{enumerate}
the equalities hold since $\lh(\Tt_x\conc\Tt_x^\uparrow)=\bar{m}+1$, and we have full agreement before index $\bar{m}$. Likewise for $Q$.

Similarly let $\Xx^\Vv_x=\copyseg(\psi^\Vv_x,\Tt^\uparrow_x)$ (a tree on $N^\Vv_x$), and let
$\psi^\Vv_{xi}:M^{\Tt^\uparrow_x}_i\to M_i^{\Xx^\Vv_x}$
be the resulting $i$th copy map.

Also let $\Xx^{P\Vv}_x=\copyseg(i^\Vv_{\gamma\eta}\com\pi^{\Vv\eta}_{m'j},\Xx^P_x)$
(also on $N^\Vv_x$)
and
$\psi^{P\Vv}_{xi}:M^{\Xx^P_x}_i\to M^{\Xx^\Vv_x}_i$
be the  resulting $i$th  copy map.

\begin{sclm}\label{sclm:copying_comm}
 $\Xx^{P\Vv}_x=\Xx^\Vv_x$
 and $\psi^{\Vv}_{xi}=\psi^{P\Vv}_{xi}\com\psi^P_{xi}$ for each $i\leq\bar{m}-m'$. 
\end{sclm}
\begin{proof}
This is a calculation like some of those in the proof of Claim \ref{clm:not_m'=m-bar<m_and_e_x,e_y_non-small}.
One useful observation is that for each $i$, $\psi^{P\Vv}_{xi}$ is non-$\nu$-low; in fact,
$\psi^{P\Vv}_{xi}$ is a near $\deg^{\Xx^P_x}_i$-embedding.
This is because $\psi^{P\Vv}_{x0}$ is a near $\deg^{\Xx^P_x}_0$-embedding, and this is propagated through the copying construction used here, by the proof in \cite{fs_tame}. We leave the details to the reader.
\end{proof}

\begin{sclm} We have:
\begin{enumerate}[label=--]
 \item  
$\Xx^{\Vv}_x=\Xx^\Ww_x$, $\Xx^{\Vv}_y=\Xx^\Ww_y$,  $(\Xx^\Vv_x)'=(\Xx^\Vv_y)'$ and $(\Xx^\Ww_x)'=(\Xx^\Ww_y)'$,
and so these trees are all mutually equivalent (the respective $i$th models are equivalent modulo small generics,
 the trees have the same lengths and use extenders of the same indices, etc),
 and
 \item $\psi^\Vv_{xi}(\vec{a}_0,\ldots,\vec{a}_{m'+i-1})=\psi^\Vv_{yi}(\vec{b}_0,\ldots,\vec{b}_{m'+i-1})=\psi^\Ww_{xi}(\vec{a}_0,\ldots,\vec{a}_{m'+i-1})=\psi^\Ww_{xi}(\vec{b}_0,\ldots,\vec{b}_{m'+i-1})$ for all $i\leq\bar{m}-m'$.
 \end{enumerate}
\end{sclm}
\begin{proof}
The fact that  $\Xx^\Vv_x=\Xx^\Ww_x$ and that $\psi^\Vv_{xi}(\ldots)=\psi^\Ww_{xi}(\ldots)$ follows easily by induction on $i$ from 
the fact that \[\zeta^\Vv_x=\zeta^\Ww_x \text{ and }\psi_{x0}^\Vv(\vec{a}_0,\ldots,\vec{a}_{m'-1})=\psi_{x0}^\Ww(\vec{a}_0,\ldots,\vec{a}_{m'-1}),\]
 given how the later extenders and generators are defined by terms with the generators $\vec{a}_0,\ldots,\vec{a}_{m'-1}$ as arguments. But then since $(\Xx^P_x)'=(\Xx^P_y)'$
 and by Subclaim \ref{sclm:copying_comm}, it follows that $(\Xx^\Vv_x)'=(\Xx^\Vv_y)'$ etc.
\end{proof}

So we reach index $\bar{m}-m'$,
producing $\psi_{x,\bar{m}-m'}^P$, $\psi_{x,\bar{m}-m'}^\Vv$, etc.
But now computations like in the proof of Claim \ref{clm:not_m'=m-bar<m_and_e_x,e_y_non-small} show that $\bar{m}=m$,
and then we reach a contradiction like in the proof of Claim \ref{clm:not_m'=mbar=m}.
This proves Claim \ref{clm:not_m'<mbar}.
\end{proof}

With Claims \ref{clm:fully_small_situation_does_not_arise},
\ref{clm:not_m'=m-bar<m_and_e_x,e_y_non-small}, \ref{clm:not_m'=mbar=m}, \ref{clm:not_m'<mbar} together,
we have ruled out all cases,
and so we have reached a contradiction to our assumption that $x\leq^P_\sigma y\iff x\not\leq^Q_\sigma y$.
This completes the proof of Lemma \ref{lem:norm_invariance}.
\end{proof}

As a corollary, we easily get:

\begin{lem}\label{lem:leq_sigma_is_a_pwo}
 Let $\sigma$ be a norm description.
 Then $\leq_\sigma$ is a prewellorder on $A$.
\end{lem}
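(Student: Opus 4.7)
The plan is to derive the prewellorder property directly from Lemma \ref{lem:norm_invariance} together with the $\CC$-closure properties of $\Mmm$. Norm invariance tells us that, fixing any $x,y \in A$, the value of $x \leq_\sigma y$ does not depend on which $M_x$- or $M_y$-good premouse $P$ we compute it in. So the real task is to exhibit, for any finite or countable collection of reals, a single ``common'' $\Yback$ that is simultaneously $M_x$-good for every real $x$ in the collection; once we have that, all of totality, transitivity, and wellfoundedness reduce to the corresponding properties of the lexicographic order used internally in the definition of $\leq_\sigma$.

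First I would show totality. Given $x,y\in A$, pick $z\in\RR$ recursive in $(x,y)$ with $x,y\in M_z$ (for $1$-$\CC$-closed $\Mmm$, take $z=(x,y)$ and use upward $\CC$-closure inside $\Mmm_z$ to produce a proper segment $P\pins\Mmm_z$ such that $\CC^P_x$ reaches $M_x$ and $\CC^P_y$ reaches $M_y$; for exactly reconstructing $\Mmm$, use $P=\Mmm_{(x,y)}$ and exact reconstruction). Such a $P$ is both $M_x$-good and $M_y$-good, and is therefore a legitimate witness for computing either $x\leq_\sigma y$ or $y\leq_\sigma x$. Since each layer $\leq_i$ of the definition is computed as a lexicographic comparison of finite tuples of ordinals (and of ``well/not well'' flags), at least one of $x\leq_\sigma y$ or $y\leq_\sigma x$ holds, and by norm invariance the answer is the same no matter which such $P$ we picked.

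Transitivity is handled the same way. Given $x,y,z\in A$ with $x\leq_\sigma y\leq_\sigma z$, choose a single $P$ that is simultaneously $M_x$-, $M_y$-, and $M_z$-good (again by $\CC$-closure, working over $\Mmm_{(x,y,z)}$). All three pairwise comparisons can then be read off from $P$; the conclusion $x\leq_\sigma z$ follows from the fact that, internally in $P$, each $\leq_i^P$ is the pullback of a lexicographic order on tuples of ordinals, elements of $M^{\wt\Uu^P}_{ij}$, and $0/1$-valued ``well''/drop flags, and such an order is transitive. Then norm invariance promotes transitivity computed using $P$ to the global relation $\leq_\sigma$.

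For wellfoundedness, let $\bar A\subseteq A$ be countable and choose $z\in\RR$ coding every element of $\bar A$. Using upward $\CC$-closure (respectively exact reconstruction) there is a single $P$ (a segment of $\Mmm_z$ or $\Mmm_z$ itself) such that $P$ is $M_x$-good for every $x\in\bar A$. By norm invariance the restriction $\leq_\sigma\restriction \bar A$ equals the relation $\leq_\sigma^P\restriction\bar A$, which by construction is obtained by pulling back a lexicographic comparison of finite tuples whose entries lie in $\OR^P\cup\{0,1\}$ and certain finite wellordered sets of drop/degree data. Any such lexicographic relation on a set of tuples of bounded finite length and ordinal/integer entries is wellfounded, so $\leq_\sigma\restriction\bar A$ has no infinite strictly descending chain. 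Hence $\leq_\sigma$ is wellfounded.

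The only step with any real content is the existence of the common $P$; everything else is a mechanical appeal to Lemma \ref{lem:norm_invariance} together with the fact that ``lexicographic on ordinals'' is a prewellorder. For $1$-$\CC$-closed $\Mmm$ the common $P$ is produced by iterating upward $\CC$-closure: given the finite (or countable) set of reals, take $z$ a Turing-join (or code) of them all, and extract a single $P\pins\Mmm_z$ simultaneously reaching each $M_{x_i}$. For exactly reconstructing $\Mmm$, the existence of the common $P$ is even more immediate: $\Mmm_z$ itself reconstructs every $M_{x_i}$ with $x_i\in M_z$.
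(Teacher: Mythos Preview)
Your proof follows essentially the same strategy as the paper's: invoke norm invariance (Lemma~\ref{lem:norm_invariance}) to reduce to a computation inside a single common $P$, where $\leq_\sigma^P$ is a lexicographic comparison of tuples of ordinals and flags, hence a prewellorder. The paper's proof is terser (it only spells out wellfoundedness), but the idea is the same.

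There is one point where your justification is incomplete. For the $1$-$\CC$-closed case you assert that, given a countable $\bar A\subseteq A$ and $z$ coding it, one can ``extract a single $P\pins\Mmm_z$ simultaneously reaching each $M_{x_i}$'' by iterating upward $\CC$-closure. But upward closure only produces, for each $n$, \emph{some} $N_n\pins\Mmm_z$ with $\CC^{N_n}_{x_n}$ reaching $M_{x_n}$; nothing prevents the $N_n$ from being cofinal in $\Mmm_z$, and even for two reals, passing from $N_n$ to a larger segment $P$ does not obviously preserve $M_{x_n}$-goodness (locality of $\CC$, Lemma~\ref{lem:con_locality}, requires $\aleph_{\alpha_n}^{P}\leq\OR^{N_n}$, which need not hold).

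The paper circumvents this by a reduction through the $\leq_0$ layer rather than by constructing a common $P$ directly. One picks $P$ that is $M_{x_n}$-good for \emph{some} $n$; then if $P$ is $M_{x_i}$-good but not $M_{x_j}$-good, the definition of $\leq_0$ gives $x_i<_0^P x_j$, hence by norm invariance $x_i<_\sigma x_j$. So in any strictly descending chain $x_0>_\sigma x_1>_\sigma\cdots$, a $P$ good for $x_0$ is automatically $M_{x_n}$-good for every $n$ (else $x_0<_\sigma x_n$). This yields the common $P$ needed for your internal-wellfoundedness argument, and the same trick handles transitivity. Once you patch in this observation, your proof is correct and equivalent to the paper's.
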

\begin{proof}
 Let us verify that $\leq^\sigma$ is wellfounded; the rest is easier. Let $\{x_n\}_{n<\om}\sub A$. Let $x\in\RR$ with $x_n\leq_T x$ for each $n$, and let $P$ be an $x$-mouse which is $M_{x_n}$-good for some $n$. The first distinction made by $\leq_\sigma$
 is that of $M_{x_i}$-goodness,
 with $x_i<_\sigma x_j$ if $P$ is $M_{x_i}$-good but not $M_{x_j}$-good. 
 So we may assume that $P$ is $M_{x_n}$-good for all $n$.
 Then by Lemma \ref{lem:norm_invariance}, $\leq^P_\sigma$
 agrees with $\leq_\sigma$ on $\{x_n\}_{n<\om}$. But then since $P$ iterable for finite length trees, it easily follows that $\leq_\sigma\rest\{x_n\}_{n<\om}$ is wellfounded, which suffices.
\end{proof}

***Definability of mouse scales corollaries (have to finish this...):

\begin{lem}
 Let $T$, $\Gamma$, etc, be as in Definition \ref{dfn:mouse_scale_exactly_reconstructing}. Then:
 \begin{enumerate}[label=--]\item each of the norms $\Phi$ of the mouse scale are in $\Gamma$,
 \item the sequence $\vec{\psi}$
 of $\Sigma_1$ formulas if $T$ is recursive then we can fold the norms into a very good sequence
 $\left<\Phi_n\right>_{n<\om}\in\Gamma$.
 \end{enumerate}
\end{lem}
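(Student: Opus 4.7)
\medskip

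The plan is to exploit norm invariance (Lemma \ref{lem:norm_invariance}) to compute each norm of the mouse scale inside a single ambient premouse, and then to express the resulting comparison $\Sigma_1$ over that premouse. The natural choice of ambient premouse is $M_{(x,y)}$, which, by exact reconstruction in the case of $\Gamma_{\mathscr{M}}$ (or by upward $\mathscr{C}$-closure in the $1$-$\mathscr{C}$-closed setup), is simultaneously $M_x$-good and $M_y$-good, with both $\core_1(N^{M_{(x,y)}}_{x\infty}) = M_x$ and $\core_1(N^{M_{(x,y)}}_{y\infty}) = M_y$ recognizable inside $M_{(x,y)}$ in a uniform $\Sigma_1$ fashion (cf.\ Lemma \ref{lem:exact_implies_uniformly_1-solid}).

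First I would treat the three kinds of norms separately. The unbounded primary norm $\Phi_{\mathscr{M}\psi}$ is already defined by a $\Sigma_1(\dot{p})$ assertion about $M_{(x,y)}$, so $\leq^*_{\Phi_{\mathscr{M}\psi}}$ and $<^*_{\Phi_{\mathscr{M}\psi}}$ are manifestly in $\Gamma$. For the $T$-uniformly bounded theory norms $\Phi_\varrho$, the value $\Phi_\varrho(x) \in \{0,1\}$ is determined by whether $M_x \wr \eta_{xn} \sats \varrho$ for some fixed $n$, and the structures $M_x \wr \eta_{xn}$ and $M_y \wr \eta_{yn}$ are each $\Sigma_1$-definable over $M_{(x,y)}$ (using that the enumeration $\eta_{x,\cdot}$ is uniformly definable by Lemma \ref{lem:exact_implies_uniformly_1-solid} and the image of $M_x$ inside the construction of $M_{(x,y)}$); hence $\leq^*_{\Phi_\varrho}$ and $<^*_{\Phi_\varrho}$ are $\Sigma_1$ over $M_{(x,y)}$. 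For the lifting norm $\Phi_\sigma$ associated to a nice norm description $\sigma$ of depth $m+1$, the entire recursion of Definition \ref{dfn:lifting_norm} --- the finite sse-$n_0$-maximal trees $\Tt_x,\Tt_y$ on $M_x,M_y$, the lifts to $\Uu$ on $M_{(x,y)}$, the resurrection data (production stages, $t$-values, background indices), and the comparison of the $\params$-tuples and generator images --- is a finite recursive procedure operating entirely on objects residing in $M_{(x,y)}$ and on the internal construction $\CC^{M_{(x,y)}}$. Since $\CC^{M_{(x,y)}}\rest\alpha$ and its associated $t$-values, $F$-values, and projecta are $\Sigma_1^{M_{(x,y)}}$ uniformly in $\alpha$ (by the discussion in Section \ref{sec:CC}), and the niceness of $\sigma$ ensures all auxiliary terms are uniformly bounded (so their values live below cofinally many $\eta_{(x,y),n}$), the resulting comparison $x \leq^P_\sigma y$ is expressed by a fixed $\Sigma_1(\dot{p})$ formula over $M_{(x,y)}$. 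Norm invariance then gives $x \leq_\sigma y$ iff $M_{(x,y)}$ satisfies this formula.

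Next, I would handle the folding into a very good sequence. Because $T$ is recursive and the collection of nice norm descriptions is recursive in $T$, we may fix a recursive enumeration $\left<\Phi_n\right>_{n<\om}$ of all the norms listed above, with $\Phi_0 = \Phi_{\mathscr{M}\psi}$. The passage from a sequence of norms to its ``very good'' lexicographic refinement, via the relation $R'$ of Definition \ref{dfn:R'}, only requires finitely many of the $\Phi_k$ at each stage $n$ together with the comparisons $x\rest n \mathrm{\ vs\ } x'\rest n$. Thus the ternary relation $R'(n,x,x')$ unfolds into a recursive disjunction of conditions each of which is uniformly $\Sigma_1$ over $M_{(x,x')}$, hence is itself in $\Gamma$. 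The resulting folded sequence of norms inherits membership in $\Gamma$, and the defining formulas $\vec{\psi}$ are recursive in $T$.

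The main obstacle is the uniform definability of the lifting norms: one must verify that the entire iteration-plus-resurrection apparatus of Definition \ref{dfn:lifting_norm}, including the ordinals $\alpha_{ij},\beta_{ij},\alpha^*_{ij},\beta^*_{ij}$, the determination of sse-extendingness, and the ultimate $\psi_x(\vec{a}_i),\psi_y(\vec{b}_i)$ comparisons, can all be read off from $M_{(x,y)}$ by a single $\Sigma_1$-formula bounded to some $\eta_{(x,y),n}$ --- and the bound $n$ must depend only on $\sigma$, not on $x,y$. This boundedness is exactly what niceness of $\sigma$ was arranged to guarantee via Lemma \ref{lem:good,unif_1-solid_to_cofinal}; in particular, each nice rep-term in $\sigma$ is $T$-uniformly bounded, so its value is decided at a uniform $\eta_{(x,y),n}$-level of $M_{(x,y)}$. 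Once this uniformity is verified, the two bullets of the lemma follow, and in the uniform case of Definition \ref{dfn:mouse_scale_exactly_reconstructing} one further observes that the $\Sigma_1$ formulas defining each $\Phi_n$ are themselves $T$-uniformly bounded, yielding membership in $\Gamma_{\mathscr{M},\mathrm{ubd}}$ when the unbounded primary norm is omitted, as required by Theorem \ref{tm:exactly_recon_scales}.
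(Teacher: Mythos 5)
Your choice of $M_{(x,y)}$ as the ambient premouse in which to evaluate each norm, via Lemma \ref{lem:norm_invariance}, is the natural one, and is likely the intended approach. But there are two genuine gaps.

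First, the boundedness discussion is misplaced and obscures the real obligation. For membership in $\Gamma=\Gamma_\Mmm$ you need only a single $\rSigma_1(\dot p)$ formula evaluated over $M_{(x,y)}$; $T$-uniform boundedness is what one needs for the finer class $\Gamma_{\Mmm,\mathrm{ubd}}$ (Definition \ref{dfn:A_psi_for_T_good}, Theorem \ref{tm:exactly_recon_scales}), which is not at issue in this lemma. By trading the real obligation for a boundedness argument, you never actually discharge the key step: showing that the Definition \ref{dfn:lifting_norm} recursion is $\rSigma_1(\dot p)$-definable over $M_{(x,y)}$. Niceness of $\sigma$ controls the terms $\nu_i,e_i,\vec t_i,\nu_m,t$ occurring \emph{in} $\sigma$, but says nothing about the resurrection data --- the ordinals $\alpha_{ij},\beta_{ij},\alpha^*_{ij},\beta^*_{ij}$, the $t$-values, the background extender indices $\lh(F^{M^{\Vv_x}_{k_x}}_{\widetilde\alpha_x x})$, and the final comparisons of $\params$-tuples and of $\psi_x(\vec a_i),\psi_y(\vec b_i)$. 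Those are read off from the class constructions $\CC^{M_{(x,y)}}_x$, $\CC^{M_{(x,y)}}_y$, the class-sized final stages $N^{M_{(x,y)}}_{x\infty},N^{M_{(x,y)}}_{y\infty}$, and finite iteration trees on $M_{(x,y)}$ via order-$0$ ultrapowers. Making all this $\Sigma_1$ requires explicitly invoking the uniform first-order definability of $\CC$ in $\alpha$, Lemma \ref{lem:rSigma_red_N_to_Y} (to reduce $\rSigma_1$ of the $N$'s to $\rSigma_1$ of $M_{(x,y)}$), and the finiteness of the recursion; ``the discussion in Section \ref{sec:CC}'' does not do this by itself and you do not carry it out.

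Second, and more minor, before using the identification $M_x=\core_1(N^{M_{(x,y)}}_{x\infty})$ you should verify that $M_{(x,y)}$ actually satisfies the exactly-reconstructing clause of Definition \ref{dfn:M_x-good} for both $x$ and $y$ (in particular the $\delta$-soundness and $p_1\cap\delta=\emptyset$ conditions); without that, Lemma \ref{lem:norm_invariance} is not applicable to the pair $(M_{(x,y)},M_{(x,y)})$, and there is no license to evaluate $\leq_\sigma$ there.
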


\begin{dfn}\label{dfn:Delta_scale}
Let $T$ be good, exactly reconstructing, cofinal, uniformly boundedly $1$-solid.
Let $\Mmm=\Mmm_T$ and $M_x=\Mmm(x)$. 
Let $\psi$ be a nice formula
and $A=A_\psi$.
We define the \emph{nice-norm mouse scale} $\left<\Phi_n\right>_{n<\om}$ on $A$
by using all $\Phi_\varphi$ and $\Phi_\sigma$, for nice formulas $\varphi$ and nice norm descriptions $\sigma$,
folded together naturally into a very good sequence
(such that the sequence is coded with a sequence of formulas and terms which is recursive in $T$).
\end{dfn}
\begin{lem}
 Let $T$, $\psi$, etc, be as in Definition \ref{dfn:Delta_scale}. Then each norm $\Phi_n$ is in $\Gamma_{\mathscr{M},\mathrm{nice}}$.
\end{lem}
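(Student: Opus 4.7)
The plan is to show that both the theory norm part and the lifting norm part of the nice-norm mouse scale can be uniformly re-expressed as assertions evaluated inside $M_{(x,y)}$, and that these assertions are themselves nice (i.e.\ captured cofinally in $\OR^{M_{(x,y)}}$ by nice $\Sigma_1(\dot p)$ formulas in the sense of \ref{dfn:good_theory}).

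First I would reduce. For each $\Phi_n$ appearing in \ref{dfn:Delta_scale}, the associated $\leq^*_n$ and $<^*_n$ are relations on pairs $(x,y)\in\RR^2$. Because $T$ is exactly reconstructing and uniform, $M_{(x,y)}$ contains both $M_x$ and $M_y$ as $\core_1(N^{M_{(x,y)}}_{\infty x})$ and $\core_1(N^{M_{(x,y)}}_{\infty y})$, with $\pi$ preserving the relevant fine-structural witness terms (Lemmas \ref{lem:exact_implies_uniformly_1-solid} and \ref{lem:iterates_pres_fs_match}); in particular $M_{(x,y)}$ is $M_x$-good and $M_y$-good. Thus by the norm invariance Lemma \ref{lem:norm_invariance}, the value of $\leq_\sigma$ on $(x,y)$ is computed absolutely from $M_{(x,y)}$. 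Since the pointclass $\Gamma_{\mathscr{M},\mathrm{nice}}$ (the two-variable version of $\Gamma_{\mathscr{M},\mathrm{ubd}}$) is closed under recursive substitution and pairing, it suffices to produce, for each $\Phi_n$, a nice $\Sigma_1(\dot p)$ formula over $M_{(x,y)}$ defining $\leq^*_n$ and $<^*_n$.

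For the theory norms $\Phi_\varrho$ with nice $\varrho$: the statement $\Phi_\varrho(x)=1$ asserts that $M_x\models\varrho$, which by $M_{(x,y)}$'s knowledge of $M_x$ as a core of a $\CC$-stage, reduces to the assertion ``some segment of $N^{M_{(x,y)}}_{\infty x}$ witnesses $\varrho$ under the canonical collapse to $\core_1$''. Niceness of $\varrho$ plus Lemma \ref{lem:good,unif_1-solid_to_cofinal} gives that this assertion, in turn, is witnessed cofinally in $\OR^{M_{(x,y)}}$ by a nice $\Sigma_1(\dot p)$ formula, which is exactly what is needed.

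For the lifting norms $\Phi_\sigma$, the computation of $\leq^{M_{(x,y)}}_\sigma$ from the recursion in \ref{dfn:lifting_norm} is a finite recursion in the depth $m+1$ of $\sigma$, at each step performing: (i) evaluating finitely many nice $(n_i+1)$-rep terms in $M^\Tt_i$, (ii) running the lifting construction $\lifttree^{\Tt_x,M_{(x,y)}}$ on finite trees, (iii) forming the resurrection tree and reading off the parameter sequences $\params_x,\params_y$ and $\exitresadd$ data. Each of these steps is a bounded first-order operation inside $M_{(x,y)}$, and (crucially) the finite trees and their lift/resurrection images all lie in some bounded segment of $M_{(x,y)}$ determined by the reals $x,y$ and the code of $\sigma$. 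Thus $\leq^*_\sigma$ and $<^*_\sigma$ are $\Sigma_1^{M_{(x,y)}}(\{\dot p\})$; and because $\sigma$ is a nice norm description, every term invoked is nice, so the witnessing ordinal is bounded by some $\eta_{(x,y),n}$, yielding a nice $\Sigma_1(\dot p)$ definition.

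The main obstacle will be to verify carefully, at the inductive step of the lifting recursion, that the combinatorial data used to branch (such as $t$-values, production stages, and $\exitresadd$ addenda) depend only on segments of $\CC^{M_{(x,y)}}$ bounded by nice terms; essentially, this is the same bookkeeping carried out in the invariance proof (Claims \ref{clm:beta_description}--\ref{clm:key_images_match}) but now deployed in a definability rather than agreement mode. Once that is done, folding the countably many nice norm descriptions and nice theory formulas into a single very good sequence recursive in $T$, as in \ref{dfn:Delta_scale}, preserves niceness level-by-level, concluding the proof.
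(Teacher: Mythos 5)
The current draft of the paper states this lemma without proof --- just before the preceding lemma (for the non-nice mouse scale) there is the author's annotation that the definability corollaries remain to be written out --- so there is no argument here to compare against.

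Your overall strategy is the natural one: use the norm invariance Lemma \ref{lem:norm_invariance} to reduce $\leq_\sigma$ on a pair $(x,y)$ to the single background $P=M_{(x,y)}$ (which is both $M_x$-good and $M_y$-good in the exactly-reconstructing case by Lemma \ref{lem:exact_implies_uniformly_1-solid}), and then show the resulting computation is $\Sigma_1^{M_{(x,y)}}(\{\dot p\})$ with a $T$-uniformly bounded witness. That is the right shape.

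The step you leave unaddressed --- and it is where essentially all the work would go --- is the claim that the resurrection data behind a lifting norm lies in a segment of $M_{(x,y)}$ bounded by a $T$-nice term. The resurrection trees $\Vv_x$ in Definition \ref{dfn:lifting_norm} are finite iterations \emph{of the background $P=M_{(x,y)}$ itself}, using order-$0$ measures from $\es^{P}$; their models $M^{\Vv_x}_j$ are not classes of $P$, so one must argue (a) that those ultrapowers, having critical points $<\delta^{P}$, can be reconstructed inside $P$ up to the level where the comparison parameters $\alpha_{xj},\beta_{xj},t$-values, $\lh(F^{M^{\Vv_x}_{k_x}}_{\wt{\alpha}_x x})$ and the $\exitresadd$ addendum live (and that this level is below $\delta^P$ or is the reserved top value, which is handled uniformly); (b) that the passage from ``the exit $e_x$ and generators $\vec a_i$ are named by nice terms over $M_x=\core_1(N^P_{x\infty})$'' to ``the corresponding production stages and background indices in $P$ are bounded by a nice term over $P$'' actually holds --- this requires the uniformity clause on $T$ together with the level-by-level fine-structural correspondence between $M_x$ and $N^P_{x\infty}$ (Lemma \ref{lem:Q-local_fs_match} and Lemma \ref{lem:projections_to_cardinals}), and is not a formal consequence of $\sigma$ being a nice norm description; and (c) that the membership $x\in A_\psi$ (which enters the $\leq^*$ and $<^*$ relations) is itself detected at a nicely bounded stage, which is where niceness of $\psi$ must be used. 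Your appeal to Claims \ref{clm:beta_description}--\ref{clm:key_images_match} does not supply these facts: those claims establish \emph{agreement} across two backgrounds via a comparison argument, not boundedness of the data inside a single background, and the two modes are not interchangeable without further argument.
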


\section{Iterability of $M_\infty$}\label{sec:iterability_M_infty}

In this section we establish the key fact needed to see that our putative scale is in fact a semiscale: the limit model $M_\infty$ is iterable.
The proof will be a fairly direct elaboration on that for  sharps 
in \S\ref{sec:MS} (the variant of Martin-Solovay).

\subsection{Putative trees}
\begin{dfn}
 A \emph{putative premouse} is a structure $N$ satisfying the first-order requirements of a premouse (but $N$ might be illfounded).
 Given a putative premouse $N$, we define $N^\sq=\core_0(N)$ in the obvious manner.
 A \emph{putative squashed premouse} is a structure $L$ satisfying the first-order requirements of a squashed premouse;
 this includes the requirement that if $L$ is active then $\OR^L\in\Ult(L,F^L)$ (but $\OR^L$ might be illfounded). Given a putative premouse,
 $\nu^N$ denotes $\dot{\nu}^N$, and given a putative squashed premouse $L$,
 $\nu^L$ denotes $\OR^L$.
 
 A \emph{$3\nu$-wellfounded-premouse} is a putative premouse such that either (i) $N$ is a type 0, 1 or 2 premouse,
 or (ii) $N$ is ``type 3'' and $\nu^N$ is wellfounded (hence $\nu^N\in\wfp(N,F^N)$).
\end{dfn}
\begin{rem}
 Let $L$ be a putative squashed premouse.
 Then there is a unique putative premouse $N$ such that $L=N^\sq$; we write $N=L^\unsq$.
 We have $\nu^N=\nu^L$.
 We define \emph{$\nu$-preserving embedding} $\pi:M\to L$
 in the obvious manner.
\end{rem}

 \begin{lem}\label{lem:nu_in_wfp}
  Let $L$ be the direct limit of a directed system
 \[ \left<M^\sq,N^\sq;\pi_{MN}:M^\sq\to N^\sq\right> \]
 of squashed type 3 premice $M^\sq,N^\sq$, whose embeddings $\pi_{MN}$ are $\Sigma_0$-elementary and $\nu$-preserving.
 Then $L$ is a putative squashed premouse and
 each direct limit map $\pi_{ML}$ is $\nu$-preserving.
 Let $N=L^\unsq$. If $L$ is wellfounded then $N$ is a $3\nu$-wfd-pm.
\end{lem}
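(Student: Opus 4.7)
\medskip

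The plan is to verify the three assertions in order, with most of the work being a routine verification that the first-order squashed-premouse axioms transfer through the direct limit.

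First I would observe that the axioms defining a putative squashed premouse $L$ (agreement of coarse structure, the rudimentary hierarchy of $L|\OR^L$, the amenable coding of $F^L$ when active, the initial segment condition below $\nu^L$, Jensen's $\Sigma_0$-coherence, etc.) are $\Sigma_0$ (or at worst universal) in the language of squashed premice, and hence are preserved by $\Sigma_0$-elementary direct limits. The only slightly subtle requirement is the clause ``$\OR^L\in\Ult(L,F^L)$'' in the active case. Since each $M^\sq$ in the system is a genuine squashed type~3 premouse, there exists a distinguished element $a\in[\OR^{M^\sq}]^{<\om}$ and function $f\in M^{\sq}$ such that $\nu^M=[a,f]^{M^\unsq}_{F^{M^\unsq}}$; this fact is $\Sigma_0$ and transfers, so we get witnesses in $L$ as well. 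This will ensure $L$ is a putative squashed premouse in the sense of the definition.

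Next I would verify $\nu$-preservation of the limit maps $\pi_{ML}:M^\sq\to L$. Since $\nu$-preservation for $\pi_{MN}$ means $\psi_{\pi_{MN}}(\nu^M)=\nu^N$ (equivalently, that the image under the Shift-Lemma-induced unsquashed map sends the distinguished representative of $\nu^M$ to one for $\nu^N$), and this is a $\Sigma_0$ property of the chosen representatives $(a,f)$ in the sense above, the commutativity $\pi_{NL}\circ\pi_{MN}=\pi_{ML}$ together with the directedness of the system gives that each $\pi_{ML}$ is $\nu$-preserving; this is the standard Shift-Lemma chase.

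Finally, suppose $L$ is wellfounded and set $N=L^\unsq$. We have $\OR^L=\nu^L$, so $\nu^N=\nu^L$ is wellfounded by hypothesis. If $F^L=\emptyset$ then $N$ is type 0, and there is nothing to prove. Otherwise $N$ is declared ``type 3'' by construction of $L^\unsq$, and wellfoundedness of $\nu^N$ together with the fact that $F^N$ is coded on the squashed part with support below $\nu^N$ yields that $\nu^N\in\wfp(N,F^N)$ (any descending sequence of ordinals below $\nu^N$ would be a descending sequence in $L$). This is exactly the definition of a $3\nu$-wellfounded premouse.

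The only step requiring actual care is the first one: writing down the axioms of a (putative) squashed premouse in a form low enough in the quantifier hierarchy that $\Sigma_0$-elementarity suffices, and in particular pinning down the existence of a representative for $\nu$ in the active case. Once that is done, the second and third assertions are formal consequences of commutativity and the definitions. I expect no genuine obstacle; the lemma is a direct-limit bookkeeping statement, and its content is precisely that $\nu$-preservation is the correct hypothesis to ensure the limit behaves as a type~3 object on the squashed side.
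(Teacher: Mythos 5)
Your proof is correct and takes essentially the same approach as the paper's: both arguments reduce the matter to observing that the Shift-Lemma-induced maps $\psi_{\pi_{MN}}$ form a compatible directed system whose limit is $\Ult(L,F^L)$ with limit maps $\psi_{\pi_{ML}}$, from which the clause $\OR^L\in\Ult(L,F^L)$, the $\nu$-preservation of the $\pi_{ML}$, and the $3\nu$-wellfoundedness all follow immediately. The paper states this in a single sentence and says ``this easily suffices,'' while you unwind the same mechanism more explicitly.
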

\begin{proof}
The maps $\pi_{MN}$ induce maps
\[ \psi_{MN}=\psi_{\pi_{MN}}:\Ult(M^\sq,F^M)\to\Ult(N^\sq,F^N) \]
with $\psi_{MN}(\nu^M)=\nu^N$, and these yield a directed system with direct limit $\Ult(L,F^L)$
and direct limit maps $\psi_{ML}=\psi_{\pi_{ML}}$. This easily suffices. 
\end{proof}

***Move the following definition earlier:
\begin{dfn}
 An iteration tree $\Tt$ on an $n$-sound premouse $N$ is called \emph{sse-essentially-$n$-maximal}
 iff it satisfies the usual requirements for $n$-maximality,
 except for the monotone length condition (that is, we do not demand that $\lh(E^\Tt_\alpha)\leq\lh(E^\Tt_\beta)$ for all $\alpha+1<\beta+1<\lh(\Tt)$),
 and:
 \begin{enumerate}[label=--]
  \item $\nu(E^\Tt_\alpha)\leq\nu(E^\Tt_\beta)$ for all $\alpha+1<\beta+1<\lh(\Tt)$.
  \item If $\alpha+2<\lh(\Tt)$
 and $\lh(E^\Tt_{\alpha+1})<\lh(E^\Tt_{\alpha})$ then $E^\Tt_\alpha$
 has superstrong type
 \tu{(}that is, $\nu(E^\Tt_\alpha)=\lambda(E^\Tt_\alpha)=i_{E^\Tt_\alpha}(\crit(E^\Tt_\alpha))$\tu{)}.
 \end{enumerate}
If $\Tt$ is sse-essentially-$n$-maximal and $\alpha+1<\lh(\Tt)$,
we say that $\alpha$ is \emph{$\Tt$-stable} iff $\lh(E^\Tt_\alpha)\leq\lh(E^\Tt_\beta)$ for all $\beta+1<\lh(\Tt)$ with $\alpha<\beta$.
\end{dfn}

\begin{rem}\label{rem:nu_in_wfp}
 Let $\Tt$ be a putative sse-essentially-$m$-maximal iteration tree of length $\eta+1$
 where $\eta$ is a limit.  Let
 \[ L=\dirlim_{\alpha_0\leq_\Tt\alpha\leq_\Tt\beta<_\Tt\eta}\left<\core_0(M^\Tt_\alpha),\core_0(M^\Tt_\beta);i^\Tt_{\alpha\beta}\right>,\]
 where $\alpha_0<_\Tt\eta$ is sufficiently large that $(\alpha_0,\eta]_\Tt$ does not drop
 and $i^\Tt_{\alpha\beta}$ is $\nu$-preserving for $\alpha,\beta$ such that $\alpha_0\leq\alpha<_\Tt\beta<_\Tt\eta$.
 So \ref{lem:nu_in_wfp} applies, and we (can) therefore define $M^\Tt_\eta=L^\unsq$,
 and hence also $\core_0(M^\Tt_\eta)=L$.
 We adopt this practice in the sequel, in order to define $M^\Tt_\eta$ without assuming wellfoundedness.
 \end{rem}

\begin{cor}\label{cor:nu_in_wfp_tree}
 Let $\Tt$ be a putative sse-essentially-$m$-maximal iteration tree of length $\eta+1$ where $\eta$
 is a limit. Suppose that $M^\Tt_\eta$ is type 3
 and $L=\core_0(M^\Tt_\eta)$  is wellfounded. Then $\nu^L=\OR^L\in\wfp(\Ult(L,F^L))$,
 so $M^\Tt_\eta$ is $(\nu^L+1)$-wellfounded,
 and  $i^\Tt_{\alpha\eta}$ is $\nu$-preserving for all sufficiently large $\alpha<_\Tt\eta$.
\end{cor}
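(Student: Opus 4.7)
The plan is to derive this as a direct application of Lemma \ref{lem:nu_in_wfp} to the directed system that defines $M^\Tt_\eta$ in Remark \ref{rem:nu_in_wfp}. First I would fix $\alpha_0<_\Tt\eta$ large enough that (i) the interval $(\alpha_0,\eta)_\Tt$ contains no drops in model or degree, and (ii) for all $\alpha_0\leq\alpha<_\Tt\beta<_\Tt\eta$, the iteration map $i^\Tt_{\alpha\beta}$ is $\nu$-preserving. Such $\alpha_0$ exists by Lemma \ref{lem:nu-pres_it_map}: once past all drops, part \ref{item:non-nu-low} gives non-$\nu$-lowness of tail iteration maps, so I only need to exclude $\nu$-highness. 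If $\nu(M^\Tt_{\alpha_0})$ is $M^\Tt_{\alpha_0}$-regular then part \ref{item:nu_reg} yields $\nu$-preservation throughout the tail. If it is singular of some cofinality $\mu$, part \ref{item:when_nu-high} characterizes any $\nu$-high step $i^\Tt_{\alpha\beta}$ as arising from a measurable cardinal cofinality together with a subsequent ultrapower by that cardinal, and part \ref{item:cof_nu(M_beta)} shows that after such a step the cofinality drops; hence along any tail only finitely many $\nu$-high steps occur, and after absorbing them into a larger $\alpha_0$, all subsequent maps $i^\Tt_{\alpha\beta}$ become $\nu$-preserving by part \ref{item:nu_sing} (propagation of singular cofinality).

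With such $\alpha_0$ fixed, the models $\core_0(M^\Tt_\alpha)$ for $\alpha\in[\alpha_0,\eta)_\Tt$, together with the restrictions of $i^\Tt_{\alpha\beta}$, form a directed system of squashed type-3 premice (type-3-ness is preserved by $\nu$-preserving $\Sigma_0$-elementary maps, and $M^\Tt_\eta$ is type 3 by hypothesis) whose maps are $\nu$-preserving and $\Sigma_0$-elementary. By the construction in Remark \ref{rem:nu_in_wfp}, the direct limit of this system is precisely $L=\core_0(M^\Tt_\eta)$, and the direct-limit map from $\core_0(M^\Tt_\alpha)$ into $L$ is $i^\Tt_{\alpha\eta}\rest\core_0(M^\Tt_\alpha)$. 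Since $L$ is assumed wellfounded, Lemma \ref{lem:nu_in_wfp} applies: each direct-limit map $\pi_{\alpha L}$ is $\nu$-preserving, and $N=L^{\unsq}=M^\Tt_\eta$ is a $3\nu$-wellfounded premouse. Unpacking the definition of $3\nu$-wellfounded at type 3 gives $\nu^N\in\wfp(N,F^N)=\wfp(\Ult(L,F^L))$, and since $\nu^L=\OR^L=\nu^N$ in the type-3 squashed setting, this is exactly $\OR^L\in\wfp(\Ult(L,F^L))$, the first conclusion.

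From this the $(\nu^L+1)$-wellfoundedness of $M^\Tt_\eta$ is immediate, since every ordinal $\leq\nu^L$ lies in $\wfp(M^\Tt_\eta)$. The final clause, that $i^\Tt_{\alpha\eta}$ is $\nu$-preserving for all sufficiently large $\alpha<_\Tt\eta$, is just the $\nu$-preservation of $\pi_{\alpha L}$ supplied by Lemma \ref{lem:nu_in_wfp}, combined with the identification of $\pi_{\alpha L}$ with $i^\Tt_{\alpha\eta}\rest\core_0(M^\Tt_\alpha)$ for $\alpha\in[\alpha_0,\eta)_\Tt$. The main obstacle in this plan is the first step, namely verifying that a single $\alpha_0$ can be chosen with both the non-dropping and the uniform tail $\nu$-preservation properties; this is entirely combinatorial book-keeping with the tree, powered by the cofinality analysis of Lemma \ref{lem:nu-pres_it_map}, and requires no new fine-structural or iterability input. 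Once that is in hand, the corollary is a one-line appeal to Lemma \ref{lem:nu_in_wfp}.
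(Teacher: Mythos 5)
Your overall strategy is exactly what the paper intends: fix a tail $[\alpha_0,\eta)_\Tt$ past all drops along which the $i^\Tt_{\alpha\beta}$ ($\beta<\eta$) are $\nu$-preserving, identify $L$ with the direct limit of the squashed models along that tail as in Remark~\ref{rem:nu_in_wfp}, and invoke Lemma~\ref{lem:nu_in_wfp}. The paper gives no separate proof -- it presents the corollary as a direct consequence of the remark and lemma -- and your proposal makes the one genuinely non-automatic step (existence of $\alpha_0$) explicit, which is a good instinct.

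However, your argument for that step has a gap. From Lemma~\ref{lem:nu-pres_it_map} part~\ref{item:cof_nu(M_beta)} you get that after a $\nu$-high step $i^\Tt_{\alpha\beta}$, the new cofinality $\cof^{M^\Tt_\beta}(\nu(M^\Tt_\beta))$ is strictly below $i^\Tt_{\alpha\beta}(\kappa)$ -- i.e.\ below the \emph{image} of the old cofinality -- not below the old cofinality $\kappa$ itself. Cofinalities are not monotone along the branch: a later $\nu$-preserving map can push the cofinality back up. So ``the cofinality drops, hence only finitely many $\nu$-high steps'' does not follow. (You also cite part~\ref{item:nu_sing} to conclude $\nu$-preservation of the later maps, but that part only describes how cofinality transforms \emph{given} $\nu$-preservation; the criterion for $\nu$-preservation is part~\ref{item:when_nu-high}.)

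The correct argument uses the strict increase of critical points along the branch. After a $\nu$-high step $i^\Tt_{\alpha\beta}$ with $\xi$ least as in part~\ref{item:when_nu-high}, part~\ref{item:cof_nu(M_beta)} gives $\cof^{M^\Tt_\beta}(\nu(M^\Tt_\beta))=\crit(i^\Tt_{\xi\beta})$. Every critical point used along the branch past $\beta$ is strictly larger than $\crit(i^\Tt_{\xi\beta})$, hence strictly larger than $\cof^{M^\Tt_\beta}(\nu(M^\Tt_\beta))$; the latter is therefore fixed by every later iteration map, and by the characterization of $\nu$-highness in part~\ref{item:when_nu-high} (which requires the relevant critical point to \emph{equal} the image of the cofinality) no later step can be $\nu$-high. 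So past all drops there is in fact at most one $\nu$-high step, and $\alpha_0$ can be taken just beyond it. With this correction your proof is complete and matches the paper's route.
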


\newcommand{\reps}{\mathrm{reps}}

\begin{dfn} Let $P$ be a putative premouse. Define the \emph{representations} function 
\[ \reps^P:P\to\core_0(P) \]
as follows. If $x\in\core_0(P)$, let $\reps^P(x)=\{x\}$, and for $x\in P\cut\core_0(P)$ (hence $P$ is active type 3) let
\[ \reps^P(x)=\{(a,f)\mid x=[a,f]^P_{F^P}\}.\]

Given a putative iteration tree $\Uu$ and $\beta<\lh(\Uu)$, let 
$\reps^{\Uu}_\beta=\reps^{M^\Uu_\beta}$.
\end{dfn}

 \begin{dfn}
  Let $M$ be an $m$-sound premouse.
  
  We say that $\Tt$ is a \emph{$3\nu$-putative sse-essentially-$m$-maximal tree 
on $M$}
iff $\Tt$ is a putative sse-essentially-$m$-maximal tree on $M$, and if $\lh(\Tt)=\eta+1$ 
then $\core_0(M^\Tt_\eta)$ is wellfounded.\footnote{So by \ref{rem:nu_in_wfp},
$M^\Tt_\eta$ is well-defined and $\nu(M^\Tt_\eta)\in\wfp(M^\Tt_\eta)$.}

A \emph{$3\nu$-sse-essentially $(m,\eta)$-iteration strategy}
  $\Sigma$ for $M$ is a strategy which applies to sse-essentially-$m$-maximal trees of length $<\eta$,
  every putative tree via $\Sigma$ of length $<\eta$ has wellfounded models,
  and if $\Tt$ via $\Sigma$ has length $\eta=\gamma+1$, then $\Tt$ is
$3\nu$-putative sse-essentially-$m$-maximal.

We say that $\Tt$ is a \emph{$3\nu$-puta-putative sse-essentially $m$-maximal tree}
iff either:
\begin{enumerate}[label=(\roman*)]\item $\Tt$ is $3\nu$-putative sse-essentially $m$-maximal,
or \item $\lh(\Tt)=\eta+2$ and $\Tt\rest\eta+1$ is $3\nu$-putative sse-essentially $m$-maximal,
$M^\Tt_\eta$ is type 3 (possibly illfounded),
\[ \Tt=\Tt\rest(\eta+1)\conc\left<F^{M^\Tt_\eta}\right>, \]
with $\pred^\Tt(\eta+1)$ etc determined as usual by $m$-maximality.
\end{enumerate}

We say that $\Tt$ is \emph{$m$-relevant}
 iff $\Tt$ is either putative sse-essentially $m$-maximal,
 or $3\nu$-puta-putative sse-essentially $m$-maximal.
\end{dfn}

\subsection{Finite hulls of trees}
\begin{dfn}[Finite Support]\label{dfn:finite_support}
Let $k\leq\om$ and $\Tt$ be $k$-relevant. Let $M_\alpha=M^\Tt_\alpha$ and 
$\reps_\alpha=\reps^\Tt_\alpha$ etc.

A \emph{finite selection of $\Tt$} is a finite sequence
$\FF=\left<\FF_\theta\right>_{\theta\in J}$ of non-empty finite sets $\FF_\theta$ with $J\sub\lh(\Tt)$ and 
$\FF_\theta\sub\core_0(M_\theta)$ for each $\theta\in J$. Write $I_\FF=J$.

A \emph{finite support of $\Tt$} 
is a finite selection $\supp=\left<\supp_\alpha\right>_{\alpha\in I}$ of $\Tt$
such that:
\begin{enumerate}
 \item $0\in I$.
\item\label{item:nu_in_rg} $\reps_\alpha(\nu^{M_\alpha})\inter\supp_\alpha\neq\emptyset$
for each $\alpha\in I$.
\footnote{Note that if $\nu^{M_\alpha}$ is defined and is an element of $M_\alpha$ even if $M_\alpha$ is illfounded;
cf.~\ref{rem:nu_in_wfp}.}
\item Let $\beta+1\in I$ and $E=E_\beta$ and $\gamma=\pred(\beta+1)$ and 
$m=\deg(\beta+1)$
and $M^*=M^{*}_{\beta+1}\ins M_\gamma$.
Then:
\begin{enumerate}
\item $\beta,\gamma,\gamma+1\in I$.
\item\label{item:x_output_of_term_gens} For each $x\in\supp_{\beta+1}$ there
is $(t,a,y)$ such that $t$ is an $\rSigma_m$ term,
$a\in[\nu_E]^{<\om}$,
$\reps_\beta(a)\inter\supp_\beta\neq\emptyset$, $y\in\core_0(M^*)$, $\reps_\gamma(y)\inter\supp_\gamma\neq\emptyset$, and
\[ x=[a,f_{t,y}^{M^*}]^{M^*,m}_{E}.\]
\item If $E\in M_\beta$ then $\reps_\beta(E)\inter \supp_\beta\neq\emptyset$.
\end{enumerate}
\item\label{item:support_limit_case} Let $\alpha\in I$ be a limit ordinal (so
$0\in I\inter\alpha$) and $\beta=\max(I\inter\alpha)$. Then 
$\beta<_\Tt\alpha$ and $\beta$ is a successor
and letting $\gamma=\pred^\Tt(\beta)$,
then $(\gamma,\alpha)_\Tt$ does not drop in model or degree
and $\supp_\alpha\sub i^\Tt_{\gamma\alpha}``\supp_\gamma$.
\end{enumerate}

Let $\supp,I$ be as above.
We write
\[ I'_\supp=\{\alpha\in I\mid \alpha=\max(I)\text{ or }\alpha+1\in I\}.\]
Given a finite selection $\FF$ of $\Tt$,
we say that $\supp$ is a \emph{finite support of $\Tt$ for $\FF$}
iff $I_\FF\sub I'_\supp$ and $\FF_\beta\sub\supp_\beta$ for each $\beta\in I_\FF$.
\end{dfn}

\begin{rem}
 The above definition is mostly like that of \cite[Definition 2.7]{hsstm}, with the following small 
differences. First, we use $\reps_\alpha$ in place of $\rep_\alpha$. Second, we have added condition \ref{item:nu_in_rg}, in order to ensure that we get 
$\nu$-preserving copy maps. 
Third, condition  \cite[2.7(g)]{hsstm}
has been modified because our mice can have superstrong extenders,
unlike in \cite{hsstm}, but we have  only required that $\Ttbar$ (a finite support tree) be \emph{sse-essentially}-$m$-maximal. The point  of \cite[2.7(g)(iv)]{hsstm}  was to ensure that finite support trees have the monotone length condition, which we cannot quite achieve here.
\end{rem}

\begin{lem}\label{lem:finite_support_exists}
Let $n\leq\om$ and $\Tt$ be $n$-relevant.
Let $\FF$ be a finite selection of $\Tt$. Then there is a finite support of $\Tt$ for $\FF$.
\end{lem}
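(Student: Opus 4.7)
The plan is to prove the lemma by induction on $\max(I_\FF)$, building $\supp$ by closing $\FF$ downward through the tree, at each stage replacing the maximum index by strictly smaller ones while adding only finitely many new elements. I first normalise by enlarging each $\FF_\theta$ (for $\theta \in I_\FF$) to include some member of $\reps_\theta(\nu^{M_\theta})$ — this is possible since $\nu^{M_\theta}$ is always a well-defined element of $M_\theta$ (by Corollary \ref{cor:nu_in_wfp_tree} in the case where $M_\theta$ is a direct limit), so $\reps_\theta(\nu^{M_\theta}) \neq \emptyset$. I also adjoin $0$ to the index set with $\supp_0$ a singleton containing a $\nu$-representative. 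The base case $\max(I_\FF) = 0$ is then immediate.

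For the successor step, suppose $\max(I_\FF) = \alpha+1$. Write $\gamma = \pred^\Tt(\alpha+1)$, $m = \deg^\Tt(\alpha+1)$, $M^* = M^{*\Tt}_{\alpha+1}$, $E = E^\Tt_\alpha$. Each $x \in \FF_{\alpha+1}$ lies in $\core_0(M_{\alpha+1}) = \core_0(\Ult_m(M^*,E))$, so by the definition of the $m$-maximal ultrapower $x = [a_x, f_{t_x,y_x}^{M^*}]^{M^*,m}_E$ for some $\rSigma_m$-term $t_x$, some $a_x \in [\nu_E]^{<\om}$, and some $y_x \in \core_0(M^*)$. I form a new finite selection $\FF'$ by removing the entry at $\alpha+1$ and adjoining:
\begin{itemize}
\item to $\FF_\alpha$: some member of $\reps_\alpha(a_x)$ for each $x \in \FF_{\alpha+1}$, plus (if $E \in M_\alpha$) some member of $\reps_\alpha(E)$;
\item to $\FF_\gamma$: some member of $\reps_\gamma(y_x)$ for each $x \in \FF_{\alpha+1}$;
\item a new singleton entry at $\gamma+1$, say containing a $\nu$-representative.
\end{itemize}
Since $\gamma < \alpha$, we have $\gamma + 1 \leq \alpha$, so $\max(I_{\FF'}) \leq \alpha < \alpha+1$. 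Applying the inductive hypothesis to $\FF'$ yields a support $\supp'$ for $\FF'$; I then take $\supp = \supp' \cup \{(\alpha+1, \FF_{\alpha+1})\}$, which satisfies all the support conditions at $\alpha+1$ by construction (the decomposition data $(t_x, a_x, y_x)$ witnesses condition 3(b), and the representative of $E$ witnesses 3(c)).

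For the limit step, suppose $\max(I_\FF) = \alpha$ with $\alpha$ a limit. By Remark \ref{rem:nu_in_wfp} and Corollary \ref{cor:nu_in_wfp_tree}, there is $\gamma_0 <_\Tt \alpha$ such that $(\gamma_0,\alpha)_\Tt$ does not drop in model or degree, and $\core_0(M_\alpha)$ is the direct limit of $\langle \core_0(M_\delta) : \gamma_0 \leq_\Tt \delta <_\Tt \alpha \rangle$ under $\nu$-preserving iteration maps. Since $\FF_\alpha$ is finite, there is $\gamma \in [\gamma_0,\alpha)_\Tt$ with $\FF_\alpha \sub i^\Tt_{\gamma\alpha}``\core_0(M_\gamma)$; let $\beta$ be the $<_\Tt$-successor of $\gamma$ in $[0,\alpha]_\Tt$, so $\pred^\Tt(\beta) = \gamma$ and $\beta$ is an ordinal-successor (since $\alpha$ is a $\Tt$-limit, $\beta<\alpha$). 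Form $\FF'$ by removing $\alpha$ from the index set and adjoining entries at $\gamma$ (containing $i^\Tt_{\gamma\alpha}$-preimages of $\FF_\alpha$, plus a $\nu$-representative) and at $\beta$ (a singleton with a $\nu$-representative); then $\max(I_{\FF'}) \leq \max(I_\FF\setminus\{\alpha\}) \cup \{\beta\}$, which is strictly below $\alpha$. Apply the inductive hypothesis and adjoin $(\alpha,\FF_\alpha)$ to the resulting support.

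The main obstacle is the limit case: one must produce $\beta<\alpha$ that is simultaneously an ordinal-successor, strictly below $\alpha$ on the $\Tt$-branch, past all drops, and large enough that $i^\Tt_{\gamma\alpha}$ already captures $\FF_\alpha$. The first two requirements are automatic from $\alpha$ being a $\Tt$-limit, the third is eventual, and the fourth follows from the direct limit property together with the finiteness of $\FF_\alpha$. The only technical wrinkle is that the $m$-maximal ultrapower in the successor case can involve a drop to $M^{*\Tt}_{\alpha+1}\pins M_\gamma$, but then $y_x \in \core_0(M^*) \sub M_\gamma$ and the $\reps_\gamma(y_x)$ construction goes through unchanged; the finiteness of everything produced at each inductive step ensures the final $\supp$ is finite.
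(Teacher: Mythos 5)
Your overall strategy — a downward recursion on $\max(I_\FF)$, decomposing elements of $\FF_{\alpha+1}$ via the ultrapower representation and propagating witnesses to $\alpha$, $\gamma$, $\gamma+1$ in the successor case, and pulling $\FF_\alpha$ back through a branch embedding in the limit case — is the right shape and matches the construction for \cite[Lemma 2.8]{hsstm} that the paper cites. But there is a genuine gap in the limit step.

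You choose $\gamma\in[\gamma_0,\alpha)_\Tt$ only subject to the constraint that $\FF_\alpha\sub i^\Tt_{\gamma\alpha}``\core_0(M_\gamma)$, set $\beta=\successor^\Tt(\gamma,\alpha)$, and then apply the inductive hypothesis to $\FF'$ and adjoin $(\alpha,\FF_\alpha)$. But condition 4 of Definition~\ref{dfn:finite_support} is about $\max(I_\supp\cap\alpha)$, and nothing you have done forces this to be $\beta$. If $\max(I_\FF\setminus\{\alpha\})>\beta$ (e.g.\ $\FF$ has an entry at some $\theta$ with $\beta<\theta<\alpha$), then $\max(I_{\FF'})\geq\theta>\beta$, so whatever support $\supp'$ the induction hands back has $\max(I_{\supp'})\geq\theta$, and then $\max(I_\supp\cap\alpha)$ is some ordinal $\beta'\geq\theta$. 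There is no reason for this $\beta'$ to be a successor ordinal lying on $[0,\alpha]_\Tt$ with $\pred^\Tt(\beta')\geq_\Tt\gamma_0$, let alone for $\FF_\alpha$ to lie in $i^\Tt_{\pred^\Tt(\beta'),\alpha}``\supp_{\pred^\Tt(\beta')}$. So condition 4 can simply fail for the $\supp$ you produce. You list four requirements on $\beta$ and verify each is eventually satisfiable, but you have omitted a fifth: $\beta$ must also exceed every index in $I_\FF\setminus\{\alpha\}$. Since all five constraints hold for all sufficiently large $\beta\in[0,\alpha)_\Tt$, the fix is just to add this constraint when choosing $\gamma$ — but as written the argument does not do so, and without it the construction breaks.

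Two further points that should be made explicit for the argument to close. First, the inductive hypothesis as you invoke it does not control $\max(I_{\supp'})$; you implicitly need the strengthened statement that one can always find a support for $\FF$ with $\max(I_\supp)=\max(I_\FF)$, and this must be maintained through both cases (in particular so that $\max(I_\supp\cap\alpha)$ is pinned down to the $\beta$ you chose). Second, in the successor step $\gamma=\pred^\Tt(\alpha+1)$ can equal $\alpha$, in which case ``a new singleton entry at $\gamma+1$'' would be an entry at $\alpha+1$, the very index you just removed; when $\gamma=\alpha$ the demand $\gamma+1\in I$ of condition 3(a) is satisfied for free by $\alpha+1\in I$ and no such entry should be added.
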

\begin{proof}
This is a straightforward construction; a very similar argument is 
given for \cite[Lemma 2.8]{hsstm}.
\end{proof}

\begin{dfn}\label{dfn:pse}
Let $\Tt$ be $m$-relevant on $M$.
We say that
  \[ \Phi=(\Ttbar,\varphi,\left<\varrho_i\right>_{i<\lh(\Ttbar)})\]
  is a \emph{pre-simple tree embedding \tu{(}of $\Ttbar$\tu{)} into $\Tt$}, written
$\Phi:\Ttbar\hookrightarrow_\simple^\pre\Tt$,  iff:
 \begin{enumerate}
  \item $\Ttbar$ is an sse-essentially $m$-maximal tree on $M$, of finite length.
  \item $\varphi:\lh(\Ttbar)\to\lh(\Tt)$ and $\varphi$ preserves tree, drop and degree structure.
  More precisely:
  \begin{enumerate}
  \item $i<_\Ttbar j$ iff $\varphi(i)<_\Tt\varphi(j)$.
  \item $\varphi(0)=0$.
  \item If $\alpha+1\in(0,\varphi(j)]_\Tt\inter\dropset_\deg^\Tt$ then 
$\{\alpha,\alpha+1,\pred^\Tt(\alpha+1)\}\sub\rg(\varphi)$.
  \item $i+1\in\dropset^\Ttbar$ iff $\varphi(i+1)\in\dropset^\Tt$,
  \item $i+1\in\dropset_\deg^\Ttbar$ iff $\varphi(i+1)\in\dropset^\Tt_\deg$,
  \item $\deg^\Ttbar(i)=\deg^\Tt(\varphi(i))$.
  \end{enumerate}
  \item $\varrho_0=\id:\core_0(M)\to\core_0(M)$.
  \item\label{item:varrho_i_maps}  $\varrho_i:\core_0(M^\Ttbar_i)\to\core_0(M^\Tt_{\varphi(i)})$.
  \footnote{\label{ftn:core_0_when_illf}Cf.~\ref{dfn:...}.}
  \item\label{item:varrho_i_props} If $\varphi(i)+1<\lh(\Tt)$ then $\varrho_i$ is a 
$\nu$-preserving near $\deg^\Ttbar(i)$-embedding.\footnote{If $\Tt$
is 3$\nu$-puta-putative $m$-maximal and $\varphi(i)+2=\lh(\Tt)$
and $M^\Tt_{\varphi(i)}$ is illfounded, then ``$\nu$-preserving near $\deg^\Ttbar(i)$-embedding''
is interpreted in the obvious manner with respect to the (wellfounded) structure
$\core_0(M^\Tt_{\varphi(i)})$ (this structure is defined
as in footnote \ref{ftn:core_0_when_illf}; note that by 3$\nu$-puta-putative $m$-maximality,
$\nu(M^\Tt_{\varphi(i)})$ is in the wellfounded part of $M^\Tt_{\varphi(i)}$,
so ``$\nu$-preserving'' makes sense).}
\item $E^\Tt_{\varphi(i)}=\extcopy(\varrho_i,E^\Ttbar_i)$ for $i+1<\lh(\Ttbar)$.
\item Let $j=\pred^\Ttbar(i+1)$. Then
\[ \varphi(j)<_\Tt\varphi(i)+1\leq_\Tt\varphi(i+1)\text{ and }\varphi(j)=\pred^\Tt(\varphi(i)+1). 
\] (note that if also $i+1\in\dropset^\Ttbar_{\deg}$ then $\varphi(i)+1=\varphi(i+1)$).
  \item\label{item:pse_nu-pres_factor_it_map} $i^\Tt_{\varphi(i)+1,\varphi(i+1)}$ is $\nu$-preserving
  (note that $(\varphi(i)+1,\varphi(i+1)]_\Tt\inter\dropset^\Tt_{\deg}=\emptyset$).
\item\label{item:drop_star_matches} Let $i+1\in\dropset^\Ttbar$ and $j=\pred^\Ttbar(i+1)$.
Then
$\psi_{\varrho_j}(M^{*\Ttbar}_{i+1})=M^{*\Tt}_{\varphi(i)+1}$.
\item\label{item:varrho_agmt} If $j<k<\lh(\Ttbar)$
then $\psi_{\varrho_j}\rest\nu^\Ttbar_j\sub\varrho_k$,
and if $E^\Ttbar_j$ is type 1 or 2 then $\psi_{\varrho_j}\rest\lh(E^\Ttbar_j)\sub\varrho_k$
(and recall that $\varrho\sub\psi_\varrho$).
\item Let $j=\pred^\Ttbar(i+1)$. Then $\varrho_{i+1}=i^\Tt_{\varphi(i)+1,\varphi(i+1)}\com\varrho'$,
where $\varrho'$ is defined as in the Shift Lemma. That is, let
$n=\deg^\Ttbar(i+1)$ and $\bar{M}^*=M^{*\Ttbar}_{i+1}$ and 
$M^*=M^{*\Tt}_{i+1}$ and $\bar{E}=E^\Ttbar_i$ and $E=E^\Tt_{\varphi(i)}$.
Then for $a\in[\nu(\bar{E})]^{<\om}$ and $x\in\core_0(\bar{M}^*)$ and $t$ an $\rSigma_n$-term,
\[ \varrho'([a,f_{t,x}^{\bar{M}^*}]^{\bar{M}^*,n}_{\bar{E}})=
[\psi_{\varrho_i}(a),f_{t,\psi_{\varrho_j}(x)}^{M^*}]^{M^*,n}_{E}.\]
 \end{enumerate}
 
 We will actually verify later that conditions \ref{item:varrho_i_props},
 \ref{item:drop_star_matches} and \ref{item:varrho_agmt} follow automatically from the other conditions;
 we have stated them explicitly here for convenience.

Let $\Phi:\Ttbar\hookrightarrow_\simple^\pre\Tt$ with 
$\Phi=(\Ttbar,\varphi,\vec{\varrho}=\left<\varrho_i\right>_{i<\lh(\Ttbar)})$. Then we write 
$\varphi^\Phi=\varphi$ and $\varrho_i^\Phi=\varrho_i$.
Given $j\in(0,\lh(\Ttbar)]$,
$\Phi\rest j$ denotes $(\Ttbar\rest j,\varphi\rest j,\vec{\varrho}\rest j)$.
Let $k+1=\lh(\Ttbar)$.
We say that $\Phi$ is a \emph{simple embedding \tu{(}of $\Ttbar$\tu{)} into $\Tt$},
written $\Phi:\Ttbar\hookrightarrow_\simple\Tt$, iff $\Tt\rest(\varphi(k)+1)$ has wellfounded models and condition
\ref{item:varrho_i_props} 
holds without the hypothesis ``if $\varphi(i)+1<\lh(\Tt)$''.
\end{dfn}

\begin{lem}\label{lem:simple_pre-embedding_basic_props}
 Let $\Phi:\Ttbar\hookrightarrow_\simple^\pre\Tt$,
  let $\varphi=\varphi^\Phi$ and 
$\varrho_i=\varrho_i^\Phi$. Let $j<k<\lh(\Ttbar)=\ell+1$. Then:
 \begin{enumerate}
 \item\label{item:if_Tt_wfd} If $\Tt$ has wellfounded models then $\Phi:\Ttbar\hookrightarrow_\simple\Tt$.
\item Let $\Phi'=\Phi\rest(j+1)$ and $\Ttbar'=\Ttbar\rest(j+1)$.
 Then $\Phi':\Ttbar'\hookrightarrow_\simple^\pre\Tt$.

\item If $j<_\Ttbar k$ and $(j,k]_\Ttbar\inter\dropset^\Ttbar=\emptyset$ then 
$\varrho_k\com 
i^\Ttbar_{j,k}=i^\Tt_{\varphi(j),\varphi(k)}\com\varrho_j$.

\item\label{item:drop_comm} If $i+1\in\dropset^\Ttbar$ and $j=\pred^\Ttbar(i+1)$ then
\[ \varrho_{i+1}\com 
i^{*\Ttbar}_{i+1}=i^{\Tt}_{\varphi(i)+1,\varphi(i+1)}\com i^{*\Tt}_{\varphi(i)+1}\com\psi_{\varrho_j}\rest\core_0(M^{*\Ttbar}_{i+1}).\]
 \item\label{item:stability_reflects} If $j$ is $\Ttbar$-stable then $\varphi(j)$ is $\Tt\rest(\varphi(\ell)+1)$-stable\footnote{***Where is this defined? It means that $\lh(E^{\bar{\Tt}}_j)<\lh(E^{\bar{\Tt}}_k)$ for all $k>j$.}
 \tu{(}however, the converse can fail\tu{)}. Hence if $j+2<\lh(\Ttbar)$
 and $j$ is $\Ttbar$-stable then $\varphi(j)$ is $\Tt$-stable.
 \item\label{item:lh_pres} If $E^\Ttbar_j$ is type 1 or 2  then:
 \begin{enumerate}[label=--]
\item $\psi_{\varrho_k}\rest\lh(E^\Ttbar_j)=\varrho_k\rest\lh(E^\Ttbar_j)=\psi_{\varrho_j}\rest\lh(E^\Ttbar_j)$, and
 \item $\psi_{\varrho_k}(\lh(E^\Ttbar_j))=\varrho_k(\lh(E^\Ttbar_j))=\psi_{\varrho_j}(\lh(E^\Ttbar_j))=\lh(E^\Tt_{\varphi(j)})$.
 \end{enumerate}
 \item\label{item:agmt_below_at_nu} We have:
 \begin{enumerate}[label=(\roman*)]
  \item 
$\psi_{\varrho_k}\rest\nu^\Ttbar_j=\varrho_k\rest\nu^\Ttbar_j=\psi_{\varrho_j}\rest\nu^\Ttbar_j$,

 \item $\psi_{\varrho_k}(\alpha)=\varrho_k(\alpha)\geq\psi_{\varrho_j}(\alpha)$ for each $\alpha<\lh(E^\Ttbar_j)$,
   \item $\psi_{\varrho_j}(\nu^\Ttbar_j)=\nu^\Tt_{\varphi(j)}$,
 \item\label{item:inner_lh_pres} either:
 \begin{enumerate}[label=--]
 \item $\lh(E^{\Ttbar}_j)\in\dom(\varrho_k)$
 and $\varrho_k(\lh(E^{\Ttbar}_j))\geq\psi_{\varrho_j}(\lh(E^{\Ttbar}_j))=\lh(E^\Tt_{\varphi(j)})$, or
 \item $k=j+1$, $E^{\Ttbar}_j$ is of superstrong type, $\OR(M^{\Ttbar}_k)=\lh(E^{\Ttbar}_j)$, $M^{\Ttbar}_k$ is active type 2, and $\lh(E^{\Tt}_{\varphi(j)})\leq\OR(M^\Tt_{\varphi(k)})$.
 \end{enumerate}

 \end{enumerate}
 
\end{enumerate}
\end{lem}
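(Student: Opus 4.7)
Most parts will follow directly from the defining clauses of \ref{dfn:pse} together with standard Shift Lemma bookkeeping, so the plan is to handle them in an order that lets later parts reuse earlier ones. The only genuinely subtle points are (a) checking that the ``near $\deg^{\bar\Tt}(i)$-embedding'' conclusion really holds at the top node once $\Tt$ is assumed to have wellfounded models (part \ref{item:if_Tt_wfd}), and (b) the agreement/preservation statements at $\nu^{\bar\Tt}_j$ and at $\lh(E^{\bar\Tt}_j)$ in parts \ref{item:lh_pres} and \ref{item:agmt_below_at_nu}, where one must carefully exploit the fact that $\bar\Tt$ is only sse-essentially-$m$-maximal and so $\bar\Tt$ may contain superstrong extenders whose images in $\Tt$ sit inside initial segments of the next model.

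I would begin with part \ref{item:if_Tt_wfd}: the only hypothesis distinguishing a simple pre-embedding from a simple embedding is the restriction ``if $\varphi(i)+1<\lh(\Tt)$'' in condition \ref{item:varrho_i_props} of \ref{dfn:pse}; when every $M^\Tt_\alpha$ is wellfounded this restriction becomes vacuous, and the fact that the top copy map is then a $\nu$-preserving near $\deg^{\bar\Tt}(i)$-embedding is obtained by applying \cite{fs_tame} to the Shift Lemma step producing $\varrho_{i+1}$, using that the input map $\psi_{\varrho_j}\rest M^{*\bar\Tt}_{i+1}$ is (by induction) a $\nu$-preserving near $\deg^{\bar\Tt}(i+1)$-embedding. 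Part 2 is immediate by inspection: every clause of \ref{dfn:pse} is local at indices $<j+1$, so $\Phi\rest(j+1)$ inherits them all.

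For part 3 I would induct on $k$, splitting at successor versus limit steps of $(j,k]_{\bar\Tt}$. At a successor $k=i+1$, the computation is exactly the commutativity diagram of the Shift Lemma in combination with clause \ref{item:pse_nu-pres_factor_it_map} of \ref{dfn:pse} (the factor map $i^\Tt_{\varphi(i)+1,\varphi(k)}$ is absorbed into $\varrho_k$ by definition). At a limit $k$, commutativity is standard once it holds cofinally below. Part \ref{item:drop_comm} is then read off from the definition of $\varrho_{i+1}$ as a Shift Lemma map composed with the factor $i^\Tt_{\varphi(i)+1,\varphi(i+1)}$, combined with the fact that on the starred model one pushes forward via $\psi_{\varrho_j}$. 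Part \ref{item:stability_reflects} is the one place where the sse-essentially-$m$-maximal framework matters: if $j$ is $\bar\Tt$-stable, then $\lh(E^{\bar\Tt}_j)\leq\lh(E^{\bar\Tt}_k)$ for all $k>j$, and applying parts \ref{item:lh_pres}, \ref{item:agmt_below_at_nu} (proved below) and $\nu$-preservation of the intermediate iteration maps translates these inequalities to $\lh(E^\Tt_{\varphi(j)})\leq\lh(E^\Tt_{\varphi(k)})$ within $\Tt\rest(\varphi(\ell)+1)$; the converse can fail precisely because the superstrong exception in sse-essential $m$-maximality need not be witnessed at the finite-support side.

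Finally for parts \ref{item:lh_pres} and \ref{item:agmt_below_at_nu} I would argue by induction on $k>j$, using clause \ref{item:varrho_agmt} of \ref{dfn:pse} to get the base case (which already subsumes the agreement of $\psi_{\varrho_j}$ and $\varrho_k$ below $\nu^{\bar\Tt}_j$, and below $\lh(E^{\bar\Tt}_j)$ when $E^{\bar\Tt}_j$ is type 1 or 2). The inductive step at a successor needs only to check that applying a further Shift Lemma above $\varphi(k)$ does not move ordinals $<\lh(E^\Tt_{\varphi(j)})$, which holds because the critical point of the next extender used is $\geq\nu^{\bar\Tt}_{k}\geq\nu^{\bar\Tt}_j$, and its image under $\psi_{\varrho_k}$ is $\geq\nu^\Tt_{\varphi(k)}\geq\nu^\Tt_{\varphi(j)}$. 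The main obstacle is part \ref{item:inner_lh_pres} of \ref{item:agmt_below_at_nu}: here one must distinguish the standard case (in which $\lh(E^{\bar\Tt}_j)$ is honestly represented inside $\core_0(M^{\bar\Tt}_k)$ and its image is $\lh(E^\Tt_{\varphi(j)})$) from the superstrong exception listed explicitly in the statement, which corresponds to the situation in sse-essential $m$-maximality where $E^{\bar\Tt}_j$ is superstrong and $M^{\bar\Tt}_{j+1}$ is a type 2 premouse cut off at the image of its critical point. In that exceptional case one verifies that the copy of this type 2 structure produced by Shift Lemma already sits within $M^\Tt_{\varphi(k)}$, so $\lh(E^\Tt_{\varphi(j)})\leq\OR(M^\Tt_{\varphi(k)})$ as claimed.
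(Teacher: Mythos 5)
Your treatment of parts 1--4, 6 and 7 tracks the paper's proof in spirit: the paper also dismisses parts 1--4 as ``mostly follow[ing] easily from the definitions,'' with the only real content at the top node being the appeal to \cite{fs_tame} for near-elementarity and the $\nu$-preservation machinery of \S\ref{sec:nu-pres}, and it likewise treats parts \ref{item:lh_pres} and \ref{item:agmt_below_at_nu} as ``quite straightforward'' consequences of clause \ref{item:varrho_agmt} plus Shift Lemma bookkeeping. So far so good.

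Your argument for part \ref{item:stability_reflects}, however, has a genuine gap. You propose to push the inequalities $\lh(E^{\Ttbar}_j)\leq\lh(E^{\Ttbar}_k)$ through the maps to obtain $\lh(E^\Tt_{\varphi(j)})\leq\lh(E^\Tt_{\varphi(k)})$ for $k>j$. But $\Tt\rest(\varphi(\ell)+1)$-stability of $\varphi(j)$ asserts $\lh(E^\Tt_{\varphi(j)})\leq\lh(E^\Tt_\alpha)$ for \emph{every} $\alpha>\varphi(j)$ with $\alpha+1\leq\varphi(\ell)$, and there are typically many such $\alpha$ outside $\rg(\varphi)$. Your translation is silent on those. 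The dangerous case is exactly $\alpha=\varphi(j)+1$ with $\varphi(j)+1\notin\rg(\varphi)$: since $\Tt$ is only sse-essentially-$m$-maximal, one can have $E^\Tt_{\varphi(j)}$ of superstrong type and $\lh(E^\Tt_{\varphi(j)+1})<\lh(E^\Tt_{\varphi(j)})$ even while all images $\varphi(k)$ carry longer extenders. The inequalities at $\rg(\varphi)$ therefore do not rule out failure of stability.

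The paper handles this by a direct contradiction argument concentrated at precisely that index. It assumes $j$ is $\Ttbar$-stable but $\varphi(j)$ is not $\Tt\rest(\varphi(\ell)+1)$-stable, deduces $\lh(E^\Tt_{\varphi(j)+1})<\lh(E^\Tt_{\varphi(j)})$ (so $E^\Tt_{\varphi(j)}$ is superstrong and $\nu(E^\Tt_{\varphi(j)})=\lambda(E^\Tt_{\varphi(j)})<\lh(E^\Tt_{\varphi(j)+1})$), and then uses $\varphi(j)+1\leq_\Tt\varphi(j+1)$ together with the no-drop/$\nu$-preservation clauses of \ref{dfn:pse} on $(\varphi(j)+1,\varphi(j+1)]_\Tt$ --- aided, when $j+1<\ell$, by part \ref{item:agmt_below_at_nu}\ref{item:inner_lh_pres} at $k=j+1$ --- to force a drop along $(\varphi(j)+1,\varphi(j+1)]_\Tt$, a contradiction. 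You should replace the ``translate inequalities across $\rg(\varphi)$'' step by this localized contradiction argument; without it, part \ref{item:stability_reflects} is not established.
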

\begin{proof}
Parts \ref{item:if_Tt_wfd}--\ref{item:drop_comm} mostly follow easily from the definitions.
However, regarding part \ref{item:if_Tt_wfd} and condition \ref{item:varrho_i_props} of \ref{dfn:pse}  in case $\varphi(i)+1=\lh(\Tt)$:
\begin{enumerate}[label=--]
 \item to see that $\varrho_i$ is $\rSigma_{\deg^{\Ttbar}_i+1}$-elementary, argue essentially as in \cite{fs_tame};
 \item to see that $\varrho_i$ is $\nu$-preserving, argue as in \S\ref{sec:nu-pres};
 in particular see Lemmas \ref{lem:nu-pres_it_map} and \ref{lem:nu-pres_copy_map}.
\end{enumerate}
Parts \ref{item:lh_pres}
and \ref{item:agmt_below_at_nu}
are quite straightforward.
Let us consider part
\ref{item:stability_reflects}. We may assume that $\lh(\Tt)=\varphi(\ell)+1$.
We have $j<\ell$. 
Suppose $j$ is $\bar{\Tt}$-stable but $\varphi(j)$ is not $\Tt$-stable.
So
\begin{enumerate}[label=--]
 \item if $j+1<\ell$ then $\lh(E^{\bar{\Tt}}_{j})\leq\lh(E^{\bar{\Tt}}_{j+1})$, but
 \item $\varphi(j)+1<\varphi(\ell)$ and $\lh(E^\Tt_{\varphi(j)+1})<\lh(E^\Tt_{\varphi(j)})$.
 \end{enumerate}

Suppose $j+1=\ell$. So $\varphi(j)+1<^\Tt\varphi(j+1)=\varphi(\ell)$ and
\[ (\varphi(j)+1,\varphi(j+1)]_\Tt\inter\dropset^\Tt=\emptyset.\]
But since $\varphi(j)$ is non-$\Tt$-stable, 
\[ \nu=\nu(E^\Tt_{\varphi(j)})=\lambda(E^\Tt_{\varphi(j)})=\lgcd(\exit^\Tt_{\varphi(j)})<\lh(E^\Tt_{\varphi(j)+1})<\lh(E^\Tt_{\varphi(j)}), \]
and since $\varphi(j)+1<_\Tt\varphi(j+1)$, we have $\nu\leq\crit(i^\Tt_{\varphi(j)+1,\varphi(j+1)})$,
but then easily $\Tt$ drops at $\successor^\Tt(\varphi(j)+1,\varphi(j+1))$, contradiction.

Now suppose $j+1<\ell$, so $\lh(E^{\bar{\Tt}}_j)\leq\lh(E^{\bar{\Tt}}_{j+1})$
but $\lh(E^\Tt_{\varphi(j)+1})<\lh(E^\Tt_{\varphi(j)})$. 
So by part \ref{item:agmt_below_at_nu}\ref{item:inner_lh_pres} for $k=j+1$, clearly $\varphi(j)+1<^\Tt\varphi(j+1)$.
But now we reach a contradiction like the previous case.
\end{proof}

\begin{lem}\label{lem:support_determines_pse}
 Let $M$ be $m$-sound and $\Tt$ be $m$-relevant on $M$. Then:
\begin{enumerate}
\item\label{item:I_determines_Phi} Let $I'\sub\lh(\Tt)$ be finite. Then there is at most one pair 
$(\Ttbar,\Phi)$ such that 
$\Phi:\Ttbar\hookrightarrow_\simple^\pre\Tt$ and $\rg(\varphi^\Phi)=I'$.
\item\label{item:Phi_existence}
 Let $\supp=\left<\supp_\alpha\right>_{\alpha\in I}$ be a finite support of $\Tt$.
 Then there is $(\Ttbar,\Phi)$ such that 
$\Phi:\Ttbar\hookrightarrow_\simple^\pre\Tt$
and $\rg(\varphi^\Phi)=I'_\supp$ \tu{(}recall that $I'_\supp\psub I_\supp$ if $I_\supp$ includes a limit ordinal\tu{)}. Moreover, 
$\supp_{\varphi^\Phi(i)}\sub\rg(\varrho_i)$ for each $i<\lh(\Ttbar)$.
\end{enumerate}
\end{lem}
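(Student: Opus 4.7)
For part \ref{item:I_determines_Phi}, I will argue by induction on $|I'|$ that $\bar{\Tt}$, $\varphi^\Phi$, and the maps $\varrho^\Phi_i$ are all forced. Enumerate $I'$ in increasing order as $\alpha_0<\alpha_1<\dots<\alpha_\ell$; then clause (2) of Definition \ref{dfn:pse} forces $\lh(\bar{\Tt})=\ell+1$, $\varphi^\Phi(i)=\alpha_i$, and the tree, drop, and degree structures on $\bar{\Tt}$. Set $\varrho_0=\id$, which is forced. Given $\varrho_i$, condition 6 of \ref{dfn:pse} determines $E^{\bar{\Tt}}_i$ uniquely as the unique extender in $\core_0(M^{\bar{\Tt}}_i)$ whose image under $\extcopy(\varrho_i,\cdot)$ is $E^\Tt_{\varphi(i)}$ (existence is the content of part 2; here we use only uniqueness, which follows from $\varrho_i$ being $\in$-preserving on the relevant segment). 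The model $M^{*\bar{\Tt}}_{i+1}$ is then forced by condition \ref{item:drop_star_matches}, since $\psi_{\varrho_j}$ is injective, and $\varrho_{i+1}$ is forced by the Shift Lemma formula in clause 11 together with condition \ref{item:pse_nu-pres_factor_it_map}.

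For part \ref{item:Phi_existence}, I will construct $(\bar{\Tt},\Phi)$ by induction along $I'_\supp$ enumerated in increasing order as $\alpha_0<\dots<\alpha_\ell$, maintaining the invariants of Definition \ref{dfn:pse} on $\bar{\Tt}\rest(i+1)$ together with $\supp_{\alpha_i}\sub\rg(\varrho_i)$. The base case $i=0$ is immediate. For the successor step, I use condition \ref{item:x_output_of_term_gens} of \ref{dfn:finite_support}: every element of $\supp_{\alpha_i+1}$ is represented by a term over $M^{*\Tt}_{\alpha_i+1}$ applied to generators of $E^\Tt_{\alpha_i}$ taken from $\supp_{\alpha_i}$ and parameters from $\supp_{\pred^\Tt(\alpha_i+1)}$, all of which lie in the range of the relevant $\varrho$'s by induction; combined with condition (3c) of \ref{dfn:finite_support}, this lets me pull back $E^\Tt_{\alpha_i}$ to $E^{\bar{\Tt}}_i$ and apply the Shift Lemma to obtain $\varrho_{i+1}$. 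When $\alpha_{i+1}>\alpha_i+1$, condition \ref{item:pse_nu-pres_factor_it_map} follows from \ref{lem:nu-pres_it_map}: the iteration map $i^\Tt_{\alpha_i+1,\alpha_{i+1}}$ is $\nu$-preserving because $(\alpha_i+1,\alpha_{i+1}]_\Tt$ does not drop and $\nu^{M^\Tt_{\alpha_i+1}}$ is represented in $\supp$ (hence preserved). For the limit step (when $\alpha_i$ is a limit of $\bar{\Tt}$), condition \ref{item:support_limit_case} of \ref{dfn:finite_support} identifies the correct tree predecessor and ensures the compatibility of the direct-limit map with the previous $\varrho$'s.

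The verification that $\bar{\Tt}$ is sse-essentially $m$-maximal is the technically delicate point. The monotone-$\nu$ condition for $\bar{\Tt}$ follows from the monotone-$\nu$ condition for $\Tt$ together with $\nu$-preservation: $\psi_{\varrho_i}(\nu^{\bar{\Tt}}_i)=\nu^\Tt_{\varphi(i)}$ by \ref{lem:simple_pre-embedding_basic_props}(\ref{item:agmt_below_at_nu})(iii), and $\varrho_{i+1}$ restricted to $\nu^{\bar{\Tt}}_i$ equals $\psi_{\varrho_i}\rest\nu^{\bar{\Tt}}_i$ by clause \ref{item:varrho_agmt}, which forces $\nu^{\bar{\Tt}}_i\leq\nu^{\bar{\Tt}}_{i+1}$ from the corresponding inequality in $\Tt$. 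The superstrong clause is handled similarly, using clause \ref{item:agmt_below_at_nu}\ref{item:inner_lh_pres} of \ref{lem:simple_pre-embedding_basic_props}, which accommodates exactly the exceptional case in which $\bar{\Tt}$ may have a ``length drop'' without a corresponding drop in $\Tt$, namely when $E^{\bar{\Tt}}_i$ is superstrong. The fact that $\varrho_i$ is a near $\deg^{\bar{\Tt}}(i)$-embedding propagates through the copying as in \cite{fs_tame}, since $\varrho_0=\id$ is trivially a near $m$-embedding; $\nu$-preservation of $\varrho_{i+1}$ follows from \ref{lem:nu-pres_copy_map} applied to the one-step copying via the Shift Lemma, using that $\varrho_i$ is $\nu$-preserving by induction and that $i^\Tt_{\varphi(i)+1,\varphi(i+1)}$ is $\nu$-preserving. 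The main obstacle is bookkeeping all these invariants simultaneously, particularly verifying sse-$m$-maximality and the correct treatment of limit steps; but each obstacle is handled by a clause explicitly built into \ref{dfn:finite_support}.
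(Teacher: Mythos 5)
Your Part 1 argument is fine and matches the paper's "easy" disposition. For Part 2, the high-level plan — inductive construction along $I'_\supp$ guided by clauses of Definition \ref{dfn:finite_support}, closely following the template of \cite[Lemma 2.9]{hsstm} — is correct. But there is a genuine gap in your treatment of $\nu$-preservation, and a couple of subsidiary points you gloss.

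The gap: you try to verify condition \ref{item:pse_nu-pres_factor_it_map} of Definition \ref{dfn:pse} (that $i^\Tt_{\varphi(i)+1,\varphi(i+1)}$ is $\nu$-preserving) \emph{first}, citing \ref{lem:nu-pres_it_map} and asserting that it holds "because $\nu^{M^\Tt_{\alpha_i+1}}$ is represented in $\supp$ (hence preserved)," and then use \ref{lem:nu-pres_copy_map} to propagate $\nu$-preservation to $\varrho_{i+1}$. This does not follow: condition \ref{item:nu_in_rg} of \ref{dfn:finite_support} says only that a representative of $\nu^{M^\Tt_{\alpha_i+1}}$ lies \emph{in} the finite set $\supp_{\alpha_i+1}$; that fact says nothing about how that ordinal is moved by the iteration map $i^\Tt_{\alpha_i+1,\alpha_{i+1}}$, and Lemma \ref{lem:nu-pres_it_map} gives a characterization in terms of cofinalities and measurability, not in terms of whether $\nu$ appears in a support set. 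The paper's route is in the opposite order and does not have this problem: one first shows that the \emph{composite} map $\varrho_{i+1}$ is $\nu$-preserving, using the support condition \ref{item:nu_in_rg} at the \emph{target} node $\varphi(i+1)=\alpha_{i+1}$ (which, combined with the maintained invariant $\supp_{\alpha_{i+1}}\sub\rg(\varrho_{i+1})$, rules out $\nu$-high) together with the $\rSigma_1$-elementarity of $\varrho_{i+1}$ (which rules out $\nu$-low, by \ref{lem:nu-pres_elem}). Since $\varrho_{i+1}=i^\Tt_{\varphi(i)+1,\varphi(i+1)}\com\varrho'$ and both factors are non-$\nu$-low (the first by \ref{lem:nu-pres_it_map}\ref{item:non-nu-low}, the second by elementarity), the $\nu$-preservation of $\varrho_{i+1}$ \emph{forces} $\nu$-preservation of each factor, so condition \ref{item:pse_nu-pres_factor_it_map} comes for free. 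You should reorganize so that condition \ref{item:pse_nu-pres_factor_it_map} is a consequence, not a prerequisite.

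Two smaller points. First, your citation of \cite{fs_tame} to propagate the near-embedding property is incomplete: that argument is for ($k$-maximal trees with) the increasing-length condition, whereas $\bar{\Tt}$ and $\Tt$ are only (sse-)essentially $m$-maximal; the paper handles this by restricting attention to the $\Tt$-stable nodes and invoking \ref{lem:simple_pre-embedding_basic_props}\ref{item:stability_reflects}. Second, your verification of the superstrong clause of sse-essential $m$-maximality is written as "handled similarly" and leans on \ref{lem:simple_pre-embedding_basic_props}\ref{item:agmt_below_at_nu}\ref{item:inner_lh_pres}, but that lemma is a consequence of \emph{already having} a pre-simple embedding; the paper instead gives a direct argument from the limit clause of \ref{dfn:finite_support}, showing that a "length drop" in $\bar{\Tt}$ (at $\bar\alpha=\bar\beta+1$ a limit node of $\bar{\Tt}$) forces $\lambda(E^\Tt_{\varphi(\bar\beta)})<\lh(\sigma(E^{\bar{\Tt}}_{\bar\alpha}))$, whence $E^\Tt_{\varphi(\bar\beta)}$ is superstrong and then, by $\nu$-preservation of $\varrho_{\bar\beta}$, so is $E^{\bar{\Tt}}_{\bar\beta}$. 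You should either reproduce that computation or make clear that the cited lemma is being applied to the already-verified initial segment $\Phi\rest(\bar\alpha+1)$.
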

\begin{proof}[Proof Sketch]
Part \ref{item:I_determines_Phi} is easy. For part \ref{item:Phi_existence}, most of the proof is 
as in 
\cite[Lemma 2.9]{hsstm}. (As in \cite{hsstm}, the fact that $\varrho_i$ is a near $\deg^{\Ttbar}(i)$-embedding (not just a weak $\deg^{\Ttbar}(i)$-embedding) is proved as in \cite{fs_tame};
we may assume that $\bar{\Tt},\Tt$ have the increasing length condition for this (that is, $\lh(E^{\Ttbar}_\alpha)\leq\lh(E^{\Ttbar}_\beta)$ for all $\alpha+1<\beta+1<\lh(\Ttbar)$, and likewise for $\Tt$), which
is the case in \cite{fs_tame},
by restricting our attention to stable nodes and applying Lemma \ref{lem:simple_pre-embedding_basic_props} part \ref{item:stability_reflects}. We
do  have one requirement
which was not present in \cite{hsstm}: that $\varrho_i$ be 
$\nu$-preserving. But note that by condition \ref{item:nu_in_rg} of \ref{dfn:finite_support}, 
$\varrho_i$ is not $\nu$-high, and by the $\rSigma_1$-elementarity of $\varrho_i$, $\varrho_i$ is 
not $\nu$-low.

The fact that $\Ttbar$ is sse-essentially-$m$-maximal is verified as follows.
Let $\alpha\in I=I_\supp$ be a limit ordinal
and $\beta+1=\max(I\inter\alpha)$ and $\gamma=\pred^\Tt(\beta+1)$.
So $(\gamma,\alpha)_\Tt$ does not drop in model or degree
and $\supp_\alpha\sub i^\Tt_{\gamma\alpha}``\supp_\gamma$.
We have $\gamma,\beta,\alpha\in\rg(\varphi^\Phi)$
and $\beta=\max(\rg(\varphi^\Phi)\inter\alpha)$.
Write $\varphi^\Phi(\bar{\gamma})=\gamma$ etc.
 So $\bar{\beta}+1=\bar{\alpha}$.
Suppose $\bar{\alpha}+1<\lh(\Ttbar)$ and $\lh(E^\Ttbar_{\bar{\alpha}})<\lh(E^\Ttbar_{\bar{\beta}})$.
Let $\sigma=(i^\Tt_{\beta+1,\alpha})^{-1}\com\varrho^\Phi_{\bar{\alpha}}$,
so $\sigma:M^\Ttbar_{\bar{\alpha}}\to M^\Tt_{\beta+1}$ results from the Shift Lemma.
Since $\varrho^\Phi_{\bar{\beta}}$ is $\nu$-preserving, therefore
$\lh(\sigma(E^\Ttbar_{\bar{\alpha}}))<\lh(E^\Tt_{\beta})$.
But $E^\Tt_\alpha\in\supp_\alpha\sub i^\Tt_{\gamma\alpha}``\supp_\gamma$,
so $\sigma(E^\Ttbar_{\bar{\alpha}})\in\rg(i^\Tt_{\gamma,\beta+1})$.
Since $i^\Tt_{\beta+1,\alpha}(\sigma(E^\Ttbar_{\bar{\alpha}}))=E^\Tt_\alpha$,
it follows that $\lambda(E^\Tt_{\beta})<\lh(\sigma(E^\Ttbar_{\bar{\alpha}}))$,
 so $E^\Tt_{\beta}$ is of superstrong type.
But $\varrho^\Phi_{\bar{\beta}}$ is $\nu$-preserving,
so $E^\Ttbar_{\bar{\beta}}$ is also superstrong, which easily suffices.
\end{proof}

The following lemma is a straightforward matter of diagram chasing, which we leave to the reader:
\begin{lem}\label{lem:factor_pse}
Let $M$ be $m$-sound and $\Tt$ be sse-essentially $m$-maximal on $M$.
 Let $\Phi:\Ss\hookrightarrow_\simple\Tt$ and $J=\rg(\varphi^\Phi)$.
 Let $I\sub J$ and $\bar{I}=(\varphi^\Phi)^{-1}(I)$.
Then
 \[ \ex\Rr,\Psi[\Psi:\Rr\hookrightarrow_\simple\Tt\text{ and }\rg(\varphi^\Psi)=I]\]
 \[ \iff\ex\Rr',\bar{\Psi}[\bar{\Psi}:\Rr'\hookrightarrow_\simple\Ss\text{ and 
}\rg(\varphi^{\bar{\Psi}})=\bar{I}].\] For such $\Rr,\Psi,\Rr',\bar{\Psi}$,
we have $\Rr=\Rr'$, $\varphi^{\Psi}=\varphi^{\Phi}\com\varphi^{\bar{\Psi}}$
and $\varrho^{\Psi}_i=\varrho^{\Phi}_{\varphi^{\bar{\Psi}}(i)}\com\varrho_i^{\bar{\Psi}}$.
\end{lem}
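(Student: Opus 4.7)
The plan is to prove the two directions of the equivalence separately, with the implication from right-to-left being essentially a composition statement, and the implication from left-to-right requiring an inductive construction of the factor embedding.

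For the ($\Leftarrow$) direction, given $\bar\Psi:\Rr'\hookrightarrow_\simple\Ss$, I would define $\Psi$ by setting $\varphi^\Psi=\varphi^\Phi\circ\varphi^{\bar\Psi}$ and $\varrho^\Psi_i=\varrho^\Phi_{\varphi^{\bar\Psi}(i)}\circ\varrho^{\bar\Psi}_i$, and verify each clause of Definition~\ref{dfn:pse}. The tree/drop/degree preservation composes, since both $\Phi$ and $\bar\Psi$ preserve these; near $\deg$-embeddings compose to near embeddings of the appropriate degree (cf.~\cite{fs_tame}); $\nu$-preservation composes; extender copying commutes ($\extcopy(\varrho^\Psi_i,E^{\Rr'}_i)=\extcopy(\varrho^\Phi_{\varphi^{\bar\Psi}(i)},\extcopy(\varrho^{\bar\Psi}_i,E^{\Rr'}_i))=\extcopy(\varrho^\Phi_{\varphi^{\bar\Psi}(i)},E^\Ss_{\varphi^{\bar\Psi}(i)})=E^\Tt_{\varphi^\Psi(i)}$); and the Shift Lemma commutes with the ambient iteration maps (since $\varrho^\Phi\circ i^\Ss=i^\Tt\circ\varrho^\Phi$ on the relevant models, using condition~\ref{item:pse_nu-pres_factor_it_map} of Definition~\ref{dfn:pse}).

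For the ($\Rightarrow$) direction, given $\Psi:\Rr\hookrightarrow_\simple\Tt$ with $\rg(\varphi^\Psi)=I$, I would set $\varphi^{\bar\Psi}=(\varphi^\Phi)^{-1}\circ\varphi^\Psi:\lh(\Rr)\to\bar I$, and then construct $\varrho^{\bar\Psi}_i:\core_0(M^\Rr_i)\to\core_0(M^\Ss_{\varphi^{\bar\Psi}(i)})$ by induction on $i$, maintaining the inductive hypothesis that $\varrho^{\bar\Psi}_i$ is a $\nu$-preserving near $\deg^\Rr(i)$-embedding with $\varrho^\Phi_{\varphi^{\bar\Psi}(i)}\circ\varrho^{\bar\Psi}_i=\varrho^\Psi_i$. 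The base case $i=0$ is trivial since all three maps $\varrho^\Psi_0,\varrho^\Phi_0,\varrho^{\bar\Psi}_0$ are identity. For the successor step, let $j=\pred^\Rr(i+1)$; the tree-order facts for $\bar\Psi$ (namely $\varphi^{\bar\Psi}(j)<_\Ss\varphi^{\bar\Psi}(i)+1\leq_\Ss\varphi^{\bar\Psi}(i+1)$ and $\pred^\Ss(\varphi^{\bar\Psi}(i)+1)=\varphi^{\bar\Psi}(j)$, together with the matching of drop/degree data) follow by pulling back the corresponding facts for $\Psi$ through the tree/drop/degree-preserving $\varphi^\Phi$. The extender identity $\extcopy(\varrho^{\bar\Psi}_i,E^\Rr_i)=E^\Ss_{\varphi^{\bar\Psi}(i)}$ follows from applying $\extcopy$ through the factorization: $\extcopy(\varrho^\Phi_{\varphi^{\bar\Psi}(i)},E^\Ss_{\varphi^{\bar\Psi}(i)})=E^\Tt_{\varphi^\Psi(i)}=\extcopy(\varrho^\Psi_i,E^\Rr_i)=\extcopy(\varrho^\Phi_{\varphi^{\bar\Psi}(i)},\extcopy(\varrho^{\bar\Psi}_i,E^\Rr_i))$, using the inductive hypothesis and injectivity of $\extcopy$. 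One then applies the Shift Lemma inside $\Ss$ to produce a provisional map, composes with $i^\Ss_{\varphi^{\bar\Psi}(i)+1,\varphi^{\bar\Psi}(i+1)}$ to define $\varrho^{\bar\Psi}_{i+1}$, and verifies commutativity with $\varrho^\Psi_{i+1}$ by Shift Lemma functoriality. The $\nu$-preservation of $i^\Ss_{\varphi^{\bar\Psi}(i)+1,\varphi^{\bar\Psi}(i+1)}$ is deduced from the $\nu$-preservation of $i^\Tt_{\varphi^\Psi(i)+1,\varphi^\Psi(i+1)}$ (known from $\Psi$), of $\varrho^\Phi_{\varphi^{\bar\Psi}(i)+1}$ and $\varrho^\Phi_{\varphi^{\bar\Psi}(i+1)}$, together with commutativity $i^\Tt\circ\varrho^\Phi=\varrho^\Phi\circ i^\Ss$ and Lemma~\ref{lem:nu-pres_it_map} (iteration maps on non-dropping non-dropping branches are non-$\nu$-low). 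At limit $i=\lambda$ with $\varphi^{\bar\Psi}(\lambda)=\bar\lambda$, the map $\varrho^{\bar\Psi}_\lambda$ is determined by continuity, commuting with the iteration maps from a tail of $[0,\lambda)_\Rr$ and with the corresponding iteration maps in $\Ss$.

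The main (very modest) obstacle is carefully verifying that the tree/drop/degree structure on $\Ss$ that is needed for $\bar\Psi$ to be pre-simple (e.g., $(\varphi^{\bar\Psi}(i)+1,\varphi^{\bar\Psi}(i+1)]_\Ss\cap\dropset^\Ss_{\deg}=\emptyset$, or the preservation of $M^{*\bar\Psi}_{i+1}$) is faithfully obtained from the corresponding data in $\Tt$ via $\Phi$; this follows readily from the fact that $\Phi$ itself is a pre-simple embedding and thus is a bijection (between $\rg(\varphi^\Phi)$ and $\lh(\Ss)$) preserving all structure. Once $\bar\Psi$ is shown to be a pre-simple embedding, the identities $\Rr=\Rr'$, $\varphi^\Psi=\varphi^\Phi\circ\varphi^{\bar\Psi}$, and $\varrho^\Psi_i=\varrho^\Phi_{\varphi^{\bar\Psi}(i)}\circ\varrho^{\bar\Psi}_i$ follow immediately: uniqueness of $\Rr$ from Lemma~\ref{lem:support_determines_pse}(\ref{item:I_determines_Phi}) applied to $\Ss$ in place of $\Tt$, and the commutativity formulas from the construction.
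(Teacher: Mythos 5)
The paper explicitly states that this lemma ``is a straightforward matter of diagram chasing, which we leave to the reader,'' so there is no official proof to compare against. Your proposal is the expected and correct approach, and fills in the gap reasonably: composition for the ($\Leftarrow$) direction, inductive factorization for the ($\Rightarrow$) direction, with the tree/drop/degree structure pulled back through the structure-preserving map $\varphi^\Phi$.

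One point worth being careful about, since you sketch it somewhat quickly: in the ($\Rightarrow$) direction, establishing condition~\ref{item:pse_nu-pres_factor_it_map} of Definition~\ref{dfn:pse} for $\bar\Psi$ --- that $i^\Ss_{\varphi^{\bar\Psi}(i)+1,\varphi^{\bar\Psi}(i+1)}$ is $\nu$-preserving --- is not a direct pullback, because $\varphi^\Phi(\varphi^{\bar\Psi}(i)+1)$ need not equal $\varphi^\Phi(\varphi^{\bar\Psi}(i))+1 = \varphi^\Psi(i)+1$. One way to finish: non-$\nu$-lowness of the iteration map is automatic (Lemma~\ref{lem:nu-pres_it_map}(\ref{item:non-nu-low})), so only $\nu$-highness needs to be ruled out. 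For that, use that the $\varrho^\Phi_{\cdot}$ maps are all $\nu$-preserving, decompose $i^\Tt_{\varphi^\Psi(i)+1,\varphi^\Psi(i+1)}$ at the intermediate node $\varphi^\Phi(\varphi^{\bar\Psi}(i)+1)$ (the first factor is $\nu$-preserving by $\Phi$'s condition~\ref{item:pse_nu-pres_factor_it_map}), and then transport $\nu$-highness through the commuting square given by Lemma~\ref{lem:simple_pre-embedding_basic_props}(3) applied to $\Phi$ on the branch segment $[\varphi^{\bar\Psi}(i)+1,\varphi^{\bar\Psi}(i+1)]_\Ss$; since $\nu$-high $\circ$ $\nu$-preserving is $\nu$-high, and $\nu$-preserving $\circ$ ($\nu$-high) is $\nu$-high, a $\nu$-high $i^\Ss$ would force $i^\Tt_{\varphi^\Psi(i)+1,\varphi^\Psi(i+1)}$ to be $\nu$-high, contradicting $\Psi$'s condition. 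With that spelled out, the rest of your argument is sound.
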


\begin{lem}\label{lem:simple_emb_iterate_end}
 Let $\Phi:\Ttbar\hookrightarrow_\simple^\pre\Tt$ and
$\varphi=\varphi^\Phi$. Suppose $\lh(\Ttbar)=k+2$ and let 
$\eta=\varphi(k+1)$.
Let $\beta<\lh(\Tt)$ be such that either
\begin{enumerate}[label=\tu{(}\roman*\tu{)}]
 \item\label{item:option_i}
$\eta\leq_\Tt\beta$ and $(\eta,\beta]_\Tt\inter\dropset_\deg^\Tt=\emptyset$
and $i^\Tt_{\eta\beta}$ is $\nu$-preserving, or
\item\label{item:option_ii}
$\varphi(k)+1\leq_\Tt\beta\leq_\Tt\eta$.
\end{enumerate}
Then there is \tu{(}a unique\tu{)} $\Phi'$ such that 
\[ \Phi':\Ttbar\hookrightarrow^\pre_\simple\Tt\text{ and }\rg(\varphi^{\Phi'})=\rg(\varphi)\un\{\beta\}\cut\{\eta\}.\]
Moreover, $\Phi'\rest(k+1)=\Phi\rest(k+1)$,
\[ \text{if 
}\beta\leq_\Tt\eta\text{ then 
}\varrho_{k+1}^{\Phi'}=(i^\Tt_{\beta\eta})^{-1}\com\varrho_{k+1}^\Phi, \]
\[ 
\text{if }\eta\leq_\Tt\beta\text{ 
then }\varrho_{k+1}^{\Phi'}=i^\Tt_{\eta\beta}\com\varrho_{k+1}^\Phi.\]
\end{lem}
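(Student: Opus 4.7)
The plan is to construct $\Phi'$ by modifying $\Phi$ only at the top index. Set $\varphi^{\Phi'}\rest(k+1)=\varphi\rest(k+1)$ and $\varrho_i^{\Phi'}=\varrho_i^\Phi$ for $i\leq k$, put $\varphi^{\Phi'}(k+1)=\beta$, and define $\varrho_{k+1}^{\Phi'}$ by the two formulas stated. Uniqueness is immediate from Lemma \ref{lem:support_determines_pse}\ref{item:I_determines_Phi} applied with $I'=(\rg(\varphi^\Phi)\cut\{\eta\})\cup\{\beta\}$, so only existence and the verification of the clauses of Definition \ref{dfn:pse} remain. In case (i) the map $\varrho^{\Phi'}_{k+1}=i^\Tt_{\eta\beta}\com\varrho^\Phi_{k+1}$ is clearly a well-defined map $\core_0(M^{\bar{\Tt}}_{k+1})\to\core_0(M^\Tt_\beta)$. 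In case (ii) we use that $\varrho^\Phi_{k+1}=i^\Tt_{\varphi(k)+1,\eta}\com\sigma$, where $\sigma$ is the Shift-Lemma map into $\core_0(M^\Tt_{\varphi(k)+1})$, and that $i^\Tt_{\varphi(k)+1,\eta}=i^\Tt_{\beta\eta}\com i^\Tt_{\varphi(k)+1,\beta}$; hence $\rg(\varrho^\Phi_{k+1})\sub\rg(i^\Tt_{\beta\eta})$ and $\varrho^{\Phi'}_{k+1}=i^\Tt_{\varphi(k)+1,\beta}\com\sigma$ is well-defined into $\core_0(M^\Tt_\beta)$.

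Clauses of Definition \ref{dfn:pse} not involving the top index hold trivially. At the top, the tree-order, drop and degree clauses carry over since $\varphi(k)+1\leq_\Tt\beta$ (in case (i), via $\varphi(k)+1\leq_\Tt\eta\leq_\Tt\beta$; in case (ii), directly) and $\pred^\Tt(\varphi(k)+1)=\varphi^\Phi(\pred^{\bar{\Tt}}(k+1))$ is unchanged. For the $\nu$-preservation of the new factor iteration map $i^\Tt_{\varphi(k)+1,\beta}$ (clause \ref{item:pse_nu-pres_factor_it_map}): in case (i), this is the composition of the $\nu$-preserving maps $i^\Tt_{\varphi(k)+1,\eta}$ and $i^\Tt_{\eta\beta}$; in case (ii), $i^\Tt_{\varphi(k)+1,\beta}$ and $i^\Tt_{\beta\eta}$ are both iteration maps along non-dropping branches, hence non-$\nu$-low by \ref{lem:nu-pres_it_map}, and their composition equals $i^\Tt_{\varphi(k)+1,\eta}$, which is $\nu$-preserving by the hypothesis on $\Phi$, forcing both factors to be $\nu$-preserving. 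That $\varrho^{\Phi'}_{k+1}$ is a $\nu$-preserving near $\deg^{\bar{\Tt}}(k+1)$-embedding then follows by composition with $\varrho^\Phi_{k+1}$ (in case (i)) or the Shift Lemma reconstruction (in case (ii)).

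The main obstacle is verifying the agreement clause \ref{item:varrho_agmt}, namely $\psi_{\varrho^\Phi_j}\rest\nu^{\bar{\Tt}}_j\sub\varrho^{\Phi'}_{k+1}$ (and the analogous statement for type 1/2 exit extenders) for each $j\leq k$. The agreement already holds with $\varrho^\Phi_{k+1}$ in place of $\varrho^{\Phi'}_{k+1}$, so what must be shown is that the modification $\varrho^{\Phi'}_{k+1}=(i^\Tt_{\eta\beta})^{\pm 1}\com\varrho^\Phi_{k+1}$ does not disturb the relevant image. Since $\psi_{\varrho^\Phi_j}(\nu^{\bar{\Tt}}_j)=\nu^\Tt_{\varphi(j)}$ (Lemma \ref{lem:simple_pre-embedding_basic_props}\ref{item:agmt_below_at_nu}), it suffices to observe that $\nu^\Tt_{\varphi(j)}\leq\crit(i^\Tt_{\eta\beta})$ in case (i) and $\nu^\Tt_{\varphi(j)}\leq\crit(i^\Tt_{\beta\eta})$ in case (ii). In both cases, $\varphi(j)<\varphi(k)+1\leq\eta$ as an ordinal, and in case (ii) also $\varphi(j)<\beta$. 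The first extender $E^\Tt_\alpha$ used on the branch $(\eta,\beta]_\Tt$ (in case (i)) or $(\varphi(k)+1,\beta]_\Tt$ (in case (ii)) has $\pred^\Tt(\alpha+1)\geq\varphi(j)+1$ by the sse-essentially-$m$-maximal rules ($\nu^\Tt_\gamma$ is monotone non-decreasing in $\gamma$), whence $\crit(E^\Tt_\alpha)\geq\nu^\Tt_{\varphi(j)}$, giving the required inequality. The analogous clause for type 1/2 extenders is handled identically using Lemma \ref{lem:simple_pre-embedding_basic_props}\ref{item:lh_pres}. Thus $\varrho^{\Phi'}_{k+1}$ agrees with $\varrho^\Phi_{k+1}$ on the images in question, completing the verification.
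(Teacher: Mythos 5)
Your proposal is correct, though it diverges from the paper's own proof in a couple of places that are worth noting.

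For case \ref{item:option_ii} (the case $\beta<\eta$), the paper's proof is an appeal to prior machinery: observe that $\Phi'$ trivially satisfies the pre-simple conditions into the truncated tree $\Tt\rest(\beta+1)$ (where $\nu$-preservation at the top is not demanded), note that $\Tt\rest(\beta+1)$ has wellfounded models since $\beta<\eta<\lh(\Tt)$, and then invoke Lemma \ref{lem:simple_pre-embedding_basic_props}\ref{item:if_Tt_wfd} to upgrade to a simple embedding, which grants $\nu$-preservation of $\varrho_{k+1}^{\Phi'}$ for free. Your proof instead argues directly via a pleasant factorization observation: $i^\Tt_{\varphi(k)+1,\beta}$ and $i^\Tt_{\beta\eta}$ are non-$\nu$-low (being iteration maps along non-dropping branches), and their composition $i^\Tt_{\varphi(k)+1,\eta}$ is $\nu$-preserving by condition \ref{item:pse_nu-pres_factor_it_map} for $\Phi$; since $\psi$-compositions are order-preserving and injective, both factors must be $\nu$-preserving. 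This is clean and self-contained. However, when you then conclude that $\varrho_{k+1}^{\Phi'}=i^\Tt_{\varphi(k)+1,\beta}\com\sigma$ is a $\nu$-preserving near $\deg^{\Ttbar}(k+1)$-embedding ``by the Shift Lemma reconstruction,'' you are eliding exactly the content that the paper's appeal to Lemma \ref{lem:simple_pre-embedding_basic_props}\ref{item:if_Tt_wfd} packages up — namely, that the Shift-Lemma map $\sigma$ is itself a $\nu$-preserving near embedding, which rests on the fine-structural transfer arguments of \cite{fs_tame} and \S\ref{sec:nu-pres}. This is not a gap so much as a place where you should cite the same supporting results the paper's route goes through.

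Your identification of the agreement clause \ref{item:varrho_agmt} as ``the main obstacle'' is misplaced: Definition \ref{dfn:pse} explicitly remarks that conditions \ref{item:varrho_i_props}, \ref{item:drop_star_matches}, and \ref{item:varrho_agmt} follow automatically from the other clauses, so the paper treats \ref{item:varrho_agmt} as part of ``the rest is clear.'' Your explicit verification via normality (the critical point of the first extender on the relevant branch dominates $\nu^\Tt_{\varphi(j)}$ for $j\leq k$) is correct in substance, though note that in case \ref{item:option_ii} the relevant branch for $\crit(i^\Tt_{\beta\eta})$ is $(\beta,\eta]_\Tt$, not $(\varphi(k)+1,\beta]_\Tt$ as written; the inequality $\varphi(j)<\beta$ still gives what you need.
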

\begin{proof}
We must have $\Phi'\rest(k+1)=\Phi\rest(k+1)$ by uniqueness.
Define $\varrho=\varrho_{k+1}^{\Phi'}$ as above.
We just verify that $\varrho$ is $\nu$-preserving,
assuming that $\beta+1<\lh(\Tt)$
(as needed for condition \ref{item:varrho_i_props}); the rest is clear.
If \ref{item:option_i} holds this is because both $\varrho_{k+1}^{\Phi}$ and $i^\Tt_{\eta\beta}$ are 
$\nu$-preserving. Suppose \ref{item:option_ii} holds.
We may assume $\beta<\eta$. Then $\Tt\rest(\beta+1)$ has wellfounded models,
and note that $\Phi':\Ttbar\hookrightarrow^\pre_\simple\Tt\rest(\beta+1)$.
So by Lemma \ref{lem:simple_pre-embedding_basic_props},
$\Phi':\Ttbar\hookrightarrow_\simple\Tt\rest(\beta+1)$,
so the conclusion of condition \ref{item:varrho_i_props}
holds also for $i=k+1$.
\end{proof}

\begin{dfn}
 Let $\Tt$ be an $m$-relevant tree on an $m$-sound premouse $M$.
 Given $\alpha+1<\lh(\Tt)$, let
 $\redd^\Tt_\alpha=\redd^{M^\Tt_\alpha}(\exit^\Tt_\alpha)$,
 and if $\lh(\Tt)=\alpha+1$ let $\redd^\Tt_{\alpha}=\left<M^\Tt_{\alpha}\right>$.
Write $M^\Tt_{\alpha j}=M_j$ and $k^\Tt_\alpha=k$ where $\left<M_i\right>_{i\leq k}=\redd^\Tt_\alpha$.

Write $\liftdom^\Tt=\{(\alpha,i)\mid\alpha<\lh(\Tt)\text{ and }i\leq k^\Tt_\alpha\}$;
as usual we order $\liftdom^\Tt$ by $<$ lexicographically. We also define the iteration tree order $<^\Tt_{\lifttree}$ on
$\liftdom^\Tt$ as follows:
\begin{enumerate}[label=--]
 \item The root of $<^\Tt_\lifttree$ is $(0,0)$,
 \item  $\pred^\Tt_\lifttree(\alpha,i+1)=(\alpha,i)$,
 \item $\pred^\Tt_\lifttree(\alpha+1,0)=(\pred^\Tt(\alpha+1),j)$ where $M^{*\Tt}_{\alpha+1}=M^\Tt_{\alpha j}$,
 \item if $\alpha$ is a limit and $\beta<_\Tt\alpha$ and $(\beta,\alpha]_\Tt$ does not drop in model
 then:
 \begin{enumerate}[label=--]\item $(\beta,0)<^\Tt_\lifttree(\alpha,0)$, and \item for each $\gamma,j$,
 \[ (\beta,0)<^\Tt_\lifttree(\gamma,j)<^\Tt_\lifttree(\alpha,0)\text{ iff }\beta<^\Tt\gamma<^\Tt\alpha\text{ and }j=0.\qedhere\]
 \end{enumerate}
\end{enumerate}
\end{dfn}

\begin{dfn}\label{dfn:(Phi,Psi)} Let $\Tt$ be $m$-relevant on $M$
and let $\Uu$ be $n$-maximal on $Y$ with ${<_\Uu}={<^\Tt_{\lifttree}}$.
Then we write
\[ (\Phi,\Psi):(\Ttbar,\Uubar)\hookrightarrow_\simple^\pre(\Tt,\Uu) \]
iff 
 $\Phi:\Ttbar\hookrightarrow^\pre_\simple\Tt$ and $\Psi:\Uubar\hookrightarrow^\pre_\simple\Uu$
and
\[ \rg(\varphi^\Psi)=\{(\varphi^\Phi(\alpha),j)\mid
\alpha<\lh(\Ttbar)\text{ and }j\leq k^\Ttbar_\alpha\}.\qedhere \]
\end{dfn}

Note that for each $\alpha+1<\lh(\Ttbar)$
we have $k^\Ttbar_\alpha=k^\Tt_{\varphi(\alpha)}$, and  if $\alpha+1=\lh(\Ttbar)$ then $k^\Ttbar_\alpha=0$.
Given $(\Phi,\Psi)$ as above, note that $\varphi^{\Psi}(\alpha,j)=(\varphi^\Phi(\alpha),j)$.

\begin{lem}\label{lem:simultaneous_pse}
Let $\Tt$ be $m$-relevant on $M$
and let $\Uu$ be $n$-maximal on $Y$ with ${<_\Uu}={<^\Tt_{\lifttree}}$.
Let $\FF$ be a finite selection of $\Tt$ and $\theta=\max(I_\FF)$. Then there are $\Ttbar,\Uubar,\Phi,\Psi$ such that
\[ (\Phi,\Psi):(\Ttbar,\Uubar)\hookrightarrow_\simple^\pre(\Tt,\Uu) \]
with $I_\FF\sub\rg(\varphi^\Phi)$ and $\theta=\max(\rg(\varphi^\Phi))$ and $\FF_{\varphi^\Phi(\alpha)}\sub\rg(\varrho^\Phi_\alpha)$
for $\alpha<\lh(\Ttbar)$.
\end{lem}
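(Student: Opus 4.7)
The plan is to build finite supports on $\Tt$ and $\Uu$ in tandem, exploiting the fact that the tree order $<_\Uu = {<^\Tt_\lifttree}$ expands each $\alpha < \lh(\Tt)$ into the finite block $(\alpha,0),\ldots,(\alpha,k^\Tt_\alpha)$ in $\Uu$, and then apply Lemma \ref{lem:support_determines_pse}(2) on each side.

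First I would apply \ref{lem:finite_support_exists} to $\FF$ to obtain a finite support $\supp$ of $\Tt$ for $\FF$; by construction $I_\FF\sub I'_\supp$ and $\theta\in I'_\supp$. I would then use $\supp$ to define a finite selection $\GG$ of $\Uu$ as follows: for every $\alpha\in I_\supp$ and every $j\leq k^\Tt_\alpha$, let $\GG_{(\alpha,j)}\sub\core_0(M^\Uu_{(\alpha,j)})$ contain (i) a preimage under $\pi^{\Tt,\Uu}_\alpha$ (resp.\ under the resurrection map $\psi_{\alpha j}^{\Tt,\Uu}$) of each element of $\supp_\alpha$ that lies in the dropdown-level $M^\Tt_{\alpha j}$, together with a representative of $\nu^{M^\Uu_{(\alpha,j)}}$ to meet clause \ref{dfn:finite_support}(2); (ii) the data needed to tag the resurrection step $j$, namely the projectum $\rho_\om^{A_{\alpha,j+1}}$, the index $\beta_{\alpha,j+1}$, the $t$-value $t^{M^\Uu_{(\alpha,j)}}_{\beta_{\alpha,j+1}}$ and the stage $\beta^*_{\alpha,j+1}$ (compare the $\params$ tuples in \ref{dfn:lifting_norm}); and (iii) when $j = k^\Tt_\alpha$ and $\alpha+1<\lh(\Tt)$, the background extender $E^\Uu_{(\alpha,k^\Tt_\alpha)} = F^{M^\Uu_{(\alpha,k^\Tt_\alpha)}}_{\eta_\alpha}$ witnessing item \ref{item:bkgd_ext} of \ref{lem:iterability}. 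Extend $\GG$ by \ref{lem:finite_support_exists} to a finite support $\supp'$ of $\Uu$ for $\GG$.

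The critical observation is that the closure conditions on $\Uu$ do not force $\supp'$ to include pairs $(\beta,0)$ for $\beta\notin I_\supp$. Indeed, $\pred^\Uu(\alpha,j{+}1)=(\alpha,j)$, while $\pred^\Uu(\alpha{+}1,0)=(\gamma,j^*)$ with $\gamma=\pred^\Tt(\alpha{+}1)\in I_\supp$ by the $\Tt$-closure of $\supp$; limit closure on $\Uu$ likewise projects to limit closure on $\Tt$. Hence $I'_{\supp'}$ is indexed exactly by pairs $(\alpha,j)$ with $\alpha\in I'_\supp$ and $0\leq j\leq k^\Tt_\alpha$ (for $\alpha$ a node at which the full block is needed). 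Now apply \ref{lem:support_determines_pse}(2) separately: on the $\Tt$-side it yields $\Phi:\Ttbar\hookrightarrow_\simple^\pre\Tt$ with $\rg(\varphi^\Phi)=I'_\supp$ and $\supp_{\varphi^\Phi(\alpha)}\sub\rg(\varrho^\Phi_\alpha)$, and on the $\Uu$-side it yields $\Psi:\Uubar\hookrightarrow_\simple^\pre\Uu$ with $\rg(\varphi^\Psi)=I'_{\supp'}$. Since $I'_{\supp'}$ is precisely $\{(\varphi^\Phi(\alpha),j):\alpha<\lh(\Ttbar),\,j\leq k^{\Ttbar}_\alpha\}$ (using that $k^{\Ttbar}_\alpha = k^\Tt_{\varphi^\Phi(\alpha)}$ for $\alpha+1<\lh(\Ttbar)$, and that the final-block condition is arranged in the definition of $\GG$), the defining identity of \ref{dfn:(Phi,Psi)} holds, and the extra inclusions $\FF_{\varphi^\Phi(\alpha)}\sub\supp_{\varphi^\Phi(\alpha)}\sub\rg(\varrho^\Phi_\alpha)$ are immediate.

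The main obstacle is item (ii)/(iii) above: one must include enough information in $\GG$ at each $(\alpha,j)$ that, after closing to a finite support and applying \ref{lem:support_determines_pse}(2), the uniqueness in \ref{lem:support_determines_pse}(1) forces the extender $E^{\Uubar}_{\text{at }(\alpha,j)}$ selected by $\Psi$ to coincide with the extender the lifting/resurrection algorithm of \ref{dfn:lift^Tt,Y} would dictate from $E^{\Ttbar}_\alpha$. This is what makes $(\Phi,\Psi)$ a genuine pair as in \ref{dfn:(Phi,Psi)} rather than two unrelated pre-embeddings. The verification is a bookkeeping exercise — using \ref{lem:simple_pre-embedding_basic_props} to confirm that the copy maps $\varrho^\Psi_{(\alpha,j)}$ agree below the relevant generators with $\psi_{\varrho^\Phi_\alpha}$, and Lemma \ref{lem:factor_pse} to slot the $(\alpha,j)$-block into the $\alpha$-step of $\Phi$ — but it is where the construction of $\GG$ needs to be arranged with care.
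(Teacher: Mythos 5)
Your plan---build finite supports on both sides and invoke Lemma \ref{lem:support_determines_pse}---is the right one and is also what the paper's sketch does, but the \emph{sequential} version you give (first close a $\Tt$-support $\supp$, then close a $\Uu$-selection $\mathbb{G}$) breaks down at limit nodes, and this is exactly the subtlety the paper's one-paragraph proof explicitly flags. Suppose $\alpha\in I_\supp$ is a limit ordinal. By clause \ref{dfn:finite_support}(4) the anchor $\beta=\max(I_\supp\cap\alpha)$ is a $\Tt$-successor with $\beta+1\notin I_\supp$, so $\beta\notin I'_\supp=\rg(\varphi^\Phi)$. Your $\mathbb{G}$ nonetheless includes \emph{all} of $(\beta,0),\dots,(\beta,k^\Tt_\beta)$; but then for $j<k^\Tt_\beta$ the lex-successor $(\beta,j+1)$ lies in $I_\mathbb{G}\subseteq I_{\supp'}$, so $(\beta,j)\in I'_{\supp'}=\rg(\varphi^\Psi)$ while $\beta\notin\rg(\varphi^\Phi)$, directly contradicting the identity $\rg(\varphi^\Psi)=\{(\varphi^\Phi(\alpha),j)\mid\alpha<\lh(\Ttbar),\ j\leq k^{\Ttbar}_\alpha\}$ from Definition \ref{dfn:(Phi,Psi)}. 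Worse, since $(\pred^\Tt(\beta),\alpha)_\Tt$ does not drop, the $\Uu$-branch to $(\alpha,0)$ passes through $(\beta,0)$ and then directly to $(\successor^\Tt(\beta,\alpha),0)$, never through $(\beta,j)$ for $j>0$; hence $(\beta,k^\Tt_\beta)\not<_\Uu(\alpha,0)$ when $k^\Tt_\beta>0$, and the limit clause \ref{dfn:finite_support}(4) for $\supp'$ at $(\alpha,0)$ then \emph{forces} the closure to add some $(\gamma',\cdot)$ with $\gamma'\in(\beta,\alpha)$ and $\gamma'\notin I_\supp$---contrary to your ``critical observation.'' (A smaller version of the same mismatch already occurs at the top index $\theta$: $k^{\Ttbar}_{\lh(\Ttbar)-1}=0$, so only $(\theta,0)$ is allowed in $\rg(\varphi^\Psi)$, whereas you include $(\theta,j)$ for all $j\leq k^\Tt_\theta$.)

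The fix is the one the paper uses: build $\supp^\Tt$ and $\supp^\Uu$ \emph{simultaneously} by adapting the proof of \ref{lem:finite_support_exists}, starting from the maxima $\theta$ and $(\theta,0)$, and at each limit $\alpha\in I_{\supp^\Tt}$ with anchor $\beta$, add only $(\beta,0)$ to $I_{\supp^\Uu}$ and omit $(\beta,j)$ for $0<j\leq k^\Tt_\beta$. Those omitted pairs are precisely the ones that would spoil the compatibility of $\Phi$ and $\Psi$; omitting them keeps $\max(I_{\supp^\Uu}\cap(\alpha,0))=(\beta,0)$, which is $<_\Uu(\alpha,0)$ and a $\Uu$-successor, so clause \ref{dfn:finite_support}(4) is satisfied on the $\Uu$-side without introducing any new first coordinates outside $I_{\supp^\Tt}$.
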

\begin{proof}
 This is an easy modification of the proof of \ref{lem:finite_support_exists}.
 One constructs finite supports $\supp^\Tt,\supp^\Uu$ of $\Tt,\Uu$ simultaneously,
 starting with maximum indices $\theta$ and $(\theta,0)$, such that when we let $\Phi,\Psi$ be the pre-simple embeddings determined by $\supp^\Tt,\supp^\Uu$
 (using \ref{lem:support_determines_pse}), then $\Phi,\Psi$ satisfy the required conditions.
 Note here that if $\alpha$ is a limit and $\alpha\in I_{\supp^\Tt}$
 and $\beta=\max(\alpha\inter I_{\supp^\Tt})$, and hence $\beta\notin\rg(\varphi^\Phi)$,
 then $(\beta,0)$ will be the largest element of $I_{\supp^\Uu}$
 below $(\alpha,0)$ (so just in this situation, when forming the supports, we do not put $(\beta,j)\in\supp^\Uu$
 for $0<j\leq k^\Tt_\beta$).
\end{proof}

\subsection{Abstract lifting algorithms}
Recall that if $\varrho:M\to N$
is a weak $0$-embedding and $E\in\es_+^M$,
$\extcopy^\varrho(E)=\extcopy(\varrho,E)$
was introduced in Definition \ref{dfn:copyseg};
if $E=F^M$ then this is just $F^N$,
and in the special case that $\varrho$
is $\nu$-preserving and $E\in\es^M$, it is just $\psi_\varrho(F^M)$.

\begin{dfn}\label{dfn:extcopy}
Let $M,N$ be $m$-sound and $\Tt$ be an $m$-maximal tree on $M$ of successor length
 and $E\in\es_+(M^\Tt_\infty)$. Let $\varrho:M\to N$ be a $\nu$-preserving near $m$-embedding.
 Then $\extcopy(\varrho,\Tt,E)$ denotes the pair $(\Uu,F)$
 where $\Uu$ is the $m$-maximal tree given by copying $\Tt$ with $\varrho$,
 and $F=\extcopy(\pi_\infty,E)$ where $\pi_\infty:M^\Tt_\infty\to M^\Uu_\infty$ is the final copy map,
 if these things exist; otherwise $\extcopy(\varrho,\Tt,E)$ is undefined.\footnote{By Lemma \ref{lem:nu-pres_copy_map},
 each copy map $\pi_\alpha:M^\Tt_\alpha\to M^\Uu_\alpha$ is a $\nu$-preserving near $\deg^\Tt(\alpha)$-embedding.
 So $E^\Uu_\alpha=\extcopy(\pi_\alpha,E^\Tt_\alpha)$
 for each $\alpha+1<\lh(\Tt)$.}
\end{dfn}

\begin{dfn}
Given a $\nu$-preserving map $\pi:M\to N$ between premice $M,N$, we write $\pi(\rho_0^M)=\rho_0^N$.
\end{dfn}

\begin{dfn}\label{dfn:abstract_lifting}
 Let $M$ be an $m$-sound $(m,\om)$-iterable premouse, $N$ an $n$-sound $(n,\om)$-iterable premouse, 
and $\lambda\geq\om$ be a limit ordinal or a successor thereof. An \emph{abstract $(m,n,\lambda)$-lifting strategy} 
for $(M,N)$ is a pair $(\Sigma,\frakL)$ with the following properties:
\begin{enumerate}
 \item\label{item:Sigma_strategy} $\Sigma$ is an sse-essentially-$(m,\lambda)$ strategy for 
$M$.
\end{enumerate}
Let $X$ be the set of all 3$\nu$-puta-putative trees $\Tt$ according to $\Sigma$ (of length 
$\leq\lambda$).
\begin{enumerate}[resume*]
\item\label{item:frakL_consists_of}
$\frakL=\left<\Uu^\Tt,\Psi^\Tt\right>_{\Tt\in X}$.
\end{enumerate}
Let $\Tt\in X$. Write $\Uu=\Uu^\Tt$ and $\Psi=\Psi^\Tt$.
\begin{enumerate}[resume*]
\item\label{item:Uu_is_lift_tree} $\Uu$ is an sse-essentially-$n$-maximal tree on $N$
with
$\dom(\Uu)=\liftdom^\Tt$ 
and ${<^{\Uu}}={<^\Tt_\lifttree}$.
 \item $\Psi=\left<\zeta^\Tt_\alpha,\wtpi^\Tt_\alpha,\eta^\Tt_\alpha,\abres^\Tt_\alpha\right>_{\alpha<\lh(\Tt)}$.
 \end{enumerate}
For $\alpha<\lh(\Tt)$ let $\zeta_\alpha=\zeta^\Tt_\alpha$, 
$\wtpi_\alpha=\wtpi^\Tt_\alpha$, $\eta_\alpha=\eta^\Tt_\alpha$, $\abres_\alpha=\abres^\Tt_\alpha$,
and $k_\alpha=k^\Tt_\alpha$.
\begin{enumerate}[resume]
\item Let $\alpha<\beta<\lh(\Tt)$ and $\Ss=\Tt\rest\beta$. Then  $\Psi^\Ss=\Psi\rest\beta$;
if $\beta$ is a limit then
$\Uu^\Ss=\Uu\rest(\beta,0)$, and otherwise $\Uu^\Ss=\Uu\rest(\beta-1,1)$.
\item $\eta_\alpha\leq\rho_0(M^\Uu_{\alpha 0})$ and either $\zeta_\alpha<\rho_0(M^{\Uu}_{\alpha 0})$ or $\zeta_\alpha=\OR(M^\Uu_{\alpha 0})$.
  \item\label{item:pi_order_pres} $\wtpi_\alpha:\rho_0(M^\Tt_\alpha)\to\eta_\alpha$ is order 
preserving.\footnote{So $\dom(\wtpi_\alpha)$ is an ordinal.
For this reason we write $\wtpi_\alpha$ instead of $\pi_\alpha$;
in one key case, $\wtpi_\alpha=\pi_\alpha\rest\OR$
where $\pi_\alpha$ is a lifting map resulting from the iterability proof \ref{lem:iterability}.}
  \item\label{item:sigma_props} Let $\alpha<\lh(\Tt)$ where $\alpha+\om\leq\lambda$. Then $\abres_\alpha$ is a function with domain $\es_+(M^\Tt_\alpha)$ and
  for each $E\in\es_+(M^\Tt_\alpha)$, we have $\abres_\alpha(E)=(\Vv,F)$ where:\footnote{$\abres$
  abbreviates \emph{abstract resurrection}.}
  \begin{enumerate}
   \item $\Vv$ is a finite length padded linear $\deg^\Uu(\alpha,0)$-maximal
tree on $M^\Uu_{\alpha 0}$ which does not drop in model or degree,
  \item $F\in\es_+(M^\Vv_\infty)$ is $\Vv$-extending, and
 \item if $E$ is sse-essentially-$\Tt\rest(\alpha+1)$-extending, then:
 \begin{enumerate}[label=--]
 \item $\Vv$ is above $\nu(E^\Uu_{(\beta,j)})$
for each $(\beta,j)<(\alpha,0)$ and
\item $\Vv\conc\left<F\right>$ is sse-essentially-$\Uu\rest(\alpha,1)$-extending.\footnote{$\Vv$ might be trivial.}
\end{enumerate}
  \end{enumerate}
\item\label{item:Uu_exts_are_lifts} $(\Uu\rest[(\alpha,0),(\alpha,k_\alpha)],E^\Uu_{\alpha k_\alpha})=\abres_\alpha(E^\Tt_\alpha)$ for $\alpha+1<\lh(\Tt)$.
\item\label{item:zeta_movement}Let $\alpha+1<\lh(\Tt)$ and 
$\beta=\pred^\Tt(\alpha+1)$ and $(\beta,j)=\pred^\Uu(\alpha+1,0)$. Then:
\begin{enumerate}
\item\label{item:zeta_semi-pres} $(\alpha+1,0)\in\dropset^\Uu$ or $\zeta_{\alpha+1}\leq 
i^\Uu_{(\beta,0),(\alpha+1,0)}(\zeta_\beta)$.
\item\label{item:if_drop_in_Tt_then_drop_options_in_Uu} If $\alpha+1\in\dropset^\Tt$ then either $(\alpha+1,0)\in\dropset^\Uu$ or  
$\zeta_{\alpha+1}<i^\Uu_{(\beta,0),(\alpha+1,0)}(\zeta_\beta)$.
\item\label{item:eventual_commutativity} If $\alpha+1\notin\dropset^\Tt$
and $(\alpha+1,0)\notin\dropset^\Uu$ and 
$\deg^\Tt(\alpha+1)=\deg^\Tt(\beta)$ and 
$\deg^\Uu(\alpha+1,0)=\deg^\Uu(\beta,0)$ and
$i^\Tt_{\beta,\alpha+1}$ and $i^\Uu_{(\beta,0),(\alpha+1,0)}$ are $\nu$-preserving,
then:
\begin{enumerate}[label=--]
\item $\eta_{\alpha+1}\leq i^\Uu_{(\beta,0),(\alpha+1,0)}(\eta_\beta)$
(note $\zeta_{\alpha+1}\leq i^\Uu_{(\beta,0),(\alpha+1,0)}(\zeta_\beta)$ by \ref{item:zeta_semi-pres}),
\item $\wtpi_{\alpha+1}\com i^\Tt_{\beta,\alpha+1}=i^\Uu_{(\beta,0),(\alpha+1,0)}\com\wtpi_\beta$,
\item $\all E\in\es_+(M^\Tt_\beta)\left[
\abres_{\alpha+1}(\extcopy(i^\Tt_{\beta,\alpha+1},E))=
\extcopy(i^\Uu_{(\beta,0),(\alpha+1,0)},\abres_\beta(E))\right]$ (note $\abres_\beta(E)$ is of the form $(\Vv,F)$; recall
that $\extcopy(i,\Vv,F)$ was defined in \ref{dfn:extcopy}).
\end{enumerate}\end{enumerate}
\item\label{item:finite_support_commutativity} Let
$(\Phi,\Psi):(\Ttbar,\Uubar)\hookrightarrow_\simple^\pre(\Tt,\Uu)$ and $\varphi=\varphi^\Phi$.
Then $\Uubar=\Uu^\Ttbar$ and for 
each $\alpha<\lh(\Ttbar)$:
\begin{enumerate}[label=--]
\item $\zeta_{\varphi(\alpha)}=\varrho_{\alpha 0}^{\Psi}(\zeta_\alpha^\Ttbar)\text{ and }\eta_{\varphi(\alpha)}=\varrho_{\alpha 0}^\Psi(\eta_\alpha^\Ttbar)$,
\item $\wtpi_{\varphi(\alpha)}\com\varrho_{\alpha}^{\Phi}=\varrho^{\Psi}_{\alpha 0}\com\wtpi^\Ttbar_\alpha$,
\item  $\all E\in\es_+(M^\Ttbar_\alpha)\left[ 
\abres_{\varphi(\alpha)}(\extcopy(\varrho^{\Phi}_\alpha,E))=
\extcopy(\varrho^{\Psi}_{\alpha 0},\abres^\Ttbar_\alpha(E))\right]$.
\end{enumerate}
\end{enumerate} 
 Let $(\Sigma,\frakL)$ be an abstract $(m,n,\lambda)$-lifting strategy for $(M,N)$,
 and $\Gamma$ an sse-essentially-$(n,\lambda)$-strategy for $N$. We say that $(\Sigma,\frakL)$ is 
\emph{induced by $\Gamma$} iff $\Uu^\Tt$ is according to $\Gamma$ for every $\Tt$ according to 
$\Sigma$.
 \end{dfn}

\begin{dfn}***Do I use this?
 $\nu$-essentially? I think it means that we can have 3$\nu$-puta-putative trees or something like this?
\end{dfn}

\begin{dfn}***Do I use this?
We also define \emph{$\nu$-abstract $(m,n,\lambda)$-lifting strategy}
by replacing conditions \ref{item:Sigma_strategy} and \ref{item:Uu_is_lift_tree} with the weaker 
requirements that $\Sigma$ be a $\nu$-essentially-$(m,\lambda)$-strategy, and $\Uu$ be a 
$\nu$-essentially-$n$-maximal tree. The notion \emph{$(\Sigma,\frakL)$ is induced by $\Gamma$}
is then defined just as before, for a $\nu$-abstract $(m,n,\lambda)$-lifting strategy
and a $\nu$-essentially-$(n,\lambda)$-strategy $\Gamma$.
\end{dfn}

\begin{lem}\label{lem:abstract_lifting_at_limit}
  Let $M$ be $m$-sound, $N$ be $n$-sound and $(\Sigma,\frakL)$
be an abstract $(m,n,\lambda)$-lifting strategy for $(M,N)$.
Fix a limit $\eta<\lambda$.
Let $\Tt$ be of length $>\eta$, according to $\Sigma$,
and $\Uu=\Uu^\Tt$.
Then for all sufficiently large $\beta<\eta$ with $(\beta,0)<_\Uu(\eta,0)$,
we have:
\begin{enumerate}
 \item\label{item:beta<_Tt_eta} $\beta<_\Tt\eta$ and $[(\beta,0),(\eta,0))_\Uu=\{(\gamma,0)\mid\gamma\in[\beta,\eta)_\Tt\}$,
 \item $\Tt$ does not drop in model or degree in $[\beta,\eta)_\Tt$,
 \item\label{item:Uu_non-dropping} $\Uu$ does not drop in model or degree in $[(\beta,0),(\eta,0))_\Uu$,
 \item\label{item:nu-pres_it_maps} $i^\Tt_{\beta\eta}$ and $i^\Uu_{(\beta,0),(\eta,0)}$ are $\nu$-preserving,
 \item\label{item:zeta,eta_pres} $i^\Uu_{(\beta,0),(\eta,0)}(\zeta_\beta,\eta_\beta)=(\zeta_\eta,\eta_\eta)$,
 \item $\wtpi_\eta\com i^\Tt_{\beta\eta}=i^\Uu_{(\beta,0),(\eta,0)}\com\wtpi_\beta$,
 \item\label{item:E-lift_pres} $\all E\in\es_+(M^\Tt_\beta)\left[
\abres_{\eta}(\extcopy(i^\Tt_{\beta\eta},E))=
\extcopy(i^\Uu_{(\beta,0),(\eta,0)},\abres_\beta(E))\right]$.
\end{enumerate}
\end{lem}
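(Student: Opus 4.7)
\medskip

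\noindent\textbf{Proof plan.} The plan is to first extract the purely combinatorial statements (1)--(3) from the way $<^\Uu = <^\Tt_{\lifttree}$ was defined at limits, and then to derive the commutativity statements (5)--(7) by taking finite supports that include both $\beta$ and $\eta$ and invoking clause \ref{item:finite_support_commutativity} of Definition \ref{dfn:abstract_lifting}. Clause \ref{item:zeta_movement} will then be used at the successor stages to push the information across the branch, so that one does not need to carry it all the way from $\alpha = 0$.

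For (1)--(3): by the definition of $<^\Tt_\lifttree$ at limits, the set $\{(\gamma,0) : \gamma\in[\beta,\eta)_\Tt\}$ is a cofinal segment of $[(\beta,0),(\eta,0))_\Uu$ for large enough $\beta\in[0,\eta)_\Tt$, and conversely any $(\gamma,j)$ in this interval of $<_\Uu$ must come from such a $\gamma<_\Tt\eta$ with $j=0$; this is immediate once $\beta$ is past the finitely many $\Tt$-drops in $[0,\eta)_\Tt$ (dropping below $\eta$ is well-founded since $\Tt$ is an iteration tree) and past the finitely many $(\gamma,0)$-nodes in $[0,(\eta,0))_\Uu$ where $j>0$ would be involved in the predecessor structure (each of these would force a drop in $\Tt$ at $\eta$, contradicting that $\eta\in[\beta,\eta)_\Tt$ does not drop). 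Thus (1)--(3) hold for all sufficiently large $\beta$.

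For (4): since $[\beta,\eta)_\Tt$ and $[(\beta,0),(\eta,0))_\Uu$ are non-dropping branches of sse-essentially-$m$- and $n$-maximal trees respectively, and by Corollary \ref{cor:nu_in_wfp_tree} (applied to the direct-limit computation used to define $M^\Tt_\eta$ and $M^\Uu_{(\eta,0)}$), we have that $i^\Tt_{\gamma\eta}$ and $i^\Uu_{(\gamma,0),(\eta,0)}$ are $\nu$-preserving for all sufficiently large $\gamma$ on the respective branches. Choose $\beta$ large enough that this holds for all $\gamma\in[\beta,\eta)_\Tt$.

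For (5)--(7): fix such a $\beta$. The plan is to apply Lemma \ref{lem:simultaneous_pse} to obtain $(\Phi,\Psi):(\Ttbar,\Uubar)\hookrightarrow_\simple^\pre (\Tt,\Uu)$ with $\rg(\varphi^\Phi) = \{0,\beta,\eta\}$ (absorbing enough of $[\beta,\eta)_\Tt$ into the finite selection to ensure that $\beta,\eta$ appear as successive entries of $\Ttbar$ with the correct predecessor structure, which is possible since $[\beta,\eta)_\Tt$ does not drop and by clause \ref{item:support_limit_case} of Definition \ref{dfn:finite_support}). Then $\lh(\Ttbar) = \bar\eta+1$ with $\varphi^\Phi(\bar\beta) = \beta$ and $\varphi^\Phi(\bar\eta) = \eta$, where $\bar\beta+1 \leq \bar\eta$ and $\bar\beta <_{\Ttbar} \bar\eta$ with no drop. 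Clause \ref{item:finite_support_commutativity} of Definition \ref{dfn:abstract_lifting} applied to $(\Phi,\Psi)$ gives
\[
\zeta_\eta = \varrho^\Psi_{\bar\eta,0}(\zeta^\Ttbar_{\bar\eta}),\quad \eta_\eta = \varrho^\Psi_{\bar\eta,0}(\eta^\Ttbar_{\bar\eta}),\quad \wtpi_\eta\com\varrho^\Phi_{\bar\eta} = \varrho^\Psi_{\bar\eta,0}\com\wtpi^\Ttbar_{\bar\eta},
\]
and the analogous identity for $\abres_\eta$; likewise at $\bar\beta$. Meanwhile clause \ref{item:eventual_commutativity} of Definition \ref{dfn:abstract_lifting} applied to $\Ttbar$ (whose branch from $\bar\beta$ to $\bar\eta$ is non-dropping, of matching degrees, and, by what we established in (4) together with Lemma \ref{lem:simple_pre-embedding_basic_props}\ref{item:pse_nu-pres_factor_it_map}, $\nu$-preserving) yields $\zeta^\Ttbar$-, $\eta^\Ttbar$-, $\wtpi^\Ttbar$-, and $\abres^\Ttbar$-commutativity between $\bar\beta$ and $\bar\eta$. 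Combining the two sets of identities via the commutativity of $\varrho^\Psi$ with the $\Uu$-iteration maps (which holds because $(\bar\beta,0)<_{\Uubar}(\bar\eta,0)$ does not drop, so $\varrho^\Psi_{\bar\eta,0}\com i^{\Uubar}_{(\bar\beta,0),(\bar\eta,0)} = i^\Uu_{(\beta,0),(\eta,0)}\com\varrho^\Psi_{\bar\beta,0}$ by Lemma \ref{lem:simple_pre-embedding_basic_props}) and the analogous commutativity of $\varrho^\Phi$ with $i^\Tt_{\beta\eta}$, one reads off exactly (5), (6) and (7).

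The main obstacle in writing this out in detail will be keeping straight the two layers of commutativity --- the $(\Phi,\Psi)$ layer that transfers identities between the finite $(\Ttbar,\Uubar)$ and the infinite $(\Tt,\Uu)$, and the $\bar\beta$-to-$\bar\eta$ propagation inside $(\Ttbar,\Uubar)$ coming from clause \ref{item:eventual_commutativity} --- and in particular verifying that one may indeed choose the finite support to realise the configuration $\{0,\beta,\eta\}$ (with $\bar\beta$ appearing as a successor whose $\Ttbar$-predecessor matches the $\Tt$-predecessor of $\eta$ via $\varphi^\Phi$). This is where clause \ref{item:support_limit_case} of Definition \ref{dfn:finite_support} and the absorption of extra branch points into the selection must be used carefully, but no new idea beyond Lemma \ref{lem:simultaneous_pse} is required.
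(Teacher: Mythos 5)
Your overall strategy for this lemma --- reduce to a pre-simple tree embedding $(\Phi,\Psi)$ of a finite $(\Ttbar,\Uubar)$ into $(\Tt,\Uu)$ via Lemma \ref{lem:simultaneous_pse}, apply clause \ref{item:finite_support_commutativity} of Definition \ref{dfn:abstract_lifting} at $\bar\beta$ and $\bar\eta$, apply clause \ref{item:eventual_commutativity} at the successor step $\bar\beta\to\bar\eta$ inside $(\Ttbar,\Uubar)$, and close the cube using the commutativity of the $\varrho$-maps with iteration maps --- is essentially the paper's argument, and it does prove parts \ref{item:beta<_Tt_eta}--\ref{item:nu-pres_it_maps} and the $\wtpi$- and $\abres$-commutativities, modulo the organizational details you already flag.

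However, there is a genuine gap in your treatment of part \ref{item:zeta,eta_pres}. You assert that clause \ref{item:eventual_commutativity}, applied inside $\Ttbar$, "yields $\zeta^\Ttbar$-, $\eta^\Ttbar$-, $\wtpi^\Ttbar$-, and $\abres^\Ttbar$-commutativity between $\bar\beta$ and $\bar\eta$." But that clause gives an \emph{equality} only for $\wtpi$ and $\abres$; for the ordinals it supplies only the one-sided inequality $\eta_{\alpha+1}\leq i^\Uu_{(\beta,0),(\alpha+1,0)}(\eta_\beta)$, together with $\zeta_{\alpha+1}\leq i^\Uu_{(\beta,0),(\alpha+1,0)}(\zeta_\beta)$ via clause \ref{item:zeta_semi-pres}. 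So your diagram chase yields only $\zeta_\eta\leq i^\Uu_{(\beta,0),(\eta,0)}(\zeta_\beta)$ and $\eta_\eta\leq i^\Uu_{(\beta,0),(\eta,0)}(\eta_\beta)$, not the equalities of part \ref{item:zeta,eta_pres}. This is not a superficial slip: the definition deliberately demands only an inequality at successor steps, because the $\zeta$-ordinal can genuinely decrease along a branch before it stabilizes.

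The paper's fix is to exploit the stabilization. When $\beta_0$ is chosen, one uses the wellfoundedness of $M^\Uu_{(\eta,0)}$ together with clause \ref{item:zeta_semi-pres} to arrange that, in addition to parts \ref{item:beta<_Tt_eta}--\ref{item:nu-pres_it_maps}, parts \ref{item:zeta,eta_pres}--\ref{item:E-lift_pres} already hold with $\eta$ replaced by any $\gamma\in[\beta_0,\eta)_\Tt$, and in particular $i^\Uu_{(\beta,0),(\delta+1,0)}(\zeta_\beta)=\zeta_{\delta+1}$ holds for the \emph{successor} node $\delta+1=\successor^\Tt(\beta,\eta)$. To bring this to bear, the paper invokes Lemma \ref{lem:simple_emb_iterate_end} to form a second pair $(\Phi',\Psi')$ on the same finite tree $\Ttbar$, with $\rg(\varphi^{\Phi'})=\rg(\varphi^\Phi)\cup\{\delta+1\}\setminus\{\eta\}$, so that the top node of $\Ttbar$ now targets $\delta+1$ rather than $\eta$. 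Applying clause \ref{item:finite_support_commutativity} to both $(\Phi,\Psi)$ and $(\Phi',\Psi')$, together with the factoring $\varrho^\Psi_{\bar\eta,0}=i^\Uu_{(\delta+1,0),(\eta,0)}\com\varrho^{\Psi'}_{\bar\delta+1,0}$ (where $\bar\delta+1=\bar\eta$) supplied by Lemma \ref{lem:simple_emb_iterate_end}, one reflects the equality at $\delta+1$ down to the finite-level equality $i^\Uubar_{(\bar\beta,0),(\bar\eta,0)}(\zeta^\Ttbar_{\bar\beta})=\zeta^\Ttbar_{\bar\eta}$, and then transfers it back up to $\eta$. Your argument never produces this finite-level equality, so part \ref{item:zeta,eta_pres} remains unproved as written. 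As a minor point, your citation ``Lemma \ref{lem:simple_pre-embedding_basic_props}\ref{item:pse_nu-pres_factor_it_map}'' is misplaced: \ref{item:pse_nu-pres_factor_it_map} is a clause of Definition \ref{dfn:pse}, not of Lemma \ref{lem:simple_pre-embedding_basic_props} --- though the $\nu$-preservation fact you need there is indeed available.
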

\begin{proof}
By induction on $\eta$.
Observe that there is $\beta$ such that $(\beta,0)<_\Uu(\eta,0)$,
properties \ref{item:beta<_Tt_eta}--\ref{item:nu-pres_it_maps} hold,
and for each $\gamma\in[\beta,\eta)_\Tt$,
properties \ref{item:zeta,eta_pres}--\ref{item:E-lift_pres} hold
with $\eta$ replaced by $\gamma$.
This follows easily from the induction hypothesis, condition \ref{dfn:abstract_lifting}(\ref{item:zeta_movement}),
that $[(0,0),(\eta,0))_\Uu\inter\dropset^\Tt$ is finite and $M^\Uu_{\eta 0}$ is wellfounded, and that
given a non-overlapping iteration tree $\Vv$ and a limit $\alpha<\lh(\Vv)$ and $\beta<_\Vv\alpha$
such that $(\beta,\alpha)_\Vv$ does not drop in model, there is at most one $\gamma$
such that $\gamma+1\in(\beta,\alpha)_\Vv$ and $i^{*\Vv}_{\gamma+1}$ is non-$\nu$-preserving (this is essentially by Lemma \ref{lem:nu-pres_it_map}).

Fix $\beta_0$ as above. Let $\beta_0\leq_\Tt\beta<_\Tt\eta$.
We claim that the statements of the lemma hold for $\beta,\eta$, which suffices.
For let $\xi<\rho_0(M^\Tt_\beta)$ and $E\in\es_+(M^\Tt_\beta)$.
It suffices to see that:
\begin{enumerate}[label=\tu{(}\alph*\tu{)}]
 \item\label{item:zeta,eta_pres_in_proof} $i^\Uu_{(\beta,0),(\eta,0)}(\zeta_\beta,\eta_\beta)=(\zeta_\eta,\eta_\eta)$,
 \item\label{item:pi_it_comm} $\wtpi_\eta(i^\Tt_{\beta\eta}(\xi))=i^\Uu_{(\beta,0),(\eta,0)}(\wtpi_\beta(\xi))$,
 \item\label{item:sigma_it_comm} $\abres_\eta(\extcopy(i^\Tt_{\beta\eta},E))=\extcopy(i^\Uu_{(\beta,0),(\eta,0)},\abres_\beta(E))$.
\end{enumerate}

Let $\Phi:\Ttbar\hookrightarrow_\simple^\pre\Tt$
and $\Psi:\Uubar\hookrightarrow^\pre_\simple\Uu$
be as in \ref{lem:simultaneous_pse} with $\eta=\max(\rg(\varphi^\Phi))$
and $\beta\in\rg(\varphi^\Phi)$ and letting $\varphi^\Phi(\bar{\beta})=\beta$, with
$\xi\in\rg(\varrho^\Phi_{\bar{\beta}})$ and if $E\neq F(M^\Tt_\beta)$,
also with $\reps^{M^\Tt_\beta}(E)\inter\rg(\varrho^\Phi_{\bar{\beta}})\neq\emptyset$.
We may assume that $\beta=\max(\rg(\varphi^\Phi)\inter[0,\eta)_\Tt)$;
this is by the choice of $\beta_0$.
So $\delta+1=\successor^\Tt(\beta,\eta)$ where $\delta=\max(\rg(\varphi^\Phi)\inter\eta)$.

Using \ref{lem:simple_emb_iterate_end}, let
$\Phi':\Ttbar\hookrightarrow_\simple^\pre\Tt$ with
\[ \rg(\varphi^{\Phi'})=\rg(\varphi^\Phi)\cup\{\delta+1\}\cut\{\eta\} \]
and $\Psi':\Uubar\hookrightarrow_\simple^\pre\Tt$ with\
\[ \rg(\varphi^{\Psi'})=\rg(\varphi^\Psi)\cup\{(\delta+1,0)\}\cut\{(\eta,0)\}.\]

Now condition \ref{dfn:abstract_lifting}(\ref{item:finite_support_commutativity}) applies to $(\Phi,\Psi)$,
and also to $(\Phi',\Psi')$. In particular, $\bar{\Uu}=\Uu^{\Ttbar}$.
Let $\varphi^\Phi(\bar{\beta})=\varphi^{\Phi'}(\bar{\beta})=\beta$ etc
(this is unambiguous, but $\bar{\eta}=\overline{\delta+1}=\bar{\delta}+1$).

Let us verify condition \ref{item:zeta,eta_pres_in_proof}.
We have
\begin{enumerate}[label=--]
 \item 
$i^\Uu_{(\beta,0),(\delta+1,0)}(\zeta_\beta)=\zeta_{\delta+1} 
\text{ and }\varrho^{\Psi'}_{\bar{\beta}0}(\zeta^\Ttbar_{\bar{\beta}})=\zeta_\beta
\text{ and }\varrho^{\Psi'}_{{\bar{\delta}+1},0}(\zeta^\Ttbar_{{\bar{\delta}+1}})=\zeta_{\delta+1}$, and
\item $\varrho^{\Psi'}_{{\bar{\delta}+1},0}\com i^\Uubar_{(\bar{\beta},0),({\bar{\delta}+1},0)}=i^\Uu_{(\beta,0),(\delta+1,0)}\com\varrho^{\Psi'}_{\bar{\beta}0}$,
\end{enumerate}
by choice of $\beta_0$, \ref{dfn:abstract_lifting}(\ref{item:finite_support_commutativity}) and
properties of $\Psi'$.  Therefore
\begin{enumerate}[label=--]
 \item $i^\Uubar_{(\bar{\beta},0),({\bar{\delta}+1},0)}(\zeta^\Ttbar_{\bar{\beta}})=\zeta^\Ttbar_{{\bar{\delta}+1}}$.
\end{enumerate}
Also (using \ref{lem:simple_emb_iterate_end} for the first equality below),
\begin{enumerate}[label=--]
 \item 
 $\varrho^\Psi_{\bar{\eta}0}=i^\Uu_{(\delta+1,0),(\eta,0)}\com\varrho^{\Psi'}_{{\bar{\delta}+1},0}$ (recalling ${\bar{\delta}+1}=\bar{\eta}$) and
 \item 
$\varrho^\Psi_{\bar{\eta}0}(\zeta^\Ttbar_{\bar{\eta}})=\zeta_\eta$ and
 $\varrho^{\Psi'}_{\bar{\delta}+1,0}(\zeta^\Ttbar_{\bar{\delta}+1})=\zeta_{\delta+1}$,
\end{enumerate}
so it follows that $i^\Uu_{(\beta,0),(\eta,0)}(\zeta_\beta)=\zeta_\eta$.
Likewise, $i^\Uu_{(\beta,0),(\eta,0)}(\eta_\beta)=\eta_\eta$, as required.

Condition \ref{item:pi_it_comm}: In the following diagram, the maps corresponding to vertical arrows are the iteration maps.
We want to see that the back\footnote{In case of visual ambiguity,
by ``back square'' we mean the square with vertices $M^\Tt_\beta,M^\Tt_\eta,M^\Uu_{(\beta,0)},M^\Uu_{(\eta,0)}$.} square commutes with respect to the point $\xi<\rho_0(M^\Tt_\beta)$.
Because $\xi\in\rg(\varrho^\Phi_{\bar{\beta}})$,
it easily suffices to see that each side of the diagram, excluding the back side, commutes:

\begin{tikzpicture}[->,shorten >=1pt,auto,node distance=2cm,
                    semithick]

  \node (Tbb)  {$M^\Ttbar_{\bar{\beta}}$};
  \node (mid1bb) [right of=Tbb] {};
  \node (mid2bb) [right of=mid1bb] {};
  \node (Ubb) [right of=mid2bb] {$M^\Uubar_{(\bar{\beta},0)}$};
  \node (Teb) [above of=Tbb] {$M^\Ttbar_{\bar{\eta}}$};
  \node (mid1eb) [right of=Teb] {};
  \node (mid2eb) [right of=mid1eb] {};
  \node (Ueb) [right of=mid2eb] {$M^\Uubar_{(\bar{\eta},0)}$};
  \node (Tb)  [above of=mid2eb] {$M^\Tt_{\beta}$};
  \node (mid1b) [right of=Tb] {};
  \node (mid2b) [right of=mid1b] {};
  \node (Ub) [right of=mid2b] {$M^\Uu_{(\beta,0)}$};
  \node (Te) [above of=Tb] {$M^\Tt_{\eta}$};
  \node (mid1e) [right of=Te] {};
  \node (mid2e) [right of=mid1e] {};
  \node (Ue) [right of=mid2e] {$M^\Uu_{(\eta,0)}$};

  \path
        (Tbb) edge node [below] {$\wtpi^\Ttbar_{\bar{\beta}}$} (Ubb)
        (Tbb) edge node {} (Teb)
        (Teb) edge node [below] {$\wtpi^\Ttbar_{\bar{\eta}}$} (Ueb)
        (Ubb) edge node {} (Ueb)
        (Tbb) edge node [near end] {$\varrho^\Phi_{\bar{\beta}}$} (Tb)
        (Teb) edge node [near end] {$\varrho^\Phi_{\bar{\eta}}$} (Te)
        (Tb) edge node {} (Te)
        (Ubb) edge node [near end] {$\varrho^\Psi_{(\bar{\beta},0)}$} (Ub)
        (Ueb) edge node [near end] {$\varrho^\Psi_{(\bar{\eta},0)}$} (Ue)
        (Ub) edge node {} (Ue)        
        (Tb) edge node [below] {$\wtpi_\beta$} (Ub)
        (Te) edge node [below] {$\wtpi_\eta$} (Ue)
;
\end{tikzpicture}

We have $\pred^\Ttbar(\bar{\eta})=\bar{\beta}$.
By the properties of $\Phi$, the left side square commutes,
and also $\bar{\eta}\notin\dropset_\deg^\Ttbar$ and $i^\Ttbar_{\bar{\beta}\bar{\eta}}$
is $\nu$-preserving (since $(\beta,\eta]_\Tt\inter\dropset^\Tt_\deg=\emptyset$
and $i^\Tt_{\beta\eta}$ is $\nu$-preserving). Likewise for $\Psi,\Uubar$ and the right side square.
So \ref{dfn:abstract_lifting}(\ref{item:eventual_commutativity}) applies to the front square,
and it therefore commutes.
And the bottom and top squares commute by \ref{dfn:abstract_lifting}(\ref{item:finite_support_commutativity}), as desired.

Condition \ref{item:sigma_it_comm}: This is basically like the previous condition,
although of course instead of literally having the models $M^\Uubar_{(\bar{\beta},0)}$, etc,
on the right side, we have some set of pairs $(\Vv,F)$
where $\Vv$ is a finite linear iteration tree on that model and $F\in\es_+(M^\Vv_\infty)$,
and instead of the maps $j=i^\Uubar_{(\bar{\beta},0),(\bar{\eta},0)}$ or $j=\varrho^\Psi_{\bar{\beta}}$, etc,
we have the function $\extcopy(j,\cdot)$. And instead of $\wtpi^\Ttbar_{\bar{\beta}}$, etc,
we have $\abres^\Ttbar_{\bar{\beta}}$, etc. (We also literally use functions of the form $\extcopy(i,\cdot)$
on the left side, with $i=i^\Ttbar_{\betabar,\etabar}$, etc, though there we are only copying extenders, not trees.)
The front, bottom and top squares commute much as before.
The maps on the left and right sides of the original diagram above
are $\nu$-preserving. It follows immediately that the left side of the new (invisible) diagram commutes.
Consider the right side. Fix some $(\bar{\Vv},\bar{F})$ in the bottom-front corner,
and let:
\begin{enumerate}[label=--]
 \item  $(\bar{\Vv}',\bar{F}')=\extcopy(i^{\Uubar}_{(\bar{\beta},0),(\bar{\eta},0)},(\bar{\Vv},\bar{F}))$ (in the top-front corner),\
 \item $(\Vv'_0,F'_0)=\extcopy(\varrho^\Psi_{(\bar{\eta},0)},(\bar{\Vv}',\bar{F}'))$ (at top-back),
 \item $(\Vv,F)=\extcopy(\varrho^\Psi_{(\bar{\beta},0)},(\bar{\Vv},\bar{F}))$ (at bottom-back),
 \item $(\Vv'_1,F'_1)=\extcopy(i^\Uu_{(\beta,0),(\eta,0)},(\Vv,F))$ (also at top-back).
\end{enumerate}
We need to see that $(\Vv'_0,F'_0)=(\Vv'_1,F'_1)$.

Let $\alpha<\lh(\bar{\Vv})$ and let
\[ \pi^{\bar{\Vv}\bar{\Vv}'}_\alpha:\core_0(M^{\bar{\Vv}}_\alpha)\to\core_0(M^{\bar{\Vv}'}_\alpha) \]
be the copy map induced by $i^\Uubar_{(\bar{\beta},0),(\bar{\eta},0)}$, and likewise for other pairs of trees replacing $(\bar{\Vv},\bar{\Vv}')$
(with copy maps induced by $\varrho^\Phi_{(\bar{\beta},0)}$,
$\varrho^\Phi_{(\bar{\eta},0)}$ and $i^\Uu_{(\beta,0),(\eta,0)}$).
Then by induction on $\alpha<\lh(\bar{\Vv})$, we get $M^{\Vv'_0}_\alpha=M^{\Vv'_1}_\alpha$ and
\[ \pi^{\bar{\Vv}'\Vv'_0}_\alpha\com\pi^{\bar{\Vv}\bar{\Vv}'}_\alpha=
 \pi^{\Vv\Vv'_1}_\alpha\com\pi^{\bar{\Vv}\Vv}_\alpha,
\]
and that each of these 4 maps are near embeddings and are $\nu$-preserving (see \S\ref{sec:nu-pres} for the latter point),
and if $\alpha+1<\lh(\bar{\Vv})$, that therefore $E^{\Vv'_0}_\alpha=E^{\Vv'_1}_\alpha$,
and if $\alpha+1=\lh(\bar{\Vv})$, that therefore $F'_0=F'_1$.
This yields the commutativity of the right side of the new diagram and
 completes the proof of the lemma.
\end{proof}

Given a pair $(M,N)$,
an abstract lifting strategy for $(M,N)$ which works for \emph{finite} trees,
and an iteration strategy for $N$,
we now show that these objects induce a natural iteration strategy for $M$.

\begin{lem}\label{lem:om-alg_to_lambda-alg}
 Let $M$ be $m$-sound, $N$ be $n$-sound. Suppose there is an 
abstract $(m,n,\om)$-lifting strategy for $(M,N)$.
Suppose $N$ is sse-essentially-$(n,\omega_1+1)$-iterable.
Then $M$ is sse-essentially-$(m,\omega_1+1)$-iterable.

In fact, let $\Gamma$ be an sse-essentially-$(n,\omega_1+1)$ strategy for $N$ and $(\Sigma,\frakL)$ 
an abstract $(m,n,\om)$-lifting 
strategy for $(M,N)$ \tu{(}so $\Sigma$ is trivial\tu{)}. Then there is a unique abstract 
$(m,n,\omega_1+1)$-lifting strategy $(\Sigma^+,\frakL^+)$ for $(M,N)$ such that 
$(\Sigma^+,\frakL^+)$ is induced by $\Gamma$ and $\frakL\sub\frakL'$.
\end{lem}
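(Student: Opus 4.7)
I will construct $(\Sigma^+,\frakL^+)$ by recursion on the length $\lambda\leq\om_1+1$ of trees $\Tt$. Uniqueness is clear, since at every stage the data is forced: the choice of branch at a limit must be determined by $\Gamma$ via the correspondence ${<^\Uu}={<^\Tt_\lifttree}$; the successor step is determined by \ref{dfn:abstract_lifting}(\ref{item:Uu_exts_are_lifts}) together with standard copying via the Shift Lemma to determine $\zeta_{\alpha+1},\wtpi_{\alpha+1},\eta_{\alpha+1},\abres_{\alpha+1}$; and the extension condition $\frakL\sub\frakL^+$ forces the finite case.

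The successor step is essentially the lifting algorithm of \ref{lem:iterability}, phrased abstractly. Given $\Tt$ of length $\alpha+2$ via $\Sigma^+$, extending $\Tt\rest(\alpha+1)$ by $E=E^\Tt_\alpha$, we set $(\Uu\rest[(\alpha,0),(\alpha,k_\alpha)],E^\Uu_{\alpha,k_\alpha})=\abres_\alpha(E)$ per \ref{dfn:abstract_lifting}(\ref{item:Uu_exts_are_lifts}), and then use $\Gamma$ to pick the branch from $(\alpha,k_\alpha)+1$ back to $(\alpha+1,0)$, recording the embeddings $\wtpi_{\alpha+1},\abres_{\alpha+1}$ and the ordinals $\zeta_{\alpha+1},\eta_{\alpha+1}$ via the Shift Lemma applied to whatever embedding $\varrho_{\alpha+1}:\core_0(M^\Tt_{\alpha+1})\to\core_0(M^\Uu_{(\alpha+1,0)})$ is determined by the iterability-style construction. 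The abstract properties \ref{dfn:abstract_lifting}(\ref{item:zeta_movement}) and \ref{dfn:abstract_lifting}(\ref{item:eventual_commutativity}) follow routinely from the construction; \ref{dfn:abstract_lifting}(\ref{item:finite_support_commutativity}) at this stage follows by reducing to a finite sub-tree and invoking the inductive hypothesis together with $\frakL$.

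The limit step is where the main work happens. Let $\Tt$ have limit length $\eta$, according to $\Sigma^+\rest\eta$, and let $\Uu=\Uu^\Tt\rest(\eta,0)$. Set $b^\Uu=\Gamma(\Uu)$, and define the putative $\Tt$-branch $b^\Tt=\{\gamma<\eta\mid(\gamma,0)\in b^\Uu\}$; by ${<^\Uu}={<^\Tt_\lifttree}$ this is a $\Tt$-cofinal branch. To extend $\frakL^+$ to $\Tt\conc b^\Tt$, I must verify that $M^\Tt_{b^\Tt}$ is $3\nu$-wellfounded, and define $\zeta_\eta,\wtpi_\eta,\eta_\eta,\abres_\eta$ satisfying the axioms. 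By \ref{lem:abstract_lifting_at_limit} applied throughout $\Uu$ (which we may apply on each proper initial segment by induction), there is $\beta_0\in b^\Tt$ such that for all $\beta\in b^\Tt$ with $\beta_0\leq\beta$, we have that $i^\Tt_{\beta\eta}$ is $\nu$-preserving and the commutativity conclusions (\ref{item:zeta,eta_pres})--(\ref{item:E-lift_pres}) of that lemma hold. We then define $\zeta_\eta=\sup_{\beta\in b^\Tt,\beta\geq\beta_0}i^\Uu_{(\beta,0),(\eta,0)}(\zeta_\beta)$ (or $\OR(M^\Uu_{(\eta,0)})$ in the boundary case), $\eta_\eta$ analogously, and $\wtpi_\eta$ as the unique order-preserving map commuting with the iteration maps from cofinally many $\beta$; well-definedness follows from \ref{lem:abstract_lifting_at_limit}. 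Likewise define $\abres_\eta(E)=\extcopy(i^\Uu_{(\beta,0),(\eta,0)},\abres_\beta(\bar E))$ whenever $E=i^\Tt_{\beta\eta}(\bar E)$ for large enough $\beta$. Since $\wtpi_\eta$ is an order-preserving map on the ordinals of $M^\Tt_{b^\Tt}$ landing in the wellfounded $\core_0(M^\Uu_{(\eta,0)})$, $M^\Tt_{b^\Tt}$ is wellfounded; and by \ref{cor:nu_in_wfp_tree} applied via the $\nu$-preserving tail of $b^\Tt$, it is also $3\nu$-wellfounded.

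The main obstacle is verifying the finite support commutativity \ref{dfn:abstract_lifting}(\ref{item:finite_support_commutativity}) at the new limit stage $\eta$, which is what makes the whole structure coherent beyond the finite case. Given any $(\Phi,\Psi):(\Ttbar,\Uubar)\hookrightarrow_\simple^\pre(\Tt,\Uu^\Tt)$ with $\eta\in\rg(\varphi^\Phi)$, let $\bar\eta=(\varphi^\Phi)^{-1}(\eta)$; applying \ref{lem:factor_pse} to peel off the top node reduces the desired identities (on $\zeta,\eta,\wtpi,\abres$) to identities at an index $\beta\in\rg(\varphi^\Phi)$ below $\eta$ with $\beta\in b^\Tt$ and $\beta\geq\beta_0$, together with the factoring diagram through $\varrho^\Phi_{\bar\eta}=i^\Tt_{\delta+1,\eta}\com\varrho'$ (where $\delta+1=\successor^\Tt(\beta,\eta)$), exactly as in the proof of \ref{lem:abstract_lifting_at_limit}. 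The inductive hypothesis applied to $\Ttbar$ (which has shorter length) together with the commutativity for $\beta$ below $\eta$ closes the argument. Finally, $\Sigma^+$ is $(m,\om_1+1)$-iterable because $\Gamma$ is $(n,\om_1+1)$-iterable on $N$ and every cofinal branch chosen by $\Gamma$ yields, via the lifting, a wellfounded extension as above.
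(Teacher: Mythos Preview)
Your limit step is broadly correct and matches the paper's approach: pull back the $\Gamma$-branch on $\Uu$ to a $\Tt$-cofinal branch, then define $\zeta_\eta,\wtpi_\eta,\eta_\eta,\abres_\eta$ as dictated by Lemma~\ref{lem:abstract_lifting_at_limit}, and verify condition~\ref{item:finite_support_commutativity} by replacing $\eta$ in $\rg(\varphi^\Phi)$ with some $\delta+1<\eta$ on the branch. (For that replacement you want Lemma~\ref{lem:simple_emb_iterate_end}, not \ref{lem:factor_pse}; the latter factors through sub-supports but does not shift the top index off a limit.)

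The successor step, however, has a genuine gap. You propose to define $\wtpi_{\alpha+1},\abres_{\alpha+1},\zeta_{\alpha+1},\eta_{\alpha+1}$ ``via the Shift Lemma applied to whatever embedding $\varrho_{\alpha+1}:\core_0(M^\Tt_{\alpha+1})\to\core_0(M^\Uu_{(\alpha+1,0)})$ is determined by the iterability-style construction''. But in the \emph{abstract} setting there is no such embedding available. The data carried by $\frakL^+$ at stage $\alpha$ consists only of $\zeta_\alpha$, an order-preserving map $\wtpi_\alpha$ on \emph{ordinals}, $\eta_\alpha$, and the function $\abres_\alpha$; none of this records an actual embedding of $M^\Tt_\alpha$ into anything, so there is nothing to feed into the Shift Lemma. (Also, $\Gamma$ plays no role at the successor step of $\Uu$: the node $(\alpha+1,0)$ is a successor in the $\Uu$-indexing, with predecessor determined by $<^\Tt_\lifttree$, not by a strategy.)

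The paper's way around this is the point of the whole setup: at a successor stage $\eta+n$ one \emph{defines} $\zeta_{\eta+n},\wtpi_{\eta+n},\eta_{\eta+n},\abres_{\eta+n}$ to be the unique objects forced by condition~\ref{item:finite_support_commutativity}. That is, take any $(\Phi,\Psi):(\Ttbar,\Uubar)\hookrightarrow^\pre_\simple(\Tt,\Uu)$ with $\eta+n=\max(\rg(\varphi^\Phi))$; since $\Ttbar$ is finite, the given $\frakL$ supplies $\zeta^\Ttbar_{\bar\alpha},\wtpi^\Ttbar_{\bar\alpha},\eta^\Ttbar_{\bar\alpha},\abres^\Ttbar_{\bar\alpha}$, and one pushes these forward along $\varrho^\Psi_{\bar\alpha 0}$. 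One must then check this is independent of the choice of support, which is where \ref{lem:factor_pse} is used: given two supports with $\rg(\varphi^\Phi)\sub\rg(\varphi^{\Phi'})$, factor $\Phi$ through $\Phi'$ and apply condition~\ref{item:finite_support_commutativity} for $\frakL$ to the factoring embeddings. Condition~\ref{item:zeta_movement} at the new stage is then verified by reflecting the relevant facts down to a finite support covering $\beta,\alpha+1$ and the parameters in question, where the condition holds for $\frakL$.
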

\begin{proof}
We will define $(\Sigma^+,\frakL^+)$ with the properties claimed,
and uniqueness will be straightforward to see.
In fact, we will define pairs $(\Sigma_\eta,\frakL_\eta)$,
for limit ordinals $\eta\leq\omega_1$, such that
$(\Sigma_\eta,\frakL_\eta)$
is an abstract $(m,n,\eta)$-lifting strategy,
$(\Sigma_\eta,\frakL_\eta)$ is induced by $\Gamma$ and $\frakL\sub\frakL_\eta$. This will be by recursion on $\eta$. We will  then
extend $(\Sigma_{\omega_1},\frakL_{\omega_1})$ to
the desired $(\Sigma^+,\frakL^+)$.

For $\eta=\om$ we (must) set $(\Sigma_\omega,\frakL_\omega)=(\Sigma,\frakL)$,
and this suffices.
 Now suppose we have defined $(\Sigma_\eta,\frakL_\eta)$,
 for some limit $\eta\leq\omega_1$. We
will define $(\Sigma',\frakL')=(\Sigma_{\eta+1},\frakL_{\eta+1})$, and observe that this is the only possible choice.
We must and do set $\Sigma_\eta\sub\Sigma'$ and $\frakL_\eta\sub\frakL'$.

Clearly $\Sigma'$ must now be determined as follows.
Let $\Tt$ be of length $\eta$, via $\Sigma_\eta$, let $\Uu=\Uu^\Tt$ and $b=\Gamma(\Uu)$.
Let $(\beta,0)\in b$ be such that $((\beta,0),(\eta,0)]_\Uu$ does not drop.
Then $\Sigma'(\Tt)$ must be the branch
\[ c=[0,\beta)_\Tt\cup\{\gamma<\eta\mid(\beta,0)<_\Uu(\gamma,0)\in b\}. \]
Note that $c$ is indeed $\Tt$-cofinal and drops only finitely often,
by \ref{dfn:abstract_lifting}(\ref{item:Uu_is_lift_tree}),(\ref{item:zeta_movement}).

So define $\Sigma'$ as above and fix $\Tt$ of length $\eta+1$ according to $\Sigma'$. We set 
\[ \Uu^{\Tt}=\Uu^{\Tt\rest\eta}\conc\left<\Gamma(\Uu^{\Tt\rest\eta})\right> \]
(as required). We must define
\[ \Psi^\Tt=\left<\zeta_\alpha,\wtpi_\alpha,\eta_\alpha,\abres_\alpha\right>_{\alpha<\lh(\Tt)},\]
with
$\Psi^\Tt\rest\eta=\Psi^{\Tt\rest\eta}$. For this, it just remains to define 
$\zeta_\eta,\wtpi_\eta,\eta_\eta,\abres_\eta$,
and to verify that the required properties hold.
Lemma \ref{lem:abstract_lifting_at_limit} indicates how
these objects must be defined, giving uniqueness.
So we just need to see that the objects are well-defined
and satisfy the conditions \ref{dfn:abstract_lifting}(\ref{item:Sigma_strategy})--(\ref{item:finite_support_commutativity}).
Just as in the proof of \ref{lem:abstract_lifting_at_limit},
we can pick $\beta_0$ with the same properties as there.
It easily follows that $\zeta_\eta,\wtpi_\eta,\eta_\eta,\abres_\eta$ are well-defined.

We now verify that $(\Sigma',\frakL')$ is indeed an abstract $(m,n,\eta+1)$-lifting strategy,
for which we must check the conditions of  \ref{dfn:abstract_lifting}.
Conditions \ref{item:Uu_is_lift_tree}--\ref{item:sigma_props} are straightforward.
(Here for condition \ref{item:sigma_props},
note that $\Gamma$ is in fact an sse-essentially-$(n,\omega_1+\om)$-strategy for $N$: if an illfounded  model $M^{\Tt'}_{\omega_1+n}$ of a  finite  extension $\Tt'$ of a tree $\Tt$ of length $\omega_1+1$ via $\Gamma$ is reached, then by taking a countable hull, one can produce a tree $\bar{\Tt}'$ via $\Gamma$
which also has an illfounded model, a contradiction.
Therefore, the finite (putative) trees $\Vv$
produced by $\abres_{\omega_1}$
have wellfounded models.)
Conditions \ref{item:Uu_exts_are_lifts},\ref{item:zeta_movement} are trivial
as $\eta$ is a limit. Condition \ref{item:Sigma_strategy}
follows from condition \ref{item:pi_order_pres} and \ref{lem:nu_in_wfp}.
It remains to verify condition
\ref{item:finite_support_commutativity}. Let
\[ \Phi:\Ttbar\hookrightarrow_\simple\Tt 
\text{ and }\Psi:\Uubar\hookrightarrow_\simple\Uu \]
be as there.
We may assume that $\eta\in\rg(\varphi^\Phi)$,
so\
\[ \eta=\max(\rg(\varphi^\Phi))\text{ and }(\eta,0)=\max(\rg(\varphi^\Psi)).\]
Let $\beta_0<_\Tt\eta$ be as before.
Let $\delta+1=\successor^\Tt(\beta_0,\eta)$ and
let $\Phi',\Psi'$ be as in the proof of \ref{lem:abstract_lifting_at_limit}
(hence, $\Phi',\Psi'$ witness \ref{lem:simple_emb_iterate_end})
with
\[ \rg(\varphi^{\Phi'})=\rg(\varphi^\Phi)\cup\{\delta+1\}\cut\{\eta\},\] etc.
So condition \ref{item:finite_support_commutativity}
applies to $\Phi',\Psi'$.
But then condition
\ref{item:finite_support_commutativity} for $\Phi,\Psi$ easily follows from
the commutativity given by \ref{lem:simple_emb_iterate_end},
the definitions of $\zeta_\eta$, $\wtpi_\eta$, $\eta_\eta$, $\abres_\eta$ and
condition \ref{item:finite_support_commutativity} applied to $\Phi',\Psi'$.

Now suppose we have defined $(\Sigma_{\eta+1},\frakL_{\eta+1})$ where $\eta$ is a limit ordinal
and $\eta+\om\leq\lambda$. We claim that $\Sigma_{\eta+1}$ extends (obviously uniquely)
to an essentially-$(m,\eta+\om)$-strategy $\Sigma_{\eta+\om}$,\footnote{Of course,
$\Sigma_{\eta+\om}$ chooses no further branches than those specified by $\Sigma_{\eta+1}$,
but we now also demand wellfoundedness for trees of length $<\eta+\om$.}
and $\frakL_{\eta+1}$ extends uniquely to $\frakL_{\eta+\om}$
such that the pair has the right properties.
In fact, we work inductively through analogous pairs $(\Sigma_{\eta+n},\frakL_{\eta+n})$
for $1\leq n<\om$; clearly this suffices.
So suppose we have $(\Sigma_{\eta+n},\frakL_{\eta+n})$ where $1\leq n<\om$.
Since $\Sigma_{\eta+n+1}=\Sigma_{\eta+n}$, we just need to define
$\frakL_{\eta+n+1}$ and verify the conditions.

So let $\Tt$ via $\Sigma_{\eta+n+1}$ have length $\eta+n+1$.
Note that $\Uu=\Uu^\Tt$ is determined by $\Uu^{\Tt\rest\eta+n}$
and condition \ref{item:Uu_exts_are_lifts}, and by condition
\ref{item:sigma_props}, $\Uu$ is essentially-$n$-maximal.
We need to define $\zeta_{\eta+n}$, $\wtpi_{\eta+n}$, $\eta_{\eta+n}$, $\abres_{\eta+n}$.
We define these objects as the unique ones determined by condition 
\ref{item:finite_support_commutativity}. Uniqueness here is because
we can form finite supports as in condition \ref{item:finite_support_commutativity}
covering any given finite selection of $\Tt$.
To see that the objects are well-defined, it suffices to see that if
$\Phi,\Psi$ and $\Phi',\Psi'$ are as in condition \ref{item:finite_support_commutativity}
with
\[ \max(\rg(\varphi^\Phi))=\eta+n=\max(\rg(\varphi^{\Phi'})) \] 
and $\rg(\varphi^\Phi)\sub\rg(\varphi^{\Phi'})$,
then the facts about $\zeta_{\eta+n}$, $\wtpi_{\eta+n}$, etc,
determined by $\Phi,\Psi$, agree with those determined by $\Phi',\Psi'$.
Say we have $\Phi:\Ttbar\hookrightarrow_\simple^\pre\Tt$
and $\Psi:\Uubar\hookrightarrow_\simple^\pre\Uu$;
then $\Uubar=\Uu^\Ttbar$ (for by condition \ref{item:finite_support_commutativity}
for $(\Sigma_{\eta+n},\frakL_{\eta+n})$,
the extenders used in $\Uubar$ are according to condition \ref{item:Uu_exts_are_lifts}).
Likewise with $\Phi',\Psi',\Ttbar',\Uubar'$.
By \ref{lem:factor_pse} there are
$\Phi^*:\Ttbar\hookrightarrow_\simple^\pre\Ttbar'$
and $\Psi^*:\Uubar\hookrightarrow_\simple^\pre\Uubar'$
with $\varphi^{\Phi'}``\rg(\varphi^{\Phi^*})=\rg(\varphi^\Phi)$
and $\varphi^{\Psi'}``\rg(\varphi^{\Psi^*})=\rg(\varphi^\Psi)$. But then note that condition \ref{item:finite_support_commutativity}
applies to $\Phi^*,\Psi^*$. Then much as in the proof of \ref{lem:abstract_lifting_at_limit},
the commutativity given by this and by \ref{lem:factor_pse} gives the required agreement between $(\Phi,\Psi)$ and $(\Phi',\Psi')$.

Finally we need to verify the conditions hold for $(\Sigma_{\eta+n+1},\mathscr{L}_{\eta+n+1})$.
***Condition \ref{item:...} is a slight issue as we don't yet have full wellfoundedness...
Conditions \ref{item:Uu_is_lift_tree}--\ref{item:Uu_exts_are_lifts} are straightforward,
and condition \ref{item:finite_support_commutativity} holds by construction;
this leaves only condition \ref{item:zeta_movement}, in the case that $\alpha+2=\lh(\Tt)$.
Let $\beta=\pred^\Tt(\alpha+1)$ and $(\beta,j)=\pred^\Uu(\alpha+1,0)$,
so $M^{*\Tt}_{\alpha+1}=M^\Tt_{\beta j}$. Given any $\xi<\rho_0(M^\Tt_{\beta j})$ and $E\in\es_+(M^\Tt_{\beta j})$,
we can fix $\Phi,\Psi$ as before with 
\[ \Phi:\bar{\Tt}\hookrightarrow_\simple^*\Tt\text{ and }\Psi:\bar{\Uu}\hookrightarrow_\simple\Uu \]
and $\beta,\alpha+1\in\rg(\varphi^\Phi)$ and capturing $\xi,E$, and reflecting all the pertinent properties
if $\beta$, $\alpha+1$, $\xi$, $E$ with respect to $\Tt$ to corresponding properties of corresponding objects with respect to $\bar{\Tt}$.
Then condition \ref{item:zeta_movement} holds for $\bar{\Tt}$ and $\bar{\Uu}=\Uu^{\bar{\Tt}}$.
Using condition \ref{item:finite_support_commutativity} applied to $\Phi,\Psi$,
clauses \ref{item:zeta_semi-pres}--\ref{item:if_drop_in_Tt_then_drop_options_in_Uu} of condition \ref{item:zeta_movement} easily follow.
For clause \ref{item:eventual_commutativity}, argue much as in the proof of \ref{lem:abstract_lifting_at_limit},
considering a diagram analogous to the one considered there, with $\eta$ there replaced by $\alpha+1$.
\end{proof}

\begin{dfn}\label{dfn:standard_alg}
 Let $Y$ be $(x,\xi,z)$-good, $N=N^Y_{\xi z}$, and $y<\om$ 
 be as in part \ref{item:general_mouse_it} (with $y=0$) or part \ref{item:Q-mouse_it} of the main iterability proof \ref{lem:iterability},
 with $Y$ being $(y,\om_1+1)$-iterable, as witnessed by strategy $\Gamma$.
We will define the \emph{$\Gamma$-standard} $(z,y,\om_1+1)$-lifting strategy $(\Sigma,\frakL)$
for $(N,Y)$, induced by $\Gamma$. We will verify in \ref{lem:standard_abstract_lifting} that it has the right properties.

Set $\Sigma=$ the essentially-$(z,\om_1+1)$-strategy for $N$ given by the proof of \ref{lem:iterability}.

We must define $\frakL=\left<\Uu^\Tt,\Psi^\Tt\right>_{\Tt\in X}$, where $X$ is the set of all trees on $M$ via $\Sigma$.
So let $\Tt\in X$. Then $\Uu^\Tt=\lifttree^{\Tt,Y}$ (see \ref{dfn:lift^Tt,Y}).
Fix $\alpha<\lh(\Tt)$; we must define $\Psi^\Tt_\alpha=\left<\zeta_\alpha,\wtpi_\alpha,\eta_\alpha,e_\alpha\right>$.
These objects are just those derived from the lifting and resurrection process of
\ref{lem:iterability}.
Let $\Tt=\Tt_0\conc\Tt_1$ where $\Tt_0$ is the small component of $\Tt$.
We adopt from the proof of \ref{lem:iterability} the notation
\[ 
\left<M_\alpha,m_\alpha,Y_\alpha,\xi_\alpha,N_\alpha,\pi_\alpha\right>_{\alpha<\lh(\Tt_0)}.\]
Let $\alpha<\lh(\Tt)$ and let $E\in\es_+(M_\alpha)$ and $P=M_\alpha|\lh(E)$ and $k+1=\lh(\redd^{M_\alpha}(P))$
and $d=\deg^{\Uu}(\alpha,0)$.

Suppose $\alpha<\lh(\Tt_0)$. We set
\[ \zeta_\alpha=\xi_\alpha\text{ and }\wtpi_\alpha=\pi_\alpha\rest\OR\text{ and }
\eta_\alpha=\rho_0^{N_\alpha},
\]
and set $\abres_\alpha(E)=(\Vv,F)$ where
 $\Vv=\restree^{\Tt_0\Uu_0d}(P)$ and $F=F^{M^\Vv_k}_{\beta}$ where $(\beta,\cdot)=\exitresadd^{\Tt_0\Uu_0}(P)$.

(***Should just define things so that this case is already covered above) Now suppose $\alpha\geq\lh(\Tt_0)$, so $\Tt\rest(\alpha+1)$ is non-small.
Let $\pi_\alpha:M_\alpha\to M^\Uu_{(\alpha,0)}$ be the copy map given by the proof of \ref{lem:iterability}.
We set
\[ \zeta_\alpha=\OR(M^\Uu_{(\alpha,0)})\text{ and }\wtpi_\alpha=\pi_\alpha\rest\OR\text{ and }
\eta_\alpha=\rho_0(M^\Uu_{(\alpha,0)}),\]
and set $\abres_\alpha(E)=(\Vv,F)$
where $\Vv$ is the length $k+1$ tree on $M^\Uu_{(\alpha,0)}$ which uses only padding,
and $F=\extcopy(\pi_\alpha,E)$.
\end{dfn}
\begin{lem}\label{lem:standard_abstract_lifting}
 Adopt the hypotheses and notation of \ref{dfn:standard_alg}.
 Then $(\Sigma,\frakL)$ is an abstract $(z,y,\om_1+1)$-lifting strategy  
for $(N,Y)$, induced by $\Gamma$.
\end{lem}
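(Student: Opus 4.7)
The plan is to verify, one by one, the conditions (1)--(10) of Definition \ref{dfn:abstract_lifting} for the pair $(\Sigma,\frakL)$ constructed in Definition \ref{dfn:standard_alg}, using the detailed bookkeeping carried out in the proof of Lemma \ref{lem:iterability}. Conditions \ref{item:Sigma_strategy}--\ref{item:Uu_is_lift_tree} are essentially by construction: $\Sigma$ is the strategy produced in \ref{lem:iterability}, $\Uu^\Tt=\lifttree^{\Tt,Y}$ was defined there, and the tree-order and drop/degree match up between $\Uu^\Tt$ and $\liftdom^\Tt$ is exactly the content of items (L4)--(L7), (L23), (L25) of the list maintained in that proof. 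The restriction property (Definition \ref{dfn:abstract_lifting}(5)) is automatic from the definition of the lifting tree operator. Conditions \ref{item:pi_order_pres} and the bound $\zeta_\alpha,\eta_\alpha\le\rho_0(M^\Uu_{\alpha 0})$ (with the $\xi=\OR^Y$ exception) are read off from (L9), (L10) and the fact that $\wtpi_\alpha=\pi_\alpha\!\upharpoonright\!\OR$ for the weak embedding $\pi_\alpha$ of \ref{lem:iterability}.

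Next I would verify \ref{dfn:abstract_lifting}(\ref{item:sigma_props}) and (\ref{item:Uu_exts_are_lifts}). These translate directly into the statements that $\restree^{\Tt_0,\Uu_0,y}_\alpha(P)$ is a padded linear $0$-maximal nowhere-dropping tree using only order $0$ measures and that $\exitresadd$ outputs a non-$\nu$-low weak $0$-embedding, together with the fact that the lift of $E^\Tt_\alpha$ used in $\Uu$ is exactly $F^{Y_{\alpha k_\alpha}}_{\eta_\alpha}$ by (L21). In the non-small segment of $\Tt$, $\abres_\alpha(E)$ is a trivial padded tree together with $\extcopy(\pi_\alpha,E)$, which is immediate. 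Condition (\ref{item:zeta_movement}) is then read off: clauses \ref{item:zeta_semi-pres} and \ref{item:if_drop_in_Tt_then_drop_options_in_Uu} are by (L25) and the case analysis in the proof of \ref{lem:iterability}, and clause \ref{item:eventual_commutativity} is (L26), which says exactly that eventual commutativity holds on the non-dropping, $\nu$-preserving parts of the respective branches. For the small segment this is all explicit, and for the non-small segment it is routine because the copy map there is a genuine near $\deg^\Tt(\alpha)$-embedding obtained from the Shift Lemma applied to $\pi_\alpha$.

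The main obstacle is condition \ref{dfn:abstract_lifting}(\ref{item:finite_support_commutativity}), namely that for every pair-embedding $(\Phi,\Psi):(\Ttbar,\Uubar)\hookrightarrow^\pre_\simple(\Tt,\Uu)$ one has $\Uubar=\Uu^\Ttbar$ and the commutativity squares involving $\zeta,\eta,\wtpi$ and $\abres$. The key point is that the construction of $\Uu^\Ttbar,\Psi^\Ttbar$ is \emph{local} in the sense that at each stage $\alpha$ it only depends on the current model $M^\Ttbar_\alpha$, the current resurrection data in the corresponding stage $N^{Y_{\alpha 0}^\Ttbar}_{\xi^\Ttbar_\alpha z}$, and the extender chosen; all of this is specified by minimality conditions (least-length background, canonical resurrection) that are preserved by $\nu$-preserving near embeddings of the sort $\varrho^\Phi_\alpha$ and $\varrho^\Psi_{(\alpha,j)}$. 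I would therefore argue by induction on $\alpha<\lh(\Ttbar)$: at a successor step use \ref{lem:res_comm} to transport the resurrection data from $Y^\Ttbar_\alpha$ to $Y^\Tt_{\varphi(\alpha)}$, showing $\varrho^\Psi_{\alpha,k^\Ttbar_\alpha}$ sends $\abres^\Ttbar_\alpha(E)$ to $\abres_{\varphi(\alpha)}(\extcopy(\varrho^\Phi_\alpha,E))$ and that the stage $\eta^\Ttbar_\alpha$ maps to $\eta_{\varphi(\alpha)}$; at a limit step in $\Ttbar$, use Lemma \ref{lem:simple_emb_iterate_end} together with (L26) of \ref{lem:iterability} to pass to a tail where all maps are $\nu$-preserving and non-dropping, reducing to the already-established successor case. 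The auxiliary fact that $\Uubar$ is forced to equal $\Uu^\Ttbar$ falls out of the same induction, since the extenders of $\Uubar$ are determined by the lifted resurrection data. Finally, the ``induced by $\Gamma$'' clause is immediate because $\lifttree^{\Tt,Y}$ is defined via $\Gamma$ by construction. The hard technical labor is confined to unwinding \ref{lem:res_comm} at successor steps and keeping track of the superstrong special cases in Definition \ref{dfn:coarse_tree_emb}--\ref{dfn:pse}.
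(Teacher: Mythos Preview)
Your plan is essentially the paper's own approach: the paper also declares all conditions except \ref{item:eventual_commutativity} and \ref{item:finite_support_commutativity} to be clear, and for \ref{item:finite_support_commutativity} it likewise argues by induction on $\alpha<\lh(\Ttbar)$, using Lemma \ref{lem:res_comm} at successor steps to transport the resurrection data and deduce the $\abres$-commutativity from the $\pi$-commutativity (in fact the paper proves the slightly stronger $\pi_{\varphi(\alpha)}\circ\varrho^\Phi_\alpha=\varrho^\Psi_{\alpha 0}\circ\pi^\Ttbar_\alpha$, not just the $\wtpi$-version, which is what one needs to feed into \ref{lem:res_comm}).

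Two small corrections. First, $\Ttbar$ is \emph{finite} by the definition of pre-simple tree embedding, so there are no limit steps in $\Ttbar$; your appeal to Lemma \ref{lem:simple_emb_iterate_end} for such steps is vacuous (that lemma is used elsewhere, namely inside the proof of Lemma \ref{lem:om-alg_to_lambda-alg}, to handle limits in $\Tt$, not in $\Ttbar$). Second, condition \ref{item:eventual_commutativity} is not fully covered by the item you cite from the iterability proof: that item gives the $\pi$-commutativity, but the $\abres$-clause of \ref{item:eventual_commutativity} also requires an application of Lemma \ref{lem:res_comm} (with $\varrho=i^\Tt_{\beta,\alpha+1}$ and $\varsigma=i^\Uu_{(\beta,0),(\alpha+1,0)}$), exactly as in the paper's proof.
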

\begin{proof}
 We just discuss conditions \ref{item:eventual_commutativity} and \ref{item:finite_support_commutativity} of \ref{dfn:abstract_lifting},
 as the rest is clear.
 
Condition  \ref{item:eventual_commutativity}: Assume the hypotheses there.
Then:
\begin{enumerate}[label=--]
\item $\eta_{\alpha+1}=i^\Uu_{(\beta,0),(\alpha+1,0)}(\eta_\beta)$,
\item $\zeta_{\alpha+1}=i^\Uu_{(\beta,0),(\alpha+1,0)}(\zeta_\beta)$ and
\item  $\pi_{\alpha+1}\com i^\Tt_{\beta,\alpha+1}=i^\Uu_{(\beta,0),(\alpha+1,0)}\com\pi_\beta$ (where $\pi_{\alpha+1},\pi_\beta$ are defined in \ref{dfn:standard_alg}), so
\item $\wtpi_{\alpha+1}\com i^\Tt_{\beta,\alpha+1}=i^\Uu_{(\beta,0),(\alpha+1,0)}\com\wtpi_\beta$ and
\item $\abres_{\alpha+1}(\extcopy(i^\Tt_{\beta,\alpha+1},E))=
\extcopy(i^\Uu_{(\beta,0),(\alpha+1,0)},\abres_\beta(E))$, by \ref{lem:res_comm}.
\end{enumerate}
 
 Condition \ref{item:finite_support_commutativity}:
 Let $(\Phi,\Psi):(\Ttbar,\Uubar)\hookrightarrow_\simple^\pre(\Tt,\Uu)$.

 We verify by induction on $\alpha<\lh(\Ttbar)$ that $\Uubar\rest(\alpha+1,1)=\Uu^{\Ttbar\rest(\alpha+1)}$
and the desired commutativity holds, and in fact that
letting $\pi^\Ttbar_\alpha$ be the lifting map as in \ref{dfn:standard_alg} but with respect to $\Ttbar$,
\begin{enumerate}[label=\tu{(}\roman*\tu{)}]
\item\label{item:zeta,eta_pres+} $\zeta_{\varphi(\alpha)}=\varrho_{\alpha 0}^{\Psi}(\zeta_\alpha^\Ttbar)\text{ and }\eta_{\varphi(\alpha)}=\varrho_{\alpha 0}^\Psi(\eta_\alpha^\Ttbar)$,
\item\label{item:pi,varrho_comm} $\pi_{\varphi(\alpha)}\com\varrho_{\alpha}^{\Phi}=\varrho^{\Psi}_{\alpha 0}\com\pi^\Ttbar_\alpha$,
\item\label{item:copy_comm}  $\all E\in\es_+(M^\Ttbar_\alpha)\left[ 
\abres_{\varphi(\alpha)}(\extcopy(\varrho^{\Phi}_\alpha,E))=
\extcopy(\varrho^{\Psi}_{\alpha 0},\abres^\Ttbar_\alpha(E))\right]$
\end{enumerate}
(this is slightly stronger than the required commutativity,
as we deal $\pi_{\varphi(\alpha)}$, etc, as opposed to $\wtpi_{\varphi(\alpha)}$, etc).
Note that  property \ref{item:copy_comm} will follow from  \ref{lem:res_comm}
together with properties  \ref{item:zeta,eta_pres} and \ref{item:pi,varrho_comm}.

Everything is trivial when $\alpha=0$.
So suppose everything holds for all $\alpha\leq\beta$, where $\beta+1<\lh(\Ttbar)$.
We prove everything when $\alpha=\beta+1$. Let
\[ (\gamma,j)=\pred^\Uubar(\beta+1,0).\]
Then by properties of $(\Tt,\Uu)$ and $(\Phi,\Psi)$, the structures of $\Tt,\Ttbar$
and of $\Uu,\Uubar$ correspond, and
\[ \Uu\rest[(\varphi(\beta),0),(\varphi(\beta)+1,0)]=\copyseg^{\varrho^\Psi_{\beta0}}(\Uubar\rest[(\beta,0),(\beta+1,0)], \]
and so by property \ref{item:copy_comm} for $\alpha=\beta$, and since $E^\Tt_{\varphi(\beta)}=\copyseg^{\varrho^\Phi_\beta}(E^\Ttbar_\beta)$,
we get $\Uubar\rest(\beta+1,1)=\Uu^{\Ttbar\rest(\beta+2)}$. We also have
\begin{enumerate}[label=--]
 \item $(\varphi(\gamma),j)=\pred^\Uu(\varphi(\beta)+1,0)$,
 \item $\gamma=\pred^\Ttbar(\beta+1)$ and $\varphi(\gamma)=\pred^\Tt(\varphi(\beta)+1)$,
 \item $M^{*\Ttbar}_{\beta+1}=M^{\Ttbar}_{\gamma j}$ and $M^{*\Tt}_{\varphi(\beta)+1}=M^\Tt_{\varphi(\gamma)j}$
 \item $\deg^\Ttbar(\beta+1)=\deg^\Tt(\varphi(\beta)+1)$ and $\deg^\Uubar(\beta+1,0)=\deg^\Uu(\varphi(\beta)+1,0)$,
 \item 
 $(\varphi(\beta)+1,\varphi(\beta+1)]_\Tt\inter\dropset_{\deg}^\Tt=\emptyset$ and $i^\Tt_{\varphi(\beta)+1,\varphi(\beta+1)}$ is $\nu$-preserving,
 \item $\varrho^\Phi_{\beta+1}=i^\Tt_{\varphi(\beta)+1,\varphi(\beta+1)}\com\bar{\varrho}_{\beta+1}$,
 where $\bar{\varrho}^\Phi_{\beta+1}:M^\Ttbar_{\beta+1}\to M^\Tt_{\varphi(\beta)+1}$ is given by the Shift Lemma,
 \item and likewise for $i^\Uu_{(\varphi(\beta)+1,0),(\varphi(\beta+1),0)}$ and $\varrho^\Psi_{(\beta+1,0)}$.
 \end{enumerate}

 We adopt the notation from \ref{lem:res_comm} with $M=M^\Ttbar_\gamma$, $M'=M^\Tt_{\varphi(\gamma)}$,
 $Y=M^\Uubar_\gamma$, $Y'=M^\Uu_{\varphi(\gamma)}$, $\zeta=\zeta^\Ttbar_\gamma$,
 $\zeta'=\zeta^\Tt_{\varphi(\gamma)}$, $m=\deg^\Ttbar(\gamma)=\deg^\Tt(\varphi(\gamma))$,
 $\pi=\pi^\Ttbar_\gamma$, $\pi'=\pi^\Tt_{\varphi(\gamma)}$, $\varrho=\varrho^\Phi_\gamma$
 and $\varsigma=\varrho^\Psi_{\gamma0}$. So we have $\Psi_j,\alpha_j$, etc, as there.

 Let us verify property \ref{item:zeta,eta_pres}. If $\alpha_j<\rho_0(M^\Uubar_{\gamma j})$ then $\alpha_j'=\varrho^\Psi_{\gamma j}(\alpha_j)$
 by \ref{lem:res_comm}, and by definition
 \[ \zeta^\Ttbar_{\beta+1}=i^\Uubar_{(\gamma,j),(\beta+1,0)}(\alpha_j)\text{ and }
  \zeta^\Tt_{\varphi(\beta)+1}=i^\Uu_{(\varphi(\gamma),j),(\varphi(\beta)+1,0)}(\alpha'_j),
 \]
 so by commutativity,
 \[ \bar{\varrho}^\Psi_{(\varphi(\beta+1),0)}(\zeta^\Ttbar_{\beta+1})=
 \zeta^\Tt_{\varphi(\beta)+1},\]
 but due to a lack of dropping,
 \[ \zeta^\Tt_{\varphi(\beta+1)}=i^\Uu_{(\varphi(\beta)+1,0),(\varphi(\beta+1),0)}(\zeta^\Tt_{\varphi(\beta)+1}),
 \]
 and therefore
 \[ \varrho^\Psi_{(\varphi(\beta+1),0)}(\zeta^\Ttbar_{\beta+1})=\zeta^\Tt_{\varphi(\beta+1)}.\]
 
 If instead $\alpha_j=\OR(M^\Uubar_{\gamma j})$ then $\alpha'_j=\OR(M^\Uu_{\varphi(\gamma) j})$
 and it is similar.
 
 The analogous relationship between $\eta^\Ttbar_{\beta+1}$ and $\eta^\Tt_{\varphi(\beta+1)}$ is likewise.
 
 Property \ref{item:pi,varrho_comm}: To obtain the required commutativity,
 we verify some corresponding commutativity at earlier stages. For notational simplicity we assume that $j>0$,
 but the $j=0$ case is analogous. Let
 \[ n=\deg^\Ttbar(\beta+1)=\deg^\Tt(\varphi(\beta)+1).\]
 By \ref{lem:res_comm} we have
 \begin{equation}\label{eqn:pred_comm_in_proof} \tau^{M^\Uu_{\varphi(\gamma)j}}_{\alpha_j'\om n}\com\psi'_j\com\psi_{\varrho^\Phi_\gamma}\rest(M^\Ttbar_{\gamma j})^\sq=
 \varrho^\Psi_{\gamma j}\com\tau^{M^\Uubar_{\gamma j}}_{\alpha_j\om n}\com\psi_j. \end{equation}

 Using again notation as in \ref{lem:res_comm}, but with $\gamma$ replaced by $\beta$,
 and writing $M_{\beta}$, etc, in place of $M$, etc, and making the notationally simplifying assumption
 that $E^\Ttbar_\beta\neq F(M^\Ttbar_\beta)$, we also have:
 \begin{equation}\label{eqn:exit_comm_in_proof} \tau^{M^\Uu_{\varphi(\beta)k_\beta}}_{\alpha'_{k_\beta}\om 0}\com\psi'_{\beta k_\beta}\com\psi_{\varrho^\Phi_\beta}\rest(\exit^\Ttbar_\beta)^\sq=
  \varrho^\Psi_{\beta k_\beta}\com\tau^{M^\Uubar_{\beta k_\beta}}_{\alpha_{k_\beta}\om 0}\com\psi_{\beta k_\beta} \end{equation}
(note that with $E^\Ttbar_\beta\neq F(M^\Ttbar_\beta)$, we have $\wtalpha_\beta=\alpha_{\beta k_\beta}$, etc).

But since $\bar{\varrho}^\Phi_{\beta+1}$, $\bar{\varrho}^\Psi_{\beta+1,0}$, $\pi^\Ttbar_{\beta+1}$ and $\pi^\Tt_{\varphi(\beta)+1}$
are defined via (essentially) the Shift Lemma,
lines (\ref{eqn:pred_comm_in_proof}) and (\ref{eqn:exit_comm_in_proof}) yield easily that
\[ \pi^\Tt_{\varphi(\beta)+1}\com\varrho^\Phi_{\beta+1}=\varrho^\Psi_{\beta+1,0}\com\pi^\Ttbar_{\beta+1}. \]
But then since
$\pi^\Tt_{\varphi(\beta+1)}\com i^\Tt_{\varphi(\beta)+1,\varphi(\beta+1)}=
 i^\Uu_{(\varphi(\beta)+1,0),(\varphi(\beta+1),0)}\com\pi^\Tt_{\varphi(\beta)+1}$,
property \ref{item:pi,varrho_comm} follows.
\end{proof}

\begin{rem} ***I need to make this section correct/general. In the following definition, a \emph{term sequence} $\vec{t}$ is a sequence $(t_0,\ldots,t_{n-1})$
of terms whose interpretation $\vec{t}^P$ relative to a premouse $P$,
if defined, specifies a finite iteration tree $\Tt$ or some element of $M^\Tt_\infty$, etc,
as in \ref{dfn:lifting_norm}. The terms must have the appropriate degrees; in particular, $t_0$ is a $\Sigma_1$ term.
So for example, maybe $\Tt$ is a finite tree on $M_\infty$ and
\begin{enumerate}[label=--]
 \item $t_0$ is a $\Sigma_1$ term and $t_0^{M_\infty}(p_1^{M_\infty})=E^\Tt_0$,
 \item $t_1$ is a $\Sigma_1$ term and $t_1^{M_\infty}(p_1^{M_\infty})=\vec{a}_0\in[\nu(E^\Tt_0)]^{<\om}$,
 \item $t_2$ is a $\Sigma_{m+1}$ term and $t_2^{M^\Tt_1}(\avec_0,\pvec_{m+1}^{M^\Tt_1})=E^\Tt_1$ where $m=\deg^\Tt(1)$,
 \item etc.
\end{enumerate}
\end{rem}

Recall the mouse operator $\Mmm$ and the set $A\sub\RR$ and formula $\psi_0$
introduced at the beginning of \S\ref{sec:the_scale},
and the constructive scale on $A$, defined in \ref{dfn:con_scale}.

\begin{dfn}\label{dfn:limit_alg}
Let $x_n\to x$ modulo the constructive scale on $A$. Let $M_\infty$ be the term structure defined in \ref{dfn:M_infty}.
Let $Y$ be $(y,\om_1+1)$-iterable and $\Gamma$ be a $(y,\om_1+1)$-strategy for $Y$.
Suppose that $Y$ is $M_{x_n}$-good for all $n<\om$.
We will define the \emph{$\Gamma$-limit} abstract $(0,y,\om_1+1)$-lifting strategy  $(\Sigma,\frakL)$  
for $(M_\infty,Y)$, induced by $\Gamma$ (as verified in \ref{lem:limit_alg}).

Let $(\Sigma_n,\frakL_n)$ be the $\Gamma$-standard $(0,y,\om_1+1)$-lifting strategy for $(M_{x_n},Y)$
(see \ref{dfn:standard_alg}). Given a tree $\Tt_n$ according to $\Sigma_n$,
and $\alpha<\lh(\Tt_n)$, we write $\zeta^{\Tt_n}_\alpha$, etc, for the objects specified by $\frakL_n$.

Now we will define an abstract $(0,y,\om)$-lifting strategy $(\Sigma',\frakL')$,
and set $(\Sigma,\frakL)$ to be its unique extension to $(0,y,\om_1+1)$, induced by $\Gamma$, given by \ref{lem:om-alg_to_lambda-alg}.
We define $\Uu^\Tt,\Psi^\Tt$, for finite length essentially $0$-maximal trees $\Tt$ on $M_\infty$,
by induction on $\lh(\Tt)$.
Fix $\Tt$ and $\Uu=\Uu^\Tt$, with $\Tt$ of length $\alpha+1$.
We must define $\zeta^\Tt_\alpha,\wtpi^\Tt_\alpha,\eta^\Tt_\alpha,\abres^\Tt_\alpha$.

Fix a term sequence $\vec{t}$ such that $\Tt=\vec{t}^{M_\infty}$. 
For sufficiently large $n<\om$ let $\Tt_n=\vec{t}^{M_{x_n}}$.
We will verify in \ref{lem:limit_alg_well_def} that for sufficiently large $n$,
$\Tt_n$ is an essentially-$0$-maximal tree on $M_{x_n}$ of the same length,
and $\Uu^{\Tt_n}=\Uu$.

We set $\zeta^\Tt_\alpha=\lim_{n\to\om}\zeta^{\Tt_n}_\alpha$ and $\eta^\Tt_\alpha=\lim_{n\to\om}\eta^{\Tt_n}_\alpha$.

Let $\beta<\rho_0(M^\Tt_\alpha)$ and $\vec{u}$ be a term sequence
such that $\beta=\vec{u}^{M_\infty}$. Let $\beta_n=\vec{u}^{M_{x_n}}$.
We will have $\beta_n<\rho_0(M^{\Tt_n}_\alpha)$ for sufficiently large $n$.
We set
\[ \wtpi^\Tt_\alpha(\beta)=\lim_{n\to\om}\wtpi^{\Tt_n}_\alpha(\beta_n). \]

Similarly, let $E\in\es_+(M^\Tt_\alpha)$. Let $\vec{v}$ be such that $\vec{v}^{M_\infty}=E$ and
let $E_n=\vec{v}^{M_{x_n}}$. We will have $E_n\in\es_+(M^{\Tt_n}_\alpha)$ for sufficiently large $n$.
We set
\[ \abres^\Tt_\alpha(E)=\lim_{n\to\om}\abres^{\Tt_n}_\alpha(E_n).\qedhere \]
\end{dfn}
\begin{dfn}
Continuing with notation as in \ref{dfn:limit_alg}, we define also some auxiliary functions,
$\abrestree$ (\emph{abstract resurrection tree}), $\abresl$ (\emph{abstract resurrection length}),
$\abresprodstage$ (\emph{abstract resurrection production stage}),
$\abresprodseg$ (\emph{abstract resurrection production segment})
and $\abdirectprodseg$
(\emph{abstract direct production segment}) functions.

First suppose that $\lh(\Tt)<\om$ (so we have $\Tt_n$ for large $n$, etc).
Given $S\ins M^\Tt_\alpha$ (and corresponding $S_n\ins M^{\Tt_n}_\alpha$ for large $n$) we write
(***this is only for the non-small case? but maybe don't need it otherwise)
\begin{enumerate}[label=--]
 \item $\abrestree^\Tt_\alpha(S)=\lim_{n\to\om}\restree^{\Tt_n\Uu d}_{x_n}(S_n)$ where $d=\deg^\Uu(\alpha,0)$,
 \item $\abresl^\Tt_\alpha(S)=k$ where $k+1=\lh(\abrestree^\Tt_\alpha(S))$,
 \item letting $c=\deg^\Tt(\alpha)$ and
 $\pi_{n\alpha}:\core_0(M^{\Tt_n}_\alpha)\to\core_0(N_{x_n\zeta_\alpha c}^{M^\Uu_{\alpha 0}})$
 be the canonical lifting map, we set
 \benumdd
 \item $\abresprodstage^\Tt_\alpha(S)=\lim_{n\to\om}\resprodstage^{M^\Uu_{\alpha 0}}_{x_n,\zeta_\alpha,c,\pi_{n\alpha}}(S_n)$,
 \eenum
 \item letting $k=\abresl^\Tt_\alpha(S)$, we set
 \benumdd
 \item $\abresprodseg^\Tt_\alpha(S)=\lim_{n\to\om}\prodseg^{M^\Uu_{\alpha k}}_{x_n}(\abresprodstage^\Tt_\alpha(S))$
  \eenum
 (note that this is a segment of $M^\Uu_{\alpha k}$),
 \item letting $\Vv=\abrestree^\Tt_\alpha(S)$, we set
 \benumdd
  \item $\abdirectprodseg^\Tt_\alpha(S)=\abresprodseg^\Tt_\alpha(S)$ if $\all i[E^\Vv_i=\emptyset]$,
  \item $\abdirectprodseg^\Tt_\alpha(S)=M^\Uu_{\alpha 0}|\lh(E^\Vv_i)$ where $i$ least s.t.~$E^\Vv_i\neq\emptyset$
 \eenum
 (in both cases this is a segment of $M^\Uu_{\alpha 0}$).
\end{enumerate}

Now for the case that $\lh(\Tt)\geq\om$ we use simple tree embeddings,
to lift the preceding definitions just as we did for the data within an abstract lifting algorithm.\footnote{
Because $\Tt$ is infinite, we do not see how to produce useful (infinite) versions $\Tt_n$ of $\Tt$ on $M_{x_n}$.}
One verifies that this is well-defined via arguments as before.
\end{dfn}

\begin{lem}\label{lem:limit_alg_well_def}
Adopt the hypotheses and notation of \ref{dfn:limit_alg}.
Then:
\begin{enumerate}[label=\tu{(}\roman*\tu{)}]
 \item for all sufficiently large $n$, $\Tt_n$, $\beta_n$ and $E_n$ are well-defined,
$\Tt_n$ is an essentially-$0$-maximal tree on $M_{x_n}$, 
$\beta_n<\rho_0(M^{\Tt_n}_\alpha)$, and $E_n\in\es_+(M^{\Tt_n}_\alpha)$,
\item $\Uu=\Uu^{\Tt_n}$ for all sufficiently large $n$, 
\item the limits used in the definition exist \tu{(}in the ``eventually constant'' sense\tu{)},
and
\item the definitions are independent of the choices of term sequences $\vec{t},\vec{u},\vec{v}$.
\end{enumerate}
\end{lem}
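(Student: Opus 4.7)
\medskip

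The plan is to prove all four clauses simultaneously by induction on $\lh(\Tt)$, using the fact that the mouse scale on $A$ was designed precisely to encode the data under consideration. The base case $\lh(\Tt)=1$ (trivial tree) is immediate, since then $\Tt_n$ is trivial for all $n$, $\Uu^{\Tt_n}$ is trivial on $Y$ (which is $M_{x_n}$-good by assumption), and $\zeta^{\Tt_n}_0,\eta^{\Tt_n}_0,\wtpi^{\Tt_n}_0,\abres^{\Tt_n}_0$ are computed by the first-line cases of Definition~\ref{dfn:standard_alg} from the data $(Y,\xi_{x_n},\pi_{x_n}^Y)$; that the outputs stabilize as $n\to\om$ follows from the $0^{\mathrm{th}}$-depth theory and ordinal norms of the scale (which already decide $M_{x_n}$-goodness of $Y$, $\Omega_{x_n}^Y$, $\projdeg(M_{x_n})$, the images $\pi_{x_n}^Y(\beta)$ of terms, and the resurrection data of segments of $M_{x_n}$).

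At a successor step $\Tt=\Tt\rest\alpha\,{}^\frown\langle E\rangle$, let $\vec{t}$ be the term sequence giving $\Tt$. The fact that $\Tt_n$ is a legitimate sse-essentially-$0$-maximal tree on $M_{x_n}$ with the same tree/drop/degree structure as $\Tt$ (and hence $\vec{a}_{\alpha-1}$ lies in $\nu(E^{\Tt_n}_{\alpha-1})$, etc.) is precisely the content of the $(\sigma,i)$-wellness clauses and the $\equiv^\dec_{4i+1}$, $\equiv^\dec_{4i+2}$, $\equiv^\dec_{4i+4}$ levels of a suitable norm description $\sigma_{\vec{t}}$ whose first $\alpha$ coordinates mirror $\vec{t}$. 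Each such norm is in the scale, so $x_n\to x$ modulo $\vec{\varphi}$ forces these decisions to be eventually constant, giving (i) for $\Tt_n$ and, with the inductive hypothesis applied to the ``final exit'' term $\vec{v}$ (respectively $\vec{u}$), also for $E_n$ and $\beta_n$. For clause (ii), observe that $\Uu^{\Tt_n}\rest\alpha=\Uu\rest\alpha$ by induction, while the extenders $E^{\Uu^{\Tt_n}}_{\alpha k}$ and the intermediate resurrection extenders $E^{\Uu^{\Tt_n}}_{\alpha j}$ ($j<k$) are exactly the lifts $\abres^{\Tt_n}_\alpha(E^{\Tt_n}_\alpha)$; a norm description $\sigma$ of depth $\alpha+1$ whose last slot is the $\leq_{4\alpha+3}$ comparison explicitly encodes $k$, the sequence $\params^P_{xj}$, and the background indices, so stabilization of these norms across $n$ forces $\Uu^{\Tt_n}\rest(\alpha,1)$ to agree with $\Uu$ for all large $n$. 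This is precisely the information that Lemma~\ref{lem:norm_invariance} showed is $P$-invariant, so the values computed inside $Y$ are the relevant ones.

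For clause (iii), the ordinal norms built from $\rSigma_1$-ordinal-terms $t$ over $M_{x_n}$ already belong to the scale (take $\sigma$ with last slot the ordinal term $t$ representing the lifting datum in question). By Definition~\ref{dfn:standard_alg} each of $\zeta^{\Tt_n}_\alpha$, $\eta^{\Tt_n}_\alpha$, $\wtpi^{\Tt_n}_\alpha(\beta_n)$, and the finitely many indices encoding $\abres^{\Tt_n}_\alpha(E_n)$ is the value of such an ordinal term evaluated in $Y$ via the lifting-resurrection process; since the corresponding $\leq_\sigma$ on $A$ is a prewellorder (Lemma~\ref{lem:leq_sigma_is_a_pwo}) and $x_n\to x$ mod the scale, the values $\Phi_\sigma(x_n)$ are eventually constant, whence the limits exist (and equal the common eventual value). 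Clause (iv), independence of $\vec{t},\vec{u},\vec{v}$, is then easy: if $\vec{t}$ and $\vec{t}'$ both define $\Tt$, then by Los' theorem for $M_\infty$ (Remark~\ref{rem:M_infty}) we have $\vec{t}^{M_{x_n}}=\vec{t}'^{M_{x_n}}$ for all sufficiently large $n$, and likewise for $\vec{u},\vec{v}$, so the eventual common values coincide.

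The main obstacle is the bookkeeping in step~(ii): one has to exhibit, for each term-specified datum that goes into $\Uu^{\Tt_n}$ (the resurrection length $k$, the production stages $\alpha_{xj}^*$, $\beta^*_{xj}$, the $t$-values, the background-extender indices, the $\wt{\alpha}$ from the exit resurrection addendum), a single norm description in $\vec{\varphi}$ whose stabilization forces equality across $n$; this is really the reason the lifting norms were set up as in Definition~\ref{dfn:lifting_norm}, and the relevant uniform-definability of $\CC$ (Lemma~\ref{lem:P-con_basic_props}) together with the $P$-invariance of the norms (Lemma~\ref{lem:norm_invariance}) is what makes the argument go through. Once this catalogue of norms is assembled, the induction on $\lh(\Tt)$ runs without further difficulty.
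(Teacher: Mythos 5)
Your proof follows essentially the same strategy as the paper's: the mouse scale norms were constructed precisely to encode the data appearing in the lifting algorithm, so convergence modulo the scale forces the relevant decisions and ordinal values to stabilize, and parts (ii), (iii) then drop out because the $\Uu$-extender indices, resurrection stages, production segments and images of terms under $\wtpi_\alpha$ are all directly built into the lifting norms.

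There is, however, a misattribution worth flagging, because it obscures the one technical step that actually needs an argument. The paper's proof is terse, but it identifies the key ingredient explicitly: for a term sequence $\vec{s}$ defining a finite tree $\Ss$ on $M_\infty$, an $\rSigma_{m+1}$ formula $\varphi$ (with $m=\deg^\Ss(\infty)$), and a term $\vec{r}$, the statement ``$\vec{r}^{M_\infty}\downarrow$ and $M^\Ss_\infty\sats\varphi(\vec{r}^{M_\infty},\pvec_{m+1}^{M^\Ss_\infty})$'' must be \emph{uniformly reducible to a $\Sigma_1$ statement over the base premouse $P$}, via a generalization of \cite[3.18--3.28]{extmax} to the superstrong level. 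That reduction is what makes the $\Sigma_1$-theory norms decide first-order facts about finite iterates of $M_{x_n}$, and it is the engine behind (i) and (iv). This reduction appears nowhere in your write-up. You cite Lemma~\ref{lem:P-con_basic_props} (``uniform-definability of $\CC$'') and Lemma~\ref{lem:norm_invariance}, but neither supplies it: Lemma~\ref{lem:P-con_basic_props} concerns the relationship between a P-construction and its background, not the theory of $m$-maximal iterates of an arbitrary premouse; Lemma~\ref{lem:norm_invariance} gives $P$-invariance of the lifting norms, which is a prerequisite for the norms to be norms at all but is orthogonal to the present issue. Likewise, your appeal to Lo\'s' theorem (Remark~\ref{rem:M_infty}) for part (iv) is in the right spirit, but ``$\vec{t}^{M_\infty}=\vec{t}'^{M_\infty}$'' is not directly a $\Sigma_1$ statement about $M_\infty$ -- its interpretation involves passing to finite iterates -- and it becomes amenable to Lo\'s only via the same reduction, which should be stated. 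The induction on $\lh(\Tt)$ and the heavier reliance on the lifting norms for part (i) are reasonable organizational choices (the paper runs (i), (iv) directly off the theory norms and reserves the lifting norms for (ii), (iii)); the substance is otherwise sound once the correct reduction is invoked.
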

\begin{proof}
For each term sequence $\vec{s}$, we have
\[ \vec{s}^{M_\infty}\downarrow\text{ and is a }0\text{-maximal tree }\Ss\text{ on }M_\infty \]
iff
\[ \all^* n\left[\vec{s}^{M_{x_n}}\downarrow\text{ and is a }0\text{-maximal tree }\Ss_n\text{ on }M_{x_n}\right],\]
and when these things hold,
\[ \all^*n\left[\Ss,\Ss_n\text{ have the same tree, drop and degree structure}\right].\]
Moreover, fixing such $\vec{s},\Ss,\Ss_n$, for each $\vec{r}$ and
$\rSigma_{m+1}$ formula $\varphi$ where $m=\deg^\Ss(\infty)$,
we have
\[ \vec{r}^{M_\infty}\downarrow\text{ and }M^\Ss_\infty\sats\varphi(\vec{r}^{M_\infty},\pvec_{m+1}^{M^\Ss_\infty}) \]
iff
\[ \all^*n\left[\vec{r}^{M_{x_n}}\downarrow\text{ and }M^{\Ss_n}_\infty\sats\varphi(\vec{r}^{M_{x_n}},\pvec_{m+1}^{M^{\Ss_n}_\infty})\right].\]
These statements follow from the fact that $x_n\to x$ modulo the theory norms,
using the uniform reduction of $\Sigma_{m'+1}^{M^{\Ss'}_\infty}$ to $\Sigma_1^P$,
where $\Ss'$ is finite essentially-$0$-maximal on $P$ and $m'=\deg^{\Ss'}(\infty)$,
 via a generalization of  \cite[3.18--3.28]{extmax} to the superstrong level (\cite{extmax} works formally with premice which do not have extenders of superstrong type on their sequence).
 
 Parts (i) and (iv) follow from these considerations,
 part (iv) because $\Tt_n,\beta_n,E_n$ are independent of the choice of terms for sufficiently large $n$.
  Parts (ii) and (iii) are because the relevant ordinals, including indices for the extenders of $\Uu$,
  are incorporated into the lifting norms.
\end{proof}
\begin{lem}
Adopt the hypotheses and notation of \ref{dfn:limit_alg}.
Then $M_\infty$ is wellfounded and $(0,\om_1+1)$-iterable, and in fact,
$(\Sigma,\frakL)$ is a $(0,y,\om_1+1)$-lifting algorithm induced by $\Gamma$.
\end{lem}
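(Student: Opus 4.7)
The plan is to establish the three conclusions in the natural order: first wellfoundedness of $M_\infty$, then the verification that the finite-tree data defined in \ref{dfn:limit_alg} constitutes an abstract $(0,y,\omega)$-lifting strategy induced by $\Gamma$, and finally an appeal to \ref{lem:om-alg_to_lambda-alg} to extend this to a $(0,y,\omega_1+1)$-lifting strategy, which in particular yields the $(0,\omega_1+1)$-iterability of $M_\infty$ (using \ref{lem:m-max_iter_equiv_ess_m-max_iter} to pass between essential and ordinary $m$-maximality). Throughout, the key mechanism is that the constructive scale folds in \emph{all} theory norms and \emph{all} lifting norms $\Phi_\sigma$, so that $x_n\to x$ modulo the scale forces the corresponding norms to be eventually constant; this is precisely what is needed to make each limit in \ref{dfn:limit_alg} well-defined.

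\textbf{Wellfoundedness.} Because $M_\infty=\Hull_1^{M_\infty}(\emptyset)$ (Remark \ref{rem:M_infty}), every $\xi\in\OR^{M_\infty}$ is of the form $t^{M_\infty}$ for some $\Sigma_1$ term $t$. Choose the norm description $\sigma$ of depth $1$ with $\mathscr{N}=\{0\}$, $n_0=0$, $\nu_0$ a trivial rep-term, and last term $t$; then the lifting norm $\Phi_\sigma$ computes, for each $x_n$, the ordinal $\pi^P_{x_n}(t^{M_{x_n}})$ in any $M_{x_n}$-good mouse $P$, and by the norm invariance lemma \ref{lem:norm_invariance} this may be read off from $Y$. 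Since $x_n\to x$ modulo $\Phi_\sigma$, the value $\wtpi^{\emptyset}_{x_n,0}(t^{M_{x_n}})\in\OR^Y$ is eventually constant; the map $t^{M_\infty}\mapsto \lim_n \wtpi^{\emptyset}_{x_n,0}(t^{M_{x_n}})$ is order-preserving on $\OR^{M_\infty}$ (an order-reversal would be witnessed, for cofinitely many $n$, by failure of an inequality controlled by a further norm $\Phi_{\sigma'}$, contradicting convergence). Hence $\OR^{M_\infty}$ embeds into $\OR^Y$ and $M_\infty$ is wellfounded.

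\textbf{Finite-tree verification.} Fix a finite-length putative tree $\Tt$ via $\Sigma'$ and a term sequence $\vec t$ with $\Tt=\vec t^{M_\infty}$; by \ref{lem:limit_alg_well_def}, for cofinitely many $n$, $\Tt_n=\vec t^{M_{x_n}}$ is a well-defined sse-essentially $0$-maximal tree on $M_{x_n}$ of the same tree, drop and degree structure, with common lift $\Uu=\Uu^{\Tt_n}$. By \ref{lem:standard_abstract_lifting}, each $(\Sigma_n,\frakL_n)$ satisfies the axioms of \ref{dfn:abstract_lifting}. The objects $\zeta^\Tt_\alpha,\eta^\Tt_\alpha,\wtpi^\Tt_\alpha(\beta),\abres^\Tt_\alpha(E)$ are each defined as eventual limits of the corresponding objects for $\Tt_n$; the existence of these limits is exactly the convergence of the appropriate lifting norms (ordinal-valued for $\zeta,\eta,\wtpi$; determined by the finite parameter blocks $\params_{xj}$ and the resurrection-tree shape for $\abres$, all of which appear in norm descriptions of depth $\leq\alpha+1$). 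Axioms (2)--(10) transfer from the approximations to the limit by an eventual-constancy argument: any failure in the limit would force failure for cofinitely many $n$. The cross-hull axiom (11) (finite support commutativity) is verified by noting that if $(\Phi,\Psi):(\bar\Tt,\bar\Uu)\hookrightarrow^{\mathrm{pre}}_{\simple}(\Tt,\Uu)$, then for large $n$ the same pair witnesses $(\bar\Tt_n,\bar\Uu)\hookrightarrow^{\mathrm{pre}}_{\simple}(\Tt_n,\Uu)$, so commutativity at level $n$ (supplied by \ref{lem:standard_abstract_lifting}) descends. Wellfoundedness of each $M^\Tt_\alpha$ follows, as in the opening paragraph, by composing $\wtpi^\Tt_\alpha$ with the order-preserving images of the relevant $Y$-side iterates.

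\textbf{Extension and the main obstacle.} Having produced an abstract $(0,y,\omega)$-lifting strategy $(\Sigma',\frakL')$ induced by $\Gamma$, apply \ref{lem:om-alg_to_lambda-alg} to extend uniquely to $(\Sigma,\frakL)$ of length $\omega_1+1$ induced by $\Gamma$; condition (1) of \ref{dfn:abstract_lifting} then delivers the desired sse-essentially $(0,\omega_1+1)$-strategy on $M_\infty$, hence $(0,\omega_1+1)$-iterability via \ref{lem:m-max_iter_equiv_ess_m-max_iter}. The principal obstacle is the verification of axiom (9)(c) (eventual commutativity at a successor step $\alpha+1$ with predecessor $\beta$ under non-dropping and $\nu$-preservation): one must show that $\wtpi_{\alpha+1}\circ i^\Tt_{\beta,\alpha+1}=i^\Uu_{(\beta,0),(\alpha+1,0)}\circ\wtpi_\beta$ and the analogous $\abres$-commutativity. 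For each approximation $\Tt_n$ this is supplied by \ref{lem:res_comm}, and passage to the limit is exactly what the norms tracking $\zeta,\eta,\wtpi,\abres$ (encoded in the norm descriptions whose finite-iteration segment matches $\Tt\rest(\alpha+1)$) are designed to secure. The other non-routine point, limit-stage behavior at ordinals $\eta<\omega_1$, is handled uniformly by \ref{lem:abstract_lifting_at_limit}, whose hypotheses (branch choice via $\Gamma$, finite-drop main branch, eventual $\nu$-preservation) are automatic for $(\Sigma,\frakL)$ as constructed; no further combinatorial input from the norms is needed beyond what was already used at the finite level.
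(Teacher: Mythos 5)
Your proof is correct and follows essentially the same route as the paper's: verify that the finite-tree data $(\Sigma',\frakL')$ satisfy the axioms of an abstract $(0,y,\omega)$-lifting strategy by transferring from the approximations $(\Sigma_n,\frakL_n)$ of Lemma \ref{lem:standard_abstract_lifting} using the eventual constancy guaranteed by the lifting norms (the paper illustrates this for the finite-support commutativity axiom, declaring the rest similar), then extend via Lemma \ref{lem:om-alg_to_lambda-alg}. One minor slip in your cross-hull step: the pair $(\Phi,\Psi)$ cannot itself witness $(\bar\Tt_n,\bar\Uu)\hookrightarrow^{\mathrm{pre}}_{\simple}(\Tt_n,\Uu)$ for the approximations, since the $\varrho$-components of $\Phi$ have domains the models of $\bar\Tt$ on $M_\infty$ rather than of $\bar\Tt_n$ on $M_{x_n}$; as the paper does, one must construct fresh $\Phi_n$ from the common term sequences (with the same $\rg(\varphi^{\Phi_n})$), after which the level-$n$ commutativity passes to the limit.
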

\begin{proof}
By \ref{lem:om-alg_to_lambda-alg}, we just need to see that $(\Sigma',\mathfrak{L}')$, as defined in \ref{dfn:limit_alg},
is a $(0,y,\om)$-lifting strategy.
Let $\Tt$ be a finite length essentially $0$-maximal tree on $M_\infty$
and $\Uu=\Uu^\Tt$, so $\Uu=\Uu^{\Tt_n}$ for all sufficiently large $n$,
whenever $\tvec$ is a term with $\tvec^{M_\infty}=\Tt$ and $\Tt_n=\tvec^{M_{x_n}}$.
For the purposes of illustration, we verify that
\[ \wtpi_{\varphi(\alpha)}\com\varrho_{\alpha}^{\Phi}=\varrho^{\Psi}_{\alpha 0}\com\wtpi^\Ttbar_\alpha \]
in the context of condition \ref{item:finite_support_commutativity}; the rest is similar.
So we have $\Phi,\Psi$ and $\Ttbar,\Uubar$ and $\Uubar=\Uu^\Ttbar$.
Fix a term $\vec{\bar{t}}$ such that $\Ttbar=\vec{\bar{t}}^{M_\infty}$.
Fix $\xibar<\rho_0(M^\Ttbar_\alpha)$.
We want to verify the commutativity with respect to $\xibar$. Let
\[ \xi=\varrho^\Phi_\alpha(\xibar).\]
Fix terms $\vec{\bar{u}},\uvec$ such that $\xibar=\vec{\bar{u}}^{M_\infty}$
and $\xi=\varrho_\alpha^\Phi(\xibar)=\uvec^{M_\infty}$.
For sufficiently large $n$, let $\Ttbar_n=\vec{\bar{t}}^{M_{x_n}}$,
and $\Phi_n:\Ttbar_n\hookrightarrow\Tt_n$ with $\rg(\varphi^{\Phi_n})=\rg(\varphi^\Phi)$,
and $\xibar_n=\vec{\bar{u}}^{M_{x_n}}$ and $\xi_n=\uvec^{M_{x_n}}$.
Note then that for all sufficiently large $n$,
we have $\varrho^{\Phi_n}_\alpha(\bar{\xi}_n)=\xi_n$,
as this equality is first-order over $M_\infty$ (***must generalize).
But
\[ \wtpi^\Ttbar_\alpha(\xibar)=\lim_{n\to\om}\wtpi^{\Ttbar_n}_\alpha(\xibar_n), \]
\[ \wtpi^\Tt_{\varphi(\alpha)}(\xi)=\lim_{n\to\om}\wtpi^\Tt_{\varphi(\alpha)}(\xi_n), \]
and since $(\Sigma_n,\mathfrak{L}_n)$ is a $(0,y,\om)$-lifting strategy,
 for large $n$ we have
 \[ \varrho^\Psi_{\alpha 0}(\pi^{\Ttbar_n}_\alpha(\xibar_n))=\pi^\Tt_{\varphi(\alpha)}(\varrho^{\Phi_n}_\alpha(\xibar_n)), \]
and so
\[ \varrho^\Psi_{\alpha 0}(\lim_{n\to\om}\wtpi^{\Ttbar_n}_\alpha(\xibar_n))=
 \lim_{n\to\om}\wtpi^\Tt_{\varphi(\alpha)}(\xi_n).
\]
Putting the equations together, we have the desired commutativity.
\end{proof}

We can now conclude:

\begin{lem}
Let $x_n\to x$ modulo the constructive scale on $A$. Let $M_\infty$ be the limit structure defined in \ref{dfn:M_infty}.
Then $M_x$ exists and $M_\infty=M_x$ \tu{(}that is, $M_\infty$ is the least $(\om,\om+1)$-iterable $x$-premouse
satisfying $\psi_0$\tu{)}.
\end{lem}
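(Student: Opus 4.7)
The plan is to deduce the theorem by combining the iterability of $M_\infty$ (established in the preceding lemma) with the basic structural/theoretic properties of $M_\infty$ recorded in Remark \ref{rem:M_infty} (or Lemma \ref{lem:M_infty_basics_exactly_recon} in the exactly reconstructing case), together with the theory norms that have been folded into the constructive scale. There are essentially three things to verify: (i) $M_\infty$ is an $(\om,\om+1)$-iterable $\om$-mouse over $x$; (ii) $M_\infty\sats\psi_0$; and (iii) no proper segment of $M_\infty$ satisfies $\psi_0$. Together these will give $M_x=M_\infty$.

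For (i), I will invoke the previous lemma, which gives wellfoundedness and $(0,\om_1+1)$-iterability of $M_\infty$. Once $M_\infty$ is wellfounded, Remark \ref{rem:M_infty} shows that $\dot{x}^{M_\infty}=x$ and that $M_\infty$ is a sound premouse with $\rho_1^{M_\infty}=\om$ and $M_\infty=\Hull_1^{M_\infty}(\emptyset)$, so it is an $\om$-premouse. Since $\rho_\om^{M_\infty}=\om$, the $(0,\om_1+1)$-iterability already supplies the $(\om,\om+1)$-iterability demanded by the definition of $M_x$ (via the standard equivalence discussed in \S\ref{sec:om-mice}).

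For (ii), the theory norm $\Phi_{\psi_0}$ is folded into the constructive scale (see Definition \ref{dfn:1-CC-closed_theory_norm}), and every $x_n\in A$, so $\Phi_{\psi_0}(x_n)=1$ for all $n$; hence $\psi_0\in T_\infty$ and $M_\infty\sats\psi_0$. For (iii), the scale also includes the $0/1$-theory norm corresponding to the $\rSigma_1$ sentence $\psi_0'$ asserting ``some proper segment of me satisfies $\psi_0$''; by the minimality of $M_{x_n}\pins\Mmm(x_n)$, no $M_{x_n}$ satisfies $\psi_0'$, so $\psi_0'\notin T_\infty$ and $M_\infty\sats\neg\psi_0'$. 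Combining (i)--(iii), $M_\infty$ is an $(\om,\om+1)$-iterable $x$-mouse satisfying $\psi_0$ with no proper $\psi_0$-satisfying segment, so $M_x$ exists and equals $M_\infty$ (using standard comparison to conclude that any other such candidate would compare with $M_\infty$, and by the absence of a proper $\psi_0$-satisfying segment this forces equality).

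The main difficulty was already discharged in the previous lemma, where iterability of $M_\infty$ was transferred from the ambient mice $Y$ (via $M_{x_n}$-goodness) to the direct limit $M_\infty$, using the whole machinery of finite supports, simple tree embeddings, abstract lifting strategies, and the limit algorithm $(\Sigma,\frakL)$ of Definition \ref{dfn:limit_alg}. Given that work, the present statement reduces to reading off the correct consequences of the theory norms, which is essentially bookkeeping.
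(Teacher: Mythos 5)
Your proof is correct and takes essentially the same route as the paper's, which is deliberately terse: it cites Remark \ref{rem:M_infty} for the facts that a wellfounded $M_\infty$ is a sound premouse over $x$ with $\Th_1^{M_\infty}=T_\infty$ and $M_\infty\models\psi_0\wedge\neg\psi_0'$, and then concludes $M_\infty=M_x$ directly from iterability. Your (i)--(iii) decomposition just makes that one-line argument explicit; there is nothing substantively different.
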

\begin{proof}
Let $T_\infty$ be as in \ref{dfn:M_infty}.
Since $T_\infty=\Th_1^{M_\infty}$ and $M_\infty$ is wellfounded, $M_\infty$ is a premouse
and $M_\infty\sats$``I am $M_x$'' (that is, $M_\infty\sats[\psi_0\&\neg\psi_0']$ where $\psi_0'$ asserts ``There is a proper segment of me satisfying $\psi_0$'').
Since $M_\infty=\Hull_1^{M_\infty}(\emptyset)$, it is sound.
Since $M_\infty$ is iterable, therefore $M_\infty=M_x$.
\end{proof}

\begin{rem}
We haven't yet demonstrated in general that the constructive scale is a semiscale,
because we haven't yet shown that $M_x\pins\Mmm(x)$. We will verify this in \ref{?}.
However, in certain special cases this is clear.

For example, if $\Mmm(x)$ is the least $P\elem_1 M_1(x)$,
then clearly $M_x\pins\Mmm(x)$, because $M_x$ is $1$-small (because $T_\infty=\Th_1^{M_\infty}$).
So by \ref{?}, we have established the semiscale property for $\Pi^1_3$, assuming the existence of $M_1^\#(x)$ for every $x$ (***we don't quite need this though)

One certain case is trivial: Recall that $\mathrm{Lp}(x)$  is the stack of all $\om$-mice over $x$.
If $\Mmm(x)\elem_1\mathrm{Lp}(x)$ for all $x$,
then clearly $M_x\pins\Mmm(x)$.
\end{rem}

\subsection{Limit lifting maps (to edit/delete)}

\begin{lem}
Let $\Tt$ be via $\Sigma_\infty$, 
$\Uu=\Uu^\Tt$, $\alpha<\lh(\Tt)$, $d=\deg^\Tt(\alpha)$.
Write $\eta_\alpha=\eta^\Tt_\alpha$, etc.
Let $N^\Tt_{\alpha n}=\core_d(N^{M^\Uu_{\alpha 0}}_{\zeta_\alpha x_n})$.
Then:
\begin{enumerate}
 \item\label{item:eta_is_limit_rho_0} $\all^*_n[\eta_\alpha=\rho_0(N^\Tt_{\alpha n})]$.
 \item\label{item:pi_alpha_is_limit_d-lifting} $\pi_\alpha$ is \emph{limit $d$-lifting on its domain}.
 That is, for all $\betavec\in[\rho_0(M^\Tt_\alpha)]^{<\om}$ and all $\rSigma_d$ formulas $\varphi$,
 \[ M^\Tt_\alpha\sats\varphi(\betavec)\implies\all^*n
  [N^\Tt_{\alpha n}\sats\varphi(\pi_\alpha(\betavec))]. \]
  \item\label{item:pi_alpha_is_limit_p-preserving} $\pi_\alpha$ is \emph{limit $\pvec_{d+1}$-preserving}: $\pi_\alpha(\pvec_{d+1}^{M^\Tt_\alpha})=\lim_{n\to\om}\pvec_{d+1}^{N^\Tt_{\alpha n}}$.
\item\label{item:pi_alpha_is_limit_c-preserving} $\pi_\alpha$ is \emph{limit c-preserving***this needs to be generalized}:
for all $\beta<\rho_0(M^\Tt_\alpha)$,
\[ M^\Tt_\alpha\sats\beta\in\Card\iff N_{\alpha n}\sats\pi_\alpha(\beta)\in\Card.
\]
[***the version if we have the weakened assumption earlier...
for all $\beta<\rho_0(M^\Tt_\alpha)$, if $\beta$ is an $M^\Tt_\alpha$-cardinal
but
\[ \neg\all^*_n[\pi_\alpha(\beta)\text{ is an }N^\Tt_{\alpha n}\text{-cardinal}],\]
then:
\begin{enumerate}[label=--]
\item $\Mmm$ is of case \ref{***?} (non-$1$-$\CC$-closed), with $M_\infty,M_{x_n}$ passive,
\item$\beta=\max(p_1^{M_\infty})=\lgcd(M_\infty)$,
\item $\lgcd(M_\infty|\beta)=\kappa$, and
\item  $\kappa$ is non-$M_\infty$-measurable.]
\end{enumerate}
\end{enumerate}
\end{lem}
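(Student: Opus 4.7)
The plan is to prove all four parts simultaneously by induction on $\alpha<\lh(\Tt)$, splitting into the base case, the successor step, and the limit step. The key strategy is: at each $\alpha<\om$, translate data in $M^\Tt_\alpha$ to data in $M^{\Tt_n}_\alpha$ for the approximating tree $\Tt_n$ on $M_{x_n}$ (which exists for cofinitely many $n$ by Lemma \ref{lem:limit_alg_well_def}), apply the corresponding fact for the $\Gamma$-standard algorithm $(\Sigma_n,\mathfrak{L}_n)$ (where it is essentially built into Definition \ref{dfn:standard_alg} and Lemma \ref{lem:standard_abstract_lifting}), and take pointwise limits; at infinite $\alpha$, use finite tree embeddings $\Phi:\Ttbar\hookrightarrow_\simple\Tt$ (Lemma \ref{lem:simultaneous_pse}) to reduce to the finite case, exactly as in the well-definedness proof for the limit algorithm \ref{dfn:limit_alg}.

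For the base case $\alpha=0$, the map $\pi_0:M_\infty\to\core_0(N^Y_{\zeta_0 x})$ is the core embedding induced by $M_\infty$-goodness of $Y$, and $N^\Tt_{0n}=\core_0(N^Y_{\zeta_0 x_n})$. Part \ref{item:eta_is_limit_rho_0} is immediate from the definition $\eta_0=\lim_n\eta_0^{\Tt_n}$ in \ref{dfn:limit_alg} together with $\eta_0^{\Tt_n}=\rho_0(N^\Tt_{0n})$ from \ref{dfn:standard_alg}. Parts \ref{item:pi_alpha_is_limit_d-lifting} and \ref{item:pi_alpha_is_limit_p-preserving} follow from the Los-style description of $M_\infty$ in Remark \ref{rem:M_infty}: every element of $M_\infty$ is realized by some $\Sigma_1$ term $t$, and the theory norms of Definition \ref{dfn:1-CC-closed_theory_norm} (or the uniformly bounded theory norms of \S\ref{sec:theory_norms_exactly_reconstructing} in the exactly reconstructing case) ensure that the $\Sigma_1$ theory of $(M_{x_n},p_1^{M_{x_n}})$ stabilizes on $T_\infty=\Th_1^{M_\infty}$, which transfers through the core maps to $N^\Tt_{0n}$. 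Part \ref{item:pi_alpha_is_limit_c-preserving} follows similarly, since cardinalhood of an ordinal term is $\Pi_1$ and the $\Mmm$-goodness hypothesis guarantees $\zeta_0$ is the same on both sides.

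For the successor step at $\alpha+1$, let $\gamma=\pred^\Tt(\alpha+1)$, $d'=\deg^\Tt(\alpha+1)$, and write $E=E^\Tt_\alpha$. For cofinitely many $n$, the definition of $\Tt_n$ from terms gives corresponding $E_n\in\es_+(M^{\Tt_n}_\alpha)$ with $\pred^{\Tt_n}(\alpha+1)=\gamma$ (by \ref{lem:limit_alg_well_def}), and hence $\pi_{\alpha+1}^{\Tt_n}$ is defined via the Shift Lemma from $\pi_\gamma^{\Tt_n}$ and $\abres^{\Tt_n}_\alpha(E_n)$ as in \ref{dfn:standard_alg}. The induction hypothesis applied to $\gamma$, together with the definition of $\abres^\Tt_\alpha(E)$ as a pointwise limit in \ref{dfn:limit_alg}, propagates \ref{item:eta_is_limit_rho_0}--\ref{item:pi_alpha_is_limit_c-preserving} to $\alpha+1$: for a fixed $\rSigma_{d'}$ formula $\varphi(\betavec)$ with $\betavec\in M^\Tt_{\alpha+1}$, writing $\betavec=[a,\vec f]^{M^*}_E$ in the usual fashion, the statement $M^\Tt_{\alpha+1}\sats\varphi(\betavec)$ translates to a $\Sigma_{d'}$ statement about $\vec f$ in $M^{*\Tt}_{\alpha+1}\ins M^\Tt_\gamma$ holding on a set in $E_a$, which by IH for $\gamma$ lifts to the analogous statement in $N^\Tt_{\gamma n}$ on a set in $\abres^{\Tt_n}_\alpha(E_n)$ for almost all $n$; this is precisely the definition of $\pi_{\alpha+1}^{\Tt_n}(\varphi)$ holding at stage $n$.

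For the limit step at $\lambda<\lh(\Tt)$, we cannot form $\Tt_n$ of length $\lambda$ directly. Instead, for any finite piece of data we wish to verify the claim about at node $\lambda$, pick a finite selection $\FF$ of $\Tt$ supporting that data together with a fixed $\beta_0<_\Tt\lambda$ past which the branch $[0,\lambda)_\Tt$ does not drop and iteration maps are $\nu$-preserving; apply Lemma \ref{lem:simultaneous_pse} to obtain $(\Phi,\Psi):(\Ttbar,\Uubar)\hookrightarrow_\simple^\pre(\Tt,\Uu)$ with $\lambda\in\rg(\varphi^\Phi)$ and $\beta_0\in\rg(\varphi^\Phi)$. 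Then $\Ttbar$ has finite length and is essentially $0$-maximal on $M_\infty$, so the successor case already proved applies to each of its nodes and yields the properties for $\Ttbar$; commutativity in condition \ref{dfn:abstract_lifting}(\ref{item:finite_support_commutativity}) for both $(\Sigma,\frakL)$ and $(\Sigma_n,\frakL_n)$ transfers these properties to $\lambda$ in $\Tt$ on the witnessed data, using Lemma \ref{lem:abstract_lifting_at_limit} to handle the passage through $i^\Tt_{\beta_0\lambda}$ and $i^\Uu_{(\beta_0,0),(\lambda,0)}$ via $\nu$-preserving iteration maps.

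The main obstacle will be part \ref{item:pi_alpha_is_limit_c-preserving} in the non-$1$-$\CC$-closed setting, where the bracketed weakened version is what is actually true: the limit algorithm can produce a $\pi_\alpha$ that fails strict cardinal preservation precisely when the construction $\CC$ witnesses the largest cardinal of $M_\infty$ by a stage $t=1$ whose index converges in a pathological way (related to ultrapower-by-measure stages with $t=2$ just below). The analysis of this failure mode relies on Lemmas \ref{lem:measurable_stack}, \ref{lem:meas_limits_of_con_require_no_proj_around} and the description of how $\CC$ behaves at near-a-measurable stages in Definition \ref{dfn:CC}; the argument is that any failure of $\pi_\alpha(\beta)\in\Card^{N^\Tt_{\alpha n}}$ for $\beta\in\Card^{M^\Tt_\alpha}$ forces $\beta$ to be the largest cardinal of a passive segment with the structure described, because the $\CC$-construction places the first post-$\beta$ cardinal witness only in the vicinity of a measurable $\kappa<\beta$, and convergence of the theory norms pins down whether $\kappa$ is measurable in $M_\infty$. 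The remaining cases follow the usual pattern outlined above.
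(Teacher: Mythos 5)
Your proposal follows essentially the same route as the paper's own argument: for finite $\alpha$, reduce to the corresponding trees $\Tt_n$ on $M_{x_n}$ (existing for cofinitely many $n$ by Lemma~\ref{lem:limit_alg_well_def}), translate $\rSigma_d$ facts into statements about terms that stabilize via the theory norms, and then push through the standard lifting/core maps to $N^\Tt_{\alpha n}$; for infinite $\alpha$, pass to a finite hull via Lemmas~\ref{lem:simultaneous_pse} and~\ref{lem:abstract_lifting_at_limit} and use the commutativity of condition \ref{dfn:abstract_lifting}(\ref{item:finite_support_commutativity}) to transfer back. Your extra detail at the successor step (explicit Shift Lemma bookkeeping) and your qualitative discussion of the bracketed failure mode for limit c-preservation in the non-$1$-$\CC$-closed case both elaborate points the paper treats very tersely, but the overall decomposition and key lemmas invoked are the same.
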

\begin{proof}
First consider the case that $\alpha<\om$.

 We will prove the corresponding statements with $N_{\alpha n}$ replaced by $M^{\Tt_n}_\alpha$,
 where $\Tt_n$ is the tree on $M_{x_n}$ corresponding to $\Tt$ for large $n$, 
 and corresponding partial maps
 \[ \bar{\pi}_{\alpha n}:M_\infty\to M_{x_n}.\]
 This suffices, because
then these statements lift to the desired ones using the usual lifting maps.

 (***This should really be shifted back to the definitions of norms, as it's needed there:)

First suppose that $\alpha=0$.

 Part \ref{item:eta_is_limit_rho_0} is directly by the definition.

 Now $\left<\bar{\pi}_{0n}\right>_{n<\om}$ is limit $m$-lifting (where $m=\deg(M_\infty)$) because
we can convert the fact that $M_\infty\sats\varphi(\betavec)$ into a statement about certain terms,
which lifts to $M_{x_n}$ for all sufficiently large $n$.
The limit $\pvec_{d+1}$-preservation we have already established, and the limit c-preservation
is because of our current restrictions regarding $\max(p_1)$ when $\Mmm$ is non-$1$-$\CC$-closed
(and some of the preceding lemmas regarding $\max(p_1)$). These facts then lift from $M_{x_n}$ to $N_{0 n}$ 
via the core maps.

Now consider the case that $0<\alpha<\om$. We fix trees $\Tt_n$ on $M_{x_n}$ corresponding to $\Tt$ on $M_\infty$,
for large $n$. By induction, for large $n$ we get that $\Tt_n,\Tt$ have corresponding structure,
drops, degrees, etc (here we make definite use of limit c-preservation; without the assumptions giving this we need to modify this part). Again it suffices to consider lifting to $\left<M_{x_n}\right>_{n<\om}$ since we can lift from $M_{x_n}$
to $N_{\alpha n}$ with the standard lifting map, a weak embedding.
Again $\left<\bar{\pi}_{0n}\right>_{n<\om}$ is limit $\deg^\Tt(\alpha)$-lifting because
we can convert the fact that $M^\Tt_\alpha\sats\varphi(\betavec)$
into a statement about certain terms,
which lift to $M^{\Tt_n}$ for all sufficiently large $n$.
The limit $\pvec_{d+1}$-preservation and limit c-preservation also easily follow by induction
and commutativity. 

Now suppose $\alpha\geq\om$. Fix a particular $\betavec$ and $\beta$.
Let $\Phi:\bar{\Tt}\hookrightarrow_\simple\Tt$
and $\Psi:\bar{\Uu}\hookrightarrow_\simple\Uu$ be as in \ref{?},
with the relevant objects in $\rg(\varrho^\Phi_{\bar{\alpha}})$
and $\rg(\varrho^\Psi_{\bar{\alpha}0})$, where $\varphi^\Phi(\bar{\alpha})=\alpha$.
Let $\bar{\Tt}_n$ be the tree on $M_{x_n}$ corresponding to $\bar{\Tt}$, for large $n$.
Let $\bar{\Uu}=\Uu^{\bar{\Tt}}=\Uu^{\bar{\Tt}_n}$ for large $n$.
Then the previous cases already apply to $\bar{\Tt}$, and we have enough commutativity
and $\varrho^\Psi_{\bar{\alpha}0}$
is sufficiently preserving that we get the desired facts.
\end{proof}
\subsection{$M_\infty$ and $M_x$}
Since we have shown that $M_\infty$ is $(0,\om_1+1)$-iterable, we have:
\begin{cor}\label{cor:M_infty_and_M_x}
 If $\Mmm$ is $1$-$\CC$-closed then $M_\infty=M_x$. If $\Mmm$ is exactly reconstructing then $M_x\ins M_\infty$.
\end{cor}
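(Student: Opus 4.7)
The plan is to combine the iterability of $M_\infty$ just established with the defining minimality of $M_x$ (respectively $M_{Tx}$), using a coiteration argument in the first case and a direct appeal to minimality in the second.

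For the first claim, assume $\Mmm$ is $1$-$\CC$-closed. First I would check that $M_x$ is well-defined: since $x\in A$, the $\Sigma_1$ sentence $\psi_0$ holds in $\Mmm(x)$, and from any witness $y\in\Mmm(x)$ for $\psi_0=\exists v\,\varphi(v)$ a sufficiently large proper segment of $\Mmm(x)$ containing $y$ also satisfies $\psi_0$; the least such is $M_x$, and a standard collapse-of-the-$\Sigma_1$-hull argument shows that $M_x$ is $\omega$-sound with $\rho_1^{M_x}=\omega$. By downward $\CC$-closure applied inside $\Mmm(x)$ (or by \ref{rem:S^Z_x}), $M_x$ is $(\omega,\omega_1+1)$-iterable. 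Meanwhile, by Remark~\ref{rem:M_infty} together with the iterability of $M_\infty$, the structure $M_\infty$ is an $\omega$-sound $(\omega,\omega_1+1)$-iterable premouse over $x$ with $\rho_1^{M_\infty}=\omega$ and $p_1^{M_\infty}=\emptyset$, satisfying $\psi_0$ but with no proper segment satisfying $\psi_0$. A successful coiteration of $M_x$ and $M_\infty$ therefore exists; since both are $\omega$-sound with $\rho_1=\omega$, neither side can drop on its main branch, so the coiteration identifies the two models, yielding $M_\infty=M_x$.

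For the second claim, assume $\Mmm$ is exactly reconstructing. By Lemma~\ref{lem:M_infty_basics_exactly_recon}, $M_\infty$ is a wellfounded sound $x$-premouse with $\rho_1^{M_\infty}=\omega$ and $p_1^{M_\infty}=\dot{p}^{M_\infty}$, $M_\infty=\Hull_1^{M_\infty}(\{p_1^{M_\infty}\})$, and $M_\infty\sats T$ cofinally; by Lemma~\ref{lem:p_1_above_lgcd} (the existence of the $\Sigma_1$ term $d$), $\delta^{M_\infty}$ is $\Sigma_1$-definable from $p_1^{M_\infty}$, so $M_\infty$ is non-small with $\delta^{M_\infty}\le\min(p_1^{M_\infty})$ when $p_1^{M_\infty}\ne\emptyset$. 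Combined with the iterability of $M_\infty$ just established, $M_\infty$ satisfies all the model-theoretic requirements listed in Definition~\ref{dfn:good_theory} for being a witness to the goodness of $T$ over $x$. Hence by the minimality in the definition of $M_x=\Mmm_T(x)$, we conclude $M_x\ins M_\infty$.

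The main point is the coiteration in the first claim: all the standard ingredients (iterability of both sides, $\omega$-soundness with $\rho_1=\omega$, the absence of drops on main branches, and countable termination of the comparison) are available by this stage of the paper, and the only genuinely new fact that is needed is the iterability of $M_\infty$ proved immediately above. The identity $M_\infty=M_x$ then follows because any two $\omega$-sound $(\omega,\omega_1+1)$-iterable $\omega$-mice over $x$ that coiterate non-trivially would give a proper segment of one that is $\omega$-sound and projects to $\omega$ — contradicting the minimality of $\psi_0$-satisfaction on the other side.
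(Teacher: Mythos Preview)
Your proposal is correct and takes essentially the same approach as the paper: the paper's proof simply cites Remark~\ref{rem:M_infty} and Lemma~\ref{lem:M_infty_basics_exactly_recon}, which already record that $M_\infty$ (once wellfounded and iterable) satisfies $\psi_0$ with no proper segment doing so in the first case, and meets the model-theoretic requirements for a good-$T$ witness in the second, yielding the conclusions by comparison and minimality respectively. One small point: the iterability of $M_x$ in the $1$-$\CC$-closed case follows simply because $M_x\pins\Mmm(x)$ and $\Mmm(x)$ is $(\omega,\omega_1+1)$-iterable by definition, not via downward $\CC$-closure; and your phrase ``the coiteration identifies the two models'' is slightly imprecise (comparison of two $\omega$-mice shows one is a segment of the other, and equality then follows from the minimality of $\psi_0$-satisfaction, as you correctly note in your final paragraph).
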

\begin{proof}
 See Remark \ref{rem:M_infty}
 and Lemma \ref{lem:M_infty_basics_exactly_recon}.
\end{proof}

\section{The mouse scale is a $\Gamma$-scale}\label{sec:lower_semi}

\subsection{Lower semicontinuity}
We now begin the proof of the key lemma which will establish that mouse scales are lower semicontinuous. We would like to might  mimic the proof of lower semicontinuity used in the sharps case. 
We will begin with the depth $1$ norms. Deeper norms will be dealt with by combining the method
here with the methods used to prove that the deeper norms are in fact norms.

However, in the general case,
things are  more subtle.
At the level of sharps,
 the absorption of an iterate into the background construction can be performed by working directly with indiscernibles.
In the general case the author doesn't see how to arrange this as directly,
and our proof makes more serious use of the iterability of mice in $V$.

\subsubsection{Iterating to background}

We will first discuss 
a key fact (Theorem \ref{tm:iter_to_bkgd} below) which will be essential to the proof. 
It is a variant of a result  \cite[Theorem 3.2]{maxcore} due to Steel and Schimmerling;
the analogous fact for other fully backgrounded constructions has featured in
various places in the literature.

\begin{dfn}\label{dfn:Tt^Sigma_N}
Let $\Sigma$ be a $(k,\alpha)$-iteration strategy for a premouse $M$, and $N$ be a premouse.
We write $\Tt^\Sigma_N$ for the unique tree $\Tt$ 
via $\Sigma$
of successor length such that $N\ins M^\Tt_\infty$ and $\lh(E^\Tt_\beta)\leq\OR^N$ for all 
$\beta+1<\lh(\Tt)$, if such $\Tt$ exists.
\end{dfn}

\begin{rem}\label{rem:Tt^Sigma_N}
The uniqueness of $\Tt$ is a standard observation. If we know $\Tt\rest(\eta+1)$ but 
$N\nins M^\Tt_\eta$ then $E^\Tt_\eta$ causes the least disagreement between $M^\Tt_\eta$ 
and $N$, and $N|\lh(E^\Tt_\eta)$ is passive.
And if we know $\Tt\rest\eta$ and $\eta$ is a limit, then $\Tt\rest(\eta+1)$ is determined by 
$\Sigma$. Note that this turns out to be the comparison $(\Tt,\Uu)$ of $(M,N)$,
using $\Sigma$ to form $\Tt$, and $\Uu$ happens to be trivial.
\end{rem}

\begin{tm}[Iterating to background]\index{Iterating to background}\label{tm:iter_to_bkgd}
Let $Y$ be countable and $(x,\delta,0)$-good. Let $M$ be an $\om$-mouse over $x$ with $M\in Y$
but $M\npins N_\delta$.
Let $\Sigma=\Sigma_M$. Let $\mu=\aleph_\delta^Y$.
Suppose that either:
\begin{enumerate}[label=(\roman*)]
 \item\label{item:it_to_bkgd_non-small} $Y$ is non-small and $\delta=\delta^Y=\mu$ and for some $y<\om$,
$Y$ is $(y,\om_1+1)$-iterable,
and $\rho_{y+1}^Y\leq\delta<\rho_y^Y$, and $Y$ is $\delta$-sound,
\item\label{item:it_to_bkgd_small} $Y$ is small and $(0,\om_1+1)$-iterable and there are $Q,Z$ such that $Q\pins Z\ins Y$ such that $\mu\leq\OR^Q$ and $Q$ is a Q-structure
for $\mu$ and $Z$ is admissible.
\end{enumerate}
Let $\eta\leq\delta$. Then:

\begin{enumerate}
 \item\label{item:Tt_eta_exists} $\Tt_\eta=\Tt^\Sigma_{N_\eta}$ exists \tu{(}see \ref{dfn:Tt^Sigma_N}\tu{)}
and $\Tt_\eta$ is small.
\item\label{item:eta<delta_Tt_in_below_delta} If $\eta<\delta$ then $\Tt_\eta\in Y|\mu$.
\item\label{item:uniformity} The function $\eta\mapsto\Tt_\eta$, with domain $\delta$, is a class 
of $Y|\mu$.
\item \label{item:final_tree_small} If $Y$ is small and $Z$ witnesses \ref{item:it_to_bkgd_small} then $\Tt_\delta\in Z$.
\item\label{item:final_tree} If $Y$ is non-small then:
\begin{enumerate}[label=--]
\item $\lh(\Tt_\delta)=\delta+1$ and $\delta=\delta(\Tt_\delta\rest\delta)$ and 
$N_\delta=M(\Tt_\delta\rest\delta)$,
\item $\Tt_\delta\rest\delta$ and $M(\Tt_\delta\rest\delta)$ are classes of $Y|\delta$,
\item $\Tt^\Sigma_{N_\alpha}$ exists for each $\alpha\leq\OR^Y$.
\item $\Tt_\delta\notin Y$, and $\Tt^\Sigma_{N_\alpha}=\Tt_\delta$ for each $\alpha\in[\delta,\OR^Y]$,
\end{enumerate}
\item\label{item:strategy1} If $\eta<\delta$ and $t_\eta,t_{\eta+\om}\neq 3$ \tu{(}so either $N_\eta$ is unsound or $\J(N_\eta)$ is small\tu{)},
then $Q(\Tt_\eta\rest\lambda,[0,\lambda)_{\Tt_\eta})\ins N_\eta$ for each limit $\lambda<\lh(\Tt_\eta)$,
so $\Tt_\eta$ is determined by $N_\eta$ via Q-structures.
\item\label{item:strategy2} Suppose $\eta<\delta$ and either $t_\eta=3$ or $t_{\eta+\om}=3$.
Let $\lambda=\card^Y(\eta)$ and $Q\pins Y$ be the Q-structure for $\lambda$ and $\zeta=\OR^Q$.
Then $\Tt_\lambda=\Tt_\eta=\Tt_\zeta$ and $\lh(\Tt_\lambda)=\lambda+1$ and 
$N_\lambda=M(\Tt_\lambda\rest\lambda)$
 is small, hence $\Tt_\lambda$ is determined by $N_\zeta$ via Q-structures.
 \item\label{item:it_to_bkgd_Y_small_stage_delta,t_delta_neq_3} Suppose $Y$ is small.
 If $t_\delta\neq 3$ and $\J(Y|\delta)\sats$``$\delta$ is not Woodin'', then $Q(\Tt_\delta\rest\lambda,[0,\lambda)^{\Tt_\delta})\ins N_\delta$ for each limit $\lambda\leq\lh(\Tt_\delta)$,
 so $\Tt_\delta$ is determined by $N_\delta$ via Q-structures,
 \item\label{item:it_to_bkgd_Y_small_stage_delta,t_delta=3} Suppose $Y$ is small.
 If $t_\delta=3$ or $\J(Y|\delta)\sats$``$\delta$ is Woodin'',
 then letting $\lambda=\card^Y(\delta)$ and $Y|\zeta$ be the Q-structure for $\lambda$,
 then $\lh(\Tt_\delta)=\lambda+1$
 and $Q(\Tt_\delta\rest\lambda,[0,\lambda)^{\Tt_\delta})=N_\zeta$,
 so $\Tt_\delta$ is determined by $N_\zeta$ via Q-structures,
 \item\label{item:it_to_bkgd_Y_non-small_stage_delta} Suppose $Y$ is non-small and is a Q-structure for $\delta$.
 Then $Q(\Tt_\delta\rest\delta,[0,\delta)_{\Tt_\delta})=N_{\OR^Q}$,
 and therefore $\Tt_\delta$ is determined by $N_{\OR^Q}$ and Q-structures.
\end{enumerate}
\end{tm}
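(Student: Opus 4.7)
The plan is to construct $\Tt_\eta$ by transfinite recursion on $\eta\leq\delta$, maintaining throughout that $\Tt_\eta$ is a successor-length, small, $0$-maximal tree on $M$ via $\Sigma$ with $N_\eta\ins M^{\Tt_\eta}_\infty$ and $\lh(E^{\Tt_\eta}_\beta)\leq\OR^{N_\eta}$ for all $\beta+1<\lh(\Tt_\eta)$; uniqueness (Remark \ref{rem:Tt^Sigma_N}) then yields the asserted tree. The successor step from $\Tt_{\eta'}$ to $\Tt_{\eta'+\om}$ is a standard comparison step: I compare $M^{\Tt_{\eta'}}_\infty$ against $N_{\eta'+\om}$ using $\Sigma$-tails on the $M$-side while holding $N_{\eta'+\om}$ fixed, and the usual ISC/universality argument shows $N_{\eta'+\om}$ is never moved (soundness and condensation of $\CC^Y$-stages preclude use of their extenders on the $N$-side). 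At limit stages $\eta$ of the recursion I simply glue together $\bigcup_{\eta'<\eta}\Tt_{\eta'}$, followed by any further comparison steps needed to absorb the $\liminf$ step of $\CC^Y$.

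Next, I would handle parts \ref{item:Tt_eta_exists}--\ref{item:uniformity} and the Q-structure clauses \ref{item:strategy1}, \ref{item:strategy2} together. Smallness is immediate: every extender used has length $\leq\OR^{N_\eta}<\delta\leq\delta^Y$, and no $\CC^Y$-stage below $\delta$ is non-small by Corollary \ref{cor:local_Woodins}. That $\Tt_\eta\in Y|\mu$ for $\eta<\delta$ is proved by induction: at successor steps the next extender is read off from $\CC^Y\rest(\eta+1)$, which is a class of $Y|\mu$, and at limits $\lambda<\lh(\Tt_\eta)$ I must show the branch $[0,\lambda)_{\Tt_\eta}$ picked by $\Sigma$ is internally computable. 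For this I distinguish by $t$-values: if $t_\eta,t_{\eta+\om}\neq 3$, the Q-structure for $\delta(\Tt_\eta\rest\lambda)$ arises as a segment of $N_\eta$ (via the agreement between $\CC^Y$ and the common part of $\Tt_\eta\rest\lambda$), yielding \ref{item:strategy1}; if $t_\eta=3$ or $t_{\eta+\om}=3$, then $\lambda=\card^Y(\eta)$ carries a Woodin in $Y$ and the Q-structure passes to the P-construction $N_\zeta$ corresponding to the $Y$-side Q-structure $Y|\zeta$, giving \ref{item:strategy2}.

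For the boundary case $\eta=\delta$: in the non-small case \ref{item:final_tree}, Woodinness of $\delta=\delta^Y$ in $Y$, transferred via P-construction (\S\ref{sec:P-construction}) and Woodin exactness (Lemma \ref{lem:Woodin_exactness}) to Woodinness of $\delta$ in $N_\delta$ relative to the extender algebra, forces $\Tt_\delta$ to have length exactly $\delta+1$ with $\delta(\Tt_\delta\rest\delta)=\delta$ and $M(\Tt_\delta\rest\delta)=N_\delta$; $\Tt_\delta\notin Y$ because any $Y$-definable branch through $\Tt_\delta\rest\delta$ would, via \ref{lem:Woodin_exactness}, contradict the Woodinness of $\delta^Y$. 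In the small case, admissibility of $Z$ together with the Q-structure $Q\pins Z$ for $\mu$ allows $Z$ to contain the full tree: the recursion is uniformly $\Sigma_1$ over $Z$ in parameters $\CC^Y$ and the $N_\alpha$'s, and $Q$ provides the last required Q-structure, yielding parts \ref{item:final_tree_small}, \ref{item:it_to_bkgd_Y_small_stage_delta,t_delta_neq_3}, \ref{item:it_to_bkgd_Y_small_stage_delta,t_delta=3}, the distinction between the last two depending on whether the final Q-structure for $\delta(\Tt_\delta)$ is located at the $\CC^Y$-stage $N_\delta$ or higher at $N_\zeta$. Part \ref{item:it_to_bkgd_Y_non-small_stage_delta} follows similarly, reading the Q-structure for $\delta(\Tt_\delta\rest\delta)$ off the Q-mouse $Y$ via its $\CC^Y$-image $N_{\OR^Q}$.

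The main obstacle, I expect, is the non-small boundary case $\eta=\delta$: specifically, matching the extenders used in $\Tt_\delta$ with those of $\CC^Y\rest\delta$ tightly enough to conclude $M(\Tt_\delta\rest\delta)=N_\delta$ (rather than some proper initial segment thereof). The upper bound $\lh(\Tt_\delta)\leq\delta+1$ comes from the $\delta$-cc of the extender algebra on $N_\delta$ guaranteed by Woodinness; the lower bound, and the identification of the common part model with $N_\delta$, requires pushing the fine-structural correspondence of \S\ref{sec:P-construction} through all of $\delta$-many construction stages simultaneously, while keeping the tree internally definable over $Y|\delta$. A secondary difficulty is uniformly verifying \ref{item:strategy1}--\ref{item:it_to_bkgd_Y_non-small_stage_delta} across the various configurations of $t_\eta,t_{\eta+\om},t_\delta$; the crucial point is that P-construction provides iterable Q-structures without external iterability hypotheses, so the strategy becomes computable inside $Y|\mu$ (or $Z$, or $Y$) at the appropriate stage.
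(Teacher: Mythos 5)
The overall architecture (transfinite recursion, gluing at limits, distinguishing Q-structure sources by $t$-values, using Woodin exactness and P-construction at the boundary $\eta=\delta$) is the right one, and matches the paper. But your successor step has a genuine gap, and it is precisely at the crux of the theorem.

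You write that the step from $\Tt_{\eta'}$ to $\Tt_{\eta'+\om}$ is ``a standard comparison step'' in which ``the usual ISC/universality argument shows $N_{\eta'+\om}$ is never moved'' because ``soundness and condensation of $\CC^Y$-stages preclude use of their extenders on the $N$-side.'' That last assertion is the whole content of the theorem, not an input. Consider the case $N_{\zeta}$ sound, $t_{\zeta+\om}=1$, so $N_{\zeta+\om}=(N_\zeta,E)$ is active. You know $N_\zeta\ins M^{\Tt_\zeta}_\infty$, and the first disagreement between $N_{\zeta+\om}$ and $M^{\Tt_\zeta}_\infty$ is at index $\lh(E)=\OR^{N_\zeta}$; by the bicephalus theorem the only possible disagreement there is that $M^{\Tt_\zeta}_\infty|\lh(E)$ is passive while $N_{\zeta+\om}$ has $E$ active. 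A priori nothing in ``soundness/condensation/ISC'' rules that out: $M$ projects to $\omega$ and $N_{\zeta+\om}$ projects to $\nu(E)$, so the usual ``the sound side does not move'' comparison argument for $\om$-mice of equal projectum does not apply, and a comparison could simply iterate $N_{\zeta+\om}$ away (in which case $\Tt^\Sigma_{N_{\zeta+\om}}$ would fail to exist). What forces the passive alternative out is the \emph{backgrounding} of $E$: there is a $Y$-total $F=F^Y_{\zeta+\om}\in\es^Y$ with $E\rest\nu_E\sub F$. The paper's argument lifts $\Tt_\zeta\rest(\kappa+1)$ (where $\kappa=\crit F$) by $j=i^Y_F$, uses Lemma~\ref{lem:measurable_stack} and Lemma~\ref{lem:limit_proj_across_meas} to identify the relevant hull of $N_\kappa$, matches the ultrapower embedding with $j$, and then applies the ISC to the image extender $E^\Uu_\alpha$ to conclude that $E$ already appears as an extender of the continuation of $\Tt_\zeta$. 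This is an argument internal to $Y$ about the interaction between $\CC^Y$ and the $\Sigma$-iteration; it is not a generic comparison fact. Without something of this shape, you have not shown the $N$-side of your proposed comparison is trivial, hence not that $\Tt_{\eta'+\om}$ exists at all.

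A second, related gap at the boundary: you correctly flag that identifying $M(\Tt_\delta\rest\delta)$ with $N_\delta$ (rather than a proper segment) is the obstacle, but you leave it unresolved. The paper resolves it by carrying the explicit inductive invariant ``$N_\eta\ins M^{\Tt_\eta}_\infty$ with all extender indices $\leq\OR^{N_\eta}$'' through every $\eta<\delta$ (in particular, through the stages $t_\eta\in\{2,3\}$ handled as single blocks), so that at $\delta$ the tree's common part is built cardinal-by-cardinal to agree with $\CC^Y$; Woodin exactness is only invoked once this agreement is in hand, not as a substitute for it. If you try to push all of this through a limit of comparison steps without that invariant, the identification $M(\Tt_\delta\rest\delta)=N_\delta$ does not fall out.
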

\begin{proof}
We prove by induction on $\zeta<\delta$ such that $t_{\zeta+\om}\in\{0,1\}$,
 the assertion $(*)_\zeta$: parts \ref{item:Tt_eta_exists}, 
\ref{item:eta<delta_Tt_in_below_delta}, \ref{item:strategy1} and \ref{item:strategy2} hold for all 
$\eta\leq\zeta$.
 In other words,
 in the induction we treat the two kinds of intervals below in single steps:
 \begin{enumerate}[label=\tu{(}\roman*\tu{)}]
  \item\label{item:Woodin}  $[\delta^Q,\OR^Q]$ where $Q$ is $Y$-cardinal-large and 
$Q$ is the Q-structure for $\delta^Q$,
\item\label{item:measurable} $[\kappa,\eta]$ where $\kappa$ is $Y$-measurable and 
$\eta=(\kappa^+)^{j(N_\kappa)}$ where $j=i^Y_{F^Y_{\kappa 0}}$.
\end{enumerate}

(We can have $Q$ which is $Y$-cardinal-large
and $\delta^Q$ is also $Y$-measurable, so there is some overlap between the two cases,
but that is fine.) Given that part \ref{item:Tt_eta_exists} holds for $\eta$,
note that part \ref{item:strategy1} follows immediately for $\eta$ from smallness and standard 
facts. And part \ref{item:eta<delta_Tt_in_below_delta} at $\eta$ will follow from  
parts \ref{item:strategy1}, \ref{item:strategy2} at $\eta$ and since $\mu$ is a limit cardinal of $Y$. So it just remains to prove parts 
\ref{item:Tt_eta_exists} and \ref{item:strategy2} at $\eta$.
(After completing the induction, part \ref{item:uniformity} and \ref{item:final_tree_small}
follow routinely, and part \ref{item:final_tree} is by the proof of part \ref{item:strategy2} below,
together with the fact that if $Y$ is non-small then $[0,\delta]_{\Tt_\delta}\notin 
Y$, because otherwise $\delta$ would be singularized. Parts \ref{item:it_to_bkgd_Y_small_stage_delta,t_delta_neq_3}, \ref{item:it_to_bkgd_Y_small_stage_delta,t_delta=3}, \ref{item:it_to_bkgd_Y_non-small_stage_delta} will follow from some further similar considerations, which we leave to the reader.) We now begin the induction.\\

We set $\Tt_\om$ to be the trivial tree on $M$, giving $(*)_\om$.

Suppose $\zeta<\delta$, $t_{\zeta+\om}\in\{0,1\}$, and $(*)_\zeta$ holds.
In particular we 
have $\Tt_\zeta\in Y$ and $N_\zeta\ins M^{\Tt_\zeta}_\infty$, etc. Let $\Tt=\Tt_\zeta$.
Note that  $t_{\zeta+\om+\om}=0$, so we have to prove $(*)_{\zeta+\om}$,
in particular defining $\Tt_{\zeta+\om}$.

\begin{case} $N_\zeta$ is not sound.

Note that $t_{\zeta+\om}=0$,
 $N_{\zeta+\om}=\J(\core_\om(N_\zeta))$,
and $N_\zeta=M^{\Tt}_\infty$.
If $M=\core_\om(N_\zeta)$ then $M\pins N_\alpha$ for all 
$\alpha>\zeta$, so we are done. Otherwise $b^{\Tt}$ drops in model,
and letting $\alpha+1\in b^{\Tt}$ be the last drop and
$\gamma=\pred^{\Tt}(\alpha+1)$, we have $\core_\om(N_\zeta)\pins M^{\Tt}_\gamma$.
So setting $\Tt_{\zeta+\om}=\Tt\rest(\gamma+1)$, we have $N_{\zeta+\om}\ins 
M^{\Tt_{\zeta+\om}}_\gamma$, as desired. The other requirements follow easily by construction of 
$\Tt_{\zeta+\om}$.\end{case}
\begin{case} $N_\zeta$ is sound and $t_{\zeta+\om}=0$.

So $N_{\zeta+\om}=\J(N_\zeta)$.
So if $N_\zeta=M$ then we are done.
Otherwise, because $N_\zeta$ is sound, note that $N_\zeta\pins M^{\Tt}_\infty$,
so $N_{\zeta+\om}\ins M^{\Tt_\zeta}_\infty$, so $\Tt_{\zeta+\om}=\Tt_\zeta$ works.
\end{case}
\begin{case} $N_\zeta$ is sound and $t_{\zeta+\om}=1$.

So $N_\zeta$ is passive and $N_{\zeta+\om}=(N_\zeta,E)$ is active.
Because $N_\zeta\ins M^{\Tt_\zeta}_\infty$, we just have to see that $E$
is used in $\Tt_\zeta$.
Let $F=F^Y_{\zeta+\om}$ and $\kappa=\crit(F)$ and $j=i^Y_F$ and
$k=i^Y_{F^Y_{\kappa0}}$. 
By \ref{lem:measurable_stack} and \ref{lem:limit_proj_across_meas},
we have $\chi^Y_\kappa<\zeta$, $N_\zeta|\kappa=N_\kappa$ and
\footnote{Because of this point, we do not have the consider the possibility
that $(\kappa^+)^{N_\zeta}<(\kappa^+)^{j(N_\kappa)}$.
For more standard $L[\es]$-constructions, one needs to deal with this possibility,
and for this one can use Theorem \ref{thm:ISC_for_submeasures}.}
\[ 
P\eqdef j(N_\kappa)|(\kappa^+)^{j(N_\kappa)}=
k(N_\kappa)|\chi^Y_\kappa
=N_\zeta|(\kappa^+)^{N_\zeta}=M^{\Tt_\zeta}_\kappa|(\kappa^+)^{M^{\Tt_\zeta}_\kappa}.\]
Let $\Uu=j(\Tt_\zeta\rest(\kappa+1))$. 
Calculations from the argument that comparison terminates
show that
$i^\Uu_{\kappa,j(\kappa)}\rest P=j\rest P$.
But $E\rest\nu_E\sub F\rest\nu_F$, and letting $\alpha+1=\min((\kappa,j(\kappa)]_\Uu)$, it follows 
that $E\rest\nu_E\sub E^\Uu_\alpha\rest\nu(E^\Uu_\alpha)$. So considering the ISC, either
\begin{enumerate}[label=(\roman*)]\item\label{item:E=E^Uu_alpha_it_to_bkgd_proof} $E=E^\Uu_\alpha$, or
 \item\label{item:E_in_E^Uu_alpha_it_to_bkgd_proof}
 $E\in\es^{M^\Uu_\alpha}$, or
 \item\label{item:E_in_Ult_it_to_bkgd_proof}letting $\rho=\nu_E$, $M^\Uu_\alpha|\rho$ is active with extender $G$
and $E\in\es^U$ where $U=\Ult(M^\Uu_\alpha|\rho,G)$.
\end{enumerate}

We have $N_\zeta|\kappa=M^{\Tt_\zeta}_\kappa|\kappa=M^\Uu_\kappa|\kappa$,
so if \ref{item:E=E^Uu_alpha_it_to_bkgd_proof} or \ref{item:E_in_E^Uu_alpha_it_to_bkgd_proof} above hold, $N_\zeta=M^\Uu_\alpha||\OR^{N_\zeta}$,
and if \ref{item:E_in_Ult_it_to_bkgd_proof} holds, $N_\zeta|\rho=M^\Uu_\alpha||\rho$ and $N_\zeta=U||\OR^{N_\zeta}$
(where $\rho,U$ are as there). Let $\lambda$ be the largest limit ordinal such that 
$\chi=\delta(\Tt_\zeta\rest\lambda)<\OR^{N_\zeta}$.
So $\lambda\geq\kappa$ and
by the remarks above, $N_\zeta|(\chi^+)^{N_\zeta}=M^\Uu_\alpha||(\chi^+)^{N_\zeta}$.
Because $N_\zeta$ is small, $\Tt_\zeta$ left all the Q-structures behind used to guide it, so $\Uu\rest(\lambda+1)=\Tt_\zeta\rest(\lambda+1)$,
so $\Uu\rest(\lambda+\om)$ is via $\Sigma$, and note that 
$\OR^{N_\zeta}<\delta(\Uu\rest(\lambda+\om))$. So if \ref{item:E=E^Uu_alpha_it_to_bkgd_proof} or \ref{item:E_in_E^Uu_alpha_it_to_bkgd_proof} hold
then $N_{\zeta+\om}\ins M^\Uu_{\lambda+n}$ for some $n<\om$, so $E$ is used in $\Tt_\zeta$, as desired.
And if instead \ref{item:E_in_Ult_it_to_bkgd_proof}  holds then the last two extenders used in $\Tt_\zeta$
are $G$ and $E$, as desired.
\end{case}

Now suppose $\xi$ is a limit of limits $\zeta$ such that $t_{\zeta+\om}\in\{0,1\}$,
and $(*)_\zeta$ holds at all such $\zeta<\xi$.
By parts \ref{item:strategy1} and \ref{item:strategy2} below $\xi$, we have 
$\left<\Tt_\zeta\right>_{\zeta<\xi}\in Y$. Note that $t_\xi=0$.
Let $\Tt=\liminf_{\alpha<\xi}\Tt_\alpha$.
That is, an extender $E$ is used in $\Tt$ iff $E$ is used in eventually all $\Tt_\alpha$,
for $\alpha<\xi$. Let $N=N_\xi$. Note that $N$ is passive.

\begin{case}
 $\lh(\Tt)$ is a successor.
 
 Note then that $N=M^\Tt_\infty||\OR^N$.
 So if $N\ins M^\Tt_\infty$ then we set $\Tt_\xi=\Tt$,
 and otherwise letting $E=F^{M^\Tt_\infty|\OR^N}$,
 set $\Tt_\xi=\Tt\conc\left<E\right>$.
\end{case}

\begin{case} $\lambda=\lh(\Tt)$ is a limit.

Note that $N=M(\Tt)$ and $N$ has no largest cardinal. Therefore $t_{\xi+\om}\neq 1$. We set 
$\Tt_\xi=\Tt\conc b$ where $b=\Sigma(\Tt)$, so we have $N\ins M^\Tt_\lambda$ as required.

\begin{scase} Either $N\not\sats\ZFC$ or $\J(N)\sats$``$\xi$ is not Woodin''.
 
 Then $N$ determines $b$, so $\Tt_\xi\in Y$, and we have $(*)_\xi$. 
 Note that $t_{\xi+\om}\neq 3$.
 If $t_{\xi+\om}=0$ then we are done. So suppose $t_{\xi+\om}=2$,
so $\xi$ is $Y$-measurable and 
we need to establish that $(*)_\eta$ holds for $\eta\in[\xi,\chi]$ where $\chi=\chi^Y_\xi$.
We have $\OR^N=\xi=\lambda$
 and $N\pins M^{\Tt_\xi}_\xi$. Now
 \[ N_\eta\ins i^Y_{\xi0}(N_\xi)|\chi=M^{\Tt}_\xi||(\xi^+)^{M^\Tt_\xi}.\]
So either:
\begin{enumerate}[label=--]\item $N_\eta\pins M^\Tt_\xi$,
in which case set $\Tt_\eta=\Tt_\xi$, or
\item $\eta=\chi=\OR(M^\Tt_\xi)$ and $M^\Tt_\xi$ is active with extender $E$,
in which case set $\Tt_\chi=\Tt_\xi\conc\left<E\right>$.
\end{enumerate}
 
\end{scase}

\begin{scase} $N\sats\ZFC$ and $\J(N)\sats$``$\xi$ is Woodin''.
 
By Lemma \ref{lem:Woodin_exactness}, and since $t_\xi=0$, then $\xi$ is a 
$Y$-cardinal and $\J(Y|\xi)\sats$``$\xi$ is Woodin''. Let $Q=Q^Y_\xi$ and $\zeta=\OR^Q$. Then
$N_\zeta$ is non-small,
$\lambda=\xi=\delta^Q=\delta^{N_\zeta}$, $N\pins N_\zeta$ and $N_\zeta$ is the above-$
\xi$-iterable
$\xi$-sound Q-structure for $\xi$.
Therefore $N_\zeta\ins M^{\Tt_\xi}_\xi$, as required. Set $\Tt_\alpha=\Tt_\xi$ for each 
$\alpha\in[\xi,\zeta]$.

Now if $\xi$ is not $Y$-measurable, then $t_{\zeta+\om}=0$, so we are done.
If instead $\xi$ is $Y$-measurable and $\chi$ is as above then the interval 
$(\zeta,\chi]$ is dealt with as before.
\end{scase}
\end{case}

This completes the induction below $\delta$.

As mentioned earlier, the remaining parts are established similarly, and the details are left to the reader.
\end{proof}

\subsubsection{Lower semicontinuity with respect to the $\Omega_x$-norm}\label{sec:lower_semicont_for_Omega_x}

We now proceed to describing the main ideas for the proof of lower semicontinuity, by considering the case of the ``$M_x$-goodness norm'' and the ``$\Omega_x$-norm'' (the norms that compare reals $x,y$ according to whether,
and if so, at what stage, $M_x,M_y$ get produced by $\CC$).

Let $M_{x_n}\to M_\infty$ modulo the mouse scale norms, where $x_n\to x$. We will write $\wt{M}_x=M_\infty$,
since $M_\infty$ is a candidate
for $M_x$ (and in some cases we  already know that $M_\infty=M_x$).
Fix a mouse $P$ over a real $y$ such that $\left<M_{x_n}\right>_{n<\om},\wt{M}_x\leq_Ty$
and $P$ is $M_{x_n}$-good for all sufficiently large $n$.
We may assume $P$ is $M_{x_n}$-good for all $n$.
We will  show that \begin{equation}\label{eqn:lower_semic_first_goal} P\text{ is } \wt{M}_x\text{-good and }\Omega^P_{\wt{M}_x}\leq\lim_{n\to\om}\Omega^P_{M_{x_n}}.\end{equation}
Let $\Omega=\lim_{n\to\om}\Omega^P_{M_{x_n}}$.

Using Theorem \ref{tm:iter_to_bkgd},
for $\alpha\leq\Omega$ let $\Tt^P_{x\alpha}$ be the tree on $\wt{M}_x$ iterating out to $N^P_{x\alpha}$.
So $N^P_{x\alpha}\ins  M_\infty^{\Tt_{x\alpha}}$,
and either:
\begin{enumerate}[label=(\roman*)]
 \item\label{item:lower_semi_c_P_is_Q-mouse} $P$ is a Q-mouse,
 $\Omega=\OR^P$, 
 $\left<\Tt^P_{x\alpha}\right>_{\alpha<\delta^P}\in P$,
 $\Tt^P_{x\delta^P}=\Tt^P_{x\alpha}$ for all $\alpha\in[\delta^P,\OR^P]$,
 $\lh(\Tt^P_{x\delta^P})=\delta^P+1$ and $\Tt^P_{x\delta^P}\rest\delta^P\in P$; in this case let $q$ be the Q-degree of $P$ (see Definition \ref{dfn:Q-degree}), or
 \item\label{item:lower_semi_c_maybe_not_Q-mouse} $\aleph_\Omega^P<\rho_0^P$
 and $\left<\Tt^P_{x\alpha}\right>_{\alpha\leq\Omega}\in P$; in this case let $q=0$.
\end{enumerate}

So  $P$ is $(q,\omega_1+1)$-iterable.
Let $m=\projdeg(\wt{M}_x)$. So
in case \ref{item:lower_semi_c_P_is_Q-mouse}, we have $m\leq q$.
Let $(\Sigma_\infty,\frakL_\infty)$ be the $\Gamma$-limit $(m,q,\om_1+1)$-lifting algorithm
for $(\wt{M}_x,P)$ given by \ref{lem:...}.
Let $\Tt$ on $\wt{M}_x$ be via $\Sigma_\infty$ and $\alpha<\lh(\Tt)$.
We write $\Uu^\Tt,\abliftprodstage^\Tt_\alpha(S)$ for $S\ins M^\Tt_\alpha$, etc, for the objects given by $\frakL_\infty$.

The plan is to form a  tree $\Tt$ on $\wt{M}_x$,  via $\Sigma_\infty$, of successor length $\varepsilon+1$, such that letting
$\Uu=\Uu^\Tt$ and $P'=M^\Uu_{\varepsilon 0}$, we have:
\begin{equation}
\Tt\text{ is small,
  so }\Uu\text{ is small and nowhere dropping in model/degree, and }\end{equation}
  \begin{equation}
  \Tt=\Tt^{P'}_{x\Omega'}
\text{ where }\Omega'=i^\Uu(\Omega)\text{ 
(where }i^\Uu(\Omega)\text{ denotes }\OR^{P'}\text{ if }\Omega=\OR^P\text{).}\end{equation}
 So letting
 $M'=M^\Tt_\varepsilon$, we will have $N^{P'}_{\Omega' x}\ins M'$.

We just need to explain how to select the extenders for $\Tt$, as together with the requirements that $\Uu=\Uu^\Tt$ and $\Tt$ be via $\Sigma_\infty$, this determines $\Tt,\Uu$ completely. 
We write  $k_\alpha=k^\Tt_\alpha$,
$P_\alpha=P_{\alpha 0}$, $P_{\alpha i}=M^\Uu_{\alpha i}$ (for $i\leq k_\alpha$) and $(E^\Tt_\alpha)^+=E^\Uu_{\alpha k_\alpha}$ (so $(E^\Tt_\alpha)^+$
 is the limit-lift of $E^\Tt_\alpha$ to a background extender in $P_{\alpha k_\alpha}$).

We proceed by induction on $\alpha$.  Suppose we have defined $\Tt\rest(\alpha+1)$,
and $\Tt\rest(\alpha+1)\in P_\alpha$.

Let $R\pins M^\Tt_\alpha$ and $\beta\in\OR$.
We say that $R$ is \emph{produced directly} at $\beta$ (with respect to $P_\alpha$)
iff $R=\core_\om(N_{x\beta}^{P_\alpha})$
(here $\rho_\om^R$ need not be a cardinal of $M^\Tt_\alpha$, but note that $R$ and $\beta$ determine one another uniquely).
(***I think the abstract notions here should be shifted to where the other definitions are...)
When this holds, write \[\prodstage^{P_\alpha}_x(R)=\beta_R=\beta.\]

We say that $R$ is \emph{produced late} iff $R$ is produced directly at $\beta$ and letting $\rho=\card^{M^\Tt_\alpha}(R)$
and $\beta'=\abliftprodstage^\Tt_\alpha(R)$\footnote{Might have to put these definitions formally earlier, if they're not there yet. It means that we do the full limit-lift/resurrection process for $R$,
back to the ``limit $N_{x_n\beta'}^{M^{\Vv}_\infty}$'', where $\Vv$ is the resulting limit resurrection tree on $P_\alpha$; $\beta'$ is the limit stage in $M^\Vv_\infty$ where $R$ is limit-embedded; and if $\lim_{n\to\om}t^{M^\Vv_\infty}_{\beta'}=1$,
then $\xi'=\lim_{n\to\om}\lh(F^{M^\Vv_\infty}_{\beta'})$; otherwise $\xi'=0$.},
either:
\begin{enumerate}
 \item the limit-resurrection
 of $R$ (with respect to $\Tt,\alpha,P_\alpha$) uses some order 0 measure, and letting $E$ be least such (so note that $\rho\leq\crit(E)$), either:
 \begin{enumerate}
 \item $\rho<\crit(E)\leq\beta$, or
 \item\label{item:crit(E)=rho_and_beta>beta'} $\crit(E)=\rho$ and $\beta>\beta'$,
\end{enumerate}
or
\item the limit-resurrection of $R$ does not use any measures,
and $\beta>\beta'$.
\end{enumerate}

Let us digress to make an illustrative remark regarding these objects that one should keep in mind.
Suppose that $R\pins\core_0(S)$ and $S\pins M_\alpha$ and $\rho_\om^R$ is an $S$-cardinal
and $\rho_\om^S<\rho_\om^R$
and $S$ is produced directly at $\gamma$ and $R$ is produced directly at $\beta$. Let
\[ \tau:\core_0(S)\to\core_0(N^{P'}_{x\gamma}) \]
be the core map. Note that $\beta<\gamma$ and in fact $\gamma$ is the least $\gamma'>\beta$
such that
\[ \rho_\om(N^{P'}_{x\gamma'})<\rho_\om^R,\]
so $R\pins N^{P'}_{x\gamma}$.
Let \[ \beta'=\prodstage^{P'}_x(\tau(R)).\]
Then note $\beta\leq\beta'$; in fact either $\crit(\tau)\leq\rho$ and $R\pins\tau(R)$\footnote{If $\crit(\tau)=\rho$ then easily $R\pins\tau(R)$. Suppose $\crit(\tau)<\rho$. A standard comparison argument
shows that $\crit(\tau)=\crit(E)$
for some $E\in\es_+^S$
which is $S$-total,
and therefore $\tau(\crit(\tau))$ is measurable in $P'$. But $S$ is sound with $\rho_\om^S<\rho_\om^R$, and therefore $\rho_\om^R$ is not a cardinal in $P'$,
so $\tau(\crit(\tau))>\rho_\om^R$, so $\tau(\rho_\om^R)>\rho_\om^R$,
and it follows that $R\pins\tau(R)$.} and $\beta<\beta'$,
or $\crit(\tau)>\rho$ and $R=\tau(R)$ and $\beta=\beta'$. So the stage $N_{x\beta}^{P'}$ is a kind of resurrection of $R$,
but may be earlier than a full resurrection which passes through the dropdown sequence.
However, $\abdirectprodseg^\Tt_\alpha(R)$ corresponds to full resurrection (in the limit lifting sense).

Let us now, in the case that $\alpha=0$, define $E^\Tt_0$, or declare that $\Tt$ is trivial. There are three cases. 
 Note that $\Tt^P_{x\Omega}$ is trivial iff $N^P_{x\Omega}\ins \wt{M}_x$.

\begin{case} $\Tt^P_{x\Omega}$ is trivial and no $R\pins \wt{M}_x$ with
$\beta_R<\Omega$ is produced late.

Then we set $\Tt$ to be trivial.
\end{case}

\begin{case}\label{case:E_0_no_late_R} $\Tt^P_{x\Omega}$ is non-trivial and no $R\pins \wt{M}_x$ with $\beta_R<\Omega$ is produced late.

Then we set $E^\Tt_0=E^{\Tt^P_{x\Omega}}_0$.
\end{case}

\begin{case}\label{case:E_0_late_R} Otherwise (some $R\pins \wt{M}_x$ with $\beta_R<\Omega$
 is produced late).

Fix $R$ least such and let $\beta=\beta_R$ and $\Tt_R=\Tt^P_{x\beta}$.
We will verify below that $\Tt_R$ is non-trivial,
and we set $E^\Tt_0=E^{\Tt_R}_0$.
\end{case}

\begin{clm}\label{clm:Tt_R_non-triv}
If Case \ref{case:E_0_late_R} above attains,
and $R$ is least as there, then
$\Tt_R$ (as there) 
 is non-trivial.
\end{clm}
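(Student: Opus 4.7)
The plan is to suppose $\Tt_R$ is trivial and use the limit-lifting algorithm's commutativity and convergence properties to derive a contradiction with any of the three subcases defining ``produced late''. Under the contradictory hypothesis, $N^P_{x\beta}\ins\wt{M}_x$, so $R$ sits in $\wt{M}_x$ as a directly identifiable proper segment whose $\CC^P_x$-production stage is precisely $\beta$. The aim is to show that in this situation the limit-lifting data for $R$ (as produced by $\frakL_\infty$ applied to the trivial tree $\Tt$ on $\wt{M}_x$) agrees with the direct data: $\beta'=\abliftprodstage^\Tt_0(R)=\beta$, and the limit resurrection of $R$ uses no order-$0$ measure with critical point $\leq\rho=\card^{\wt{M}_x}(R)$. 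This directly negates each of cases (1a), (1b), (2) of the definition of late production.

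First, fix a $\Sigma_1$ term-sequence $\vec{r}$ naming $R$ over $\wt{M}_x$ from $p_1^{\wt{M}_x}$. By the convergence $x_n\to x$ modulo the theory norms, $R_n=\vec{r}^{M_{x_n}}$ is a well-defined proper segment of $M_{x_n}$ for all sufficiently large $n$. By Theorem \ref{tm:iter_to_bkgd}(\ref{item:uniformity}), the assignment $\eta\mapsto\Tt^P_{x\eta}$ is class-definable over $P|\mu$, so the assertion ``$\Tt^P_{x\beta}$ is trivial and $N^P_{x\beta}$ is the segment of $\wt{M}_x$ named by $\vec{r}$'' is essentially a $\Sigma_1$ statement about $(\wt{M}_x,p_1^{\wt{M}_x})$ together with data (the production stage $\beta$) already captured by the depth-$1$ lifting norms incorporated up to this point in the mouse scale. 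Consequently, for all sufficiently large $n$, the analogous tree $\Tt^P_{x_n\beta_n}$ iterating $M_{x_n}$ out to $N^P_{x_n\beta_n}$, where $\beta_n$ is the direct $\CC^P_{x_n}$-production stage of $R_n$, is also trivial.

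Second, once $\Tt^P_{x_n\beta_n}$ is trivial, $\CC^P_{x_n}$'s direct production of $R_n$ at stage $\beta_n$ sits as a segment of the core of the stage at which $M_{x_n}$ itself is produced, so by the definition of the standard lifting algorithm $(\Sigma_n,\frakL_n)$ in \ref{dfn:standard_alg}, the resurrection of $R_n$ is effected by the trivial resurrection tree and $\abliftprodstage^{(\Tt_n)}_0(R_n)=\beta_n$, with no order-$0$ measure used whose critical point lies at or below $\card^{M_{x_n}}(R_n)$. Taking limits as $n\to\om$, using that all ordinal norms folded into the mouse scale prior to the present comparison force convergence of the relevant ordinals and of the $t$-values, one obtains $\beta'=\lim_n\beta_n=\beta$ and that the limit-resurrection of $R$ (in the sense of $\abrestree^\Tt_0$, $\abres^\Tt_0$) uses no such measure, contradicting that $R$ is produced late.

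The main obstacle is step one, i.e.~converting triviality of $\Tt_R$ into a first-order property of $(\wt{M}_x,p_1^{\wt{M}_x})$ that reflects to the approximants $(M_{x_n},p_1^{M_{x_n}})$. Since $\wt{M}_x\notin P$ in general, one cannot simply quote Theorem~\ref{tm:iter_to_bkgd} as a statement about $P$ alone; rather, one needs to combine the uniformity part of that theorem with a finite-support argument analogous to Lemma~\ref{lem:simultaneous_pse}, using that the production stage $\beta$, the parameter $\vec{r}$, and a $\Sigma_1$-witness to $N^P_{x\beta}\ins\wt{M}_x$ can all be captured by a single finite-support hull $\Ttbar\hookrightarrow_\simple^\pre\Tt$, whose reflection to $M_{x_n}$ delivers the required triviality of $\Tt^P_{x_n\beta_n}$.
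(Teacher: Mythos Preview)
Your proposal has a fundamental gap in step one. The assertion ``$\Tt^P_{x\beta}$ is trivial and $N^P_{x\beta}=R$'' is a statement about the construction $\CC^P_x$ and the model $\wt{M}_x$; it is not a first-order property of $(\wt{M}_x,p_1^{\wt{M}_x})$ alone that would reflect to $(M_{x_n},p_1^{M_{x_n}})$ via the theory norms. When you pass from $x$ to $x_n$ you change the construction from $\CC^P_x$ to $\CC^P_{x_n}$, and there is no reason whatsoever that $R_n$ should appear as a direct stage $N^P_{x_n\beta_n}$ of $\CC^P_{x_n}$. Indeed, the entire content of ``produced late'' is precisely that the actual $\CC^P_x$-production stage $\beta$ of $R$ \emph{exceeds} the limit of the $\CC^P_{x_n}$-stages at which the resurrected $R_n$ are produced; so your desired equality $\beta=\lim_n\beta_n=\beta'$ is exactly what the hypothesis denies. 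The lifting norms in the scale do not give you this: they compare stages of $\CC^P_x$ with $\CC^P_{x_n}$ only through the specific lifting/resurrection process, not through any direct identification of segments.

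The paper's argument is entirely different and does not use reflection to the $x_n$ at all. It exploits the minimality of $R$: if $\Tt_R$ is trivial then $R=N^P_{x\beta}$ is sound and passive with $t^P_{x\beta}=0$, so every $S\pins R$ with $\rho_\om^S$ an $R$-cardinal is also produced directly, and by minimality of $R$, none of them is produced late. This gives upper bounds $\prodstage^P_x(S)\leq\abliftprodstage^\Tt_0(S)$ (or the relevant inequality with the first order-$0$ measure) for each such $S$. The proof then splits into three cases according to the structure of $R$ ($R=\J(S)$ for some $S$; $R$ has a largest cardinal but no largest proper segment; $R$ has no largest cardinal) and in each case shows, by analyzing how the limit-resurrection process for $R$ relates to that for its proper segments $S$, that the bounds on the $S$'s force $\prodstage^P_x(R)\leq\abliftprodstage^\Tt_0(R)$ (or the corresponding condition), contradicting that $R$ is produced late. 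The key technical point is that the limit-resurrection of $R$ is, in each case, determined by that of its cardinal-projecting proper segments together with at most one additional step, and one must check this step does not introduce an order-$0$ measure that would rescue lateness.
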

\begin{proof}
 Suppose not. Then  $R=N^P_{x\beta}\ins\wt{M}_x$ is sound.
 Because $R\ins \wt{M}_x\in P$, it easily follows that $R$ is passive and $t^P_{x\beta}=0$.
 Let
 \[ C=\{S\mid S\pins R\text{ and }\rho_\om^S\text{ is an }R\text{-cardinal}\}. \]
 Because $R=N^P_{x\beta}$, each $S\in C$ is produced directly at some $\gamma$.
 
 \begin{case}$R=\J(S)$ for some $S\pins R$.
 
 Then note that $S\in C$ and
 \[ \prodstage^P_x(R)=\prodstage^P_x(S)+\om.\]
 By the minimality of $R$, either:
 \begin{enumerate}[label=(\roman*)]
  \item\label{item:order_0_meas_used_in_limit-lift(S)} an order $0$ measure $E$ is used in the limit-lift of $S$, and  $\prodstage^P_x(S)<\crit(E)$ where $E$ is least such, or
  \item\label{item:no_order_0_meas_used_in_limit-lift(S)} no order $0$ measure used in the limit-lift of $S$, and $\prodstage^P_x(S)\leq\abliftprodstage^\Tt_0(S)$.
 \end{enumerate}
Suppose \ref{item:order_0_meas_used_in_limit-lift(S)} holds. So $\prodstage^P_x(R)=\prodstage^P_x(S)+\om<\crit(E)$,
where $E$ is least as there. Therefore by the choice of $R$,
$E$ is not used in the limit-lift of $R$.
It follows that $\rho_\om^R<\rho_\om^S$,
and the $\crit(E)=\pi(\rho_\om^S)$
where $\pi$
is the limit-lift map on $\OR^R$.
But the limit-lift process for $R$ is just an initial segment of that for $S$,
and for large enough $n<\om$,
we then get $\xi$ such that $t^{P}_{x_n\xi}=2$
with $\card^P(\xi)=\crit(E)$ (the stage corresponding to the version of the limit-lift of $S$ prior to taking the $E$-ultrapower) but with $\rho_\om(N^P_{x_n,\xi+\om})<\crit(E)$ (this gives the limit-lift of $R$), which is impossible.

So \ref{item:no_order_0_meas_used_in_limit-lift(S)}
 holds. But $\rho_\om^R\leq\rho_\om^S$ and  the limit-lift process for $R$ is an initial segment of that for $S$, so no order $0$ measure is used within that for $R$. So \[\abliftprodstage^\Tt_0(R)=\abliftprodstage^\Tt_0(S)+\om,\]
 so \[\prodstage^P_x(R)\leq\abliftprodstage^\Tt_0(R),\]
 contradiction.
\end{case}
\begin{case}
$R$ has a largest cardinal $\rho$,
 but no largest proper segment.

Let $C_\rho$ be the set of all $S\in C$ such that $\rho_\om^S=\rho$.
Note that
 \[ \prodstage^P_x(R)=\sup_{S\in C_\rho}\prodstage^P_x(S).\]

Suppose an order $0$ measure is used in the limit-lift of $R$, and let $E$ be least such.
So $\beta_R=\prodstage^P_x(R)>\crit(E)$. Note
that $E$ is also used in the limit-lift of $S$,
for each $S\in C_\rho$,
and so $\prodstage^P_x(S)<\crit(E)$ for each such $S$. But then $\prodstage^P_x(R)<\crit(E)$, contradiction.

So there is no order $0$ measure used in the limit-lift of $R$. If there is also no order $0$ measure used in the limit-lift of $S$,
for $S\in C_\rho$,
then note that
\[ \prodstage^P_x(R)=\sup_{S\in C_\rho}\prodstage^P_x(S) \]
\[ \leq\sup_{S\in C_\rho}\abliftprodstage^\Tt_0(S) \]
\[ \leq\abliftprodstage^\Tt_0(R),\]
contradiction.
Note then that there is $E$ such
that for each $S\in C_\rho$,
$E$ is the least order $0$ measure
used in the limit-lift of $S$,
and $\crit(E)=\pi(\rho)$
where $\pi$ is the limit-lift map for $R$ on $\dom(\pi)=\OR^R$.
So $\prodstage^P_x(S)<\crit(E)$
for each such $S$.
But then
\[ \prodstage^P_x(R)=\big(\sup_{S\in C_\rho}\prodstage^P_x(S)\big)<\crit(E)<\abliftprodstage^\Tt_0(R), \]
contradiction.\end{case}

\begin{case}
 $R$ has no largest cardinal.
 
We have \[\prodstage^P_x(R)=\sup_{S\in C}\prodstage^P_x(S).\]
Suppose some order $0$ measure is used in the limit-lift of $R$, and let $E$ be least such;
so $\prodstage^P_x(R)>\crit(E)$.
But for all sufficiently large $R$-cardinals $\rho<\OR^R$, and all $S\in C$ with $\rho_\om^S=\rho$, $E$ is also the least used in the limit-lift of $S$. So $\prodstage^P_x(S)<\crit(E)$.
But then $\prodstage^P_x(R)<\crit(E)$, contradiction. So there is no such $E$ used in the limit-lift of $R$. But then for all sufficiently large $R$-successor-cardinals $\rho$,
if $S\in C$ and $\rho_\om^S=\rho$,
then there is no such $E$ used in the limit-lift of $S$. It follows that
\[ \prodstage^P_x(R)=\sup_{S\in C}\prodstage^P_x(S) \]
\[ \leq\sup_{S\in C}\abliftprodstage^\Tt_0(S)\]
\[\leq\abliftprodstage^\Tt_0(R),\]
contradiction.\end{case}

This completes all cases, and hence the proof.
\end{proof}

\begin{rem}
 In Case \ref{case:E_0_late_R}, if $\Tt^P_{x\Omega}$ is non-trivial then
 letting $R\pins \exit^{\Tt^P_{x\Omega}}_0$,
 so $\lh(E^\Tt_0)<\lh(E^{\Tt^P_{x\Omega}}_0)$.
 For otherwise $\exit^{\Tt^P_{x\Omega}}_0\ins R=\core_\om(N_{\beta_Rx}^P)$,
 but since $\beta_R<\Omega$, this is impossible.
\end{rem}

Now assuming that $\Tt$ is non-trivial (equivalently, either Case \ref{case:E_0_no_late_R} or Case \ref{case:E_0_late_R} above attained at stage $0$), set $\eta_1$ to be the unique $\eta$ such that $N_{x\eta}^P=\wt{M}_x||\lh(E_0^\Tt)$
(this $\eta$ exists as $\wt{M}_x||\lh(E^\Tt_0)$ is a cardinal segment of $N^P_{x\Omega}$
in Case \ref{case:E_0_no_late_R}, and is a cardinal segment of $N^P_{x\beta_R}$
in Case \ref{case:E_0_late_R}). Recall that since $E^\Tt_0$ was used in either $\Tt^P_{x\Omega}$
or $\Tt^P_{x\beta_R}$, and by Theorem \ref{tm:iter_to_bkgd}, $\exit^\Tt_0$ is small.

In general along with $\Tt\rest(\alpha+1)$,
we will also define
$\left<\eta_\beta\right>_{0<\beta\leq\alpha}$,
 and defining $\Omega_\beta=i^\Uu_{00,\beta0}(\Omega)$, we
will verify that ($*$)$_\alpha$ holds, which asserts:
\benum
\item\label{item:star_Tt_small} $\Tt\rest(\alpha+1)$ is  $\om$-maximal
and small.
\item\label{item:star_eta_str_inc_disc} $\left<\eta_\beta\right>_{0<\beta\leq\alpha}$ is strictly increasing and continuous.
\item\label{item:star_N_eta_beta} For each $\beta\in(0,\alpha]$, we have $\eta_\beta\leq \Omega_\beta=i^\Uu_{00,\beta0}(\Omega)$ and 
 \[N^{P_{\beta}}_{x\eta_\beta}=M^\Tt_\beta|\sup_{\gamma<\beta}\lh(E^\Tt_\gamma) \]
(so if $\beta=\gamma+1$
then \[N^{P_{\gamma+1}}_{x\eta_{\gamma+1}}=M^\Tt_{\gamma+1}|\lh(E^\Tt_\gamma)=M^\Tt_\gamma||\lh(E^\Tt_\gamma), \]
and if $\beta$ is a limit ordinal
then $N^{P_\beta}_{x\eta_\beta}=M(\Tt\rest\beta)$).
\item\label{item:star_coherence}  For each $\beta<\alpha$,  letting $\eta=\eta_{\beta+1}$ and $\eta'=\eta+\om$ and $i\leq k_{\beta}$, we have:
\benum
\item\label{item:eta<crit(F_kappa,0)} if $i<k_{\beta}$ and $E^\Uu_{\beta i}\neq\emptyset$ then $\eta<\crit(F_{\beta i})$ (hence $\aleph_{\eta}^{P_{\beta i}}<\crit(F_{\beta i})$),
\item\label{item:aleph_eta<nu(F)} $\aleph_{\eta}^{P_{\beta i}}\leq\nu(E^\Uu_{\beta k_\beta})$,
\item\label{item:bkgd_agmt_thru_aleph_eta}$P_\alpha|\aleph_{\eta}^{P_\alpha}=P_{\beta i}|\aleph_{\eta}^{P_{\beta i}}$
\item\label{item:construction_agmt_thru_eta+1} $\eta'<\Omega^{P_\alpha}$
and $\CC^{P_{\alpha}}_x\rest(\eta+1)=\CC^{P_{\beta i}}_x\rest(\eta+1)$ and $t^{P_{\alpha}}_{x\eta'}=0=t^{P_{\beta i}}_{x\eta'}$, so
\item\label{item:iterate_is_construction_through_eta} $N^{P_{\alpha}}_{x\eta}=N^{P_{\beta i}}_{x\eta}=M^\Tt_\beta||\lh(E^\Tt_\beta)=M^\Tt_\alpha|\lh(E^\Tt_\beta)$
is fully sound and $N^{P_{\alpha}}_{x,\eta'}=\J(N^{P_\alpha}_{x\eta})$.
\eenum
\item\label{item:star_no_late_below_delta_beta} No $R\ins M^\Tt_\alpha|\sup_{\beta<\alpha}\lh(E^\Tt_\beta)$ is produced late with respect to $\Tt,\alpha,P_\alpha$.
\eenum

Note that $(*)_0$ is trivial.

Clearly $(*)_\alpha$ implies that for all limits $\gamma\leq\alpha$,
setting $\eta=\eta_{\gamma}$ and $\eta'=\eta+\om$ and $\delta=\delta(\Tt\rest\gamma)$, we also have:
\benumdd
\item Either $\delta$ is a cardinal of $M^\Tt_\gamma$ and of $M^\Tt_\alpha$,
or $\alpha=\gamma$ and $\delta=\OR^{M^\Tt_\alpha}$,
\item $\CC^{P_\alpha}_x\rest(\eta+1)=\CC^{P_\gamma}_x\rest(\eta+1)=\bigcup_{\beta<\gamma}\CC^{P_\beta}_x\rest(\eta_\beta+\om+1)$,
\item $N^{P_\alpha}_{x\eta}=N^{P_\gamma}_{x\eta}=M(\Tt\rest\gamma)=M^\Tt_\gamma|\delta=M^\Tt_\alpha|\delta$,
\item $t^{P_\alpha}_{x\eta}=t^{P_\gamma}_{x\eta}=0$ and
$t^{P_\alpha}_{x,\eta'},t^{P_\gamma}_{x,\eta'}\neq 1$ (as $M(\Tt\rest\gamma)$ has no largest cardinal).
\item No $R\ins M(\Tt\rest\gamma)$ is produced late (with respect to $\Tt,\alpha,P_\alpha$).
\eenum
But it is possible that $t^{P_\alpha}_{x,\eta'}\in\{2,3\}$,
and also possible that $t^{P_\gamma}_{x,\eta'}=2$ but $t^{P_\alpha}_{x,\eta'}=0$.

We have:

\begin{clm}
$(*)_1$ holds.
\end{clm}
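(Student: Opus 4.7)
The assertion $(*)_1$ splits into five clauses. Clause \ref{item:star_eta_str_inc_disc} is vacuous, and clause \ref{item:star_Tt_small} is immediate: $\omega$-maximality of a length-two tree is automatic, while smallness of $\exit^\Tt_0$ follows because $E^\Tt_0$ was selected as the first extender of the small tree $\Tt^P_{x\Omega}$ (Case \ref{case:E_0_no_late_R}) or $\Tt^P_{x\beta_R}$ (Case \ref{case:E_0_late_R}), both small by Theorem \ref{tm:iter_to_bkgd}. For clauses \ref{item:star_N_eta_beta} and \ref{item:star_coherence}, observe that the lifted extender $(E^\Tt_0)^+=E^\Uu_{0k_0}$ backgrounds the lift of $E^\Tt_0$ in $\CC^{P_{0k_0}}_x$ at some stage $\widetilde\eta+\omega$ with $\nu((E^\Tt_0)^+)=\aleph^{P_{0k_0}}_{\widetilde\eta}$ and $t^{P_{0k_0}}_{x,\widetilde\eta+\omega}=1$; I will verify that $\widetilde\eta=\eta_1$ by unwinding the definition of $\eta_1$ and using that $\Uu\rest[(0,0),(0,k_0)]$ consists only of order-$0$ measures and so preserves $\CC$ up to their critical points. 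Lemma \ref{lem:con_coherence} then immediately yields sub-clauses \ref{item:eta<crit(F_kappa,0)}--\ref{item:iterate_is_construction_through_eta} and the identity $N^{P_1}_{x\eta_1}=\wt M_x||\lh(E^\Tt_0)=M^\Tt_1|\lh(E^\Tt_0)$ required by \ref{item:star_N_eta_beta}. The inequality $\eta_1\leq\Omega_1$ follows because $\eta_1$ sits below every critical point appearing in $\Uu\rest[(0,0),(1,0)]$, so $\eta_1$ is fixed by $i^\Uu_{(0,0),(1,0)}$, while $\eta_1\leq\Omega$ is built into the choice of $E^\Tt_0$ (in Case \ref{case:E_0_no_late_R} this is clear; in Case \ref{case:E_0_late_R}, $\eta_1<\beta_R<\Omega$ since $\wt M_x||\lh(E^\Tt_0)$ is a cardinal-segment of $R=\core_\omega(N^{P_0}_{x\beta_R})$).

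Clause \ref{item:star_no_late_below_delta_beta} is the main obstacle. By the coherence established above, the cardinal-projecting proper segments $R$ of $N^{P_1}_{x\eta_1}$ and their production stages $\beta_R$ coincide as computed in $P_0$ and in $P_1$, and each such $\beta_R<\eta_1$. The crucial observation I must make precise is that for every such $R$, the ordinals entering into the computation of the abstract limit-resurrection of $R$ relative to $(\Tt\rest 2,1,P_1)$ --- the order-$0$ measures appearing in the limit-lift, the abstract production stage $\abliftprodstage^\Tt_1(R)$, and the direct production segment $\abdirectprodseg^\Tt_1(R)$ --- all lie below $\aleph^{P_{0k_0}}_{\eta_1}\leq\crit(i^\Uu_{(0,k_0),(1,0)})$, so these data are invariant under passage from $P_0$ to $P_1$. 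Consequently, $R$ is late relative to $(\Tt\rest 2,1,P_1)$ if and only if $R$ is late in the ambient sense relative to $P_0$.

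Given this invariance, the two cases conclude quickly. In Case \ref{case:E_0_no_late_R}, the case hypothesis directly provides the required non-lateness, since no $R\pins\wt M_x$ with $\beta_R<\Omega$ --- in particular no $R\pins N^{P_0}_{x\eta_1}$ --- is late relative to $P_0$. In Case \ref{case:E_0_late_R}, the minimality of $R$ yields that no $R'\pins\wt M_x$ with $\beta_{R'}<\beta_R$ is late relative to $P_0$; and since $\eta_1<\beta_R$, every $R'\pins N^{P_1}_{x\eta_1}=N^{P_0}_{x\eta_1}$ satisfies $\beta_{R'}<\eta_1<\beta_R$ and is therefore not late. The delicate point expected to absorb the bulk of the detailed work is the precise formulation and verification of the invariance claim, which will require carefully unwinding $\abliftprodstage^\Tt_1$ and $\abdirectprodseg^\Tt_1$ via the limit lifting algorithm $\frakL_\infty$ and confirming that the relevant critical points all lie above $\eta_1$.
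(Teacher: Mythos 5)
Your outline correctly identifies all five clauses and, unlike the paper's own sketch of the analogous later claim, you explicitly tackle clause \ref{item:star_no_late_below_delta_beta}, which is good. That said, there are two places where the argument as written is incomplete. First, you assert that once $\widetilde\eta=\eta_1$ is verified, Lemma \ref{lem:con_coherence} ``immediately yields'' all sub-clauses \ref{item:eta<crit(F_kappa,0)}--\ref{item:iterate_is_construction_through_eta} of clause \ref{item:star_coherence}. That lemma only concerns the backgrounding extender $F_{\alpha+\omega}$ (the $t=1$ case) and says nothing about the order-$0$ measures $E^\Uu_{0i}$ with $i<k_0$ appearing inside the resurrection tree. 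Sub-clause \ref{item:eta<crit(F_kappa,0)} --- that $\eta_1<\crit(E^\Uu_{0i})$ for every such measure --- requires the separate argument used in the paper's proof of $(*)_{\alpha+1}$: one considers the set $C$ of all $S\pins\exit^\Tt_0$ with $\rho_\om^S=\lgcd(\exit^\Tt_0)$, observes that the least measure in the resurrection tree is also used in the limit-lift of each such $S$, and invokes that no $S\in C$ is produced late (which in turn is packed into the way $E^\Tt_0$ was selected in Cases \ref{case:E_0_no_late_R}/\ref{case:E_0_late_R}) to conclude $\eta_1=\sup_{S\in C}\prodstage^{P_0}_x(S)<\kappa$.

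Second, the invariance claim underlying your treatment of clause \ref{item:star_no_late_below_delta_beta} is subtler than the ``ordinals lie below the critical point'' heuristic you give. For $R\ins M^\Tt_1|\lh(E^\Tt_0)=\wt M_x||\lh(E^\Tt_0)$, the abstract production stage $\abliftprodstage^\Tt_1(R)$ is computed by traversing the dropdown sequence $\redd^{M^\Tt_1}(R)$ and applying $\pi^\Tt_1$, whereas $\abliftprodstage^\Tt_0(R)$ uses $\redd^{M_\infty}(R)$ and $\pi^\Tt_0$. These two dropdowns have genuinely different top elements ($M^\Tt_1=\Ult_0(M_\infty,E^\Tt_0)$ versus $M_\infty$), so equality of the two abstract production stages is not merely a matter of ordinals being fixed by $i^\Uu_{(0,k_0),(1,0)}$; one needs the commutativity clauses of the abstract $(m,n,\lambda)$-lifting strategy (Definition \ref{dfn:abstract_lifting}, conditions \ref{item:zeta_movement} and \ref{item:finite_support_commutativity}, together with Lemma \ref{lem:res_comm}) to match up the two resurrection processes through the shift at $E^\Tt_0$. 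You do flag this as the delicate point, which is fair; just be aware that the proof will have to go through the algorithm's commutativity data rather than pure critical-point bounds.
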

\begin{proof}
We omit this as it is almost the same as the proof of Claim \ref{clm:*_alpha+1_holds} below.
\end{proof}

Now suppose we have defined $\Tt\rest(\alpha+1)$ and $\left<\eta_\beta\right>_{0<\beta\leq\alpha}$
and ($*$)$_\alpha$ holds.
We define $\Tt\rest(\alpha+2)$ and $\eta_{\alpha+1}$.

Let  $\delta=\sup_{\beta<\alpha}\lh(E^\Tt_\beta)$ and $\eta=\eta_\alpha$.
So $\eta\leq\Omega_\alpha$ and $M^\Tt_\alpha|\delta=N^{P_\alpha}_{x\eta}$
and $t^{P_\alpha}_{\eta+\om,x}\neq 1$.
Let
\begin{equation}\label{eqn:def_theta_in_E^Tt_alpha_def} \theta=\text{least }\theta'\in[\eta,\Omega_\alpha]\text{ such that }\theta'=\Omega_\alpha\text{ or }N_{x\theta'}^{P}\text{ projects }<\delta.\end{equation}
Let $\Tt^*=\Tt^P_{x\theta}$.
So $\Tt\rest(\alpha+1)\ins\Tt^*$.
We have $M^{\Tt^*}_\infty=N_{x\theta}^P$,
because $N_{x\theta}^P\npins M^{\Tt^*}_\infty$, because if $\delta=\OR^{M^\Tt_\alpha}$ then $\theta=\eta_\alpha$ (as $\Tt$ is  on $\wt{M}_x$, so $M^\Tt_\alpha$ is unsound),
and if $\delta<\OR^{M^\Tt_\alpha}$ then $\delta$ is a cardinal of $M^\Tt_\alpha$ and of $M^{\Tt^*}_\infty$.

We now define $E^\Tt_\alpha$ or declare the process complete; again there are multiple cases:
\begin{case}  $\Tt\rest(\alpha+1)\notin P_\alpha$ or $\theta=\eta_\alpha$.

Then the process is complete, so $\Tt=\Tt\rest(\alpha+1)$.
\end{case}

Now suppose that $\Tt\rest(\alpha+1)\in P_\alpha$ and $\theta>\eta_\alpha$ (so $\delta<\OR^{M^\Tt_\alpha}$).

\begin{case}\label{case:some_R_late_prior_to_gamma} Some $R\pins M^\Tt_\alpha$ is produced late, with $\beta_R<\theta$.

Fix $R$ least such and let $\beta=\beta_R$.

\begin{clm}\label{clm:t^P_beta=0,1}
 $M^\Tt_\alpha|\delta\pins R$ and $t^{P_\alpha}_{x\beta}\in\{0,1\}$.
\end{clm}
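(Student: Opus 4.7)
\textbf{Proof proposal for Claim \ref{clm:t^P_beta=0,1}.}

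The plan is to argue the two conclusions separately, leveraging the choice of $R$ as the least proper segment of $M^\Tt_\alpha$ produced late with $\beta_R < \theta$, together with the inductive hypothesis $(*)_\alpha$. I would first establish $M^\Tt_\alpha|\delta \pins R$ by controlling $\rho_\omega^R$ and then eliminating alternatives. Since $R = \core_\omega(N^{P_\alpha}_{x\beta})$ and $\beta < \theta$, the defining property of $\theta$ in line (\ref{eqn:def_theta_in_E^Tt_alpha_def}) forces $\rho_\omega(N^{P_\alpha}_{x\beta}) \geq \delta$, hence $\rho_\omega^R \geq \delta$. Since $R \pins M^\Tt_\alpha$, the options are $R \pins M^\Tt_\alpha|\delta$, $R = M^\Tt_\alpha|\delta$, or $M^\Tt_\alpha|\delta \pins R$. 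The first option is ruled out directly by inductive hypothesis $(*)_\alpha$ clause \ref{item:star_no_late_below_delta_beta}, and the second by $(*)_\alpha$ clause \ref{item:star_N_eta_beta} together with ($*$)$_\alpha$ clause \ref{item:iterate_is_construction_through_eta}: $M^\Tt_\alpha|\delta = N^{P_\alpha}_{x\eta_\alpha}$ is fully sound, so $R = M^\Tt_\alpha|\delta$ forces $\beta = \eta_\alpha$, but then the direct and limit-lift production stages coincide and $R$ cannot be late.

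For the second conclusion, $t^{P_\alpha}_{x\beta} \in \{0,1\}$, I would rule out $t = 3$ and $t = 2$ separately. If $t^{P_\alpha}_{x\beta} = 3$, then by Corollary \ref{cor:local_Woodins} and Lemma \ref{lem:Woodin_exactness}, the stage $N^{P_\alpha}_{x\beta}$ sits in the P-construction region above some Woodin $\delta^\ast$ of $P_\alpha$, so $N^{P_\alpha}_{x\beta}$ is fully sound, $\rho_\omega^R \geq \delta^\ast$, and the limit-lift of $R$ passes through no order-$0$ measure (everything above $\delta^\ast$ is handled by P-construction on both sides). A direct comparison, using the commutativity between P-construction and the limit-lift algorithm supplied by $(\Sigma_\infty,\frakL_\infty)$, then yields $\prodstage^{P_\alpha}_x(R) = \abliftprodstage^\Tt_\alpha(R)$, contradicting the defining inequality (2) in the definition of \emph{produced late}. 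If $t^{P_\alpha}_{x\beta} = 2$, then by Lemma \ref{lem:meas_implies_least_t=2_exists}, $\beta = \gamma + \omega$ where $N^{P_\alpha}_{x\gamma}$ is the Q-structure for the measurable $\kappa = \card^{P_\alpha}(\beta)$, and $\rho_\omega^R = \kappa$. A case analysis on whether the limit-lift of $R$ employs an order-$0$ measure shows that, in each of the three subcases (1)(a), (1)(b), (2) of the definition of \emph{produced late}, the relevant stages of the direct production and of the limit-lift agree — using that the measure $F^{P_\alpha}_{\kappa 0}$ lifts consistently to the limit via the compatibility captured by $\frakL_\infty$. This again contradicts lateness.

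The main obstacle will be the second part, and in particular the $t = 2$ case, because one must carefully compare the direct production stage $\beta$ with $\abliftprodstage^\Tt_\alpha(R)$ under the possibility that the relevant measure $\kappa$ on the $P_\alpha$ side is not the image of a measurable on the $\wt{M}_x$ side (or is only "eventually" measurable, in the $n\to\omega$ sense). Unpacking the three subcases in the definition of \emph{late}, and matching each against the possible resurrection behaviours of $R$ under $\frakL_\infty$, will require the detailed bookkeeping developed in Sections \ref{sec:CC_iterability} and \ref{sec:iterability_M_infty}; this is morally parallel to the case analysis carried out in the proof of Claim \ref{clm:Tt_R_non-triv}, but applied at a general stage $\alpha$ rather than at stage $0$, so the inductive clauses of $(*)_\alpha$ (especially \ref{item:star_coherence}(\ref{item:construction_agmt_thru_eta+1}) and \ref{item:star_no_late_below_delta_beta}) must be invoked to transfer the bookkeeping from stage $0$ to stage $\alpha$.
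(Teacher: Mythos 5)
Your decomposition is the same as the paper's (first establish $M^\Tt_\alpha|\delta\pins R$ from $(*)_\alpha$, then rule out $t^{P_\alpha}_{x\beta}=3$ and $t^{P_\alpha}_{x\beta}=2$ separately by showing $R$ could not then be late), and the first two steps are essentially right, if a little circuitous. For the first part, once you know $R$ is late you can conclude immediately from $(*)_\alpha$ clause \ref{item:star_no_late_below_delta_beta} that $R\nins M^\Tt_\alpha|\delta$; the observation $\rho_\om^R\geq\delta$ is redundant, and your sub-argument for the case $R=M^\Tt_\alpha|\delta$ is unnecessary because clause \ref{item:star_no_late_below_delta_beta} already covers $R\ins M^\Tt_\alpha|\delta$, not just $R\pins M^\Tt_\alpha|\delta$. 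For $t=3$, you should pin down the Woodin: the paper observes directly that $\delta^{P_\alpha|\beta}=\delta$ (from $\Tt\rest(\alpha+1)\in P_\alpha$ being small and $\delta^{P_\alpha|\beta}$ being a $P_\alpha$-cardinal), from which $R=N^{P_\alpha}_{x\beta}$ is sound and $\prodstage^{P_\alpha}_x(R)=\OR^R$, giving the contradiction quickly; your ``$\delta^\ast$'' is left floating and the ``direct comparison'' is asserted rather than argued.

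The genuine gap is in the $t=2$ case, which you correctly flag as the hard part but then hand-wave. Two of your claims there are wrong or unjustified: first, $\beta=\gamma+\omega$ is \emph{not} a consequence of Lemma \ref{lem:meas_implies_least_t=2_exists} --- that lemma only identifies the \emph{least} ordinal $\xi$ with $t^{P_\alpha}_{x\xi}=2$ and $\card^{P_\alpha}(\xi)=\kappa$; in general $\gamma+\om<\beta<\chi^{P_\alpha}_{x\kappa}$, and the paper's argument in fact distinguishes between the subcase $R=\J_\xi(Q)$ (where $\beta=\OR^Q+\xi$) and the subcase where $R$ sits strictly past the rud closure of $Q$. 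Second, $\rho_\om^R=\kappa$ only follows once one knows $\delta=\kappa$, i.e.\ once $\alpha=\kappa$; you use it before establishing that. The crucial structural step you skip entirely is the chain $\alpha\geq\kappa$, then $\eta_\kappa=\kappa$, then (via Lemma \ref{lem:iterate_up_to_N_x,kappa}, which gives $N^{P_\alpha}_{x\chi}=M^\Tt_\kappa|\kappa^{+M^\Tt_\kappa}$ with $\chi=\chi^{P_\alpha}_{x\kappa}$) that $R\pins M^\Tt_\kappa|\kappa^{+M^\Tt_\kappa}$, forcing $\alpha=\kappa$. Without $\alpha=\kappa$ there is no sensible comparison between $\beta$ and $\abliftprodstage^\Tt_\kappa(R)$, because the lateness inequality is relative to the stage $\alpha$. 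Invoking ``compatibility captured by $\frakL_\infty$'' does not substitute for this bookkeeping --- the point of the case analysis in the paper is precisely to exhibit, via the ordinals $\OR^Q$, $\chi^{P_\alpha}_{\kappa 0}$, $\kappa^{+P_\alpha}$ and $\xi$, that whichever of the subcases of \emph{produced late} applies, $\beta$ in fact lies at or below $\abliftprodstage^\Tt_\kappa(R)$, contradicting lateness.
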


\begin{proof}
The fact that $M^\Tt_\alpha|\delta\pins R$ is by $(*)_\alpha$ (and the remarks following it). This also gives that $\beta>\eta_\alpha$.

Suppose that $t_{x\beta}^{P_\alpha}=3$. Then note that $\delta^{P_\alpha|\beta}=\delta$ because $\Tt\rest(\alpha+1)\in P_\alpha$
and is $\Tt\rest(\alpha+1)$ small
and $\delta^{P_\alpha|\beta}$ is a cardinal of $P_\alpha$. But $\rho_\om^R\geq\delta$ (as $R\pins M^\Tt_\alpha$) 
and so because $t_{x\beta}^{P_\alpha}=3$, we have $R=N^{P_\alpha}_{x\beta}$; that is, $N^{P_\alpha}_{x\beta}$ is sound.
But then $\prodstage^{P_\alpha}_x(R)=\OR^R$,
from which it is easy to reach a contradiction.

Suppose $t_{x\beta}^{P_\alpha}=2$. So $\prodseg^{P_\alpha}_x(R)=P_\alpha|\lh(F_{\kappa 0}^{P_\alpha})$ where $\kappa=\card^P(\beta)$,
and $\card^{P_\alpha}(R)=\kappa$.
Since $\Tt\rest(\alpha+1)\in P_\alpha$
and $R\pins M^\Tt_\alpha|$,
clearly $\alpha\geq\kappa$.
Now $\eta_\kappa=\kappa$.
For note that
 $\delta(\Tt\rest\kappa)=\kappa$ and $M^\Tt_\alpha|\kappa=N^{P_\alpha}_{x\eta_\alpha}|\kappa$, but if $N^{P_\alpha}_{x\eta_\alpha}|\kappa\neq N^{P_\alpha}_{x\kappa}$
 then there is no $\xi>\eta_\alpha$
 with $\card^{P_\alpha}(\xi)=\kappa$
 and $t_{\xi x}^{P_\alpha}=2$.
 So $N^{P_\alpha}_{x\eta_\alpha}|\kappa=N^{P_\alpha}_{x\kappa}$, so $\eta_\kappa=\kappa$. We have $N^{P_\alpha}_{x\chi}=M^\Tt_{\kappa}|\kappa^{+M^\Tt_\kappa}$ where $\chi=\chi_{x\kappa}^{P_\alpha}$,
 by Lemma \ref{lem:iterate_up_to_N_x,kappa}.
 But since $t^{P_\alpha}_{x\beta}=2$,
 we have $R\pins N^{P_\alpha}_{x\chi}$,
 so $R\pins M^\Tt_\kappa|\kappa^{+M^\Tt_\kappa}$,
 and therefore $\alpha=\kappa$
 (otherwise $\kappa<\lh(E^\Tt_\kappa)\leq\OR^R$).
 Let $Q\pins M^\Tt_\kappa$
 be the Q-structure for $M^\Tt_\kappa|\kappa$.
 So $Q\pins R$, since $t^{P_\alpha}_{x\beta}=2$.
 Note that if $R=\J_\xi(Q)$
 for some $\xi\in\OR$ then $\beta=\OR^Q+\xi\leq\abliftprodstage^\Tt_\kappa(R)$,
 contradicting that $R$ was produced late;
 but otherwise, there is some $\xi>\OR^Q$ such that $\xi\leq\abliftprodstage^\Tt_\kappa(R)$ and
 $t^{\Ult(P_\alpha,F_{\kappa0}^{P_\alpha})}_{\xi x}=1$, and taking the least such $\xi$, 
 \[ \OR^R=\beta<\chi^{P_\alpha}_{\kappa0}\leq\kappa^{+P_\alpha}<\xi\leq\abliftprodstage^\Tt_\kappa(R),\]
again a contradiction.
\end{proof}

\begin{clm}\label{clm:late_next_ext} $R\neq N_{x\beta}^P$.\end{clm}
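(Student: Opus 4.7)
The plan is to prove Claim \ref{clm:late_next_ext} by contradiction: assume $R=N^{P_\alpha}_{x\beta}$, so this stage of $\CC^{P_\alpha}_x$ is already fully sound (equal to its own $\om$-core). By Claim \ref{clm:t^P_beta=0,1} we know $M^\Tt_\alpha|\delta\pins R\ins M^\Tt_\alpha$ and $t^{P_\alpha}_{x\beta}\in\{0,1\}$, and by the minimality of $R$ no proper segment of $R$ extending $M^\Tt_\alpha|\delta$ is produced late. The key point we will exploit is that if $N^{P_\alpha}_{x\beta}$ is already sound, then the limit-lift process for $R$ (viewed inside $\CC^{P_\alpha}_x$ itself) must reach stage $\beta$ directly, forcing $\abliftprodstage^\Tt_\alpha(R)\geq\beta$ and any order $0$ measure used in the limit-lift to be compatible with $\beta$ in the sense opposite to what ``produced late'' requires.

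First I would handle $t^{P_\alpha}_{x\beta}=1$. Then $R$ is active and the next stage places a background extender $F\in\es^{P_\alpha}$ in $\CC^{P_\alpha}_x$ with $\nu_F=\aleph_\beta^{P_\alpha}$ coherent with $F^R$. Since $R\pins M^\Tt_\alpha$ with $M^\Tt_\alpha|\delta\pins R$ and since $\Tt\rest(\alpha+1)$ is small and lies in $P_\alpha$, the lifting algorithm $(\Sigma_\infty,\frakL_\infty)$ identifies $\abliftprodstage^\Tt_\alpha(R)$ as the ``eventual'' production stage in the $\CC^{P_{\alpha k}}_{x_n}$-constructions; the soundness of $N^{P_\alpha}_{x\beta}$ and condition \ref{item:bkgd_agmt_thru_aleph_eta}--\ref{item:iterate_is_construction_through_eta} of $(*)_\alpha$ force $\abliftprodstage^\Tt_\alpha(R)\geq\beta$, contradicting clause 2 of the definition of ``produced late''. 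Moreover, no order $0$ measure can figure in the limit-lift of $R$ in the way required by clause 1(b), because the relevant image cardinal $\rho$ would already be non-measurable in $P_\alpha$ just beyond $R$, by Lemmas \ref{lem:add_ext_non-ZFC} and \ref{lem:lifting_t=2_proj_meas}.

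Next I would treat $t^{P_\alpha}_{x\beta}=0$, so $R$ is passive. Here I follow the case structure used in the proof of Claim \ref{clm:Tt_R_non-triv}: split according to whether $R=\J(S)$ for some $S\pins R$, $R$ has a largest cardinal $\rho$ but no largest proper segment, or $R$ has no largest cardinal. In each subcase one notes that the limit-lift of $R$ is built from the limit-lifts of the relevant predecessors $S\in C$ (the collection of cardinal-projecting segments of $R$), and the production stage $\prodstage^{P_\alpha}_x(R)$ is either the immediate successor, the supremum over $S\in C_\rho$, or the supremum over $S\in C$, respectively. The minimality of $R$ (no earlier proper segment is produced late) together with $(*)_\alpha$ then propagates the inequality $\prodstage^{P_\alpha}_x(S)\leq\abliftprodstage^\Tt_\alpha(S)$ to $R$ itself, yielding $\beta=\prodstage^{P_\alpha}_x(R)\leq\abliftprodstage^\Tt_\alpha(R)$; the possibility that an order $0$ measure intervenes is ruled out exactly as in Claim \ref{clm:Tt_R_non-triv}, using that $\crit(E)$ would then bound $\prodstage^{P_\alpha}_x(S)$ for all relevant $S$, hence bound $\prodstage^{P_\alpha}_x(R)$, contradicting $\crit(E)<\beta$.

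The main obstacle will be the active case $t^{P_\alpha}_{x\beta}=1$, where one must carefully compare the background extender $F$ used at stage $\beta$ of $\CC^{P_\alpha}_x$ with the background extenders witnessing the limit-lift of $R$ produced by $\frakL_\infty$; in particular one must verify that the agreement encoded in condition \ref{item:bkgd_agmt_thru_aleph_eta} of $(*)_\alpha$ forces the minimal background index in $\CC^{P_\alpha}_x$ at $R$ to agree with the limit-lifted one, so that late production is genuinely impossible when $N^{P_\alpha}_{x\beta}$ is sound. The passive case is essentially a relativization of Claim \ref{clm:Tt_R_non-triv} over $P_\alpha$ rather than over $\wt{M}_x$, with the extra ingredient that $\Tt\rest(\alpha+1)\in P_\alpha$ and $R\pins M^\Tt_\alpha$.
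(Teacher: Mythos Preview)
Your approach is essentially the paper's: the paper's entire proof is the phrase ``A slight variant of the proof of Claim~\ref{clm:Tt_R_non-triv},'' and you have correctly unpacked what that variant must be. You rightly observe that the new feature compared with Claim~\ref{clm:Tt_R_non-triv} is that $t^{P_\alpha}_{x\beta}=1$ is no longer ruled out on size grounds (in Claim~\ref{clm:Tt_R_non-triv} one had $R\in\HC^P$, forcing $\crit(F^R)$ to be countable yet measurable in $P$, contradiction; here $R$ can be large in $P_\alpha$). Your treatment of the $t=0$ case---relativizing the three subcases of Claim~\ref{clm:Tt_R_non-triv} to $P_\alpha$, using $(*)_\alpha$ and the minimality of $R$ to propagate the inequality $\prodstage^{P_\alpha}_x(S)\leq\abliftprodstage^\Tt_\alpha(S)$ from the segments $S\in C$ up to $R$---is exactly the intended argument.

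Your handling of $t=1$, however, points at the wrong tools. Conditions \ref{item:bkgd_agmt_thru_aleph_eta}--\ref{item:iterate_is_construction_through_eta} of $(*)_\alpha$ describe agreement of the constructions $\CC^{P_{\beta i}}_x$ and $\CC^{P_\alpha}_x$ below $\eta_{\beta+1}$; they do not directly compare $\beta=\prodstage^{P_\alpha}_x(R)$ with $\abliftprodstage^\Tt_\alpha(R)$. The cleaner line is to reduce to the passive analysis: if $t^{P_\alpha}_{x\beta}=1$ and $R=N^{P_\alpha}_{x\beta}$ is sound and active, then $R^{\passive}=N^{P_\alpha}_{x,\beta-\om}$ is passive with largest cardinal $\rho=\lgcd(R)$. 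By minimality of $R$, no $S\in C_\rho$ is produced late, so the Case-2 calculation from Claim~\ref{clm:Tt_R_non-triv} yields the appropriate bound on $\prodstage^{P_\alpha}_x(R^{\passive})=\beta-\om$ (and the order-$0$-measure subcase is handled as before). Since $R$ is small and active, its limit-lift ends at an active stage with $t=1$ in the corresponding limit construction, giving $\abliftprodstage^\Tt_\alpha(R)=\abliftprodstage^\Tt_\alpha(R^{\passive})+\om$; so $\beta\leq\abliftprodstage^\Tt_\alpha(R)$, contradicting ``produced late.'' This is the ``slight variant'' the paper has in mind---the active case reduces to the passive one up to a uniform ``$+\om$.''
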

\begin{proof}
A slight variant of  the proof of Claim \ref{clm:Tt_R_non-triv}.
\end{proof}

Note then that $\Tt^P_{x\beta}$ properly extends $\Tt\rest(\alpha+1)$.
Set $E^\Tt_\alpha=E^{\Tt^P_{x\beta}}_\alpha$,
and set
\[ \eta_{\alpha+1}=\prodstage^P_x(M^\Tt_\alpha||\lh(E^\Tt_\alpha)) \]
(note that $N^P_{x\beta}|\lh(E^\Tt_\alpha)=M^\Tt_\alpha||\lh(E^\Tt_\alpha)$ is a cardinal proper segment of $N^{P_\alpha}_{x\beta}$, so $\eta_{\alpha+1}$ is well-defined).

This completes this case. Note that in this case,
$E^\Tt_\alpha$ is defined, $E^\Tt_\alpha\in\es_+^R$, and $R\pins M^\Tt_\alpha$; in particular, $E^\Tt_\alpha\neq F^{M^\Tt_\alpha}$.
\end{case}

\begin{case}\label{case:no_late_R_before_gamma}
 There is no $R$ as in Case \ref{case:some_R_late_prior_to_gamma}.
 
 \begin{scase}\label{scase:no_late_next_ext}
 $\Tt\rest(\alpha+1)\neq\Tt^P_{x\theta}$.

Set $E^\Tt_\alpha=E^{\Tt^P_{x\theta}}_\alpha$ and
 $\eta_{\alpha+1}=\prodstage^P_x(N_{x\theta}^P|\lh(E^\Tt_\alpha))$.
\end{scase}

\begin{scase}
\label{scase:no_late_no_next_ext}
$\Tt\rest(\alpha+1)=\Tt^P_{x\theta}$.

Then the process is complete; we set $\Tt=\Tt\rest(\alpha+1)$.
\end{scase}

\end{case}

\begin{clm}\label{clm:*_alpha+1_holds} $(*)_{\alpha+1}$ holds.\end{clm}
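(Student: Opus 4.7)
My approach is to verify the five clauses of $(*)_{\alpha+1}$ in turn, using the inductive hypothesis $(*)_\alpha$, the definitions of $E^\Tt_\alpha$ and $\eta_{\alpha+1}$ from each of the cases (Case \ref{case:some_R_late_prior_to_gamma}, Subcase \ref{scase:no_late_next_ext}), the coherence and commutativity properties of the $\Gamma$-limit abstract lifting algorithm $(\Sigma_\infty,\frakL_\infty)$ listed in Definition \ref{dfn:abstract_lifting}, and the fundamental Theorem \ref{tm:iter_to_bkgd} on iterating to background.

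First I would verify clauses \ref{item:star_Tt_small}, \ref{item:star_eta_str_inc_disc} and the coherence subclauses (a)--(e) of \ref{item:star_coherence}, most of which are formal. In both Case \ref{case:some_R_late_prior_to_gamma} and Subcase \ref{scase:no_late_next_ext}, $E^\Tt_\alpha = E^{\Tt^{*}}_\alpha$ for some small tree $\Tt^{*}$ on $\wt{M}_x$ (namely $\Tt^P_{x\beta_R}$ or $\Tt^P_{x\theta}$) given by Theorem \ref{tm:iter_to_bkgd}. So $\exit^\Tt_\alpha$ is small (since $\Tt^{*}$ is small), and the monotone length condition with respect to $\Tt\rest(\alpha+1)$ follows from $\Tt\rest(\alpha+1) \ins \Tt^{*}$. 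Strict increase $\eta_\alpha < \eta_{\alpha+1}$ is immediate from $\eta > \eta_\alpha$ in the case division (either $\theta > \eta_\alpha$ in Subcase \ref{scase:no_late_next_ext}, or $\beta_R > \eta_\alpha$ in Case \ref{case:some_R_late_prior_to_gamma}, using Claim \ref{clm:t^P_beta=0,1} plus the observation that $R \npins N^{P_\alpha}_{x\eta_\alpha}$). Continuity holds trivially since $\alpha+1$ is a successor. Clauses (a)--(e) of \ref{item:star_coherence} at $\beta = \alpha$ reduce to coherence of $\CC$ through limits of ordinals below $\aleph_{\eta_{\alpha+1}}^{P_\alpha}$, combined with Lemma \ref{lem:con_coherence} (applied to the background extender $(E^\Tt_\alpha)^+$ chosen by $\frakL_\infty$).

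Next I would handle clause \ref{item:star_N_eta_beta} at $\beta = \alpha+1$. The idea is that $\eta_{\alpha+1}$ was chosen as $\prodstage^P_x(N^P_{x\theta}|\lh(E^\Tt_\alpha))$ (or analogously in Case \ref{case:some_R_late_prior_to_gamma} with $\beta_R$ in place of $\theta$). Combined with the P-construction's internal coherence from Lemma \ref{lem:Woodin_exactness} (or rather its small-side analogue), together with $\Uu = \Uu^\Tt$ via $\frakL_\infty$, this gives $N^{P_{\alpha+1}}_{x\eta_{\alpha+1}} = M^\Tt_\alpha\|\lh(E^\Tt_\alpha) = M^\Tt_{\alpha+1}|\lh(E^\Tt_\alpha)$ as required, with $\eta_{\alpha+1} \leq \Omega_{\alpha+1}$ following from $\eta_{\alpha+1} < i^\Uu_{00,\alpha+1,0}(\Omega)$.

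The main obstacle is clause \ref{item:star_no_late_below_delta_beta}: I must show no $R \ins M^\Tt_{\alpha+1}|\lh(E^\Tt_\alpha)$ is produced late with respect to $(\Tt\rest(\alpha+2),\alpha+1,P_{\alpha+1})$. By inductive hypothesis $(*)_\alpha$ clause \ref{item:star_no_late_below_delta_beta}, no $R \ins M^\Tt_\alpha|\delta$ is produced late with respect to $(\Tt\rest(\alpha+1),\alpha,P_\alpha)$; this transfers to the new triple by coherence clause (e). So I need only rule out late production for $R$ with $\delta \leq \OR^R < \lh(E^\Tt_\alpha)$. In Case \ref{case:some_R_late_prior_to_gamma} the minimality of $R$ (the first $R\pins M^\Tt_\alpha$ produced late with $\beta_R < \theta$) ensures that no earlier $R'$ with $\beta_{R'} < \beta_R$ is produced late, and $E^\Tt_\alpha$ was chosen as the \emph{first} extender used in $\Tt^P_{x\beta_R}$ beyond $\Tt\rest(\alpha+1)$; so any cardinal segment of $M^\Tt_{\alpha+1}|\lh(E^\Tt_\alpha)$ corresponds to some $N^{P_{\alpha+1}}_{x\gamma}$ with $\gamma < \beta_R$, and hence was not late. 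In Subcase \ref{scase:no_late_next_ext} there is simply no late $R$ before $\theta$ by case hypothesis, and $\lh(E^\Tt_\alpha) \leq \OR(N^P_{x\theta})$, so the same minimality argument applies. The delicate part here is to verify that the notion of ``produced late'' transfers correctly through the maps $i^\Uu_{\alpha 0,\alpha+1,0}$ and the change in abstract lift-production-stage function $\abliftprodstage^\Tt_{\alpha+1}$ versus $\abliftprodstage^\Tt_\alpha$ --- this requires chasing the commutativity diagrams of Lemma \ref{lem:res_comm} and the finite-support commutativity condition \ref{dfn:abstract_lifting}(\ref{item:finite_support_commutativity}), essentially carrying over the ``late production'' condition to the new context via elementarity of iteration maps that are $\nu$-preserving and do not drop in model or degree across $[(\alpha,0),(\alpha+1,0)]_\Uu$.
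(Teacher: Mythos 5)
Your proof reverses the emphasis of the paper's argument, and the place where you compress most aggressively is precisely where the real work lies. You treat the coherence subclauses (a)--(e) of item \ref{item:star_coherence} as ``most of which are formal\ldots combined with Lemma \ref{lem:con_coherence}'', and locate the main obstacle in item \ref{item:star_no_late_below_delta_beta}. The paper's proof does the opposite: it declares items \ref{item:star_eta_str_inc_disc} and \ref{item:star_N_eta_beta} ``straightforward'' and spends essentially all of its effort on parts \ref{item:eta<crit(F_kappa,0)} and \ref{item:aleph_eta<nu(F)} of item \ref{item:star_coherence}. These are not formal. For part \ref{item:eta<crit(F_kappa,0)}, the paper fixes $\kappa=\crit(E^\Uu_{\alpha i})$ for the least $i<k_\alpha$ with $E^\Uu_{\alpha i}\neq\emptyset$, lets $C$ be the set of $S\pins\exit^\Tt_\alpha$ with $\rho_\om^S=\lgcd(\exit^\Tt_\alpha)$, invokes the inductive hypothesis $(*)_\alpha$ item \ref{item:star_no_late_below_delta_beta} to conclude that no $S\in C$ is produced late, deduces $\abliftprodstage^\Tt_\alpha(S)<\kappa$, and so $\eta_{\alpha+1}=\sup_{S\in C}\prodstage^{P_\alpha}_x(S)<\kappa$. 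For part \ref{item:aleph_eta<nu(F)}, when no measure is used, the paper brings in Lemma \ref{lem:active_and_sub_projector_res_tree_agmt} to classify the limit-resurrection trees of all $S\in C$ into two uniform patterns, then argues separately in each to identify $\nu(E^\Uu_{\alpha k_\alpha})$ with $\aleph_{\eta'}^{P_\alpha}$ for an explicitly computed $\eta'\geq\eta_{\alpha+1}$. None of this is a consequence of coherence under $(E^\Tt_\alpha)^+$ (Lemma \ref{lem:con_coherence}); in fact you cannot ``reduce to coherence through ordinals below $\aleph_{\eta_{\alpha+1}}^{P_\alpha}$'' since the very statement that this ordinal is $\leq\nu(E^\Uu_{\alpha k_\alpha})$ \emph{is} part \ref{item:aleph_eta<nu(F)}. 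This is a genuine gap in your proposal.

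Two further remarks. First, you were right to notice that item \ref{item:star_no_late_below_delta_beta} needs verification --- the paper's proof as written never establishes it, even though it uses the $(*)_\alpha$ instance in the argument for part \ref{item:eta<crit(F_kappa,0)} --- so you caught an omission in the draft. Your sketch for it, though, is somewhat loose: the minimality of $R$ in the case hypothesis is minimality in $\OR^R$ (height), and you slide to ``no earlier $R'$ with $\beta_{R'}<\beta_R$ is produced late'', which requires an argument relating height ordering to production-stage ordering that you do not give. Second, the lemmas you cite for your item 5 paragraph (Lemma \ref{lem:res_comm}, the finite-support condition of Definition \ref{dfn:abstract_lifting}) and for item \ref{item:star_N_eta_beta} (Lemma \ref{lem:Woodin_exactness}) are not the ones the paper's argument relies on for $(*)_{\alpha+1}$, so the citations signal that you are pattern-matching to earlier material rather than engaging with the structure of the induction here.
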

\begin{proof}
Condition \ref{item:star_Tt_small}: Since $E^\Tt_\alpha=E^{\Tt^P_{x\iota}}_\alpha$ for some $\iota$, this is because $\Tt^P_{x\iota}$ is small.

Conditions \ref{item:star_eta_str_inc_disc},
\ref{item:star_N_eta_beta} are straightforward.

Condition \ref{item:star_coherence}: Part \ref{item:eta<crit(F_kappa,0)}:  Suppose $i<k_{\alpha}$ and $i$ is least such that $E^\Uu_{\alpha i}\neq\emptyset$.
Then  (since $k_\alpha>0$) letting $\kappa=\crit(E^\Uu_{\alpha i})$, we have $\exit^\Tt_\alpha\pins M^\Tt_\alpha$
and $E^\Uu_{\alpha i}=F^{P_\alpha}_{\kappa 0}$ is used in the limit-lift/resurrection of $\exit^\Tt_\alpha$. Let $\rho=\lgcd(\exit^\Tt_\alpha)$.
Let $C$ be the set of all 
$S\pins\exit^\Tt_\alpha$
with $\rho_\om^S=\rho$.
Then note that for each $S\in C$, $F^{P_\alpha}_{\kappa 0}$ is also used in the limit-lift/resurrection of $S$. But no such $S$ is produced late, 
and so $\abliftprodstage^\Tt_\alpha(S)<\kappa$.
So
 \[\eta=\prodstage^{P_\alpha}_x((\exit^\Tt_\alpha)^{\passive})=\sup_{S\in C}\prodstage^{P_\alpha}(S)<\kappa,\]
 as desired.
 
 Part \ref{item:aleph_eta<nu(F)}: We must see that
 $\aleph_{\eta}^{P_{\alpha i}}\leq\nu(E^\Uu_{\alpha k_\alpha})$
 for each $i\in [0,k_\alpha]$.
 If there is $i$ as in part \ref{item:eta<crit(F_kappa,0)} then this follows immediately from that part
 (since then in fact $\crit(E^\Uu_{\alpha i})<\nu(E^\Uu_{\alpha k_\alpha})$).
 So suppose otherwise.
 Let $C$ be as above. So $P_{\alpha i}=P_\alpha$. By Lemma \ref{lem:active_and_sub_projector_res_tree_agmt} (applied to limit $x_n$),
 either:
 \begin{enumerate}[label=(\roman*)]
  \item\label{item:no_exts_in_S-res} for all $S\in C$,
  limit-lift resurrection process for $S$
  uses no extenders, or
  \item\label{item:just_E_kappa_in_S-res} there is $\kappa$ such that for all $S\in C$,
  the limit-lift resurrection process
  for $S$ uses only the order $0$ measure on $\kappa$ in $P_\alpha$.
 \end{enumerate}
In case \ref{item:no_exts_in_S-res}, note that
 \[ \eta=\sup_{S\in C}\prodstage^{P_\alpha}_x(S)\leq\sup_{S\in C}\abliftprodstage^\Tt_\alpha(S)\leq(\abliftprodstage^\Tt_\alpha(\exit^\Tt_\alpha)-\om)\eqdef\eta', \]
 and $\nu(E^\Uu_{\alpha k_\alpha})=
 \aleph_{\eta'}^{P_\alpha}$
 (the ``${}-\om$'' is because $t_{x_n\zeta}^{P_\alpha}=1$ for $\zeta=\abliftprodstage^\Tt_\alpha(\exit^\Tt_\alpha)$ and for large $n$, so $\zeta=\eta'+\om$). In case \ref{item:just_E_kappa_in_S-res},
 with $\kappa$ as there,
 $\prodstage^{P_\alpha}_x(S)<\kappa$
 for each $S\in C$, so for large $n$,
 \[ \eta<\kappa<\chi_{x_n\kappa}^{P_\alpha}\eqdef\eta'<\chi_{x_n\kappa}^{P_\alpha}+\om=\abliftprodstage^\Tt_\alpha(\exit^\Tt_\alpha),\]
and again $\nu(E^\Uu_{\alpha k_\alpha})=\aleph_{\eta'}^{P_\alpha}$.

Part 
 \ref{item:bkgd_agmt_thru_aleph_eta} just requires that $P_{\alpha+1}|\aleph_\eta^{P_{\alpha+1}}=P_{\alpha i}|\aleph_\eta^{P_{\alpha i}}$, but this follows immediately from part \ref{item:aleph_eta<nu(F)}.
 
 Part 
 \ref{item:construction_agmt_thru_eta+1}:
With $\eta=\eta_{\alpha+1}$ as above,
the fact that $\eta+\om<\Omega^{P_\alpha}$
is easy, and using parts \ref{item:aleph_eta<nu(F)},\ref{item:bkgd_agmt_thru_aleph_eta},  \[ \CC^{P_{\alpha+1}}_x\rest(\eta+1)=\CC^{P_{\alpha i}}_x\rest(\eta+1).\]
We have $t^{P_{\alpha}}_{x\eta'}=0$ since $E^\Tt_\alpha$ was used in $\Tt^{P_\alpha}_{x\beta}$ for the appropriate $\beta$. (That is, therefore $\lh(E^\Tt_\alpha)<\OR(N_{x\beta}^{P_\alpha})$ and there is no $\xi\in[\eta,\beta)$ such that $N_{x\xi}^{P_\alpha}$ projects ${<\lh(E^\Tt_\alpha)}$.
But if $t_{x,\eta+\om}^{P_\alpha}=1$ then $N_{x,\eta+\om}^{P_\alpha}$ projects ${<}{\lh(E^\Tt_\alpha)}=\OR(N_{x,\eta})$, so $\eta+\om=\beta$, but  $F(N_{x,\eta+\om})$ cannot be used in $\Tt_{x,\eta+\om}^{N_\alpha}$, a contradiction.)
If there is $i<k_\alpha$ such that $E^\Uu_{\alpha i}\neq\emptyset$
then as above, $\eta<\kappa=\crit(E^\Uu_{\alpha i})<\nu(E^\Uu_{\alpha k_\alpha})$,
and therefore $\CC^{P_\alpha}_x\rest\kappa=\CC^{P_{\alpha+1}}_x\rest\kappa$, and in particular $t_{x,\eta+\om}^{P_{\alpha+1}}=t_{x,\eta+\om}^{P_\alpha}=0$.
If there is no such $i<k_\alpha$,
then $P_{\alpha i}=P_\alpha$
and $\aleph_\eta^{P_\alpha}\leq\nu(E^\Uu_{\alpha k_\alpha})<\lh(E^\Uu_{\alpha k_\alpha})$,
and since $t^{P_\alpha}_{x,\eta+\om}=0$,
coherence gives that $t^{P_{\alpha+1}}_{x,\eta+\om}=0$ also.

 Part \ref{item:iterate_is_construction_through_eta}: This follows directly from the previous parts (the soundness
 of $M^\Tt_{\alpha+1}|\lh(E^\Tt_\alpha)$
 is just because this is the passivization of an active premouse).
\end{proof}

Now let $\alpha$ be a limit
and suppose we have defined $\Tt\rest\alpha$ and $\left<\eta_\beta\right>_{\beta<\alpha}$.
Then we extend to $\Tt\rest(\alpha+1)$ by applying our iteration strategy, and set $\eta_\alpha=\sup_{\beta<\alpha}\eta_\beta$.
We then get:

\begin{clm} $(*)_\alpha$ holds (for this limit $\alpha$).
\end{clm}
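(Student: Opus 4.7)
The plan is to verify the five clauses of $(*)_\alpha$ in turn, leveraging $(*)_\beta$ for all $\beta<\alpha$, together with the coherence between the abstract lifting data at successor stages and at the limit. Clauses \ref{item:star_Tt_small} and \ref{item:star_eta_str_inc_disc} are essentially immediate: smallness of $\Tt\rest\alpha$ is preserved by branch extension (no exit extender in $b^\Tt$ makes $\Tt\rest(\alpha+1)$ non-small when every $E^\Tt_\beta$ was chosen from some $\Tt^P_{x\iota_\beta}$, each of which is small by Theorem \ref{tm:iter_to_bkgd}\ref{item:final_tree}), and continuity of $\left<\eta_\beta\right>$ holds by definition. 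Since $[0,\alpha)_\Uu$ drops in model/degree only finitely often (by \ref{lem:abstract_lifting_at_limit}\ref{item:Uu_non-dropping}) and the iteration maps $i^\Uu_{(\beta,0),(\alpha,0)}$ are eventually $\nu$-preserving, strict monotonicity of $\eta_\beta\leq\Omega_\beta$ is inherited.

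For clause \ref{item:star_N_eta_beta} at $\beta=\alpha$, I would use clause \ref{item:star_coherence}\ref{item:construction_agmt_thru_eta+1} of $(*)_{\beta+1}$ for each $\beta+1<\alpha$, together with Lemma \ref{lem:abstract_lifting_at_limit}\ref{item:zeta,eta_pres},\ref{item:E-lift_pres}, to see that the constructions $\CC^{P_\beta}_x\rest(\eta_\beta+1)$ cohere along a tail of $\beta<\alpha$ under the iteration embeddings. Taking direct limits one obtains $\CC^{P_\alpha}_x\rest(\eta_\alpha+1)=\bigcup_{\beta<\alpha}i^\Uu_{(\beta,0),(\alpha,0)}(\CC^{P_\beta}_x\rest(\eta_\beta+1))$, and in particular $N^{P_\alpha}_{x\eta_\alpha}=\bigcup_{\beta<\alpha}M^\Tt_\beta|\lh(E^\Tt_\beta)=M(\Tt\rest\alpha)=M^\Tt_\alpha|\delta$, where $\delta=\delta(\Tt\rest\alpha)$. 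Clause \ref{item:star_coherence} is preserved from the successor stages below, again using the coherence of the limit-lifted constructions and $\nu$-preservation of the relevant iteration maps.

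The main obstacle is clause \ref{item:star_no_late_below_delta_beta}: showing that no $R\ins M^\Tt_\alpha|\delta$ is produced late with respect to $(\Tt,\alpha,P_\alpha)$. Fix such $R$; by boundedness of $\OR^R<\delta$, there is $\beta_0<\alpha$ with $\OR^R<\lh(E^\Tt_{\beta_0})$, so $R\ins M^\Tt_{\beta_0+1}|\lh(E^\Tt_{\beta_0})$. Pick $\beta\in[\beta_0,\alpha)$ large enough that $(\beta,\alpha)_\Uu$ does not drop in model or degree and the iteration map $j=i^\Uu_{(\beta,0),(\alpha,0)}$ is $\nu$-preserving (possible by \ref{lem:abstract_lifting_at_limit}). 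Then by coherence, the production stage and the limit-lift/resurrection data for $R$ computed in $P_\alpha$ are the images under $j$ of the corresponding data computed in $P_\beta$: the production stage $\beta_R^{P_\alpha}$ equals $j(\beta_R^{P_\beta})$ (or equals $\beta_R^{P_\beta}$ when that lies below $\crit(j)$), and the abstract resurrection $\abliftprodstage^\Tt_\alpha(R)$ is related to $\abliftprodstage^\Tt_\beta(R)$ analogously via the copy-map commutativity in the abstract lifting algorithm. Since $(*)_\beta$ (more precisely its refinement for $R\ins M^\Tt_\beta|\sup_{\gamma<\beta}\lh(E^\Tt_\gamma)$, which applies because $\OR^R<\lh(E^\Tt_{\beta_0})<\sup_{\gamma<\beta}\lh(E^\Tt_\gamma)$ once $\beta>\beta_0$) tells us $R$ is not produced late with respect to $(\Tt,\beta,P_\beta)$, the same holds for $(\Tt,\alpha,P_\alpha)$ by applying $j$ to the witnessing inequalities.

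The subtlety in this last step is that ``produced late'' refers to the full (measure-using) limit-lift/resurrection tree and the corresponding thresholds of measurable cardinals or extender-length backgrounds. Two cases need care: when an order-$0$ measure is used in the limit-lift of $R$ under $P_\beta$, its $j$-image is an order-$0$ measure in $P_\alpha$, and the inequality $\beta_R<\crit(E)$ (or $\crit(E)=\rho$ with $\beta_R\leq\beta_R'$) is preserved by $j$; when no measure is used, $j$ commutes past the production stage and the comparison $\beta_R\leq\abliftprodstage$ transfers. In either case, applying Lemma \ref{lem:abstract_lifting_at_limit}, the hypotheses that $R$ is produced late with respect to $(\Tt,\alpha,P_\alpha)$ would pull back to the same assertion with respect to $(\Tt,\beta,P_\beta)$, contradicting $(*)_\beta$. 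This completes the verification of $(*)_\alpha$.
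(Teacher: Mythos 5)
Your proposal covers more ground than the paper's own proof, which is only a brief parenthetical, and it takes a somewhat different route. The paper's whole argument at a limit $\alpha$ reduces to one direct agreement calculation for condition \ref{item:star_N_eta_beta}: for each $\beta+1\in b^\Tt\cap\alpha$, conditions \ref{item:star_coherence}\ref{item:aleph_eta<nu(F)} and \ref{item:star_coherence}\ref{item:bkgd_agmt_thru_aleph_eta} of $(*)_{\beta+1}$ give $\aleph_{\eta_{\beta+1}}^{P_{\beta+1}}\leq\nu(E^\Uu_{\beta k_\beta})\leq\crit(i^\Uu_{(\beta+1,0),(\alpha,0)})$, so $N_{x\eta_{\beta+1}}^{P_\alpha}$ and $N_{x\eta_{\beta+1}}^{P_{\beta+1}}$ agree \emph{verbatim} (not as images under the iteration map), whence $N_{x\eta_\alpha}^{P_\alpha}=\bigcup_\beta M^\Tt_\beta||\lh(E^\Tt_\beta)=M(\Tt\rest\alpha)$. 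You reach the same destination, but you phrase the identity as a direct limit via $i^\Uu_{(\beta,0),(\alpha,0)}$ and route through Lemma \ref{lem:abstract_lifting_at_limit}. That lemma is about the abstract lifting data $(\zeta_\alpha,\wtpi_\alpha,\eta_\alpha,\abres_\alpha)$, where $\eta_\alpha$ there means $\rho_0(N_\alpha)$, a different $\eta$ from the sequence $\left<\eta_\beta\right>$ being built in $(*)_\alpha$. The two objects are related but not the same, and leaning on the lemma here is a detour: the needed agreement is already an immediate consequence of $(*)_{\beta+1}$, with no appeal to the abstract lifting machinery.

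Your treatment of clause \ref{item:star_no_late_below_delta_beta} (no $R$ produced late) is filling in a step the paper waves off as ``straightforward by induction,'' and the shape of your transfer argument is the right one. But the key to making it go through is again the bound $\beta_R<\eta_{\beta_0+1}<\crit(i^\Uu_{(\beta_0+1,0),(\alpha,0)})$, which means $\prodstage^{P_\alpha}_x(R)=\prodstage^{P_{\beta'}}_x(R)$ holds literally for all $\beta'\geq\beta_0+1$ on the branch, not merely as ``images under $j$.'' For the $\abliftprodstage^\Tt_\alpha(R)$ side, Lemma \ref{lem:abstract_lifting_at_limit}\ref{item:E-lift_pres} speaks only to $\abres_\alpha$ applied to extenders; one still has to unwind the definition of $\abliftprodstage$ (as a limit over $n$ of resurrection production stages, lifted via simple tree embeddings for infinite $\Tt$) and verify that this quantity is itself fixed by the relevant iteration maps. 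That step is in the spirit of the paper's conventions, but your write-up asserts it as ``analogous'' without flagging that it needs its own unwinding; this is the one place where a reader would want more than a gesture.
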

\begin{proof}
This is straightforward by induction.
(The fact that $N_{x\eta_\alpha}^{P_\alpha}=M(\Tt\rest\alpha)$ uses the fact that for $\beta<\alpha$, we have 
\[M(\Tt\rest\alpha)|\lh(E^\Tt_\beta)=N_{x\eta_{\beta+1}}^{P_{\beta i}}=N_{x\eta_{\beta+1}}^{P_{\beta+1}}\]
and
 $\aleph_{\eta_{\beta+1}}^{P_{\beta+1}}=\aleph_{\eta_{\beta+1}}^{P_{\beta i}}\leq\nu(E^\Uu_{\beta k_\beta})$,
and so if $\beta+1\in b^\Tt$,
and hence $(\beta+1,0)\in b^\Uu$,
then $\aleph_{\eta_{\beta+1}}^{P_{\beta+1}}\leq\crit(i^\Uu_{(\beta+1,0),(\alpha,0)})$,
and therefore $N_{x\eta_{\beta+1}}^{P_\alpha}=N_{x\eta_{\beta+1}}^{P_{\beta+1}}=M(\Tt\rest\alpha)|\lh(E^\Tt_\beta)$.)
\end{proof}

\begin{clm}\label{clm:copy_incompatible} For each $\alpha+1<\lh(\Tt)$ and $i\leq k_\alpha$, we have:
\begin{enumerate}[label=(\alph*)]
\item\label{item:aleph_eta_alpha_leq_nu} $\aleph_{\eta_{\alpha+1}}^{P_\alpha}\leq\nu(E^\Uu_{\alpha i})$, and
\item\label{item:E_alpha_incompat_F_alpha,i} $E^\Tt_\alpha\rest\nu(E^\Tt_\alpha)\not\sub E^\Uu_{\alpha i}$.
\end{enumerate}
\end{clm}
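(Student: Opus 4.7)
\medskip

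The plan is to verify both parts as essentially corollaries of the construction of $E^\Tt_\alpha$ and $(*)_{\alpha+1}$, which was established in Claim \ref{clm:*_alpha+1_holds}.

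For part \ref{item:aleph_eta_alpha_leq_nu}, I would argue by cases on $i$. If $i<k_\alpha$, then by $(*)_{\alpha+1}$ condition \ref{item:star_coherence}\ref{item:eta<crit(F_kappa,0)} applied with $\beta=\alpha$, we have $\eta_{\alpha+1}<\crit(E^\Uu_{\alpha i})$, and so $\aleph_{\eta_{\alpha+1}}^{P_{\alpha i}}<\crit(E^\Uu_{\alpha i})\leq\nu(E^\Uu_{\alpha i})$. Since the iteration maps $i^\Uu_{(\alpha,0),(\alpha,j)}$ for $j\leq i$ arise from order $0$ measures with critical points strictly above $\eta_{\alpha+1}$, they act as the identity on $\pow(\eta_{\alpha+1})$, so $\aleph_{\eta_{\alpha+1}}^{P_\alpha}=\aleph_{\eta_{\alpha+1}}^{P_{\alpha i}}$, yielding the desired inequality. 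For $i=k_\alpha$, condition \ref{item:star_coherence}\ref{item:aleph_eta<nu(F)} gives $\aleph_{\eta_{\alpha+1}}^{P_{\alpha k_\alpha}}\leq\nu(E^\Uu_{\alpha k_\alpha})$, and the same agreement argument promotes this to $\aleph_{\eta_{\alpha+1}}^{P_\alpha}\leq\nu(E^\Uu_{\alpha k_\alpha})$.

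For part \ref{item:E_alpha_incompat_F_alpha,i}, I would argue by tracking the choice that defined $E^\Tt_\alpha$. Recall that $E^\Tt_\alpha=E^{\Tt^P_{x\iota}}_\alpha$ for $\iota\in\{\theta,\beta_R\}$, where $\Tt^P_{x\iota}$ is the iteration-to-background tree on $\widetilde M_x$ reaching $N^P_{x\iota}$. The key point is that, by Theorem \ref{tm:iter_to_bkgd} applied inside $P$ (or inside an appropriate $Q$-structure in the non-Woodin case), $E^\Tt_\alpha$ is determined by the least disagreement between $M^\Tt_\alpha$ and the model $N^P_{x\iota}$ (or its relevant segment), computed in $P_\alpha$. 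For $i<k_\alpha$, $E^\Uu_{\alpha i}$ is an order-$0$ measure with critical point a specific resurrection cardinal $\kappa_i$ satisfying $\eta_{\alpha+1}<\kappa_i$; this $\kappa_i$ cannot equal $\crit(E^\Tt_\alpha)$, which lies below $\nu(E^\Tt_\alpha)$ and hence (by part (a) and condition \ref{item:bkgd_agmt_thru_aleph_eta} of $(*)_{\alpha+1}$) below $\kappa_i$. So the two extenders have different critical points and the inclusion fails. For $i=k_\alpha$, $E^\Uu_{\alpha k_\alpha}$ is the \emph{lift} background for $E^\Tt_\alpha$, and incompatibility must be extracted from the fact that $\sigma_\alpha$ moves $\crit(E^\Tt_\alpha)$ strictly upward to $\crit(E^\Uu_{\alpha k_\alpha})$; otherwise $E^\Tt_\alpha\rest\nu$ would sit in $\es^{P_\alpha}$ and the iteration-to-background tree would already have incorporated this agreement, contradicting that $E^\Tt_\alpha$ was chosen as the least disagreement on the $\widetilde M_x$ side.

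The main obstacle I expect is the $i=k_\alpha$ case of part (b): one must argue that the copy/resurrection map $\sigma_\alpha$ cannot have $\crit(E^\Tt_\alpha)\in\rg(\sigma_\alpha)$ as a fixed point in a way that would make $E^\Tt_\alpha\rest\nu$ a literal sub-extender of $E^\Uu_{\alpha k_\alpha}$. This should follow from the nontriviality of the resurrection, but verifying it requires unraveling how $\sigma_\alpha$ interacts with the $P$-background of $E^\Tt_\alpha$, invoking \ref{lem:limit_alg_well_def} and the preservation properties of the limit-lift, and exploiting that if the inclusion held then $E^\Tt_\alpha$ itself (not merely its lift) would have been produced already in $\CC^{P_\alpha}_x$ at an earlier stage, contradicting the minimality built into the choice of $\eta_{\alpha+1}$ via (\ref{eqn:def_theta_in_E^Tt_alpha_def}).
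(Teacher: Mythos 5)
For parts (a) and (b) with $i<k_\alpha$, your argument matches the paper's: part (a) is read off directly from conditions \ref{item:star_coherence}\ref{item:eta<crit(F_kappa,0)}, \ref{item:aleph_eta<nu(F)}, \ref{item:bkgd_agmt_thru_aleph_eta} of $(*)_{\alpha+1}$, and the $i<k_\alpha$ case of (b) follows because $\lh(E^\Tt_\alpha)\leq\eta_{\alpha+1}<\crit(E^\Uu_{\alpha i})$, which is exactly the paper's one-line observation.

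For the $i=k_\alpha$ case of (b), which you correctly flag as the hard one, your write-up goes astray in two ways. First, the disjunction you set up --- ``either $\sigma_\alpha$ moves $\crit(E^\Tt_\alpha)$ strictly upward, or else $E^\Tt_\alpha\rest\nu\in\es^{P_\alpha}$'' --- is a red herring: under the hypothesis $E^\Tt_\alpha\rest\nu(E^\Tt_\alpha)\sub E^\Uu_{\alpha k_\alpha}$, the critical points are equal by definition of $\sub$, so the first horn can never hold and nothing is gained by considering it. Second, the contradiction you point to (``minimality built into the choice of $\eta_{\alpha+1}$ via (\ref{eqn:def_theta_in_E^Tt_alpha_def})'', or ``least disagreement on the $\wt{M}_x$ side'') is not the one the structure actually provides. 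The paper's argument is tighter and worth absorbing: from the hypothesized inclusion, combined with part (a), the agreement condition \ref{item:bkgd_agmt_thru_aleph_eta}, and coherence (the ISC for $E^\Uu_{\alpha k_\alpha}$), one concludes that $F=E^\Uu_{\alpha k_\alpha}\rest\aleph_{\eta_{\alpha+1}}^{P_\alpha}\in\es^{P_\alpha}$. This $F$ is then a legal background extender for adding an active extender to $N^{P_\alpha}_{x\eta_{\alpha+1}}=M^\Tt_\alpha||\lh(E^\Tt_\alpha)$ at stage $\eta_{\alpha+1}+\om$ of $\CC^{P_\alpha}_x$, which would force $t^{P_\alpha}_{x,\eta_{\alpha+1}+\om}\neq 0$. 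But this flatly contradicts condition \ref{item:construction_agmt_thru_eta+1} of $(*)_{\alpha+1}$ (established in Claim \ref{clm:*_alpha+1_holds}), which records $t^{P_\alpha}_{x,\eta_{\alpha+1}+\om}=0$. So the contradiction is to a concrete $t$-value recorded by the induction, not to a ``minimality'' in the choice of $\theta$ or $\eta_{\alpha+1}$, and the unraveling of $\sigma_\alpha$ and \ref{lem:limit_alg_well_def} that you anticipate is not needed --- the inductive bookkeeping $(*)_{\alpha+1}$ already carries exactly the fact required.
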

\begin{proof}
Part \ref{item:aleph_eta_alpha_leq_nu}
was just part of $(*)_{\alpha+1}$.
Part \ref{item:E_alpha_incompat_F_alpha,i}:
If $i<k_\alpha$ then also by $(*)_{\alpha+1}$, we have $\eta_{\alpha+1}<\crit(E^\Uu_{\alpha i})$,
so $\lh(E^\Tt_\alpha)\leq\eta_{\alpha+1}<\crit(E^\Uu_{\alpha i})$,
which suffices.
So suppose $i=k_\alpha$,
and suppose $E^\Tt_\alpha\rest\nu(E^\Tt_\alpha)\sub E^\Uu_{\alpha k_\alpha}$.
Then by coherence
and the remarks just mentioned (concerning $E^\Uu_{\alpha i}$ when $i<k_\alpha$), we have $F=E^\Uu_{\alpha k_\alpha}\rest\aleph_{\eta_{\alpha+1}}^{P_\alpha}\in\es^{P_\alpha}$.
But $F$ witnesses that $t^{P_\alpha}_{x\eta_{\alpha+1}}\neq 0$,
contradicting $(*)_{\alpha+1}$.
\end{proof}

\begin{clm} The process (defining $\Tt$) terminates in countably many stages.\end{clm}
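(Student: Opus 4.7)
The plan is to adapt the standard countable-hull termination argument (as in the proof of Claim \ref{clm:first_comparison_termination} for the comparison in Lemma \ref{lem:first_comparison}), using the iterability of $P$ and the incompatibility established in Claim \ref{clm:copy_incompatible}.

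Suppose for contradiction that $\lh(\Tt) = \omega_1 + 1$. Since the construction continues, each stage $\alpha < \omega_1$ satisfies the hypotheses of neither of the terminating cases, so $\Tt\rest(\alpha+1) \in P_\alpha$, $\theta_\alpha > \eta_\alpha$ in line (\ref{eqn:def_theta_in_E^Tt_alpha_def}), and $E^\Tt_\alpha$ is defined. Consequently $\Uu = \Uu^\Tt$ also has length $\omega_1$ (in its non-padded indexing), is via the $(q,\omega_1+1)$-strategy $\Gamma$, and by $(*)_\alpha$ is small, nowhere dropping in model or degree, with $\langle\eta_\alpha\rangle$ strictly increasing and continuous below $\omega_1$.

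First I would fix a real $a$ coding $P, M_\infty$, the sequences $\langle x_n \rangle_{n<\om}$, and the strategies $\Sigma_\infty, \Gamma$ at the level required to reconstruct $\Tt$ and $\Uu$ internally. Then I would choose an elementary $\pi: H \to V_\vartheta$ with $H$ countable transitive, $\kappa = \crit(\pi)$, and all relevant objects in $\rg(\pi)$. Working in the inverse image of $V_\vartheta$, one verifies (as in Claim \ref{clm:first_comparison_termination}) that $\kappa <_\Uu \omega_1$ in the $\Uu$-indexing of Definition \ref{dfn:lift^Tt,Y}, that $[0,\kappa)^\Uu \cap \dropset^\Uu = \emptyset$, and that $i^\Uu_{(\kappa,0),(\omega_1,0)} \rest A = \pi \rest A$ on
\[ A = \core_0(M^\Uu_{(\kappa,0)}) \cap \core_0(M^\Uu_{(\omega_1,0)}) \supseteq \pow([\kappa]^{<\om})^{M^\Uu_{(\kappa,0)}}, \]
and analogously for $\Tt$ with the lifting/resurrection maps $\wtpi^\Tt_\kappa$ and $\wtpi^\Tt_{\omega_1}$. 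Let $\alpha+1 = \min((\kappa, \omega_1]_\Tt)$ and $(\beta+1, j)$ play the corresponding role on the $\Uu$ side; by the monotone nature of $\langle \eta_\alpha \rangle$ and condition \ref{item:zeta_movement} of Definition \ref{dfn:abstract_lifting}, $\crit(i^\Uu_{(\alpha+1,0), (\omega_1,0)})$ is above $\lh(E^\Uu_{\alpha k_\alpha})$, and likewise for $\Tt$.

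The main step is then to conclude that, writing $E = E^\Tt_\alpha$ and letting $F = E^\Uu_{\alpha k_\alpha}$ be its background lift, the factor map identities on $A$ imply
\[ E \rest \nu(E) \sub E^\Uu_{\alpha i} \text{ for some } i \leq k_\alpha, \]
which directly contradicts Claim \ref{clm:copy_incompatible}\ref{item:E_alpha_incompat_F_alpha,i}. The key computation for this is a variant of the one in Claim \ref{clm:first_comparison_termination}: the commutativity $i^\Uu_{(\kappa,0),(\omega_1,0)} \rest A = \pi \rest A$ together with coherence of $\es^{P_\alpha}$ along $\Uu$ shows that $F \rest \nu(F)$ is already on $\es^{P_\alpha}$ (possibly after restricting to the first $\nu(E^\Uu_{\alpha i})$ for some $i < k_\alpha$ when intermediate order-$0$ measures intervene), and the minimality that went into the selection of $E^\Tt_\alpha$ via the $\Tt^P_{x\iota}$-guided choice forces this restriction to coincide with $E \rest \nu(E)$ as computed in $M^\Tt_\alpha$. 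This is exactly the compatibility ruled out by Claim \ref{clm:copy_incompatible}.

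The hard part will be handling the case in which the resurrection tree for $\exit^\Tt_\alpha$ is non-trivial, so that $k_\alpha > 0$ and an order-$0$ measure $E^\Uu_{\alpha i}$ (with $i < k_\alpha$) may have critical point $\crit(E^\Uu_{\alpha i})$ interleaving between $\aleph_{\eta_\alpha}^{P_\alpha}$ and the strength of the true background $F$. Here one must use part \ref{item:eta<crit(F_kappa,0)} of condition \ref{item:star_coherence} in $(*)_\alpha$, which gives $\eta_{\alpha+1} < \crit(E^\Uu_{\alpha i})$, together with the coherence clauses of $(*)_\alpha$, to argue that the reflection produces the ``right'' candidate background and therefore reduces the problem to the simpler $k_\alpha = 0$ situation. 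The rest of the argument — showing that the hypotheses on $P_\alpha$ allowing the construction to continue are preserved by $\pi$, and that the terminating cases of the construction are absolute between $H$ and $V$ — is routine by the elementarity of $\pi$ and the fact that $\Tt\rest(\kappa+1)$ and the data $\langle \zeta_\beta, \wtpi_\beta, \eta_\beta, \abres_\beta \rangle_{\beta \leq \kappa}$ all lie in $\rg(\pi)$.
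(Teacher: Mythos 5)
Your proposal is correct and follows the paper's argument: reflection via a countable hull $\pi:\bar V\to V_\theta$ with $\pi(\kappa)=\om_1$, agreement of $i^\Tt_{\kappa\om_1}$ and $i^\Uu_{(\kappa,0),(\om_1,0)}$ with $\pi$ on $\pow(\kappa)$, and the resulting compatibility $E^\Tt_\beta\rest\nu(E^\Tt_\beta)\sub E^\Uu_{\beta k_\beta}$ for $\beta+1=\min((\kappa,\om_1]_\Tt)$, contradicting Claim \ref{clm:copy_incompatible}. The middle step is more elaborate than needed: since both iteration maps agree with $\pi$ on the common domain, the compatibility of $E^\Tt_\beta$ with $E^\Uu_{\beta k_\beta}$ follows directly without appealing to coherence of $\es^{P_\alpha}$ or to the minimality of the $\Tt^P_{x\iota}$-guided choice, and Claim \ref{clm:copy_incompatible} (established earlier via $(*)_{\alpha+1}$) already disposes of the $k_\alpha>0$ case uniformly, so there is no separate hard subcase to analyze.
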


\begin{proof} Suppose not. So we get $(\Tt,\Uu)$ with $\Tt$ of length $\om_1+1$.
Let $\pi:\bar{V}\to V_\theta$ be elementary with $\bar{V}$ countable and everything relevant in $\rg(\pi)$.
Let $\pi(\kappa)=\om_1$. Then $\kappa<_\Tt\om_1$ and $(0,\om_1)_\Tt\inter\dropset^\Tt\sub(0,\kappa)_\Tt$.
Therefore $(\kappa,0)<_\Uu(\om_1,0)$ and for all $(\alpha,i)$ such that
$(\kappa,0)<_\Uu(\alpha,i)<_\Uu(\om_1,0)$, we have $i=0$. Note that
\[ \pow(\kappa)\inter M^\Tt_\kappa\sub M^\Uu_{\kappa 0}, \]
so as usual, $i^\Tt_{\kappa\om_1}\rest\pow(\kappa)\sub i^\Uu_{(\kappa,0),(\om_1,0)}$.

Let $\beta+1=\min(\kappa,\om_1)_\Tt$. Then $(\beta+1,0)=\min((\kappa,0),(\om_1,0))_\Uu$,
and by the compatibility, $E^\Tt_\beta\rest\nu(E^\Tt_\beta)\sub E^\Uu_{\beta k_\beta}$.
This contradicts Claim \ref{clm:copy_incompatible}.
\end{proof}

Let $\lh(\Tt)=\varepsilon+1$.
Let $\delta_\infty=\OR(N_{x\eta_\varepsilon}^{P_\varepsilon})=\sup_{\beta<\varepsilon}\lh(E^\Tt_\beta)$.
If $\alpha>0$, let $\theta_\alpha$
be the ordinal $\theta$ defined in line (\ref{eqn:def_theta_in_E^Tt_alpha_def})
at stage $\alpha$;
and let $\theta_0=\Omega_0$.

\begin{clm}\label{clm:if_theta_eps<Omega_eps_then_M^T_eps=N}
 We have:
 \begin{enumerate}[label=--]
  \item $\theta_\varepsilon\leq\Omega_\varepsilon$,
\item $M^\Tt_\varepsilon|\delta_\infty\ins N^{P_\varepsilon}_{x\theta_\varepsilon}\ins M^\Tt_\varepsilon$,
and
\item if $\theta_\varepsilon<\Omega_\varepsilon$ then $N^{P_\varepsilon}_{x\theta_\varepsilon}=M^\Tt_\varepsilon$.
\end{enumerate}
\end{clm}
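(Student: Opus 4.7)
Item 1 is immediate from the definition: $\theta_0=\Omega_0$ by stipulation, while for $\alpha>0$, $\theta_\alpha$ is selected from $[\eta_\alpha,\Omega_\alpha]$ in line~(\ref{eqn:def_theta_in_E^Tt_alpha_def}), so $\theta_\varepsilon\leq\Omega_\varepsilon$. The left-hand inclusion of item~2 is also cheap: $(*)_\varepsilon$ clause~\ref{item:star_N_eta_beta} gives $N^{P_\varepsilon}_{x\eta_\varepsilon}=M^\Tt_\varepsilon|\delta_\infty$, and $\eta_\varepsilon\leq\theta_\varepsilon$ then gives $M^\Tt_\varepsilon|\delta_\infty=N^{P_\varepsilon}_{x\eta_\varepsilon}\ins N^{P_\varepsilon}_{x\theta_\varepsilon}$. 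Thus the real content is the right-hand inclusion $N^{P_\varepsilon}_{x\theta_\varepsilon}\ins M^\Tt_\varepsilon$, and this, together with item~3, I would prove by case-analysis on how the recursion defining $\Tt$ terminated.

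Inspection of the construction shows that the process terminates at stage $\varepsilon$ in exactly one of the following ways: \tu{(A)} $\varepsilon=0$ and $\Tt$ is declared trivial; \tu{(B)} $\varepsilon>0$ and either $\Tt\rest(\varepsilon+1)\notin P_\varepsilon$ or $\theta_\varepsilon=\eta_\varepsilon$; \tu{(C)} $\varepsilon>0$ and Subcase~\ref{scase:no_late_no_next_ext} attains, i.e., $\Tt\rest(\varepsilon+1)=\Tt^{P_\varepsilon}_{x\theta_\varepsilon}$. Case~\tu{(A)} is immediate: $\theta_0=\Omega_0=\Omega$ and triviality of $\Tt^P_{x\Omega}$ gives $N^P_{x\Omega}\ins\widetilde{M}_x=M^\Tt_0$. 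The subcase of~\tu{(B)} in which $\theta_\varepsilon=\eta_\varepsilon$ is also immediate, via $(*)_\varepsilon$: $N^{P_\varepsilon}_{x\theta_\varepsilon}=N^{P_\varepsilon}_{x\eta_\varepsilon}=M^\Tt_\varepsilon|\delta_\infty\ins M^\Tt_\varepsilon$. Case~\tu{(C)} is a definition-chase: by Definition~\ref{dfn:Tt^Sigma_N}, $N^{P_\varepsilon}_{x\theta_\varepsilon}\ins M^{\Tt^{P_\varepsilon}_{x\theta_\varepsilon}}_\infty=M^\Tt_\varepsilon$.

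The remaining subcase of~\tu{(B)}, where $\Tt\rest(\varepsilon+1)\notin P_\varepsilon$, is the main obstacle. Since by $(*)_\beta$ each $\Tt\rest(\beta+1)$ was an element of $P_\beta$ for $\beta<\varepsilon$, and successor steps add only one new extender, itself taken from $\Tt^{P_\alpha}_{x\theta_\alpha}\in P_\alpha$, this failure can only happen at a limit $\varepsilon$ where the branch $[0,\varepsilon)_\Tt$ is chosen by $\Sigma_\infty$ and $\Tt\rest\varepsilon$ is merely a class of $P_\varepsilon$. The plan is to show that, taking $\theta_\varepsilon=\Omega_\varepsilon$, we have $\Tt\rest(\varepsilon+1)=\Tt^{P_\varepsilon}_{x\Omega_\varepsilon}$ as classes of $P_\varepsilon$, from which Theorem~\ref{tm:iter_to_bkgd} delivers $N^{P_\varepsilon}_{x\Omega_\varepsilon}\ins M^{\Tt^{P_\varepsilon}_{x\Omega_\varepsilon}}_\infty=M^\Tt_\varepsilon$. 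The key verifications are: at each successor stage $\alpha<\varepsilon$, the choice $E^\Tt_\alpha$ coincides with $E^{\Tt^{P_\varepsilon}_{x\Omega_\varepsilon}}_\alpha$ (using coherence clauses~\ref{item:star_coherence}\ref{item:construction_agmt_thru_eta+1}--\ref{item:iterate_is_construction_through_eta} combined with the ``least disagreement'' characterization of $\Tt^\Sigma_N$ in Remark~\ref{rem:Tt^Sigma_N}), and at limit stages both branches are selected by the same strategy $\Sigma_\infty$ on $\widetilde{M}_x$, so they coincide. Clause~\ref{item:star_no_late_below_delta_beta} of $(*)_\alpha$, ruling out late $R$'s below $\delta_\infty$, is essential to ensuring that the choice of $E^\Tt_\alpha$ in Case~\ref{case:no_late_R_before_gamma}, Subcase~\ref{scase:no_late_next_ext}, tracks $\Tt^{P_\varepsilon}_{x\Omega_\varepsilon}$ rather than a detour through a late stage.

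Finally, for item~3, assume $\theta_\varepsilon<\Omega_\varepsilon$. By the definition in line~(\ref{eqn:def_theta_in_E^Tt_alpha_def}), $N^{P_\varepsilon}_{x\theta_\varepsilon}$ projects strictly below $\delta_\infty$. Combining this with $N^{P_\varepsilon}_{x\theta_\varepsilon}\ins M^\Tt_\varepsilon$ from item~2, suppose for contradiction that $N^{P_\varepsilon}_{x\theta_\varepsilon}\pins M^\Tt_\varepsilon$; then its ordinal height exceeds $\delta_\infty$, while $\delta_\infty$ is a cardinal of $M^\Tt_\varepsilon$ (either $\varepsilon$ is a successor and $\delta_\infty=\lh(E^\Tt_{\varepsilon-1})$ is the corresponding successor cardinal of $M^\Tt_\varepsilon$, or $\varepsilon$ is a limit and $\delta_\infty=\delta(\Tt\rest\varepsilon)$ is a cardinal by normality of $\Tt$). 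Then no proper segment of $M^\Tt_\varepsilon$ of height above $\delta_\infty$ projects below $\delta_\infty$, a contradiction. Hence $N^{P_\varepsilon}_{x\theta_\varepsilon}=M^\Tt_\varepsilon$, completing the plan.
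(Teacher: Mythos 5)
Your proposal follows the same route as the paper's (extremely terse) proof: items 1 and the left inclusion in 2 are by definition together with $(*)_\varepsilon$; the right inclusion in 2 is justified by the termination conditions; item 3 is the cardinal-preservation argument using item 2. What you add is an explicit enumeration of the termination cases, which the paper dispatches with a single clause (``since the process terminated''), and you correctly flag the subcase $\Tt\rest(\varepsilon+1)\notin P_\varepsilon$ with $\theta_\varepsilon>\eta_\varepsilon$ as the one with genuine content. Your plan there — show $\Tt\rest(\varepsilon+1)$ is, as a class of $P_\varepsilon$, the comparison tree $\Tt^{P_\varepsilon}_{x\Omega_\varepsilon}$ and then invoke Theorem \ref{tm:iter_to_bkgd}, with the coherence clauses of $(*)_\alpha$ and the ``no late $R$'' clause doing the matching at successors and the strategy $\Sigma_\infty$ (which agrees with $\Sigma_{M_x}$ on small trees since $\om$-mice have unique strategies) doing the matching at limits — is the right idea, but as written it is a plan rather than a proof, and it also presupposes $\theta_\varepsilon=\Omega_\varepsilon$ in that case, which you assert but do not verify. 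The paper evidently regards that whole subcase as immediate from the construction, so you are supplying detail the author suppressed rather than disagreeing with him. One small imprecision in your item 3: when you assume $N^{P_\varepsilon}_{x\theta_\varepsilon}\pins M^\Tt_\varepsilon$ you say its ordinal height \emph{exceeds} $\delta_\infty$, but from the left inclusion of item 2 you only get $\geq\delta_\infty$; equality is possible, and in that case $N^{P_\varepsilon}_{x\theta_\varepsilon}=M^\Tt_\varepsilon|\delta_\infty$. The contradiction still goes through — a segment of height $\delta_\infty$ belonging to $M^\Tt_\varepsilon$ and projecting below $\delta_\infty$ already collapses $\delta_\infty$ inside $M^\Tt_\varepsilon$ — but you should state the argument so that it covers that boundary case.
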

\begin{proof}
 The fact that $\theta_\varepsilon\leq\Omega_\varepsilon$ is directly by definition. We already know that $M^\Tt_\varepsilon|\delta_\infty\ins N^{P_\varepsilon}_{x\theta_\varepsilon}$, and $N^{P_\varepsilon}_{x\theta_\varepsilon}\ins M^\Tt_\varepsilon$ since the process terminated at stage $\varepsilon$.
 And if $\theta_\varepsilon<\Omega_\varepsilon$ then $N^{P_\varepsilon}_{x\theta_\varepsilon}$ projects ${<\delta_\infty}=\sup_{\beta<\varepsilon}\lh(E^\Tt_\beta)$. But  $\delta_\infty$ is an $M^\Tt_\varepsilon$-cardinal, so no proper segment of $M^\Tt_\varepsilon$ of extending $M^\Tt_\varepsilon|\delta_\infty$ projects ${<\delta_\infty}$.
\end{proof}

\begin{clm}\label{clm:theta_eps=Omega_eps} $\theta_\varepsilon=\Omega_\varepsilon$.\end{clm}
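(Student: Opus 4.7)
Suppose for contradiction that $\theta_\varepsilon<\Omega_\varepsilon$. By Claim \ref{clm:if_theta_eps<Omega_eps_then_M^T_eps=N} we then have $N^{P_\varepsilon}_{x\theta_\varepsilon}=M^\Tt_\varepsilon$, and since $\theta_\varepsilon<\Omega_\varepsilon$, this structure projects strictly below $\delta_\infty=\sup_{\beta<\varepsilon}\lh(E^\Tt_\beta)$; moreover it is fully sound, being a proper cardinal segment of $N^{P_\varepsilon}_{x\Omega_\varepsilon}$ below some $N^{P_\varepsilon}_{x\theta'}$ witnessing the projection. Let $R=M^\Tt_\varepsilon$ and $\rho=\rho_\om^R<\delta_\infty$. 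Note that since $R\ins N^{P_\varepsilon}_{x\Omega_\varepsilon}$ with $\rho_\omega^R<\delta_\infty$, there is a well-defined stage $\theta_\varepsilon$ of $\CC^{P_\varepsilon}_x$ at which $R$ is directly produced, and the pair $(R,\theta_\varepsilon)$ reflects a ``late production'' kind of phenomenon relative to the tree $\Tt$.

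The plan is to reflect this configuration along a pre-simple tree embedding. Using Lemmas \ref{lem:simultaneous_pse} and \ref{lem:factor_pse}, I would fix a finite selection of $(\Tt,\Uu)$ that captures $\varepsilon$, the parameter $p_1^{M_\infty}$, enough of $R$ and its dropdown sequence inside $M^\Tt_\varepsilon$, the ordinal $\rho$ and a $\delta_\infty$-witnessing $\lh(E^\Tt_\beta)$, and the ordinal $\theta_\varepsilon$ (as an element of $\rg(\varrho^\Psi_{\bar{\varepsilon},0})$). This yields
\[ (\Phi,\Psi):(\bar{\Tt},\bar{\Uu})\hookrightarrow^{\pre}_{\simple}(\Tt,\Uu) \]
with $\bar{\varepsilon}:=(\varphi^\Phi)^{-1}(\varepsilon)$ satisfying: $\bar{\Tt}$ is finite sse-essentially $0$-maximal on $M_\infty=\widetilde{M}_x$, the pullback $\bar{R}\pins M^{\bar{\Tt}}_{\bar{\varepsilon}}$ satisfies $\rho_\omega^{\bar{R}}=\bar{\rho}<\bar{\delta}_\infty:=\sup_{\beta<\bar{\varepsilon}}\lh(E^{\bar{\Tt}}_\beta)$, and $\bar{\Uu}=\Uu^{\bar{\Tt}}$ is the $\Gamma$-standard lift with $\bar{R}$ directly produced at some ordinal $\bar{\theta}<\bar{\Omega}_{\bar{\varepsilon}}$ in $\CC^{M^{\bar{\Uu}}_{\bar{\varepsilon},0}}_x$.

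Now I would pick terms $\vec{t},\vec{s},\vec{u}$ over $(M_\infty,p_1^{M_\infty})$ so that $\bar{\Tt}=\vec{t}^{M_\infty}$, $\bar{R}=\vec{s}^{M_\infty}$, and the relevant $\lh(E^{\bar{\Tt}}_\beta)$ values and the production ordinal $\bar{\theta}$ are all represented by terms in $\vec{u}$. By Lemma \ref{lem:limit_alg_well_def} and the convergence $x_n\to x$ modulo the full mouse scale — in particular, modulo the theory norms and the depth-$\geq 1$ lifting norms that encode $\lh(E^{\bar{\Tt}}_\beta)$ and the resurrection parameters (the $\zeta_\alpha,\eta_\alpha,\params_{\alpha j}$) — for all sufficiently large $n$ the corresponding tree $\bar{\Tt}_n=\vec{t}^{M_{x_n}}$ is a legitimate sse-essentially $0$-maximal tree on $M_{x_n}$ of the same structure as $\bar{\Tt}$, its $\Gamma$-standard lift is again $\bar{\Uu}$, the pullback $\bar{R}_n=\vec{s}^{M_{x_n}}$ is a sound proper segment of $M^{\bar{\Tt}_n}_{\bar{\varepsilon}}$, and $\rho_\omega^{\bar{R}_n}=\bar{\rho}_n<\bar{\delta}^n_\infty$. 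Moreover the production ordinals converge: $\bar{R}_n$ is directly produced in $\CC^{M^{\bar{\Uu}}_{\bar{\varepsilon},0}}_{x_n}$ at some $\bar{\theta}^n$ that is eventually equal to $\bar{\theta}$ (in the eventually-constant sense given by convergence modulo the lifting norms). In particular $\bar{\theta}^n<\Omega^P_{M_{x_n}}$-image for all large $n$.

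The hard part, and where I expect the main obstacle, is to use this configuration to contradict the very construction of $\Tt$ at stage $\bar{\varepsilon}$ of $\bar{\Tt}$. In the $\Gamma$-standard algorithm for $M_{x_n}$, such a $\bar{R}_n$ would have exhibited a ``late production'' witness in the sense of the definition preceding Case \ref{case:E_0_no_late_R}/Case \ref{case:E_0_late_R}: namely, $\bar{R}_n\pins M^{\bar{\Tt}_n}_{\bar{\varepsilon}}$ with $\bar{\rho}_n<\bar{\delta}^n_\infty$, produced directly at $\bar{\theta}^n$ strictly above the value of $\abliftprodstage^{\bar{\Tt}_n}_{\bar{\varepsilon}}(\bar{R}_n)$ (the abstract production stage reached by direct limit-resurrection). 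Since the lifting norms encode both stages, convergence forces the same inequality at the limit: $\bar{R}$ would be \emph{produced late} relative to $\bar{\Tt},\bar{\varepsilon},P_{\bar{\varepsilon}}$. But the selection rule at stage $\bar{\varepsilon}$ of $\bar{\Tt}$ (which matches the selection rule at stage $\varepsilon$ of $\Tt$ by the commutativity in Definition \ref{dfn:abstract_lifting}(\ref{item:finite_support_commutativity})) forbids termination whenever some $\bar{R}\pins M^{\bar{\Tt}}_{\bar{\varepsilon}}$ is produced late with $\beta_{\bar{R}}<\bar{\Omega}_{\bar{\varepsilon}}$: the process is then obligated to choose $E^{\bar{\Tt}}_{\bar{\varepsilon}}=E^{\Tt^P_{x\beta_{\bar{R}}}}_{\bar{\varepsilon}}$ and continue (Case \ref{case:E_0_late_R}). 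This would mean $\bar{\varepsilon}$ is not the last index of $\bar{\Tt}$, contradicting $\bar{\varepsilon}\in\rg(\varphi^\Phi)$ and the correspondence with $\varepsilon$ being the last index of $\Tt$. The delicate point throughout is that ``produced late'' involves the abstract resurrection stage $\abliftprodstage$, which is defined via limits over $n$, and one must verify that convergence of the relevant lifting norms (and in particular convergence of \emph{which extenders appear in $\abrestree^{\bar{\Tt}}_{\bar{\varepsilon}}(\bar{R})$}) is strong enough that ``late'' at the limit reflects to ``late'' at almost every $n$; for this I would invoke Lemma \ref{lem:active_and_sub_projector_res_tree_agmt} to control the resurrection trees near cardinal-projecting proper segments and Claim \ref{clm:key_images_match} (transplanted to the present one-sided setting) to control the resurrection ordinals.
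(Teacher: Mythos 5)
Your approach departs substantially from the paper's, and while the reflection idea is not without interest, there are concrete obstructions that prevent it from going through.

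First, the assertion that $M^\Tt_\varepsilon=N^{P_\varepsilon}_{x\theta_\varepsilon}$ is ``fully sound, being a proper cardinal segment of $N^{P_\varepsilon}_{x\Omega_\varepsilon}$'' is false in the case that matters. If $b^\Tt$ drops in model, then $M^\Tt_\varepsilon$ is unsound (since $\widetilde{M}_x$ projects to $\omega$ and the tree is $\omega$-maximal); and the stage $N^{P_\varepsilon}_{x\theta_\varepsilon}$ (where $\theta_\varepsilon$ is the \emph{least} stage $\geq\eta_\varepsilon$ projecting below $\delta_\infty$) is an uncored, potentially unsound construction model, not a cardinal proper segment. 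This undercuts the identification of $R=M^\Tt_\varepsilon$ as the sort of object to which ``produced directly/produced late'' applies: in Definition \ref{dfn:abstract_lifting} and in the construction of $\Tt$, lateness is a notion for \emph{proper} segments $R\pins M^\Tt_\alpha$. The invariant $(*)_\varepsilon$ (clause \ref{item:star_no_late_below_delta_beta}) already guarantees that no $R\ins M^\Tt_\varepsilon|\delta_\infty$ is produced late, so if you want a late witness you must produce one in the interval between $M^\Tt_\varepsilon|\delta_\infty$ and $M^\Tt_\varepsilon$, and you have not verified that any such $R$ exists or that it has the right form.

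Second, even granting a late witness, terminating at stage $\varepsilon$ does not contradict the existence of a late $R$: the construction also halts if $\Tt\rest(\varepsilon+1)\notin P_\varepsilon$ or if $\theta_\varepsilon=\eta_\varepsilon$. You have not argued which termination condition was invoked, so you cannot conclude the process would have been ``obligated to continue.'' The paper's proof works differently: it splits immediately on whether $b^\Tt\cap\dropset^\Tt=\emptyset$ (and in that subcase obtains line \eqref{eqn:lower_semic_first_goal} directly rather than a contradiction), and otherwise locates the \emph{last} drop $\gamma+1\in b^\Tt$ with $\alpha=\pred^\Tt(\gamma+1)$, establishes via Subclaims \ref{sclm:mu<kappa} and \ref{sclm:mu<kappa-bar} that $M^\Tt_\alpha\in P_\alpha|\bar{\kappa}$ with $\bar{\kappa}=\crit(i^\Uu_{(\alpha,0),\varepsilon})$, and then uses $i^\Uu_{\alpha0,\varepsilon0}$-elementarity to push the local definition of $E^\Tt_\alpha$ forward into $P_\varepsilon$ and contradict $M^\Tt_\varepsilon=N^{P_\varepsilon}_{x\theta_\varepsilon}$ case by case (Subclaims \ref{sclm:not_alpha>0,theta_alpha<Omega_alpha,Case_not_late_R_before_gamma}, \ref{sclm:rule_out_Cases_43,50}, \ref{sclm:rule_out_Cases_44,49}). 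That argument makes no appeal to tree embeddings or to the sequence $x_n$ at this stage; the essential tool is the iteration map $i^\Uu_{\alpha0,\varepsilon0}$ and the fact that $E^\Tt_\alpha$ was chosen locally at $P_\alpha$ but must cohere with the final construction in $P_\varepsilon$.

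Finally, the reflection step you describe --- pulling back to a finite $\bar{\Tt}$ via a pre-simple tree embedding and then instantiating it at $M_{x_n}$ to assert convergence of production stages --- is precisely the kind of thing that is delicate here, and the ingredients you cite do not suffice as stated. Claim \ref{clm:key_images_match} lives in the two-sided comparison of the norm-invariance proof and would have to be reformulated and reproved in the one-sided setting, and Lemma \ref{lem:active_and_sub_projector_res_tree_agmt} controls agreement of resurrection trees but does not by itself establish that ``late at the limit implies late for almost all $n$.'' None of this is impossible in principle, but as written the argument has a missing core: you need a correctly-located proper segment witnessing lateness, you need to rule out the alternative termination conditions, and you need the drop/no-drop dichotomy that the paper's argument is built around.
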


\begin{proof} Suppose otherwise.
 So $0<\varepsilon$ (so $\Tt$ is non-trivial) and $\theta=\theta_\varepsilon<\Omega_\varepsilon$.
Now if $b^\Tt\cap\dropset^\Tt=\emptyset$
then we are done,
since then $\core_\om(N^{P_\varepsilon}_{x\theta})=\wt{M}_x$,
so $P_\varepsilon$ is $\wt{M}_x$-good
and $\Omega_{\wt{M}_x}^{P_\varepsilon}<\Omega_\varepsilon$, but then the same reflects back to $P$,
as desired.

So suppose $b^\Tt\cap\dropset^\Tt\neq\emptyset$.
 Let $\gamma+1=\max(\mathscr{D}^\Tt\cap b^\Tt)$ and 
$\alpha=\pred^\Tt(\gamma+1)$. Let $\mu=\crit(E^\Tt_\gamma)$ and 
$\kappa=\crit(E^\Uu_{(\gamma,k_\gamma)})$,
so $\mu\leq\kappa$. Let $\delta=\sup_{\beta<\alpha}\lh(E^\Tt_\beta)$.

\begin{sclm}\label{sclm:mu<kappa} $\mu<\kappa$.\end{sclm}
\begin{proof} Suppose $\mu=\kappa$. Then $\kappa\leq\delta$ since $\card^{P_\alpha}(M^\Tt_\alpha)\leq\delta$.
But if $\kappa<\delta$, then since $\Tt$ is normal, we must have
$\delta=(\kappa^+)^{M^\Tt_\alpha}$ and $\alpha=\kappa+1$ and $\nu(E^\Tt_\kappa)=\kappa$,
but then $\gamma+1\notin\dropset^\Tt$, contradiction. So $\kappa=\delta$. Since $\kappa$ is measurable
in $P$, therefore $\alpha=\kappa$.

Let $\left<M_{\kappa i}\right>_{i\leq k_\kappa}=\redd^{M^\Tt_\kappa}(\exit^\Tt_\kappa)$. Since $\gamma+1\in\dropset^\Tt$ and $\kappa=\crit(E^\Tt_\gamma)$, we have $\lh(E^\Tt_\kappa)<\kappa^{+M^\Tt_\kappa}$ and
 $\rho_\om^{M_{\kappa 1}}=\kappa$ and $M_{\kappa 1}=M^{*\Tt}_{\gamma+1}$. So
$\pred^\Uu(\gamma+1,0)=(\kappa,1)$. We have  $E^\Uu_{\kappa 0}=\emptyset$, because otherwise $\crit(E^\Uu_{\kappa0})=\kappa$
and $E^\Uu_{\kappa 0}$ is an order $0$ measure, but then $\kappa<\crit(E^\Uu_{\gamma k_\gamma})=\kappa$, contradiction. We have $M^\Tt_\kappa|\kappa=N_{x\eta_\kappa}^{P_\kappa}$. So if $\eta_\kappa=\kappa$, then by Lemma  \ref{lem:iterate_up_to_N_x,kappa}, $M^\Tt_\kappa|\kappa^{+M^\Tt_\kappa}=N_{x\chi}^{P_\kappa}$ where $\chi=\chi_{x\kappa}^{P_\kappa}$,
so $\lh(E^\Tt_\kappa)<\chi=\kappa^{+M^\Tt_\kappa}$, but then $t_{x\eta_{\kappa+1}}^{P_\kappa}=2$, contradicting $(*)_{\kappa+1}$. So $\kappa<\eta_\kappa$
and
$M^\Tt_\kappa|\kappa\neq N_{x\kappa}^{P_\kappa}$, so
there is $R\pins M^\Tt_\kappa$ such that $\rho_\om^R$ is an $M^\Tt_\kappa$-cardinal
and
\[ \kappa<\prodstage^{P_\kappa}_x(R)<\eta_{\kappa}.\]
So by $(*)_\gamma$, also
\begin{equation}\label{eqn:R_produced_after_kappa} \kappa<\prodstage^{P_\kappa}_x(R)\leq\abliftprodstage^\Tt_\gamma(R).\end{equation}
Also,
for $\beta<\lh(\Tt)$, letting
$\xi_\beta=\abliftprodstage^\Tt_\beta(M^\Tt_\beta)$, we also have the limit-resurrection map
\[\wt{\pi}^\Tt_\beta:\rho_0(M^\Tt_\beta)\to\lim_{n\to\om}\rho_0(N^{P_\beta}_{x_n\xi_\beta}),\]
and for $\beta+1<\lh(\Tt)$,
letting $\zeta_\beta=\abliftprodstage^\Tt_\beta(\exit^\Tt_\beta)$, we have
the limit-exit-resurrection map
 \[ \wt{\psi}^\Tt_{\beta k_{\beta}}:\rho_0(\exit^\Tt_\beta)\to\lim_{n\to\om}\rho_0(N_{x_n\zeta_\beta}^{P_{\beta k_\beta}}).\]
 Now since $\crit(E^\Uu_{\gamma k_\gamma})=\kappa=\mu=\crit(E^\Tt_\gamma)$, we have $\wt{\psi}^\Tt_{\gamma k_\gamma}(\kappa)=\kappa$, so
 for all $\beta\in[\kappa,\gamma]$,
\[ \wt{\psi}^\Tt_{\gamma k_\gamma}``\kappa=\wt{\pi}^\Tt_\beta``\kappa=\wt{\pi}^\Tt_\kappa``\kappa\sub\kappa.\]
In particular, $\iota=\wt{\psi}^\Tt_{\gamma k_\gamma}(\OR^R)<\kappa$.
But then letting $\zeta_\gamma=\abliftprodstage^\Tt_\gamma(\exit^\Tt_\gamma)$,
for sufficiently  large $n<\om$,
we have
$N^{P_{\gamma k_\gamma}}_{x_n\zeta_\gamma}|\kappa\neq N^{P_{\gamma k_\gamma}}_{x_n\kappa}$ (as $N^{P_{\gamma k_\gamma}}_{x_n\zeta_\gamma}|\iota$
was produced at stage $\abliftprodstage^\Tt_\gamma(R)>\kappa$,
by line (\ref{eqn:R_produced_after_kappa})).
But then by Lemma \ref{lem:limit_proj_across_meas},
$\lim_{n\to\om}t^{P_{\gamma k_\gamma}}_{x_n\zeta_\gamma}\neq 1$,
contradicting that $E^\Tt_\gamma$
was limit-lifted to a limit-ancestral stage with $t=1$
(as $\Tt$ is small). This completes the proof of the subclaim.
\end{proof}

\begin{sclm}\label{sclm:mu<kappa-bar}$\mu<\bar{\kappa}\eqdef\crit(i^\Uu_{(\alpha,0),\varepsilon})$, hence $M^\Tt_\alpha\in P_\alpha|\bar{\kappa}$.\end{sclm}
\begin{proof} If not then by Subclaim \ref{sclm:mu<kappa}, there is $i<k_\alpha$ such that $E^\Uu_{\alpha i}\neq\emptyset$,
and letting $i$ be least such, we have 
$\bar{\kappa}=\crit(E^\Uu_{\alpha i})\leq\mu<\kappa$.
But this contradicts $(*)_{\kappa+1}$
part \ref{item:star_coherence}(\ref{item:eta<crit(F_kappa,0)}).
\end{proof}

Now consider the definition of $E^\Tt_\alpha$.
In Subclaims \ref{sclm:not_alpha>0,theta_alpha<Omega_alpha,Case_not_late_R_before_gamma}, \ref{sclm:rule_out_Cases_43,50} 
and \ref{sclm:rule_out_Cases_44,49} below we rule out each possible subcase thereof.
Let $\theta_\alpha$
be the $\theta$ defined in line (\ref{eqn:def_theta_in_E^Tt_alpha_def})
at stage $\alpha$.

\begin{sclm}\label{sclm:not_alpha>0,theta_alpha<Omega_alpha,Case_not_late_R_before_gamma}
 It is not the case that $\alpha>0$
 and $\theta_\alpha<\Omega_\alpha$
 and Case \ref{case:no_late_R_before_gamma} attains
 at stage $\alpha$.
\end{sclm}
\begin{proof}
 Suppose otherwise.
 So $\eta_\alpha\leq\theta_\alpha<\Omega_\alpha$ and
 $\theta_\alpha$ is the least $\theta\geq\eta_\alpha$ such that $N_{x\theta}^{P_\alpha}$ projects
 $<\OR^N$ where $N=N_{x\eta_\alpha}^{P_\alpha}$.
 Since $E^\Tt_\alpha$ is defined,
 in fact $\eta_\alpha<\eta_{\alpha+1}<\theta_\alpha$,
 and $\theta_\alpha$ is also
 the least $\theta\geq\eta_{\alpha+1}$
 such that $N_{x\theta}^{P_\alpha}$
 projects $<\OR^{N^+}=\lh(E^\Tt_\alpha)$ where $N^+=N_{x\eta_{\alpha+1}}^{P_\alpha}$.
 By Subclaim \ref{sclm:mu<kappa-bar},
 $\OR(N^+)=\lh(E^\Tt_\alpha)\leq\OR(M^\Tt_\alpha)<\crit(i^\Uu_{(\alpha,0),\varepsilon})$,
 so letting $\eta^*_{\alpha+1}=i^\Uu_{\alpha0,\varepsilon0}(\eta_{\alpha+1})$,
 we have that $N^{P_\varepsilon}_{x\eta^*_{\alpha+1}}=N^+$
 and $\theta^*_\alpha=i^\Uu_{(\alpha,0),\varepsilon}(\theta_\alpha)$
 is the least $\theta\geq\eta^*_{\alpha+1}$
 such that $N^{P_\varepsilon}_{x\theta}$ projects ${<\OR^{N^+}}$;
  moreover, $N^{P_\varepsilon}_{x\theta^*_\alpha}$ actually projects ${<\OR^N}$.
 But by Claim \ref{clm:if_theta_eps<Omega_eps_then_M^T_eps=N}, $M^\Tt_\varepsilon=N^{P_\varepsilon}_{x\theta_\varepsilon}$,
 and since $N^+$ is a cardinal segment
 of $M^\Tt_\varepsilon$,
 we have $\theta_\varepsilon\geq\eta^*_{\alpha+1}$.
 And $\rho_\om(M^\Tt_\varepsilon)=\mu<\OR^{N^+}$. So $\theta_\varepsilon=\theta^*_\alpha$.
 But $\OR^N\leq\mu$, a contradiction.
\end{proof}

\begin{sclm}\label{sclm:rule_out_Cases_43,50} We have:
\begin{enumerate}[label=(\alph*)]
 \item\label{item:rule_out_Case_43} It is not the case that $\alpha=0$
 and Case \ref{case:E_0_no_late_R} attains at stage $0$.
 \item\label{item:rule_out_Case_50} It is not the case that $\alpha>0$
 and $\theta_\alpha=\Omega_\alpha$
 and Case \ref{case:no_late_R_before_gamma}
 attains at stage $\alpha$.
 \end{enumerate}
\end{sclm}
\begin{proof}
We just discuss part \ref{item:rule_out_Case_50};  part \ref{item:rule_out_Case_43} is only slightly simpler.
Suppose \ref{item:rule_out_Case_50} fails.
Then there is no $\xi\in[\eta_{\alpha+1},\Omega_\alpha)$ such that $N_{x\xi}^{P_\alpha}$ projects ${<\lh(E^\Tt_\alpha)}=\OR(N_{x\eta_{\alpha+1}}^{P_\alpha})$.
This is preserved by $i^\Uu_{\alpha0,\varepsilon0}$ in the sense described in the proof of Subclaim \ref{sclm:not_alpha>0,theta_alpha<Omega_alpha,Case_not_late_R_before_gamma}, so 
letting $\eta^*_{\alpha+1}$ be like before, we have $N^{P_\varepsilon}_{x\eta^*_{\alpha+1}}=M^\Tt_\varepsilon|\lh(E^\Tt_\alpha)$ and there is no $\xi\in[\eta^*_{\alpha+1},\Omega_\alpha)$ such that $N^{P_\varepsilon}_{x\xi}$ projects ${<\lh(E^\Tt_\alpha)}$.
But $\rho_\om(M^\Tt_\varepsilon)=\mu<\lh(E^\Tt_\alpha)$ and $M^\Tt_\varepsilon=N^{P_\varepsilon}_{x\theta_\varepsilon}$, a contradiction.
\end{proof}

\begin{sclm}\label{sclm:rule_out_Cases_44,49} We have:
\begin{enumerate}[label=(\alph*)]
\item It is not the case that $\alpha=0$ and Case \ref{case:E_0_late_R} attains at stage $\alpha$.
 \item It is not the case that $\alpha>0$
 and Case \ref{case:some_R_late_prior_to_gamma} attains
 at stage $\alpha$.
 \end{enumerate}
\end{sclm}
\begin{proof}
 Suppose otherwise.
 Let $R$ be as in the case hypothesis.
 Let $\beta=\beta_R$.
 So $R=\core_\om(N^{P_\alpha}_{x\beta})$
 and $R'=N^{P_\alpha}_{x\beta}$ is non-sound,
 and $R$ is produced late (relative to stage $\alpha$).
 Let $N^+=N_{x\eta_{\alpha+1}}^{P_\alpha}$.
 Since $E^\Tt_\alpha\in\es_+^R$
 and $\lh(E^\Tt_\alpha)=\OR^{N^+}$ and $N^+\pins_{\mathrm{card}} R'$,
 note that $\beta$ is the least
 $\beta'\geq\eta_{\alpha+1}$
 such that $N^{P_\alpha}_{x\beta'}$ projects $<\OR^{N^+}$.
 Like in the proof of Subclaim \ref{sclm:not_alpha>0,theta_alpha<Omega_alpha,Case_not_late_R_before_gamma}, it follows
 that $M^\Tt_\varepsilon=i^\Uu_{\alpha0,\varepsilon0}(R')$ and $R=\core_\om(M^\Tt_\varepsilon)=M^{*\Tt}_{\gamma+1}$. So with respect to stage $\varepsilon$, $R$ is produced late,
 and $\beta^*=i^\Uu_{\alpha0,\varepsilon0}(\beta)$ is the corresponding stage,
 so $R=\core_\om(N^{P_\varepsilon}_{x\beta^*})$.
 But because the process terminated
 (at stage $\varepsilon$), no proper segment of $M^\Tt_\varepsilon$ is produced late relative to stage $\varepsilon$. 
 But the latter 
 implies (using some considerations
 like those in the proof of Claim \ref{clm:Tt_R_non-triv}) that
 relative to stage $\varepsilon$,
 $R$ is not produced late, a contradiction.
\end{proof}

With the preceding subclaims we have ruled out every possibility for defining $E^\Tt_\alpha$, so we have a contradiction, completing the proof of Claim \ref{clm:theta_eps=Omega_eps}.
\end{proof}

\begin{clm}\label{clm:b^Tt_no_drop}
 $M^\Tt_\varepsilon=N^{P_\varepsilon}_{x\Omega_\varepsilon}$ and $(0,\varepsilon]^\Tt\cap\dropset^\Tt=\emptyset$.
\end{clm}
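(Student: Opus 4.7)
The plan is as follows. We have $\theta_\varepsilon=\Omega_\varepsilon$ by Claim \ref{clm:theta_eps=Omega_eps}, and by Claim \ref{clm:if_theta_eps<Omega_eps_then_M^T_eps=N} we already know $N^{P_\varepsilon}_{x\Omega_\varepsilon}\ins M^\Tt_\varepsilon$. So the first task is to show that the inclusion is an equality, and the second is to rule out drops on $(0,\varepsilon]^\Tt$.

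For equality, I would inspect why the construction terminated at stage $\varepsilon$. The termination clause fires precisely when either (a) $\Tt\rest(\varepsilon+1)\notin P_\varepsilon$, or (b) $\theta_\varepsilon=\eta_\varepsilon$, or (c) Subcase \ref{scase:no_late_no_next_ext} attains, i.e.~$\Tt\rest(\varepsilon+1)=\Tt^{P_\varepsilon}_{x\theta_\varepsilon}$. In case (c), by the defining property of $\Tt^{P_\varepsilon}_{x\Omega_\varepsilon}$, we have $M^\Tt_\varepsilon=M^{\Tt^{P_\varepsilon}_{x\Omega_\varepsilon}}_\infty=N^{P_\varepsilon}_{x\Omega_\varepsilon}$. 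In case (b), the equation $\theta_\varepsilon=\eta_\varepsilon$ combined with line (\ref{eqn:def_theta_in_E^Tt_alpha_def}) forces $\delta_\infty=\OR^{M^\Tt_\varepsilon}$, so $\varepsilon$ must be a limit and $M^\Tt_\varepsilon=M(\Tt\rest\varepsilon)=N^{P_\varepsilon}_{x\eta_\varepsilon}=N^{P_\varepsilon}_{x\Omega_\varepsilon}$ by $(*)_\varepsilon$. Case (a) is the case of real substance and corresponds to the situation that $P$ is a Q-mouse with $\Omega=\OR^P$ and the canonical tree reaching the full construction lies outside $P$ (cf.~Theorem \ref{tm:iter_to_bkgd}\ref{item:final_tree}); here I would argue that $\Tt\rest(\varepsilon+1)$ still must coincide with $\Tt^{P_\varepsilon}_{x\Omega_\varepsilon}$ as a class of $P_\varepsilon$, using Theorem \ref{tm:iter_to_bkgd}\ref{item:uniformity} and Theorem \ref{tm:iter_to_bkgd}\ref{item:final_tree}, and that clause ($*$)$_\varepsilon$(\ref{item:star_no_late_below_delta_beta}) together with the proof of Claim \ref{clm:Tt_R_non-triv} forbids a further proper segment of $M^\Tt_\varepsilon$ above $N^{P_\varepsilon}_{x\Omega_\varepsilon}$.

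For the no-drops part, suppose for contradiction that $(0,\varepsilon]^\Tt\cap\dropset^\Tt\neq\emptyset$, and let $\gamma+1$ be its maximum, with $\alpha=\pred^\Tt(\gamma+1)$, $\mu=\crit(E^\Tt_\gamma)$ and $R=M^{*\Tt}_{\gamma+1}\pins M^\Tt_\alpha$. Then $M^\Tt_\varepsilon$ is non-$(\deg^\Tt(\varepsilon)+1)$-sound with $\rho_{\deg^\Tt(\varepsilon)+1}(M^\Tt_\varepsilon)\leq\mu$, and $M^\Tt_\varepsilon$ is an iterate of $R$ via the remaining part of $\Tt$. But by the already-established equality $M^\Tt_\varepsilon=N^{P_\varepsilon}_{x\Omega_\varepsilon}$ and the $M_{x_n}$-goodness of $P$, each $N^P_{x\Omega}$ has $\core_\om(N^P_{x\Omega})=M_{x_n}$ sound with $\rho_1^{M_{x_n}}=\om$; this property is encoded into the limit-lifting algorithm $(\Sigma_\infty,\mathfrak L_\infty)$ (via the theory norms controlling the $\Sigma_1$-theory of $M_\infty$ and the parameter preservation shown in \ref{lem:M_infty_basics_exactly_recon}\ref{item:M_infty_fs_exactly_recon}), and hence also characterises $N^{P_\varepsilon}_{x\Omega_\varepsilon}$ as the $\om$-core of itself up to the appropriate fine structural data. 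A drop on $b^\Tt$ would mean $M^\Tt_\varepsilon$ is obtained as a proper hull of $R$ and fails this core condition, contradicting the identification $M^\Tt_\varepsilon=N^{P_\varepsilon}_{x\Omega_\varepsilon}$.

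The main obstacle will be the fine-structural matching in the no-drops argument, in particular reconciling the limit-lifting data along $b^\Tt$ with the direct reconstruction of $N^{P_\varepsilon}_{x\Omega_\varepsilon}$ as the image of $N^P_{x\Omega}$ under $i^\Uu_{(0,0),(\varepsilon,0)}$, especially in the Q-mouse case where $\Omega=\OR^P$ and one must argue that the degree and solidity witnesses transfer across the iteration without degradation. The analysis of termination case (a) is also delicate because $\Tt\rest(\varepsilon+1)$ genuinely may fail to belong to $P_\varepsilon$ as a set, but must be recognised as a class; handling this requires invoking Theorem \ref{tm:iter_to_bkgd} and the Q-structure determinacy of $\Tt^{P_\varepsilon}_{x\Omega_\varepsilon}$ in a uniform way.
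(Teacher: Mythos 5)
Your proposal takes a genuinely different route from the paper's, and unfortunately the key mechanism is missing. The paper's proof is short and hinges on a single production-stage computation: let $C$ be the cardinal-projecting proper segments of $N^{P_\varepsilon}_{x\Omega_\varepsilon}$, let $\zeta=\abliftprodstage^\Tt_\varepsilon(N^{P_\varepsilon}_{x\Omega_\varepsilon})$, and show $\zeta=\Omega_\varepsilon$. This is done (in the case $t^{P_\varepsilon}_{x\Omega_\varepsilon}\neq 1$) by running the chain
\[
\Omega_\varepsilon=\prodstage^{P_\varepsilon}_x(N^{P_\varepsilon}_{x\Omega_\varepsilon})=\sup_{S\in C}\big(\prodstage^{P_\varepsilon}_x(S)+\om\big)\leq\sup_{S\in C}\big(\abliftprodstage^\Tt_\varepsilon(S)+\om\big)\leq\zeta\leq\Omega_\varepsilon,
\]
where the inequality $\prodstage^{P_\varepsilon}_x(S)\leq\abliftprodstage^\Tt_\varepsilon(S)$ is exactly clause $(*)_\varepsilon(\ref{item:star_no_late_below_delta_beta})$ (``no $R\ins M^\Tt_\varepsilon|\delta_\infty$ is produced late''), which the inductive construction of $\Tt$ preserved precisely so this step would be available; $\theta_\varepsilon=\Omega_\varepsilon$ from Claim \ref{clm:theta_eps=Omega_eps} feeds in at the start. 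Once $\zeta=\Omega_\varepsilon$, both halves of the claim come out together: if $N^{P_\varepsilon}_{x\Omega_\varepsilon}\pins M^\Tt_\varepsilon$ properly, then treating it as a proper segment the resurrection process returns an abstract production stage strictly below $\zeta_\varepsilon\leq\Omega_\varepsilon$, impossible; and if $(0,\varepsilon]^\Tt$ dropped, then by condition (\ref{item:if_drop_in_Tt_then_drop_options_in_Uu}) of Definition \ref{dfn:abstract_lifting} (with $\Uu$ nowhere dropping since $\Tt$ is small) one would get $\zeta_\varepsilon<i^\Uu_{00,\varepsilon 0}(\Omega)=\Omega_\varepsilon$, again contradicting $\zeta_\varepsilon\geq\zeta=\Omega_\varepsilon$.

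There are two concrete gaps in what you wrote. For the equality, your termination case (c) gives $\Tt=\Tt^{P_\varepsilon}_{x\Omega_\varepsilon}$, but by Definition \ref{dfn:Tt^Sigma_N} the defining property of $\Tt^\Sigma_N$ is $N\ins M^\Tt_\infty$, not $N=M^\Tt_\infty$; so this case yields only $N^{P_\varepsilon}_{x\Omega_\varepsilon}\ins M^\Tt_\varepsilon$, which is already the content of Claim \ref{clm:if_theta_eps<Omega_eps_then_M^T_eps=N}, not the required equality. For the no-drop part, the asserted contradiction --- ``a drop would mean $M^\Tt_\varepsilon$ fails the core condition, contradicting $M^\Tt_\varepsilon=N^{P_\varepsilon}_{x\Omega_\varepsilon}$'' --- does not go through, because $N^{P_\varepsilon}_{x\Omega_\varepsilon}$ is itself typically unsound (its $\om$-core is the relevant iterate of $M_x$, which projects to $\om$), and a $\Tt$ that drops on its main branch also terminates in an unsound model; the soundness/core pattern of $M^\Tt_\varepsilon$ alone does not distinguish the two scenarios. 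What separates them is the abstract production stage $\zeta_\varepsilon$, the ordinal recording \emph{where} in the construction over $P_\varepsilon$ the top model is lifted, which strictly decreases across a drop of $\Tt$ not mirrored by $\Uu$; that quantitative piece of the lifting algorithm is what your fine-structural matching leaves out, and is what the ``no late production'' bookkeeping is there to feed.
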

\begin{proof}
  Let $C$ be the set of all $S\pins N^{P_\varepsilon}_{x\Omega_\varepsilon}$ such that $\rho_\om^S$ is an $N^{P_\varepsilon}_{x\Omega_\varepsilon}$-cardinal.
 Let $\zeta=\abliftprodstage^\Tt_\varepsilon(N^{P_\varepsilon}_{x\Omega_\varepsilon})$.
 We claim $\zeta=\Omega_\varepsilon$. For
if $t^{P_\varepsilon}_{x\Omega_\varepsilon}\neq 1$ then  
 \[ \theta_\varepsilon=\Omega_\varepsilon=\prodstage^{P_\varepsilon}_x(N_{x\Omega_\varepsilon})=\sup_{S\in C}(\prodstage^{P_\varepsilon}_x(S)+\om)
 \leq\sup_{S\in C}(\abliftprodstage^\Tt_\varepsilon(S)+\om)\leq\zeta\leq\Omega_\varepsilon,\]
 and  if instead $t^{P_\varepsilon}_{x\Omega_\varepsilon}= 1$ then  
 \[ \theta_\varepsilon=\Omega_\varepsilon=\prodstage^{P_\varepsilon}_x(N_{x\Omega_\varepsilon})=\big(\sup_{S\in C}\prodstage^{P_\varepsilon}_x(S)\big)+\om
 \leq\big(\sup_{S\in C}\abliftprodstage^\Tt_\varepsilon(S)\big)+\om\leq\zeta\leq\Omega_\varepsilon\]
 (note $t_{x_n\zeta}^{P_\varepsilon}=1$ for sufficiently large $n$).
 Since $\zeta=\Omega_\varepsilon$,
 the claim easily follows.
\end{proof}

We can now deduce semicontinuity with respect to the $\Omega_x$ norm.
Let $\Tt^P=\Tt^P_{x\Omega_0}$.
It suffices to see that
\begin{equation}\label{eqn:Omega_x_lower_semicon_main_goal}M^{\Tt^P}_\infty=N^{P}_{x\Omega_0}\text{ and }b^{\Tt^P}\cap\dropset^{\Tt^P}=\emptyset.\end{equation}
Recall that we are working in one of the two contexts \ref{item:lower_semi_c_P_is_Q-mouse}
or \ref{item:lower_semi_c_maybe_not_Q-mouse} mentioned at the start of \S\ref{sec:lower_semicont_for_Omega_x}.
If \ref{item:lower_semi_c_maybe_not_Q-mouse} holds (in particular that $\aleph_\Omega^P<\rho_0^P$ and $\Tt_{x\Omega}^P\in P$) then Claim \ref{clm:b^Tt_no_drop} clearly pulls back to $P$, giving line (\ref{eqn:Omega_x_lower_semicon_main_goal}) in this case
And if  \ref{item:lower_semi_c_P_is_Q-mouse} holds, so $P$ is a Q-mouse and $\Omega=\OR^P$, then it follows from Lemma \ref{lem:Q-mouse_branch_pres_under_it_maps}.

\begin{lem}\label{lem:Q-mouse_branch_pres_under_it_maps} Let $P$ be a Q-mouse and $q$ be the Q-degree of $P$, as witnessed by $(q,\om_1+1)$-strategy $\Sigma_P$ for $P$. Let $x\in\RR^P$. Let $M\in\HC^P$ be both an $x$-mouse and an $\om$-mouse.
 Let $\Tt\in P$ be an $\om$-maximal tree on $M$ of length $\delta^P$,
 via $\Sigma_M$. 
 Suppose that $\Tt$ is small, $\Tt$ is definable from parameters over $P|\delta^P$, and $P|\delta^P$ is generic for the $\delta^P$-generator extender algebra of $M(\Tt)$ at $\delta(\Tt)=\delta^P$.
 
 Let $\Uu$ be a successor length small $q$-maximal tree on $P$ via $\Sigma_P$, such that $b^\Uu$ does not drop.
 Let $P'=M^\Uu_\infty$ and $\Tt'=i^\Uu(\Tt)$. Suppose that $\Tt'$
 is also via $\Sigma_M$. Let $b=\Sigma_M(\Tt)$
 and $c=\Sigma_M(\Tt')$.
 (Note that $b\cap\alpha\in P$ for each $\alpha<\delta^P$.)
 Then:
 \begin{enumerate}\item $c=\bigcup_{\alpha<\delta^P}i^\Uu(b\cap\delta^P)$.
  \item $b$ drops in model iff $c$ drops in model.
  \item $Q(\Tt,b)=\mathscr{P}^P(M(\Tt))\ins M^\Tt_b$ and $Q(\Tt',c)=\mathscr{P}^{P'}(M(\Tt'))\ins M^{\Tt'}_c$,
  \item $\mathscr{P}^{P}(M(\Tt))=M^\Tt_b$ iff $\mathscr{P}^{P'}(M(\Tt'))=M^{\Tt'}_c$.
 \end{enumerate}
\end{lem}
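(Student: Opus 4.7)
\medskip

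The plan is to identify $Q(\Tt,b)$ with the P-construction $\mathscr{P}^P(M(\Tt))$ by a uniqueness-of-iterable-Q-structures argument, then transfer the whole picture across $i^\Uu$ using its elementarity and continuity. First I would invoke Lemma~\ref{lem:P-construction_tree_translation} (and~\ref{lem:Woodin_exactness}) to see that $\mathscr{P}^P(M(\Tt))$ is a well-defined non-small premouse with $\OR^{\mathscr{P}^P(M(\Tt))}=\OR^P$ and with $\delta(\Tt)=\delta^P$ its least Woodin, and that it is above-$\delta(\Tt)$, $(q,\omega_1{+}1)$-iterable (inheriting this from the $(q,\omega_1{+}1)$-strategy $\Sigma_P$ for $P$). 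Likewise, on the $P'$-side, $\mathscr{P}^{P'}(M(\Tt'))$ is above-$\delta(\Tt')$-iterable; note that by Lemma~\ref{lem:iterates_maintain_def_regularity} (applied to $\Uu$ and the $\bfrSigma_q$-regularity of $\delta^P$ in $P$) we have $\delta^{P'}=\sup i^\Uu``\delta^P=\delta(\Tt')$, so the P-construction here is set up on exactly the correct Woodin.

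Next I would prove item~(3). Since $M$ is an $\omega$-mouse, $\Sigma_M$ is the unique $(\omega,\omega_1{+}1)$-strategy for $M$; in particular $\Sigma_M$ is guided by Q-structures (since $\Tt$ is small and the models along $b$ are small until the drop, if any). Comparing $Q(\Tt,b)$ with $\mathscr{P}^P(M(\Tt))$ as two above-$\delta(\Tt)$-iterable, $\delta(\Tt)$-sound extensions of $M(\Tt)$, the standard Q-structure uniqueness argument (no Zipper-Lemma violation, as in the proof of Lemma~\ref{lem:limit_trees_uniquely-Pi^1_1-guided}) forces $Q(\Tt,b)\ins\mathscr{P}^P(M(\Tt))$ and $\mathscr{P}^P(M(\Tt))\ins M^\Tt_b$, and in fact $Q(\Tt,b)=\mathscr{P}^P(M(\Tt))$ since $\mathscr{P}^P(M(\Tt))$ does witness the failure of Woodinness of $\delta(\Tt)$ appropriately (via its Q-mouse structure inherited from $P$). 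The same argument on the $P'$-side, using that $\Tt'$ is via $\Sigma_M$ by hypothesis, gives $Q(\Tt',c)=\mathscr{P}^{P'}(M(\Tt'))\ins M^{\Tt'}_c$.

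For item~(1), set $c^*=\bigcup_{\alpha<\delta^P}i^\Uu(b\cap\alpha)$. Since $\Tt$ is definable over $P|\delta^P$ and $\Sigma_M$ is guided by Q-structures sitting inside $P$ (as $\mathscr{P}^P(M(\Tt)\rest\alpha)$ lives in $P$ for $\alpha<\delta^P$), we have $b\cap\alpha\in P$ for each $\alpha<\delta^P$, so each $i^\Uu(b\cap\alpha)$ is a well-defined initial segment of a branch through $\Tt'\rest i^\Uu(\alpha)$. The continuity of $i^\Uu$ at $\delta^P$ (again by~\ref{lem:iterates_maintain_def_regularity}) ensures $c^*$ is cofinal in $\lh(\Tt')=\delta^{P'}$. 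By elementarity the images $i^\Uu(Q(\Tt\rest\alpha,b\cap\alpha))$ are exactly the Q-structure segments of $\mathscr{P}^{P'}(M(\Tt'))$ guiding $c^*$, so $c^*$ is the unique $\Sigma_M$-branch of $\Tt'$; thus $c=c^*$. Items~(2) and~(4) then follow routinely: $b$ drops in model iff $\mathscr{P}^P(M(\Tt))\pins M^\Tt_b$ (by~(3)), and this condition is equivalent under $i^\Uu$ to $\mathscr{P}^{P'}(M(\Tt'))\pins M^{\Tt'}_c$, again via the direct-limit-of-images description from~(1) and elementarity.

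The main obstacle is the identification $Q(\Tt,b)=\mathscr{P}^P(M(\Tt))$ in item~(3). Showing that $\mathscr{P}^P(M(\Tt))$ is above-$\delta(\Tt)$-iterable at the right degree (namely, at degree matching the position of $\rho_{q+1}^P$ relative to $\delta^P$) requires careful use of the translation lemma~\ref{lem:P-construction_tree_translation} in the Q-mouse setting, and the uniqueness-of-Q-structure step has to accommodate the possibility that $\delta(\Tt)$ remains Woodin in $M^\Tt_b$ (so $\mathscr{P}^P(M(\Tt))=M^\Tt_b$) versus the case where a proper initial segment suffices; handling both cases uniformly is where the bulk of the technical bookkeeping lies.
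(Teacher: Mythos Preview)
Your overall plan matches the paper's (partial) proof: both establish item~(3) via Q-structure uniqueness (using that $\mathscr{P}^P(M(\Tt))$ inherits above-$\delta^P$-iterability from $P$ by Lemma~\ref{lem:P-construction_tree_translation} and kills Woodinness because $P$ is a Q-mouse), and both then attack item~(1) by defining the candidate branch $c^*=\bigcup_{\alpha<\delta^P}i^\Uu(b\cap\alpha)$, checking cofinality via continuity of $i^\Uu$ at $\delta^P$, and then arguing that $Q'=\mathscr{P}^{P'}(M(\Tt'))\ins M^{\Tt'}_{c^*}$.

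However, you mis-identify the main obstacle. Item~(3) is essentially routine once you have the P-construction translation and the Q-mouse fine structure lined up. The genuine difficulty lies in your sentence ``by elementarity the images $i^\Uu(Q(\Tt\rest\alpha,b\cap\alpha))$ are exactly the Q-structure segments of $\mathscr{P}^{P'}(M(\Tt'))$ guiding $c^*$, so $c^*$ is the unique $\Sigma_M$-branch.'' Knowing that the intermediate Q-structures (at limit stages $<\delta^{P'}$) are correct does not by itself give $Q'\ins M^{\Tt'}_{c^*}$. The issue is that $Q=\mathscr{P}^P(M(\Tt))$ has $\OR^Q=\OR^P$, so $Q\notin P$; hence you cannot simply apply $i^\Uu$ to $Q$ and invoke elementarity. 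What you need is that the direct limit of the $i^\Uu(\bar{Q}_\beta)$ along $c^*$ (where $\bar{Q}_\beta\ins M^\Tt_\beta$ are the preimages of $Q$) actually equals $Q'$, and this requires a commutativity $i^{\Tt'}_{i^\Uu(\beta),c^*}\circ(i^\Uu\rest\bar{Q}_\beta)=(i^\Uu\rest Q)\circ i^\Tt_{\beta,b}$ which is not a direct consequence of elementarity, since $i^\Tt_{\beta,b}$ is not an element of $P$.

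The paper's approach to this point is to work not with $Q$ itself but with the $\rSigma_n$-theory fragments $t^Q_\xi=\Th^Q_{\rSigma_n}(\delta^P\cup\{\pvec_n^Q\})$ stratified by $\xi<\rho_n^Q$ (where $\rho_{n+1}^P\leq\delta^P<\rho_n^P$). These fragments \emph{are} in $P$, and one can track how they move: $i^\Uu(t^Q_\xi)=t^{Q'}_{i^\Uu(\xi)}$ by the $P$/$Q$ fine-structural correspondence, while $t^{\bar{Q}_\beta}_\xi\rest\kappa=t^Q_{i^\Tt_{\beta,b}(\xi)}\rest\kappa$ for $\kappa=\crit(i^\Tt_{\beta,b})$ since $i^\Tt_{\beta,b}$ is $\pvec_{n+1}$-preserving. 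Chasing these identities through $i^\Uu$ and along $c^*$ is what actually pins down $Q'\ins M^{\Tt'}_{c^*}$. Your proposal would go through if you replaced the ``by elementarity'' step with this theory-fragment argument.
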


and we just have  $\left<\Tt^P_{x\alpha}\right>_{\alpha<\delta^P}\in P$.
But therefore also $\Tt^P_{x\delta^P}\rest\delta^P\in P$, so $M(\Tt^P_{x\delta^P}\rest\delta^P)\in P$.
Let $b=b^{\Tt^P}$.
Let $\Tt^{P_\varepsilon}=\Tt^{P_\varepsilon}_{x\OR^{P_\varepsilon}}=\Tt^{P_\varepsilon}_{x\delta^{P_\varepsilon}}$ and
$c=b^{\Tt^{P_\varepsilon}}$.
So $b\sub\delta^P$ and $c\sub\delta^{P_\varepsilon}$
and $b\cap\alpha\in P$ for each $\alpha<\delta^P$,
and $c\cap\alpha\in P_\varepsilon$
for each $\alpha<\delta^{P_\varepsilon}$.
\begin{clm}
 $c=\bigcup_{\alpha<\delta^P}i^\Uu_{00,\varepsilon0}(b\cap\alpha)$.
\end{clm}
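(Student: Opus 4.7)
The plan is to invoke Lemma~\ref{lem:Q-mouse_branch_pres_under_it_maps} with $M = \wt{M}_x$ and $\Tt = \Tt^P\rest\delta^P$. First I would verify each hypothesis. By Theorem~\ref{tm:iter_to_bkgd}(\ref{item:final_tree}), $\Tt^P\rest\delta^P$ has length $\delta^P$, $\delta(\Tt^P\rest\delta^P) = \delta^P$, $N^P_{x\delta^P} = M(\Tt^P\rest\delta^P)$, and is a class of $P|\delta^P$; by Theorem~\ref{tm:iter_to_bkgd}(\ref{item:Tt_eta_exists}),(\ref{item:uniformity}) it is small and uniformly definable from parameters over $P|\delta^P$ (in particular, the approximations $\Tt^P_{x\eta}$ for $\eta<\delta^P$ lie in $P|\delta^P$, which is what Lemma~\ref{lem:Q-mouse_branch_pres_under_it_maps} actually uses). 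By Lemma~\ref{lem:Woodin_exactness}(\ref{item:background_gen}) applied to $N^P_{x\delta^P}$, $P|\delta^P$ is generic over $M(\Tt^P\rest\delta^P)$ for the $\delta^P$-generator extender algebra at $\delta^P$. The base $\wt{M}_x$ is in $\HC^P$ since $\wt{M}_x \leq_T y \in \RR^P$, and it is an $\om$-mouse by the iterability of $M_\infty$ established earlier in this section. Finally, $\Tt^P$ is via $\Sigma_{\wt{M}_x}$ by definition of the notation $\Tt^\Sigma_N$.

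Next I would verify that $i = i^\Uu_{00,\varepsilon 0}$ sends $\Tt^P\rest\delta^P$ to $\Tt^{P_\varepsilon}\rest\delta^{P_\varepsilon}$ in the appropriate class sense. Since $\Uu$ is small, $q$-maximal and non-dropping, Lemma~\ref{lem:iterates_maintain_def_regularity} gives that $i$ is continuous at $\delta^P$ and $i(\delta^P) = \delta^{P_\varepsilon}$. By Theorem~\ref{tm:iter_to_bkgd}(\ref{item:uniformity}), a single formula $\varphi$ defines the class function $\eta \mapsto \Tt^P_{x\eta}$ (for $\eta<\delta^P$) over $P|\delta^P$; by elementarity of $i$, the same formula defines $\eta \mapsto \Tt^{P_\varepsilon}_{x\eta}$ over $P_\varepsilon|\delta^{P_\varepsilon}$, so $i(\Tt^P_{x\eta}) = \Tt^{P_\varepsilon}_{x,i(\eta)}$ for each $\eta<\delta^P$. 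Taking unions and using continuity gives $i(\Tt^P\rest\delta^P) = \Tt^{P_\varepsilon}\rest\delta^{P_\varepsilon}$. That $\Tt^{P_\varepsilon}\rest\delta^{P_\varepsilon}$ is via $\Sigma_{\wt{M}_x}$ again follows from its definition.

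With all hypotheses verified, the conclusion of Lemma~\ref{lem:Q-mouse_branch_pres_under_it_maps} gives precisely
\[ c = \bigcup_{\alpha<\delta^P} i^\Uu_{00,\varepsilon 0}(b\cap\alpha), \]
as desired.

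The main obstacle I foresee is the minor mismatch between the hypothesis ``$\Tt \in P$'' in Lemma~\ref{lem:Q-mouse_branch_pres_under_it_maps} and the fact that Theorem~\ref{tm:iter_to_bkgd}(\ref{item:final_tree}) only delivers $\Tt^P\rest\delta^P$ as a definable class of $P|\delta^P$, not as a set element of $P$. This is a matter of reading the lemma correctly: its proof proceeds level-by-level with the bounded approximations $\Tt\rest\alpha$ for $\alpha<\delta^P$ (each of which is a set of $P|\delta^P$ by uniform definability), applies $i$ to each such approximation, and then passes to the union at $\delta^P$ using continuity. No essential use is made of the whole tree lying inside $P$ as a single set. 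Thus the formulation of the hypothesis should be understood in the definable-class sense, and the argument goes through unchanged.
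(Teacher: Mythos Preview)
Your approach is circular given the paper's organization. Lemma~\ref{lem:Q-mouse_branch_pres_under_it_maps} is stated immediately before this Claim \emph{without} an independent proof; the text following the Lemma sets up notation in the specific situation at hand, and the proof of this Claim is evidently meant to supply the substance of conclusion~(1) of the Lemma. So invoking the Lemma as a black box to prove the Claim assumes exactly what is to be established.

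The paper's own argument is direct. Letting $c'$ denote the specified union, one first observes (using continuity of $i^\Uu_{00,\varepsilon 0}$ at $\delta^P$) that $c'$ is a $\Tt^{P_\varepsilon}\!\rest\!\delta^{P_\varepsilon}$-cofinal branch. Since the branch $c$ is determined by the Q-structure $Q'=N^{P_\varepsilon}_{x\,\OR^{P_\varepsilon}}$, the problem reduces to showing $Q'\ins M^{\Tt^{P_\varepsilon}}_{c'}$. This reduction is then attacked by tracking how the $\rSigma_{n+1}$-theory fragments $t^Q_\xi=\Th_{\rSigma_{n+1}}$-restrictions of $Q=N^P_{x\,\OR^P}$ move under $i^\Uu$ and along the branch maps $i^{\Tt^P}_{\beta,\infty}$ for large $\beta\in b$: the key inputs are that $i^\Uu$ is a $\pvec_{n+1}$-preserving $n$-embedding (via the fine-structural correspondence between $P$ and $Q$ above $\delta^P$), and likewise for the tail branch embeddings, which forces $i^\Uu(t^Q_\xi)=t^{Q'}_{i^\Uu(\xi)}$ and matching restrictions below critical points. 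This is the real content; your hypothesis-verification would be the correct setup \emph{if} the Lemma were proved elsewhere, but here the Lemma's proof is precisely the Claim you are trying to establish.
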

\begin{proof}
Letting $c'$ be the union specified in the claim, since $i^\Uu_{00,\varepsilon0}$ is continuous at $\delta^P$,
it is easy to see that $c'$ is a $\Tt^{P_\varepsilon}\rest\delta^{P_\varepsilon}$-cofinal branch. Since the Q-structure which determines $c$ is just $Q'=N^{P_\varepsilon}_{x\OR^{P_\varepsilon}}$,
it therefore suffices to see that $Q'\ins M^{\Tt^{P_\varepsilon}}_{c'}$.

The Q-structure $Q=Q(\Tt^P\rest\delta^P,b)$ is just $Q=N_{x\OR^P}^P$. 
Let $n<\om$ be such that $\rho_{n+1}^P\leq\delta^P<\rho_n^P$.
Let $\alpha\in b$ be large enough that:
\begin{enumerate}[label=--]\item $b\cut\alpha$ does not drop in model or degree, 
 \item 
if $Q\pins M^{\Tt^P}_{\infty}$ then $Q\in\rg(i^{\Tt^P}_{\alpha\infty`})$;
and
\item  $i^{\Tt^P}_{\alpha,b}$ is continuous at $\rho_n^{\bar{Q}}$, where $\bar{Q}=M^{\Tt^P}_\alpha$ if $Q=M^{\Tt^P}_{\infty}$,
and $i^{\Tt^P}_{\alpha\infty}(\bar{Q})=Q$ otherwise.
\end{enumerate}
By \ref{?}, $Q$ is $\delta^P$-sound,
 $\rho_n^Q=\rho_n^P$ and $\rho_{n+1}^Q\leq\delta^P$.
 For $\xi<\rho_n^Q$ let
 $t^Q_\xi$ be the set of pairs $(\varphi,\vec{x})$ such that $\varphi$ is $\rSigma_{n+1}$,
 $\vec{x}\in[\delta^P]^{<\om}$,
 and \[Q\sats\varphi(\vec{x},\pvec_{n+1}^Q\cut\delta^P),\]
 as witnessed in the codes by 
 \[\Th_{\rSigma_n}^{Q}(\xi\cup\{\pvec_n^Q\}),\]
 or if $n=0$, as witnessed by the usual $\xi$th approximation to $Q$ (see \cite[\S2]{fsit} for an explanation of all of this).
 So
 \[ \Th_{\rSigma_{n+1}}^Q(\delta^P\cup\{\pvec_{n+1}^Q\cut\delta^P\})=\bigcup_{\xi<\rho_n^Q}t^Q_\xi,\]
 and $t^Q_\xi\in Q$ (and hence $t_\xi\in P$)
 for each $\xi<\rho_n^Q$.
For $\kappa<\delta^P$,
write $t^Q_\xi\rest\kappa$
for the set of all $(\varphi,\vec{x})\in t_\xi$ with $\vec{x}\in[\kappa]^{<\om}$.
 For $\beta\in b\cut\alpha$,
 let $\bar{Q}_\beta=i^\Tt_{\alpha\beta}(\bar{Q})$, and for $\xi<\rho_n^{\bar{Q}_\beta}$,
 define $t_\xi^{\bar{Q}_\beta}$
 over $\bar{Q}_\beta$ analogously
 to how $t_{\xi'}$ was defined over $Q$
 (note that $\delta^P\in\rg(i^\Tt_{\alpha\beta})$).
 Likewise for $t^{Q'}_\xi$ for $\xi<\rho_n^{Q'}$.
 
 For each $\xi<\rho_n^Q=\rho_n^P$, since $i^\Uu$ is a $\pvec_{n+1}$-preserving $n$-embedding and by the correspondence of fine structure between $Q,P$ (and $Q',P_\varepsilon$), we have
\[ i^\Uu(t^Q_\xi)=t^{Q'}_{i^\Uu(\xi)}.\]

 Fix $\beta\in b\cut\alpha$. 
 Then since $i^\Tt_{\beta\infty}$
 is also a $\vec{p}_{n+1}$-preserving $n$-embedding, we have
 $i^\Tt_{\beta\infty}(t^{\bar{Q}_\beta}_\xi)=t^Q_{i^\Tt_{\beta\infty}(\xi)}$,
 so
 \[t^{\bar{Q}_\beta}_\xi\rest\kappa=t^Q_{i^\Tt_{\beta\infty}(\xi)}\rest\kappa\]
 where $\kappa=\crit(i^{\Tt^P}_{\beta\infty})$.

 But $\beta'=i^\Uu(\beta)\in c'$ and note that $\kappa'=i^\Uu(\kappa)=\crit(i^{\Tt^{P_\varepsilon}}_{\beta',c'})$,
 and 
 \[t^{i^\Uu(\bar{Q}_\beta)}_{i^\Uu(\xi)} i^\Uu(t^{\bar{Q}_\beta}_\xi\rest\kappa)=i^\Uu(t^Q_{i^\Tt_{\beta\infty}(\xi)}\rest\kappa),\]
\end{proof}

\subsubsection{$M_\infty=M_x$}

\begin{cor}
 $M_\infty=M_x$.
\end{cor}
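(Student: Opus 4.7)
The plan is to combine the iterability of $M_\infty$ established in \S\ref{sec:iterability_M_infty} with the minimality built into the definitions of $M_x$, handling the two cases of Definition/construction of $\Mmm$ separately.

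First I will handle the $1$-$\CC$-closed case. By Remark \ref{rem:M_infty}, $M_\infty$ is a sound premouse over $x$ with $\rho_1^{M_\infty} = \omega$ and $p_1^{M_\infty} = \emptyset$, and moreover $M_\infty \sats \psi_0 \wedge \neg\psi_0'$ where $\psi_0'$ asserts ``there is a proper segment of me satisfying $\psi_0$''; this is because $\psi_0 \in T_\infty$ but $\psi_0' \notin T_\infty$ (every $M_{x_n}$ is the \emph{least} proper segment of $\Mmm(x_n)$ satisfying $\psi_0$, so eventually fails $\psi_0'$). Since the results of \S\ref{sec:iterability_M_infty} give that $M_\infty$ is $(\omega,\omega_1{+}1)$-iterable, $M_\infty$ is an $\omega$-mouse over $x$ satisfying $\psi_0$ with no proper segment satisfying $\psi_0$. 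The minimality clause in the definition of $M_x$ then yields $M_x = M_\infty$.

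Next I will handle the exactly reconstructing case. Corollary \ref{cor:M_infty_and_M_x} already gives $M_x \ins M_\infty$, so it suffices to rule out $M_x \pins M_\infty$. By Lemma \ref{lem:M_infty_basics_exactly_recon} parts \ref{item:M_infty_fs_exactly_recon} and \ref{item:when_M_infty_iterable_exactly_recon}, once we know $M_\infty$ is wellfounded and iterable (which we now do), we have $M_x \ins M_\infty$, and in case $T$ is recursive, equality. The remaining subcase (where $T$ may fail to be recursive but is still good, cofinal and uniformly boundedly $1$-solid) is handled by the cofinality of $T$: the formulas $\varphi_n$ of $T$ are witnessed in $M_x$ at strictly increasing stages $\eta_{xn}$ with $\sup_n \eta_{xn} = \OR^{M_x}$, and since the mouse scale incorporates the $T$-uniformly bounded theory norms (Definition \ref{dfn:mouse_scale_exactly_reconstructing}\ref{item:theory_norms_in_mouse_scale}), L\'os' theorem \ref{lem:M_infty_basics_exactly_recon}\ref{item:Los_for_exactly_recon} ensures these witnessing stages lift faithfully to $M_\infty$. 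Combined with the uniform primary norm $\Phi_{\Mmm\psi_0}$, this forces $\sup_n \eta_{(M_\infty),n} = \OR^{M_\infty}$, which together with iterability and the minimality of $M_x$ (now inside $M_\infty$, viewed as a potential $M_x$-candidate) gives $M_\infty = M_x$.

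The main obstacle in the exactly reconstructing subcase is a potential mismatch between what ``$x$-good proper segment'' means internally to $M_\infty$ when $T$ is not recursive: without recursiveness, this predicate may not be expressible by a single formula over $M_\infty$, so the recursive-case argument of \ref{lem:M_infty_basics_exactly_recon}\ref{item:when_M_infty_iterable_exactly_recon} does not apply directly. The fix is to use the cofinality of the norms: if $M_x \pins M_\infty$ then there would exist $n_0 < \omega$ such that all of $T$ witnessed in $M_\infty$ below $\OR^{M_x}$ is already witnessed below the height $\eta_{xn_0}$, but this contradicts the lower semicontinuity of the ordinal lifting norms (proved in \S\ref{sec:lower_semi}) that locate each $\eta_{xn}$ precisely.
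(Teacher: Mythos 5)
Your handling of the $1$-$\CC$-closed case and of the exactly-reconstructing case with recursive $T$ matches the paper (via Corollary \ref{cor:M_infty_and_M_x} and Lemma \ref{lem:M_infty_basics_exactly_recon}\ref{item:when_M_infty_iterable_exactly_recon}). The gap lies in your treatment of the exactly-reconstructing case when $T$ need not be recursive. You argue that cofinality of $T$ forces $\sup_n\eta_{(M_\infty)n}=\OR^{M_\infty}$ and that this, together with iterability and minimality, gives $M_\infty=M_x$. But cofinal witnessing of $T$ in $M_\infty$ does not rule out a proper segment $M_x\pins M_\infty$: $M_x$ also witnesses $T$ cofinally in its \emph{own} height, so $M_\infty$ witnessing $T$ cofinally in $\OR^{M_\infty}$ is entirely compatible with $M_x\pins M_\infty$. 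Moreover the deduction in your last paragraph (``if $M_x\pins M_\infty$ then there would exist $n_0<\om$ such that all of $T$ witnessed in $M_\infty$ below $\OR^{M_x}$ is already witnessed below $\eta_{xn_0}$'') does not follow from $M_x\pins M_\infty$ at all --- cofinality of $T$ gives the contrary --- and the appeal to lower semicontinuity of the lifting norms is a non sequitur, since those norms track invariants of the background construction $\CC^P_x$ inside $P$, not the internal witnessing stages $\eta_{xn}$ of $M_x$.

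The paper closes this subcase with a quite different and shorter argument, whose essential ingredient is the exact-reconstruction clause itself rather than cofinality. Assume $M_\infty\neq M_x$; by Corollary \ref{cor:M_infty_and_M_x} one is in the exactly-reconstructing case with $M_x\pins M_\infty$. Line (\ref{eqn:lower_semic_first_goal}) of \S\ref{sec:lower_semicont_for_Omega_x} says that $P$ is $\wt{M}_x$-good, i.e.\ $M_\infty=\wt{M}_x$ is the output of the construction $\CC^P_x$. Together with $M_x\pins M_\infty$ and $\rho_\om^{M_x}=\om$, this forces $M_x=\core_\om(N^P_{\alpha x})$ for some $\alpha<\OR^P$, i.e.\ $M_x$ is captured at a strict substage of $\CC^P_x$. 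That directly contradicts the definition of \emph{exactly reconstructing}, which requires that $M_x$ appear only as the core of the full construction $N^{M_y}_{x\infty}$. So the work is done by line (\ref{eqn:lower_semic_first_goal}) plus exact reconstruction, not by cofinality of $T$.
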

\begin{proof}
Suppose not. Then by Corollary \ref{cor:M_infty_and_M_x},
$\Mmm$ is exactly reconstructing
and $M_x\pins M_\infty$.
But by line (\ref{eqn:lower_semic_first_goal})
in \S\ref{sec:lower_semicont_for_Omega_x},
this implies that $M_x=\core_\om(N_{\alpha x}^P)$
for some $\alpha<\OR^P$. This contradicts
the assumption that $\Mmm$ is exactly reconstructing.
\end{proof}

So from now on we can write $M_x$ instead of $\widetilde{M}_x$, since they are equal.
\subsubsection{Lower semicontinuity in general}

Lower semicontinuity with respect to an arbitrary lifting norms given by norm description $\sigma$ will be established via an elaboration of the foregoing proof, integrating the role of (initial segments of) the finite tree on $M_x$ determined by $\sigma$ into the construction of the tree $\Tt$ much as in the proof of Lemma \ref{lem:norm_invariance}.

***THE PROOF IN THIS SECTION STOPS (EARLIER THAN) MIDWAY. THE REST WILL BE ADDED IN A LATER VERSION OF THE DOCUMENT.

Let $\sigma$ be an arbitrary norm description 
of depth $m+1$. Suppose $\sigma$ determines a tree $\Tt$ on $M_x$ (of length $\leq m+1$), and let $\Rr'$
be the corresponding resurrection tree on $P$ (computed in the standard manner; that is, with respect to $\mathfrak{L}_x$). We will run a version of the proof of \S\ref{sec:lower_semicont_for_Omega_x},
but incorporating simple tree embeddings from the relevant initial segment of $\Rr'$,
much as in the proof of the Norm Invariance Lemma \ref{lem:norm_invariance}.
There will also be a little more work to handle further details of the norms which did not arise so far.

Let
$\leq^P_{i},\equiv_{i}^P$, etc,
 be as in  \ref{dfn:lifting_norm}, as computed with respect to $P$.
We want to see that $x\leq^P_{4m+4} x_n$ for all sufficiently large $n<\om$.
So suppose not and let $\bar{m}\in[-1, m]$ be largest such that
$x\leq^P_{4\bar{m}} x_n$ for sufficiently large $n$; so $x>^P_{4\bar{m}+4}x_n$ for large $n$.
By \S\ref{sec:lower_semicont_for_Omega_x}, in fact $\bar{m}\geq 0$.
And note that (using the contradictory hypothesis) $x\equiv^{\undec,P}_{4\bar{m}+2} x_n$ for all large $n$. Let $n_0=\projdeg(M_x)$.\footnote{***In the present version of the paper, $n_0=0$.} We now specify a finite sse-$n_0$-maximal trees $\Tt_x$
on $M_x$ and $\Tt_{x_n}$ on $M_{x_n}$ for all sufficiently large $n<\om$, and a partial resurrection tree $\Rr$ on $P$.

We have the sse-$n_0$-maximal 
tree $\bar{\Tt}_x=\Tt_{x\bar{m}}$ on $M_x$, and for all sufficiently large  $n$,
the sse-$n_0$-maximal tree $\bar{\Tt}_{x_n}=\Tt_{x_n\bar{m}}$ on $M_{x_n}$, each of length $\bar{m}+1$.
If $\bar{\Tt}_x$ (equivalently $\bar{\Tt}_{x_n}$ for large $n$) is small let $m'=\bar{m}$;
otherwise let $m'<\bar{m}$ be least such that $\exit^{\bar{\Tt}_x}_{m'}$ is non-small.
Let $\Tt_x=\bar{\Tt}_x\rest(m'+1)$
and $\Tt_{x_n}=\bar{\Tt}_{x_n}\rest(m'+1)$.
Then for all large $n$, $\Rr_0\eqdef\lifttree^{\Tt_x\Yback}=\lifttree^{\Tt_{x_n}\Yback}$
and $\Rr_0$ is small.
\begin{case}\label{case:m'<mbar_lsc}
$m'<\bar{m}$ ($\bar{\Tt}_x$ is non-small).

Let
$e_x=\exit^{\bar{\Tt}_x}_{m'}$ and $e_{x_n}=\exit^{\bar{\Tt}_{x_n}}_{m'}$.
 Let
\[\Rr_{1}=\critrestree^{\Tt_x\Rr_0}(e_x,\delta^{e_x})=\lim_{n\to\om}
\critrestree^{\Tt_{x_n}\Rr_0}(e_{x_n},\delta^{e_{x_n}}).\]
\end{case}

In the remaining cases
(in which $m'=\bar{m}$),
if $\bar{m}<m$, we
adopt the notation from the definition of $\leq^P_{4\bar{m}+3}$ of \ref{dfn:lifting_norm}.
Note then that in this case, $e_x\downarrow$ and $e_{x_n}\downarrow$ for large $n$.

\begin{case}\label{case:m'=mbar_and_S_x_small_lsc} $m'=\bar{m}<m$ and $e_x$ is small.

Adopt also the notation from the definition of $\leq_{4\bar{m}+3}$ in its Subcase \ref{scase:S_x_small}
(in which $e_x$ is small).
Clearly $k_x=\lim_{n\to\om}k_{x_n}$ (this is the first ordinal considered for $\leq_{4\bar{m}+3}$).
Let $\wt{j}$ be the least  $j\leq k_x$ such that either $j=k_x$ or $\params^\Yback_{xj}\neq\lim_{n\to\om}\params^\Yback_{x_nj}$. Let
\[ \Rr_1=\restree^{\Tt_x\Rr_0}(e_x)\rest(\wt{j}+1)=\lim_{n\to\om}\restree^{\Tt_{x_n}\Rr_0}(e_{x_n})\rest(\wt{j}+1). \]
\end{case}
\begin{case}\label{case:S_x_non-small_lsc} $m'=\bar{m}<m$ and $e_x,e_y$ are non-small.

Adopt the notation from Subcase \ref{scase:S_x_non-small,Tt_x_small} of the definition of $\leq_{4\bar{m}+3}$.
Again $k_x=\lim_{n\to\om}k_{x_n}$. Let $\wt{j}$ be defined as above and
\[ \Rr_1=\critrestree^{\Tt_x\Rr_0}(e_x,\delta^{e_x})\rest(\wt{j}+1)=
 \lim_{n\to\om}\critrestree^{\Tt_{x_n}\Rr_0}(e_{x_n},\delta^{e_{x_n}})\rest(\wt{j}+1).
\]
\end{case}

\begin{case}\label{case:m'=mbar=m_lsc}
 $m'=\bar{m}=m$.
 
 Let $\Rr_1=\emptyset$.
\end{case}

This completes the cases. In each case let $\wt{\Rr}=\Rr_0\conc\Rr_1$ and
 $\Yback_{ij}=M^{\wt{\Rr}}_{ij}$.
Let $(i_\dis,j_\dis)$
be the largest index in $\dom(\wt{\Rr})$.
Let $\Rr$ be the non-padded tree, indexed by ordinals,
equivalent to $\wt{\Rr}$. Let
\[ I_2=\{\alpha\mid\alpha+1<\lh(\Rr)\wedge E^{\Rr}_\alpha\text{ is an order 0 measure}\}, \]
\[ I_1=\{\alpha\mid\alpha+1<\lh(\Rr)\}\cut I_2.\]
Note that $\alpha\in I_2$ iff $E^{\Rr}_\alpha$ is the measure corresponding
to some $\beta$ such that $t^{M^{\Rr}_\alpha}_{\beta x}=2=\lim_{n\to\om}t^{M^{\Rr}_\alpha}_{\beta x_n}$. For $h<\lh(\Rr)$ let $(\theta_h,i_h)$ be the least $(\theta,i)\in\dom(\wt{\Rr})$ such that $M^{\wt{\Rr}}_{\theta i}=M^\Rr_h$.

Recall that $M_x=\core_\om(N_{x\Omega}^P)$
(***and in the present version of the paper, $\rho_1^{M_x}=\om$, so $M_x=\core_1(N_{x\Omega}^P)$). Let $\pi_x:M_x\to\core_0(N_{x\Omega}^P)$
be the core embedding. Also let $\pi_{x_n}:M_{x_n}\to\core_0(N_{x_n\Omega}^P)$
be the core map for those (sufficiently large)
$n$ such that $M_{x_n}=\core_\om(N_{x_n\Omega}^P)$.
Define $\Tt^P_{x\alpha}$ (the tree on $M_x$ iterating out to $N^P_{x\alpha}$), for $\alpha\leq\Omega$, as before.
Let $(\Sigma_\infty,\frakL_\infty)$ be as before. For $\Tt$ on $M_x$ via $\Sigma_\infty$ and $\alpha<\lh(\Tt)$, we use the notation $\Uu^\Tt,\abliftprodstage^\Tt_\alpha(S)$ for $S\ins M^\Tt_\alpha$, etc, as before.

Again we will form a small tree $\Tt$ on $M_x$, via $\Sigma_\infty$, and define  $\Uu=\Uu^\Tt$.
Like in \S\ref{sec:lower_semicont_for_Omega_x}, we write  $k_\alpha=k^\Tt_\alpha$,
$P_\alpha=P_{\alpha 0}$, $P_{\alpha i}=M^\Uu_{\alpha i}$ (for $i\leq k_\alpha$) and $(E^\Tt_\alpha)^+=E^\Uu_{\alpha k_\alpha}$.
But this time, we will also arrange that there is a coarse tree embedding $\Delta:\Rr\hookrightarrow_{\mathrm{c}}\Uu$,
with $\Delta(k)+1=\lh(\Uu)$ where $k+1=\lh(\Rr)$ (see Definition \ref{dfn:coarse_tree_emb}). Using this $\Delta$, we will see that the
distinction in ordinal calculations 
seen in $M^{\Rr}_\infty=M^{\wt{\Rr}}_{i_{\mathrm{dis}},j_{\mathrm{dis}}}$
(exhibiting the fact that $x>^P_{4\bar{m}+4}x_n$ for large $n$) reflects into $M^\Uu_\infty$, thus ensuring that the relevant ordinal $\zeta$ associated to $x$ in $M^\Uu_\infty$ is strictly larger than the ordinal $\xi$ associated to $x_n$ for large $n$.
But the fact that the comparison process terminates at $M^\Uu_\infty$ will ensure
that in fact $\xi\leq\zeta$, which will yield the final  contradiction.

We construct $\Tt\rest\rest(\chi+1)$
and $\Uu\rest(\chi,1)$ simultaneously by 
 recursion on ordinals $\chi$.
Along with $\Tt\rest(\chi+1)$ and $\Uu\rest(\chi,1)$,
we will also define
$\left<h_{\alpha k},\Delta_{\alpha k}\right>_{(\alpha,k)\in\mathscr{I}_\chi}$
where $\mathscr{I}_\chi=\dom(\Uu\rest(\chi,1))$,
with $h_{\alpha k}<\lh(\Rr)$ and
\[\Delta_{\alpha k}:\Rr\rest(h_{\alpha k}+1)\hookrightarrow_{\mathrm{c}}\Uu\rest(\alpha,k+1) \]
and $\Delta_{\alpha k}(h_{\alpha k})=(\alpha,k)$.
Write $\delta^{\alpha k}_j=\Delta_{\alpha k}(j)$, $\gamma^{\alpha k}_j=\gamma^{\Delta_{\alpha k}}_j$, etc
(as in \ref{dfn:coarse_tree_emb}, for $j\leq h_{\alpha k}$). The associated maps are
\[\pi^{\alpha k}_j=M^{\wt{\Rr}}_{\theta_ji_j}\to M^\Uu_{\gamma^{\alpha k}_{j}},\]
\[\sigma^{\alpha k}_j=M^{\wt{\Rr}}_{\theta_ji_j}\to M^\Uu_{\delta^{\alpha k}_{j}} 
,\]
and we have $\gamma^{\alpha k}_j\leq^\Uu\delta^{\alpha k}_j$ and $\sigma^{\alpha k}_j=i^\Uu_{\gamma^{\alpha k}_{j}\delta^{\alpha k}_{j}}\com\pi^{\alpha k}_j$ (because $\Tt$ will be small, $\Uu$ will not drop anywhere in model or degree).

If $(\alpha,k)\in\dom(\Uu)$
but $(\alpha,k)$ is not the largest element in $\dom(\Uu)$, 
then $(\alpha,k)$ will be either \emph{copying} or \emph{inflationary}. Suppose this is the case and let $\iota$ be the successor of $(\alpha,k)$ in the $\Uu$-indexing (so either $\iota=(\alpha,k+1)$
or $\iota=(\alpha+1,0)$).
If $(\alpha,k)$ is copying
then we will have $h_{\alpha k}+1<\lh(\Rr)$,
and $\Delta_{\iota}$ will be the one-step copying extension of $\Delta_{\alpha k}$
(that is, $h_{\iota}=h_{\alpha k}+1$ and $\Delta_{\iota}=\Delta_{\alpha k}\cup\{(h_\iota,\iota)\}$).
If instead $(\alpha,k)$ is inflationary
then $\Delta_\iota$ will be the $E^\Uu_{\alpha k}$-inflation of $\Delta_{\beta j}$,
where $(\beta,j)=\pred^\Uu(\iota)$
(that is, $h_{\iota}=h_{\beta j}$
and $\Delta_\iota=(\Delta_{\beta j}\rest h_{\beta j})\cup\{(h_\iota,\iota)\}$).

We set $h_{00}=0$ and $\Delta_{00}(0)=(0,0)$.

Now suppose we have defined $\Tt\rest(\chi+1)$,
and hence $\Uu\rest(\chi,1)$,
and suppose we have also defined $\left<h_{\alpha k},\Delta_{\alpha k}\right>_{(\alpha,k)\in\mathscr{I}_\chi}$.

Define the terminology \emph{produced directly \tu{(}at $\beta$\tu{)}}, notation $\beta_R$,
and terminology \emph{produced late} as in \S\ref{sec:lower_semicont_for_Omega_x}.

We now, in case $\alpha=0$, define $E^\Tt_0$.
There are two main cases. Recall $\pi:M_x\to\core_0(N_{\Omega x}^P)$ is the core map.

\begin{case}$0<m'$.

Let $e_{x0}=\exit^{\Tt_x}_0$ and $e_{x_n0}=\exit^{\Tt_{x_n}}_0$ for large $n$.
Let
 $\Vv=\restree^P_{\Omega \om\pi_x x}(e_{x0})$,
 $k=\resl^P_{\Omega \om\pi_x x}(e_{x0})$,
 $\psi=\modresmap^P_{\Omega \om\pi_x x}(e_{x0})$,
 $\zeta=\resprodstage^P_{\Omega \om\pi_x x}(e_{x0})$, and
$N=N^{M^\Vv_k}_{\zeta x}$.
So $\psi:\core_0(e_{x0})\to \core_0(N)$ is the final resurrection map. Note that since $0<m'$,
we also have
\begin{enumerate}[label=--]\item $\Vv=\lim_{n\to\om}\restree^P_{\Omega \om\pi_{x_n} x_n}(e_{x_n0})$,
 \item $k=\lim_{n\to\om}\resl^P_{\Omega \om\pi_{x_n} x_n}(e_{x_n0})$, and
 \item $\zeta=\lim_{n\to\om}\resprodstage^P_{\Omega \om\pi_{x_n} x_n}(e_{x_n0})=\abliftprodstage^\Tt_{0}(e_{x0})$.
 \end{enumerate}

 Note that $\Tt^{M^\Vv_k}_{x\zeta}$ is non-trivial (as $N$ is active and small, hence backgrounded with an $M^\Vv_k$-total extender,
 but $M_x\in\HC^{M^\Vv_k}$). So there are just the following two subcases;
 if $\Vv$ uses some extender
 then let $\theta=\crit(D)$ with $D$ the least such,
 and otherwise let $\theta=\xi$:

\begin{scase}\label{scase:E_0_no_late_R_lsc} No $R\pins M_x$ with $\beta_R<\theta$ is produced late (as defined over $P$).

Then we set $E^\Tt_0=E^{\Tt^{M^\Vv_k}_{x\xi}}_0$.
\end{scase}

\begin{scase}\label{scase:E_0_late_R_lsc} Otherwise (some $R\pins M_x$ with $\beta_R<\theta$
 is produced late (as defined over $P$)).

Fix $R$ least such and let $\beta=\beta_R$ and $\Tt_R=\Tt^P_{x\beta}$.
By the proof of Claim \ref{clm:Tt_R_non-triv}
in \S\ref{sec:lower_semicont_for_Omega_x},
$\Tt_R$ is non-trivial,
and we set $E^\Tt_0=E^{\Tt_R}_0$.
\end{scase}
\end{case}

\begin{case}
$0=m'<\bar{m}$ (so Case \ref{case:m'<mbar_lsc} attained in defining $\Tt_x$).

Let
$e_x$ and $e_{x_n}$ for large $n$ be as in Case \ref{case:m'<mbar_lsc}. 
Let
 $\Vv=\critrestree^P_{\Omega \om\pi_x x}(e_{x},\delta^{e_x})$,
 $k=\critresl^P_{\Omega \om\pi_x x}(e_{x},\delta^{e_x})$,
 $\psi=\critresmap^P_{\Omega \om\pi_x x}(e_{x},\delta^{e_x})$,
 $\zeta=\critresprodstage^P_{\Omega \om\pi_x x}(e_{x},\delta^{e_x})$, 
$N=N^{M^\Vv_k}_{\zeta x}$,
and $\delta=\delta^N$,
if $N$ is non-small,
and otherwise $\delta=\delta^{\psi(e_x)}$.
 Note that since $m'<\bar{m}$,
we also have
\begin{enumerate}[label=--]\item $\Vv=\lim_{n\to\om}\critrestree^P_{\Omega \om\pi_{x_n} x_n}(e_{x_n},\delta^{e_{x_n}})$,
 \item $k=\lim_{n\to\om}\critresl^P_{\Omega \om\pi_{x_n} x_n}(e_{x_n},\delta^{e_{x_{n}}})$, and
 \item $\zeta=\lim_{n\to\om}\critresprodstage^P_{\Omega \om\pi_{x_n} x_n}(e_{x_n},\delta^{e_{x_n}})$.
 \end{enumerate}

 Note that $\Tt^{M^\Vv_k}_{x\delta}$ is non-trivial. If $\Vv$ uses some extender
 then let $\theta=\crit(D)$ with $D$ the least such,
 and otherwise let $\theta=\delta$.
 
\begin{scase}\label{scase:E_0_no_late_R_lsc_0=m'<mbar} No $R\pins M_x$ with $\beta_R<\theta$ is produced late (as defined over $P$).

Then we set $E^\Tt_0=E^{\Tt^{M^\Vv_k}_{x\theta}}_0$.
\end{scase}

\begin{scase}\label{scase:E_0_late_R_lsc_0=m'<mbar} Otherwise (some $R\pins M_x$ with $\beta_R<\theta$
 is produced late (as defined over $P$)).

Fix $R$ least such and let $\beta=\beta_R$ and $\Tt_R=\Tt^P_{x\beta}$.
By the proof of Claim \ref{clm:Tt_R_non-triv}
in \S\ref{sec:lower_semicont_for_Omega_x},
$\Tt_R$ is non-trivial,
and we set $E^\Tt_0=E^{\Tt_R}_0$.
\end{scase}
\end{case}

\begin{case} $0=m'=\bar{m}<m$ and $e_x$ is small (so Case \ref{case:m'=mbar_and_S_x_small_lsc} attained
in defining $\Tt_x$).

Let $\wt{j}$ be as in Case \ref{case:m'=mbar_and_S_x_small_lsc}. Let
$(\Vv,\left<\Psi_i\right>_{i\leq k})=\modres^P_{\Omega\omega\pi_xx}(e_x)$ and
$\Psi_i=(\beta_i,A_i,\psi_i,\alpha_i,d_i)$.
If $\Vv\rest(\wt{j}+1)$ uses some extender
then let $\theta=\crit(D)$ where $D$ is least such; otherwise let $\theta=\alpha_{\wt{j}}$.

\begin{scase}\label{scase:E_0_no_late_R_lsc_0=m'<mbar_2} No $R\pins M_x$ with $\beta_R<\theta$ is produced late (as defined over $P$).

Then we set $E^\Tt_0=E^{\Tt^{M^\Vv_k}_{x\theta}}_0$.
\end{scase}

\begin{scase}\label{scase:E_0_late_R_lsc_0=m'=mbar<m} Otherwise (some $R\pins M_x$ with $\beta_R<\theta$
 is produced late (as defined over $P$)).

Fix $R$ least such and let $\beta=\beta_R$ and $\Tt_R=\Tt^P_{x\beta}$.
By the proof of Claim \ref{clm:Tt_R_non-triv}
in \S\ref{sec:lower_semicont_for_Omega_x},
$\Tt_R$ is non-trivial,
and we set $E^\Tt_0=E^{\Tt_R}_0$.
\end{scase}
\end{case}

\begin{case} $0=m'=\bar{m}<m$ and $e_x$ is non-small (so Case \ref{case:S_x_non-small_lsc} attained
in defining $\Tt_x$).

Let $\wt{j}$ be as in Case \ref{case:S_x_non-small_lsc}. Let
$(\Vv,\left<\Psi_i\right>_{i\leq k})=\modres^P_{\Omega\omega\pi_xx}(e_x)$ and
$\Psi_i=(\beta_i,A_i,\psi_i,\alpha_i,d_i)$.
If $\Vv\rest(\wt{j}+1)$ uses some extender
then let $\theta=\crit(D)$ where $D$ is least such; otherwise let $\theta=\alpha_{\wt{j}}$.

\begin{scase}\label{scase:E_0_no_late_R_lsc_0=m'<mbar_3} No $R\pins M_x$ with $\beta_R<\theta$ is produced late (as defined over $P$).

Then we set $E^\Tt_0=E^{\Tt^{M^\Vv_k}_{x\theta}}_0$.
\end{scase}

\begin{scase}\label{scase:E_0_late_R_lsc_0=m'=mbar<m_2} Otherwise (some $R\pins M_x$ with $\beta_R<\theta$
 is produced late (as defined over $P$)).

Fix $R$ least such and let $\beta=\beta_R$ and $\Tt_R=\Tt^P_{x\beta}$.
By the proof of Claim \ref{clm:Tt_R_non-triv}
in \S\ref{sec:lower_semicont_for_Omega_x},
$\Tt_R$ is non-trivial,
and we set $E^\Tt_0=E^{\Tt_R}_0$.
\end{scase}
\end{case}

(***THE REMAINDER OF THE PROOF OF LOWER SEMICONTINUITY IS TO BE ADDED.)

\section{Optimality of mouse scales for exactly reconstructing}

\begin{tm}\label{tm:optimality}
Let $T$ be good, exactly reconstructing, uniformly cofinal, uniformly boundedly $1$-solid. Let $\mathscr{M}=\mathscr{M}_T$. Then
 there is a set $B\in\Gamma_{\Mmm,\mathrm{ubd}}$ such that there is no
semiscale $\vec{\leq}$ on $B$ with $\vec{\leq}\in\undertilde{\Gamma}_{\Mmm,\mathrm{ubd}}$.\end{tm}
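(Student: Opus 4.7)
The plan is to exhibit a specific $B = A_{\psi^*} \in \Gamma_{\Mmm,\mathrm{ubd}}$ whose natural ``stage-of-witness'' norm ranges cofinally through $\OR^{M_x}$, and then to diagonalize against any putative $\utilde{\Gamma}_{\Mmm,\mathrm{ubd}}$-semiscale. Concretely, I would fix a recursive coding of reals as triples $x = (n,\sigma,y)$ where $\sigma$ is a $\rSigma_1$ sentence of $\Ll_\pm(\dot p)$, and take $\psi^*$ to be the uniformly bounded formula asserting ``$\dot x = (n,\sigma,y)$ and $\exists \alpha\ [(M \wr \alpha, p_1) \sat \sigma]$''. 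Witnessing $\psi^*(x)$ requires only exhibiting some proper segment of $M_x$ satisfying $\sigma$, with no a priori upper bound on $\alpha$; hence $\psi^*$ is uniformly bounded and $B \in \Gamma_{\Mmm,\mathrm{ubd}}$. However, the natural norm $\Phi(x) = $ least $\alpha$ with $(M_x \wr \alpha, p_1^{M_x}) \sat \sigma_x$ ranges cofinally in $\OR^{M_x}$ as $\sigma_x$ varies, because $T$ is cofinal in the sense of Definition \ref{dfn:unif_cofinal}.

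Suppose for contradiction that $\vec\leq = \langle \leq_k \rangle_{k<\omega}$ is a semiscale on $B$ with $\vec\leq \in \utilde{\Gamma}_{\Mmm,\mathrm{ubd}}(z_0)$ for some real $z_0$, witnessed by a recursive sequence of uniformly bounded $\rSigma_1$ formulas $\langle \lambda_k \rangle_{k<\omega}$ interpreted over $M_{(x,x',z_0)}$. Consider the tree $T_{\vec\leq}$ of the semiscale from Definition following Fact 1.43, so that $B = p[T_{\vec\leq}]$, and note that for fixed $k$, whether $(x\!\rest\! n, \vec\alpha)$ lies at level $k$ of $T_{\vec\leq}$ is determined by checking $\lambda_0, \ldots, \lambda_{k-1}$ at uniformly bounded stages $\eta_{(x,z_0), m^*_k}$ of $M_{(x, z_0)}$, where $m^*_k < \omega$ depends recursively on $k$ alone. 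By uniform cofinality of $T$ there is a single $m^* < \omega$ bounding all relevant witnesses at level $0$ of the tree; call the corresponding stage $\eta^*_{(x,z_0)} := \eta_{(x,z_0), m^*}$.

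I would then carry out a diagonalization by choosing a $\rSigma_1$ sentence $\sigma^*_{z_0}$ such that $(M\wr\beta, p_1) \sat \sigma^*_{z_0}$ only for $\beta > \eta^*_{(\dot x, z_0)}$ as computed internally. Setting $x^* = (0, \sigma^*_{z_0}, z_0)$ forces $\Phi(x^*) > \eta^*_{(x^*, z_0)}$, meaning the witness to $x^* \in B$ lies strictly beyond the bounded region of $M_{(x^*, z_0)}$ that the norms $\leq_k^*$ can inspect. Using the $\Gamma$-limit lifting framework of \S\ref{sec:iterability_M_infty}, I would then build a sequence $x_n \in B$ with $\sigma_{x_n}$ witnessed at stages cofinal below $\eta^*_{(x^*, z_0)}$ and such that $x_n \to x^*$ topologically; by the uniform boundedness of each $\leq_k$, the sequence $\langle \leq_k (x_n) \rangle$ is eventually constant for every $k$, so $x_n \to x^*$ mod $\vec\leq$, yet either $x^* \notin B$ (if $\sigma^*_{z_0}$ is designed so witnesses above $\eta^*$ exist only in $M_{x^*}$ but not in the ambient limit structure built from the $x_n$) or the semiscale fails, a contradiction.

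The hard part will be constructing $\sigma^*_{z_0}$ so as to genuinely separate the stage-of-witness in $M_{x^*}$ from anything visible within $M_{(x^*, z_0)}\wr\eta^*$, while still keeping $\sigma^*_{z_0}$ a $\rSigma_1$ sentence of the correct form. Two subtleties dominate: (i) $\sigma^*_{z_0}$ must be uniformly specifiable from $z_0$ (so that $x^*$ is a legitimate real), which forces us to absorb $z_0$ into the coding and thereby reduce to the lightface case via a fixed-point argument; and (ii) the interaction between the norms $\leq_k$ and the exact-reconstruction correspondence $M_{x^*} = \core_1(N_{x^*,\infty}^{M_{(x^*, z_0)}})$ must be analyzed carefully to ensure that the uniformly bounded information about $M_{(x^*, z_0)}$ used by the norms really does project to bounded information about $M_{x^*}$, so that the diagonal sentence $\sigma^*_{z_0}$ genuinely escapes. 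I expect the fixed-point construction and the exact-reconstruction analysis (relying crucially on uniformity of $T$) to be the bulk of the detailed work.
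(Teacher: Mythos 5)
There is a genuine gap, and it sits right at the foundation of your construction. You claim that your formula $\psi^*$ (``$\exists\alpha\,[(M\wr\alpha,p_1)\sats\sigma]$'' with ``no a priori upper bound on $\alpha$'') is uniformly bounded and hence that $B=A_{\psi^*}\in\Gamma_{\Mmm,\mathrm{ubd}}$, while simultaneously claiming that the associated stage-of-witness norm ranges cofinally through $\OR^{M_x}$. But by the definition of $T$-uniformly bounded, such a formula must assert ``there is $\alpha$ such that $M\wr\alpha\sats\psi\wedge\neg\varphi_n$'' for some fixed $n$; the conjunct $\neg\varphi_n$ forces any witness to occur strictly below the landmark stage $\eta_{xn}$. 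So either your $\psi^*$ carries this conjunct, in which case its witnesses are bounded and the cofinality of $\Phi$ --- the entire engine of your diagonalization --- disappears, or it does not, in which case $B\notin\Gamma_{\Mmm,\mathrm{ubd}}$ and the theorem's hypothesis on $B$ fails. The tension you want to exploit (membership witnessed arbitrarily high in $M_x$ versus norms that only inspect bounded stages) cannot be realized inside $\Gamma_{\Mmm,\mathrm{ubd}}$; uniform boundedness was designed precisely to exclude it.

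Beyond this, the diagonalization itself does not have the right logical shape to refute a semiscale. A semiscale is refuted by producing $x_n\in B$ converging topologically to some $x$ with all norm values eventually constant but $x\notin B$; you instead arrange $x^*\in B$ with a ``late'' witness and then hope that ``either $x^*\notin B$ or the semiscale fails,'' leaving both the fixed-point choice of $\sigma^*_{z_0}$ and the construction of the approximating sequence entirely open --- and the claim that uniform boundedness of the $\leq_k$ by itself makes $\left<\leq_k(x_n)\right>_{n<\om}$ eventually constant is unjustified. For comparison, the paper's proof avoids all of this by a short adaptation of Martin's classical argument: take $B(x,y)$ iff $y\in\RR\cut\RR^{M_x}$ (which is in the pointclass at a bounded level, since non-membership is decided by stage $\delta^{M_{x,y}}$), let $y$ be Cohen generic over $M_x$, observe that the tree of a putative boldface uniformly bounded semiscale, restricted to $M_{x,y}$, is definable over $M_{x,y}$ from $x$ at a bounded level, hence lies in $M_x$ by homogeneity of Cohen forcing, with $T_x\in M_x|\delta^{M_x}$; illfoundedness of $T_x$ (witnessed by $y$) is then absolute to $M_x$, producing a real of $M_x$ outside $\RR^{M_x}$, a contradiction. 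If you want to salvage your approach, you would need a different $B$ and a convergence argument of the correct shape; as written, the proposal does not go through.
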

\begin{proof}
We use an easy adaptation of Martin's proof.
That is, let $B(x,y)$ iff $y\in\RR\cut\RR^{M_x}$. Then $B\in\Delta^\Mmm_0$,
because given $x,y\in\RR$, we have $y\notin M_x$ iff $M_{x,y}\sats$``$y\notin N_{x\infty}^\CC$''
iff $M_{x,y}\sats$``$y\notin N_{x\delta}^\CC$'', where $\delta=\delta^{M_{x,y}}$.

Now suppose there is a semiscale $\vec{\leq}$ on $B$ and $n<\om$
such that $\vec{\leq}$ is $\Delta^\Mmm_n(x)$.
Let $y$ be Cohen generic over $M_x$. Then $M_{x,y}$ is a reorganization of $M_x[y]$.
Let $B'=B\inter M_{x,y}$, let $\vec{\leq'}$ be the norms
$\vec{\leq}\rest M_{x,y}$,
and let $T$ be the tree of this semiscale.
Then $T\in M_{x,y}$. Moreover, $T$ is $\Delta_n^{M_{x,y}}(\{x\})$
(since $\es_+^{M_{x,y}}$ is computed easily from $\es_+^{M_x}$),
and by homogeneity of the forcing, therefore $T\in M_x$.
So $T_x\in M_x$. But $y\in p[T_x]$, so $T_x$ is illfounded in $M_x$,
so $p[T_x]\inter M_x\neq\emptyset$ (note that $T_x\in M_x|\delta^{M_x}$,
so this does not require any extra closure of $M_x$ above $\delta^{M_x}$).
Considering the definition of $B_0$, this is a contradiction.
\end{proof}

\section{Appendix: ISC etc}\label{sec:ISC_etc}
(***Proofs in this section are small variants on proofs that appears elsewhere.)

In this section: Proof of ISC for pseudo-premice, partial measures ISC, proof of solidity/universality.

\begin{lem}\label{lem:type_Z_limit_card}
 Let $M=(N,G)$ be a generalized-pseudo-premouse and $G\rest(\xi+1)$ be type Z.
 Suppose that all proper segments of $M$ satisfy condensation.
 Then $\xi$ is a limit cardinal of $M$.
\end{lem}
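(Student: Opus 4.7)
The plan is to argue by contradiction: assume $\xi$ is not a limit cardinal of $M$. The first step is to observe that $\xi$ is at least a cardinal of $M$. This follows from the type Z structure of $G\rest(\xi+1)$: since $\xi$ is a generator of $G\rest(\xi+1)$ (and in particular is not in the range of the ultrapower map $i_G$ on ordinals below $\crit(G)$), $\xi$ is a cardinal of $\Ult(N,G\rest(\xi+1))$; by coherence, and because $M$ satisfies the generalized-pseudo-premouse axioms (in particular $M||\xi$ agrees with the passive reduct of the ultrapower up to $\xi$), $\xi$ is a cardinal of $N$, hence of $M$. So I may fix an $M$-cardinal $\rho < \xi$ with $\xi = (\rho^+)^M$, and examine $M|\xi = N|\xi$, which is a proper segment of $M$ and therefore satisfies condensation by hypothesis.

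Next I consider the two possibilities for $N|\xi$. If $N|\xi$ is active with extender $F$, then because $\xi = (\rho^+)^M$ and by the Mitchell--Steel indexing, $\nu(F) = \rho$ or $\nu(F) = \xi$, and the generators of $F$ agree with those of $G$ below $\xi$ on the common part of their domains (both produce an ultrapower with $(\rho^+)^{\Ult} = \xi$). Using this agreement, one shows that the cross-section $(G)_{a}$ for $a = \{\xi\}$, which by type Z is determined by a representing function drawn from $G\rest\xi$ together with the single generator $\xi$, gets canonically coded by $F$ in $N|\xi$. Now applying condensation to the factor map $k : \Ult(M|\xi,F) \to \Ult(N,G\rest(\xi+1))$ induced by $F \sim G\rest\nu(F)$, together with the fact that $\xi+1$ is a successor cardinal of the upper ultrapower (by type Z), forces $F$ itself to be type Z on the sequence of $N|\xi$. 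But type Z extenders are explicitly forbidden on a premouse extender sequence, contradicting that $N|\xi$ is a (true) premouse.

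If instead $N|\xi$ is passive, I derive a parallel contradiction: because $\xi$ is a generator of $G$ and the type Z condition makes $(G)_{\{\xi\}}$ a $\bfrSigma_1^{N|\xi+1}$-definable subset of $\rho$ from $G\rest\xi$, together with coherence of $G\rest\xi$ with $\es^N$, one recovers enough of $G\rest\xi$ inside $N$ to produce an extender at the level of $\xi$ that must appear on $\es^N$ by condensation; this contradicts the passivity of $N|\xi$. The main obstacle in both branches is verifying the precise form of the agreement between $G\rest(\xi+1)$ (which is an ambient extender possibly violating the ISC, since $M$ is only a generalized-pseudo-premouse) and the genuine fine-structural behaviour of the true premouse $N|\xi$; this is where condensation for proper segments of $M$ is doing real work, by ruling out the anomalous configuration created by the type Z hypothesis when $\xi$ is a successor cardinal.
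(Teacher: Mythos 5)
Your high-level outline (observe $\xi$ is an $M$-cardinal, fix $\rho$ with $\xi=(\rho^+)^M$, split on whether $N|\xi$ is active or passive, invoke condensation for proper segments of $M$) tracks the paper's strategy, but the way you try to extract a contradiction in each branch does not work, and the place where the type Z hypothesis actually does its work is missing.

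First, the ``factor map $k:\Ult(M|\xi,F)\to\Ult(N,G\rest(\xi+1))$'' you appeal to in the active branch is not a factor map: factor maps arise between ultrapowers of a \emph{common} base by \emph{nested} extenders, whereas here the bases ($M|\xi$ versus $N$) and the extenders ($F$ versus $G\rest(\xi+1)$) are both different, with no canonical comparison between them. The factor map that is available, and that the paper uses, is $k:U\to U'$ where $U=\Ult(N,G\rest\xi)$ and $U'=\Ult(N,G\rest(\xi+1))$; since $\xi$ is a generator, $\crit(k)=\xi$. Second, there is no reason for $F$ to be type Z, and trying to show it is type Z would anyway not give a clean contradiction: $F$ is a bona fide premouse extender on $\es^N$ (so it satisfies the premouse axioms by hypothesis), and the type Z character of $G\rest(\xi+1)$ is a property of a sub-extender of the \emph{ambient} $G$, which is an unrelated extender. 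Third, in the passive branch you claim that condensation lets you ``produce an extender at the level of $\xi$ that must appear on $\es^N$'': condensation says that a sound hull embedding into a premouse is (essentially) an initial segment, it does not put extenders onto a sequence. What you seem to want there is an initial segment condition for $G$, but that is exactly what can fail for a generalized-pseudo-premouse.

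The step you are missing is the arithmetic that type Z actually supplies. With $U$, $U'$, $k$ as above and $\xi=(\delta^+)^U$, one has $k(\xi)=(\delta^+)^{U'}=(\xi^+)^{U'}$, and the type Z condition gives $(\xi^+)^U=(\xi^+)^{U'}$, so
$(\delta^{++})^U=(\xi^+)^U=(\xi^+)^{U'}=(\delta^+)^{U'}$.
Applying condensation (for proper segments of $U'$, which are proper segments of $M$ by coherence) to the sound levels of $U$ projecting to $\xi$ and mapped by $k$, you get that $U|(\delta^{++})^U$ coincides with $U'||(\delta^{++})^U$ in the passive case, or with $\Ult(U'|\xi,\es^{U'}_\xi)||(\delta^{++})^U$ in the active case. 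But either identity is incompatible with the displayed chain of equalities: the $U$-side thinks $\xi$ is a cardinal while the $U'$-side (respectively the $\Ult(U'|\xi,\cdot)$-side) collapses $\xi$ to $\delta$ strictly below $(\delta^{++})^U=(\delta^+)^{U'}$. That is the contradiction, and your write-up never reaches it.
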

\begin{proof}
 Let $U=\Ult(N,G\rest\xi)$ and $U'=\Ult(N,G\rest(\xi+1))$ and $k:U\to U'$ be the factor map,
 so $\crit(k)=\xi$. Suppose $\xi$ is not a limit cardinal of $M$.
 Then $\xi=(\delta^+)^U$ for some $\delta$, so $k(\xi)=(\delta^+)^{U'}=(\xi^+)^{U'}$.
 Because $G\rest(\xi+1)$ is type Z,
 \[ (\delta^{++})^U=(\xi^+)^U=(\xi^+)^{U'}=(\delta^+)^{U'}. \]
 So by condensation for proper segments of $U'$, either:
 \begin{enumerate}[label=--]
  \item\label{item:pass} $U'|\xi$ is passive and $U|(\delta^{++})^U=U'||(\delta^{++})^U$, or
  \item\label{item:act} $U'|\xi$ is active and letting
  $R=\Ult(U'|\xi,\es^{U'}_\xi)$, then $U|(\delta^{++})^U=R||(\delta^{++})^U$.
 \end{enumerate}
 But both cases contradict the fact that $(\delta^{++})^U=(\delta^+)^{U'}$.
\end{proof}

\begin{rem}\label{rem:proof_gen-pseudo-pm} We now prove Theorem \ref{thm:pseudo-premice}:\end{rem}
\begin{tm}\label{tm:pseudo}
Let $M$ be a $(0,\om_1+1)$-iterable generalized-pseudo-premouse. Then either
$M$ is a premouse, or
\tu{[}$\nu(F^M)<\lgcd(M)=\delta$ and $(M|\delta,F^M\rest\delta)$ is a premouse\tu{]}.
\end{tm}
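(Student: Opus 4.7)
The plan is to adapt the classical Mitchell--Steel proof of the ISC for ordinary pseudo-premice (see \cite[\S10]{fsit}) to the generalized setting, using the $(0,\omega_1+1)$-iterability of $M$ together with the submeasure-ISC analysis underlying Theorem \ref{thm:ISC_for_submeasures} to rule out failures of the ISC for $F^M$ below $\lgcd(M)$. First I would fix $M = (N,G)$ not a premouse and let $\eta$ be the least ordinal $\leq \lh(G)$ at which the ISC fails in the standard sense, i.e., the least $\eta$ which is either a generator of $G$ or a type-Z successor of one, and for which $G\rest\eta$ is neither in $\es^N$ nor captured by an ultrapower of an active $F^{N|\eta}$ as permitted by the ISC. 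If $\eta$ is of type Z, Lemma \ref{lem:type_Z_limit_card} tells us that the predecessor $\gamma$ with $\eta=\gamma+1$ is a limit cardinal of $M$, which lets us replace $\eta$ by $\gamma$ and proceed with a structurally cleaner witness. Form the auxiliary active structure $P = (M|\theta, G\rest\eta)$, where $\theta = (\crit(G)^+)^{\Ult(N,G\rest\eta)}$, and run the usual phalanx comparison of $P$ against $M$ via the iteration strategy; by the submeasure argument (the proof of Theorem \ref{thm:ISC_for_submeasures} shows that an iterable type 1 premouse which is a submeasure of $F^M$ is an almost-initial-segment of $M$), the comparison forces $P \ins^{\mathrm{almost}} M$, which puts $G\rest\eta$ exactly on $\es^M$ up to at most one ultrapower step, contradicting the choice of $\eta$.

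Once every generator of $G$ strictly below $\delta = \lgcd(M)$ is known to be captured by the ISC, I would conclude the dichotomy of the theorem by splitting on the two clauses of the definition of generalized-pseudo-premouse. If $G\rest\delta \in \es^N$, then $G$ has no generators above $\delta$ (any such would yield an additional failure of the ISC at a stage below $\lh(G)$ and the previous paragraph would again apply), so $\nu(G) \leq \delta$ and all generators of $G$ are covered; hence $M$ is a premouse. If instead $\delta = \rho^{+N}$ and only $G\rest\rho \in \es^N$, then either $\nu(G) \leq \rho < \delta$, in which case $G\rest\delta = G$ and the analysis at generators $<\rho$ shows that $(M|\delta, G)$ is a genuine premouse (falling into the alternative of the theorem); or $\nu(G) \in (\rho, \delta]$, in which case the first paragraph applies to each generator in $(\rho, \nu(G))$ and again $M$ is a premouse.

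The main obstacle will be the phalanx comparison at the type-Z boundary: the standard Mitchell--Steel argument relies on shifting between $G\rest\eta$ and its natural-length counterpart without disturbing the cardinal structure just below $\eta$, and in the generalized setting the passive $N$-cardinal $\rho$ sitting immediately below $\delta$ could in principle disrupt that shift when $\eta$ lies in the interval $(\rho,\delta)$. Here Lemma \ref{lem:type_Z_limit_card} is essential; it replaces the ad hoc calculation used in \cite{fsit} for the ordinary case. The remaining work is routine: accommodate superstrong extenders in the copied iteration trees via the convention of \cite[2.43--2.44]{operator_mice}, and verify in the alternative conclusion that $(M|\delta, G\rest\delta)$ really satisfies all the premouse axioms, which reduces to checking the ISC at generators $\leq\nu(G)<\delta$, and this is exactly what the first paragraph delivers when applied inside $M|\delta$.
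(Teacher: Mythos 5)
Your overall decomposition (show the ISC below $\delta$ by an iterability argument, then split on the two shapes of generalized-pseudo-premouse to get the dichotomy) is the right shape, and the reduction of the non-pseudo-premouse alternative to the pseudo-premouse case for $(M|\delta,F^M\rest\delta)$ matches the paper. But the core of your first paragraph has two genuine gaps.

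First, you invoke the proof of Theorem~\ref{thm:ISC_for_submeasures} to conclude $P\ins^{\mathrm{almost}}M$, but that theorem is about \emph{type 1} structures (a single normal measure) and, more importantly, presupposes that both sides are already iterable \emph{premice}. Here $P=(M|\theta,G\rest\eta)$ is generally not type 1 (it has generators above $\crit(G)^{+}$), and whether it is a premouse is exactly what we are trying to decide, so appealing to the submeasure ISC as a black box is circular. The paper does not reduce Theorem~\ref{tm:pseudo} to Theorem~\ref{thm:ISC_for_submeasures}; it runs a separate phalanx comparison argument directly, and the phalanx used is $((H,0,{<\delta}),(R,r,\delta),(U,0))$, which has three components and must accommodate the anomalous case $r=-1$. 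That comparison is not ``routine''.

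Second, and crucially, you omit the step that makes the argument go through under only normal $(0,\omega_1+1)$-iterability. The paper first passes to the hull $H=\cHull_1^M(\{\gamma,G\rest\delta\})$ with $\gamma$ the largest generator of $F^M$. The point is that $H=\Hull_1^H(\{\gamma^H,F^H\rest\delta^H\})$, and this self-soundness is what replaces Dodd–Jensen in the phalanx comparison (the paper says explicitly that the fine-structural setup replaces Dodd–Jensen). Your version, running the comparison directly with $P$ and $M$, would need Dodd–Jensen and hence stack iterability, which the hypotheses do not provide. Without the hull step the argument as you have written it would not terminate correctly in the comparison analysis. You would also need to explain why a nontrivial largest generator $\gamma$ exists (the paper first reduces the no-largest-generator case and the $\gamma=\delta$ case to the pseudo-premouse setting), and why $F^H\rest\gamma^H$ is not type Z (which is where Lemma~\ref{lem:type_Z_limit_card} actually enters — as a tool to rule out a type-Z largest generator, not to clean up the index $\eta$).
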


\begin{proof}
\begin{clm}
 If $M$ is a pseudo-premouse then $M$ is a premouse.
\end{clm}
\begin{proof} See below.\end{proof}

Now suppose that
 $\nu(F^M)<\delta=\lgcd(M)$. So assume this. Then $M$ is not a pseudo-premouse,
 so $\delta=(\rho^+)^M$ where $\rho$ is an $M$-cardinal, and $F^M\rest\rho\in\es^M$. 
 Note that $M'=(M|\delta,F^M\rest\delta)$ is a pseudo-premouse. Moreover, $M'$ is $(0,\om_1+1)$-iterable.
So $M'$ is a premouse, as required.

So we may assume that $\nu(F^M)\geq\delta$.
By the claim above, it suffices to prove:

\begin{clm} $F^M\rest\delta\in\es^M$.\end{clm}
\begin{proof} See below.\end{proof}

\begin{clm} $\delta$ is a limit of generators of $F^M$ and for every $\alpha<\delta$
 there is $\beta\in(\alpha,\delta)$ such that $F^M\rest\beta\in\es^M$.
\end{clm}

This completes the proof.
\end{proof}

\begin{proof}
(***Need to point out that we have standard condensation for proper segs.)
Let $M=(N,G)$ be a $(0,\om_1+1)$-iterable generalized-pseudo-premouse.
For notational simplicity we will just consider the case that $M$ is not a pseudo-premouse; the other case is easier. (It is not however quite standard,
as we assume only $(0,\om_1+1)$-iterability, not $(0,\om_1,\om_1+1)$-iterability.)
So $M$ has largest cardinal $(\delta^+)^M$ where $\delta$ is an $M$-cardinal,
and $G\rest\delta\in\es^M$. Therefore $G$ has a generator $\geq\delta$.
Note that if $\delta$ is the largest generator of $G$ then we are 
done, so we may assume that $G$ has a generator $\gamma>\delta$.
Let $\nu=\nu_G$.

Suppose that $G$ has no largest generator. Then $\nu$ is a cardinal of $\Ult(M,G)$,
hence $\nu=(\delta^+)^M$. But then by considering $G\rest\alpha$ for cofinally many $\alpha<\nu$,
we can reduce to the pseudo-premouse case.

So we may assume that $G$ has a largest generator $\gamma$,
and we may assume that $\gamma\geq(\delta^+)^M$, as otherwise we can again reduce to the pseudo-premouse case.
Note that $(\delta^+)^M$ is not a generator of $G$ (otherwise the factor map 
\[ k:\Ult(M,G\rest(\delta^+)^M)\to\Ult(M,G) \]
has $\crit(k)=(\delta^+)^M$, which is easily a contradiction). So $\gamma>(\delta^+)^M$.
By \ref{lem:type_Z_limit_card}, $G\rest\gamma$ is not type Z.
Let $\nu=\nu(G\rest\gamma)$. We need to verify that ($*$) either 
\begin{enumerate}[label=--]
 \item $G\rest\gamma\in\es^M$, or
 \item $M|\nu$ is active and $G\rest\gamma\in\es^{\Ult(M|\nu,\es^M_\nu)}$.
\end{enumerate}

Let $H=\cHull_1^M(\{\gamma,G\rest\delta\})$ and $\pi:H\to M$ be the uncollapse.
Here $\Sigma_1$ is in the language with symbols $(\dot{\in},\dot{\es},\dot{F})$;
we do not use the extra constants used in the standard premouse language.
Let $\pi(\delta^H,\gamma^H)=(\delta,\gamma)$.
Then $H$ is a generalized-pseudo-premouse with largest cardinal $((\delta^H)^+)^H$,
$\delta^H$ is an $H$-cardinal,
$F^H$ has largest generator $\gamma^H>((\delta^H)^+)^H$, and $\pi(F^H\rest\delta^H)=G\rest\delta^H$.
Clearly $H$ is $(0,\om_1+1)$-iterable, by copying with $\pi$.
Moreover,
\[ H=\Hull_1^H(\{\gamma^H,F^H\rest\delta^H\}). \]

As above, $F^H\rest\gamma^H$ is not  type Z.
So by the following claim,
the ISC applies to $F^H\rest\gamma^H$, and lifting this up with $\pi$,
we have ($*$), completing the proof:

\begin{clm} $H$ is a premouse. \end{clm}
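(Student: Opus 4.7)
The plan is to verify the ISC for $F^H$, since every other premouse axiom is already built into the definition of a generalized-pseudo-premouse. The presentation $H = \Hull_1^H(\{\gamma^H, F^H\rest\delta^H\})$ makes $H$ $1$-sound with $\rho_1^H \leq \delta^H$ and with $\gamma^H$ the only element of $p_1^H$ above $\delta^H$, so $H$ is fine-structurally rigid; moreover, every proper segment of $H$ lies in $\rg(\pi)$ and, being the transitive collapse of a proper segment of $M$ (which satisfies condensation), is itself a premouse. Consequently any potential ISC failure for $F^H$ must come from a non-type-Z initial segment $E = F^H \rest (\xi+1)$ with $\xi < \gamma^H$ a generator of $F^H$, since type-Z initial segments are handled by Lemma \ref{lem:type_Z_limit_card} and since ISC already holds below $\gamma^H$ at the level of $H$'s proper segments.

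Arguing by contradiction, assume such a $\xi$ exists with $E \notin \es^H$ and, if $H|\xi$ is active, $E \notin \es^{\Ult(H|\xi, F^{H|\xi})}$. I would form the phalanx $\mathcal{P} = (H, \Ult(H|(\xi^+)^{\Ult(H,E)}, E), \xi)$ and copy it via $\psi_\pi$ to the analogous phalanx on $M$; iterability of the $M$-side phalanx is inherited from $M$, and the standard Shift-Lemma copying then yields $(0,\omega_1+1)$-iterability of $\mathcal{P}$. I would then compare $\mathcal{P}$ with $H$ itself using both strategies. Since every proper segment of $H$ is already a premouse, neither side produces disagreements below $\xi$, so the comparison proceeds entirely at the top and terminates with a creature of bicephalus form $(\bar N, F^*, G^*)$, where $F^*$ and $G^*$ are the respective images of $F^H$ and $E$ on the common passive part $\bar N$. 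Applying Theorem \ref{thm:bicephali} gives $F^* = G^*$; pulling this equality back through the comparison maps, together with the fine-structural rigidity of $H$ that forbids non-trivial drops on the $H$-side, forces $E$ onto the appropriate extender sequence, contradicting our assumption.

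The main obstacle will be controlling the comparison precisely enough to produce a bicephalus to which Theorem \ref{thm:bicephali} applies. Complications arise when some of $F^H$, $F^{H|\xi}$, or $E$ is of superstrong type, in which case the tree orders on the two sides can shift by a step; the sse-essentially-$m$-maximal framework developed earlier realigns the orders so that the bicephalus identification still goes through. A secondary technical point is that the $\psi_\pi$-copy of $\Ult(H|(\xi^+)^{\Ult(H,E)}, E)$ to the $M$-side must genuinely be an iterable phalanx, which uses the agreement $\pi(F^H\rest\delta^H) = G\rest\delta^H$ together with the Shift Lemma to propagate iterability from $M$. Once these points are in hand, the bicephalus identification closes the argument, yielding that $H$ is a premouse.
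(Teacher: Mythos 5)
Your proposal takes a genuinely different route from the paper. The paper builds the three-model phalanx $P=((H,0,{<\delta}),(R,r,\delta),(U,0))$ with $U=\Ult(H,F^H\rest\gamma^H)$, proves its iterability by lifting along the factor map into $\Ult(H,F^H)$, and then compares $P$ against $H$, tracking exactly where the two branches land; the hull presentation $H=\Hull_1^H(\{\gamma^H,F^H\rest\delta^H\})$ substitutes for Dodd--Jensen (we only have $(0,\omega_1+1)$-iterability, not iterability for stacks) and forces $b^\Tt$ above $U$ with no drops, from which the ISC for $F^H\rest\gamma^H$ is read off directly (Subclaims~\ref{sclm:compatible_exts} and~\ref{sclm:pres_fsH}). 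You instead assume an ISC failure at a generator $\xi<\gamma^H$, form the standard two-model phalanx, compare, and try to finish with Theorem~\ref{thm:bicephali}.

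The gap is the claim that the comparison ``terminates with a creature of bicephalus form $(\bar N,F^*,G^*)$.'' That is not a possible outcome of a comparison. A comparison halts when one final model is an initial segment of the other; if both sides reached a common passive part $\bar N$ but carried distinct top extenders $F^*\neq G^*$, that would be a disagreement at the top, and the comparison would be required to continue by hitting one of those extenders. So the bicephalus you want never appears as a comparison terminus. To invoke Theorem~\ref{thm:bicephali} you would have to construct the bicephalus explicitly as a single structure carrying both extenders and separately establish its $(0,\omega_1+1)$-iterability -- a nontrivial task that your sketch does not address and that is not obviously extractable from the two strategies you start with. (Note also that Theorem~\ref{thm:bicephali} as stated requires $\bar N$ small, which is another hypothesis you would need to secure.) The paper's argument never forms a bicephalus; it instead analyzes where compatible extenders can occur in the $(\Tt,\Uu)$ comparison and concludes from the branch positions that $F^H\rest\gamma^H$ is forced onto the appropriate sequence. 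Your first paragraph's observation that ISC failures reduce to non-type-Z generators via Lemma~\ref{lem:type_Z_limit_card} is correct and compatible with the paper, and the $\psi_\pi$-copy for phalanx iterability is reasonable, but the bicephalus finish is the step that would fail.
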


\begin{proof}
We can mostly follow the standard comparison proof,
using the fine structural setup to replace the use of Dodd-Jensen.

Renaming, let $\delta=\delta^H$ and $\gamma=\gamma^H$ and $F=F^H$.
Let $U=\Ult(H,F\rest\gamma)$.
Then $\gamma=(\delta^{++})^U$ and $i^H_{F\rest\gamma}(\kappa)>\gamma$
where $\kappa=\crit(F)$.

Define the phalanx
\[ P=((H,0,{\leq\delta}),(U,0)). \]

Here the notation means that we form normal trees $\Uu$ on $P$ by setting $\pred^\Uu(\alpha+1)=-1$
when $\crit(E^\Uu_\alpha)\leq\delta$, and otherwise $\pred^\Uu(\alpha+1)\geq 0$.
Note that $U|\gamma=H||\gamma$, and we will have $\lh(E^\Uu_0)>\gamma$, so when $\pred^\Uu(\alpha+1)=-1$,
$M^{*\Uu}_{\alpha+1}=H$ and $\Uu$ does not drop at $\alpha+1$. Also,
note that $\crit(E^\Tt_\alpha)\notin(\delta,\gamma]$.
Now $P$ is $(\om_1+1)$-iterable, as we can lift normal trees on $P$ to trees on $H$.

Let $(\Tt,\Uu)$ be the result of comparing $H$ vs $P$.
The fine structural situation clearly prevents either $b^\Tt$ or $b^\Uu$ from dropping.

Let $R\pins H$ be least such that $\gamma\leq\OR^R$ and $\rho_\om^R=(\delta^+)^H$.
Because $\gamma<\OR^M$

Renaming, let $\delta=\delta^H$ and $\gamma=\gamma^H$ and $F=F^H$.
Let $U=\Ult(H,F\rest\gamma)$. Then $\gamma=(\delta^{++})^U$ and $i^H_{F\rest\gamma}(\kappa)>\gamma$
where $\kappa=\crit(F)$.Let $R\pins H$ be least such that 
$\gamma\leq\OR^R$ and $\rho_\om^R=(\delta^+)^H$ and let $r=\projdeg(R)$.

Define the phalanx
\[ P=((H,0,{<\delta}),(R,r,\delta),(U,0)). \]
Here the notation indicates that when forming iterates $\Tt$ of $P$,
if $E=E^\Tt_\alpha$ and $\crit(E)<\delta$ then $\pred^\Tt(\alpha+1)=H$ and 
$\deg^\Tt(\alpha+1)=0$ (so $M^\Tt_{\alpha+1}=\Ult_0(H,E)$),
if $\crit(E)=\delta$ then $\pred^\Tt(\alpha+1)=R$ and $\deg^\Tt(\alpha+1)=r$, etc.
Here $\Tt$ will be normal, and we will have $\gamma<\lh(E^\Tt_0)$,
so $\crit(E)\notin(\delta,\gamma]$, and $E$ will be total over $P$ or $R$ in the situations just 
mentioned. The case that $r=-1$ is just the \emph{anomalous} case in the terminology of 
\cite{deconstructing}.
In this case $R$ is type 3 with $\nu(F^R)=\delta=\lgcd(R)$,
and $\Ult_{-1}(R,E)$ denotes forming the standard degree $0$ ultrapower,
but of $R$ itself, not $R^\sq$ (which we must do as $\delta=\lgcd(R)$).
Likewise if $\pred^\Tt(\alpha+1)=\beta$ and $R<_\Tt\beta$ and 
$(R,\alpha+1)_\Tt\inter\dropset^\Tt=\emptyset$. If $R<_\Tt\beta$ and 
$(R,\beta)_\Tt\inter\dropset^\Tt=\emptyset$ and $E^\Tt_\beta=F(M^\Tt_\beta)$
then we use $\lgcd(M^\Tt_\beta)$ as the exchange ordinal instead of $\nu(E^\Tt_\beta)$
(it can be that $\nu(E^\Tt_\beta)<\lgcd(M^\Tt_\beta)$, and for the proof of iterability of $P$ it 
is more convenient to use $\lgcd(M^\Tt_\beta)$ as the exchange ordinal).

\begin{sclm} $P$ is $(\om_1+1)$-iterable with respect to trees $\Tt$ as described above.\end{sclm}
\begin{proof}[Proof Sketch]
Let $\pi_H:H\to H$ be the identity, $\pi_R:R\to R\pins H$ be the identity/inclusion,
and $\pi_U:U\to U_F=\Ult(H,F)$ be the factor. So 
\[ \crit(\pi_U)=\gamma=(\delta^+)^U=(\delta^+)^R \]
so $\pi_H\rest\delta\sub\pi_U$ and $\pi_R\rest\gamma\sub\pi_U$,
and $\pi_U(\gamma)=(\delta^+)^{U_F}=\OR^H$. So using these lifting maps, one can lift
the relevant trees $\Tt$ to normal trees $\Uu$ on the phalanx 
\[ P'=((H,0,<\delta),(H,0,\delta),(U_F,0,>\delta)), \]
where $\OR^H<\lh(E^\Uu_0)$ (the latter because $\gamma<\lh(E^\Tt_0)$ and $\pi_U(\gamma)=\OR^H$).
Note that this is just a $0$-maximal continuation of the tree on $H$ which uses only the extender 
$F^H$. This completes the sketch.
\end{proof}

Now let $(\Tt,\Uu)$ be the comparison of $(P,H)$, using our $(\om_1+1)$-strategies. Note here that 
$U|\gamma=H||\gamma$,
so if $\Tt$ is non-trivial then $\lh(E^\Tt_0)>\gamma$, as required.

\begin{sclm}\label{sclm:compatible_exts}
Let $\alpha,\beta$ be such that $E^\Tt_\alpha\rest\nu=E^\Uu_\beta\rest\nu$
where $\nu=\min(\nu(E^\Tt_\alpha),\nu(E^\Uu_\beta))$.
Then $r=-1$ and $\beta=0$ (so $E^\Uu_\beta=F^R=F^{R|\gamma}$ is type 3), $R<_\Tt\alpha$ and 
$E^\Tt_\alpha=F(M^\Tt_\alpha)$, and in particular, $\crit(E^\Tt_\alpha)=\crit(E^\Uu_\beta)<\delta$.
\end{sclm}
\begin{proof}***Fill in\end{proof}
\begin{sclm}
 The comparison terminates.
\end{sclm}
\begin{proof}
Otherwise the usual argument gives $\alpha,\beta$ such that 
$E^\Tt_\alpha\rest\nu=E^\Uu_\beta\rest\nu$ where $\nu=\min(\nu^\Tt_\alpha,\nu^\Uu_\beta)$,
where $\nu^\Tt_\alpha$ is the exchange ordinal associated to $E^\Tt_\alpha$,
and $\nu^\Tt_\beta$ is likewise, and $\kappa=\crit(E^\Tt_\alpha)>>\delta$. Here recall that
$\nu(E^\Tt_\alpha)\leq\nu^\Tt_\alpha$
and $\nu^\Tt_\beta=\nu(E^\Tt_\beta)$. This contradicts Subclaim \ref{sclm:compatible_exts}.
\end{proof}

The following is a standard calculation (for the fact that 
$F(M^\Tt_\alpha)$ is generated by $i^\Tt_{0\alpha}(\gamma)+1$ see 
example \cite{extmax}):
\begin{sclm}\label{sclm:pres_fsH}
 Let $0<_\Tt\alpha$ be such that $(0,\alpha]_\Tt$ does not drop in model.
 Then $i^\Tt_{0\alpha}(\gamma)$ is the largest generator of $F(M^\Tt_\alpha)$
 and $i^\Tt_{0\alpha}(\delta)=\lgcd(M^\Tt_\alpha)$ and
 and $i^\Tt_{0\alpha}(F^H\rest\delta)=F(M^\Tt_\alpha)\rest i^\Tt_{0\alpha}(\delta)$.
 
Likewise for $\Uu$ if $H<_\Uu\alpha$ and $(H,\alpha]_\Uu$ does not drop,
or if $U<_\Uu\alpha$ and $(U,\alpha]_\Uu$ does not drop.
\end{sclm}

\begin{sclm}
For each $\alpha+1<\lh(\Tt)$, $E^\Tt_\alpha$ is close to $M^{*\Tt}_{\alpha+1}$.
Likewise for $\Uu$.\end{sclm}
\begin{proof}[Proof Sketch]
For $\Uu$ this is by the proof of \cite[6.1.5]{fsit},
and for $\Tt$ is by that proof along with some extra observations.
That (inductive) proof works in the case that $\pred^\Tt(\alpha+1)\notin\{H,R\}$;
let us consider the other cases.
Let $E=E^\Tt_\alpha$.

Suppose $\pred^\Tt(\alpha+1)=R$, so $\crit(E)=\delta$.
If $E=F(M^\Tt_\alpha)$ then 
\[ (\phalroot^\Tt(\alpha),\alpha]_\Tt\inter\dropset^\Tt\neq\emptyset.\]
For suppose not. If $H<_\Tt\alpha$ then
\[ \crit(i^\Tt_{H,\alpha})<\crit(F(M^\Tt_\alpha))=\delta<\gamma<\lh(E^\Tt_0), \]
so $\crit(F(M^\Tt_\alpha))\notin\rg(i^\Tt_{H,\alpha})$, contradiction.
If $R<_\Tt\alpha$ then $\delta=\crit(i^\Tt_{\alpha,R})$, again a contradiction.
But $U\not<_\Tt\alpha$, because $\crit(F^U)>\delta$, because 
$F^H$ is not superstrong, because $F^H$ has generators $\geq\delta$.
Now arguing as in \cite[6.1.5]{fsit}, one can show that the component measures of $E$
are all in $U$.
But then by condensation applied to the factor map $\pi_U$, we get that either
$H|\gamma$ is passive and all of the component 
measures are in $R$, or $R=H|\gamma$ is active and all the component measures are in $\Ult(R,F^R)$.
Note here that condensation suffices here because it applies to all proper segments of $U$ 
which project to $\gamma=(\delta^+)^U$, and this includes 
all of the component measures of $E$. So $E$ is close to $R$.

Now suppose that $\pred^\Tt(\alpha+1)=H$, so $\crit(E)<\delta$.
If all component measures are in $U$, hence in $U|\gamma\sub H$, then we are done
(maybe $\gamma=(\kappa^{++})^U$ where $\kappa=\crit(E)$).
So we may assume that $E^\Tt_\alpha=F(M^\Tt_\alpha)$ and
\[ (\phalroot^\Tt(\alpha),\alpha]_\Tt\inter\dropset^\Tt=\emptyset.\]
Therefore $U\not<_\Tt\alpha$. If $R<_\Tt\alpha$
then by induction, all component measures of $E$ are definable over $R$,
hence in $H$. If $H<_\Tt\alpha$ then $\crit(E)<\crit(i^\Tt_{H,\alpha})$,
so by induction all component measures are close to $H$, so we are done.
\end{proof}

By the previous subclaim, we get standard preservation of fine structure under the iteration maps 
of $\Tt,\Uu$.

\begin{sclm}
$M^\Tt_\infty=M^\Uu_\infty$, $b^\Tt$ is above $U$, and neither $b^\Tt$ nor $b^\Uu$ drops.
\end{sclm}
\begin{proof}
The fact that $M^\Tt_\infty=M^\Uu_\infty$ is straightforward using standard fine structure
and the fact that $H=\Hull_1^H(\{\gamma,F^H\rest\delta\})$.
The fact that not both $b^\Tt,b^\Uu$ drop in model is also by standard fine structure.

Suppose $b^\Uu$ drops in model, so $b^\Tt$ does not. If $b^\Tt$ is above $H$ or $U$,
we get a contradiction using standard fine structure and the fact that $H=\Hull_1^H(\{x\})$ for 
some $x$. So $b^\Tt$ is above $R$. Because $b^\Uu$ drops in model,
$M^\Uu_\infty$ is a premouse, which implies that $r\geq 0$ (as if $r=-1$
then $F(M^\Tt_\infty)\rest\delta\notin M^\Tt_\infty$). But standard fine structure,
considering the core map determined by $M^\Tt_\infty=M^\Uu_\infty$,
gives that $\Tt,\Uu$ use compatible extenders having critical point $\geq\delta$,
contradicting Subclaim \ref{sclm:compatible_exts}.

So $b^\Uu$ does not drop in model.
If $b^\Tt$ drops in model or $b^\Tt$ is above $R$
then we get a contradiction as above (here if $r=-1$, use that 
the ISC fails for $F(M^\Tt_\infty)\rest\delta$
for $M^\Tt_\infty$, whereas by Subclaim \ref{sclm:pres_fsH}, it holds at $\delta$ for 
$M^\Uu_\infty$).
So $b^\Tt$ is above $H$ or $U$,
and does not drop in model.

Suppose $b^\Tt$ is above $H$.
Then $i^\Tt=i^\Uu$, by Subclaim \ref{sclm:pres_fsH}. So letting $\alpha+1=\min(b^\Tt\cut\{H\})$
and $\beta+1=\min(b^\Uu\cut\{0\})$, we have $E^\Tt_\alpha\rest\nu=E^\Uu_\beta\rest\nu$
where $\nu=\min(\nu(E^\Tt_\alpha),\nu(E^\Uu_\beta))$. So by Subclaim \ref{sclm:compatible_exts},
$r=-1$ and $\beta=0$ and $R<_\Tt\alpha$ and $E^\Tt_\alpha=F(M^\Tt_\alpha)$.
Note that $\delta$ is a generator of $E^\Tt_\alpha$, so 
$M^\Uu_\infty\neq M^\Uu_{\beta+1}$.
So let $\alpha'+1=\min(R,\alpha]_\Tt$ and $\beta'+1=\min(b^\Uu\cut\{0,\beta+1\})$.
Then $E^\Tt_{\alpha'}\rest\nu'=E^\Uu_{\beta'}\rest\nu'$ where 
$\nu'=\min(\nu(E^\Tt_{\alpha'}),\nu(E^\Uu_{\beta'}))$. But $\crit(E^\Tt_{\alpha'})=\delta$,
contradicting Subclaim \ref{sclm:compatible_exts}.
\end{proof}

So $b^\Tt$ is above $U$ and does not drop in model.
Again using Subclaim \ref{sclm:pres_fsH},
\[ i^\Tt=i^\Uu\com i^H_{F^H\rest\gamma}.\]
And $\gamma<\crit(i^\Uu)$ by construction. So letting $\alpha+1=\min(b^\Uu)$,
\[ F^H\rest\nu=E^\Uu_\alpha\rest\nu \]
where $\nu=\min(\gamma,\nu(E^\Uu_\alpha))$.
But $\gamma\leq\lh(E^\Uu_0)$.

Suppose that $\gamma=\lh(E^\Uu_\alpha)$, so $\alpha=0$.
Recall that $F^H\rest\gamma$ has a generator $\geq\delta$.
So if $\nu(E^\Uu_0)=\delta$ then note that $1<_\Uu\infty$
and that $\delta=\crit(i^\Uu_{1,\infty})$. But it follows that $\nu(F^H\rest\gamma)=\delta+1$
and $F^H\rest\gamma$ is type Z, a contradiction. So $\delta<\nu(E^\Uu_0)$,
so if $1<_\Uu\infty$ then $\gamma<\crit(i^\Uu_{1,\infty})$,
which gives that $F^H\rest\gamma=E^\Uu_\alpha$, as required.

Now suppose that $\gamma<\lh(E^\Uu_\alpha)$.
Then $\gamma$ is a cardinal of $\exit^\Uu_\alpha$,
which by Subclaim \ref{sclm:pres_fsH} implies that $\gamma\leq\nu(E^\Uu_\alpha)$
and if 
$\gamma<\nu(E^\Uu_\alpha)$
then $E^\Uu_\alpha\rest\gamma\in\exit^\Uu_\alpha$,
hence $E^\Uu_\alpha\rest\gamma\in M^\Uu_\infty$.
But by the compatibility above, this also gives that $F^H\rest\gamma=E^\Uu_\alpha\rest\gamma$,
and since $F^H\rest\gamma\notin M^\Tt_\infty$,
therefore $\gamma=\nu(E^\Uu_\alpha)$. It follows that either $\alpha=0$
and $E^\Uu_0=F^H\rest\gamma\in\es^H$,
or $\alpha=1$ and $\lh(E^\Uu_0)=\gamma$
and $E^\Uu_1=F^H\rest\gamma$, which completes the proof that $H$ is a premouse.
\end{proof}
This completes the proof of the theorem.
\end{proof}

\begin{tm}[ISC for submeasures]\label{thm:ISC_for_submeasures}
 Let $N$ be an active, $(0,\om_1+1)$-iterable premouse and $\kappa=\crit(F^N)$.
 Let $M$ be a type 1 premouse such that $M|\kappa=N|\kappa$
 and $(\kappa^+)^M=\xi<(\kappa^+)^N$ and
$F^M\rest\xi\sub F^N$.

 Then either \tu{(}i\tu{)} $M\pins N$ or \tu{(}ii\tu{)} $N|\xi$ is active and 
$M\pins U=\Ult(N|\xi,F^{N|\xi})$.
\end{tm}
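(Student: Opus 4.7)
The plan is to construct a suitable embedding of $M$ into a structure related to $N$, and then apply the condensation lemma (Fact \ref{fact:condensation_from_norm_it}) to obtain the initial-segment conclusion. Concretely, since $\crit(F^M)=\kappa$ and $M|\kappa=N|\kappa$, I can form the ultrapower $U^*=\Ult(N|\kappa,F^M)$, which makes sense because $F^M$ measures exactly the subsets of $\kappa$ in $M|\kappa=N|\kappa$. The hypothesis $F^M\rest\xi\sub F^N$ gives a natural $\Sigma_0$-elementary factor map $k:U^*\to\Ult(N|\kappa,F^N)$, and because $\nu(F^M)=\xi<(\kappa^+)^N$, this factor map has $\crit(k)=\xi$. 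Because $M$ is type $1$ with active extender $F^M$, the standard identification (using Mitchell-Steel indexing of type $1$ extenders) shows that $M\pins^{\mathrm{almost}} U^*$, in the sense of Definition made earlier in \S\ref{sec:silver_mice}.

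Next, I would use the coherence of $F^N$ to identify $\Ult(N|\kappa,F^N)$ with a structure sitting inside $N$ above $\xi$: specifically, $\Ult(N|\kappa,F^N)$ agrees with $\Ult(N,F^N)$ below $\lh(F^N)$, and the passive cardinal segments of $N$ above $\xi$ are read off from this ultrapower via the coherence given by the active extender $F^N$. Composing $k\rest M$ with the natural identification yields an $\in$-preserving $\Sigma_0$-elementary map $\pi:M\to N$ which is the identity on $M|\xi=N||\xi$ and satisfies $\pi(\kappa)=\kappa$ and $\pi(\xi)\geq\xi$, and which is in fact a near $0$-embedding (this follows from $k$ being a near embedding and the fine-structural preservation given by coherence of $F^N$).

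The third step is to apply condensation to $\pi$. We have $\rho_1^M=\kappa<\xi\leq\crit(\pi)$ (since $M$ is type $1$, $\rho_1^M\leq\kappa$, and $M$ is sound by hypothesis). Fact~\ref{fact:condensation_from_norm_it}, applied in conjunction with the version of condensation encoded in the notion of $0$-condensation-standardness (which holds of $N$ by its iterability), yields the dichotomy: either $M\pins N$ directly, or $N|\xi$ is active and $M\pins\Ult(N|\xi,F^{N|\xi})$. The bifurcation is exactly the standard condensation dichotomy according to whether $\xi$ is a passive or active point in $N$'s sequence, and the active case forces the conclusion to land inside the ultrapower $U$ because $M$ is type $1$ with generator $\kappa<\xi$.

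The main obstacle will be in step two: verifying that the composed map $\pi$ really is a near $0$-embedding (not merely $\Sigma_0$-elementary) and that $M$ projects to $\kappa$ in the sense required by condensation. This amounts to checking that the ultrapower factoring respects the fine structure of Mitchell-Steel type $1$ extenders at the boundary $\xi$, in particular that no extraneous information above $\xi$ in $U^*$ obstructs the embedding. A secondary subtlety is ensuring iterability of $U^*$ (needed implicitly so that condensation can be invoked through its standard form), which follows from $(0,\om_1+1)$-iterability of $N$ by lifting iteration trees on $U^*$ through $k$ to trees on $N$, using that $\crit(k)=\xi$ exceeds the critical point of $F^M$.
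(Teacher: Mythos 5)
Your overall route — build an embedding $\pi\colon M\to N$ from a factor map between the ultrapowers by $F^M$ and $F^N$, then apply condensation — is the same as the paper's. But there is a genuine gap in the condensation step, plus some smaller imprecisions worth fixing.

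The gap: to obtain the dichotomy you need a condensation theorem applicable to an embedding whose \emph{target} is $N$ itself. The hypotheses only give $(0,\om_1+1)$-iterability of $N$, not $1$-soundness. The two facts you cite do not reach this. Fact~\ref{fact:condensation_from_norm_it} is stated for $y>0$ and requires the target to be $y$-sound; and $0$-condensation-standardness, which \emph{does} follow from $(0,\om_1+1)$-iterability alone, only governs embeddings into \emph{proper} segments of $N$ (unwinding the definition, $(N',n+1)\ins(N,0)$ with $n\geq 0$ forces $N'\pins N$). Neither covers $\pi\colon M\to N$ when $N$ itself may fail to be $1$-sound. The paper closes this gap with a reduction you omit: it observes that the failure of the theorem is a $\Sigma_1^N$ assertion, so reflects down to $\cHull_1^N(\emptyset)$, and hence one may assume $N=\Hull_1^N(\emptyset)$, i.e.\ that $N$ is $1$-sound. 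Only then does $1$-condensation (\cite[Theorem 5.2]{premouse_inheriting} with $k=0$) apply to $\pi$ and deliver the dichotomy. Without that hull-reduction your appeal to condensation does not go through.

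Secondary points. The expression $\Ult(N|\kappa,F^M)$ is ill-formed: $N|\kappa$ has ordinal height $\kappa$ and contains no unbounded subsets of $\kappa$, so $F^M$ has no domain to act on there; what you want is the ultrapower of $M||\xi=N||\xi$ (or of $M$ itself), and correspondingly $\Ult(N,F^N)$ rather than $\Ult(N|\kappa,F^N)$. The paper packages this more cleanly: with $j=i^N_{F^N}$ and $k\colon\Ult(M,F^M)\to j(M)$ the factor map, coherence gives $j(M)|\OR^N=N^\passive$, so $\pi=k\rest M$ lands inside $N$ with $\crit(\pi)=\xi$, $\pi(\xi)=(\kappa^+)^N$. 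Also, you should not assert ``$M\pins^{\mathrm{almost}}U^*$'' as an intermediate step; that is essentially the shape of the conclusion you are trying to prove via condensation, not something available beforehand. Finally, the iterability that condensation needs is that of the \emph{target} $N$, which is given; ``iterability of $U^*$'' is not the relevant requirement, so that worry, and the proposed lifting of trees through $k$, is a red herring.
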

This theorem can be proved directly in the style of the ISC (such as \cite[\S10]{fsit}
or \ref{tm:pseudo})
and such an argument is given in \cite{mim} (literally under the restriction
to premice without superstrongs) from very similar assumptions.\footnote{
In \cite{mim}, the term \emph{premouse} did not demand $(k+1)$-condensation
for all proper segments, as we take it do demand here. But $(k+1)$-condensation for proper 
segments follows from $(0,\om_1+1)$-iterability by \cite{premouse_inheriting}. In \cite[Theorem 4.15]{mim}, it 
is assumed that $\Ult_0(M,\mu)$ is $(0,\om_1+1)$-iterable (in the notation used there, so $\mu=F^M$). But a straightforward calculation shows that this is equivalent to the $(0,\om_1+1)$-iterability of $M$;
moreover, this is a simple instance of \cite[Theorem 9.6]{iter_for_stacks}.} Here we instead deduce the theorem 
from $1$-condensation (\cite[Theorem 5.2]{premouse_inheriting} with $k=0$;
\cite[Theorem 5.2]{premouse_inheriting} is a natural adaptation to Mitchell-Steel indexed mice of Zeman's 
general condensation theorem \cite[9.3.2]{imlc} for Jensen-indexed mice).
\begin{proof}
Suppose not. By the ISC, we may assume that $N$ is type 1.
Note that by weak amenability, $M\in N$ and the failure of the theorem with respect to $N$ is a 
$\Sigma_1^N$ assertion.
Therefore by replacing $N$ with $\cHull_1^N(\emptyset)$, we may assume that 
$N=\Hull_1^N(\emptyset)$.

Let $j=i^N_{F^N}$.
 Because $M|\kappa=N|\kappa$, we have $j(M)|j(\kappa)=j(N|\kappa)$,
 so $j(M)|\OR^N=N^\passive$.
  Let
  \[ k:\Ult(M,F^M)\to j(M) \]
  be the factor. Then $\crit(k)=\xi=(\kappa^+)^M$ and 
$k(\xi)=(\kappa^+)^N$ and $k(\OR^M)=\OR^N$. Let $\pi:M\to N$ be $\pi=k\rest M$.
Then $\pi$ is $\Sigma_0$-elementary; 
this follows readily from the fact that $F^M\rest\xi\sub F^N$.
Also, either $\rho_1^M=\xi$ or $\rho_1^M=\kappa$ (we can't have $\rho_1^M<\kappa$ as $M\in N$ and 
$N|\kappa\sub M$), and $p_1^M\sub\xi$ and $M$ is $\xi$-sound.
We have that $N$ is $1$-solid, in fact $1$-sound, as $N=\Hull_1^N(\emptyset)$.
So by $1$-condensation (\cite[Theorem 5.2]{premouse_inheriting}, with $k=0$),
the desired conclusion holds. For clearly cases 1 and 3(c)
of \cite[Theorem 5.2]{premouse_inheriting} fail. So does its case 3(b),
as otherwise we have some $G\in\es^N$ with $\crit(G)=\kappa$,
and some $Q,k$ such that $M=\Ult_k(Q,G)$. But  $\crit(F^M)=\kappa$, contradicting that because 
$\crit(G)=\kappa$.
The other cases each give the desired 
conclusion.
\end{proof}

\begin{rem}\label{rem:proof_bicephalus_comp}We now give the proof of Theorem \ref{thm:bicephali}.\end{rem}
\begin{tm*} Let $B=(N,F,G)$ be a $(0,\om_1+1)$-iterable bicephalus such that 
$N$ is small.
Then $F=G$.
\end{tm*}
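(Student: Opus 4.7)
The plan is to run a bicephalus comparison in the style of \cite[\S 9]{fsit} and \cite[Theorem 5.3]{extmax}, but with modifications to accommodate (i) superstrong extenders on the sequence and (ii) the mixed-type case where $(N,F)$ and $(N,G)$ may be of different premouse types. Iterability is applied directly to $B$ as a bicephalus, so the trees and lifted trees live on $B$ itself and both ``halves'' are iterated in tandem.

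First I would form the comparison $(\Tt,\Uu)$ of $(N,F)$ against $(N,G)$ as normal iteration trees on the bicephalus $B$. At each stage, the least disagreement between $M^\Tt_\alpha$ and $M^\Uu_\alpha$ is located, and the offending extender is applied on the appropriate side, with $\nu^\Tt_\alpha$ taken to be $\max(\lgcd(B^\Tt_\alpha),\nu(E^\Tt_\alpha))$ when $E^\Tt_\alpha$ is one of the two top extenders of the bicephalus (as in \cite{extmax}), and $\nu(E^\Tt_\alpha)$ otherwise. The pair $(\Tt,\Uu)$ is via the $(0,\omega_1+1)$-iteration strategy for $B$, so all models encountered are wellfounded. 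Smallness of $N$ (no Woodin cardinals) gives termination in $<\omega_1$ many steps, via the standard hull-reflection argument: if the comparison ran to length $\omega_1$, a countable elementary substructure $\pi:H\to V_\theta$ with $\crit(\pi)=\kappa$ would yield compatible extenders $E^\Tt_\alpha,E^\Uu_\beta$ with $\alpha+1=\min((\kappa,\omega_1]_\Tt)$ and $\beta+1=\min((\kappa,\omega_1]_\Uu)$, contradicting least-disagreement.

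Next, I would analyze the final models $M^\Tt_\infty,M^\Uu_\infty$, which are themselves bicephali (or premice if the branches drop). Standard fine structure prevents both sides from dropping in model on their main branches, and more is true: because $B$ is already formally a ``double'' structure, a Dodd--Jensen style analysis shows that the main branches are non-dropping and $M^\Tt_\infty=M^\Uu_\infty$, with $i^\Tt=i^\Uu$ on the common part. Using closeness of the extenders used in $\Tt,\Uu$ to the starting models, together with the fact that $B=\Hull_1^B(\emptyset\cup\{F,G\})$ (after replacing $B$ by its hull, as may be done since the failure of the theorem is $\Sigma_1$ in $B$), one concludes that the very first extenders applied on each side must have come from the common portion of the two bicephali. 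This forces $F\rest\nu_F$ and $G\rest\nu_G$ to coincide on an initial segment big enough that, combined with the ISC applied inside $B$, we get $F=G$.

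The main obstacle will be the mixed-type bicephalus case, which is only partly covered in \cite{extmax}: one must track $\nu$-preservation carefully through copy maps, verify that the exchange-ordinal convention behaves correctly when the two top extenders have different types (e.g.\ one type 2, one type 3 with $\lgcd<\nu$), and adapt the argument of \cite[Remark 2.44]{operator_mice} to the trees at hand so that superstrong extenders do not disrupt the comparison. Once the proof is seen to go through for trees of length $\leq\omega_1+1$, the conclusion follows: if $F\neq G$ then one side would use a nontrivial extender at stage $0$ while the other stayed trivial, giving an iterable bicephalus whose first extender lies in its own sequence, which contradicts the ISC applied within $M^\Tt_\infty=M^\Uu_\infty$.
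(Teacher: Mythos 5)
Your proposal outlines the general shape of a bicephalus comparison, but it misplaces the role of smallness and, more importantly, it does not identify the concrete observation that the proof actually turns on. You invoke smallness only for termination of the comparison (``Smallness of $N$ \ldots gives termination in $<\om_1$ many steps''), but termination here is handled by the usual hull-reflection argument from $(0,\om_1+1)$-iterability alone, independent of whether $N$ is small. The place where smallness is genuinely needed is elsewhere.

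The real issue is that the arguments of \cite[\S9]{fsit} and \cite[Theorem 5.3]{extmax}, to which one wants to reduce, are carried out for premice whose extender sequences contain no superstrong-type extenders. So the correct strategy is not to ``adapt the argument of \cite[Remark 2.44]{operator_mice} \ldots so that superstrong extenders do not disrupt the comparison'' --- which leaves the central difficulty unaddressed --- but rather to show that no superstrong-type extender is ever used in the comparison, after which those existing arguments apply verbatim. This is exactly where smallness enters: if $E^\Tt_\alpha$ were of superstrong type, then $\exit^\Tt_\alpha$ would satisfy ``there is a Woodin cardinal''; letting $\delta=\delta^{\exit^\Tt_\alpha}$ and $\beta$ least with $\delta<\lh(E^\Tt_\beta)$, smallness of $N$ yields a Q-structure $Q\ins M^\Tt_\beta$ for $\delta$ (a premouse, not a bicephalus), and by uniqueness of the iterable $\delta$-sound Q-structure one also has $Q\ins M^\Uu_\beta$, forcing $\OR^Q<\lh(E^\Tt_\beta)$ and hence $\delta$ is not Woodin in $\exit^\Tt_\alpha$ --- a contradiction. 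Your draft has the ingredients (bicephalus comparison, smallness, the concern about superstrongs) on the table, but without this Q-structure argument the reduction is not actually established, and the rest of your sketch (Dodd--Jensen analysis, ``forces $F\rest\nu_F$ and $G\rest\nu_G$ to coincide'', ISC applied inside $B$) is a loose paraphrase of the known comparison proof rather than a closed argument.
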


\begin{proof}
If $B$ is not of mixed type (that is, either $F,G$ are both type 2,
or both not of type 2), then one can use the argument of \cite[\S9]{fsit}.
Otherwise, use the argument of \cite{extmax}. By that argument, it suffices to see that in the 
comparison of $B$ with itself, we never use any extender of superstrong type.
So let $(\Tt,\Uu)$ be the comparison and suppose that $E^\Tt_\alpha$ is of superstrong type.
Then $\exit^\Tt_\alpha\sats$``There is a Woodin cardinal''.
Let $\delta=\delta^{\exit^\Tt_\alpha}$,
 and $\beta$ be least such that $\delta<\lh(E^\Tt_\beta)$.
 Then because $N$ is small, there is $Q\ins M^\Tt_\beta$ which is a Q-structure for $\delta$,
 and $Q$ is a premouse, not a bicephalus, and because $\delta=\delta^Q$, 
 $Q$ is the unique iterable $\delta$-sound Q-structure for $\delta$ extending $Q|\delta$.
 But then $Q\ins M^\Uu_\beta$, so $\OR^Q<\lh(E^\Tt_\beta)$,
 so $\delta$ is not Woodin in $\exit^\Tt_\alpha$, contradiction.
\end{proof}

\printindex

\bibliographystyle{plain}
\bibliography{../bibliography/bibliography}

\end{document}